\newtheorem{coro}{Corollary}[chapter]
\newtheorem{defi}{Definition}[chapter]
\newtheorem{prop}{Proposition}[chapter]
\newtheorem{theo}{Theorem}[chapter]
\newtheorem{lemm}{Lemma}[chapter]
\newtheorem{ques}{Question}[chapter]
\newtheorem{exem}{Example}[chapter]
\newtheorem{prob}{Problem}[chapter]
\newtheorem{rema}{Remark}[chapter]
\newtheorem{conv}{Convention}[chapter]
\newtheorem{rem}{Remarque}[chapter]
\newtheorem*{coro*}{Corollary}
\newtheorem*{defi*}{Definition}
\newtheorem*{prop*}{Proposition}
\newtheorem*{conj*}{Conjecture}
\newtheorem*{theo*}{Theorem}
\newtheorem*{lemm*}{Lemma}
\newtheorem*{ques*}{Question}
\newtheorem*{exer*}{Exercise}
\newtheorem*{exem*}{Example}
\newtheorem*{prob*}{Problem}
\newtheorem*{rema*}{Remark}
\newtheorem*{conv*}{Convention}
\newtheorem*{clai*}{Claim}
\newtheorem*{affi*}{Affirmation}
\newtheorem*{claim*}{Claim}
\newtheorem*{exes*}{Exercise}
\newtheorem*{pbm*}{Problem}
\def\R{I\kern -0.37 em R}
\def\N{I\kern -0.37 em N}
\def\Z{I\kern -0.37 em Z}
\def\AA{{\mathbb A}}  \def\CC{{\mathbb C}}
\def\EE{{\mathbb E}}
 \def\NN{{\mathbb N}} 
 \def\RR{{\mathbb R}}
 \def\ZZ{{\mathbb Z}}
  \def\cG{{\mathcal G}} \def\cM{{\mathcal M}} \def\cS{{\mathcal S}} 
\def\cB{{\mathcal B}}  \def\cH{{\mathcal H}}  \def\cT{{\mathcal T}} 
  \def\cI{{\mathcal I}} \def\cO{{\mathcal O}} \def\cU{{\mathcal U}}
\def\cD{{\mathcal D}}  \def\cJ{{\mathcal J}} \def\cP{{\mathcal P}} \def\cV{{\mathcal V}}
    \def\cW{{\mathcal W}}
\def\cF{{\mathcal F}}   \def\cR{{\mathcal R}}
\title{	An Algorithmic Classification of Generalized Pseudo-Anosov Homeomorphisms via Geometric Markov Partitions.}
\author{Inti Cruz Diaz}
\begin{document}

\begin{titlepage}
	
	\begin{center}
		\vspace*{1cm}
		
		\Large\textbf{\MakeUppercase{Thèse de doctorat de l'établissement}}\\
		\vspace{0.5cm}
		\Large\textbf{\MakeUppercase{Université Bourgogne Franche-Comté}}\\
		\vspace{1cm}
		\Large\textbf{\MakeUppercase{Préparée à l'université de Bourgogne}}\\
		\vspace{1cm}
		
		École doctorale Carnot-Pasteur\\
		Doctorat de mathématiques\\
		\vspace{1cm}
		
			Par \\
		Inti Cruz Diaz\\
		\vspace{1cm}
		
				Titre de la thèse:\\
		\textit{An Algorithmic Classification of Generalized Pseudo-Anosov Homeomorphisms via Geometric Markov Partitions.}\\

		\vspace{2cm}
		
		Directeur de thèse:\\
			\vspace{0.5cm}
		 Christian Bonatti - CNRS  $\&$ IMB
		
	\end{center}
\end{titlepage}

\tableofcontents

\newpage

\begin{center}
\textbf{Acknowledgements:}
I am grateful for the full funding provided by a CONACyT grant under the \emph{Becas al Extranjero Convenios GOBIERNO FRANCES 2019 - 1} program in order to pursue my PhD in Dijon. This thesis is the product of such investment. I appreciate the trust you have placed in my mathematics research project, particularly given the budget constraints that Mexico faces in the realm of pure science.
\end{center}

\newpage

\chapter*{Introduction}

\section*{Abstract}

This thesis explores the problem of classifying generalized pseudo-Anosov homeomorphisms under topological conjugacy. Since each of these homeomorphisms admits a Markov partition and preserves the orientation of the surface on which it acts, we are going to show how to associate each Markov partition a geometrization and then a finite combinatorial object, known as the geometric type of the Markov partition.

The idea is to establish a connection between the definition of generalized pseudo-Anosov homeomorphisms in terms of measured foliations and one given in by  geometric types. The thesis presents three essential results: the geometric type is a complete invariant of conjugation, there is a combinatorial characterization of geometric types that correspond to generalized pseudo-Anosov homeomorphisms, and finally, we give an algorithm  to determine when two realizable geometric types correspond to the same conjugacy class. In this manner, we can change one definition for another, with the hope that the combinatorial approach that we present permits us to use computers to understand these dynamical systems. We have not developed any program up to this moment, but this philosophical approach guides us.

In this introduction, we will attempt to place our problem and the obtained results into context. Throughout this process, it will be necessary to provide definitions that are not entirely formal, aiming to convey the intuition behind them. In any case, we will refer to the corresponding part of the text where you can find the complete definitions.

\section*{The classification problem for generalized pseudo-Anosov homeomorphisms. }

\pagenumbering{Roman}

It could be proven that any Anosov diffeomorphism $f$ on the $2$-torus $\mathbb{T}^2$ preserves two transverse and non-singular \emph{measured foliations}, $(\cF^s,\mu^s)$ and $(\cF^u,\mu^u)$, the \emph{stable} and \emph{unstable} foliations, respectively. Moreover, there is a number $\lambda>0$ such that $f$ contracts by a factor of $\lambda^{-1}$ the $\mu^u$-measure of every arc transverse to $\cF^u$, and expands by a factor of $\lambda$ the $\mu^s$-measure of any arc transverse to $\cF^s$

The only closed and orientable surface that supports non-singular foliations is $\mathbb{T}^2$. We will study a class of homeomorphisms defined on compact and oriented surfaces that generalize Anosov diffeomorphisms on the $2$-torus. These are called \emph{pseudo-Anosov homeomorphisms with spines} (also known as pseudo-Anosov homeomorphisms with marked points or generalized pseudo-Anosov homeomorphisms). Similar to Anosov diffeomorphisms, they preserve a pair of foliations. However, in this case, these foliations admit a finite number of singularities and can be defined on surfaces of any genus.

Let $S$ be a closed and oriented surface. A homeomorphism $f:S\rightarrow S$ is called \emph{pseudo-Anosov} if it preserves two transverse and \emph{singular measured foliations}: $(\cF^s,\mu^s)$ and $(\cF^s,\mu^u)$. Furthermore, there exists a \emph{dilatation factor} $\lambda>0$ such that $f$ induces a uniform expansion by a factor of $\lambda$ on the leaves of $\cF^u$ and a contraction by a factor of $\lambda^{-1}$ on the leaves of $\cF^s$. The difference with the Anosov case is that these foliations admit a finite number of saddle-type singularities with $k\geq 3$ separatrices (called $k$-prongs). In this thesis, we consider \emph{pseudo-Anosov homeomorphisms with  spines}, whose invariant foliations can exhibit singularities with only one separatrice, known as spines.

Let Hom$(S)$ be the group of homeomorphisms of $S$, and Hom$_+(S)$ be the group of orientation-preserving homeomorphisms of $S$. The mapping class group $\mathcal{M}(S)$ is obtained by considering equivalence classes in Hom$_+(S)$ under isotopy. The Nielsen–Thurston classification establishes a trichotomy for every homeomorphism in Hom$_+(S)$, where each homeomorphism is isotopic to one that is periodic, reducible, or pseudo-Anosov. It is in these works where the formal definition of a pseudo-Anosov diffeomorphism appears. However, we want to study Hom$_+(S)$ under another equivalence relation, topological conjugation. Two homeomorphisms $f$ and $g$ in Hom$(S)$ are \emph{topologically conjugate} if and only if there exists $h$ in Hom$(S)$ such that $f = h \circ g \circ h^{-1}$.  The equivalence classes formed by this relation are referred to as \emph{conjugacy classes}, and our interest lies in their classification. However, we will restrict ourselves to the case when conjugation is performed by a homeomorphism that preserves orientation, i.e. $h\in$Hom$_+(S)$.

We will study pseudo-Anosov homeomorphisms as a whole, regardless of whether they have spines. For practical writing purposes, we refer to them as \emph{generalized pseudo-Anosov homeomorphisms}. If necessary in the discussion or proofs, we will explicitly mention whether a particular homeomorphism has spine-like singularities or not. This thesis focuses on the conjugacy classes that contain a generalized pseudo-Anosov homeomorphism that in addition preserve the orientation of the surface.
 
Furthermore, the set of generalized  pseudo-Anosov homeomorphisms it is closed under topological conjugation. In other words, if $f\in$ Hom$_+(S)$ is a generalized pseudo-Anosov homeomorphism and $h\in$Hom$_+(S)$ is any homeomorphism, then $g := h \circ f \circ h^{-1}$ is also a generalized pseudo-Anosov homeomorphism.

Some authors use the term \emph{pseudo-Anosov homeomorphisms with marked points} to refer to pseudo-Anosov homeomorphisms with spines. This is relevant when studying isotopy classes relative to such marked points. However, in our approach, we consider conjugation by the whole group of homeomorphisms without explicitly considering the set of marked points. In terms of topological conjugation, we can inquire about homeomorphisms that are topologically conjugate and whose conjugation preserves a predefined set of marked points. The problem becomes more complicated as we seek to find a topological conjugation between two homeomorphisms that also preserves the entire marked set, which may include not only singularities. This is something we didn't consider in the thesis.

Our goal is to classify the conjugacy classes of pseudo-Anosov homeomorphisms and understand their behaviors. We face challenges in developing effective methods to distinguish between conjugacy classes by studying the dynamics, analyzing the structure of the invariant foliations, and considering the impact of spines. Let us clarify the notion of classification and the challenges we have imposed on ourselves in order to achieve this classification. We hope that our efforts will contribute to a broader understanding of these important dynamical systems.

\begin{prob}\label{Prob: Clasification}
	
The \emph{classification problem} for generalized pseudo-Anosov homeomorphisms can be divided into three fundamental components.

	\begin{enumerate}
		\item 	\textbf{Finite presentation:} This aspect aims to provide a concise and combinatorial description of the conjugacy classes that contain a generalized pseudo-Anosov homeomorphism. It can be further divided into the following sub-problems:
		\begin{itemize}
			\item[I)]\textbf{Combinatorial representation:} The objective of this problem is to establish a formal and precise association between every generalized pseudo-Anosov homeomorphism $f$ and a \emph{finite combinatorial object} denoted as $T:=T(f)$. This association is designed such that the same combinatorial object is linked to a pair of generalized pseudo-Anosov homeomorphisms if and only if they are topologically conjugated. It is important to note that the association may not be unique and $f$ can have multiple valid representations.
						
			\item[II)] \textbf{Combinatorial models:} Given a combinatorial information associated to a generalized pseudo-Anosov homeomorphism $f$, $T(f)$, we propose to reconstruct a pseudo-Anosov homeomorphism that is conjugate to $f$ using the information provided by $T(f)$. This problem represents a higher level of ambition compared to the previous point, since successfully achieving this reconstruction involves the combinatorial representation.

		\end{itemize} 
		
		\item \textbf{Realization:}The set of combinatorial objects forms a set denoted by $\mathcal{GT}$, and the subset of these objects associated with a pseudo-Anosov homeomorphism is referred to as the \emph{pseudo-Anosov class} and will be denoted by $\mathcal{GT}(p\AA)$. Generally, $\mathcal{GT}(p\AA)$ is a proper subset of $\mathcal{GT}$.
		
		The second problem is to provide a characterization and an algorithm for determining whether or not an abstract combinatorial information belongs to the pseudo-Anosov class. Ideally, we would like to set a time bound for the algorithm to provide an answer.
	
		\item \textbf{Algorithmic decidability:} A generalized pseudo-Anosov homeomorphism can have multiple combinatorial representations. The subsequent problem involves determining whether two combinatorial representations in the pseudo-Anosov class correspond to conjugate pseudo-Anosov homeomorphisms or not. Similar to the previous problem, our objective is to develop an algorithm that can efficiently determine, within finite and bounded time, whether two combinatorial representations represent the same conjugacy class.
		
	A more ambitious task is to identify a finite and distinguished family, denoted $\Theta[f]$, of the set of combinatorial objects associated with the pseudo-Anosov homeomorphism $f$. This family must satisfy three essential criteria: 
		\begin{itemize}
		\item $\Theta(f)$ must be computable from any other combinatorial object associated with $f$.
		\item $\Theta(f)$ must allow the computation of any other invariant $T(f)$ associated with $f$.
		\item $\Theta(f)$ must be, by contention,  the minimum set of combinatorial information that satisfies the previous items.
		\end{itemize}
	\end{enumerate}
\end{prob}

Let $S$ be a closed and oriented surface, and let $f: S \rightarrow S$ be a generalized pseudo-Anosov homeomorphism. The combinatorial object associated with $f$ will be the \emph{geometric type} of a \emph{geometric Markov partition} of $f$. In the course of presenting our results, we will provide an explanation and overview of these concepts.
 
 \section*{Presentation of results.}

 \subsection{Geometric Markov partitions and its geometric type.}

Let $S$ be a closed oriented surface, and let $f$ be a pseudo-Anosov homeomorphism with spines. We denote by $(\mathcal{F}^s,\mu^s)$ and $(\mathcal{F}^u,\mu^u)$ its invariant foliations equipped with their transverse measures. A \emph{rectangle} $R\subset S$ of $f$ is defined as a pair $(R,r)$, where $r$ is a (equivalence class of) continuous map $r: [-1,1]^2 \rightarrow S$ such that its image is $R$, i.e., $r([-1,1]^2)=R$. Moreover, $r$ is a homeomorphism on $(-1,1)^2$, and the pre-images of $\mathcal{F}^s$ and $\mathcal{F}^u$ under $r^{-1}$ correspond to the horizontal and vertical foliations, respectively, of the unit square. We identify two rectangles $r$ and $r'$ if there are mappings $\varphi_s,\varphi_u: [-1,1] \rightarrow [-1,1]$ such that $r'^{-1} \circ r = (\varphi_s,\varphi_u)$ and $\varphi_{s,u}$ are increasing homeomorphisms defined on the interval $[-1,1]$ (see Definition~\ref{Defi: rectangle}). The interior of $R$ is denoted as $\overset{o}{R}$ and corresponds to the restriction of $r$ to $(-1,1)^2$. Typically, when the parametrization is not necessary for the exposition, we refer to the image of $r$ simply as a rectangle.

A \emph{Markov partition}  $\cR=\{R_i\}_{i=1}^n$ of $f$ is a decomposition of the surface into a finite set of rectangles with disjoint interiors, satisfying the following property: if $\overset{o}{R_i}\cap f^{-1}(\overset{o}{R_j})\neq \emptyset$, then every connected component $C$ of $\overset{o}{R_i}\cap f^{-1}(\overset{o}{R_j})$ is the interior of a \emph{horizontal sub-rectangle} of $R_i$ (see Definition~\ref{Defi: vertical/horizontal sub rectangles}), and $f(C)$ is the interior of a vertical sub-rectangle of $R_j$.

The following result is a classical theorem for the case of pseudo-Anosov homeomorphisms without spines (see~\cite{fathi2021thurston}). In Chapter \ref{Section: Existence}, we provide a recipe for constructing Markov partitions for generalized pseudo-Anosov homeomorphisms¨. The advantage of our procedure is that it begins by selecting a point, and the rest of the construction follows an algorithmic approach.
  
\begin{coro*}[\ref{Coro: Existence adapted Markov partitions}]
Every generalized pseudo-Anosov homeomorphism admits Markov partitions.
\end{coro*}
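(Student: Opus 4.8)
The plan is to adapt the classical construction of Markov partitions for pseudo-Anosov maps (see \cite{fathi2021thurston}) so that it tolerates the $1$-prong singularities, the algorithmic recipe announced for Chapter~\ref{Section: Existence} being one concrete way to carry it out. Two structural facts that the scheme needs are available here: off the finite singular set the transverse pair $(\cF^s,\cF^u)$ gives $S$ a local product structure, so each regular point has a neighbourhood that is the interior of a rectangle in the sense of Definition~\ref{Defi: rectangle}; and $f$ is expansive, with every half-leaf of $\cF^s$ and $\cF^u$ dense in $S$.

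\textbf{Step 1: a finite rectangular cover.} I would first cover $S$ by finitely many rectangles with pairwise disjoint interiors whose sides are compact leaf segments of $\cF^s$ and $\cF^u$. Around a regular point one takes a small bi-foliated chart; around a $k$-prong with $k\ge 3$ one takes the standard saddle model, covered by $k$ rectangles meeting at that point as a common corner; around a $1$-prong (a spine) one takes a single rectangle having the spine's separatrix inside one of its stable or unstable sides. Compactness of $S$ yields a finite subfamily, cutting the rectangles against one another and keeping the two-dimensional pieces makes the interiors disjoint, and one may take every diameter below a fixed small $\varepsilon$.

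\textbf{Step 2: dynamical refinement and the Markov property.} Let $\Gamma^s$ and $\Gamma^u$ be the unions of the stable, resp. unstable, sides of the cover. Following Bowen and Fathi--Shub, I would cut the rectangles successively along the forward preimages $f^{-n}(\Gamma^s)$ and the forward images $f^{n}(\Gamma^u)$, and let $\cR$ be the closures of the surviving connected components. Since $f$ uniformly expands $\cF^u$ by $\lambda>1$ and contracts $\cF^s$ by $\lambda^{-1}$, a Lebesgue-number estimate together with expansivity shows that after a bounded number of steps $N=N(\varepsilon,\lambda)$ no further cut subdivides a surviving piece; thus $\cR$ is finite, its elements are non-degenerate rectangles of diameter $<\varepsilon$ with disjoint interiors covering $S$, and after stabilisation pulling back creates no new stable cut while pushing forward creates no new unstable cut. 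Consequently, if $\overset{o}{R_i}\cap f^{-1}(\overset{o}{R_j})\ne\emptyset$, every connected component $C$ of this set is spanned by complete stable leaf segments of $R_i$ and is uncut in the unstable direction, i.e. $C$ is the interior of a horizontal sub-rectangle of $R_i$ (Definition~\ref{Defi: vertical/horizontal sub rectangles}) and, dually, $f(C)$ is the interior of a vertical sub-rectangle of $R_j$. That is precisely the defining property, so $\cR$ is a Markov partition, which yields Corollary~\ref{Coro: Existence adapted Markov partitions}.

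\textbf{Main obstacle.} The one genuinely new point, and the reason this is not verbatim the classical statement, is the behaviour near spines. A spine is a periodic point whose single separatrix is permuted by $f$ among the spine separatrices, so a suitable power of $f$ sends the rectangle covering a spine to a long thin rectangle that again accumulates on a spine; one must check that these images remain embedded rectangles and that the stable and unstable cuts close up consistently around each spine rather than spiralling into it. Making this precise -- and, as a secondary, organisational matter, arranging the whole construction so that it can be run starting from a single chosen point -- is exactly the work carried out in Chapter~\ref{Section: Existence}.
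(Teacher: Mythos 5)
There is a genuine gap, and it sits exactly where your Step 2 does all the work. First, the stabilization claim is false. Since $f_*\mu^u=\lambda\mu^u$, the map $f^{-1}$ expands stable arcs by the factor $\lambda$, so the cut family $f^{-n}(\Gamma^s)$ consists of stable arcs whose total $\mu^u$-length grows like $\lambda^n$, while any candidate skeleton produced after finitely many cuts is a fixed finite union of compact arcs. By minimality of $\cF^s$ (Proposition~\ref{Prop: pseudo-Anosov properties.}) these ever longer arcs must keep crossing the interiors of the surviving pieces, so ``no further cut subdivides a surviving piece'' never happens: the refinement $\bigvee_{k\le n}f^{-k}$ produces on the order of $\lambda^{n}$ pieces and does not terminate, and no Lebesgue-number or expansivity argument can prevent this (the growth is forced by $h_{top}(f)=\log\lambda>0$). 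Second, even if one stops at an arbitrary finite stage, the invariances you would obtain point the wrong way. The Markov property is equivalent (Proposition~\ref{Prop: Markov criterion boundary}) to $f(\partial^s\cR)\subset\partial^s\cR$ and $f^{-1}(\partial^u\cR)\subset\partial^u\cR$; but a stable skeleton of the form $\bigcup_{k=0}^{N}f^{-k}(\Gamma^s)$ is (at best) closed under $f^{-1}$ up to the last generation, and $f(\Gamma^s)$ -- a union of short stable arcs with no prescribed position -- has no reason to lie in it. So cutting along backward images of stable sides and forward images of unstable sides cannot deliver the required forward invariance of $\partial^s$ and backward invariance of $\partial^u$; the spine discussion you flag as the ``main obstacle'' is not where the construction actually fails.

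The paper avoids both problems by anchoring every boundary arc at the singularities, which is what makes the relevant invariances automatic and the graphs finite. Starting from a point $z\in F^s(p)\cap F^u(q)$ with $p,q$ singular, the primitive unstable segment $J^u(z)=[q,z]^u$ is contracted by $f^{-1}$ toward the (periodic) singularity, so $\cJ^u(z)=\bigcup_{i\ge0}f^{-i}(J^u(z))$ is a \emph{finite}, $f^{-1}$-invariant union of unstable arcs; the $s$-adapted graph it generates (stable segments issuing from each singular separatrix until first meeting $\cJ^u(z)$) is then $f$-invariant by Lemma~\ref{Lemm: the generated graph is adapted}, and dually one builds $\delta^u(z)$. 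Uniform expansion gives an $n(z)$ after which $\delta^s(z)$ and $f^n(\delta^u(z))$ are compatible (Lemma~\ref{Lemm: Iteration to be adapted}), and the rectangles are the closures of the components of the complement, with stable boundary exactly $\delta^s(z)$ and unstable boundary $f^n(\delta^u(z))$ (Proposition~\ref{Prop:compatibles implies Markov partition}, Proposition~\ref{Prop: Recipe for Markov partitions}); invariance of the boundary is built in because every stable segment runs from a singularity to the $f^{-1}$-invariant unstable graph, so its $f$-image nests into another segment of the same graph. If you want to retain a cut-and-refine flavour, the cuts must be along \emph{forward} images of stable arcs emanating from singularities (so that successive images nest) and backward images of unstable arcs likewise anchored, not along $f^{-n}(\Gamma^s)$ and $f^{n}(\Gamma^u)$ for an arbitrary initial cover.
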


\subsection{Finite presentations: The geometric type is a complete conjugacy invariant.}

If $R$ is a rectangle, the intersection $\mathcal{F}^s\cap R$ is referred to as the horizontal foliation of $R$, while $\mathcal{F}^u\cap R$ is its vertical foliation. The definition of a rectangle naturally provides an orientation for these foliations. By choosing an orientation for the vertical (unstable) foliation and assigning the unique orientation to the horizontal (stable) foliation, such that the parametrization $r$ restricted to the interior of the unit square, $r:(0,1)\times (0,1)\to \overset{o}{R}\subset S$, is an orientation-preserving homeomorphism. 
A \emph{geometric Markov partition} of $f$ is a Markov partition of the homeomorphism where a vertical orientation has been chosen for all the rectangles within it. In the book \cite{bonatti1998diffeomorphismes}, the authors associate to each geometric Markov partition a combinatorial information called \emph{geometric type}. Let us take a moment to review this concept. Throughout the rest of our presentation, we assume that all rectangles have been assigned orientations in their respective horizontal foliations, i.e, they are indeed, geometric Markov partitions.

Let $f$ be a generalized pseudo-Anosov homeomorphism and let $\cR=\{R_i\}_{i=1}^n$ be a geometric Markov partition of $f$. The set of horizontal sub-rectangles of $R_i$, which are the connected components of the intersection $\overset{o}{R_i}\cap f^{-1}(\overset{o}{R_k})$ for some $k\in \{1,\cdots,n\}$, is labeled from \emph{bottom to top} according to the vertical orientation in $R_i$. We denote these sub-rectangles as $\{H^i_j\}_{j=1}^{h_i}$, where $h_i$ is the number of horizontal sub-rectangles of $R_i$. These particular rectangles are known as the \emph{horizontal sub-rectangles of the Markov partition} $\cR$. Similarly, the vertical sub-rectangles of $R_i$, whose interiors are the connected components of $\overset{o}{R_i}\cap f(\overset{o}{R_k})$ for some $k\in \{1,\cdots,n\}$, are labeled from \emph{left to right} according to the horizontal order of $R_i$ as: $\{V^i_l\}_{l=1}^{v_i}$, where $v_i$ is the number of vertical sub-rectangles of $R_i$. These specific rectangles are referred to as the  \emph{vertical sub-rectangles of the Markov partition} $\cR$.

The numbers $\{(h_i,v_i)\}_{i=1}^n$ determine two formal sets:

\begin{eqnarray}\label{Equa: Horizontal vertical sub}
\cH=\{(i,j): i\in\{1,\dots,n\}, j\in\{1,\dots,h_i\},\\
\cV=\{(k,l): k\in\{1,\dots,n\}, l\in\{1,\dots,v_k\}.
\end{eqnarray}
which represent the vertical and horizontal rectangles of the Markov partition, indexed according to the order within each rectangle. The homeomorphism $f$ induces a bijection, $\phi: \mathcal{H} \to \mathcal{V}$, between them. It is defined as $\phi(i,j) = (k,l)$ if and only if $f(H^i_j) = V^k_l$. Here, a first numerical obstruction arises in the set $\{(h_i,v_i)\}_{i=1}^n$:

\begin{equation}\label{Equa: same sum v-i h-i}
\sum_{i=1}^{n}h_i=\sum_{i=1}^{n}v_i.
\end{equation}

Each oriented vertical leaf of a sub-rectangle $H^i_j$ is sent by the function $f$ to a vertical leaf of $V^k_l$. We can compare the orientations of these leaves and define $\epsilon(i,j) = 1$ if the vertical orientations of $f(H^i_j)$ and $V^k_l$ coincide, and $\epsilon(i,j) = -1$ otherwise. This procedure determines the function:
\begin{equation}\label{Equation: same horizontal vertical}
\epsilon\colon \cH \rightarrow \{1,-1\}.
\end{equation}

The \emph{geometric type} of a geometric Markov partition $\cR$ packages all this information into a quadruple:

\begin{equation}\label{Equation: Geometric type}
T(\cR):=(n,\{(h_i,v_i)\}_{i=1}^n,\Phi=(\phi,\epsilon)\colon \cH\to \cV\times\{-1,1\}).
\end{equation}

Later on, we will explain how to define an \emph{abstract geometric type} (Definition ~ \ref{Defi: abstract geometric type}). The set of abstract geometric types will be denoted by $\mathcal{GT}$. However, it is important to note that not every abstract geometric type corresponds to the geometric type of a geometric Markov partition of a pseudo-Anosov homeomorphism. The \emph{pseudo-Anosov class} of geometric types, denoted as $\mathcal{GT}(p\AA)\subset \mathcal{GT}$, consists of those geometric types $T$ for which there exists a generalized pseudo-Anosov homeomorphism with a geometric Markov partition $\mathcal{R}$ whose geometric type is $T$. This discrepancy is the reason for the realization problem in \ref{Prob: Clasification}.

In Chapter \ref{Chapter: TypeConjugacy}, the following theorem is proved, achieving a solution to \textbf{Item} $1.I)$ of the classification problem. It establishes that the geometric type serves as a complete invariant under topological conjugacy.

\begin{theo*}[\ref{Theo: conjugated iff  markov partition of same type}]
Let $f:S_f\rightarrow S_f$ and $g:S_g \rightarrow S_g$ be two generalized pseudo-Anosov homeomorphisms. Then, $f$ and $g$ have a geometric Markov partition of the same geometric type if and only if there exists an orientation preserving homeomorphism between the surfaces $h:S_f\rightarrow S_g$ that conjugates them, i.e., $g=h\circ f\circ h^{-1}$.
\end{theo*}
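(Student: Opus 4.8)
The plan is to prove both implications by exploiting the fact that a geometric Markov partition, together with the dynamics on it, records enough data to reconstruct the pair $(S,f)$ up to orientation-preserving conjugacy. For the easy direction, suppose $h:S_f\to S_g$ is an orientation-preserving homeomorphism with $g=h\circ f\circ h^{-1}$. If $\cR=\{R_i\}$ is a geometric Markov partition of $f$, then $h(\cR)=\{h(R_i)\}$ is a decomposition of $S_g$ into rectangles: indeed $h$ carries the invariant foliations $(\cF^s_f,\cF^u_f)$ to $(\cF^s_g,\cF^u_g)$ (since these are topological invariants of the dynamics, being the stable/unstable sets of the periodic-point structure), and one transports the parametrizations $r_i$ by post-composition with $h$. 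The Markov property is preserved because $h(\overset{o}{R_i}\cap f^{-1}(\overset{o}{R_j}))=h(\overset{o}{R_i})\cap g^{-1}(h(\overset{o}{R_j}))$. Finally, choosing vertical orientations on $h(\cR)$ that are the $h$-images of those on $\cR$, the combinatorial data $(n,\{(h_i,v_i)\},\Phi)$ is literally unchanged, since $h$ is a homeomorphism respecting all the orders and, being orientation-preserving on the surface and respecting the chosen vertical orientations, it also preserves the sign function $\epsilon$. Hence $T(h(\cR))=T(\cR)$.

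The substantive direction is the converse: given geometric Markov partitions $\cR_f=\{R_i\}$ of $f$ and $\cR_g=\{Q_i\}$ of $g$ with $T(\cR_f)=T(\cR_g)=T$, construct an orientation-preserving conjugacy. First I would build a homeomorphism $h_0$ from the ``cellular skeleton'' of $\cR_f$ to that of $\cR_g$: send each $R_i$ to $Q_i$ by an orientation-preserving homeomorphism respecting horizontal and vertical foliations (possible since both are standard oriented squares), so that $h_0$ matches up the horizontal sub-rectangles $H^i_j\leftrightarrow \tilde H^i_j$ and vertical sub-rectangles $V^k_l\leftrightarrow \tilde V^k_l$ in the order recorded by $T$. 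The equality of geometric types guarantees that on the level of sub-rectangles, $h_0\circ f$ and $g\circ h_0$ agree up to the parametrization ambiguity (the increasing homeomorphisms $\varphi_s,\varphi_u$ allowed in the identification of rectangles), and the sign data $\epsilon$ guarantees the vertical orientations are matched compatibly. The map $h_0$ is then a semi-conjugacy on the ``grid'' $\bigcup_i \partial R_i \cup$ (stable/unstable boundary leaves). Iterating — pulling back the finer partition $\bigvee_{k=-N}^{N} f^k(\cR_f)$ and its image $\bigvee_{k=-N}^N g^k(\cR_g)$, which have the same refined geometric type because $T$ determines them inductively — one obtains a nested sequence of grids whose union is dense (this uses that $f,g$ are pseudo-Anosov, so the stable and unstable leaves through the partition are dense and rectangles shrink uniformly, by the dilatation $\lambda$). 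Taking a limit, the compatible homeomorphisms $h_N$ of the $N$-th grids converge uniformly to a homeomorphism $h:S_f\to S_g$; uniform convergence plus $h_N$ intertwining $f$ and $g$ on the $N$-th grid yields $g=h\circ f\circ h^{-1}$ on a dense set, hence everywhere by continuity. Orientation-preservation of $h$ is inherited from that of each $h_N$, which is built to be orientation-preserving.

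The main obstacle is the limiting step and, beneath it, the verification that the combinatorial matching at each finite stage is genuinely consistent — i.e., that the identification of rectangles up to increasing reparametrization does not obstruct gluing the local homeomorphisms into a single global one, and that the diameters of the cells of the $N$-th refinement go to zero uniformly so that the $h_N$ form a Cauchy sequence in the uniform topology. Controlling the geometry requires passing from the purely topological rectangles to genuinely ``geometric'' ones carrying the transverse measures $\mu^s,\mu^u$; this is presumably where the ``geometrization'' machinery developed earlier in the thesis (the passage from a Markov partition to a geometric Markov partition, and the width/height estimates coming from the $\lambda$-expansion) is essential. A secondary delicate point is handling the singularities and spines of the foliations, which lie on the boundary grid: one must check that $h$ matches $k$-prong singularities to $k$-prong singularities and spines to spines, which follows because the number of rectangle-corners meeting at a singular point, together with the local $\epsilon$-data, is encoded in $T$ and hence preserved by $h_0$ and all the $h_N$.
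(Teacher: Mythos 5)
Your easy direction is fine and matches the paper (Theorem \ref{Theo: Conjugated partitions same types}), and your converse strategy --- match rectangles $R_i\leftrightarrow Q_i$, refine by $\bigvee_{k=-N}^{N}f^k(\cR_f)$, use the $\lambda$-expansion to shrink cells, and pass to a uniform limit --- is a legitimate outline. But the step you defer ("the verification that the combinatorial matching at each finite stage is genuinely consistent", "presumably where the geometrization machinery is essential") is not a technicality: it is the entire content of the theorem, and as written your argument has a genuine gap there. The rectangles of a Markov partition of a pseudo-Anosov map are not disjoint; they are glued along their stable and unstable boundaries, and a boundary point (in particular a $k$-prong or spine) carries several distinct itineraries. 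For $h_N$ to be well defined on the grid you must know that whenever two itineraries code the \emph{same} point of $S_f$, the corresponding itineraries code the same point of $S_g$ --- equivalently, that the fibers of the two projections $\pi_f,\pi_g:\Sigma_A\to S_f,S_g$ induce the same partition of $\Sigma_A$. A priori the geometric type only records how $f$ shuffles horizontal sub-rectangles inside rectangles; it is not obvious that it also determines which rectangle boundaries are identified in the surface, how many sectors meet at a singular point, or where the spines sit. Your sentence asserting that "the number of rectangle-corners meeting at a singular point, together with the local $\epsilon$-data, is encoded in $T$" is precisely the claim that has to be proved; without it the $h_N$ need not glue into maps of the surfaces at all, let alone converge to a conjugacy.

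This is exactly what the paper's proof supplies: it defines, purely from $T$ (via the $s$- and $u$-generating functions, boundary codes, boundary-leaf codes and sector codes), an equivalence relation $\sim_T$ on $\Sigma_A$, and Proposition \ref{Prop: The relation determines projections} shows that $\underline{w}\sim_T\underline{v}$ iff $\pi_f(\underline{w})=\pi_f(\underline{v})$ for \emph{every} realization $(f,\cR)$ of $T$. One then takes the quotient $\Sigma_T=\Sigma_A/\sim_T$ (Proposition \ref{Prop: cociente T}) and conjugates both $f$ and $g$ to the combinatorial model $(\Sigma_T,\sigma_T)$, so $h=[\pi_g]\circ[\pi_f]^{-1}$; the orientation statement then requires a separate argument (carried out after passing to a horizontal refinement with binary incidence matrix and $h_i,v_i>2$, a reduction your scheme would also need so that the finite-stage matchings of sub-rectangles are unambiguous). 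If you want to keep your limit-of-refinements route, you must first prove the analogue of Proposition \ref{Prop: The relation determines projections} --- i.e.\ that the adjacency and identification pattern of the boundary leaves, the periodic boundary points, and the prong/spine data are functions of $T$ alone; once that is in hand, your convergence argument (uniform shrinking of cells via $\mu^{s,u}$ and $\lambda$) goes through and is essentially equivalent to the paper's quotient construction.
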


To prove Theorem~\ref{Theo: conjugated iff markov partition of same type}, we will construct a \emph{combinatorial model} of a pseudo-Anosov homeomorphism $f:S_f \to S_f$ with a Markov partition $\mathcal{R}$ of geometric type $T$. We will then demonstrate that $f$ is conjugated to this model. Essentially, the combinatorial model is obtained by taking the quotient of a sub-shift of finite type under an equivalence relation determined by the geometric type. Let us provide a brief overview of this construction.

 The geometric type $T$ determines the \emph{incidence matrix} $A:=A(T)$ of the Markov partition $\cR$. When the matrix $A$ has coefficients in $\{0,1\}$, it is called a \emph{binary matrix}, and we can define a symbolic dynamical system called, sub-shift of finite type $\sigma\colon \Sigma_A\to\Sigma_A$. Furthermore, it is possible to create a function $\pi_f:\Sigma_A \rightarrow S_f$ that depends on the homeomorphism $f$ and the Markov partition $\cR$ as follows: Let $(w_z)_{z\in \mathbb{Z}}=\underline{w}\in \Sigma_A$.

\begin{equation}\label{Equa: Projection}
\pi_f(\underline{w}):=\cap_{n\in \NN}(\overline{\cap_{z=-n}^n f^{-z}(\overset{o}{R_{w_z}})}).
\end{equation}
 
The map $\pi_f$ is the \emph{projection of the shift} with respect to the geometric Markov partition $(f, \mathcal{R})$. The projection associates each itinerary $\underline{w}=(w_z)_{z\in \mathbb{Z}}$ with a unique point $x$ on the surface $S$ such that $f^z(x)\in R_{w_z}$, i.e $x:= \pi_f(\underline{w}) $ follows the itinerary dictated by the code $\underline{w}$. The definition of $\pi_f$ is based on the following idea: the sets $\overset{o}{Q_n}=\cap_{z=-n}^n f^{-z}(\overset{o}{R_{w_z}})$ are open rectangles contained within the interior of $R_{w_0}$, and their diameters approach zero as $n$ tends to infinity. Consequently, the closure of $\overset{o}{Q_n}$ cannot simultaneously touch the two stable (or unstable) boundaries of $R_{w_0}$. As was said before, the intersection of the closures of all these rectangles, $\cap_{n\in \mathbb{N}} \overline{Q_n}$, corresponds to a unique point $\pi_f(\underline{w}):=x$ that follows the itinerary dictated by $\underline{w}$.

 It's important to mention that this is not the only way to define the projection. For instance, in \cite{fathi2021thurston}, the projection is given by $\pi_f(\underline{w}) = \cap_{z\in \ZZ} f^{-z}(R_{w_z})$. However, if we define $\pi_f$ in this manner, the projection may not always be a function as $\pi_f(\underline{w})$ is not necessarily a single point, except in cases where all the rectangles in $\cR$ are embedded.
 
The projection $\pi_f$ defined in Equation \ref{Equa: Projection} is a continuous, surjective, and finite-to-one map that acts as a semi-conjugation between the sub-shift of finite type $(\Sigma_A, \sigma)$ and the generalized pseudo-Anosov homeomorphism $f$.

If two generalized pseudo-Anosov homeomorphisms $f: S_f \to S_f$ and $g: S_g \to S_g$ have Markov partitions $\cR_f$ and $\cR_g$ of the same geometric type $T$, they will share the same incidence matrix and thus have associated identical sub-shifts of finite type. This implies the existence of projections $\pi_f: \Sigma_A \to S_f$ and $\pi_g: \Sigma_A \to S_g$ that semi-conjugate the shift with the respective homeomorphisms. However, it is not clear how we can establish a direct conjugation between $f$ and $g$ based solely on these projections.

The main challenge in constructing a conjugacy between $f$ and $g$ lies in determining the correspondence between codes in $\Sigma_A$ that represent the same points on the surfaces $S_f$ and $S_g$. This correspondence is particularly challenging because the only known information that relates $f$ to $g$ is the geometric type of their respective partitions. Therefore, our analysis will be based solely on this combinatorial information.

We adopt the approach of introducing an equivalence relation $\sim_T$ in $\Sigma_A$, which is purely determined by the geometric type $T$. This relation defines equivalence classes that correspond to the codes in $\Sigma_A$ that project to the same point under $\pi_f$. The advantage of using the equivalence classes determined by $\sim_T$ is that they do not depend on the specific homeomorphism $f$ or a Markov partition $\cR$. This allows us to compare different generalized pseudo-Anosov homeomorphisms that share the same geometric type for their Markov partitions. In Chapter \ref{Chapter: TypeConjugacy}, we construct this equivalence relation, and Proposition \ref{Prop: The relation determines projections} establishes its key properties.

\begin{prop*}[\ref{Prop: The relation determines projections}]
	
	Let $T$ be a geometric type with an incidence matrix $A:=A(T)$ having coefficients in $\{0,1\}$, and let $(\Sigma_A,\sigma)$ be the associated sub-shift of finite type. Consider a generalized pseudo-Anosov homeomorphism $f:S\rightarrow S$ with a geometric Markov partition $\cR$ of geometric type $T$. Let $\pi_f:\Sigma_A \rightarrow S$ be the projection induced by the pair $(f,\cR)$.

There exists an equivalence relation $\sim_T$ on $\Sigma_A$, algorithmically defined in terms of $T$, such that for any pair of codes $\underline{w},\underline{v}\in \Sigma_A$, they are $\sim_T$-related if and only if their projections coincide, $\pi_f(\underline{w}) = \pi_f(\underline{v})$.
\end{prop*}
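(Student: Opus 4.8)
The plan is to define $\sim_T$ purely symbolically by encoding the two ways that distinct codes can project to the same point: either they track the same point through the "interior" of the partition and differ only because that point lies on the stable/unstable boundary of some rectangle, or the point is a singularity (or a point of a spine) where several rectangles meet. Concretely, I would first isolate, from the geometric type $T$ alone, which horizontal (resp. vertical) sub-rectangles are adjacent along a stable (resp. unstable) boundary component inside a given $R_i$: the labeling of $\{H^i_j\}_{j=1}^{h_i}$ from bottom to top tells us that $H^i_j$ and $H^i_{j+1}$ share a stable boundary arc, and the bijection $\Phi=(\phi,\epsilon)$ transports this adjacency forward under $f$. Iterating $\Phi$ and its "vertical" counterpart (obtained by reading $\phi^{-1}$ together with $\epsilon$, which records how orientations, hence the bottom-to-top and left-to-right orders, are matched) yields, for every finite admissible word, the list of rectangles whose closures meet the point coded by that word along a prescribed side. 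I would then declare $\underline{w}\sim_T\underline{v}$ if for every $n$ the finite central blocks $w_{-n}\cdots w_n$ and $v_{-n}\cdots v_n$ are "boundary-compatible" in this combinatorial sense — roughly, they lie in a common orbit of the groupoid generated by these adjacency moves — and verify directly from the definition that this is reflexive, symmetric and transitive, and that membership can be checked algorithmically since at each level only finitely many admissible blocks occur.

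Next I would prove the two implications. For the easy direction, suppose $\underline{w}\sim_T\underline{v}$; I would show by induction on $n$ that $\overline{Q_n(\underline w)}$ and $\overline{Q_n(\underline v)}$ always share a point, using that the geometric adjacency relations encoded in $T$ are \emph{realized geometrically} by the Markov property of $(f,\cR)$: consecutive horizontal sub-rectangles of $R_i$ genuinely abut along a stable leaf, $f$ genuinely sends horizontal sub-rectangles to vertical ones respecting orientations via $\epsilon$, and the singularities contribute only finitely many identifications consistent with the prong structure recorded in $T$. Passing to the limit and using that $\operatorname{diam}(Q_n)\to 0$ (already noted after Equation \ref{Equa: Projection}) forces $\pi_f(\underline w)=\pi_f(\underline v)$. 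For the converse, suppose $\pi_f(\underline w)=\pi_f(\underline v)=:x$. I would split into cases according to the position of $x$: if $x$ is in the interior of no stable or unstable boundary and is not a singularity, the rectangle containing it is unique and $\underline w=\underline v$; if $x$ lies on a stable boundary arc (the unstable case is symmetric), the set of rectangles of $\cR$ whose closure contains $x$ is governed exactly by the adjacency data above, applied to all forward and backward iterates $f^k(x)$, and unwinding this gives precisely that the central blocks of $\underline w$ and $\underline v$ are boundary-compatible for every $n$; the singular case is handled the same way using that a $k$-prong (or a spine) sits in a $T$-prescribed cyclic arrangement of rectangle corners.

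The main obstacle I expect is the converse direction, specifically showing that the combinatorial relation $\sim_T$ is not merely \emph{sufficient} but \emph{necessary} for $\pi_f$-equality — i.e. that no "unexpected" identifications occur on the surface beyond those visible in $T$. This requires a clean structural description of the set $\pi_f^{-1}(x)$ in terms of which boundary leaves and which singularities $x$ and its iterates meet, and a proof that this description is captured by finitely-much combinatorics at each level $n$; the delicate points are bookkeeping the orientations through $\epsilon$ so that "left/right" and "bottom/top" adjacencies are transported correctly under $f$ and $f^{-1}$, and verifying that spines (one-pronged singularities) do not create identifications that the $k\ge 3$ formalism of \cite{bonatti1998diffeomorphismes} would miss. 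Once the finite-level boundary-compatibility is shown to be exactly the trace of $\pi_f^{-1}(x)$, transitivity and the algorithmic character of $\sim_T$ follow formally, and the proposition is complete.
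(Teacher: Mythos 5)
There is a genuine gap: your relation $\sim_T$ is never actually defined, and the part of the argument you defer ("the main obstacle") is in fact the bulk of the proof. Saying that two codes are related when their central blocks "lie in a common orbit of the groupoid generated by adjacency moves" is not yet an algorithmic definition in terms of $T$; what makes the identification work in the paper is a very specific mechanism: for non-periodic codes there is a unique time $k$ at which the orbit leaves the stable boundary, the negative parts of the two codes must agree up to that time, their $k$-th letters must coincide, and their positive tails must be exactly the $s$-boundary codes produced by the generating function of Definition~\ref{Defi: s-boundary generating funtion}, with the sign conditions $\epsilon_w=\epsilon_T(i,j)$, $\epsilon_v=-\epsilon_T(i,j+1)$ recording on which side (upper/lower) of the image rectangle the shared boundary arc lands (Definition~\ref{Defi: s realtion in Sigma S no per}). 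Without this $\epsilon$-bookkeeping you cannot even carry out your "easy direction" beyond one step, and transitivity of the stable relation is itself a small $\epsilon$-sign argument, not a formality. Moreover, periodic boundary codes have no "first exit time" from the boundary, so your level-wise compatibility condition does not apply to them at all; the paper has to extend the relation to them by a separate definition (Definition~\ref{Defi: sim s in per}).

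The converse direction, which you explicitly leave open, is where the real work lies: one must show that the only codes projecting to the boundary are those generated by the boundary labels (Proposition~\ref{Prop: boundary points have boundary codes}), that totally interior points have a unique preimage, that every code over a point $x$ is a sector code of $x$, and finally that all sector codes of a given point are joined by a finite alternating chain of stable and unstable identifications — including the corner/singular case where the orbit hits $\partial^s\cR$ and $\partial^u\cR$ at different times, and the periodic (spine and $k$-prong) case where one goes around the point through its cyclically ordered sectors (Lemma~\ref{Lemm: sector codes}). Your sketch names these cases but provides no argument for them, and your per-level "boundary-compatibility" gives no handle on why no further identifications can occur. So while your outline points in the same general direction as the paper (boundary identifications transported by $\Phi$ and $\epsilon$, uniqueness over interior points, case analysis at singularities), as written it is a plan rather than a proof, and the missing pieces are exactly the technical core of the proposition.
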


Two codes $\underline{v}, \underline{v}\in \Sigma_A$ are $\sim_f$-related if $\pi_f(\underline{v}) = \pi_f(\underline{v})$ (recall that $f:S_f\to S_f$ is pseudo-Anosov). This relation determines the quotient space $\Sigma_f := \Sigma_A/\sim_f$ and is not difficult to see that $\Sigma_f$ is homeomorphic to $S_f$. A further consequence of Proposition \ref{Prop: The relation determines projections} is the following result, from which Theorem \ref{Theo: conjugated iff markov partition of same type} follows.

\begin{prop*}[\ref{Prop:  cociente T}]
Consider a geometric type $T$ in the pseudo-Anosov class, whose incidence matrix $A:=A(T)$ is binary. Let $f:S \rightarrow S$ be a generalized pseudo-Anosov homeomorphism with a geometric Markov partition $\cR$ of geometric type $T$.  Also, let $(\Sigma_A,\sigma)$ be the sub-shift of finite type associated with $A$ and denote by $\pi_f:\Sigma_A \rightarrow S$ the projection induced by the pair $(f,\cR)$. Then, the following holds:
	
	\begin{itemize}
\item The quotient space $\Sigma_T = \Sigma_A/\sim_T$ is equal to $\Sigma_f := \Sigma_A/\sim_f$. Thus, $\Sigma_T$ is a closed and orientable surface.
\item  The sub-shift of finite type $\sigma$ induces a homeomorphism $\sigma_T: \Sigma_T \rightarrow \Sigma_T$ through the equivalence relation $\sim_T$. This homeomorphism is a generalized pseudo-Anosov and topologically conjugate to $f: S \rightarrow S$ via the quotient homeomorphism $[\pi_f]: \Sigma_T = \Sigma_f \rightarrow S$
	\end{itemize}	
\end{prop*}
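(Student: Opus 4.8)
The plan is to leverage Proposition~\ref{Prop: The relation determines projections} as the central tool, and then to verify each bullet point by transferring the known topological and dynamical structure of $f$ through the semi-conjugacy $\pi_f$. First I would establish the set-theoretic identity $\sim_T\ =\ \sim_f$ of equivalence relations on $\Sigma_A$: by definition $\underline w\sim_f\underline v$ means $\pi_f(\underline w)=\pi_f(\underline v)$, and Proposition~\ref{Prop: The relation determines projections} says precisely that this holds if and only if $\underline w\sim_T\underline v$. Since the two relations have the same graph in $\Sigma_A\times\Sigma_A$, the quotient sets $\Sigma_T=\Sigma_A/\sim_T$ and $\Sigma_f=\Sigma_A/\sim_f$ coincide, not merely up to homeomorphism but as topological spaces with the quotient topology. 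This immediately gives the first half of the first bullet; that $\Sigma_f$ is a closed orientable surface is the remark recalled just before the statement (the induced map $[\pi_f]$ is a continuous bijection from a compact space onto the Hausdorff space $S$, hence a homeomorphism, so $\Sigma_f\cong S_f$), and I would state this explicitly, noting that $\pi_f$ being surjective, continuous, and finite-to-one, together with $\sim_f$ being by construction the fiber relation of $\pi_f$, is exactly what makes $[\pi_f]$ a bijection.

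Next I would handle the dynamical statement. Because $\pi_f$ is a semi-conjugacy, $\pi_f\circ\sigma=f\circ\pi_f$; hence $\sigma$ preserves the fibers of $\pi_f$, i.e. $\underline w\sim_f\underline v\Rightarrow \sigma(\underline w)\sim_f\sigma(\underline v)$, and the same for $\sigma^{-1}$. Therefore $\sigma$ descends to a well-defined map $\sigma_T\colon\Sigma_T\to\Sigma_T$ on the quotient, and since $\sim_T=\sim_f$ this is the map induced through $\sim_T$ as claimed. The relation $[\pi_f]\circ\sigma_T=f\circ[\pi_f]$ holds by construction, and as $[\pi_f]$ is a homeomorphism this says $\sigma_T=[\pi_f]^{-1}\circ f\circ[\pi_f]$, so $\sigma_T$ is a homeomorphism topologically conjugate to $f$. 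Finally, since $f$ is generalized pseudo-Anosov and being generalized pseudo-Anosov is invariant under topological conjugacy (as recalled in the introduction), $\sigma_T$ is generalized pseudo-Anosov as well; the conjugating homeomorphism $[\pi_f]\colon\Sigma_T=\Sigma_f\to S$ is the quotient of $\pi_f$, which is what the statement asserts.

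I expect the main obstacle to be purely a matter of bookkeeping rather than of genuine difficulty: one must check that $\sigma_T$ is not just well-defined but actually a homeomorphism of $\Sigma_T$. Well-definedness of both $\sigma_T$ and its candidate inverse $(\sigma^{-1})_T$ follows from $\sigma$-invariance of the fibers; continuity of $\sigma_T$ follows from the universal property of the quotient topology applied to the continuous map $q\circ\sigma$ where $q\colon\Sigma_A\to\Sigma_T$ is the quotient projection; and $\sigma_T\circ(\sigma^{-1})_T=(\sigma^{-1})_T\circ\sigma_T=\mathrm{id}$ follows by projecting the corresponding identities for $\sigma$. Alternatively, and more cleanly, once $[\pi_f]$ is known to be a homeomorphism one can simply \emph{define} $\sigma_T:=[\pi_f]^{-1}\circ f\circ[\pi_f]$ and then check it agrees with the map induced by $\sigma$, which is immediate from the semi-conjugacy relation; this route sidesteps the direct verification. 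The one subtlety worth flagging is that the hypothesis requires $A(T)$ to be binary so that $(\Sigma_A,\sigma)$ is an honest sub-shift of finite type and $\pi_f$ is defined by Equation~\ref{Equa: Projection}; I would mention that this is exactly the hypothesis under which Proposition~\ref{Prop: The relation determines projections} applies, so no extra assumption is needed. With these pieces assembled, Theorem~\ref{Theo: conjugated iff markov partition of same type} follows: if $f$ and $g$ share a geometric type $T$ with binary incidence matrix, both are conjugate to the single model $\sigma_T\colon\Sigma_T\to\Sigma_T$, hence to each other by an orientation-preserving homeomorphism, the orientation-preservation being inherited from the orientation-preserving identifications $[\pi_f]$ and $[\pi_g]$ built from the geometric (hence orientation-compatible) data.
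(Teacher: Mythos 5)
Your proposal is correct and follows essentially the same route as the paper: identify $\sim_T$ with $\sim_f$ via Proposition~\ref{Prop: The relation determines projections}, use the quotient result for $\sim_f$ (compactness of $\Sigma_A$ plus Hausdorffness of $S$ making $[\pi_f]$ a homeomorphism, i.e.\ Proposition~\ref{Prop: quotien by f}), and conclude that $\sigma_T$ is generalized pseudo-Anosov because conjugacy preserves that property (Proposition~\ref{Prop:  conjugation produces pA}). The only caveat is that your closing remark about orientation-preservation of $[\pi_g]\circ[\pi_f]^{-1}$ glosses a point the paper proves separately (and with an extra refinement hypothesis) in Theorem~\ref{Theo: conjugated iff  markov partition of same type}, but that is outside the statement of this proposition.
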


Let $g$ be any other generalized pseudo-Anosov homeomorphism with a geometric Markov partition of geometric type $T$. Then, $g$ is topologically conjugate to $\sigma_T$. Therefore, $f$ and $g$ are topologically conjugate.

\begin{defi*}[\ref{Defi: symbolic model}]
Let $T$ be a geometric type whose incidence matrix is binary. The dynamical system $(\Sigma_T,\sigma_T)$ is the \emph{combinatorial model of the geometric type} $T$.
\end{defi*}

The dynamical system $(\Sigma_T,\sigma_T)$ is constructed from the geometric type $T$, and it is topologically conjugate to $f$. Therefore, we have addressed \textbf{Item} $1.II$ of the Classification Problem \ref{Prob: Clasification} by demonstrating that the geometric type $T$ allows us to reconstruct a generalized pseudo-Anosov homeomorphism that is topologically conjugate to $f$. 

%%%%%%%%%%%

\subsection{The realization problem: The genus and the impasse}
 
An \emph{abstract geometric type} is defined as a quadruple $T=(n,\{(h_i,v_i)\}_{i=1}^n,\Phi:=(\phi,\epsilon))$, as described in Equation \ref{Equation: Geometric type}, where $n\in\NN{+}$, the numbers $h_i,v_i\in\NN_{+}$ satisfy the equality in Equation \ref{Equation: same horizontal vertical}, and $\phi$ is a bijection between the following sets.
 $$
 \cH(T):=\{(i,j)\in \NN: 1\leq i \leq n \text{ and } 1 \leq j \leq h_i\}
 $$
 and
 $$
 \cV(T):=\{(k,l)\in \NN: 1\leq k \leq n \text{ and } 1 \leq l \leq v_i\},
 $$
  additionally, $\epsilon:\cH(T) \rightarrow \{1,-1\}$ is an arbitrary function. These objects together form the set of \emph{abstract geometric types} denoted by $\mathcal{G}\mathcal{T}$.

As mentioned previously, an abstract geometric type $T$ belongs to the \emph{pseudo-Anosov class} of geometric types if there exists a generalized pseudo-Anosov homeomorphism  with a geometric Markov partition of geometric type $T$. The pseudo-Anosov class is denoted by $\mathcal{G}\mathcal{T}(p\AA)$. It is evident that $\mathcal{G}\mathcal{T}(p\AA)\subset \mathcal{G}\mathcal{T}$, and \textbf{Item} $II$ of Problem \ref{Prob: Clasification} seeks an algorithmic condition to determine whether an abstract geometric type belongs to the pseudo-Anosov class. Our approach to this problem is to translate it into the context of \emph{basic pieces} of Smale's surface diffeomorphisms.

The theory of geometric types was first introduced in the 1998 book \cite{bonatti1998diffeomorphismes} by Christian Bonatti, Rémi Langevin, and Emmanuelle Jeandenans. This work was followed by François Béguin's Ph.D. thesis \cite{beguin1999champs} and subsequent articles \cite{beguin2002classification} and \cite{beguin2004smale} published between 1999 and 2004. Together, these works provide a complete classification (in the sense of Problem 1) of the basic saddle-type pieces of Smale's diffeomorphisms on surfaces, as well as their invariant neighborhoods but excluding hyperbolic attractors and repellers. The definition of the geometric Markov partition and its geometric type for diffeomorphisms is very similar to the one given for pseudo-Anosov homeomorphisms, with the advantage that the Markov partition for the basic pieces consists of disjoint rectangles.  To apply the results obtained by these authors, our initial task is to establish a connection between Markov partitions for pseudo-Anosov homeomorphisms and those defined by saddle-type basic pieces on surfaces. Let's explore some key concepts in our discussion.

Consider a geometric type $T$, and let $A$ be its incidence matrix with coefficients in ${0,1}$. We say that $A$ is \emph{mixing} if there exists a positive integer $n$ such that each coefficient $a_{ij}^{(n)}$ of the matrix $A^n$ is positive. When the incidence matrix $A$ is mixing, we can define a \emph{concretization} of $T$ \footnote{In fact, a weaker condition is needed, namely, that the geometric type does not have double $s,u$-boundaries. However, this property is deduced from the fact that $A(T)$ is mixing.}.

 A concretization of $T$ is represented by a pair $(\cR,\phi)$, where $\cR$ is a collection of disjoint rectangles $\{R_i\}_{i=1}^n$ (we can take $R_i:=[0,1]^2 \times \{i\}$), and $\phi$ is a piecewise-defined function. The function $\phi$ maps a collection of disjoint horizontal sub-rectangles $H_j^i\subset R_i$ (for each $(i,j)\in \cH(T)$) to a family of disjoint vertical sub-rectangles $V_l^k\subset R_k$ (for each $(k,l)\in \cV(T)$). Within each rectangle, $\phi$ is a homeomorphism that preserves the orientation and trivial foliations of the rectangles, furthermore, $\phi$ follows the combinatorial pattern specified by $T$. Specifically, if $\phi(i,j)=(k,l)$, then, $f(H_j^i)=V_l^k$,  and the orientation of the vertical foliations is preserved by $\phi$ if and only if $\epsilon(i,j)=1$.

A concretization provides a visual representation of the geometric type $T$, but it alone cannot determine whether $T$ corresponds to the geometric type of a geometric Markov partition of a saddle-type basic piece. However, if $T$ does correspond to the geometric Markov partition of a saddle-type basic piece of a diffeomorphism on a compact surface $S$ (with finite genus), it is crucial that such a surface contains the rectangles of a realization of $T$  and their images under the iterations of such diffeomorphism. 

To establish that $T$ is a geometric type corresponding to a geometric Markov  partition of a saddle-type basic pieces in $S$, it is necessary for a specific surface with boundaries and corners, known as an $m$-realizer of $T$ (described in Definition \ref{Defi: m realizer }), to be embedded in the surface $S$. We will now describe the main properties and definitions related to the $m$-realizer that are necessary to state our results.

Consider a concretization $(\{R_i\}_{i=1}^n,\phi)$ of $T$.  The union of these rectangles will be denoted as $\cR:=\bigcup_{i=1}^n R_i$.
The $m$-\emph{realizer} of $T$ is defined as the surface (with vertices and boundaries):
$$
\cR_m=\cup_{i=0}^m \cR\times \{i\}/(x,i)\sim(\phi^{-1}(x),i+1)
$$

together with the function $\phi_m(x,i)=(\phi(x),i+1)$ whenever it makes sense.

The surface $\mathcal{R}_m$ has vertices and corners and supports $m-1$ iterations of the diffeomorphism $\phi_m$. These $m$-realizers determine a non-decreasing sequence $\{g_m\}_{m\in \mathbb{N}}$, where $g_m$ is the genus of the surface $\mathcal{R}_m$. The supremum of this sequence is defined as the \emph{genus of} $T$, denoted as $\text{gen}(T)$. The geometric type $T$ has \emph{finite genus} if $\text{gen}(T)<\infty$. Having finite genus is the first obstruction in realizing $T$ as the geometric type of a basic piece, and subsequently as the geometric type of a pseudo-Anosov homeomorphism.

The \emph{impasse property} of Proposition \ref{Prop: pseudo-Anosov iff basic piece non-impace} is quite technical (Definition \ref{Defi: Impasse geo}). For now, we won't explore it further to stay focused on the main exposition. The following Proposition is crucial for solving the realization problem. 

 \begin{prop*}[ \ref{Prop: pseudo-Anosov iff basic piece non-impace}]
	Let $T$ be an abstract geometric type. The following conditions are equivalent.
	\begin{itemize}
		\item[i)] The geometric type $T$ is realized as a mixing basic piece of a surface Smale diffeomorphism without impasse.
		\item[ii)] The geometric type $T$ is in the pseudo-Anosov class.
		\item[iii)] The geometric type $T$ satisfies the following properties:
		\begin{enumerate}
			\item  The incidence matrix $A(T)$ is mixing
			\item The genus of $T$ is finite
			\item $T$ does not have an impasse.
		\end{enumerate}
	\end{itemize}
\end{prop*}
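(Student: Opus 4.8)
\textbf{Proof plan for Proposition \ref{Prop: pseudo-Anosov iff basic piece non-impace}.}

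The plan is to establish the chain of equivalences $i)\Leftrightarrow ii)$ and $i)\Leftrightarrow iii)$, leaning heavily on the Béguin--Bonatti--Jeandenans--Langevin classification of saddle-type basic pieces of Smale surface diffeomorphisms via geometric types. The equivalence $i)\Leftrightarrow iii)$ is, essentially, the content of that classification theory once it is phrased in our language: a geometric type $T$ is realized as a mixing saddle-type basic piece (without impasse) precisely when its incidence matrix is mixing, its genus is finite, and it has no impasse. I would cite \cite{bonatti1998diffeomorphismes}, \cite{beguin1999champs}, \cite{beguin2002classification}, \cite{beguin2004smale} for the non-trivial implications, and verify that the three combinatorial conditions in $iii)$ are exactly the obstructions isolated there. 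The mixing condition guarantees that a concretization exists and that $T$ has no double $s,u$-boundary; finiteness of the genus is needed so that the direct limit of the $m$-realizers $\cR_m$ embeds in a compact surface; the no-impasse condition rules out the one remaining local configuration that cannot occur in a basic piece of a diffeomorphism.

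For $i)\Rightarrow ii)$, I would start from a Smale diffeomorphism $\Phi$ on a compact surface $S$ having $T$ as the geometric type of a geometric Markov partition $\cR$ of a mixing saddle-type basic piece $K$ without impasse. The strategy is to \emph{collapse} $\cR$ and its iterates to a pseudo-Anosov model: the basic piece $K$, together with the stable and unstable laminations restricted to a filtrating neighborhood, carries a pair of transverse singular foliations after one shrinks the wandering ``gaps''. Concretely, the no-impasse hypothesis ensures that when one quotients the complement of $K$ by collapsing each connected component of the complement of the laminations along the stable (resp.\ unstable) direction, the result is a closed surface on which the induced map is generalized pseudo-Anosov, and the image of $\cR$ is a geometric Markov partition of the same geometric type $T$. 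This is the standard passage from ``derived from Anosov''-type pictures to pseudo-Anosov representatives; the point to check is that the combinatorics, i.e.\ $T$, is preserved under the collapse, which is immediate since collapsing gaps does not change the horizontal/vertical sub-rectangle structure nor the orientations recorded by $\epsilon$.

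For $ii)\Rightarrow i)$, I would go in the opposite direction: given a generalized pseudo-Anosov $f:S\to S$ with a geometric Markov partition $\cR$ of type $T$, I would \emph{blow up} the singularities and the foliations to produce a Smale diffeomorphism. Thickening each leaf through a singularity and each boundary leaf of the partition turns $f$ into a diffeomorphism of a surface (possibly with larger genus, but still finite) possessing a hyperbolic saddle-type basic piece whose geometric Markov partition is a thickened copy of $\cR$, hence of the same geometric type $T$; mixing of $A(T)$ transfers from the topological transitivity and local product structure of the pseudo-Anosov map, and the absence of impasse is automatic because an impasse would force a leaf of $\cF^s$ or $\cF^u$ to terminate, contradicting that these are genuine foliations with only prong singularities. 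Finiteness of the genus in $iii)$, and hence the existence of the ambient compact surface in $i)$, follows because $S$ itself already contains all the rectangles of a realization of $T$ together with their $f$-iterates, so $g_m\le \operatorname{genus}(S)$ for every $m$.

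The main obstacle I expect is $i)\Rightarrow ii)$, specifically proving that the collapsing operation yields an honest \emph{generalized} pseudo-Anosov homeomorphism (allowing spines) rather than merely a map with a hyperbolic invariant set, and that the collapse descends to the whole surface $S$ rather than just a neighborhood of $K$: one must show that the filtrating neighborhood of $K$ together with the attractors/repellers and connecting wandering dynamics in its complement collapse compatibly, and this is exactly where the no-impasse condition does the work. Making precise that no information is lost or created at the level of the geometric type during either the collapse ($i)\Rightarrow ii)$) or the blow-up ($ii)\Rightarrow i)$) — i.e.\ that the bijection $\phi$ and the sign function $\epsilon$ are literally unchanged — is the technical heart of the argument and would occupy the bulk of the chapter referenced for this proposition.
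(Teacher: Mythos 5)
Your overall cycle differs from the paper's: the paper proves $i)\Rightarrow ii)\Rightarrow iii)\Rightarrow i)$ (Sections \ref{Sec: type basic piece then pA}, \ref{Sec: Tipes PA finite genus but not impasse}, \ref{Sec: finite genus no impas implies basic piece}), whereas you propose $i)\Leftrightarrow ii)$ and $i)\Leftrightarrow iii)$, which forces you to prove $ii)\Rightarrow i)$ directly. Your $i)\Rightarrow ii)$ is essentially the paper's argument: invoke the Bonatti--Jeandenans collapsing theorem (Theorem \ref{Theo: Basic piece projects to pseudo-Anosov}) and check the geometric type survives the collapse; note, though, that this check is not ``immediate'' — the paper spends Proposition \ref{Prop: type of basic piece is type of pseudo-Anosov} and several lemmas showing that $\pi(R_i)$ is a rectangle, that $\pi(\cR)$ is a Markov partition, that $S'$ is orientable, and that horizontal/vertical sub-rectangles correspond. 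Your $iii)\Rightarrow i)$ (via the Bonatti--Langevin--Béguin realization theorem plus the impasse equivalence) also matches the paper, which uses Theorem \ref{Theo: finite genus iff realizable} and Theorem \ref{Theo: Geometric and combinatoric are equivalent}.

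The genuine gaps are in your treatment of $ii)$. First, your finite-genus argument claims $g_m\le\operatorname{gen}(S)$ because ``$S$ itself already contains all the rectangles of a realization of $T$ together with their $f$-iterates.'' This fails as stated: a realization, and hence the $m$-realizer $\cR_m$, is built from \emph{disjoint} rectangles, while the rectangles of a pseudo-Anosov Markov partition tile $S$ and meet along their boundaries (and collapse at spines), so $\cR_m$ does not embed in $S$ — there is only a non-injective map $\cR_m\to S$, and genus is not monotone under such maps. The paper avoids exactly this by proving instead that a pseudo-Anosov type has none of the type-$(1)$, $(2)$, $(3)$ combinatorial obstructions nor the impasse property (Lemmas \ref{Lemm: T pA class then no condition 1}--\ref{Lemm: T pA class then no condition 3}, Corollary \ref{Coro: T pA clas no com impasse}), via orientation and closed-leaf arguments on the boundary leaves and spines, and then concludes finite genus from Theorem \ref{Theo: finite type iff non-obtruction}. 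Second, your direct blow-up $ii)\Rightarrow i)$ glosses over the fact that $f$ is only a homeomorphism: to obtain a \emph{Smale diffeomorphism} with a hyperbolic saddle-type basic piece of the same geometric type after opening the singular and boundary leaves, one must smooth the map and produce genuine uniform hyperbolicity (adapted metric, cone fields) on the blown-up set, and also verify mixing and no-impasse for the resulting piece; none of this is addressed, and the paper deliberately sidesteps it by routing $ii)\Rightarrow iii)\Rightarrow i)$ through the combinatorial characterization. Either supply that DA-type construction in detail or adopt the paper's detour through $iii)$.
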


To determine the realization of the geometric type in Problem \ref{Prob: Clasification}, it is necessary to demonstrate that each property listed in \textbf{Item} $iii)$ of Proposition \ref{Prop: pseudo-Anosov iff basic piece non-impace} can be determined algorithmically. This is achieved in the Preliminaries Chapter, specifically in Subsection \ref{Sec: Bonatti-Langevin theory}, where Proposition \ref{Prop: mixing+genus+impase is algorithm} establishes that the properties of mixing, finite genus, and impasse can be expressed in a purely combinatorial manner. This allows us to conclude the following theorem.

\begin{theo*}[\ref{Theo: caracterization is algoritmic}]
	There exists a finite algorithm that can determine whether a given geometric type $T$ belongs to the pseudo-Anosov class. Such algorithm requires calculating at most $6n$ iterations of the geometric type $T$, where $n$ is the first parameter of $T$.	
\end{theo*}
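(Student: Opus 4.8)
\textbf{Proof proposal for Theorem~\ref{Theo: caracterization is algoritmic}.}

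The plan is to reduce the problem entirely to the three combinatorial conditions of \textbf{Item}~$iii)$ of Proposition~\ref{Prop: pseudo-Anosov iff basic piece non-impace} — namely that $A(T)$ is mixing, that $\mathrm{gen}(T)<\infty$, and that $T$ has no impasse — and then invoke Proposition~\ref{Prop: mixing+genus+impase is algorithm}, which asserts that each of these is a purely combinatorial (hence algorithmically checkable) property of $T$. So the core of the theorem is not logical but quantitative: I must show that the combined procedure halts after inspecting at most $6n$ iterations of $T$, where $n$ is the first parameter. First I would treat mixing: the incidence matrix $A(T)$ is an $n\times n$ nonnegative integer matrix, and by the classical Perron–Frobenius/Wielandt bound a primitive matrix of size $n$ has $A^{m}$ strictly positive for some $m\le n^2-2n+2$; however, since here we only need to test mixing of a $\{0,1\}$ matrix arising from a geometric type, I would use the sharper fact (established in Subsection~\ref{Sec: Bonatti-Langevin theory}) that it suffices to examine a bounded number of iterates — I expect the relevant bound packaged there to be well within the $6n$ budget, consuming on the order of $2n$ or $3n$ iterations of the geometric type.

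Next I would handle the genus. The key input is that the sequence $\{g_m\}$ of genera of the $m$-realizers $\mathcal{R}_m$ is non-decreasing and, crucially, \emph{stabilizes}: if it fails to increase from step $m$ to step $m+1$ for a controlled number of consecutive steps, then it is constant thereafter, so $\mathrm{gen}(T)=g_m<\infty$; conversely if it keeps increasing past that threshold then $\mathrm{gen}(T)=\infty$. Concretely, the genus of $\mathcal{R}_m$ can be computed from an Euler-characteristic count on the cell structure of $\mathcal{R}_m$ (rectangles as $2$-cells, identified horizontal/vertical boundary arcs as $1$-cells, corner points as $0$-cells), all of which is read directly off $T$ and its iterates. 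The stabilization bound — this is where I would cite the precise statement from the preliminaries — should again be linear in $n$, and I would arrange the bookkeeping so that checking finiteness of genus costs at most about $2n$ further iterations of $T$.

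Finally the impasse condition (Definition~\ref{Defi: Impasse geo}): an impasse is a local configuration of a stable and an unstable boundary leaf meeting at a corner with no rectangle filling the fourth quadrant, and whether such a configuration is ever created under forward iteration is detectable by tracking the boundary leaves of the rectangles through the combinatorics of $\Phi=(\phi,\epsilon)$. Since the boundary leaves of the $R_i$ form a finite set that is permuted-with-splitting by the dynamics, their orbit structure is eventually periodic after at most $O(n)$ steps — here I would invoke the explicit bound from Proposition~\ref{Prop: mixing+genus+impase is algorithm} — so the presence or absence of an impasse is decided within another portion of the budget. Summing the three contributions and choosing the constants so that the total is at most $6n$ iterations of the geometric type $T$ completes the argument; by Proposition~\ref{Prop: pseudo-Anosov iff basic piece non-impace} the conjunction of the three verified properties is equivalent to $T\in\mathcal{GT}(p\mathbb{A})$, which is exactly the decision the algorithm outputs.

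\textbf{Main obstacle.} The logical skeleton is immediate from the cited propositions; the real work — and the only place the specific constant $6n$ can go wrong — is pinning down, for each of the three subroutines, an explicit iteration bound that is \emph{linear} in $n$ with the right coefficient, and in particular proving the stabilization lemma for the genus sequence with a linear (not merely finite) threshold. I expect that to be the crux, and I would organize the proof so that the genus-stabilization estimate is stated and proved first, with mixing and impasse as comparatively routine linear-time checks layered on top.
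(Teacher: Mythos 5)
Your logical skeleton is the same as the paper's: reduce membership in $\mathcal{GT}(p\AA)$ to the three conditions of \textbf{Item} $iii)$ of Proposition~\ref{Prop: pseudo-Anosov iff basic piece non-impace} and invoke Proposition~\ref{Prop: mixing+genus+impase is algorithm} for algorithmic checkability. But your quantitative strategy for the genus is where the proposal breaks down. You propose to detect finiteness of $\mathrm{gen}(T)$ by a stabilization lemma for the non-decreasing sequence $\{g_m\}$ of genera of the $m$-realizers: if it fails to increase for a linearly bounded number of consecutive steps, it is constant forever. No such lemma appears in the paper, and it is not at all clear it is true: a non-decreasing integer sequence can stagnate for arbitrarily long stretches and then grow, and nothing in the combinatorics of $T$ rules this out a priori. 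The paper's route is different in substance: by the Bonatti--Langevin theorem (Theorem~\ref{Theo: finite type iff non-obtruction}), $\mathrm{gen}(T)<\infty$ if and only if the Markov partition exhibits none of the type-$(1)$, $(2)$, $(3)$ topological obstructions in the $6n$-realizer $\cR_{6n}$, and these obstructions are translated (Lemmas~\ref{Lemm: Equiv com and top type 1}--\ref{Lemm: Equiv com and top type 3}) into combinatorial conditions on the powers $T^m$. The constant $6n$ in the theorem is exactly this bound from the obstruction theory; proving your stabilization lemma with a linear threshold would essentially amount to re-proving that deep result, so it is the crux you cannot defer to "bookkeeping."

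Your budget arithmetic is also inconsistent with the bounds actually available. The paper does not sum the costs of three subroutines and tune constants: the genus check alone consumes iterations up to $T^{6n}$, the impasse check needs $T^{2n+1}$ (Theorem~\ref{Theo: Geometric and combinatoric are equivalent}), and mixing needs only $A(T)^n$, i.e.\ $T^n$ (Proposition~\ref{Prop: bound positive incidence matriz }); since the powers $T,\dots,T^{6n}$ are computed once and all conditions are read off them, the total is the maximum, $6n$, not a sum. Allocating "about $2n$" to the genus, as you do, has no support and contradicts the only known bound. Your sketches for mixing and impasse are in the right spirit (and the mixing bound you worry about via Wielandt is handled in the paper by the graph-path argument giving exponent $n$), but as written the proof of the headline claim — that $6n$ iterations suffice — rests on an unproved and likely false stabilization estimate rather than on the obstruction characterization that actually delivers it.
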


This resolves the problem of realization that we had set out to address.

\subsection{Algorithmic Decidability: The Béguin's Algorithm and Formal DA$(T)$}

To address \textbf{Item} $III$ of Problem \ref{Prob: Clasification} regarding algorithmic decidability, we utilize the \emph{Béguin's algorithm}. This algorithm, originally developed in \cite{beguin2004smale}, provides a solution for determining whether two geometric types represent conjugate basic pieces. Its essence can be summarized as follows:

\begin{theo*}[The Béguin's Algorithm]
Let $f$ be a Smale surface diffeomorphism and $K$ be a saddle-type basic piece with a geometric Markov partition $\mathcal{R}$ of geometric type $T=(n,\{(h_i,v_i)\}_{i=1}^n,\Phi_T)$. The steps of the algorithm are as follows:
\begin{enumerate}
\item Begin by defining the \emph{primitive geometric types of order} $n$ of $f$, denoted as $\cT(f,n)$, for all $n$ greater than a certain constant $n(f)\geq 0$.
\item Prove the existence of an upper bound $O(T)$ for $n(f)$ in terms of the number $n$ in $T$, i.e. $n(f)\leq O(T)$.
\item For every $n>n(f)$, there exist an algorithm to compute all the elements of $\cT(f,n)$ in terms of $T$.
\item Let $g$ be another Smale surface diffeomorphism and $K'$ be a saddle-type basic piece of $g$ with a geometric Markov partition of geometric type $T'$. Choose $n$ to be greater than or equal to the maximum of $O(T)$ and $O(T')$. After applying Step 3 of the algorithm, we obtain two finite lists of geometric types, $\cT(f,n)$ and $\cT(g,n)$. These lists will be equal if and only if $f$ is topologically conjugate to $g$ in some invariant neighborhoods of $K'$ and $K'$.
\end{enumerate}
\end{theo*}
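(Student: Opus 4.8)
The statement is, in essence, a repackaging in effective form of the classification of saddle-type basic pieces of Smale surface diffeomorphisms due to Béguin \cite{beguin1999champs,beguin2002classification,beguin2004smale} (building on \cite{bonatti1998diffeomorphismes}); so the plan is to import that machinery and track how everything depends on the input geometric type $T$. The first step is to recall the notion of a \emph{primitive} geometric Markov partition of a saddle-type basic piece $K$: one whose rectangle boundaries are supported on the stable and unstable separatrices issued from the finitely many periodic orbits lying on the boundary of $K$, cut at the canonical first-return points. Starting from an arbitrary geometric Markov partition $\mathcal{R}$ of $K$ of geometric type $T$, one shows that a finite sequence of elementary moves — subdividing a rectangle along images and preimages of rectangle boundaries, and amalgamating two adjacent rectangles — whose admissibility is decidable from $T$ alone, produces primitive partitions; collecting the geometric types of all primitive partitions obtained with at most $n$ cutting levels defines the finite set $\cT(f,n)$, and $n(f)$ is the least order from which this list is complete and stationary.

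\textbf{The bound $n(f)\le O(T)$ (Step 2).} This is the core of the argument. Every piece of data that the primitivization uses — the number of boundary periodic orbits, their periods, the number of free separatrices, and the return combinatorics of the stable and unstable leaves along these orbits — is encoded in the incidence matrix $A(T)$ and the permutation--sign pair $\Phi_T$. Using the Perron--Frobenius structure of $A(T)$ together with the count of periodic points of the subshift $\sigma$ on $\Sigma_{A(T)}$, each of these quantities is bounded by an explicit function of the size parameter $n$ of $T$, which yields the desired $O(T)$; this is the same kind of effective bookkeeping that produces the $6n$ estimate of Theorem~\ref{Theo: caracterization is algoritmic}.

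\textbf{Enumeration and completeness (Steps 3--4).} For $n>n(f)$ one runs the finite tree of $T$-admissible subdivisions and amalgamations up to order $n$, discards the branches whose geometric type is not primitive (a condition decidable on the geometric type), and records the geometric types of the survivors; termination and exhaustiveness follow from Steps 1--2, giving Step 3. For Step 4, if $f$ and $g$ are topologically conjugate on invariant neighborhoods of $K$ and $K'$, the conjugacy carries primitive partitions to primitive partitions and preserves the cutting order, so $\cT(f,n)=\cT(g,n)$ for every $n\ge\max(n(f),n(g))$, in particular for $n\ge\max(O(T),O(T'))$. Conversely, if the two lists coincide at such an $n$, pick a common element $S\in\cT(f,n)=\cT(g,n)$ and realize it by primitive partitions $\mathcal{P}$ of $K$ and $\mathcal{P}'$ of $K'$; equality of their geometric types gives, exactly as in the proof of Theorem~\ref{Theo: conjugated iff markov partition of same type}, a conjugacy between the two pieces compatible with the labelling of the stable and unstable boundary leaves, and since $\mathcal{P}$ and $\mathcal{P}'$ are primitive this conjugacy matches the separatrix structure along the boundary periodic orbits and hence extends to a conjugacy of invariant neighborhoods.

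\textbf{Main obstacle.} I expect Step 2 — the explicit bound $O(T)$ — to be the hardest point, together with the converse direction of Step 4: upgrading a shared primitive geometric type to a conjugacy of \emph{invariant neighborhoods} (rather than merely of the pieces) requires a careful accounting of the free separatrices and of how the rectangles are attached along the boundary of the basic piece, which is precisely the delicate part of Béguin's construction that we are importing.
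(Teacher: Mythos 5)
First, a point of comparison: the paper itself offers no proof of this statement — it is a summary, quoted from Béguin's work, of the algorithm whose existence the thesis later invokes as Theorem~\ref{Theo: Beguin algorithm} with the citation \cite{beguin2004smale}. So your proposal can only be judged as a reconstruction of Béguin's argument, and at the level of skeleton you have identified the right ingredients: a canonical finite family of ``primitive'' partitions attached to the boundary periodic data of the basic piece, a computable bound on the order from which this family is defined, invariance of the family under conjugacy, and the Bonatti--Langevin rigidity of the domain for the converse direction. That is indeed the route the thesis relies on (compare the parallel pseudo-Anosov construction of Chapter~\ref{Chapter: Markov partitions}: first intersection points, the compatibility coefficient $n(z)$, $n(f)$, and $\cT(f,n)$).

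Judged as a proof, however, the two hard steps are asserted rather than established, and your description of the primitive objects does not match the actual construction. Primitive partitions are not characterized by ``elementary subdivision/amalgamation moves whose admissibility is decidable from $T$ alone''; they are built from first intersection points of stable and unstable separatrices of the $s$- and $u$-boundary periodic points, and $n(f)$ is the maximum of the compatibility coefficients $n(z)$ — a dynamical quantity measuring how many iterations are needed before the unstable graph generated by $z$ becomes compatible with the stable one — not ``the least order from which the list is stationary''. Consequently your Step 2 is the content of the theorem, not a consequence of Perron--Frobenius bookkeeping: counting boundary periodic orbits and their periods from $A(T)$ and $\Phi_T$ does not by itself control $n(f)$, and bounding it in terms of the size of $T$, together with the algorithmic enumeration of $\cT(f,n)$ from $T$ (your Step 3), is precisely the delicate part of \cite{beguin2004smale} that you would have to reproduce, not cite implicitly. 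Finally, in the converse of Step 4 the appeal to primitivity and to ``matching the separatrix structure'' is unnecessary and vaguer than what is available: once a single geometric type is realized by Markov partitions of both basic pieces, Theorem~\ref{Theo: Presentation in a domain} (Bonatti--Langevin's uniqueness of the domain, \cite[Theorem 5.2.2]{bonatti1998diffeomorphismes}) already yields a conjugacy on invariant neighborhoods; primitivity is what makes the forward direction work, since conjugacies carry first intersection points to first intersection points and hence primitive partitions to primitive partitions of the same type (the analogue of Theorem~\ref{Theo: Conjugates then primitive Markov partition}).
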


As can be observed, the calculation of the upper bound $O(T)$ for $n(f)$ and the procedure in item 3 are the essential steps of the algorithm. We will revisit them at the end of this subsection. 

For now, let us explain how we address our problem by shifting the analysis to the formal \textbf{DA}$(T)$ (Definition \ref{Defi: Formal DA}). Consider a geometric type $T$ in the pseudo-Anosov class. According to Proposition \ref{Prop: pseudo-Anosov iff basic piece non-impace}, $T$ has finite genus. Interestingly, this condition is necessary and sufficient for $T$ to be realized as a basic piece of a Smale surface diffeomorphism. The \emph{formal derived from Anosov} of $T$ refers to this realization and is represented by the triplet:

\begin{equation}\label{Equa: DA of T}
 \textbf{DA}(T):=(\Delta(T),K(T),\Phi_T),
\end{equation}

The components are as follows: $\Delta(T)$ is a compact surface with boundary of finite genus, $\phi_T$ is a Smale diffeomorphism defined on $\Delta(T)$; and $K(T)$, the unique nontrivial saddle-type basic piece of $(\Delta(T),\Phi_T)$ with a Markov partition of geometric type $T$.  We call $\Delta(T)$ the \emph{domain} of the basic piece $K(T)$. It is a profound result of Bonatti-Langevin \cite[Propositions 3.2.2 and 3.2.5, Theorem 5.2.2]{bonatti1998diffeomorphismes} that the \emph{DA}$(T)$ is unique up to conjugacy and serves as a bridge between the realm of pseudo-Anosov homeomorphisms and the basic pieces of Smale surface diffeomorphisms.

Let $f$ a generalized pseudo-Anosov homeomorphism and $\cR$ a geometric Markov partition, a periodic point of $f$ that is in the stable (unstable) boundary of $\cR$ is a $s$-boundary point ($u$-boundary point). Let $f$ and $g$ be two generalized pseudo-Anosov homeomorphisms, and let $\cR_f$ and $\cR_g$ be geometric Markov partitions with geometric types $T_f$ and $T_g$, respectively.
 Let  $p(f)$ be the maximum period of  periodic points of $f$ on the boundary of $\cR_f$, and $p(g)$ be the maximum period of periodic points of $g$ on the boundary of $\cR_g$. Take $p = \max\{p(f), p(g)\}$.  We are going to construct  geometric Markov partitions $\cR_{\cR_f,\cR_g}$ of $f$ and $\cR_{\cR_g,\cR_f}$ of $g$ such that:

\begin{itemize}
\item The set of periodic boundary points of $\cR_{\cR_f,\cR_g}$ ( $\cR_{\cR_g,\cR_g}$ ) coincides with the set of periodic points of $f$ ( $g$ ) whose period is less or equal than $p$.
\item Every periodic boundary point in $\cR_{\cR_f,\cR_g}$ is a \emph{corner point} i.e. is $u$ and $s$-boundary.
\end{itemize}

 These refined partitions are referred to as the \emph{compatible refinements} of $\cR_f$ and $\cR_g$, and their respective geometric types are denoted $T_{(\cR_f,\cR_g)}$ and $T_{(\cR_g,\cR_f)}$.

Two geometric types, $T_1$ and $T_2$, which can be realized as basic pieces, are considered to be \emph{strongly equivalent} if there exists a Smale diffeomorphism $f$ of a compact surface and a nontrivial saddle-type basic piece $K$ of $f$, such that $K$ has a geometric Markov partition of geometric type $T_1$ as well as a geometric Markov partition of geometric type $T_2$.  We have proven the following corollary , which establishes a connection between the formal \textbf{DA} of the geometric types of the compatible refinements and the underlying pseudo-Anosov homeomorphisms.
 
\begin{coro*}\ref{Coro: equivalence pA and DA}
	Let $f$ and $g$ be generalized pseudo-Anosov homeomorphisms with geometric Markov partitions $\cR_f$ and $\cR_g$ of geometric types $T_f$ and $T_g$, respectively. Let $\cR_{f,g}$ be the joint refinement of $\cR_f$ with respect to $\cR_g$, and let $\cR_{g,f}$ be the joint refinement of $\cR_g$ with respect to $\cR_f$, whose geometric types are $T_{f,g}$ and $T_{g,f}$, respectively.  Under these hypotheses: $f$ and $g$ are topologically conjugated through an orientation preserving homeomorphism if and only if $T_{f,g}$ and $T_{g,f}$ are strongly equivalent.
\end{coro*}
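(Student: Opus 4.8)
The plan is to reduce the statement about pseudo-Anosov homeomorphisms to the corresponding statement about Smale basic pieces via the formal \textbf{DA} construction of Equation~\ref{Equa: DA of T}, and to use the fact that compatible refinements have been designed so that their periodic boundary points are exactly the corner points of controlled period. First I would record the elementary direction: if $f$ and $g$ are conjugate by an orientation-preserving homeomorphism $h:S_f\to S_g$, then by Theorem~\ref{Theo: conjugated iff markov partition of same type} (or directly) $h$ carries $\cR_{f,g}$ to a geometric Markov partition of $g$ of geometric type $T_{f,g}$; since $\cR_{g,f}$ is also a geometric Markov partition of $g$, the single homeomorphism $g$ carries both geometric types, so passing to $\textbf{DA}(T_{f,g})$ and $\textbf{DA}(T_{g,f})$ — which are unique up to conjugacy by Bonatti--Langevin \cite[Propositions 3.2.2 and 3.2.5, Theorem 5.2.2]{bonatti1998diffeomorphismes} — exhibits a common Smale diffeomorphism and basic piece realizing both types. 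That is precisely strong equivalence of $T_{f,g}$ and $T_{g,f}$.

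For the converse, assume $T_{f,g}$ and $T_{g,f}$ are strongly equivalent, witnessed by a Smale diffeomorphism $F$ and a saddle-type basic piece $K$ carrying geometric Markov partitions of both types. The key point is that $\textbf{DA}(T_{f,g})$ is the \emph{unique} realization of $T_{f,g}$ as a basic piece with its invariant neighborhood (again by the Bonatti--Langevin uniqueness theorem), hence $(\Delta(T_{f,g}),K(T_{f,g}),\Phi_{T_{f,g}})$ and $(\Delta(T_{g,f}),K(T_{g,f}),\Phi_{T_{g,f}})$ are conjugate as Smale diffeomorphisms on their domains. Now one must get back from the basic piece to the pseudo-Anosov homeomorphism: the pseudo-Anosov $f$ (up to conjugacy) is recovered from $\textbf{DA}(T_f)$ by collapsing the connected components of the complement of the basic piece — i.e. the stable/unstable boundary leaves and the complementary regions — to their cores, a ``derived-from-Anosov in reverse'' operation. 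Concretely, $f$ is conjugate to the combinatorial model $(\Sigma_{T_f},\sigma_{T_f})$ of Definition~\ref{Defi: symbolic model}, and $\Sigma_{T_f}$ depends on $T_f$ only through the equivalence relation $\sim_{T_f}$ of Proposition~\ref{Prop: The relation determines projections}; the same holds for any geometric type carried by a Markov partition of $f$, in particular for $T_{f,g}$. So it suffices to show that a conjugacy between $\textbf{DA}(T_{f,g})$ and $\textbf{DA}(T_{g,f})$ induces a conjugacy between $(\Sigma_{T_{f,g}},\sigma_{T_{f,g}})$ and $(\Sigma_{T_{g,f}},\sigma_{T_{g,f}})$, hence between $f$ and $g$; and that this conjugacy can be taken orientation-preserving because the geometric type records the vertical orientations and the \textbf{DA} construction is orientation-compatible.

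The step I expect to be the main obstacle is controlling the \emph{boundary data} under the passage between the basic piece and the pseudo-Anosov homeomorphism. A conjugacy between basic pieces $K(T_{f,g})$ and $K(T_{g,f})$ in invariant neighborhoods need not a priori match up the complementary regions that get collapsed when one forms the pseudo-Anosov map, and different complementary regions collapse to singularities of different prong-numbers (including $1$-prong spines); matching them is exactly where the design of the compatible refinements is used. Because $\cR_{f,g}$ and $\cR_{g,f}$ were built so that their periodic boundary points are precisely the corner points whose period is at most $p=\max\{p(f),p(g)\}$, the complementary regions of the basic pieces are in canonical bijection with the singularities of $f$ (resp. $g$) together with the marked periodic orbits up to period $p$, and this bijection is read off the geometric type. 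One must verify that strong equivalence of $T_{f,g}$ and $T_{g,f}$ forces this combinatorial boundary structure to agree, so that the conjugacy of \textbf{DA}'s descends to the collapsed surfaces; I would handle this by tracking the periodic boundary orbits through Béguin's invariant-neighborhood conjugacy and invoking the uniqueness in \cite{bonatti1998diffeomorphismes} at the level of ``basic piece plus its periodic boundary leaves.'' Once the complementary regions are matched, collapsing each to its core (a point for an $s$-boundary leaf not meeting $K$, an arc for a rectangle boundary, etc.) produces the surfaces $S_f$ and $S_g$ and a homeomorphism between them conjugating $f$ to $g$, orientation-preserving by construction.
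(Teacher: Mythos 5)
Your overall architecture (pass to the formal \textbf{DA}, use the joint refinement to control boundary orbits, return to the pseudo-Anosov) is the right one, but both directions have a genuine gap. In the forward direction you pass from ``the single homeomorphism $g$ carries geometric Markov partitions of both types $T_{f,g}$ and $T_{g,f}$'' to ``a common Smale diffeomorphism and basic piece realizes both types'' by invoking uniqueness of the formal \textbf{DA}. But uniqueness of $\textbf{DA}(T)$ only compares two realizations of the \emph{same} type $T$; it says nothing about $\textbf{DA}(T_{f,g})$ versus $\textbf{DA}(T_{g,f})$, and the fact that both types are carried by the same pseudo-Anosov does not by itself produce one basic piece carrying both. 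Bridging exactly this step is the content of Proposition \ref{Prop: preimage is Markov Tfg type}: one must lift the partition $h(\cR_{f,g})$ of $g$ into the domain $\Delta(T_{g,f})$ and verify that the lifted rectangles form a Markov partition of the basic piece $K(T_{g,f})$ of geometric type $T_{f,g}$, which uses the corner property of the joint refinements and a careful analysis of the preimages of rectangles and of the invariance of their lifted boundaries; none of this follows from \textbf{DA} uniqueness alone.

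In the converse direction your plan is to conjugate the two \textbf{DA}'s and then descend that conjugacy through the collapse maps; you correctly flag the matching of the collapsed complementary regions (cycles, minimal rectangles, boundary leaves, prong data) as the main obstacle, but you only sketch how it might be handled, so the argument stops exactly where the work is. This obstacle can be avoided altogether: strong equivalence hands you a geometric Markov partition of type $T_{g,f}$ inside $\textbf{DA}(T_{f,g})$; pushing it forward with the Bonatti--Jeandenans projection of Theorem \ref{Theo: Basic piece projects to pseudo-Anosov}, Proposition \ref{Prop: type of basic piece is type of pseudo-Anosov} produces a geometric Markov partition of $f$ of type $T_{g,f}$. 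Since $g$ also carries a partition of type $T_{g,f}$, Theorem \ref{Theo: conjugated iff  markov partition of same type} immediately gives the orientation-preserving conjugacy between $f$ and $g$, with no need to descend a conjugacy of \textbf{DA}'s or to match their fibers.
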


  In the proof we use the formal \textbf{DA} in the next manner: If $T_{(\cR_f,\cR_g)}$ and $T_{(\cR_g,\cR_f)}$ are strongly equivalent, then \textbf{DA}$(T_{(\cR_f,\cR_g)})$ and \textbf{DA}$(T_{(\cR_g,\cR_f)})$ are conjugate through a homeomorphism $h$. This homeomorphism induces a conjugation between $f$ and $g$.  Proposition \ref{Prop: preimage is Markov Tfg type} establish  that if $f$ and $g$ are topologically conjugate, then \textbf{DA}$(T_{(\cR_f,\cR_g)})$ has a Markov partition of geometric type $T_{(\cR_g,\cR_f)}$. Hence they  are strongly equivalent.

 Finally we use the Béguin's algorithm to determine if $T_{(\cR_f,\cR_g)}$ and $T_{(\cR_g,\cR_f)}$ are strongly equivalent. This provides a solution for the fist part of Item $III$ in Problem \ref{Prob: Clasification} through the following theorem.
  
  \begin{theo*}[\ref{Theo: algorithm conjugacy class}]
Let $T_f$ and $T_g$ be two geometric types within the pseudo-Anosov class. Assume that $f: S \rightarrow S_f$ and $g: S_g \rightarrow S_g$ are two generalized pseudo-Anosov homeomorphisms with geometric Markov partitions $\cR_f$ and $\cR_g$, having geometric types $T_f$ and $T_g$ respectively.  We can compute the geometric types $T_{f,g}$ and $T_{g,f}$ of their joint refinements through the algorithmic process described in Chapter \ref{Chap: Computations}, and the homeomorphisms $f$ and $g$ are topologically conjugated by an orientation-preserving homeomorphism if and only if the algorithm developed by Béguin determines that $T_{f,g}$ and $T_{g,f}$ are strongly equivalent.
  \end{theo*}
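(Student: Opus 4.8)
The plan is to reduce the conjugacy problem for the pseudo-Anosov homeomorphisms $f$ and $g$ entirely to the strong equivalence of the two computable geometric types $T_{f,g}$ and $T_{g,f}$, and then invoke Béguin's algorithm as a black box to decide that strong equivalence. The first step is to verify that the compatible refinements $\cR_{(\cR_f,\cR_g)}$ and $\cR_{(\cR_g,\cR_f)}$, together with their geometric types $T_{f,g}$ and $T_{g,f}$, are genuinely produced by the algorithmic construction of Chapter~\ref{Chap: Computations}: that is, starting from the raw data $(n,\{(h_i,v_i)\},\Phi)$ of $T_f$ and of $T_g$ one can compute, in finitely many steps, the list of periodic boundary points of period $\le p$ (using the incidence matrix $A(T_f)$ and the permutation data $\phi$ to detect periodic orbits on the boundary), then perform the cutting operations that turn all such points into corner points, recording the effect on the combinatorics. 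This is bookkeeping; it has essentially been carried out in the cited chapter, so here I would only cite it and fix notation.

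The second step is the substantive reduction, which is exactly Corollary~\ref{Coro: equivalence pA and DA}: $f$ and $g$ are conjugate by an orientation-preserving homeomorphism \emph{if and only if} $T_{f,g}$ and $T_{g,f}$ are strongly equivalent. So the theorem will follow once I know that Béguin's algorithm correctly decides strong equivalence of two geometric types that are realizable as basic pieces. Here is where I must check a hypothesis: Béguin's algorithm takes as input two geometric types realizable as saddle-type basic pieces of Smale surface diffeomorphisms, and its output ``$\cT(f,n)=\cT(g,n)$'' is equivalent to conjugacy in invariant neighborhoods of the basic pieces — which, by the definition of strong equivalence and by the uniqueness (up to conjugacy) of $\mathbf{DA}(T)$ from Bonatti--Langevin, is the same as strong equivalence of $T_{f,g}$ and $T_{g,f}$. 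Thus I need: (a) $T_{f,g}$ and $T_{g,f}$ lie in the pseudo-Anosov class, hence by Proposition~\ref{Prop: pseudo-Anosov iff basic piece non-impace} have finite genus, hence are realizable as basic pieces — so Béguin's algorithm applies to them; and (b) the compatible-refinement construction is designed precisely so that the two types share the same periodic boundary data (all corners, with matching periods up to $p$), which is the normalization Béguin's algorithm needs in order for the equality of the primitive-type lists to detect conjugacy rather than merely conjugacy-up-to-boundary-adjustment.

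Putting these together: compute $T_{f,g}$ and $T_{g,f}$ (Step 1); they are in the pseudo-Anosov class because $\cR_{(\cR_f,\cR_g)}$ is an honest geometric Markov partition of the pseudo-Anosov $f$ and $\cR_{(\cR_g,\cR_f)}$ of $g$, so finite genus and non-impasse are inherited; run Béguin's algorithm on the pair to obtain a definite yes/no answer on strong equivalence in bounded time; and conclude via Corollary~\ref{Coro: equivalence pA and DA} that this yes/no answer is precisely whether $f$ and $g$ are orientation-preservingly conjugate. The main obstacle I anticipate is the compatibility between \emph{two} normalizations — the one Béguin's algorithm assumes on its input (controlled periodic boundary behavior, corner points, the bound $O(T)$ on $n(f)$) and the one guaranteed by the compatible-refinement construction — and in particular making sure that the ``strong equivalence'' detected by the algorithm at the level of basic pieces pulls back, via the uniqueness of $\mathbf{DA}$, to a conjugacy of the pseudo-Anosov maps themselves rather than merely of their associated Smale models; this is the content of Proposition~\ref{Prop: preimage is Markov Tfg type} and Corollary~\ref{Coro: equivalence pA and DA}, which I would cite carefully rather than reprove. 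Everything else is either the cited algorithmic construction or a direct appeal to Béguin's theorem.
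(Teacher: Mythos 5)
Your proposal is correct and follows essentially the same route as the paper: the paper's proof is simply the observation that Corollary~\ref{Coro: equivalence pA and DA} reduces orientation-preserving conjugacy of $f$ and $g$ to strong equivalence of $T_{f,g}$ and $T_{g,f}$, which Béguin's algorithm (Theorem~\ref{Theo: Beguin algorithm}) decides, with the computability of the joint refinements delegated to Chapter~\ref{Chap: Computations}. Your additional checks (realizability of the refined types as basic pieces, applicability of Béguin's input hypotheses) are sound and consistent with what the paper leaves implicit.
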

 
%%%%%%%%%%%%%%%%%%%%%%%

The second part of \textbf{Item} $1.III$ in Problem \ref{Prob: Clasification}, requires the identification of a finite and well-defined family of geometric types, denoted as $\Theta[f]$, for a given pseudo-Anosov homeomorphism $f$. This family should be computable based on any geometric type associated with $f$, enabling the computation of any other geometric type realized by $f$. Furthermore, it is desirable for this family to be as small as possible while fulfilling these requirements.
In this direction, we have obtained some partial results, which are outlined below.

Let $p$ and $q$ be two periodic points of $f$. Here, $F^s(p)$ represents a stable separatrice of $p$, and $F^u(q)$ represents an unstable separatrice of $q$. Proposition \ref{Prop: Recipe for Markov partitions} presents a topological algorithm that, given any point $z\in F^s(p)\cap F^u(q)$, generates a corresponding Markov partition $\cR_z$. The significance of this algorithm lies in the fact that, under certain conditions, the geometric type of $\cR_z$ remains constant along the orbit of $z$. To obtain a canonical family of geometric types for $f$, it becomes necessary to identify a distinguished set of points contained in $F^s(p)\cap F^u(q)$.

We begin by considering $p$ and $q$ as singular points of $f$, which are the more distinguished points that we can take. A point $z\in F^s(p)\cap F^u(q)$ is called \emph{first intersection point} of $f$ if the stable segment $(p,z]^s\subset F^s(p)$ and the unstable segment $(q,z]^u\subset F^u(q)$ intersect at a single point, $(p,z]^s\cap (q,z]^u={z}$ (see Definition \ref{Defi: first intersection points}). There exist a finite number of orbits of first intersection points as proved in Proposition \ref{Prop: Finite number first intersection points}. Clearly, a conjugation between pseudo-Anosov homeomorphisms maps first intersection points to first intersection points (see Theorem \ref{Theo: Conjugates then primitive Markov partition}) and therefore are invariant under conjugation. This property allows us to define: a natural number that is constant for the entire conjugacy class of $f$, $n(f)\geq 0$ , such that for every integer $n\geq n(f)$ and every first intersection point $z$, there exists a Markov partition $\cR(n,z)$ of $f$ associated to these two parameters.  The Markov partitions constructed in this manner are referred to as \emph{primitive Markov partition} of order $n$ of $f$. The collection of all these partitions is denoted by $\cM(f,n)$. They have the following property.

\begin{coro*}[\ref{Coro: Finite orbits of primitive Markov partitions}]		
	Let $f$ be a generalized pseudo-Anosov homeomorphism, and let $n\geq n(f)$. Then, there exists a finite but nonempty set of orbits of primitive Markov partitions of order $n$.
\end{coro*}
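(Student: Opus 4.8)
\textbf{Proof plan for Corollary~\ref{Coro: Finite orbits of primitive Markov partitions}.}

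The plan is to combine the two finiteness statements that have already been established — the finiteness of the set of orbits of first intersection points (Proposition~\ref{Prop: Finite number first intersection points}) and the fact that for each fixed first intersection point $z$ and each $n\geq n(f)$ the recipe of Proposition~\ref{Prop: Recipe for Markov partitions} produces a well-defined Markov partition $\cR(n,z)$ — and then to show that the resulting collection $\cM(f,n)$ is both nonempty and carries only finitely many $f$-orbits. I would organize the argument in three steps.

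First, I would establish nonemptiness. Take $p$ and $q$ to be singular points of $f$ (these exist: a generalized pseudo-Anosov homeomorphism on a closed surface always has at least one singularity of its invariant foliations, and in fact enough to build a partition; if the surface were a torus one argues separately, but the generalized setting forces singularities). Since stable and unstable leaves of $f$ are dense, some stable separatrix $F^s(p)$ meets some unstable separatrix $F^u(q)$; among all points of $F^s(p)\cap F^u(q)$, the one closest to $p$ along $F^s(p)$ (equivalently, closest to $q$ along $F^u(q)$ — one chooses consistently) is a first intersection point by Definition~\ref{Defi: first intersection points}. Hence first intersection points exist, and applying the recipe of Proposition~\ref{Prop: Recipe for Markov partitions} with any $n\geq n(f)$ yields at least one element of $\cM(f,n)$, so $\cM(f,n)\neq\emptyset$.

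Second, I would bound the number of orbits. By Proposition~\ref{Prop: Finite number first intersection points} there are finitely many $f$-orbits of first intersection points; choose a finite set $Z=\{z_1,\dots,z_m\}$ of orbit representatives. I claim every primitive Markov partition of order $n$ is in the $f$-orbit of one of the partitions $\cR(n,z_1),\dots,\cR(n,z_m)$. Indeed, by definition every element of $\cM(f,n)$ is $\cR(n,z)$ for some first intersection point $z$; writing $z=f^k(z_i)$ for suitable $i$ and $k\in\ZZ$, one must check that the recipe is equivariant, i.e. $\cR(n,f^k(z_i))=f^k(\cR(n,z_i))$ (as geometric Markov partitions, hence in particular up to the chosen identifications). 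This equivariance should follow directly from the construction in Proposition~\ref{Prop: Recipe for Markov partitions}, since that construction is built entirely out of the dynamically defined objects — stable/unstable separatrices of $p,q$, the point $z$, and $n$ iterates of $f$ — all of which are natural with respect to conjugation by $f$ itself; I would invoke here the conjugation-naturality already recorded in Theorem~\ref{Theo: Conjugates then primitive Markov partition} specialized to $h=f$. Consequently $\cM(f,n)$ meets only the $m$ orbits of $\cR(n,z_1),\dots,\cR(n,z_m)$, so there are at most $m<\infty$ orbits of primitive Markov partitions of order $n$ (possibly fewer, since distinct $z_i$ could yield partitions in the same $f$-orbit).

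The main obstacle I anticipate is the second step's equivariance claim: one has to be careful that the recipe of Proposition~\ref{Prop: Recipe for Markov partitions} genuinely commutes with $f$ on the nose rather than merely up to isotopy, because the statement concerns orbits of \emph{geometric} Markov partitions and a geometric type is sensitive to the rectangles themselves (and their orientations), not just their isotopy class. I would address this by tracing through the construction and observing that each rectangle of $\cR(n,z)$ is cut out by finitely many segments of $F^s(p)$, $F^u(q)$ and their $f$-iterates up to order $n$, so that applying $f$ to all of these data simply reindexes the construction with $z$ replaced by $f(z)$; the induced vertical orientations are transported by $f$, which is orientation-preserving, so the geometric type — and the partition as a geometric object — is carried to that of $f(z)$. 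Once this is pinned down, the corollary is immediate. A secondary, lighter point to verify is that $n(f)$ is indeed independent of the choice of singular points $p,q$ and of the first intersection point $z$, as asserted in the discussion preceding the corollary; this is used implicitly so that "order $n$" is unambiguous, and it too rests on the conjugation-invariance of first intersection points.
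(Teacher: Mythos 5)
Your proposal is correct and follows essentially the same route as the paper: nonemptiness comes from the existence of a first intersection point together with $n\geq n(f)$, and the bound on orbits comes from choosing orbit representatives of first intersection points (finite by Proposition~\ref{Prop: Finite number first intersection points}) and using the equivariance $f^{m}(\cR(z_i,n))=\cR(f^{m}(z_i),n)$, which the paper likewise obtains from Theorem~\ref{Theo: Conjugates then primitive Markov partition} specialized to $h=f$. The extra care you take about the equivariance holding on the nose (rectangles and orientations, not just isotopy classes) is exactly what item~$vii)$ of that theorem provides, so no gap remains.
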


Our Theorem \ref{Theo: Conjugated partitions same types} states that a Markov partition and its iterations have the same geometric type. As a consequence, we have the following Theorem

\begin{theo*}[\ref{Theo: finite geometric types}]
	Let $f$ a generalized pseudo-Anosov homeomorphism. The set 
	$$
	\cT(f,n) = \{T(\cR): \cR \in \cM(f,n)\}
	$$ 
	is finite for every $n \geq n(f)$. These geometric types are referred to as \emph{primitive types} of order $n$ for $f$.
\end{theo*}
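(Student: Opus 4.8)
The plan is to deduce finiteness of $\cT(f,n)$ from two inputs already available in the excerpt: the finiteness of the set of \emph{orbits} of primitive Markov partitions of order $n$ (Corollary~\ref{Coro: Finite orbits of primitive Markov partitions}), and the fact that a Markov partition and all of its forward and backward iterates share the same geometric type (Theorem~\ref{Theo: Conjugated partitions same types}). The map $\cR \mapsto T(\cR)$ factors through the orbit space: if $\cR' = f^m(\cR)$ for some $m\in\ZZ$, then $T(\cR')=T(\cR)$. So the assignment descends to a well-defined map from the set of orbits of primitive Markov partitions of order $n$ to the set $\mathcal{GT}$ of abstract geometric types, and $\cT(f,n)$ is exactly the image of this map.

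First I would make precise that $\cM(f,n)$ is invariant under the $\ZZ$-action $\cR \mapsto f(\cR)$: this is immediate from the construction of primitive Markov partitions $\cR(n,z)$, since applying $f$ carries the data $(n,z)$ with $z$ a first intersection point to $(n,f(z))$, and $f(z)$ is again a first intersection point (a conjugation — in particular $f$ itself — sends first intersection points to first intersection points, as recorded before Corollary~\ref{Coro: Finite orbits of primitive Markov partitions}); more plainly, $\cR(n,f(z)) = f(\cR(n,z))$ by the equivariance of the topological recipe in Proposition~\ref{Prop: Recipe for Markov partitions}. Hence $f$ permutes $\cM(f,n)$ and the notion of an orbit of primitive Markov partitions makes sense. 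Next I would invoke Theorem~\ref{Theo: Conjugated partitions same types} to conclude that $T$ is constant on each such orbit, so there is a well-defined induced map
\begin{equation}
\overline{T}\colon \cM(f,n)/f \longrightarrow \mathcal{GT}, \qquad \overline{T}([\cR]) = T(\cR).
\end{equation}
Then $\cT(f,n) = \overline{T}\bigl(\cM(f,n)/f\bigr)$ by definition of $\cT(f,n)$. By Corollary~\ref{Coro: Finite orbits of primitive Markov partitions} the domain $\cM(f,n)/f$ is a finite (nonempty) set for every $n\ge n(f)$, and the image of a finite set under any map is finite. Therefore $\cT(f,n)$ is finite, which is the assertion.

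There is essentially no hard analytic content left here; the real work has been front-loaded into Corollary~\ref{Coro: Finite orbits of primitive Markov partitions} (finiteness of orbits of primitive partitions, which itself rests on the finiteness of orbits of first intersection points, Proposition~\ref{Prop: Finite number first intersection points}) and into Theorem~\ref{Theo: Conjugated partitions same types} (invariance of the geometric type under iteration). The only point that requires a sentence of care — and the one I would single out as the place a reader could object — is the equivariance claim $\cR(n,f(z)) = f(\cR(n,z))$, i.e.\ that the recipe of Proposition~\ref{Prop: Recipe for Markov partitions} commutes with $f$; this is where one must check that every choice made in the construction (the separatrices $F^s(p)$, $F^u(q)$, the truncation at level $n$, the way the rectangles are cut out along stable and unstable segments) is carried by $f$ to the corresponding choice for the point $f(z)$, using that $f$ fixes the singular orbits and preserves the foliations with their transverse measures. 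Once that compatibility is granted, the statement follows by the purely set-theoretic argument above.
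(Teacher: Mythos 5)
Your argument is correct and follows essentially the same route as the paper: finiteness of the orbits of primitive Markov partitions (Corollary~\ref{Coro: Finite orbits of primitive Markov partitions}) combined with constancy of the geometric type along an orbit (Corollary~\ref{Coro: constant type in orbit}, obtained from Theorem~\ref{Theo: Conjugated partitions same types} with $f=g=h$). The equivariance $f(\cR(z,n))=\cR(f(z),n)$ that you flag as the delicate point is exactly item~$vii)$ of Theorem~\ref{Theo: Conjugates then primitive Markov partition} specialized to $f=g=h$, so it is already secured in the paper.
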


%%%%%%%%%%

Each of these families is a finite invariant under conjugation. If $f$ and $g$ are conjugate generalized pseudo-Anosov homeomorphisms, Corollary \ref{Coro: n(f) conjugacy invariant} establishes that $n(f)=n(g)$ and its set of \emph{canonical types} coincides too:
$$
\Theta[f]:=\cT(f,n(f))=\cT(g,n(g))=\Theta[g]
$$
Hence this  is a finite total invariant of the conjugation of $f$. They represent the finite presentation of \textbf{Item} $1.III$ in Problem \ref{Prob: Clasification}. Among the families of \emph{primitive types}, they are the minimum with  respect to the parameter $n$. 

At present, we are unable to compute the invariant $\Theta[f] := \cT(f,n(f))$ from any other geometric type or, in general, all geometric types associated with a conjugacy class as required by the \textbf{III} of Problem \ref{Prob: Clasification}. However, we can compute primitive geometric types of sufficiently large order. We believe it would be interesting to expand our theory and compute $\Theta([f])$, but this is still a work in progress.

We have concluded the presentation of the results in this thesis. Next, we will provide a brief survey of the results and perspectives surrounding the classification of pseudo-Anosov homeomorphisms. This is not an exhaustive bibliography, but rather an attempt to position our work within the context of the field.
\section*{Other classifications} 

The problem of classifying pseudo-Anosov homeomorphisms has been considered from different perspectives. In this section, we will discuss various results and approaches, in order to place our work within the current landscape of the mathematics. The conjugacy class and isotopy classes refer to the equivalence classes determined by topological conjugacy and isotopy, respectively.

\subsection{Lee Mosher: The classification of pseudo-Anosov}

In his 1983 thesis \cite{mosher1983pseudo} and the 1986 article \cite{mosher1986classification}, Lee Mosher classifies pseudo-Anosov homeomorphisms on a given closed and oriented surface $S$ equipped with a finite set of marked points $P$. To achieve this, Mosher considers the action of the mapping class group $\mathcal{M}(S,P)$ on a directed graph $\tilde{\mathcal{G}}(S,P)$, where each vertex represents a cellular decomposition of the surface with a single vertex and a distinguished sector around such vertex. The directed edges of $\tilde{\mathcal{G}}(S,P)$ correspond to elementary moves that transform one decomposition into another.

The mapping class group acts freely on the graph $\tilde{\mathcal{G}}(S,P)$, and the quotient yields a graph $\mathcal{G}(S,P)$ whose fundamental group is the mapping class group. Consequently, an element of the mapping class group can be represented by a cycle of directed edges in this graph. Each connected component of $\mathcal{G}(S,P)$ can be labeled using what Mosher refers to as \emph{polygonal types}, which essentially correspond to the list of polygons involved in the decomposition. It is worth noting that for a fixed genus of $S$ and a fixed number of marked points, there exists a finite number of polygonal types. Therefore, $\mathcal{G}(S,P)$ has a finite number of connected components.

Given a marked point-preserving pseudo-Anosov homeomorphism that fixes each of its  singularities and each of the separatrices of its singularities, Mosher define its \emph{singularity type}, which is determined by the number of interior singularities with $n$-prongs ($n \geq 3$) and marked singularities with $m$-prongs ($m \geq 1$). For each singularity type, he assigns a finite set of polygonal types of cellular decomposition to the pseudo-Anosov. His main result asserts that, for these polygonal types of cellular decomposition, the isotopy class of the pseudo-Anosov corresponds to a finite and explicit number of loops (cycles) on the graph $\mathcal{G}(S,P)$. Each of these loops has a finite combinatorial description, resulting in a finite number of canonical combinatorial descriptions for the pseudo-Anosov, they are called  \emph{pseudo-Anosov invariants.}

Thus, given two marked point-preserving pseudo-Anosov homeomorphisms that fix each of the separatrices of the singularities, they are conjugate if and only if their pseudo-Anosov invariants are the same. Mosher himself emphasizes that he does not know if his classification extends to a classification of all pseudo-Anosov homeomorphisms (without the assumption of fixing the separatrices).

As we have seen, Mosher's perspective and the one presented here are quite different. The distinction starts with the definition of a pseudo-Anosov homeomorphism. For Mosher, it is an isotopy class characterized by its action on a cell decomposition. In this thesis, we define a pseudo-Anosov homeomorphism through a Markov partition.  On which surface does it lay? What types of singularities does it possess? This information can be deduced from the geometric type of the partition, using a simple algorithm. Applying Mosher's result to a pseudo-Anosov homeomorphism defined by a geometric type of Markov partition seems to be a challenging task. However, a preliminary corollary, which we cite below, allows us to determine the singularity type of the pseudo-Anosov homeomorphism, which is a necessary piece of information for Mosher's result.

\begin{coro*}[\ref{Coro: algoritmo genero}]
	Let $T$ be a geometric type of the pseudo-Anosov class, and let $f:S\rightarrow S$ be a generalized pseudo-Anosov homeomorphism with a geometric Markov partition of type $T$. There exists a finite algorithm that, given $T$, determines the number of singularities of $f$ and the number of stable and unstable separatrices at each singularity and subsequently the genus of $S$.
\end{coro*}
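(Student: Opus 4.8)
The plan is to extract from the purely combinatorial data of $T$ the Euler–Poincaré information of the pair of invariant foliations, and then invoke the Euler–Poincaré–Hopf formula for singular foliations to recover the genus. The key observation is that a geometric Markov partition $\cR = \{R_i\}_{i=1}^n$ gives a CW-like decomposition of $S$ whose $2$-cells are the rectangles $R_i$, whose $1$-cells are the stable and unstable boundary segments of the rectangles, and whose $0$-cells are the corners of the rectangles together with the singular points of $\cF^s, \cF^u$ that lie on the boundary of $\cR$. The combinatorial rule $\Phi = (\phi, \epsilon)$ together with the numbers $\{(h_i, v_i)\}$ tells us exactly how the stable boundaries of the $R_i$ are glued to one another (a stable boundary component of some $R_i$ is a union of images under $f^{-1}$ of stable sides of vertical sub-rectangles, read off from $\phi$ and $\epsilon$), and symmetrically for the unstable boundaries. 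From this one reconstructs, combinatorially, the cyclic arrangement of rectangle corners around each boundary leaf segment, hence the identification pattern of the $0$-cells.

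First I would make precise the notion of an $s$-boundary leaf and a $u$-boundary leaf of $\cR$: these are the (finitely many) leaves of $\cF^s$, resp. $\cF^u$, that are contained in $\partial^s \cR := \bigcup_i \partial^s R_i$, resp. $\partial^u \cR$, and I would show (this is essentially bookkeeping with $\phi$ and $\epsilon$, in the spirit of the $s$- and $u$-boundary analysis already used for Proposition~\ref{Prop: The relation determines projections}) that each such leaf is a finite concatenation of sides of sub-rectangles, with the concatenation pattern computable from $T$. Each boundary leaf must terminate, in each direction, either by closing up into a circle or by running into a singularity on the boundary; the number of prongs of such a singularity that point ``into'' the complementary regions is again encoded by how many rectangle corners accumulate at that combinatorial vertex. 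So the second step is: from $T$, produce the finite list of singular points lying on $\partial \cR$, each tagged with its number of stable prongs and unstable prongs (for a $k$-prong singularity the two numbers agree and equal $k$; spines are the case $k=1$).

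The third step handles singularities in the interior of rectangles. Here I would use that the only way a singularity of $\cF^s$ (or $\cF^u$) can fail to lie on $\partial\cR$ in a Markov partition is ruled out — or more precisely, one refines: every generalized pseudo-Anosov homeomorphism admits a Markov partition all of whose singularities lie on the boundary (this can be arranged by the construction of Chapter~\ref{Section: Existence}, by choosing the initial point appropriately, or by a standard subdivision argument), and the geometric type detects whether $T$ already has this property. Granting that, every singularity of $f$ appears in the list produced in step two, with multiplicity given by the number of orbits of boundary leaves emanating from it. Finally, with the full list of singularities $\{p_1, \dots, p_r\}$ and their prong numbers $\{k_1, \dots, k_r\}$ in hand, the genus is determined by the Euler–Poincaré formula for a measured foliation on a closed oriented surface,
\begin{equation}\label{Equa: Euler Poincare genus}
\sum_{j=1}^{r} (2 - k_j) = 2\chi(S) = 4 - 4g,
\end{equation}
so that $g = 1 + \tfrac14 \sum_{j=1}^r (k_j - 2)$, which is a finite computation once the list is known. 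The algorithm is then: compute $A(T)$ and the boundary-gluing data; list the boundary leaves and their endpoints; tabulate the singularities with their prong counts; output $r$, the multiset $\{k_j\}$, and $g$ via \eqref{Equa: Euler Poincare genus}.

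The main obstacle I expect is step two: correctly reading off, from $\phi$ and $\epsilon$ alone, the cyclic order in which rectangle corners accumulate at each boundary singularity — equivalently, identifying when several combinatorial ``corner'' vertices of the rectangle decomposition are in fact the same point of $S$ and with what cyclic structure. This is exactly the kind of delicate gluing analysis that underlies the equivalence relation $\sim_T$ of Proposition~\ref{Prop: The relation determines projections}, and I would expect the cleanest route is to piggyback on that relation: the singular points on $\partial\cR$ correspond to the $\sim_T$-classes of eventually-periodic boundary codes whose local picture has valence $\neq 4$, and the prong number is the valence divided by two. Once that correspondence is set up, everything else is routine combinatorics plus the classical formula \eqref{Equa: Euler Poincare genus}.
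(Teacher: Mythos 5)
Your overall route is the paper's route: every singularity necessarily lies on $\partial\cR$ (the interior of a rectangle is trivially bifoliated, so in fact no choice or refinement of the partition is needed for your step three — singularities are automatically periodic boundary points), the singularities are detected as $\sim_T$-classes of periodic boundary codes, the prong number is read off from the size of the class, and the genus follows from the Euler--Poincar\'e formula. But there is a genuine gap exactly at the point you yourself flag as "the main obstacle": the identity \emph{prong number} $=$ \emph{(number of codes in the class)}$/2$ is false for an arbitrary geometric type in the pseudo-Anosov class. The number of codes projecting to a point equals the number of its sectors only when distinct sectors determine distinct itineraries; if a boundary periodic point $p$ lies in the \emph{interior} of a stable side of some rectangle $R$ (i.e.\ $p$ is not a corner of $R$), the two sectors of $p$ inside $R$ that are separated by an unstable germ not contained in $\partial^u\cR$ have the same sector code at every time, so the $\sim_T$-class undercounts the sectors and your "valence divided by two" undercounts the prongs. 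The paper removes this obstruction before counting: it first replaces $T$ by its \emph{corner refinement} $T_C$ (an algorithmically computable type, constructed in Chapter \ref{Chap: Computations}, for which every boundary periodic code is a corner code), and only then proves that the number $\underline{P}_s$ of periodic boundary codes in a $\sim_T$-class equals the number of sectors of the corresponding point, giving $P_s=\tfrac12\underline{P}_s$ prongs and the formula $2-2\,\text{gen}(S)=\tfrac12\sum_s(2-P_s)$.

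A second, smaller omission of the same nature: the symbolic apparatus you want to piggyback on ($\pi_f$, the boundary codes generated by $\Gamma(T)$ and $\Upsilon(T)$, and the relation $\sim_T$) is only defined when the incidence matrix $A(T)$ is binary, so your algorithm must begin with the horizontal refinement $H(T)$ (also computable from $T$, Proposition \ref{Prop: Unique horizontal type}) before the corner refinement. With those two algorithmic preprocessing steps inserted, your step two becomes exactly the paper's Proposition \ref{Prop: Peridic boundary codes} (periodicity of each boundary label is decided in at most $2n$ iterations of the generating functions) and the rest of your argument, including the final computation $g=1+\tfrac14\sum_j(k_j-2)$, goes through as written; also note that the relevant codes are the \emph{periodic} boundary codes rather than the eventually periodic ones, since the singularities themselves are periodic.
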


 \subsection{The Bestvina-Handel Algorithm}

Let $S$ be a closed and oriented surface with marked points $P$. The objective of Bestvina and Handel in the article \cite{bestvina1995train} is to provide an algorithmic proof of the Handel-Thurston classification theorem for homeomorphisms in Hom$_+(S, P)$. In other words, given a homeomorphism $f$, their goal is to determine whether $f$ belongs to the isotopy class of a reducible, periodic, or pseudo-Anosov homeomorphism.

The article presents two main theorems which we will discuss below. These theorems are initially proven for surfaces with a single marked point and later extended to surfaces with a finite set of marked points. To facilitate the understanding of their results, we will provide an intuitive introduction to the key concepts involved.

A \emph{fibered surface} is a compact surface $F\subset S$ with  boundary that can be decomposed into arcs and polygons. These polygons are called \emph{junctions}, and the regions foliated by arcs are referred to as \emph{stripes}. We say that $F$ is \emph{carried} by a homeomorphism $f$ if $f$ maps $F$ into itself (up to isotopy) and sends the set of junctions to the set of junctions, as well as every stripe to the union of stripes.

If we collapse the fibered surface $F$, we obtain a graph $\cG$ where the vertices are the junctions and the edges are the stripes of $F$. If $F$ is carried by $f$, the homeomorphism induces a map on the graph $g:\cG\rightarrow \cG$ which sends vertices to vertices, and each 
edge to an edge-path. The map $g$ and the homeomorphism $f$ are irreducible if the incidence matrix of $g$ is irreducible and $\cG$ have no vertex of valence $1$ or $2$. Finally the growth rate of the map $g:\cG \to \cG$ is defined as the Perron-Frobenius eigenvalue of the incidence matrix of $g$, denoted by $\lambda(F,f)\geq 1$.

If every iteration of $g$ sends each edge of $\cG$ along a path without backtracking, and satisfy another technical condition of how the edge path cross every vertex, we say that $g$ is \emph{efficient} (see  \cite[Lemma 3.1.2]{bestvina1995train}).

The first theorem of the authors establishes a dichotomy for $f$: it is either periodic or it carries an efficient fibered surface. This dichotomy forms the basis of their subsequent steps in the classification algorithm.
 
 \begin{theo*}[B-H, Theorem 3.1.3 ]
Every homeomorphism on a once punctured surface is isotopic, relative to the puncture, to one that is either reducible or carried by an efficient fibered surface.
 \end{theo*}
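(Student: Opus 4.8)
The plan is to follow the Bestvina–Handel strategy of starting with \emph{any} fibered surface carrying $f$ and then improving it step by step until either the map becomes efficient or a periodicity phenomenon forces itself to appear. First I would produce an initial fibered surface: take a spine of the once-punctured surface $S$, thicken it to a fibered surface $F_0$ whose junctions are a neighbourhood of the branch points and whose stripes are neighbourhoods of the edges, and isotope $f$ so that $F_0$ is carried by $f$. Collapsing $F_0$ gives a graph $\mathcal{G}_0$ and an induced graph map $g_0\colon\mathcal{G}_0\to\mathcal{G}_0$ that is a homotopy equivalence; the once-punctured surface has free fundamental group, so this is the setting of graph maps on finite graphs with a marked peripheral loop (the puncture).

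The heart of the argument is a finite sequence of moves on the pair $(\mathcal{G},g)$ that do not change the isotopy class of $f$ but simplify $g$: (1) \emph{tightening} — homotope $g$ rel vertices so that every $g^k$ sends edges to reduced edge-paths; (2) \emph{collapsing an invariant forest} and removing valence-one and valence-two vertices, which keeps the incidence matrix honest; (3) if the incidence matrix is reducible, pass to the invariant subgraph of Perron–Frobenius eigenvalue one (this is where the \emph{reducible} alternative is produced: the subgraph either carries a reduction of $f$ or, if its growth rate is $1$, signals a periodic piece); (4) the two key ``efficiency'' moves — \emph{folding} (identifying initial segments of edges with the same $g$-image, which can lower the eigenvalue) and \emph{subdivision together with absorbing into the junction}, used to kill backtracking created at vertices and to fix the way edge-paths turn at a vertex. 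I would set up a complexity — lexicographically, the Perron–Frobenius eigenvalue $\lambda(F,f)$, then the number of edges, then the number of ``illegal turns'' — and show each move either strictly decreases it or is applied only finitely often; when no move applies, the resulting $g$ is efficient by the characterization of \cite[Lemma 3.1.2, 3.1.2]{bestvina1995train}. If at some stage $\lambda=1$, a finite amount of further bookkeeping shows $f$ is isotopic to a periodic homeomorphism (the graph map is then, up to homotopy, a simplicial automorphism of finite order of some $\mathcal{G}$).

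The main obstacle is step (1)–(4)'s \emph{termination}: individual moves such as folding can create new valence-two vertices or new backtracking elsewhere, so one must choose the order of moves and the complexity function carefully to guarantee that the process halts rather than cycling. This is exactly the technical core of \cite{bestvina1995train}, and I would structure the proof around a single monovariant: show that folding never increases $\lambda$, that the combinatorial moves (collapsing forests, removing low-valence vertices, absorbing into junctions) strictly decrease $(\,\lambda,\ \#\text{edges},\ \#\text{illegal turns}\,)$ in lexicographic order whenever $\lambda$ is unchanged, and that between two eigenvalue-decreasing folds only finitely many eigenvalue-preserving moves can occur because the edge count is bounded below by the rank of $\pi_1$. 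Granting this, the terminal graph map is either efficient — giving the ``carried by an efficient fibered surface'' alternative after re-thickening $\mathcal{G}$ to $F$ — or has $\lambda=1$, giving the periodic alternative; no other outcome is possible, which is the claimed dichotomy. The extension from one puncture to finitely many punctures, and the identification of the efficient case with the pseudo-Anosov case via a measured foliation built from the transition matrix, are then handled exactly as in \cite{bestvina1995train} and are not needed for the statement as phrased.
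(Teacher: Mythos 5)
Your outline follows the same route the paper sketches for Theorem 3.1.3 (and the original Bestvina--Handel argument): thicken a spine to get an initial fibered surface carried by $f$, collapse to a graph map $g:\cG\to\cG$, and run the moves (tightening, collapsing invariant forests, removing valence-one and valence-two vertices, folding, subdivision/absorption) with the growth rate $\lambda(F,f)$ as the governing complexity, producing either a reduction or an efficient carrier. So in spirit there is no divergence from the paper's approach.

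There is, however, one genuine gap in your termination argument as stated. You argue that between two eigenvalue-decreasing moves only finitely many eigenvalue-preserving moves occur (via the lexicographic complexity $(\lambda,\ \#\text{edges},\ \#\text{illegal turns})$), but this alone does not make the algorithm halt: a priori $\lambda$ could undergo infinitely many strict decreases, converging to some limit $\geq 1$, since $\lambda$ is a real number and a lexicographic order whose first coordinate ranges over a non-well-ordered subset of $\RR$ need not terminate. The missing ingredient — which the paper states explicitly — is that after removing valence-one and valence-two vertices the graph has a number of edges bounded above in terms of the Euler characteristic of the once-punctured surface, so $\lambda$ always lies in the set of Perron--Frobenius eigenvalues of non-negative integer matrices of bounded size, and this set is a discrete subset of $[1,\infty)$ (for any bound $\lambda_0$ there are only finitely many such matrices with spectral radius at most $\lambda_0$). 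Only with this discreteness can one conclude that $\lambda$ decreases at most finitely many times, and hence that the process ends either with an efficient map or with a reduction (or with $\lambda=1$, the periodic case). Also note that the upper bound on the number of edges is what matters here; the lower bound by the rank of $\pi_1$ that you invoke plays no role in this step.
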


The proof follows an algorithmic approach. A spine of $S$ is a CW-complex $\cG$ contained in $S$ such that $S$ deformation retracts to $\cG$. The authors suggest starting with a regular neighborhood $F$ of a spine $G$ that naturally has a fibered surface structure. After that they perform an isotopy of $f$ to ensure that $F$ is carried by the homeomorphism. Then Bestvina and Handel follow the philosophy: the growth rate of the map $g$ is a measure of how close we are to an efficient fibered surface. The smaller the growth rate, the more efficient the carrier.

The algorithm described in \cite[Section 3]{bestvina1995train} is used to prove Theorem 3.1.3. It involves modifying the homeomorphism $f$ and the fibered surface $F$. Each step of the algorithm either reduces the growth rate of the induced map or maintains it at the same level as the previous step. The algorithm includes isotopies of $f$ and collapsing certain sub-graphs of $\mathcal{G}$, resulting in natural modifications of $F$.

It is proven that the algorithm terminates in finite time if $f$ is reducible, or it continues until an efficient carried surface is obtained. The algorithm terminates because the set of growth rates of non-negative integer matrices of bounded size is a discrete subset of $[1, \infty)$. As we decrease the growth rate of $g$, either $g$ becomes efficient, or we find a reduction and the algorithm stops. Next, Bestvina and Handel present the following result:

\begin{theo*}[B-H, Theorem 3.1.4 ]
If a homeomorphism $f:S\to S$ of a once-punctured surface is carried by an efficient fibered surface, then $f$ is isotopic, relative to the puncture, to a homeomorphism that is either periodic, reducible, or pseudo-Anosov.
\end{theo*}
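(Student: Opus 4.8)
The plan is to run the Bestvina--Handel trichotomy directly on the graph map induced by the efficient carrier. Collapsing each stripe of the efficient fibered surface $F$ to its core arc yields the spine graph $\cG$ together with an induced map $g\colon\cG\to\cG$; the hypothesis that $F$ is efficient says precisely that $g$ is a train track map, i.e.\ no iterate of $g$ backtracks on an edge and the crossing (gate) condition of \cite[Lemma 3.1.2]{bestvina1995train} holds. Let $M=M(g)$ be the incidence matrix and $\lambda=\lambda(F,f)\ge 1$ its Perron--Frobenius eigenvalue, the growth rate. The argument then splits according to whether $M$ is irreducible and, when it is, according to the value of $\lambda$ and whether the associated train track fills $S$; these three branches produce the reducible, periodic, and pseudo-Anosov conclusions respectively.

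First I would dispose of the reducible branch coming from the combinatorics. Since efficiency of $F$ already forbids valence-$1$ and valence-$2$ vertices, the only remaining obstruction to the pair $(g,f)$ being irreducible is the matrix $M$ itself. If $M$ is reducible there is a proper $g$-invariant subgraph $\cG_0\subsetneq\cG$; a regular neighbourhood of the stripe sub-surface it carries is an $f$-invariant essential sub-surface of $S$, so its frontier, after discarding inessential or puncture-parallel components, is a reducing multicurve and $f$ is reducible. So assume $M$ is irreducible and apply Perron--Frobenius to obtain positive left and right eigenvectors $u,w$ with $uM=\lambda u$ and $Mw=\lambda w$. If $\lambda=1$, then an irreducible non-negative integer matrix with spectral radius $1$ is a permutation matrix (each column sum, hence each row sum, is forced to be $1$), so $g$ carries each edge bijectively to an edge and is a finite-order graph automorphism; unrolling the collapse shows $f$ is isotopic rel the puncture to a finite-order homeomorphism, i.e.\ $f$ is periodic.

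There remains the case $M$ irreducible and $\lambda>1$, which is the heart of the matter. Smooth the train track $\tau$ dual to $(\cG,g)$ and weight its branches by the left eigenvector $u$; this defines a transverse measure on $\tau$ that $g$ expands by exactly $\lambda$, hence a partial measured foliation on a neighbourhood of $\tau$, complementary to the "width'' partial foliation carried by the stripes of $F$ weighted by the right eigenvector $w$. Now examine the components of $S\setminus\tau$ via their Euler--Poincaré indices, computed from the cusp counts. If every complementary component is a disc with $k\ge 3$ cusps or a once-punctured disc with $k\ge 1$ cusps, the two partial foliations extend over all of them, producing a transverse, filling pair of measured foliations $(\cF^s,\cF^u)$ on $S$ whose singularities are $k$-prongs and spines; after one further isotopy $f$ preserves this pair and multiplies the transverse measures by $\lambda^{-1}$ and $\lambda$, so $f$ is, by definition, isotopic to a pseudo-Anosov homeomorphism. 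If instead some complementary component is an annulus, a disc with one or two cusps, or a piece of positive genus or with several punctures, then it carries an essential non-peripheral simple closed curve that is $f$-invariant up to isotopy, and again $f$ is reducible; this exhausts the possibilities.

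The main obstacle is this last paragraph: one must show that efficiency of $g$, together with the fact that $F$ arose from (a modification of) a spine of $S$, is exactly what controls the complementary regions of $\tau$, so that the Euler--Poincaré bookkeeping genuinely yields the clean dichotomy "all punctured polygons with enough cusps, or else an essential reducing curve,'' with no leftover cases; and then one must upgrade the statement "$f$ carries $F$'' to "$f$ preserves $(\cF^s,\cF^u)$ up to isotopy,'' which relies on the Alexander-method uniqueness of the isotopy class realising a filling invariant pair of foliations. The remaining delicate point is the routine but necessary verification, in the two reducible branches, that the curves extracted from an invariant subgraph or from a bad complementary region are truly essential and non-peripheral, correctly excluding the annulus and puncture-parallel exceptions; this is straightforward once the structural picture above is in place.
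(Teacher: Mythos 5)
Your overall strategy coincides with the one the paper attributes to Bestvina--Handel (and with their actual argument): split on the growth rate $\lambda(F,f)$, obtain a periodic representative when $\lambda=1$, and when $\lambda>1$ smooth the graph into an $f$-invariant train track and either build the invariant measured foliations (Bestvina--Handel do this through the Markov partition given by the stripes, with widths and heights the Perron--Frobenius eigenvectors --- the same data you use) or extract a reduction from the bad complementary pieces. So the skeleton is the right one, and your $\lambda=1$ and reducible-matrix branches are in the spirit of the original.

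There is, however, a concrete error at exactly the step you yourself flag as the main obstacle, and it is the step where the efficiency hypothesis actually does its work. A complementary disc of $\tau$ with one or two cusps does not ``carry an essential non-peripheral simple closed curve'': its boundary bounds a disc, hence is null-homotopic, so such a component can never produce a reduction. In Bestvina--Handel these monogon and bigon regions are not converted into reductions --- they are shown not to occur, precisely because $g$ is efficient (no iterate of $g$ backtracks on an edge and the gate/crossing condition holds at every vertex); this exclusion, together with the index bookkeeping for the remaining regions, is the content of their Sections 3.3--3.4 that your sketch defers. Without it your case analysis is not exhaustive: you would be left with complementary regions that neither admit the foliation extension (wrong Euler--Poincar\'e index) nor yield a reducing curve. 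Relatedly, in the reducible-matrix branch you must still rule out the possibility that every frontier component of the invariant subsurface is inessential or puncture-parallel, in which case no reducing multicurve is produced and a separate argument is required. As written, then, the proposal is a correct plan whose decisive verifications (efficiency implies no monogons or bigons, control of the remaining complementary regions, essentialness and non-peripherality of the extracted curves) are asserted rather than proved.
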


 To prove Theorem 3.1.4, Bestvina and Handel utilize the concept of \emph{train tracks}. Like the  fibered surface is efficient, the map $g$ has a incidence matrix with growth rate greater than or equal to 1. In Section 3.3 of \cite{bestvina1995train}, the proof is divided into two cases
  
  \begin{itemize}
  	\item If $\lambda(F,f)=1$, it is observed that the homeomorphism is isotopic to a periodic one.
  	\item If $\lambda(F,f)>1$, either they can construct an $f$-invariant train track in finite time, or they find a reduction of $f$ (in which case $f$ is reducible).
  \end{itemize}

The second point is addressed in Section 3.4. There, they assume the existence of the $f$-invariant train track and use it to construct a Markov partition for the homeomorphism. Subsequently, they obtain the invariant foliations of $f$, establishing that $f$ is isotopic to pseudo-Anosov.

In their examples, the authors \cite[Section 6]{bestvina1995train} illustrate the initial step of the algorithm, which involves decomposing $f$ into Dehn twists. This serves as the input information for their algorithm.

An interesting question arises: given a geometric type $T$ in the pseudo-Anosov class, how can we find a decomposition of the underlying pseudo-Anosov homeomorphism into Dehn twists? Conversely, if we have a decomposition of $f$ into Dehn twists, can we calculate a geometric type associate of $f$? We think  is possible that the machinery of train-tracks, utilized to construct the Markov partition of $f$, could provide a solution to the last question.

Lastly, the authors do not explicitly discuss the problem of conjugacy. It is not clear to us how their algorithm can be used to determine if two pseudo-Anosov homeomorphisms are topologically equivalent, although it is possible that with further investigation, this could be achieved.

 \subsection{Jerome Los's Approach: The $n$-Punctured Disk and the Braid Group }
 
 The intimate relation between the mapping class group of the $n$-punctured disk and the Braid groups is exploited by Jerome Los in \cite{los1993pseudo} to provide a finite algorithm for determining whether an isotopy class (of the punctured disk) contains a pseudo-Anosov element.
 
 \begin{theo}[Los, J. \cite{los1993pseudo}]
Given a braid $\beta\in B_n$ defining an isotopy class $[f_{\beta}]$ of orientation-preserving homeomorphisms of the punctured disc $D_n$, there exists a finite algorithm which enables one to decide whether $[f_{\beta}]$ contains a pseudo-Anosov element. In the pseudo-Anosov case, the algorithm gives an invariant train track and the dilatation factor.
 \end{theo}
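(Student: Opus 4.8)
The plan is to make the Nielsen--Thurston trichotomy effective on the $n$-punctured disc by exploiting the isomorphism between the braid group $B_n$ and the mapping class group of $D_n$ fixing $\partial D_n$, and then to run a train-track reduction algorithm in the spirit of the Bestvina--Handel procedure discussed above, but set up so that every step is explicit in terms of the braid word.

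First I would turn the combinatorial datum $\beta=\sigma_{i_1}^{\pm1}\cdots\sigma_{i_k}^{\pm1}$ into a dynamical datum. Fix a spine $G\subset D_n$ --- say a tree joining the $n$ punctures together with one edge meeting $\partial D_n$ --- whose regular neighborhood is a fibered surface carrying the identity. Each standard half-twist $\sigma_i$ acts on $G$ by an explicit elementary graph move, so composing these moves produces a graph map $g\colon G\to G$ representing $f_\beta$ up to isotopy. The transition matrix of $g$ has a Perron--Frobenius growth rate $\lambda(g)\ge 1$; this is the quantity the algorithm will try to push down.

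Next I would iterate the normalization moves --- subdivision, folding at illegal turns, and collapsing invariant forests or pretrivial subgraphs --- each of which either strictly decreases $\lambda(g)$ or leaves it unchanged while decreasing an auxiliary complexity (edge count, or number of illegal turns). Three outcomes are possible. If at some stage an invariant subgraph appears whose regular neighborhood is a union of annuli and once-punctured pieces, we have produced an invariant multicurve and $[f_\beta]$ is reducible. If the process stabilizes at an efficient graph map with $\lambda(g)=1$, then $f_\beta$ is isotopic to a periodic homeomorphism. If it stabilizes at an efficient graph map with $\lambda(g)>1$ and no invariant multicurve, the legal turns organize into an $f_\beta$-invariant train track $\tau$ filling $D_n$; smoothing $\tau$ and equipping it with the Perron--Frobenius eigenweights yields the stable measured foliation of a pseudo-Anosov representative whose dilatation is exactly $\lambda(g)$. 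The invariant train track and dilatation claimed in the statement are then read off directly.

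The hard part will be termination together with the correctness of the pseudo-Anosov conclusion. Termination follows from two facts that must be combined carefully: the set of Perron--Frobenius eigenvalues of non-negative integer matrices of a priori bounded size is discrete in $[1,\infty)$, so $\lambda(g)$ cannot strictly decrease infinitely often; and between two consecutive strict decreases the auxiliary complexity is strictly monotone, ruling out infinitely many constant-$\lambda$ steps. Converting this into an actual \emph{finite} algorithm requires a bound on the size of the graphs that can occur along the way --- hence on the number of moves, in terms of $n$ and the length of $\beta$ --- and, in the last case, a genuine verification that $\tau$ fills (no complementary disc, monogon, or smooth once-punctured bigon), so that Thurston's construction produces a pseudo-Anosov map rather than something hiding a reduction. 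Once that is in place the trichotomy is exhaustive and mutually exclusive by the classical uniqueness of the invariant train track and stable foliation, which also guarantees that the algorithm's answer does not depend on the chosen braid word.
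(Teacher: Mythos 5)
This statement is quoted by the paper as a cited result of Los (\cite{los1993pseudo}); the paper gives no proof of its own, only a survey-level sketch of Los's argument, so the comparison is with that sketch. Your proposal is a legitimate route, but it is essentially the Bestvina--Handel strategy transplanted to $D_n$, not Los's: you start from one spine of the punctured disc, read a single graph map off the braid word, and then run a monotone reduction (folds, subdivisions, collapses) whose termination rests on discreteness of Perron--Frobenius eigenvalues plus a secondary complexity, ending in the trichotomy periodic/reducible/pseudo-Anosov. Los instead fixes from the outset a \emph{finite} family of very simple ``skeleton'' graphs, enumerates all induced maps of representatives of $[f_\beta]$ on them (the braid being first put in Garside minimal-length form, which is where the input normalization enters), and searches this finite set for the map of minimal Perron--Frobenius eigenvalue; pseudo-Anosov-ness is then certified by checking three explicit properties of the candidate invariant train track (Los's Theorem~3.4) rather than by a filling/complementary-region analysis as in your last step. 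What your route buys is a uniform monotone scheme that needs no preliminary normal form; what Los's buys is that finiteness is built in from the start, so the termination and graph-size issues you flag as ``the hard part'' never arise. On those flagged points, note that your size worry is in fact controllable: all graphs occurring are homotopy equivalent to $D_n$, so after suppressing valence-one and valence-two vertices the number of edges is bounded in terms of $n$ alone, which is exactly how the Bestvina--Handel termination argument closes; and the final verification that $\tau$ carries a pseudo-Anosov (no complementary discs, monogons, or punctured bigons, here with the boundary of $D_n$ treated as a distinguished puncture) must indeed be made explicit, since that is the step your sketch leaves most open and the step Los replaces by his three checkable conditions.
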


The train tracks are graphs that were introduced to simplify the description of the measured foliations on a surface. The author considers an even simpler graphs called \emph{skeleton graph}. The spirit of their algorithm is to consider all the possible maps induced on these graphs by some elements of the isotopy class $[f_{\beta}]$.

J. Los is able to simplify the problem by reducing it to a finite set of skeletons and a finite set of induced maps. Each induced map on a specific graph is represented by a non-negative integer matrix known as the incidence matrix. The objective, similar to Bestvina and Handel, is to find a graph and an induced map whose incidence matrix has the minimum Perron-Frobenius eigenvalue among all possible graphs and induced maps.

Furthermore, when considering a homeomorphism $f$, J. Los identified three properties of an $f$-invariant train track that are crucial for $f$ to be isotopic to pseudo-Anosov (refer to \cite[Theorem 3.4]{los1993pseudo}). He then follows a finite number of steps, where he either finds an invariant train track for $f$ that satisfies the necessary conditions for the pseudo-Anosov class or determines that the class is not pseudo-Anosov.

The algorithm takes as input a braid $\beta \in B_n$, which is assumed to be given in \emph{minimal length form}. The author highlights the existence of an algorithm that, given a braid, can find a representative with minimal length using the Garside algorithm. However, it is noted that this method is not efficient due to its exponential complexity in relation to the initial length.

 By using a train track, it is possible to construct a Markov partition for $f$. Therefore, one consequence of Jerome's result is the existence of a coding for every pseudo-Anosov homeomorphism. However, as we will see, this coding is not sufficient to distinguish a conjugacy class from others. Nonetheless, it is a valuable corollary arising from his constructions.

  \subsection{Other Algorithms}
  
 There are certainly other classifications in the literature, but most of them focus on the classification modulo isotopy and its refinements.  However for the pseudo-Anosov case this is equivalent since two pseudo-Anosov homeomorphisms are isotopic if and only if they are topologically conjugated by a homeomorphism which is isotopic to the identity, the difference lies in the kind of arguments used for the classification. In the case of isotopy classification the authors use properties of foliations and homeomorphisms that are invariant under continuous deformations while we use arguments that highlight properties that are invariant under conjugation. 
 
 For instance, in the 1996 article by Hamidi et al. \cite{hamidi1996surface}, they present an algorithm that starts with a homeomorphism $f$ expressed as a combination of Dehn twists. The algorithm then determines whether $f$ is reducible or not. If it is reducible, it describes the isotopy classes fixed by $f$. If $f$ is pseudo-Anosov, the algorithm provides the contraction factor and invariant train tracks. Finally, if $f$ is periodic, it gives its period. These results offer additional insights and contribute to the overall understanding of surface homeomorphisms, up isotopy.
 
\section*{Some questions and  projects.}

\subsection{Repellers and attractors:} After the work of Bonatti, Langevin, Jeandenans and B\`eguin, a complete classification of saddle-type saturated sets for Smale diffeomorphisms on surfaces has been achieved, it remained to understand those hyperbolic sets containing a hyperbolic attractor or repeller. However, it has long been known that there is a deep relationship between hyperbolic attractors on surfaces and pseudo-Anosov homeomorphisms and we have used the framework they developed around geometric types to achieve a classification of pseudo-Anosov homeomorphisms . 
We believe that following this perspective we will be able to complement the developments made by this research group and thus complete the classification of all Smale diffeomorphisms on surfaces, considering this time all their saturated sets, including their attractors and repellers.

\subsection{Relations` in the pseudo-Anosov class:} As we can see, there are multiple geometric types that represent the same conjugacy class of pseudo-Anosov homeomorphisms. Another problem that appears naturally in our project is to provide an example of two geometric types in the pseudo-Anosov class with the same incidence matrix but do not represent topologically conjugate pseudo-Anosov homeomorphisms.  In a more general situation, we would like to describe an easy graphical or combinatorial operation that allows us to transition from one geometric type $T$ to any other that represents the same homeomorphism. 
We have the following construction. The permutation group in $n$-elements and the group $\{1, -1\}$ act on the set of geometric types with $n$-rectangles and $\alpha=\sum_{i=1}^{n}h_i$, denoted as $\mathcal{T}(n,\alpha)$. This action corresponds to changing the indexing of the rectangles and reversing the orientation of each of them when the geometric type is in the pseudo-Anosov class. Of course, the orbit of each geometric type $T$ under this action is associated with the same conjugacy class of $T$.
 However, are there other operations in the set $\mathcal{T}(n,\alpha)$ that relate geometric types in the pseudo-Anosov class, representing topologically conjugate pseudo-Anosov homeomorphisms? Can we use these quotients to estimate the number of different conjugacy classes for each $(n,\alpha)$?
 
 \subsection{Computational implementation.} Finally, we want to point out that we have written this thesis with the intention of its computational implementation. Therefore, properties such as mixing and finite-genus impasse are presented with formulas that only involve the geometric type. Additionally, we have attempted to provide some bounds for the algorithms in terms of iterations of a geometric type. However, we have not yet completed the programming of these algorithms. We believe it is necessary to do so in order to calculate geometric types associated with specific families of pseudo-Anosov homeomorphisms,  such as those obtained from branched covers or compositions of Dehn twists, for example.

\chapter{Preliminaries}\label{Chapter: Preliminares}

\pagenumbering{arabic}

\section*{Some general considerations.}

In this thesis, a surface $S$ will always refer to a connected, closed, and oriented two-dimensional topological manifold. In some cases, we may assume additional regularity or metric structure, such as $S$ being a $C^\infty$ Riemannian manifold. It is worth noting that the surface has finite genus and no boundary components.

Our intention is to study homeomorphisms under topological conjugation. The homeomorphisms we examine are orientation preserving and exhibit a nice property: any homeomorphism that is topologically conjugate to a generalized pseudo-Anosov homeomorphism  is also a generalized pseudo-Anosov homeomorphism. This closure property is intrinsic to topological conjugacy. This does not occur when considering the isotopy relation, where a canonical model of a pseudo-Anosov homeomorphism can be deformed into another homeomorphism that is no longer pseudo-Anosov . This will be advantageous when analyzing the structure of their conjugacy classes.

The concepts presented in this Chapter are derived from \cite{farb2011primer}, \cite{fathi2021thurston}, \cite{hiraide1987expansive}, and \cite{bonatti1998diffeomorphismes}. We have adapted them to our specific context. Pseudo-Anosov homeomorphisms, initially defined in terms of singular foliations and transverse measures, serve as the foundation for our discussion and are introduced first. We then define geometric Markov partitions and its corresponding geometric type. Additionally, we provide an overview of the classical hyperbolic theory of surface diffeomorphisms. Finally, we introduce the Bonatti-Langevin theory of geometric types, originally developed for Smale diffeomorphisms on surfaces. This theory will be employed to determine the realizability and equivalence of geometric types in our study.

\section{Generalized pseudo-Anosov homeomorphisms}

\subsection{Measured foliations with Spines}

A singular foliation in the classical sense is a foliation of $S$ except for a finite number of $k$-prong singularities, where $k\geq 3$. Our intention is to study a more general version of such foliations that allows for the existence of \emph{spine}-type singularities. The following discussion is based on \cite{hiraide1987expansive}.  For us, the natural numbers $\NN$ include zero, and $\NN_+$ denotes the positive integers.

The unitary rectangle in $\mathbb{C}$ is given by:
$$
\cD_2:=\{(x,y)\in \CC: \vert \text{Re}(x)\vert <1 \text{ and } \vert \text{ Im }(y) \vert <1\}.
$$
For all $z\in \mathbb{N}_{+}$, let $\pi_p:\mathbb{C}\rightarrow \mathbb{C}$ be given by $\pi_p(z)=z^p$. Then define:
$$
\cD_1:=\pi_2(\cD_2).
$$
For all $p\in \NN$ take:
$$
\cD_p:=\pi^{-1}_p(\cD_1).
$$

The function $\pi_p:\cD_p \rightarrow \cD_1$ is a $p$-fold  branched cover. The domain $\cD_2$ is equipped with the horizontal foliation $\cH_2$ and the vertical foliation $\cV_2$ of the unitary rectangle. The projection of $\cH_2$ and $\cV_2$ onto $\cD_1$ through $\pi_2:\cD_2\rightarrow \cD_1$ induces the decompositions $\cH_1$ and $\cV_1$ of $\cD_1$ respectively. Finally, for every $p\in \NN{+}$, we define the decompositions $\cH_p$ and $\cV_p$ of $\cD_p$ as the lifting of $\cH_1$ and $\cV_1$ respectively, using the map $\pi_p:\cD_p\rightarrow \cD_1$.

\begin{defi}\label{Defi: Singular foliation}
	Let $S$ be a closed and oriented surface. A decomposition $\cF$ of $S$ by curves and points is a  singular foliation if every $F\in \cF$ is path connected and for every $x\in S$ there exist $p(x)\in \mathbb{N}_{+}$ and a $C^{0}$ chart around $x$, $\phi_x:U_x\rightarrow \mathbb{C}$ such that:
	\begin{enumerate}
		\item $\phi_x(x)=0$
		\item $\phi_x(U_x)=\cD_{p(x)}$
		\item $\phi_x$  sends every non-empty connected component of $U_x\cap L$ onto some element of $\cH_{p(x)}$.
		\item 	There is a finite number of points $x$ in $S$ such that $p(x)\neq 2$. This set is the singular set of $\cF$ and is denoted by \textbf{Sing}$(\cF)$. 
	\end{enumerate}
A singular point $x$ with $p(x)=1$ is we called  \emph{Spine} (or $1$-prong) the set of spines of $\cF$ is \textbf{Sp}$(\cF)$ . In general for $p(x)\geq 2$,  $x$ is refereed as a $k$-prong.
\end{defi}

\begin{figure}[h]
	\centering
	\includegraphics[width=0.4\textwidth]{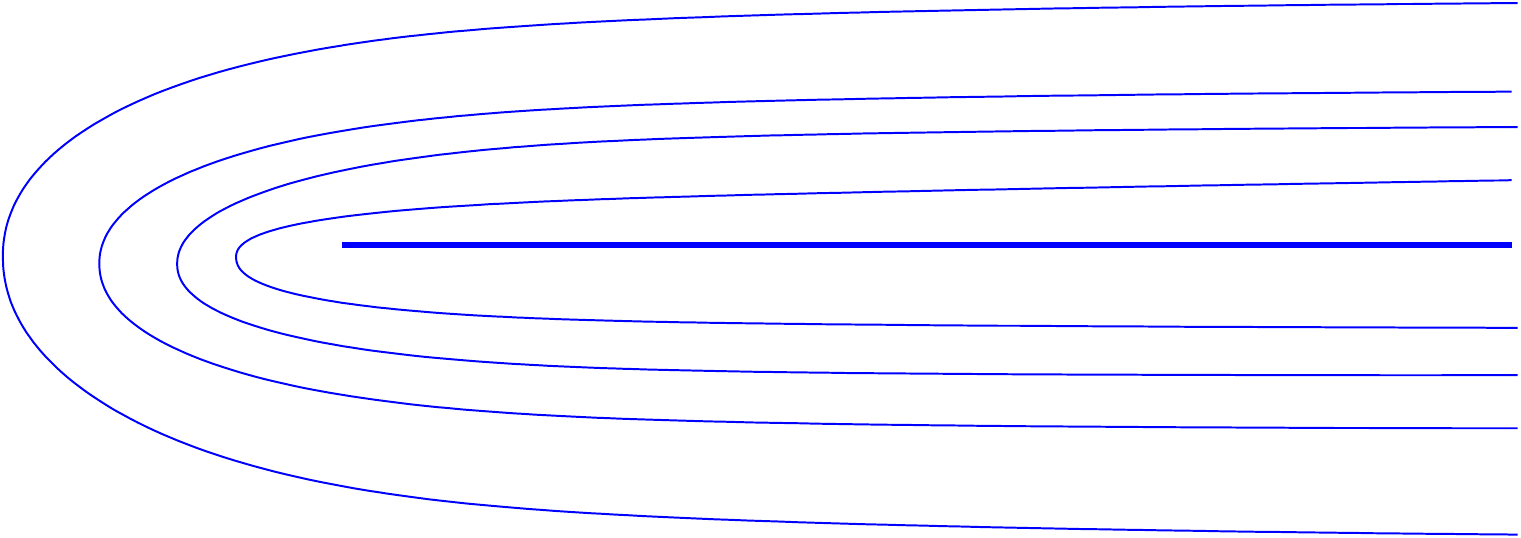}
	\caption{Spine or $1$-prong}
	\label{Fig: Spine}
\end{figure}

An element $F$ of a singular foliation $\cF$ is a \emph{leaf} of $\cF$. A separatrix of a point $x \in F$ is a connected component of $F \setminus \{x\}$. Now we will introduce a way to measure intervals across the leaves of a singular foliation.  A subset $J \subset M$ is an arc (resp. open arc) if there exists a $C^0$ embedding $h$ from a compact (resp. open) interval $I \subset \RR$ into $M$ such that $h(I) = J$, and the interior of $J$ is denoted as $\overset{o}{J} = \text{Int}(I)$.

\begin{defi}\label{Defi: Arch transversal arches}
Let $\cF$ be a singular foliation of $S$. An arc $J \subset S$ is transversal to $\cF$ if the interior of $J$ is contained in $S \setminus \textbf{Sing}(\cF)$, and for every $x \in J \setminus \textbf{Sing}(\cF)$ there exists a $C^0$-chart (as in Definition \ref{Defi: Singular foliation}) $\phi_x: U_x \rightarrow \cD_{2} \subset \CC$ such that, if $P_i$ is the projection onto the imaginary axes in $\CC$, then $P_i \circ \phi_x$ is injective on $U_x \cap \overset{o}{J}$. In other words, $\phi_x(\overset{o}{J})$ is topologically transversal to the leaves of $\cH_{2}$.
\end{defi}

Let $\cF$ be a singular foliation of $S$. A leaf-preserving isotopy between two arcs $\alpha$ and $\beta$ transversal to $\cF$ is an isotopy $H: [0,1] \times [0,1] \rightarrow S$ such that $H([0,1] \times {0}) = \alpha$ and $H([0,1] \times {1}) = \beta$, and for all $t \in [0,1]$, the following conditions hold:

\begin{itemize}
	\item $H([0,1] \times {t})$ is transverse to $\cF$,
	\item $H({0} \times [0,1])$ is contained in a single leaf of $\cF$.
	\item$H({1} \times [0,1])$ is contained in a single leaf of $\cF$.
\end{itemize}

In other words, we can deform $\alpha$ into $\beta$ without losing transversality to the foliation and without changing the leaves on which its endpoints move.

\begin{figure}[h]
	\centering
	\includegraphics[width=0.4\textwidth]{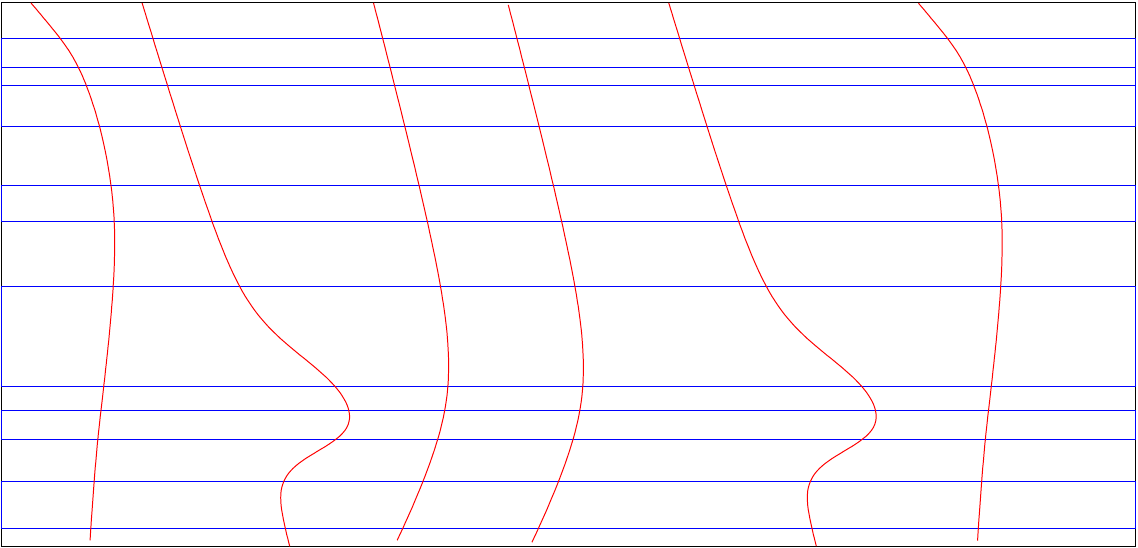}
	\caption{Leaf preserving isotopy}
	\label{Fig: Leaf preserving isotopy}
\end{figure}

\begin{defi}\label{Defi: Transverse measure}
Let $\cF$ be a singular foliation on $S$. An invariant transverse measure of $\cF$, denoted by $\mu$, is a Borel measure defined over any arc transversal to $\cF$, such that every non-trivial arc has positive (and finite) measure and every point has zero measure.

Even more, two leaf-isotopic arcs have the same measure. That is, if $\alpha$ and $\beta$ are transversal to $\cF$ and are leaf-isotopic, then
	$$
	\mu(\alpha)=\mu(\beta).
	$$
\end{defi}

\begin{defi}\label{Defi: measured foliation}
A pair $(\cF, \mu)$, where $\cF$ is a singular foliation of $S$ and $\mu$ is an invariant transverse measure of $\cF$, is a measurable foliation on the surface $S$.
\end{defi}

Let $f:S\rightarrow S$ be a homeomorphism of $S$ and $(\cF,\mu)$ be a measured foliation of $S$. The homeomorphism $f$ acts on the measured foliation in the following manner:

\begin{equation}\label{Equa: induced action on measured foliation}
f(\cF,\mu)=(f(\cF), f_*(\mu)).
\end{equation}

Here, $f(\cF)$ is the singular foliation of $S$ given by the image of leaves and singularities of $\cF$ under $f$. In other words, the elements of $f(\cF)$ are of the form $f(F)$, where $F\in \cF$.  For a transverse arc $\gamma$ to $f(\cF)$, we define the measure $f_*\mu$ as follows:

\begin{equation}\label{Equa: induced measure by f}
f_*(\mu)(\gamma)=\mu(f^{-1}(\gamma)).
\end{equation}

The equation $f_*(\mu)(\gamma)=\mu(f^{-1}(\gamma))$ only makes sense if $f^{-1}(\gamma)$ is transverse to $\cF$. As we have defined transversality at a topological level, this operation is well-defined. Moreover, the image of a measurable foliation under a homeomorphism is always a singular measured foliation, like we are going to see after.

\subsection{Pseudo-Anosov homeomorphism} In this thesis Hom$(S)$ is the group of homeomorphism over $S$ and Hom$_+(S)$ is the sub-group that preserve the orientation.

\begin{defi}\label{Defi: pseudo-Anosov}
Let $S$ be a closed and oriented surface. A homeomorphism $f: S \rightarrow S$ that preserve the orientation is a \emph{generalized pseudo-Anosov homeomorphism} if there are two measurable foliations $(\cF^s, \mu^s)$ and $(\cF^u, \mu^u)$, called the \emph{stable} and \emph{unstable} foliations of $f$, respectively, such that:
\begin{itemize}
	\item 
	A point $p$ is a $k$-prong singularity of $\cF^s$ if and only if $p$ is a $k$-prong singularity of $\cF^u$ for $k \geq 1$. The set of singularities of $f$ is given by:
	$$
	\textbf{Sing}(f):=\textbf{Sing}(\cF^s)=\textbf{Sing}(\cF^u).
	$$
	\item Every leaf of $\cF^s$ is transverse to every leaf of $\cF^u$ in the complement of their singularities.
	\item The foliations are invariant under $f$, i.e., $f(\cF^{s})=\cF^{s}$ and $f(\cF^{u})=\cF^{u}$.	
	\item 	There exists a number $\lambda > 1$, called the \emph{dilatation factor of} $f$, such that
	$$
	f_*(\mu^u)=\lambda\mu^u \text{ and } 	f_*(\mu^s)=\lambda^{-1}\mu^s 
	$$
\end{itemize}

\end{defi}

The set of spines of $f$ is denoted by \textbf{Sp}$(f)$. If $f$ has a non-empty set of spines, we call $f$ a \emph{pseudo-Anosov homeomorphism with spines}.  It is time to define the equivalence relation between homeomorphisms that we are going to study

\begin{defi}\label{Defi: Top conjugacy}
Let $S$ be a closed and orientable surface, and let $f:S\rightarrow S$ and $g:S\rightarrow S$ be two homeomorphisms. They are \emph{topologically conjugate} if there exists another homeomorphism $h:S\rightarrow S$ such that $g = h\circ f\circ h^{-1}$. In this case we said that   $h$ conjugates the homeomorphisms $f$ and $g$.
\end{defi}

The following proposition shows that the conjugacy class of a generalized  pseudo-Anosov homeomorphism is composed exclusively of generalized pseudo-Anosov homeomorphisms.

\begin{prop}\label{Prop:  conjugation produces pA}

Consider $f: S \rightarrow S$ as a generalized pseudo-Anosov homeomorphism, and let $h: S \rightarrow S$ be any other surface homeomorphism. Define $g = h \circ f \circ h^{-1}$. Then, $g: S \rightarrow S$ is also a generalized pseudo-Anosov homeomorphism of $S$ with the same dilation factor than $f$. Moreover, if $p \in S$ is a $k$-prong singularity of $f$ for $k \geq 1$, then $h(p) \in S$ is a $k$-prong singularity of $g$.
\end{prop}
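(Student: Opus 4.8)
The plan is to verify the definition of generalized pseudo-Anosov homeomorphism directly for $g = h \circ f \circ h^{-1}$, by transporting all the data of $f$ through $h$. Concretely, I would set $\cF^s_g := h(\cF^s_f)$ and $\cF^u_g := h(\cF^u_f)$, with transverse measures $\mu^s_g := h_*(\mu^s_f)$ and $\mu^u_g := h_*(\mu^u_f)$, using the action on measured foliations described in Equations \eqref{Equa: induced action on measured foliation} and \eqref{Equa: induced measure by f}. The bulk of the work is then to check, one bullet at a time, that the quadruple $((\cF^s_g,\mu^s_g),(\cF^u_g,\mu^u_g))$ satisfies Definition \ref{Defi: pseudo-Anosov} for $g$.

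First I would establish that $h(\cF^s_f)$ is genuinely a singular foliation in the sense of Definition \ref{Defi: Singular foliation}, and that $h$ carries a $k$-prong point of $\cF^s_f$ to a $k$-prong point of $h(\cF^s_f)$: this is because if $\phi_x : U_x \to \cD_{p(x)}$ is a chart adapted to $\cF^s_f$ at $x$, then $\phi_x \circ h^{-1} : h(U_x) \to \cD_{p(x)}$ is a chart adapted to $h(\cF^s_f)$ at $h(x)$, preserving $p(\cdot)$; hence \textbf{Sing}$(h(\cF^s_f)) = h(\textbf{Sing}(\cF^s_f))$ and likewise for the unstable foliation, which immediately gives \textbf{Sing}$(g) = h(\textbf{Sing}(f))$ and the final assertion about $k$-prong singularities. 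Similarly $h_*\mu$ is an invariant transverse measure: $h$ sends arcs transverse to $\cF^s_f$ to arcs transverse to $h(\cF^s_f)$ and leaf-isotopies to leaf-isotopies (transversality and leaf-isotopy were defined purely topologically in Definitions \ref{Defi: Arch transversal arches} and \ref{Defi: Transverse measure}), so $h_*\mu^s_f(\gamma) = \mu^s_f(h^{-1}(\gamma))$ is a well-defined Borel measure assigning positive finite mass to nontrivial arcs, zero to points, and equal mass to leaf-isotopic arcs. Transversality of the leaves of $\cF^s_g$ to those of $\cF^u_g$ away from singularities follows because $h$ is a homeomorphism and transversality is topological; and since $h$ preserves orientation and $f$ does too, $g$ preserves orientation.

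Next I would check invariance and the dilatation relation. Invariance: $g(\cF^s_g) = (h f h^{-1})(h(\cF^s_f)) = h(f(\cF^s_f)) = h(\cF^s_f) = \cF^s_g$, and likewise for $\cF^u_g$. For the measures, using functoriality of push-forward, $g_*(\mu^u_g) = (hfh^{-1})_* h_* \mu^u_f = h_* f_* \mu^u_f = h_*(\lambda \mu^u_f) = \lambda\, h_*\mu^u_f = \lambda \mu^u_g$, and symmetrically $g_*(\mu^s_g) = \lambda^{-1}\mu^s_g$; so $g$ has the same dilatation factor $\lambda > 1$ as $f$. This completes all four bullets of Definition \ref{Defi: pseudo-Anosov}, so $g$ is a generalized pseudo-Anosov homeomorphism.

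I do not expect a serious obstacle here — the proposition is essentially bookkeeping, exploiting that every notion involved (singular foliation chart, transversality of arcs, leaf-preserving isotopy, the action $f(\cF,\mu)$) has been set up at the purely topological level, so it is manifestly compatible with conjugation by a homeomorphism. The only point requiring a little care is the claim that $h$ maps transverse arcs to transverse arcs and leaf-isotopies to leaf-isotopies, i.e.\ that the adapted charts transform correctly; but this is exactly the chart computation $\phi_x \mapsto \phi_x \circ h^{-1}$ noted above, and it also yields the $k$-prong statement for free. One should also remark that $h$ need not preserve orientation for $g$ to be pseudo-Anosov, but the statement as phrased takes $h$ arbitrary and only asserts $g$ preserves orientation when $f$ does — which fails if $h$ reverses orientation; so I would either assume $h \in \mathrm{Hom}_+(S)$ for that last clause, or note that $g$ is pseudo-Anosov regardless and orientation-preserving exactly when $h$ and $f$ have the same orientation behavior.
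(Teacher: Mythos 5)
Your proof is correct and follows essentially the same route as the paper: transport the foliations and measures by $h$, verify the chart condition $\phi_x \mapsto \phi_x \circ h^{-1}$ at singularities (which gives the $k$-prong statement), check transversality and invariance topologically, and compute $g_*\mu^{s,u}_g$ to recover the dilatation factor $\lambda$.

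The one place you go astray is the closing caveat about orientation: conjugation by an orientation-\emph{reversing} $h$ still produces an orientation-preserving $g$, since $\deg(h\circ f\circ h^{-1})=\deg(h)\deg(f)\deg(h^{-1})=\deg(f)$ — this is exactly the degree argument the paper uses in its footnote — so no extra hypothesis $h\in\mathrm{Hom}_+(S)$ is needed and the statement is correct for arbitrary $h$.
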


\begin{proof}

Let $(\cF^{s,u},\mu^{u,s})$ be the invariant foliations of $f$. It is clear that $h(\cF^{s,u},\mu^{s,u})$ forms a singular foliation on $S$, as we can obtain $C^0$ charts of $h(\cF^{s,u})$ at a point $y \in S$ by composing $\phi_{h^{-1}(y)}$ with $h^{-1}(y)$. The conjugation between $f$ and $g$ implies that $h(\cF^{s,u})$ is invariant under the action of $g$. Using these charts, it is possible to see that the foliations of $g$ remain (topologically) transverse away from the singular points.

On the other hand, $h$ maps the singularities of $f$ to the singularities of $g$. Note that if $p$ is a $k$-prong singularity for $f$ with the chart $(U, \phi)$ around the point $p$ that maps the foliations of $f$ to $k$-prong curves in $\mathbb{C}$, then $(h(U), \phi \circ h^{-1})$ is a chart around $h(p)$ that maps the foliations of $g$ to $k$-prong curves in $\mathbb{R}^2$. Hence, $h(p)$ is a $k$-prong singularity for $g$.

If $J$ is an arc transversal to $h(\cF^{s,u})$ (in a topological sense), then $h^{-1}(J)$ is a transversal to $\cF^{s,u}$, and it is a non-trivial interval if and only if $J$ is a non-trivial interval. The pre-image of every Borel set contained in $J$ is a Borel set under $h^{-1}$ as $h$ is an homeomorphism. Hence, it is meaningful to calculate $\mu^{s,u}(h^{-1}(J))$ for every transversal to $h(\cF^{s,u})$, and we have $h_*(\mu^{s,u})(J) = \mu^{s,u}(h^{-1}(J))$. Now, let's observe the following computation, where $\lambda$ is the dilatation factor of $f$:

\begin{eqnarray*}
h_*(\mu^s(g(J)))=\mu^s(h^{-1}(h\circ f\circ h^{-1})(J))=\\
\mu^s(f(h^{-1}(J)))=\lambda^{-1}\mu^s(h^{-1}(J))=h_*\mu^s(J).
\end{eqnarray*}

This implies that $g$ has the same dilation factor as $f$.

Finally, if $f$ preserves the orientation of $S$, then any conjugation of $f$ also preserves the orientation \footnote{The homeomorphism $f$ preserves the orientation if the induced map in the second homology group of $S$ has degree one. Therefore, we can use the fact that deg$(h \circ f \circ h^{-1}) =$ deg$(h) \cdot$ deg$(f) \cdot$ deg$(h^{-1}) =$ deg$(f)$.}. This implies that $g$ preserves the orientation and is a generalized pseudo-Anosov homeomorphism of $S$.
\end{proof}

The theory of pseudo-Anosov homeomorphisms can indeed be extended to surfaces with boundaries, although the definition of pseudo-Anosov homeomorphisms is not canonical. It is important to note, as mentioned in  \cite[exposition 13]{fathi2021thurston}, that given a generalized pseudo-Anosov homeomorphism, we can perform a blow-up on the spine and obtain a pseudo-Anosov homeomorphism on a surface with boundaries. The theory of pseudo-Anosov homeomorphisms on surfaces with boundaries has been developed by B. Farb and D. Margalit in their book \cite{farb2011primer}. They have proved Proposition \ref{Prop: pseudo-Anosov properties.} in the case of surfaces with boundaries, and these results can be applied to the invariant foliations of a generalized pseudo-Anosov homeomorphism, which will be used in our future constructions.

\begin{prop}\label{Prop: pseudo-Anosov properties.}
Let $f: S \rightarrow S$ be a generalized pseudo-Anosov homeomorphism with invariant foliations $(\cF^s, \mu^s)$ and $(\cF^u, \mu^u)$. They have the following properties:
\begin{enumerate}
\item   None of the foliations contains a closed leaf, and none of the leaves contains two singularities (\cite[Lemma  14.11]{farb2011primer})	
\item They are \emph{minimal}, i.e., any leaf of the foliations is dense in $S$  \cite[Corollary 14.15]{farb2011primer}).
\end{enumerate}
\end{prop}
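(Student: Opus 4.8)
The plan is to deduce both statements from the classical theory of measured foliations of (genuine) pseudo-Anosov homeomorphisms, transporting those results through the blow-up operation on spines mentioned in the excerpt. The key observation is that a generalized pseudo-Anosov $f\colon S\to S$ with spine set $\textbf{Sp}(f)\neq\emptyset$ becomes, after blowing up each spine to a boundary circle, a pseudo-Anosov homeomorphism $\hat f\colon \hat S\to \hat S$ of a surface with boundary, to which Farb--Margalit's results (\cite[Lemma 14.11, Corollary 14.15]{farb2011primer}) apply directly; if $\textbf{Sp}(f)=\emptyset$ then $f$ is already pseudo-Anosov in the classical sense and no blow-up is needed. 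So the whole proof reduces to: (i) state the corresponding properties for $\hat f$, and (ii) check they descend under the collapsing map $\hat S\to S$.

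For part $1$, I would argue by contradiction. Suppose $\cF^u$ (say) contains a closed leaf $\gamma$. Since leaves pass through no singularity unless they terminate at one, and a closed leaf is disjoint from $\textbf{Sing}(f)$, this leaf survives the blow-up as a closed leaf of the unstable foliation of $\hat f$ disjoint from $\partial\hat S$; this contradicts \cite[Lemma 14.11]{farb2011primer} applied to $\hat f$. The same reasoning rules out a leaf containing two singularities: such a leaf would correspond, after blow-up, to a leaf of $\hat f$ joining two singular prongs (interior singularities of $\hat f$), again contradicting the cited lemma. One must be slightly careful about spines themselves, which are genuine singularities of $f$ (with $p=1$) but become boundary components of $\hat S$; the statement "no leaf contains two singularities" should be read as including spines, and a leaf with two endpoints on $\partial\hat S$ (or one endpoint at a spine and one at a $k$-prong) is likewise excluded by the corresponding structure theorem for pseudo-Anosov maps on surfaces with boundary.

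For part $2$, minimality, I would again work upstairs: every leaf of the unstable foliation of $\hat f$ is dense in $\hat S$ by \cite[Corollary 14.15]{farb2011primer}. Let $\ell$ be a leaf of $\cF^u$ and $U\subset S$ a nonempty open set; choose a nonempty open $\hat U\subset \hat S$ mapping into $U$ under the collapsing map $c\colon\hat S\to S$, and a lift $\hat\ell$ of $\ell$ (a leaf or union of finitely many leaf-segments of $\hat f$). By density of each leaf of $\hat f$, $\hat\ell$ meets $\hat U$, hence $\ell=c(\hat\ell)$ meets $U$; so $\ell$ is dense in $S$. The symmetric argument handles $\cF^s$ (or one invokes the fact that $\cF^s$ is the unstable foliation of $f^{-1}$, which is also generalized pseudo-Anosov by Proposition~\ref{Prop: conjugation produces pA}-type reasoning, or directly by Definition~\ref{Defi: pseudo-Anosov}).

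The main obstacle I anticipate is making the blow-up/collapse correspondence precise enough that the cited theorems genuinely apply: one needs that blowing up a $1$-prong produces a boundary component around which the foliation has the standard "one-pronged boundary" model used in \cite{farb2011primer}, that the transverse measure extends appropriately, and that no new closed leaves or singularity-joining leaves are created or destroyed by the operation. Once this dictionary is set up cleanly — which is essentially the content of \cite[exposition 13]{fathi2021thurston} — both parts follow formally, so I would spend most of the write-up pinning down the blow-up correspondence and then state parts $1$ and $2$ as short corollaries of the boundary-case results.
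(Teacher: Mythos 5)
Your route is exactly the one the paper takes: the paper gives no independent proof, but in the paragraph preceding the statement it invokes the blow-up of spines (citing \cite[exposition 13]{fathi2021thurston}) to reduce to the surface-with-boundary case, where the two properties are quoted from \cite[Lemma 14.11, Corollary 14.15]{farb2011primer}. Your write-up simply makes that reduction and the descent through the collapsing map explicit, so it is correct and essentially the same approach.
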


\subsection{The sectors of a point}

There is a natural local stratification of the surface $S$ given by the transverse foliation of a pseudo-Anosov homeomorphism $f$. We are going to exploit it. The following result corresponds to \cite[Lemme 8.1.4]{bonatti1998diffeomorphismes}

\begin{theo}\label{Theo: Regular neighborhood}
Let $f: S \rightarrow S$ be a generalized pseudo-Anosov homeomorphism, and let $p \in S$ be a singularity with $k \geq 1$ separatrices. Then there exists $\epsilon_0 > 0$ such that for all $0 < \epsilon < \epsilon_0$, there is a neighborhood $D$ of $p$ with the following properties:
\begin{itemize}
		\item The boundary of $D$ consists of $k$ segments of unstable leaves alternated with $k$ segments of stable leaves.
	\item For every (stable or unstable) separatrice $\delta$ of $p$, the connected component of $\delta \cap D$ which contains $p$ has a measure (stable or unstable) equal to $\epsilon$.
\end{itemize}

We say that $D$ is a \emph{regular neighborhood} of $p$ with side length $\epsilon$. If necessary, to make it clear, we write $D(p,\epsilon)$.
\end{theo}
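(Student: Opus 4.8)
The statement is a local assertion about the structure of the invariant foliations near a singularity, so the proof will be carried out entirely in a local chart. The plan is to work in the branched-cover model of Definition~\ref{Defi: Singular foliation}: around a $k$-prong singularity $p$ there is a $C^0$-chart $\phi_p : U_p \to \cD_{p(x)}$ with $p(x) = k$, sending the stable and unstable leaves to the horizontal and vertical decompositions $\cH_k$ and $\cV_k$ of $\cD_k$, which are lifted via the $k$-fold branched cover $\pi_k : \cD_k \to \cD_1$ from the horizontal and vertical foliations of the half-disc $\cD_1 = \pi_2(\cD_2)$. First I would upgrade this topological chart to one compatible with the transverse measures: along each of the $k$ stable separatrices and each of the $k$ unstable separatrices emanating from $p$, reparametrize by arc length with respect to $\mu^s$ (resp.\ $\mu^u$). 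Concretely, since $\mu^{s}$ and $\mu^{u}$ assign positive finite mass to every nondegenerate transverse arc and zero mass to points (Definition~\ref{Defi: Transverse measure}), the map which sends a point $q$ on an unstable separatrice to the $\mu^s$-length of the stable arc from $p$ to $q$ — wait, rather: sends $q$ on a stable leaf near $p$ to its transverse $\mu^u$-coordinate and to the $\mu^s$-distance along the leaf — is a well-defined homeomorphism onto its image, because leaf-preserving isotopies preserve the measure. This gives a "measured chart" in which the stable foliation is $\{y = \text{const}\}$, the unstable foliation is $\{x = \text{const}\}$ in the $\cD_1$-model, and the $\mu^u$-measure of a horizontal transverse arc equals its Euclidean length (similarly for $\mu^s$ and vertical arcs).

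\textbf{Key steps.} With such a chart fixed, I would proceed as follows.

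(1) Choose $\epsilon_0 > 0$ small enough that the measured chart $\phi_p$ is defined on a neighborhood containing the "square" $\{|x|\le \epsilon_0,\ |y|\le\epsilon_0\}$ in the $\cD_k$-model; this is possible because $\phi_p(U_p) = \cD_k$ is open and contains $0$, and the measure-normalization only shrinks the domain by a bounded amount.

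(2) For $0 < \epsilon < \epsilon_0$, define $D = D(p,\epsilon)$ to be the $\phi_p$-preimage of the set in $\cD_k$ lying over the square $[-\epsilon,\epsilon]^2 \cap (\text{closure of } \cD_1)$; equivalently, $D$ is the union of $k$ "corner sectors," one lifted into each sheet of the branched cover, each sector being bounded by one stable and one unstable segment of $\mu$-length $\epsilon$ meeting at $p$.

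(3) Verify the first bullet: the boundary $\partial D$ pulls back from the boundary of the square, which consists of $k$ horizontal segments (stable leaf segments) alternating with $k$ vertical segments (unstable leaf segments) as one goes around the $k$-fold cover of the corner; pushing forward by $\phi_p^{-1}$ this is exactly $k$ stable segments alternating with $k$ unstable segments. Here one must check that $\partial D$ consists of genuine leaf segments and not portions through $p$ — this is immediate since the square's boundary avoids the corner $0$ except at its two endpoints, and $\pi_k^{-1}(0)$ is a single point over which $p$ sits.

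(4) Verify the second bullet: each separatrice $\delta$ of $p$ corresponds, in the model, to a horizontal or vertical ray from $0$; the component of $\delta \cap D$ containing $p$ is the image of the segment of that ray of Euclidean length $\epsilon$, which by the measure-normalization of the chart has $\mu^s$- (resp.\ $\mu^u$-) measure exactly $\epsilon$.

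\textbf{Main obstacle.} The routine topological parts (the branched-cover bookkeeping, the alternation of boundary segments) are straightforward once the chart is in place. The real work is step~(1)–the construction of the \emph{measured} chart–and in particular checking that it is genuinely a homeomorphism onto an open neighborhood of $p$, not merely a continuous map. The subtlety is that the transverse measure $\mu^u$ is only defined on transverse arcs, so one must argue that the "holonomy along stable leaves" used to spread the transverse coordinate over a two-dimensional neighborhood is continuous and injective; this uses the invariance of $\mu^u$ under leaf-preserving isotopy (Definition~\ref{Defi: Transverse measure}) together with the fact, from Proposition~\ref{Prop: pseudo-Anosov properties.}, that leaves near $p$ other than the separatrices themselves do not return to $p$, so the local leaf structure is exactly that of the product model $\cD_k$ and the holonomy is well-defined. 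Once continuity and injectivity of the measured chart are established, invariance of domain (or just the explicit product form of $\cD_k$) gives that it is open, and the neighborhood $D$ and both bullet points follow as above. Since this is essentially the content of \cite[Lemme 8.1.4]{bonatti1998diffeomorphismes}, I would expect to cite the local normal-form results there for the measured chart and then give the short branched-cover argument for the two bullets.
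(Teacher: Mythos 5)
The paper itself gives no proof of this statement: it is imported verbatim from \cite[Lemme 8.1.4]{bonatti1998diffeomorphismes}, so the only comparison available is with the standard local argument you are reconstructing. Your overall plan — local model chart, measured coordinates built from $\mu^{s},\mu^{u}$ via leaf-isotopy invariance, branched-cover bookkeeping, with the construction of the measured chart correctly singled out as the only real analytic work — is indeed that argument, and citing the normal form of \cite{bonatti1998diffeomorphismes} is exactly what the paper does.

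However, the explicit construction of $D$ in your steps (2)--(3) does not work as written. The foliations $\cH_{1},\cV_{1}$ of $\cD_{1}$ are the push-forwards under $z\mapsto z^{2}$ of the horizontal and vertical foliations of $\cD_{2}$; they are \emph{not} the horizontal and vertical line foliations of $\cD_{1}\subset\CC$, and no chart can straighten the stable foliation to $\{y=\mathrm{const}\}$ on the whole $\cD_{1}$-model, since that model contains the singular leaf. Hence the Euclidean square $[-\epsilon,\epsilon]^{2}\cap\overline{\cD_{1}}$ is not bounded by leaves, and neither is its $\pi_{k}$-preimage, so the set you propose as $D$ does not have boundary made of stable and unstable segments. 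The repair is the standard sector-by-sector construction: a singularity with $k$ separatrices has $2k$ sectors (not $k$), each bounded by a stable separatrix and the adjacent unstable one, and on the closure of each sector your measured holonomy coordinates do give a genuine product chart near $p$. Take in each sector the measured square $[0,\epsilon]\times[0,\epsilon]$ with corner at $p$, and let $D$ be the union of these $2k$ squares, choosing $\epsilon_{0}$ so small that they all lie in the chart domain $U_{p}$. Then the separatrix segments of measure $\epsilon$ are \emph{interior} to $D$ — they are common sides of adjacent sector-squares, not part of $\partial D$ as your step (2) suggests — while the outer sides glue in pairs across the separatrices into $k$ stable and $k$ unstable arcs of $\partial D$, alternating, each crossing exactly one separatrix at the point at measured distance $\epsilon$ from $p$; this yields both bullets. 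Equivalently, one may take the $\pi_{2}$-image of a measured square in the nonsingular model $\cD_{2}$ and lift it through $\pi_{k}$, but the square must be taken upstairs, where the foliations really are horizontal and vertical, never downstairs in $\cD_{1}$.
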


\begin{figure}[h]
	\centering
	\includegraphics[width=0.4\textwidth]{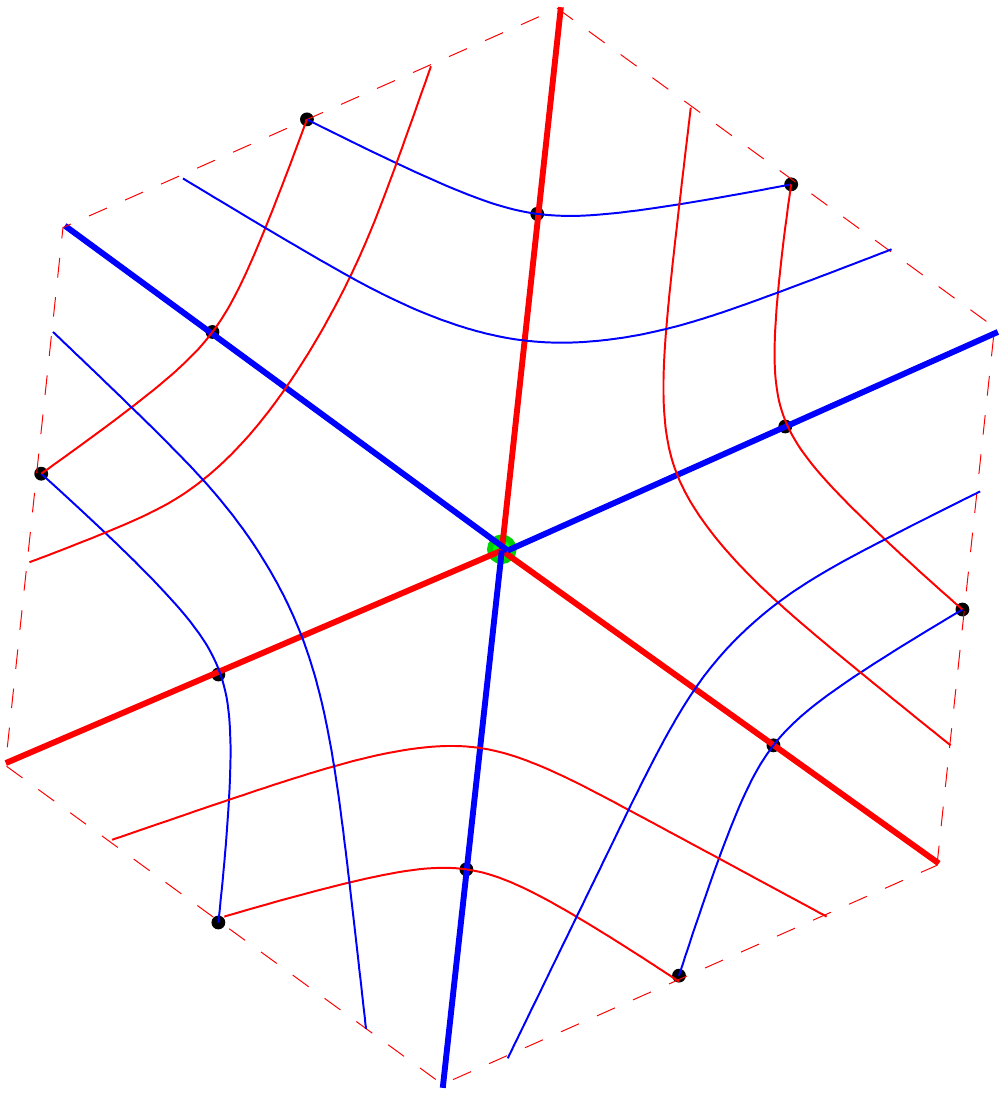}
	\caption{Regular neighborhood of a $3$-prong}
	\label{Fig: Regular neighborhood}
\end{figure}

Let $D(p,\epsilon)$ be a regular neighborhood of $p$ with side length $0<\epsilon\leq\epsilon_0$. We assume that $p$ has $k$ separatrices. Using the orientation of $S$, we label the stable and unstable separatrices of $p$ cyclically counterclockwise as $\{\delta_i^s\}_{i=1}^k(\epsilon)$ and $\{\delta_i^u\}_{i=1}^k(\epsilon)$.

 We define $\delta_i^s(\epsilon)$ as the connected component of $\delta_i^s\cap D(p,\epsilon)$ containing $p$, and $\delta_i^u(\epsilon)$ as the connected component of $\delta_i^u\cap D(p,\epsilon)$ containing $p$. We assume that $\delta_i^u(\epsilon)$ is located between $\delta_i^s(\epsilon)$ and $\delta_{i+1}^s(\epsilon)$, where $i$ is taken modulo $k$.
  
The connected components of $\text{Int}(D(p,\epsilon_0)) \setminus (\cup_{i=i}^k \delta_{i=1}^s(\epsilon_0) \cup \delta_{i=1}^u(\epsilon_0))$ are labeled with a cyclic order and denoted as $\{E(\epsilon_0)_j(p)\}_{j=1}^{2k}$, where the boundary of $E(\epsilon_0)_1(p)$ consists of $\delta^s_1(\epsilon_0)$ and $\delta^u_1(\epsilon_0)$. These conventions lead to the following definitions.

\begin{defi}\label{Defi:converge in a sector }
Let $\{x_n\}$ be a sequence convergent to $p$. We say $\{x_n\}$ \emph{converges to $p$ in the sector} $j$ if and only if there exists $N \in \mathbb{N}$ such that for every $n > N$, $x_n \in E(\epsilon_0)_j(p)$. The set of sequences convergent to $p$ in the sector $j$ is denoted by $E(p)_j$.
\end{defi}

The set of sequences that converge to $p$ in a sector is $\cup_{j=1}^{2k} E(p)_j$. We are going to define an equivalence relation on this set.

\begin{defi}\label{Defi: Sector equiv}
	Let $\{x_n\}$ and $\{y_n\}$ be sequences that converge to $p$ in a sector. They are in the same sector of $p$, and we denote ${x_n} \sim_q {y_n}$ if and only if $\{x_n\}$ and $\{y_n\}$ belong to the same set $E(p)_j$.
\end{defi}

\begin{rema}\label{Rema: caracterisation sim-p}
The previous definition is equivalent to the existence of $j \in \{1, \ldots, 2k\}$ and $N \in \mathbb{N}$ such that for all $n \geq N$, $x_n, y_n \in E(\epsilon_0)_j(p)$. We are using this characterization to prove the following lemma.
\end{rema}

\begin{lemm}\label{Lemm: Equiv relation}
In the set of sequences that converge to $p$ in a sector, $\sim_q$ is an equivalence relation. Moreover, every equivalence class consists of the set $E(p)_j$ for certain $j \in \{1, \ldots, 2k\}$.
\end{lemm}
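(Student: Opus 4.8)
The plan is to verify the three axioms of an equivalence relation directly from the characterization in Remark~\ref{Rema: caracterisation sim-p}, and then to extract the description of the equivalence classes as a byproduct. Throughout, a sequence $\{x_n\}$ that converges to $p$ in a sector means precisely that it is eventually contained in a single $E(\epsilon_0)_j(p)$; by Definition~\ref{Defi:converge in a sector } such a $j$ is unique, since the sectors $E(\epsilon_0)_j(p)$ are pairwise disjoint open sets. The key observation, which I would state first, is therefore that the map sending a sequence $\{x_n\}$ converging to $p$ in a sector to its (well-defined) index $j(\{x_n\})\in\{1,\dots,2k\}$ is well-defined on the whole set $\cup_{j=1}^{2k}E(p)_j$, and that $\{x_n\}\sim_q\{y_n\}$ holds if and only if $j(\{x_n\})=j(\{y_n\})$.

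Once this is established, the proof is routine. For \textbf{reflexivity}: any $\{x_n\}$ in the domain satisfies $j(\{x_n\})=j(\{x_n\})$, so $\{x_n\}\sim_q\{x_n\}$. For \textbf{symmetry}: if $j(\{x_n\})=j(\{y_n\})$ then trivially $j(\{y_n\})=j(\{x_n\})$. For \textbf{transitivity}: if $j(\{x_n\})=j(\{y_n\})$ and $j(\{y_n\})=j(\{z_n\})$ then $j(\{x_n\})=j(\{z_n\})$. In the more hands-on phrasing of Remark~\ref{Rema: caracterisation sim-p}, transitivity requires producing a single $N$ and a single index $j$ valid for $x_n$ and $z_n$ simultaneously: take $N:=\max\{N_1,N_2\}$ where $N_1$ witnesses $\{x_n\},\{y_n\}\in E(\epsilon_0)_j(p)$ for $n\ge N_1$ and $N_2$ witnesses $\{y_n\},\{z_n\}\in E(\epsilon_0)_{j'}(p)$ for $n\ge N_2$; since for $n\ge N$ the point $y_n$ lies in both $E(\epsilon_0)_j(p)$ and $E(\epsilon_0)_{j'}(p)$, disjointness of the sectors forces $j=j'$, so $x_n,z_n\in E(\epsilon_0)_j(p)$ for all $n\ge N$.

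For the final assertion about the equivalence classes, I would argue that the equivalence class of $\{x_n\}$ is exactly $E(p)_{j(\{x_n\})}$: by definition of $\sim_q$ every sequence $\sim_q$-related to $\{x_n\}$ lies in $E(p)_{j(\{x_n\})}$, and conversely every element of $E(p)_{j(\{x_n\})}$ has the same index and hence is $\sim_q$-related to $\{x_n\}$. Thus the classes are precisely the nonempty sets among $E(p)_1,\dots,E(p)_{2k}$ (each is nonempty since points of $E(\epsilon_0)_j(p)$ near $p$ furnish convergent sequences in that sector).

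I do not expect a genuine obstacle here: the only subtlety is making sure that "converges to $p$ in the sector $j$" pins down $j$ uniquely, which follows from the disjointness of the open sets $E(\epsilon_0)_j(p)$ in Theorem~\ref{Theo: Regular neighborhood}'s regular neighborhood, together with the fact that a sequence eventually inside two disjoint sets would have to be eventually inside their empty intersection. Everything else is a formal check of the equivalence-relation axioms.
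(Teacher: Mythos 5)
Your proof is correct and follows essentially the same route as the paper: the only nontrivial point is transitivity, which the paper also handles by taking $N=\max\{N_1,N_2\}$ and using the fact that $y_n$ would otherwise lie in two distinct (disjoint) components $E(\epsilon_0)_j(p)\cap E(\epsilon_0)_{j'}(p)$, forcing $j=j'$. Your added framing via the well-defined index map and the explicit identification of the classes with the sets $E(p)_j$ is just a slightly more detailed write-up of the same argument.
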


\begin{proof}
	The less evident property is transitivity. If $\{x_n\}\sim_q \{x_n\}$ and $\{y_n\}\sim_q \{z_n\}$, suppose $x_n,y_n \in  E(\epsilon_0)_j(p)$ for $n>N_1$ and $y_n,z_n \in  E(\epsilon_0)_{j'}(p)$ for $n>N_1$. For every $n>N:=\max{N_1,N_2}$, $y_n\in  E(\epsilon_0)_j(p)\cap  E(\epsilon_0)_{j'}(p)$This is only possible if and only if $j = j'$.
\end{proof}

\begin{defi}\label{Defi: Sector}
	The equivalent class of  successions convergent to $p$ in the sector $j$  is the \emph{sector}  $e(p)_j$ of $p$.
\end{defi}

This notion is important in view of the next proposition, which establishes that generalized pseudo-Anosov homeomorphisms have a well-defined action on the sectors of a point.

\begin{prop}\label{Prop: image secto is a sector}
Let $f$ be a generalized pseudo-Anosov homeomorphism and $p$ any point in the underlying surface with $k$ different separatrices. If  $\{x_n\}\in e(p)_j$, there is a unique $i\in \{1,\cdots, 2k\}$ such that $\{f(x_n)\}\in e(f(p))_i$. This can be summarized by saying that the image of the sector  $e(p)_j$ is the sector  $e(f(p))_{i}$.
\end{prop}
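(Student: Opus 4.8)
The plan is to show that conjugating the germ of the sector structure at $p$ by $f$ gives the germ of the sector structure at $f(p)$, so that $f$ descends to a well-defined bijection on equivalence classes. First I would fix a regular neighborhood $D(p,\epsilon_0)$ as in Theorem~\ref{Theo: Regular neighborhood} and, since $f$ is a homeomorphism that maps the invariant foliations $(\cF^s,\mu^s)$ and $(\cF^u,\mu^u)$ to themselves and scales the transverse measures by $\lambda^{\pm 1}$, observe that $f(D(p,\epsilon_0))$ is a neighborhood of $f(p)$ whose boundary consists of $k$ stable segments alternating with $k$ unstable segments; it is not literally a regular neighborhood of $f(p)$ (its side lengths are $\lambda^{\pm1}\epsilon_0$, and they differ between stable and unstable sides), but it \emph{is} contained in — and contains — genuine regular neighborhoods $D(f(p),\epsilon_1)\subset f(D(p,\epsilon_0))\subset D(f(p),\epsilon_2)$ for suitable $0<\epsilon_1<\epsilon_2\le\epsilon_0(f(p))$, because the separatrices of $f(p)$ are exactly the images under $f$ of the separatrices of $p$ (Proposition~\ref{Prop: pseudo-Anosov properties.} guarantees leaves carry at most one singularity, so this correspondence of separatrices is unambiguous). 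The key point is that $f$ maps each connected component $E(\epsilon_0)_j(p)$ of $\mathrm{Int}(D(p,\epsilon_0))$ minus the $2k$ separatrix segments homeomorphically onto a connected component of $f(\mathrm{Int}(D(p,\epsilon_0)))$ minus the images of those segments, and the latter region is, up to intersecting with a smaller regular neighborhood, one of the $E(\epsilon_2)_i(f(p))$.

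Next I would make the reduction to sequences precise. Suppose $\{x_n\}\in e(p)_j$, so (by Remark~\ref{Rema: caracterisation sim-p}) there is $N$ with $x_n\in E(\epsilon_0)_j(p)$ for all $n\ge N$. Since $f$ is continuous, $f(x_n)\to f(p)$, and since $f$ restricts to a homeomorphism of the interior sending the separatrix-segments of $p$ to those of $f(p)$, the points $f(x_n)$ lie, for $n\ge N$, in a single connected component of $\mathrm{Int}(f(D(p,\epsilon_0)))\setminus(\text{separatrix segments of }f(p))$. Intersecting with a regular neighborhood $D(f(p),\epsilon_1)$ contained in $f(D(p,\epsilon_0))$, this component refines into exactly one sector-region $E(\epsilon_1)_i(f(p))$ that accumulates at $f(p)$ (the component is connected, it touches $f(p)$, and the separatrix segments of $f(p)$ inside $D(f(p),\epsilon_1)$ cut $\mathrm{Int}(D(f(p),\epsilon_1))$ into the $2k$ pieces $E(\epsilon_1)_i(f(p))$). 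Hence for $n$ large enough $f(x_n)\in E(\epsilon_1)_i(f(p))$, i.e.\ $\{f(x_n)\}\in e(f(p))_i$, and $i$ is uniquely determined because the $E(\epsilon_1)_i(f(p))$ are pairwise disjoint. Uniqueness of $i$ independent of the chosen representative sequence then follows from Lemma~\ref{Lemm: Equiv relation}: two representatives of $e(p)_j$ eventually lie in the same $E(\epsilon_0)_j(p)$, so their images eventually lie in the same component, hence in the same $E(\epsilon_1)_i(f(p))$.

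The step I expect to be the main obstacle is the careful bookkeeping of \emph{which} regular neighborhood to use, since $f(D(p,\epsilon_0))$ fails to be a regular neighborhood of $f(p)$ on the nose — the stable and unstable sides get scaled by the reciprocal factors $\lambda$ and $\lambda^{-1}$. The clean way around this is to argue entirely with germs: the sector classes $e(p)_j$ depend only on the germ at $p$ of the pair of foliations together with the cyclic labelling of separatrices, not on any particular $\epsilon$; one checks (using that the $E(p)_j$ for different $\epsilon$ are cofinal in each other, which is implicit in Definition~\ref{Defi:converge in a sector } and Lemma~\ref{Lemm: Equiv relation}) that shrinking $\epsilon$ does not change the partition into sectors. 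Granting that, $f$ carries the germ of $(\cF^s,\cF^u)$ at $p$ with its separatrix labelling to the germ at $f(p)$ with the induced labelling — possibly with a cyclic shift and, if $f$ were orientation-reversing, a reflection, but here $f$ preserves orientation — and therefore induces a bijection from $\{e(p)_j\}_{j=1}^{2k}$ to $\{e(f(p))_i\}_{i=1}^{2k}$. This gives the asserted unique $i$ with $f(e(p)_j)=e(f(p))_i$.
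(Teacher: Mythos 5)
Your proof is correct and follows essentially the same route as the paper: both arguments push the sector region forward by $f$, use the correspondence of bounding separatrices to pin down a unique sector of $f(p)$, and then apply this to the tail of a representative sequence via Remark~\ref{Rema: caracterisation sim-p}. The only cosmetic difference is that the paper shrinks the source neighborhood (choosing $\epsilon$ with $f(D(p,\epsilon))\subset D(f(p),\epsilon_0)$) whereas you shrink the target one ($D(f(p),\epsilon_1)\subset f(D(p,\epsilon_0))$), together with your explicit germ/independence-of-$\epsilon$ remark, which the paper uses implicitly.
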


\begin{proof}
	Note that $p$ is a $k$-prong a regular points or a spine if and only if $f(p)$ is a $k$-prong, a regular point or a spine, therefore $p$ and $f(p)$ have same number of sectors. Let $0<\epsilon<\epsilon_0$ be such that $f(D(p,\epsilon)) \subset D(f(p),\epsilon_0)$, such $\epsilon$ exist because $f$ is continuous.

	Let $\delta^s(p)_j$ and $\delta^u(p)_j$ be the separatrices of $p$ which bound the set $E(\epsilon)_{j}(p) \subset S$, then $f(\delta^s(p)_j)$ and $f(\delta^u(p)_j)$ are in two contiguous separatrices of $f(p)$, which determine a unique set $E(\epsilon_0)_{i}(f(p))$ for some $i\in \{1,\cdots,2k\}$.

	Let $N\in \NN$ such that, for all $n>N$, $x_n\in  E(\epsilon)_j(p)$ This implies that, for every $n>N$, $f(x_n)\in E(\epsilon_0)_{i}(f(p)$ and the succession $\{f(x_n)\}$ is in the sector $e(f(p))_{i}$.
\end{proof}

\section{Geometric Markov partitions}

\subsection{Rectangles} A Markov partition of pseudo-Anosov homeomorphism with marked points $f:S\rightarrow S$ is a decomposition of the surface  in pieces well adapted to its foliations, $(\cF^s,\mu^s)$ and $(\cF^u,\mu^u)$, and well behaved with respect iterations of $f$. Such pieces are called \emph{rectangles}. This sets admits a parametrization like follows

\begin{defi}\label{Defi: rectangle}
Let $f$ be a generalized pseudo-Anosov homeomorphism, and let $R\subset S$ be a compact subset. $R$ is a rectangle if there exists a continuous function $\rho:[0,1]\times [0,1] \rightarrow S$ satisfying the following conditions:
	\begin{itemize}
		\item Its image is $R$: $h([0,1]\times [0,1])=R$.
		\item Restricted to $(0,1)\times (0,1)$, $\rho$ is a homeomorphic embedding.
		\item For every $t\in [0,1]$, $I_t:=\rho([0,1]\times \{t\})$ is contained in a unique leaf of $\mathcal{F}^s$, and $\rho$ restricted to $[0,1]\times \{t\})$ is a homeomorphism onto its image. 
		\item For every $t\in [0,1]$, $J_t:=\rho(\{t\} \times [0,1])$ is contained in a unique leaf of $\mathcal{F}^u$, and $\rho$ restricted to $\{t\}\times [0,1]$  is a homeomorphism onto its image.
	\end{itemize}
The function $\rho$ is called a parametrization of the rectangle $R$.
\end{defi}

A rectangle $R$ admits more than one parametrization. In fact, there are two distinguished classes of parametrizations that will allow us to formulate the notion of oriented rectangles.
 
 \begin{defi}\label{Defi: equivalent parametrizations}
Let $\rho_1$ and $\rho_2$ be two parametrizations of the rectangle $R$. They are equivalent if
 $$
\rho_2^{-1} \circ \rho_1:= (\varphi_1,\varphi_2):(0,1)\times (0,1)\rightarrow (0,1)\times (0,1),
$$ 
satisfies the condition that $\varphi_i: (0,1) \to (0,1)$ is an increasing homeomorphism for $i = 1,2$. 

Is to said  that the change of parametrization preserve the orientation in the stable and unstable leaves at the same time.
 \end{defi}

Imagine that $\rho_1$ and $\rho_2$ are equivalent parametrizations of $R$, and let $I_t:=\rho_1([0,1]\times \{t\})$ and $I_s:=\rho_2([0,1]\times \{s\})$  be two intervals intersecting  within their interiors. In such a case,
 $$
 \rho_2^{-1} \circ \rho_1\circ ((0,1)\times \{t\}):= (\varphi_1(0,1),\varphi_2(t))=(\varphi_1(0,1),s)
 $$
 Like $\varphi_1(0,1)=(0,1)$ it follows that
 $$
 \rho_1((0,1)\times \{t\})=\rho_2((0,1),s)
 $$

So $I_s$ and $I_t$ coincide in their interior, the parameterizations restricted to a horizontal interval of $[0,1]\times[0,1]$ are homeomorphisms onto their respective images. Therefore, $I_t=I_s$. This observation allows us to give the following definition independently of the parametrization.

\begin{defi}\label{Defi: Vertical Horizontal foliations}
The \emph{horizontal or stable foliation} of $R$ is the decomposition of $R$ into its stable segments, denoted by $\{I_t\}_{t\in [0,1]}$, where $I_t$ is the \emph{horizontal leaf} of $R$ at $t$, i.e. for some parametrization $\rho$ of $R$, $I_t:=\rho([0,1]\times \{t\})$.

 The \emph{vertical or unstable foliation} of $R$ is the decomposition of $R$ into its unstable segments, denoted by $\{J_t\}_{t\in [0,1]}$, where $J_t$ is is¨ the \emph{vertical leaf} of $R$ at $t$. i.e. for some parametrization  $\rho$ of $R$,  $J_t:=\rho(\{t\}\times [0,1])$.
\end{defi}
 
 \begin{conv*}
Usually, we are not going to rely on a specific parametrization of a rectangle $R$. If the parametrization is not necessary for our discussion, we refer to the image of any equivalent parametrization of $R$ as a rectangle, along with its vertical and horizontal foliations.
 \end{conv*} 

There are other subsets within a rectangle to which we will often refer. They are defined below.
 
\begin{defi}\label{Defi: Interiors rec and segments}
With the same notation as Definition \ref{Defi: rectangle}, we have the following distinguished sets:
	\begin{itemize}
	\item The \emph{interior of the rectangle}: $\overset{o}{R}=h((0,1)\times (0,1))$.
	\item The \emph{interior of a horizontal leaf} $I_t$: $\overset{o}{I_t}=h(\{t\} \times (0,1))$.
	\item The \emph{interior of a vertical leaf} $J_t$: $\overset{o}{J_t}=h((0,1)\times \{t\})$
	\item The \emph{horizontal or stable boundary of the rectangle} $R$: $\partial^s R=I_0 \cup I_1$.
	\item The\emph{ vertical or unstable boundary of the rectangle} $R$: $\partial^u R=J_0\cup J_1$
	\item The \emph{boundary} of $R$: $\partial R=\partial^s R \cup \partial^uR$.
	\item The \emph{corners} of $R$ that are define by pairs: if  $t,s\in \{0,1\}$ the respective corner is $C_{s,t}=\rho(s,t)$.
	\end{itemize}
\end{defi}

\begin{defi}\label{Defi: vertical/horizontal sub rectangles}

Let $R$ be a rectangle. A subset $H\subset R$ is a \emph{horizontal sub-rectangle} of $R$ if it is a rectangle itself, and for every $x\in \overset{o}{H}$, the horizontal leaf of the rectangle $H$ through $x$ coincides with the horizontal leaf of the rectangle $R$ through $x$.

Similarly, a subset $V\subset R$ is a \emph{vertical sub-rectangle} of the rectangle $R$ if it is a rectangle itself, and for every $x\in \overset{o}{V}$, the vertical leaf through $x$ of the rectangle $V$ coincides with the vertical leaf through $x$ of the rectangle $R$.
\end{defi}

\begin{figure}[h]
	\centering
	\includegraphics[width=0.3\textwidth]{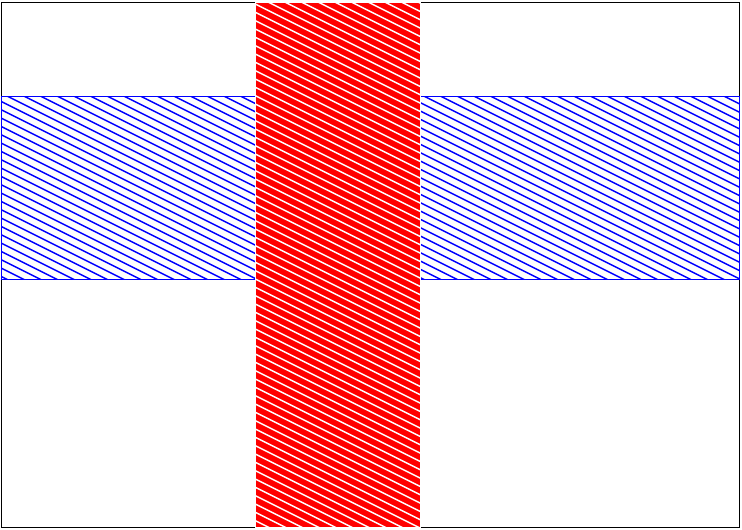}
	\caption{Sub-rectangles}
	\label{fig:Sub-rec}
\end{figure}

If $p\in S$ is a singularity of $\cF^{s,u}$, $p$  is in a unique leaf even if it have $p$ is a $k$-prong with $k$-separatrices. The interior of a rectangle is trivially bi-foliated and  there is non-singularity in the interior of a rectangle, therefore, if a rectangle contains a singularity such singularity is in the boundary of the rectangle. Even more, if $h([0,1]\times \{0\})$ have a singularity, $h([0,1]\times \{0\})$ is contained in at most $2$ stable separatrices of $p$.There are some other configurations of rectangles that are excluded from our definitions.

\begin{itemize}
\item[i)] As the foliation doesn't have closed leaves, a rectangle is never an annulus, where the horizontal boundaries are closed curves and the the vertical boundaries correspond to the same unstable interval. Look the figure \ref{Fig: No annulus}:

\begin{figure}[h]
	\centering
	\includegraphics[width=0.2\textwidth]{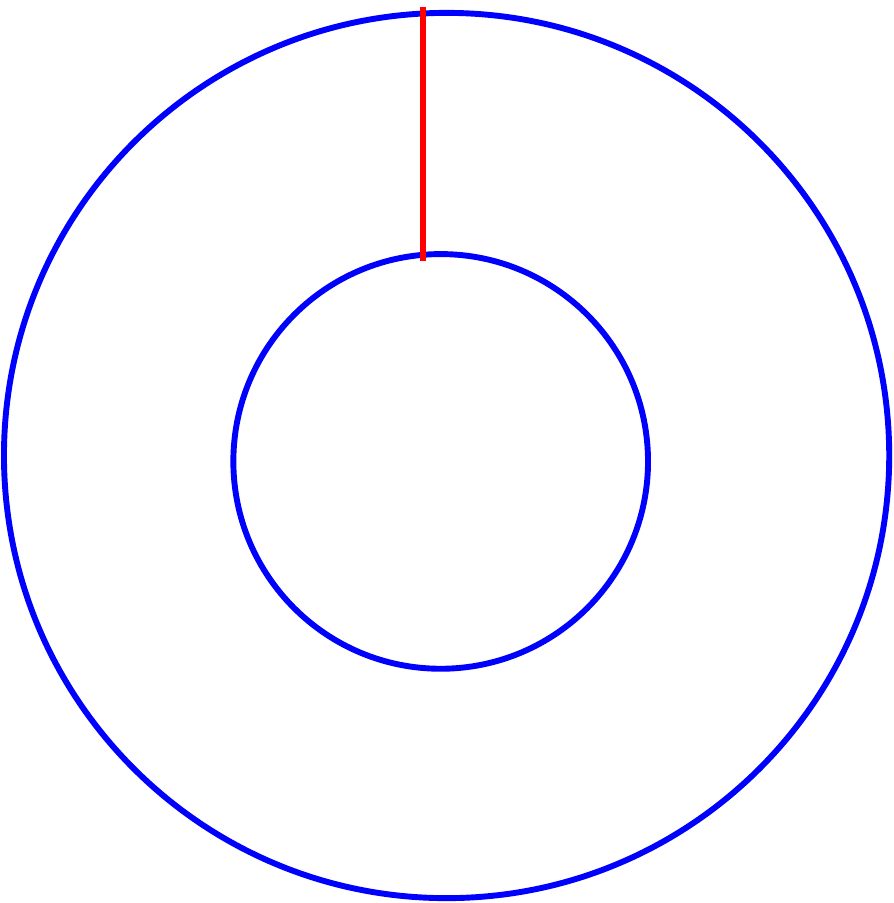}
	\caption{Annulus}
	\label{Fig: No annulus}
\end{figure}

 \item[ii)]   If $p$ is a singularity on the unstable boundary of $R$, there are at most two  stable separatrices of side small enough  of $p$ such that they intersect $R$. This is because in a small neighborhood of the unstable boundary of $R$, all the vertical  leaves of the rectangle $R$ are parallel to the boundary. Any additional separatrice of $p$ intersecting the rectangle must be transverse to the boundary and this behavior is not  coherent with the trivial bi-foliation in the interior of $R$. See for example the Figure \ref{Fig: Non-rectangles}:

 \begin{figure}[h]
 	\centering
 	\includegraphics[width=0.4\textwidth]{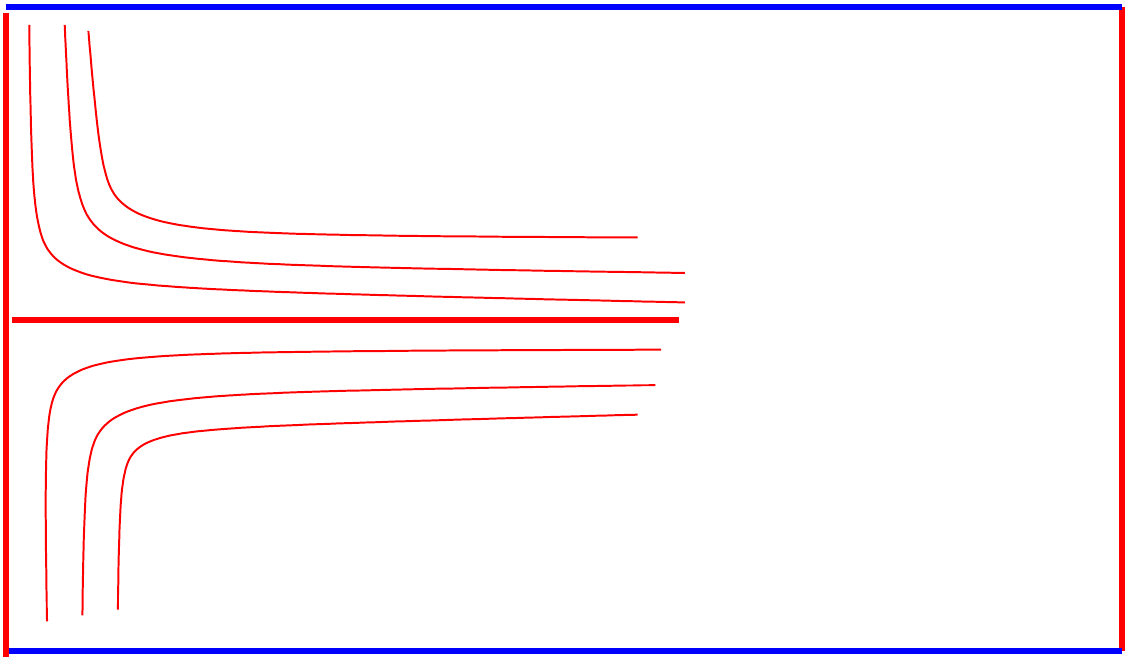}
 	\caption{$3$-prong with a separatrice in the interior of $R$}
 	\label{Fig: Non-rectangles}
 \end{figure}

\item[iii)] There are other configurations that are not rectangles in the sense of definition \ref{Defi: rectangle} and we describe a couple of them in Figure \ref{Fig: more Non-rectangles} 

\begin{figure}[h]
	\centering
	\includegraphics[width=0.7\textwidth]{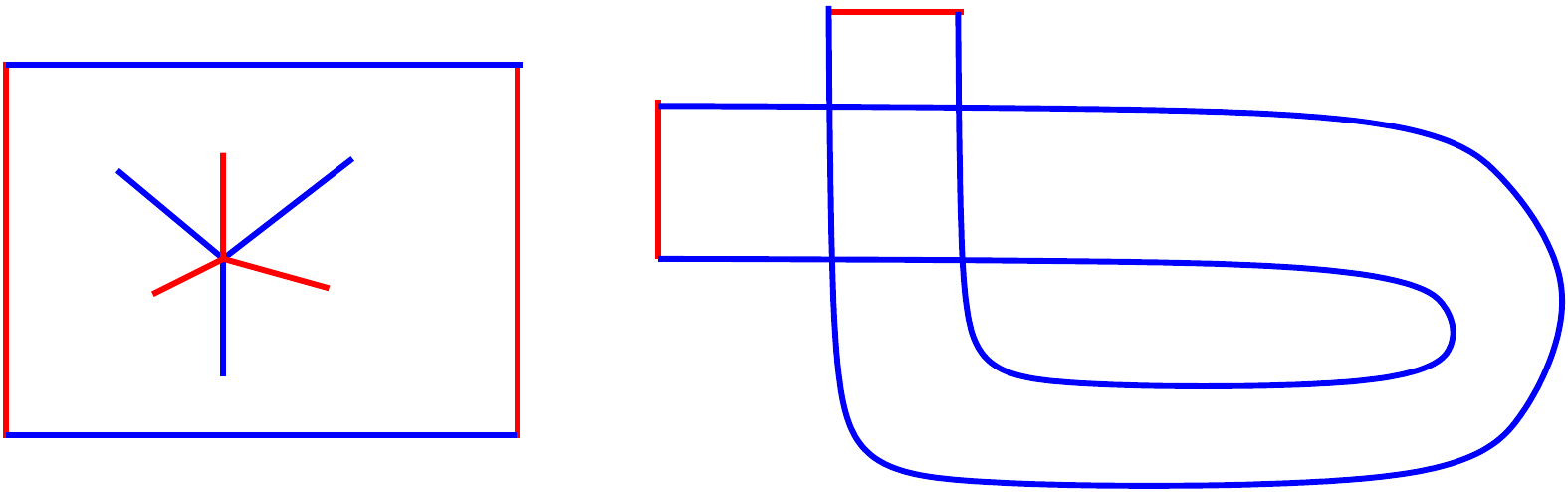}
	\caption{More non-rectangles}
	\label{Fig: more Non-rectangles}
\end{figure}

\end{itemize}

Since the rectangles we are considering can have self-intersections at their boundaries, there is no inherent notion of an 'upper', 'lower', 'left', or 'right' side of the rectangle. However, we have defined the notion of equivalent parametrizations in Definition\ref{Defi: equivalent parametrizations}, which gives us two distinct equivalence classes of parametrizations for the same rectangle $R$. Each equivalence class induces a unique orientation for $R$ that preserves the orientation of the parametrization. Therefore, once we fix a parametrization, we choose an orientation for $R$, and this parametrization induces a unique orientation in the interior of every vertical and horizontal leaf of $R$. We will utilize this construction to exploit the orientations.

\begin{defi}\label{Defi: Oriented rectangle}
An oriented rectangle $R$ is the selection of an equivalence class of parameterizations for the rectangle $R$.
\end{defi}

It is clear that this definition is equivalent to the following one, which will be the one we will actually use more frequently in our technical arguments.

\begin{defi}\label{Defi: vertical and horizontal orientation}
Let $R$ be an oriented rectangle. Let $\rho$ be a parametrization of a rectangle $R$ chosen from the equivalence class of parametrizations. This parametrization induces a unique orientation in the vertical foliation of $R$, which we will refer to as the  \emph{vertical orientation}  of $R$. Similarly, the \emph{horizontal orientation} of $R$ is the unique orientation in the horizontal foliation of $R$ induced by $\rho$.
\end{defi}

The vertical and horizontal orientation of $R$ is local and as we have seen it supports two different ones. Using the vertical and horizontal orientation of $R$ induced by a parametrization $\rho$  we make the following conventions:

\begin{itemize}
	\item The \textbf{ left side} of $R$ is define as $\partial^u_{-1}R:=\rho(\{0\}\times [0,1]))$ and the \textbf{right side} is $\partial^u_{1}R=\rho(\{1\}\times [0,1]))$. 
	\item The \textbf{ lower side} of $R$ is $\partial^s_{-1}R=\rho([0,1] \times \{0\} ))$ and the \textbf{ upper side} is $\partial^s_{1}R=\rho([0,1] \times \{1\} ))$.
\end{itemize}

\subsection{Geometric Markov partition} Now we are ready to introduce the main tool of this section, the \emph{Markov partitions}.

\begin{defi}[Markov partition]\label{Defi: Markov partition}
	Let $f:S\rightarrow S$ be pseudo-Anosov homeomorphism. A Markov partition of $f$  is a family of rectangles $\cR=\{R_i\}_{i=1}^n$ with the following properties:
	\begin{itemize}
		\item The surface is the union of the rectangles: $S=\cup_{i=1}^n R_i$.
		\item If $i\neq j$, then $\overset{o}{R_i}\cap \overset{o}{R_j}=\emptyset$.
		\item For every $i,j\in\{1,\cdots,n\}$, each  connected component of $\overset{o}{R_i} \cap f^{-1}(\overset{0}{R_j})$is either empty or the interior of a horizontal sub-rectangle of  $R_i$.
		\item For every $i,j\in\{1,\cdots,n\}$, each  connected component of $f(\overset{o}{R_i}) \cap \overset{0}{R_j}$ is either empty or the interior of a vertical sub-rectangle of $R_j$.
	\end{itemize}
	\end{defi}	

\begin{comment}
\begin{lemm}\label{Lemm: vertical sub-rectangles of image}
If $\cR=\{R_i\}_i^n$ is a Markov partition of $f$, every connected component of $f(\overset{o}{R_i}) \cap \overset{0}{R_j}$ is either empty or the interior of a vertical sub-rectangle of $R_j$.
\end{lemm}

\begin{proof}
If $C$ is a non-empty connected component of $f(\overset{o}{R_i}) \cap \overset{o}{R_j}$, then $f^{-1}(C)$ is a connected component of $\overset{o}{R_i} \cap f^{-1}(\overset{o}{R_j})$. Consequently, its closure $\overline{C} = f(\overline{f^{-1}(C)})$ is a rectangle contained in $R_j$.

The horizontal boundary of $\overline{f^{-1}(C)}$ is the image under $f^{-1}$ of two intervals contained in the horizontal boundary of $R_j$, as otherwise we could extend $f^{-1}(C)$ to a larger set.

On the other hand, any vertical segment of $\overline{C}$ passing through a point in $C = \overset{o}{\overline{C}}$ is the image under $f$ of a vertical segment $I$ contained in $\overline{f^{-1}(C)}$ and passing through a point in $f^{-1}(C)$. As previously mentioned, $I$ has endpoints at the pre-image of the stable boundary of $R_j$, which means $f(I)$ is a vertical segment of the rectangle $R_j$ that joins its horizontal boundaries. Therefore, we deduce that $f(I)$ is a vertical segment of both $C$ and $R_j$. This proves that $C$ is a vertical sub-rectangle of $R_j$.

\end{proof}
\end{comment}

The closure of any connected component of $\overset{o}{R_i} \cap f^{-1}(\overset{o}{R_j})$ is a distinguished horizontal sub-rectangle of $R_i$, as they correspond to the vertical sub-rectangles of $R_j$ obtained as the closure of the connected components of $f(\overset{o}{R_i}) \cap \overset{o}{R_j}$. Now we are going to establish a convention through the following definition.

\begin{defi}\label{Defi: horizontal-vertical of the partition}
We define a \emph{horizontal sub-rectangle of the Markov partition} $\cR$ as any sub-rectangle obtained as the closure of a connected component of $\overset{o}{R_i} \cap f^{-1}(\overset{0}{R_j})$ for $i,j\in \{1,\cdots,n\}$. Similarly, a \emph{vertical sub-rectangle of the Markov partition} $\cR$ is any sub-rectangle obtained as the closure of a connected component of $f(\overset{o}{R_i}) \cap \overset{0}{R_j}$ for $i,j\in \{1,\cdots,n\}$.
\end{defi}

We obtain the following corollary.

\begin{coro}\label{Coro: image of sub horzontal-vertical }
The image of any horizontal sub-rectangle of the Markov partition $\mathcal{R}$ under $f$ is a vertical sub-rectangle of the Markov partition $\mathcal{R}$, and vice versa, the image of any vertical sub-rectangle of the Markov partition $\mathcal{R}$ under $f^{-1}$ is a horizontal sub-rectangle of the Markov partition $\mathcal{R}$.

\end{coro}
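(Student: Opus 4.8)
The plan is to unwind the two definitions in play — Definition \ref{Defi: Markov partition} and Definition \ref{Defi: horizontal-vertical of the partition} — and use only that $f$ is a homeomorphism of the compact surface $S$. First I would fix a horizontal sub-rectangle $H$ of the Markov partition $\cR$. By Definition \ref{Defi: horizontal-vertical of the partition}, $H=\overline{C}$ for some (necessarily nonempty) connected component $C$ of $\overset{o}{R_i}\cap f^{-1}(\overset{o}{R_j})$, with $i,j\in\{1,\dots,n\}$. Since $f$ is a homeomorphism, a direct set-chase shows it restricts to a homeomorphism $\overset{o}{R_i}\cap f^{-1}(\overset{o}{R_j})\to f(\overset{o}{R_i})\cap \overset{o}{R_j}$, and homeomorphisms carry connected components onto connected components; hence $f(C)$ is a connected component of $f(\overset{o}{R_i})\cap \overset{o}{R_j}$. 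By the last bullet of Definition \ref{Defi: Markov partition}, $f(C)$ is then the interior of a vertical sub-rectangle of $R_j$, so by Definition \ref{Defi: horizontal-vertical of the partition} the rectangle $V:=\overline{f(C)}$ is a vertical sub-rectangle of $\cR$.

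The one genuine point to verify is that $f(H)=V$, that is, $f(\overline{C})=\overline{f(C)}$. This is where I would invoke compactness: $f$ is continuous, so $f(\overline{C})\subseteq\overline{f(C)}$; and since $S$ is compact Hausdorff, $f$ is a closed map, so $f(\overline{C})$ is a closed set containing $f(C)$, giving $\overline{f(C)}\subseteq f(\overline{C})$. Therefore $f(H)=f(\overline{C})=\overline{f(C)}=V$ is a vertical sub-rectangle of $\cR$, as claimed. The converse — that $f^{-1}$ sends any vertical sub-rectangle of $\cR$ to a horizontal sub-rectangle of $\cR$ — follows verbatim from the same argument, interchanging the roles of $f$ and $f^{-1}$ and of the sets $\overset{o}{R_i}\cap f^{-1}(\overset{o}{R_j})$ and $f(\overset{o}{R_i})\cap \overset{o}{R_j}$.

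I do not expect a real obstacle here: the statement is essentially bookkeeping built directly into Definitions \ref{Defi: Markov partition} and \ref{Defi: horizontal-vertical of the partition}, and the only mildly nonformal ingredient is the commutation of $f$ with topological closure, which is immediate once one remembers that $f$ is a homeomorphism of a compact surface. If extra care is wanted, one can additionally note that $f(C)$ is automatically nonempty and that, its closure being a vertical sub-rectangle of $R_j$, it is in particular a rectangle, so that describing $f(H)$ as a ``vertical sub-rectangle of $\cR$'' is literally the content of Definition \ref{Defi: horizontal-vertical of the partition}.
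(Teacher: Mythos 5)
Your argument is correct and is exactly the bookkeeping the paper has in mind: the corollary is stated as an immediate consequence of Definitions \ref{Defi: Markov partition} and \ref{Defi: horizontal-vertical of the partition}, with no separate proof, and your component-chasing plus $f(\overline{C})=\overline{f(C)}$ fills that in faithfully. One small remark: compactness is superfluous, since any homeomorphism is already a closed map ($f(\overline{C})$ is the preimage of $\overline{C}$ under the continuous map $f^{-1}$).
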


We could say that $f$ induces a bijection between the horizontal and vertical sub-rectangles of $\mathcal{R}$. We will now proceed to distinguish certain sets and types of points within a Markov partition.

\begin{rema}\label{Rema: dependece partition homeo}
	They can be a Markov partition for $f$, some power of $f$, or even another homeomorphism. To indicate this dependence, we write $(f, R)$ whenever necessary.
\end{rema}

\begin{defi}\label{Defi: boundary points}
Let $\mathcal{R} = \{R_i\}_{i=1}^n$ be a Markov partition of $f$. We have the following distinguished sets:
	\begin{enumerate}
		\item The \emph{stable boundary} of $\cR$ is the union of the stable boundaries of the rectangles in $\cR$:  $\partial^s \cR:=\cup_{i=1}^n \partial^s R_i$.
		\item The \emph{unstable boundary} of $\cR$ is the union of the unstable boundaries of the rectangles in $\cR$: $\partial^u \cR=\cup_{i=1}^n \partial^u R_i$.
		\item The \emph{boundary} of $\cR$ is $\partial \cR:= \partial^s \cR \cap \partial^u \cR$.
		\item The \emph{interior} of $\cR$  is the union of the interiors of all the rectangles in  $\cR$, $\overset{o}{\cR}=\cup_{i=1}^n \overset{o}{R_i}$.
	\end{enumerate}
Let $p$ be a periodic point of $f$. Then:
	\begin{enumerate}
\item  $p$ is an $s$-\emph{boundary periodic point} of $\mathcal{R}$ if $p \in \partial^s \mathcal{R}$, an $u$-\emph{boundary periodic point} if $p \in \partial^u \mathcal{R}$, and a \emph{boundary periodic point} if $p \in \partial \mathcal{R}$. These sets are denoted as Per$^{s,u,b}(f,\mathcal{R})$ accordingly.
\item $p$ is an \emph{interior periodic point} if $p \in \overset{o}{\mathcal{R}}$, and this set is denoted as Per$^I(f,\mathcal{R})$.
\item $p$ is a \emph{corner periodic point} if there exists $i \in \{1, \ldots, n\}$ such that $p$ is a corner point of $R_i$. This set is denoted as Per$^C(f,\mathcal{R})$.
	\end{enumerate}    
	\end{defi}

In the following, we introduce two families of Markov partitions based on the type of periodic points on their boundary. Please note that the periodic points are determined by the specific homeomorphism associated with $R$ as a Markov partition. 

\begin{defi}\label{Defi: well suited partition}
	A Markov partition $\cR$ of $f$ is considered \emph{well suited} if all of its periodic boundary points are corner periodic boundary points.
\end{defi}

\begin{defi}\label{Defi: adapted partition}
Let $\cR$ be a Markov partition of $f$. The partition is considered \emph{adapted} to $f$ if it is well-suited and all the boundary periodic points are singularities of $f$.
\end{defi}

We have introduced the concept of oriented rectangle, which leads us to the following definition. Although it may seem redundant, the indexing of the rectangles plays a crucial role in defining the geometric type of a Markov partition, so we include it as part of the definition.

\begin{defi}\label{Defi: geometric Markov partition}
	 Let $f:S\rightarrow S$ be a generalized pseudo-Anosov homeomorphism, and let $\cR$ be a Markov partition of $f$. We refer to $\cR$ as a \emph{geometric Markov partition} once we have chosen an indexing and an orientation in the vertical direction (ergo in the horizontal direction) for each the rectangle in $\cR$, i.e. $\cR:=\{R_i\}_{i=1}^n$.
\end{defi}

The following lemma will be used many times in our future explanations.

\begin{lemm}\label{Lemm: Boundary of Markov partition is periodic}
Both the upper and lower boundaries of each rectangle in the Markov partition $\cR$ lie on the stable leaf of some periodic point of $f$. Similarly, the left and right boundaries lie on the unstable leaf of some periodic point.
\end{lemm}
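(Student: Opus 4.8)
The plan is to exploit the defining property of a Markov partition together with the fact that $f$ is a homeomorphism and $\cR$ is finite. First I would fix a rectangle $R_i$ and consider its lower boundary $\partial^s_{-1}R_i = I_0$, a stable arc. The key observation is that under $f^{-1}$, the stable boundary of $\cR$ maps into itself: since each connected component of $f(\overset{o}{R_k})\cap\overset{o}{R_j}$ is the interior of a vertical sub-rectangle of $R_j$, and vertical sub-rectangles of $R_j$ have their horizontal (stable) boundary contained in $\partial^s R_j$, it follows that $f^{-1}(\partial^s\cR)\subset\partial^s\cR$. Dually, $f(\partial^u\cR)\subset\partial^u\cR$. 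I would state and prove this inclusion carefully as the first step, since it is the engine of the argument; it is essentially a restatement of Corollary~\ref{Coro: image of sub horzontal-vertical }.

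Next, I would look at the horizontal leaf $I_0$ containing the lower side of $R_i$. Because $f^{-1}(\partial^s\cR)\subset\partial^s\cR$ and there are only finitely many rectangles, the collection of maximal stable arcs comprising $\partial^s\cR$ is permuted-with-subdivision by $f^{-1}$; more precisely, each stable boundary arc of a rectangle is covered by finitely many stable boundary arcs, and iterating $f^{-1}$ one lands, by a pigeonhole argument, on a point whose forward orbit under $f$ returns. The cleanest way to extract a periodic point is: the stable boundary $\partial^s\cR$ is a compact $f^{-1}$-forward-invariant set (in the sense $f^{-1}(\partial^s\cR)\subset \partial^s\cR$), hence $\bigcap_{n\ge 0} f^{-n}(\partial^s\cR)$ is a nonempty compact $f$-invariant set contained in $\partial^s\cR$; actually I would rather argue directly on corners. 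Each rectangle has four corners, so $\cR$ has at most $4n$ corner points, and by the inclusion above $f^{-1}$ sends corners of rectangles to points of $\partial^s\cR\cap\partial^u\cR$; combined with the dual statement for $\partial^u$, one checks $f$ maps the finite set of corner points of $\cR$ into itself, so every corner is eventually periodic and in fact — since $f$ is a bijection on this finite set — periodic. Thus the endpoints of $I_0$ are periodic. Then I would show $I_0$ lies on the stable leaf through such a periodic endpoint: a stable boundary arc cannot contain a singularity in its relative interior causing a branch (by the discussion following Definition~\ref{Defi: vertical/horizontal sub rectangles}, at most two stable separatrices of a boundary singularity meet $R_i$), so $I_0$ is contained in a single stable leaf, which is the stable leaf of its periodic endpoint.

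The main obstacle I anticipate is the bookkeeping around singularities and the non-embeddedness of rectangles: a stable boundary arc $I_0$ might pass through a $k$-prong singularity $p$ on $\partial^u\cR$, and one must be sure that "the stable leaf of $I_0$" is well defined and equals the stable leaf of a genuine periodic point rather than splitting at $p$. Here I would invoke that $p$ itself, being a corner or boundary point, is periodic (by the finiteness argument above, since $f$ permutes singularities by Proposition~\ref{Prop: conjugation produces pA} and the boundary-corner structure is preserved), and that a single leaf of $\cF^s$ through $p$ contains $I_0$ by the local picture near $\partial^u\cR$. The symmetric statement for the left/right (unstable) boundaries follows by applying the same argument to $f^{-1}$, whose Markov partition structure exchanges the roles of horizontal and vertical sub-rectangles. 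I would close by remarking that this is exactly the content needed later, namely that boundary arcs of $\cR$ sit inside stable/unstable leaves of periodic points.
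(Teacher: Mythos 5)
Your proposal contains two genuine errors, and the second one sinks the argument. First, the invariance you call the "engine" is stated backwards. From the Markov property one gets $f(\partial^s\cR)\subset\partial^s\cR$ and $f^{-1}(\partial^u\cR)\subset\partial^u\cR$ (this is exactly Proposition~\ref{Prop: Markov criterion boundary}), not $f^{-1}(\partial^s\cR)\subset\partial^s\cR$. Your claimed inclusion is in fact false: since $f$ contracts the $\mu^u$-measure of stable arcs by $\lambda^{-1}$, we have $f(\partial^s\cR)\subsetneq\partial^s\cR$, hence $f^{-1}(\partial^s\cR)\supsetneq\partial^s\cR$; concretely, a point $y$ in the relative interior of the lower side of $R_j$ is $f(x)$ for $x$ on the stable boundary of a horizontal sub-rectangle $H^i_{j'}$, which in general lies in $\overset{o}{R_i}$, so $f^{-1}(y)\notin\partial^s\cR$. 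Second, and more seriously, the corner argument is wrong: $f$ does not map the corner set of $\cR$ into itself. The image of a vertical side of $R_i$ consists of vertical sides of vertical sub-rectangles $V^k_l$, which typically lie in the interior of $R_k$, so the image of a corner is on $\partial^s\cR$ but usually not on $\partial^u\cR$. Indeed corners are generically not periodic at all: in the classical two-rectangle partition of a hyperbolic toral automorphism the corners are transverse homoclinic points of the fixed point, whose orbits are infinite. So your conclusion "the endpoints of $I_0$ are periodic" is false in general; the lemma only asserts that the boundary arc lies on the stable leaf \emph{of some} periodic point, and that point need not be an endpoint of the arc, nor lie on $\partial\cR$ at all.

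The correct (and shorter) route is the one the paper takes: pick $x\in\partial^s R_i$ and iterate \emph{forward}. By $f$-invariance of $\partial^s\cR$, every $f^n(x)$ lies in one of the finitely many stable boundary components, so by pigeonhole some power $f^{m}$ maps the leaf containing the boundary component of $x$ into itself; an $f^m$-invariant stable leaf is the stable leaf of an $f^m$-periodic point (either it is a separatrix of a singularity fixed by $f^m$, or $f^m$ acts on the regular leaf as a $\mu^u$-contraction and has a fixed point there). The unstable case is symmetric, using $f^{-1}$-invariance of $\partial^u\cR$. If you want to salvage your write-up, replace the backward invariance by the forward one, delete the corner step entirely, and argue on leaves rather than on endpoints.
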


\begin{proof}
Let $x$ be a point on the stable boundary of $R_i$. For all $n\geq 0$, $f^{n}(x)$ remains on the stable boundary of some rectangle because the image of the stable boundary of $R_i$ coincides with the stable boundary of some vertical rectangle in the Markov partition, and such vertical rectangle have stable boundary in $\partial^s \cR$. However, there are only a finite number of stable boundaries in $\cR$. Hence, there exist $n_1, n_2\in \mathbb{N}$ such that $f^{n_1}(x)$ and $f^{n_2}(x)$ are on the same stable boundary component. This implies that this leaf is periodic and corresponds to the stable leaf of some periodic point. Therefore, $x$ is one of these leaves and is located on the stable leaf of a periodic point.
A similar reasoning applies to the case of vertical boundaries.
\end{proof}

To finish this exposition about Markov partition with a result that relate the partition with the sector of a point.

\begin{lemm}\label{Lemm: sector contined unique rectangle}
Let $f: S \rightarrow S$ be a generalized pseudo-Anosov homeomorphism with a Markov partition $\cR$. Let $x \in S$ and let $e$ be a sector of $x$. Then, there exists a unique rectangle in the Markov partition that contains the sector $e$.
\end{lemm}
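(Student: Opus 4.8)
The plan is to replace ``the rectangle $R$ contains the sector $e$'' by the equivalent condition ``$E(\epsilon)_j(x)\subseteq R$ for all sufficiently small $\epsilon$'', where, with the regular neighborhood of Theorem \ref{Theo: Regular neighborhood} and the labelling of Definitions \ref{Defi:converge in a sector } and \ref{Defi: Sector}, $E(\epsilon)_j(x):=E(\epsilon_0)_j(x)\cap\mathrm{Int}(D(x,\epsilon))$ is the open connected nonempty region in $\mathrm{Int}(D(x,\epsilon_0))$ that represents $e$ and shrinks to $\{x\}$ as $\epsilon\to 0$ (equivalently, every sequence $\{x_n\}\in e$ lies eventually in $R$). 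Before anything else I would record two elementary facts. First, each rectangle satisfies $R_i=\rho_i([0,1]^2)=\overline{\rho_i((0,1)^2)}=\overline{\overset{o}{R_i}}$, by continuity of $\rho_i$ together with $[0,1]^2=\overline{(0,1)^2}$; since moreover the interiors $\overset{o}{R_i}$ are pairwise disjoint, any nonempty open subset of $S$ contained in $R_a\cap R_b$ forces $\overset{o}{R_a}\cap\overset{o}{R_b}\neq\emptyset$, hence $a=b$. Second, the four sides of $R_i$ are compact arcs, with $I_0,I_1$ each contained in a single leaf of $\cF^s$ and $J_0,J_1$ each contained in a single leaf of $\cF^u$.

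For existence I would take any sequence $\{x_n\}\in e$, discard finitely many terms so that all $x_n$ lie in $E(\epsilon_0)_j(x)$, and use the finiteness of $\cR$ and $S=\bigcup_i R_i$ to extract a subsequence contained in one rectangle $R_a$; then $x\in R_a$ because $R_a$ is closed. If $x\in\overset{o}{R_a}$ the claim is immediate. The substantive case is $x\in\partial R_a$, which I would treat by contradiction: if $E(\epsilon)_j(x)\not\subseteq R_a$ for every $\epsilon$, then, since $E(\epsilon)_j(x)$ is connected and meets both $R_a$ (it contains $x_n$ for $n$ large) and its complement, it meets $\partial R_a$; choosing such points $y_\epsilon$ gives $y_\epsilon\to x$ with $y_\epsilon\in\partial R_a$, and the pigeonhole principle over the four sides produces a sequence $z_m\to x$ lying on one fixed side, say the stable side $I_0$, with each $z_m\in E(\epsilon_0)_j(x)$. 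Since $I_0$ is a compact arc inside a leaf of $\cF^s$ and $z_m\to x\in I_0$, we get $I_0\subseteq F^s(x)$, so for $m$ large $z_m$ lies on the portion of $F^s(x)$ near $x$, which is contained in the union of the initial stable separatrix segments $\bigcup_i\delta_i^s(\epsilon_0)$ of $x$; but $E(\epsilon_0)_j(x)$ was built from $\mathrm{Int}(D(x,\epsilon_0))$ by deleting exactly those segments, a contradiction. Hence $E(\epsilon)_j(x)\subseteq R_a$ for some $\epsilon$.

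For uniqueness I would suppose $R_a$ and $R_b$ both contain $e$, so $E(\epsilon)_j(x)\subseteq R_a$ and $E(\epsilon')_j(x)\subseteq R_b$ for some $\epsilon,\epsilon'$; then $E(\min\{\epsilon,\epsilon'\})_j(x)$ is a nonempty open subset of $R_a\cap R_b$, and the first recorded fact forces $a=b$.

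I expect the only real obstacle to be the contradiction step in the existence part, namely excluding a side of $R_a$ that accumulates onto $x$ while staying inside the sector region. The point that resolves it is that such a side lies on a single leaf of $\cF^s$ or $\cF^u$ through $x$, which near $x$ coincides with the separatrix segments removed to carve out the sector; everything else — connectedness of $E(\epsilon)_j(x)$, pigeonhole over the finitely many sides, closedness of the rectangles and of their sides, and density of the interiors — is routine point-set topology.
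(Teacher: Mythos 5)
Your route is genuinely different from the paper's. The paper argues constructively with the Markov structure: it takes the local stable and unstable separatrices bounding the sector, notes that for small $\epsilon$ each lies in the boundary of at most two rectangles, chooses a horizontal sub-rectangle $H\subset R$ adjacent to the stable separatrix on the sector side and a vertical sub-rectangle $V\subset R'$ adjacent to the unstable separatrix on the sector side, and observes that $\overset{o}{H}\cap\overset{o}{V}$ is a nonempty open rectangle inside the sector region; disjointness of interiors then forces $R=R'$ and the tail of the sequence lies in $\overset{o}{H}\cap\overset{o}{V}\subset R$. Your argument is instead purely point-set (connectedness and frontier, pigeonhole over the four sides, each side inside a single leaf), and your uniqueness step via $R_i=\overline{\overset{o}{R_i}}$ and disjoint interiors is actually spelled out more carefully than in the paper. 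Both approaches are legitimate; the paper's sidesteps entirely the delicate point on which yours hinges.

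That delicate point is where your write-up has a genuine gap. The claim ``for $m$ large $z_m$ lies on the portion of $F^s(x)$ near $x$, which is contained in $\bigcup_i\delta^s_i(\epsilon_0)$'' is false as stated: by Proposition \ref{Prop: pseudo-Anosov properties.} every leaf is dense in $S$, so the part of $F^s(x)$ within any ambient distance of $x$ contains plaques that are far from $x$ along the leaf and certainly not in the local separatrix segments. Hence ``on the leaf through $x$ and close to $x$ in $S$'' does not imply ``on a local separatrix of $x$'', and your closing remark that the side ``near $x$ coincides with the separatrix segments'' is precisely the false ambient-metric version of what you need. The step is repairable, but only by using that the $z_m$ lie on the \emph{fixed side} $I_0$: by Definition \ref{Defi: rectangle} the parametrization restricted to $[0,1]\times\{0\}$ is a homeomorphism onto its image, so $I_0$ is a compact embedded arc containing $x$; a continuous bijection from a compact space onto a Hausdorff image is a homeomorphism, so the subspace topology on $I_0$ is the arc topology, and $z_m\to x$ in $S$ forces $z_m\to x$ along the arc. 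A short sub-arc of $I_0$ at $x$ is then a leafwise neighbourhood of $x$ inside $F^s(x)$ and hence lies in the (closed) local separatrix segments deleted to form $E(\epsilon_0)_j(x)$, which gives the contradiction with $z_m\in E(\epsilon_0)_j(x)$. With that sentence inserted — and with the routine facts you assert made explicit (connectedness of $E(\epsilon)_j(x)$, $\mathrm{diam}\,D(x,\epsilon)\to 0$ as $\epsilon\to 0$, and the fact that the topological frontier of $R_a$ is contained in $\partial R_a$, which uses that $\overset{o}{R_a}$ is open in $S$ by invariance of domain) — your proof is complete.
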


\begin{proof}
Let $\{x_n\}$ be a sequence that converges to $x$ within the sector $e$. Consider a canonical neighborhood $U$ of size $\epsilon > 0$ around $x$, and let $E$ be the unique connected component of $U$ minus the local stable and unstable manifolds of $x$ that contains the sequence $\{x_n\}$.

By choosing $\epsilon$ small enough, we can assume that the local stable separatrix $I$ of $x$ that bound $E$ is contained in at most two rectangles of the Markov partition, and similarly, the local unstable separatrix $J$ of $x$ is contained in at most two rectangles. By take the right side of the local separatrices, we can take:  rectangles $R$ and $R'$ in the Markov partition, a horizontal sub-rectangle $H$ of $R$ that contains $I$ in its upper or lower boundary, and a vertical sub-rectangle $V$ of $R'$ whose upper or left boundary contains $J$. These rectangles can be chosen small enough such that the intersection of their interiors is a rectangle contained within $E$, denoted as $\overset{o}{Q} := \overset{o}{H} \cap \overset{o}{V} \subset E$. This implies that $R = R'$ since the intersection of the interiors of rectangles in the Markov partition is empty.

Furthermore, by considering a sub-sequence of $\{x_n\}$, we don't change its equivalent sector class. Therefore, $\{x_n\} \subset \overset{o}{Q} \subset R$. This completes our proof.
\end{proof}

\section{The geometric type}

In this section, we will show how to associate combinatorial data to any geometric Markov partition. This information is the geometric type of a geometric Markov partition. This data takes into account how $f$ shuffles the vertical and horizontal rectangles of the Markov partition and how their orientation changes. Later on, we will define the geometric type in a more general manner.

\subsection{The geometric type of a geometric Markov partition}Let $f: S \rightarrow S$ be a generalized pseudo-Anosov homeomorphism, and let $\cR=\{R_i\}_{i=1}^n$ be a geometric Markov partition of $f$. We will label the horizontal sub-rectangles of the Markov partition contained in $R_i$ using the vertical orientation of $R_i$, from bottom to top, as follows:
\begin{equation}
\{H^i_j\}_{j=1}^{h_i} \, \text{ where } h_i\geq1.
\end{equation}
Here, $h_i$ is the number of horizontal sub-rectangles of the Markov partition contained in $R_i$. The sub-rectangle $H^i_1$ shares its lower boundary with the lower boundary of $R_i$, and the upper boundary of $H^i_{h_i}$ has the same upper boundary as $R_i$.

We label the vertical sub-rectangles of the Markov partition contained in $R_k$, from left to right, using the horizontal orientation of $R_k$ as follows:
\begin{equation}
\{V_l^k\}_{k=1}^{v_k} \, \text{ where } v_k\geq 1 .
\end{equation}	

The number of vertical sub-rectangles of the Markov partition contained in $R_k$ is denoted by $v_k$. The sub-rectangle $V^k_1$ has its left boundary coinciding with the left boundary of $R_k$, and the right boundary of $V^k_{v_k}$ is equal to the right boundary of $R_k$.

\begin{figure}[h]
	\centering
	\includegraphics[width=0.5\textwidth]{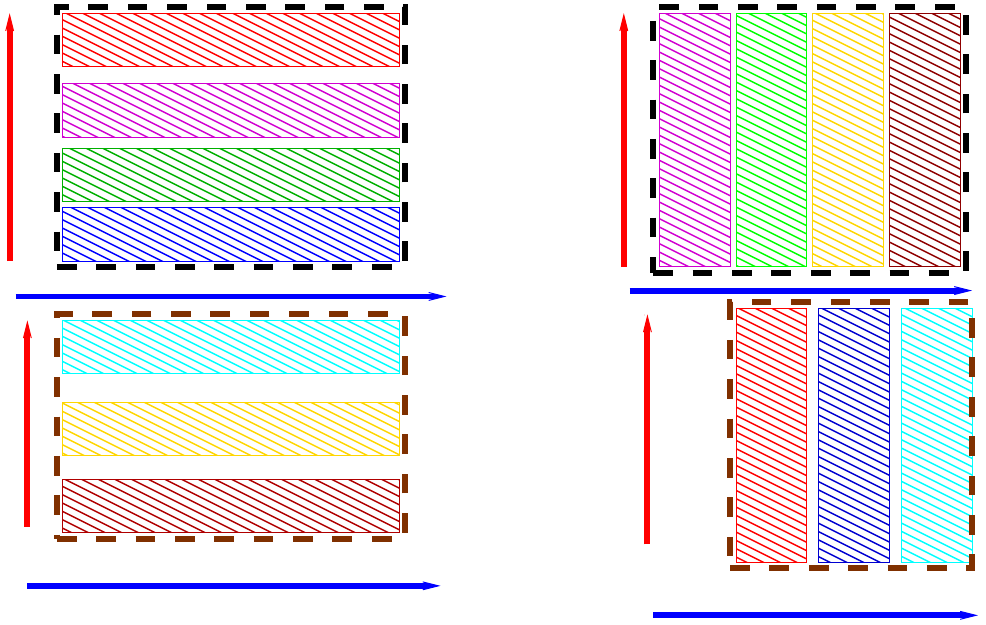}
	\caption{Horizontal and vertical sub-rectangles}
	\label{fig:Horizontal vertical}
\end{figure}

There are two sets determined by such indexations of rectangles
\begin{equation}
\cH= \{(i,j): i\in \{1, \cdots,n\} \text{ and } j\in \{1,\cdots,h_i\}\}.
\end{equation}
and
\begin{equation}
\cV= \{(k,l): l\in \{1, \cdots,n\} \text{ and } k\in \{1,\cdots,v_i\}\}.
\end{equation}

We have seen in Corollary \ref{Coro: image of sub horzontal-vertical } that $f$ establishes a bijection between the horizontal and vertical sub-rectangles of the Markov partition. In particular, the number of horizontal and vertical sub-rectangles in $\cR$ is the same and we can deduce the following equality:
\begin{equation}\label{Equa: Same number horizontal and verticals}
\sum_{i=1}^{n}h_i=\sum_{i=1}^n v_i.
\end{equation}

This allows us to provide the following definition, where the function encodes the order in which the horizontal sub-rectangles are shuffled by $f$.

\begin{defi}\label{Defi: Permutation of type}
	The function $\rho:\cH \rightarrow \cV$ is defined as $\rho(i,j)=(k,l)$ if and only if $f(H^i_j)=V^k_l$.
\end{defi}

On the other hand, the change in orientation of $f$ in every horizontal sub-rectangle is measured by the following function.

\begin{defi}\label{Defi: Permutation type}
Suppose $f(H_j^i)=V_l^k$. The rectangle $H^i_j$ has a vertical orientation with respect to $R_i$, and $V_l^k$ has a vertical orientation with respect to $R_k$. We define:
\begin{equation}
\epsilon:\cH \rightarrow \{ 1,-1\} \text{ as }
\end{equation}
by $\epsilon(i,j)=1$ if $f$ preserves such orientations, and $-1$ otherwise.
\end{defi}

\begin{figure}[h]
	\centering
	\includegraphics[width=0.5\textwidth]{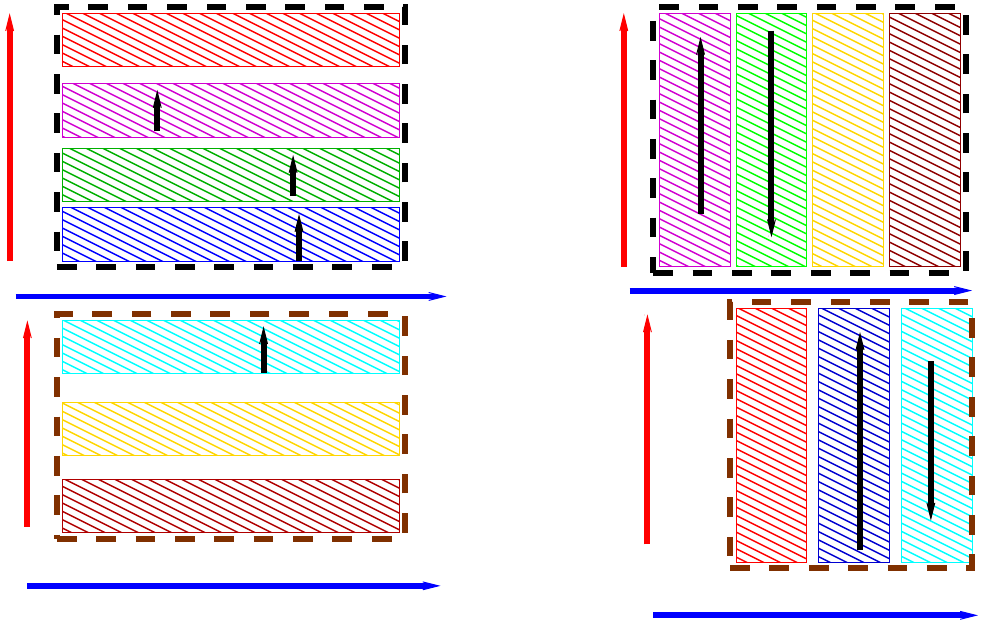}
	\caption{$\epsilon$ captures the change of orientation}
	\label{fig: orietation}
\end{figure}

\begin{defi}\label{Defi: geometric type of a Markov partition}
	Let $\cR$ be a Markov partition for $f$. The geometric type of $(f,\cR)$, is denoted by $T:=T(f, \cR)$ and defined as the quadruple:
\begin{equation}
	T(f,\cR)=(n,\{(h_i,v_i)\}_{i=1}^n, \Phi_T).
\end{equation}
	where $\Phi_T:\cH \rightarrow \cV \times \{-1,1\}$ is $\Phi_T:=(\rho,\epsilon)$ .
\end{defi}

\begin{conv*}
	Many times we will work with a fixed homeomorphism $f$, so if it is not necessary to specify the pair $(f,\cR)$, we simply write $T(\cR)$ to indicate the geometric type of the partition $\cR$. When there are multiple partitions and homeomorphisms, we will use the notation from the definition.
\end{conv*}

\subsection{Abstract geometric types} An abstract geometric type is a generalization of the geometric type of a geometric Markov partition. As we will see, it is determined as the skew product of a permutation and a reflection.
 
 \begin{defi}\label{Defi: abstract geometric type}
An abstract geometric type is an ordered quadruple:
 $$
 T=(n, \{h_i,v_i\}_{i=1}^n,\Phi_T=(\rho_T,\epsilon_T))
 $$ 
which contains the following information:
\begin{itemize}
\item $n,h_i,v_i\in \NN_+$ are positive integers.
\item $\sum_{i=1}^{n}h_i=\sum_{i=1}^n v_i$
\item $\rho_T:\cH(T)\rightarrow \cV(T)$ is a bijection between the sets
$$
\cH(T)=\{(i,j): 1\leq i \leq n \text{ and } 1\leq j\leq h_i\}
$$
 and 
 $$
 \cV(T)=\{(k,l): 1\leq k \leq n \text{ and } 1\leq l\leq v_k\}
 $$ 
 \item $\epsilon_T:\cH(T) \rightarrow \{-1,1\}$ is any function.
\end{itemize}
The set of abstract geometric types is denoted by $\mathcal{G}\mathcal{T}$.
 \end{defi}

Given that $\rho_T$ is a bijection, we can define the formal inverse of a geometric type.

\begin{defi}\label{Defi: inverse of Type}
The inverse of a geometric type $T=(n,\{h_i,v_i\}_{i=1}^n,\Phi_T=(\rho_T,\epsilon_T))$ could be define as the geometric type:
$$
T^{-1}:=(N,\{H_i,V_i\}_{i=1}^N, \Phi_{T^{-1}}:=(\rho_{T^{-1}}, \epsilon_{T^{-1}})  )
$$
where:
\begin{itemize}
\item The numbers are: $N = n$, $H_i = v_i$, and $V_i = h_i$.
\item In this manner,  $\cH(T^{-1})=\cV(T)$ and  $\cV(T^{-1})=\cH(T)$
\item The functions are given by: $\rho_{T^{-1}}=\rho_T^{-1}$ and $\epsilon_{T^{-1}}(k,l)=\epsilon(\rho_T^{-1}(k,l))$.
\end{itemize}
\end{defi}

\begin{rema}\label{Rema: Inverse type and inverse pAnosov}
If $f$ is a generalized pseudo-Anosov homeomorphism and $T$ is the geometric type of a geometric Markov partition $\mathcal{R}$ of $f$, $\mathcal{R}$ is a Markov partition for $f^{-1}$ too. However, the horizontal rectangles of $\mathcal{R}$ as a partition of $f$ now become vertical rectangles of $\mathcal{R}$ as a partition of $f^{-1}$. It is not difficult to convince ourselves that $T^{-1}$ is the geometric type of the Markov partition $\mathcal{R}$ for $f^{-1}$.
$$
T(f^{-1},\cR)=T^{-1}(f,\cR).
$$
\end{rema}

\subsection{The pseudo-Anosov class} Not every abstract geometric type is the geometric type of the Markov partition of a pseudo-Anosov homeomorphism. Those that are determine a subset of $\mathcal{G}\mathcal{T}$, which we will formalize in the following definition.

\begin{defi}\label{Defi: pseudo-Anosov class}
A geometric type $T \in \mathcal{G}\mathcal{T}$ is in the \emph{pseudo-Anosov class} if there exists a surface $S$ and a generalized pseudo-Anosov homeomorphism $f:S\rightarrow S$ possessing a Markov partition $\mathcal{R}$ of geometric type $T$, i.e., $T := T(f,\mathcal{R})$. The pseudo-Anosov class of geometric types is denoted by $\mathcal{G}\mathcal{T}(p\AA)$.
\end{defi}

\begin{defi}\label{Defi: T is realized by}
Let $T$ be a geometric type in the pseudo-Anosov class. We say that $T$ is realized by a pair of homeomorphism and partition if there exists a generalized pseudo-Anosov homeomorphism $f$ with a geometric Markov partition $\mathcal{R}$ of geometric type $T$, i.e., $T := T(f,\mathcal{R})$.
\end{defi}

In Chapter  \ref{Chapter: Realization} , we will see the necessary and sufficient conditions for a geometric type to be in the pseudo-Anosov class.

\subsection{The powers of a geometric type}Finally, we must define powers of geometric types. To do this, let us note that if $f$ is a pseudo-Anosov homeomorphism with a geometric Markov partition $\cR=\{R_i\}_{i=1}^n$, then for all $m\in \mathbb{N}$:

\begin{itemize}
\item The map $f^m$ is a generalized pseudo-Anosov homeomorphism with the same stable and unstable foliations.
\item Every set $R_i$ is a rectangle for $f^m$.
\item The family of rectangles $\cR=\{R_i\}_{i=1}^n$ is a geometric Markov partition for $f^m$, with the same labels and the same orientation in every rectangle.
\end{itemize}

If we consider the partition $\mathcal{R}$ as a Markov partition for $f^m$, we denote them as $(f^m,\mathcal{R})$.

\begin{defi}\label{Defi: powers of the type}
Let $f:S\rightarrow S$ be a generalized pseudo-Anosov homeomorphism with a geometric Markov partition $\mathcal{R}$ of geometric type $T$, and let $m\in \mathbb{N}$. The geometric type of the geometric Markov partition $(f^m,\mathcal{R})$ is the power $m$ of $T$ and is denoted by $T^m$.
\end{defi}

In fact, it is possible to define the power of an abstract geometric type in a way that matches our definition when we restrict ourselves to the pseudo-Anosov class or for types that are realized as basic pieces of Smale diffeomorphisms of surfaces, which we will study in the next section. The following result was made explicit by François Béguin in \cite[Appendix A]{beguin2004smale}  and forms the core of the algorithms we will describe in the future.

\begin{prop}\label{Prop: algoritm iterations type}
Let $T$ be an abstract geometric type. There exists an algorithm to compute $T^m$ for all $m\in \mathbb{N}$.
\end{prop}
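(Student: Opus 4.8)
The plan is to give an explicit combinatorial description of $T^m$ and verify that the recipe produces exactly the geometric type of $(f^m,\mathcal R)$ whenever $T$ is realized. Since $T^m$ should record how $f^m$ shuffles and reorients the rectangles of $\mathcal R$, the first step is to understand the horizontal and vertical sub-rectangles of the Markov partition $(f^m,\mathcal R)$. A connected component of $\overset{o}{R_i}\cap f^{-m}(\overset{o}{R_j})$ is a horizontal sub-rectangle of $R_i$ for $f^m$, and by the Markov property for $f$ it refines into a nested chain: such a component is indexed by an admissible word $(i=i_0,i_1,\dots,i_m=j)$ of length $m+1$ in the incidence matrix $A(T)$, since $\overset{o}{R_{i_0}}\cap f^{-1}(\overset{o}{R_{i_1}})\cap\cdots\cap f^{-m}(\overset{o}{R_{i_m}})$ is (the interior of) a horizontal sub-rectangle of $R_{i_0}$. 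So I would first show: the horizontal sub-rectangles of $(f^m,\mathcal R)$ inside $R_i$ are in bijection with length-$m$ admissible paths in $A(T)$ starting at $i$, and dually for vertical sub-rectangles (paths ending at a given index); this already gives $h_i^{(m)}=\sum_j (A^m)_{ij}$ and $v_k^{(m)}=\sum_k (A^m)_{jk}$, and Proposition~\ref{Prop: algoritm iterations type} only needs $A(T)$ which is computed directly from $T$.

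The second step is the ordering. The vertical orientation of $R_i$ (part of the data of a geometric Markov partition) linearly orders the horizontal sub-rectangles $\{H^i_j\}$ of $(f,\mathcal R)$, and inside each $H^i_{j}$ the $f$-image is a vertical sub-rectangle $V^k_l$ of $R_k$ whose horizontal order is again recorded by $\Phi_T$. Pulling back: inside a horizontal sub-rectangle $H^i_{j_1}$ associated to the first step $i\to i_1$, the further subdivision by $f^{-2}(\overset{o}{R_{i_2}})$ produces sub-rectangles ordered from bottom to top by an order that is governed by $\epsilon_T(i,j_1)$ — if $\epsilon_T(i,j_1)=1$ the induced vertical orientation on $H^i_{j_1}$ agrees with that pushed from $R_{i_1}$, and if $\epsilon_T(i,j_1)=-1$ it is reversed. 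Thus the bottom-to-top order of the length-$m$ horizontal sub-rectangles of $(f^m,\mathcal R)$ inside $R_i$ is a lexicographic-type order on admissible words, where at each successive coordinate the comparison is made using the order inherited from $\Phi_T$ twisted by the accumulated product of signs $\epsilon_T$ along the path. I would make this precise by defining, for an admissible word $\underline i=(i_0,\dots,i_m)$, a sequence of "local indices" $j_1,\dots,j_m$ (with $j_t$ the position of the $t$-th step among the horizontal sub-rectangles, read with the correct orientation) together with a cumulative sign $\eta_t=\prod_{s<t}\epsilon_T(\cdot)$; the map $\underline i\mapsto$ its rank in this twisted-lexicographic order gives the index $j\in\{1,\dots,h_i^{(m)}\}$. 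The same construction run forward (pushing by $f,f^2,\dots,f^m$, tracking $\Phi_T$) gives the vertical index and the total sign $\epsilon_{T^m}$, and composing gives $\rho_{T^m}$; concretely $\rho_{T^m}(i,j)=(k,l)$ where $k$ is the endpoint of the path, $l$ its rank among vertical sub-rectangles, and $\epsilon_{T^m}(i,j)=\prod_{t=0}^{m-1}\epsilon_T(\rho_T^{t-1}(\text{step }t))$ — i.e. the product of the one-step signs read along the $f$-orbit of the sub-rectangle.

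The third step is to check that this purely combinatorial recipe, applied to a $T$ that is realized by $(f,\mathcal R)$, reproduces $T(f^m,\mathcal R)$. This is where the earlier structural facts do the work: Corollary~\ref{Coro: image of sub horzontal-vertical } (iterated) identifies $f^m(H)$ with a vertical sub-rectangle, Lemma~\ref{Lemm: Boundary of Markov partition is periodic} guarantees the refinements behave well at the boundary, and the compatibility of the parametrizations (Definition~\ref{Defi: equivalent parametrizations}) ensures that "preserving the vertical orientation" composes multiplicatively along the orbit, which is exactly the product formula for $\epsilon_{T^m}$. Finiteness and effectivity are immediate: there are finitely many admissible words of length $m$, each of bounded size, and sorting them in the twisted-lexicographic order is a finite computation, so the algorithm halts; $A^m$, the paths, the ranks, and the sign products are all computed from the finite data $(n,\{(h_i,v_i)\},\Phi_T)$ alone, with no reference to $f$. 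The main obstacle I expect is the bookkeeping of orientations: getting the twist by $\epsilon_T$ exactly right at every coordinate (in particular deciding, for a word, whether a later coordinate is compared "upward" or "downward" depending on the parity of the sign product so far), and doing this simultaneously for the horizontal side (pull-backs) and the vertical side (push-forwards) so that $\rho_{T^m}$ and $\epsilon_{T^m}$ are mutually consistent. Everything else — the matrix powers, the path bijections, the finiteness — is routine once the orientation conventions are pinned down. (For a clean writeup one can alternatively sidestep some of this by first defining $T\mapsto T^2$ carefully and then setting $T^m=(\cdots((T^2)\circ T)\circ\cdots)$ via a composition operation on geometric types, reducing the whole proposition to the well-definedness of a single "product of two geometric types" operation, which is again the same orientation-bookkeeping packaged once.)
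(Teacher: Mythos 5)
The paper itself contains no proof of this proposition: it attributes the computation to Béguin \cite[Appendix A]{beguin2004smale}, and your sketch is essentially that same standard recipe (chains of sub-rectangles, twisted-lexicographic ordering, multiplicative $\epsilon$-signs), which is also the bookkeeping the paper later carries out explicitly in Proposition~\ref{Prop: Unique horizontal type} and Chapter~\ref{Chap: Computations}. The only adjustments needed are that for non-binary $A(T)$ a connected component of $\overset{o}{R_i}\cap f^{-m}(\overset{o}{R_j})$ is indexed not by a word of rectangle indices alone but by a chain of horizontal sub-rectangles (pairs compatible under $\rho_T$), which your local indices $j_1,\dots,j_m$ already encode, and that for a purely abstract $T$ the combinatorial recipe is itself the definition of $T^m$, the comparison with $(f^m,\mathcal{R})$ being available only when $T$ is realizable.
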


\subsection{Equivalent definition of Markov partition} We end this section by introducing an equivalent definition of a Markov partition that is easier to verify. We do this up to this moment because the proof utilizes the language of geometric types.

\begin{prop}\label{Prop: Markov criterion boundary}
	Let $f:S \rightarrow S$ be a pseudo-Anosov homeomorphism and $\cR=\{R_i\}_{i=1}^n$ be a family of rectangles satisfying the following conditions:
	\begin{itemize}
		\item $S=\cup_{i=1}^n R_i$
		\item For all $i\neq j$,  $\overset{o}{R_i}\cap \overset{o}{R_j}=\emptyset$
	\end{itemize}
Then, the family $\mathcal{R}$ is a Markov partition for $f$ if and only if the following conditions hold:
	\begin{itemize}
		\item $\partial^s\cR=\cup_{i=1}^m\partial^sR_i$ is $f$-invariant, and 
		\item $\partial^u\cR=\cup_{i=1}^m\partial^uR_i$ is $f^{-1}$-invariant.
	\end{itemize}
\end{prop}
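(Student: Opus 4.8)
The plan is to prove both implications, using the structural results about sub-rectangles (Corollary~\ref{Coro: image of sub horzontal-vertical }) and the observation (Lemma~\ref{Lemm: Boundary of Markov partition is periodic}) that the pieces of $\partial^{s,u}\cR$ sit on stable/unstable leaves. The forward implication is immediate from the definitions: if $\cR$ is a Markov partition, then for each $i$ every connected component $C$ of $\overset{o}{R_i}\cap f^{-1}(\overset{o}{R_j})$ has $f(\overline{C})$ a vertical sub-rectangle of $R_j$, whose stable boundary lies in $\partial^s R_j \subset \partial^s\cR$. Since $\partial^s R_i$ is covered by the stable boundaries of the horizontal sub-rectangles $\{H^i_k\}_k$ (the lowest and highest ones share their outer stable sides with $R_i$), and $f$ maps each $H^i_k$ to a vertical sub-rectangle of the partition, we get $f(\partial^s\cR)\subseteq \partial^s\cR$; equality follows because $f$ is a homeomorphism and both sides are finite unions of compact arcs of equal total $\mu^s$-measure (or, more simply, because the same argument applied to $f^{-1}$ shows $f^{-1}(\partial^s\cR)\subseteq \partial^s\cR$). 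The unstable boundary is handled symmetrically, giving $f^{-1}(\partial^u\cR)=\partial^u\cR$.

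For the converse, assume $S=\cup R_i$ with disjoint interiors, $\partial^s\cR$ is $f$-invariant, and $\partial^u\cR$ is $f^{-1}$-invariant. Fix $i,j$ and let $C$ be a connected component of $\overset{o}{R_i}\cap f^{-1}(\overset{o}{R_j})$. First I would show $\overline{C}$ is a horizontal sub-rectangle of $R_i$: take a point $x\in C$ and consider the horizontal (stable) leaf $I_x$ of $R_i$ through $x$. Because $f(\partial^s\cR)=\partial^s\cR$ and $f(\partial^u R_i)$ consists of unstable arcs avoiding $\overset{o}{R_j}$, the leaf $I_x$ cannot exit $f^{-1}(\overset{o}{R_j})$ through a stable side; so $f(I_x)$ is a stable arc of $S$ meeting $\overset{o}{R_j}$ and, using that $f(\partial^u\cR)\supseteq \partial^u\cR$ forces the endpoints of $f(I_x)$ onto $\partial^u R_j$, we conclude $f(I_x)$ is a full horizontal leaf of $R_j$, hence $I_x\subset \overline{C}$. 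This shows $\overline C$ is saturated by horizontal leaves of $R_i$; the invariance of $\partial^u\cR$ under $f^{-1}$ then pins down its vertical sides, so $\overline C$ is a genuine horizontal sub-rectangle of $R_i$ and $f(\overline C)$ is a vertical sub-rectangle of $R_j$. Running the same argument with $f^{-1}$ (and the hypothesis that $\partial^u\cR$ is $f^{-1}$-invariant, $\partial^s\cR$ is $f$-invariant) gives the dual condition on connected components of $f(\overset{o}{R_i})\cap \overset{o}{R_j}$, completing the verification that $\cR$ is a Markov partition.

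The main obstacle I anticipate is the careful bookkeeping in the converse direction: one must rule out the ``bad'' configurations in which a stable leaf of $R_i$ has its $f$-image partly inside and partly outside $\overset{o}{R_j}$, or in which $f(I_x)$ crosses $\partial^u\cR$ transversally rather than terminating on it. This is exactly where the invariance hypotheses on $\partial^s\cR$ and $\partial^u\cR$ are used, together with the fact that these boundary sets lie on stable/unstable leaves (Lemma~\ref{Lemm: Boundary of Markov partition is periodic}), so that $f$-images of stable arcs stay stable arcs and cannot cross the unstable boundary except at endpoints. I would also want to invoke the exclusion of annular or otherwise degenerate rectangles (items i)--iii) following Definition~\ref{Defi: vertical/horizontal sub rectangles}) to guarantee that a subset of $R_i$ saturated by full horizontal leaves of $R_i$ and bounded by two vertical arcs is automatically a rectangle in the sense of Definition~\ref{Defi: rectangle}.
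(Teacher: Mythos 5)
Your argument follows essentially the same route as the paper's proof: the forward direction via the extreme horizontal/vertical sub-rectangles whose outer sides coincide with $\partial^s R_i$ and $\partial^u R_k$, and the converse by showing that a stable leaf of $R_i$ through a point of a component $C$ cannot terminate inside $\overset{o}{R_i}$ without producing a point of $f^{-1}(\partial^u R_j)\cap\overset{o}{R_i}$, contradicting the invariance hypotheses. One caveat: your claimed equality $f(\partial^s\cR)=\partial^s\cR$ (and its proposed justifications) is false in general, since stable boundary arcs strictly contract under $f$; only the inclusion $f(\partial^s\cR)\subseteq\partial^s\cR$ holds, and that is all the proposition asserts and all your argument actually uses.
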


\begin{figure}[h]
	\centering
	\includegraphics[width=0.7\textwidth]{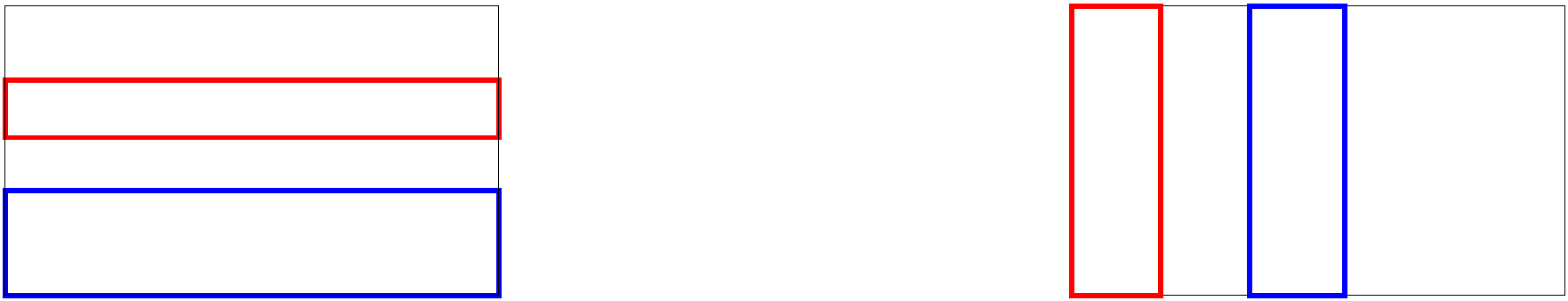}
	\caption{Markov partition implies $f,f^{-1}$-invariant boundaries}
	\label{Fig: criterio Markov 1}
\end{figure}

\begin{figure}[h]
	\centering
	\includegraphics[width=0.7\textwidth]{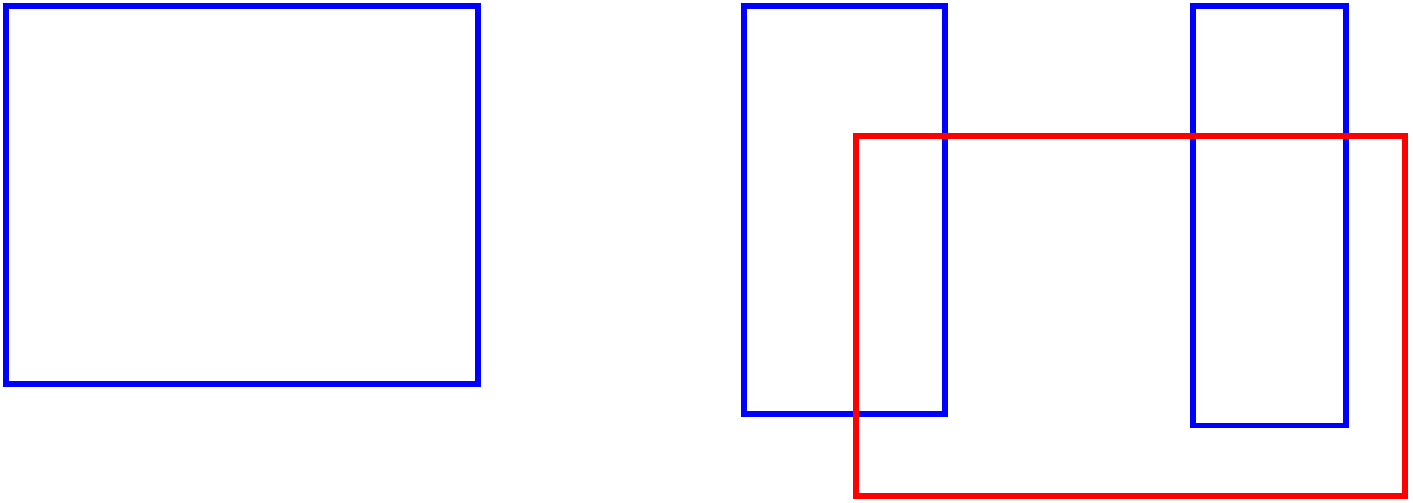}
	\caption{Invariant implies Markov partition}
	\label{Fig: criterio Markov 2}
\end{figure}

\begin{proof}
	
Suppose $\mathcal{R}$ is a Markov partition of $f$ and is endowed with a geometrization. If $I$ is the lower ( upper) boundary of $R_i$, then $I$ corresponds to the lower ( upper) boundary of $H^i_1$ ( $H^i_{h_i}$) as we have $f(H^i_1)=V^k_l$, for some $(k,l)\in \mathcal{V}$, and $V^k_l$ is a vertical sub-rectangle of $R_k$. The interval $f(I)$ is necessarily on the horizontal boundary of $R_k$. This argument proves that $\partial^s \mathcal{R}$ is $f$-invariant.
	
If $J$ is the left side of $R_k$, then $J$ is the left side of the vertical sub-rectangle $V^k_1$. Now, $f^{-1}(V^k_l)=H^i_j$ is a horizontal sub-rectangle of $R_i$, and $f^{-1}(J)$ is on the unstable boundary of $H^i_j$ (for some $(i,j)\in \mathcal{H}$), which is contained in the unstable boundary of $R_i$. This shows that $\partial^u(\mathcal{R})$ is $f^{-1}$-invariant.	

Let $C$ be a nonempty connected component of $f^{-1}(\overset{o}{R_k})\cap \overset{o}{R_i}$. We need to show that $C$ is the interior of a horizontal sub-rectangle of $R_i$. Let $x\in C$ be any point, $\overset{o}{I_x}$ be the interior of the stable segment of $R_i$ passing through $x$, and $\overset{o}{I_x'}$ be the connected component of $\mathcal{F}^s\cap C$ containing $x$. Clearly, $\overset{o}{I_x'}\subset \overset{o}{I_x}$. If $I_x\neq I_x'$, then at least one endpoint $z$ of $I_x'$ lies in $\overset{o}{I_x}$ and then in the interior of $R_i$. However, $z$ must be equal to $f^{-1}(z')$ with $z'\in \partial^u R_k$, but since $\partial^u \mathcal{R}$ is $f^{-1}$-invariant, $z \in \partial^u \cR$ this leads to a contradiction, as $z\in \partial^u\mathcal{R} \cap \overset{o}{R_i}=\emptyset$. Similarly, we can show that $f(C)$ is the interior of a vertical sub-rectangle of $R_k$.
\end{proof}

\section{Hyperbolic diffeomorphisms.}
Let $S$ be a compact and oriented Riemann surface. The Riemannian metric is denoted by $|\cdot|$ and the metric induced on $S$ by $d$. The tangent space of $S$ at the point $x\in S$ will be denoted by $T_xS$, and the tangent bundle of $S$ is denoted by $TS$. Suppose $f:S\rightarrow S$ is a diffeomorphism. The \emph{tangent map} of $f$ is the map $Df:TS\rightarrow TS$, and the \emph{differential} at $x\in S$ is denoted by $D_xf:T_xS\rightarrow T_{f(x)}S$. In this thesis, we will always assume that the diffeomorphisms we study are orientation preserving.

\subsection{Hyperbolic sets} Hyperbolic sets are fundamental in dynamical systems, defined by the contraction and expansion properties of the differentials in their tangent spaces. This simple geometric definition has profound dynamical implications, which we will explore further. We will now present a formal definition and introduce some key results that we will utilize later. For the sake of simplicity, we will focus on the case of surfaces, although the theory can be extended to any dimension.

\begin{defi}\label{Defi: Hyperbolic sets}
Let $S$ be a compact and oriented surface, and let $f: S \rightarrow S$ be a diffeomorphism. A compact set $K \subset S$ is hyperbolic if the following properties are satisfied:
\begin{enumerate}
	\item  $K$ is $f$-invariant, that is:  for all $z \in \ZZ$, $f^z(K)=K$.
	\item There exists a continuous decomposition of the tangent bundle of $S$ restricted to $K$ as a direct sum of two sub-bundles:
	 $$
	 T_K S=\EE^s \oplus \EE^u,
	 $$
	where $\EE^s$ is called the \emph{stable bundle} and $\EE^u$ is called the \emph{unstable bundle}. We require this decomposition to be $Df$-invariant:
	$$
	Df(\EE^s)=\EE^s \text{ and }  Df(\EE^u)=\EE^u.
	$$.
	\item There are constants $1 < \lambda$ and $C > 0$ such that for all $x\in K$,  $u_x\in \EE^u_x$ and all $v_x\in \EE^s_x$, and  $n\in \NN$, the following inequalities are satisfied for:
	\begin{eqnarray*}
	\Vert Df^n_x(v_x)\Vert_{f(x)} \leq \lambda^{-n}C\Vert v_x \Vert_x\\
	\Vert Df^{n}_x(u_x)\Vert_{f(x)} \geq \lambda^n C \Vert u_x \Vert_x
	\end{eqnarray*}
\end{enumerate}
\end{defi}

Regarding the last item of the definition, it is usually said that the differential is a contraction restricted to the stable bundle and an expansion in the unstable bundle by a constant factor of $\lambda^{-1}$ and $\lambda$, respectively. It is a classic result that there exists a Riemannian metric on $S$ called \emph{adapted metric}, such that:
\begin{itemize}
\item The constant $C$ is equal to $1$
\item The contraction and expansion is detected in the fist iteration:
	\begin{eqnarray*}
	\Vert Df_x(v_x)\Vert_{f(x)} \leq \lambda^{-1}\Vert v_x \Vert_x\\
	\Vert Df_x(u_x)\Vert_{f(x)} \geq \lambda \Vert u_x \Vert_x
\end{eqnarray*}
\end{itemize}
 We always assume that our Riemannian metric is adapted to the diffeomorphism.

\subsection{Invariant foliations} Similar to the case of pseudo-Anosov homeomorphisms, hyperbolic sets of $f$ define two transverse \emph{laminations} invariant under $f$. The following is a brief presentation of this theory, which begins with the notion of stable and unstable manifolds.

\begin{defi}\label{Defi: Stable/unstable manifolds}
Let $S$ be a compact oriented Riemann surface, $f:S\rightarrow S$ be a diffeomorphism, $K$ be a hyperbolic set of $f$, and $x \in K$. The \emph{stable manifold} of $x$, denoted as $W^s(x,f)$, is defined as follows:
$$
W^s(x,f):=\{y\in S:\lim_{n\rightarrow \infty} d(f^n(x),f^n(y))=0\}.
$$
Let $\epsilon > 0$. The \emph{local stable manifold} of $x$ with size $\epsilon$ is defined as the set:
\begin{eqnarray*}
W^s_{\epsilon}(x,f):=\{y\in S : \lim_{n\rightarrow \infty} d(f^n(x),f^n(y))=0 \text{ and }\\  
\forall n\in \NN, \,\, d(f^n(x),f^n(y))\leq \epsilon\}.
\end{eqnarray*}
The \emph{unstable manifold} and \emph{local unstable manifold} of $x$ with size $\epsilon$ are similarly defined as:
$$
W^u(x,f):=\{y\in S:\lim_{n\rightarrow \infty} d(f^{-n}(x),f^{-n}(y))=0\}.
$$
and 
\begin{eqnarray*}
W^u_{\epsilon}(x,f):=\{y\in S : \lim_{n\rightarrow \infty} d(f^{-n}(x),f^{-n}(y))=0  \text{ and }\\
 \forall n\in \NN, \,\, d(f^{-n}(x),f^{-n}(y))\leq \epsilon  \}.
\end{eqnarray*}
\end{defi}

It is possible to express the stable and unstable manifolds in terms of the local stable and unstable manifolds as follows:
\begin{eqnarray*}
W^s(x,f)=\cup_{n\geq 0}f^{-n}(W^s_{\epsilon}(f^n(x),f)).\\
W^u(x,f)=\cup_{n\geq 0}f^{n}(W^s_{\epsilon}(f^{-n}(x),f)).
\end{eqnarray*}

It is a classical result that the stable and unstable manifolds are, in fact, injective immersed sub-manifolds of $S$ that are as regular as the diffeomorphism $f$ itself. They are tangent to their respective stable or unstable bundles at every point. The following version of the stable manifold theorem corresponds to \cite[Theorem 6.2]{shub2013global}, and here we recall the main properties that we are going to use.

\begin{theo}\label{Theo: Stable manifold theorem}
Let $K$ be a hyperbolic set of $f$. There exists an $\epsilon > 0$ such that for all $x \in K$:
\begin{enumerate}
	\item $W^s_{\epsilon}(x,f)$ is an embedded disk of the same dimension that $\EE^s_x$, $T_xW^s_{\epsilon}(x,f)=\EE^s_x$ and similarly for the unstable case. 
	\item for all $y\in W^s_{\epsilon}(x,f)$ and every $n\geq 0$
	$$
	d(f^n(x),f^n(y))\leq \lambda^n d(x,y)
	$$
	and for all  $y\in W^u_{\epsilon}(x,f)$ and every $n\geq 0$
	$$
	d(f^{-n}(x),f^{-n}(y))\leq \lambda^n d(x,y);
	$$
	\item The embedding of $W^{s,u}_{\epsilon}(x,f)$ varies continuously with respect to  $x$.
	\item  It is possible to write
	$$
	W^s_{\epsilon}(x,f)=\{y\in S :	\forall n\in \NN \,\, d(f^n(x),f^n(y))\leq \epsilon\}.
	$$
	and
	$$
	W^u_{\epsilon}(x,f)=\{y\in S :	\forall n\in \NN \,\, d(f^{-n}(x),f^{-n}(y))\leq \epsilon  \}.
	$$
	\item The manifold $W^{s,u}_{\epsilon}(x,f)$ is as smooth as $f$.
\end{enumerate}
\end{theo}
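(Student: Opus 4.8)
Since this is the classical stable manifold theorem (stated here in the form of \cite[Theorem 6.2]{shub2013global}), I would not reprove it from scratch but rather recall the Hadamard graph-transform scheme, whose output is exactly items (1)--(5). The plan is as follows. First I would normalize: work in the adapted metric, so that $\Vert Df_x(v)\Vert\le\lambda^{-1}\Vert v\Vert$ for $v\in\EE^s_x$ and $\Vert Df_x(u)\Vert\ge\lambda\Vert u\Vert$ for $u\in\EE^u_x$, and observe that by compactness of $K$ and continuity of the splitting $T_KS=\EE^s\oplus\EE^u$ these bounds, the size of the charts, and the modulus of continuity of $Df$ can all be taken uniform over $x\in K$. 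Using the exponential map I would linearize along orbits: put $\hat f_x:=\exp_{f(x)}^{-1}\circ f\circ\exp_x$, defined on a uniform ball around $0$ in $T_xS$, so that $D_0\hat f_x=D_xf$ respects the splitting and the nonlinear remainder is uniformly $C^1$-small on a ball of radius $\delta$.

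Next I would set up the graph transform. For each $x$ let $\mathcal{G}_x$ be the complete metric space (sup metric) of maps $g\colon\EE^s_x(\delta)\to\EE^u_x(\delta)$ with $g(0)=0$ and $\mathrm{Lip}(g)\le1$. The image of the graph of $g\in\mathcal{G}_x$ under $\hat f_x$ is again a Lipschitz graph over $\EE^s_{f(x)}(\delta)$ --- the cone condition is precisely what keeps it a graph --- giving $\Gamma_x g\in\mathcal{G}_{f(x)}$. The induced operator on orbit-indexed families $(g_x)_{x\in K}$ is a contraction (factor close to $\lambda^{-1}$) in the sup metric, hence has a unique fixed point $(g^*_x)_x$; by construction the graph of $g^*_x$ is $f$-invariant along the orbit, and one checks it coincides with $W^s_\delta(x,f)$. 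This yields (1): an embedded Lipschitz disk of dimension $\dim\EE^s_x$, tangent to $\EE^s_x$ because $\mathrm{Lip}(g^*_x)\to0$ as $\delta\to0$ (equivalently, $g^*_x$ is differentiable at $0$ with zero derivative); and (2), since invariance of the graph together with the contraction of $Df$ on the stable cone forces $d(f^n(x),f^n(y))\le\lambda^{-n}d(x,y)$ for $y$ on the disk. Item (4): one inclusion is (2), and for the converse, if the forward orbit of $y$ stays $\epsilon$-close to that of $x$, a cone-field estimate shows the unstable component of $\exp_x^{-1}(y)$ cannot be nonzero --- it would be stretched by a factor $\ge\lambda$ at each step and escape the $\epsilon$-ball --- so $y$ lies on $g^*_x$. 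Item (3) follows because $\hat f_x$, $\exp_x$ and the splitting depend continuously (indeed as regularly as $f$) on $x\in K$, and the fixed point of a uniformly contracting family of maps depends continuously on the parameter.

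The one genuinely technical point is item (5): bootstrapping $g^*_x$ from Lipschitz to $C^r$ when $f$ is $C^r$. The standard route is the fiber contraction theorem / $C^r$ section theorem of Hirsch--Pugh--Shub: the graph transform lifts to a fiber contraction over the base contraction on the bundle of $C^r$ jets of the $g_x$'s, and its attracting fixed section is automatically $C^r$. I expect this smoothness bootstrap to be the main obstacle to a self-contained write-up; once the uniform geometric estimates coming from compactness of $K$ are in place, everything else reduces to the Banach fixed point theorem. The unstable statements are obtained by applying the above to $f^{-1}$, for which $\EE^u$ plays the role of the stable bundle.
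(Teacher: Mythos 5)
Your outline is the standard Hadamard graph-transform proof (uniform estimates from compactness of $K$, cone condition, Banach fixed point for the transform, $C^r$ section theorem for smoothness), which is essentially the same route as the paper: the paper does not prove this theorem at all but simply cites it as Theorem 6.2 of Shub's \emph{Global Stability of Dynamical Systems}, where precisely this argument is carried out. One small point in your favor: your contraction estimate $d(f^n(x),f^n(y))\leq \lambda^{-n}d(x,y)$ is the correct one, since $\lambda>1$; the exponent $\lambda^{n}$ appearing in item (2) of the paper's statement is a typo.
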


\begin{conv*}
If there is no possibility for confusion about the diffeomorphism in question, we denote the (local) stable and unstable manifolds of $f$ at the point $x$ as $W^{s}(x)$ and $W^u(x)$ ($W^{s,u}_{\epsilon}(x)$), respectively.
\end{conv*}

A surface diffeomorphism $f: S \rightarrow S$ satisfies the \emph{Axiom A} if the set of \emph{non-wandering points} of $f$, denoted $\Omega(f) \subset S$, is hyperbolic and coincides with the closure of the periodic points of $f$, i.e., $\Omega(f) = \overline{\text{Per}(f)}$.

The diffeomorphism $f$ satisfies the \emph{strong transversality condition} if for every $x, y \in \Omega(f)$, the stable manifold of $x$, $W^s(x)$, intersects transversely (at any point) with the unstable manifold of $y$, $W^u(y)$.

\begin{defi}[Smale diffeomorphism]\label{Defi: Smale difeomorphism}
A surface diffeomorphism $f: S \rightarrow S$ is a \emph{Smale diffeomorphism} if it satisfies the Axiom A and has the strong transversality condition.
\end{defi}

One of the fundamental results in the hyperbolic theory of dynamical systems, established by Robbin \cite{robbin1971structural}, Robinson \cite{robinson1976structural}, and Ma~n'e \cite{mane1987proof}, states that in the $C^1$-topology, being a Smale diffeomorphism is equivalent to being structurally stable. We have introduced the previous definition as the transversality of the invariant manifolds is a concept commonly utilized in our discussion.  Additionally, Smale demonstrated  that the non-wandering set of diffeomorphisms satisfying the Axiom A can be decomposed into a finite collection \emph{basic pieces}.

\begin{defi}\label{Defi: Basic piece}
Let $f$ be a surface diffeomorphism that satisfies the Axiom A. A \emph{basic piece} of $f$ is any compact invariant set $\Lambda \subset \Omega(f)$ that is transitive and maximal among the sets satisfying these properties.
\end{defi}

\subsection{The structure of Invariant Manifolds for Saturated Sets} A remarkable property of the basic pieces is that they are saturated sets.

\begin{defi}\label{Defi: Saturated set}
Let $f$ be a diffeomorphism of a compact surface. A hyperbolic set $K$ of $f$ is called saturated if $W^s(K)\cap W^u(K)\subset K$.
\end{defi}

Understanding the topology of invariant curves in basic pieces and saturated sets is crucial for our purposes. We will introduce various types of points and segments within a saturated set, along with their invariant manifolds, which will be of great importance in our subsequent analysis.

\begin{defi}\label{Defi: Boundary points}
Let $f:S \rightarrow S$ be an orientation-preserving Smale surface diffeomorphism, and let $K$ be a saturated set of $f$ with $x\in K$. We introduce the following definitions:
\begin{enumerate}
	\item The point $x$ is $s$-\emph{boundary} if there exists an open interval (relative to the unstable manifold of $x$ ) contained in the unstable manifold of $x$, disjoint from $K$ but having $x$ in its closure relative to $W^u(x)$.	
	
	\item The point $x$ is $u$-\emph{boundary} if there exists and open interval (relative to the stable manifold of $x$ ) contained in the stable manifold of $x$,  disjoint from $K$ but having $x$ in its closure relative to $W^s(x)$.	
	
	\item Any connected component of $W^s(x)\setminus{x}$ is called a \emph{stable separatrice} of $x$. Similarly, a connected component of $W^u(x)\setminus{x}$ is called an \emph{unstable separatrice} of $x$.
	
	\item A separatrix of $x$ is \emph{free} if it is disjoint from $K$.
	
	\item Let $x$ be an $s$-boundary point.	Suppose $W^u_0(x)$ is an unstable separatrice of $x$, and there exists an open interval of $W^u_0(x)$, disjoint from $K$ but having $x$ in its closure, in this case we say that $x$ is isolated from the hyperbolic set $K$ on the side of the separatrice $W^u_0(x)$. If there is no such interval we say that $x$ is saturated by $K$ on the side of $W^u_0(x)$.
	
	\item Let $x$ be an $u$-boundary point, assume that $W^s_0(x)$ is a stable separatrice of $x$ and there exists a stable interval of $W^s_0(x)$, disjoint from $K$ but having $x$ in its closure. Then we say that $x$ is isolated from $K$ on the side of the separatrice $W^s_0(x)$. If there is no such interval we say that $x$ is saturated by $K$ on the side of $W^s_0(x)$.
	
	\item If $x$ is isolated from $K$ on both stable separatrices, we say $x$ is a \emph{double $u$-boundary point}. If $x$ is isolated from $K$ on  its two unstable separatrices, we say $x$ is a \emph{double $s$-boundary point}.
	
\end{enumerate}
\end{defi}

We are going to enumerate some properties of saturated basic sets. The results correspond to \cite[Proposition 2.1.1]{bonatti1998diffeomorphismes}.

\begin{prop}\label{Prop: sub-boundary points 2.1.1}
For a Smale surface diffeomorphism $f:S \rightarrow S$ that preserves the orientation of $S$ and a saturated hyperbolic set $K$, the following propositions hold true for $s$-boundary points (and their equivalent formulations for $u$-boundary points).
\begin{enumerate}
\item If $x$ is an $s$-boundary point, then every point in the $f$-orbit of $x$ is also an $s$-boundary point, and every point in the stable manifold of $x$, $W^s(x)$, is an $s$-boundary point.

\item If $x$ is a periodic $s$-boundary point, then $x$ has at least one free unstable separatrix.

\item The set of periodic $s$-boundary points is finite.

\item If $x$ has a free separatrix, then $x$ is periodic.

\item If $x$ is an $s$-boundary point, then $x$ lies in the stable manifold of a periodic $s$-boundary point.

\item If $K$ has no $s$ or $u$-boundary points, then $K$ is an \emph{Anosov diffeomorphism}.  If $K$ has $u$-boundary points but no $s$-boundary points, then $K$ is a \emph{hyperbolic attractor}.  If $K$ has $s$-boundary points but no $u$-boundary points, then $K$ is a \emph{hyperbolic repeller}.

\item A point $x\in W^s(K)$ is a double $s$-boundary point if and only if it corresponds to the stable manifold of a periodic point whose orbit is a basic piece of $f$ and is minimal under the Smale order.

\end{enumerate}
\end{prop}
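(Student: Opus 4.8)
The plan is to prove the seven assertions essentially in the order listed, since items~3--7 rely on items~1--2, using only the following tools: the $f$-invariance of $K$; the uniform hyperbolic estimates of Definition~\ref{Defi: Hyperbolic sets}; the local stable manifold theorem (Theorem~\ref{Theo: Stable manifold theorem}), in particular the $C^1$-continuity of the embeddings $W^{s,u}_\epsilon(\cdot)$; the fact that a saturated hyperbolic set has local product structure (the hypothesis $W^s(K)\cap W^u(K)\subset K$ of Definition~\ref{Defi: Saturated set} says exactly that brackets of nearby points of $K$ lie in $K$); and Smale's spectral decomposition of $\Omega(f)$ into finitely many basic pieces. None of the steps requires a long computation; the content is in organizing these ingredients.

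\textbf{Items 1 and 2.} For item~1, I would take a free unstable interval $I\subset W^u(x)$ with $x\in\overline I$ and $I\cap K=\emptyset$; then for each $k\in\ZZ$ the interval $f^{k}(I)\subset W^u(f^{k}(x))$ satisfies $f^{k}(I)\cap K=f^{k}(I\cap K)=\emptyset$ by invariance of $K$, so every point of the orbit of $x$ is $s$-boundary. For a point $y\in W^s(x)$, choose $n$ with $f^{n}(y)\in W^s_\epsilon(f^{n}(x))$ and work in a product neighborhood of the hyperbolic point $f^{n}(x)$: the unstable holonomy along local stable leaves is a homeomorphism that respects $K$ (local product structure), so it carries the free unstable interval of $f^{n}(x)$ to a free unstable interval of $f^{n}(y)$; pulling back by $f^{-n}$ shows $y$ is $s$-boundary. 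For item~2, if $x$ has period $p$ and $I$ is a free unstable interval issuing from $x$ inside the separatrix $W^u_{0}(x)$, then $f^{p}$ fixes $x$ and expands the unstable direction, so $\{f^{pm}(I)\}_{m\ge 0}$ is an increasing exhaustion of $W^u_{0}(x)$ by intervals disjoint from $K$; hence $W^u_{0}(x)$ is free.

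\textbf{Items 4, 5 and 3.} For item~4 I would argue that if a separatrix $W^u_{0}(x)$ is free, the backward orbit $\{f^{-n}(x)\}\subset K$ accumulates (by compactness and the spectral decomposition) on a basic piece, and that this is incompatible with $W^u_{0}(x)$ being entirely disjoint from $K$ unless the backward orbit is finite, i.e.\ $x$ is periodic; this is the step where the global organization of the Smale diffeomorphism enters. For item~5, given an arbitrary $s$-boundary point $x\in K$ with maximal free unstable interval $I$ on one separatrix, either $I$ is all of that separatrix and item~4 gives $x$ periodic and $x\in W^s(x)$, or $I=(x,\beta)^{u}$ with $\beta\in K$; iterating $f^{-1}$ contracts $I$ and, passing to a subsequence $f^{-n_k}(x)\to p\in K$, the $C^1$-convergence of local unstable manifolds produces a periodic $s$-boundary point $p$, while convergence of forward orbits gives $x\in W^s(p)$ via Theorem~\ref{Theo: Stable manifold theorem}. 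Item~3 then follows: by item~2 each periodic $s$-boundary orbit carries a free unstable separatrix, distinct orbits carry disjoint ones (unstable leaves form a lamination), each free separatrix contains a definite-length interval in the open set $S\setminus K$, and each basic piece of the spectral decomposition can be flanked by only finitely many such separatrices, bounding the number of periodic $s$-boundary orbits.

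\textbf{Items 6 and 7, and the main obstacle.} For item~6, if $K$ has no $s$-boundary point then $K\cap W^u(z)$ can have no finite-endpoint gaps for $z\in K$ (an endpoint of a gap would be an $s$-boundary point of $K$, since $K$ is closed), so $W^u(z)\subseteq K$; symmetrically, absence of $u$-boundary points gives $W^s(z)\subseteq K$; together with the local product structure this makes $K$ open and closed, hence $K=S$ and $f$ is Anosov. If there are $u$-boundary points but no $s$-boundary points, then $W^u(z)\subseteq K$ for all $z\in K$ and $K$ closed give $K=\overline{W^u(K)}$, a hyperbolic attractor; the repeller case is symmetric. For item~7, a double $s$-boundary point has both unstable separatrices free; by item~4 it is periodic and, by item~2, its entire unstable manifold meets $K$ only at the orbit, so the orbit is an isolated basic piece, and ``double $s$-boundary'' translates into this basic piece being minimal for the Smale order; the converse direction is the same product-structure/holonomy argument run along $W^s$ of such an orbit. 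I expect the genuine difficulties to be concentrated in item~3 (finiteness of periodic $s$-boundary points) and in the periodicity claim of item~4: these are the only places where one must pass from purely local hyperbolic estimates to the global structure of a Smale diffeomorphism --- the finiteness of basic pieces and the way free separatrices attach to them --- whereas items~1, 2, 5, 6 and the forward direction of~7 follow from the stable manifold theorem, the uniform estimates, and local product structure alone.
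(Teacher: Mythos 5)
The paper offers no proof of this proposition to compare against: it is quoted directly from Bonatti--Langevin--Jeandenans \cite[Proposition 2.1.1]{bonatti1998diffeomorphismes}, so your sketch stands or falls on its own. Your items 1, 2 and 6 are essentially the standard arguments and are sound, with one caveat on item 1: the holonomy/bracket argument (saturation implies the stable holonomy preserves $K$) only makes sense for points $y\in W^s(x)\cap K$, since a point of $W^s(x)\setminus K$ has no unstable manifold in this framework; the statement must either be read for $W^s(x)\cap K$ or be formulated in terms of the lamination $W^u(K)$.

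The genuine gaps sit exactly where you admit the difficulty but supply no mechanism. In item 4 you say that accumulation of the backward orbit on a basic piece is ``incompatible'' with the separatrix being free unless $x$ is periodic, but give no reason; the real argument converts the free separatrix into uniformly long gaps: every $f^{-n}(x)$ still has a free unstable separatrix, hence a free local unstable interval of uniform size, and pushing these by stable holonomy (which preserves $K$ by saturation) into one local unstable leaf through an accumulation point yields infinitely many pairwise disjoint gaps of definite length inside an interval of bounded length --- impossible unless the backward orbit is finite. That same counting of uniformly long gaps is what bounds the number of periodic $s$-boundary orbits in item 3; your clause ``each basic piece can be flanked by only finitely many such separatrices'' is precisely the statement to be proved, not a tool. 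Item 5 as written is internally inconsistent: you extract $p$ as a limit of \emph{backward} iterates $f^{-n_k}(x)$ and then invoke ``convergence of forward orbits'' to get $x\in W^s(p)$; a backward limit point gives no control whatsoever on the forward orbit, and moreover under $f^{-1}$ the adjacent gap shrinks, so the limit point need not be a boundary point at all. The correct direction is forward: the unstable length of the gap adjacent to $x$ is expanded, so for large $n$ the point $f^{n}(x)$ carries a free adjacent interval of definite size; an accumulation point of the forward orbit then inherits a free local separatrix via holonomy, is periodic by item 4, and one still needs a separate local-product-structure (shadowing-type) step to upgrade ``$f^{n_k}(x)$ approaches the orbit of $p$'' to $x\in W^s(p)$. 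Finally, in item 7 you conflate ``double $s$-boundary'' (isolated from $K$ on both unstable separatrices, which is the paper's Definition~\ref{Defi: Boundary points}) with ``both unstable separatrices free'', which is strictly stronger; passing from the former to the latter (again by forward iteration and item 4) is part of what has to be proved, and the statement concerns points of $W^s(K)$, not only points of $K$.
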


We should not emphasize the Smale order because we are not going to use it. In fact, all the theory we need will be applied to a special type of saturated sets: those that contain neither attractors nor repellers and are not Anosov.

\begin{defi}\label{Defi: saddle-type basic piece}
Let $f$ be a surface Smale diffeomorphism. A saturated hyperbolic set $K$ of $f$ is \emph{saddle-type} if $K$ has both $s$-boundary and $u$-boundary points.
\end{defi}

A saddle-type hyperbolic set could consist only of an isolated hyperbolic periodic point. In this case, such a periodic point is  a double $s$-boundary point and a double $u$-boundary point. Here comes a convention.

\begin{conv}\label{Conv: non double boundary points}
In this thesis, every saddle-type hyperbolic set $K$ does not contain double $s$ or $u$ boundary points and is not reduced to a trivial hyperbolic point. Furthermore, our focus is on studying mixing saddle-type basic pieces. However, we present the theory in a slightly more general setting to highlight certain difficulties and phenomena that are better understood for non-trivial saddle-type basic sets without double boundaries.
\end{conv}

\subsection{Markov partitions for saddle-type saturated sets}
A saddle-type saturated set without double boundaries has the property of being totally disconnected, and the stable and unstable laminations are locally modeled by the product of a Cantor set and an interval. This property allows us to define Markov partitions for the hyperbolic set using disjoint embedded rectangles, which constitutes the main difference compared to the pseudo-Anosov case. For this definition, it is necessary to exclude saturated hyperbolic sets that contain double boundary points. In such cases, we would need to consider degenerated rectangles within our Markov partition, i.e., rectangles that are reduced to stable or unstable intervals. This observation justifies the Convention \ref{Conv: non double boundary points}. In the context of saturated hyperbolic sets without double boundaries, the definitions of rectangles, boundaries, and the interior of rectangles are similar to those discussed in the previous section and are quite intuitive.

\begin{defi}\label{Defi: Markov partition for Smale difeos}
Let $K$ be a saddle-type saturated set of $f:S \rightarrow S$. A \emph{Markov partition} of $K$ is a finite family of disjoint embedded rectangles $\cR={R_i}_{i=1}^n$ satisfying the following conditions:
\begin{itemize}
	\item $K\subset \cup_{i=1}^n R_i$
	\item For all $i,j\in \{1,\cdots,n\}$, , every nonempty connected component of $R_i \cap f^{-1}(R_j)$ is a horizontal sub-rectangle of $R_i$ and a vertical sub-rectangle of $f^{-1}(R_j)$.
	\end{itemize}
\end{defi}

A Markov partition for a saturated set admits an equivalent formulation as previously exposed for pseudo-Anosov homeomorphisms in Proposition \ref{Prop: Markov criterion boundary}. The proofs of the following result are the same as in the other case.

\begin{prop}\label{Prop: Markov criterion boundary diffeomorphism}
	Let $f:S \rightarrow S$ Smale surface diffeomorphism, be $K$ a saddle-type saturated set and be $\cR=\{R_i\}_{i=1}^n$ a family of  rectangles such that:
	\begin{itemize}
		\item $K\subset \cup_{i=1}^n R_i$ and
		\item for all $i\neq j$,  $R_i\cap R_j=\emptyset$
	\end{itemize}
	Then the family $\cR$ is a Markov partition of $K$ if and only if:
	$\partial^s\cR=\cup_{i=1}^m\partial^sR_i$ is $f$-invariant and $\partial^u\cR=\cup_{i=1}^m\partial^uR_i$ is $f^{-1}$-invariant.
\end{prop}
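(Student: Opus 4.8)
The statement to prove is Proposition~\ref{Prop: Markov criterion boundary diffeomorphism}: for a Smale surface diffeomorphism $f$, a saddle-type saturated set $K$, and a finite family of disjoint embedded rectangles $\cR=\{R_i\}_{i=1}^n$ covering $K$, the family is a Markov partition of $K$ if and only if $\partial^s\cR$ is $f$-invariant and $\partial^u\cR$ is $f^{-1}$-invariant.

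\textbf{Plan.} The proposition is the exact analogue for saturated sets of Proposition~\ref{Prop: Markov criterion boundary}, which was already proved for pseudo-Anosov homeomorphisms, and the excerpt explicitly says ``The proofs of the following result are the same as in the other case.'' So the plan is to transcribe that argument, checking that each step survives the passage from ``the rectangles cover $S$'' to ``the rectangles cover $K$ and are disjoint and embedded.''

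\textbf{Forward direction.} Assume $\cR$ is a Markov partition of $K$. First I would endow $\cR$ with an orientation/geometrization so I can speak of the horizontal sub-rectangles $H^i_j$ and vertical sub-rectangles $V^k_l$, exactly as in the geometric-type formalism. Given a lower (resp. upper) stable boundary interval $I=\partial^s_{-1}R_i$, it is the lower boundary of the first horizontal sub-rectangle $H^i_1$; since by the Markov property $f(H^i_1)=V^k_l$ is a vertical sub-rectangle of some $R_k$, the image $f(I)$ must lie on the horizontal boundary $\partial^s R_k$. Hence $\partial^s\cR$ is $f$-invariant. Symmetrically, the left boundary of $R_k$ is the left boundary of $V^k_1$, and $f^{-1}(V^k_1)=H^i_j$ is a horizontal sub-rectangle of some $R_i$, so $f^{-1}$ maps the unstable boundary of $R_k$ into $\partial^u R_i\subset\partial^u\cR$. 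Hence $\partial^u\cR$ is $f^{-1}$-invariant. None of this used surjectivity onto $S$, only the disjointness-of-interiors and the sub-rectangle structure, so it carries over verbatim.

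\textbf{Reverse direction.} Assume the boundaries are invariant. Let $C$ be a nonempty connected component of $R_i\cap f^{-1}(R_j)$; I must show $C$ is a horizontal sub-rectangle of $R_i$ (and then, applying $f$, a vertical sub-rectangle of $f^{-1}(R_j)$, equivalently $f(C)$ is a vertical sub-rectangle of $R_j$). Pick $x\in \overset{o}{C}$, let $I_x$ be the stable segment of $R_i$ through $x$ and $I_x'\subset I_x$ the connected component of $\cF^s\cap C$ through $x$. If $I_x'\neq I_x$, an endpoint $z$ of $I_x'$ lies in the interior of $R_i$; but $z$ is forced to equal $f^{-1}(z')$ for some $z'\in\partial^u R_j$, and $f^{-1}$-invariance of $\partial^u\cR$ would put $z\in\partial^u\cR\cap\overset{o}{R_i}$, which is empty since the $R_i$ are disjoint. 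Contradiction, so the stable segments of $R_i$ through points of $C$ stay in $C$. The dual argument with $f$ shows the unstable segments of $f^{-1}(R_j)$ through points of $C$ stay in $C$, i.e.\ $C$ is simultaneously horizontal in $R_i$ and vertical in $f^{-1}(R_j)$. This establishes the defining property in Definition~\ref{Defi: Markov partition for Smale difeos}. Again the argument is local around boundary points and the interiors, so it is insensitive to whether the rectangles fill $S$ or merely cover $K$.

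\textbf{Where the adaptation needs a word of care.} The one genuinely new point compared with the pseudo-Anosov case is that here the rectangles are \emph{disjoint and embedded} (not a partition with touching boundaries), and $K$ is totally disconnected, so ``$R_i\cap f^{-1}(R_j)$'' need not be all of what sits over $K$ near $x$. I would remark that because $K$ is saturated and $\partial\cR$ is built from stable and unstable boundary segments lying on invariant manifolds of periodic boundary points of $K$ (the analogue of Lemma~\ref{Lemm: Boundary of Markov partition is periodic}), the invariance hypotheses are exactly what is needed to run the endpoint-chasing argument; the disjointness of the $R_i$ is in fact what makes the contradiction $\partial^u\cR\cap\overset{o}{R_i}=\emptyset$ cleaner than in the pseudo-Anosov setting. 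I expect no real obstacle: the main (minor) task is to state clearly that the geometric-type bookkeeping $f(H^i_j)=V^k_l$ is available here too, which follows from the definition of Markov partition for saddle-type sets and Corollary~\ref{Coro: image of sub horzontal-vertical } adapted to disjoint rectangles.
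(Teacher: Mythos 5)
Your proposal is correct and follows the paper's route exactly: the paper itself proves this proposition by declaring the argument identical to that of Proposition~\ref{Prop: Markov criterion boundary} (the boundary-tracking forward direction via $f(H^i_1)=V^k_l$ and the endpoint-chasing contradiction for the reverse direction), which is precisely what you transcribe and adapt. Your closing remarks on disjointness and embeddedness only make explicit what the paper leaves implicit, so nothing further is needed.
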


A geometric Markov partition for a saddle-type saturated set is a Markov partition in which we have chosen an orientation for each rectangle in the partition. Similar to the pseudo-Anosov situation, these orientations determine an ordered family of disjoint horizontal and vertical sub-rectangles of the Markov partition. The horizontal sub-rectangles of $R_i$ are denoted by $\{H^i_j\}_{i=1}^{h_1}$, and $\{V^k_l\}_{l=1}^{v_k}$ denotes the vertical sub-rectangles of $R_k$.

\begin{defi}\label{Defi: geometric type Smale Markov partition}
	Let $f: S \rightarrow S$ be a Smale surface diffeomorphism, $K$ a saddle-type saturated set, and $\cR=\{R_i\}_{i=1}^n$ a geometric Markov partition of $K$. The geometric type of $\mathcal{R}$ can be described by a quadruple
	$$
	T:=T(f,\cR)=(n,\{h_i,v_i\}_{i=1}^n,\Phi_T:=(\rho_T, \epsilon_T))
	$$ 
	where the functions are  determine by the formulas:
	$$
	\rho_T(i,j)=(k,l) \text{ if and only if } f(H^i_j)=V^k_l,
	$$
	and $\epsilon_T(i,j)=1$ if $f\vert_{H^i_j}$ preserves the vertical direction, and $-1$ otherwise.
\end{defi}

If $\mathcal{R}$ is a Markov partition for a basic piece $K$ of a Smale surface diffeomorphism, for each $m \in \mathbb{N}$, the function $f^m$ is still a Smale surface diffeomorphism, but $K$ is not necessarily a basic piece; it is a saturated hyperbolic set. Nonetheless, $\mathcal{R}$ can still serve as a family of rectangles and a Markov partition for $K$ as a saturated set of $f^m$. This Markov partition has a geometric type denoted by $T^m$, which is the power $m$ of $T$. Similarly to the pseudo-Anosov situation, to indicate that $\mathcal{R}$ is viewed as a Markov partition of $f^m$, we write $(f^m,\mathcal{R})$.

 \section{Survey on Bonatti-Langevin theory for Smale diffeomorphism}\label{Sec: Bonatti-Langevin theory}

Geometric types were introduced by Christian Bonatti and Rémi Langevin to study the dynamics of surface Smale diffeomorphisms in the neighborhood of a saddle-type saturated set without double boundaries. In a series of papers \cite{beguin2002classification}, \cite{beguin2004smale}, and his Thesis \cite{beguin1999champs}, François Béguin gives a \emph{Characterization of realizable geometric types} and develops an algorithm to determine when \emph{two geometric types represent the same saturated set}.

At some point, we will transfer our problems of \emph{realization} and \emph{decidability} from homeomorphisms to the context of basic saddle-type pieces for Smale's surface diffeomorphisms. Therefore, we will provide a review of the concepts and results that we will use, emphasizing the simplified version of the Bonatti-Langevin theory that applies to  basic pieces that don't have double boundaries. This obey that in the future we are going to deal with iteration of the diffeomorphism restricted to mixing basic piece.

The \emph{Smale Spectral Decomposition Theorem} asserts that the non-wandering set of an Axiom A diffeomorphism $f$ can be decomposed into a finite number of basic pieces. Each of these basic sets $K$ can further be decomposed into a finite disjoint union of invariant sets $\{K_i\}$, that under the iteration of $f^n$ for some power $n$, are mixing. As a consequence, these $K_i$ become basic sets of $f^n$.

This phenomenon introduces complexity, as now a basic set $K$ might only be saturated for $f^n$, and not for the original $f$. However, this complication doesn't arise if $K$ is mixing, allowing us to work within the same theoretical framework as before. We can still consider powers of $f$, their Markov partitions, and the geometric types of these partitions in a coherent manner.

\subsection{ The domain of a basic piece}\label{Sub-sec: Domain basic piece}

Once again, let $f: S \rightarrow S$ be a Smale surface diffeomorphism, and let $K$ be a saddle-type basic piece of $f$. The objective of this subsection is to define an invariant neighborhood of $K$ that has $K$ as its maximal invariant set. Such a surface has finite genus, is minimal in a certain sense, and is uniquely determined up to topological conjugacy. This distinguished neighborhood is called the \emph{domain of $K$} and is denoted by $\Delta(K)$. Later on, we will define the \emph{formal derived from pseudo-Anosov} of a geometric type in terms of this domain. More details about this construction can be found in \cite[Chapter 3]{bonatti1998diffeomorphismes}.

If our domain is intended to be an invariant neighborhood of $K$, it needs to contain the stable and unstable manifolds of $K$. Moreover, for it to have finite topology, it needs to contain all polygons bounded by arcs of stable and unstable manifolds. This is the motivation for the following definition.

\begin{defi}\label{Defi: restricted domain}
The \emph{restricted domain} of $K$ is the set $\delta(K)$, consisting of the union of the stable and unstable manifolds of $K$ together with all disks bounded by a polygon formed by arcs of the stable and unstable manifolds of $K$.
\end{defi}
 
 A basic piece $\Lambda$ is always an isolated set, which means that there exists an open neighborhood $U$ of $\Lambda$ such that $\Lambda = \cap_{n=-\infty}^{\infty} f^n(U)$. This $U$ is an invariant neighborhood of $K$. This property can be deduced from the fact that every Smale diffeomorphism admits a \emph{filtration adapted} to $f$ (see \cite[Chapter 2]{shub2013global}). Such a filtration produces an invariant neighborhood $U$ of $f$ with the following properties:
  
\begin{itemize}
\item $U$ is $f$ invariant.
\item $K \subset U$, and for all compact subsets of $U$, their maximal invariant sets are contained in $K$. In other words, $K$ is the maximal invariant set of $U$.
\item $U$ admits a compactification that is a surface of finite type $S'$. Such a surface carries a Smale diffeomorphism, and the complement of $U$ in $S'$ consists of a finite number of periodic attractor or repeller points.
\item The invariant manifolds $W^s(K)$ as $W^u(K)$ are closed sub-sets of $U$.
\item We could suppose every connected component of $U$ intersect $K$.
\item Finally, $\delta(K)\subset U$.
\end{itemize}

Such $U$ is not canonical in the sense that it depends on the filtration. To obtain the domain $\Delta(K)$, Bonatti-Langevin describe the complement of $\delta(K)$ in $U$. The domain of $K$ results from extracting a neighborhood of $\delta(K)$ in $U$. This procedure is discussed in \cite[Proposition 3.1.2]{bonatti1998diffeomorphismes}.

It turns out that $U \setminus \delta(K)$ has a finite number of connected components, all of which are homeomorphic to $\mathbb{R}^2$ and periodic under the action of $f$. The specific construction of $\Delta(K)$ is described in \cite[Subsection 3.2]{bonatti1998diffeomorphismes}, but it essentially involves removing some "strips" from the complement of $U \setminus \delta(K)$. This results in a compact surface, $\Delta(K)$, from which we have removed a finite number of periodic points. We will consider the class of surfaces homeomorphic to $\Delta(K)$ with some additional properties restricted to $K$ as the domain of $K$.

\begin{defi}\label{Defi: Domain of K}
We call a neighborhood $O$ of $K$ a \emph{domain} of $K$ if it is homeomorphic to $\Delta(K)$ under a homeomorphism that coincides with the identity over $K$ and conjugates the restrictions of $f$ to $O$ and $\Delta(K)$.
\end{defi}

The uniqueness, or the canonical nature, of $\Delta(K)$ is summarized in the following proposition, corresponding to \cite[Proposition 3.2.2]{bonatti1998diffeomorphismes}.

\begin{prop}[uniqueness of domain]\label{Prop: unicity of domain}
For every neighborhood $D$ of $K$ that is $f$-invariant and contains $\delta(K)$, there exists an embedding from $\Delta(K)$ to $D$ that is the identity over the restricted domain and commutes with $f$ restricted to $\delta(K)$.
\end{prop}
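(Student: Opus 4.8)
The statement to prove is Proposition~\ref{Prop: unicity of domain} (uniqueness of the domain $\Delta(K)$): for every $f$-invariant neighborhood $D$ of $K$ containing the restricted domain $\delta(K)$, there is an embedding $\Delta(K)\hookrightarrow D$ which is the identity on $\delta(K)$ and commutes with $f$ there. The plan is to build this embedding by a controlled isotopy/tubular-neighborhood argument, starting from the fact recalled just before the statement: $\Delta(K)$ was itself constructed as a neighborhood of $\delta(K)$ obtained from an ambient filtration neighborhood $U$ by deleting a finite collection of ``strips'' from the finitely many complementary components of $U\setminus\delta(K)$, each of which is homeomorphic to $\RR^2$ and permuted periodically by $f$. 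The key point is that $\delta(K)$ is \emph{intrinsic} to $K$ (it depends only on the stable and unstable manifolds of $K$ and the polygons they bound, not on any auxiliary filtration), so both $\Delta(K)$ and $D$ contain $\delta(K)$ as a common ``core'', and the difference between them lives entirely in the complementary $\RR^2$-components.

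\textbf{Step 1: Reduce to the complementary components.} First I would restrict attention to a single $f$-orbit of connected components of $\Delta(K)\setminus\delta(K)$; since there are finitely many and $f$ permutes them, it suffices to construct the embedding on one periodic orbit of components and then spread it around by the dynamics, using $f$-equivariance to keep everything consistent. Fix such a component $W$ of period $p$, so $f^p(W)=W$ and $W\cong\RR^2$. Inside $D$, the corresponding region $D\setminus\delta(K)$ also decomposes into $\RR^2$-components (by the same argument used for $U$ in the construction, since $D\supset\delta(K)$ is $f$-invariant and of the appropriate topological type), and there is a canonical matching of ``ends'' along $\delta(K)$: the frontier of $W$ in $\Delta(K)$ is a union of arcs of $W^s(K)\cup W^u(K)$, and these same arcs bound a region of $D$.

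\textbf{Step 2: Build the embedding on one component, equivariantly.} On the chosen periodic family of components, I would use that the strip that was removed to form $\Delta(K)$ from $U$ is a product neighborhood of part of the frontier, so $W$ is a collar of a piece of $\delta(K)$'s boundary; the corresponding region in $D$ is a (possibly larger, but still $\RR^2$) region attached along the same boundary arcs. One then defines the embedding on $W$ to be the identity on $\partial W\cap\delta(K)$ and extends across $W$ by pushing into $D$ — this is where one invokes that both sides are planar and the attaching data agree, so a collar-compression / Schoenflies-type argument gives an embedding $W\hookrightarrow D$. To make it commute with $f$: define it freely on one representative $W$ of the $f^p$-periodic orbit $\{W, f(W),\dots,f^{p-1}(W)\}$ on a fundamental domain for the $f^p$-action, then propagate by $f$ and check compatibility under $f^p$ using that $f^p$ maps $W$ to itself; a standard averaging/fundamental-domain trick upgrades an ``almost equivariant'' map to a genuinely equivariant one, since $f^p$ acts on $W\cong\RR^2$ without obstruction.

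\textbf{Step 3: Glue.} Finally I would glue the identity on $\delta(K)$ with the embeddings on the complementary components, checking that they agree on the common frontier arcs (which they do, by construction in Step~2) and that the result is an embedding of $\Delta(K)$ into $D$ commuting with $f|_{\delta(K)}$. I expect the \textbf{main obstacle} to be Step~2: making the collar/strip argument precise so that the embedding genuinely commutes with $f$ and not merely up to isotopy. The planar topology of the components and the periodicity of the $f$-action are what save us — the relevant mapping class groups and isotopy obstructions for $\RR^2$ relative to a boundary pattern are trivial or well understood — but the bookkeeping of which boundary arcs match, and propagating a single local choice around a whole $f$-orbit while respecting the return map $f^p$, is the delicate part. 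I would lean heavily on the explicit description of the strips in \cite[Subsection 3.2]{bonatti1998diffeomorphismes} rather than reproving it, treating the present proposition as an essentially formal consequence of that construction together with the intrinsic nature of $\delta(K)$.
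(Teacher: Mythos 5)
First, a point of comparison: the paper does not prove this proposition at all — it is quoted as \cite[Proposition 3.2.2]{bonatti1998diffeomorphismes}, so there is no in-paper argument to measure your sketch against. Your proposal therefore has to stand on its own, and as written it has genuine gaps rather than being a formal consequence of the construction of $\Delta(K)$.

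The central gap is that your Step 2 is purely topological where the problem is essentially dynamical. The frontier arcs of a complementary component $W$ along $\delta(K)$ are non-compact pieces of $W^s(K)\cup W^u(K)$, and an arbitrary $f$-invariant neighborhood $D\supset\delta(K)$ carries no uniform collar along them: $D$ may become arbitrarily thin as one goes out along a separatrix, so a Schoenflies/collar-compression argument has nothing to compress into. The only thing that rescues the construction is the hypothesis you never actually use, namely the $f$-invariance of $D$ combined with the contraction/expansion of $f$ along the invariant manifolds: one pushes the strip by high powers of $f$ (or exhausts it by an increasing union of $f$-translates of a compact piece) so that it is absorbed into $D$, and this is where the specific shape of the strips removed in \cite[Subsection 3.2]{bonatti1998diffeomorphismes} enters. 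Relatedly, your claim in Step 1 that $D\setminus\delta(K)$ decomposes into $\RR^2$-components ``by the same argument'' is unjustified: that argument was carried out for a filtration neighborhood $U$, and the proposition is asserted for every $f$-invariant neighborhood containing $\delta(K)$, with no finite-topology hypothesis. Finally, the equivariance machinery in Step 2 is over-engineered for the statement as given — since the embedding is required to be the identity on the $f$-invariant set $\delta(K)$, commutation with $f$ on $\delta(K)$ is automatic — while the genuinely delicate point (absorbing the non-compact strips into $D$ via the dynamics) is the one your sketch defers back to the source.
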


\begin{defi}\label{Defi: versatile neighboorhood}
A versatile neighborhood of $K$ refers to any $D$-neighborhood of $K$ that is $f$-invariant, has finite topology (i.e., a compact surface minus a finite number of points), and finite genus. Furthermore, it satisfies the minimality property described in Proposition \ref{Prop: unicity of domain}, which states that for every $f$-invariant $D'$-neighborhood of $K$ with finite topology, there exists an embedding from $D$ to $D'$ that commutes with $f$ and is the identity over $\delta(K)$.
\end{defi}

The importance of the domain, apart from its minimality, lies in the fact that $\Delta(K)$ is unique in a specific sense. The main challenge is that the domain is neither closed nor open. In fact, it is through the use of the boundary components of the domains that we are able to connect different domains of basic pieces together and reconstruct the dynamics of the entire diffeomorphism.  Bonatti-Langevin introduce the concept of a \emph{closed module} for $f$ (refer to \cite[Definition 3.2.4]{bonatti1998diffeomorphismes}). While we won't delve into the details of this definition, the significance of it is captured in the following corollary (\cite[Corollary 3.27]{bonatti1998diffeomorphismes}).

\begin{coro}[Uniqueness of the Domain]\label{Coro: Uniqueness of domain}
Any $f$-invariant neighborhood of $K$ with finite topology that is versatile, closed modulo $f$, and such that every connected component of it intersects $K$, is conjugate to $\Delta(K)$ by a homeomorphism that coincides with the identity on $\delta(K)$.
\end{coro}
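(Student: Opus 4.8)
The plan is to derive the statement from Proposition~\ref{Prop: unicity of domain} together with the minimality clause built into the notion of a versatile neighborhood (Definition~\ref{Defi: versatile neighboorhood}), by a Schr\"oder--Bernstein type argument for surfaces of finite topology. Let $D$ be an $f$-invariant neighborhood of $K$ with finite topology that is versatile, closed modulo $f$, and all of whose components meet $K$; write $\Delta:=\Delta(K)$ for the canonical domain.

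First I would check that $D\supset\delta(K)$: being an open $f$-invariant neighborhood of $K$, $D$ contains $W^s_\epsilon(x)$ and $W^u_\epsilon(x)$ for every $x\in K$ and small $\epsilon$, hence contains $W^s(K)\cup W^u(K)$ by $f$-invariance, and since $D$ has finite topology and is closed modulo $f$ it cannot omit any of the finitely many polygonal disks bounded by arcs of these manifolds, so $\delta(K)\subset D$. Proposition~\ref{Prop: unicity of domain} then supplies an embedding $\iota\colon\Delta\to D$ equal to the identity on $\delta(K)$ and commuting with $f$ there. Conversely, $\Delta$ is itself an $f$-invariant neighborhood of $K$ with finite topology and finite genus, so the minimality property of Definition~\ref{Defi: versatile neighboorhood}, applied to $D$ with $D'=\Delta$, yields an embedding $j\colon D\to\Delta$ commuting with $f$ and equal to the identity on $\delta(K)$.

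Now set $\phi:=j\circ\iota\colon\Delta\to\Delta$ and $\psi:=\iota\circ j\colon D\to D$; both are embeddings, both restrict to the identity on $\delta(K)\supset K$, and both commute with $f$ along $\delta(K)$. It suffices to prove that $\phi$ and $\psi$ are surjective, for then $\iota$ and $j$ are mutually inverse homeomorphisms and $j\colon D\to\Delta(K)$ is the desired conjugacy, the identity on $\delta(K)$ by construction. Suppose $W:=\Delta\setminus\phi(\Delta)\neq\emptyset$. Since $\phi$ fixes $\delta(K)$ pointwise, $W$ is disjoint from $\delta(K)$, in particular from $K$; the sub-surface $\phi(\Delta)$ still contains $\delta(K)$, so it has the same genus as $\Delta$ and meets every end of $\Delta$, which confines $W$ to a collar of the \emph{transversal} part of $\partial\Delta$. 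At this point the hypotheses enter decisively: "every component meets $K$'' matches the component structures of $D$ and $\Delta$ and, together with $K$ being the maximal invariant set of $\Delta$, prevents a component of $\phi(\Delta)$ from drifting off $K$; and "closed modulo $f$'' pins down the transversal boundary, since $\phi\circ f=f\circ\phi$ on $\delta(K)$ and $f$ permutes the finitely many orbits of transversal boundary arcs, so $\phi$ must respect that boundary and cannot shrink a collar off it --- otherwise iterating $\phi$ would produce a strictly decreasing nest $\{\phi^n(\Delta)\}$ of $f$-compatible neighborhoods contradicting the minimality that characterizes $\Delta$. Hence $W=\emptyset$, and symmetrically $\psi$ is onto.

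The main obstacle is exactly this last step. Since the domain is neither open nor closed and its transversal boundary arcs lie \emph{outside} $\delta(K)$, the self-embedding $\phi$ is not determined near $\partial\Delta$ by its behavior on $\delta(K)$; showing that an $f$-compatible embedding fixing $\delta(K)$ cannot devour a boundary collar is the delicate part, and it is precisely what the "closed modulo $f$'' condition of \cite[Definition~3.2.4]{bonatti1998diffeomorphismes} is designed to control (cf.\ \cite[Corollary~3.27]{bonatti1998diffeomorphismes}). I would carry this out through an explicit analysis of the finitely many $f$-orbits of transversal boundary arcs of $\Delta$ and of $D$, showing that $\phi$ and $\psi$ permute them exactly as $f$ does; the remainder is formal Schr\"oder--Bernstein bookkeeping.
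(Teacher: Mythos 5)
The paper offers no proof of this corollary: it is imported directly from the cited result of \cite{bonatti1998diffeomorphismes}, where the notion of being \emph{closed modulo $f$} is actually defined and exploited, so your proposal has to stand on its own — and as written it does not, because the decisive step is exactly the one you defer. The Schr\"oder--Bernstein skeleton (an embedding $\iota\colon\Delta(K)\to D$ from Proposition \ref{Prop: unicity of domain}, an embedding $j\colon D\to\Delta(K)$ from the minimality clause of Definition \ref{Defi: versatile neighboorhood}, then surjectivity of $\phi=j\circ\iota$) is a reasonable frame, but the claim that $\phi$ cannot shrink a collar of the transversal boundary is never proved. Nothing you cite forbids a self-embedding of $\Delta(K)$ that is the identity on $\delta(K)$ yet pushes a boundary collar strictly inside: such a collar is disjoint from $\delta(K)$, so fixing $\delta(K)$ gives no control there, and the "strictly decreasing nest $\phi^n(\Delta)$ contradicts minimality" argument does not follow — the minimality property is an existence statement about embeddings into other neighborhoods and is perfectly compatible with a nested family of proper sub-neighborhoods. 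The only hypothesis that could exclude this is "closed modulo $f$", but neither the paper nor your proposal states what that condition says, so your appeal to it is a placeholder rather than an argument; the promised "explicit analysis of the finitely many $f$-orbits of transversal boundary arcs" is precisely the proof that still has to be written.

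There is a second gap in the conclusion. Even granting surjectivity, Proposition \ref{Prop: unicity of domain} (and the versatility clause as you use it) only provides embeddings that commute with $f$ \emph{on $\delta(K)$}, whereas the corollary asserts a homeomorphism $D\to\Delta(K)$ conjugating the restrictions of $f$ on the whole neighborhoods. Since the complement of $\delta(K)$ in the domain is nonempty (it contains the transversal boundary region), "identity on $\delta(K)$ and equivariant on $\delta(K)$" does not by itself yield a conjugacy; one must either produce embeddings intertwining $f$ everywhere or correct $j$ off $\delta(K)$, and this again requires the structure of the boundary arcs and the closed-modulo-$f$ condition. In short, the architecture is plausible, but both the surjectivity step and the upgrade from $\delta(K)$-equivariance to a genuine conjugacy are missing, and these are exactly the content of the Bonatti--Langevin result the paper is quoting.
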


This corollary allows us to provide a representation of the Smale diffeomorphism on a basic piece in terms of its geometric type. This result is presented in  \cite[Theorem 5.2.2]{bonatti1998diffeomorphismes}, and we reproduce it below:

\begin{theo}\label{Theo: Presentation in a domain}
Let $f$ and $g$ be two Smale diffeomorphisms on compact surfaces, and let $K$ and $L$ be two basic pieces for $f$ and $g$ respectively. We denote their domains as $(\Delta(K), f)$ and $(\Delta(L), g)$. If $K$ and $L$ have Markov partitions with the same geometric types, then there exists a homeomorphism from $\Delta(K)$ to $\Delta(L)$ that conjugates $f$ and $g$.
\end{theo}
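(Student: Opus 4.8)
The plan is to route the statement through the symbolic description of a saddle-type basic piece and then through the uniqueness of the domain (Corollary~\ref{Coro: Uniqueness of domain}). The three stages are: (1) produce a conjugacy $h_0\colon K\to L$ between $f|_K$ and $g|_L$ out of the common geometric type $T$; (2) extend $h_0$ to a homeomorphism $\hat h_0\colon\delta(K)\to\delta(L)$ of the restricted domains; (3) upgrade $\hat h_0$ to a homeomorphism $\Delta(K)\to\Delta(L)$ conjugating $f$ and $g$.

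\emph{Step 1.} Since $K$ and $L$ are saddle-type saturated sets without double boundaries, their Markov partitions $\cR_K$ and $\cR_L$ consist of disjoint embedded rectangles; writing $A=A(T)$ for the common incidence matrix and $(\Sigma_A,\sigma)$ for the associated subshift, the codings $\pi_f\colon\Sigma_A\to K$ and $\pi_g\colon\Sigma_A\to L$ are well defined, continuous, surjective, finite-to-one and semi-conjugate $\sigma$ to $f$ and to $g$ respectively. Exactly as in Proposition~\ref{Prop: The relation determines projections} one builds an equivalence relation $\sim_T$ on $\Sigma_A$, depending only on $T$ through the combinatorics of the boundary sub-rectangles, whose classes are simultaneously the fibers of $\pi_f$ and of $\pi_g$. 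Hence $\Sigma_A/\sim_T$ is equivariantly homeomorphic to both $K$ and $L$, yielding $h_0$ with $g\circ h_0=h_0\circ f$ on $K$.

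\emph{Step 2.} The restricted domain $\delta(K)$ is $W^s(K)\cup W^u(K)$ together with the disks bounded by polygons of invariant arcs. Invariant manifolds are topological objects, so $h_0$ already prescribes the combinatorial pattern of separatrices: by Proposition~\ref{Prop: sub-boundary points 2.1.1} every $s$- or $u$-boundary point lies on the stable (resp. unstable) manifold of a periodic boundary point, there are finitely many such orbits, and which separatrices are \emph{free} is read off from $T$ via the position of the boundary sub-rectangles $H^i_j,V^k_l$. I would extend $h_0$ leaf by leaf along $W^s(K)$ and $W^u(K)$, respecting the linear order along each separatrix, and then over each complementary polygon by a homeomorphism of the disk matching the boundary already fixed; coherence at each periodic boundary point holds because the cyclic order of its separatrices is encoded in $T$.

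\emph{Step 3.} Choose for $K$ a versatile $f$-invariant neighborhood $D_K$, closed modulo $f$, every component of which meets $K$; by Corollary~\ref{Coro: Uniqueness of domain} it is conjugate to $\Delta(K)$ by a homeomorphism that is the identity on $\delta(K)$, so it suffices to work with $D_K$ (and likewise $D_L$ for $L$). The surface $D_K\setminus\delta(K)$ consists of finitely many strips, permuted periodically by $f$, whose attaching data along $\delta(K)$ is determined by $T$ (through the $m$-realizers of $T$, whose genus stabilizes because $K$ is a basic piece of a diffeomorphism on a compact surface). Doing the same for $L$ and gluing $\hat h_0$ along the strips according to this common combinatorics produces the desired equivariant homeomorphism $D_K\to D_L$, hence $\Delta(K)\cong\Delta(L)$ intertwining $f$ and $g$.

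\emph{Main obstacle.} The crux is Step~3: extending a conjugacy from the restricted domain to the full domain in a dynamically coherent manner. This is precisely where the Bonatti--Langevin theory of closed modules and the uniqueness statement of Corollary~\ref{Coro: Uniqueness of domain} carry the weight --- the real content being that every piece of gluing data (genus, number and period of boundary orbits, cyclic order of free separatrices, and the strip-attaching combinatorics) is a function of $T$ alone, hence the same for $K$ and for $L$.
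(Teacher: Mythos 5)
You should know at the outset that the paper contains no proof of this theorem: it is quoted verbatim from Bonatti--Langevin--Jeandenans, \cite[Theorem 5.2.2]{bonatti1998diffeomorphismes}, so there is no internal argument to compare yours with; I can only judge the proposal on its own terms. Step 1 is essentially correct but overbuilt and slightly misdirected: for a saddle-type basic piece the Markov partition consists of \emph{disjoint} embedded rectangles, so the coding $\pi_f\colon\Sigma_A\to K$ is already injective (nested rectangles have shrinking diameter), and no analogue of $\sim_T$ is needed; the relation of Proposition~\ref{Prop: The relation determines projections} is designed for the pseudo-Anosov case where rectangles overlap along their boundaries. This also exposes a conceptual mismatch: a conjugacy of $f|_K$ with $g|_L$ only uses the incidence matrix, not the full geometric type; the orientation data $\epsilon$ and the ordering of the sub-rectangles become relevant precisely when one embeds the dynamics in the surface, i.e.\ exactly in the part of your argument that stays schematic.

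The genuine gap is in Steps 2--3. Extending $h_0$ ``leaf by leaf'' over $W^s(K)\cup W^u(K)$ requires equivariant parametrizations of the complementary gaps (the $s$- and $u$-arcs) that vary continuously transversally to a Cantor lamination, and extending over each periodic complementary disk requires conjugating the \emph{return maps} on those disks, not merely matching prescribed boundary values; none of this is argued. More seriously, in Step 3 the assertion that ``every piece of gluing data \dots is a function of $T$ alone'' is essentially the statement of the theorem, and Corollary~\ref{Coro: Uniqueness of domain} cannot deliver it: that corollary compares two invariant neighborhoods of the \emph{same} basic piece, so it becomes applicable only after you already possess an equivariant homeomorphism transporting one neighborhood structure onto the other. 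The tool that actually carries the weight in the Bonatti--Langevin theory is the realizer machinery of Subsection~\ref{Sub-sec: The genus}: Proposition~\ref{Prop: realizer and Markov partition} identifies $\cup_{j=0}^m f^j(\cup_i R_i)$ with the $m$-realizer $\cR_m$ of $T$, and Proposition~\ref{Prop: unique m-realizer} makes $\cR_m$ unique up to an HV-homeomorphism conjugating the partial dynamics; a proof along your lines must produce these identifications for $K$ and $L$ compatibly for all $m$, pass to the limit, and only then invoke the minimality/uniqueness properties of the domain. As written, your Step 3 assumes precisely that compatibility instead of establishing it, so the argument is circular at its crux.
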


It is clear that if $f$ and $g$ are conjugate restricted to $\Delta(K)$ and $\Delta(L)$, and if $K$ admits a Markov partition of geometric type $T$, then its image under conjugation is a Markov partition of $L$ with geometric type $T$. Therefore, Theorem  \ref{Theo: Presentation in a domain} provides a necessary and sufficient condition for the conjugacy between $f$ and $g$ in terms of their geometric types and domains.

Two basic pieces $K$ and $L$ have Markov partitions of the same geometric type if and only if they have the same domain (up to conjugation), as stated in Theorem \ref{Theo: Presentation in a domain}. This allows us to provide the definition of the formal derived-from Anosov of a geometric type in the pseudo-Anosov case (see Definition \ref{Defi: Formal DA}).
\subsection{ The genus of a geometric type}\label{Sub-sec: The genus}

The pseudo-Anosov class of geometric type refers to the geometric type of Markov partitions of a generalized pseudo-Anosov homeomorphism. There exists a completely analogous notion for the geometric type of saddle-type basic pieces, which we introduce below.

\begin{defi}\label{Defi: realizable as Basic piece}
Let $T$ be a geometric type. $T$ is \emph{realizable as a (surface) basic piece} if: There exists a Smale surface diffeomorphism $f$ with a \emph{non-trivial} saddle-type basic piece $K$ and a Markov partition $\mathcal{R}$ of $K$ with geometric type $T$. In this case, we say that $K$ realizes the geometric type $T$.
\end{defi}

If we start with a geometric type $T$, we would like to know whether or not it is realizable as a basic piece of surfaces. Christian Bonatti, Rémi Langevin, and Emmanuelle Jeandenans have formulated in \cite[Chapter 7]{bonatti1998diffeomorphismes} a necessary condition for $T$ to be realizable as a basic piece of surface: the genus of $T$ must be finite. Subsequently, François Béguin has shown in \cite{beguin2004smale} that such a condition is sufficient. In this subsection, we define the genus of a geometric type and formulate a criterion that allows us to decide whether the genus is finite.

Let $T=(n,\{h_i,v_i\}_{i=1}^,\Phi)$ be a geometric type. The strategy for constructing a diffeomorphism on a compact surface (of finite type) with a basis piece realizing $T$ begins by considering a family of disjoint, non-degenerate rectangles $\cR=\{R_i\}_{i=1}^n$ together with a function $\phi$ defined piecewise on a family of horizontal sub-rectangles of $\mathcal{R}$ whose image is a family of vertical sub-rectangles of $\mathcal{R}$ satisfying the combinatorics of $\Phi$.

Any saddle-type basis piece has no double $s,u$-boundary and does not admit degenerate rectangles, i.e., rectangles reduced to a point or an interval. If we want to have hope that such a family of rectangles has, as its maximal invariant set, a non-trivial saddle-type basic piece, there is a combinatorial phenomenon dictated by $\Phi$ that we have to avoid.

Suppose there exists a subfamily of rectangles $\{R_{i(k)}\}_{k=1}^m$ of $\cR$, such that they all have a single horizontal sub-rectangle, and the function $\phi$ defined on the horizontal sub-rectangles of the family satisfies the following conditions: $\phi(R_{i(k)}\subset R_{i(k+1)}$ for $k \in \{1,\cdots, m-1\}$ and $\phi(R_{i(m)})\subset R_{i(1)}$. This condition implies that $\phi(R_{i(1)})\subset R_{i(1)}$ is a horizontal sub-rectangle of $R_{i(1)}$. Therefore, in order to have a uniform expansion (as we desire) in the unstable direction, it is necessary that $R_{i(k)}$ reduces to a stable interval. However, this is incompatible with the requirement that the desired basic piece should not have double $s$-boundaries or double $u$-boundaries. This property can be read from the geometric type $T$, which we describe in the following definition

	\begin{defi}\label{Defi: double boundary}
	A geometric type $T$ has a \emph{double $s$-boundary} if there exists a cycle of indices between $1$ and $n$:
	$$
	i_i\rightarrow i_2 \rightarrow\cdots \rightarrow i_k \rightarrow i_{k+1}=i_i.
	$$
	Such that, for all $t\in \{2,\cdots,k\}$, $h_{i_t}=1$ and $\phi_T(i_{t},1)=(i_{t+1},l)$.\\
	Using the inverse of the geometric type is possible to define \emph{double-$u$-boundary} as a double $s$-boundary of $T^{-1}$.
\end{defi}

  It was shown in \cite[Proposition 7.2.2]{bonatti1998diffeomorphismes} that a double boundary of geometric type $T$ corresponds to double boundaries on a hyperbolic set that has a Markov partition of geometric type $T$, so the analogy we present is compatible with the combinatorial formulation. Don't have double boundaries is the fist obstruction of $T$ to be realized as a saddle-type basic piece.

\begin{comment}
There is a first  obstruction of $T$ which is formulate in therms of the incidence matrix of $T$, $A(T)$.

\begin{defi}\label{Defi: Transitive matriz}
Let $A$ a square non-negative matrix. $A$ is transitive if there exist $n\in \NN$ such that all the coefficients of $A^n$ are positive, i.e. $a_{ij}^{(n)}>0$. 
\end{defi}

\begin{lemm}\label{Lemm: T realizable implies A non-boundaries}
If $A(T)$ is transitive, $T$ have no-double $s,u$-boundaries.
\end{lemm}
\begin{proof}
Suppose $T$ have a double $s$-boundary and $A$ is a matrix of $n\times n$. Let $\{e_i\}_{i=1}^n$ the canonical base of $\RR^n$. Let $\{i_j\}_{j=1}^k$, is clear the $A(e_{i_j})=e_{i_{j+1}}$ this implies that for all $m\in \NN$ $A^m(e_{i_j})=e_{i_{j+m}}$ where we take $j+m$ modulus $k$. This implies $A^m$ is not positive, a contradiction with the fact $A$ been transitive.\\
A similar analysis prove the non-existence of $u$-boundaries.
\end{proof}
\end{comment}

Once we have introduced our first obstruction for $T$, in order to construct a surface with boundaries and corners on which there exists an (affine) diffeomorphism realizing $T$, the next step is to formalize what we mean by a diffeomorphism with a combinatorics induced by $T$.
	
	\begin{defi}[Concretization]\label{Defi: Concretization}
			Let $T=(n,\{h_i,v_i\}{i=1}^n, \Phi_T)$ be a geometric type without double boundary. A \emph{concretization} of $T$ is a pair $(\cR=\{R_i\}_{i=1}^n,\phi)$ formed by a family of $n$ oriented, non-degenerate, and disjoint rectangles $\cR$  and a function $\phi$ defined over a subset of $\cup \cR:=\cup_{i=1}^n R_i$ with the following characteristics:
		
		\begin{enumerate}
			\item Every rectangle is endowed with the trivial vertical and horizontal foliations. We give an orientation to the vertical leaves of $R_i$ and let the horizontal leaves be oriented in such a way that a pair of horizontal and vertical leaves gives the orientation chosen for $R_i$.
			
			\item For every $1\leq i \leq n$ and $1\leq j \leq h_i$, there is a horizontal sub-rectangle $H^i_j$ of $R_i$. The sub-rectangles are disjoint and their order is compatible with the vertical orientation in $R_i$. We demand that the lower (upper) boundary of $H^i_1$ ($H^i_{h_i}$) coincides with the lower (upper) boundary of $R_i$.
					
			\item For every $1\leq k \leq n$ and $1\leq l \leq v_k$, we have a vertical sub-rectangle $V^k_l$ of $R_k$. The sub-rectangles are disjoint and the order is compatible with the horizontal orientation in $R_k$. We demand that the left (right) boundary of $V^k_1$ ($V^k_{v_k}$) coincides with the left (right) boundary of $R_k$.
			
			\item The function $\phi:\cup_{i,j}H^i_j \rightarrow \cup_{k,l}V^k_l$ is  orientation-preserving diffeomorphism restricted to every $H^i_j$ that preserves the horizontal and vertical foliations.
			
			\item If $\Phi_T(i,j)=(k,l,\epsilon)$, then $\phi(H^i_j)=V^k_l$.

			\item If $\epsilon(i,j)=1$, $f$ preserves the orientation of the vertical direction restricted to $H^i_j$, and $f$ reverses it in the case $\epsilon(i,j)=-1$.
		\end{enumerate}
	\end{defi}
	
	We have obtained a graphical representation of $T$, but another problem arises: the function $\phi$ is not necessarily hyperbolic. To ensure hyperbolicity, we require the existence of an integer $m\in\mathbb{N}$ such that, for all points in its maximal invariant set, the derivative of $\phi^m$ expands vectors in the vertical direction and contracts horizontal vectors uniformly. This leads us to the following definition.
	
	\begin{defi}[realization]\label{Defi: realization}
Let $T$ be a geometric type without double boundaries. A concretization of $T$ is considered a realization if the maximal invariant set of $\phi$ is hyperbolic.
	\end{defi}

An easy way to define $\phi$ is as an affine transformation over every horizontal sub-rectangle. Such concretizations are called \emph{affine concretizations}. It turns out that every affine concretization is a realization, and every geometric type without a double boundary admits an affine concretization. This implies the result of \cite[Proposition 7.2.8]{bonatti1998diffeomorphismes}, which we reproduce below.
	
	\begin{prop}\label{Prop: Types transitive have a realization}
	Every geometric type without a double boundary admits a realization.
	\end{prop}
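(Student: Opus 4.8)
The plan is to exhibit, for an arbitrary geometric type $T=(n,\{(h_i,v_i)\}_{i=1}^n,\Phi_T)$ without double boundary, one affine concretization, and then to verify that its maximal invariant set is hyperbolic; this is precisely what Definition~\ref{Defi: realization} asks of a realization, so it proves the proposition. The existence of \emph{some} affine concretization is routine combinatorics: take $n$ pairwise disjoint copies $R_1,\dots,R_n$ of the unit square $[0,1]^2$ with their standard orientation; inside each $R_i$ choose $h_i$ pairwise disjoint horizontal strips $H^i_1,\dots,H^i_{h_i}$, ordered from bottom to top, with $H^i_1$ meeting the lower edge of $R_i$ and $H^i_{h_i}$ meeting the upper edge, and dually $v_i$ pairwise disjoint vertical strips $V^i_1,\dots,V^i_{v_i}$ meeting the left and right edges; finally, for each $(i,j)\in\cH(T)$ with $\Phi_T(i,j)=(k,l,\epsilon)$, let $\phi|_{H^i_j}\colon H^i_j\to V^k_l$ be the unique affine bijection carrying horizontal leaves to horizontal leaves and vertical leaves to vertical leaves, preserving the vertical orientation when $\epsilon=1$ and reversing it when $\epsilon=-1$ (and reversing the horizontal orientation as well when $\epsilon=-1$, so that $\phi|_{H^i_j}$ is orientation preserving). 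One checks at once that $(\{R_i\},\phi)$ satisfies every clause of Definition~\ref{Defi: Concretization}.

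The substance lies in showing that this affine concretization is a realization. Since each $\phi|_{H^i_j}$ is affine and carries a full-width horizontal strip onto a full-height vertical strip, in the coordinates of the squares its differential is diagonal: it multiplies the horizontal coordinate by $\mathrm{width}(V^k_l)\le 1$ — with equality precisely when $v_k=1$ — and the vertical coordinate by $1/\mathrm{height}(H^i_j)\ge 1$ — with equality precisely when $h_i=1$. Consequently the horizontal and vertical line fields constitute a locally constant, hence continuous, $D\phi$-invariant splitting $E^s\oplus E^u$ along the maximal invariant set $\Lambda$ of $\phi$, and it only remains to find one iterate $\phi^N$ that contracts $E^s$ and expands $E^u$ uniformly on $\Lambda$. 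To $x\in\Lambda$ one attaches the admissible word $i_0i_1i_2\cdots$ of the subshift of finite type with transition matrix $A(T)$, where $i_m$ is the index of the rectangle containing $\phi^m(x)$; then $\|D\phi^N_x|_{E^s_x}\|$ is a product of $N$ positive numbers, the $m$-th ($1\le m\le N$) being the width of a vertical sub-rectangle of $R_{i_m}$, hence $\le 1$ and moreover $<1$ as soon as $v_{i_m}\ge 2$; dually $D\phi^N_x$ expands $E^u_x$ by a product of $N$ positive numbers, the $m$-th ($0\le m\le N-1$) being $1/\mathrm{height}$ of a horizontal sub-rectangle of $R_{i_m}$, hence $\ge 1$ and $>1$ as soon as $h_{i_m}\ge 2$.

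The key claim is that there is an integer $N$ such that every admissible word of length $N$ contains an index $i$ with $v_i\ge 2$ \emph{and} an index $i'$ with $h_{i'}\ge 2$. Suppose the first half failed for every $N$; a pigeonhole and compactness argument on the finite-alphabet subshift then yields a periodic admissible word all of whose symbols have a single vertical sub-rectangle, that is, a cycle of rectangles each with $v=1$ linked by allowed transitions. Reversing this cycle and reading it in $T^{-1}$ (where $h_i(T^{-1})=v_i(T)$ and $\rho_{T^{-1}}=\rho_T^{-1}$) produces exactly a double $s$-boundary of $T^{-1}$, i.e.\ a double $u$-boundary of $T$ in the sense of Definition~\ref{Defi: double boundary}, contradicting the hypothesis; symmetrically, failure of the second half would produce a double $s$-boundary. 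Granting the claim, set $\beta:=\max\{\mathrm{width}(V^k_l):v_k\ge 2\}<1$ and $\beta':=\max\{\mathrm{height}(H^i_j):h_i\ge 2\}<1$. Applying the claim to the words $i_1\cdots i_N$ and $i_0\cdots i_{N-1}$ along the orbit of any $x\in\Lambda$, at least one of the $N$ horizontal factors is $\le\beta$ while all are $\le 1$, so $\|D\phi^N_x|_{E^s_x}\|\le\beta$; and at least one of the $N$ vertical factors is $\ge 1/\beta'$ while all are $\ge 1$, so $\phi^N$ expands $E^u_x$ by at least $1/\beta'$. Thus $\phi^N$ uniformly contracts $E^s$ and expands $E^u$ on $\Lambda$, and by the standard characterization of hyperbolic sets (passing to an adapted metric, as recalled after Definition~\ref{Defi: Hyperbolic sets}) $\Lambda$ is hyperbolic for $\phi$. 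Hence $(\{R_i\},\phi)$ is a realization of $T$, as desired.

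I expect the only delicate point to be the bookkeeping inside the key claim: converting a cyclic sequence of rectangles, each with a single vertical (respectively horizontal) sub-rectangle linked by admissible transitions, into the precise cyclic condition of Definition~\ref{Defi: double boundary}, keeping track of the direction of the cycle and of the passage to the inverse geometric type. The construction of the affine maps, the verification that the splitting is continuous and $D\phi$-invariant, and the passage from a uniformly hyperbolic iterate to the $(\lambda,C)$ form of Definition~\ref{Defi: Hyperbolic sets}, are all routine.
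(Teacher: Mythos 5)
Your proof is correct and follows essentially the same route the paper takes: the paper proves this proposition by remarking that every geometric type without double boundaries admits an affine concretization and that every affine concretization is a realization (citing Bonatti--Langevin, Proposition 7.2.8), which is exactly your construction. You simply supply the details the paper leaves to the reference — in particular, the use of the no-double-boundary hypothesis, via the cycle/pigeonhole argument and passage to $T^{-1}$, to obtain a uniform iterate with strict contraction and expansion — and that bookkeeping is carried out correctly.
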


Starting from a realization $(\cR=\{R_i\}_{i=1}^n,\phi)$ of $T$, and for each positive natural number $m \in \mathbb{N}+$, we will construct the "minimal" surface with boundaries and corners that supports $m-1$ iterations of $\phi$. This construction involves gluing horizontal and vertical sub-rectangles of the realization according to the rules determined by the diffeomorphism $\phi$. Let us introduce a general definition that will be helpful in our discussions.

\begin{defi}\label{Defi: Vetical-horizontal stripes}
For every $i\in \{1,\cdots,h_i-1\}$, the \emph{horizontal stripe} $\tilde{H^i_j}$ is the vertical sub-rectangle of $R_i$ that is bounded by $H^i_j$ and $H^i_{j+1}$. Similarly, we define the \emph{vertical stripe} $\tilde{V^k_l}$ with $l\in \{1,\cdots,v_k-1\}$.
\end{defi}

\begin{figure}[h]
	\centering
	\includegraphics[width=0.5\textwidth]{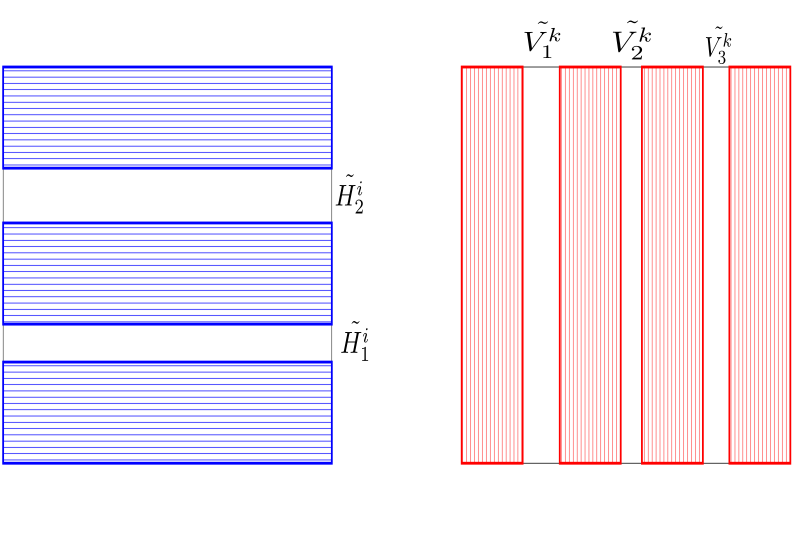}
	\caption{Horizontal and vertical stripes}
	\label{Fig: Stripes}
\end{figure}

We refer to the compact surface $S$ with boundary and vertices as an \emph{HV-surface} if it has an atlas whose charts are open sets contained in the upper half-plane, the first quadrant of the origin in $\RR^2$, or the union of the three quadrants of the origin in the real plane. Furthermore, it is required that the changes of charts are elements of Hom$(\RR)\times$Hom$(\RR)$. This is the type of surface where we will define and iterate $\phi$. It is not immediate that we can extend the dynamics to a finite-type compact surface, but this is the next problem we are going to address.

	\begin{defi}\label{Defi: m realizer }
	Let $T$ be a geometric type without double boundaries, and let $(\{R_i\}_{i=1}^n,\phi)$ be a realization of $T$. We denote the union of the rectangles as $\cR=\cup_{i=1}^n R_i$, and for each $m \in \NN$, we consider the disjoint union of $m$ copies of $\cR$ given by:
		$$
	\cup_{i=0}^m \cR \times \{i\}.
		$$
We define $\cR_m$ as the quotient space of this set by the equivalence relation: for all  $ 1\leq i \leq m-1$
		$$
		 (x,i)\sim(\phi^{-1}(x),i+1),
		$$
for each point where $\phi^{-1}$ is defined, and we leave $(x,i)$ unidentified otherwise.		
 The space $\cR_m$ is the $m$-th \emph{realizer} of $T$ relative to the realization $(\{R_i\}_{i=1}^n,\phi)$. 
	\end{defi}

The space $\cR_m$ is an oriented HV-surface and is endowed with two transverse foliations: the vertical and horizontal foliations (refer to \cite[Section 7.1]{bonatti1998diffeomorphismes} and \cite[Lemma 7.3.3]{bonatti1998diffeomorphismes} for more details). An HV-homeomorphism is a homeomorphism between HV-surfaces that conjugates the vertical and horizontal foliations.
	
For all $m\in \NN$  and every $i\leq m$, there exist an embedding:  
$$
\Psi :\cR\times \{i\} \rightarrow \cR_{m'},
$$

defined as $\Psi(x,i)=[x,i]_{m'}$, which turns out to be an HV-homeomorphism. We denote $R_i$ as $R_i:=\Psi(R_i\times \{0\})$. Let's consider two natural numbers $m' > m$ and $\Psi: \cR \times {i} \rightarrow \cR_{m'}$ as described above. Let's take
 $$
 \phi_m: \cup_{i=0}^{m-1} \Psi(\cR\times \{i\}) \subset \cR_{m'} \rightarrow \cup_{i=1}^m \Psi(\cR\times \{i\}) \subset \cR_{m'},
 $$
given by $\phi_m(\pi(x,i))=\Psi(x,i+1)$.  Provided $\phi(x)$ is defined, $\phi_m(\pi(x_i))=[(x,i+1)]_{m'}=[\phi(x),i]_m'=\Psi(\phi(x,i))$. In this way, the space $\cR_m$ is considered immersed in $\cR_{m'}$ and we have endowed $\cR_m$ with some 'dynamics' that is possible to iterate at most $m$ times.

	\begin{rema*}
 When it makes sense, $\phi_m([x,i])=[\phi(x),i]$, and for all $0 \leq k \leq m$, the image by $\Psi$ of $R_i \times {k}$ is equal to $\phi^k_m(R_i\times {0})$.
	\end{rema*}
	
The following proposition essentially states that there is a unique $m$-realizer up to HV-conjugation. It is proved in \cite[Proposition 7.3.5]{bonatti1998diffeomorphismes}.
 
	\begin{prop}\label{Prop: unique m-realizer}
		Let $T$ be a geometric type,$(\{R_i\},\phi)$ and $(\{R_i'\},\phi')$ be two realizations of $T$, and $\cR_m$ and $\cR_m'$ be their respective $m$-realizers. Then there exists an HV-homeomorphism $\theta:\cR_m \rightarrow \cR_m'$ which conjugates $\phi_m$ to $\phi_m'$. Moreover, the image of $R_i$ under $\theta$ is $R_i'$, and $\theta$ preserves the orientation of the two foliations on the rectangles.	
	\end{prop}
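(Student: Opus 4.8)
The plan is to prove Proposition~\ref{Prop: unique m-realizer} by induction on $m$, building the HV-homeomorphism $\theta\colon \cR_m\to\cR_m'$ one ``level'' at a time and verifying at each stage that it is compatible with the gluing relation and with the partial dynamics $\phi_m$. The base case $m=0$ (or $m=1$) is the observation that $\cR_0$ is just the disjoint union $\bigcup_{i=1}^n R_i\times\{0\}$ and $\cR_0'$ is $\bigcup_{i=1}^n R_i'\times\{0\}$; on each rectangle we choose an HV-homeomorphism $\theta_i\colon R_i\to R_i'$ preserving both orientations --- this exists because any two oriented non-degenerate rectangles with their trivial bi-foliations are HV-homeomorphic via a product of two increasing homeomorphisms of $[0,1]$, and we are free to pick $\theta_i$ so that it carries $H^i_j$ to $H'^i_j$ and $V^i_l$ to $V'^i_l$ for all admissible $j,l$ (again adjusting each factor by an increasing homeomorphism of $[0,1]$ so that the finitely many sub-rectangle boundaries match up).

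First I would set up the inductive step. Suppose we have built an HV-homeomorphism $\theta^{(m)}\colon\cR_m\to\cR_m'$ carrying $R_i$ to $R_i'$, respecting orientations, and intertwining $\phi_m$ with $\phi_m'$ where both are defined. Recall from the construction that $\cR_{m+1}$ is obtained from $\cR_m$ by adjoining one more copy $\cR\times\{m+1\}$ and gluing $(x,m)\sim(\phi^{-1}(x),m+1)$ wherever $\phi^{-1}$ is defined; concretely, $\cR_{m+1}\setminus \cR_m$ is the union of the ``new'' parts of the rectangles $R_i\times\{m+1\}$, namely the complement in each $R_i$ of the horizontal sub-rectangles $\bigcup_j H^i_j$ (which get absorbed by the gluing into level $m$), together with the horizontal stripes $\tilde H^i_j$ of Definition~\ref{Defi: Vetical-horizontal stripes}. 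On the already-glued part the extension is forced: $\theta^{(m+1)}$ must agree with $\phi_m'\circ\theta^{(m)}\circ\phi_m^{-1}$ on $\phi_m(\text{image of }\cR\times\{m\})$. It remains to extend across each horizontal stripe $\tilde H^i_j\times\{m+1\}$ and across the top and bottom pieces of $R_i\times\{m+1\}$. Since $\theta^{(m)}$ is already defined on the two horizontal boundary components of each such stripe (they lie in $H^i_j$ and $H^i_{j+1}$, hence were glued to level $m$) and these boundary values are HV-homeomorphisms onto the corresponding boundary components in $\cR_{m+1}'$ preserving orientations, one extends ``by interpolation'': choose the vertical coordinate to interpolate linearly (or by any increasing homeomorphism) between the two prescribed boundary parametrizations, and use the horizontal parametrization inherited along the leaves. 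This produces an HV-homeomorphism on the stripe agreeing with $\theta^{(m)}$ on its boundary. The top and bottom leftover pieces of $R_i\times\{m+1\}$ (above $H^i_{h_i}$ and below $H^i_1$) are handled the same way, except there only one boundary component is prescribed; but Definition~\ref{Defi: Concretization}(2) says the lower boundary of $H^i_1$ is the lower boundary of $R_i$ (and likewise at the top), so in fact those leftover pieces are empty and no extension is needed there. Gluing these pieces together along shared vertical leaves, and checking that the resulting map is continuous and a local HV-homeomorphism near the corners and boundary vertices, yields $\theta^{(m+1)}$. By construction it sends $R_i$ to $R_i'$, preserves the foliations with their orientations, and commutes with $\phi_{m+1}$, $\phi_{m+1}'$ wherever both are defined.

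I expect the main obstacle to be the bookkeeping at the corners and the verification that the interpolated maps genuinely respect the HV-structure across the identifications --- that is, that when two stripes (or a stripe and a glued rectangle) meet along a common vertical leaf, the two locally-defined extensions agree there, and that near a boundary vertex of the HV-surface the map is a homeomorphism in one of the model charts (half-plane, one quadrant, or three quadrants). The cleanest way around this is to do the interpolation ``leaf by leaf'' uniformly: fix once and for all, for each pair (prescribed bottom parametrization, prescribed top parametrization) of a stripe, a single increasing homeomorphism of $[0,1]$ in the vertical direction that depends continuously (indeed, can be taken constant) along the horizontal direction, so that matching along vertical leaves is automatic from the fact that $\theta^{(m)}$ was already an HV-homeomorphism on the adjacent glued rectangle. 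With that uniformity built in, continuity across identifications is immediate and the local model near each vertex is a product of increasing homeomorphisms of $\RR$, which is exactly the allowed class of HV-chart transitions. Finally, invoking the already-constructed $\theta^{(m)}$ preserves the intertwining with the dynamics, since $\phi_{m+1}$ restricted to the image of $\cR\times\{i\}$ for $i\le m$ is just $\phi_m$ followed by the inclusion; this closes the induction and gives the desired $\theta=\theta^{(m)}$ for the given $m$.
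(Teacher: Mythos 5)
There is a genuine gap, and it sits exactly where your construction claims to have freedom. In the $(m{+}1)$-realizer the map $\phi_{m+1}$ is the level shift $[x,k]\mapsto[x,k+1]$, defined on \emph{all} of the levels $0,\dots,m$ — including the stripe points where $\phi$ itself is undefined. So once $\theta^{(m)}$ is fixed on level $m$, the conjugacy equation $\theta\circ\phi_{m+1}=\phi_{m+1}'\circ\theta$ forces the value of $\theta^{(m+1)}$ at \emph{every} point of level $m+1$: $\theta^{(m+1)}([x,m+1])=\phi'_{m+1}\bigl(\theta^{(m)}([x,m])\bigr)$. There is no room to ``interpolate'' on the ribbons $\tilde H^i_j\times\{m+1\}$ — an arbitrary increasing reparametrization in the vertical direction there will in general violate the conjugacy at the corresponding stripe points of level $m$. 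Worse, on the glued part of level $m+1$ (the horizontal sub-rectangles, identified via $\phi$ with vertical sub-rectangles of level $m$) this forced formula must \emph{agree} with the already-defined $\theta^{(m)}$, and that agreement is a nontrivial consistency condition you never verify. Unwinding it (already for $m=1$): writing $\theta_0$ for the restriction of $\theta$ to the level-$0$ copy, level preservation plus the conjugacy force $\theta_0(\phi(x))=\phi'(\theta_0(x))$ for all $x\in\bigcup_{i,j}H^i_j$. In other words, any $\theta$ as in the proposition is necessarily of the form $[x,k]\mapsto[\theta_0(x),k]$ with $\theta_0$ a genuine conjugacy of the two partially defined systems $(\cR,\phi)$ and $(\cR',\phi')$.

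Your base case does not provide such a $\theta_0$: choosing product homeomorphisms $\theta_i\colon R_i\to R_i'$ that merely carry $H^i_j$ to $H'^i_j$ and $V^k_l$ to $V'^k_l$ does not give $\theta_0\circ\phi=\phi'\circ\theta_0$ (take two realizations of the horseshoe type on the same rectangles, one affine and one not: the identity matches all sub-rectangles but conjugates nothing). Constructing $\theta_0$ is the actual content of the proposition: since an orientation-preserving HV-homeomorphism of a rectangle is a product of two increasing interval homeomorphisms, one must solve the one-dimensional conjugacy problems induced by $\phi$ and $\phi'$ in the vertical (expanding) and horizontal (contracting) directions — e.g.\ by matching the codings of the two Markov families on their invariant Cantor sets, choosing the map freely on the top-level gaps, and propagating over the remaining gaps by the dynamics, with the signs $\epsilon_T(i,j)$ guaranteeing that orientations can be matched. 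Note also that the paper itself does not prove this statement; it is quoted from Bonatti--Langevin \cite[Proposition 7.3.5]{bonatti1998diffeomorphismes}, where this conjugacy construction is carried out. Your induction skips precisely that step, so as written the argument does not establish the proposition.
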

	
Suppose there exists a basic piece $K$ of a Smale surface diffeomorphism $f$ with a geometric Markov partition $\cR$ of geometric type $T$. We would like to compare the surface generated by the iterations $f^n(\cR)$ ($n\geq 0$) of the Markov partition and the $m$-realizer of the geometric type $T$. The \cite[Proposition 7.3.9]{bonatti1998diffeomorphismes} that we cite below provides insight into this problem.

	\begin{prop}\label{Prop: realizer and Markov partition}
	Let $T$ be the geometric type of a geometric Markov partition $\cR:=\{R_i\}$ of a basic piece $K$ of a Smale surface diffeomorphism $f$. If $\cR_m$ is the $m$-realizer of $T$ relative to any realization of $T$, then $\cup_{j=0}^m(\cup_{i}f^j(R_i))$  is an HV-surface, HV-homeomorphic to $\cR_m$.
	\end{prop}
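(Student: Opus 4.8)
The plan is to produce an explicit HV--homeomorphism and to reduce everything, via the uniqueness of the $m$--realizer (Proposition~\ref{Prop: unique m-realizer}), to one convenient realization of $T$ built directly from the given Markov partition. Write $\Gamma:=\bigcup_{i=1}^n R_i$, and let $\Gamma_H:=\bigcup_{i,j}H^i_j$, $\Gamma_V:=\bigcup_{k,l}V^k_l$ be the unions of the horizontal and vertical sub-rectangles of $\cR$. By the Markov property, $\Gamma_H$ is exactly the union of the connected components of $\Gamma\cap f^{-1}(\Gamma)$, $\Gamma_V$ is the union of the components of $f(\Gamma)\cap\Gamma$, and $f$ carries $\Gamma_H$ onto $\Gamma_V$ rectangle by rectangle, respecting the combinatorics recorded by $T=T(f,\cR)$ and the orientations recorded by $\epsilon_T$. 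Hence $(\{R_i\}_{i=1}^n,\phi)$ with $\phi:=f|_{\Gamma_H}$ is a concretization of $T$ in the sense of Definition~\ref{Defi: Concretization} (recall $T$ has no double boundary, since $K$ is a non-trivial saddle-type basic piece); as its maximal invariant set $\bigcap_{z\in\ZZ}f^z(\Gamma)$ equals the hyperbolic set $K$, it is in fact a realization of $T$ (Definition~\ref{Defi: realization}). By Proposition~\ref{Prop: unique m-realizer} the $m$--realizer attached to any realization of $T$ is HV--homeomorphic to the one attached to this particular realization, so it suffices to treat the realizer built from $(\{R_i\},f|_{\Gamma_H})$.

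For this realization, $\cR_m=\bigl(\bigsqcup_{i=0}^m\Gamma\times\{i\}\bigr)/\!\sim$, with $(x,i)\sim(\phi^{-1}(x),i+1)=(f^{-1}(x),i+1)$ for $x\in\Gamma_V$. Define $\Theta:\bigsqcup_{i=0}^m\Gamma\times\{i\}\to S$ by $\Theta(x,i)=f^i(x)$. Since $\phi^{-1}=f^{-1}$ on $\Gamma_V$, one has $\Theta(f^{-1}(x),i+1)=f^{i+1}(f^{-1}(x))=f^i(x)=\Theta(x,i)$, so $\Theta$ factors through a continuous map $\bar\Theta:\cR_m\to S$ whose image is exactly $\bigcup_{j=0}^m f^j(\Gamma)=\bigcup_{j=0}^m\bigl(\bigcup_i f^j(R_i)\bigr)$. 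Because $\cR_m$ is compact and $S$ is Hausdorff, $\bar\Theta$ is a closed map, hence a homeomorphism onto its image as soon as it is injective.

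The injectivity is the key point. Unwinding the relation, the class of $(w,j)$ in $\cR_m$ is $\{(\phi^r(w),j-r)\}$ over the integers $r$ for which $\phi^r(w)$ is defined and $0\le j-r\le m$. Hence, for $z,w\in\Gamma$ and $0\le i\le j\le m$ with $q:=j-i$, we have $\bar\Theta([z,i])=\bar\Theta([w,j])$ iff $f^i(z)=f^j(w)$, i.e. $z=f^q(w)$, and this forces $[z,i]=[w,j]$ precisely when $w,f(w),\dots,f^{q-1}(w)\in\Gamma_H$, that is, when the orbit segment $w,f(w),\dots,f^q(w)$ stays inside $\Gamma$. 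This ``no--escape'' property of the Markov partition is the crux; I would prove it by contradiction: if some $f^r(w)$ leaves $\Gamma$, take the vertical leaf $J$ of the rectangle through $w$, follow the arcs of the $f^t(J)$ lying outside $\overset{o}{\Gamma}$, and use (i) the $f$--invariance of $\partial^s\cR$ and the $f^{-1}$--invariance of $\partial^u\cR$ (Proposition~\ref{Prop: Markov criterion boundary diffeomorphism}), (ii) the disjointness of the rectangles $R_i$, and (iii) the Markov identity $f(\text{horizontal sub-rectangle})=\text{vertical sub-rectangle}$, to show that $f$ sends a vertical arc disjoint from $\overset{o}{\Gamma}$ with endpoints on $\partial^s\cR$ to another such arc; since $f^q(w)\in\overset{o}{\Gamma}$ this contradicts $f^q(w)=z\in\Gamma$ having just escaped and returned. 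Granting this, $\bar\Theta$ is injective.

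Finally, each rectangle $R_i$, and hence each iterate $f^j(R_i)$, carries the restrictions of the invariant foliations $\cF^u,\cF^s$ of $f$ as its vertical and horizontal foliations, and these agree with the HV--structure of $\cR_m$ because $f$ (an orientation-preserving Smale diffeomorphism) maps $\cF^u$ to $\cF^u$ and $\cF^s$ to $\cF^s$; thus the HV--charts of $\cR_m$ around interior points, boundary points and corners correspond under $\bar\Theta$ to the charts inherited from $S$ on $\bigcup_{j=0}^m f^j(\Gamma)$, exhibiting the latter as an HV--surface, and $\bar\Theta$ preserves the orientations of the two foliations on every rectangle, as in Proposition~\ref{Prop: unique m-realizer}. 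Therefore $\bar\Theta$ is an HV--homeomorphism, and with the reduction of the first paragraph this proves the statement. The only non-formal ingredient is the no--escape property used for injectivity; the rest is bookkeeping with the quotient topology and the definitions of rectangles and HV--surfaces.
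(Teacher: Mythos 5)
There is nothing in the thesis to compare against line by line: Proposition~\ref{Prop: realizer and Markov partition} is quoted from \cite[Proposition 7.3.9]{bonatti1998diffeomorphismes} and the paper gives no proof. Judged on its own, your architecture is reasonable — take the realization induced by $f$ on the horizontal sub-rectangles, invoke Proposition~\ref{Prop: unique m-realizer} to reduce to it, define $\bar\Theta([x,i])=f^i(x)$, and use compactness plus the Hausdorff property to reduce everything to injectivity — and you correctly isolate injectivity as equivalent to the ``no-escape'' property $\Gamma\cap f^{-q}(\Gamma)\subset\bigcap_{t=0}^{q}f^{-t}(\Gamma)$ for $q\le m$, where $\Gamma=\bigcup_i R_i$.

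The genuine gap is that this crux is exactly the content of the proposition, and your sketch for it does not establish it. The step ``$f$ sends a vertical arc disjoint from $\overset{o}{\Gamma}$ with endpoints on $\partial^s\cR$ to another such arc'' is a restatement of the no-escape property (in its arc form), not a consequence of (i) boundary invariance, (ii) disjointness of the $R_i$, and (iii) the Markov identity: those three facts only control the components of $R_i\cap f^{-1}(R_j)$ \emph{inside} $\Gamma$, and say nothing about where the portions of $f^{-1}(R_j)$ lying outside $\Gamma$ sit. A priori an orbit can exit through a horizontal stripe, travel along a ribbon outside the partition, and re-enter a rectangle through its stable boundary a few iterates later; nothing you invoke excludes this, so the argument is circular precisely where the work is. A correct proof must bring in the structure you never use: that $K$ is a \emph{saturated} basic piece ($W^s(K)\cap W^u(K)\subset K$), that the rectangles are rectangles of $K$ with $\partial^s\cR\subset W^s(K)$, $\partial^u\cR\subset W^u(K)$, and that stripe (hence ribbon) interiors are disjoint from $K$ — for instance, one rules out a returning ribbon by showing it would have to cross stable leaves through points of $K$ inside the rectangle it re-enters, producing points of $W^u(K)\cap W^s(K)$ in its $K$-free interior, and one must also handle partial-entry configurations through a single stable side. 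Until an argument of this kind is supplied (this is what the cited proof in Bonatti--Langevin provides), the injectivity of $\bar\Theta$, and with it the whole proposition, is not established. A smaller unproved assertion in the same spirit: that $(\{R_i\},f|_{\Gamma_H})$ is a \emph{realization}, i.e.\ that the maximal invariant set of $f$ in $\Gamma$ is hyperbolic (indeed equal to $K$), is again a property of rectangles of a basic piece rather than of the bare axioms you quote, although the paper itself also takes it for granted.
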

	
The dynamics of $f$ restricted to the Markov partition $\cR$ induces a realization of $T$. Proposition \ref{Prop: unique m-realizer} implies that $f^m$ restricted to $\cup_{j=0}^m(\cup_{i}f^j(R_i))$ is conjugate to $\phi_m$. Each realizer $\cR_m$ has finite genus, and we define $g_m := \text{gen}(\cR_m)$ as the genus of $\cR_m$. Since $\cR_m$ is embedded in $\cR_{m+1}$, the sequence $\{g_m\}_{m\in \NN}$ is non-decreasing. Therefore, it is either stationary or tends to infinity. This observation allows us to define the genus of $T$.
	
	\begin{defi}\label{Defi: Genus of T}

Let $T$ be a geometric type without double boundaries. The genus of $T$ is the upper bound of the sequence $\{g_m\}_{m\in \NN}$ and is denoted by $\text{gen}(T)$.
	\end{defi}

	If the geometric type $T$ is realized as a basic piece, then there exists a surface of finite genus containing the realizer $m$ of $T$ for all $m in \NN$. It follows that the genus of $T$ is finite, as shown in \cite[Corollary 7.3.10]{bonatti1998diffeomorphismes}. Later, François Béguin proved in \cite[Corollary 4.4]{beguin1999champs} that this is a sufficient condition. Thus, we obtain the following theorem:
	
	\begin{theo}\label{Theo: finite genus iff realizable}

Let $T$ be a geometric type. $T$ is realizable as a saddle type basic piece of a surface Smale diffeomorphism if and only if $T$ has no double boundaries and gen$(T)$ is finite.
	\end{theo}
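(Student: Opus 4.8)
The statement to be proved is: a geometric type $T$ is realizable as a saddle-type basic piece of a surface Smale diffeomorphism if and only if $T$ has no double boundaries and $\mathrm{gen}(T)<\infty$. The plan is to split this into the two implications, each of which is essentially a citation-plus-assembly argument using the machinery set up in this section.

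For the forward implication, suppose $T$ is realized by a saddle-type basic piece $K$ of a Smale surface diffeomorphism $f$, with geometric Markov partition $\cR=\{R_i\}_{i=1}^n$. First, $T$ has no double boundaries: by \cite[Proposition 7.2.2]{bonatti1998diffeomorphismes} (invoked in the text right after Definition \ref{Defi: double boundary}), a double $s$- or $u$-boundary of $T$ would force $K$ to contain a double $s$- or $u$-boundary point, contradicting Convention \ref{Conv: non double boundary points} (a genuine saddle-type basic piece has no such points). Second, $\mathrm{gen}(T)<\infty$: by Proposition \ref{Prop: realizer and Markov partition}, for every $m\in\NN$ the surface $\cup_{j=0}^m(\cup_i f^j(R_i))$ is HV-homeomorphic to the $m$-realizer $\cR_m$; since all these surfaces sit inside the single finite-genus ambient surface $S$ (each is a compact subsurface of $S$), their genera $g_m=\mathrm{gen}(\cR_m)$ are uniformly bounded by $\mathrm{gen}(S)$. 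Hence the non-decreasing sequence $\{g_m\}$ is bounded, so its supremum $\mathrm{gen}(T)$ is finite. This is exactly \cite[Corollary 7.3.10]{bonatti1998diffeomorphismes}, which I would cite directly after spelling out the one-line embedding argument.

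For the reverse implication, assume $T$ has no double boundaries and $\mathrm{gen}(T)<\infty$. By Proposition \ref{Prop: Types transitive have a realization}, $T$ admits a realization $(\cR=\{R_i\}_{i=1}^n,\phi)$, and one may take $\phi$ affine on each horizontal sub-rectangle. The finite-genus hypothesis is precisely what lets one glue the realizers $\cR_m$ into a single compact surface of finite type: since the genera $g_m$ stabilize, the increasing union $\cup_m \cR_m$ (with $\cR_m\hookrightarrow\cR_{m+1}$ as described before the Remark following Proposition \ref{Prop: unique m-realizer}) has finite genus, and after closing off the finitely many remaining ends/boundary strips one obtains a compact surface $\Delta$ carrying a Smale diffeomorphism $\hat f$ whose maximal invariant set in the image of $\cR$ is a non-trivial saddle-type basic piece $K$ with Markov partition of geometric type $T$. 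This is the content of \cite[Corollary 4.4]{beguin1999champs} (François Béguin's sufficiency result), and since the present section is a survey I would simply cite it rather than reconstruct the gluing.

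The main obstacle — and the reason the reverse direction is genuinely a theorem rather than a routine corollary — is the passage from "each finite realizer $\cR_m$ is a compact HV-surface of bounded genus" to "there is a single compact finite-type surface supporting the full bi-infinite dynamics with the prescribed basic piece." One must control not only the genus but the number of boundary components and ends uniformly in $m$, verify that the limiting diffeomorphism is genuinely hyperbolic (Axiom A plus strong transversality) on the resulting surface, and check that the resulting basic piece is non-trivial and has no double boundaries. All of this is done in Béguin's thesis; in the write-up I would state it as an appeal to \cite{beguin1999champs} and \cite{bonatti1998diffeomorphismes}, packaging the two implications into the single equivalence of Theorem \ref{Theo: finite genus iff realizable}.
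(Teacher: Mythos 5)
Your proposal is correct and follows essentially the same route as the paper, which treats this as a survey statement: necessity via the embedding of the $m$-realizers in the ambient finite-genus surface (citing \cite[Corollary 7.3.10]{bonatti1998diffeomorphismes}, with the double-boundary exclusion coming from \cite[Proposition 7.2.2]{bonatti1998diffeomorphismes}), and sufficiency by appealing to B\'eguin's result \cite[Corollary 4.4]{beguin1999champs}. Your extra remarks on where the real work lies in the converse are consistent with what the cited sources handle.
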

	
This is our main tool for determining whether or not a geometric type is realizable. But how can we check if $T$ does not have double boundaries and has finite genus? The next few sections are devoted to formulating a mechanism to verify this.

\subsection{ Topological formulation of finite genus and impasse}\label{Sub-sec: Top impasse genus}

In this subsection, we assume that $T$ is a geometric type without double boundaries, and $\cR_m$ is the $m$-realizer of $T$ with respect to a realization $(\{R_i \}_{i=1}^n,\phi)$. Our goal is to describe three topological obstructions for the geometric type $T$ to have finite genus. These obstructions are formulated in terms of the $m$-realizer and certain subsets called \emph{ribbons}.

\begin{defi}\label{Defi: k Ribbon}
 For each $k$ in $\{1,\ldots,m\}$, we define a \emph{ribbon} of $k$-generation of $\cR_m$ as the closure of any connected component of $\cR_k \setminus \cR_{k-1}$. Thus, a ribbon of $\cR_m$ can be any ribbon from the $k$-generation for $0 < k \leq m$.
\end{defi}

\begin{comment}

\begin{rema}\label{Rema: Caracterization of ribbon}
Is proved in \cite[Lemma 7.1.6]{bonatti1998diffeomorphismes} that the $k$-generation ribbons are disjoint rectangles whose horizontal boundaries are include in the horizontal boundary of $\cR_m$, denoted $\partial^h\cR_m$ and in fact (\cite[Lemma 7.1.5]{bonatti1998diffeomorphismes} ), they are contain in $\partial^h \cR_0$. \\
Note that: a ribbon of $k$-generation is characterize by be contain in $\pi(\cR_0 \times \{k\})$ but it is not in the image by $\phi_m$ of  $\pi(\cR_0\times  \{k-1\})$, then  the ribbon correspond to a c.c. of $\pi^k_m(\cR_0\setminus \pi^{-1}_m(\cR_0))$, i.e. the ribbon is the image by $\pi^k_m$ of some horizontal stripe $\tilde{H^i_j}$ of $\cR_0$.
\end{rema}

\cite[Lemma 7.4.4]{bonatti1998diffeomorphismes} ensure something more: for all $m>0$ the closure of every c.c. of $\cR_m\setminus \cup_{i=0}^n (R_i\times \{0\})$ correspond to a ribbon of $k$-generation for $k\leq m$. This permit to give the following equivalent definition

\begin{defi}\label{Defi: Ribbon}
We cal ribbon of $\cR_m$ to any c.c. of  $\cR_m\setminus \cup_{i=0}^n (R_i\times \{0\})$ .
\end{defi}
\end{comment}

The following proposition summarizes some of the properties of the ribbons that are relevant to our discussion. A detailed exposition of these properties can be found in \cite[Chapter 7.4]{bonatti1998diffeomorphismes}.

\begin{prop}\label{Prop: properties ribbons}
Let $T$ be a geometric type without double boundaries, and let $(\{R_i\}_{i=1}^n, \phi)$ be a realization of $T$. Consider $0 < k \leq m$. The following propositions hold: 
\begin{itemize}
\item Every ribbon of $k$-generation is a rectangle whose boundary is contained in the horizontal boundary of $\cR$: $\partial^h\cR_0 = \cup_{i=1}^n \partial^h(\cup \pi(R_i \times {0}))$. Moreover, if $B$ is the complement of the interior of the vertical stripes and their positive iterations by $\pi$ contained in the horizontal boundary of  $\partial^h\cR_0$, then for all $m > 0$, the stripes of $\cR_m$ have horizontal boundaries in $B$.

\item A $k$-generation ribbon is the image of the horizontal stripes in $\cR_0$ under $\phi^k_m$, or equivalently, the image of the horizontal stripes $\tilde{H^i_j}\times \{k\}$ under $\pi$.

\item For all $m > 0$, the closure of each connected component of $\cR_m \setminus \cR_0$ is a $k$-generation ribbon for $0 < k \leq m$.
\end{itemize}
\end{prop}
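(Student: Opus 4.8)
The three assertions all describe the stratification of the realizer by the nested subsurfaces $\cR_0\subset\cR_1\subset\cdots\subset\cR_m$, so the plan is to first identify precisely what is adjoined at each step. The only identifications in the quotient defining $\cR_m$ are $(x,i)\sim(\phi^{-1}(x),i+1)$, and by the concretization axioms $\phi^{-1}$ is defined exactly on $\cup_{k,l}V^k_l$, with $\phi^{-1}\bigl(\cup_{k,l}V^k_l\bigr)=\cup_{i,j}H^i_j$. Hence the part of the copy $\cR\times\{k\}$ that gets glued onto material already present in $\cR_{k-1}$ is exactly $\bigl(\cup_{i,j}H^i_j\bigr)\times\{k\}$, and the rest is new. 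Since inside $R_i$ the rectangles $H^i_j$ are stacked, with $H^i_1$ sharing the lower boundary of $R_i$ and $H^i_{h_i}$ its upper boundary, the closures of the non-degenerate connected components of $R_i\setminus\cup_j\overset{o}{H^i_j}$ are precisely the horizontal stripes $\tilde H^i_j$, $1\le j\le h_i-1$. Consequently each connected component of $\cR_k\setminus\cR_{k-1}$ is, up to closure, the image $\pi(\tilde H^i_j\times\{k\})$ of some horizontal stripe, and using the identity $\Psi(\cR\times\{k\})=\phi_m^k(\cR\times\{0\})$ this image equals $\phi_m^k(\tilde H^i_j\times\{0\})$; this is the second bullet.

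For the first bullet, $\tilde H^i_j$ is a sub-rectangle of $R_i$ and $\Psi|_{\cR\times\{k\}}$ is an HV-embedding, so the ribbon $\pi(\tilde H^i_j\times\{k\})$ is a rectangle, and two ribbons of the same generation are disjoint because distinct stripes are disjoint and $\Psi|_{\cR\times\{k\}}$ is injective. To locate the horizontal boundary of a $k$-generation ribbon I would use the forward invariance of $\partial^h \cR$: the concretization axioms give $\phi(H^i_j)=V^k_l$, a vertical sub-rectangle of $R_k$, so $\phi$ carries $\partial^h H^i_j$ into $\partial^h V^k_l\subset\partial^h R_k\subset\partial^h \cR$; moreover $\partial^h \cR\subset\cup_{i,j}H^i_j=\mathrm{dom}(\phi)$, since the extreme horizontal sub-rectangles share the horizontal boundary of their $R_i$. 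Therefore $\phi(\partial^h \cR)\subset\partial^h \cR$ and, inductively, $\phi^t(\partial^h \cR)\subset\partial^h \cR$ for all $t\ge 0$. Now $\partial^h \tilde H^i_j$, which is the top of $H^i_j$ together with the bottom of $H^i_{j+1}$, also lies in $\mathrm{dom}(\phi)$, so a point $(x,k)$ of $\partial^h \tilde H^i_j\times\{k\}$ is identified in turn with $(\phi(x),k-1),(\phi^2(x),k-2),\dots,(\phi^k(x),0)$, every step being legitimate because we remain inside $\partial^h \cR\subset\mathrm{dom}(\phi)$. Hence the horizontal boundary of the ribbon has level-$0$ representative $\phi^k(\partial^h \tilde H^i_j)\subset\partial^h \cR$, that is, it is contained in $\Psi(\partial^h \cR\times\{0\})=\partial^h \cR_0$.

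For the third bullet, write $\cR_m\setminus\cR_0=\bigcup_{k=1}^m(\cR_k\setminus\cR_{k-1})$; one must then check that a connected component of this union does not spread over several levels, and granting this, the closure of such a component is the closure of a connected component of a single $\cR_k\setminus\cR_{k-1}$, hence a $k$-generation ribbon. This non-merging property is exactly \cite[Lemma 7.4.4]{bonatti1998diffeomorphismes}, and I would quote it. The final assertion refines the inclusion of the previous paragraph: writing $B$ for what remains of $\partial^h \cR_0$ after deleting the interiors of the vertical stripes and of all their forward images, one reruns the tracking argument for a finer $\phi$-invariant subset of $\partial^h \cR$, checking that $\partial^h \tilde H^i_j$ already avoids the arcs of $\partial^h \cR$ that the vertical stripes cut out and that this avoidance is preserved by $\phi$. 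For the precise combinatorics of which arcs of $\partial^h \cR_0$ get swept out by the iterates of the vertical stripes I would follow \cite[Lemmas 7.1.5--7.1.6 and Chapter 7.4]{bonatti1998diffeomorphismes}.

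The core of the argument is the unwinding of the quotient structure of $\cR_m$ and of the definition of a ribbon; once that is done, the second bullet and the rectangle-and-$\partial^h \cR_0$ content of the first bullet follow directly, while the third bullet reduces to a cited lemma. The genuinely delicate point, and the one I expect to be the main obstacle, is the refinement to $B$: it requires knowing exactly where the forward iterates $\phi_m^t(\tilde V^k_l)$ meet the horizontal boundary, and this combinatorial bookkeeping is what I would most carefully import from Bonatti-Langevin.
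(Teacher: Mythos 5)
The paper gives no proof of this proposition at all: it is stated as a survey of facts imported from Bonatti--Langevin, with only the citation \cite[Chapter 7.4]{bonatti1998diffeomorphismes} (the relevant statements being their Lemmas 7.1.5, 7.1.6 and 7.4.4), so there is nothing internal to compare your argument against. Your unwinding of the quotient — identifying the new material at level $k$ with the horizontal stripes, hence the ribbons with $\phi_m^k(\tilde H^i_j\times\{0\})=\pi(\tilde H^i_j\times\{k\})$, and using $\phi(\partial^h\cR)\subset\partial^h\cR\subset\operatorname{dom}(\phi)$ to push the ribbon's horizontal boundary down to $\partial^h\cR_0$ — is correct, and for the two genuinely delicate points (components not spreading over several levels, and the refinement to the set $B$) you invoke exactly the Bonatti--Langevin lemmas on which the paper itself relies, so your proposal is fully compatible with the paper's treatment and in fact supplies more detail than it does.
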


From this result, we can deduce that there is a correspondence between $k$-ribbons and consecutive horizontal sub-rectangles of $\phi^k$. This correspondence is established as follows. Let $0 < k \leq m$ and $r$ be a $k$-generation ribbon of $\cR_m$. Suppose $r = \phi^k_m(\tilde{H^i_j} \times {0})$. Let $H^{i,(k)}{j}$ and $H^{i,(k)}{j+1}$ be two consecutive horizontal sub-rectangles of the rectangle $R_i = \pi(R_i \times {0})$ determined by the map $\phi^k$, such that $\tilde{H^i_j}$ is between them. Since $\phi^k$ is well-defined for such sub-rectangles, we have:

$$
\phi^k_m(H^{i,(k)}_{j}\times \{0\} )=\phi^k(H^{i,(k)}_{j})\times \{0\}=V^{z,(k)}_{l} \times  \{0\}
$$
and 
$$
\phi^k_m(H^{i,(k)}_{j=1}\times \{0\} )=\phi^k(H^{i,(k)}_{j})\times \{0\}=V^{z',(k)}_{l'}\times \{0\}.
$$
Where $V^{z,(k)}_{l}$ and $V^{z',(k)}_{l'}$ are horizontal sub-rectangles of the Markov partition $\cR_0=\{R_i=\pi(R_i\times \{0\})\}$  for the map $\phi^k$. i.e $(\cR_0,\phi^k)$.

Now we are ready to provide the topological criteria, in terms of the realization $\cR_m$, for $T$ to have finite genus. In the following definitions, as usual, we consider a geometric type  without double boundaries:
$$
T=(n,\{(h_i,v_i)\}_{i=1}^n, \Phi:=(\rho,\epsilon)),
$$
 and $(\{R_i\}_{i=1}^n, \phi)$  is a realization of $T$.
 
We reiterate that when we refer to the Markov partition on $\cR_m$, we mean the family of rectangles $\cR_0=\{\pi(R_i\times \{0\})\}$ under the action of $\phi_m$. However, formally speaking, it is not a Markov partition in the strict sense of the definition, as there is currently no complete diffeomorphism for which it satisfies all the properties of a Markov partition according to our definition.

\begin{defi}\label{Defi: Type 1 obstruction top}
Let $R$ be a rectangle in the Markov partition $\cR_0$, $A$ be a horizontal boundary component of $R$, and let $r$ be a ribbon with both horizontal boundaries in $A$.
The Markov partition in $\cR_m$ has a topological obstruction of type-$(1)$ if there exists another ribbon $r'$, distinct from $r$, with a single horizontal boundary contained in $A$ and lying between the horizontal boundaries of $r$. (See Figure \ref{Fig: obstruction 1}) for a visual representation.)

\begin{figure}[hh]
	\centering
	\includegraphics[width=0.3\textwidth]{}
	\caption{ $(1)$-type obstruction.}
	\label{Fig: obstruction 1}
\end{figure}
\end{defi}

\begin{defi}\label{Defi: Type 2 obtruction top}
Let $A$ and $A'$ be two distinct horizontal boundary components of the Markov partition $\cR_0$. Consider a ribbon $r$ with one horizontal boundary contained in $A$ and the other boundary in $A'$.

The ribbon $r$ has a horizontal orientation when viewed as a horizontal sub-rectangle of a certain rectangle in the Markov partition, $\pi(R_i\times \{k\})$ (with $k\leq m$). This orientation may not correspond to the orientations of $A$ or $A'$ directly. However, we can assign orientations to $A$ and $A'$ that are compatible with the orientation of the horizontal boundary of $r$.
 
The Markov partition has a topological obstruction of type-$(2)$ in $\cR_m$ if there exists another ribbon $r'$ with one horizontal boundary $\alpha$ contained in $A$ and the other horizontal boundary $\alpha'$ contained in $A'$. With the orientations previously fixed on $A$ and $A'$, the order of $\alpha$ and $r\cap A$ in $A$ is the reverse of the order of $\alpha'$ and $r\cap A'$ in $A'$. (See Figure \ref{Fig: obstruction 2} for a graphical representation.)

\begin{figure}[hh]
	\centering
	\includegraphics[width=0.4\textwidth]{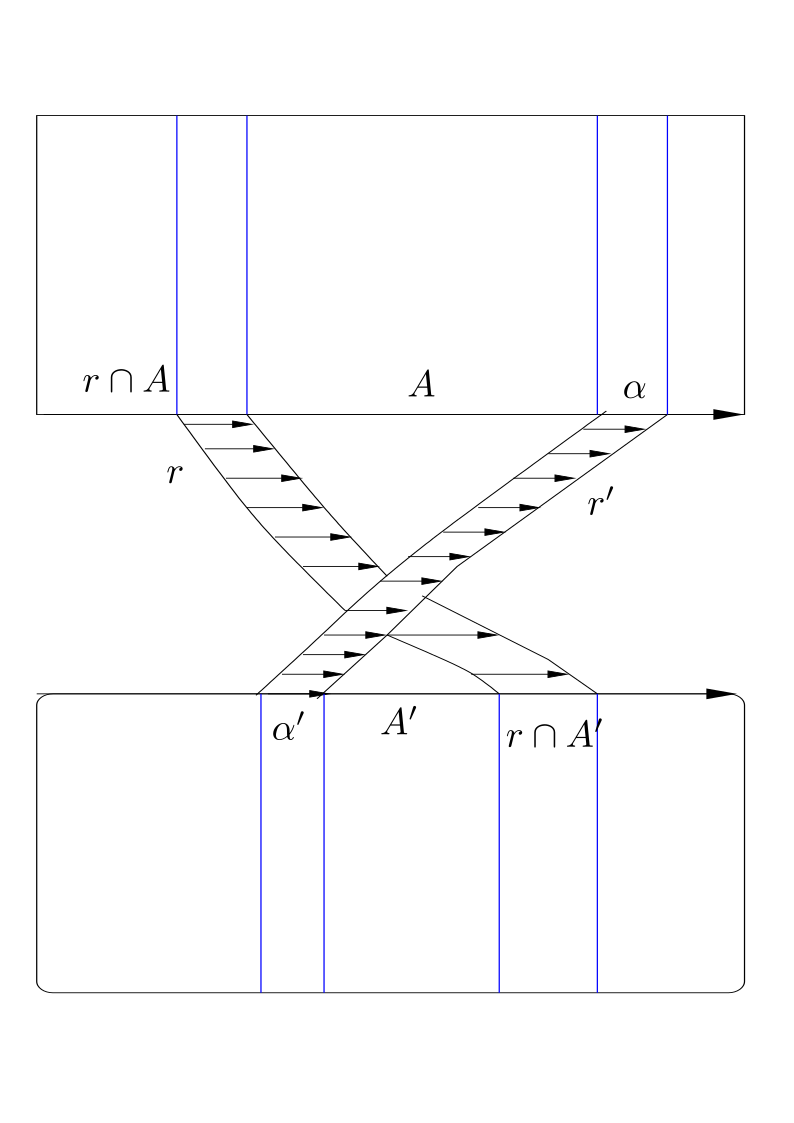}
	\caption{ $(2)$-type obstruction.}
	\label{Fig: obstruction 2}
\end{figure}
\end{defi}

\begin{defi}\label{Defi: p-peridic embrinary}
A horizontal boundary $A$ of a rectangle $R$ in the Markov partition is said to be $p$-\emph{periodic} if $\phi^p(A)\subset A$. If $A$ is $p$-periodic, we refer to any connected component of $A\setminus \pi^{kp}(A)$, with $k\in \NN_{>0}$, as an \emph{embryonic separatrix} of $A$.
\end{defi}

\begin{defi}\label{Defi:  Type 3 obtruction geo}
Let $S_1$, $S_2$, and $S_3$ be three different embryonic separatrices (i.e., they are on periodic sides but not in the image of the iterations of that side), and let $r$ be a ribbon with one horizontal boundary on $S_1$ and the other on $S_2$. We say that the Markov partition has a type-$(3)$ obstruction in $\cR_m$ if there exists another ribbon $r'$ with one side in $S_1$ and the other in $S_3$ (see Figure \ref{Fig: obstruction 3}).

\begin{figure}[hh]
	\centering
	\includegraphics[width=0.5\textwidth]{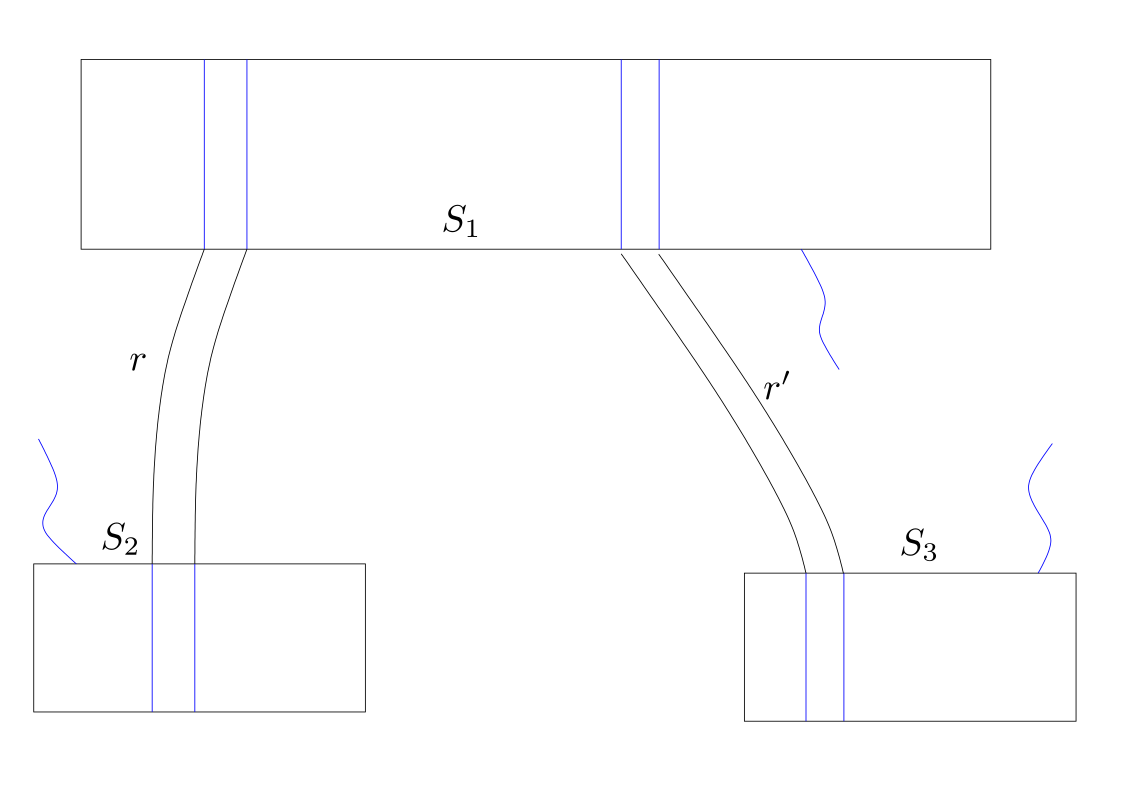}
	\caption{ $(3)$-type obstruction.}
	\label{Fig: obstruction 3}
\end{figure}

\end{defi}

 The following theorem is key to our future analysis, as it allows us to determine whether the genus of $T$ is finite by verifying the obstructions on a bounded number of realizers. This result corresponds to \cite[Theorem 7.4.8]{bonatti1998diffeomorphismes}. Here, we reformulate it in the context of basic pieces, as we will be using it.

\begin{theo}\label{Theo: finite type iff non-obtruction}
Let $T$ be a geometric type without double boundaries, let $(\{R_i\}_{i=1}^n,\phi)$  be a realization of $T$, and let $\cR_{6n}$ be its $6n$-realizer, where $n$ is the number of rectangles in the realization of $T$. The following statements are equivalent:

\begin{itemize}
	\item[i)] The Markov partition does not have obstructions of types $(1)$, $(2)$, and $(3)$ in $\cR_{6n}$.
	\item[ii)] The genus of $T$ is finite, i.e., $\text{gen}(T) < \infty$.
\end{itemize}
\end{theo}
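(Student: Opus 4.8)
The plan is to prove the theorem by reducing the genus computation to a counting argument on ribbons, and then showing that the three topological obstructions are precisely the mechanisms that make the genus grow; once no obstruction survives in $\cR_{6n}$, the genus must already have stabilized. First I would recall the Euler-characteristic bookkeeping for the HV-surfaces $\cR_m$: since $\cR_{m+1}$ is obtained from $\cR_m$ by gluing the images under $\phi_m$ of the horizontal stripes $\tilde H^i_j$ along their horizontal boundaries (Proposition~\ref{Prop: properties ribbons}), each such gluing changes the Euler characteristic in a controlled way, and a genus increment $g_{m+1}>g_m$ occurs exactly when a newly attached $(m+1)$-generation ribbon is glued to the existing surface along \emph{both} of its horizontal boundaries in a way that creates a nontrivial handle rather than a disk or a boundary-bigon. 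The first real step is therefore a lemma: $g_{m+1}>g_m$ if and only if the Markov partition exhibits an obstruction of type $(1)$, $(2)$, or $(3)$ at level $m+1$ (this is the content, made explicit, of \cite[Chapter 7.4]{bonatti1998diffeomorphismes}, and I would either cite it or reprove the "only if" direction, which is the one we need for $(ii)\Rightarrow(i)$).

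Granting that lemma, the direction $(i)\Rightarrow(ii)$ is the substantive one: assuming no obstruction of the three types appears in $\cR_{6n}$, I must show no obstruction appears in \emph{any} $\cR_m$, so that the sequence $\{g_m\}$ is eventually constant and hence bounded. The idea is a pigeonhole/periodicity argument on the combinatorial data carried by ribbons. By Proposition~\ref{Prop: properties ribbons} each $k$-generation ribbon corresponds to a pair of consecutive horizontal sub-rectangles $H^{i,(k)}_j,H^{i,(k)}_{j+1}$ of $\phi^k$, equivalently to the image $\phi^k_m(\tilde H^i_j\times\{0\})$ of a vertical stripe; its two horizontal boundaries lie in the distinguished finite set $B\subset\partial^h\cR_0$ (again Proposition~\ref{Prop: properties ribbons}), and the \emph{pattern} of how a ribbon meets the finitely many horizontal boundary components $A$ of $\cR_0$ — together with the induced orientations and the embryonic-separatrix data of Definition~\ref{Defi: p-peridic embrinary} — takes values in a finite set whose size is controlled by $n$ (the number of rectangles) and the periods of the boundary components, themselves bounded by $n$. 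An obstruction of type $(1)$, $(2)$ or $(3)$ is the statement that two ribbons of possibly different generations interleave badly along a common boundary component (type $1$: one nested inside the other's span along a single side; type $2$: two ribbons spanning the same two sides in reversed order; type $3$: a ribbon pattern involving three embryonic separatrices). If such a bad configuration occurred at some level $m>6n$, I would pull it back using the $\phi$-equivariance of the ribbon structure: applying $\phi^{-1}$ to the relevant stripes shifts generation by one while preserving the combinatorial incidence and orientation data, so by iterating we find the \emph{same} configuration at a level $\le 6n$, contradicting $(i)$. The constant $6n$ is where the careful bookkeeping enters: one needs the number of generations one might have to "wind back" to be at most $6n$, which comes from bounding the number of horizontal boundary components ($2n$), their periods, and the at most three ribbons involved — giving a product of the shape $2n\cdot 3$ — exactly as in \cite[Theorem 7.4.8]{bonatti1998diffeomorphismes}.

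For $(ii)\Rightarrow(i)$ I would argue the contrapositive: if some obstruction of type $(1)$, $(2)$ or $(3)$ appears in $\cR_{6n}$, then by the first lemma $g_k>g_{k-1}$ for the generation $k\le 6n$ at which it appears; but moreover the $\phi$-equivariance lets us \emph{propagate} the obstruction to infinitely many generations (pushing the offending stripes forward by $\phi$), so $\{g_m\}$ is strictly increasing along an infinite subsequence and $\text{gen}(T)=\sup g_m=\infty$. This propagation step is the mirror image of the pull-back step above and uses the same equivariance.

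The part I expect to be the main obstacle is making the first lemma — "genus increments $\Leftrightarrow$ obstructions of type $(1)$, $(2)$, $(3)$" — genuinely precise, because it requires a clean handle-by-handle analysis of the gluing $\cR_m\rightsquigarrow\cR_{m+1}$ along ribbon boundaries, keeping track of orientations (hence the appearance of a "reversed order" condition in the type-$(2)$ obstruction and of embryonic separatrices in type $(3)$), and verifying that these three local patterns exhaust all ways a handle can be created. In the interest of length I would lean on \cite[Chapter 7.4 and Theorem 7.4.8]{bonatti1998diffeomorphismes} for this classification and concentrate the new exposition on the equivariance/pigeonhole argument that converts "no obstruction up to level $6n$" into "no obstruction ever," together with the explicit tracking of the bound $6n$.
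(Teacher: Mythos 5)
The paper does not actually prove this theorem: it is imported, with only a reformulation of the statement, from \cite[Theorem 7.4.8]{bonatti1998diffeomorphismes}, so there is no in-paper argument to compare yours against. Your proposal, for its part, also ends up deferring the hardest content (the classification of handle-creating gluings and the bookkeeping behind the bound $6n$) to that same reference, so what is genuinely new in it is the reduction you build around your ``first lemma'' and the equivariance/pigeonhole step --- and that is where the problem lies.

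The key lemma as you state it, namely that $g_{m+1}>g_m$ if and only if an obstruction of type $(1)$, $(2)$ or $(3)$ appears at level $m+1$, is false, and the rest of your argument leans on it. A geometric type realized as a basic piece on a surface of finite but positive genus has $\text{gen}(T)<\infty$, hence (by the very theorem being proved) no obstruction at any level; yet $\cR_0$ is a disjoint union of rectangles of genus $0$, so the non-decreasing sequence $\{g_m\}$ must increase at least once. Thus genus increments occur with no obstruction present: the three obstructions characterize \emph{unbounded} growth of $\{g_m\}$, not individual increments. This breaks your direction $(i)\Rightarrow(ii)$: even after your pull-back argument upgrades ``no obstruction in $\cR_{6n}$'' to ``no obstruction in any $\cR_m$'', you cannot conclude that $\{g_m\}$ stabilizes, because that implication is exactly the missing (and hard) content --- in Bonatti--Langevin it requires showing that, absent the three configurations, all the realizers embed in a single compact surface of controlled genus, not a level-by-level handle count. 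The direction $(ii)\Rightarrow(i)$ via propagation of an obstruction under $\phi$ to force infinitely many handles is sound in spirit, but note that the pull-back/pigeonhole step also needs care: applying $\phi^{-1}$ changes which horizontal boundary components and embryonic separatrices carry the ribbons, so ``the same configuration at level $\le 6n$'' only follows after tracking the periodicity of the $2n$ boundary components, which is precisely the bookkeeping you delegate back to \cite[Theorem 7.4.8]{bonatti1998diffeomorphismes}. As written, the proposal therefore does not constitute an independent proof.
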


Let's formulate another topological condition on the $6n$-realizer of geometric type $T$. This condition will be important in the problem of realization in the context of pseudo-Anosov homeomorphisms. We begin by assuming that $K$ is a basic piece of a Smale diffeomorphism of surfaces and that this piece has a Markov partition $\cR$ of geometric type $T$.

\begin{defi}\label{Defi: arc}
An $s,u$-arc is an interval contained in the unstable or stable foliation (respectively) of $K$ whose interior does not intersect $K$ but whose ends do.
\end{defi}

\begin{defi}\label{Defi: Impasse geo}
Let $f:S\rightarrow S$ be a surface Smale diffeomorphism, and let $K$ be a saddle-type basic piece of $f$. We call  \emph{topological impasse}, any open disk $\overset{o}{D}\subset S$ that is disjoint from $K$ and whose boundary consists of the union of a $u$-arc with an $s$-arc.
\end{defi}

\begin{figure}[h]
	\centering
	\includegraphics[width=0.6\textwidth]{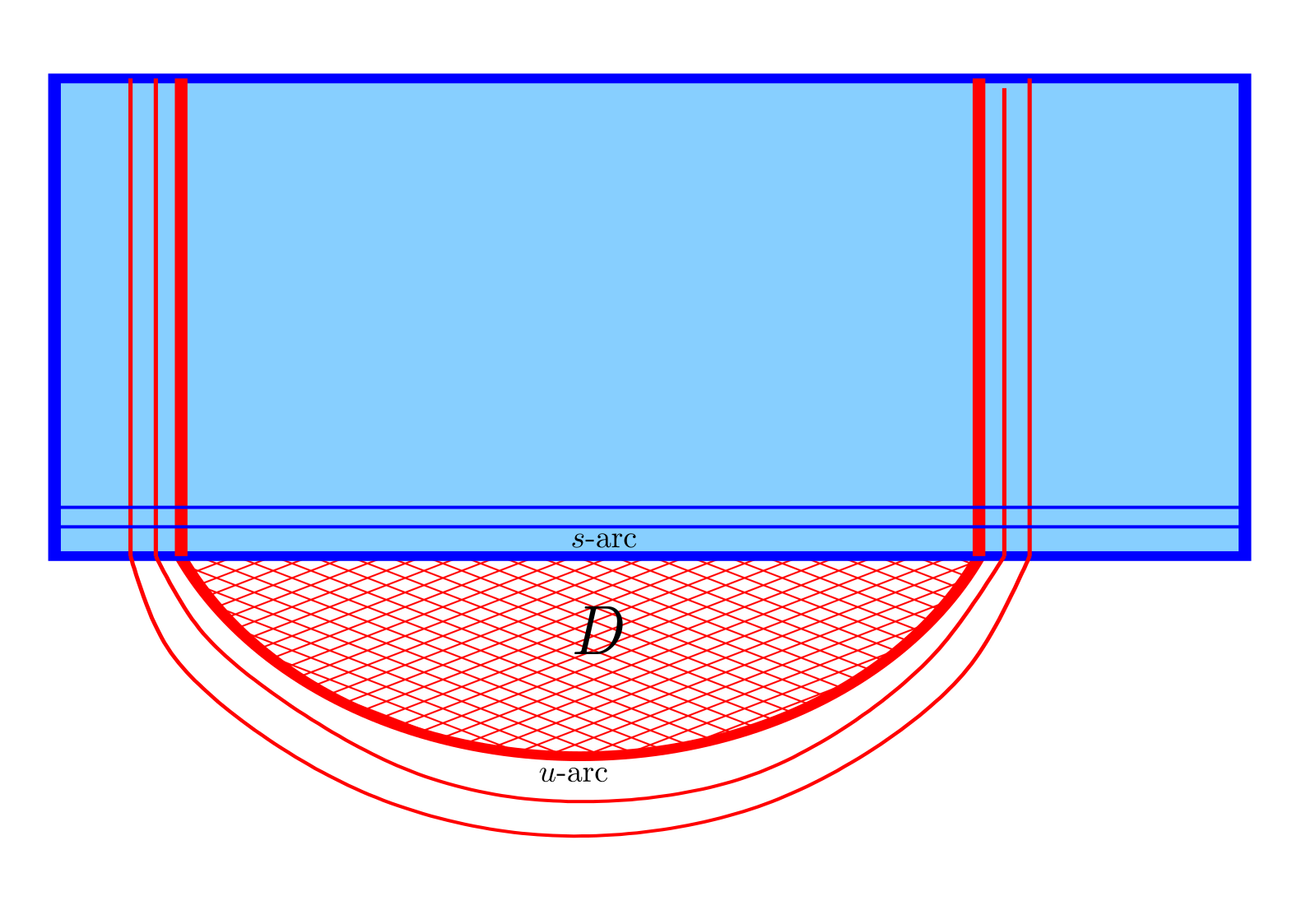}
	\caption{ Topological Impasse}
	\label{Fig: Impasse}
\end{figure}

Two $u$-arcs $\beta$ and $\beta'$ are equivalent if there is a rectangle that has $\beta$ and $\beta'$ as unstable boundaries. This relation defines an equivalence relation between $u$-arcs, and the equivalence classes are rectangles. An $u$-arc on the boundary of an equivalence class is called an extremal $u$-arc. Similarly, an equivalence relation is defined between $s$-arcs, and the concept of $s$-extremal arc is introduced (see \cite[Definition 2.2.6]{bonatti1998diffeomorphismes} for a precise definition). In \cite[Corollary 2.4.8]{bonatti1998diffeomorphismes}, it is proven that the boundary arcs of an impasse are extremal arcs.

\begin{rema}\label{Rema: extremal arc}

The important property of the arcs bounding an impasse is that the $u$-arc is saturated by leaves of the unstable lamination on the side opposite to the impasse, just as the $s$-arc is saturated by leaves of the stable lamination on the opposite side of the impasse.

\end{rema}

\subsection{ Combinatorial formulation of finite genus and impasse}\label{Sub-sec: Comb Impasse genus}

The obstructions described above provide a valuable tool for analyzing our Markov partition from a visual perspective. However, in order to obtain an algorithmic characterization of the pseudo-Anosov class of geometric types, we need to translate these three topological obstructions and the impasse condition into a combinatorial formulation that allows us to determine in finite time whether or not these conditions are satisfied.

The key insight lies in Proposition \ref{Prop: properties ribbons}, where we observed that every ribbon of $k$-generation corresponds to the image of a horizontal strip between two consecutive horizontal sub-rectangles of the Markov partition for the map $\phi^k$ under the projection $\pi$. In this section, we will consider the geometric type.

$$
T=(n,\{(h_i,v_i)\}_{i=1}^n,\Phi_T:=(\rho_T,\epsilon_T )).
$$

The trick in our upcoming definitions is that whenever we want to talk about a "ribbon" with a certain property, we need to refer to two consecutive horizontal rectangles and analyze their images under $\phi^k$. This way, a "ribbon" is formed, and we can express the obstructions and the impasse property in terms of the images of these consecutive horizontal rectangles.

\subsubsection{Combinatorial obstructions for finite genus.}
\begin{defi}\label{Defi: Type 1 combinatoric}
	A geometric type $T$ satisfies the \emph{combinatorial condition} of type-$(1)$ if and only if there are:
	
	\begin{itemize}
		\item Three (not necessarily distinct) numbers, $k,i_0,i_1\in \{1,\cdots,n\}$.
		\item Three different  numbers, $l_0,l_1,l_2\in \{1,\cdots,v_{k}\}$ with $l_0<l_1<l_2$.
		\item A pair  $(k',l')\in\cV(T)$, such that $(k',l')\notin \{(k,l):l_0\leq l \leq l_2\}$.
		\item And four pairs, $(i_0,j_0),(i_0,j_0+1),(i_1,j_1)(i_1,j_1+1)\in \cH(T)$
		\end{itemize}
	Such that they satisfy one of the following relations:
\begin{enumerate}
\item If  $\rho_T(i_0,j_0)=(k,l_0)$ and $\rho_T(i_0,j_0+1)=(k,l_2)$, we have two available possibilities:
\begin{itemize}
	\item[i)] $\epsilon_T(i_0,j_0)=1$ and $\epsilon_T(i_0,j_0+1)=-1$.  In such a case:
	 $$
	 \rho_T(i_1,j_1)=(k,l_1), \, \epsilon_T(i_1,j_1)=1 \text{ and } \rho_T(i_1,j_1+1)=(k',l'),
	 $$
	  or well,
	  $$
	  \rho_T(i_1,j_1+1)=(k,l_1), \,  \epsilon_T(i_1,j_1+1)=-1,  \text{ and } \rho_T(i_1,j_1)=(k',l').
	  $$
	
	\item[ii)] $\epsilon_T(i_0,j_0)=-1$ and $\epsilon_T(i_0,j_0+1)=1$. In such a case  
	$$
	\rho_T(i_1,j_1)=(k,l_1), \,  \epsilon_T(i_1,j_1)=-1 \text{ and  }\rho_T(i_1,j_1+1)=(k',l')
	$$
	or well
	$$
	\rho_T(i_1,j_1+1)=(k,l_1), \,  \epsilon_T(i_1,j_1)=1 \text{ and  } \rho_T(i_1,j_1)=(k',l').
	$$.

\end{itemize}
\item  In the symmetric case when, $\rho_T(i_0,j_0)=(k,l_2)$ and $\rho_T(i_0,j_0+1)=(k,l_0)$ there are two options:
\begin{itemize}
	\item[i)] $\epsilon_T(i_0,j_0)=1$ and $\epsilon_T(i_0,j_0+1)=-1$. In such a case: 
	$$
	\rho_T(i_1,j_1)=(k,l_1), \, \epsilon_T(i_1,j_1)=1  \text{ and }  \rho_T(i_1,j_1+1)=(k',l')
	$$
	or well
	$$
	\rho_T(i_1,j_1+1)=(k,l_1), \, \epsilon_T(i_1,j_1+1)=-1  \text{ and }  \rho_T(i_1,j_1)=(k',l')
	$$

	\item[ii)]  $\epsilon_T(i_0,j_0)=-1$ and $\epsilon_T(i_0,j_0+1)=1$. In such a case:  
	$$
	\rho_T(i_1,j_1)=(k,l_1),\, \epsilon_T(i_1,j_1)=-1 \text{ and  } \rho_T(i_1,j_1+1)=(k',l')
	$$
	or well
	$$
\rho_T(i_1,j_1+1)=(k,l_1),\, \epsilon_T(i_1,j_1)=1 \text{ and  } 	\rho_T(i_1,j_1)=(k',l').
	$$
\end{itemize}
\end{enumerate}
	A geometric type has the \emph{combinatorial obstruction} of type-$(1)$ if there exists $m \in \mathbb{N}_{>0}$ such that $T^m$ satisfies the combinatorial condition of type-$(1)$.
\end{defi}

The way we have defined the combinatorial condition of type $(1)$ is by identifying the $1$-generation ribbons of the Markov partition $\cR_0$ in $\cR_1$ that give rise to the topological obstruction of type-$(1)$ in $\cR_1$ as the image of a horizontal stripe contained between two consecutive sub-rectangles. This understanding of ribbons corresponds to two consecutive elements $(i,j)$ and $(i,j+1)$ in $\cH$ and their images under $\rho_T$ in $\cV$, which represent the ribbons. In the following lemma we are going to follow the Figure \ref{Fig: Type one proof} to explain or proof.

\begin{figure}[h]
	\centering
	\includegraphics[width=0.83\textwidth]{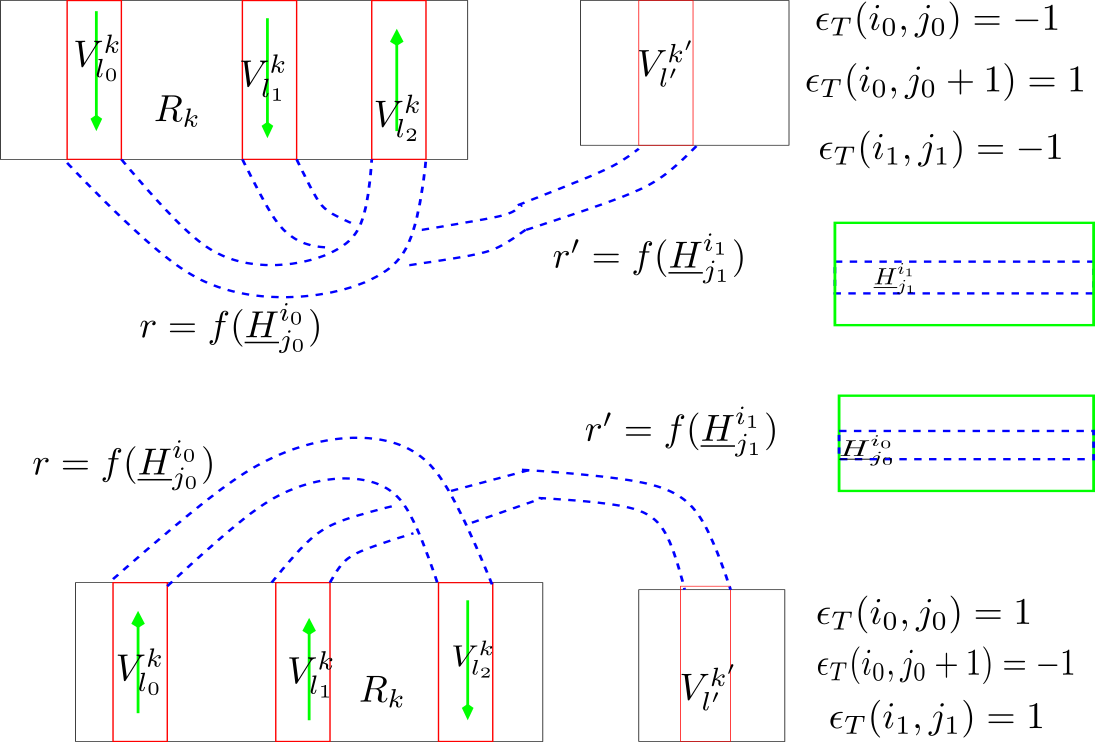}
	\caption{Type $(1)$ combinatorial condition}
	\label{Fig: Type one proof}
\end{figure}

\begin{lemm}\label{Lemm: Equiv com and top type 1}
A geometric type $T$ has the combinatorial condition of type-$(1)$ if and only if the Markov partition in $\cR_1$ exhibits the topological obstruction of type-$(1)$.
\end{lemm}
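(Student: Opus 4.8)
The statement is an equivalence between a purely combinatorial condition on $T$ (Definition~\ref{Defi: Type 1 combinatoric}, in its non-iterated ``combinatorial condition'' form) and a topological feature of the $1$-realizer $\cR_1$ (the type-$(1)$ obstruction of Definition~\ref{Defi: Type 1 obstruction top}). The strategy is the standard dictionary translation: make precise the correspondence between $1$-generation ribbons of $\cR_1$ and pairs of consecutive horizontal sub-rectangles of $\cR_0$, then check that the geometric configuration demanded by Definition~\ref{Defi: Type 1 obstruction top} is captured, case by case, by the relations on $\rho_T$ and $\epsilon_T$ listed in Definition~\ref{Defi: Type 1 combinatoric}. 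Throughout I would fix a realization $(\{R_i\}_{i=1}^n,\phi)$ of $T$ and work inside $\cR_1$, keeping Figure~\ref{Fig: Type one proof} as the running picture.

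\textbf{Step 1: the ribbon dictionary.} First I would recall, from Proposition~\ref{Prop: properties ribbons}, that every $1$-generation ribbon $r$ of $\cR_1$ is $\phi_1(\tilde H^i_j)$ for a unique horizontal stripe $\tilde H^i_j$ of $R_i$, i.e. $r$ is the image under $\phi$ of the stripe lying between the consecutive horizontal sub-rectangles $H^i_j$ and $H^i_{j+1}$. Since $\phi(H^i_j)=V^{k}_{l}$ and $\phi(H^i_{j+1})=V^{k'}_{l'}$ with $(k,l)=\rho_T(i,j)$, $(k',l')=\rho_T(i,j+1)$, the two horizontal boundary components of $r$ sit on the horizontal (stable) boundaries of $R_k$ and $R_{k'}$ respectively; more precisely, a horizontal boundary of $r$ lies on the horizontal boundary component $A$ of a rectangle $R$ of $\cR_0$ exactly when the corresponding endpoint $(k,l)$ or $(k',l')$ indexes a vertical sub-rectangle whose relevant horizontal edge is in $A$, and the side of $A$ it lands on (``inner'' vs.\ landing on the extreme boundary of $R_k$) is governed by $\epsilon_T(i,j)$ resp.\ $\epsilon_T(i,j+1)$. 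I would state this as a preliminary lemma (or just inline it), because both directions of the main equivalence rest on it. The key bookkeeping point is that ``$r$ has both horizontal boundaries in the same boundary component $A$'' translates to $\rho_T(i,j)$ and $\rho_T(i,j+1)$ having the same first index $k$ and moreover the two vertical sub-rectangles abutting the \emph{same} horizontal edge of $R_k$ — which, given the left-to-right indexing and the orientation data, is exactly the $\epsilon$-sign pattern $\{\epsilon_T(i,j),\epsilon_T(i,j+1)\}=\{1,-1\}$ appearing in the definition.

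\textbf{Step 2: $(\Leftarrow)$, combinatorial condition implies topological obstruction.} Assume $T$ satisfies the type-$(1)$ combinatorial condition with witnesses $k,i_0,i_1$, $l_0<l_1<l_2$, $(k',l')$, and the four pairs $(i_0,j_0),(i_0,j_0+1),(i_1,j_1),(i_1,j_1+1)$. Using Step~1, the pair $(i_0,j_0),(i_0,j_0+1)$ produces a ribbon $r$ whose two horizontal boundaries both lie on one horizontal boundary component $A$ of $R_k$ (this is what $\rho_T(i_0,j_0)=(k,l_0)$, $\rho_T(i_0,j_0+1)=(k,l_2)$ together with the sign condition $\{\epsilon_T(i_0,j_0),\epsilon_T(i_0,j_0+1)\}=\{1,-1\}$ encodes), and these boundaries are the edges of $V^k_{l_0}$ and $V^k_{l_2}$ on $A$. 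The second pair $(i_1,j_1),(i_1,j_1+1)$ produces a ribbon $r'\ne r$ with exactly one horizontal boundary on $A$, namely the edge of $V^k_{l_1}$; since $l_0<l_1<l_2$, that boundary lies \emph{between} the two boundaries of $r$ in $A$, and the condition $(k',l')\notin\{(k,l):l_0\le l\le l_2\}$ forces the other boundary of $r'$ to leave this window. That is precisely the configuration of Definition~\ref{Defi: Type 1 obstruction top}. The various sub-cases (i)/(ii) and 1./2. of Definition~\ref{Defi: Type 1 combinatoric} are just the possible orientations of $r$ and the two ways $r'$ can be attached; each is handled by the same argument with signs adjusted, so I would do one sub-case in full and say ``the remaining cases are symmetric'' with a one-line indication of which symmetry (reversing the vertical orientation of $R_k$, or swapping the roles of $j_1$ and $j_1+1$).

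\textbf{Step 3: $(\Rightarrow)$, topological obstruction implies combinatorial condition; expected main obstacle.} Conversely, suppose the Markov partition in $\cR_1$ has a type-$(1)$ obstruction, witnessed by a rectangle $R$, a horizontal boundary component $A$ of $R$, and ribbons $r,r'$ as in Definition~\ref{Defi: Type 1 obstruction top}. By Step~1, $r=\phi_1(\tilde H^{i_0}_{j_0})$ and $r'=\phi_1(\tilde H^{i_1}_{j_1})$ for suitable indices; reading off which vertical sub-rectangles of $R=R_k$ their boundaries sit on gives $\rho_T(i_0,j_0),\rho_T(i_0,j_0+1)\in\{(k,l_0),(k,l_2)\}$ and one of $\rho_T(i_1,\cdot)$ equal to $(k,l_1)$, with the other, $(k',l')$, outside the window $\{l_0\le l\le l_2\}$; the $\epsilon$-signs are forced by which side of $A$ each boundary approaches and by the fact that the two boundaries of $r$ abut the same edge $A$. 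The main obstacle I anticipate is purely organizational rather than conceptual: one must be scrupulous about orientations — the horizontal order on $A$ inherited from $R_k$ versus the order inherited from the ribbon's own horizontal foliation can disagree, and $l_0<l_1<l_2$ is an assertion about the \emph{left-to-right indexing of vertical sub-rectangles of $R_k$}, which is why the definition carries both the ``$1/-1$'' patterns and the ordered triple $l_0<l_1<l_2$ simultaneously. I would set up one fixed convention for the orientation of $A$ at the start of the argument (say, the one compatible with $r$'s horizontal boundary, exactly as Definition~\ref{Defi: Type 2 obtruction top} does for its case) and verify that each of the four enumerated cases of Definition~\ref{Defi: Type 1 combinatoric} arises from exactly one combination of (orientation of $r$)$\times$(which end of $r'$ is the one meeting $A$). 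Once the conventions are pinned down, the verification is a finite check with no further ideas required; the proof then concludes by noting that the extracted data $(k,i_0,i_1,l_0,l_1,l_2,(k',l'),(i_0,j_0),(i_0,j_0+1),(i_1,j_1),(i_1,j_1+1))$ satisfies one of the listed relations, i.e.\ $T$ satisfies the type-$(1)$ combinatorial condition.
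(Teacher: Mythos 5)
Your proposal is correct and follows essentially the same route as the paper: both directions rest on the dictionary identifying each $1$-generation ribbon with the $\phi$-image of the stripe between consecutive horizontal sub-rectangles $H^i_j,H^i_{j+1}$, reading off from $\rho_T$ which vertical sub-rectangles of $R_k$ its boundaries meet and from the $\epsilon_T$-signs which side of the stable boundary they land on, with one case done explicitly and the rest by symmetry. Your treatment of the converse is in fact slightly more explicit than the paper's, which disposes of it with a short "the conditions are totally determined" remark.
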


\begin{proof}
The terms $(k,l_0)$, $(k,l_1)$, and $(k,l_2)$ represent three different vertical sub-rectangles of $R_k$ ordered by the horizontal orientation of $R_k$. The term $(k',l')$ represents a horizontal sub-rectangle of $\cR$ that is not located between the rectangles $V^k_{l_0}$ and $V^k_{l_2}$. Let's illustrate the meaning of the condition in the first case, as the rest are symmetric.

The conditions $\rho(i_0,j_0)=(k,l_0)$ and $\rho(i_0,j_0+1)=(k,l_2)$ are equivalent to the fact that the image under $f$ of the horizontal strip $\underline{H}_{j_0}$ bounded by $H^{i_0}_{j_0}$ and $H^{i_0}_{j_0+1}$ is a ribbon $r=f(\underline{H}_{j_0})$ of the first generation that connects the horizontal boundaries of $V^k_{l_0}$ and $V^{k}_{l_2}$. In this case, in Item $(i)$, the conditions $\epsilon_T(i_0,j_0)=1$ and $\epsilon_T(i_0,j_0+1)=-1$ means that the ribbon $r$ connects the upper boundary of $V^k_{l_0}$ with the upper boundary of $V^{k}_{l_2}$.

The conditions $\rho(i_1,j_1)=(k,l_1)$ and $\rho(i_1,j_1+1)=(k',l')$ indicate that the image of the strip $\underline{H}_{j_1}$ bounded by $H^{i_1}_{j_1}$ and  $H^{i_1}_{j_1+1}$ is a ribbon $r'$ connecting a horizontal boundary of $V^k_{l_1}$ with a horizontal boundary of $V^{k'}_{l'}$. Moreover, since $\epsilon_T(i_1,j_1)=1$, it means that $r'$ connects the upper boundary of $V^k_{l_1}$ with the stable boundary $\alpha$ of $V^{k'}_{l'}$. But the condition $(k',l')\notin \{(k,l): l_0\leq l\leq l_2\}$ ensures that the stable boundary $\alpha$ cannot be situated between the upper boundaries of $V^k_{l_0}$ and $V^k_{l_2}$, as we have specified from the beginning. Therefore the Markov partition $\cR_1$ have the topological obstruction of type $(1)$.

Is not difficult to see that if $\cR_1$ have the topological obstruction of type $(1)$, then the ribbons $r$ and $r'$ given by the definition \ref{Defi: Type 1 obstruction top}, are determined by be the image of a stripe comprised between consecutive sub-rectangles $H^{i_0}_{j_0}$ and $H^{i_0}_{j_0}$ for $r$ and $H^{i_1}_{j_1}$ and $H^{i_1}_{j_1+1}$ for $r'$. The rest of the conditions are totally determined.
\end{proof}

This coding method of representing a ribbon as two consecutive indices $(i,j)$ and $(i,j+1)$ in $\cH$ will be utilized frequently throughout this subsection.

\begin{defi}\label{Defi: Type 2 combinatoric}
	The geometric type $T$ has the type-$(2)$ \emph{combinatorial condition} if and only if there are:
	\begin{itemize}
\item Four different pairs $(k_1,l^1),(k_1,l^2),(k_2,l_1),(k_2,l_2)\in \cV(T)$ with $l_1<l_2$ and $l^1<l^2$.
\item Pairs $(i_1,j_1),(i_1,j_1+1),(i_2,j_2),(i_2,j_2+1)\in \cH(T)$ 
	\end{itemize}
Such that:
	$$
	\rho_T(i_1,j_1)=(k_1,l^1) \text{ and } \rho_T(i_1,j_1+1)=(k_2,l_2).
	$$
	
Additionally, depending on the signs of $\epsilon_T(i_1,j_1)$ and $\epsilon_T(i_1,j_1+1)$ we have the next obstructions:
	
	\begin{enumerate}
\item If $\epsilon_T(i_1,j_1)=\epsilon_T(i_1,j_1+1)$, then we have two options:
\begin{itemize}
\item[i)] If $\rho_T(i_2,j_2)=(k_1,l^2)$ and $\rho_T(i_2,j_2+1)=(k_2,l_1)$ then: $$\epsilon_T(i_1,j_1)=\epsilon_T(i_2,j_1+1)=\epsilon_T(i_2,j_2)=\epsilon_T(i_2,j_2+1)$$.

\item[ii)] If  $\rho_T(i_2,j_2)= (k_2,l_1)$ and $\rho_T(i_2,j_2+1)=(k_1,l^2)$. Then
$$-\epsilon_T(i_1,j_1)=-\epsilon_T(i_2,j_1+1)=\epsilon_T(i_2,j_2)=\epsilon_T(i_2,j_2+1)$$.
\end{itemize}

\item If $\epsilon_T(i_1,j_1)=-\epsilon_T(i_1,j_1+1)$, then we have two options:

\begin{itemize}
\item[i)] If $\rho_T(i_2,j_2)=(k_1,l^2)$ and $\rho_T(i_2,j_2+1)=(k_2,l_1)$ then:
$$
\epsilon_T(i_2,j_2)=\epsilon_T(i_1,j_1) \text{ and  } \epsilon_T(i_2,j_2+1)=\epsilon_T(i_1,j_1+1).
$$
\item[ii)] If $\rho_T(i_2,j_2)=(k_2,l_1)$ and $\rho_T(i_2,j_2+1)=(k_1,l^2)$ then:
$$
\epsilon_T(i_2,j_2)=-\epsilon_T(i_1,j_1+1) \text{ and  } \epsilon_T(i_2,j_2+1)=-\epsilon_T(i_1,j_1).
$$

\end{itemize}

\end{enumerate}

\begin{comment}

$l'_1\in \{1,\cdots,l_1-1,l_1+1, v_{k+1}\}$ and $l'_1\{1,\cdots,l_1-1,l_1+1v_{k+1}\}$ such that,  $l'_1<l_1$ if and only if  $l'_2>l_2$ and there are two options
$$
\Phi(i_2,j_2)=(k_1,l'_1,\epsilon(i_2,j_2)) \text{ and } \Phi(i_2,j_2+1)=(k_2,l'_2,\epsilon(i_2,j_2+2)) 
$$
or well
$$
\Phi(i_2,j_2)=(k_2,l'_2\epsilon(i_2,j_2)) \text{ and } \Phi(i_2,j_2+1)=(k_1,l'_1,\epsilon(i_2,j_2+2)).
$$

\item Suppose $\epsilon(i_1,j_1)=-\epsilon(i_2,j_2+1)$. Then there are $l'_1\{1,\cdots,l_1-1,l_1+1v_{k+1}\}$ and $l'_1\{1,\cdots,l_1-1,l_1+1v_{k+1}\}$ such that, $l'_1<l_1$ if and only if $l'_2<l_2$ and in this situation two possibilities are admissible:
$$
\Phi(i_2,j_2)=(k_1,l'_1,\epsilon(i_2,j_2)) \text{ and } \Phi(i_2,j_2+1)=(k_2,l'_2,\epsilon(i_2,j_2+2)) 
$$
or well
$$
\Phi(i_2,j_2)=(k_2,l'_2\epsilon(i_2,j_2)) \text{ and } \Phi(i_2,j_2+1)=(k_1,l'_1,\epsilon(i_2,j_2+2)).
$$
\end{itemize}
\end{comment}

A geometric type $T$ has the \emph{combinatorial obstruction} of type-$(2)$ if there exists $m\in \NN$ such that $T^m$ satisfies the combinatorial condition of type-$(2)$.
\end{defi}

\begin{figure}[h]
	\centering
	\includegraphics[width=0.9\textwidth]{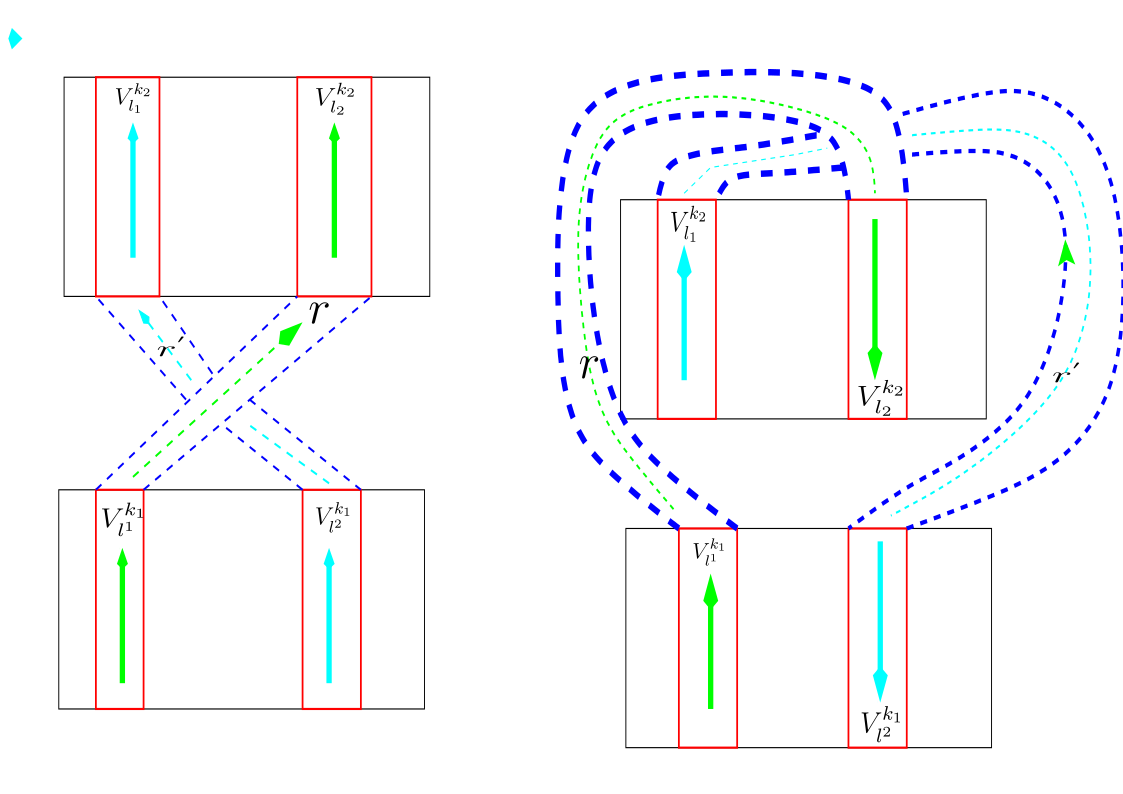}
	\caption{Type $(2)$ combinatorial condition}
	\label{Fig: Type two proof}
\end{figure}

 The Figure \ref{Fig: Type two proof} give some help to understand the proof.
 
\begin{lemm}\label{Lemm: Equiv com and top type 2} 
A geometric type $T$ satisfies the combinatorial condition of type-$(2)$ if and only if the Markov partition $\cR_0$ has the topological obstruction of type-$(2)$ on $\cR_1$.
\end{lemm}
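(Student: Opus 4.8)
The plan is to follow the same translation scheme that worked for Lemma~\ref{Lemm: Equiv com and top type 1}, adapting it to the type-$(2)$ configuration. Recall that a $k$-generation ribbon of $\cR_m$ is, by Proposition~\ref{Prop: properties ribbons}, the image under $\pi$ of a horizontal stripe $\tilde{H^i_j}$ sitting between two consecutive horizontal sub-rectangles $H^i_j$ and $H^i_{j+1}$ of the Markov partition associated to $\phi^k$; for $k=1$ this is exactly the data of a consecutive pair $(i,j),(i,j+1)\in\cH(T)$ together with its images $\rho_T(i,j),\rho_T(i,j+1)\in\cV(T)$ and the signs $\epsilon_T(i,j),\epsilon_T(i,j+1)$. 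So a ribbon $r$ in $\cR_1$ whose two horizontal boundaries lie in \emph{distinct} horizontal boundary components $A$ and $A'$ of $\cR_0$ corresponds to a pair $(i_1,j_1),(i_1,j_1+1)$ with $\rho_T(i_1,j_1)=(k_1,l^1)$ and $\rho_T(i_1,j_1+1)=(k_2,l_2)$ where $(k_1,l^1)$ and $(k_2,l_2)$ index vertical sub-rectangles lying in different rectangles (or at least with their relevant horizontal boundaries on different components $A,A'$); the signs $\epsilon_T(i_1,j_1),\epsilon_T(i_1,j_1+1)$ record \emph{which} horizontal boundary (upper or lower) of each of $V^{k_1}_{l^1}$, $V^{k_2}_{l_2}$ the ribbon $r$ attaches to, hence they pin down the orientations we are forced to put on $A$ and $A'$ so as to be compatible with the horizontal orientation of $r$.

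Next I would do the bookkeeping for the second ribbon $r'$. By Definition~\ref{Defi: Type 2 obtruction top} the obstruction occurs precisely when there is a second ribbon $r'$ with one horizontal boundary $\alpha\subset A$ and the other $\alpha'\subset A'$ such that, relative to the fixed orientations on $A$ and $A'$, the order of $\alpha$ against $r\cap A$ in $A$ is the reverse of the order of $\alpha'$ against $r\cap A'$ in $A'$. Translating: $r'$ is encoded by a consecutive pair $(i_2,j_2),(i_2,j_2+1)$ whose images under $\rho_T$ are $(k_1,l^2)$ and $(k_2,l_1)$ (the four pairs $(k_1,l^1),(k_1,l^2),(k_2,l_1),(k_2,l_2)$ being distinct, with $l_1<l_2$ and $l^1<l^2$ to fix a reference ordering of vertical sub-rectangles within $R_{k_1}$ and $R_{k_2}$). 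The ``reverse order'' condition is then exactly the sign bookkeeping listed in items $1$ and $2$ of Definition~\ref{Defi: Type 2 combinatoric}: when $\epsilon_T(i_1,j_1)=\epsilon_T(i_1,j_1+1)$ (the ribbon $r$ hits the same-named boundary, upper or lower, of both $V^{k_1}_{l^1}$ and $V^{k_2}_{l_2}$) the constraint on the four signs of $r'$ forces the ``crossing'' configuration and hence the reversal; when $\epsilon_T(i_1,j_1)=-\epsilon_T(i_1,j_1+1)$ the compatible orientations on $A,A'$ are flipped relative to the previous case, which is why the sign conditions in item $2$ are the mirror of those in item $1$, and the two sub-cases $(i)$ and $(ii)$ in each item simply record whether $r'$ is read left-to-right as $(k_1,l^2)\to(k_2,l_1)$ or $(k_2,l_1)\to(k_1,l^2)$, i.e. whether $j_2$ or $j_2+1$ carries the $(k_1,l^2)$ image.

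Concretely the proof will be: (i) start from a Markov partition exhibiting the topological type-$(2)$ obstruction on $\cR_1$, extract the two ribbons $r,r'$, use Proposition~\ref{Prop: properties ribbons} to realize each as the $\pi$-image of a stripe between consecutive horizontal sub-rectangles, thereby reading off the pairs in $\cH(T)$ and $\cV(T)$; (ii) unwind the orientation-compatibility convention on $A$ and $A'$ to see that the upper/lower incidence of $r$ is recorded by $\epsilon_T(i_1,j_1),\epsilon_T(i_1,j_1+1)$, splitting into the case $\epsilon_T(i_1,j_1)=\pm\epsilon_T(i_1,j_1+1)$; (iii) in each case, check by a direct planar picture (Figure~\ref{Fig: Type two proof}) that the reversal of orders is equivalent to precisely the sign equalities stated; (iv) conversely, given a geometric type satisfying the combinatorial condition of type-$(2)$, run the same dictionary backwards: the two consecutive pairs produce two first-generation ribbons in $\cR_1$ sharing the boundary components $A,A'$, and the sign conditions guarantee the order-reversal, so $\cR_0$ has the topological obstruction of type-$(2)$ on $\cR_1$. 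The main obstacle is purely the orientation bookkeeping in step (ii)--(iii): one must be scrupulous about how the local horizontal orientation of a ribbon (inherited as a horizontal sub-rectangle of some $\pi(R_i\times\{k\})$) induces orientations on the two boundary components it meets, and how the sign $\epsilon_T$ interacts with the convention that $l_1<l_2$, $l^1<l^2$ fixes the reference order of vertical sub-rectangles; getting the four sub-cases matched to items $1(i),1(ii),2(i),2(ii)$ without an off-by-one or a flipped sign is the only real work, and it is handled by carefully reading off Figure~\ref{Fig: Type two proof}.
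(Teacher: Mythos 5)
Your plan is correct and follows essentially the same route as the paper: encode each first-generation ribbon by a consecutive pair $(i,j),(i,j+1)\in\cH(T)$ with its $\rho_T$-images and $\epsilon_T$-signs, interpret the signs as recording which (upper or lower) stable boundary of $V^{k_1}_{l^1}$, $V^{k_2}_{l_2}$ each ribbon attaches to, and verify the order-reversal case by case on $\epsilon_T(i_1,j_1)=\pm\epsilon_T(i_1,j_1+1)$, with the converse obtained by reading the dictionary backwards. The paper's proof is exactly this orientation bookkeeping, carried out explicitly over the four sub-cases with the aid of Figure~\ref{Fig: Type two proof}.
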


\begin{proof}
The pairs $(k_1,l^1),(k_1,l^2),(k_2,l_1),(k_2,l_2)$ represent the respective vertical sub-rectangles of $\cR$ and the condition $\rho_T(i_1,j_1)=(k_1,l^1) \text{ and } \rho_T(i_1,j^1+1)=(k_2,l_2)$ implies the existence of a rubber $r=f(\underline{H}^{i_1}_{j_1})$ joining a pair of vertical boundaries of the rectangles $V^{k_2}_{l_2}$ and $V^{k_1}_{l^1}$ that correspond to the image  of the upper boundary of $H^{i_1}_{j_1}$ inside $V^{k_1}_{l^1}$ and the image of the inferior boundary of $H^{i_1}_{j_1+1}$ inside $V^{k_2}_{l^2}$.

In the situation when $\epsilon_T(i_1,j_1)=\epsilon_T(i_2,j_1+1)$  we can assume that both are positive (or negative, but the reasoning is the same). 

The condition: $\rho_T(i_2,j_2)=(k_1,l^2)$ and $\rho_T(i_2,j_2+1)=(k_2,l_1)$ implies that the rubber $r'=f(\underline{H}^{i_2}_{j_2})$ joins the stable boundary of $V^{k_2}_{l_2}$ that is the image by $f$ of the inferior boundary of $H^{i_2}_{j_2+1}$ with the stable boundary $V^{k_1}_{l^1}$ that is the image  by $f$ of the upper boundary of $H^{i_2}_{j_2}$. 

Like $\epsilon_T(i_1,j_1)=\epsilon_T(i_2,j_1+1)$ this implies that $r$ joins the upper boundary of $V^{k_1}_{l^1}$ with the inferior boundary of $V^{k_2}_{l_2}$, even more like $$\epsilon_T(i_1,j_1)=\epsilon_T(i_2,j_1+1)=\epsilon_T(i_2,j_2)=\epsilon_T(i_2,j_2+1)=1$$
the $r'$ joins the upper boundary of $V^{k_1}_{l^2}$ (that is the image of the upper boundary of $H^{i_2}_{j_2}$) with the inferior boundary of $V^{k_2}_{l_1}$.
Like $l_1<l_2$ and $l^1<l^2$ the fist combinatorial condition implies that $\cR$ have a topological obstruction of type $(2)$.

Now if  $\rho_T(i_2,j_2)= (k_2,l_1)$ and $\rho_T(i_2,j_2+1)=(k_1,l^2)$, and 
$$\epsilon_T(i_2,j_2)=\epsilon_T(i_2,j_2+1)=-1$$,
The rubber rubber $r'$ have stable boundary in the upper boundary of $V^{k_1}_{l_2}$ as this correspond to the image by $f$ of the inferior boundary of $H^{i_2}_{j_2+1}$ with the  inferior boundary of $V^{k_2}_{l_1}$ that correspond to the image by $f$ of the upper boundary of $H^{i_2}_{j_2}$. Therefore we have a type $(2)$ obstruction.

Consider now that $\epsilon_T(i_1,j_1)=-\epsilon_T(i_1,j_1+1)$. Too fix ideas  $\epsilon_T(i_1,j_1)=1$ and $\epsilon_T(i_1,j_1+1)=-1$. This means that $r$ joins the upper boundary of $V^{k_1}_{l^1}$ (that is the image by $f$ of the upper boundary of $H^{i_0}_{j_0}$)
with the upper boundary of $V^{k_2}_{l_2}$ (that correspond to the image by $f$ of the inferior boundary of $H^{i_1}_{j_2+1}$ ).

 The condition  $\rho_T(i_2,j_2)=(k_1,l^2)$ and $\rho_T(i_2,j_2+1)=(k_2,l_1)$ of the point $(2)$ in the definition together with:
 $$
\epsilon_T(i_2,j_2)=1 \text{ and  } \epsilon_T(i_2,j_2+1)=-1.
$$
implies that $r'$ joints the superior boundary of $V^{k_1}_{l^2}$ that is the image by $f$ of the upper boundary of $H^{i_1}_{j_1}$, with the upper boundary of $V^{k_2}_{l_2}$ that is the image by $f$ of the inferior boundary of $H^{i_1}_{j_1+1}$. Similarly we have the obstruction of type $(2)$ in the Markov partition.

Finally if  $\epsilon_T(i_1,j_1)=1$ and $\epsilon_T(i_1,j_1+1)=-1$ but now $\rho_T(i_2,j_2)=(k_2,l_1), \rho_T(i_2,j_2+1)=(k_1,l^2)$ and 
$$\epsilon_T(i_2,j_2)=1 \text{ and  } \epsilon_T(i_2,j_2+1)=-1.$$

we can deduce that $r'$ joints the upper boundary of $V^{k_1}_{l^2}$ that is the image by $f$ of the inferior boundary of $H^{i_2}_{j_2+1}$ and the upper boundary of $V^{k_2}_{l_1}$ that is the image by $f$ of the upper boundary of $H^{i_2}_{j_2}$. Once again we get a type two obstruction.

If the Markov partition $\cR$ have the type $(2)$ topological obstruction in $\cR_1$the ribbons $r$ and $r'$ determine the horizontal sub-rectangles determined by $(k_1,l^1),(k_1,l^2),(k_2,l_1),(k_2,l_2)$  and we can fix the condition 	$\rho_T(i_1,j_1)=(k_1,l^1)$ and $\rho_T(i_1,j^1+1)=(k_2,l_2)$ to indicate that $r$ joints such rectangles. The condition $(1)$ reflex the case when the two rectangles $R_{k_1}$ and $R_{k_2}$ have cohered orientation and the other case when they have inverse orientation. In any case, we need to remember that $r$ joint the image of the upper boundary of $H^{i_0}_{j_0}$ with the image of the inferior boundary of $H^{i_0}_{j_0}$. The four conditions enunciated are all the possible case when the ribbon $r'$ have stable boundaries in the same stables boundaries components of $\cR$ than $r$.

\end{proof}

\begin{figure}[h]
	\centering
	\includegraphics[width=0.83\textwidth]{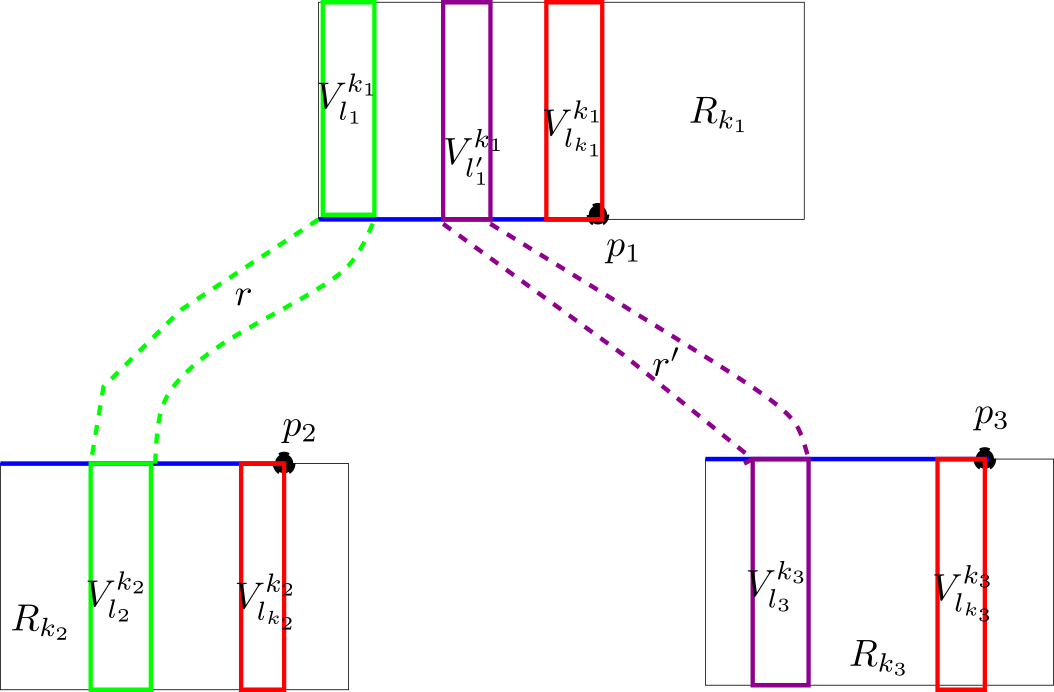}
	\caption{Type $(3)$ combinatorial condition}
	\label{Fig: Type three proof}
\end{figure}

\begin{defi}\label{Defi: Type 3 combinatoric}
A geometric type $T$ satisfies the \emph{combinatorial condition} of type-$(3)$ if there exist indices  described as follows:
\begin{itemize}
\item[i)]  There are indexes $(k_1,j_1)\neq (k_2,j_2)$, and $(k_3,j_3) \in \cH(T)$  were $j_{\sigma}=1$ or $j_{\sigma}=h_{\sigma}$ for $\sigma=k_1,k_2,k_3$, that satisfy:
\begin{eqnarray}\label{Equ: periodic boundary}
\Phi(\sigma,j_{\sigma})=(\sigma,l_{\sigma},\epsilon(\sigma,j_{\sigma}) =1),
\end{eqnarray}

where $l_{\sigma}\in \{1,\cdots,v_{\sigma}\}$. We don't exclude the possibility that $(k_3,j_3)$ is equal to $(k_1,j_1)$ or $(k_2,j_2)$.

\item[ii)] A couple of numbers $l_{1},l_{1}'\in \{1,\cdots, v_{k_1}\}$  such that, either 
$$
l_1<l_1'<l_{k_1} \text{ or } l_{k_1}<l_1<l_1'.
$$

\item[iii)] For  the pair $(k_2,j_2)$, there is a number $l_{2}\in \{1,\cdots, v_{k_2}\}$  with $l_{2}\neq l_{k_2}$.

\item[iv)] There are three possible situations for the pair $(k_3,j_3)$ depending if it is  equal or not to some other pair in item $i)$:

\begin{itemize}
\item[1)] If  $(k_1,j_1)\neq(k_3,j_3) \neq (k_2,j_2) $, there is a number $l_{2} \in \{1,\cdots, v_{k_2}\}$ different form $l_{k_2}$ and a number $l_3\in \{1,\cdots, v_{k_3}\}$ different from $l_{k_3}$.

\item[2)] In case that $(k_2,j_2) = (k_3,j_3)$. There is a numbers $l_3\in \{1,\cdots, v_{k_2}=v_{k_3}\}$, such that: $l_2<l_{k_2}=l_{k_3}<l_3$ or $l_3<l_{k_3}=l_{k_2}<l_2$. 

\item[3)] In case that $(k_1,j_1) = (k_3,j_3)$. There is a number $l_3\in \{1,\cdots, v_{k_1}=v_{k_3}\}$, such that: $l_1<l_1'<l_{k_1}=l_{k_3}<l_3$ or $l_3<l_{k_3}=l_{k_1}<l_1<l_1'$ depending on the situation of item $ii)$. 
\end{itemize}
 
\item[v)] There is a pair of pairs $(i_r,j_r),(i_{r'},j_{r'})\in \cH(T)$ such that $(i_r,j_r+1),(i_{r'},j_{r'}+1)\in \cH(T)$
\end{itemize} 
All of them need to satisfy the next equations:

\begin{eqnarray}
\rho(i_r,j_r)=(k_1,l_1) \text{ and } \rho(i_r,j_r+1)=(k_2,l_2), \text{ or }\\
\rho(i_2,j_2+1)=(k_1,l_1) \text{ and } \rho(i_r,j_r)=(k_2,l_2).
\end{eqnarray}
and at the same time:
\begin{eqnarray}
\rho(i_{r'},j_{r'})=(k_1,l_1') \text{ and } \rho(i_{r'},j_{r'}+1)=(k_3,l_3), \text{ or well }\\
\rho(i_{r'},j_{r'}+1)=(k_1,l_1') \text{ and } \rho(i_{r'},j_{r'})=(k_3,l_3).
\end{eqnarray}
A geometric type $T$ has the \emph{combinatorial obstruction} of type-$(3)$ if there exists an $m\in \NN$ such that $T^m$ satisfies the combinatorial condition of type-$(3)$.
\end{defi}

The distinction between the last combinatorial and topological conditions and obstructions lies in the fact that, for the type $3$ combinatorial condition, we need to consider some power of the geometric type to determine the periodic stable boundaries and their embrionary separatrices. This is why we formulated Lemma \ref{Lemm: Equiv com and top type 3} in terms of combinatorial and topological obstructions, rather than the combinatorial condition of type $3$ itself. In the future (see Lemma \ref{Lemm: T pA class then no condition 3}), we will prove that no geometric type in the pseudo-Anosov class has the combinatorial condition of type $3$. Therefore, its iterations won't have such a condition either. This will be sufficient to establish that any iteration of such a geometric type will don't have the combinatorial condition of type $3$, and the corresponding combinatorial and topological obstructions will not hold for $T$.

\begin{lemm}\label{Lemm: Equiv com and top type 3}
A geometric type have $T$ has the combinatorial obstruction of type $(3)$  if and only if the Markov partition $\cR$ has the topological obstruction of type-$(3)$.
\end{lemm}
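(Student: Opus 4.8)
The plan is to argue exactly as in Lemmas \ref{Lemm: Equiv com and top type 1} and \ref{Lemm: Equiv com and top type 2}, translating clause by clause between the combinatorial data attached to a power $T^m$ and the geometric features of a realizer, via the dictionary supplied by Proposition \ref{Prop: properties ribbons}: a pair of consecutive indices $(i,j),(i,j+1)\in\cH(T^m)$ together with their $\rho_{T^m}$-images describes the ribbon obtained as the $\pi$-image of the horizontal stripe $\tilde H^i_j$ lying between the two consecutive horizontal sub-rectangles of $R_i$ for the map $\phi^m$, and the two signs $\epsilon_{T^m}$ record on which of the two stable boundaries of the target vertical sub-rectangles the ribbon abuts. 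The feature that distinguishes type $(3)$ from types $(1)$ and $(2)$ is the presence of embryonic separatrices, which live on $p$-periodic stable boundary components; since periodicity of a boundary is only visible after $p$ iterations of $\phi$, a single realizer $\cR_1$ no longer suffices, and this is exactly why the statement is phrased in terms of the combinatorial \emph{obstruction} (which quantifies over a power $T^m$) and of the topological obstruction appearing in $\cR_m$ for \emph{some} $m$, rather than in terms of the bare condition in $\cR_1$.

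For the direction ``combinatorial obstruction $\Rightarrow$ topological obstruction'', fix $m$ with $T^m$ satisfying the combinatorial condition of type $(3)$. First I would decode \eqref{Equ: periodic boundary}: $\Phi_{T^m}(\sigma,j_\sigma)=(\sigma,l_\sigma,1)$ with $j_\sigma\in\{1,h_\sigma\}$ says that $\phi^m$ carries the horizontal sub-rectangle $H^\sigma_{j_\sigma}$ (for $\phi^m$) onto the vertical sub-rectangle $V^\sigma_{l_\sigma}\subset R_\sigma$ preserving the vertical orientation; since $j_\sigma=1$ (resp.\ $h_\sigma$), the lower (resp.\ upper) boundary of $H^\sigma_{j_\sigma}$ equals the corresponding stable boundary component $A_\sigma$ of $R_\sigma$, and orientation-preservation forces its $\phi^m$-image to be the corresponding stable boundary of $V^\sigma_{l_\sigma}$, which lies in $A_\sigma$. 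Hence $\phi^m(A_\sigma)\subset A_\sigma$, so $A_\sigma$ is $m$-periodic in the sense of Definition \ref{Defi: p-peridic embrinary}, and I would pick an embryonic separatrix on it, naming them $S_1,S_2,S_3$ according to $(k_1,j_1),(k_2,j_2),(k_3,j_3)$. Items ii), iii) and iv) are then used precisely to guarantee that $S_1,S_2,S_3$ are pairwise distinct: the three sub-cases of item iv) are the three possibilities for whether $(k_3,j_3)$ equals $(k_1,j_1)$, equals $(k_2,j_2)$, or neither, and chains of inequalities such as $l_1<l_1'<l_{k_1}$ are what forces distinctness when two of the indices collide. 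Finally the relations of item v), read through the dictionary, produce inside a suitable realizer $\cR_M$ (with $M$ a large multiple of $m$) a ribbon $r$ with horizontal boundaries on $A_{k_1}$ and $A_{k_2}$, and a ribbon $r'$ with horizontal boundaries on $A_{k_1}$ and $A_{k_3}$; the constraints $l_2\neq l_{k_2}$, $l_1<l_1'<l_{k_1}$, etc., localise the four endpoints strictly off the periodic cores $\phi^m(A_{k_i})$, i.e.\ on $S_1,S_2,S_3$ themselves, so that $r$ joins $S_1$ to $S_2$ and $r'$ joins $S_1$ to $S_3$ — which is the topological obstruction of type $(3)$ in $\cR_M$.

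For the converse I would run the dictionary backwards. Given a type-$(3)$ obstruction in some $\cR_{m'}$, the embryonic separatrices $S_1,S_2,S_3$ sit on $p_i$-periodic stable boundary components $A_i$ of certain rectangles $R_{k_i}$ (and, by Lemma \ref{Lemm: Boundary of Markov partition is periodic}, these really are periodic); choosing $m$ to be a common multiple of $p_1,p_2,p_3$ and of the generations of the two ribbons, Proposition \ref{Prop: properties ribbons} writes each ribbon as the $\phi^m$-image of a horizontal stripe between consecutive $\phi^m$-sub-rectangles, and $A_i$ becomes periodic under $\phi^m$ with the $S_i$ among its embryonic separatrices. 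Reading off the indices of these sub-rectangles, the orientation signs (all equal to $1$ on the periodic sides, which is how those boundaries are periodic in the first place, together with the signs of $r$ and $r'$ recorded by $\epsilon_{T^m}$) and the position inequalities forced by the placement of the four endpoints on $S_1,S_2,S_3$, one recovers precisely the combinatorial condition of type $(3)$ for $T^m$, hence the combinatorial obstruction.

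The main obstacle I anticipate is the sign-and-position bookkeeping in the middle of both directions: one must verify that \eqref{Equ: periodic boundary} with $\epsilon=1$ genuinely encodes a stable boundary component mapped into itself orientation-preservingly (rather than, say, the two stable boundaries of $R_\sigma$ being interchanged), that the numerous inequality hypotheses in items ii)--iv) are exactly — no more and no less — what is needed for $S_1,S_2,S_3$ to be three distinct embryonic separatrices, and that each ``or'' alternative in the displayed relations of item v) matches one of the two possible horizontal orientations of the ribbon as a sub-rectangle of $\phi^m(R_i)$. As with the previous two lemmas, once the correspondence ribbon $\leftrightarrow$ consecutive-sub-rectangle-pair is fixed, the proof is a finite case check guided by Figure \ref{Fig: Type three proof}, and I would organise it along the three sub-cases of item iv).
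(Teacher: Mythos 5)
Your two directions follow the same route as the paper's proof: decode item $i)$ into $\phi^m$-invariance of the stable boundaries $A_{k_i}$, use the inequalities of items $ii)$--$iv)$ to place the relevant boundaries of the vertical sub-rectangles on three distinct embryonic separatrices $S_1,S_2,S_3$, read item $v)$ as the two ribbons $r$ (from $S_1$ to $S_2$) and $r'$ (from $S_1$ to $S_3$), and, conversely, pass to a power $m$ divisible by the periods of the boundaries and at least as large as the generations of the two given ribbons.

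There is, however, one step of your converse that fails as written. A ribbon of generation $k<m$ is \emph{not} ``the $\phi^m$-image of a horizontal stripe between consecutive $\phi^m$-sub-rectangles'': by Proposition \ref{Prop: properties ribbons} it is the $\phi^k$-image of the stripe between two consecutive $\phi^k$-sub-rectangles, so what you can read off it directly is a pair of consecutive indices in $\cH(T^k)$, not in $\cH(T^m)$, and taking $m$ to be a multiple of $k$ does not change this (the $k$-generation and $m$-generation ribbons of $\cR_m$ are different objects: the former are closures of components of $\cR_k\setminus\cR_{k-1}$, the latter of $\cR_m\setminus\cR_{m-1}$). The paper bridges exactly this point by trading each given ribbon for an associated ribbon of generation $m$: the stripe between the two consecutive $\phi^k$-sub-rectangles is contained in the stripe between two consecutive $\phi^m$-sub-rectangles, and it is the $m$-generation ribbon over this larger stripe whose images furnish the vertical sub-rectangles $V^{k_1}_{l_1}$, $V^{k_2}_{l_2}$, $V^{k_1}_{l_1'}$, $V^{k_3}_{l_3}$ of $(\cR,\phi^m)$ and hence the indices in items $ii)$--$v)$. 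To complete the argument one must then check that the horizontal boundaries of these new ribbons still lie on embryonic separatrices of the same three sides of $A_{k_1},A_{k_2},A_{k_3}$: they are the $\phi^{m-k}$- (resp.\ $\phi^{m-k'}$-) images of the old boundary segments, hence sit deeper along the same separatrix germs, which is still admissible since Definition \ref{Defi: p-peridic embrinary} allows components of $A\setminus\phi^{kp}(A)$ for every $k>0$ and $m$ was chosen divisible by the periods. With this substitution the rest of your bookkeeping coincides with the paper's proof.
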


\begin{proof}

Assume that $T'$ has the combinatorial obstruction of type $(3)$, meaning there exists an $m \geq 1$ such that for $T := T'^{m}$, and the combinatorial condition of type $(3)$ is satisfied for $T$. We will proceed with our analysis using $T$.

The condition in Item $(i)$ regarding the pairs of indices $(k_1, j_1) \neq (k_2, j_2)$ and  $(k_3, j_3) \in \cH(T)$ implies that the rectangle $R_{\sigma}$ has a fixed stable boundary and a periodic point on it. Let's denote the stable boundaries as $A_{k_1}$, $A_{k_2}$, and $A_{k_3}$. Item $(i)$ indicates that $A_{k_1} \neq A_{k_2}$, but it is possible that $A_{k_3}$ could be equal to one of the other two.

To clarify, let's consider a specific case to fix the ideas. Let's assume that $A_{k_1}$ is the lower boundary of $R_{k_1}$ and $A_{k_2}$ is the upper boundary of $R_{k_2}$. If $A_{k_3}$ is different from the other two stable boundaries, we'll assume that $A_{k_3}$ is the upper boundary of $R_{k_3}$. Note that the other cases are either symmetric or can be entirely determined based on these conventions.

The periodic point $p_1$ that lies on the stable boundary $A_{k_1}$ is contained in the lower boundary of the vertical sub-rectangle $V^{k_1}_{l_{k_1}}$. Then, the conditions $l_1<l_1'<l_{k_1}$ or $l_{k_1}<l_1<l_1'$ imply that the lower boundary of the rectangles $V^{k_1}_{l_1}$ and $V^{k_2}_{l_1'}$ lies on the same embryonic separatrice of $A_{k_1}$. Let's denote this embrionary separatrice as $S_1$. Similarly, the upper boundary of $V^{k_2}_{l_2}$ lies on an embrionary separatrice $S_2$ within $A_{k_2}$. Depending on the situation described in item $iv)$, we can have the following cases:

\begin{itemize}
\item[i)] The periodic point $p_3$ is different from $p_1$ and $p_2$, and the horizontal sub-rectangle $V^{k_3}_{l_3}$ has its upper boundary in an embrionary separatrice $S_{3}$.

\item[ii)] In this case, $p_3=p_2$, but the condition $l_2<l_{k_2}=l_{k_3}<l_3$ or $l_3<l_{k_3}=l_{k_2}<l_2$ implies that the upper boundary of $V^{k_3}_{l_3}$ is in an embrionary  separatrice $S_3$, distinct from $S_2$.

\item[iii)]  In this case, $p_1=p_2$, and the condition $l_1<l_1'<l_{k_1}=l_{k_3}<l_3$ or $l_3<l_{k_3}=l_{k_1}<l_1<l_1'$ implies that the \emph{inferior boundary} of $V^{k_3}_{l_3}$ is in an embryonic separatrice $S_3$ different from $S_1$.
\end{itemize}

The conclusion is that $S_1\neq S_2\neq S_3$. Finally, item $(4)$ along with the conditions about the indexes $(i_r,j_r)$ and $(i_{r'},j_{r'})$ implies that there is a ribbon $r$ from $S_1$ to $S_2$ and another ribbon $r'$ from $S_1$ to $S_3$. Therefore, we have the type $(3)$ topological obstruction in $\cR_0$.

In the converse direction. Imagine that $\cR$ has the topological obstruction of type $(3)$. This means there are three or two different periodic stable boundaries of the Markov partition $A_{k_1}\neq A_{k_2}$ and $A_{k_3}$ with $A_{\sigma}\subset R_{\sigma}$ that contain different embrionary separatrices $S_1,S_2$ and $S_3$. Additionally, there is a ribbon $\underline{r}$ of generation $k$ that joins $S_1$ with $S_2$, and another ribbon $\underline{r}'$ of generation $k'$ that joins $S_1$ with $S_2$.

By taking a certain power of $T' = T^m$ with $m$ a multiple of the period of every stable boundary and greater than $k'$ and $k$, we consider the realization $(\cR,\phi^m)$ of $T$. We can assume that $A_{\sigma}\subset R_{\sigma}$ is a fixed stable boundary for $\phi^m$.
 
We are now in the setting of the combinatorial condition of type $(3)$ for $T$. The fact that the stable boundaries are fixed implies the existence of the indexes $(k_1,j_1)\neq (k_2,j_2)$, and $(k_3,j_3) \in \cH(T')$  in item $(1)$, and clearly, the fixed points are contained in $V^{\sigma}_{l_{\sigma}}$.

The ribbons $\underline{r}$ and $\underline{r}'$, contain ribbons of generation $m$, denoted as $r'$ and $r_2$, respectively. These ribbons joins certain vertical sub-rectangles of the realization of $T$,  $(\cR,\phi^m)$, $V^{k_1}_{l_1}\subset S_1$ with $V^{k_2}_{l_2}\subset S_2$, and $V^{k_1}_{l_1'}\subset S_1$ with $V^{k_3}_{l_3}\subset S_3$. These vertical sub-rectangles produce the indices in items $ii)$, $iii)$, and $iv)$.  The items $2)$ and $3)$ represent the situations where $S_1$ and $S_3$ (or $S_3$ and $S_2$) are in the same stable boundary of a rectangle.

Finally, the ribbons $r$ and $r'$ are determined by two consecutive horizontal sub-rectangles of $(\cR,\phi^m)$. This corresponds to the situation described in item $v)$, and they satisfy the rest of the properties as outlined in the argument.

\end{proof}

\subsubsection{The impasse property.}

Finally we proceed to formulate the impasse condition in terms of the geometric type.

\begin{defi}\label{Defi: Impasse combinatoric}
 Let $T$ be an abstract geometric type of finite genus. Then $T$ has the \emph{impasse property} if there exist $(i,j),(i,j+1)\in \cH(T)$ such that one of the following conditions holds:
 
\begin{eqnarray}
(\rho,\epsilon)(i,j)=(k,l,\epsilon(i,j)) \text{ and } (\rho,\epsilon)(i,j+1)=(k,l+1,-\epsilon(i,j)), \text{ or }\\
(\rho,\epsilon)(i,j)=(k,l+1,\epsilon(i,j)) \text{ and } (\rho,\epsilon)(i,j+1)=(k,l,-\epsilon(i,j)).
\end{eqnarray}
A geometric type $T$ has a \emph{combinatorial impasse} if there exists $m \in \mathbb{N}$ such that $T^m$ has the impasse property.
\end{defi}

Unlike the first three obstructions, where a ribbon is determined by two horizontal sub-rectangles and their respective indices $(i,j),(i,j+1)\in \mathcal{H}$, a topological impasse is defined in terms of a disjoint disk of $K$ and two arcs. Proving the equivalence between the topological and combinatorial formulations just given is a more subtle task, and we address it in the final part of this subsection.

\begin{theo}\label{Theo: Geometric and combinatoric are equivalent}
Let $T=(n,\{(h_i,v_i)\}_{i=1}^n,\Phi)$  be a geometric type of finite genus. Let $f: S \rightarrow S$ be a surface Smale diffeomorphism, and let $K$ be a saddle-type basic piece of $f$ that has a Markov partition of geometric type $T$. The following conditions are equivalent:

\begin{itemize}
\item[i)] The basic piece $K$ has a topological impasse.
\item[ii)] The geometric type $T^{2n+1}$ has the impasse property.
\item[iii)] The geometric type $T$ has a combinatorial impasse.
\end{itemize}
\end{theo}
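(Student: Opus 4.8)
The plan is to prove the chain of implications $iii) \Rightarrow i) \Rightarrow ii) \Rightarrow iii)$, using the ribbon dictionary of Proposition \ref{Prop: properties ribbons} together with the topological characterization of an impasse (Definition \ref{Defi: Impasse geo}, Remark \ref{Rema: extremal arc}) and the algorithmic iteration of geometric types (Proposition \ref{Prop: algoritm iterations type}). The guiding principle, exactly as in Lemmas \ref{Lemm: Equiv com and top type 1}, \ref{Lemm: Equiv com and top type 2} and \ref{Lemm: Equiv com and top type 3}, is that the impasse property of Definition \ref{Defi: Impasse combinatoric} for a power $T^m$ encodes precisely the existence of an $m$-generation ribbon whose two horizontal boundaries lie on the \emph{same} stable separatrix (more precisely, two consecutive vertical sub-rectangles $V^k_l, V^k_{l+1}$ of the map $\phi^m$ that are joined by a ribbon, with opposite vertical orientations so that the ribbon "folds back" inside a single horizontal boundary component of $R_k$). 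Such a ribbon, together with the horizontal strip of $\cR_0$ lying between its two bounding horizontal sub-rectangles, bounds a disk disjoint from $K$ whose boundary is a $u$-arc (the outer side of the ribbon) glued to an $s$-arc (a piece of the stable boundary between $V^k_l$ and $V^k_{l+1}$); conversely, Corollary 2.4.8 of \cite{bonatti1998diffeomorphismes} guarantees the bounding arcs of any impasse are extremal, hence carried by the Markov partition after enough iterations.

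For $iii) \Rightarrow i)$: if $T^m$ has the impasse property, realize $T$ by $(f,\cR)$ (it is in a context realizable as a basic piece since $T$ has finite genus, by Theorem \ref{Theo: finite genus iff realizable}), pass to the Markov partition $(\cR, \phi^m)$ where $\phi = f$, and read off from the two indices $(i,j),(i,j+1)$ the horizontal strip $\tilde H^i_j$ whose image $f^m(\tilde H^i_j)$ is a ribbon $r$ joining $V^k_l$ and $V^k_{l+1}$. Because the sign condition $\epsilon(i,j+1) = -\epsilon(i,j)$ forces $r$ to attach to the \emph{same} horizontal boundary component $A$ of $R_k$ along both of its ends, and because $(k,l)$ and $(k,l+1)$ are \emph{consecutive}, the region enclosed between $r$ and the sub-arc of $A$ between $V^k_l\cap A$ and $V^k_{l+1}\cap A$ is a disk $\overset{o}{D}$ meeting $K$ in nothing but its two corner points; one then checks its boundary decomposes as a $u$-arc (a side of $r$) and an $s$-arc (inside $A$). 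This is a topological impasse.

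For $i) \Rightarrow ii)$: given an impasse $\overset{o}{D}$, Remark \ref{Rema: extremal arc} and Corollary 2.4.8 of \cite{bonatti1998diffeomorphismes} say its boundary is a $u$-extremal arc $\beta$ glued to an $s$-extremal arc $\alpha$. Extremality means $\beta$ lies in the unstable boundary of some iterate $f^{k}(R_i)$ and $\alpha$ lies in the stable boundary of some iterate $f^{k'}(R_{i'})$; by Lemma \ref{Lemm: Boundary of Markov partition is periodic} the relevant boundary components are eventually periodic, and since a saddle-type basic piece without double boundaries has at most $2n$ periodic boundary components (the stable boundaries of the $n$ rectangles and their orbit count, bounded via the finiteness in Proposition \ref{Prop: sub-boundary points 2.1.1}(3)), one can choose $m = 2n+1$ so that $\phi^m = f^{2n+1}$ fixes every relevant boundary and so that both arcs are carried by $(\cR,\phi^m)$: then $\beta = f^m(\text{a side of a strip})$ is an $m$-generation ribbon $r$ joining two consecutive vertical sub-rectangles $V^k_l, V^k_{l+1}$ of $(\cR,\phi^m)$ on the same stable boundary, with the orientation reversal recorded by the $\epsilon$ condition. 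Translating $r$ back into the two consecutive horizontal sub-rectangles of $(\cR,\phi^m)$ that bound the strip, and using that the geometric type of $(\cR, f^m)$ is $T^m = T^{2n+1}$ by Definition \ref{Defi: powers of the type}, gives exactly the impasse property for $T^{2n+1}$. Finally $ii) \Rightarrow iii)$ is immediate from Definition \ref{Defi: Impasse combinatoric}, taking $m = 2n+1$.

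The main obstacle I expect is the bound $m = 2n+1$ in $i) \Rightarrow ii)$: one must argue carefully that, after this many iterations, \emph{both} the $u$-extremal and the $s$-extremal arc bounding the impasse have become sides of the (iterated) Markov partition that realize the ribbon configuration — i.e. that $2n+1$ iterations suffice simultaneously to make the two periodic boundary components invariant and to "capture" both extremal arcs as honest ribbon boundaries between consecutive sub-rectangles. This requires the counting of periodic boundary components (each rectangle contributes one lower and one upper stable boundary, and an orbit of such has length at most $n$, giving the $2n$; the extra $+1$ handles the passage from "eventually periodic" to "periodic and correctly aligned"), plus the observation that once a boundary is $\phi^m$-invariant the relevant ribbon appears already at generation $1$ for $\phi^m$, i.e. at generation $m$ for $\phi$. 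Checking that the opposite-orientation condition on $\epsilon$ is genuinely equivalent to "the ribbon folds back inside one boundary component" — rather than "the ribbon joins the two distinct boundary components of $R_k$" — is the delicate sign bookkeeping that makes the combinatorial Definition \ref{Defi: Impasse combinatoric} match the geometry, and I would treat it with a figure as in the earlier lemmas.
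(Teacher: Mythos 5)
Your chain of implications is the same cycle the paper proves (just traversed in the opposite order), and your treatment of $i)\Rightarrow ii)$ — iterating so that the $u$-arc of the impasse sits in the unstable boundary and pushing the $s$-arc forward, with the $2n$ stable boundary components plus pigeonhole giving the $2n+1$ — is essentially the paper's Lemma on the ``well-suited impasse''. The genuine gap is in $iii)\Rightarrow i)$. You assert that the region enclosed between the folded ribbon and the sub-arc of the stable boundary ``is a disk $\overset{o}{D}$ meeting $K$ in nothing but its two corner points; one then checks its boundary decomposes as a $u$-arc and an $s$-arc.'' But that the closed curve $\gamma=\alpha\cup\beta$ bounds a disk at all, and that this disk is disjoint from $K$, is precisely the hard content of this direction and is where the finite-genus hypothesis enters; it is not visible from the local picture of two consecutive sub-rectangles with reversed orientation, because a priori $\gamma$ could be an essential curve in $S$, or could bound a region containing points of $K$.

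The paper closes this gap in two steps that your proposal omits entirely. First it shows $\gamma$ cannot be essential: since the stable sides of $\gamma$ lie in stable manifolds of periodic points, suitable iterates $f^{mk}(\gamma)$ are pairwise disjoint, and a separate lemma (using that $f^m$ preserves orientation and an analysis of which side of the would-be annulus is saturated by $K$, via non-free separatrices being dense in $K$) shows $\gamma$ is not homotopic to $f^{2mk}(\gamma)$; an infinite family of disjoint, pairwise non-isotopic simple closed curves contradicts the finite genus of $S$. Second, once $\gamma$ bounds a disk $D$, a density-of-separatrices argument shows $\overset{o}{D}\cap K=\emptyset$: a point of $K$ in $\overset{o}{D}$ would have a non-free stable separatrix dense in $K$, forced to cross the $u$-arc $\beta$ in its interior, which is impossible. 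Without these two arguments your ``one then checks'' does not go through, so as written the implication $iii)\Rightarrow i)$ is unproven; the rest of your plan (including $ii)\Rightarrow iii)$ from the definition and the sign bookkeeping matching $\epsilon(i,j+1)=-\epsilon(i,j)$ to the fold) is consistent with the paper.
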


\begin{proof}
 \textbf{ i) implies ii):} Let  $\cR=\{R_i\}_{i=1}^n$ be a Markov partition of $K$ of geometric type $T$, and let $\overset{o}{D}$ be a topological impasse with $\alpha$ as the $s$-arc and $\beta$ as the $u$-arc, whose union forms the boundary of $D$.

 \begin{lemm}\label{Lemm: impasse disjoin int Markov partition}
The impasse is disjoint from the interior of the Markov partition, i.e.
$$
\overset{o}{D}\cap \cup_{i=1}^n \overset{o}{R_i}=\emptyset.
$$
Moreover, the intersection $\{k_1,k_2\}=\alpha\cap \beta$ is not contained in the interior of the Markov partition.
 \end{lemm}
 
\begin{proof}
Suppose $\overset{o}{D}\cap \overset{o}{R_i}\neq \emptyset$. Let $\{k_1,k_2\}:=\alpha\cap \beta$ be the endpoints of the $s,u$-arcs  on the boundary of $D$. Let us take a point $x$ in $\overset{o}{D}\cap \overset{o}{R_i}$, since $x$ is not a hyperbolic point and have a open neighborhood $U\subset \overset{o}{R_i}$ disjoint of $K$, there must exist a sub-rectangle  $Q\subset \overset{o}{R_i}$  containing $x$ in its interior, whose interior is disjoint from $K$, whose horizontal boundary consists of two disjoint intervals $I,I'$ of $W^s(K)$ and whose vertical boundary consists of two disjoint intervals $J, J'$´ of $W^u(K)$ (See Figure \ref{Fig: Rec Q} ). 
\begin{figure}[h]
	\centering
	\includegraphics[width=0.6\textwidth]{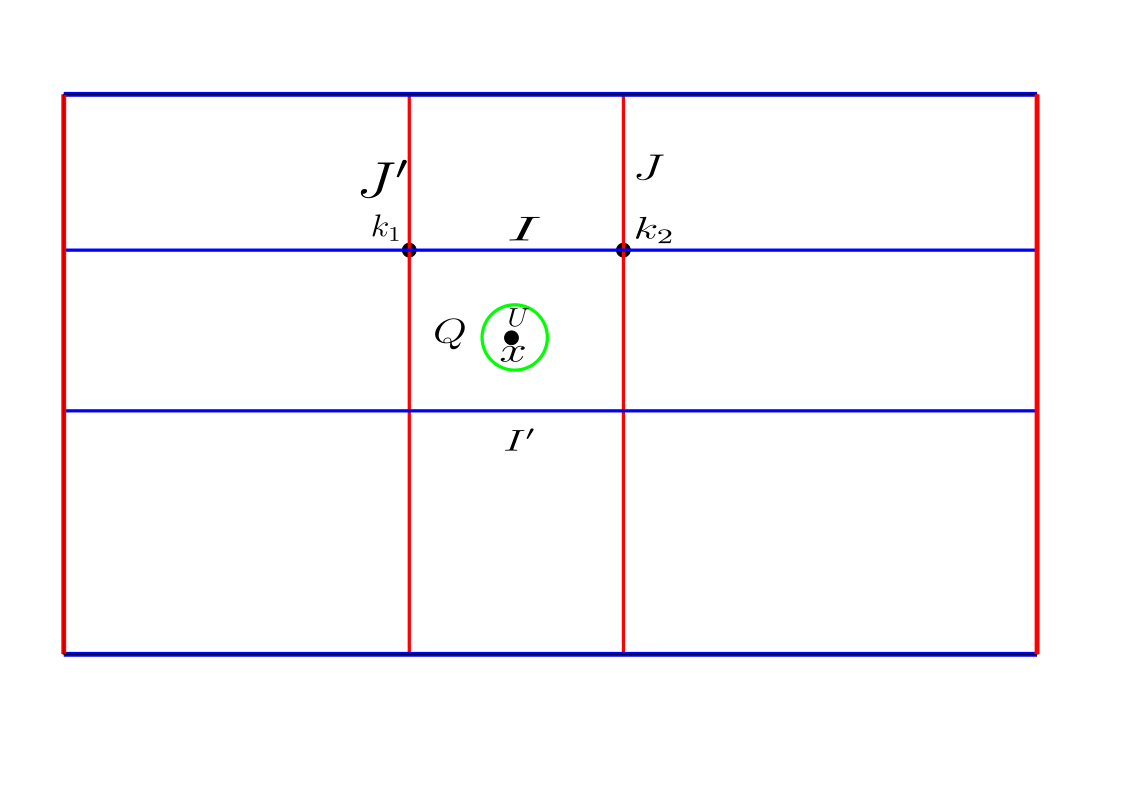}
	\caption{The rectangle $Q$}
	\label{Fig: Rec Q}
\end{figure}

Even more, as $\overset{o}{D}$ is disjoint from $K$ and  $k_1\in K$  is in the closure of $\overset{o}{D}$, the point $k_1$ is a corner point of the rectangle $Q$ and hence $k_1$ is in $\overset{o}{R_i}$.

By similar arguments, we can conclude that $k_2$ is in $\overset{o}{R_i}$. However, this leads to a contradiction, since there would be a horizontal segment $I$ of $R_i$ containing $\alpha$ and a vertical segment $J$ of $R_i$ containing $\beta$ which intersect at two points in the interior of $R_i$ (namely $k_1$ and $k_2$). This is not possible because $R_i$ is an embedded rectangle. Therefore, $\overset{o}{D}$ is disjoint from the interior of $R_i$.

It is evident that if $k_1$ or $k_2$ is in the interior of $R_i$, then $\overset{o}{D}\cap \overset{o}{R_i}\neq \emptyset$, and we appeal to the previous argument to reach a contradiction.

\end{proof}

If $\overset{o}{D}$ is an impasse, then $f^z(\overset{o}{D})$ is also an impasse for all integers $z \in \ZZ$. In the following lemma, we will construct a "well-suited" impasse that will make it easier for us to prove the first implication.

\begin{lemm}\label{Lemm: well suit impasse}
If $K$ has an impasse, then there exists another impasse $D'$ of $K$ such that the $s$-arc $\alpha'$ and the $u$-arc $\beta'$ on the boundary of $D'$ satisfy the following conditions:

\begin{itemize}
\item[i)]  $\beta'$ is in the interior of the unstable boundary of the Markov partition,  $\beta' \subset \overset{o}{\partial^u  \cR}$,
\item[ii)] $f(\beta')$ is not a subset of the unstable boundary of the Markov partition,  $\partial^u\overset{o}{\cR}= \overset{o}{\partial^u  \cR}$.
\item[iii)] $f^{2n+1}(\alpha)\subset \overset{o}{\partial^s \cR}$, where $n$ is the number of rectangles in the Markov partition.
\end{itemize}
\end{lemm}
 
 \begin{proof}
Let $D$ be an impasse of $K$ with $\alpha$ and $\beta$ as its boundary arcs. Let $\{k_1,k_2\}:=\alpha \cap \beta \subset K$ be the extreme points of these arcs. If $k_1$ is a periodic point, then there exists a positive iteration of $k_2$ that intersects $\alpha$ in its interior. However, this is not possible since the interior of $\alpha$ is disjoint from $K$. Therefore, we deduce that $k_1$ and $k_2$ are on the same stable separatrice of a periodic $s$-boundary point, and they are not periodic points themselves. Similarly, $k_1$ and $k_2$ are on the same unstable separatrice of a $u$-boundary point, but they are not periodic.

Since the negative orbit of $k_1$ converges to a periodic point $p_1$ on the stable boundary of the Markov partition, the negative orbit of $k_1$ approaches $p_1$ through points contained in the boundary of the Markov partition. Therefore, there exists $m \in \NN$ such that $f^{-m}(k_1)\in \overset{o}{\partial^u \cR}$.  We claim that if $f^{-m}(k_1)\in \overset{o}{\partial^u_{\epsilon}R_i}$, then $f^{-m}(k_2)$ is in the interior of the same unstable boundary component as $f^{-m}(k_1)$, i.e., $f^{-m}(k_2)\in \overset{o}{\partial^u_{\epsilon}R_i}$.

Indeed, since $f^{-m}(k_1)$ and $f^{-m}(k_2)$ are the boundary points of the $u$-arc $f^{-m}(\beta)$, there are no points of $K$ between them. The point $f^{-m}(k_1)$ lies in the interior of $\partial^u_{\epsilon} R_i$, which means that $f^{-m}(\beta)$ intersects the interior of this stable boundary component. This implies that $f^{-m}(k_2)$ is either an extreme point of $\partial^u_{\epsilon} R_i$ or lies in its interior, i.e., $f^{-m}(k_2) \in \overset{o}{\partial^u_{\epsilon}R_i}$.

If $f^{-m}(k_2)$ were an extreme point of $\partial^u_{\epsilon} R_i$, such an extreme point would not be surrounded by elements of $K$ on either side. This is because on one side we have the $u$-arc $f^{-m}(\beta)$, and on the other side it is on the unstable boundary of a rectangle in the Markov partition. However, this is not possible since $K$ has no double $s$-boundary points. Therefore, we conclude that $f^{-m}(k_2) \in \overset{o}{\partial^u_{\epsilon}R_i}$, as we claimed.

Now let us consider the impasse $D' = f^{-m}(D)$ instead of $D$. We will still use $\alpha$ and $\beta$ to denote the $s$-arc and $u$-arc on the boundary of $D$, but we assume that $\beta \subset \overset{o}{\partial^u \cR}$. Due to the uniform expansion along the unstable leaves, we know that there exists $m \in \NN$ such that $f^m(\beta) \subset \overset{o}{R_i}$, but $f^{m+1}(\beta)$  is no longer contain in  the unstable boundary of the Markov partition. Let us define $D' = f^{m}(D)$ with boundary arcs $\alpha'$ and $\beta'$. This impasse satisfies items $i)$ and $ii)$ of our lemma, since $\beta'$ is contained in the interior of $\overset{o}{\partial^u \cR}$ and $f(\beta')$ is not a subset of $\partial^u \overset{o}{\cR} = \overset{o}{\partial^u \cR}$.

In view of Lemma \ref{Lemm: impasse disjoin int Markov partition}, the points $f(k_1)$ and $f(k_2)$ are not in the interior of the Markov partition. We conclude that $f(k_1)$ and $f(k_2)$ belong to $\partial^s \cR$ because both are on the boundary of $\cR$ but not inside the unstable boundary. By the invariance of the stable boundary of $\cR$ under positive iterations of $f$ and the Pigeonhole principle applied to the $2n$ components of stable boundaries of the Markov partition, we know that $f^{2n}(f(k_1))$ and $f^{2n}(f(k_2))$ lie on periodic stable boundaries of the Markov partition. Moreover, $f^{2n}(f(k_1))$ and $f^{2n}(f(k_2))$ lie on the same stable separatrice, and on each separatrice, there is a single stable boundary component of $\cR$ that is periodic (even though they are all pre-periodic). This implies that $f^{2n}(f(k_1))$ and $f^{2n}(f(k_2))$ are in $\overset{o}{\partial^s_{\epsilon}R_j}$ for a single stable boundary component of the rectangle $R_j$. This proves item $iii)$ of our lemma.

\end{proof}

With the simplification of  Lemma \ref{Lemm: well suit impasse} we deduce that $\beta\subset \partial^u R_i$ and $f^{2n+1}(\alpha)\subset \partial^s R_k$. 

Let us consider the partition $\cR$ viewed as a Markov partition of $f^{2n+1}$. We claim that $\beta$ is an arc joining two consecutive sub-rectangles of $(\cR,f^{2n+1})$, denoted as $H^i_j$ and $H^{i}_{j+1}$. In effect, $\beta$ is a $u$-arc joining two consecutive rectangles or is properly contained in a single sub-rectangle $H$ of $(\cR,f^{2n+1})$. This is because the stable boundaries of $H$ are not isolated from $K$ within $H$, as would be the case if the ends of $\beta$ were on the stable boundary of $H$. Therefore, the only possibility is that $\beta$ is properly contained in a rectangle $H$. In this case, $f^{2n+1}(H)$ is a vertical sub-rectangle of $R_i$ containing $f^{2n+1}(\beta)$ as a proper interval. However, the endpoints of $f^{2n+1}(\alpha)$, which coincide with the endpoints of $f^{2n+1}(\beta)$, are not in the interior of the stable boundary of $R_k$, which contradicts the hypothesis. Thus, we conclude that $\beta$ joins two consecutive sub-rectangles.

Let $H^i_{j}$ and $H^i_{j+1}$ be the consecutive rectangles joined by $\beta$. Suppose that $f^{2n+1}(H^i_j)=V^k_{l}$ with the change of vertical orientation encoded by $\epsilon_{T^{2n+1}}(i,j)$ and $f^{2n+1}(H^i_{j+1})=V^k_{l'}$ with the change in the vertical orientation encoded by $\epsilon_{T^{2n+1}}(i,j+1)$. Between $V^k_{l}$ and $V^k_{l'}$ there are no points of $K$ because the $s$-arc $f^{2n+1}(\alpha)$ joins them in the horizontal direction, this implies that $l'\in \{l+1,l-1\}$. The vertical orientation of $\beta$ is the same as $H^i_j$ and $H^i_{j+1}$, furthermore $f^{2n+1}(\beta)$ joins the horizontal sides of the rectangles $V^k_{l}$ and $V^k_{l'}$ which are on the same stable boundary component of $R_k$,  this implies that the vertical orientation of $f(H^i_j)$ is the inverse of the orientation on $f(H^i_{j+1})$ within $R_k$ (to visualize this it suffices to follow the segment $f^{2n+1}(\beta)$ with a fixed orientation), thus $\epsilon_{T^{2n+1}}(i,j)=-\epsilon_{2n+1}(i,j')$.

The geometric type of the Markov partition $\cR$ for the map $f^{2n+1}$ is $T^{2n+1}$. Our construction implies that $T^{2n+1}$ has the impasse property. With this, we conclude the first implication of the theorem..

 \textbf{ ii) implies iii):} The geometric type $T^{2n+1}$ has the impasse property, which implies that $T$ has a combinatorial impasse.

\textbf{ iii) implies i):} Suppose $T^m$ has the impasse property for $m=2n+1$. Let $H^i_j$ and $H^i_{j+1}$ be consecutive horizontal sub-rectangles in the Markov partition $\cR$ for the map $f^{m}$, as given by the impasse property. This condition implies that if $f^{m}(H^i_j)=V^k_{l}$ and $f^{m}(H^i_{j+1})=V^k_{l'}$, then $l$ and $l'$ are consecutive indexes and have inverse vertical orientations.

The stable segments of  the stable boundary of $R_k$ between $V^k_{l}$ and $V^k_{l'}$ do not contain elements of the maximal invariant set of $f^m\vert_{\cR }$, since they are consecutive sub-rectangles of the partition $\cR$ seen as a Markov partition of $f^m$. In fact they do not contain elements of $K$ since $K=f^m(K)$ and therefore those segments are $s$-arcs of $f$. Furthermore, the image by $f^{m}$ of the two arcs $u$ on the boundary  of $R_i$ that join $H^i_j$ and $H^i_{j+1}$ are $u$-arcs of $f$. Between the two pairs of arcs there is an $s$-arc $\alpha$ and a $u$-arc $\beta$ such that they intersect each other only at their ends. We claim that $\alpha$ and $\beta$ bound a disk $D$ whose  interior is disjoint from $K$.

Suppose that $\gamma:=\alpha \cup \beta$ does not bound a disk. This means that $\gamma$ is not homotopically trivial, and neither are its iterations $\{f^m(\gamma)\}_{m=1}^{\infty}$. In fact, since the stable segments of such curves lie in the stable manifold of a periodic point, there exists a $k\in \NN$ such that $\{f^{mk}(\gamma)\}_{m=1}^{\infty}$ is a set of disjoint curves. We will argue that two of these curves are not homotopic, leading to a contradiction with the fact that the surface $S$ where the basic piece is contained has finite genus.

\begin{lemm}\label{Lemma: gamma not homotopic imagen}
The curve $\gamma$ is not homotopic to any other curve $f^{2mk}(\gamma)$ for $k\in \mathbb{N}$.
\end{lemm}

\begin{figure}[h]
	\centering
	\includegraphics[width=0.6\textwidth]{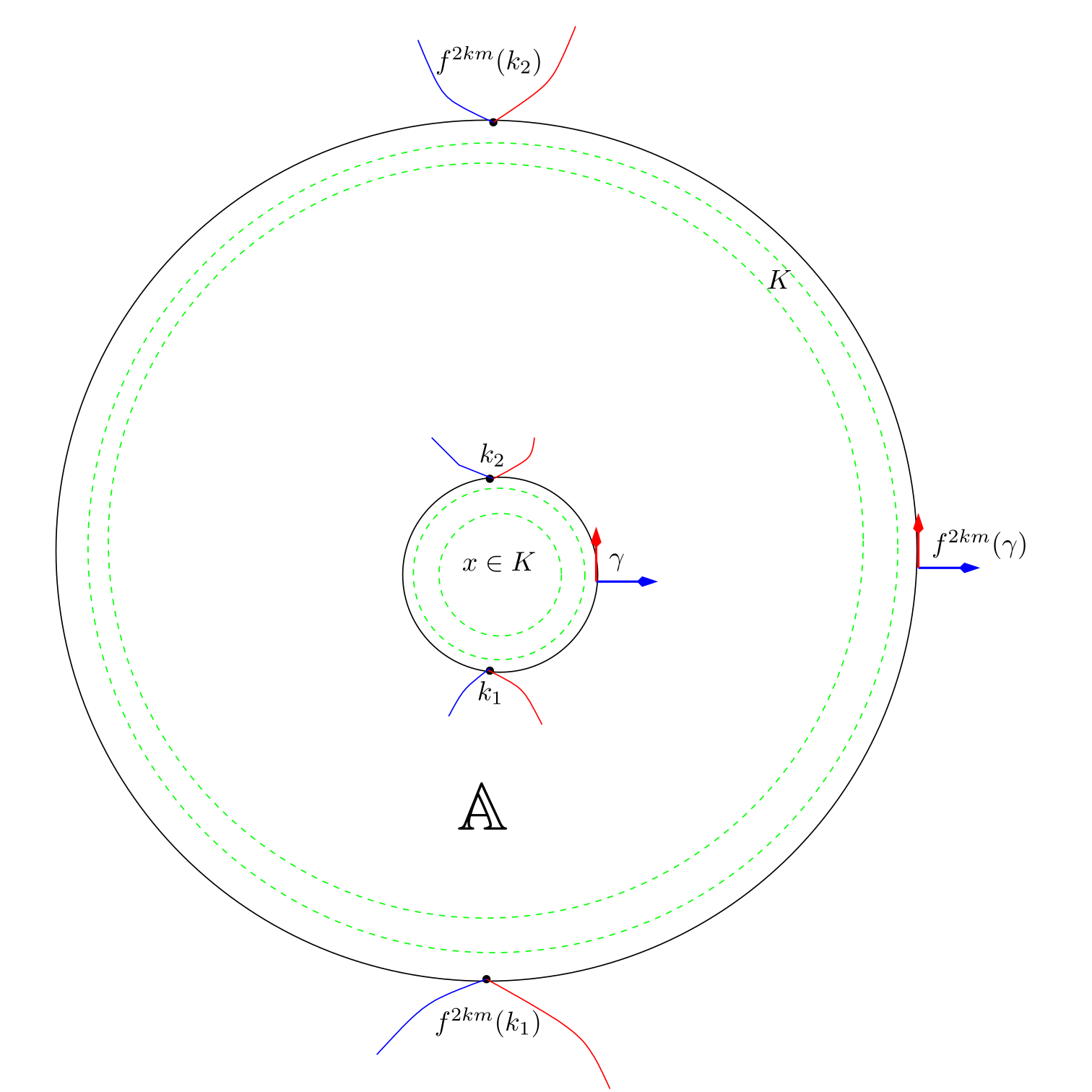}
	\caption{ The Annulus}
	\label{Fig: The annulus}
\end{figure}

\begin{proof}
Suppose $\gamma$ and $f^{m}(\gamma)$ are homotopic, which means they bound a topological annulus $\AA$ in $S$. 

Then we can take small enough sub-annulus $\AA'$ of $\AA$, which has $\gamma$ as one of its boundary components and  is send to another annulus $f^m(\AA')$ that have $f^m(\gamma)$  as a boundary component. 

 We have the following situations:
\begin{itemize}
\item[i)]  $f^m(\overset{o}{\AA'}) \cap \overset{o}{R}\neq \emptyset$ or
\item[ii)] $f^m(\overset{o}{\AA'}) \cap \overset{o}{R} = \emptyset$
\end{itemize}

Lets to understand the fist situation. Take $x\in \gamma$ and a small interval $J$ that have $x$ as and end point and pointing toward the interior of $\AA$, take a interval $I\subset \gamma$ that have $x$ as and end point and give to $I$ an orientation such that $J$ and $I$ generate a frame that is coherent with the orientation of $S$. While $f^m$ preserve the orientation of $S$ it could change the orientation of $I$ and $J$ at the same time, in such a manner that $f^m(J)$ is and interval pointing towards the interior of $\AA$, this is the mechanism behind the situation in item $i)$. Anyways by taking the annulus $\AA$ bounded by $\gamma$ and $f^{2m}(\gamma)$, we can take $\AA'\subset \AA$ small enough such that:
$$f^{2m}(\overset{o}{\AA'}) \cap \overset{o}{R} = \emptyset$$
Therefore, we can assume that $\gamma$ and $f^{2km}(\gamma)$ satisfy the property enunciated in item $ii)$.

Suppose that $\gamma$ is saturated by the hyperbolic set $K$ in the interior direction of $\AA$, and therefore, it is isolated from $K$ in the exterior direction of $\AA$. Then $f^{2km}(\gamma)$ is isolated from $K$ in the interior direction of $\AA$ and saturated by $K$ in the exterior direction of $\AA$. In particular, there exists $x\in K \cap (S \setminus \AA)$.
 
Since ${k_1, k_2}$ and ${f^m(k_1), f^{2km}(k_2)}$ are not periodic points, they do not have free separatrices. Moreover, $k_1$ and $k_2$ lie on the same separatrice. Without loss of generality, let's assume that $k_1$ has a stable separatrice $I$ that does not connect it to a periodic point. The separatrice $I$ cannot remain entirely inside $\AA$ because there exists a point $x \in K$ outside of $\AA$. Non-free separatrices of points $y \in K$ that do not connect $y$ to a periodic point are dense in $K$, so $I$ needs to approach $x$ arbitrarily closely. The boundary of $\AA$ is the union of four arcs, which means that the separatrice $I$ cannot intersect them in their interiors. Therefore, the possible scenarios are as follows:

 \begin{itemize}
\item The intersection point is $k_2$, which is impossible since it would imply that $I$ is inside a closed leaf of $W^s(K)$,
\item The other point is $f^{2km}(k_1)$ or $f^m(k_2)$. This is not possible because in that case, $I$ would need to be contained in the separatrice of $f^{2km}(k_1)$ (or $f^{2km}(k_2)$), which is not the same as the $s$-arc in $f^{2km}(\gamma)$. But this separatrice points towards the exterior of $\AA$
 \end{itemize}
 
So, $I$ couldn't intersect either $f^{2k¨m}(\gamma)$ or $\gamma$. We conclude that $I$ is a subset of $\AA$, which is a contradiction. This ends our proof.
 
\end{proof}

Then, if $\gamma$ is not null-homotopic, we could find an infinite amount of disjoint and non-isotopic curves (See \cite[Lemma 3.2]{juvan1996systems}), which is a contradiction with the finite genus of the surface $S$. It follows that $\gamma$ bounds a disk.

\begin{coro}\label{key}
	
	The disk bounded by $\gamma$ doesn't intersect $K$.
\end{coro}

\begin{proof}
If there exists $x \in K \cap \overset{o}{D}$, then $x$ has at least one non-free stable separatrix $I$. Such a separatrix is not confined within $D$ because there exist points in $K$ that are not in $D$ (they are those on the other side of the $s$-arc $\alpha$ on the boundary of $D)$. Additionally, $I$ is dense in $K$. Therefore, $I$ must intersect the $u$-arc $\beta$ on the boundary of $D$. However, the separatrice $I$ does not intersect the extreme points $\{k_1,k_2\}=\alpha \cap \beta$ because no local separatrices of them intersect the interior of $D$. This leads to a contradiction. Hence, $K \cap \overset{o}{D} = \emptyset$.
\end{proof}

This implies that $D$ is an impasse for $f$.

\end{proof}

\subsection{Algorithmic Determination of Finite Genus and Impasse }\label{Sub-sec: algoritm finite genus}

The objective is to prove the following proposition that will serve as the key element in determining, algorithmically and in finite time, whether a geometric type belongs to the pseudo-Anosov class or not. In the future, we will derive the property of double boundaries from the fact that the incidence matrix of a geometric type in the pseudo-Anosov class is mixing. Therefore, we formulate this property as the first one to be verified.

\begin{prop}\label{Prop: mixing+genus+impase is algorithm}
	Given any abstract geometric type $T=(n,\{(h_i,v_i)\}_{i=1}^n,\Phi)$, there exists a finite algorithm that can determine whether $T$ satisfies the following properties:
	\begin{enumerate}
		\item The incidence matrix of $T$, $A(T)$ is mixing. 
		\item The genus of $T$ is finite.
		\item  $T$ exhibits an impasse.
	\end{enumerate}
	Furthermore, the number of iterations of $T$ required by the algorithm to determine these properties is upper bounded by $6n$.
\end{prop}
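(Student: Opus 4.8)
The proof assembles three decision procedures run in cascade, exploiting Proposition \ref{Prop: algoritm iterations type} to compute every iterate $T^m$ algorithmically and the identity $A(T^m)=A(T)^m$ to reach the powers of the incidence matrix. The cascade respects the logical dependencies among the three properties: $\text{gen}(T)$ is defined only when $T$ has no double boundary, and the combinatorial impasse is defined only for types of finite genus. So I would verify mixing first, deduce the absence of double boundaries, then test finite genus, and only then test for an impasse.

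\textbf{Step 1: mixing.} The incidence matrix $A:=A(T)$ is read off directly from $\Phi_T$. Being mixing is exactly the statement that the digraph $G(A)$ on $\{1,\dots,n\}$ is strongly connected and aperiodic. Strong connectivity is decided by inspecting $A,A^2,\dots,A^{n-1}$, since a directed path $i\to j$ exists iff $(A^m)_{ij}>0$ for some $m\le n-1$. Given strong connectivity, the period of $G(A)$ equals the gcd of the lengths of its simple cycles, hence equals $\gcd\{m\le n:\exists\, i,\ (A^m)_{ii}>0\}$, which is read off from $A,\dots,A^n$. Thus deciding mixing requires at most $n$ powers of $A$, i.e.\ at most $n$ iterations of $T$. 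Moreover, if $A(T)$ is mixing then $T$ has no double $s$- or $u$-boundary: a double $s$-boundary cycle $i_1\to\cdots\to i_k\to i_1$ with $h_{i_t}=1$ and $\rho_T(i_t,1)=(i_{t+1},\cdot)$ forces the $i_t$-th row of $A$ to be the standard vector $e_{i_{t+1}}$, so $A^m$ has a column with a single nonzero entry for every $m$, contradicting primitivity; the $u$-case is the same applied to $T^{-1}$. Hence, once mixing is confirmed, $\text{gen}(T)$ is well defined and we pass to Step 2. (When $A(T)$ is not mixing the algorithm additionally runs the evidently finite double-boundary test of Definition \ref{Defi: double boundary} — a cycle search on the functional graph of the indices with $h_i=1$, and of $T^{-1}$ — to determine whether $\text{gen}(T)$ is even defined before attempting the remaining properties.)

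\textbf{Step 2: finite genus.} By Theorem \ref{Theo: finite type iff non-obtruction}, in its reformulation for basic pieces, $\text{gen}(T)<\infty$ if and only if the Markov partition of a realization of $T$ — which exists by Proposition \ref{Prop: Types transitive have a realization} — displays no topological obstruction of type $(1)$, $(2)$ or $(3)$ in the $6n$-realizer $\cR_{6n}$. Under the correspondence between $k$-generation ribbons of $\cR_{6n}$ (for $1\le k\le 6n$) and pairs of consecutive horizontal sub-rectangles for the map $\phi^k$ established after Proposition \ref{Prop: properties ribbons}, together with Lemmas \ref{Lemm: Equiv com and top type 1}, \ref{Lemm: Equiv com and top type 2} and \ref{Lemm: Equiv com and top type 3}, the existence of such an obstruction in $\cR_{6n}$ is equivalent to the existence of an $m\in\{1,\dots,6n\}$ for which $T^m$ satisfies the combinatorial condition of type $(1)$, $(2)$ or $(3)$ of Definitions \ref{Defi: Type 1 combinatoric}, \ref{Defi: Type 2 combinatoric}, \ref{Defi: Type 3 combinatoric}. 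Each combinatorial condition is an existential statement over a bounded number of index tuples ranging in the finite sets $\cH(T^m)$ and $\cV(T^m)$, hence decidable by exhaustive search. Therefore the algorithm computes $T^1,\dots,T^{6n}$ via Proposition \ref{Prop: algoritm iterations type}, tests the three combinatorial conditions on each, and concludes $\text{gen}(T)<\infty$ precisely when no test succeeds — at the cost of at most $6n$ iterations of $T$.

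\textbf{Step 3: impasse, and conclusion.} Suppose $\text{gen}(T)<\infty$. Then, by Theorem \ref{Theo: finite genus iff realizable}, $T$ is realized by a saddle-type basic piece $K$ of a surface Smale diffeomorphism, and Theorem \ref{Theo: Geometric and combinatoric are equivalent} applies: $T$ has a combinatorial impasse if and only if the single iterate $T^{2n+1}$ has the impasse property of Definition \ref{Defi: Impasse combinatoric}. The latter is again a finite check — a search over the pairs $(i,j),(i,j+1)\in\cH(T^{2n+1})$ testing the two alternatives on $(\rho_{T^{2n+1}},\epsilon_{T^{2n+1}})$ — and since $2n+1\le 6n$ the iterate $T^{2n+1}$ is already available from Step 2. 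Combining the three steps yields a finite algorithm deciding all three properties, and the iterates of $T$ it computes lie among $T^1,\dots,T^{6n}$, giving the stated bound. I expect the only real subtlety to be the bookkeeping in Step 2: matching the ``$\cR_{6n}$'' formulation of Theorem \ref{Theo: finite type iff non-obtruction} with the ``combinatorial condition on $T^m$, $m\le 6n$'' formulation through the ribbon–sub-rectangle dictionary and the three equivalence lemmas. The genuinely hard mathematics — the uniform bound $6n$ of Theorem \ref{Theo: finite type iff non-obtruction} and the topological-to-combinatorial translations — is already available and is used here as a black box.
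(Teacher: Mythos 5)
Your proposal is correct and, for the genus and impasse tests, follows the paper's own route: both arguments compute the iterates $T^m$ via Proposition \ref{Prop: algoritm iterations type}, decide finite genus through Theorem \ref{Theo: finite type iff non-obtruction} read against the combinatorial conditions of Lemmas \ref{Lemm: Equiv com and top type 1}, \ref{Lemm: Equiv com and top type 2} and \ref{Lemm: Equiv com and top type 3} within $6n$ iterations, and decide the impasse through Theorem \ref{Theo: Geometric and combinatoric are equivalent} at $T^{2n+1}\le T^{6n}$; the bookkeeping subtlety you flag in matching the $\cR_{6n}$ formulation with ``$T^m$, $m\le 6n$'' is handled with the same brevity in the paper itself, so it is not a gap you introduce. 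Where you genuinely diverge is the mixing test. The paper decides mixing by checking positivity of $A(T^n)=A(T)^n$, relying on its auxiliary proposition that a nonnegative $n\times n$ matrix is mixing if and only if $A^n$ is positive; but the argument given there only produces, for each pair $(i,j)$, \emph{some} power $m\le n$ with $(A^m)_{ij}>0$, not a single power working for all pairs simultaneously, and indeed the primitivity index of a mixing matrix can be as large as $n^2-2n+2$ (Wielandt), so positivity of $A^n$ is a sufficient but not a necessary certificate of mixing. Your test --- strong connectivity read off from $A,\dots,A^{n-1}$ together with aperiodicity via $\gcd\{m\le n:\exists i,\ (A^m)_{ii}>0\}$ --- is the standard exact characterization, still fits inside $n$ iterations and hence preserves the $6n$ bound, and so decides property $(1)$ as literally stated rather than only detecting failure of the pseudo-Anosov class; you also make the logical ordering of the cascade explicit (double boundaries before genus, finite genus before the impasse test) and rederive Lemma \ref{Lemm: T mixing implies non double boundaries} on the spot (your aside should say a \emph{row}, not a column, of $A^m$ has a single nonzero entry, but the point stands). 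In short: same skeleton, with a more robust Step 1 than the paper's.
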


\begin{proof}
	
	We have formulated specific conditions on the combinatorics of $T$ to determine whether $T^n$ has an impasse or one of the three obstructions to have finite genus. Furthermore, according to Theorem \ref{Theo: finite type iff non-obtruction}, determining whether or not $T$ has finite genus requires computing at most $T^{6n}$. To determine whether $T$ has an impasse, Theorem \ref{Theo: Geometric and combinatoric are equivalent} states that it is necessary to compute at most $T^{2n+1}$. Since $n\geq1$, it is clear that $2n+1<6n$, which provides the upper bound we claim in the proposition. Proposition \ref{Prop: algoritm iterations type} asserts that the computation of $T^{6n}$ is algorithmic, thus confirming the existence of an algorithm to determine the finite genus and the presence of an impasse for $T$.

	We have left the mixing criterion for last. The following lemma is the basis for the upper bound on the number of iterations to verify the mixing of the incidence matrix.
	
	\begin{prop}\label{Prop: bound positive incidence matriz }
	A matrix $A$  of size $n \times n$ with non-negative coefficients  is  mixing if and only if  every coefficient in $A^n$ is positive .
	\end{prop}
	
	\begin{proof}	
		If $A^n$ has only positive coefficients, since $A$ don't have negative coefficients, the matrix  $A^{n+1}$ has only positive coefficients and $A$ is mixing.
			
		Consider the directed graph $\mathcal{G}$ with $n$ vertices corresponding to the matrix $A$. In this graph, there is an edge pointing from vertex $v_i$ to vertex $v_j$ if and only if $a_{ij} > 0$ in the matrix $A$. Since $A$ is mixing, it means that there is always a path from any vertex $v_i$ to any other vertex $v_j$ in the graph $\mathcal{G}$.
		
		To prove our Lemma, it is sufficient to demonstrate that there exists a path of length less than or equal to $n$ between any two vertices in the graph.

		Consider a path $\gamma = [v_{i(1)}, \cdots, v_{i(m)}]$ from $v_i = v_{i(1)}$ to $v_j = v_{i(m)}$ in the graph $\mathcal{G}$. Let $\gamma$ have $m$ vertices and be of minimum length among all paths connecting these vertices.
		
		If any two vertices of the path $\gamma$ are equal, it implies the existence of a cycle within $\gamma$. We can eliminate this cycle and obtain a new path $\gamma'$ from $v_i$ to $v_j$ with a shorter length than $\gamma$. This contradicts the assumption that $\gamma$ has the minimum length. Hence, all vertices in $\gamma$ must be distinct.
		
		Therefore, the length of $\gamma$ is necessarily less than or equal to $n$, since there are only $n$ distinct vertices in the graph.
	\end{proof}

	If $T$ is in the pseudo-Anosov class, we have $A^n(T) = A(T^n)$. According to the previous Lemma, if $A(T)$ is a non-negative matrix, then $A(T^n)$ is positive definite. This implies that $A(T^n)$ is mixing and does not have double boundaries. Therefore, for a given geometric type $T$ with $A(T)$ being a non-negative matrix, the positivity of $A(T^n)$ is equivalent to its mixing property, which in turn implies the absence of double boundaries.
	
	\begin{coro}\label{Coro: algoritmic mixing}
		Given a geometric type $T$, if $A(T^n)$ is not positive definite, then $T$ is not in the pseudo-Anosov class.
	\end{coro}
	
	Clearly, we only need to iterate $T$ at most $n$ times, and the upper bound given in the proposition remains valid. 
	This concludes our proof.
\end{proof}

\chapter{Primitive Markov partition and canonical types.}\label{Chapter: Markov partitions}

\section{The construction of Markov partitions.}\label{Section: Existence}

It is a classical result that pseudo-Anosov homeomorphisms admit Markov partitions (see \cite[Proposition 10.17]{fathi2021thurston}). However, our goal is to provide a recipe for constructing Markov partitions of generalized pseudo-Anosov homeomorphisms, which subsequently allows us to generate a distinguished family of adapted Markov partitions. From this particular family, we can extract a canonical and finite set of geometric types that provide a partial solution to Item $III)$ of Problem \ref{Prob: Clasification}. Throughout our exposition, we will use  the characterization of Markov partitions described in Proposition \ref{Prop: Markov criterion boundary}.

Let $S$ be a closed and oriented surface of genus $g$. We fix a generalized pseudo-Anosov homeomorphism $f: S \rightarrow S$ whose measured foliations are $(\mathcal{F}^s, \mu^s)$ and $(\mathcal{F}^u, \mu)$, and \textbf{Sing}$(f)$ is the set of singularities.

\subsection{$s,u$-adapted graphs.} Our construction of the Markov partition begins by creating an $f$-invariant graph of non-trivial compact intervals contained in stable leaves. We formalize the object to be constructed in the following definition.

\begin{defi}\label{Defi:Adapted graph}
  Let $\delta^s=\{I_i\}_{i=1}^n$ be a family of compact stable intervals not reduced to a point. The family $\delta^s$ is an $s$-\emph{adapted graph} to $f$ if:
 
	\begin{itemize}
		\item[i)] $\cup \delta^s:=\cup_{i=1}^n I_i$ is $f$-invariant.
		\item[ii)] For all $i\in \{1,\cdots,n\}$, $I_i \subset \cF^s(\textbf{Sing}(f))$.
		\item[iii)] For all $x\in \textbf{Sing}(f)$ and every stable separatrice of $\cF^s(x)$, there exists an interval $I_i$ contained in that separatrice with an endpoint equal to  $x$.
		\item[iv)] The endpoints of each interval  $I_i$ belong to $\cF^u(\textbf{Sing}(f))$.
	\end{itemize}
	The definition of a $u$-\emph{adapted graph} to $f$ is completely symmetric.
\end{defi}

This definition is not surprising since the boundary of an adapted Markov partition (Definition \ref{Defi: adapted partition}) consists of $s$ and $u$-adapted graphs, as discussed in the following example.

\begin{exem}\label{Exem: Markov boundary is adapted}
Let $f: S \rightarrow S$ be a generalized pseudo-Anosov homeomorphism and $\mathcal{R}$ be an adapted Markov partition of $f$. Then the family of (unstable) stable boundary components of $\mathcal{R}$, denoted by $\partial^s \cR = \cup_{i=1}^n \partial^s_{\pm} R_i$ ($\partial^u \cR=  \cup_{i=1}^n \partial^u_{\pm} R_i$), is a ($u$) $s$-adapted graph to $f$.

Indeed, $\partial^s \mathcal{R}$ is $f$-invariant. The only periodic points on the boundary are singularities of $f$. Furthermore, the singular points of $f$ are at the corners of a rectangle, and at each separatrice of these corners, we have a stable boundary component of a rectangle from $\mathcal{R}$. Finally, the extreme points of the stable boundary are on the unstable leaf of a singularity because, as we know, singularities are the only periodic points on the unstable boundary of an adapted Markov partition.
	
\end{exem}

 Now we will show how to construct adapted graphs starting with a family of $f^{-1}$-invariant unstable segments.

\begin{lemm}\label{Lemm: the generated graph is adapted}
	 Let $\mathcal{J} = \{J_j\}_{j=1}^n$ be a family of nontrivial compact segments, each contained in $\mathcal{F}^u(\textbf{Sing}(f))$, such that $\cup \mathcal{J} = \cup_{j=1}^n J_i$ is $f^{-1}$-invariant. Let $\delta^s(\mathcal{J}) = \{I_i\}_{i=1}^m$ be the family of all stable segments with one endpoint in \textbf{Sing}$(f)$, the other endpoint in $\cup \mathcal{J}$, and with interior disjoint from $\cup \mathcal{J}$.
	 
The family $\delta^s(\mathcal{J})$ is an $s$-adapted graph to $f$. The set $\delta^s(\mathcal{J})$ is the $s$-\emph{adapted graph generated by} $\mathcal{J}$.
\end{lemm}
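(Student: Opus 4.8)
The plan is to verify the four defining properties of an $s$-adapted graph (Definition~\ref{Defi:Adapted graph}) for the family $\delta^s(\mathcal J)=\{I_i\}_{i=1}^m$. Properties (ii) and (iv) are essentially built into the definition of $\delta^s(\mathcal J)$: each $I_i$ is by construction a stable segment with one endpoint in $\textbf{Sing}(f)$, so it lies in $\cF^s(\textbf{Sing}(f))$, giving (ii); and the other endpoint lies in $\cup\mathcal J\subset\cF^u(\textbf{Sing}(f))$, while the $\textbf{Sing}(f)$-endpoint is itself a singularity and hence trivially in $\cF^u(\textbf{Sing}(f))$, giving (iv). The remaining work is in (iii) (every stable separatrix of every singularity contains some $I_i$ emanating from that singularity) and (i) ($f$-invariance of $\cup\delta^s(\mathcal J)$), and one should also remark that the family is finite — this follows because $\cup\mathcal J$ is a finite union of compact unstable segments, each stable leaf of a singularity is a topological line crossing $\cup\mathcal J$ in finitely many points (transversality of the two foliations plus compactness), and there are finitely many singularities with finitely many separatrices each.

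For property (iii), I would argue as follows. Fix a singularity $p$ and a stable separatrix $\sigma$ of $p$. Parametrize $\sigma$ starting from $p$ by $\mu^u$-distance to the nearest point of $\cup\mathcal J$, or more simply: I claim $\sigma$ meets $\cup\mathcal J$. Indeed, $\cup\mathcal J$ is $f^{-1}$-invariant, so $\cup_{n\ge 0} f^n(\cup\mathcal J)$ is an $f$-invariant set containing a full unstable segment through some singularity $q$; since $f$ expands unstable leaves by $\lambda$ and the unstable leaf of $q$ is dense (Proposition~\ref{Prop: pseudo-Anosov properties.}), iterating shows $\cup\mathcal J$ together with its forward images accumulates everywhere, and in particular the stable leaf $\cF^s(p)$, being a nontrivial arc, must cross $\cup\mathcal J$ — here one uses that the foliations are transverse and minimal, so any sufficiently long stable arc meets any prescribed unstable segment. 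Actually the cleaner route: $\cup\mathcal J$ is $f^{-1}$-invariant and nonempty, hence contains an unstable arc $J$; a nontrivial initial segment of $\sigma$ starting at $p$ is a stable arc, and by minimality of $\cF^u$ (density of unstable leaves) the unstable leaf through a point near the free end of that initial segment of $\sigma$ returns to meet $J$; pulling back by $f^{-1}$ and using invariance of $\cup\mathcal J$, some point of $\sigma$ near $p$ already lies in $\cup\mathcal J$. Let $x_\sigma$ be the \emph{first} point of $\sigma$ (nearest to $p$ in the leaf parametrization) lying in $\cup\mathcal J$; such a first point exists since $\cup\mathcal J$ is closed and the relevant initial segment of $\sigma$ is compact once we know it meets $\cup\mathcal J$. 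Then the stable segment $[p,x_\sigma]\subset\sigma$ has one endpoint $p\in\textbf{Sing}(f)$, the other endpoint $x_\sigma\in\cup\mathcal J$, and interior disjoint from $\cup\mathcal J$ by minimality of $x_\sigma$ — so $[p,x_\sigma]$ is one of the $I_i$, which proves (iii).

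For property (i), $f$-invariance of $\cup\delta^s(\mathcal J)$: I would show $f(\cup\delta^s(\mathcal J))=\cup\delta^s(\mathcal J)$ by showing $f$ permutes (maps into) the family $\{I_i\}$ and, since the family is finite and $f$ is a bijection of $\cup\delta^s(\mathcal J)$ onto its image, conclude equality. Take $I_i=[p,x]$ with $p\in\textbf{Sing}(f)$, $x\in\cup\mathcal J$, interior disjoint from $\cup\mathcal J$. Then $f(I_i)=[f(p),f(x)]$ is a stable segment (stable foliation is $f$-invariant), $f(p)\in\textbf{Sing}(f)$ since $f$ permutes singularities, $f(x)\in f(\cup\mathcal J)\subset\cup\mathcal J$ because $\cup\mathcal J$ is $f^{-1}$-invariant hence also $f$-invariant (a finite $f^{-1}$-invariant union of compact arcs is mapped onto itself by $f$), and the interior $f(\overset{o}{I_i})$ is disjoint from $\cup\mathcal J$: if $f(y)\in\cup\mathcal J$ for some $y\in\overset{o}{I_i}$ then $y\in f^{-1}(\cup\mathcal J)=\cup\mathcal J$, contradicting that $\overset{o}{I_i}$ misses $\cup\mathcal J$. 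Hence $f(I_i)\in\delta^s(\mathcal J)$, so $f(\cup\delta^s(\mathcal J))\subset\cup\delta^s(\mathcal J)$; applying the same argument to $f^{-1}$ (note $\delta^s(\mathcal J)$ is defined symmetrically in $f$ and $f^{-1}$ once we know $\cup\mathcal J$ is invariant under both) gives the reverse inclusion, establishing (i).

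**Main obstacle.** The routine parts are (ii), (iv), invariance (i), and finiteness. The genuinely delicate point is (iii) — proving that \emph{every} stable separatrix of \emph{every} singularity actually hits $\cup\mathcal J$, so that the $I_i$ exist and exhaust all separatrices. This is where I expect to lean hardest on the structure theory: minimality/density of the unstable foliation (Proposition~\ref{Prop: pseudo-Anosov properties.}), transversality of $\cF^s$ and $\cF^u$, the $f^{-1}$-invariance of $\cup\mathcal J$, and the fact that $\cup\mathcal J$ already contains at least one nontrivial unstable arc through a singularity. Making the "first intersection point exists" argument fully rigorous (closedness of $\cup\mathcal J$, compactness of the initial segment, ruling out the separatrix running off to the free end without ever meeting $\cup\mathcal J$) is the crux of the proof.
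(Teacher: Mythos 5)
The genuine gap is in your argument for property (i). You claim that $\cup\cJ$, being $f^{-1}$-invariant, is "hence also $f$-invariant (a finite $f^{-1}$-invariant union of compact arcs is mapped onto itself by $f$)", and from this that $f(x)\in\cup\cJ$, so that $f(I_i)$ is again a member of $\delta^s(\cJ)$. This is false: $f^{-1}(\cup\cJ)\subseteq\cup\cJ$ gives the opposite containment $\cup\cJ\subseteq f(\cup\cJ)$, and in the intended application, where $\cup\cJ=\cup_{i\geq 0}f^{-i}(J^u(z))$, one has $f(\cup\cJ)=f(J^u(z))\cup(\cup\cJ)\supsetneq\cup\cJ$ because $f$ expands unstable segments; compactness does not help. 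Consequently $f(x)$ need not lie in $\cup\cJ$, and $f(I_i)$ is in general only a \emph{proper subsegment} of an interval of $\delta^s(\cJ)$, not an element of the family. For the same reason your "reverse inclusion by symmetry" is unfounded ($\delta^s(\cJ)$ is not symmetric in $f$ and $f^{-1}$, since $\cup\cJ$ is only backward invariant), and the equality $f(\cup\delta^s(\cJ))=\cup\delta^s(\cJ)$ is in fact generally false — only forward invariance holds, just as the stable boundary of a Markov partition maps strictly into itself. The correct argument, which is the one the paper gives, is: $f(I)$ lies in the stable separatrix of $f(p)$, which by (iii) carries an interval $I'\in\delta^s(\cJ)$ with endpoint $f(p)$; since $f(I)$ and $I'$ share this endpoint on the same separatrix, one contains the other; if $I'\subsetneq f(I)$ then the other endpoint $x'\in\cup\cJ$ of $I'$ lies in the interior of $f(I)$, so $f^{-1}(x')\in\cup\cJ$ lies in the interior of $I$, contradicting the definition of $I$. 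Hence $f(I)\subseteq I'$ and $f(\cup\delta^s(\cJ))\subseteq\cup\delta^s(\cJ)$, which is the invariance required.

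On property (iii), your instinct that it deserves an argument is reasonable (the paper simply asserts it by construction), and your endgame is fine: $\cup\cJ$ is closed, so along the separatrix there is a first intersection point, yielding the required interval. But the intermediate step as written is shaky: the "pull back by $f^{-1}$" of an intersection of $W^u(y)$ with $J$ produces points of $\cup\cJ$ on unstable leaves of backward iterates of $y$, not points of the separatrix $\sigma$ itself. It is cleaner to use minimality of the \emph{stable} foliation directly: the separatrix $\sigma$ is dense in $S$, and any nontrivial arc $J\subseteq\cup\cJ$ is a transversal to $\cF^s$, so the dense leaf $\sigma$ must cross the interior of $J$; then take the first such crossing. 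Properties (ii) and (iv) are indeed immediate from the construction, as you say.
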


\begin{proof}

By construction, each interval in the graph $\delta^s(\cJ)$ is contained in $\mathcal{F}^s(\textbf{Sing}(f))$, has one endpoint in $\textbf{Sing}(f)$ and the other in $\mathcal{J} \subset \mathcal{F}^u(\textbf{Sing}(f))$, and for every stable separatrice of any point in $\textbf{Sing}(f)$, there exists a segment in $\delta^s(\mathcal{J})$ that belongs to that separatrice. It remains to show that $\cup \delta^s(\mathcal{J})$ is $f$-invariant.

Let $I:=I_i$ be an interval in $\delta^s(\mathcal{J})$. $I$ is contained in a stable separatrice of a point $p \in \textbf{Sing}(f)$, therefore $f(I)$ is contained in a stable separatrice of $f(p)$, and within that separatrice, there exists an interval $I'$ of $\delta^s(\mathcal{J})$. If $f(I)$ is not a subset of $I'$, then $I' \subset f(I)$, which implies that $I'$ has an endpoint $x'$ in $\cup \mathcal{J}$ that belongs to the interior of $f(I)$. Since $\cup \mathcal{J}$ is $f^{-1}$-invariant, then $f^{-1}(x') \in \cup \mathcal{J}$ is in the interior of $I$, which contradicts our definition of $I$ because the interior of $I$ cannot intersect $\cup \mathcal{J}$. Consequently, $f(I) \subset I' \subset \cup \mathcal{J}$, and thus $\cup \delta^s(\mathcal{J})$ is $f$-invariant.

\end{proof}

If $\mathcal{I}$ is a family of stable segments contained in $\mathcal{F}^s(\textbf{Sing}(f))$ and $f$-invariant, we define the $u$-\emph{adapted graph generated by} $\mathcal{I}$, denoted by $\delta^u(\mathcal{I})$, in a symmetric manner.

\subsection{Rails and rectangles.}
In an $s$-adapted graph, we can classify the points into two distinct classes based on their relative position within the graph. This classification enables us to define an equivalence relation on the surface without the graph, such that the closure of the equivalence classes corresponds to rectangles.

\begin{defi}\label{Defi: regular points}
Let  $\delta^s=\{I_i\}_{i=1}^n$ be an $s$-adapted graph of $f$. The regular part of $\delta^s$ is defined as:
	
	$$
	\overset{o}{\delta^s}=\delta^s \setminus \{\text{ endpoints of } I_i , \, i=1,\dots n \}.
	$$
	Similarly, we define the regular part of a $u$-adapted graph by removing its endpoints.
\end{defi}

Now we define two types of unstable segments depending on the location of their endpoints.

\begin{defi}\label{Defi: Rail}
		Let $\delta^s$ be an $s$-adapted graph. A \emph{$u$-regular rail} for $\delta^s$ is any unstable segment $J$ whose interior is disjoint from $\delta^s$ but has endpoints in the regular part of the $s$-adapted graph, $\overset{o}{\delta^s}$.
				
		A \emph{$u$-extremal rail} is any unstable segment $J$ that is not a regular rail for $\delta^s$, for which there exists an \emph{embedded rectangle} $R$ with stable boundary $\partial^s R$ contained in $\delta^s$, with $J$ as its left or right boundary, and such that any other vertical segment of $R \setminus J$ is a $u$-regular rail for $\delta^s$. The set of extremal rails of $\delta^s$ is denoted by Ex$^u(\delta^s)$.
		
\end{defi}

\begin{figure}[h]
	\centering
	\includegraphics[width=0.5\textwidth]{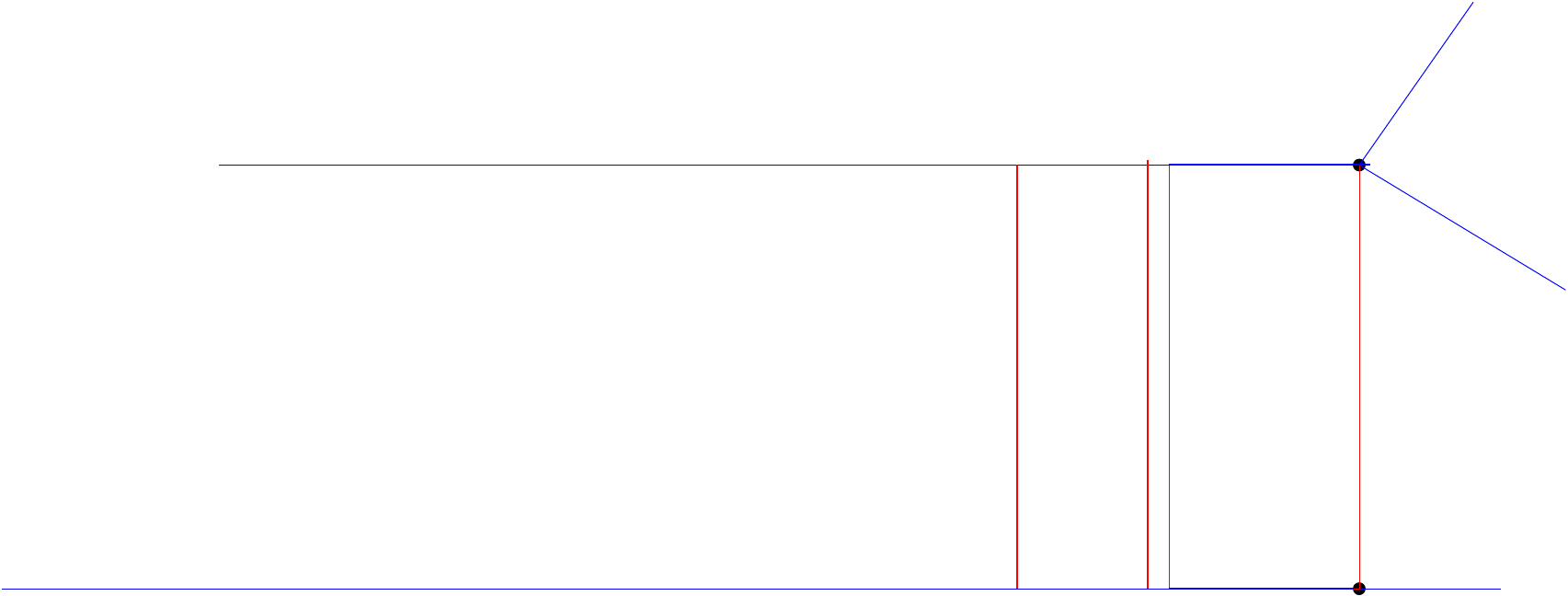}
	\caption{Unstable rails}
	\label{Fig: u Rails}
\end{figure}

\begin{rema}\label{Rema: finite rails intersect or equals}

The set of extremal rails is finite because there are only a finite number of extreme points in $\delta^s$, and a finite number of unstable separatrices pass through each of them.

If two regular rails intersect each other, they either coincide (i.e., they are equal) or have a single common endpoint along $\delta^s$.
\end{rema}

Let $\delta^s$ be an $s$-adapted graph of $f$. An equivalence relation will be defined on the surface $S$ minus the graph and its extremal rails, denoted as 
$$
\overset{o}{S}=S\setminus (\delta^s \cup \text{Ex}^u(\delta^s)).
$$
to this set. 
Since the graph $\delta^s$ and its extreme rails consist of a finite number of compact intervals, their complement generates a finite number of connected components. The following lemma is then established.

\begin{lemm}\label{Lemm: finite c.c. }
	The surface $\overset{o}{S}$ has a finite number of connected components. Denote the family of these connected components by $\{r_i\}_{i=1}^n$.
\end{lemm}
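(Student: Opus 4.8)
The plan is to show that $\overset{o}{S} = S \setminus (\delta^s \cup \text{Ex}^u(\delta^s))$ is an open subset of $S$ whose complement is a finite union of compact arcs, and then invoke the fact that a connected surface minus a finite CW-complex of dimension $\leq 1$ has only finitely many connected components. First I would observe that $\delta^s = \bigcup_{i=1}^n I_i$ is, by Definition~\ref{Defi:Adapted graph}, a finite union of compact stable intervals, and that by Remark~\ref{Rema: finite rails intersect or equals} the set $\text{Ex}^u(\delta^s)$ of extremal rails is finite; each extremal rail is a compact unstable segment. Hence $C := \delta^s \cup \text{Ex}^u(\delta^s)$ is a finite union of compact arcs in $S$, so $C$ is compact and $\overset{o}{S} = S \setminus C$ is open in $S$.

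Next I would argue that $C$ is a closed subset of $S$ which is the underlying space of a finite graph (one-dimensional CW-complex): the vertices are the endpoints of the $I_i$, the endpoints of the extremal rails, and all intersection points among these finitely many arcs (two distinct stable arcs can meet only at common endpoints, a stable and an unstable arc meet transversally in finitely many points, and two distinct unstable rails meet as in Remark~\ref{Rema: finite rails intersect or equals}); the edges are the finitely many subarcs into which these arcs are cut by the vertices. Therefore $C$ is a compact subpolyhedron of $S$ of dimension $\leq 1$.

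Then the key topological input is standard: if $S$ is a compact surface and $C \subset S$ is a compact one-dimensional subpolyhedron, then $S \setminus C$ has finitely many connected components. One clean way I would justify this: thicken $C$ to a regular neighborhood $N$ (a compact surface with boundary, by the existence of regular neighborhoods of subpolyhedra), so $\partial N$ has finitely many components; then $S \setminus C$ deformation retracts onto (equivalently, has the same set of components as) $S \setminus \overset{o}{N}$, which is a compact surface with boundary and hence has finitely many components. Alternatively one can triangulate $(S, C)$ and note that $S \setminus C$ is a union of a finite set of open simplices, so it has at most finitely many components. Either way, denoting these components $\{r_i\}_{i=1}^n$ completes the proof.

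I do not expect a serious obstacle here; the only point requiring a little care is the verification that $C$ really is a finite graph, i.e.\ that the pairwise intersections of the finitely many defining arcs are themselves finite — this uses that stable and unstable leaves are transverse away from singularities (Definition~\ref{Defi: pseudo-Anosov}) so a stable arc and an unstable arc meet in a finite (hence discrete, by compactness) set, together with the stated behavior of rails in Remark~\ref{Rema: finite rails intersect or equals} and the adapted-graph axioms governing how stable intervals meet. Once that is in hand, the finiteness of components of the complement is purely formal.
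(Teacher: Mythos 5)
Your argument is correct and follows the same line as the paper, which simply observes that $\delta^s$ and its extremal rails form a finite union of compact intervals and asserts that the complement therefore has finitely many components; you have just supplied the standard topological details (finite graph structure of the union, regular neighborhood or triangulation) that the paper leaves implicit. The only point worth keeping in mind is the one you already flag — finiteness of the pairwise intersections — which indeed follows from transversality of the foliations and Remark~\ref{Rema: finite rails intersect or equals}, and tameness of arcs is automatic in dimension two, so the regular-neighborhood step is legitimate.
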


There is another perspective from which we can consider the connected components of $\overset{o}{S}$.

     \begin{defi}[Equivalent rails]\label{defi: equivalent rails}
	Let $\delta^s$ be an $s$-adapted graph for $f$. Two points $x$ and $y$ in $\overset{o}{S}$ are considered equivalent with respect to $\delta^s$ if there exists a finite family of $u$-regular rails $\{J_0, J_1, \cdots, J_k\}$ and embedded rectangles $\{R_1, R_2, \cdots, R_k\}$ satisfying the following conditions:
		\begin{itemize}
		\item Either $x,y\in J_0$, or $x\in J_0$ and $y\in J_k$.
		\item For each $i\in \{1,\cdots,k\}$, $J_{i-1}$ and $J_i$ are unstable boundaries of $R_i$.
		\item For each $i\in \{1,\cdots,k\}$, the intersection of $R_i$ with $\delta^s$ is equal to the stable boundary $\partial^s R_i$, i.e.  $R_i\cap \delta^s=\partial^s R_i$.
	\end{itemize}.
We denote $x \sim_{\delta^s} y$ when this defined relation holds between $x$ and $y$.
\end{defi}

\begin{lemm}\label{lemm: equiv classe = c.c}
The relation $\sim_{\delta^s}$ is an equivalence relation on $\overset{o}{S}$.
\end{lemm}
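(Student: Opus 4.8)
The claim is that $\sim_{\delta^s}$ is an equivalence relation on $\overset{o}{S}=S\setminus(\delta^s\cup\mathrm{Ex}^u(\delta^s))$. The three properties must be checked, and the only delicate one is transitivity; reflexivity and symmetry are essentially built into the definition. First I would observe that for any $x\in\overset{o}{S}$ there is a $u$-regular rail through $x$: indeed $x$ lies in some connected component of $\overset{o}{S}$, hence off the graph and off the extremal rails, so the maximal unstable segment through $x$ whose interior avoids $\delta^s$ has its two endpoints on $\overset{o}{\delta^s}$ (it cannot terminate on an endpoint of an interval $I_i$, for then it would be, or be contained in, an extremal rail, contradicting $x\notin\mathrm{Ex}^u(\delta^s)$; and it cannot fail to terminate, since $\cup\delta^s$ is a finite union of compact intervals meeting every separatrix near every singularity, so every unstable leaf segment eventually hits it — this uses minimality of $\mathcal F^u$ from Proposition~\ref{Prop: pseudo-Anosov properties.}). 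Taking the family $\{J_0\}$ with $J_0$ this rail and no rectangles gives $x\sim_{\delta^s}x$. Symmetry is immediate: reverse the order of the lists $\{J_0,\dots,J_k\}$ and $\{R_1,\dots,R_k\}$, since "$J_{i-1}$ and $J_i$ are unstable boundaries of $R_i$" and "$R_i\cap\delta^s=\partial^sR_i$" are symmetric in the two vertical boundaries of $R_i$.

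For transitivity, suppose $x\sim_{\delta^s}y$ via $(\{J_0,\dots,J_k\},\{R_1,\dots,R_k\})$ and $y\sim_{\delta^s}z$ via $(\{J'_0,\dots,J'_m\},\{R'_1,\dots,R'_m\})$. The naive concatenation would chain the two lists together, but there is a compatibility issue at the junction: $y$ lies on $J_k$ from the first chain and on $J'_0$ from the second, and both $J_k,J'_0$ are $u$-regular rails through $y$. By Remark~\ref{Rema: finite rails intersect or equals}, two regular rails that intersect either coincide or share a single endpoint on $\delta^s$; since $y$ is an interior point of both (it lies in $\overset{o}{S}$, whereas the shared endpoint would lie on $\delta^s$), they must coincide, $J_k=J'_0$. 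Hence the concatenated data $(\{J_0,\dots,J_{k}=J'_0,\dots,J'_m\},\{R_1,\dots,R_k,R'_1,\dots,R'_m\})$ is a legitimate witnessing family: consecutive rails are unstable boundaries of a common rectangle (the glueing rectangle at the junction is $R'_1$, whose left/right boundaries are $J'_0=J_k$ and $J'_1$), every rectangle meets $\delta^s$ exactly in its stable boundary, and the endpoint condition holds because either $x\in J_0$ and $z\in J'_m$, or ($k=0$, so $x\in J_0=J_k=J'_0$, and then $x\sim_{\delta^s}z$ directly via the second family), or the symmetric degenerate case. Therefore $x\sim_{\delta^s}z$.

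The main obstacle I anticipate is precisely the junction argument: one must rule out the case where $J_k$ and $J'_0$ are distinct regular rails merely sharing an endpoint, and for that it is essential that $y\in\overset{o}{S}$ lies in the \emph{interior} of every regular rail passing through it — equivalently, that $y$ is not an endpoint of any $I_i$ and not on the extremal rails. This is exactly what the definition of $\overset{o}{S}$ guarantees, so the argument closes, but it is the point where the definitions have to be invoked carefully. A secondary point worth stating cleanly (and which I would fold into a short remark or the opening of the proof) is that the equivalence classes of $\sim_{\delta^s}$ are exactly the connected components $\{r_i\}_{i=1}^n$ of $\overset{o}{S}$ from Lemma~\ref{Lemm: finite c.c. }: a $\sim_{\delta^s}$-chain visibly stays inside one connected component (each rail is connected and each rectangle minus its stable boundary is connected and avoids $\delta^s\cup\mathrm{Ex}^u(\delta^s)$), and conversely any two points of a single component can be joined by such a chain by covering a path between them with finitely many rail-rectangle steps. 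This identification both proves the lemma a second way and sets up the subsequent claim that the closures of these classes are rectangles.
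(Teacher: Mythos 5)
Your proof is correct and follows essentially the same route as the paper: reflexivity and symmetry are read off the definition, and transitivity is obtained by concatenating the two chains after observing that the rails $J_k$ and $J'_0$ through $y$ meet in their interiors (since $y\in\overset{o}{S}$) and hence coincide by Remark~\ref{Rema: finite rails intersect or equals}. The extra material you add (existence of a regular rail through every point of $\overset{o}{S}$, and the identification of the classes with the connected components) goes beyond what the paper proves here — the latter is the content of the subsequent proposition — but it does not change the argument for the lemma itself.
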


 \begin{proof}
The relationship is reflexive since we allow for the case when $x$ and $y$ belong to the same $u$-regular rail $J_0$, and it is symmetric due to a modification in the labeling of the rectangles. Let's prove the transitivity.

Suppose $x \sim_{\delta^s} y$ through rails $\{J_0, \cdots, J_k\}$ and rectangles $\{R_1, \cdots, R_k\}$, and also $y \sim_{\delta^s} z$ through rails $\{J'_k, \cdots, J_n\}$ and rectangles $\{R'_{k+1}, \cdots, R_l\}$. The segments $J_k$ and $J'_k$ intersect each other in their interior because $y \in \overset{o}{S}$ is not an extreme point of them. By Remark  \ref{Rema: finite rails intersect or equals}, they are equal, and we can concatenate the family of regular rails and rectangles to obtain the equivalence between $x$ and $z$.

\end{proof}

\begin{prop}\label{lemm: equivalent class rectangles}
Each equivalence class $r$ of the equivalence relation $\sim_{\delta^s}$  is contained in a unique connected component of $\overset{o}{S}$ and each connected component of $\overset{o}{S}$ is contained in a single equivalent class of the equivalence relation $\sim_{\delta^s}$ .
\end{prop}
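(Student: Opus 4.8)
The plan is to prove a set-theoretic double inclusion between the partition of $\overset{o}{S}$ into connected components $\{r_i\}$ and the partition into $\sim_{\delta^s}$-equivalence classes, by showing each side refines the other; since both are partitions of the same set $\overset{o}{S}$, mutual refinement forces equality, which is precisely the two statements in the proposition.

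\textbf{First direction: each equivalence class lies in a single connected component.} Let $r$ be a $\sim_{\delta^s}$-class and take $x\sim_{\delta^s}y$. By Definition \ref{defi: equivalent rails} there are $u$-regular rails $\{J_0,\dots,J_k\}$ and embedded rectangles $\{R_1,\dots,R_k\}$ with $R_i\cap\delta^s=\partial^sR_i$ and with consecutive rails among the unstable boundaries of $R_i$. I would argue that each $R_i$ is connected and disjoint from $\delta^s\cup\mathrm{Ex}^u(\delta^s)$, hence $R_i\setminus(\text{its stable boundary})\subset\overset{o}{S}$ is connected: indeed $R_i\cap\delta^s=\partial^sR_i$ already, and no extremal rail can cross $\overset{o}{R_i}$ because any vertical segment of $R_i$ between $J_{i-1}$ and $J_i$ is a $u$-regular rail (its endpoints lie in $\overset{o}{\delta^s}$, so it is not extremal, and extremal rails, being boundaries of embedded rectangles with stable boundary in $\delta^s$, cannot be properly interior to another such rectangle without contradicting Remark \ref{Rema: finite rails intersect or equals}). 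Therefore $\overset{o}{R_i}\cup(J_{i-1}\setminus\partial\delta^s)\cup(J_i\setminus\partial\delta^s)$ is a connected subset of $\overset{o}{S}$, and chaining these along $i=1,\dots,k$ (they overlap along the common rail $J_i$) produces a connected subset of $\overset{o}{S}$ containing both $x$ and $y$. Hence $x$ and $y$ lie in the same connected component, so the whole class $r$ is contained in one $r_i$.

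\textbf{Second direction: each connected component lies in a single equivalence class.} Fix a connected component $C$ of $\overset{o}{S}$. I would show the relation $\sim_{\delta^s}$ restricted to $C$ is both open and closed in $C$, so that each equivalence class meeting $C$ is clopen in $C$; connectedness then forces a single class to contain all of $C$. For openness: given $x\in C$, take the maximal open unstable segment $J$ through $x$ disjoint from $\delta^s$; its closure $\overline{J}$ has endpoints on $\delta^s$. If both endpoints lie in the regular part $\overset{o}{\delta^s}$ then $J$ is a $u$-regular rail and a small bi-foliated flow-box rectangle around $x$, with stable sides sitting on the leaves of $\delta^s$ through the endpoints and vertical sides nearby regular rails, witnesses $x\sim_{\delta^s}y$ for all $y$ in that box; if an endpoint is an endpoint of $\delta^s$, then $J$ extends to an extremal rail, but $x\in\overset{o}{S}$ means $x$ is not on an extremal rail, so this case is excluded for points of $\overset{o}{S}$, and a neighborhood of $x$ is still swept by regular rails equivalent to $x$ via thin rectangles. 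Closedness of the relation (equivalently, that the complement of a class is open in $C$) follows by the same local flow-box argument applied to a limit point, together with Remark \ref{Rema: finite rails intersect or equals} to concatenate the witnessing chains. Since $\overset{o}{S}$ has only finitely many components (Lemma \ref{Lemm: finite c.c. }) and finitely many equivalence classes correspondingly, both partitions are finite and this clopen argument is legitimate.

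\textbf{Main obstacle.} The delicate point is the openness/closedness argument near $\delta^s$ and near the extremal rails: one must be sure that a flow-box neighborhood of $x\in\overset{o}{S}$ really is covered by $u$-regular rails all equivalent to $x$, which requires controlling how the stable leaves of $\delta^s$ and the finitely many extremal rails can accumulate near $x$. Here I would invoke that $\delta^s$ is a finite union of compact stable intervals and $\mathrm{Ex}^u(\delta^s)$ is finite (Remark \ref{Rema: finite rails intersect or equals}), so in a sufficiently small bi-foliated chart around $x\in\overset{o}{S}=S\setminus(\delta^s\cup\mathrm{Ex}^u(\delta^s))$ the complement locally looks like a product of an interval with the complement of finitely many points, making the local picture a genuine embedded rectangle and the equivalence transparent; combining this local triviality with the chaining of rectangles is what upgrades "locally constant" to "constant on each connected component," giving both assertions of the proposition at once.
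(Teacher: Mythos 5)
Your proposal is correct and follows essentially the same route as the paper: the first inclusion is obtained by chaining the witnessing rectangles to see that an equivalence class is connected inside $\overset{o}{S}$, and the second by showing each class is open via a small bi-foliated rectangle $H$ around a point of a regular rail and then invoking connectedness of the components. Your "clopen" phrasing of the second step is just a reformulation of the paper's argument that two open classes inside one component would produce a disconnection, so there is nothing genuinely different to compare.
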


\begin{proof}
	
By definition two points of the same equivalent class $r$ are contained in the union of a finite family of rectangles intersecting at their boundaries and, therefore, the equivalent class $r$ is path-connected, therefore $r$ is connected. This implies that $r$ is within a single connected component of $\overset{o}{S}$.

Now let's take $x\in r$  and the $u$-regular $J$-rail passing through $x$, this $u$-rail has endpoints $x_1$ and $x_2$ in the regular part of $\delta^s$. Since $x_1$ and $x_2$ are in the interior of $\delta^s$ we can consider a small rectangle $H$ whose unstable boundary is given by two $u$-regular rails $J_1$ and $J_2$, such that: $J_1$ have extreme points $y_1,y_2$ and $J_2$ have  extreme points $z_1,z_2$, but in such a way that, $x_1\in (y_1,z_1)^s \subset \overset{o}{\delta^s}$ and $x_2 \in (y_2,z_2)\subset \overset{o}{\delta^s}$. Let's take now $U=\overset{o}{H}$, with this construction we can deduce that for all $y\in U$,  $y\sim_{\delta^s} x$ and then $U\subset r$. Therefore $r$ is an open set in $\overset{o}{S}$.

Let $C$ be the connected component of $\overset{o}{S}$ that contains $r$. If there was another equivalent class contained in $C$, we could create a disconnection of $C$ using open sets, which is impossible since $C$ is a connected component. Therefore, $C$ contains only the equivalent class $r$, and it follows that $C = r$.
\end{proof}

  \begin{lemm}\label{lemm: Equivalent class rectangle}
The closure of every equivalent class $r$ in $S$ is a rectangle.
\end{lemm}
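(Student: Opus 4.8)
The plan is to show that $\overline{r}$ admits a parametrization $\rho\colon[0,1]^2\to S$ satisfying Definition~\ref{Defi: rectangle}, by "stacking up" the $u$-regular rails that foliate $r$. First I would fix, as in the proof of Proposition~\ref{lemm: equivalent class rectangles}, the fact that through every point $x\in r$ there passes a unique $u$-regular rail $J_x$, whose two endpoints lie in $\overset{o}{\delta^s}$; call these endpoints $x^-$ and $x^+$. The two maps $x\mapsto x^-$ and $x\mapsto x^+$ trace out two subsets $\gamma^- $ and $\gamma^+$ of $\overset{o}{\delta^s}$, and the key observation is that each $\gamma^{\pm}$ is a (connected, possibly non-injectively immersed) sub-arc of the stable foliation: indeed moving $x$ along a rail keeps $x^-$ fixed, and moving $x$ transversally (through the little rectangles $H$ used in the openness argument of Proposition~\ref{lemm: equivalent class rectangles}) moves $x^-$ monotonically along $\delta^s$. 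I would parametrize $\gamma^-$ by the transverse measure $\mu^u$ restricted to the rails — i.e. use $\mu^u$-arclength along a rail to label which rail we are on — giving a coordinate $t\in[0,1]$ after normalizing by the total $\mu^u$-length; and parametrize each rail $J_t$ by $\mu^s$-arclength, giving a coordinate $s\in[0,1]$. This defines $\rho(s,t)$ on $(0,1)^2$ and, passing to closures, on $[0,1]^2$.

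The second block of the argument is to verify that this $\rho$ is a parametrization of a rectangle. Continuity of $\rho$ on $(0,1)^2$ follows from continuity of the stable and unstable foliations and the continuous dependence of the rails $J_t$ on $t$ (which is exactly what the open "box" neighborhoods $U=\overset{o}{H}$ in Proposition~\ref{lemm: equivalent class rectangles} give). Injectivity on $(0,1)^2$ is precisely the statement that two distinct regular rails through points of $r$ meet at most at a common endpoint on $\delta^s$ (Remark~\ref{Rema: finite rails intersect or equals}), so in the interior they are disjoint; combined with injectivity along each rail (segments of a single leaf) this gives a homeomorphic embedding on $(0,1)^2$. By construction $\rho([0,1]\times\{t\})=J_t$ lies in a single unstable leaf, and $\rho(\{s\}\times[0,1])$ sweeps, as $t$ varies, a set lying in a single stable leaf: the endpoints $x^-$ (resp. $x^+$) all lie on one leaf of $\cF^s$ because $\gamma^-$ (resp. $\gamma^+$) was shown to be a stable arc, and the interior points $\rho(\{s\}\times(0,1))$ lie on a stable leaf because the family $\{J_t\}$ is the image, under the local product charts, of a vertical segment sweeping horizontally — this is where I would invoke the local bi-foliated chart structure (Definition~\ref{Defi: Singular foliation}) to see that "$s$-th point of each rail" traces a leaf of $\cF^s$. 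Finally $R:=\overline{r}=\rho([0,1]^2)$, and the horizontal boundary $\rho([0,1]\times\{0,1\})$ is contained in $\delta^s$ (it is made of extremal rails together with the arcs $\gamma^\pm$, all on $\delta^s$), while $\partial^u R$ is the union of the two limiting rails, which are the extremal rails Ex$^u(\delta^s)$ bounding $r$; this is consistent with $r$ being a connected component of $S\setminus(\delta^s\cup\mathrm{Ex}^u(\delta^s))$.

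The main obstacle I expect is controlling the \emph{boundary} behavior and ruling out the degenerate/forbidden configurations listed after Definition~\ref{Defi: vertical/horizontal sub rectangles} — namely showing $\overline{r}$ is not an annulus, not a "spine-pinched" region, and does not have a singularity in its interior. No closed leaf can appear, so the annulus case is excluded by Proposition~\ref{Prop: pseudo-Anosov properties.}(1); a singularity in the interior is impossible since the interior $r\subset\overset{o}{S}$ is trivially bi-foliated and singularities of $\cF^{s,u}$ are the same points (Definition~\ref{Defi: pseudo-Anosov}), and any singularity would have to sit on $\delta^s\cup\mathrm{Ex}^u(\delta^s)$ by condition (iv) of Definition~\ref{Defi:Adapted graph} together with the definition of extremal rail. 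The subtlety is that $\rho$ need only be injective on the \emph{open} square, so the closure $\overline r$ may genuinely self-touch along its boundary (corners identified, or a stable boundary arc folded at a singularity) — but this is exactly what Definition~\ref{Defi: rectangle} permits, so I would take care to phrase the verification only at the level of the open-square embedding plus continuity up to the boundary, and explicitly note that the boundary identifications allowed are precisely those excluded-configuration-compatible ones (endpoints in $\cF^u(\mathbf{Sing}(f))$, horizontal boundary meeting at most two separatrices of a singular point, as already discussed in the text).
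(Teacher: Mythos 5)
Your overall strategy is the same as the paper's: foliate the class $r$ by the regular $u$-rails, use the endpoint projections $\pi_\pm\colon r\to\delta^s$ to see that the upper and lower endpoints sweep out stable intervals $I_\pm$, rule out the annulus configuration by the absence of closed leaves, and assemble a parametrization of $\overline r$. Replacing the paper's chain-of-rectangles construction and hand-built extension of $h$ by product coordinates coming from the transverse measures is a legitimate (and slightly cleaner) way to handle the \emph{interior}: holonomy invariance of $\mu^s$ along the stable foliation does make the ``constant $\mu^s$-height'' slices lie in single stable leaves, and injectivity follows from Remark~\ref{Rema: finite rails intersect or equals} as you say.

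The genuine gap is at the vertical boundary, which is exactly where the paper has to work. Definition~\ref{Defi: rectangle} requires $\rho(\{0\}\times[0,1])$ and $\rho(\{1\}\times[0,1])$ to be (homeomorphic images of intervals) contained in single unstable leaves; in your coordinates these are the ``limiting rails'' at $t=0,1$, and you simply assert that they exist and are the extremal rails bounding $r$ (``consistent with $r$ being a connected component of $\overset{o}{S}$'' is not an argument). What has to be proved is that the extreme endpoints $a_-$ of $\overline{I_+}$ and $b_-$ of $\overline{I_-}$ lie on a \emph{common} unstable leaf, that the segment $[b_-,a_-]^u$ is an extremal rail whose associated rectangle meets $r$, and that your $\rho(s,t)$ converges to points of this segment as $t\to 0$ (and similarly on the other side). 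This is precisely the content of the paper's Lemma~\ref{Lemm: Same unstable leaf}, proved by comparing $\mu^u([a_-,a_n]^s)$ with $\mu^u([b',b_n]^s)$ via leaf-isotopy invariance along approximating rails $J_n$, and, crucially, by first excluding the configuration in which a singularity sits in the interior of the would-be limiting unstable segment with a separatrix entering $r$; without that exclusion the limit of the rails need not be a single unstable segment at all. Your singularity discussion only rules out singularities in the interior of $r$ and the annulus case, neither of which is the dangerous configuration, so as written the proposal does not establish the continuity of $\rho$ up to $t\in\{0,1\}$ nor condition (4) of Definition~\ref{Defi: rectangle} on the lateral edges. (A minor slip: a rail is an unstable segment, so the coordinate along a rail is $\mu^s$-length, while the coordinate labelling the rails should be $\mu^u$-length along the stable arc $\gamma^-$; one of your sentences swaps these roles.)
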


\begin{proof}
	Let $x\in r$ be any point, and let $J_x$ be the unique $u$-regular rail passing through $x$. If $x\sim_{\delta^s}y$, the rectangles that establish the equivalence between them share a compatible vertical orientation. This allows us to define the upper end of $J_x$, denoted by $a(x)$, and the lower end, denoted by $b(x)$. Since the foliations are $C^0$ and $x$ and its extreme points are far from the singular points of $f$ (which some of the end points of $\delta^s$), there exists an open set $U_x\subset r$ such that for all $y\in U_x$, $x\sim_{\delta^s} y$ (see the argument used in  the proof of Proposition \ref{lemm: equivalent class rectangles}). Therefore, we can define two continuous functions on $r$:
	$$
	\pi_{\pm}: r \rightarrow \delta^s,
	$$
where $\pi_+(x)=a(x)$ and $\pi_-(x)=b(x)$. Consider $I_+:= \pi_+(r)$ and $I_-:= \pi_-(r)$. Since the functions $\pi_{\pm}$ are continuous and $r$ is connected, $I_+$ and $I_-$ are intervals. In fact, $I_+$ and $I_-$ correspond to the interiors of closed intervals $[a_-,a_+]^s$ and $[b_-,b_+]^s$, respectively, both contained in $\delta^s$. Here, we have assumed that $a_-$ and $b_-$ correspond to the left-hand sides of our intervals, which makes sense since all rectangles that form the equivalence class $r$ share a congruent horizontal orientation.

\begin{lemm}\label{Lemm: Same unstable leaf}
The points $a_-$ and $b_-$ belong to the same unstable leaf, and the interval $[b_-,a_-]^u$ is an extreme rail of $\delta^s$. Moreover, the rectangle defined by the extremal rail $[b_-,a_-]^u$ (as given by the definition) intersects the equivalent class $r$. The same statement holds for the points $a_+$ and $b_+$.

\end{lemm}
	
\begin{figure}[h]
	\centering
	\includegraphics[width=0.7\textwidth]{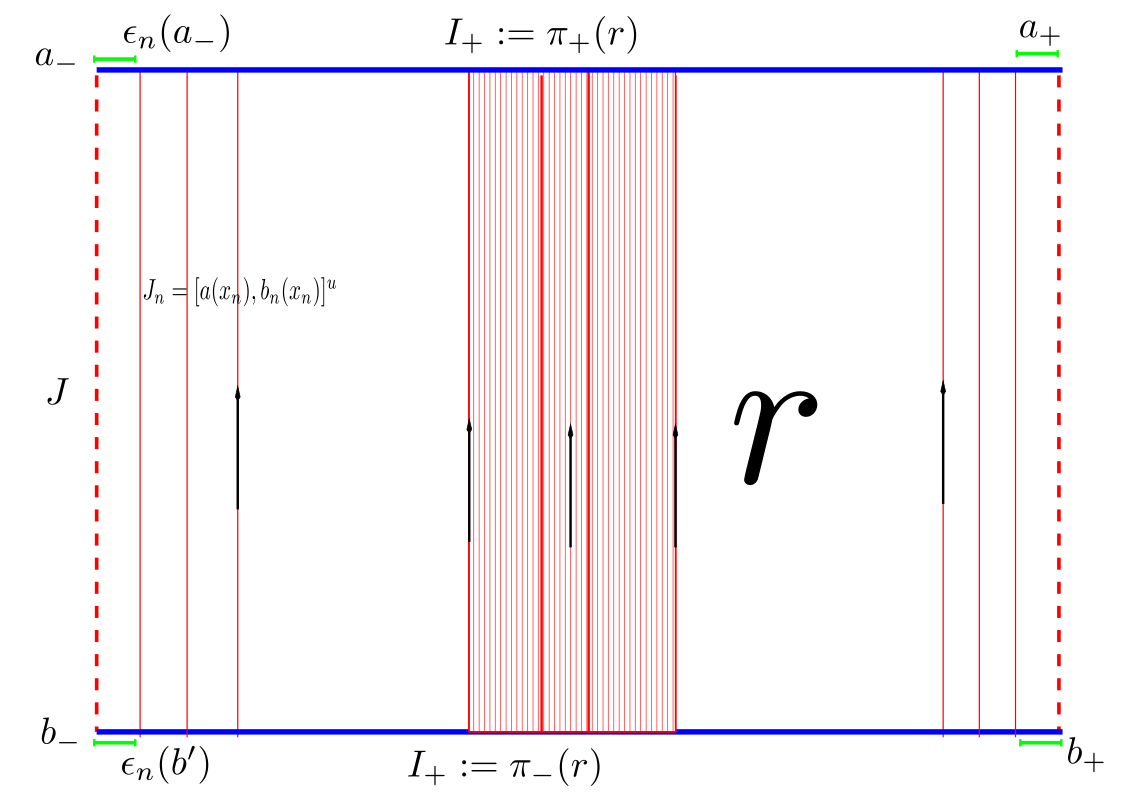}
	\caption{Intervals $[a_-,a_+]^s$ and $[b_-,b_+]^s$}
	\label{Fig: Intervals same leaf}
\end{figure}

	\begin{proof}
		
Look at Figure \ref{Fig: Intervals same leaf} for a pictorial representation of our arguments.
 
  Let $\{a_n\}_{n=1}^{\infty} \subset I_+ $ and $\{b_n\}_{n=1}^{\infty} \subset I_-$ be two convergent successions, $a_n\rightarrow a_-$ and $b_n \rightarrow b_-$, such that $a_n$ and $b_n$ are the extreme points of a regular $u$-rail $J_n\subset r$.

  Suppose that $a_-$ is not in the same unstable leaf than $b_-$. Therefore, either through $a_-$ passes an unstable interval $J$ intersecting $(b_-,b_+)^s$ only at one end point or through $b_-$ passes an unstable interval intersecting $(a_-,a_+)^s$ only at one end point.  Any way there is a situation that must exclude that is that $J$ contains a singularity in its interior and $p\in \overset{o}{J}$ and $p$ have a separatrice $W^s(p)$ entering to the equivalent class $r$ that we are considered (See the Figure \ref{Fig: No conf P}).
  
  \begin{figure}[h]
  	\centering
  	\includegraphics[width=0.7\textwidth]{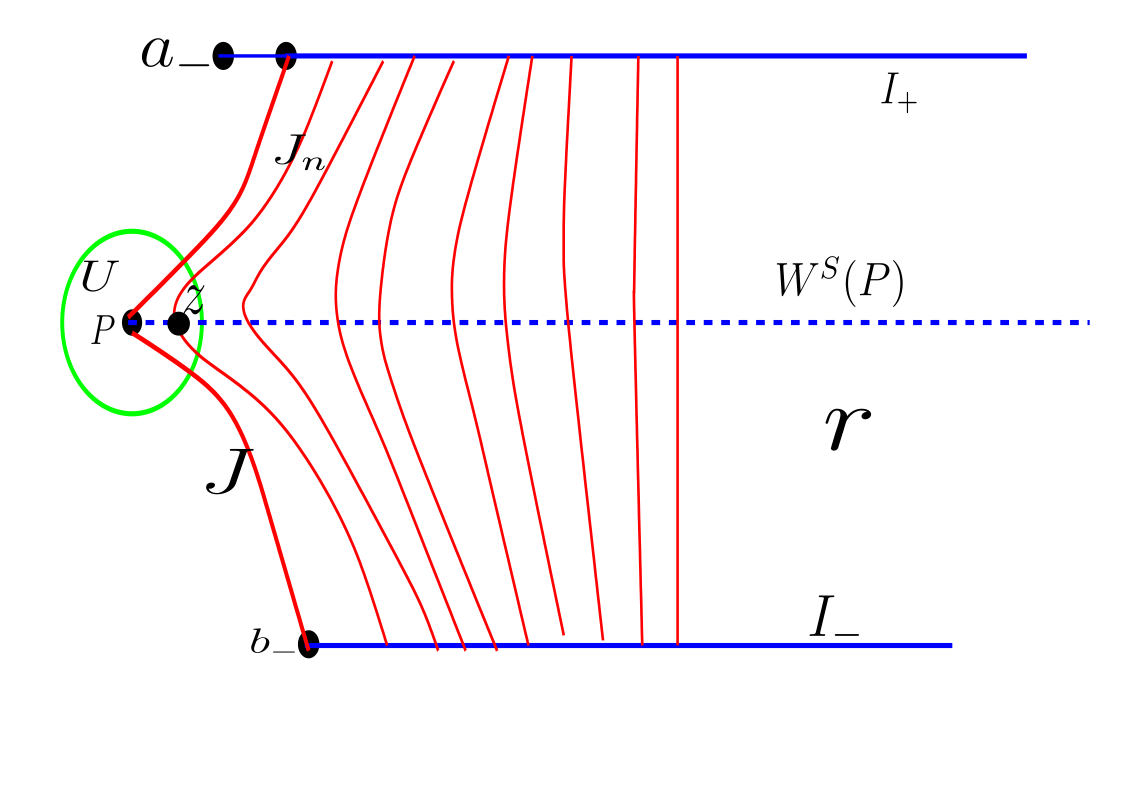}
  	\caption{Impossible configuration of the stable interval $J$}
  	\label{Fig: No conf P}
  \end{figure}
  
   Assume this is the situation, like $p\in \delta^s$ is a extreme point of the stable graph, every open neighbor $U$ of $p$ intersect the rectangular part of $\delta^s$, by the $C^0$ continuity of the foliations,  then there exist a regular rail $J_n$ (with a end point close enough to $a^-$) whose interior intersect $U\cap \overset{o}{\delta^s}$ in a point $z$ that is not contain in the stable interval $\overset{o}{I_-}$ or in $\overset{o}{I_+}$, but this is not possible as every regular $u$-rail only intersect $\overset{o}{\delta^s}$ in its end points, in this manner we carried a contradiction.  Without loss of generality we can assume that  $J$ intersecting $(b_-,b_+)^s$ only at one of its end point, and in case of intersect a singularity in its interior, such singularity don't have a separatrice entering in $r$.

  Let $\epsilon_n(a_-):=\mu^u([a_-,a_n]^s)$ and $\epsilon_
 n(b_-):\mu^u([b',b_n]$, like the measure $\mu^u$ is invariant by leaf isotopy, $\epsilon_n(a_-)=\epsilon_n(b')$. We deduce that $b_n\rightarrow b'$ and finally that $b'=b_-$, but if $a_-$ is not in the same leaf ot $b_-$, the equality $\epsilon_n(a_-)=\epsilon_n(b')$ couldn't hold. We arrive to a contradiction, hence  $a_-$ is in the same unstable leaf than $b_-$. 
 
Let $J=[a_-,b_-]^u$ be the interval determined by these points. It is not possible for $a_-$ and $b_-$ to be regular points, as we could increase the equivalent class $r$ (by moving to the left of $a_-$ and $b_-$). Therefore, $J$ is not a regular rail.
    
  By taking $J_n$ close enough to $J$, we can construct a rectangle $R$ with an unstable boundary equal to the disjoint union of $J$ and $J_n$. Furthermore, the intervals $[a_-,a_n]^s$ and $[b_-,b_n]^s$ are disjoint, and therefore, $R$ is embedded. The stable boundary of $R$ is contained in $\delta^s$, and every unstable segment $J'$ of $R$ other than $J$ has its ends in $\overset{o}{\delta^s}$, so $J'$ is a regular rail equivalent to $J_n$. This shows that $J$ is an extremal rail, and $R$ intersects $r$ by construction.
  
  	\end{proof}

We deduce the existence of two embedded rectangles $R_-$ and $R_+$ with $J_-: =[a_-,b_-]^u$ on the unstable boundary of $R_-$ and $J_+=[a_+,b_+]^u$ on the unstable boundary of $R_+$. Both $R_-$ and $R_+$ intersect $r$ (i.e., $R_- \cap r \neq \emptyset$ and $R_+ \cap r \neq \emptyset$), and they can be made arbitrarily thin. Furthermore, we can ensure that $R_- \cap R_+ \cap r = \emptyset$, meaning they are disjoint within the equivalent class $r$. Moreover, $R_-$ and $R_+$ are the rectangles defined by the extreme rails $J_-$ and $J_+$, respectively, so any other $u$-rail contained in them is regular.

If $J'_{-}$ and $J'_{+}$ are the other vertical sides of $R_{-}$ and $R_+$ (respectively), there exists a finite family of rectangles $\{R_i\}_{i=1}^n$ that establish an equivalence between $J'_{-}$ and $J'_{+}$. Finally, we have:

	$$
	r=[r\cap R_-] \cup [ \cup_{i=1}^n  r \cap R_i] \cup [r \cap R_+].
	$$

	Two consecutive rectangles $r \cap R_i$ and $r \cap R_{i+1}$ share a single rail. Moreover, if $r \cap R_i \cap (r \cap R_{i+2})$ (when it makes sense) is not empty, then they intersect in the interior of a regular rail. Therefore, $R_i \cup R_{i+1} \cup R_{i+2}$ forms an annulus, and $\alpha^s$ contains a closed curve, which is not possible.
	
	Based on this argument, we can deduce that for any $x \in r$, there exists a unique closed stable interval $I_x$ and a unique closed unstable interval $J_x$ such that:

	\begin{itemize}
\item $\overset{o}{I_x},\overset{o}{J_x}\subset r$
\item $\overset{o}{I_x}\cap \overset{o}{J_x}=\{x\}$
\item The extreme points of $I_x$ are denoted as $a(x)$ and $b(x)$, and they satisfy that $a(x) \in (a_-,a_+)^s$ and $b(x) \in (b_-,b_+)^s$. Furthermore, it is true that $a(x) \neq b(x)$.
\item The extreme points of $J_x$ are denoted as $c(x)$ and $d(x)$, and they satisfy that $c(x) \in J_-$ and $d(x) \in J_+$. Additionally, we have $c(x) \neq d(x)$.
	\end{itemize}
A further conclusion is the existence of a homeomorphism:

	$$
	h:(0,1)\times (0,1) \rightarrow r,
	$$
such that it sends the horizontal lines of $(0,1)\times (0,1)$ to the stable leaves of $r \cap \mathcal{F}^s$ and the vertical lines of $(0,1)\times (0,1)$ to the unstable leaves of $r \cap \mathcal{F}^u$. However, it might be impossible to extend $h$ to a homeomorphism on $[0,1]\times [0,1]$ because the rectangles $\{R_i\}_{i=1}^n$ can intersect at their stable boundaries, and $R_-$ could intersect with $R_{+}$ at their stable boundaries or at their extreme rails $J_-$ and $J_+$. With this in mind, we can define:
	$$
	R:=R_- \cup [ \cup_{i=1}^n   R_i] \cup  R_+,
	$$
	where it is clear that $\overline{r}=R$, and the homeomorphism $h$ extends to a continuous map:
	$$
	h:[0,1]\times [0,1]\rightarrow R.
	$$
	in the following manner:
	\begin{itemize}
		\item 	Let $t_0\in (0,1)$. If $h(t_0,(0,1))=\overset{o}{I_x}$, then $h(t_0,0)=b(x)$ and $h(t_0,1)=a(x)$.
		
		\item  Let $s_0\in (0,1)$. If $h((0,1),s_0)=\overset{o}{J_x}$, then $h(0,s_0)=c(x)\in J_-$ and $h(1,s_0)=d(x)\in J_+$.
		
		\item The corner points are determined by: $h(0,0)=b_-$, $h(1,0)=b_+$, $h(0,1)=a_-$ and $h(1,1)=a_+$.
	\end{itemize}
	
	This continuous function, when restricted to a vertical or horizontal line in $[0,1]\times [0,1]$, is a homeomorphism whose image is an interval contained in $\mathcal{F}^u\cap R$ or $\mathcal{F}^s\cap R$, respectively. This proves that $R$ is an (immersed) rectangle.
\end{proof}

 Lemma \ref{Lemm: finite c.c. }  establishes that there exists a finite number of connected components of the surface $\overset{o}{S}$, denoted as $\{r_i\}_{i=1}^n$. Therefore, according to the recent Lemma \ref{lemm: Equivalent class rectangle}, there exists a finite family of rectangles associated with the $s$-adapted graph $\delta^s$. From the proof of Lemma \ref{lemm: Equivalent class rectangle} , it is possible to determine that the stable boundaries of these rectangles are contained in $\delta^s$, and the unstable boundary of each rectangle consists of two extreme rails. Clearly, the boundary of $\cR(\delta^s)$ is equal to the union of the graph $\delta^s$ and its extreme rails Ex$(\delta^s)$, which were eliminated from $S$ to obtain $\overset{o}{S}$. We can formalize this discussion in the following Definition and Corollary.

 \begin{defi}\label{Defi: s,u-rectangles}
Let $\delta^s$ be an $s$-adapted graph of $f$. Let $\{r_i\}{i=1}^n$ be the set of equivalent classes of the relation $\sim_{\delta^s}$. We define the rectangle

$$
R_i(\delta^s):=\overline{r_i}.
$$
and the family of \emph{induced rectangles} by $\delta^s$ as  $\cR(\delta^s)=\{R_i(\delta^s)\}_{i=1}^n$.
\end{defi}

\begin{coro}\label{Coro: Boundaries induced rectangles}
Any rectangle $R_i = R_i(\delta^s)$ in the family of rectangles induced by $\delta^s$ satisfies the following properties:

\begin{itemize}
\item Its stable boundary $\partial^s R_i$ is contained in $\delta^s$.
\item  Its unstable boundary $\partial^u R_i$ consists of two extreme rails of $\delta^s$.
\end{itemize}

Furthermore, the stable boundary of $\cR(\delta^s)$ is equal to $\delta^s$, i.e., $\partial^s \cR(\delta^s) = \cup_{i=1}^n \partial^s R_i = \delta^s$, and its unstable boundary is the set of extreme rails of $\delta^s$, i.e., $\partial^u \cR(\delta^s) = \cup_{i=1}^n \partial^u R_i = \text{Ex}(\delta^s)$.

\end{coro}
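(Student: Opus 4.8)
The plan is to extract assertions (1) and (2) directly from the construction carried out inside the proof of Lemma \ref{lemm: Equivalent class rectangle}, and then deduce the global identities of assertion (3) by combining this with the fact that $\overset{o}{S}$ is open and dense.

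First I would fix an equivalence class $r = r_i$ and recall the decomposition produced in the proof of Lemma \ref{lemm: Equivalent class rectangle}: one obtains $R_i = R_i(\delta^s) = \overline{r} = R_- \cup \left(\cup_k R_k\right) \cup R_+$ together with a parametrization $h\colon [0,1]^2 \to R_i$ for which $h([0,1]\times\{0\}) = [b_-,b_+]^s$ and $h([0,1]\times\{1\}) = [a_-,a_+]^s$, both contained in $\delta^s$, and $h(\{0\}\times[0,1]) = [a_-,b_-]^u = J_-$, $h(\{1\}\times[0,1]) = [a_+,b_+]^u = J_+$. By construction the two stable sides are unions of the images of the projections $\pi_\pm$ and of the stable sides of $R_\pm$, all lying on $\delta^s$; hence $\partial^s R_i \subset \delta^s$, which is assertion (1). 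The two unstable sides $J_\pm$ were shown in Lemma \ref{Lemm: Same unstable leaf} to be extremal rails of $\delta^s$; hence $\partial^u R_i \subset \mathrm{Ex}^u(\delta^s)$ and consists of exactly two extremal rails, which is assertion (2). In particular $\partial^s\cR(\delta^s) = \cup_i \partial^s R_i \subseteq \delta^s$ and $\partial^u\cR(\delta^s) = \cup_i \partial^u R_i \subseteq \mathrm{Ex}^u(\delta^s)$.

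For the reverse inclusions, observe that $\delta^s \cup \mathrm{Ex}^u(\delta^s)$ is a finite union of compact arcs contained in leaves of $\mathcal{F}^s$ and $\mathcal{F}^u$, hence has empty interior, so $\overset{o}{S} = S\setminus(\delta^s\cup\mathrm{Ex}^u(\delta^s))$ is open and dense; moreover $\overset{o}{S} = \cup_i r_i = \cup_i \overset{o}{R_i(\delta^s)}$ by Proposition \ref{lemm: equivalent class rectangles}. Taking closures of the finite union gives $S = \overline{\overset{o}{S}} = \cup_i R_i(\delta^s)$, and therefore
$$
\delta^s\cup\mathrm{Ex}^u(\delta^s) = S\setminus \textstyle\bigcup_i \overset{o}{R_i(\delta^s)} \subseteq \textstyle\bigcup_i \left(R_i(\delta^s)\setminus \overset{o}{R_i(\delta^s)}\right) \subseteq \textstyle\bigcup_i \partial R_i(\delta^s) = \partial^s\cR(\delta^s)\cup\partial^u\cR(\delta^s).
$$
Together with the inclusions of the previous paragraph this yields $\partial^s\cR(\delta^s)\cup\partial^u\cR(\delta^s) = \delta^s\cup\mathrm{Ex}^u(\delta^s)$. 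To split this equality into the two asserted ones, take $x\in\delta^s$; then $x$ lies in $\partial^s\cR(\delta^s)$ or in $\partial^u\cR(\delta^s)\subseteq\mathrm{Ex}^u(\delta^s)$. In the latter case $x$ lies on an extremal rail, i.e.\ on an arc of $\mathcal{F}^u$; if $x$ is an interior point of that rail this contradicts transversality of $\mathcal{F}^s$ and $\mathcal{F}^u$ together with the fact that the interior of a rail does not meet $\delta^s$, while if $x$ is an endpoint of the rail it is a corner of the corresponding induced rectangle and hence also lies on $\partial^s$ of that rectangle; either way $x\in\partial^s\cR(\delta^s)$. The symmetric argument gives $\mathrm{Ex}^u(\delta^s)\subseteq\partial^u\cR(\delta^s)$, completing the proof.

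The step I expect to be the main obstacle is this last bookkeeping in assertion (3): one must be careful that the recovered set $\delta^s\cup\mathrm{Ex}^u(\delta^s)$ is distributed correctly between the stable and unstable boundaries of the $R_i$, in particular at the corners, where by item (iii) of Definition \ref{Defi:Adapted graph} the singularities of $f$ sit (a singularity is an endpoint of $\delta^s$, and its unstable separatrices are endpoints of the adjacent extremal rails). Away from corners the dichotomy ``on a leaf of $\mathcal{F}^s$ $\Rightarrow$ in $\partial^s$, on a leaf of $\mathcal{F}^u$ $\Rightarrow$ in $\partial^u$'' is automatic from the transversality of the two foliations; at the corners one simply uses that each corner of $R_i$ belongs to both $\partial^s R_i$ and $\partial^u R_i$.
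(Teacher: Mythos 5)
Your overall route is the same as the paper's: there the corollary carries no separate proof --- items (1) and (2) are read off the proof of Lemma \ref{lemm: Equivalent class rectangle} (with Lemma \ref{Lemm: Same unstable leaf} supplying that the two unstable sides are extremal rails), and the global identity is justified by the remark that $\delta^s\cup\mathrm{Ex}^u(\delta^s)$ is exactly the set deleted from $S$ to form $\overset{o}{S}$. Your density/complement argument makes that remark precise, and the steps $\overset{o}{R_i}=r_i$, $S=\cup_i R_i(\delta^s)$ (finite union of closures) and $\delta^s\cup\mathrm{Ex}^u(\delta^s)\subseteq\partial^s\cR(\delta^s)\cup\partial^u\cR(\delta^s)$ are all fine.

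The one step that does not work as written is the dichotomy inside your final splitting argument: you invoke ``the fact that the interior of a rail does not meet $\delta^s$''. That is definitional only for \emph{regular} rails; for extremal rails it is false in general --- the paper's own proofs of Lemma \ref{Lemm: Same unstable leaf} and Lemma \ref{Lemm: Iteration to be adapted} explicitly allow an extremal rail to contain a singularity of $f$ in its interior, and singularities belong to $\delta^s$ by item (iii) of Definition \ref{Defi:Adapted graph}. What is true is that an interval of $\delta^s$ cannot cross an extremal rail into the rectangle witnessing extremality (otherwise nearby regular rails of that rectangle would meet $\delta^s$ in their interiors), but it may terminate on the rail from the opposite side; in that configuration the contradiction you claim does not exist. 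The desired conclusion still holds there: for such a point $x$, the equivalence classes on the far side of the rail that are adjacent to the stable interval ending at $x$ have $\pi_{\pm}$-images accumulating on $x$, so $x$ lies in the closed stable side $[a_-,a_+]^s$ or $[b_-,b_+]^s$ of one of those induced rectangles, hence $x\in\partial^s\cR(\delta^s)$. With that sub-case repaired (and the symmetric repair in the argument for $\mathrm{Ex}^u(\delta^s)\subseteq\partial^u\cR(\delta^s)$), your proof goes through and coincides in substance with the paper's.
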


\subsection{Compatible graphs and Markov partitions.}

In view of the previous Corollary \ref{Coro: Boundaries induced rectangles}, the stable boundary of $\cR(\delta^s)$ is $f$-invariant. However, this alone is not sufficient to create a Markov partition. In order to achieve that, we need the vertical boundaries of the rectangles to be $f^{-1}$-invariant. This leads us to introduce the notion of \emph{compatible graphs}.

  \begin{defi}\label{defi:compatible graphs}
	Let $\delta^s$ and $\delta^u$ be $s$-adapted and $u$-adapted graphs for $f$ respectively. We call them \emph{compatible} if:
	\begin{itemize}
		\item The extreme points of $\delta^s$ belong to $\cup\delta^u$, and the extreme points of $\delta^u$ belong to $\cup \delta^s$.
		\item  The union of the intervals in the graph $\delta^u$ contains the extreme $u$-rails of $\delta^s$, and the union of the intervals in $\delta^s$ contains the extreme $s$-rails of $\delta^u$.
	\end{itemize}
\end{defi}

The following proposition is key to obtaining a Markov partition for $f$.

 \begin{prop}\label{Prop:compatibles implies Markov partition}
	If $\delta^u$ and $\delta^s$ are compatible graphs, we denote $\cup \delta^{s,u}$ to the union of the intervals in $\delta^{s,u}$ and  $\delta^s\cup\delta^s:=\cup \delta^u \, \cup \, \cup\delta^s$ is the boundary of a Markov partition adapted to $f$. This Markov partition consists of the rectangles obtained by intersecting the family $\{R_i(\delta^s)\}_{i=1}^n$ with the family $\{R_j(\delta^u)\}_{j=1}^m$  of rectangles induced by $\delta^s$ and $\delta^u$.
\end{prop}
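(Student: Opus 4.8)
The plan is to realise the desired partition as the common refinement of the two families $\cR(\delta^s)=\{R_i(\delta^s)\}$ and $\cR(\delta^u)=\{R_j(\delta^u)\}$ produced by Lemma~\ref{lemm: Equivalent class rectangle}, and then to apply the boundary criterion of Proposition~\ref{Prop: Markov criterion boundary}. Concretely, I would set $\cR$ to be the (finite) family of closures of the connected components of $S\setminus(\cup\delta^s\cup\cup\delta^u)$. Compatibility (Definition~\ref{defi:compatible graphs}) enters immediately: by Corollary~\ref{Coro: Boundaries induced rectangles}, $\partial\cR(\delta^s)=\cup\delta^s\cup\mathrm{Ex}^u(\delta^s)$, and since the extreme $u$-rails of $\delta^s$ are contained in $\cup\delta^u$ we get $\partial\cR(\delta^s)\subset\cup\delta^s\cup\cup\delta^u$; symmetrically for $\cR(\delta^u)$. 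Hence each connected component $C$ above lies in a unique $\overset{o}{R_i(\delta^s)}$ and in a unique $\overset{o}{R_j(\delta^u)}$, and a short connectedness-and-disjointness-of-interiors argument gives the converse, so $\cR$ is exactly the family of rectangles obtained by intersecting $\cR(\delta^s)$ with $\cR(\delta^u)$, as in the statement.

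I would then verify that each member of $\cR$ is genuinely a rectangle and that $\partial^s\cR=\cup\delta^s$, $\partial^u\cR=\cup\delta^u$. The crucial local fact is that an interval $J\subset\cup\delta^u$ meeting $\overset{o}{R_i(\delta^s)}$ must cross it as an unstable chord joining its two horizontal boundary components: the endpoints of $J$ lie in $\cup\delta^s=\partial^s\cR(\delta^s)$, hence not in $\overset{o}{R_i(\delta^s)}$, while $J$, being an unstable arc, cannot exit transversally through the unstable sides $\partial^u R_i(\delta^s)$. Cutting $R_i(\delta^s)$ along this finite family of chords decomposes it into vertical sub-rectangles, which are precisely the members of $\cR$ inside $R_i(\delta^s)$ (and, by the symmetric argument, horizontal sub-rectangles of the $R_j(\delta^u)$); in particular they are rectangles in the sense of Definition~\ref{Defi: rectangle}, their union is $S$, and their interiors are pairwise disjoint. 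The stable boundary of such a vertical sub-rectangle lies in $\partial^s R_i(\delta^s)\subset\cup\delta^s$; conversely every point of $\cup\delta^s=\partial^s\cR(\delta^s)$ sits on the stable boundary of one of these pieces, so $\partial^s\cR=\cup\delta^s$, and symmetrically $\partial^u\cR=\cup\delta^u$, whence $\partial\cR=\cup\delta^s\cup\cup\delta^u$.

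With these identifications the conclusion is short. Since $\delta^s$ is $s$-adapted, $\partial^s\cR=\cup\delta^s$ is $f$-invariant; since $\delta^u$ is $u$-adapted, $\partial^u\cR=\cup\delta^u$ is $f^{-1}$-invariant; so Proposition~\ref{Prop: Markov criterion boundary} shows $\cR$ is a Markov partition of $f$. To see it is adapted, note (Lemma~\ref{Lemm: Boundary of Markov partition is periodic}) that any periodic point on $\partial\cR$ lies on $\cF^s(\textbf{Sing}(f))\cap\cF^u(\textbf{Sing}(f))$, hence on the stable leaf of some singularity $p$; since a leaf carries at most one singularity (Proposition~\ref{Prop: pseudo-Anosov properties.}) and a suitable power of $f$ contracts that stable separatrix toward its only fixed point $p$, the periodic point must be $p$. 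Finally, condition (iii) of Definition~\ref{Defi:Adapted graph} applied to $\delta^s$ and to $\delta^u$ places an interval of $\delta^s$ (resp.\ $\delta^u$) along every stable (resp.\ unstable) separatrix of $p$, so all local separatrices of $p$ lie in $\partial\cR$ and $p$ is a corner of every rectangle meeting its sectors; thus $\cR$ is well suited, and adapted.

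The step I expect to be most delicate is the middle one: proving, using only compatibility, that no interval of $\cup\delta^u$ can terminate inside some $\overset{o}{R_i(\delta^s)}$ (so the cuts really are chords and the pieces really are rectangles), and that conversely no arc of $\cup\delta^s$ fails to appear in $\partial^s\cR$. This is complicated further by the fact that the $R_i(\delta^s)$ and $R_j(\delta^u)$ may be merely immersed, self-overlapping along their boundaries, so the "cut along chords" picture must be carried out through the parametrizations coming from Lemma~\ref{lemm: Equivalent class rectangle} rather than taken at face value.
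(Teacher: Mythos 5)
Your proposal is correct and follows essentially the same route as the paper: both form the common refinement of $\{R_i(\delta^s)\}$ and $\{R_j(\delta^u)\}$, use compatibility (extremal rails of each graph lying in the other graph) to show the pieces are genuine sub-rectangles whose stable and unstable boundaries are exactly $\cup\delta^s$ and $\cup\delta^u$, and then conclude via the boundary-invariance criterion of Proposition~\ref{Prop: Markov criterion boundary}. The only difference is presentational — you phrase the key step as cutting along chords of $\cup\delta^u$ rather than following the paper's argument that the vertical segment through a point of a component cannot meet $\partial^s R_j(\delta^u)\subset\delta^s$ in its interior, and you spell out the adaptedness argument that the paper dispatches in a single sentence.
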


\begin{proof}
By considering the equivalence relations $\sim_{\delta^s}$ and $\sim_{\delta^u}$, we obtain two families of rectangles: the family induced by $\delta^s$, denoted by $\{R_i^s:=R_i(\delta^s)\}_{i=1}^n$, and the family induced by $\delta^u$, denoted by  $\{R^u_j=R_j(\delta^u)\}_{j=1}^m$ where we have simplified the notation. This allows us to define another family of sets:

\begin{eqnarray*}
\cR(\delta^s,\delta^u):=\{ \overline{C}: C \text{ is a c.c. of } \overset{o}{R^s_i}\cap \overset{o}{R_j^u} \\ \text{ with }  1\leq i \leq n, \text{ and } 1\leq j\leq m  \}.
\end{eqnarray*}

\begin{figure}[h]
	\centering
	\includegraphics[width=0.7\textwidth]{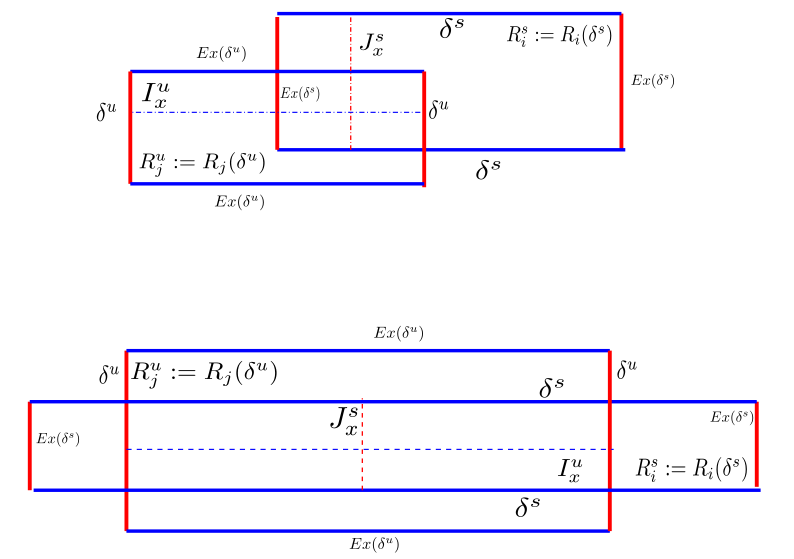}
	\caption{Impossible and possible configurations for compatible graphs}
	\label{Fig: Intersection adapted}
\end{figure}

It is evident that this family covers $S$, and the sets have disjoint interiors. We will now show that they are, in fact, rectangles. Please refer to Figure \ref{Fig: Intersection adapted} to follow our notation.

Let $C$ be a nonempty connected component of $\overset{o}{R^s_i}\cap \overset{o}{R_j^u} $, and let $x\in C$ be any point. Let $J^s_x\subset R^s_i$ denote the vertical segment of $R^s_i$ passing through $x$. We claim that the interior of $J^s_x$ is contained in $C$. If this is not the case, $J^s_x$ would intersect $\partial^s R_j^u$ at an interior point (refer to the top image in Figure \ref{Fig: Intersection adapted}). However, $\partial^s R_j^u$ is formed by two extremal $s$-rails of $\delta^u$. By hypothesis, such extremal rails are contained in $\delta^s$. Therefore, we deduce that $J^s_x$ intersects $\delta^s$ in its interior, which is not possible since the only points where a $u$-rail of $\delta^s$ intersects the graph $\delta^s$ are endpoints. Hence, $C$ must be the interior of a vertical sub-rectangle of $R^s_i$.

Let $I^u_x$ be the horizontal segment of $R_j^u$ passing through $x$. We claim that the interior of $I^u_x$ is contained in $C$. If this is not the case (as shown in the top image of Figure \ref{Fig: Intersection adapted}), the interior of $I^u_x$ would intersect $\partial^u R_i^s$, which is an extreme $u$-rail of $\delta^s$. However, by hypothesis, such an extremal $u$-rail is contained in $\delta^u$. Thus, $I^u_x$ would intersect $\delta^u$ in its interior, which is not possible since any $s$-rail of $\delta^u$ intersects the graph $\delta^u$ only at its endpoints. Consequently, we deduce that $C$ must be the interior of a horizontal sub-rectangle of $R_j^u$.

Then the configuration of the intersection between $\overset{o}{R_i^s}$ and $\overset{o}{R_j^u}$ is as shown in the bottom image of Figure  \ref{Fig: Intersection adapted}. In particular, $\overline{C}$ is a rectangle with horizontal boundary contained in $\delta^s$ and  vertical boundary contained in $\delta^u$. Therefore, the union of the horizontal boundaries of the rectangles in $\cR(\delta^s,\delta^u)$ is contained in the union of the intervals in $\delta^s$, denoted as $\cup\delta^s$, and the union of their vertical boundaries is contained in $\cup \delta^u$.

In fact, every point $z\in \cup \delta^s$ lies on the stable boundary of some $\overline{C}\in \cR(\delta^s,\delta^u)$ because $z$ is on the horizontal boundary of some $R_i^s$ and within the closure of some $R_j^u$ such that $\overset{o}{R_i^s}\cap \overset{o}{R_j^u}\neq \emptyset$. Therefore, $z$ lies on the stable boundary of the closure of a connected component of such intersection. This implies that $\cup \delta^s$ is contained in the horizontal boundary of $\cR(\delta^s,\delta^u)$, and hence they are equal, i.e., $\partial^s\cR(\delta^s,\delta^u)=\cup \delta^s$. A symmetric argument shows that $\partial^u\cR(\delta^s,\delta^u)=\cup \delta^u$.

Like $\cup \delta^s$ is $f$-invariant, the horizontal boundary of $\cR(\delta^s,\delta^u)$ is also $f$-invariant. Likewise, we deduce that the vertical boundary of $\cR(\delta^s,\delta^u)$ is $f^{-1}$-invariant. In conclusion, $\cR(\delta^s,\delta^u)$ is a Markov partition.

Finally, since $\delta^s$ and $\delta^u$ are graphs adapted to $f$, the Markov partition itself is adapted.
\end{proof}

If we start with two graphs adapted to $f$, the following result provides a formula to obtain compatible graphs by considering the minimal property of the measured foliations in $S$, i.e., their leaves being dense in $S$ (Proposition \ref{Prop: pseudo-Anosov properties.}), and taking into account the uniform expansion and contraction along the leaves.

 \begin{lemm}\label{Lemm: Iteration to be adapted}
Let $\delta^s$ and $\delta^u$ be graphs adapted to $f$. Then there exists an $n := n(\delta^s, \delta^u) \in \mathbb{N}$ such that $\delta^s$ and $h^n(\delta^u)$ are compatible whenever $m \geq n$, and $n$ is the minimum natural number with this property.
\end{lemm}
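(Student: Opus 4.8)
The plan is to show that after sufficiently many forward iterates the $u$-adapted graph $h^m(\delta^u)$ grows long enough (in unstable measure) that it contains all the extremal $u$-rails of $\delta^s$, and that its endpoints land inside $\cup \delta^s$; symmetrically, $\delta^s$ already contains the extremal $s$-rails of $h^m(\delta^u)$ because those are just the forward images of the extremal $s$-rails of $\delta^u$, which lie on stable leaves, and passing to $h^m$ only contracts them. Let me carry this out in the order: (1) check the endpoint conditions; (2) check the rail-containment conditions; (3) locate the minimal $n$.

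First I would observe that the endpoints of $h^m(\delta^u)$ are the images under $h^m$ of the endpoints of $\delta^u$, which by Definition~\ref{Defi:Adapted graph}(iv) lie on $\mathcal{F}^s(\textbf{Sing}(f))$; but in fact, for an adapted graph the endpoints of $\delta^u$ lie in $\cup \delta^s$ (this is exactly the compatibility hypothesis we are trying to arrange, so instead I would argue directly: the endpoints of $\delta^u$ are on unstable leaves of singularities, hence on $\mathcal{F}^u(\textbf{Sing}(f))$ by (ii), and the condition we need is that they lie in $\cup\delta^s$). Since $\cup\delta^s$ is $f$-invariant and contains a nontrivial stable segment at every stable separatrix of every singularity, and since every endpoint $z$ of $\delta^u$ satisfies $h^{-k}(z) \to \textbf{Sing}(f)$ along a stable separatrix (because $h^{-1}$ contracts unstable leaves and $z$ is at finite unstable distance from a singularity along a leaf — here I use that $z$'s backward orbit accumulates on a periodic point which by adaptedness is a singularity), a high enough forward iterate $h^m(z)$ of such endpoints is not what we want; rather I should use that $\cup\delta^s$ being $f$-invariant means $h^m(z) \in \cup\delta^s \iff z \in \cup\delta^s$. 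So the cleaner route: the endpoints of $\delta^u$ already satisfy $z \in \mathcal{F}^s(\textbf{Sing}(f))$ and their backward $h$-orbit converges to a singularity $p$ through the regular part; choosing $m$ large makes $h^m$ of the \emph{extremal rails} the real issue, not the endpoints, which are governed by invariance of $\cup\delta^s$ together with the density of stable leaves.

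The heart of the argument, and the step I expect to be the main obstacle, is the rail-containment condition: $\cup h^m(\delta^u)$ must contain all extremal $u$-rails of $\delta^s$. There are finitely many extremal $u$-rails $J_1,\dots,J_r$ by Remark~\ref{Rema: finite rails intersect or equals}, each a compact unstable segment with endpoints at endpoints of $\delta^s$, hence at points of $\mathcal{F}^u(\textbf{Sing}(f))$. The plan is: each $J_t$ lies on an unstable leaf $\ell_t$ through a singularity, and $h^{-1}(\delta^u)$, when restricted to $\ell_t$, consists of unstable subsegments of $\ell_t$ that, as $m$ grows, exhaust $\ell_t$ because $h$ expands unstable length by $\lambda$ and the union $\cup\delta^u$, being $f^{-1}$-invariant and containing a nontrivial unstable segment at each unstable separatrix of each singularity, has the property that $\cup_{m\geq 0} h^m(\cup\delta^u)$ equals $\mathcal{F}^u(\textbf{Sing}(f))$ entirely (this is the standard ``a bit of unstable manifold, expanded forever, fills the whole leaf'' statement, provable from the uniform expansion factor $\lambda$). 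Since $J_t$ is compact and contained in $\mathcal{F}^u(\textbf{Sing}(f))$, there is $m_t$ with $J_t \subset h^{m_t}(\cup\delta^u)$; take $n = \max_t m_t$ (also exceeding the bound from step (1)). For the symmetric condition, the extremal $s$-rails of $h^m(\delta^u)$ are $h^m$ of the extremal $s$-rails of $\delta^u$, which lie in $\cup\delta^s$ by adaptedness of the original configuration — but wait, adaptedness of $\delta^u$ alone does not give that; so here too I would invoke that the extremal $s$-rails of $\delta^u$ lie on stable leaves of singularities and apply the same exhaustion statement to $\delta^s$ under forward iteration, which is automatic since $\cup\delta^s$ is already $f$-invariant, so a subsegment of a stable separatrix contained in $\cup\delta^s$, after we note $h^m$ contracts these, stays inside $\cup\delta^s$; the containment of extremal $s$-rails of $h^m(\delta^u) = h^m(\text{extremal } s\text{-rails of }\delta^u)$ inside $\cup\delta^s$ then needs $h^m(\text{extremal }s\text{-rails of }\delta^u)\subset \cup\delta^s$, which by $f$-invariance of $\cup\delta^s$ reduces to: extremal $s$-rails of $\delta^u$ $\subset \cup\delta^s$. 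I would establish this last inclusion separately from the density of stable leaves and the structure of adapted graphs, or absorb it into a further forward iterate. Finally, minimality of $n$: the set of $m$ for which compatibility holds is upward-closed (forward iteration only lengthens the relevant $u$-graph and preserves the $f$-invariant $s$-graph), so it has a least element, which we name $n(\delta^s,\delta^u)$.

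The main obstacle, to restate, is making rigorous the exhaustion claim ``$\bigcup_{m\geq 0} h^m(\cup\delta^u) = \mathcal{F}^u(\textbf{Sing}(f))$'' and its stable counterpart, using only the uniform dilatation $\lambda$ and adaptedness (part (iii) of Definition~\ref{Defi:Adapted graph} guaranteeing a seed segment on every separatrix), together with the compactness of the finitely many extremal rails that must be swallowed. Everything else — finiteness of extremal rails, invariance of the graphs, upward-closedness hence minimality — is bookkeeping.
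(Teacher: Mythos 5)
The unstable half of your plan is essentially the paper's proof: the extremal $u$-rails of $\delta^s$ (and the extreme points of $\delta^s$) sit on unstable separatrices of singularities at finite $\mu^s$-distance from them, while by item (iii) of Definition~\ref{Defi:Adapted graph} the graph $\delta^u$ has on every unstable separatrix a seed interval issuing from the singularity, whose image under $f^m$ has $\mu^s$-length at least $\lambda^m G^u$ with $G^u>0$ the minimal length of the intervals of $\delta^u$; so for $m$ large these anchored segments swallow the finitely many compact rails. One caveat: your ``exhaustion'' claim plus compactness is not enough as stated — an increasing union of compact sets equal to a whole leaf need not contain a given compact segment in a single term (e.g. $K_m=\{0\}\cup[1/m,\infty)$ inside a separatrix identified with $[0,\infty)$). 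You must use the quantitative anchored version above, which is exactly what the paper does with its constants $G^u,M^u$ (and $L^u,F^u$ for the extreme points of $\delta^s$).

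The genuine gap is on the stable side. You reduce ``extremal $s$-rails of $f^m(\delta^u)\subset\cup\delta^s$'' to ``extremal $s$-rails of $\delta^u\subset\cup\delta^s$'', but this last inclusion is false in general: it is essentially compatibility at $m=0$, and nothing in adaptedness provides it; neither ``density of stable leaves'' nor $f$-invariance of $\cup\delta^s$ can produce it, since forward invariance only keeps points that are already in $\cup\delta^s$ inside it (for the same reason your earlier ``$h^m(z)\in\cup\delta^s \iff z\in\cup\delta^s$'' holds only in one direction). The same problem affects the endpoint condition, which you leave to ``invariance plus density''. The correct mechanism, mirroring your unstable argument, is contraction: the endpoints of $\delta^u$ and its extremal $s$-rails lie on stable separatrices of singularities at finite $\mu^u$-distance from them; applying $f^m$ scales this distance by $\lambda^{-m}$, while $\delta^s$ contains on every stable separatrix a seed interval issuing from the singularity whose $\mu^u$-length is bounded below (there are finitely many intervals, and item (iii) gives one on each separatrix). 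Hence for $m$ large the images under $f^m$ of the endpoints of $\delta^u$ and of its extremal $s$-rails fall inside $\cup\delta^s$; this is the paper's ``same proof with $f^{-1}$ and $\mu^u$'', i.e. $f^{-m}(\delta^s)$ expands to contain these stable objects. With that fixed, the remaining bookkeeping (finiteness, taking the maximum of the thresholds, and minimality via the well-ordering of the nonempty set of valid thresholds) goes through as you indicate.
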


\begin{proof}

First, we prove that there exists $N_1 \in \mathbb{N}$ such that for all $n > N_1$, $\delta^s$ contains the extreme points of $h^n(\delta^u)$ and $h^n(\delta^u)$ contains the extreme points of $\delta^s$. Next, we establish an $N_2 \in \mathbb{N}$ such that for all $n > N_2$, the extreme $u$-rails of $\delta^s$ are in $h^n(\delta^u)$ and the extreme $s$-rails of $h^n(\delta^u)$ are contained in $\delta^s$. These conditions imply that:

$$
\{n\in \NN : \delta^s \text{ is compatible with }f^n(\delta^u)\}\neq \emptyset,
$$

and then, we can define $n := n(\delta^s, \delta^u)$ as the minimum of this set.

Let's start by assuming that  $\delta^u=\{J_j\}_{j=1}^h$ and $\delta^s=\{I_i\}_{i=1}^l$. We define:
$$
L^u:=\min\{\mu^s(J_j): 1\leq j\leq h\}.
$$
It is clear that for every $n\in \NN$ and every interval $J_j$
$$
\mu^s(f^n(J_i))=\lambda^n\mu^s(J_i)\leq \lambda^n L^u.
$$

Let $z\in \delta^s$ be an extreme point of $\delta^s$. Since $\delta^s$ is an adapted graph, the point $z$ lies on the unstable leaf of a singularity of $f$. Let $[p_z,z]^u$ denote the unique compact interval of the unstable leaf passing through $z$ and connecting a singularity $p_z$ of $f$ with $z$. Now, let's define:
$$
F^u:=\max\{\mu^s([p_z,z]^u): z \text{ is extreme point of  } \delta^s \}.
$$

By the uniform expansion in the unstable leaves of $\mathcal{F}^u$, there exists $n_1\in \mathbb{N}$ such that for every $n\geq n_1$, we have $\lambda^nL^u > F^u$. Moreover, if $n > n_1$ and $z$ is an extreme point of $\delta^s$, there exists a unique $J_j$ in $\delta^u$ such that $z$ lies on the same separatrice as $f^n(J_j)$. However,
$$
\mu^u(f^n(J_j))\geq \lambda^nL^u > F^u\geq \mu^s([p_z,z]^u),
$$
implying that $z$ belongs to $f^n(J_j)$, or equivalently $z$ belongs to $f^n(\delta^u)$ for all $n\geq n_1$.

In the same way, but using $f^{-1}$ and the measure $\mu^u$, we can deduce the existence of $n_2\in \NN$ such that for all $n\geq n_2$ and every extreme point $z$ of $\delta^u$, $z$ is also an extreme point of $f^{-n}(\delta^s)$. In other words, for all $n\geq n_2$ and every extreme point $z$ of $f^n(\delta^u)$, $z$ is contained in $\delta^s$. Let's take $N_1=\max\{n_1,n_2\}$.

Now, let's consider the set of $u$-extreme rails of $\delta^s$. This set is finite, and each $u$-extreme rail is a closed interval.

For each $p\in \textbf{Sing}(f)$ and each unstable separatrice $F^u_i(p)$ of $p$ (if $p$ is a $k$-prong, we consider $i=1,\cdots,k$), we define $J(p, i)\subset F^u_i(p)$ as the minimal compact interval containing the intersection of all the $u$-extreme rails of $\delta^s$ with the separatrice $F^u_i(p)$, in case that some $u$-extremal rail $J$ of of $\delta^s$ contains a singularity in its interior we consider just the sub-interval of $J$ contained in the separatrice $F^u_i(p)$. Now, we consider the following quantity, which is finite:

$$
M^u:=\max\{ \mu^s(J(p,i)): p\in \textbf{Sing}(f) \text{ is a $k$- prong and } i=1,\cdots k  \}.
$$

Like $\delta^u=\{J_j\}_{j=1}^h$ the next quantity is finite too,
$$
G^u=\min\{\mu^s(J_j): j=1,\cdots,h\}.
$$

By the uniform expansion in unstable leaves of $\mathcal{F}^u$, there exists $n_1\in \mathbb{N}$ such that for all $n\geq n_1$, $\lambda^n G^u \geq M^u$. Furthermore, for any $k$-prong $p\in \textbf{Sing}(f)$ and every $i=1, \cdots, k$, there exists an interval $J_j$ in $\delta^u$ such that $f^n(J_j)$ is contained in the same unstable separatrice as $J(p,i)$, indeed $J_j$ is the interval of $\delta^u$ contained in $F^u_i(p)$ with and end point in $p$ that is given by item $iii)$ in Definition \ref{Defi:Adapted graph} (in the $u$ case). Moreover, we have the following computation:

 $$
 \mu^s(f^n(J_j))=\lambda^n\mu^s(J_i)\geq \lambda^n G^u \geq M^u.
 $$

 Since $\mu^s(J(p,i)) \leq M^u$, this calculation implies that $J(p,i)\subset f^n(J_j)$. By the construction of $J(p,i)$, we deduce that for each $n\geq n_1$, any $u$-extreme rail of $\delta^s$ is contained in $f^n(\delta^u)$.

A similar proof using $f^{-1}$ and the measure $\mu^u$ gives another natural number $n_2\in \NN$ such that for all $n\geq n_2$, any extreme $s$-rail of $\delta^u$ is contained in $f^{-n}(\delta^s)$, or equivalently, all extreme $s$-rails of $f^{n}(\delta^u)$ are contained in $\delta^s$. Let $N_2=\max(n_1,n_2)$. The conclusion is that the following quantity exists:
 $$
n(\delta^s,\delta^u):=\min \{N\in \NN: \forall n\geq N \, \delta^s \text{ is compatible with } f^n(\delta^u) \},
 $$ 

Which is the number we were looking for
\end{proof}

\subsection{A recipe for Markov partitions.}
We can now summarize all these steps to construct a Markov partition:

\begin{prop}\label{Prop: Recipe for Markov partitions}
Let $p$ and $q$ be singular points of $f$, with separatrices $F^s(p)$ and $F^u(q)$ respectively, and let $z$ be a point in their intersection $F^s(p)\cap F^u(q)$. Consider the following construction:
\begin{enumerate}
\item Define $J^u(z)$ as the unstable interval $[p,z]^u$ in $F^u(p)$, which is referred to as the \emph{primitive segment}.

\item Define  $\cJ^u(z)=\cup_{i\in \NN}f^{-i}(J^u(z))$. Due to contraction in the unstable foliation, $\mathcal{J}^u(z)$ is a finite union of closed intervals and is $f^{-1}$-invariant.

\item  The graph $\mathcal{J}^u(z)$ satisfies the conditions of the Lemma {Lemm: the generated graph is adapted}, then the graph generated by $\mathcal{J}^u(z)$ is an $s$-adapted graph to $f$, denoted as $\delta^s(z)$.

\item The graph $\delta^s(z)$ is $f$-invariant. By applying  Lemma \ref{Lemm: the generated graph is adapted} to $\delta^s(z)$, we obtain the $u$-adapted graph $\delta^u(z)$ generated by $\delta^s(z)$.

\item Using Lemma \ref{Lemm: Iteration to be adapted}, we can determine a number $n(z) = n(\delta^s(z), \delta^u(z))$ such that for all $n > n(z)$, $\delta^s(z)$ and $f^n(\delta^u(z))$ are compatible.

\item Finally, for all $n\geq n(z)$, Proposition  \ref{Prop:compatibles implies Markov partition} implies the existence of a adapted Markov partition $\mathcal{R}(z,n)$ with  stable boundary equal to $\cup \delta^s(z)$ and  unstable boundary equal to $\cup f^n(\delta^u(z))$.

\end{enumerate}
 We refer to $n(z)$ as the \emph{compatibility coefficient} of $z$.
\end{prop}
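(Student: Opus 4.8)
The statement is essentially a summary proposition that assembles the constructions of the preceding subsections, so the plan is to verify that each numbered step is legitimate and that the hypotheses needed to invoke the earlier lemmas are genuinely met at each stage. First I would check the hypotheses on $z$: since $p$ and $q$ are singular points and $z \in F^s(p) \cap F^u(q)$, the primitive segment $J^u(z) = [p,z]^u$ is a well-defined nontrivial compact interval in an unstable separatrix of the singularity $p$ (here I use that a leaf through a singularity is unique, as recalled after Definition~\ref{Defi: vertical/horizontal sub rectangles}). Then $\cJ^u(z) = \cup_{i \in \NN} f^{-i}(J^u(z))$ is $f^{-1}$-invariant by construction; the key point to justify is finiteness, which follows from the uniform contraction of $\mu^s$-length along unstable leaves under $f^{-1}$ together with the fact that there are finitely many separatrices at finitely many singularities — so the backward images of $J^u(z)$ must eventually start overlapping and stabilize into a finite union of closed intervals. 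This is exactly the hypothesis format required by Lemma~\ref{Lemm: the generated graph is adapted} (with the roles of $s$ and $u$ as stated there), so step (3) produces the $s$-adapted graph $\delta^s(z) := \delta^s(\cJ^u(z))$.

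Next, step (4): the graph $\delta^s(z)$ is $f$-invariant (this is part of the conclusion of Lemma~\ref{Lemm: the generated graph is adapted}) and consists of stable intervals in $\cF^s(\textbf{Sing}(f))$, hence it satisfies the symmetric hypotheses of Lemma~\ref{Lemm: the generated graph is adapted} applied in the $u$-direction, producing the $u$-adapted graph $\delta^u(z)$ generated by $\delta^s(z)$. For step (5), both $\delta^s(z)$ and $\delta^u(z)$ are graphs adapted to $f$, so Lemma~\ref{Lemm: Iteration to be adapted} applies verbatim and yields the compatibility coefficient $n(z) := n(\delta^s(z), \delta^u(z))$ such that $\delta^s(z)$ is compatible with $f^n(\delta^u(z))$ for every $n \geq n(z)$. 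Finally, for step (6) I note that $f^n(\delta^u(z))$ is still a $u$-adapted graph to $f$ (applying $f$ preserves adaptedness, since $f$ permutes singularities and their separatrices and preserves the foliations), and that compatibility is a symmetric relation between an $s$-adapted and a $u$-adapted graph; therefore Proposition~\ref{Prop:compatibles implies Markov partition} applies to the compatible pair $(\delta^s(z), f^n(\delta^u(z)))$ and delivers an adapted Markov partition $\cR(z,n) := \cR(\delta^s(z), f^n(\delta^u(z)))$, whose stable boundary is $\cup \delta^s(z)$ and whose unstable boundary is $\cup f^n(\delta^u(z))$, as claimed.

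The main obstacle — and the only step that is not a purely formal citation — is the finiteness claim in step (2), i.e.\ that $\cJ^u(z)$ is a finite union of closed intervals. The subtlety is that the backward iterates $f^{-i}(J^u(z))$ all share the endpoint $f^{-i}(p) $ when $p$ is fixed, but in general $p$ is only periodic, so one must track the finitely many points in the orbit of $p$ and argue that along each unstable separatrix of each of these points, the union of the relevant backward iterates of the primitive segment is an increasing (in the appropriate sense) family of intervals that cannot grow past the full separatrix reaching the next singularity; combined with the $\mu^s$-contraction this forces eventual stabilization. Once finiteness is in hand, everything else is an application of Lemmas~\ref{Lemm: the generated graph is adapted} and~\ref{Lemm: Iteration to be adapted} and Proposition~\ref{Prop:compatibles implies Markov partition}, so the proof is really a matter of checking that the outputs of each step match the inputs required by the next. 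I would also remark at the end that the construction is genuinely algorithmic in the topological sense: each step is a finite procedure given the combinatorial/topological data of $f$ near its singularities, which is the point emphasized in the surrounding text.
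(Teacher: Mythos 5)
Your proposal is correct and takes essentially the same route as the paper, which offers no separate proof of this proposition but presents it precisely as the assembly you describe: finiteness and $f^{-1}$-invariance of $\cJ^u(z)$ from uniform contraction, then Lemma~\ref{Lemm: the generated graph is adapted} (in both directions), Lemma~\ref{Lemm: Iteration to be adapted}, and Proposition~\ref{Prop:compatibles implies Markov partition} applied to the compatible pair $(\delta^s(z), f^n(\delta^u(z)))$. The one caveat is your heuristic that the intervals ``cannot grow past the full separatrix reaching the next singularity'': no leaf contains two singularities, and the backward iterates shrink rather than grow, so stabilization follows simply because, along each of the finitely many separatrices in the backward orbit of the primitive segment, the $\mu^s$-contraction forces every later iterate anchored at that singularity to be nested inside the first one that lands there.
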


Therefore we recover a classic result.

\begin{coro}\label{Coro: Existence adapted Markov partitions}
Every generalized pseudo-Anosov homeomorphism admits adapted Markov partitions.
\end{coro}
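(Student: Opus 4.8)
\textbf{Proof plan for Corollary \ref{Coro: Existence adapted Markov partitions}.}

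The plan is to observe that the final corollary is an immediate consequence of the recipe packaged in Proposition \ref{Prop: Recipe for Markov partitions}, together with the density of leaves established in Proposition \ref{Prop: pseudo-Anosov properties.}. First I would note that any generalized pseudo-Anosov homeomorphism $f$ has a nonempty singular set $\textbf{Sing}(f)$: if the surface has positive genus this follows from the Euler–Poincaré formula for the index of the foliations, and if the surface is the torus one may take any periodic point — but in fact the cleaner route is to pick any singularity $p$ (which exists because a measured foliation on a closed oriented surface of genus $g$ must have singular data with total index $2-2g$, so for $g\neq 1$ there are singularities, and for the torus case the construction below still runs verbatim with $p=q$ a fixed or periodic point playing the role of the ``singular'' base point, since nothing in the recipe beyond the adapted-graph bookkeeping truly needs $p$ to be singular). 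So I would begin by fixing singular points $p$ and $q$ of $f$.

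Next I would invoke the minimality statement of Proposition \ref{Prop: pseudo-Anosov properties.}: every leaf of $\cF^u$ is dense in $S$, hence the unstable separatrix $F^u(q)$ meets the stable separatrix $F^s(p)$; choose a point $z \in F^s(p)\cap F^u(q)$. This is the only existential input needed. With $z$ in hand, the six steps of Proposition \ref{Prop: Recipe for Markov partitions} apply in sequence: form the primitive segment $J^u(z)=[p,z]^u$; take its backward saturation $\cJ^u(z)=\cup_{i\in\NN}f^{-i}(J^u(z))$, which is a finite union of closed intervals and $f^{-1}$-invariant by the uniform contraction along $\cF^u$; apply Lemma \ref{Lemm: the generated graph is adapted} to get the $s$-adapted graph $\delta^s(z)$ generated by $\cJ^u(z)$; apply the symmetric version of that lemma to $\delta^s(z)$ to obtain the $u$-adapted graph $\delta^u(z)$; use Lemma \ref{Lemm: Iteration to be adapted} to produce $n(z)$ so that $\delta^s(z)$ and $f^n(\delta^u(z))$ are compatible for all $n\geq n(z)$; and finally apply Proposition \ref{Prop:compatibles implies Markov partition} to conclude that $\cup\delta^s(z)\cup\cup f^n(\delta^u(z))$ bounds a Markov partition adapted to $f$. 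Since each cited ingredient is already proved in the excerpt, assembling them yields the existence of an adapted Markov partition, which in particular is a Markov partition, proving the corollary.

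All the real work has already been done in the lemmas and propositions leading up to the statement, so there is essentially no obstacle in this final step beyond carefully checking the one genuinely nontrivial point: that $F^s(p)\cap F^u(q)$ is nonempty so that a valid base point $z$ exists. I expect this to be the only place requiring care, and it is handled by the density of leaves (Proposition \ref{Prop: pseudo-Anosov properties.}, item 2) — transversality of the two foliations away from singularities then guarantees that the intersection is a genuine topologically transverse crossing, so the primitive segment $J^u(z)$ is well defined. Everything downstream is a verbatim citation of the recipe.
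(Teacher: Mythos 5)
Your proof is correct and takes essentially the same route as the paper: the corollary is presented there as an immediate consequence of Proposition \ref{Prop: Recipe for Markov partitions}, the only existential input being a pair of singularities and a point $z\in F^s(p)\cap F^u(q)$, which minimality of the foliations (Proposition \ref{Prop: pseudo-Anosov properties.}) supplies. Your extra remarks about the no-singularity (Anosov/torus) case go beyond what the paper addresses, but they do not alter the argument in the setting the paper works in.
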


\section{Canonical Markov partitions and canonical geometric types}

We want to apply Proposition  \ref{Prop: Recipe for Markov partitions} to a particular family of points known as \emph{first intersection points}. These points play a crucial role in determining a distinguished and infinite family of Markov partitions. In fact, this family can be regarded as the orbit of a finite number of Markov partitions. We will show that the geometric type of a Markov partition and its iterates are equal. This allows us to define a finite family of geometric types associated with each pseudo-Anosov homeomorphism, based on this distinguished family of Markov partitions. In this way, we obtain a countable family of finite invariants for each conjugacy class. These invariants have the property of being calculable to some extent in terms of any other geometric type. In the final chapter, we will utilize this family of invariants to address point $III$ in Problem \ref{Prob: Clasification} . However, before proceeding, we need to construct these invariants.

\subsection{First intersection points} We start by defining these special points.

\begin{defi}\label{Defi: first intersection points}	
		
	Let $f$ be a generalized pseudo-Anosov homeomorphism, $p, q \in \textbf{Sing}(f)$, $F^s(p)$ be a stable separatrice of $p$, and $F^u(q)$ be an unstable separatrice of $q$.
	
	A point $x \in [\cF^s(p) \cap \cF^u(q)] \setminus \textbf{Sing}(f)$ is a \emph{first intersection point} of $f$ if the stable interval $[p,x]^s \subset F^s(p)$ and the unstable segment $[q,x]^u \subset F^u(q)$ have disjoint interiors. In other words:
	
	 $$
	 (p,x]^s\cap (q,x]^u=\{x\}.
	 $$. 
\end{defi}

\begin{lemm}
There exists at least one first intersection point for $f$
\end{lemm}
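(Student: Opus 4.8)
The plan is to produce a first intersection point by starting with any point of $\cF^s(p)\cap\cF^u(q)$ for suitably chosen singularities $p,q$, and then ``walking inward'' along the two separatrices toward the singularities until the intersection becomes minimal. First I would fix a singularity $p$ and a stable separatrix $F^s(p)$, and similarly a singularity $q$ with an unstable separatrix $F^u(q)$. By Proposition~\ref{Prop: pseudo-Anosov properties.}, item~(2), every leaf of $\cF^s$ is dense in $S$; in particular the separatrix $F^s(p)$ is dense, hence it meets $F^u(q)$ in at least one point (a transverse intersection away from the singular set, since neither leaf contains two singularities by item~(1) of the same proposition). So the set $[\cF^s(p)\cap\cF^u(q)]\setminus\textbf{Sing}(f)$ is nonempty; pick a point $y$ in it.

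Now consider the compact stable arc $[p,y]^s\subset F^s(p)$ and the compact unstable arc $[q,y]^u\subset F^u(q)$. The set $(p,y]^s\cap(q,y]^u$ is a nonempty (it contains $y$), finite set: it is finite because $[p,y]^s$ is a compact arc transverse to $\cF^u$ away from finitely many points and $[q,y]^u$ is a compact arc of a single unstable leaf, so their intersection is discrete in a compact set. Among the finitely many points of $(p,y]^s\cap(q,y]^u$, choose $x$ to be the one minimizing $\mu^s([p,\cdot]^s)$, equivalently the point ``closest to $p$'' along $F^s(p)$. I claim $x$ is a first intersection point. Indeed, by construction no point of $(q,y]^u$ lies in the open sub-arc $(p,x)^s$; and since $x\in[q,y]^u$, the arc $(q,x]^u$ is contained in $(q,y]^u$, so $(p,x)^s\cap(q,x]^u=\emptyset$, which gives $(p,x]^s\cap(q,x]^u=\{x\}$ as required by Definition~\ref{Defi: first intersection points}.

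The one point needing a little care is the finiteness of $(p,y]^s\cap(q,y]^u$: this is where the structure of the foliations enters. Since $[q,y]^u$ lies in a single unstable leaf and $[p,y]^s$ is transverse to $\cF^u$ (its interior avoiding $\textbf{Sing}(f)$ by choice, after possibly shrinking to avoid the endpoints $p$ being the only singularity on it), each intersection point is isolated along $[p,y]^s$; compactness of $[p,y]^s$ then forces finiteness. Alternatively one can argue with the transverse measure $\mu^u$: the values $\mu^u([q,z]^u)$ for $z$ ranging over the intersection points are distinct positive reals bounded by $\mu^u([q,y]^u)$, and accumulation would contradict the local product structure near a non-singular point together with transversality of the two foliations. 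I expect this finiteness/discreteness step to be the only real obstacle; the rest is a direct minimization argument, and the density of leaves from Proposition~\ref{Prop: pseudo-Anosov properties.} is exactly the input that makes the starting intersection exist.
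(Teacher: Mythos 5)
Your proof is correct and follows essentially the same route as the paper's: both reduce to the intersection of two compact arcs (one stable with endpoint $p$, one unstable with endpoint $q$), observe this intersection is finite (your justification of discreteness via transversality is in fact more careful than the paper's), and pick an extremal intersection point — you minimize along the stable arc toward $p$, the paper minimizes along the unstable arc toward $q$ and handles $p=q$ by a small case split that your choice avoids. The only blemish is notational: with the paper's conventions the length of a stable arc is measured by $\mu^u$ (and of an unstable arc by $\mu^s$), so your $\mu^s([p,\cdot]^s)$ has the labels swapped, but since only the linear order along the arc is used this does not affect the argument.
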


\begin{proof}
Let $I$ be a compact interval contained in $F^s(p)$ with one endpoint equal to $p$. Take a singularity $q$ (that could be equal to $p$), like any unstable separatrice $F^u(q)$ of $q$ is minimal there exist a closed interval $J\subset F^u(q)$ such that $\overset{o}{J}\cap \overset{o}{I}=\emptyset$.
Like $J$ and $I$ are compact sets, their intersection $J\cap I$ consist in a finite number of points, $\{z_0,\cdots,z_n\}$. We can assume that $n>1$ and we orient the interval $J$ pointing towards $q$ and whit this orientation $z_i<z_{i+1}$. In this manner:
\begin{itemize}
\item If $p=q$, then $z_0=p=q$ but $p\neq z_1$ and we take $z=z_1$.
\item If $p\neq q$, then $p\neq z_0 \neq q$ and we take $z=z_0$.
\end{itemize}

 Clearly, $(p,z]^s\cap (q,z]^u=\{z\}$, and $z$ is a first intersection point.
\end{proof}

\begin{figure}[h]
	\centering
	\includegraphics[width=0.5\textwidth]{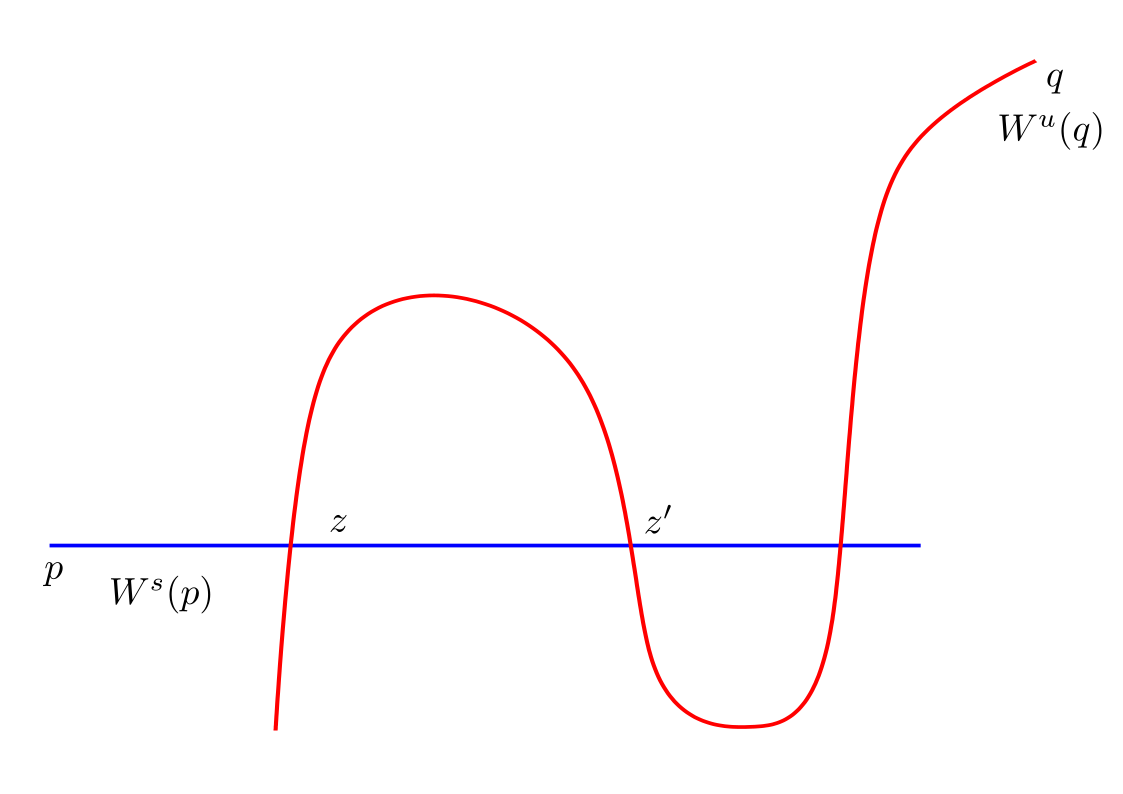}
	\caption{$z$ first intersection point and $z'$ non first intersection point}
	\label{Fig: First intersection point}
\end{figure}

Now we will proceed to prove some properties about these points.

\begin{lemm} \label{Lemm: Image of first intersection is first intersection}
If $z$ is a first intersection point of $f$, then $f(z)$ is also a first intersection point of $f$.
\end{lemm}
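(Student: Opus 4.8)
The statement to prove is that the image under $f$ of a first intersection point is again a first intersection point. This is a direct consequence of the invariance of the foliations and of the fact that $f$ maps singularities to singularities. Let me sketch the argument.

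\medskip

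The plan is to use the characterization of a first intersection point $z$ as a point of $[\cF^s(p)\cap\cF^u(q)]\setminus\textbf{Sing}(f)$ for which the two half-open segments $(p,z]^s$ and $(q,z]^u$ meet only at $z$, and then simply push everything forward by $f$. First I would note, using Proposition~\ref{Prop: conjugation produces pA} (applied with $h=f$, or more elementarily just the definition of a generalized pseudo-Anosov homeomorphism together with Proposition~\ref{Prop: image secto is a sector}), that $f$ permutes the singular set: $p\in\textbf{Sing}(f)$ implies $f(p)\in\textbf{Sing}(f)$, and likewise for $q$. Moreover $f$ preserves the foliations, $f(\cF^s)=\cF^s$ and $f(\cF^u)=\cF^u$, and sends a stable (resp. unstable) separatrix of a point $x$ to a stable (resp. unstable) separatrix of $f(x)$; this last point follows because $f$ is a homeomorphism mapping leaves to leaves, so a connected component of $\cF^s(p)\setminus\{p\}$ is carried to a connected component of $\cF^s(f(p))\setminus\{f(p)\}$. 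Hence, writing $F^s(p)$ for the stable separatrix of $p$ through $z$ and $F^u(q)$ for the unstable separatrix of $q$ through $z$, we have $f(F^s(p))=F^s(f(p))$ a stable separatrix of $f(p)$ and $f(F^u(q))=F^u(f(q))$ an unstable separatrix of $f(q)$, and $f(z)$ lies on both. Also $f(z)\notin\textbf{Sing}(f)$ since $z\notin\textbf{Sing}(f)$ and $f$ permutes singularities bijectively.

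\medskip

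The second step is to identify the two relevant segments from $f(p)$ and $f(q)$ to $f(z)$. Since $f$ is a homeomorphism carrying the stable leaf through $z$ homeomorphically onto the stable leaf through $f(z)$, and carrying $p$ to $f(p)$ and $z$ to $f(z)$, it maps the compact stable interval $[p,z]^s$ onto a compact stable interval with endpoints $f(p)$ and $f(z)$ contained in $F^s(f(p))$; by uniqueness of such an interval inside a separatrix this is exactly $[f(p),f(z)]^s$. Symmetrically, $f([q,z]^u)=[f(q),f(z)]^u$. Taking interiors (which $f$ also respects, being a homeomorphism), $f((p,z]^s)=(f(p),f(z)]^s$ and $f((q,z]^u)=(f(q),f(z)]^u$.

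\medskip

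The final step is to transport the disjointness condition. We are given $(p,z]^s\cap(q,z]^u=\{z\}$. Applying the bijection $f$ to both sides and using that $f$ commutes with intersection,
\[
(f(p),f(z)]^s\cap(f(q),f(z)]^u \;=\; f\big((p,z]^s\big)\cap f\big((q,z]^u\big)\;=\;f\big((p,z]^s\cap(q,z]^u\big)\;=\;\{f(z)\}.
\]
Therefore $f(z)$ satisfies all the requirements of Definition~\ref{Defi: first intersection points}, and is a first intersection point of $f$. I do not expect any real obstacle here: the only point requiring a little care is the claim that $f$ sends separatrices to separatrices and hence sends the canonical segment $[p,z]^s$ to the canonical segment $[f(p),f(z)]^s$ (rather than to some a priori larger or smaller arc), but this is immediate from $f$ being a homeomorphism that maps the stable leaf through $z$ onto the stable leaf through $f(z)$ while fixing the roles of the endpoints. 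The same homeomorphism argument takes care of the passage to interiors and the compatibility with intersections.
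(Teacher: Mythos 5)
Your proof is correct and follows essentially the same route as the paper's: apply $f$ to the identity $(p,z]^s\cap(q,z]^u=\{z\}$, using invariance of the foliations and of the singular set to conclude $(f(p),f(z)]^s\cap(f(q),f(z)]^u=\{f(z)\}$. You simply spell out the intermediate justifications (separatrices to separatrices, segments to segments) that the paper leaves implicit.
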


\begin{proof}
If $z$ is a first intersection point, there exist singular points of $f$, $p$ and $q$, such that:
$$
\{z\}=(p,z]^s\cap (q,z]^u.
$$
Therefore,
$$
\{f(z)\}=(f(p),f(z)]^s\cap (f(q),f(z)]^u
$$
is a first intersection point of $f$.
\end{proof}

The following result establishes the finiteness of the orbits of these first intersection points:

\begin{prop}\label{Prop: Finite number first intersection points}
Let $f$ be a generalized pseudo-Anosov homeomorphism. Then there exists a finite number of orbits of first intersection points under $f$.
\end{prop}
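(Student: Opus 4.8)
\textbf{Proof plan for Proposition \ref{Prop: Finite number first intersection points}.}

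The plan is to bound the number of orbits of first intersection points by attaching to each such orbit a combinatorial datum coming from finitely many choices, and then showing that on each relevant separatrix only finitely many points can be ``first.'' First I would fix a first intersection point $z$ with $\{z\}=(p,z]^s\cap(q,z]^u$, where $p,q\in\textbf{Sing}(f)$, $F^s(p)$ is a stable separatrix of $p$, and $F^u(q)$ is an unstable separatrix of $q$. By Lemma \ref{Lemm: Image of first intersection is first intersection} the whole $f$-orbit of $z$ consists of first intersection points, and since \textbf{Sing}$(f)$ is finite and $f$ permutes singularities and their separatrices, after replacing $f$ by a fixed power $f^N$ (which does not change the set of first intersection points, only groups orbits) we may assume $f$ fixes $p$, $q$, the separatrix $F^s(p)$, and the separatrix $F^u(q)$. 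There are only finitely many such quadruples $(p, F^s(p), q, F^u(q))$, so it suffices to show that for each fixed quadruple there are finitely many $f^N$-orbits of first intersection points lying on $F^s(p)\cap F^u(q)$.

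Next I would use the measured-foliation structure to localize the analysis. Put a transverse measure coordinate on the separatrices: parametrize $F^s(p)$ by $\mu^u$-distance from $p$ and $F^u(q)$ by $\mu^s$-distance from $q$. Since $f^N_*\mu^s=\lambda^{-N}\mu^s$ and $f^N_*\mu^u=\lambda^N\mu^u$ with $\lambda>1$, the map $f^N$ contracts the $\mu^u$-parameter on $F^s(p)$ by $\lambda^{-N}$ and contracts the $\mu^s$-parameter on $F^u(q)$ by $\lambda^{-N}$ as well (moving points toward $p$, resp.\ toward $q$). Consequently, if $z$ is a first intersection point on this pair of separatrices, then for $k\geq 0$ the iterates $f^{kN}(z)$ are first intersection points whose stable parameter $\mu^u([p,f^{kN}(z)]^s)$ tends to $0$; so each orbit has a unique representative in any fixed small ``primitive window,'' say the set of first intersection points $z$ with $\mu^u([p,z]^s)\in [\lambda^{-N}c, c]$ for a suitable fixed $c>0$ (chosen small enough that $[p,z]^s$ stays inside a foliation chart and avoids other singularities). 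Thus it is enough to prove that this window contains only finitely many first intersection points.

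To finish I would argue that, by compactness, the window contains finitely many of them. The key observation is the ``first'' condition: $(p,z]^s\cap(q,z]^u=\{z\}$ forces $z$ to be the \emph{first} point of $F^u(q)$ (in the order starting from $q$) that meets the compact stable arc $[p,z]^s$, and simultaneously the first point of $[p,z]^s$ (starting from $p$) meeting $[q,z]^u$. Fixing the $\mu^u$-length $t=\mu^u([p,z]^s)\leq c$ determines a compact stable arc $I_t:=[p, z_t]^s\subset F^s(p)$ of that length; by transversality and compactness of $I_t$, the separatrix $F^u(q)$ meets $I_t$ only finitely often, and the first such intersection point (along $F^u(q)$) is the unique candidate for a first intersection point with that stable length --- but that candidate then has a prescribed unstable length $s(t)=\mu^s([q,z_t]^u)$, and the genuine ``first intersection'' requires consistency of both orderings. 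One then shows the set of admissible lengths $t$ in the window is closed and that distinct admissible $t$'s are isolated: if $t_m\to t_\infty$ were admissible lengths of first intersection points $z_m\to z_\infty$, then $z_\infty$ lies on $[p,z_\infty]^s\cap[q,z_\infty]^u$, and because no leaf contains two singularities and leaves are transverse (Proposition \ref{Prop: pseudo-Anosov properties.}), a limiting argument shows $z_\infty$ is itself a first intersection point and the $z_m$ eventually coincide with it or lie on its local stable/unstable cross, contradicting that each $z_m$ was a \emph{first} crossing. Hence the window is finite, and altogether there are finitely many orbits.

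\textbf{Main obstacle.} I expect the genuinely delicate step to be the isolation/compactness argument at the end: ruling out an infinite sequence of distinct first intersection points accumulating in the primitive window. The subtlety is that ``first intersection point'' is a condition about the \emph{global} arcs $(p,z]^s$ and $(q,z]^u$ meeting only at $z$, not a local transversality condition, so one must carefully exploit minimality and the absence of closed leaves (Proposition \ref{Prop: pseudo-Anosov properties.}), together with the uniform contraction/expansion, to prevent arcs of bounded transverse length from developing infinitely many ``first'' crossings --- essentially a finiteness statement about how a pair of leaves can interleave, which is where the bulk of the technical work will lie.
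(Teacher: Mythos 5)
Your reduction to a fixed pair of separatrices via a power of $f$, and the use of a multiplicative window for the stable parameter $\mu^u([p,z]^s)$, is sound and is essentially the paper's fundamental-domain setup. The gap is in the concluding finiteness step. First, the assertion that ``by transversality and compactness of $I_t$, the separatrix $F^u(q)$ meets $I_t$ only finitely often'' is false: by minimality of the invariant foliations (Proposition \ref{Prop: pseudo-Anosov properties.}) the full separatrix $F^u(q)$ is dense in $S$, so it meets any nontrivial compact stable arc infinitely (indeed densely) often; finiteness of the intersection requires \emph{both} arcs to be compact. Second, your accumulation argument does not close: if $z_m$ are distinct first intersection points in the window, nothing in your sketch prevents the unstable lengths $\mu^s([q,z_m]^u)$ from tending to infinity, in which case the limit $z_\infty$ need not lie on $F^u(q)$ at all (it merely lies in the closure of the separatrix, which is the whole surface), so the claim that ``$z_\infty$ is itself a first intersection point'' is unjustified. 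Ruling out precisely this scenario of unbounded interleaving is the whole content of the proposition, so it cannot be deferred to an unspecified limiting argument.

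The missing idea, which is how the paper concludes, is to bound the \emph{unstable} arc by an auxiliary intersection point: fix any $w\in F^s(p)\cap F^u(q)$ whose stable parameter lies below the window (the paper takes $w=g(z_0)$, the image of the endpoint of the fundamental domain). If $z$ is a first intersection point in the window with $\mu^s([q,z]^u)>\mu^s([q,w]^u)$, then $w\in(p,z]^s\cap(q,z]^u$ and $w\neq z$, contradicting the first-intersection property. Hence every candidate lies in the intersection of the compact stable arc given by the window with the compact unstable arc $[q,w]^u$, and two compact transverse arcs meet in a finite set; combined with the contraction along $F^s(p)$ this yields finitely many orbits for each pair of separatrices, and finiteness of the set of such pairs finishes the proof. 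A minor additional slip: $f$ \emph{expands} the $\mu^s$-length along unstable leaves (it does not move points of $F^u(q)$ toward $q$); this is not load-bearing for your window construction, but the statement as written is backwards.
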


\begin{proof}

There is only a finite number of singularities and separatrices of $f$, so Proposition \ref{Prop: Finite number first intersection points} is a direct consequence of the following lemma.

\begin{lemm}\label{lemm: finite intersect in fundamental domain}
		Let $p$ and $q$ be singularities of $f$, and let $F^s(p)$ and $F^u(q)$ be stable and unstable separatrices of these points. There exists a finite number of orbits of first intersection points contained in $F^s(p)\cap F^u(q)$
	\end{lemm}
 
 \begin{proof}

Let $n$ be the smallest natural number such that $g:=f^n$ fixes the separatrices $F^s(p)$ and $F^u(q)$. Take a point $z_0 \in F^s(p)\cap F^u(q)$, which may or may not be a first intersection point of $f$. The interval $(g(z_0),z_0]^s \subset F^s(p)$ serves as a fundamental domain for $g$ and contains a fundamental domain for $f$. To prove the lemma, we need to show that within this interval there exists a finite number of first intersection points.

Consider a first intersection point $z$ in $F^s(p) \cap F^u(q)$. There exists an integer $k \in \mathbb{Z}$ such that $g^k(z)\in (g(z_0),z_0]^s \cap F^u(q)$. Therefore, there are two possible configurations:

 	\begin{enumerate}
\item Either $g^k(z)\in (g(z_0),z_0]^s \cap (q,g(z_0)]^u$, or
\item $g^k(z)\in (g(z_0),z_0]^s \cap (g(z_0),\infty))^u$.
 	\end{enumerate}

  We claim that option $(2)$ is not possible. If such a configuration were to exist, it would imply that $g(z_0) \in (p,g^k(z)]^s \cap [q,g^k(z)]^u$, and thus $g^k(z)$ would not be a first intersection point. However, Lemma \ref{Lemm: Image of first intersection is first intersection} implies that $f^{kn}(z)$ is a first intersection point, leading to a contradiction.
  
Hence, we deduce that $g^k(z) \in (g(z_0),z_0]^s \cap [q,g(z_0)]^u \subset [g(z_0),z_0]^s \cap [q,g(z_0)]^u$, which is the intersection of two compact intervals. Therefore, this intersection is a finite set.
 	
This implies that every first intersection point $z\in F^s(p) \cap F^u(q)$ has some iteration lying in a finite set. Consequently, there are only a finite number of orbits of first intersection points in $F^s(p)\cap F^u(q)$.

 \end{proof}
 
\end{proof}

\subsection{Primitive Markov partitions.} Applying the recipe for Markov partitions described in Proposition \ref{Prop: Recipe for Markov partitions} to the first intersection points of $f$, we obtain a distinguished family of Markov partitions.
 
 \begin{defi}\label{Defi: Primitive Markov partitions}
Let $z$ be a first intersection point of $f$. For all $n \geq n(z)$, we refer to the Markov partition $\mathcal{R}(z,n)$ constructed using the method described in Proposition 
\ref{Prop: Recipe for Markov partitions} as the  \emph{primitive Markov partition} generated by $z$ of order $n$.
 \end{defi}
 
Next, we establish some properties of the first intersection points of $f$ and the primitive Markov partitions that are preserved under topological conjugation. It is important to emphasize that, as shown in Proposition \ref{Prop:  conjugation produces pA}, conjugating a pseudo-Anosov homeomorphism by any homeomorphism results in another pseudo-Anosov homeomorphism.

\begin{theo}\label{Theo: Conjugates then primitive Markov partition}
Let $f: S_f \rightarrow S_f$ and $g: S_g \rightarrow S_g$ be two  pseudo-Anosov homeomorphisms that are conjugated by a homeomorphism $h: S_f \rightarrow S_g$, i.e., $g = h \circ f \circ h^{-1}$. Let $z \in S_f$ be a first intersection point of $f$. The following propositions are true:
	
	\begin{itemize}
	\item[i)] $h(z)$ is a first intersection point of $z$
	\item[ii)] $h(\cJ^u(z))=\cJ^u(h(z))$
	\item[iii)] $h(\delta^s(z))=\delta^s(h(z))$ and $h(\delta^u(z))=\delta^u(h(z))$
	\item[iv)] The graph $\delta^s(z)$ is compatible with $f^n(\delta^u(z))$ if and only if the graph $\delta^s(h(z))$ is compatible with $g^n(\delta^u(z))$.
	\item[v)] The compatibility coefficient of $z$ and $h(z)$ are equal, i.e. $n(z)=n(h(z))$.
	\item[vi)] $r$ is a equivalent class of $u$-rails for $\delta^s(z)$ if and only if $h(r)$ is a equivalent class of $u$-rails for $\delta^s(h(z))$. Similarly for equivalent classes of $s$-rail of $f^n(\delta^u(z))$ and $g^n(\delta^u(h(z)))$.
	\item[vii)] For all $n\geq n(z)=n(h(z))$,  $h(\cR(p,z))=\cR(p,h(z))$ is a primitive Markov partition of $g$ generated by $h(z)$ of order $n$.
	\end{itemize}
\end{theo}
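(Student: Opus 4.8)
The strategy is to verify the seven items in order, since each is used to establish the next; the whole argument is essentially a matter of transporting every object in the construction of Proposition~\ref{Prop: Recipe for Markov partitions} across the conjugacy $h$. The key general principle, already recorded in Proposition~\ref{Prop: conjugation produces pA}, is that $h$ maps $\textbf{Sing}(f)$ bijectively onto $\textbf{Sing}(g)$, sends $k$-prongs to $k$-prongs, and carries the stable (resp. unstable) foliation of $f$ to the stable (resp. unstable) foliation of $g$, preserving the transverse measures up to the common dilatation factor. In particular $h$ sends stable leaves to stable leaves, unstable leaves to unstable leaves, and preserves the relation of being a (non-trivial, compact) subinterval of a separatrice. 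Every step below is an immediate consequence of this, combined with the uniqueness statements built into the construction.

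First I would prove (i): if $\{z\}=(p,z]^s\cap(q,z]^u$ with $p,q\in\textbf{Sing}(f)$, then applying $h$ and using that $h$ is a homeomorphism carrying the relevant separatrices of $p,q$ to separatrices of $h(p),h(q)\in\textbf{Sing}(g)$ gives $\{h(z)\}=(h(p),h(z)]^s\cap(h(q),h(z)]^u$, so $h(z)$ is a first intersection point of $g$; this is the exact analogue of Lemma~\ref{Lemm: Image of first intersection is first intersection}. For (ii), since the primitive segment $J^u(z)=[p,z]^u$ satisfies $h(J^u(z))=[h(p),h(z)]^u=J^u(h(z))$, and since $h$ conjugates $f$ to $g$ so that $h\circ f^{-i}=g^{-i}\circ h$, we get $h(\cJ^u(z))=h\big(\cup_i f^{-i}(J^u(z))\big)=\cup_i g^{-i}(J^u(h(z)))=\cJ^u(h(z))$. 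Item (iii) follows because the $s$-adapted graph $\delta^s(z)$ generated by $\cJ^u(z)$ is characterized (Lemma~\ref{Lemm: the generated graph is adapted}) as the family of stable segments with one endpoint in $\textbf{Sing}(f)$, the other in $\cup\cJ^u(z)$, and interior disjoint from $\cup\cJ^u(z)$; all three conditions are preserved by the homeomorphism $h$ once (ii) is known, so $h(\delta^s(z))=\delta^s(h(z))$, and then the symmetric statement $h(\delta^u(z))=\delta^u(h(z))$ follows by applying the same reasoning to the $u$-adapted graph generated by $\delta^s(z)$.

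Next, (iv) is obtained by transporting the definition of compatibility (Definition~\ref{defi:compatible graphs}): its two clauses refer only to endpoints of intervals, to extremal rails, and to containment of intervals, all of which are preserved by $h$ — here one also uses that $h$ carries an embedded rectangle with stable boundary in $\delta^s$ to an embedded rectangle with stable boundary in $\delta^s(h(z))$, so that $u$-extremal rails of $\delta^s(z)$ go to $u$-extremal rails of $\delta^s(h(z))$, and likewise that $h(f^n(\delta^u(z)))=g^n(\delta^u(h(z)))$ by (iii) and the conjugacy. Then (v) is immediate: $n(z)$ and $n(h(z))$ are defined as the minimum of the same (by (iv)) nonempty set of natural numbers, hence are equal. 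Item (vi) is again a transport of the definition of the equivalence relation $\sim_{\delta^s}$ (Definition~\ref{defi: equivalent rails}): it is phrased entirely in terms of $u$-regular rails and embedded rectangles meeting $\delta^s$ along their stable boundary, and $h$ preserves all of these notions, so $x\sim_{\delta^s(z)}y$ iff $h(x)\sim_{\delta^s(h(z))}h(y)$; taking closures gives the correspondence of induced rectangles, and symmetrically for $s$-rails of $f^n(\delta^u(z))$.

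Finally, (vii) assembles the previous items. The Markov partition $\cR(z,n)$ is, by Proposition~\ref{Prop:compatibles implies Markov partition}, the family of closures of connected components of $\overset{o}{R_i^s}\cap\overset{o}{R_j^u}$ where the $R_i^s$ are induced by $\delta^s(z)$ and the $R_j^u$ by $f^n(\delta^u(z))$. By (iii), (vi), and the conjugacy, $h$ maps this family bijectively onto the family of closures of connected components of the intersection of the rectangles induced by $\delta^s(h(z))$ and by $g^n(\delta^u(h(z)))$, which is exactly $\cR(h(z),n)$; since $h$ is a homeomorphism it preserves interiors and connectedness, so the correspondence is a genuine equality of partitions, $h(\cR(z,n))=\cR(h(z),n)$. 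For $n\ge n(z)=n(h(z))$ this is precisely the primitive Markov partition of $g$ generated by $h(z)$ of order $n$ (Definition~\ref{Defi: Primitive Markov partitions}). I do not anticipate a serious obstacle: the proof is a chain of ``apply $h$ to the defining property'' arguments, and the only point requiring a little care is making sure, in (iv) and (vi), that the auxiliary embedded rectangles appearing in the definitions of extremal rails and of $\sim_{\delta^s}$ are honestly transported by $h$ — which holds because $h$ is a homeomorphism of surfaces carrying one bi-foliated structure to the other.
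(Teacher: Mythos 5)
Your proposal is correct and follows essentially the same route as the paper's own proof: each item is obtained by transporting the defining properties (first intersection point, the graphs $\cJ^u$, $\delta^{s,u}$, extremal rails, compatibility, rail-equivalence classes, and finally the induced rectangles) across the conjugacy $h$, with item $v)$ falling out as the minimum of the same set and item $vii)$ assembled from the preceding ones. The only cosmetic difference is that you invoke the characterization of the generated graph directly where the paper runs the inclusion argument twice (with $h$ and $h^{-1}$), which amounts to the same thing.
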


\begin{proof}
	First, $h$ send transverse foliations of $f$ into transverse foliations of $g$ and singularities of $f$ into singularities of $g$, that is how we use the hypothesis that $f$ and $g$ are pseudo-Anosov homeomorphisms. But even more, $h$ sends disjoint segments of stable and unstable leaves of $f$ to disjoint segments of stable and unstable leaves of $g$, therefore, if $z$ is first intersection point of $f$, $h(z)$ is first intersection point of $g$. This shows the accuracy of Item $i)$.
	
 A direct computation proof Item $ii)$:
	$$
	h(\cJ^u(z))=h(\cup f^{-n}(J^u(z)))=\cup g^{-n}(J^u(h(z)))= \cJ^u(h(z)).
	$$

	A compact interval $[p,x]^s \in \delta^s(z)$ is characterized by having interior disjoint of $\cJ^u(z)$ and endpoint $x\in \cJ^u(z)$. Then $h([p,x]^s)$ is a segment with disjoint interior of $h(\cJ^u(z))=\cJ^u(h(z))$ but endpoint $h(x)\in \cJ^u(h(z))$, this implies $h([p,x]^s)$ is contained in $\delta^s(h(z))$, hence $h(\delta^s(z))\subset \delta^s(h(z))$. The symmetric argument starting from the segment  $[p',x']^s\in \delta^s(h(z))$ and using $h^{-1}$ gives the equality. Analogously it is possible to show that $\delta^u(h(z))=h(\delta^u(z))$. The arguments in this paragraph proved the Item $iii)$.	
	
	Clearly the point $e$ is an extreme point of $\delta^s(z)$ if and only if $h(e)$ is an extreme point of $h(\delta^s(z))=\delta^s(h(z))$, moreover $e\in f^n(\delta^n(z))$ if and only if $h(e)\in g^{n}(\delta^u(h(z)))$. Therefore an extreme point $e$ of $\delta^s(z)$ is contained in $f^n(\delta^n(z))$ if and only if the extreme point $h(e)$ of $\delta^s(h(z))$ is contained in $g^{n}(\delta^u(h(z))$.	Another consequence is that the regular part of $\delta(z)$ is sent by $h$ to the regular part of $\delta(z)$ and an $u$-regular $J$-rail of $\delta(z)$ is such that $h(J)$ is a regular rail of $\delta^s(h(z))$.

	We claim that $J$ is an extremal rail of $\delta^s(z)$ if and only if $h(J)$ is an extremal lane of $\delta^u(h(z))$. Indeed if $R$ is the embedded rectangle of $S_f$ given by the definition of extreme rail having $J$ as a vertical boundary  component, therefore the embedded rectangle $h(R)$ satisfies the definition for $h(J)$ to be an extreme rail of $\delta^s(h(z))$, since  except for $h(J)$ all its other vertical segments are regular rails of $\delta^s(h(z))$ and it stable boundary is contained in $h(\delta^s(z))=\delta^s(h(z))$. Moreover, $J \subset f^n(\delta^u(z))$ if and only if $h(J)\subset g^n(\delta^u(h(z)))$.
		
		This proves that $\delta^s(z)$ is compatible with  $f^n(\delta^u(z))$ if and only if $\delta^s(h(z))$ is compatible with $g^n(\delta^u(h(z)))$. In this way the point $iv)$ is corroborated, but at the same time we deduce that $n(z)=n(h(z))$ because they are the minimum of the same set of natural numbers and thus the point $v)$ is obtained.
	
	Let $r$ be an equivalent class of $u$-rails for $\delta^s(z)$, as stated before $I$ is a regular rail for $\delta^s(z)$ if and only if $h(I)$ is a regular rail for $\delta^s(h(z))$. 	We denote $I\equiv_{\delta^s(z)}I'$ to indicate that any point in $I$ is $\delta^s(z)$ equivalent to any point in $I'$. In this manner $h(I)\equiv_{\delta^s(h(z))}h(I')$, because the image by $h$ of rectangles and regular rails realizing the equivalence between points in $I$ and points in $I'$ are rectangles and regular rails realizing the equivalence between points in $h(I)$ and points in ´$h(I')$. This implies that $h(r)$ is an equivalent class of $u$-rails for $\delta^s(h(z))$. Analogously the image by $h$ of an equivalent class $r'$ of $s$-rails for $f^n(\delta^u(z)$ is an equivalent class of $s$-rails for $g^n(\delta^u(h(z)))$.

	As the interior of a rectangle $R$ of $R(z,p)$ is a connected component of the intersection a equivalent class $r$ of $u$-rails of $\delta^s(z)$ with  a class $r'$ of $s$-rails for $f^n(\delta^u(z))$, then $h(R)$ is a connected component of the intersection of $h(r)\cap h(r')$ and $h(r)$ and $h(r')$ are $s,u$-rail classes for $\delta^s(h(z))$ and $g^n(\delta^u(h(z)))$, by definition $h(\overset{o}{R})$ correspond to the interior of a rectangle of $\cR(h(z),n)$ and then $h(\cR(z,n))=\cR(h(z),n)$. In this manner we obtain Item $vi)$

	Since the interior of a rectangle  $R$ of $R(z,p)$ is a connected component of the intersection an $r$ equivalent class of $u$-rails of $\delta^s(z)$ with an $r'$ class of $s$-rails for $f^n(\delta^u(z))$, then $h(R)$ is a connected component of the intersection of $h(r)\cap h(r')$, anyway $h(r)$ and $h(r')$ are $s,u$-rail classes for $\delta^s(h(z))$ and $g^n(\delta^u(h(z)))$, therefore $h(\overset{o}{R})$ correspond to the interior of a rectangle of $\cR(h(z),n)$ and then $h(\cR(z,p))=\cR(h(z),p)$. This probe the Item $vii)$ and finish our proof.
\end{proof}

In particular, if $f = g = h$, we have $f = f \circ f \circ f^{-1}$. We deduce that $n(z) = n(f(z))$ for every first intersection point $z$ of $f$. Therefore, the quantity $n(f^m(z))$ is constant over the entire orbit of $z$. In view of Proposition \ref{Prop: Finite number first intersection points}, there are only a finite number of orbits of first intersection points, and the following corollary follows immediately.

\begin{coro}\label{Coro: n(f) number}
The number
$$
n(f)=\max\{n(z): z \text{ is a first intersection point of } f \}
$$
is finite. We refer to $n(f)$ as the \emph{compatibility order} of $f$.
\end{coro}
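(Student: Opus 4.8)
\textbf{Proof plan for Corollary \ref{Coro: n(f) number}.}

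The plan is to deduce the finiteness of $n(f)$ directly from two ingredients already in hand: the finiteness of the set of orbits of first intersection points (Proposition \ref{Prop: Finite number first intersection points}) and the orbit-invariance of the compatibility coefficient $z \mapsto n(z)$, which is the $f=g=h$ specialization of Theorem \ref{Theo: Conjugates then primitive Markov partition}. First I would make the orbit-invariance explicit: applying Theorem \ref{Theo: Conjugates then primitive Markov partition}, item $v)$, with $S_f = S_g = S$, $g = f$, and $h = f$ (which is legitimate since $f = f\circ f \circ f^{-1}$ and, by Proposition \ref{Prop: conjugation produces pA}, $f$ is itself a pseudo-Anosov homeomorphism conjugating $f$ to $f$), we obtain $n(z) = n(f(z))$ for every first intersection point $z$ of $f$. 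By item $i)$ of the same theorem, $f(z)$ is again a first intersection point, so this identity propagates along the whole orbit: $n(f^m(z)) = n(z)$ for all $m \in \mathbb{Z}$. Hence the function $z \mapsto n(z)$ is constant on each $f$-orbit of first intersection points, and therefore descends to a well-defined function on the set of such orbits.

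Next I would invoke Proposition \ref{Prop: Finite number first intersection points}, which guarantees that there are only finitely many orbits of first intersection points of $f$, say represented by $z_1, \dots, z_r$. Since $n(z)$ depends only on the orbit of $z$, the set $\{n(z) : z \text{ is a first intersection point of } f\}$ equals the finite set $\{n(z_1), \dots, n(z_r)\}$. A maximum of a finite, nonempty set of natural numbers exists and is finite; nonemptiness is assured because there is at least one first intersection point (by the lemma preceding Lemma \ref{Lemm: Image of first intersection is first intersection}). This yields $n(f) = \max\{n(z_1),\dots,n(z_r)\} < \infty$, and we define this number to be the compatibility order of $f$.

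This argument is essentially bookkeeping, so I do not anticipate a genuine obstacle; the only point requiring a moment of care is the legitimacy of taking $h = f$ in Theorem \ref{Theo: Conjugates then primitive Markov partition}. One must check that $f$ indeed plays the role of a conjugating homeomorphism between $f$ and itself — which is the trivial identity $f = f \circ f \circ f^{-1}$ combined with the fact, recorded in Proposition \ref{Prop: conjugation produces pA}, that the class of generalized pseudo-Anosov homeomorphisms is closed under (indeed trivially here) conjugation, so the hypotheses of the theorem are met. Once that is observed, items $i)$ and $v)$ of Theorem \ref{Theo: Conjugates then primitive Markov partition} give precisely the orbit-invariance we need, and the corollary follows. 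If one prefers to avoid the slightly artificial appeal to $h=f$, an alternative is to re-prove $n(z)=n(f(z))$ directly from the recipe in Proposition \ref{Prop: Recipe for Markov partitions}: the primitive segment, the $f^{-1}$-orbit generating $\mathcal{J}^u$, the induced adapted graphs, and the compatibility condition all transform equivariantly under $f$, so $n(f(z))$ is the minimum of the same set of integers as $n(z)$; I would present the short version via the theorem and remark on this alternative.
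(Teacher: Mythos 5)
Your argument is correct and coincides with the paper's: the text preceding the corollary applies Theorem \ref{Theo: Conjugates then primitive Markov partition} with $f=g=h$ to get $n(z)=n(f(z))$, so $n(\cdot)$ is constant on orbits, and then invokes Proposition \ref{Prop: Finite number first intersection points} on the finiteness of orbits of first intersection points, exactly as you do. Your extra remarks on nonemptiness and the alternative equivariance argument are fine but not needed.
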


\begin{coro}\label{Coro: n(f) conjugacy invariant}
Let $f:S_f\rightarrow S_f$ and $g:S_g\rightarrow S_g$ be two pseudo-Anosov homeomorphisms, conjugated by a homeomorphism $h:S_f\rightarrow S_g$, i.e. $g:=h\circ f \circ h^{-1}$, then the compatibility order of $f$ and $g$ is the same, i.e. $n(f)=n(g)$.
\end{coro}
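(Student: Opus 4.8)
The plan is to derive Corollary~\ref{Coro: n(f) conjugacy invariant} directly from Corollary~\ref{Coro: n(f) number} together with item~$v)$ of Theorem~\ref{Theo: Conjugates then primitive Markov partition}, which already states that for any first intersection point $z$ of $f$ the compatibility coefficients satisfy $n(z)=n(h(z))$. So the only work is bookkeeping: translate an identity about individual points into an identity about the two maxima defining $n(f)$ and $n(g)$.

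First I would recall that, since $h$ conjugates $f$ and $g$ and both are generalized pseudo-Anosov homeomorphisms (Proposition~\ref{Prop: conjugation produces pA}), the homeomorphism $h$ carries the transverse foliations of $f$ to those of $g$ and singularities to singularities, so by item~$i)$ of Theorem~\ref{Theo: Conjugates then primitive Markov partition} the map $z\mapsto h(z)$ is a bijection from the set of first intersection points of $f$ onto the set of first intersection points of $g$; its inverse is $w\mapsto h^{-1}(w)$, which is legitimate because $f=h^{-1}\circ g\circ h$ exhibits $h^{-1}$ as a conjugacy from $g$ to $f$. Then I would invoke item~$v)$: for every first intersection point $z$ of $f$, $n(z)=n(h(z))$.

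Next, the computation of the two maxima: by definition $n(f)=\max\{n(z): z \text{ first intersection point of } f\}$ and $n(g)=\max\{n(w): w \text{ first intersection point of } g\}$, and both maxima are attained and finite by Corollary~\ref{Coro: n(f) number} applied to $f$ and to $g$ respectively. Since $w\mapsto n(w)$ ranges over exactly the same finite set of values $\{n(h(z))\}=\{n(z)\}$ as $z$ ranges over the first intersection points of $f$ — using the bijection of the previous paragraph and item~$v)$ — the two sets of natural numbers coincide, hence so do their maxima: $n(f)=n(g)$.

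I do not expect any real obstacle here; the statement is essentially a corollary of a single already-proved item. The only point to be slightly careful about is the symmetry/surjectivity of $z\mapsto h(z)$ on first intersection points, which is why I would explicitly note that $h^{-1}$ is itself a conjugacy (from $g$ to $f$) before quoting item~$i)$ in both directions; once that is observed, the equality of the index sets, and therefore of the maxima, is immediate.
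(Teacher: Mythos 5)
Your proposal is correct and follows essentially the same route as the paper: both invoke items $i)$ and $v)$ of Theorem~\ref{Theo: Conjugates then primitive Markov partition} to identify the sets $\{n(z)\}$ for $f$ and $g$ and then take maxima. Your extra remark that $h^{-1}$ is a conjugacy from $g$ to $f$, making $z\mapsto h(z)$ a bijection on first intersection points, just makes explicit what the paper's set equalities implicitly use.
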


\begin{proof}
We are in the setting of \ref{Theo: Conjugates then primitive Markov partition}, Items $i)$ and $v)$ of that theorem imply the following set equalities:

\begin{eqnarray*}
\{n(z): z \text{ is a first intersection point of } f \}=\\
\{n(h(z)): z \text{ is a first intersection point of } f \}=\\
\{n(z'): z' \text{ is a first intersection point of } g \}.
\end{eqnarray*}
Therefore, it follows that its maximum is the same and finally that  $n(f)=n(g)$.
\end{proof}

\begin{defi}\label{Defi: primitive Markov parition}
		For every $n>n(f)$	the set all of primitive Markov partition of $f$ generated by first intersection points  of order $n$ is denoted $\cM(f,n)$. 
	$$
	\cM(f,n):=\{\cR(z,n): z \text{ first intersection point of } f \}.
	$$
\end{defi}

Another application of Theorem \ref{Theo: Conjugates then primitive Markov partition} in the case $f=g=h$ is that if $z$ is a first intersection point of $f$ and $n\geq n(f)$, then $f(\cR(z,n))=\cR(f(z),n)$. This yields the following corollary:

\begin{coro}\label{Coro: Finite orbits of primitive Markov partitions}		
Let $n\geq n(f)$. Then, there exists a finite but nonempty set of orbits of primitive Markov partitions of order $n$.
\end{coro}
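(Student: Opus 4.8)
The plan is to combine the finiteness of orbits of first intersection points with the equivariance of the partition-construction recipe under the homeomorphism $f$ itself. First I would recall from Proposition~\ref{Prop: Finite number first intersection points} that $f$ has only finitely many orbits of first intersection points; let $z_1,\dots,z_r$ be representatives of these orbits. Since $n\geq n(f)$, the compatibility order of $f$, Proposition~\ref{Prop: Recipe for Markov partitions} guarantees that for each first intersection point $z$ the primitive Markov partition $\cR(z,n)$ is well-defined, so the set $\cM(f,n)$ is nonempty — it contains at least $\cR(z_1,n)$.

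Next I would invoke Theorem~\ref{Theo: Conjugates then primitive Markov partition} in the special case $f=g=h$, where the conjugating homeomorphism is $f$ itself (so that $f = f\circ f\circ f^{-1}$). Item~i) of that theorem gives that $f(z)$ is again a first intersection point whenever $z$ is, and item~v) gives $n(z)=n(f(z))$, so the hypothesis $n\geq n(f)$ is preserved along orbits. Item~vii) then yields the key equivariance statement: for every first intersection point $z$ and every $n\geq n(f)$,
$$
f\big(\cR(z,n)\big)=\cR(f(z),n).
$$
Consequently the action of $f$ on $\cM(f,n)$ permutes the primitive Markov partitions, and the $f$-orbit of $\cR(z,n)$ consists exactly of the partitions $\cR(f^{m}(z),n)$ for $m\in\ZZ$; in particular each orbit of primitive Markov partitions is the image, under $z\mapsto \cR(z,n)$, of a single orbit of first intersection points.

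From here the counting is immediate: the map sending the orbit of a first intersection point $z$ to the orbit of the primitive Markov partition $\cR(z,n)$ is well-defined (by the displayed equivariance) and surjective onto the set of orbits of primitive Markov partitions of order $n$ (by the definition of $\cM(f,n)$). Since there are only finitely many orbits of first intersection points, there are only finitely many orbits of primitive Markov partitions of order $n$, and this set is nonempty because $\cM(f,n)\neq\emptyset$. I do not anticipate a serious obstacle here: the entire argument is a bookkeeping consequence of results already established, the only subtlety being to make sure the equivariance $f(\cR(z,n))=\cR(f(z),n)$ is applied with the correct specialization of Theorem~\ref{Theo: Conjugates then primitive Markov partition} and that the hypothesis $n\geq n(f)$ is genuinely orbit-invariant, which is exactly what item~v) provides.
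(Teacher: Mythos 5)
Your proposal is correct and follows essentially the same route as the paper: it applies Theorem~\ref{Theo: Conjugates then primitive Markov partition} with $f=g=h$ to get $f(\cR(z,n))=\cR(f(z),n)$ and then uses Proposition~\ref{Prop: Finite number first intersection points} to bound the number of orbits. The only cosmetic difference is that you phrase the count as a surjection from orbits of first intersection points onto orbits of primitive partitions, while the paper writes $z=f^m(z_i)$ explicitly; the content is identical.
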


\begin{proof}
As $n\geq n(f)$, there exists at least one Markov partition $\cR(z,n)$ in $\cM(f,n)$, and therefore the orbit of $\cR(z,n)$, given by $\{\cR(f^m(z),n)\}_{m\in \ZZ}$, is contained in $\cM(f,n)$.

Let $\{z_1, \cdots, z_k\}$ be a set of first intersection points of $f$ such that any other first intersection point can be written as $f^m(z_i)$ for a unique $i \in \{1,\cdots, k\}$, and no two different points $z_i$ and $z_j$ are in the same orbit. . Let $\cR(z,n)$ be a primitive Markov partition of order $n$. If $z$ is any first intersection point, $z=f^m(z_i)$ and we have:
$$
f^m(\cR(z_i,n))=\cR(f^m(z_i),n)=\cR(z,n). 
$$
Therefore, in $\cM(f,n)$, there are at most $k$ different orbits of primitive Markov partitions of order $n$.
\end{proof}

\subsection{The geometric type of the orbit of a primitive Markov partition.}

We are interested in studying the set of geometric types produced by the orbit of a primitive Markov partition. Therefore, it is important to understand the behavior of a Markov partition under the action of a homeomorphism that preserve the orientation.
 
 Let  $\cR=\{R_i\}_{i=1}^n$ be a geometric Markov partition of a generalized pseudo-Anosov homeomorphism $f:S\rightarrow S$, and let $g:S\rightarrow S$ be another generalized pseudo-Anosov homeomorphism that is conjugate to $f$ by a homeomorphism $h:S\rightarrow S$ preserving the orientation. The function $h$ maps the foliations and singularities of $f$ to the foliations and singularities of $g$ while preserving the orientation of the transversal foliations.
 
 It is clear that if $r:[0,1]\times [0,1] \rightarrow R \subset S$ is an oriented rectangle for $f$, then $h\circ r: [0,1]\times [0,1] \rightarrow h(R) \subset S$ is an oriented rectangle for $g$. This is because $h$ determines a unique orientation for the transverse foliations of $h(R)$. Furthermore,  $h(\cR)=\{h(R_i)\}_{i=1}^n$ has a $g$-invariant horizontal boundary and a $g^{-1}$-invariant vertical boundary. These properties are deduced from the conjugation provided by $h$. Based on these observations, we can now give the following definition.

  \begin{defi}\label{Defi: induced geometric Markov partition}
Let $f:S\rightarrow S$ be a generalized pseudo-Anosov homeomorphism, and let $h:S\rightarrow S$ be an orientation-preserving homeomorphism, and $g:=h\circ f \circ h^{-1}$. If $\cR$ is a geometric Markov partition of $f$, the \emph{induced geometric Markov partition} of $g$ by $h$ is the Markov partition $h(\mathcal{R})=\{h(R_i)\}_{i=1}^n$. For each $h(R_i)$, we choose the unique orientation in the vertical and horizontal foliations such that $h$ preserves both orientations at the same time.
 \end{defi}

The following lemma clarifies the correspondence between the horizontal and vertical rectangles of the partition $\mathcal{R}$ and those of $h(\mathcal{R})$.
 
\begin{lemm}\label{Lemm: Conjugated partitions same type.}
	Let $f$ and $g$ be two generalized pseudo-Anosov homeomorphisms conjugated through a homeomorphism $h$ that preserves the orientation. Let $\cR=\{R_i\}_{i=1}^n$  be a geometric Markov partition for $f$, and let $h(\cR)=\{h(R_i)\}_{i=1}^n$ be the geometric Markov partition of $g$ induced by $h$. In this situation, $H$ is a horizontal sub-rectangle of $(f,\mathcal{R})$ if and only if $h(H)$ is a horizontal sub-rectangle of $(g,h(\mathcal{R}))$. Similarly, $V$ is a vertical sub-rectangle of $(f,\mathcal{R})$ if and only if $h(V)$ is a vertical sub-rectangle of $(g,h(\mathcal{R}))$.
\end{lemm}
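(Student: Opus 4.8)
The statement is essentially a transport-of-structure lemma: the homeomorphism $h$ conjugates $f$ to $g$, preserves the transverse foliations and their orientations, and therefore must carry every combinatorial feature of the Markov partition $\cR$ to the corresponding feature of $h(\cR)$. The plan is to unwind the definitions of horizontal and vertical sub-rectangles of a Markov partition (Definition \ref{Defi: horizontal-vertical of the partition}) and check that each defining condition is preserved by $h$ and by $h^{-1}$, using only the conjugacy relation $g = h\circ f\circ h^{-1}$ and the fact that $h$ sends the trivially bi-foliated interior of a rectangle to the trivially bi-foliated interior of its image (this is exactly how $h(\cR)$ was set up as a geometric Markov partition in Definition \ref{Defi: induced geometric Markov partition}).

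First I would record the basic observations already implicit in the setup: for any rectangle $R$ of $f$ with parametrization $\rho$, the composition $h\circ\rho$ is a parametrization of $h(R)$ as a rectangle of $g$ (because $h$ maps $\cF^{s,u}_f$ to $\cF^{s,u}_g$ homeomorphically away from singularities), so $h$ restricts to a homeomorphism from $\overset{o}{R}$ onto $\overset{o}{h(R)}$ carrying the horizontal (stable) leaves of $R$ onto the horizontal leaves of $h(R)$ and the vertical (unstable) leaves of $R$ onto the vertical leaves of $h(R)$. Consequently $h$ maps sub-rectangles of $R$ to sub-rectangles of $h(R)$, and by the very definition of horizontal sub-rectangle of a rectangle (Definition \ref{Defi: vertical/horizontal sub rectangles}), a sub-rectangle $H\subset R_i$ is horizontal for $R_i$ if and only if $h(H)$ is horizontal for $h(R_i)$, and similarly for vertical sub-rectangles. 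This handles the "sub-rectangle of the ambient rectangle" part of the definition.

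Next I would invoke Corollary \ref{Coro: image of sub horzontal-vertical } (or Definition \ref{Defi: horizontal-vertical of the partition} directly): $H$ is a horizontal sub-rectangle of the Markov partition $(f,\cR)$ iff $H$ is the closure of a connected component of $\overset{o}{R_i}\cap f^{-1}(\overset{o}{R_j})$ for some $i,j$. Apply $h$: since $h$ is a homeomorphism, it maps connected components to connected components and commutes with closure, and from $g=h\circ f\circ h^{-1}$ we get $h(f^{-1}(\overset{o}{R_j})) = g^{-1}(h(\overset{o}{R_j})) = g^{-1}(\overset{o}{h(R_j)})$. Hence $h$ maps the connected components of $\overset{o}{R_i}\cap f^{-1}(\overset{o}{R_j})$ bijectively onto the connected components of $\overset{o}{h(R_i)}\cap g^{-1}(\overset{o}{h(R_j)})$, so $h(H)$ is a horizontal sub-rectangle of $(g,h(\cR))$. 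Applying the same argument to $h^{-1}$ (which conjugates $g$ to $f$ and also preserves orientations) gives the converse, and the vertical case is obtained by replacing $f^{-1}$ with $f$ (equivalently, by passing to $f^{-1}$ and using Remark \ref{Rema: Inverse type and inverse pAnosov}).

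I do not anticipate a genuine obstacle here — the lemma is a bookkeeping statement — but the one point requiring a little care is verifying that $h$ really does send the \emph{interior} structure correctly, i.e.\ that $h(\overset{o}{R_i}) = \overset{o}{h(R_i)}$ rather than merely $h(R_i) = \overline{h(\overset{o}{R_i})}$; since the rectangles may be immersed (self-glued on their boundaries), one should note that the interior of a rectangle was defined as the image of $(0,1)^2$ under the parametrization, and $h\circ\rho$ restricted to $(0,1)^2$ is still a homeomorphic embedding, so the equality holds. With that checked, the four bullet-point conditions in Definition \ref{Defi: Markov partition} transported through $h$ give precisely that $h(\cR)$ is a Markov partition of $g$ and that the horizontal/vertical sub-rectangle correspondence claimed in the lemma holds.
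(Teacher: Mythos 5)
Your proof is correct and follows essentially the same route as the paper: the paper's argument is exactly the observation that $h(\overset{o}{R_i})=\overset{o}{h(R_i)}$ together with the conjugacy, so that $C$ is a connected component of $\overset{o}{R_i}\cap f^{\pm 1}(\overset{o}{R_j})$ if and only if $h(C)$ is a connected component of $\overset{o}{h(R_i)}\cap g^{\pm 1}(\overset{o}{h(R_j)})$. Your additional care about interiors of immersed rectangles and the inverse direction via $h^{-1}$ only makes explicit what the paper leaves implicit.
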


\begin{proof}

Observe that $h(\overset{o}{R_i})=\overset{o}{h(R_i)}$. Therefore, $C$ is a connected component of $\overset{o}{R_i}\cap f^{\pm}(\overset{o}{R_j})$ if and only if $h(C)$ is a connected component of $\overset{o}{h(R_i)}\cap g^{\pm}(\overset{o}{h(R_j)})$.

\end{proof}

Let $T(f,\cR)=(n,\{h_i,v_i\},\Phi:=(\rho,\epsilon))$ be the geometric type of the geometric Markov partition $\cR$, and let $T(g,h(\cR))=(n',\{h'_i,v'_i\},\Phi'_T:=(\rho',\epsilon'))$be the geometric type of the induced Markov partition.  A direct consequence of Lemma  \ref{Lemm: Conjugated partitions same type.} is that: $n=n'$, $h_i=h'_i$ and $v_i=v'_i$.

Let $\{\overline{H^i_j}\}_{j=1}^{h_i}$ be the set of horizontal sub-rectangles of $h(R_i)$, labeled with respect to the induced vertical orientation in $h(R_i)$. Similarly, we define $\{\overline{V^k_l}\}_{l=1}^{v_k}$ as the set of vertical sub-rectangles of $h(R_k)$, labeled with respect to the horizontal orientation induced by $h$ in $h(R_k)$.

 By the we choose the orientations in $h(R_i)$ and in $h(R_k)$ is clear that:
 $$
 h(H^i_j) =\overline{H^i_j} \text{ and } h(V^k_l) =\overline{V^k_l}.
 $$
Even more, using the conjugacy we have that: if $f(H^i_j)=V^k_l$ then
$$
g(\overline{H^i_j})= g(h(H^i_j))=h(f(H^i_j))=h(V^k_l)=\overline{V^k_l}.
$$

This implies that in the geometric types, $\rho = \rho'$.

Suppose that $\overline{V^k_l} = g(\overline{H^i_j})$, so the latter set is equal to $h \circ f \circ h^{-1}(\overline{H^i_j})$. The homeomorphism $h$ preserves the vertical orientations between $R_i$ and $h(R_i)$, as well as between $R_k$ and $h(R_k)$. Therefore, $f$ sends the positive vertical orientation of $H^i_j$ with respect to $R_i$ to the positive vertical orientation of $V^k_l$ with respect to $R_k$ if and only if $g$ sends the positive vertical orientation of $\overline{H^i_j}$ with respect to $h(R_i)$ to the positive vertical orientation of $\overline{V^k_l}$ with respect to $h(R_k)$. It follows then that $\epsilon(i,j) = \epsilon'(i,j)$. We summarize this discussion in the following Theorem

\begin{theo}\label{Theo: Conjugated partitions same types}
Let $f$ and $g$ be generalized pseudo-Anosov homeomorphisms conjugated through a homeomorphism $h$ that preserves the orientation, i.e., $g = h \circ f \circ h^{-1}$. Let $\cR=\{R_i\}_{i=1}^n$ be a geometric Markov partition for $f$, and let $h(\cR)=\{h(R_i)\}_{i=1}^n$ be the geometric Markov partition of $g$ induced by $h$. In this situation, the geometric types of $(g,h(\cR))$ and $(f,\cR)$ are the same.
\end{theo}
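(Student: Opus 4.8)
The plan is to track how the homeomorphism $h$ transports all the combinatorial data used to define the geometric type, and to check that each piece of that data is preserved. The proof is essentially the paragraph of discussion preceding the statement, which I would organize as follows. First I would record the basic transport facts: since $h$ conjugates $f$ and $g$ and preserves orientation, it sends the invariant foliations $(\cF^{s,u}_f,\mu^{s,u}_f)$ to $(\cF^{s,u}_g,\mu^{s,u}_g)$ (Proposition~\ref{Prop: conjugation produces pA}), sends singularities to singularities, and — crucially — for any oriented rectangle $r\colon[0,1]^2\to R$ of $f$, the composite $h\circ r\colon[0,1]^2\to h(R)$ is an oriented rectangle of $g$; moreover, by Definition~\ref{Defi: induced geometric Markov partition}, the chosen orientations on $h(R_i)$ are exactly those for which $h$ is orientation-preserving in both the horizontal and vertical directions simultaneously. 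Combined with the fact that $h(\cR)$ has $g$-invariant stable boundary and $g^{-1}$-invariant unstable boundary (which follows from the conjugacy), $h(\cR)$ is a genuine geometric Markov partition of $g$.

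Next I would establish the matching of sub-rectangles. Since $h$ is a homeomorphism, $h(\overset{o}{R_i})=\overset{o}{h(R_i)}$, so $C$ is a connected component of $\overset{o}{R_i}\cap f^{\pm1}(\overset{o}{R_j})$ if and only if $h(C)$ is a connected component of $\overset{o}{h(R_i)}\cap g^{\pm1}(\overset{o}{h(R_j)})$; this is exactly Lemma~\ref{Lemm: Conjugated partitions same type.}. Consequently $n=n'$, and $H^i_j\mapsto h(H^i_j)$, $V^k_l\mapsto h(V^k_l)$ are bijections between the horizontal (resp.\ vertical) sub-rectangles of $(f,\cR)$ and those of $(g,h(\cR))$. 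Because $h$ preserves the vertical orientation of each $R_i$ relative to $h(R_i)$, it preserves the bottom-to-top order of the $H^i_j$'s and the left-to-right order of the $V^k_l$'s; hence $h(H^i_j)=\overline{H^i_j}$ and $h(V^k_l)=\overline{V^k_l}$ with the \emph{same} labels, giving $h_i=h_i'$ and $v_i=v_i'$ and identifying $\cH(T(f,\cR))$ with $\cH(T(g,h(\cR)))$ and likewise for $\cV$.

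Then I would compute $\Phi$. If $f(H^i_j)=V^k_l$, applying $g=h\circ f\circ h^{-1}$ gives $g(\overline{H^i_j})=g(h(H^i_j))=h(f(H^i_j))=h(V^k_l)=\overline{V^k_l}$, so $\rho=\rho'$. For the sign function, fix $(i,j)$ with $f(H^i_j)=V^k_l$: $h$ carries the vertical orientation of $H^i_j$ (as a sub-rectangle of $R_i$) to that of $\overline{H^i_j}$ (as a sub-rectangle of $h(R_i)$), and similarly for $V^k_l$ and $\overline{V^k_l}$ inside $R_k$, $h(R_k)$. Hence $f$ matches the chosen vertical orientations of $H^i_j$ and $V^k_l$ if and only if $g$ matches those of $\overline{H^i_j}$ and $\overline{V^k_l}$, i.e.\ $\epsilon(i,j)=\epsilon'(i,j)$. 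Assembling these equalities $n=n'$, $(h_i,v_i)=(h_i',v_i')$, $\rho=\rho'$, $\epsilon=\epsilon'$ yields $T(f,\cR)=T(g,h(\cR))$.

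I do not expect a serious obstacle here: every step is a direct consequence of $h$ being an orientation-preserving conjugating homeomorphism together with definitions already in place. The one point requiring a little care — and the closest thing to the "main difficulty" — is making sure the orientation bookkeeping is airtight: that the induced orientation on $h(R_i)$ from Definition~\ref{Defi: induced geometric Markov partition} is simultaneously compatible with the horizontal and vertical directions (this is why orientation-preservation of $h$ is essential and why the statement would fail for orientation-reversing $h$), so that the labelings of horizontal and vertical sub-rectangles transport consistently and the $\epsilon$-comparison is between the right pairs of orientations.
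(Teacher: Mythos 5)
Your proof is correct and follows essentially the same route as the paper: it organizes the discussion preceding the statement, invoking Lemma~\ref{Lemm: Conjugated partitions same type.} for the correspondence of sub-rectangles, the conjugacy identity $g(\overline{H^i_j})=h(f(H^i_j))$ for $\rho=\rho'$, and the orientation bookkeeping of Definition~\ref{Defi: induced geometric Markov partition} for $\epsilon=\epsilon'$. No gaps.
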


Since $f$ preserves the orientation, we can consider the case where $f = g = h$, and apply the previous theorem. If $n \geq n(f)$ and $z$ is a first intersection point of $f$, then for all $m \in \mathbb{Z}$, the geometric type of $f^m(\cR(z,n)) = \cR(f^m(z),n)$ is the same as the geometric type of $\cR(z,n)$. This leads to the following corollary.

\begin{coro}\label{Coro: constant type in orbit}
For every primitive Markov partition $\cR(z,n)$ with $n \geq n(f)$ and $z$ as a first intersection point of $f$, we have the following property for all $m \in \mathbb{Z}$:
$$
T(\cR(z,n))= T(\cR(f^m(z),n)).
$$
\end{coro}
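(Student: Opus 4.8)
The plan is to obtain Corollary~\ref{Coro: constant type in orbit} as an immediate specialization of Theorem~\ref{Theo: Conjugated partitions same types}, applied in the degenerate case $f = g = h$. The one preliminary observation needed is that $f$ itself qualifies as an orientation-preserving conjugating homeomorphism of $f$ with itself: indeed $f$ is a generalized pseudo-Anosov homeomorphism, hence by Definition~\ref{Defi: pseudo-Anosov} it preserves the orientation of $S$, and trivially $f = f \circ f \circ f^{-1}$. Thus the hypotheses of Theorem~\ref{Theo: Conjugated partitions same types} are met with the conjugating homeomorphism $h := f$.

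First I would fix $n \geq n(f)$ and a first intersection point $z$ of $f$, so that by Corollary~\ref{Coro: n(f) number} and Definition~\ref{Defi: Primitive Markov partitions} the primitive Markov partition $\cR(z,n)$ is defined, and it is a geometric Markov partition once we equip it with its indexing and vertical orientations as in the recipe of Proposition~\ref{Prop: Recipe for Markov partitions}. Next I would invoke item $vii)$ of Theorem~\ref{Theo: Conjugates then primitive Markov partition} in the case $f = g = h$: it gives $f(\cR(z,n)) = \cR(f(z),n)$ as geometric Markov partitions, and more generally, iterating, $f^m(\cR(z,n)) = \cR(f^m(z),n)$ for every $m \in \ZZ$ (using that $f^m(z)$ is again a first intersection point of $f$ by Lemma~\ref{Lemm: Image of first intersection is first intersection}, and that $n(f^m(z)) = n(z) \leq n(f) \leq n$ by item $v)$ of the same theorem, so the partition $\cR(f^m(z),n)$ is indeed defined).

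Then I would apply Theorem~\ref{Theo: Conjugated partitions same types} with $h = f$ and $\cR = \cR(z,n)$: the theorem asserts that the geometric type of $(f, f(\cR(z,n)))$ equals the geometric type of $(f, \cR(z,n))$. Since the induced geometric Markov partition $f(\cR(z,n))$ is precisely $\cR(f(z),n)$ with the orientations induced by $f$ (this is exactly what item $vii)$ of Theorem~\ref{Theo: Conjugates then primitive Markov partition} records, together with Definition~\ref{Defi: induced geometric Markov partition}), we conclude
$$
T(\cR(z,n)) = T(\cR(f(z),n)).
$$
Finally, replacing $z$ by $f(z), f^2(z), \ldots$ and by $f^{-1}(z), f^{-2}(z), \ldots$ and composing these equalities — or equivalently applying the above to the homeomorphism $f^m$, which is also an orientation-preserving generalized pseudo-Anosov homeomorphism admitting $\cR(z,n)$ as a geometric Markov partition — yields $T(\cR(z,n)) = T(\cR(f^m(z),n))$ for all $m \in \ZZ$, which is the claim.

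I do not anticipate a genuine obstacle here: the whole content has already been front-loaded into Theorem~\ref{Theo: Conjugated partitions same types} and Theorem~\ref{Theo: Conjugates then primitive Markov partition}. The only point requiring a line of care is the bookkeeping that $\cR(f^m(z),n)$ is well-defined for the given $n$ — i.e.\ that the compatibility coefficient does not grow along the orbit — but this is exactly item $v)$ of Theorem~\ref{Theo: Conjugates then primitive Markov partition} (with $f = g = h$), which gives $n(f^m(z)) = n(z)$, combined with the hypothesis $n \geq n(f) = \max_z n(z)$. Once that is noted, the corollary is a two-line deduction.
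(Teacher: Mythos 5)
Your proposal is correct and follows essentially the same route as the paper: the text preceding the corollary sets $f=g=h$ in Theorem~\ref{Theo: Conjugated partitions same types}, uses item $vii)$ of Theorem~\ref{Theo: Conjugates then primitive Markov partition} to identify $f^m(\cR(z,n))$ with $\cR(f^m(z),n)$, and concludes the equality of geometric types along the orbit. Your extra remark that $n(f^m(z))=n(z)\leq n(f)\leq n$ (item $v)$) keeps the partitions well-defined is a fair piece of bookkeeping the paper leaves implicit.
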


In view of Corollary \ref{Coro: Finite orbits of primitive Markov partitions}, for all $n \geq n(f)$ there exists a finite number of primitive Markov partition orbits of $f$ of order $n$. Moreover, the geometric type is constant within each orbit of such a Markov partition. Therefore, for all $n \geq n(f)$, there exists only a finite number of distinct geometric types corresponding to the geometric types of partitions in the set $\cM(f,n)$. We summarize this in the following theorem.

\begin{theo}\label{Theo: finite geometric types}
For every $n > n(f)$, the set $\cT(f,n) := \{T(\cR) : \cR \in \cM(f,n)\}$ is finite. These geometric types are referred to as the \emph{primitive geometric types} of $f$ of order $n$.
\end{theo}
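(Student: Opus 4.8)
The proof of Theorem~\ref{Theo: finite geometric types} is essentially a bookkeeping argument that assembles the pieces already established, so the plan is to trace the logical dependencies rather than prove anything substantially new. First I would recall that, by definition, $\cM(f,n)$ is the collection of all primitive Markov partitions $\cR(z,n)$ of order $n$, where $z$ ranges over the first intersection points of $f$, and that this set is nonempty precisely because $n \geq n(f)$ guarantees the compatibility coefficient $n(z) \leq n(f) \leq n$ for every such $z$ (Corollary~\ref{Coro: n(f) number}), so Proposition~\ref{Prop: Recipe for Markov partitions} actually produces a partition. The target is to show $\cT(f,n) = \{T(\cR) : \cR \in \cM(f,n)\}$ is a finite set of geometric types.

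The key step is to invoke Corollary~\ref{Coro: Finite orbits of primitive Markov partitions}, which tells us there are only finitely many \emph{orbits} of primitive Markov partitions of order $n$ under $f$: pick representatives $z_1,\dots,z_k$ among the first intersection points so that every first intersection point is of the form $f^m(z_i)$ for a unique $i$, and then every element of $\cM(f,n)$ equals $f^m(\cR(z_i,n)) = \cR(f^m(z_i),n)$ for some $m \in \ZZ$ and some $i \in \{1,\dots,k\}$ (the last equality being the already-noted consequence of Theorem~\ref{Theo: Conjugates then primitive Markov partition} applied with $f=g=h$). Then I would apply Corollary~\ref{Coro: constant type in orbit} — itself a specialization of Theorem~\ref{Theo: Conjugated partitions same types} to the orientation-preserving homeomorphism $f$ conjugating $f$ to itself — which says the geometric type is constant along an orbit: $T(\cR(f^m(z),n)) = T(\cR(z,n))$ for all $m$. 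Combining these two facts, every $T(\cR)$ with $\cR \in \cM(f,n)$ equals one of the finitely many types $T(\cR(z_1,n)),\dots,T(\cR(z_k,n))$, hence $\cT(f,n)$ has at most $k$ elements and is finite. Nonemptiness follows since $\cM(f,n) \neq \emptyset$.

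Honestly, there is no serious obstacle here; the theorem is a corollary in all but name, and the real content was already spent in (i) proving the recipe of Proposition~\ref{Prop: Recipe for Markov partitions} actually terminates, (ii) the finiteness of orbits of first intersection points (Proposition~\ref{Prop: Finite number first intersection points}) which underlies Corollary~\ref{Coro: Finite orbits of primitive Markov partitions}, and (iii) the invariance of geometric type under orientation-preserving conjugacy (Theorem~\ref{Theo: Conjugated partitions same types}). The only point requiring a modicum of care is making sure the quantifier ``$n > n(f)$'' in the statement versus ``$n \geq n(f)$'' used in the supporting corollaries is handled consistently — I would simply note that $n > n(f)$ implies $n \geq n(f)$, so all cited results apply verbatim. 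I would also remark, for completeness, that the indexing and orientation choices entering the definition of a \emph{geometric} Markov partition are carried along the orbit by the induced-partition construction of Definition~\ref{Defi: induced geometric Markov partition}, which is exactly why Theorem~\ref{Theo: Conjugated partitions same types} gives equality of geometric types and not merely of the underlying unoriented combinatorics.
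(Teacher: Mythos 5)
Your argument is exactly the paper's: finitely many orbits of primitive Markov partitions of order $n$ (Corollary~\ref{Coro: Finite orbits of primitive Markov partitions}), constancy of the geometric type along each orbit (Corollary~\ref{Coro: constant type in orbit}, i.e.\ Theorem~\ref{Theo: Conjugated partitions same types} with $f=g=h$), hence finitely many types in $\cT(f,n)$. The proposal is correct and matches the paper's reasoning, including the harmless $n>n(f)$ versus $n\geq n(f)$ point.
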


Of course, within all the primitive geometric types, there is a distinguished family that, in some sense, has the least complexity with respect to its compatibility coefficient.

\begin{defi}\label{defi: Canonical types}

The canonical types of $f$ are the set $\cT(f,n(f))$.
\end{defi}

In fact, in Corollary \ref{Coro: n(f) conjugacy invariant}, we have proved that if $f$ and $g$ are topologically conjugate, then $n(f)=n(g)$. By Theorem  \ref{Theo: Conjugated partitions same types}, we deduce that for every $n\geq n(f)=n(g)$, the set of primitive types of $f$ of order $n$ coincides with the set of primitive geometric types of $g$ of order $n$. In particular, we have the following corollary.

\begin{coro}\label{Coro: conjugated implies same cannonical types}
If $f$ and $g$ are conjugate generalized pseudo-Anosov homeomorphisms, for all $m\geq n(f)=n(g)$, $\cT(f,m)=\cT(g,m)$ and their canonical types are the same, i.e., $\cT(f,n(f))=\cT(g,n(g))$.
\end{coro}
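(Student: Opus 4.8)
The plan is to assemble the corollary directly from the two main ingredients that precede it, namely Corollary~\ref{Coro: n(f) conjugacy invariant} and Theorem~\ref{Theo: Conjugated partitions same types}, together with the fact (from Theorem~\ref{Theo: Conjugates then primitive Markov partition}) that a conjugating homeomorphism carries primitive Markov partitions of order $n$ to primitive Markov partitions of order $n$. First I would recall the setup: let $h:S_f\to S_g$ be an orientation-preserving homeomorphism with $g=h\circ f\circ h^{-1}$. By Corollary~\ref{Coro: n(f) conjugacy invariant} we immediately have $n(f)=n(g)$, so the phrase ``for all $m\geq n(f)=n(g)$'' makes sense and it suffices to prove $\cT(f,m)=\cT(g,m)$ for each such $m$; the statement about canonical types is then the special case $m=n(f)=n(g)$.

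Next I would prove the inclusion $\cT(f,m)\subseteq\cT(g,m)$. Take $T\in\cT(f,m)$; by definition $T=T(\cR)$ for some $\cR\in\cM(f,m)$, i.e.\ $\cR=\cR(z,m)$ for a first intersection point $z$ of $f$ with $m\geq n(f)$. By Item $vii)$ of Theorem~\ref{Theo: Conjugates then primitive Markov partition}, $h(\cR(z,m))=\cR(h(z),m)$ is a primitive Markov partition of $g$ generated by the first intersection point $h(z)$ of $g$, of order $m$ (here one also uses Item $v)$, $n(z)=n(h(z))$, so that $m\geq n(h(z))$ as required); hence $h(\cR)\in\cM(g,m)$. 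Since $h$ is orientation preserving and $g=h\circ f\circ h^{-1}$, Theorem~\ref{Theo: Conjugated partitions same types} applies and gives $T(g,h(\cR))=T(f,\cR)=T$. Therefore $T\in\cT(g,m)$. The reverse inclusion $\cT(g,m)\subseteq\cT(f,m)$ is obtained by the symmetric argument applied to $h^{-1}:S_g\to S_f$, which is also orientation preserving and conjugates $g$ to $f$. Combining the two inclusions yields $\cT(f,m)=\cT(g,m)$, and specializing to $m=n(f)=n(g)$ gives $\cT(f,n(f))=\cT(g,n(g))$, which is exactly the assertion.

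I do not expect a genuine obstacle here, since every nontrivial step is already packaged as a previously established result; the only points requiring care are bookkeeping ones. First, one must make sure that the order parameter is admissible on the $g$-side: the definitions $\cM(g,m)$ and $\cT(g,m)$ require $m\geq n(g)$, and this is guaranteed precisely because $n(f)=n(g)$ by Corollary~\ref{Coro: n(f) conjugacy invariant}, so no first intersection point of $g$ has compatibility coefficient exceeding $m$. Second, one should note that $\cM(f,m)$ is taken up to the choice of first intersection point (not up to orbit), so the map $\cR\mapsto h(\cR)$ need not be injective on orbits, but this is irrelevant: we only need it to send $\cM(f,m)$ into $\cM(g,m)$ and to preserve geometric types, both of which hold. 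With these observations the proof is a short formal deduction.

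\begin{proof}
Since $f$ and $g$ are conjugate, let $h:S_f\to S_g$ be an orientation-preserving homeomorphism with $g=h\circ f\circ h^{-1}$. By Corollary~\ref{Coro: n(f) conjugacy invariant} we have $n(f)=n(g)$; write $N:=n(f)=n(g)$, and fix $m\geq N$.

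We show $\cT(f,m)\subseteq \cT(g,m)$. Let $T\in\cT(f,m)$, so $T=T(\cR)$ with $\cR\in\cM(f,m)$; thus $\cR=\cR(z,m)$ for some first intersection point $z$ of $f$ and $m\geq n(z)$. By Items $v)$ and $vii)$ of Theorem~\ref{Theo: Conjugates then primitive Markov partition}, $h(z)$ is a first intersection point of $g$ with $n(h(z))=n(z)\leq m$, and $h(\cR(z,m))=\cR(h(z),m)$ is a primitive Markov partition of $g$ generated by $h(z)$ of order $m$; hence $h(\cR)\in\cM(g,m)$. Because $h$ preserves the orientation and $g=h\circ f\circ h^{-1}$, Theorem~\ref{Theo: Conjugated partitions same types} gives
$$
T(g,h(\cR))=T(f,\cR)=T,
$$
so $T\in\cT(g,m)$. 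Applying the same argument to the orientation-preserving homeomorphism $h^{-1}:S_g\to S_f$, which conjugates $g$ to $f$, yields $\cT(g,m)\subseteq\cT(f,m)$. Therefore $\cT(f,m)=\cT(g,m)$ for every $m\geq N$.

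In particular, taking $m=N=n(f)=n(g)$ we obtain $\cT(f,n(f))=\cT(g,n(g))$, i.e.\ $f$ and $g$ have the same canonical types.
\end{proof}
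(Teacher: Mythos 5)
Your proof is correct and follows essentially the same route as the paper: the paper deduces the corollary from Corollary~\ref{Coro: n(f) conjugacy invariant} (giving $n(f)=n(g)$) together with Theorem~\ref{Theo: Conjugates then primitive Markov partition} and Theorem~\ref{Theo: Conjugated partitions same types}, exactly as you do. Your version merely spells out the two inclusions via $h$ and $h^{-1}$, which the paper leaves implicit.
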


In the next chapter, we will demonstrate that if $f$ and $g$ are pseudo-Anosov homeomorphisms with geometric Markov partitions of the same geometric type, then they are topologically conjugate trough an orientation preserving homeomorphism. Therefore, having the same canonical type is a sufficient condition for topological conjugacy. We are going provided an answer to the finite presentation problem, but, currently, we do not have a finite algorithm to compute $n(f)$ or $\cT(f,n(f))$ starting from an arbitrary Markov partition of $f$ with geometric type $T$. What is possible, and will be shown in the last chapter, is the existence of a finite algorithm that, given a geometric type $T$, computes a number $O(T)\geq n(f)$ and the set $\cT(f,n)$ for $n\geq O(T)$. This allows us to obtain a finite presentation of $f$ although it may not be canonical.

\chapter{Combinatorial representatives of conjugacy classes.}\label{Chapter: TypeConjugacy}

\section*{A complete conjugation invariant.}

The objective of this chapter is to prove the following Theorem \ref{Theo: conjugated iff  markov partition of same type} that addresses Item 1.I in Problem \ref{Prob: Clasification}, regarding the combinatorial representation of the conjugacy class of a generalized pseudo-Anosov homeomorphism.

\begin{theo}\label{Theo: conjugated iff  markov partition of same type}
Let $f:S_f\rightarrow S_f$ and $g:S_g \rightarrow S_g$ be two generalized pseudo-Anosov homeomorphisms. Then, $f$ and $g$ have a geometric Markov partition of the same geometric type if and only if there exists an orientation preserving homeomorphism between the surfaces $h:S_f\rightarrow S_g$ that conjugates them, i.e., $g=h\circ f\circ h^{-1}$.
\end{theo}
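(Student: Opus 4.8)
The plan is to establish the two implications separately. The implication ``conjugate $\Rightarrow$ common geometric type'' is immediate from what has already been proved: if $h\colon S_f\to S_g$ is an orientation-preserving homeomorphism with $g=h\circ f\circ h^{-1}$, choose any geometric Markov partition $\cR$ of $f$ (one exists by Corollary \ref{Coro: Existence adapted Markov partitions}); then $h(\cR)$, equipped with the orientations induced by $h$ as in Definition \ref{Defi: induced geometric Markov partition}, is a geometric Markov partition of $g$, and by Theorem \ref{Theo: Conjugated partitions same types} (whose proof applies verbatim when the source and target surfaces differ) $T(g,h(\cR))=T(f,\cR)$. So the whole content of the statement lies in the converse.

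For the converse, suppose $\cR_f=\{R_i\}_{i=1}^n$ is a geometric Markov partition of $f$ and $\cR_g$ one of $g$, both of the same geometric type $T$. Propositions \ref{Prop: The relation determines projections} and \ref{Prop:  cociente T} are formulated for geometric types with binary incidence matrix, so the first step is a reduction to that case. Replace $\cR_f$ by the family $\{H^i_j\}$ of its horizontal sub-rectangles, each oriented by the restriction of the orientation of the rectangle $R_i$ that contains it; this is again a geometric Markov partition of $f$, and its incidence matrix is binary, because $f(H^i_j)$ is a vertical sub-rectangle $V^k_l$ of $R_k$, so a transition $H^i_j\to H^{i'}_{j'}$ can occur only when $i'=k$, and then $\overset{o}{V^k_l}\cap\overset{o}{H^{k}_{j'}}$ is a single rectangle. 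A direct computation (compute, for each $(i,j)$ with $\rho_T(i,j)=(k,l)$, the horizontal and vertical sub-rectangles of $H^i_j$, their images, and the attached signs $\epsilon$) shows that the geometric type of $\{H^i_j\}$ depends only on $T$; call it $T'$. Performing the same refinement on $\cR_g$ then yields a geometric Markov partition of $g$ of the same geometric type $T'$; moreover $T'$ is in the pseudo-Anosov class (it is realized by $(f,\{H^i_j\})$) and $A(T')$ is binary. After renaming we may therefore assume from now on that $A(T)$ itself is binary.

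Now invoke Proposition \ref{Prop:  cociente T} twice. Applied to $(f,\cR_f)$ it produces the combinatorial model $(\Sigma_T,\sigma_T)$ — with $\Sigma_T=\Sigma_A/\sim_T$ a closed orientable surface and $\sigma_T$ a generalized pseudo-Anosov homeomorphism — together with a homeomorphism $[\pi_f]\colon\Sigma_T\to S_f$ conjugating $\sigma_T$ to $f$. Applied to $(g,\cR_g)$, and using that the relation $\sim_T$ and the model $(\Sigma_T,\sigma_T)$ are built out of $T$ alone (Proposition \ref{Prop: The relation determines projections}), it produces a homeomorphism $[\pi_g]\colon\Sigma_T\to S_g$ conjugating $\sigma_T$ to $g$. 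Then $h:=[\pi_g]\circ[\pi_f]^{-1}\colon S_f\to S_g$ satisfies $h\circ f\circ h^{-1}=[\pi_g]\circ\sigma_T\circ[\pi_g]^{-1}=g$. It remains to see that $h$ preserves orientation. The surface $\Sigma_T$ carries a canonical orientation determined by $T$ — the one coming from the chosen vertical/horizontal orientations of the rectangles, which are part of the data encoded in $\sim_T$ — and both $[\pi_f]$ and $[\pi_g]$ are built so as to carry this orientation to the orientation of $S_f$, resp.\ $S_g$; hence $h$ is orientation-preserving. (Alternatively, an orientation-reversing conjugacy between generalized pseudo-Anosov homeomorphisms is impossible, since a topological conjugacy must respect the cyclic order of the separatrices at each singularity, which is recorded by the geometric type.) This proves the converse and completes the proof.

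The real difficulty is not the deduction above but the two cited propositions, which form the technical heart of the chapter. One must: (i) define, from $T$ alone, the equivalence relation $\sim_T$ on $\Sigma_A$ that detects when two itineraries converge to the same point of the surface — which forces a careful combinatorial analysis of how rectangles of a Markov partition can share stable or unstable boundary arcs and of how orbits approach the singularities, through the sector formalism of Definition \ref{Defi: Sector} and Lemma \ref{Lemm: sector contined unique rectangle}; (ii) prove that for every realization $(f,\cR)$ of $T$ the projection $\pi_f$ collapses exactly the $\sim_T$-classes, independently of $f$; and (iii) show that the quotient $\Sigma_T$ is a closed orientable surface on which the induced map $\sigma_T$ is pseudo-Anosov and not merely a homeomorphism. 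Within the proof of the theorem proper, the only points that need care are the reduction to a binary incidence matrix and the orientation bookkeeping just described.
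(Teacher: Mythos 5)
Your overall route is the same as the paper's: the forward direction via Theorem \ref{Theo: Conjugated partitions same types}, the reduction to a binary incidence matrix by passing to the horizontal refinement (whose geometric type depends only on $T$), and then two applications of Proposition \ref{Prop:  cociente T} (resting on Proposition \ref{Prop: The relation determines projections}) to obtain $[\pi_f],[\pi_g]\colon\Sigma_T\to S_f,S_g$ and the conjugacy $h=[\pi_g]\circ[\pi_f]^{-1}$. Up to that point the proposal is correct and essentially identical to the text.

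The one place where your argument is thinner than it needs to be is the orientation-preservation step. You assert that $\Sigma_T$ ``carries a canonical orientation determined by $T$'' which both $[\pi_f]$ and $[\pi_g]$ transport to the orientations of $S_f$ and $S_g$, but this is exactly what has to be proved, and the paper does not get it for free: it first passes to a refinement in which every rectangle has $h_i,v_i>2$, then shows that $h$ maps each horizontal sub-rectangle $H^i_j$ of $\cR_f$ onto the corresponding sub-rectangle $\underline{H}^i_j$ of $\cR_g$, deduces from the ordering of the sub-rectangles (this is where $h_i,v_i>2$ is used) that $h$ preserves the transverse orientations of both foliations inside each rectangle, and finally checks coherence of these orientations across adjacent rectangles along shared stable boundaries. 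Some argument of this kind is needed; without it the claim that the projections ``are built so as to carry this orientation'' is circular. Your parenthetical fallback is not a valid substitute: an orientation-reversing conjugacy between generalized pseudo-Anosov homeomorphisms is not impossible in general (conjugating by any orientation-reversing homeomorphism of the surface produces such a situation), so preservation of orientation really must be extracted from the fact that both partitions realize the \emph{same} geometric type, as the paper does.
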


We will prove that every generalized pseudo-Anosov homeomorphism $f$ with a geometric Markov partition of geometric type $T$ is conjugate to a unique \emph{combinatorial model} determined by $T$, denoted by $(\Sigma_T, \Sigma_T)$. Before proceeding, let's outline our strategy.

Suppose $f: S \rightarrow S$ is a generalized pseudo-Anosov homeomorphism with a geometric Markov partition of geometric type $T$. In order to prove Theorem \ref{Theo: conjugated iff  markov partition of same type}, our strategy is to construct a \emph{combinatorial model} of $f$ using the sub-shift of finite type associated with the geometric type $T$ and quotient it by an equivalence relation $\sim_T$ determined by $T$. Then, Proposition \ref{Prop: cociente T} establishes that every generalized pseudo-Anosov homeomorphism with a Markov partition of geometric type $T$ is topologically conjugate to such a combinatorial model. This implies that any two pseudo-Anosov homeomorphisms with the same geometric type are topologically conjugate. After that we need to prove that the homeomorphism that conjugated the pseudo-Anosov is orientation preserving.

If we let $A := A(T)$ be the incidence matrix of $T$, the construction of the combinatorial model involves taking the quotient of the sub-shift of finite type $(\Sigma_A, \sigma)$ by an equivalence relation $\sim_T$. This results in a space denoted by $\Sigma_T := \Sigma_A/\sim_T$, where the shift map $\sigma$ induces a generalized pseudo-Anosov homeomorphism $\sigma_T$. The equivalence relation $\sim_T$ is directly related to every pair $(f,\mathcal{R})$ that realizes $T$ through the concept of \emph{projection} $\pi_f: \Sigma_A \rightarrow S$. The projection $\pi_f$ is a continuous function that semi-conjugates $\sigma$ with $f$. The explicit relationship between the equivalence relation $\sim_T$ and the projections is established in Proposition \ref{Prop: The relation determines projections}, which serves as the main technical result in this chapter.

\begin{prop}\label{Prop: The relation determines projections}
Let $T$ be a geometric type in the pseudo-Anosov class with a binary incidence matrix $A = A(T)$. Consider the sub-shift of finite type $(\Sigma_A, \sigma)$ associated with $T$. We can define an equivalence relation $\sim_T$ on $\Sigma_A$ algorithmically in terms of $T$.

The equivalence relation $\sim_T$ has the following property: If $(f, \mathcal{R})$ is a pair that realizes the geometric type $T$, and $\pi_f: \Sigma_A \rightarrow S$ is the projection induced by $(f, \mathcal{R})$, then for any pair of codes $\underline{w}, \underline{v} \in \Sigma_A$, they are $\sim_T$-related if and only if their projections coincide, i.e., $\pi_f(\underline{w}) = \pi_f(\underline{v})$.
	
\end{prop}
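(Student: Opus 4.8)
The plan is to construct $\sim_T$ as the equivalence relation generated by two families of ``elementary moves'' on bi-infinite admissible words of $\Sigma_A$: \emph{stable moves} supported on forward rays $[z_0,+\infty)$, \emph{unstable moves} supported on backward rays $(-\infty,z_0]$, and a finite list of \emph{periodic moves} attached to the (finitely many) periodic boundary orbits. All the data describing these moves will be extracted from $T$ and from a controlled number of its powers $T^m$ (Proposition~\ref{Prop: algoritm iterations type}), so that $\sim_T$ depends on $T$ alone. The proof then splits into the easy implication $\underline w\sim_T\underline v\Rightarrow\pi_f(\underline w)=\pi_f(\underline v)$, a verification that each elementary move records a genuine boundary coincidence in $S$, and the converse, which is the technical heart.

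The first task is to isolate the combinatorial gluing data of the boundaries. Since $A(T)$ is binary, each $(i,j)\in\cH(T)$ is a unique transition, so $\rho_T$ and $\epsilon_T$ let one track symbolically how the lower/upper stable boundaries $\partial^s_{\pm}R_i$ and the left/right unstable boundaries $\partial^u_{\pm}R_i$ are permuted: $f$ sends $\partial^s_{-1}R_i$ (the lower boundary of $H^i_1$) onto a stable boundary piece of $R_k$ with $(k,l)=\rho_T(i,1)$, on the lower or upper side according to $\epsilon_T(i,1)$, and symmetrically for $\partial^s_{+1}R_i$ via $(i,h_i)$ and for $\partial^u_{\pm}R_i$ via $\rho_T^{-1}$. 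Iterating, these abstract boundary segments fall into finitely many eventually periodic orbits, all of which become visible within $O(n)$ iterates of $T$; this is where periodic boundary points and their sectors (Definition~\ref{Defi: Sector}, Lemma~\ref{Lemm: sector contined unique rectangle}) come in. From this I would read off, for each position, the ``multi-labels'' compatible with a point lying in the interior of a given stable boundary orbit, \emph{and} the deterministic rule by which choosing a side at one position forces the side at the next position forward (for stable ambiguities) or backward (for unstable ones); this rule uses that a vertical sub-rectangle carries its stable boundary on that of the ambient rectangle. A stable move from $\underline w$ to $\underline v$ is then: $w_z=v_z$ for $z<z_0$, and for $z\ge z_0$ the pairs $(w_z,v_z)$ run along such a forced side-flip of a stable boundary orbit; unstable moves are symmetric on backward rays; and periodic moves are the identifications occurring over a periodic boundary point, where the admissible codes are in bijection with the sectors of that point (using Proposition~\ref{Prop: image secto is a sector} and Lemma~\ref{Lemm: sector contined unique rectangle}) and are permuted combinatorially by $T$.

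For $\underline w\sim_T\underline v\Rightarrow\pi_f(\underline w)=\pi_f(\underline v)$ it suffices to treat one elementary move. If $x=\pi_f(\underline w)$ then $f^z(x)\in R_{w_z}$ for all $z$, and $f^{z_0}(x)$ lies in the interior of the stable boundary segment prescribed by the move; since $(f,\cR)$ realizes $T$, the Markov property (Proposition~\ref{Prop: Markov criterion boundary}) forces the two rectangles glued along that segment at every position $z\ge z_0$ to be exactly $R_{w_z}$ and $R_{v_z}$ — this is precisely the content of the forced propagation rule built into the move. Hence $f^z(x)\in R_{v_z}$ for all $z$; since each rectangle equals the closure of its interior and $\pi_f$ is continuous, approximating $x$ inside the nested closures $\overline{\cap_{z=-n}^{n}f^{-z}(\overset{o}{R_{v_z}})}$ yields $\pi_f(\underline v)=x$. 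Unstable and periodic moves are handled identically, the latter using Proposition~\ref{Prop: image secto is a sector}.

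The converse is the main obstacle. Given $\pi_f(\underline w)=\pi_f(\underline v)=x$ with $\underline w\ne\underline v$, put $D=\{z:w_z\ne v_z\}$; disjointness of interiors forces $f^z(x)\in\partial^s\cR\cup\partial^u\cR$ for $z\in D$. The key step is a confinement lemma: $\partial^s\cR$ is $f$-invariant and $\partial^u\cR$ is $f^{-1}$-invariant (Proposition~\ref{Prop: Markov criterion boundary}), and minimality of the invariant foliations (Proposition~\ref{Prop: pseudo-Anosov properties.}) rules out orbits trapped forever in one boundary except for the finitely many periodic boundary orbits; consequently either $x$ lies on a periodic boundary orbit — finitely many cases, treated by periodic moves — or $D$ is contained in the union of a forward ray on which $f^z(x)\in\partial^s\cR$ and a backward ray on which $f^z(x)\in\partial^u\cR$. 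On the forward ray the ambiguity propagates deterministically, so the restriction of $(\underline w,\underline v)$ there is, by the very definition, a stable move or a finite composition of such (if several boundary orbits intervene before the pattern stabilises); symmetrically on the backward ray; composing these witnesses $\underline w\sim_T\underline v$. Since every verification is carried out symbolically using only $T$ and bounded powers of it, this simultaneously establishes the algorithmic nature of $\sim_T$ and its independence of $(f,\cR)$. The delicate points to execute carefully are: (1) the confinement lemma — pinning down the finitely many periodic boundary orbits and showing that outside a finite window the ambiguity is of a single type; (2) proving that the side-propagation rule read from $T$ matches the geometry in \emph{every} realization, i.e. that distinct boundary orbits of $T$ cannot be ``accidentally'' glued in $S$ beyond what the periodic moves already record; and (3) bounding the number of powers of $T$ (we expect $O(n)$) needed to detect all periodic boundary behaviour, so that the defining algorithm terminates.
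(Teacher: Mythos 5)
Your plan is essentially the paper's own construction: your stable and unstable moves are the relations $\sim_s$ and $\sim_u$ built from the boundary-tracking of $\rho_T,\epsilon_T$ (the $s$- and $u$-generating functions), anchored at two adjacent horizontal sub-rectangles of a common rectangle; your periodic moves are the extension of these relations to the periodic boundary codes; $\sim_T$ is the relation generated by alternating such identifications; and your confinement argument for the converse is the paper's decomposition of $\Sigma_A$ into totally interior, $s$-boundary-leaf and $u$-boundary-leaf codes together with its $k(s)/k(u)$ sector-code analysis, so the three delicate points you flag are precisely what that section settles (pre-periodicity of the $2n$ boundary codes within at most $2n$ iterates, the adjacency anchoring that excludes accidental gluings, and the characterization of fibers of $\pi_f$ as sector codes). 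The only step to tighten is the easy direction: knowing $f^z(x)\in R_{v_z}$ for all $z$ does not by itself give $\pi_f(\underline{v})=x$ under the paper's definition of $\pi_f$ through closures of intersections of interiors, so one must argue, as your mechanism implicitly does and the paper does explicitly, that the flipped code is the code of the sector of $x$ adjacent across the relevant stable (resp.\ unstable) boundary segment, whence $x$ is accumulated by the corresponding open intersections.
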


First, we will introduce the concepts of symbolic dynamical systems that form the basis of our construction. Then, we will proceed to define the equivalence relation step by step. The main result of this chapter is that the geometric type serves as a complete conjugacy invariant for pseudo-Anosov homeomorphisms. As a consequence, we obtain Corollary\ref{Coro: finite canonical types}, which provides a collection of finite invariants for the conjugacy class.

\begin{coro}\label{Coro: finite canonical types}
	Two generalized pseudo-Anosov homeomorphisms $f$ and $g$ are topologically conjugate trough and orientation preserving homeomorphism if and only if there exists an integer $n\geq \max\{n(f),n(g)\}$ such that $\cT(f,n) = \cT(g,n)$. 
\end{coro}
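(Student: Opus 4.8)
The plan is to obtain this corollary purely by assembling results already in hand: Theorem~\ref{Theo: conjugated iff  markov partition of same type} (the geometric type is a complete invariant for orientation-preserving conjugacy) together with the finiteness and conjugacy-invariance of the primitive type families $\cT(f,n)$. There are two implications to establish.

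For the direct implication, I would start from an orientation-preserving conjugacy $g = h\circ f\circ h^{-1}$. Corollary~\ref{Coro: n(f) conjugacy invariant} then gives $n(f)=n(g)$, so $\max\{n(f),n(g)\}=n(f)$, and Corollary~\ref{Coro: conjugated implies same cannonical types} gives $\cT(f,m)=\cT(g,m)$ for every $m\geq n(f)=n(g)$. Taking $n:=n(f)$ furnishes the required integer; any larger choice works equally well.

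For the converse, suppose $n\geq\max\{n(f),n(g)\}$ satisfies $\cT(f,n)=\cT(g,n)$. Since $n\geq n(f)$, Corollary~\ref{Coro: Finite orbits of primitive Markov partitions} ensures that $\cM(f,n)\neq\emptyset$; I would pick $\cR\in\cM(f,n)$ and set $T:=T(\cR)$, so that $T\in\cT(f,n)=\cT(g,n)$. By definition of $\cT(g,n)$ there is a primitive Markov partition $\cR'\in\cM(g,n)$ with $T(\cR')=T$. Hence $f$ admits a geometric Markov partition of geometric type $T$ and $g$ admits one of the same type, and Theorem~\ref{Theo: conjugated iff  markov partition of same type} then produces the desired orientation-preserving homeomorphism $h\colon S_f\to S_g$ with $g=h\circ f\circ h^{-1}$.

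There is no serious obstacle here: the whole difficulty is concentrated in Theorem~\ref{Theo: conjugated iff  markov partition of same type} and in the construction of the families $\cM(f,n)$ and $\cT(f,n)$, which are treated earlier. The only subtlety worth flagging is that a set equality $\cT(f,n)=\cT(g,n)$ carries information only when both sides are nonempty, which is precisely why one appeals to Corollary~\ref{Coro: Finite orbits of primitive Markov partitions} to extract a genuine common geometric type, and why the hypothesis is stated with $n\geq\max\{n(f),n(g)\}$ rather than merely $n\geq n(f)$, so that $f$ and $g$ play symmetric roles.
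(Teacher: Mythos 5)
Your proposal is correct and is essentially the argument the paper intends: the forward direction is exactly Corollary~\ref{Coro: n(f) conjugacy invariant} together with Corollary~\ref{Coro: conjugated implies same cannonical types}, and the converse extracts a common geometric type from the nonempty set $\cT(f,n)=\cT(g,n)$ (via Corollary~\ref{Coro: Finite orbits of primitive Markov partitions}) and applies Theorem~\ref{Theo: conjugated iff  markov partition of same type}. No gaps; this matches the paper's (implicit) proof.
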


 In particular, they are topologically conjugate if and only if their canonical geometric types are the same, i.e., $\cT(f,n(f)) = \cT(g,n(g))$.

%%Secciones
\section{Symbolic dynamical systems induced by a geometric type.}\label{Sub: symbolic dynamics of a Type}

In this exposition, we introduce the properties and definitions of \emph{symbolic dynamical systems} from which we will develop our combinatorial models. We formulate and prove some classical results in terms of geometric type to provide a unified approach and highlight the challenge of associating a symbolic dynamical system to a homeomorphism.
 
 \subsection{The incidence matrix of a geometric Type}\label{Subsec: Incidence matrix}

 	Let $f$ be a generalized pseudo-Anosov homeomorphism and $\mathcal{R}$ be a geometric Markov partition of $f$ with geometric type $T$. The incidence matrix of $(f, \mathcal{R})$ is the matrix $A := A(f,\mathcal{R}) = (a_{i,j})$ with coefficients given by:
 	
 $$
 a_{i,j}=\# \{c.c. \text{ of } \overset{o}{R_i} \cap f^{-1}(\overset{o}{R_j})\}.
 $$
 Since our original information is a geometric type, we explain how to obtain the incidence matrix of a geometric Markov partition in terms of the geometric type of the partition. The coefficient $a_{i,k}$ can be obtained by counting how many horizontal sub-rectangles of $R_i$ are mapped to vertical sub-rectangles of $R_k$, i.e., by counting all $j \in \{1, \ldots, h_i\}$ such that $\Phi(i,j) = (k,l,\pm 1)$.  This gives rise to the formula:
  
$$
a_{i,k}=\#\{j \in \{1,\cdots,h_i\}: \Phi(i,j)=(k,l,\epsilon(i,j))\}.
$$

The incidence matrix contains less information than the geometric type, and there can be multiple geometric types with the same incidence matrix. However, this formula suggests a general definition.

\begin{defi}\label{Defi: Incidence matriw of a type}

Let $T=\{n,\{(h_i,v_i)\}_{i=1}^n, \Phi_T\}$ be an abstract geometric type. The  \emph{incidence matrix of the geometric type } $T$ is denoted by $A(T)$ and defined as the matrix $A(T) = (a_{ik})$ with coefficients given by:

$$
a_{i,k}=\#\{j \in \{1,\cdots,h_i\}: \Phi(i,j)=(k,l,\epsilon(i,j))\}.
$$
\end{defi}

If $T$ belongs to the pseudo-Anosov class and the pair $(\mathcal{R}, f)$ realizes $T$, i.e., $T = T(f, \mathcal{R})$, we can conveniently write $A(T) = A(T(f, \mathcal{R})) = A(\mathcal{R})$ to refer to the same object, depending on the level of precision required. Now, let us pose an interesting question.

\begin{ques}\label{Ques: Same incidence non cojugated}
Give an example of two non-conjugate generalized pseudo-Anosov homeomorphisms $f$ and $g$ with geometric Markov partitions whose geometric types $T_f$ and $T_g$ have the same incidence matrices, i.e., $A(T_f) = A(T_g)$. How are the genera of $S_f$ and $S_g$ related? Are they equal?
\end{ques}

\begin{comment}
The following is a fair question but before be able to answer it, we need to develop our theory, in particular we need condition to determine when a geometric type is in the pseudo-Anosov class.

\end{comment}

\begin{defi}\label{Defi: Mixing and binary}
A matrix $A$ with non-negative coefficients is called \emph{mixing} if there exists an $n$ such that $A^n=(a_{i,j}^{(n)})$ is positive definite, i.e., for all $i$ and $j$, $a^{(n)}_{i,j}>0$.

A matrix $A$ whose coefficients are either $0$ or $1$ is called a \emph{binary} matrix.
\end{defi}

The coefficients of the incidence matrix of a Markov partition $\cR$ can take any value in $\mathbb{N}$. For example, if the image of the rectangle $R_i$ intersects the interior of $R_j$ twice, then $a_{i,j}=2$. This introduces complications in our arguments. However, there is a way to obtain a Markov partition $H(\cR)$ such that the image of each rectangle in $H(\cR)$ intersects another rectangle in $H(\cR)$ at most once. This means that the incidence matrix of $H(\cR)$ has coefficients of either $0$ or $1$, i.e., it is binary. Subsection  \ref{Subsec: The horizontal refinament} addresses this refinement, where we discuss the difficulties that will arise in Subsection \ref{Subsec: finite type shift Markov partition} when we consider the sub-shift of finite type associated with a Markov partition and its projections.

 The following result is proved in \cite[Lemma 10.21]{fathi2021thurston}

\begin{lemm}\label{Lemm: incidence matriz pA is mixing}

If $A$ is the incidence matrix of a Markov partition of a pseudo-Anosov homeomorphism, then $A$ is mixing.
\end{lemm}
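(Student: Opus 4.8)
The plan is to combine the minimality of the unstable foliation (Proposition~\ref{Prop: pseudo-Anosov properties.}) with the uniform expansion along unstable leaves to show that a single power $f^{N}$ spreads every rectangle of $\mathcal R$ completely across every other rectangle; this is exactly the statement that the incidence matrix $A=A(f,\mathcal R)$ is primitive, hence mixing.

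I would first record the combinatorial meaning of the powers of $A$. Write $\mathcal R=\{R_i\}_{i=1}^{n}$. A standard induction on $m$ using the Markov property shows that the number of connected components of $\overset{o}{R_i}\cap f^{-m}(\overset{o}{R_j})$ equals $a^{(m)}_{ij}$, the $(i,j)$-entry of $A^{m}$; in particular $a^{(m)}_{ij}>0$ if and only if $f^{m}(\overset{o}{R_i})\cap \overset{o}{R_j}\neq\emptyset$. Thus it suffices to produce a single $N$ for which $f^{N}(\overset{o}{R_i})\cap\overset{o}{R_j}\neq\emptyset$ for every pair $(i,j)$ simultaneously.

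The geometric core is a uniform-spreading lemma: there is $L>0$ such that every arc $J$ contained in a single leaf of $\mathcal F^{u}$ and of length at least $L$ meets the interior of every rectangle $R_j$. To prove it, fix $j$; for each $x\in S$ the unstable leaf through $x$ is dense by minimality, so some finite sub-arc of it issuing from $x$ enters $\overset{o}{R_j}$, and since the measured foliation varies continuously while ``reaching $\overset{o}{R_j}$'' is an open condition on the starting point, this length stays bounded on a neighbourhood of $x$; compactness of $S$ then yields a single bound $L_j$, and one sets $L:=\max_{1\le j\le n}L_j$. (Once such an arc reaches a point of $\overset{o}{R_j}$ it contains the whole vertical leaf of $R_j$ through that point, which spans $R_j$ from its lower to its upper stable boundary, so the witnessing component is actually a vertical spanning sub-rectangle, consistent with the Markov property.)

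Finally I would close with the expansion estimate. Each $R_i$ contains, through an interior point, a sub-arc $J_i\subset\overset{o}{R_i}$ of a leaf of $\mathcal F^{u}$ of positive length $\ell_i>0$; let $\ell:=\min_i\ell_i>0$ and pick $N$ with $\lambda^{N}\ell\ge L$. Since $f$ expands the leaves of $\mathcal F^{u}$ uniformly by the factor $\lambda$ (Definition~\ref{Defi: pseudo-Anosov}), the arc $f^{N}(J_i)\subset f^{N}(\overset{o}{R_i})$ has length at least $L$, hence meets $\overset{o}{R_j}$ for every $j$ by the lemma; therefore $a^{(N)}_{ij}>0$ for all $i,j$, so $A^{N}$ is positive and $A$ is mixing. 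The main obstacle I anticipate is the uniform bound $L_j$: upgrading the pointwise density statement to a bound uniform over the compact surface $S$ requires the continuous variation of $\mathcal F^{u}$ together with the openness of the condition ``the forward unstable arc of length $t$ from $x$ reaches $\overset{o}{R_j}$''. Everything else is either the expansion estimate or the routine dictionary between the matrix powers $A^{m}$ and the crossings $f^{m}R_i\cap R_j$.
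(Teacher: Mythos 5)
Your argument is essentially correct, and it is the standard proof of this fact: the paper itself does not prove the lemma but cites \cite[Lemma 10.21]{fathi2021thurston}, and the route there is exactly yours — the dictionary between $a^{(m)}_{ij}$ and the crossings of $f^{m}(\overset{o}{R_i})$ with $\overset{o}{R_j}$ via the Markov property, a uniform length $L$ beyond which every unstable arc meets every $\overset{o}{R_j}$, and the uniform expansion $\mu^s(f(\gamma))=\lambda\,\mu^s(\gamma)$ to exceed $L$ after a single power $N$. The only places to tighten are in the uniform-spreading lemma: you need density of each unstable \emph{separatrix} (half-leaf), not merely of each leaf, since the sub-arc entering $\overset{o}{R_j}$ must issue from $x$ in the prescribed direction (this is a standard strengthening, also in \cite{fathi2021thurston}), and the openness/compactness step must account for the failure of continuous dependence of the length-$t$ arc on its starting point near the finitely many singular prongs (arcs heading into a $k$-prong diverge after passing it), which one handles by treating the finitely many singular separatrices separately or by working in the local product charts; with those two standard adjustments the proof goes through.
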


\begin{rem}\label{Rema: mixing after positive stay positive}
	The term $a^n_{i,j}$ counts how many times $f^n(\overset{o}{R_i})$ intersects $\overset{o}{R_j}$. The fact that $A(\mathcal{R})$ is mixing is equivalent to the existence of an $n \in \mathbb{N}$ such that for all $i, j$, $f^n(\overset{o}{R_i}) \cap \overset{o}{R_j} \neq \emptyset$. In fact, if for some $n\in \NN_{+}$ and every coefficient in the matrix $A^n$ is positive, i.e $a^n_{i,j} > 0$, then for all $m > n$, $a^m_{i,j} > 0$ as well.	
\end{rem}

\subsection{The horizontal refinement of a geometric type.}\label{Subsec: The horizontal refinament}

  The partition $H(f,\mathcal{R})$ that we define next contains all the horizontal sub-rectangles of the original Markov partition $(f,\mathcal{R})$. We explain how to make $H(f,\mathcal{R})$ a geometric partition of $f$ by using the orientation induced by $\mathcal{R}$ and labeling the sub-rectangles according to the \emph{lexicographic order}. We then prove that $H(f,\mathcal{R})$ is a geometric Markov partition of $f$ (Lemma \ref{Lemm: Algoritmic refinament}) called the \emph{horizontal refinement} of $(f,\cR)$, whose geometric type is uniquely determined by $T$ (Proposition  \ref{Prop: Unique horizontal type}).

   \begin{defi}\label{Defi: Horizontal refinament}
      Let $T:=(n,\{(h_i,v_i)\}_{i=1}^n,\Phi_T)$ be a geometric type in the pseudo-Anosov class. Let $(f, \mathcal{R})$ be a pair that realizes $T$, where $\cR=\{R_i\}_{i=1}^n$. Consider the following construction:
     
  \begin{itemize}
\item Let $H(f,\cR):=\{H^i_j\}_{(i,j)\in \cH(T)}$ be the family of horizontal sub-rectangles of $(f,\cR)$.

\item Endow each rectangle $H^i_j$ with the orientation induced by $R_i$.

\item Endow the set $\cH(T):=\{(i,j): i \in\{1,\cdots,n\}  \text{ and } j\in\{1,\cdots,h_i \} \}$ with the \emph{lexicographic order}  using the function  $r:\cH(T)\rightarrow \{1,\cdots, \sum_{i=1}^{n}h_i=\alpha(T)\}$ defined by the formula:
$$
r(i_0,j_0)=\sum_{i<i_0} h_i + j_0
$$
\item  The rectangles of $H(f,\cR)$ are indexed by the formula $H_{r(i,j)}:=H^i_j$,which we can synthesize as  $H(\cR)=\{H_r\}_{r=1}^{\alpha(T)}$.
  \end{itemize}
The family of oriented rectangles constructed in this way, $H(f,\cR)$, is called the \emph{horizontal refinement} of $(f,\mathcal{R})$.
   \end{defi}
     
   The horizontal refinement $H(f,\cR)$ is shown to be a Markov partition of $f$ in Lemma \ref{Lemm: Algoritmic refinament}. To gain some intuition about the proof, we suggest the reader to refer to Figure \ref{Fig: Horizontal refinament}.

\begin{lemm}\label{Lemm: Algoritmic refinament}
  Let $T$ be a geometric type in the pseudo-Anosov class, and $(f, \cR)$ be a pair realizing $T$. The horizontal refinement $H(f,\cR)$ is a geometric Markov partition of the generalized pseudo-Anosov homeomorphism $f$. 
\end{lemm}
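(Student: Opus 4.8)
The plan is to verify the two defining conditions of a geometric Markov partition (Definition \ref{Defi: Markov partition} together with the geometrization data of Definition \ref{Defi: geometric Markov partition}) for the family $H(f,\cR)=\{H_r\}_{r=1}^{\alpha(T)}$. First I would check the covering and disjoint-interior conditions: since $\cR$ is a Markov partition, each $R_i$ decomposes as the union of its horizontal sub-rectangles $H^i_1,\dots,H^i_{h_i}$ (whose interiors are the connected components of $\overset{o}{R_i}\cap f^{-1}(\overset{o}{\cR})$, hence cover $\overset{o}{R_i}$ up to a finite set of stable leaves), so $S=\cup_i R_i=\cup_{(i,j)}H^i_j=\cup_r H_r$. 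The interiors $\overset{o}{H^i_j}$ for fixed $i$ are pairwise disjoint because they are distinct connected components of the same open set, and for different rectangles $R_i, R_{i'}$ the interiors are already disjoint; this gives $\overset{o}{H_r}\cap\overset{o}{H_{r'}}=\emptyset$ for $r\ne r'$.

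The heart of the argument is the Markov condition, and here I would invoke Proposition \ref{Prop: Markov criterion boundary}: it suffices to show that $\partial^s H(f,\cR)$ is $f$-invariant and $\partial^u H(f,\cR)$ is $f^{-1}$-invariant, given that we have already established the covering and disjointness properties. For the unstable boundary: each $H^i_j$ is a horizontal sub-rectangle of $R_i$, so $\partial^u H^i_j\subset \partial^u R_i$, whence $\partial^u H(f,\cR)\subset \partial^u\cR$; since $\cR$ is Markov, $\partial^u\cR$ is $f^{-1}$-invariant, and moreover $f^{-1}(\partial^u H^i_j)$ is the unstable boundary of $f^{-1}(H^i_j)$, which by Corollary \ref{Coro: image of sub horzontal-vertical } is a vertical sub-rectangle $V^k_l$ of the Markov partition $\cR$; one then observes that $f^{-1}(H^i_j)=V^k_l$ is itself a union of horizontal sub-rectangles of $H(f,\cR)$ (every vertical sub-rectangle of $\cR$ meets the rectangles $R_m$ in horizontal sub-rectangles of $\cR$, hence in elements of $H(f,\cR)$), so its unstable boundary lies in $\partial^u H(f,\cR)$. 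For the stable boundary: $\partial^s H^i_j$ consists of two stable leaves of $R_i$; the images $f(H^i_j)=V^k_l$ are vertical sub-rectangles of $\cR$, and the stable boundary of $V^k_l$ lies in $\partial^s\cR=\cup_i\partial^s R_i$; but each component of $\partial^s\cR$, being the top or bottom of some $R_i$, is also the top or bottom of $H^i_1$ or $H^i_{h_i}$, hence lies in $\partial^s H(f,\cR)$. Thus $f(\partial^s H(f,\cR))\subset \partial^s H(f,\cR)$, and since the number of such stable leaves is finite and $f$ is a homeomorphism, equality holds and $\partial^s H(f,\cR)$ is $f$-invariant.

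Finally I would address the geometrization: Definition \ref{Defi: Horizontal refinament} equips each $H^i_j$ with the orientation induced by $R_i$ and indexes the family by the lexicographic bijection $r:\cH(T)\to\{1,\dots,\alpha(T)\}$, so $H(f,\cR)$ carries the structure of an ordered, oriented family of rectangles; combined with the verified Markov property this makes it a geometric Markov partition in the sense of Definition \ref{Defi: geometric Markov partition}, and since $f$ preserves the orientation of $S$ and of the two foliations, $f$ remains a generalized pseudo-Anosov homeomorphism for which $H(f,\cR)$ is a legitimate partition. The main obstacle I anticipate is the bookkeeping in the unstable-boundary step — namely checking carefully that $f^{-1}$ of a horizontal sub-rectangle of $H(f,\cR)$, which is a vertical sub-rectangle of $\cR$, decomposes cleanly into horizontal sub-rectangles of $H(f,\cR)$ so that its boundary does not escape $\partial^u H(f,\cR)$; this is where Figure \ref{Fig: Horizontal refinament} and Corollary \ref{Coro: image of sub horzontal-vertical } do the real work, and where one must be attentive to the fact that intersections of rectangles may occur only along their boundaries.
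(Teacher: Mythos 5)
Your proof follows essentially the same route as the paper's: verify covering and disjointness of interiors, then apply Proposition \ref{Prop: Markov criterion boundary} by checking that the stable boundary of $H(f,\cR)$ is $f$-invariant and the unstable boundary is $f^{-1}$-invariant, and finally observe that the orientation and lexicographic indexing of Definition \ref{Defi: Horizontal refinament} supply the geometrization. Your stable-boundary argument ($f(\partial^s H^i_j)\subset\partial^s V^k_l\subset\partial^s R_k\subset\partial^s H^k_1\cup\partial^s H^k_{h_k}$) is exactly the paper's.

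The unstable-boundary step, however, is garbled as written. The corollary on images of sub-rectangles gives $f(H^i_j)=V^k_l$, i.e. $H^i_j=f^{-1}(V^k_l)$; your assertion that $f^{-1}(H^i_j)$ \emph{is} a vertical sub-rectangle $V^k_l$ of $\cR$ is false, and so is the follow-up claim that it decomposes into elements of $H(f,\cR)$: the set $f^{-1}(H^i_j)$ is a connected component of $f^{-1}(R_i)\cap f^{-2}(R_k)$, which in general only splits into pieces of a second refinement, cutting across the rectangles $H^{i'}_{j'}$. Fortunately none of that detour is needed. Since every $H^i_j$ spans the full width of $R_i$ (its unstable sides lie on those of $R_i$) and the $H^i_j$ cover $R_i$, one has the \emph{equality} $\partial^u H(f,\cR)=\partial^u\cR$, not merely the inclusion you state; the $f^{-1}$-invariance of $\partial^u H(f,\cR)$ is then inherited verbatim from the Markov property of $\cR$, which is precisely how the paper argues. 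With that one-line replacement your proof is correct and coincides with the paper's.
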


\begin{proof}

Suppose $f:S\rightarrow S$ and the geometric Markov  partition is $\cR={R_i}_{i=1}^n$. The geometric type of $(f,\cR)$ is denoted by $T$ and is given by:
$$
T(f,\cR):=T=(n,\{(h_i,v_i)\}_{i=1}^n, \Phi_T=(\rho_T,\epsilon_T)).
$$

The family of horizontal sub-rectangles of $(f,\cR)$, $\{ H^i_j: (i,j)\in \cH \}$, always covers the surface $S$ and has disjoint interiors. Thus, $H(f,\cR)$ is a partition of $S$ into rectangles. Let's analyze its boundary.

The unstable boundary of $H(f,\cR)$, denoted by  $\partial^u H(f,\cR)=\cup_{(i,j)\in \cH} \partial^u H^i_j$, coincides with the unstable boundary of $\cR$, making it $f^{-1}$-invariant.

The stable boundary of $H(f,\cR)$ consists of the stable boundaries of the rectangles $H^i_j$, denoted by $\partial^s H(f,\cR)=\cup_{(i,j)\in \cH} \partial^s H^i_j$. Since $f(H^i_j)=V^k_l$ and $V^k_l$ is a vertical sub-rectangle of $R_k\in \cR$, the horizontal boundary of $V^k_l$ is contained in $\partial^s R_k$. This implies that $f(\partial^s H^i_j) \subset \partial^s H^k_{1}\cup \partial^sH^k_{h_k}$. Thus, we deduce that the stable boundary of $H(f,\cR)$ is $f$-invariant.
  
 Since $H(f,\cR)$ satisfies the properties of Proposition  \ref{Prop: Markov criterion boundary}, it is a Markov partition of $f$. It is clear that the conventions regarding the order and orientations of the rectangles given in Definition \ref{Defi: Horizontal refinament} of the horizontal refinement make $H(f,\cR)$ a geometric Markov partition of $f$. 
  
\end{proof}

\begin{figure}[h]
	\centering
	\includegraphics[width=0.6\textwidth]{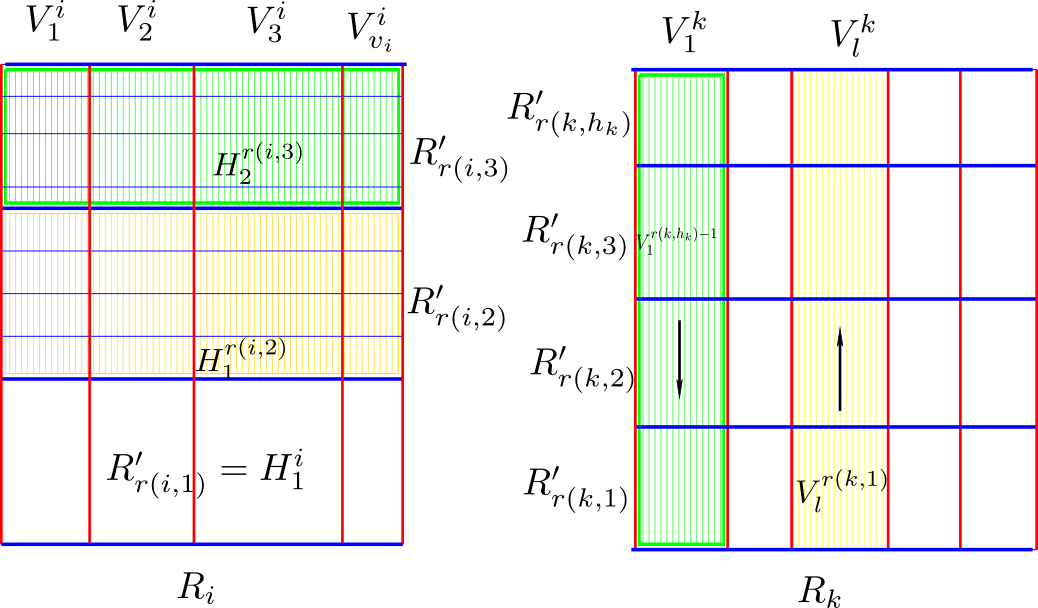}
	\caption{Horizontal refinement}
	\label{Fig: Horizontal refinament}
\end{figure}

 \begin{defi}\label{Defi: geometric type of a refinement}
Let $T$ be a geometric type in the pseudo-Anosov class, and let $(f,\cR)$ be any pair that realizes $T$. The geometric type of the horizontal refinement $H(f,\cR)$ is denoted by $T_H(f,\cR):=T(H(f,\cR))$.
 \end{defi}

The horizontal refinement was defined using a pair, $(f,\cR)$, that realizes $T$. We would like to clarify that if $(g,\cR_g)$ is another realization of $T$, its respective horizontal refinement has the same geometric type, $T_H(f,\cR)=T_H(g,\cR_g)$, and thus they define a unique geometric type $H(T)$ that is canonically associated with $T$.
	
Proposition \ref{Prop: Unique horizontal type} establishes that $T_H(f,\cR)=T_H(g,\cR_g)$, and indeed $T_H(f,\cR)$ is expressed in terms of $T$. This is the definition we use for the horizontal refinement of a geometric type $H(T)$. It also states that the geometric type of the horizontal refinement has the desired property of having an incidence matrix with coefficients in $\{0,1\}$.

\begin{prop}\label{Prop: Unique horizontal type}
	Let $T$ be a geometric type in the pseudo-Anosov class, and let $(f,\cR_f)$ and $(g,\cR_g)$ be two pairs  that realize $T$. We can deduce the following:
	
	\begin{itemize}
\item[i)] The  incidence matrix of $H(f,\cR)$  is binary.
\item[ii)] The geometric type of $H(f,\cR_f)$ is uniquely determined by an algorithm and formulas in terms of $T$.
\item[iii)] The  horizontal refinements of $H(f,\cR_f)$ and  $H(g,\cR_g)$  have the same geometric type, denoted as $H(T):=T(H(f,\cR_f ))=T(H(g,\cR_g))$.
	\end{itemize}
We refer to $H(T)$ as the \emph{horizontal type} of $T$.
\end{prop}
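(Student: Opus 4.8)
The plan is to give an explicit combinatorial description of the geometric type $H(T)$ purely in terms of the data $T=(n,\{(h_i,v_i)\}_{i=1}^n,\Phi_T=(\rho_T,\epsilon_T))$, and then observe that this description does not refer to any particular realization $(f,\cR)$, which simultaneously proves items (ii) and (iii); item (i) will come out of the construction for free. First I would set $\alpha(T)=\sum_{i=1}^n h_i$ and recall the bijection $r\colon\cH(T)\to\{1,\dots,\alpha(T)\}$ from Definition~\ref{Defi: Horizontal refinament}, together with its analogue on the vertical side. The rectangles of $H(f,\cR)$ are the horizontal sub-rectangles $H^i_j$ indexed by $r(i,j)$, so the new geometric type has first parameter $N=\alpha(T)$. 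For the numbers $(h'_r,v'_r)$ attached to $H_{r(i,j)}=H^i_j$: since each $H^i_j$ is mapped by $f$ homeomorphically onto the vertical sub-rectangle $V^k_l=f(H^i_j)$ of $R_k$ (where $(k,l)=\rho_T(i,j)$), the horizontal sub-rectangles of $H^i_j$ are exactly the connected components of $\overset{o}{H^i_j}\cap f^{-1}(\overset{o}{H^{i'}_{j'}})$; these correspond, under $f$, to the way the \emph{vertical} sub-rectangle $V^k_l$ of $R_k$ is cut by the horizontal sub-rectangles $\{H^k_{j'}\}_{j'=1}^{h_k}$ of $R_k$. Because $V^k_l$ runs the full vertical extent of $R_k$, it is cut into exactly $h_k$ horizontal strips, one inside each $H^k_{j'}$. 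Hence $h'_{r(i,j)}=h_{k}$ where $(k,l)=\rho_T(i,j)$. Dually, the vertical sub-rectangles of $H^i_j$ are the traces of the vertical sub-rectangles of $R_i$, but restricted to the horizontal strip $H^i_j\subset R_i$, and each such vertical sub-rectangle of $R_i$ meets $H^i_j$ in exactly one vertical strip; so $v'_{r(i,j)}=v_i$. One then checks $\sum_r h'_r=\sum_r v'_r$ (both equal $\sum_{(i,j)}h_{\rho_T(i,j)_1}=\sum_i v_i\cdot(\text{something})$ — in fact both sides count, with multiplicity, the intersections $f(\overset{o}{H})\cap \overset{o}{H'}$ over the refined partition, hence they agree), which is forced anyway since $H(f,\cR)$ is a genuine Markov partition by Lemma~\ref{Lemm: Algoritmic refinament}.

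Next I would write down $\Phi_{H(T)}=(\rho_{H(T)},\epsilon_{H(T)})$. A horizontal sub-rectangle of the refined partition inside $H^i_j$ is indexed by a pair $(r(i,j),a)$ with $1\le a\le h'_{r(i,j)}=h_k$, $(k,l)=\rho_T(i,j)$; it is the preimage under $f$ of the $a$-th horizontal strip of $R_k$ intersected with $V^k_l$, i.e.\ (after applying $f$) the vertical sub-rectangle $V^k_l\cap H^k_a$, which is a vertical sub-rectangle of $H^k_a$. So $f$ sends the horizontal sub-rectangle $(r(i,j),a)$ of $H_{r(i,j)}$ to a vertical sub-rectangle of $H_{r(k,a)}$, where $(k,l)=\rho_T(i,j)$; its horizontal position among the $v'_{r(k,a)}=v_k$ vertical sub-rectangles of $H^k_a$ is exactly $l$, \emph{provided} $\epsilon_T(i,j)=1$, and is $v_k+1-l$ if $\epsilon_T(i,j)=-1$ — because a negative sign reverses the left-to-right order. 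Thus
\begin{equation}
\rho_{H(T)}\bigl(r(i,j),a\bigr)=\bigl(r(k,a),\,l^{\ast}\bigr),\qquad l^{\ast}=\begin{cases} l,&\epsilon_T(i,j)=1,\\ v_k+1-l,&\epsilon_T(i,j)=-1,\end{cases}
\end{equation}
where $(k,l)=\rho_T(i,j)$. For the orientation sign: the vertical orientation of the horizontal sub-rectangle $(r(i,j),a)$ agrees with that of $H^i_j$, which agrees with that of $R_i$; after applying $f$ the vertical orientation of its image, as a vertical sub-rectangle of $H^k_a$, agrees with that of $R_k$ iff the composite sign is $+1$. Since $f\colon H^i_j\to V^k_l$ twists the vertical direction by $\epsilon_T(i,j)$, and $H^k_a$ carries the orientation of $R_k$, we get $\epsilon_{H(T)}(r(i,j),a)=\epsilon_T(i,j)$. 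All of these formulas involve only $T$, so once verified they give (ii) immediately, and applying them to $(f,\cR_f)$ and $(g,\cR_g)$ gives the \emph{same} quadruple, which is (iii).

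Finally, item (i): the incidence matrix of $H(T)$ has $(r,s)$-entry equal to the number of $j$'s with $\rho_{H(T)}(r,j)$ landing in rectangle $s$. By the formula above, $\rho_{H(T)}(r(i,j),a)=(r(k,a),l^\ast)$ with $(k,l)=\rho_T(i,j)$ fixed once $(i,j)$ is fixed; so as $a$ ranges over $1,\dots,h_k$ the target rectangle $r(k,a)$ takes each value $r(k,1),\dots,r(k,h_k)$ \emph{exactly once}. Hence every row of the incidence matrix of $H(T)$ has entries in $\{0,1\}$, i.e.\ it is binary. The main obstacle I anticipate is bookkeeping: making the geometric combinatorics (which vertical strip of $V^k_l$ lies in which $H^k_a$, and how orientations of sub-rectangles of sub-rectangles compose) fully rigorous, rather than the formal manipulation — a clean picture like Figure~\ref{Fig: Horizontal refinament} together with careful tracking of the two order-reversals induced by the signs $\epsilon_T$ should settle it, and I would present the argument by fixing one rectangle $R_i$, one horizontal strip $H^i_j$, and chasing its image and preimages explicitly, then noting the general case is identical.
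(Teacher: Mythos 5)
Your strategy is the same as the paper's — compute $N=\alpha(T)$, $h'_{r(i,j)}=h_k$, $v'_{r(i,j)}=v_i$, and explicit formulas for $(\rho_{H(T)},\epsilon_{H(T)})$ involving only $T$, so that (ii) and (iii) follow simultaneously and (i) falls out of the formula. The counts, the identity $\epsilon_{H(T)}(r(i,j),\cdot)=\epsilon_T(i,j)$, and the binarity argument are fine. However, your formula for $\rho_{H(T)}$ in the orientation-reversing case is wrong, and the error sits precisely in the bookkeeping you flagged as the delicate point.

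Fix $(i,j)$ with $\rho_T(i,j)=(k,l)$ and $\epsilon_T(i,j)=-1$. The horizontal sub-rectangles of $H_{r(i,j)}$ must be labeled bottom-to-top with respect to the vertical orientation of $H_{r(i,j)}$, i.e.\ the one inherited from $R_i$ — that is what the geometric type of $H(f,\cR)$ requires. Since $f$ reverses the vertical direction on $H^i_j$, the $j_0$-th such sub-rectangle is $f^{-1}\bigl(V^k_l\cap H^k_{h_k-(j_0-1)}\bigr)$, not $f^{-1}\bigl(V^k_l\cap H^k_{j_0}\bigr)$; you instead indexed these sub-rectangles by the strip of $R_k$ they map into, which is the top-to-bottom order inside $H_{r(i,j)}$ when $\epsilon_T(i,j)=-1$. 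Consequently the image of the $j_0$-th sub-rectangle is a vertical sub-rectangle of $H_{r(k,\,h_k-j_0+1)}$, not of $H_{r(k,j_0)}$. Moreover its horizontal coordinate is $l$ in both cases, never $v_k+1-l$: the image always lies inside $V^k_l$, and the vertical sub-rectangles of $H_{r(k,j')}$ are labeled by the horizontal orientation of $R_k$ itself, not by anything pulled back through $f$, so the sign of $\epsilon_T(i,j)$ cannot move that label. Concretely, for $j_0=1$ your formula gives $(r(k,1),\,v_k+1-l)$, whereas the bottom strip of $H_{r(i,j)}$ actually maps into the top piece $V^k_l\cap H^k_{h_k}$, i.e.\ to position $(r(k,h_k),\,l)$. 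The correct reversing-case rule is $\rho_{H(T)}(r(i,j),j_0)=(r(k,\,h_k-j_0+1),\,l)$ with $\epsilon_{H(T)}(r(i,j),j_0)=\epsilon_T(i,j)$, which is the paper's formula; with that correction your argument goes through and coincides with the paper's, but as written the quadruple you construct is not $T(H(f,\cR))$, so (ii) and (iii) are not yet established.
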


\begin{proof}

For every $(i,j),(k,j')\in \cH(T)$ the number of connected components in

For every $(i,j), (k,j') \in \cH(T)$, the number of connected components in the intersection $f(H_{r(i,j)}) \cap H_{r(k,j')}$ determines the coefficient $a_{r(i,j)r(k,j')}$ of the incidence matrix $A := A(H(f,R))$. Please consider the following equality between sets:

\begin{equation}\label{Equ: incidence matrice}
f(H_{r(i,j)}) \cap H_{r(k,j')}= f(\overset{o}{ H^i_j})\cap \overset{o}{H^k_{j'}}  =
\overset{o}{ V^{k}_{l} }\cap \overset{o}{H^k_{j'}}
\end{equation}

By definition and the left side of Equation  \ref{Equ: incidence matrice}, the coefficient $a_{r(i,j)r(k,j')}$ in $A$ is equal to the number of connected components that result from the intersection of the interiors of a horizontal sub-rectangle and a vertical sub-rectangle of $R_k$.

With this interpretation in mind, if $k \neq k'$, the interiors of the rectangles $R_k$ and $R_{k'}$ are disjoint, and thus $a_{r(i,j),r(k,j')} = 0$. Since the interior of $R_k$ is an embedded (open) rectangle, if $k = k'$, the intersection of the interior of a horizontal sub-rectangle and a vertical sub-rectangle of $R_k$ has only one connected component. Therefore, $a_{r(i,j)r(k,j')} = 1$. This proves \textbf{Item} $i)$.

Let's deduce the geometric type of $H(f,\cR)$ using only the information from $T$, the chosen orientation, and the lexicographic order. Let's denote the resulting geometric type as 
$$
T_H(f,\cR)=\{n',\{(h_i',v_i')\}_{i'=1}^{n'}, \Phi_{T'}=(\rho',\epsilon')\}.
$$
We need to determine all the parameters. The number of elements in $\cH(T)$ determines $n'$, which can be calculated using the formula:

\begin{equation}\label{Equ: number rectangles in horizontal ref}
n'=\sum_{i=1}^{n}h_i=\sum_{i=1}^n v_i=\alpha(T).
\end{equation}

Just remember that the number of elements in $\cH(T)$ corresponds to the number of horizontal sub-rectangles in $(f,\cR)$.
 
 Let $(i,j)\in \cH(T)$ and suppose $\Phi_T(i,j)=((k,l), \epsilon_T(i,j))$ for the rest of the proof. The pair $(f,H(f,\cR))$ denotes the horizontal refinement $H(f,\cR)$ viewed as a Markov partition of $f$.
  
 Based on our previous analysis, the vertical sub-rectangles of the rectangles $H^{r(i,j)}$ in $(f,H(f,\cR))$ correspond to the connected components of the intersection  of the interiors of all the vertical sub-rectangles $V^i_l$ of $R_i$ with $H^i_j$. The incidence matrix determines that all vertical sub-rectangles of $R_i$ intersect every horizontal sub-rectangle of $R_i$ exactly once. Therefore, the number of vertical sub-rectangles of $H_{r(i,j)}$ is constant with respect to $i$. We can infer that the number of vertical sub-rectangles of $H_{r(i,j)}$ is equal to $v_i$, i.e.
 
 \begin{equation}\label{Equ: number vertical sub in the horizontal ref}
 v'_{r(i,j)}=v_{i}
 \end{equation}

Moreover, these vertical sub-rectangles are ordered from left to right in a coherent way with the horizontal orientation of $R_i$ by 

 \begin{equation}\label{Equ: Vertical order of horizontal ype}
 \{V^{r(i,j)}_{l}\}_{l=1}^{v_i}.
 \end{equation}

The number of horizontal sub-rectangles of $H_{r(i,j)}$, denoted as $h'_{r(i,j)}$, is equal to $h_{k}$ because  $f(H_{r(i, j)})=f(H^{i}_{j})=V^{k}_{l}$  intersects exactly $h_{k}$ distinct horizontal sub-rectangles of $R_k$ and no other horizontal sub-rectangles of $(f,\cR)$. These horizontal sub-rectangles are ordered in increasing order according to the vertical orientation of $H_{r(i,j)}$, which is inherited from $R_i$, in the following manner:

\begin{equation}\label{Equa: Horizontal rec of the horizontal ref}
\{H^{r(i,j)}_{j'}\}_{j'=1}^{h_k}.
\end{equation}

We proceed to determine $\rho'$ and $\epsilon'$. Recall that we assumed $\Phi_T(i,j)=((k,l), \epsilon_T(i,j))$. To establish $\rho'$, we need to consider the change of orientation in $f(H^k_j)$ given by $\epsilon_T(i,j)$. We will split the calculation based on the two cases.

First, assume $\epsilon_T(i,j)=1$ and $j_0\in \{1,\cdots, h_{k}\}$, representing the horizontal sub-rectangle $H^{r(i,j)}_{j_0}$ of $H_{r(i,j)}$. Since $f(H_{r(i,j)})=V^k_l$ preserves the vertical orientation, the horizontal sub-rectangle of $R_k$ that intersects $f(H_{r(i,j)})=V^k_l$ at position $j_0$ with respect to the vertical orientation of $R_k$ is labeled by $r(k,j_0)$. Also, $f(H^{r(i,j)}_{j_0})$ corresponds to the vertical sub-rectangle of $R_k$ that is at position $l$, so $f(H^{r(i,j)}_{j_0})$ is the vertical sub-rectangle $V^{r(k,j_0)}_l$. We can express this construction in terms of the geometric type using the formula:

\begin{equation}\label{Equa: horizontal type for preseving orientation }
\Phi_{T'}(r(i,j),j_0):=(\rho'(r(i,j),j_0),\epsilon'(r(i,j),j_0)=(r(k,j_0),l,\epsilon_T(i,j)).
\end{equation}

In the case of $\epsilon_T(i,j)=-1$, $f$ changes the vertical orientation of $H^i_j$ with respect to $V^k_l=f(H^i_j)$. This implies that the horizontal sub-rectangle of $R_k$ containing the image of $H^{r(i,j)}_{j_0}$ is located at position $j_0$ with the inverse vertical orientation of $R_k$, which corresponds to position $(h_k-(j_0-1))$ with the prescribed orientation in $R_k$. Therefore, the horizontal sub-rectangle of $R_k$ corresponding to $f(H^{r(i,j)}_{j_0})$ is given by:

$$
H^k_{h_k-(j_0-1)}=H_{r(k,h_k-(j_0-1))}\in H(f,\cR)
$$

The vertical sub-rectangle of $(f,H_{r(k,h_k-(j_0-1))})$ that contains $f(H^{r(i,j)}_{j_0})$ is a subset of $V^k_l$, so it is located at position $l$ with respect to the horizontal orientation of $R_k$. We can conclude that:

$$
f(H^{r(i,j)}_{j_0})=V^{r(k,h{}-(j_0-1))}_l.
$$
 
The change in the vertical direction of the sub-rectangle $H^{r(i,j)}{j_0}$ is quantified by $\epsilon'(r(i,j),j_0)$. However, the change in the vertical direction of $H^{r(i,j)}{j_0}$ is determined by the change in the orientation of $f$ restricted to $H^i_j$. This information is governed by $\epsilon_T(i,j)$ in the geometric type $T$, and we have $\epsilon'(r(i,j),j_0)=\epsilon_T(i,j)$. In terms of $T$, we have the formula

\begin{equation}\label{Equa: horizontal type for change orientation }
\Phi_{T'}(r(i,j),j_0)=(r(k,h_{k}) -(j_0-1),l,\epsilon_T(i^,j_0)).
\end{equation}

	The equations \ref{Equa: horizontal type for preseving orientation } and  \ref{Equa: horizontal type for change orientation } are determined by $T$, and their computation is algorithmic, proving \textbf{Item} $ii)$.
	 
 The horizontal refinements of $H(f,\cR_f)$ and $H(g,\cR_g)$ yield the same equations:  (\ref{Equ: number rectangles in horizontal ref}), 	(\ref{Equ: number vertical sub in the horizontal ref}), (\ref{Equ: Vertical order of horizontal ype}), (\ref{Equa: Horizontal rec of the horizontal ref}), (\ref{Equa: horizontal type for preseving orientation }) and (\ref{Equa: horizontal type for change orientation }). Therefore, they result in the same geometric type, $H(T):=T(H(f,\cR_f))=T(H(g,\cR_g))$. This proves \textbf{Item} $iii)$.

\end{proof}

The following corollary is a direct consequence of Proposition \ref{Prop: Unique horizontal type}. However, its formulation will be useful for future arguments related to the general case of the Theorem \ref{Theo: conjugated iff  markov partition of same type}.

\begin{coro}\label{Coro: horizontal type is pseudo-Anosov}
	Let $T$ be a geometric type in the pseudo-Anosov class, the horizontal type of $T$, $H(T)$, is in the pseudo-Anosov class.
\end{coro}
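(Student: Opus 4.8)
~\textbf{Proof strategy for Corollary~\ref{Coro: horizontal type is pseudo-Anosov}.} The plan is to observe that the statement is an immediate consequence of the preceding results once the objects are unwound. By definition, $T$ being in the pseudo-Anosov class means there is a generalized pseudo-Anosov homeomorphism $f:S\to S$ together with a geometric Markov partition $\cR$ such that $T=T(f,\cR)$; in other words, there exists a pair $(f,\cR)$ realizing $T$. First I would invoke Lemma~\ref{Lemm: Algoritmic refinament}, which guarantees that the horizontal refinement $H(f,\cR)$ is itself a geometric Markov partition \emph{of the same homeomorphism} $f$. Thus $f$ is a generalized pseudo-Anosov homeomorphism carrying a geometric Markov partition whose geometric type is $T(H(f,\cR))$.

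Next I would identify this geometric type with $H(T)$. By Proposition~\ref{Prop: Unique horizontal type}, item~$iii)$, the geometric type $T(H(f,\cR))$ does not depend on the chosen realization $(f,\cR)$ of $T$ and equals the horizontal type $H(T)$ defined purely combinatorially from $T$. Combining the two facts: the pair $\bigl(f,\,H(f,\cR)\bigr)$ is a realization of the abstract geometric type $H(T)$, since $f$ is generalized pseudo-Anosov, $H(f,\cR)$ is a geometric Markov partition of $f$ by Lemma~\ref{Lemm: Algoritmic refinament}, and $T\bigl(f,H(f,\cR)\bigr)=H(T)$ by Proposition~\ref{Prop: Unique horizontal type}. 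Therefore $H(T)$ satisfies the defining condition of Definition~\ref{Defi: pseudo-Anosov class} and lies in $\mathcal{G}\mathcal{T}(p\AA)$.

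There is essentially no obstacle here: the corollary is a repackaging of Lemma~\ref{Lemm: Algoritmic refinament} (the refinement is a Markov partition of the \emph{same} pseudo-Anosov $f$) and Proposition~\ref{Prop: Unique horizontal type}~$iii)$ (its type is the combinatorially defined $H(T)$). The only point worth stating explicitly in the write-up is that realizability of $T$ supplies a concrete $f$ and $\cR$, so that the refinement construction applies and produces a genuine generalized pseudo-Anosov homeomorphism realizing $H(T)$ rather than merely an abstract combinatorial object. I would write the proof in two or three sentences along exactly these lines.
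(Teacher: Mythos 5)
Your proposal is correct and matches the paper's own reasoning: the paper states the corollary as a direct consequence of Proposition~\ref{Prop: Unique horizontal type}, with Lemma~\ref{Lemm: Algoritmic refinament} implicitly supplying that $H(f,\cR)$ is a geometric Markov partition of the same pseudo-Anosov $f$, exactly as you spell out. Nothing is missing; your write-up simply makes the implicit step explicit.
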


\subsection{The sub-shift of fine type of a Geometric type.}\label{Subsec: finite type shift Markov partition}

Let $\Sigma = \prod_{\mathbb{Z}} \{1, \cdots, n\}$ be the set of bi-infinite sequences in $n$-digits. The elements of $\Sigma$ are called \emph{codes}, denoted by $\underline{w} = (w_z)_{z \in \mathbb{Z}}$, and $\Sigma$ is endowed with the topology induced by the metric:

\begin{equation}\label{Equa: Metric shift}
d_{\Sigma}(\underline{x},\underline{y})= \sum_{z\in\ZZ} \frac{\delta(x_z,y_z)}{2^{\vert z \vert}},
\end{equation}

where $\delta(x_z, y_z) = 0$ if and only if $x_z = y_z$ and $\delta(x_z, y_z) = 1$ if and only if $x_z \neq y_z$. In this manner, if there is a big interval of integers around zero where two codes coincide, their distance is small. Like $\Sigma$ is the product of compact spaces, Tychonoff's theorem implies  that $\Sigma$ is a compact space. The \emph{shift} over $\Sigma$ is the homeomorphism $\sigma: \Sigma \rightarrow \Sigma$ given by the relation:

\begin{equation}
(\sigma(\underline{w}))_{z}= w_{z+1}.
\end{equation}
for all $z\in \ZZ$ and all codes $\underline{w}\in \Sigma$. 

Take a binary matrix $A\in \cM_{n\times n}(\{0,1\})$. Then $A$ permits the definition of a subset of $\Sigma$ given by:

\begin{equation}\label{Equa: Sigma A set}
\Sigma_A:=\{\underline{w}=(w_z)_{z\in\ZZ}\in \Sigma:\, \forall z \in \ZZ, \, (a_{w_z,w_{z+1}})=1 \}.
\end{equation}

Lemma \ref{Lemm: Sigma A is invariant subset} is a well-known result in the theory of symbolic dynamical systems, whose proof  only use that $\Sigma_A$ is a closed sub-set of the compact space $\Sigma$.

\begin{lemm}\label{Lemm: Sigma A is invariant subset}
The set $\Sigma_A$ is a compact subset of $\Sigma$ and is invariant under the shift map $\sigma$.
\end{lemm}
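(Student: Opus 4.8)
The plan is to establish the two assertions separately: that $\Sigma_A$ is closed in $\Sigma$ (hence compact, since $\Sigma$ is compact by Tychonoff), and that $\sigma(\Sigma_A) = \Sigma_A$.

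For compactness, I would first recall that $\Sigma$ is compact by Tychonoff's theorem, as already noted in the excerpt. Therefore it suffices to show $\Sigma_A$ is closed. I would do this by showing its complement is open: if $\underline{w} \in \Sigma \setminus \Sigma_A$, then there exists some index $z_0 \in \ZZ$ with $a_{w_{z_0}, w_{z_0+1}} = 0$. Any code $\underline{v}$ with $v_{z_0} = w_{z_0}$ and $v_{z_0+1} = w_{z_0+1}$ also fails the admissibility condition at $z_0$, hence lies outside $\Sigma_A$. Since the set of such $\underline{v}$ contains a metric ball around $\underline{w}$ — concretely, taking $\epsilon = 2^{-|z_0| - |z_0+1|}$ (or any radius small enough that $d_\Sigma(\underline{v},\underline{w}) < \epsilon$ forces $v_{z_0} = w_{z_0}$ and $v_{z_0+1} = w_{z_0+1}$) — the complement is open. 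Alternatively, and perhaps more cleanly, I would observe that $\Sigma_A = \bigcap_{z \in \ZZ} C_z$ where $C_z = \{\underline{w} : a_{w_z, w_{z+1}} = 1\}$ is a cylinder set, which is both open and closed because it is a finite union of basic cylinders; an intersection of closed sets is closed.

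For $\sigma$-invariance, I would verify $\sigma(\Sigma_A) \subseteq \Sigma_A$ and $\sigma^{-1}(\Sigma_A) \subseteq \Sigma_A$, which together with $\sigma$ being a bijection gives $\sigma(\Sigma_A) = \Sigma_A$. If $\underline{w} \in \Sigma_A$, then $(\sigma(\underline{w}))_z = w_{z+1}$, so for each $z$ the relevant pair of consecutive entries of $\sigma(\underline{w})$ is $(w_{z+1}, w_{z+2})$, and $a_{w_{z+1}, w_{z+2}} = 1$ because $\underline{w}$ is admissible; hence $\sigma(\underline{w}) \in \Sigma_A$. The argument for $\sigma^{-1}$ is symmetric, using $(\sigma^{-1}(\underline{w}))_z = w_{z-1}$.

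I do not expect a genuine obstacle here: this is a standard fact whose proof is essentially bookkeeping with the definition of $\Sigma_A$ and the metric. The only point requiring a modicum of care is making the openness-of-complement argument precise, i.e.\ choosing the metric radius so that closeness forces agreement on the two coordinates $z_0$ and $z_0+1$; the cylinder-set formulation sidesteps even that. So the write-up will be short, with the main content being the display of $\Sigma_A$ as an intersection of clopen cylinders and the two-line check of invariance.
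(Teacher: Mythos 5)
Your proof is correct, and it is the standard argument the paper itself invokes: the paper only remarks that the lemma is classical and that its proof uses nothing beyond $\Sigma_A$ being a closed subset of the compact space $\Sigma$, which is exactly what you establish (via the clopen cylinder description) together with the routine two-line check of shift invariance. No gaps; your write-up simply supplies the bookkeeping the paper omits.
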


\begin{defi}\label{Defi: Finite type sub-shift}
 The \emph{sub-shift of finite type} associated with the matrix $A\in \cM_{n\times n}(\{0,1\})$ is the dynamical system $(\Sigma_A, \sigma)$, where $\Sigma_A$ is the set defined earlier and $\sigma$ is the shift map.
\end{defi}

\begin{defi} \label{Defi: Subshift of type for a Markov partition}
	Let $T$ be a geometric type in the pseudo-Anosov class, and suppose its incidence matrix $A$ is binary. The sub-shift of finite type associated with $T$ is $(\Sigma_A, \sigma)$, 
\end{defi}

Any geometric type $T$ in the pseudo-Anosov class has an associated sub-shift of finite type through the incidence matrix of its horizontal refinement, $H(T)$. This sub-shift of finite type can be seen as an example of a symbolic dynamical system induced by a geometric type.

\subsection{The projection $\pi_f$}\label{Subsec: projection is funtion}

Following the intuition, the \emph{projection} $\pi_f:\Sigma_A \rightarrow S$ that we are going to define is a natural way to relate $(\Sigma_A,\sigma)$ with $(f,\cR)$ as dynamical systems. It recovers the idea of associating every code $\underline{w}=(w_z)$ with a point $x$ in $S$ such that $f^z(x)$ visits the rectangle $R_{w_z}$ at any time $z\in\ZZ$. Then, the $f$-orbit of $x$ is ruled by $\underline{w}$. The definition requires that the incidence matrix has coefficients in ${0,1}$. Therefore, its definition depends strongly on the Markov partition, and this is the first obstruction for $\pi_f$ to be a function.

\begin{defi}\label{Defi: projection pi}

	Let $T$ be a geometric type in the pseudo-Anosov class, with an incidence matrix $A := A(T)$ that is binary, meaning it has coefficients in ${0,1}$. Let $(f,\cR)$ be a realization of $T$. The \emph{projection} with respect to $(f,\cR)$, denoted as $\pi_f: \Sigma_A \rightarrow S$, is defined by the relation:
		\begin{equation}
		\pi_f(\underline{w})=\bigcap_{n\in \NN} \overline{\cap_{z=-n}^{n} f^{-z}(\overset{o}{R_{w_z}})}.
		\end{equation}
	for every $\underline{w}=(w_z)_{z\in \ZZ} \in \Sigma_A$.	
\end{defi}
 
 \begin{rema}\label{Rema: other projection}
 There exists another way to define the projection $\pi_f$, which is more classical and is given by the relation:

$$
\phi_f(\underline{w})=\cap_{z\in \ZZ} f^{-z}(R_{w_z}).
$$

The problem with this definition is that,  if the rectangles of $\cR$ are not embedded, the projection $\phi_f$ defined in this manner is not necessarily a single valued map since the set $\cap_{z\in\ZZ} f^{-z}(R_{w_z})$ could contain more than one point.
 \end{rema}

It could useful to observe that if $\underline{w}\in\Sigma_{A}$ then, for all $z_1,z_2\in \ZZ$  the intersection of the rectangles, $f^{-z_1}(R_{w_{Z_1}})\cap f^{-z_2}(R_{w_{z_2}})$ is a rectangle and the intersection $f^{-z_1}(\overset{o}{R_{w_{Z_1}}})\cap f^{-z_2}(\overset{o}{R_{w_{z_2}}})$ the interior of a rectangle. We are going to use this observation with regularity in the following text.

\begin{comment}

Another ambiguity appears as  $\pi_f$ is defined in terms of $f$ and not of geometric type $T$, so in principle if $g: S \rightarrow S$ is another pseudo-Anosov homeomorphism with a Markov partition of geometric type $T$  the projections $\pi_f$ and $\pi_g$  are not necessarily be the same, at some point we need to relate bot of them.  

 The ideal behave of $\pi_f$ is resume in the next Definition.

\begin{defi}\label{Defi: Simbolic modelable}
A geometric type $T$ in the pseudo-Anosov class is \emph{symbolically modelable} if for every the pair $(f,\cR)$ that represents $T$. The projection $\pi_f:\Sigma_A \rightarrow S$ is:
\begin{itemize}
\item A  continuous function.
\item Surjective and finite-to-one. In fact, if $x\in S$ has $k$ separatrices then $\vert \pi^{-1}_f\vert \leq 2k$.

\item A semi-conjugation between $f$ and and $\sigma$, that is i.e. $f \circ \pi_f=\pi_f \circ \sigma $.
\end{itemize} 
\end{defi}

\end{comment}

Proposition \ref{Prop:proyecion semiconjugacion} is a classic result in the theory. It was proven in \cite[Lemma 10.16]{fathi2021thurston}, where $\pi_f$ is defined as $\phi_f(\underline{w}) = \cap_{z\in \ZZ} f^{-z}(R_{w_z})$. Here, we include the proof for the sake of completeness.

\begin{prop}\label{Prop:proyecion semiconjugacion}
If $\cR$ is the Markov partition of a pseudo-Anosov homeomorphism $f:S\rightarrow S$, with a binary incidence matrix $A:=A(\cR)$, then the projection $\pi_f:\Sigma_A \rightarrow S$ is a continuous, surjective, and finite-to-one map\footnote{ What is implicit in this statement is that $\pi_f(\underline{w})$ is a single point in the surface.}. Furthermore, $\pi_f$ semi-conjugates $f$ with $\sigma$, i.e., $f\circ\pi_f=\pi_f \circ \sigma_A$.
\end{prop}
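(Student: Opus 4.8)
\textbf{Proof strategy for Proposition \ref{Prop:proyecion semiconjugacion}.}

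The plan is to verify the four assertions one at a time: that $\pi_f(\underline{w})$ is a single point (well-definedness), continuity, the semi-conjugacy relation, and finally surjectivity and the finite-to-one property. The whole argument rests on one geometric lemma: if $\underline{w}\in\Sigma_A$, then for each $n$ the set $\overset{o}{Q_n}:=\cap_{z=-n}^{n}f^{-z}(\overset{o}{R_{w_z}})$ is the interior of a (nonempty, by the $\Sigma_A$ condition and the Markov property) sub-rectangle of $R_{w_0}$, whose stable and unstable ``widths'' are controlled by $\lambda^{-n}$. First I would establish this: since $a_{w_z w_{z+1}}=1$, each component of $\overset{o}{R_{w_z}}\cap f^{-1}(\overset{o}{R_{w_{z+1}}})$ is the interior of a horizontal sub-rectangle, so inductively $f^{-z}(\overset{o}{R_{w_z}})\cap\cdots\cap f^{n}(\overset{o}{R_{w_{-n}}})$ intersected with $\overset{o}{R_{w_0}}$ is the interior of a sub-rectangle $Q_n$; the unstable direction is cut down by the forward constraints $z=1,\dots,n$ and the stable direction by the backward ones $z=-1,\dots,-n$. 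Because $f$ expands $\mu^u$ by $\lambda$ on unstable leaves and contracts $\mu^s$ by $\lambda^{-1}$, the $\mu^u$-length of a vertical leaf of $Q_n$ is at most $\lambda^{-n}\operatorname{diam}_u(\mathcal R)$ and similarly for horizontal leaves; here I invoke Proposition \ref{Prop: pseudo-Anosov properties.} (no closed leaves, minimality — actually only the uniform expansion/contraction of the transverse measures is needed, plus compactness to bound leaf lengths by measure). Hence $\overline{Q_n}$ is a nested decreasing sequence of compact connected sets whose diameters go to $0$, so $\cap_n\overline{Q_n}=\pi_f(\underline w)$ is exactly one point. (The closure is essential precisely because the $R_i$ need not be embedded, cf. Remark \ref{Rema: other projection}.)

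For continuity: given $\varepsilon>0$, choose $n$ with $\operatorname{diam}(\overline{Q_n})<\varepsilon$ for every admissible word of length $2n+1$ (uniform in the word, again by the $\lambda^{-n}$ bound). If $\underline v$ agrees with $\underline w$ on the block $[-n,n]$, i.e. $d_\Sigma(\underline v,\underline w)<2^{-n}$, then $\pi_f(\underline v)$ and $\pi_f(\underline w)$ both lie in the same $\overline{Q_n}$, so $d(\pi_f(\underline v),\pi_f(\underline w))<\varepsilon$. For the semi-conjugacy, observe directly from the definition that
$$
f(\pi_f(\underline w))=f\Big(\bigcap_{n}\overline{\cap_{z=-n}^{n}f^{-z}(\overset{o}{R_{w_z}})}\Big)=\bigcap_{n}\overline{\cap_{z=-n}^{n}f^{-(z-1)}(\overset{o}{R_{w_z}})}=\bigcap_{n}\overline{\cap_{z'=-n-1}^{n-1}f^{-z'}(\overset{o}{R_{w_{z'+1}}})},
$$
and the right-hand side is $\pi_f(\sigma(\underline w))$ (passing from the symmetric window $[-n,n]$ to $[-n-1,n-1]$ changes neither the nested intersection nor its limit point, since cofinal subsequences of nested compacta have the same intersection). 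Thus $f\circ\pi_f=\pi_f\circ\sigma$.

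Surjectivity: given $x\in S$, I want a code $\underline w$ with $f^z(x)\in R_{w_z}$ for all $z$. Pick for each $z$ a rectangle $R_{w_z}$ containing $f^z(x)$; since $\mathcal R$ is a Markov partition (and using the equivalent boundary formulation, Proposition \ref{Prop: Markov criterion boundary}), whenever $f^z(x),f^{z+1}(x)$ lie in interiors this forces $a_{w_z w_{z+1}}=1$, and the boundary cases are handled by choosing, via Lemma \ref{Lemm: sector contined unique rectangle}, a sector of $x$ and the rectangle containing that sector consistently along the orbit, which makes the word admissible; then $x\in\cap_z f^{-z}(R_{w_z})$ and, being a single point, $x=\pi_f(\underline w)$. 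Finite-to-one: two codes $\underline w,\underline v$ with $\pi_f(\underline w)=\pi_f(\underline v)=x$ can differ at $z$ only if $f^z(x)\in\partial\mathcal R$; a point lies on at most finitely many rectangles and, more precisely, the number of rectangles containing a given sector at $x$ is one (Lemma \ref{Lemm: sector contined unique rectangle}), and $x$ has at most $2k$ sectors if it has $k$ separatrices, so there are at most $2k$ admissible codes projecting to $x$; for $x\notin\partial\mathcal R$ in all its orbit the code is unique. The main obstacle is the well-definedness step — proving the diameters of the $\overline{Q_n}$ shrink to zero uniformly and that the sets really are sub-rectangles — which is where the hyperbolicity of $f$ and the Markov property must be combined carefully; the remaining items are then short formal consequences.
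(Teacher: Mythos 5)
Your proposal is correct and follows essentially the same route as the paper: the binary incidence matrix makes each $\overline{Q_n}$ a single nested sub-rectangle whose transverse widths shrink like $\lambda^{-n}$ (the paper phrases this via the horizontal segment $I_{\underline{w}}$ and vertical segment $J_{\underline{w}}$ plus connectedness of $Q_n$, rather than a direct diameter bound, but the substance is the same), continuity and the semi-conjugacy are the same formal computations, and surjectivity with the finite-to-one bound $2k$ is obtained exactly as in the paper through sector codes and Lemma \ref{Lemm: sector contined unique rectangle}. No gaps worth flagging.
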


The proof of Proposition \ref{Prop:proyecion semiconjugacion} will be divided into several Lemmas. Let's fix some notation.

Let $R$ be a rectangle. The \emph{unstable length} of $R$, denoted as $L^u(R)$, is defined as the measure $\mu^s(J)$ for any unstable segment $J$ of $R$. Similarly, the \emph{stable length} of $R$, denoted as $L^s(R)$, is defined as the measure $\mu^u(I)$ for any stable segment $I$ of $R$. Finally, the \emph{width} of $R$ is given by:

$$
W(R)=\max\{L^s(R),L^u(R)\}.
$$
 
 If $\cR=\{R_i\}_{i=1}^n$  is a Markov partition, then the \emph{width} of $\cR$ is given by:
 $$
 W(\cR)=\max\{W(R_i): i\in \{1,\cdots,n\}\}.
 $$

\begin{lemm}\label{Lemm: pif is well defined}
If $\cR$ is a Markov partition of $f$ with a binary incidence matrix, then the projection $\pi_f$ is a well-defined map. Is to said that $\pi_f(\underline{w})$ it is a unique point in the surface.
\end{lemm}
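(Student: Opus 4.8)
The plan is to show first that the nested sequence of closed rectangles $Q_n := \overline{\cap_{z=-n}^{n} f^{-z}(\overset{o}{R_{w_z}})}$ is a decreasing sequence of nonempty compact sets, and then that their widths tend to zero, so that their intersection is exactly one point. The nonemptiness will come from the defining condition of $\Sigma_A$: since $(a_{w_z,w_{z+1}})=1$ for every $z$, each consecutive intersection $f^{-z}(\overset{o}{R_{w_z}})\cap f^{-(z+1)}(\overset{o}{R_{w_{z+1}}})$ is the interior of a (horizontal) sub-rectangle of $f^{-z}(\overset{o}{R_{w_z}})$, hence nonempty; iterating this, each $\cap_{z=-n}^{n} f^{-z}(\overset{o}{R_{w_z}})$ is the nonempty interior of a rectangle, so its closure $Q_n$ is a nonempty compact rectangle. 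The inclusions $Q_{n+1}\subseteq Q_n$ are clear from the definition. By the finite intersection property for compacts, $\pi_f(\underline{w})=\cap_n Q_n$ is nonempty.

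The heart of the argument — and the main obstacle — is the estimate that the widths $W(Q_n)$ converge to $0$. I would argue as follows. First decompose $Q_n$: the stable length of $Q_n$ is controlled by the ``past'' part of the code, i.e. by $\overline{\cap_{z=0}^{n} f^{-z}(\overset{o}{R_{w_z}})}$, while the unstable length is controlled by the ``future'' part $\overline{\cap_{z=-n}^{0} f^{-z}(\overset{o}{R_{w_z}})}$. For the stable direction: applying $f^{-n}$, the set $f^{-n}\big(\cap_{z=0}^{n} f^{-z}(\overset{o}{R_{w_z}})\big)=\cap_{z=-n}^{0} f^{-z}(\overset{o}{R_{w_{z+n}}})$ is a rectangle contained in $\overset{o}{R_{w_0}}$ whose stable length is at most $W(\cR)$; since $f^{-n}$ contracts the stable measure $\mu^u$ by the factor $\lambda^{-n}$ (here one uses that $f$ expands $\mu^u$ on unstable leaves by $\lambda$, equivalently contracts $\mu^u$-measure of stable arcs under $f^{-1}$), we get $L^s(Q_n')\le \lambda^{-n}W(\cR)$ for the partial intersection over $z\ge 0$. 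Wait — one must be careful which way $f$ acts on $\mu^s$ versus $\mu^u$; I will use Definition~\ref{Defi: pseudo-Anosov} directly: $f_*\mu^u=\lambda\mu^u$, $f_*\mu^s=\lambda^{-1}\mu^s$, and translate this into contraction/expansion of lengths of transverse arcs. Symmetrically the unstable length of the partial intersection over $z\le 0$ is at most $\lambda^{-n}W(\cR)$. Combining, $W(Q_n)\le \lambda^{-n}W(\cR)\to 0$.

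Finally, a decreasing sequence of nonempty compact sets whose diameters (with respect to the surface metric $d$) go to zero has a one-point intersection: the widths going to zero give that the $\mu^s$- and $\mu^u$-lengths of the sides go to zero, and since on the compact surface these transverse measures dominate the metric locally (away from, and uniformly near, singularities — one may invoke the local models $\cD_p$ or simply the equivalence of the metric with the product measure structure on each chart), the Euclidean diameter of $Q_n$ also goes to zero. Hence $\cap_n Q_n$ is a single point, and $\pi_f(\underline w)$ is well-defined. I expect the delicate point to be making the width-to-diameter comparison rigorous at the singular points (spines and $k$-prongs), where the charts are branched covers $\cD_p$; this can be handled by noting there are finitely many singularities, each lying only on the boundary of rectangles, and the sub-rectangles $Q_n$ meet a fixed rectangle $R_{w_0}$ in which the bi-foliated structure is a genuine product, so the comparison is the standard one on $[0,1]^2$.
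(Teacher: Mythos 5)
Your proof follows the paper's strategy for the core estimates --- nonemptiness of the nested closed rectangles $Q_n$ via the binary incidence matrix, the Markov property and the finite intersection property, and the decay of the two side-lengths at rate $\lambda^{-n}$ --- but it diverges at the endgame, and that is exactly where it is weakest. The paper never passes from ``widths $\to 0$'' to ``metric diameter $\to 0$''. Instead it observes that $\cap_n Q_n \subseteq \big(\cap_{z\ge 0} f^{-z}(R_{w_z})\big) \cap \big(\cap_{z\le 0} f^{-z}(R_{w_z})\big) = I_{\underline w}\cap J_{\underline w}$, where $I_{\underline w}$ is a single horizontal segment and $J_{\underline w}$ a single vertical segment of $R_{w_0}$ (this is what the $\lambda^{-n}$ decay is used for); a compact stable arc and a compact unstable arc meet in finitely many points $x_1,\dots,x_m$, so one can choose pairwise disjoint neighborhoods $U_i\ni x_i$, and then connectedness of $\overset{o}{Q_n}$ together with compactness forces $Q_n\subset U_i$ for a single $i$ and all large $n$, whence $\cap_n Q_n=\{x_i\}$. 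This step is designed precisely to handle the fact that the rectangles of $\cR$ need not be embedded (so $I_{\underline w}\cap J_{\underline w}$ may genuinely contain several surface points), and it requires no comparison between the transverse measures and the Riemannian metric.

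Your alternative --- deducing $\mathrm{diam}(Q_n)\to 0$ from the widths --- can be made rigorous, but not by the justification you give. Arguing that $Q_n$ sits in $R_{w_0}$ ``where the bi-foliated structure is a genuine product, so the comparison is the standard one on $[0,1]^2$'' does not work: the parametrization of $R_{w_0}$ is only a continuous map, a homeomorphism on the interior, with possible identifications on the boundary, and it carries no uniform metric information; the possible failure of embeddedness is the very danger this lemma must address. The honest route is global: use the singular flat metric determined by $(\mu^s,\mu^u)$, note that any two points of $Q_n$ are joined inside it by a horizontal-plus-vertical path of flat length at most $L^s(Q_n)+L^u(Q_n)$, and then use that the flat metric and $d$ induce the same topology on the compact surface, so the identity map is uniformly continuous and flat-diameter $\to 0$ implies $d$-diameter $\to 0$; but this imports flat-structure machinery that the paper's connectedness argument avoids. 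Finally, two bookkeeping slips: the part of the code with $z\ge 0$ controls the unstable length $L^u$ (the $\mu^s$-measure of the vertical sides), not $L^s$, and $f^{-1}$ expands (does not contract) the $\mu^u$-measure of stable arcs; also the conjugation you write corresponds to applying $f^{n}$, not $f^{-n}$, and lands in $R_{w_n}$, not $R_{w_0}$. None of this affects the final bound $W(Q_n)\le \lambda^{-n}W(\cR)$, which is correct.
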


\begin{proof}
Let $\underline{w}$ be in $\Sigma_A$ we must prove that $\pi_f(\underline{w})$ has only one point.
 The unstable length of $R_i$ was previously define. Let's  take the maximum of such lengths:
 $$
 L^u(\cR):=\max\{L^u(R_i): i=1,\cdots, n\}=L^u.
 $$
For every $n\geq 0$, let  $H_n:=\cap_{i\leq n} f^{-i}(R_{w_i})\subset f^{-n}(R_{w_{n}})$. Since the incidence matrix of $\cR$ has only $0$ or $1$ as coefficients the sets $H_n$ have only one connected component, and they are  horizontal sub-rectangles of $R_{w_0}$, hence
$$
L^u(H_n)= \mu^s(\cap_{i\leq n} f^{-i}(R_{w_i})) \leq \mu^s( f^{-n}(R_{w_{n}})))= \lambda^{-n}\mu^s(R_{w_n})\leq \lambda^{-n} L^u
$$
Therefore their  vertical lengths converges to zero, i.e. $\lim_{n\rightarrow \infty} \mu^s(H_n)=0$, so $\cap_{z\geq 0} f^{-z}(R_{w_z})$ is an horizontal segment $I_{\underline{w}}$ of $R_{w_0}$ (the measures $\mu^s$ take positive values in any non-trivial interval).

 Similar arguments using the $\mu^u$-measure show that $V_n=\cap_{0 \geq z \geq n }f^{-z}(R_{w_z})$ is  horizontal sub-rectangles of $R_{w_0}$ whose horizontal length converges to zero, in view of the following computation: for all $n\in \NN$, 
 $$
 L^s(V_n)= \mu^u(\cap_{i\leq n} f^{i}(R_{w_{-i}})) \leq \mu^u( f^{n}(R_{w_{-n}})))= \lambda^{-n}\mu^u(R_{w_n})\leq \lambda^{-n} L^s(\cR)
  $$
Therefore $\cap_{z\leq 0} f^{-z}(R_{w_z})$ is a vertical segment $J_{\underline{w}}$ of $R_{w_0}$. The intervals $J_{\underline{w}}$ and $I_{\underline{w}}$ intersects in a finite number of points as are they are compact.
$$
I_{\underline{w}}\cap J_{\underline{w}}=\{x_i\}_{i=1}^m
$$

Then there exist pairwise disjoint open sets $\{U_i\}_{i=1}^m$ such that $x_i\in U_i$ and they are disjoints. Now lets come back to our projection.

Clearly 
$$
\pi_f(\underline{w})\subset \cap_{z\in \ZZ} f^{-z}(R_{w_z}) =  I_{\underline{w}}\cap  J_{\underline{w}}
$$

 Let $\overset{o}{Q_n}=\cap_{z=-n}^nf^{-z}(\overset{o}{R_{w_z}})$ an open rectangle and its closure $Q_n=\overline{\cap_{z=-n}^nf^{-z}(\overset{o}{R_{w_z}})}$ have the finite intersection property, therefore its intersection is not empty. So $\pi_f(\underline{w})$ is finite collection of points. But as the width of $\overset{o}{Q_n}$ converge to zero and is a connected set, there exist $N\in \NN$  and a unique $i\in \{1,\cdots,m\}$ such that for all $n\geq N$, $\overset{o}{Q_n}\subset U_i$. This implies that $\pi_f(\underline{w})\subset U_i$ and the only possibility is $\pi_f(\underline{w})=x_i$. This end the proof.
\end{proof}

With the metric that we defined for $\Sigma$ and $\Sigma_A$, two codes $\underline{w}$ and $\underline{v}$ are 'close' if there exists a certain $N \in \NN$ such that for all $-N \leq z \leq N$, $w_z = v_z$. This means that there is a central block around $0$ where the codes coincide. In particular, for all $n \geq N$, the rectangles $\overline{Q_n}(\underline{w})$ induced by $\underline{w}$ and $\overline{Q_n}(\underline{v})$ induced by  $\underline{v}$ are the same. This observation leads to the next corollary.

\begin{coro}\label{Coro: pif is continuous funtion}
With the hypothesis of Proposition \ref{Prop:proyecion semiconjugacion}, the projection $\pi_f$ is continuous.
\end{coro}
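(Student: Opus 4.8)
The plan is to establish continuity of $\pi_f$ directly from the metric structure of $\Sigma_A$ and the key estimate already proven in Lemma~\ref{Lemm: pif is well defined}, namely that the diameters of the nested rectangles $\overline{Q_n(\underline w)}$ tend to zero. The point is that a neighbourhood basis of a code $\underline w$ in $\Sigma_A$ is given by the cylinder sets: for each $N\in\NN$, the set $C_N(\underline w):=\{\underline v\in\Sigma_A : v_z=w_z \text{ for all } |z|\le N\}$ is open, and these cylinders form a neighbourhood basis at $\underline w$ because of the explicit formula \ref{Equa: Metric shift} for $d_\Sigma$ (if two codes agree on $\{-N,\dots,N\}$ their distance is at most $\sum_{|z|>N}2^{-|z|}=2^{-N+1}$, and conversely a small distance forces agreement on a large central block).

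The main step is then the observation, foreshadowed in the paragraph preceding the corollary: if $\underline v\in C_N(\underline w)$, then for every $n\le N$ we have $Q_n(\underline v)=Q_n(\underline w)$, since $Q_n$ depends only on the coordinates $w_{-n},\dots,w_n$. In particular $\pi_f(\underline v)\in \overline{Q_N(\underline w)}=\overline{Q_N(\underline v)}$ and also $\pi_f(\underline w)\in\overline{Q_N(\underline w)}$, so both points lie in the same rectangle $\overline{Q_N(\underline w)}$. Now I would invoke the estimate from Lemma~\ref{Lemm: pif is well defined}: the width $W(\overline{Q_N(\underline w)})$ is bounded above by $\lambda^{-N}W(\cR)$ (more precisely the stable and unstable lengths are each bounded by $\lambda^{-N}L^u$ and $\lambda^{-N}L^s$ respectively), hence its diameter in the surface metric $d$ goes to zero uniformly as $N\to\infty$. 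Given $\varepsilon>0$, choose $N$ so that every rectangle of the form $\overline{Q_N(\underline u)}$ has diameter less than $\varepsilon$; then $\underline v\in C_N(\underline w)$ implies $d(\pi_f(\underline w),\pi_f(\underline v))<\varepsilon$. Since $C_N(\underline w)$ is an open neighbourhood of $\underline w$, this is exactly continuity at $\underline w$, and $\underline w$ was arbitrary.

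One small technical wrinkle to address: bounding the diameter of $\overline{Q_N(\underline w)}$ in terms of its stable and unstable lengths. A rectangle with both side-lengths small need not a priori have small diameter in a general metric, but here we may use that there are only finitely many rectangles $R_i$, that $f$ and its iterates are uniformly continuous, and that $\overline{Q_N(\underline w)}$ is contained in $R_{w_0}$ and is foliated by stable and unstable arcs of controlled measure; alternatively one argues that for $N$ large $\overline{Q_N(\underline w)}$ lies in an arbitrarily small neighbourhood of the single point $\pi_f(\underline w)$, which is precisely the content of the last paragraph of the proof of Lemma~\ref{Lemm: pif is well defined} (the sets $\overset{o}{Q_n}$ eventually enter the prescribed small open set $U_i$). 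I expect this compactness-and-uniform-continuity bookkeeping to be the only real obstacle; the combinatorial heart — that nearby codes yield identical truncated rectangles — is immediate. Therefore the proof of Corollary~\ref{Coro: pif is continuous funtion} reduces to assembling these pieces, and I would write it in three or four lines citing Lemma~\ref{Lemm: pif is well defined} for the diameter estimate.
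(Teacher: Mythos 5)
Your proposal is correct and is essentially the paper's own argument: the paper leaves the corollary as an immediate consequence of the remark that codes close in $d_\Sigma$ agree on a central block, so the truncated rectangles $\overline{Q_n}$ coincide for $n\le N$, combined with the shrinking-width estimates already established in the proof of Lemma~\ref{Lemm: pif is well defined}. Your extra care about passing from small stable/unstable lengths to small diameter (via the observation that $\overline{Q_N(\underline w)}$ eventually lies in any prescribed neighbourhood of $\pi_f(\underline w)$, which is exactly the final step of that lemma's proof) fills a detail the paper glosses over but does not change the route.
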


\begin{lemm}\label{Lemm: Semiconjugacy pif}
Under the assumptions of Proposition \ref{Prop:proyecion semiconjugacion}, the projection $\pi_f$ is a semi-conjugation between  $\sigma$ and $f$, i.e. $f\circ \pi_f = \pi_f \circ \sigma_A$.
\end{lemm}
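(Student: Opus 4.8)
The plan is to unwind both sides of the claimed identity $f\circ\pi_f=\pi_f\circ\sigma$ directly from the definition of $\pi_f$ in Definition \ref{Defi: projection pi}, using nothing more than the elementary set-theoretic behaviour of $f$ with respect to intersections and closures. First I would fix a code $\underline{w}=(w_z)_{z\in\ZZ}\in\Sigma_A$ and write out $\sigma(\underline{w})=\underline{v}$, where $v_z=w_{z+1}$ for all $z\in\ZZ$; the goal is then the point equality $f(\pi_f(\underline{w}))=\pi_f(\underline{v})$, which makes sense because Lemma \ref{Lemm: pif is well defined} guarantees both sides are single points.

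The key computation is to push $f$ through the nested intersection. Since $f$ is a homeomorphism it commutes with finite intersections and with closures, so
\begin{equation*}
f(\pi_f(\underline{w}))=f\Bigl(\bigcap_{n\in\NN}\overline{\bigcap_{z=-n}^{n}f^{-z}(\overset{o}{R_{w_z}})}\Bigr)
=\bigcap_{n\in\NN}\overline{\bigcap_{z=-n}^{n}f^{-z+1}(\overset{o}{R_{w_z}})}.
\end{equation*}
Re-indexing the inner intersection by $z'=z-1$ turns $\bigcap_{z=-n}^{n}f^{-z+1}(\overset{o}{R_{w_z}})$ into $\bigcap_{z'=-n-1}^{n-1}f^{-z'}(\overset{o}{R_{w_{z'+1}}})=\bigcap_{z'=-n-1}^{n-1}f^{-z'}(\overset{o}{R_{v_{z'}}})$. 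So $f(\pi_f(\underline w))$ is the intersection over $n$ of the closures of the ``windows'' $[-n-1,n-1]$ of $\underline v$, whereas $\pi_f(\underline v)$ is the intersection over $n$ of the closures of the symmetric windows $[-n,n]$ of $\underline v$. The remaining point is that these two decreasing sequences of compact sets are cofinal in each other: every symmetric window $[-n,n]$ contains the window $[-n-1,n-1]$'s complement pattern appropriately, and more simply, the window $[-m,m]$ with $m=n+1$ is contained in $[-n-1,n-1]$ after one more step, so the two nested families have the same intersection. I would make this precise by noting that for each $n$ there is an $m$ with $[-m-1,m-1]\subset$ the set indexed by $[-n,n]$ and vice versa, hence the two infinite intersections of closures coincide.

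The only mildly delicate point — and the step I expect to need the most care — is justifying that closure commutes with the relevant finite intersections here, i.e. that $\overline{f^{-z}(\overset oR_{w_z})}\cap\overline{(\cdots)}$ can be replaced by $\overline{f^{-z}(\overset oR_{w_z})\cap(\cdots)}$, and that $f(\overline{A})=\overline{f(A)}$. The latter is immediate since $f$ is a homeomorphism. For the former I would use the observation recorded just after Definition \ref{Defi: projection pi}: for $\underline w\in\Sigma_A$ the finite intersections $\bigcap_{z=-n}^{n}f^{-z}(\overset oR_{w_z})$ are interiors of genuine sub-rectangles of $R_{w_0}$ (because the incidence matrix is binary, there is exactly one connected component at each stage), so these are ``nice'' open rectangles whose closure is obtained by adding their boundary, and the closure of an intersection of such nested product-like sets equals the intersection of their closures. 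With that technicality handled, the re-indexing argument above gives $f(\pi_f(\underline w))=\pi_f(\sigma(\underline w))$ as points, completing the proof; together with Corollary \ref{Coro: pif is continuous funtion} and Lemma \ref{Lemm: pif is well defined} this also finishes Proposition \ref{Prop:proyecion semiconjugacion} modulo the surjectivity and finite-to-one statements handled in the surrounding lemmas.
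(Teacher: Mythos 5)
Your proof is correct and follows essentially the same route as the paper: push $f$ through the nested intersection using that a homeomorphism commutes with intersections and with closures, reindex, and compare the two nested families of closed sets. The only cosmetic differences are that the paper obtains one containment and then invokes Lemma \ref{Lemm: pif is well defined} (both sides are single points) to upgrade it to equality, whereas you get both containments directly by cofinality; moreover the closure-versus-intersection interchange you flag as delicate is never actually needed, since in Definition \ref{Defi: projection pi} the closure is already applied to the whole finite intersection.
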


\begin{proof}
Let $\underline{w}\in \Sigma_A$  need to show that
$$
f\circ \pi_f (\underline{w})=\pi_f \circ \sigma(\underline{w}).
$$
A direct computation gives that:
\begin{eqnarray*}
f\circ \pi_f (\underline{w})=f \left( \bigcap_{n\in \NN} \overline{\cap_{z=-n}^{n} f^{z}\left( \overset{o}{R_{w_{-z}}} \right)} \right)= \\
\bigcap_{n\in \NN} f\left(\overline{\cap_{z=-n}^{n} f^{z}\left(\overset{o}{R_{w_{-z}}}\right)}\right)= \bigcap_{n\in \NN} \overline{\cap_{z=-n}^{n} f(f^{z}\left(\overset{o}{R_{w_{-z}}}\right))}=\\
\bigcap_{n\in \NN} \overline{\cap_{z=-n}^{n} (f^{z+1}\left(\overset{o}{R_{w_{-z}}}\right))}=\bigcap_{n\in \NN} \overline{\cap_{k=-n+1}^{n+1} (f^{k}\left(\overset{o}{R_{w_{-k+1}}}\right))}\\
\subset \bigcap_{n\in \NN} \overline{\cap_{k=-(n-1)}^{n-1} (f^{k}\left(\overset{o}{R_{w_{-k+1}}}\right))} = \bigcap_{n\in \NN} \overline{\cap_{k=-n}^{n} (f^{k}\left(\overset{o}{R_{w_{-k+1}}}\right))} \\
=\pi_f \circ \sigma(\underline{w}).
\end{eqnarray*}

Since the Lemma \ref{Lemm: pif is well defined}  established that $f\circ \pi_f (\underline{w})$ and $\pi_f \circ \sigma(\underline{w})$ have only one element, the previous contention is indeed and equality and this argument complete our proof.
\begin{comment}
Observe that for all $n\in \NN$, the following contention occur:

\begin{eqnarray*}
\cap_{k=-(n+1)}^{n+1} (f^{k}(\overset{o}{R_{w_{-k+1}}})) \\
\subset \cap_{k=-(n-1))}^{n+1} (f^{k}(\overset{o}{R_{w_{-k+1}}}))\\
\subset \cap_{k=-n+1}^{n-1} (f^{k}(\overset{o}{R_{w_{-k+1}}}))
\end{eqnarray*}

Using this contention we can deduce that:

\begin{eqnarray*}
\bigcap_{n \in \NN} \overline{ \cap_{z=-(n+1)}^{n+1} f^{z}(\overset{o}{R_{w_{-z}}})  } \subset \bigcap_{n\in \NN} \overline{\cap_{k=-n+1}^{n+1} (f^{k}(\overset{o}{R_{w_{-k+1}}}))}=\\
f\circ \pi_f(\underline{w})	=\bigcap_{n\in \NN} \overline{\cap_{k=-n+1}^{n+1} (f^{k}(\overset{o}{R_{w_{-k+1}}}))} \subset\\
\bigcap_{n\in \NN} \overline{\cap_{z=-n}^{n} f^{z}(\overset{o}{R_{w_{-z}}})}
\end{eqnarray*}

And it is not difficult to see that the first and last sets in the previous statement are equal. And finally: in fact:
$$
\pi_f \circ \sigma(\underline{w})=\bigcap_{n\in \NN}\overline{ \cap_{z=-n}^{n} f^{z}(\overset{o}{R_{w_{-z+1}}})  }= \bigcap_{n\in \NN_{+}}\overline{ \cap_{z=-n}^{n} f^{z}(\overset{o}{R_{w_{-z+1}}})  } = f\circ \pi_f(\underline{w}).
$$
\end{comment}

\end{proof}

\begin{lemm}\label{Lemm: projection is surjective}
The projection $\pi_f:\rightarrow S$ is surjective.
\end{lemm}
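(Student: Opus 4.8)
The plan is to show that every point $x \in S$ lies in the image of $\pi_f$ by constructing a code $\underline{w} \in \Sigma_A$ whose projection is $x$. The natural candidate is the \emph{itinerary} of $x$: for each $z \in \ZZ$, we want $w_z$ to be an index of a rectangle of $\cR$ containing $f^z(x)$. The subtlety is that $x$ may lie on the boundary $\partial \cR$, in which case $f^z(x)$ belongs to several rectangles, so the itinerary is not unique and one must choose it carefully so that consecutive symbols are compatible with $A$, i.e., $a_{w_z,w_{z+1}} = 1$. Since we are working with a geometric type whose incidence matrix is binary (the horizontal refinement), I would first reduce to that case, or simply assume as in the statement that $A = A(\cR)$ is binary.

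First I would handle the generic case: if the full orbit $\{f^z(x)\}_{z\in\ZZ}$ avoids $\partial\cR$, then each $f^z(x)$ lies in the interior of a unique rectangle $R_{w_z}$, and the Markov property (Definition~\ref{Defi: Markov partition}, together with Corollary~\ref{Coro: image of sub horzontal-vertical }) guarantees that $f(\overset{o}{R_{w_z}}) \cap \overset{o}{R_{w_{z+1}}} \neq \emptyset$, hence $a_{w_z,w_{z+1}} = 1$, so $\underline{w} \in \Sigma_A$. Then $f^z(x) \in \overset{o}{R_{w_z}}$ for all $z$, so $x \in \cap_{z=-n}^n f^{-z}(\overset{o}{R_{w_z}})$ for every $n$, and therefore $x \in \overline{Q_n(\underline{w})}$ for every $n$; by Lemma~\ref{Lemm: pif is well defined} this intersection is the single point $\pi_f(\underline{w})$, so $x = \pi_f(\underline{w})$.

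For a general point $x$, the plan is to use a density/continuity argument. By the minimality of the invariant foliations (Proposition~\ref{Prop: pseudo-Anosov properties.}, item 2), the set of points whose full orbit misses the finite union of stable and unstable boundary leaves of $\cR$ is dense in $S$; alternatively, one can perturb $x$ slightly within a small rectangle neighborhood to a point $y$ with generic orbit. Taking a sequence $y_k \to x$ of generic points, each $y_k = \pi_f(\underline{w}^{(k)})$ for some $\underline{w}^{(k)} \in \Sigma_A$. Since $\Sigma_A$ is compact (Lemma~\ref{Lemm: Sigma A is invariant subset}), a subsequence of $\underline{w}^{(k)}$ converges to some $\underline{w} \in \Sigma_A$, and by continuity of $\pi_f$ (Corollary~\ref{Coro: pif is continuous funtion}) we get $\pi_f(\underline{w}) = \lim \pi_f(\underline{w}^{(k)}) = \lim y_k = x$. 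Hence $\pi_f$ is surjective.

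The main obstacle is the treatment of boundary points and making the "generic orbit" reduction rigorous: one must be sure that points with orbit disjoint from $\partial\cR$ are genuinely dense. This follows because $\partial^s\cR$ and $\partial^u\cR$ are contained in finitely many leaves of $\cF^s$ and $\cF^u$ respectively (Lemma~\ref{Lemm: Boundary of Markov partition is periodic}), so their full orbits $\cup_{z\in\ZZ} f^z(\partial\cR)$ form a countable union of leaves (no leaf is closed, by Proposition~\ref{Prop: pseudo-Anosov properties.}), which is a meager set with empty interior; its complement is dense. Once this density is in hand, the compactness-plus-continuity argument closes the proof cleanly, and no further estimates beyond those already established in Lemma~\ref{Lemm: pif is well defined} are needed.
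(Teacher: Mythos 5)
Your argument is correct, but it follows a genuinely different route from the paper. The paper proves surjectivity constructively: for \emph{every} point $x\in S$ (boundary points included) it introduces the sector codes of $x$ (Definition~\ref{Defi: Sector codes}), shows via Lemma~\ref{Lemm: sector contined unique rectangle} and Proposition~\ref{Prop: image secto is a sector} that each sector code is admissible (Lemma~\ref{Lemm: sector code is admisible}), and these codes visibly project to $x$; this construction is then reused immediately to prove that $\pi_f$ is finite-to-one and to describe the fibers $\pi_f^{-1}(x)$ exactly (Lemma~\ref{Lemm: every code is sector code }, Corollary~\ref{Coro: Caracterisation fibers}). You instead argue softly: for points whose full orbit avoids $\partial\cR$ the itinerary is an admissible code (here you only need nonemptiness of $f(\overset{o}{R_{w_z}})\cap\overset{o}{R_{w_{z+1}}}$, which holds since $f^{z+1}(x)$ lies in both, plus binarity of $A$) and projects to $x$ by Lemma~\ref{Lemm: pif is well defined}; then the orbit of $\partial\cR$ lies in countably many leaves, each a countable union of nowhere dense arcs, so generic points are dense by Baire, and since $\pi_f(\Sigma_A)$ is compact (hence closed) by Lemma~\ref{Lemm: Sigma A is invariant subset} and Corollary~\ref{Coro: pif is continuous funtion}, it must be all of $S$. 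Your approach buys brevity and avoids the sector formalism for this particular lemma, and it only invokes results established earlier, so there is no circularity; what it does not buy is the explicit identification of the codes over a given point, which the paper's sector-code construction provides and which is indispensable for the subsequent finite-to-one statement and for the definition of the relation $\sim_T$, so the paper's longer construction is not wasted effort. One cosmetic remark: the appeal to minimality (Proposition~\ref{Prop: pseudo-Anosov properties.}) is not what makes generic points dense — your own meagerness argument in the last paragraph is the one that does the work, and the parenthetical ``no leaf is closed'' is unnecessary there, since a leaf is meager whether or not it is closed.
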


For all $x\in S$, we will construct an element of $\Sigma_A$ that projects to $x$. The \emph{sector codes} we define below will do the job.  It was shown in Lemma \ref{Lemm: sector contined unique rectangle} that each sector is contained in a unique rectangle of the Markov partition, and Proposition  \ref{Prop: image secto is a sector} shows that the image of a sector is a sector. This allows for the following definition.

\begin{defi}\label{Defi: Sector codes}
Let $\cR=\{R_i\}_{i=1}^n$ be a Markov partition whose incidence matrix is binary. Let $x \in S$ be a point with sectors $\{e_1(x),\cdots, e_{2k}(x)\}$ (where $k$ is the number of stable or unstable separatrices in $x$).

The \emph{sector code} of $e_j(x)$ is the sequence $\underline{e_j(x)}=(e(x,j)_z)_{z\in \ZZ} \in \Sigma$, given by the rule: $e(x,j)_z:=i$, where $i\in \{1,\dots,n\}$ is the index of the unique rectangle in $\cR$ such that the sector $f^z(e_j(x))$ is contained in the rectangle $R_i$.

\end{defi}

Any $\underline{w}\in\Sigma$ that is equal to the sector code of $e_j(x)$ (for some $j$ ) is called a sector code of $x$. The space $\Sigma$ of bi-infinite sequences is larger than $\Sigma_A$. We need to show that every sector code is, in fact, an \emph{admissible code}, i.e., that $\underline{e_j(x)}\in \Sigma_A$.

\begin{lemm}\label{Lemm: sector code is admisible}
For every $x \in S$, every sector code $\underline{e}:=\underline{e_j(x)}$ is an element of $\Sigma_A$.
\end{lemm}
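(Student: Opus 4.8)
The plan is to verify the admissibility condition of Equation \ref{Equa: Sigma A set} one transition at a time. Writing $e_z := e(x,j)_z$ for the letters of the candidate code $\underline{e} = \underline{e_j(x)}$, I must show that $a_{e_z,e_{z+1}} = 1$ for every $z \in \mathbb{Z}$, where $A = (a_{ik})$ is the binary incidence matrix; recall that $a_{ik}$ equals the number of connected components of $\overset{o}{R_i} \cap f^{-1}(\overset{o}{R_k})$ (Definition \ref{Defi: Incidence matriw of a type} and the discussion in Subsection \ref{Subsec: Incidence matrix}). Since $A$ is binary, it suffices to produce a single point of $\overset{o}{R_{e_z}} \cap f^{-1}(\overset{o}{R_{e_{z+1}}})$: non-emptiness of this set forces $a_{e_z,e_{z+1}} \geq 1$, hence $a_{e_z,e_{z+1}} = 1$.

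First I would fix $z$ and recall, from Definition \ref{Defi: Sector codes} together with Lemma \ref{Lemm: sector contined unique rectangle}, that $e_z$ is the unique index with $f^z(e_j(x)) \subset R_{e_z}$, and that by Proposition \ref{Prop: image secto is a sector} the set $f^{z+1}(e_j(x)) = f\bigl(f^z(e_j(x))\bigr)$ is again a sector, namely the one contained in $R_{e_{z+1}}$, so that $e_{z+1}$ is its index. Next I would pick a representative sequence $\{y_n\}$ of the sector $f^z(e_j(x))$. Using the construction in the proof of Lemma \ref{Lemm: sector contined unique rectangle} — where, after passing to a sub-sequence of any representative, one exhibits a small open rectangle $\overset{o}{Q} \subset \overset{o}{R_{e_z}}$ containing the tail of the sequence — I may assume $y_n \in \overset{o}{R_{e_z}}$ for all $n$. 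By Proposition \ref{Prop: image secto is a sector} the sequence $\{f(y_n)\}$ represents the sector $f^{z+1}(e_j(x))$; invoking the same lemma once more and passing to a further sub-sequence, I may also assume $f(y_n) \in \overset{o}{R_{e_{z+1}}}$. Passing to this last sub-sequence only replaces $\{y_n\}$ by a sub-sequence of a sequence already lying in $\overset{o}{R_{e_z}}$, so the property $y_n \in \overset{o}{R_{e_z}}$ is retained.

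With both conditions in force, each surviving $y_n$ lies in $\overset{o}{R_{e_z}} \cap f^{-1}(\overset{o}{R_{e_{z+1}}})$, so this intersection is non-empty and $a_{e_z,e_{z+1}} = 1$. As $z$ was arbitrary, every one-step transition of $\underline{e} = \underline{e_j(x)}$ is admissible, which is exactly the statement $\underline{e} \in \Sigma_A$. I do not expect a genuine obstacle; the only delicate point is the two successive sub-sequence extractions needed to place $y_n$ and its image $f(y_n)$ in the respective open rectangles simultaneously, together with the routine remark that passing to a sub-sequence does not change the sector class of a convergent sequence, which is used implicitly in Lemma \ref{Lemm: sector contined unique rectangle} and in the discussion around Remark \ref{Rema: caracterisation sim-p}.
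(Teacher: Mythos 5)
Your proposal is correct and follows essentially the same route as the paper: take a representative sequence of the sector $f^z(e_j(x))$, use Lemma \ref{Lemm: sector contined unique rectangle} to place its tail in $\overset{o}{R_{e_z}}$, push forward by $f$ and invoke Proposition \ref{Prop: image secto is a sector} to place the image tail in $\overset{o}{R_{e_{z+1}}}$, which gives the non-empty intersection and hence $a_{e_z,e_{z+1}}=1$. The only cosmetic difference is that the paper justifies the tail lying in the interior by ruling out a boundary component sitting between the two bounding separatrices of the sector, while you appeal to the open rectangle $\overset{o}{Q}$ from the proof of that lemma; both are fine.
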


\begin{proof}
Let $A=(a_{ij})$ the incidence matrix.	The code $\underline{e}=(e_z)$ is in $\Sigma_A$ if and only if for all $z\in \ZZ$, $a_{e_z e_{z+1}}=1$. By definition, this happens if and only if $f(\overset{o}{R_{e_z}})\cap \overset{o}{R_{e_{z+1}}}\neq \emptyset$.

Let $\{x_n\}_{n\in \NN}$ be a sequence converging to $f^z(x)$ and contained in the sector $f^z(e)$. By Lemma \ref{Lemm: sector contined unique rectangle} the sector $f^z(e)$ is contained in a unique rectangle $R_{e_z}$, and we can assume $\{x_n\}\subset R_{e_z}$. Moreover, there exists $N\in \NN$ such that $x_n \in \overset{o}{R_{e_z}}$ for all $n\geq N$. Remember that the sector $f^z(e)$ is bounded by two consecutive local stable and unstable separatrices of $f^z(x)$: $F^s(f^z(x))$ and $F^u(f^z(x))$. If for every $n\in \NN$, $x_n$ is contained in the boundary of $R_{e_z}$, this boundary component is a local separatrice of $x$ between $F^s(f^z(x))$ and $F^u(f^z(x))$, which is not possible.

Since the image of a sector is a sector, the sequence $\{f(x_n)\}$ converges to $f^{z+1}(x)$ and is contained in the sector $f^{z+1}(e)$. The argument in the last paragraphs also applies to this sequence, and $f(x_n)\in \overset{o}{R_{e_{z+1}}}$ for $n$ big enough. This proves that $f(\overset{o}{R_{e_z}})\cap \overset{o}{R_{e_{z+1}}}$ is not empty.
	
\end{proof}

The sector codes of a point $x$ are not only admissible; as the following Lemma shows, they are, in fact, the only codes in $\Sigma_A$ that project to $x$.  

\begin{lemm}\label{Lemm: every code is sector code }
If $\underline{w}=(w_z)\in \Sigma_A$ projects under $\pi_f$ to $x$, then $\underline{w}$ is equal to a sector code of $x$.
\end{lemm}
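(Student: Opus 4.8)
The goal is to show that if $\underline{w} = (w_z)_{z \in \ZZ} \in \Sigma_A$ satisfies $\pi_f(\underline{w}) = x$, then $\underline{w}$ coincides with $\underline{e_j(x)}$ for some sector $e_j(x)$ of $x$. The plan is to recover the sector of $x$ that $\underline{w}$ ``encodes'' by looking at the open rectangles $\overset{o}{Q_n}(\underline{w}) = \cap_{z=-n}^n f^{-z}(\overset{o}{R_{w_z}})$ used in the definition of $\pi_f$. By Lemma \ref{Lemm: pif is well defined} and its proof, these are open sub-rectangles of $\overset{o}{R_{w_0}}$ whose widths tend to $0$ and whose closures all contain $x = \pi_f(\underline{w})$. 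Since each $\overset{o}{Q_n}$ is connected, disjoint from the local stable and unstable separatrices of $x$ (it is an open bi-foliated rectangle not containing $x$ itself once $n$ is large, because $x$ lies on its boundary), and of shrinking diameter, the sequence $\{\overset{o}{Q_n}\}$ eventually lies inside a single connected component $E(\epsilon_0)_{j}(x)$ of the regular neighborhood $D(x,\epsilon_0)$ minus the local separatrices of $x$; this determines a well-defined sector $e_j(x)$ of $x$. First I would make this precise: pick a sequence of points $y_n \in \overset{o}{Q_n}$; then $y_n \to x$, and for $n$ large all $y_n$ lie in the same $E(\epsilon_0)_j(x)$, so $\{y_n\}$ represents the sector $e_j(x)$, and this $j$ does not depend on the choices made (two such sequences are cofinally in the same component).

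Next I would check that $\underline{w}$ is exactly the sector code of this $e_j(x)$. Fix $z \in \ZZ$. Applying $f^z$ to the shrinking rectangles and using that $f$ maps sectors to sectors (Proposition \ref{Prop: image secto is a sector}), the images $f^z(\overset{o}{Q_n}) = \cap_{k=-n+z}^{n+z} f^{-k}(\overset{o}{R_{w_k}})$ are open rectangles inside $\overset{o}{R_{w_z}}$, contain the sequence $f^z(y_n) \to f^z(x)$, and these points eventually lie in the sector $f^z(e_j(x))$. By Lemma \ref{Lemm: sector contined unique rectangle}, the sector $f^z(e_j(x))$ is contained in a \emph{unique} rectangle of $\cR$; since $f^z(y_n) \in \overset{o}{R_{w_z}}$ for all $n$ (as $f^z(\overset{o}{Q_n}) \subset \overset{o}{R_{w_z}}$) and $f^z(y_n)$ converges to $f^z(x)$ through the sector $f^z(e_j(x))$, that unique rectangle must be $R_{w_z}$. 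But by Definition \ref{Defi: Sector codes}, the $z$-th entry of the sector code $\underline{e_j(x)}$ is precisely the index of the unique rectangle containing $f^z(e_j(x))$. Hence $w_z = e(x,j)_z$ for every $z \in \ZZ$, i.e.\ $\underline{w} = \underline{e_j(x)}$.

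The only subtlety — and the step I expect to require the most care — is the claim that, for $n$ large, every point of $\overset{o}{Q_n}$ lies in a single sector of $x$. This needs: (i) $x$ is in the closure of $\overset{o}{Q_n}$ (from Lemma \ref{Lemm: pif is well defined}), (ii) $x \notin \overset{o}{Q_n}$ for $n$ large (otherwise $x$ would be an interior point of a bi-foliated open rectangle, yet $x$ may be a singularity or a boundary point; in any case one argues that if $x$ were in the interior of $\overset{o}{Q_n}$ for all $n$, then since $\operatorname{diam}(\overset{o}{Q_n}) \to 0$ the intersection would be an open neighbourhood basis of $x$ consisting of rectangles, forcing $x$ to be a regular point with a single sector, and then the statement is trivial), and (iii) $\overset{o}{Q_n}$, being connected and of diameter less than the size of the regular neighbourhood $D(x,\epsilon_0)$, cannot meet two different components $E(\epsilon_0)_j(x)$ without crossing a local separatrix of $x$; but $\overset{o}{Q_n} \subset \overset{o}{R_{w_0}}$ is an \emph{open} sub-rectangle of $R_{w_0}$, hence disjoint from $\partial^s R_{w_0} \cup \partial^u R_{w_0}$, and the local separatrices of $x$ near $x$ are contained in the stable/unstable boundary components of the rectangles of $\cR$ meeting $x$, so $\overset{o}{Q_n}$ cannot cross them. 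Assembling these observations gives a single sector, and the rest follows as above. I would also record that this, together with Lemma \ref{Lemm: sector code is admisible}, shows $\pi_f^{-1}(x)$ is precisely the set of sector codes of $x$, which is finite (at most $2k$ codes, $k$ the number of separatrices at $x$), completing the proof of surjectivity and finite-to-one-ness needed for Proposition \ref{Prop:proyecion semiconjugacion}.
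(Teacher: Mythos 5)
Your overall strategy, the nested shrinking rectangles $\overset{o}{Q_n}$ together with the uniqueness of the rectangle containing a sector, is essentially the paper's, but the step you yourself flag as the main subtlety is genuinely wrong as you argue it. The claim that $\overset{o}{Q_n}$ is disjoint from the local separatrices of $x$, so that an arbitrary choice of $y_n\in\overset{o}{Q_n}$ eventually lies in a single component $E(\epsilon_0)_j(x)$ and determines a sector independently of the choices, fails. Take $x$ on the stable leaf of an $s$-boundary periodic point but not on the unstable lamination of $u$-boundary points (say $f^z(x)\in\partial^s\cR$ exactly for $z\geq 0$, with all other iterates interior), and let $\underline{w}$ be a code projecting to $x$. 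Then for every $n$ the point $x$ lies in the interior of a stable edge of $\overline{Q_n}$ (not at a corner), and the vertical fiber of $\overline{Q_n}$ through $x$ --- which is part of a local unstable separatrix of $x$ --- is contained in $\overset{o}{Q_n}$. Hence $\overset{o}{Q_n}$ meets two different sectors of $x$ for every $n$, and a badly chosen sequence $y_n$ oscillates between them, converging to $x$ in no sector; the "independence of choices" you assert is false. Your supporting claim that the local separatrices of $x$ are contained in the stable/unstable boundaries of the rectangles of $\cR$ meeting $x$ is also false: by Lemma \ref{Lemm: Boundary of Markov partition is periodic} only leaves of boundary periodic points lie in $\partial^{s}\cR\cup\partial^u\cR$, so in the example above the unstable separatrices of $x$ lie in no rectangle boundary at all.

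What the argument needs --- and what the paper proves instead --- is the weaker but correct statement that for each $n$ at least one sector of $x$ has its whole local quadrant contained in $F_n=\overset{o}{Q_n}$ (otherwise an entire regular neighborhood of $x$ would be disjoint from the open set $F_n$, contradicting $x\in\overline{F_n}$); since $F_{n+1}\subset F_n$ and $x$ has only finitely many sectors, a single sector $e$ is contained in every $F_n$, and then $f^m(e)\subset f^m(F_n)\subset\overset{o}{R_{w_m}}$ for $\vert m\vert\leq n$ gives $e_m=w_m$ for all $m$. Your endgame (Proposition \ref{Prop: image secto is a sector} plus Lemma \ref{Lemm: sector contined unique rectangle}) works fine once such a sector is in hand, and your treatment of the degenerate case $x\in\overset{o}{Q_n}$ for all $n$ is acceptable in substance (though then $x$ is a totally interior point whose sector codes all coincide, not a point "with a single sector"). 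The missing ingredient is precisely this pigeonhole over sectors contained in the nested rectangles; tracking a sequence of points of $Q_n$ instead of a quadrant contained in $Q_n$ is the step that fails.
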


\begin{proof}
		
		For each $n \in \N$, we take the rectangle $F_n=\cap_{j=-n}^n f^{-j}(\overset{o}{R_{w_j}})$, which is non-empty because $\underline{w}$ is in $\Sigma_A$. The following properties hold:

	\begin{itemize}
		\item[i)] $\pi_f(\underline{w})=x \in \overline{F_n}$ for every $n\in \ZZ$.
		\item[ii)] For all $n\in \NN$, $F_{n+1}\subset F_n$.
		\item[iii)]	For every $n\in \NN$, there exists at least one sector $e$ of $x$ contained in $F_n$. If this does not occur, there is $\epsilon>0$ such that the regular neighborhood of size $\epsilon$ around $x$, given by Theorem \ref{Theo: Regular neighborhood}, is disjoint from $F_n$. However, $x\in F_n$ by item i).
	
		\item[iv)] If the sector $e \subset H_n$, then for every $m\in  \ZZ$ such that $\vert m\vert \leq n$:
		$$
		f^m(e) \subset f^m(H_n)= \cap_{j=m-n}^{m+n}f^{-j}(\overset{o}{R_{w_{m-j}}}) \subset \overset{o}{R_m}.
		$$ 
		which implies that $e_m=w_m$ for all $m\in \{-n,\cdots,n\}$.
	\end{itemize}
	
	By item $ii)$, if a sector $e$ is not in $F_n$, then $e$ is not in $F_{n+1}$. Together with the fact that for all $n$ there is always a sector in $F_n$ (item $iii)$), we deduce that there is at least one sector $e$ of $x$ that is in $F_n$ for all $n$. Then, we apply point $iv)$ to deduce $e_z=w_z$ for all $z$.
\end{proof}
Let $x$ be a point with $k$ stable and $k$ unstable separatrices. Then $x$ has at most $2k$ sector codes projecting to $x$.
\begin{coro}\label{Coro: Caracterisation fibers}
For all $x\in S$, if $x$ has $k$ separatrices, then $\pi_f^{-1}(x)=\{\underline{e_j(x)}\}_{j=1}^{2k}$. In particular, $\pi_f$ is finite-to-one.
\end{coro}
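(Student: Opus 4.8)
The statement to prove is Corollary~\ref{Coro: Caracterisation fibers}: for every $x\in S$ with $k$ separatrices, $\pi_f^{-1}(x)=\{\underline{e_j(x)}\}_{j=1}^{2k}$, and in particular $\pi_f$ is finite-to-one. The plan is to assemble this directly from the three lemmas that immediately precede it. First I would fix $x\in S$ and recall that, since $x$ has $k$ stable and $k$ unstable separatrices, it has exactly $2k$ sectors $\{e_1(x),\dots,e_{2k}(x)\}$ by the stratification set up before Definition~\ref{Defi: Sector}; each sector $e_j(x)$ has a well-defined sector code $\underline{e_j(x)}\in\Sigma$ by Definition~\ref{Defi: Sector codes}, using that the image of a sector is a sector (Proposition~\ref{Prop: image secto is a sector}) and that each sector lies in a unique rectangle (Lemma~\ref{Lemm: sector contined unique rectangle}).

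The two inclusions come as follows. For $\{\underline{e_j(x)}\}_{j=1}^{2k}\subseteq \pi_f^{-1}(x)$: Lemma~\ref{Lemm: sector code is admisible} shows each $\underline{e_j(x)}$ lies in $\Sigma_A$, so $\pi_f(\underline{e_j(x)})$ makes sense; then I must check $\pi_f(\underline{e_j(x)})=x$. This is essentially the content already extracted in the proof of Lemma~\ref{Lemm: every code is sector code } read in reverse: if $e:=e_j(x)$ and $F_n=\cap_{z=-n}^n f^{-z}(\overset{o}{R_{(e_j(x))_z}})$, then by construction of the sector code $f^z(e)\subset \overset{o}{R_{(e_j(x))_z}}$ for all $z$, hence $e\subset F_n$ for every $n$; since $x$ lies in the closure of the sector $e$ and the widths of the $\overline{F_n}$ shrink to $0$ (same estimate as in Lemma~\ref{Lemm: pif is well defined}), the unique point of $\cap_n\overline{F_n}=\pi_f(\underline{e_j(x)})$ must be $x$. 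For the reverse inclusion $\pi_f^{-1}(x)\subseteq\{\underline{e_j(x)}\}_{j=1}^{2k}$: this is exactly Lemma~\ref{Lemm: every code is sector code }, which says any $\underline{w}\in\Sigma_A$ with $\pi_f(\underline{w})=x$ equals a sector code of $x$, and there are at most $2k$ of those.

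Combining the two inclusions gives $\pi_f^{-1}(x)=\{\underline{e_j(x)}\}_{j=1}^{2k}$ as a set (some of the $\underline{e_j(x)}$ may coincide, so this is a set of cardinality at most $2k$), and in particular $|\pi_f^{-1}(x)|\le 2k<\infty$, so $\pi_f$ is finite-to-one; together with continuity and surjectivity (Corollary~\ref{Coro: pif is continuous funtion} and Lemma~\ref{Lemm: projection is surjective}) this completes the proof of Proposition~\ref{Prop:proyecion semiconjugacion} as well. I do not expect a genuine obstacle here: the only point requiring a small amount of care is the first inclusion, i.e.\ verifying $\pi_f(\underline{e_j(x)})=x$ rather than merely that $\underline{e_j(x)}$ is admissible — one must note that $x$ belongs to $\overline{F_n}$ for all $n$ (because $x$ is in the closure of the sector $e_j(x)\subset F_n$) and invoke the width-shrinking estimate to pin down the unique intersection point. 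Everything else is bookkeeping over the already-established lemmas.
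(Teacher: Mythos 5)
Your proof is correct and takes essentially the same route as the paper: the reverse inclusion is exactly the lemma stating that every code projecting to $x$ is a sector code of $x$, and the forward inclusion is the fact, underlying the surjectivity lemma, that each admissible sector code $\underline{e_j(x)}$ projects to $x$. Your only addition is to write out explicitly the nested-rectangle/width-shrinking verification that $\pi_f(\underline{e_j(x)})=x$, a step the paper leaves implicit.
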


This ends the proof of Proposition \ref{Prop:proyecion semiconjugacion}.

\subsection{The quotient space  is a surface}\label{subsec: quotien surface}

There is a very natural equivalence relation in $\Sigma_A$ in terms of the projection $\pi_f$. Two codes $\underline{w}$ and $\underline{v}$ in $\Sigma_A$ are $f$-related, written as $\underline{w} \sim_f \underline{v}$, if and only if $\pi_f(\underline{w}) = \pi_f(\underline{v})$. The quotient space is denoted as $\Sigma_f = \Sigma_A/\sim_f$, and $[\underline{w}]_f$ represents the equivalence class of $\underline{w}$. If $\underline{w} \sim_f \underline{v}$, by definition:
$$
[\pi_f]([\underline{w}]_f)=\pi_f(\underline{w})=\pi_f(\underline{v})=[\pi_f]([\underline{v}]_f).
$$

Even more, since $\pi_f: \Sigma_A \rightarrow S$ is a continuous function, $\Sigma_A$ is compact, and $S$ is a Hausdorff topological space, the \emph{closed map lemma} implies that $\pi_f$ is a closed function. Furthermore, since $\pi_f$ is surjective, it follows that $\pi_f$ is a quotient map. Therefore, the projection $\pi_f$ induces a homeomorphism $[\pi_f]: \Sigma_f \rightarrow S$ in the quotient space. The shift behaves well under this quotient since: $$[\sigma_f([\underline{w}]_f)]_f:=[\sigma([\underline{w}])]_f
$$
and $\underline{w} \sim_f \underline{v}$. The semi-conjugation of $f$ and $\sigma$ through $\pi_f$ implies that:
$$
[\sigma(\underline{w})]\sim_f [\sigma(\underline{v})].
$$
then the map $[\sigma]:\Sigma_f \rightarrow \Sigma_f$ is well defined and is, in fact, a homeomorphism as well, and

\begin{eqnarray*}
[\pi_f]\circ[\sigma]([\underline{w}]_f)=
[\pi_f][\sigma_A(\underline{w})]_f =\pi_f(\sigma(\underline{w})))= \\
f \circ (\pi_f(\underline{w}))= f\circ[\pi_f]([\underline{w}]_f).
\end{eqnarray*}

Therefore, $[\pi_f]$ determines a topological conjugation between $f$ and $[\sigma_A]$. This implies that $\Sigma_f$ is a surface homeomorphic to $S$, and $[\sigma]$ is a pseudo-Anosov homeomorphism. It is almost the ideal situation. Let us summarize this discussion in the following proposition.

\begin{prop}\label{Prop: quotien by f}
	The quotient space $\Sigma_f:=\Sigma_A/\sim_f$ is homeomorphic to $S_f$, and the quotient shift $\sigma_f:\Sigma_f\rightarrow \Sigma_f$ is a generalized pseudo-Anosov homeomorphism, topologically conjugated to $f:S_f \rightarrow S_f$ through the quotient projection:
	$$
	[\pi_f]:\Sigma_f\rightarrow S_f.
	$$
\end{prop}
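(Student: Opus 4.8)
The plan is to assemble the proof of Proposition~\ref{Prop: quotien by f} from the pieces already established in Proposition~\ref{Prop:proyecion semiconjugacion} (the projection $\pi_f$ is continuous, surjective, finite-to-one, and semi-conjugates $\sigma$ with $f$), together with elementary point-set topology. First I would record the formal setup: define $\underline{w} \sim_f \underline{v}$ iff $\pi_f(\underline{w}) = \pi_f(\underline{v})$, which is manifestly an equivalence relation on $\Sigma_A$ since it is the pullback of equality on $S_f$ under the map $\pi_f$. Let $q:\Sigma_A \to \Sigma_f := \Sigma_A/\!\sim_f$ be the canonical projection and equip $\Sigma_f$ with the quotient topology. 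Since $\pi_f$ is constant on each $\sim_f$-class, the universal property of the quotient gives a unique continuous map $[\pi_f]:\Sigma_f \to S_f$ with $[\pi_f]\circ q = \pi_f$, and this map is injective by construction and surjective because $\pi_f$ is surjective.

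The key topological step is to upgrade $[\pi_f]$ from a continuous bijection to a homeomorphism. Here I would invoke the closed map lemma: $\Sigma_A$ is compact (Lemma~\ref{Lemm: Sigma A is invariant subset}, being a closed subset of the compact space $\Sigma$), $S_f$ is Hausdorff (it is a surface), and $\pi_f$ is continuous, hence $\pi_f$ is a closed map. A quotient of a closed continuous surjection is again closed, so $[\pi_f]$ is a continuous closed bijection, therefore a homeomorphism. Consequently $\Sigma_f$ is homeomorphic to $S_f$; in particular $\Sigma_f$ is a closed oriented surface. (Alternatively, one can argue directly that $\Sigma_f$ is compact as the continuous image of $\Sigma_A$ under $q$, and that it is Hausdorff because $\sim_f$ is a closed equivalence relation — $\pi_f$ being a closed map — so that a continuous bijection from the compact space $\Sigma_f$ to the Hausdorff space $S_f$ is automatically a homeomorphism.)

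Next I would handle the dynamics. The semi-conjugation $f\circ \pi_f = \pi_f\circ\sigma$ from Proposition~\ref{Prop:proyecion semiconjugacion} shows that $\sigma$ descends to the quotient: if $\underline{w}\sim_f\underline{v}$ then $\pi_f(\sigma(\underline{w})) = f(\pi_f(\underline{w})) = f(\pi_f(\underline{v})) = \pi_f(\sigma(\underline{v}))$, so $\sigma(\underline{w})\sim_f\sigma(\underline{v})$, and the induced map $\sigma_f:\Sigma_f\to\Sigma_f$ defined by $\sigma_f([\underline{w}]_f) = [\sigma(\underline{w})]_f$ is well defined and continuous (by the quotient property). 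The same argument applied to $\sigma^{-1}$ — using that $f^{-1}\circ\pi_f = \pi_f\circ\sigma^{-1}$, which follows from the semi-conjugation since $f$ and $\sigma$ are homeomorphisms — shows that $\sigma_f$ has a continuous inverse, so $\sigma_f$ is a homeomorphism of $\Sigma_f$. Chasing the identities gives $[\pi_f]\circ\sigma_f = f\circ[\pi_f]$, i.e. $[\pi_f]$ conjugates $\sigma_f$ to $f$. Finally, since $[\pi_f]$ is a homeomorphism and $f$ is a generalized pseudo-Anosov homeomorphism, Proposition~\ref{Prop:  conjugation produces pA} (conjugation by a homeomorphism preserves the generalized pseudo-Anosov property, with the same dilatation factor and the same prong data) implies that $\sigma_f = [\pi_f]^{-1}\circ f\circ[\pi_f]$ is itself a generalized pseudo-Anosov homeomorphism of $\Sigma_f$.

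I do not expect a genuine obstacle here: everything is a bookkeeping exercise once Proposition~\ref{Prop:proyecion semiconjugacion} is in hand. The one point that deserves care — and which I would write out explicitly rather than wave at — is the claim that $\pi_f$ is a closed (equivalently, quotient) map and hence that $\Sigma_f$ is Hausdorff; this is exactly where compactness of $\Sigma_A$ and the Hausdorff property of $S_f$ enter, and it is what makes $[\pi_f]$ a homeomorphism rather than merely a continuous bijection. Everything else — that $\sim_f$ is an equivalence relation, that $\sigma_f$ is well defined, that the conjugacy identity holds — is immediate diagram chasing.
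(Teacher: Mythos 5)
Your proposal is correct and follows essentially the same route as the paper: the closed map lemma (continuous map from the compact $\Sigma_A$ to the Hausdorff surface $S_f$) makes $\pi_f$ a closed, hence quotient, map, so $[\pi_f]$ is a homeomorphism, and the semi-conjugation $f\circ\pi_f=\pi_f\circ\sigma$ lets $\sigma$ descend to a homeomorphism $\sigma_f$ conjugate to $f$, which is then generalized pseudo-Anosov by Proposition~\ref{Prop:  conjugation produces pA}. Your only additions are bookkeeping the paper leaves implicit (the universal property of the quotient and the explicit inverse of $\sigma_f$).
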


If we have two pseudo-Anosov maps $f:S_f\rightarrow S_f$ and $g:S_g\rightarrow S_g$ with Markov partitions of the same geometric type, after a horizontal refinement, they have the same incidence matrix $A$ with coefficients in $\{0,1\}$ and are associated with the same sub-shift of finite type $(\Sigma_A,\sigma)$. However, the projections $\pi_f$ and $\pi_g$ are not necessarily the same. In particular, while $\Sigma_A/\sim_f$ is homeomorphic to $S_f$ and $\Sigma_A/\sim_g$ is homeomorphic to $S_g$, we cannot affirm that $S_f$ is homeomorphic to $S_g$.

The problem arises from the fact that given $x\in S_f$ and $y\in S_g$, we don't know if $\pi_f^{-1}(x)\cap\pi_g^{-1}(x)\neq \emptyset$ implies $\pi_f^{-1}(x)=\pi_g^{-1}(x)$. In the proof of the finite-to-one property Lemma, it was shown that every code in $\pi_f^{-1}(x)$ is a sector code of $x$. Therefore, if $\pi_f^{-1}(x)\cap \pi_g^{-1}(y)\neq \emptyset$, there is a common sector code of $x$ and $y$. However, this does not imply a unique (or continuous) correspondence between the sets of sectors of $x$ and $y$. For example, it is possible for $x$ to have a different number of prongs than $y$.

This ambiguity cannot be resolved by the incidence matrix alone because two sector codes are considered different if, after iterations of $\sigma$, they are in different rectangles. If a point $x$ is in the corner of $4$ rectangles and $y$ is in the corner of $3$ rectangles, then $x$ has $4$ different sector codes, while $y$ has only $3$. This discrepancy in the number of incident rectangles at a point is overcome by the implementation of the geometric type.

To address this, we will construct another quotient space of $\Sigma_A$ called $\Sigma_T$ and define an equivalence relation $\sim_T$ in terms of the geometric type $T$.

%dinamica simbolica generales
\section{The equivalent relation $\sim_T$.}\label{Sub-sec:Sub-conjugation}

 In this section, we will define a the equivalence relation that satisfy the properties enumerated  in Proposition \ref{Prop: The relation determines projections}. Given a Markov partition $\cR=\{R_i\}_{i=1}^n$, we have introduced the following notation: the interior of the partition is denoted by $\overset{o}{\mathcal{R}}:=\cup_{i=1}^n \overset{o}{R_i}$, and the stable or unstable boundary of $\mathcal{R}$ is denoted by $\partial^{s,u}\mathcal{R}:=\cup_{i=1}^n \partial^{s,u}R_i$.
 
 The general idea is to start with a geometric type $T$ within the pseudo-Anosov class and define a decomposition of $\Sigma_{A(T)}$ in terms of $T$ into three subsets: $\Sigma_{I(T)}$, $\Sigma_{S(T)}$, and $\Sigma_{U(T)}$. We will then introduce three relations, $\sim_{I,s,u}$, over these sets based on the properties of $T$. These relations will be extended to an equivalence relation $\sim_T$ in $\Sigma_A$. Finally, we will prove that for any pair $(f,\mathcal{R})$ that realizes $T$, $\pi_f(\underline{w})=\pi_f(\underline{v})$ if and only if $\underline{w}\sim_T \underline{v}$. The subsets and the equivalence relation $\sim_T$ will be determined through several steps, which we will outline below:

\begin{itemize} 
	\item 	We define the set of periodic points in $\Sigma_A$ denoted by $\Sigma_P$. It is proven that $\pi_f(\Sigma_P)$ corresponds to the periodic points of $f$, and for every periodic point $p$ of $f$, $\pi_f^{-1}(p)\subset \Sigma_P$.

	\item 	We construct a finite family of positive codes $\mathcal{S}(T)^+$, called $s$-\emph{boundary label codes}. It is proven that every code $\underline{w}\in \Sigma_A$ whose positive part is in $\mathcal{S}(T)^+$ is projected by $\pi_f$ to the stable boundary of a Markov partition $(f,\mathcal{R})$ whose geometric type is $T$. This determines a subset $\underline{\mathcal{S}(T)}\subset \Sigma_{A(T)}$ of $s$-\emph{boundary codes}.

	\item 	A relation $\sim_s$ is defined in $\underline{\mathcal{S}(T)}$, which satisfies the property that two codes in $\underline{\mathcal{S}(T)}$ are $\sim_s$-related, i.e., $\underline{w}\sim_s \underline{v}$ if and only if they project to the same point $\pi_f(\underline{w})=\pi_f(\underline{v})$ in the stable boundary of two adjacent rectangles of the Markov partition.

	\item  	Similarly, we construct a finite family of negative codes $\mathcal{U}(T)^-$, called $u$-\emph{boundary label codes}, and the subset $\underline{\mathcal{U}(T)}\subset \Sigma_{A(T)}$ of codes whose negative part is in $\mathcal{U}(T)^-$. It is proven that $\underline{\mathcal{U}(T)}$ projects by $\pi_f$ to the unstable boundary of rectangles in the Markov partition. A relation $\sim_u$ is defined in $\underline{\mathcal{U}(T)}$ such that if two codes are $\sim_u$-related, they are projected to the same point in the unstable boundary of two adjacent rectangles of the Markov partition.

	\item 	Using the relations $\sim_s$ and $\sim_u$, we define a set of periodic codes $\Sigma_{\mathcal{P}(T)}\subset \Sigma_A$, which are projected by $\pi_f$ to the boundary periodic points of $\mathcal{R}$.
	
	\item 	We define the $s$-\emph{boundary leaf codes} $\Sigma_{\mathcal{S}(T)}\subset\Sigma_A$ as the set of codes $\underline{w}$ for which there exists $k\in \mathbb{Z}$ such that $\sigma^k(\underline{w})\in \underline{\mathcal{S}(T)}$. Similarly, the $u$-\emph{boundary leaf codes} $\Sigma_{\mathcal{U}(T)}\subset\Sigma_A$ is the set of codes $\underline{w}$ for which there exists $k\in \mathbb{Z}$ such that $\sigma^k(\underline{w})\in \underline{\mathcal{U}(T)}$. The relations $\sim_s$ and $\sim_u$ are extended to the sets of $s$-\emph{boundary leaf codes} and $u$-\emph{boundary leaf codes}, respectively, by declaring that $\underline{w},\underline{v}\in \Sigma_{\mathcal{S}(T)}$ are $S$-related, i.e., $\underline{w}\sim_S \underline{v}$, if and only if there is $k\in\mathbb{Z}$ such that $\sigma^k(\underline{w}),\sigma^k(\underline{v})\in \Sigma_{\mathcal{S}(T)}$ and $\sigma^k(\underline{w})\sim_s\sigma^k(\underline{v})$. Similarly, we define the relation $\sim_U$ over $\Sigma_{\mathcal{U}(T)}\subset \Sigma_A$.

	\item We prove that $\pi_f(\Sigma_{\mathcal{S}(T)})$ and $\pi_f(\Sigma_{\mathcal{U}(T)})$ corresponds to the stable and unstable leaves of the periodic boundary points of $(f,\mathcal{R})$, and they are the only codes in $\Sigma_A$ with this property.
	
	\item 	It is proved that if $\underline{w}, \underline{v} \in \Sigma_{\mathcal{S}(T)}, \Sigma_{\mathcal{U}(T)}$ are $S, U$-related, then $\pi_f(\underline{w}) = \pi_f(\underline{v})$.
	
	\item 	We extend the relations $\sim_S$ and $\sim_U$ to a relation $\sim_{S,U}$ in the union $\Sigma_{\mathcal{S}(T)} \cup \Sigma_{\mathcal{U}(T)}$. It is proved that $\underline{w}, \underline{v} \in \Sigma_{\mathcal{S}(T)} \cup \Sigma_{\mathcal{U}(T)}$ are $\sim_{S,U}$-related if and only if they are projected by $\pi_f$ to the same point.

	\item 	The subset $\Sigma_{\mathcal{I}(T)}$ of $\Sigma_A$ is the complement of $\Sigma_{\mathcal{S}(T)} \cup \Sigma_{\mathcal{U}(T)}$ in $\Sigma_A$. $\pi_f(\Sigma_{I})$ is contained in the complement of the stable and unstable foliation of the periodic boundary points of $\mathcal{R}$ and are called \emph{totally interior codes}. We define $\sim_I$ as the identity relation in $\Sigma_{\mathcal{I}(T)}$. It is proved that $\underline{w} \in \Sigma_{I}$ if and only if all the sector codes of $\pi_f(\underline{w})$ are the same.

	\item 	We extend the relations $\sim_{S,U,I}$ to a relation $\sim_T$ in $\Sigma_A$. We prove that it is an equivalence relation in $\Sigma_A$ and satisfies Proposition \ref{Prop: The relation determines projections}.
	
\end{itemize}

\subsection{Periodic codes and periodic points}
Let $T$ be a geometric type whose incidence matrix is binary. Let $f: S \rightarrow S$ be a generalized pseudo-Anosov homeomorphism with a Markov partition of geometric type $T$, denoted as $(f, \mathcal{R})$. Consider $\pi_f: \Sigma_A \rightarrow S$ as the projection induced by $(f, \mathcal{R})$ into $S$. In this subsection, $A := A(T)$ represents the incidence matrix of $T$. The periodic points in the dynamical system $(\Sigma_A, \sigma)$ are denoted as $\text{Per}(\sigma) \subset \Sigma_A$, and the periodic points of $f$ are denoted as $\text{Per}(f) \subset S$. Note that $(f, \mathcal{R})$ implies that $\mathcal{R}$ is seen as a Markov partition of $f$. Hence, to avoid overloading the notation, we may omit the explicit mention of the homeomorphism if it is clear from the context.

The following lemma is classical in the literature when the authors work with the more classical definition of the projection, anyway, this lemma  represents the type of result we aim to prove. On one hand, we have periodic shift codes determined by $T$, and on the other hand, a valid property holds for every pseudo-Anosov homeomorphism with a Markov partition of the prescribed geometric type.

\begin{lemm}\label{Lemm: Periodic to peridic}
Let $T$ be a geometric type in the pseudo-Anosov class. Assume that its incidence matrix is binary, and $(f, \mathcal{R})$ is a pair realizing $T$. If $\underline{w} \in \text{Per}(\sigma)$ is a periodic code, then $\pi_f(\underline{w})$ is a periodic point of $f$. Moreover, if $p$ is a periodic point for $f$, then $\pi^{-1}_f(p) \subset \text{Per}(\sigma)$, meaning that all codes projecting onto $p$ are periodic.
\end{lemm}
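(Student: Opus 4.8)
The plan is to prove the two directions separately, using the semi-conjugation property $f \circ \pi_f = \pi_f \circ \sigma$ established in Proposition \ref{Prop:proyecion semiconjugacion} (here $\sigma$ really means $\sigma_A$) together with the fact, established in the proof of that proposition (Lemma \ref{Lemm: every code is sector code } and Corollary \ref{Coro: Caracterisation fibers}), that every code in $\pi_f^{-1}(x)$ is a sector code of $x$, so that $\pi_f$ is finite-to-one with fibers of size at most $2k$ when $x$ has $k$ separatrices.

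For the first direction, suppose $\underline{w} \in \Sigma_A$ is $\sigma$-periodic with period $q$, i.e. $\sigma^q(\underline{w}) = \underline{w}$. Then applying the semi-conjugation $q$ times gives $f^q(\pi_f(\underline{w})) = \pi_f(\sigma^q(\underline{w})) = \pi_f(\underline{w})$, so $\pi_f(\underline{w})$ is a periodic point of $f$ of period dividing $q$. This direction is immediate and requires no real work beyond iterating the intertwining relation.

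For the second direction, let $p$ be a periodic point of $f$, say $f^q(p) = p$, and let $\underline{w} \in \pi_f^{-1}(p)$. I want to show $\underline{w}$ is $\sigma$-periodic. The key observation is that $\sigma^q(\underline{w})$ again lies in $\pi_f^{-1}(p)$, because $\pi_f(\sigma^q(\underline{w})) = f^q(\pi_f(\underline{w})) = f^q(p) = p$. Thus $\sigma^q$ maps the finite set $\pi_f^{-1}(p)$ to itself; call this set $F$, with $|F| = N < \infty$. Since $\sigma$ is a bijection (it is a homeomorphism of $\Sigma_A$), $\sigma^q$ restricted to $F$ is a bijection of a finite set, hence a permutation of $F$, hence of finite order: there exists $m \geq 1$ with $(\sigma^q)^m|_F = \mathrm{id}_F$. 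Therefore $\sigma^{qm}(\underline{w}) = \underline{w}$, so $\underline{w}$ is periodic. This shows $\pi_f^{-1}(p) \subset \mathrm{Per}(\sigma)$.

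The argument is short and the only place one must be slightly careful is invoking finiteness of $\pi_f^{-1}(p)$ — this is exactly the content of Corollary \ref{Coro: Caracterisation fibers}, so the main (minor) obstacle is simply making sure all the ingredients from Proposition \ref{Prop:proyecion semiconjugacion} are in place: continuity, surjectivity, the semi-conjugation, and finite-to-one. No estimate or geometric construction is needed; the proof is purely formal once those properties are granted. One could optionally remark that the period of $\underline{w}$ need not equal the period of $p$ (only a multiple of it), which is why the statement asserts only membership in $\mathrm{Per}(\sigma)$ rather than an equality of periods.
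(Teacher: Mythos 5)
Your proof is correct, and the first direction is word-for-word the paper's argument. For the second direction you take a slightly different route: the paper identifies every code in $\pi_f^{-1}(p)$ with a sector code of $p$ and argues that some iterate $f^{Q}$ (a multiple of the period of $p$) fixes the corresponding sector, so the sector code itself is periodic; you instead observe that $\sigma^{q}$ maps the finite fiber $\pi_f^{-1}(p)$ into itself, and since $\sigma$ is invertible on $\Sigma_A$ this restriction is a permutation of a finite set, hence of finite order, giving $\sigma^{qm}(\underline{w})=\underline{w}$. The two arguments are essentially the same pigeonhole applied on either side of the semi-conjugation (sectors of $p$ versus codes in the fiber), but yours is a bit more economical: it needs only the formal package of Proposition \ref{Prop:proyecion semiconjugacion} together with the finite-to-one statement of Corollary \ref{Coro: Caracterisation fibers} and injectivity of the shift, without re-entering the geometry of sectors, whereas the paper's version keeps the sector description explicit (which is also what lets it name a concrete period $Q=km$). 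Your closing remark that the period of $\underline{w}$ is in general only a multiple of that of $p$ is accurate and consistent with the paper's statement.
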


\begin{proof}
If $\underline{w}\in \Sigma_{A}$ is periodic of period $k$, the semi-conjugation given by $\pi_f$ implies that:

$$
f^k(\pi_f(\underline{w}))=\pi_f(\sigma^k(\underline{w}))=\pi_f(\underline{w}).
$$

If $p$ is a periodic point of period $P$, and let $\underline{w} \in \Sigma_{A}$ be a code such that $\pi_f(\underline{w}) = p$, then from our proof that $\pi_f$ is finite-to-one, we established that every code projecting to $p$ is a sector code. Thus, we have $\underline{w} = \underline{e}$, corresponding to some sector $e$ of $p$.

If $p$ is a saddle point of index $k$ (with $k \geq 1$), there are at most $2k$ rectangles in $\cR$ that can contain $p$. Let's consider the rectangle $R_{w_0}$. We are aware that under $f$, the sectors incident to $p$ can be permuted, with each sector contained within a single rectangle. The action of $f^{kP}$ on the set of sectors of $p$ is a permutation. Hence, there exists a multiple of the period of $p$, denoted as $Q = km$, such that $f^Q(e) = e$. However, once we observe that a sector $e$ returns to itself after the action of $f^Q$ for all $m \in \{1, 2, \ldots, Q\}$, it  becomes necessary that the sector $f^{Q+m}(e)$ and $f^{m}(e)$ must coincide. Consequently, the sector code $\underline{e} = \underline{w}$ is periodic, with a period $Q$.

\end{proof}

 Suppose that the pair $(f, \mathcal{R})$ realizes the geometric type $T$, i.e., $T(f, \mathcal{R}) = T$. For a fixed Markov partition $\mathcal{R}$, we give in Definition \ref{Defi: Interiors rec and segments} the notions of:  interior of $\mathcal{R}$ denoted by $\overset{o}{\mathcal{R}}$, and  stable and unstable boundaries of $\mathcal{R}$ denoted by $\partial^{s,u}\mathcal{R}$. Similarly, in Definition \ref{Defi: boundary points}, we introduced the following notation: the stable and unstable periodic boundary points are denoted by Per$^{s,u}(f, \mathcal{R}) =$ Per$^{s,u}(\mathcal{R})$, the interior periodic points by Per$^{I}(f, \mathcal{R}) =$ Per$^{I}(\mathcal{R})$, and the corner periodic points by Per$^{c}(f, \mathcal{R}) =$ Per$^{c}(f, \mathcal{R})$.
 
 The following Lemma characterizes the periodic boundary points of $(f,\cR)$ in terms of their iterations under $f$. It is a technical result that we will use in future arguments.

\begin{lemm} \label{Lemm: no periodic boundary points}
	Let $\underline{w} \in \Sigma_A$ be a code such that for every $k \in \mathbb{Z}$, $f^k(\pi_f(\underline{w})) \in \partial^s\mathcal{R}$, then $\pi_f(\underline{w})$ is a periodic point of $f$. Similarly, if $f^k(\pi_f(\underline{w})) \in \partial^u\mathcal{R}$ for all integers $k$, then $\underline{w}$ is a periodic code.
\end{lemm}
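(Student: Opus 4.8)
The plan is to prove that a point whose entire $f$-orbit stays in the stable boundary $\partial^s\mathcal R$ must be periodic, and dually for the unstable boundary. I will prove the stable statement; the unstable one follows by applying the same argument to $f^{-1}$ together with Remark \ref{Rema: Inverse type and inverse pAnosov}, since $\cR$ is also a Markov partition for $f^{-1}$ with the roles of stable and unstable boundaries exchanged, and a periodic point of $f$ is a periodic code precisely when it is a periodic point of $f$ (Lemma \ref{Lemm: Periodic to peridic}).

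First I would set $x:=\pi_f(\underline w)$ and recall Lemma \ref{Lemm: Boundary of Markov partition is periodic}: the stable boundary $\partial^s\cR$ is a \emph{finite} union of stable leaf-segments, each lying in the stable leaf of some periodic point of $f$, and $\partial^s\cR$ is $f$-invariant (Proposition \ref{Prop: Markov criterion boundary}). Thus the full orbit $\{f^k(x)\}_{k\in\ZZ}$ is contained in this finite union of stable segments. Writing $\partial^s\cR=\cup_{i=1}^m \partial^s R_i$, there are only finitely many stable boundary components; by the pigeonhole principle there exist $k_1<k_2$ such that $f^{k_1}(x)$ and $f^{k_2}(x)$ lie on the same stable boundary component $I$, which is a compact stable segment. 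Hence $f^{k_2-k_1}$ maps the stable leaf through $f^{k_1}(x)$ into itself, and in fact, since the component $I$ is sent by some power of $f$ into a stable boundary component contained in a single stable leaf, the leaf $\cF^s(f^{k_1}(x))$ is periodic under $f$.

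Next I would exploit the contraction of $f$ on stable leaves. Let $p$ be the periodic point on whose stable leaf $f^{k_1}(x)$ lies, with period $P$, so $\cF^s(p)$ is $f^P$-invariant and $f^P$ contracts $\mu^s$-length on $\cF^s(p)$ by the factor $\lambda^{-1}<1$. The point $y:=f^{k_1}(x)$ lies on $\cF^s(p)$; consider the compact sub-arc $[p,y]^s$ of this leaf (using that every leaf-segment between two points on a leaf is well-defined, Proposition \ref{Prop: pseudo-Anosov properties.} guaranteeing no leaf contains two singularities). Then $\mu^s([p,f^{nP}(y)]^s)=\lambda^{-nP}\mu^s([p,y]^s)\to 0$, so $f^{nP}(y)\to p$ along the leaf. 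I must now argue $y=p$: the orbit of $y$ stays in the finite union $\partial^s\cR$, which is a finite union of \emph{compact} stable arcs; the forward orbit $f^{nP}(y)$ converges to $p$, so $p$ is an accumulation point of the orbit lying on $\partial^s\cR$, hence $p\in\partial^s\cR$. If $y\neq p$, the arc $(p,y]^s$ has positive $\mu^s$-measure, and its forward iterates $f^{nP}((p,y]^s)$ are nested arcs shrinking to $p$ but all contained in $\partial^s\cR$; since $\partial^s\cR$ meets any small stable neighbourhood of $p$ in finitely many arcs with $p$ as a common endpoint, for large $n$ the arc $f^{nP}((p,y]^s)$ is forced to lie inside $\partial^s\cR$, which is consistent, so this alone does not yet give a contradiction. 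The cleaner route: $y$ and all its forward iterates lie on the periodic leaf $\cF^s(p)$, and $\partial^s\cR\cap\cF^s(p)$ is a finite union of compact arcs, hence the forward orbit of $y$ under $f^P$ lies in a compact subset of $\cF^s(p)$ on which $f^P$ acts as a contraction toward $p$; a strict contraction of a compact invariant subset of a line has a unique fixed point, and an orbit inside it converging to $p$ forces $y$ to be $f^P$-periodic only if the compact set is $f^P$-invariant, which it is because $\partial^s\cR$ is $f$-invariant. Therefore $y$ is eventually periodic, and since $f$ is a homeomorphism it is periodic; consequently $x=f^{-k_1}(y)$ is periodic, as desired.

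The main obstacle I anticipate is making the last step fully rigorous: ruling out a non-periodic point $y$ on a periodic stable leaf whose entire orbit is trapped in $\partial^s\cR$. The key fact to pin down is that $\partial^s\cR\cap\cF^s(p)$ is a finite $f^P$-invariant union of compact arcs (finite because $\partial^s\cR$ has finitely many components and $\cF^s(p)$ meets each in finitely many arcs, by compactness and transversality of the two foliations), so that $f^P$ restricted to this set is a homeomorphism of a compact one-dimensional set that strictly contracts $\mu^s$-length; such a map cannot have a non-periodic point whose full orbit stays in the set — indeed the $\alpha$-limit of $y$ would be empty since backward iterates expand length beyond any bound available in the compact set, contradicting invariance unless $y$ itself is periodic. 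I would phrase this as: the backward orbit $\{f^{-nP}(y)\}$ must remain in the compact set $\partial^s\cR\cap\cF^s(p)$, but $\mu^s([p,f^{-nP}(y)]^s)=\lambda^{nP}\mu^s([p,y]^s)\to\infty$, contradicting the boundedness of $\mu^s$-lengths of arcs inside the compact set $\partial^s\cR\cap\cF^s(p)$ — unless $y=p$, which is periodic. This contradiction completes the proof, and the unstable case is symmetric via $f^{-1}$ and $T^{-1}$.
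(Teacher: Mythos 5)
Your final argument is correct and is essentially the paper's own proof: both reduce to the observation that if the point $y$ (equivalently $x$) were different from the periodic point $p$ on its leaf, the stable segment joining them would have backward iterates of unbounded length, while the hypothesis confines those iterates to the finitely many compact stable arcs that $\partial^s\cR$ meets on that leaf — a contradiction; the unstable case follows by the symmetric argument for $f^{-1}$, with Lemma \ref{Lemm: Periodic to peridic} upgrading the periodic point to a periodic code. Two cosmetic remarks: in the paper's conventions the length of a stable arc is measured by $\mu^u$ (not $\mu^s$), and both the pigeonhole step and the forward-contraction paragraph are dispensable detours, since Lemma \ref{Lemm: Boundary of Markov partition is periodic} already provides the periodic point $p$ and only your backward-expansion paragraph carries the proof.
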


\begin{proof}

Suppose $\underline{w}$ is non-periodic. Lemma \ref{Lemm: Periodic to peridic}  implies that $x = \pi_f(\underline{w})$ is non-periodic and lies on the stable boundary of $\mathcal{R}$, which is a compact set with each component having finite $\mu^u$ length. Take $[x,p]^s$ to be the stable segment joining $x$ with the periodic point on its leaf (which exists by Lemma \ref{Lemm: Boundary of Markov partition is periodic}). For all $m \in \mathbb{N}$,
$$
\mu^u([f^{-m}(x), f^{-m}(p)]^s)=\mu^u(f^{-m}[x,p]^s)=\lambda^m\mu^u([x,p]^s),
$$
 which is unbounded. Therefore, there exists $m \in \mathbb{N}$ such that $f^{-m}(x)$ is no longer in $\partial^s\mathcal{R}$, contradicting the hypothesis of our lemma. A similar reasoning applies to the second proposition.
 \end{proof}

This will allow us to provide a characterization of the inner periodic points of $\mathcal{R}$. The proof of Corollary \ref{Coro: interior periodic points unique code} follows from the fact that all sector codes of a point satisfying the hypotheses are equal.

\begin{coro}\label{Coro: interior periodic points unique code}

Let $\underline{w}\in \Sigma_A$ be a periodic code. If $\pi_f(\underline{w})\in \overset{o}{\cR}$, then for all $z\in \ZZ$, $f^z(\pi_f(\underline{w}))\in \overset{o}{\cR}$, which means it stays in the interior of the Markov partition. Moreover, $\pi^{-1}_f(\pi_f(\underline{w}))=\{\underline{w}\}$ have a unique point.

\end{coro}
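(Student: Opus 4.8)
The statement has two parts: (1) if $\underline{w}$ is periodic and $\pi_f(\underline{w})$ lies in the interior of $\mathcal{R}$, then the whole $f$-orbit of $\pi_f(\underline{w})$ stays in $\overset{o}{\mathcal{R}}$; and (2) under the same hypotheses the fiber $\pi_f^{-1}(\pi_f(\underline{w}))$ is a single point. The plan is to reduce both parts to results already available: Lemma~\ref{Lemm: no periodic boundary points} (really its contrapositive), Lemma~\ref{Lemm: Boundary of Markov partition is periodic}, the sector-code characterization of fibers (Corollary~\ref{Coro: Caracterisation fibers}), and the $\sim_q$-structure on sectors of a point (Lemma~\ref{Lemm: Equiv relation}, Proposition~\ref{Prop: image secto is a sector}).

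For part (1), let $x:=\pi_f(\underline{w})$ and suppose, for contradiction, that some iterate $f^{z_0}(x)$ lies in $\partial\mathcal{R}=\partial^s\mathcal{R}\cup\partial^u\mathcal{R}$. First I would observe that, because the stable boundary $\partial^s\mathcal{R}$ is $f$-invariant and the unstable boundary $\partial^u\mathcal{R}$ is $f^{-1}$-invariant (Proposition~\ref{Prop: Markov criterion boundary}), if $f^{z_0}(x)\in\partial^s\mathcal{R}$ then $f^{z}(x)\in\partial^s\mathcal{R}$ for all $z\geq z_0$, and if $f^{z_0}(x)\in\partial^u\mathcal{R}$ then $f^{z}(x)\in\partial^u\mathcal{R}$ for all $z\leq z_0$. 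Now replace $\underline{w}$ by the shifted code $\sigma^{z_0}(\underline{w})$, which is still periodic (of the same period $P$, say) and projects to $f^{z_0}(x)$; since the code is periodic with period $P$, the orbit visits $\partial^s\mathcal{R}$ (resp. $\partial^u\mathcal{R}$) at time $z_0$ and hence, by invariance, at every time $z_0+kP$ for $k\in\mathbb{Z}$, i.e. the point $f^{z_0}(x)$ has every iterate in $\partial^s\mathcal{R}$ (going forward and, by periodicity, backward as well) — this puts us exactly in the hypothesis of Lemma~\ref{Lemm: no periodic boundary points}, but applied to conclude that $f^{z_0}(x)$ lies on the stable \emph{and} unstable boundary, hence is a corner point, contradicting $x\in\overset{o}{\mathcal{R}}$ (since a periodic point whose orbit meets $\overset{o}{\mathcal{R}}$ at time $0$ cannot be a boundary point at time $z_0$ while being interior at time $0$; more precisely, an interior periodic point can never be iterated into the boundary because a boundary point is pre-periodic to a periodic boundary point and the latter cannot coincide with an interior periodic point). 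The cleanest way to phrase this is: a periodic point is either entirely in $\overset{o}{\mathcal{R}}$ at all times or its orbit meets $\partial\mathcal{R}$, and by Lemma~\ref{Lemm: Boundary of Markov partition is periodic} combined with Lemma~\ref{Lemm: no periodic boundary points} the latter forces the point itself to be a periodic boundary point, whence $x\notin\overset{o}{\mathcal{R}}$, contradiction. So $f^{z}(x)\in\overset{o}{\mathcal{R}}$ for all $z\in\mathbb{Z}$.

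For part (2), I would invoke Corollary~\ref{Coro: Caracterisation fibers}: every code in $\pi_f^{-1}(x)$ is a sector code $\underline{e_j(x)}$ for some sector $e_j(x)$, and there are at most $2k$ of them if $x$ has $k$ separatrices. It therefore suffices to show that all sector codes of $x$ coincide. Since, by part (1), $f^{z}(x)$ lies in the interior of some rectangle $R_{w_z}$ for every $z$, and the interior of a rectangle contains no singularity, the point $f^z(x)$ is a regular point (has exactly $4$ sectors) and, being in $\overset{o}{R_{w_z}}$, \emph{all} of its sectors are contained in the same rectangle $R_{w_z}$ — this is immediate from Lemma~\ref{Lemm: sector contined unique rectangle} together with the fact that a small enough ball around $f^z(x)$ is contained in $\overset{o}{R_{w_z}}$, so every sequence converging to $f^z(x)$ in any sector is eventually in $R_{w_z}$. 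Hence for every sector $e$ of $x$ and every $z\in\mathbb{Z}$ the $z$-th symbol of the sector code $\underline{e}$ equals $w_z$, i.e. $\underline{e}=\underline{w}$ for all sectors $e$. Consequently $\pi_f^{-1}(x)=\{\underline{w}\}$.

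The main obstacle I anticipate is not a deep one but a bookkeeping one: making airtight the claim that an interior periodic point can never be iterated onto the boundary. One must be careful that "$f^{z_0}(x)\in\partial^s\mathcal{R}$" propagates only forward and "$\in\partial^u\mathcal{R}$" only backward, so a single bad iterate does not immediately give a two-sided orbit in $\partial^s\mathcal{R}$; periodicity of $\underline{w}$ is exactly what repairs this, since it lets one run the forward-invariance argument and then wrap around. An alternative, perhaps cleaner, route avoiding this subtlety: note that $\partial^s\mathcal{R}$ is a finite union of compact stable segments, each lying on the stable leaf of a periodic point (Lemma~\ref{Lemm: Boundary of Markov partition is periodic}); if $x$ were periodic with an iterate on $\partial^s\mathcal{R}$, then $x$ itself would lie on the stable leaf of a periodic boundary point, and applying the unbounded-expansion argument of Lemma~\ref{Lemm: no periodic boundary points} in the $f^{-1}$ direction one sees the backward orbit of that iterate leaves $\partial^s\mathcal{R}$ unless $x$ is already a boundary periodic point — contradicting $x\in\overset{o}{\mathcal{R}}$. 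I would present whichever of these is shorter in the final writeup.
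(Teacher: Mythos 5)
Your argument is correct and is essentially the paper's own proof: periodicity together with the $f$-invariance of $\partial^s\cR$ (and $f^{-1}$-invariance of $\partial^u\cR$) wraps a single boundary visit around the periodic orbit to force $x=\pi_f(\underline{w})$ itself onto the boundary, contradicting $x\in\overset{o}{\cR}$; and once every iterate is interior, all sectors of $f^z(x)$ lie in the same rectangle, so all sector codes coincide and $\pi_f^{-1}(x)=\{\underline{w}\}$. One minor remark: your appeal to Lemma~\ref{Lemm: no periodic boundary points} to ``conclude a corner point'' is not what that lemma asserts and is not needed, since your wrap-around argument (exactly the one the paper uses) already settles the first part.
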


\begin{proof}
 If, for some $z\in \ZZ$, $f^z(x)$ is in the stable (or unstable) boundary of the Markov partition, as $x$ is periodic and the stable boundary of a Markov partition is $f$-invariant, the entire orbit of $x$ lies in the stable boundary of $\cR$. This leads to a contradiction because at least one point in the orbit of $x$ is in the interior of $\cR$.
 
 From this perspective, all the sectors of $f^z(x)$ are contained within the interior of the same rectangle, and therefore all its sector codes are the same.
\end{proof}

\subsection{Decomposition of the surface: totally interior points and boundary laminations.}

The property that the entire orbit of $x$ is always in the interior of the partition is very important because it distinguishes the points that are in the complements of the stable and unstable laminations of periodic boundary points. We are going to give them a name.

\begin{defi}\label{Defi: totally interior points}
Let $(f,\cR)$ be a realization of $T$. The \emph{totally interior points} of $(f,\cR)$ are points $x \in \cup\cR=S$ such that for all $z \in \ZZ$, $f^z(x) \in \overset{o}{\cR}$. They are denoted Int$(f,\cR)\subset S$.
\end{defi}

 Proposition \ref{Prop: Carterization injectivity of pif} characterizes the points of $S$ where the projection $\pi_f$ is invertible as the \emph{totally interior points} of $\cR$.

\begin{prop}\label{Prop: Carterization injectivity of pif}
Let $x$ be any point in  $S$. Then $\vert \pi_f^{-1}(x)\vert =1$ if and only if $x$ is a totally interior point of $\cR$.
\end{prop}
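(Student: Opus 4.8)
The plan is to prove both implications. Recall from Corollary~\ref{Coro: Caracterisation fibers} that for $x\in S$ with $k$ separatrices we have $\pi_f^{-1}(x)=\{\underline{e_j(x)}\}_{j=1}^{2k}$, the set of sector codes of $x$; hence $|\pi_f^{-1}(x)|=1$ exactly when all $2k$ sector codes of $x$ coincide. So the statement amounts to: all sector codes of $x$ agree if and only if the whole $f$-orbit of $x$ stays in $\overset{o}{\cR}$.

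\textbf{($\Leftarrow$) Suppose $x$ is totally interior.} First I would use Lemma~\ref{Lemm: sector contined unique rectangle}: each sector $e_j(x)$ is contained in a unique rectangle $R_i$ of $\cR$. If $x\in\overset{o}{R_i}$, then a whole neighbourhood of $x$ lies in $\overset{o}{R_i}$, so every sequence converging to $x$ eventually enters $\overset{o}{R_i}$; hence \emph{all} sectors of $x$ are contained in the same $R_i$, and the zeroth symbol of every sector code is $i$. Applying the same argument to $f^z(x)\in\overset{o}{\cR}$ (using Proposition~\ref{Prop: image secto is a sector}, which says $f$ carries the sector $e_j(x)$ to a sector of $f(x)$), we get that for every $z\in\ZZ$ all sectors of $f^z(x)$ lie in a single rectangle, so the $z$-th symbol of every sector code of $x$ is the same. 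Therefore all sector codes of $x$ coincide and $|\pi_f^{-1}(x)|=1$.

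\textbf{($\Rightarrow$) Suppose $x$ is not totally interior.} Then there is $z_0\in\ZZ$ with $f^{z_0}(x)\in\partial\cR$; set $y=f^{z_0}(x)$, which lies in $\partial^s\cR$ or $\partial^u\cR$. The goal is to exhibit two sectors of $y$ sitting in distinct rectangles, which (pushing back by $f^{-z_0}$ and using Proposition~\ref{Prop: image secto is a sector}) produces two distinct sector codes of $x$, so $|\pi_f^{-1}(x)|\ge 2$. I would argue as follows: say $y\in\partial^s_{\epsilon}R_i$ for some rectangle $R_i$ and sign $\epsilon$; then locally near $y$ there is a stable boundary leaf of $R_i$ through $y$ separating, in a regular neighbourhood $D(y,\delta)$, the sectors ``above'' from those ``below'' this leaf. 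The sectors on the $R_i$-side are contained in $R_i$. On the other side of this stable leaf, since $S=\cup_{i}R_i$, the neighbourhood is covered by other rectangles; a sector $e'$ of $y$ on that opposite side is, by Lemma~\ref{Lemm: sector contined unique rectangle}, contained in a unique rectangle $R_j$, and $R_j\neq R_i$ because a small transverse arc from $e'$ to the $R_i$-side would otherwise cross $\partial^s R_i$ (i.e.\ leave $\overset{o}{R_i}$) yet stay in $\overset{o}{R_i}$, which is impossible as $\overset{o}{R_i}$ is an open rectangle whose interior does not meet $\partial\cR$. Hence $y$ has two sectors in different rectangles, giving two distinct sector codes. (The case $y\in\partial^u\cR$ is symmetric, exchanging the roles of stable and unstable.)

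\textbf{Main obstacle.} The delicate point is the ($\Rightarrow$) direction: one must carefully check that a boundary point genuinely has two sectors lying in \emph{distinct} rectangles, rather than all its sectors happening to lie in one rectangle that contains $y$ in its boundary in a folded way. I expect to handle this by invoking the precise local model of Theorem~\ref{Theo: Regular neighborhood} (the regular neighbourhood of $y$ is bounded by alternating stable/unstable separatrix segments) together with the fact that interiors of distinct rectangles are disjoint and $\cR$ covers $S$: the two sectors adjacent to the boundary separatrix of $\cR$ through $y$ must belong to rectangles on its two sides, and these cannot coincide since $\overset{o}{R_i}\cap\partial\cR=\emptyset$. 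One should also treat corner points of $\cR$ (where several rectangles meet) but these only make it \emph{easier} to find two sectors in different rectangles. Assembling these local facts and translating back through $f^{-z_0}$ to conclude about $x$ completes the argument.
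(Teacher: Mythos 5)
Your ($\Leftarrow$) direction is fine and is essentially the paper's argument (all sectors of each $f^z(x)$ lie in one rectangle, so all sector codes coincide). The genuine gap is in the ($\Rightarrow$) direction, at exactly the point you flagged as delicate: the claim that a point $y\in\partial\cR$ must have, \emph{at that very time}, two sectors lying in distinct rectangles. Your justification of $R_j\neq R_i$ does not produce a contradiction: a small transverse arc crossing the boundary leaf leaves $\overset{o}{R_i}$ only at the single crossing point, and neither the openness of $\overset{o}{R_i}$ nor $\overset{o}{R_i}\cap\partial\cR=\emptyset$ forbids the interior of $R_i$ from lying on \emph{both} sides of a frontier arc -- this is precisely what happens when the upper and lower stable boundaries of the same rectangle overlap along an arc (an embedded open disk can abut a slit from both sides). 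Moreover, the claim itself is false in general: a rectangle of a Markov partition can be adjacent to itself across a stable (or unstable) boundary arc -- already for the standard two-rectangle partitions of hyperbolic toral automorphisms the crossing sequence of an unstable leaf is Sturmian and contains a doubled symbol, and nothing in the binary-incidence hypothesis removes such self-adjacency. In that configuration all four sectors of $y=f^{z_0}(x)$ lie in one rectangle at time $z_0$, so the two sector codes agree there and can only be distinguished at some other time; your local argument (and the fallback via Theorem~\ref{Theo: Regular neighborhood} plus disjointness of interiors) cannot detect this.

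The repair is not local in time but uses the boundary-code machinery of the paper. If $y\in\partial^s\cR$, every code over $y$ has positive part equal to an $s$-boundary positive code (Proposition~\ref{Prop: boundary points have boundary codes}); the sector below the boundary leaf, contained in $R_i$ along its upper boundary, yields positive part $\underline{I}^{+}(i,+1)$, while the sector above, contained in some $R_j$ along its lower boundary, yields $\underline{I}^{+}(j,-1)$. When $j\neq i$ these differ at time $z_0$, but when $j=i$ one needs Proposition~\ref{Prop: s code Inyective} (injectivity of the boundary-code map, proved using the binary incidence matrix) to conclude $\underline{I}^{+}(i,+1)\neq\underline{I}^{+}(i,-1)$, hence two distinct codes in $\pi_f^{-1}(x)$; the case $y\in\partial^u\cR$ is symmetric with $u$-boundary negative codes. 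For comparison, the paper proves this direction directly rather than contrapositively -- all sector codes equal forces every $f^z(x)$ into $\overset{o}{\cR}$ -- and is admittedly terse at the same spot; but your version makes the problematic claim explicit and supports it with an argument that fails, so as written the proof does not go through without invoking something like Proposition~\ref{Prop: s code Inyective}.
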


\begin{proof}
If $\vert \pi_f^{-1}(x)\vert = 1$, for all $z\in \ZZ$ the sector codes of $f^z(x)$ are all equal. Therefore, for all $z\in \ZZ$, the sectors of $f^z(x)$ are all contained in the interior of the same rectangle. Furthermore, the union of all sectors determines an open neighborhood of $f^z(x)$, which must be contained in the interior of a rectangle. Therefore, $f^z(x)\in \overset{o}{\cR}$.

On the other hand, if $f^z(x)\in \overset{o}{\cR}$ for all $z\in \ZZ$, then all sectors of $f^z(x)$ are contained in the same rectangle. This implies that the sector codes of $x$ are all equal. In view of Lemma  \ref{Lemm: every code is sector code }, this implies $\vert \pi_f^{-1}(x)\vert = 1$.

\end{proof}

Lemma \ref{Lemm: Caraterization unique codes}  characterizes the stable and unstable manifolds of periodic boundary points of $\cR$ as the complement of the totally interior points. In this manner, we have a decomposition of $S$ as the union of three sets: the totally interior points, the stable lamination of periodic $s$-boundary points $\cF^s(\text{Per}^{s}(f,\cR))$, and the unstable lamination of periodic $u$-boundary points $\cF^u(\text{Per}^{u}(f,\cR))$.

\begin{lemm}\label{Lemm: Caraterization unique codes}
A point $x\in S$ is a totally interior point of $\cR$ if and only if it is not in the stable or unstable leaf of a periodic boundary point of $\cR$.
\end{lemm}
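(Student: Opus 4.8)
\textbf{Proof plan for Lemma \ref{Lemm: Caraterization unique codes}.}

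The plan is to prove the two implications by contraposition, using the characterization of totally interior points already obtained in terms of the orbit staying in $\overset{o}{\cR}$. Recall that $x$ is a totally interior point of $\cR$ precisely when $f^z(x) \in \overset{o}{\cR}$ for every $z \in \ZZ$. So it suffices to show that the \emph{complement} of the totally interior points — the set of $x$ for which $f^{z_0}(x) \in \partial\cR$ for some $z_0 \in \ZZ$ — is exactly $\cF^s(\mathrm{Per}^s(f,\cR)) \cup \cF^u(\mathrm{Per}^u(f,\cR))$.

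First I would do the easy inclusion. Suppose $x$ lies on the stable leaf of a periodic $s$-boundary point $p \in \mathrm{Per}^s(f,\cR)$; that is, $x \in \cF^s(p)$ with $p \in \partial^s\cR$. Since $\partial^s\cR$ is $f$-invariant (this is the content of Proposition \ref{Prop: Markov criterion boundary}, and of Lemma \ref{Lemm: Boundary of Markov partition is periodic}), the whole stable leaf $\cF^s(p)$ is carried inside $\partial^s\cR$: indeed $p$ is periodic, $[x,p]^s$ is a compact stable segment, and applying a suitable positive power $f^{kN}$ (where $N$ is the period of $p$) contracts this segment, so $f^{kN}(x)$ lies on $\partial^s\cR$ for $k$ large; but $\partial^s\cR$ being $f$-invariant and the segment $[f^{kN}(x), p]^s \subset \partial^s\cR$ forces $x \in \partial^s\cR$ already, hence $x \notin \overset{o}{\cR}$ and $x$ is not totally interior. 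The unstable case is symmetric, using $f^{-1}$-invariance of $\partial^u\cR$.

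For the converse — the main point — suppose $x$ is not totally interior, so there is $z_0$ with $y := f^{z_0}(x) \in \partial\cR = \partial^s\cR \cup \partial^u\cR$. Assume $y \in \partial^s\cR$ (the other case is symmetric). Then $y$ lies in a stable boundary segment $I$ of some rectangle $R_i$; by Lemma \ref{Lemm: Boundary of Markov partition is periodic}, $I$ lies on the stable leaf of a periodic point $p$, and — again because $\partial^s\cR$ is $f$-invariant and there are only finitely many stable boundary components — $p$ is a periodic $s$-boundary point, i.e. $p \in \partial^s\cR$, and in fact $p \in \mathrm{Per}^s(f,\cR)$. Hence $y \in \cF^s(p)$, and therefore $x = f^{-z_0}(y) \in \cF^s(f^{-z_0}(p))$, where $f^{-z_0}(p)$ is again a periodic $s$-boundary point (periodicity and membership in the $f$-invariant set $\partial^s\cR$ are both preserved under iteration). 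Thus $x \in \cF^s(\mathrm{Per}^s(f,\cR))$.

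I expect the main obstacle to be a bookkeeping subtlety rather than a deep difficulty: one must be careful that a point $y$ on $\partial^s\cR$ genuinely lies on the stable leaf of a periodic point \emph{that is itself an $s$-boundary point} (not merely any periodic point), and that the definition of $\mathrm{Per}^s(f,\cR)$ from Definition \ref{Defi: boundary points} is the relevant one — this is handled by combining Lemma \ref{Lemm: Boundary of Markov partition is periodic} with $f$-invariance of $\partial^s\cR$ and finiteness of the boundary components. A second small point is the distinction between $y$ lying on $\partial^s\cR$ versus $\partial^u\cR$ versus both (a corner): in the corner case $y$ belongs to both laminations, which is consistent with the statement since we only need $x$ to belong to the union. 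Once these points are dispatched, the lemma follows immediately, and it records the promised decomposition $S = \mathrm{Int}(f,\cR) \sqcup' \big(\cF^s(\mathrm{Per}^s(f,\cR)) \cup \cF^u(\mathrm{Per}^u(f,\cR))\big)$ used to split the analysis of $\sim_T$ into the interior, stable, and unstable cases.
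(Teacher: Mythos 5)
Your argument is essentially the paper's proof, just phrased contrapositively: one direction uses contraction along the stable (resp.\ unstable) leaf of the boundary periodic point to push some iterate of $x$ onto $\partial^{s}\cR$ (resp.\ $\partial^{u}\cR$), and the other uses Lemma \ref{Lemm: Boundary of Markov partition is periodic} together with the invariance of the boundary to see that a point lying on neither lamination has its entire orbit in $\overset{o}{\cR}$. One clause in your first direction is false, however, and should simply be deleted: from $f^{kN}(x)\in\partial^s\cR$ you cannot conclude that ``$x\in\partial^s\cR$ already''. The stable boundary is only forward invariant, $f(\partial^s\cR)\subset\partial^s\cR$, not backward invariant; and if your claim were true, the whole lamination $\cF^s(\mathrm{Per}^s(f,\cR))$ — a union of leaves, each dense in $S$ — would sit inside the finite union of compact segments $\partial^s\cR$, which is absurd. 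Indeed the content of the lemma is precisely that such points are typically interior but fail to be \emph{totally} interior. Fortunately the false clause is not needed: once $f^{kN}(x)\in\partial^s\cR$, that iterate is not in $\overset{o}{\cR}$, so $x$ is not totally interior, which is exactly the conclusion required (and exactly how the paper argues). With that sentence removed, your proof is correct and coincides with the paper's.
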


\begin{proof}

Suppose that for all $z\in \ZZ$, $f^z(x)\in \overset{o}{\cR}$ and also $x$ is in the stable (unstable) leaf of some periodic boundary point $p\in \partial^s R_i$ ($p\in \partial^u R_i$). The contraction (expansion) on the stable (unstable) leaves of $p$ implies the existence of a $z\in \ZZ$ such that $f^z(x)\in \partial^s R_i$ ($f^z(x)\in \partial^u R_i$), which leads to a contradiction.

On the contrary, Lemma  \ref{Lemm: Boundary of Markov partition is periodic} implies that the only stable or unstable leaves intersecting the boundary $\partial^{s,u} R_i$ are the stable and unstable leaves of the $s$-boundary and $u$-boundary periodic points. If $x$ is not in these laminations, neither are its iterations $f^z(x)$, and we have $f^z(x)\in \overset{o}{\cR}$ for all $z\in \ZZ$.
\end{proof}

It will be easier to provide a combinatorial definition of codes that project to stable and unstable laminations of periodic $s,u$-boundary points than to define codes that project to a totally interior point based on $T$. However, we can define the latter as the complement of the union of the former.

In the next section, we will demonstrate how to construct codes that project to stable and unstable $\cR$-boundaries and then expand this set to include codes that project to theirs stable and unstable leaves.

\subsection{Codes that project to the stable or unstable leaf.}

 Given $\underline{w} \in \Sigma_A$, the task is to determine, in terms of $(\Sigma_A, \sigma)$ and $T$, the subset $\underline{F}^s(\underline{w})$ of codes that are projected by $\pi_f$ to the stable leaf of $\pi_f(\underline{w})$. The following definition is completely determined by $T$ and its sub-shift, this what we use to call a \emph{combinatorial definition}.

\begin{defi}\label{Defi: s,u-leafs}
Let $\underline{w} \in \Sigma_A$. The set of \emph{stable leaf codes} of $\underline{w}$ is defined as follows:
	$$
	\underline{F}^s(\underline{w}):=\{\underline{v}\in \Sigma_A: \exists Z\in \ZZ \text{ such that } \forall z\geq Z, \, v_z=w_z\}.
	$$
Similarly, the \emph{unstable leaf codes} of $\underline{w}$ is the set:
	$$
	\underline{F}^u(\underline{w}):=\{\underline{v}\in \Sigma_A: \exists Z\in \ZZ \text{ such that } \forall z\leq Z, \, v_{z}=w_{z}\}.
	$$
\end{defi}

We will now prove that these sets project to the stable leaf at the point $\pi_f(\underline{w})$, and that they are the only codes that do so. This provides a combinatorial definition of stable and unstable leaves.

Let's introduce some notation. For $\underline{w}\in \Sigma_A$, we define its positive part as $\underline{w}_+ := (w_n)_{n\in \NN}$ (with $0\in \NN$), and its negative part as $\underline{w}_- := (w_{-n})_{n\in \NN}$. We denote the set of positive codes in $\Sigma_A$ as $\Sigma_A^+$, which corresponds to the positive parts of all codes in $\Sigma_A$. Similarly, we define the set of negative codes in $\Sigma_A$ as $\Sigma^-_A$. Moreover, we use $F^s(x)$ to denote the stable leaf $\cF^s(f)$ passing through $x$, and $F^u(x)$ for the corresponding unstable leaf $\cF^u(f)$.

\begin{prop}\label{Prop: Projection foliations}
Let $\underline{w}\in \Sigma_A$. Then we have $\pi_f(\underline{F}^{s}(\underline{w}))\subset F^{s}(\pi_f(\underline{w}))$. Furthermore, assume that $\pi_f(\underline{w})=x$.
\begin{itemize}
\item If $x$ is not a $u$-boundary point, then for all $y\in F^u(x)$, there exists a code $\underline{v}\in \underline{F}^s(\underline{w})$ such that $\pi_f(\underline{v})=y$.

\item  If $x$ is a $u$-boundary point and $\underline{w}_0=w_0$, then for all $y$ in the stable separatrix of $x$ that enters the rectangle $R_{w_0}$, denoted $F^s_0(x)$, there is a code $\underline{v}\in \underline{F}^s(x)$ that projects to $y$, i.e., $\pi_f(\underline{v})=x$.

\end{itemize}

A similar statement applies to the unstable manifold of $\underline{w}$ and its respective projection onto the unstable manifold of $\pi_f(\underline{w})$.
	\end{prop}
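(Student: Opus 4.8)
The plan is to prove the inclusion $\pi_f(\underline{F}^s(\underline{w}))\subset F^s(\pi_f(\underline{w}))$ first, and then establish the surjectivity onto (a separatrix of) the unstable leaf in the two cases. For the inclusion, I would take $\underline{v}\in\underline{F}^s(\underline{w})$, so there is $Z\in\ZZ$ with $v_z=w_z$ for all $z\geq Z$. Applying the shift $Z$ times and using the semi-conjugacy $f\circ\pi_f=\pi_f\circ\sigma$ (Proposition \ref{Prop:proyecion semiconjugacion}), it suffices to treat the case $Z=0$, i.e. $v_z=w_z$ for all $z\geq 0$. Then for every $n\geq 0$ the rectangles $\overset{o}{Q_n}(\underline{w}):=\cap_{z=0}^{n}f^{-z}(\overset{o}{R_{w_z}})$ and $\overset{o}{Q_n}(\underline{v})$ coincide, and from the proof of Lemma \ref{Lemm: pif is well defined} their closures shrink (in the $\mu^s$-direction) to a single stable segment $I_{\underline{w}}=I_{\underline{v}}$ of $R_{w_0}$; both $\pi_f(\underline{w})$ and $\pi_f(\underline{v})$ lie on $\cap_{z\geq 0}f^{-z}(R_{w_z})=I_{\underline{w}}$, a subset of a single stable leaf, hence $\pi_f(\underline{v})\in F^s(\pi_f(\underline{w}))$.

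For the converse direction I would fix $x=\pi_f(\underline{w})$ with $w_0$ the chosen index. Given $y\in F^u(x)$ (case 1) or $y\in F^s_0(x)$ (wait — in the statement it is the stable separatrix $F^s_0(x)$ that enters $R_{w_0}$; I must match the codomain, so I build a stable-leaf code of $\underline{w}$ whose projection sweeps out $F^u(x)$, since $\underline{F}^s$ should move along the \emph{unstable} direction — the positive tail being fixed pins the stable leaf, the free negative tail moves transversally). Concretely, for $y$ in the unstable leaf of $x$, I would build $\underline{v}$ by keeping $v_z=w_z$ for $z\geq 0$ and choosing the negative part $\underline{v}_-$ to be a negative sector code of $y$ (Definition \ref{Defi: Sector codes}), using that sector codes are admissible (Lemma \ref{Lemm: sector code is admisible}) and that the concatenation remains in $\Sigma_A$ because $a_{v_{-1}v_0}=1$: this last compatibility is exactly what the Markov property guarantees, since $y\in R_{w_0}$ forces $f(\overset{o}{R_{v_{-1}}})\cap\overset{o}{R_{w_0}}\neq\emptyset$. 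Then by the first part $\pi_f(\underline{v})\in F^s(x)$, and by construction (its negative tail being a sector code of $y$) $\pi_f(\underline{v})$ lies in the unstable direction at $y$; the intersection of the stable leaf through $x$ with the local unstable picture at $y$ gives $\pi_f(\underline{v})=y$. When $x$ is a $u$-boundary point, not every point of $F^u(x)$ (resp. stable separatrix) is reachable: only those $y$ lying on the side where $R_{w_0}$ sits, which is why the hypothesis $\underline{w}_0=w_0$ selects the separatrix $F^s_0(x)$ that enters $R_{w_0}$, and I would phrase the reachability of such $y$ in terms of the existence of a vertical sub-rectangle of $R_{w_0}$ containing a neighborhood of $y$ in its boundary, then run the same sector-code construction.

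The symmetric statement for the unstable manifold follows verbatim by applying the argument to $f^{-1}$ with the partition $\cR$, whose geometric type is $T^{-1}$ (Remark \ref{Rema: Inverse type and inverse pAnosov}), swapping the roles of $\Sigma_A^+$ and $\Sigma_A^-$ and of stable/unstable.

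The main obstacle I anticipate is the careful bookkeeping at $u$-boundary points: one must verify that the chosen negative sector code of $y$ is genuinely compatible with $w_0$ (i.e. the glued bi-infinite word lands in $\Sigma_A$) and that it projects precisely to $y$ rather than to some other point of $F^s_0(x)\cap$ (the local picture), which requires invoking Lemma \ref{Lemm: sector contined unique rectangle} and Proposition \ref{Prop: image secto is a sector} to track how the chosen sector of $y$ sits inside $R_{w_0}$ and is preserved under iteration. The non-boundary case is easier because transversality of the two foliations away from singularities makes $F^s(x)\cap F^u(x)$-type intersections behave well, but at boundary points one has to be attentive to which separatrices actually meet the prescribed rectangle.
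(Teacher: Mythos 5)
Your first half is fine and is essentially the paper's argument: reduce to $Z=0$ by the shift and the semi-conjugacy, and observe that a common positive tail forces both projections into the single stable segment $\cap_{z\geq 0}f^{-z}(R_{w_z})$ of $R_{w_0}$, hence into one stable leaf.

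The converse half contains a genuine gap, and it originates in how you resolved the inconsistency in the statement. The occurrence of $F^u(x)$ in the first bullet is a typo for $F^s(x)$: the inclusion you have just proved shows that fixing the positive tail pins the projection to the horizontal (stable) segment $I_{\underline{w}}$ of $R_{w_0}$, and varying the negative tail moves the point \emph{along} that stable segment, not transversally; the second bullet (the stable separatrix $F^s_0(x)$ entering $R_{w_0}$) and the closing sentence about the symmetric unstable statement confirm that the surjectivity claim is onto the stable leaf. You resolved the typo the other way and tried to sweep $F^u(x)$ with stable-leaf codes, which contradicts the inclusion you proved one paragraph earlier. Concretely, if you keep $v_z=w_z$ for $z\geq 0$ and glue on a negative sector code of $y\in F^u(x)$, then $\pi_f(\underline{v})$ lies in $I_{\underline{w}}\subset F^s(x)$ while the negative tail only forces $\pi_f(\underline{v})$ to lie in the vertical segment of $R_{w_0}$ through $y$; the resulting point is the intersection of these two segments, i.e.\ a point of the stable leaf of $x$ (typically $x$ itself when $y$ sits in the same vertical fiber as $x$), and it cannot equal a generic $y\in F^u(x)$. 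There is a secondary problem as well: any code projecting to $y$ must be a sector code of $y$ (Lemma \ref{Lemm: every code is sector code }), so you cannot independently prescribe the positive tail and a negative sector code of $y$; you must instead show that the full sector code of a well-chosen sector of $y$ already has positive tail equal to $\underline{w}_+$. That is what the paper does, for $y$ on the \emph{stable} leaf: after iterating so that $f^k(y)$ lies in the stable segment of $R_{w_0}$ through $x$ (possible since $x$ is not $u$-boundary, or, in the boundary case, for $y$ on the separatrix entering $R_{w_0}$), one chooses the sector $e_y$ pointing stably toward $x$ and on the same unstable side as the sector $e_x$ with $\underline{e_x}=\underline{w}$; then $e_y$ lies in the same horizontal sub-rectangle $H^{w_0}_{j_0}$ as $e_x$, inductively $f^n(e_y)$ and $f^n(e_x)$ lie in the same $R_{w_n}$, so $\underline{e_y}\in\underline{F}^s(\underline{w})$ and $\pi_f(\underline{e_y})=y$. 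Rewriting your converse along these lines (and only then invoking Lemma \ref{Lemm: sector contined unique rectangle} and Proposition \ref{Prop: image secto is a sector} for the bookkeeping you anticipated) repairs the proof; the reduction of the unstable statement to $f^{-1}$ and $T^{-1}$ at the end is fine.
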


\begin{proof}

Let $\underline{v}\in \underline{F}^s(\underline{w})$ be a stable leaf code of $\underline{w}$. Based on the definition of stable leaf codes, we can deduce that $w_z=v_z$ for all $z\geq k$ for certain $k\in \NN$. Consequently, $\pi_f(\sigma^k(\underline{w})),\pi_f(\sigma^k(\underline{v}))\in R_{w_k}$. Since the positive codes of $\underline{w}$ and $\underline{v}$ are identical starting from $k$, they define the same horizontal sub-rectangles of $R_{w_k}$ in which the codes $\sigma^k(\underline{w})$ and $\sigma^k(\underline{v})$ are projected. For $n\in \mathbb{N}$, let $H_n$ be the rectangle determined by:

$$
H_n=\cap_{z=0}^n f^{-z}(R_{w_{z+k}})=\cap_{z=0}^n f^{-z}(R_{v_{z+k}}).
$$
The intersection of all the $H_n$ forms a stable segment of $R_{w_k}$. Furthermore, each $H_n$ contains the rectangles:
$$
\overset{o}{Q_n}=\cap_{z=-n}^{n} f^{-z}(\overset{o}{R_{w_{z+k}}})= \cap_{z=-n}^{n} f^{-z}(\overset{o}{R_{v_{z+k}}})
$$
Therefore, the projections $\pi_f(\underline{v})$ and $\pi_f(\underline{w})$ are in the same stable leaf. 
	
	\begin{figure}[h]
		\centering
		\includegraphics[width=1\textwidth]{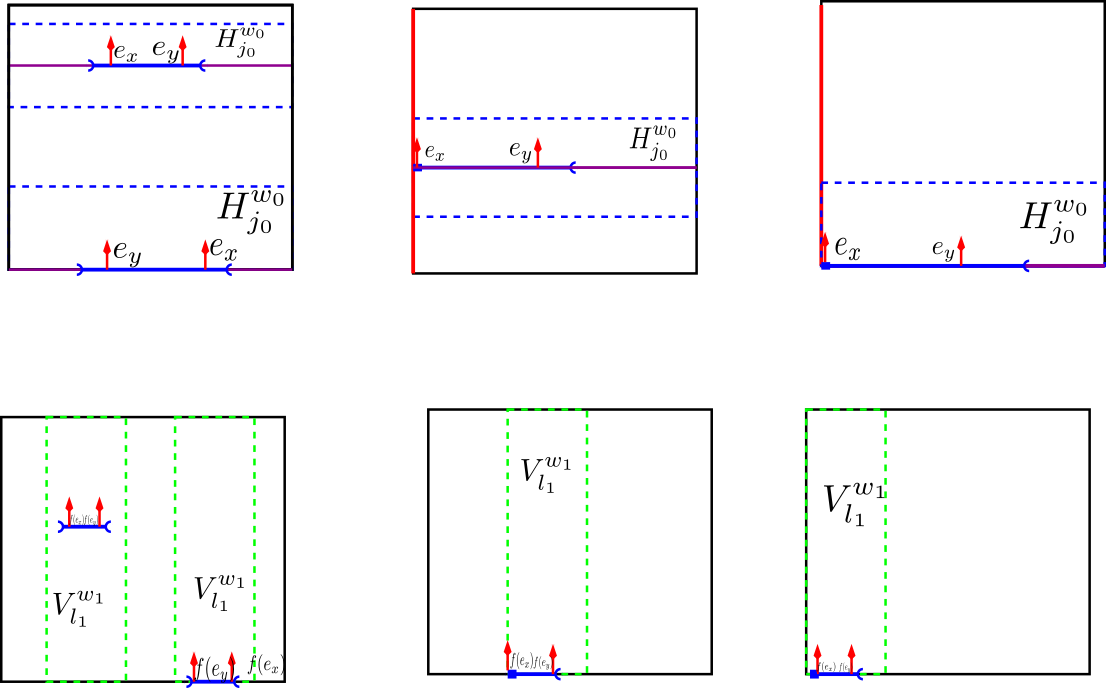}
		\caption{Projections of the stable leaf codes}
		\label{Fig: Proyection fol}
	\end{figure}

For the other part of the argument, let's refer to the image in Figure \ref{Fig: Proyection fol} to gain some visual intuition. Let's address the situation when $x=\pi_f(\underline{w})$ is not a $u$-boundary point. We recall that is implicit that the incidence matrix $A$ is binary.

Let $y\in F^s(\pi_f(\underline{w}))$ such that $y\neq x$. Suppose $y$ is not a periodic point (if it is, we can replace $y$ with $x$). Since  $x$ is not a $u$-boundary point, there is a small interval $I$ properly contained  in the stable segment of  $\cR_{w_0}$ that pass trough $x$. This allows us to apply  the next argument on both stable separatrices of $x$.  By definition of stable leaf, there exist $k\in \mathbb{N}$ such that $f^k(y)\in I$. If we prove there is  $\underline{v}\in \underline{F}^s(\underline{w})$ such that $\pi_f(\underline{v})=f^k(y)$, we obtain that $\sigma^{-k}(\underline{w})\in \underline{F}^s(\underline{w})$  is a code that projects to $y$, i.e $\pi_f(\sigma^{-k}(\underline{v}))=y$. Therefore we can assume  that $y\in I$. This situation is given by the left side pictures in Figure \ref{Fig: Proyection fol}.

There are two situations for $f^z(y)$: either it lies in the stable boundary of $R_{w_0}$ or  not. In any case $x$, the code $\underline{w}$ is equal to  a sector code $\underline{e}_x$ of $x$. The sector $e_x$ is contained in  a unique horizontal sub-rectangle of $R_{w_0}$, $H^{w_0}_{j_0}$, therefore the sector $f(e_x)$ is contained in  $f(H^{w_0}_{j_0})=V^{w_1}_{l_1}$, implying $w_1=\underline{e_x}_1$.

Take the sector   $e_y$ of $y$ in such that in the stable direction points toward $x$ and in the unstable direction point in the same direction than $e_y$. In this manner $e_y$ is contained in the rectangle $H^{w_0}_{j_0}$, therefore $f(e_y)$ is contained in $V^{w_1}_{l_1}$ and $\underline{e_y}_1=w_1$.

 In fact, it turns out that $f^{n}(e_y)$ and $f^n(e_x)$ are contained in the same $R_{w_n}$ for all $n\in \NN$. We can deduce that the positive part of $\underline{e_y}$ coincides with the positive part of $\underline{w}$. Therefore $\underline{e_y}\in \underline{F}^s(\underline{w})$.

%%%%%%%%

In the case where $x\in \partial^u\cR$, there is a slight variation. Let's assume that $\underline{w}=\underline{e_x}$, where $e_x$ is a sector of $x$. This sector code is contained within a unique horizontal sub-rectangle of $R_{w_0}$, $H^{w_0}_{j_0}$ such that $f(H^{w_0}_{j_0})=V^{w_1}_{l_1}$. This horizontal sub-rectangle contain a unique stable interval $I$ that contains $x$ in one of its end points. Consider $y\in I$.

Similar to the previous cases, we can define a sector  $e_y$ of $x$ that is contained in horizontal sub-rectangle $H^{w_0}_{j_0}$, i.e. $\underline{e_y}=w_0$. This implies that $f(e_y)$ is contained in $V^{w_1}_{l_1}$. Then the first term of $\underline{e_y}$ is $w_1$. By applied this process inductively  for all $n\in \NN$, we get that $\underline{e_y}_n=w_n$ and therefore $\underline{e_y}\in \underline{F}^s(\underline{w})$ and projects to $y$.
\end{proof}

The next task is to define, in terms of $T$, those codes that project to the boundary of the Markov partition. We will then use Proposition  \ref{Prop: Projection foliations} to determine the codes that project to the stable and unstable laminations of $s,u$-boundary periodic points.

\subsection{Boundary codes and $s,u$-generating functions.}

Let's proceed with the construction of the codes that are projected onto the boundary of the Markov partition, $\partial^{s,u}\cR$. We assume that $\cR=\{R_i\}_{i=1}^n$ is a geometric Markov partition of $f$ with geometric type $T$. For each $i\in \{1,\cdots,n\}$, we label the boundary components of $R_i$ as follows:

\begin{itemize}
\item $\partial^s_{+1}R_i$ denotes the upper stable boundary of the rectangle $R_i$.
\item $\partial^s_{-1}R_i$ denotes the lower stable boundary of $R_i$.
\item $\partial^u_{-1}R_i$ denotes the left unstable boundary of $R_i$.
\item $\partial^u_{+1}R_i$ denotes the right unstable boundary of $R_i$.
\end{itemize}

By using these labeling conventions, we can uniquely identify the boundary components of $\cR$ based on the geometric type $T$.

\begin{defi}\label{Defi; s,u boundary labels of T}
	Let $T=\{n,\{(h_i,v_i)\}_{i=1}^n,\Phi_T\}$  be an abstract geometric type. The $s$-\emph{boundary labels} of $T$ are defined as the formal set:
		$$
	\cS(T):=\{(i,\epsilon): i\in \{1,\cdots,n\} \text{ and } \epsilon\in \{1,-1\} \}, 
	$$
Similarly, the $u$-\emph{boundary labels} of $T$ are defined as the formal set:
	$$
	\cU(T):=\{(k,\epsilon): k\in \{1,\cdots,n\} \text{ and } \epsilon\in \{1,-1\} \}
	$$
\end{defi}

In a while, we will justify such names. It's important to note that this definition was made using only the value of $n$ given by the geometric type $T$, so it does not depend on the specific realization. With these labels, we can formulate in terms of the geometric type the codes that $\pi_f$ projects to $\partial^{u,s}_{\pm 1 }R_i$. The first step is to introduce a \emph{generating function}

We begin by relabeling the stable boundary component of $\mathcal{R}$ using the next  function:
$$
\theta_T:\{1,\cdots,n\}\times \{1,-1\}\rightarrow \{1,\cdots,n\} \times\cup_{i=1}^n \{1,h_i\}_{i=1}^n \subset \cH(T)
$$ 
that is defined as:

\begin{equation}\label{Equa: theta T relabel}
\theta_T(i,-1)=1 \text{ and } \theta(i,1)=h_i.
\end{equation}

The effect of $\theta_T$ is to choose the sub-rectangle of $R_i$ that contains the stable boundary component in question. Specifically, the boundary $\partial^s_{-1}R_1$ is contained in $\partial^s_{-1}H^i_1$, and the boundary $\partial^s_{+1}R_1$ is contained in $\partial^s_{+1}H^i_{h_i}$. This allows us to track the image of a stable boundary component, as the image of $\partial^s_{+1}R_i$ under $f$ should be contained in the stable boundary of $f(H^i_{h_i})$. But remember, the image of $\partial^s_{-1}R_i$ is a boundary component of $f(H^i_1) = V^k_l$, and the pair $(k,l)$ is uniquely determined by $\Phi_T$. By considering the value of $\epsilon_T(i,h_i)$, we can trace the image of the upper boundary component of $H^i_{h_i}$ in a more precise manner. If $\epsilon_T(i,h_i) = 1$, it indicates that the map $f$ does not alter the vertical orientations. As a result, the image of the upper boundary component of $H^i_{h_i}$ will remain on the upper boundary component of $f(H^i_{h_i})$. i.e. in this example:
$$
f(\partial^s_{+1}R_i)\subset f(\partial^s_{+1}H^i_{h_i}) \subset \partial^s_{+1}R_k.
$$ 
On the contrary, if $\epsilon_T(i,h_i)=-1$, it means that the map $f$ changes the vertical orientation. This has the following implication:
 $$
 f(\partial^s_{+1}R_i)\subset f(\partial^s_{+1}H^i_{h_i})\subset \partial_{-1}R_k.
 $$

 We don't really care about the index $l$ in $(k,l) \in \mathcal{V}(T)$, so it's convenient to decompose $\rho_T$ into two parts: $\rho_T:=(\xi_T,\nu_T)$. In this decomposition
$$
\xi_T:\cH(T)\rightarrow \{1,\cdots, n\},
$$
is defined as $\xi_T(i,j)=k$ if and only if $\rho_T(i,j)=(k,l)$.

 Let's continue with our example. In order to determine where $f$ sends the upper boundary component $\partial^s_{+1}R_i$, we need to identify the rectangle in $\mathcal{R}$ to which $f$ maps $H^i_j$. This can be determined using the following relation:
 $$
 \xi_T(i,\theta_T(1))=\xi_T(i,h_i)=k.
 $$ 
We now incorporate the change of orientation to determine the boundary component of $R_k$ that contains $f(\partial^s_{+1}H^i_{h_i})$. If we assume that $f(\partial^s_{+1}R_i) \subset \partial^s_{\epsilon}R_k$, then the value of $\epsilon$ can be determined using the following formula:
  $$
  \epsilon=+1 \cdot\epsilon_T(i,h_i) = +1 \cdot \epsilon_T(i,\theta_T(i,1)).
  $$   
This procedure is illustrated in Figure  \ref{Fig: theta T}.
 	\begin{figure}[h]
 	\centering
 	\includegraphics[width=0.6\textwidth]{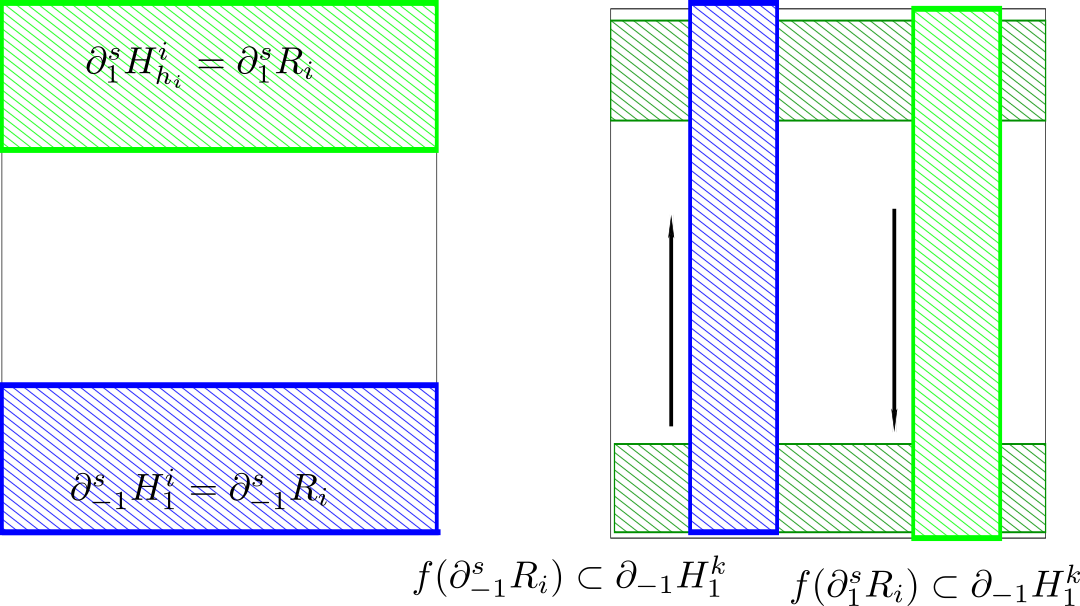}
 	\caption{The effect of $\theta_T$}
 	\label{Fig: theta T}
 \end{figure}
 The $S$-generating function utilizes $\theta_T$ and $\xi_T$ to generalize this idea.

\begin{defi}\label{Defi: s-boundary generating funtion}
 The $s$-generating function of $T$ is the function 
 	$$
 \Gamma(T):\cS(T) \rightarrow \cS(T),
 $$
defined for every $s$-boundary label $(i_0,\epsilon_0) \in \mathcal{S}(T)$ by the following formula:

\begin{equation}\label{Equ: Gamma generation funtion}
	\Gamma(T)(i_0,\epsilon_0)=(\xi_T(i_0,\theta_T(i_0,\epsilon_0)),
\epsilon_0 \cdot \epsilon_T(i_0,\theta_T(i_0,\epsilon_0)) ).
\end{equation}
	
\end{defi}

The $u$-boundary generating function can be defined as either the $s$-boundary generating function of $T^{-1}$ (associated with $f^{-1}$), or it can be defined directly as follows.

\begin{defi}\label{Defi: u-boundary generating funtion}
The $u$-\emph{generating function} of $T$ is
$$
\Upsilon(T):\cU(T)\rightarrow  \cU(T).
$$
defined as the $s$-\emph{generating function} of $T^{-1}$, $\Gamma(T^{-1})$.
\end{defi}

 The orbit of a label $(i_0,\epsilon_0)\in \cS(T)$ under $\Gamma(T)$ is defined as:
$$
\{(i_m,\epsilon_m)\}_{m\in \NN}:=\{\Gamma(T)^m(i_0,\epsilon_0) : m\in\NN\}. 
$$
We define $\varsigma_T:\cS(T)\rightarrow {1,\dots,n}$ as the projection onto the first component of the funtion $\Gamma_T$, i.e $\varsigma_T(i,\epsilon)=k$ if and only if $\Gamma_T(i,j)=(k,\epsilon)$. We will focus on the $s$-generating function. The ideas for the unstable case naturally extend, but sometimes we are going to recall them too.

The generating functions allow us to create a positive (or negative) code in $\Sigma^+$ for every $s$-boundary label of $T$ by composing $\Gamma(T)$ ($\Upsilon(T)$) with $\varsigma_T$ to obtain:
 
\begin{eqnarray}\label{Equa: $s$-boundary code +}
\underline{I}^{+}(i_0,\epsilon_0):=\{ \varsigma_T\circ\Gamma(T)^m(i_0,\epsilon_0)\}_{m\in\NN}=\{i_m\}_{m\in \NN}
\end{eqnarray}

Similarly, for $(k_0,\epsilon_0)\in \cU(T)$, we can construct the negative code in $\Sigma^{-}$ given by:

\begin{eqnarray}\label{Equa: $u$-boundary code -}
\underline{J}^{-}(k_0,\epsilon_0):=\{ \varsigma_T\circ\Upsilon(T)^m(k_0,\epsilon_0)\}_{m\in\NN}=\{k_{-m}\}_{m\in \NN}
\end{eqnarray}

These positive and negative codes are of great importance and deserve a name.

\begin{defi}\label{Defi: positive negative boundary codes }
	
	 Let $(i_0,\epsilon_0)\in \cS(T)$. The $s$-\emph{boundary positive code} of $(i_0,\epsilon_0)$ is denoted as $\underline{I}^+(i_0,\epsilon_0)\in \Sigma^+$, and it is determined by Equation \ref{Equa: $s$-boundary code +}. The set of $s$-boundary positive codes of $T$ is denoted as:
	 $$
	\underline{\cS}^{+}(T)=\{\underline{I}^{+}(i,\epsilon): (i,\epsilon)\in \cS(T)\} \subset \Sigma^+.
	$$
	Let $(k_0,\epsilon_0)\in \cU(T)$. The $u$-\emph{boundary negative code }of $(k_0,\epsilon_0)$ is denoted as $\underline{J}^{-}(k_0,\epsilon_0)=\{k_{-m}\}_{m=0}^{\infty}$ and is determined by Equation \ref{Equa: $u$-boundary code -}. The set of $u$-boundary negative codes of $T$ is denoted as
		
	$$
	\underline{\cJ}^{-}(T)=\{\underline{J}^{-}(k,\epsilon): (k,\epsilon)\in \cU(T)\}\subset \Sigma^{-}
	$$
\end{defi}
 
 Lemma \ref{Lemm: positive code admisible} below asserts that every $s$-boundary positive code of $T$ corresponds to the positive part of at least one element of $\Sigma_A$. The analogous result for $u$-boundary negative codes is true using a fully symmetric approach with the $u$-generating function. Therefore, we will only provide the proof for the stable case.

\begin{lemm}\label{Lemm: positive code admisible}
	Every $s$-boundary positive code $\underline{I}^{+}(i_0,\epsilon_0)$ belongs to $\Sigma_A^+$. Similarly, every $u$-boundary negative code $\underline{J}^{-}(k_0,\epsilon_0)$ belongs to $\Sigma_A^-$.
\end{lemm}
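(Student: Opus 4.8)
The statement to prove is that every $s$-boundary positive code $\underline{I}^{+}(i_0,\epsilon_0) = \{i_m\}_{m\in\NN}$ actually lies in $\Sigma_A^+$, i.e.\ that consecutive symbols are $A$-admissible: $a_{i_m,i_{m+1}} = 1$ for every $m$. By symmetry (replacing $T$ by $T^{-1}$ and $f$ by $f^{-1}$), it suffices to handle the stable case. The plan is to interpret the abstract combinatorics geometrically using a fixed realization $(f,\mathcal{R})$ of $T$ and show that the $s$-generating function $\Gamma(T)$ tracks the image of a stable boundary component of $\mathcal{R}$ under $f$.

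First I would fix a pair $(f,\mathcal{R})$ realizing $T$ (which exists since $T$ is in the pseudo-Anosov class) and set up the dictionary between the labels in $\mathcal{S}(T)$ and the stable boundary components: the label $(i,\epsilon)$ corresponds to the boundary component $\partial^s_{\epsilon} R_i$. The key claim is that if $\Gamma(T)(i_0,\epsilon_0) = (i_1,\epsilon_1)$, then $f(\partial^s_{\epsilon_0} R_{i_0}) \subset \partial^s_{\epsilon_1} R_{i_1}$. This follows by unwinding Definition~\ref{Defi: s-boundary generating funtion}: the function $\theta_T$ selects the extreme horizontal sub-rectangle $H^{i_0}_{\theta_T(i_0,\epsilon_0)}$ of $R_{i_0}$ whose boundary contains $\partial^s_{\epsilon_0} R_{i_0}$; applying $f$, this sub-rectangle is sent to the vertical sub-rectangle $V^{k}_{l}$ with $k = \xi_T(i_0,\theta_T(i_0,\epsilon_0))$, whose stable boundary lies in $\partial^s R_k$; and the sign $\epsilon_0 \cdot \epsilon_T(i_0,\theta_T(i_0,\epsilon_0))$ records whether $f$ preserved or reversed the vertical orientation, hence which of the two boundary components $\partial^s_{\pm 1} R_k$ actually contains the image. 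This is exactly the computation illustrated in Figure~\ref{Fig: theta T}, and it is the heart of the argument; I expect verifying the sign bookkeeping in all cases to be the main (though routine) obstacle.

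Once this geometric interpretation is established, I would argue as follows. Iterating, for every $m \geq 0$ we have $f^{m}(\partial^s_{\epsilon_0} R_{i_0}) \subset \partial^s_{\epsilon_m} R_{i_m}$, where $(i_m,\epsilon_m) = \Gamma(T)^m(i_0,\epsilon_0)$ and $i_m = \varsigma_T \circ \Gamma(T)^m(i_0,\epsilon_0)$ is the $m$-th symbol of $\underline{I}^{+}(i_0,\epsilon_0)$. Now pick any point $x \in \overset{o}{\partial^s_{\epsilon_0} R_{i_0}}$ in the interior of that boundary component (which is a nontrivial stable interval, since rectangles are nondegenerate). For each $m$, $f^m(x)$ lies in the stable boundary of $R_{i_m}$; in particular $f^m(x) \in R_{i_m}$, and since the boundary component is contained in a horizontal sub-rectangle of $R_{i_m}$ that is mapped by $f$ into $R_{i_{m+1}}$, we get that $f^m(x)$ lies in a horizontal sub-rectangle of $R_{i_m}$ whose image meets $R_{i_{m+1}}$. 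Therefore $\overset{o}{R_{i_m}} \cap f^{-1}(\overset{o}{R_{i_{m+1}}}) \neq \emptyset$ — equivalently $f(\overset{o}{R_{i_m}}) \cap \overset{o}{R_{i_{m+1}}} \neq \emptyset$ — which by definition of the incidence matrix (Definition~\ref{Defi: Incidence matriw of a type}, together with the fact that $A(T)$ is binary) means $a_{i_m,i_{m+1}} = 1$. Hence $\underline{I}^{+}(i_0,\epsilon_0) \in \Sigma_A^+$.

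Finally, I would note that the statement for $u$-boundary negative codes $\underline{J}^{-}(k_0,\epsilon_0)$ is obtained by applying the stable argument verbatim to the geometric type $T^{-1}$ and the homeomorphism $f^{-1}$, using Remark~\ref{Rema: Inverse type and inverse pAnosov} to identify $T^{-1}$ with the geometric type of $\mathcal{R}$ as a Markov partition of $f^{-1}$ (the horizontal and vertical roles are exchanged, the stable boundary of $(f^{-1},\mathcal{R})$ is the unstable boundary of $(f,\mathcal{R})$, and admissibility in $\Sigma_{A(T^{-1})}^+$ is the same as admissibility in $\Sigma_{A(T)}^-$ read backwards). This completes the proof.
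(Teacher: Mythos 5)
Your proposal is correct and follows essentially the same route as the paper: fix a realization $(f,\mathcal{R})$ of $T$ and observe that, by definition of $\xi_T$ and $\theta_T$, the horizontal sub-rectangle $H^{i_m}_{\theta_T(i_m,\epsilon_m)}$ is mapped by $f$ into $R_{i_{m+1}}$, so $\overset{o}{R_{i_m}}\cap f^{-1}(\overset{o}{R_{i_{m+1}}})\neq\emptyset$ and $a_{i_m,i_{m+1}}=1$, with induction over $m$ and the unstable case handled symmetrically via $T^{-1}$ and $f^{-1}$. The extra bookkeeping you do on the orientation sign and the containment $f(\partial^s_{\epsilon_0}R_{i_0})\subset\partial^s_{\epsilon_1}R_{i_1}$ is not needed here (it is the content of the subsequent proposition), but it does no harm.
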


\begin{proof}
The incidence matrix $A$ of $T$ has a relation to any realization $(f,\cR)$ of $T$, which we can exploit to visualize the situation, anyway observe that our arguments only use the properties of $A$ or $T$. 
	In the construction of the positive code $\underline{I}^{+}(i_0,\epsilon_0)$, the term $i_1$ corresponds to the index of the unique rectangle in $\cR$ such that $f(H^{i_0}_{\theta_T(1,\epsilon_0)})=V^{i_1}_{l_1}$ (where $\theta_T(i_0,\epsilon)=1$ or $h_i$). In either case, we have:
	
	$$
	f^{-1}(\overset{o}{R{i_1}})\cap \overset{o}{R_{i_0}}=\overset{o}{H^{i_0}_{\theta_T(1,\epsilon_0)}´}\neq \emptyset,
	$$
	
	 which implies $a_{i_0,i_1}=1$. By induction, we can indeed extend this result to the entire positive code $\underline{I}^{+}(i_0,\epsilon_0)$.
	 
	 Similarly, for the case of $u$-boundary negative codes, we can use the symmetric approach by considering $T^{-1}$ and the inverse of the incidence matrix $A^{-1}$. Additionally, we can consider the realization $(f^{-1},\cR)$ to obtain a visualization. 
\end{proof}

Proposition \ref{Prop: s code Inyective} is the first step in proving that each $s$-boundary label uniquely determines a set of codes that projects to a single boundary component of the partition.

\begin{prop}\label{Prop: s code Inyective}	
	The map $I:\cS(T)\rightarrow \Sigma_A^+$ defined by $I(i,\epsilon):=\underline{I}^+(i,\epsilon)$ is  injective map. Similarly, the map $J:\cU(T)\rightarrow \Sigma_A^-$ defined by $J(i,\epsilon):=\underline{J}^-(i,\epsilon)$ is also injective.
\end{prop}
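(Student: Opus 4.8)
\textbf{Proof plan for Proposition \ref{Prop: s code Inyective}.}

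The plan is to show that the first component of the $s$-generating function, together with the change-of-orientation bookkeeping, retains enough information to recover the starting $s$-boundary label $(i,\epsilon)\in\cS(T)$ from the positive code $\underline I^{+}(i,\epsilon)$. Concretely, suppose $\underline I^{+}(i_0,\epsilon_0)=\underline I^{+}(i_0',\epsilon_0')$. Reading off the $0$-th entry of each code immediately gives $i_0=i_0'$, since by definition $(\underline I^{+}(i_0,\epsilon_0))_0=\varsigma_T(i_0,\epsilon_0)=i_0$ (recall $\varsigma_T$ is the projection of $\Gamma_T$ onto its first coordinate, and $\Gamma_T(i_0,\epsilon_0)$ has first coordinate $\xi_T(i_0,\theta_T(i_0,\epsilon_0))$ — I should double-check the indexing convention so that the $0$-th term of the code is genuinely $i_0$ rather than $i_1$; if the code is indexed so that the $0$-th term is $i_1$, then one reads $i_1=i_1'$ first and proceeds identically). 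So the only thing left is to prove $\epsilon_0=\epsilon_0'$, i.e. that two labels with the same first coordinate but opposite signs produce different positive codes.

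First I would assume for contradiction that $\epsilon_0=1$, $\epsilon_0'=-1$ (or vice versa), with $i_0=i_0'$, and that the two codes agree in every entry. The idea is that $\theta_T(i_0,1)=h_{i_0}$ and $\theta_T(i_0,-1)=1$ are different elements of $\cH(T)$ whenever $h_{i_0}\geq 2$, so the two orbits under $\Gamma(T)$ start by following the images of two distinct horizontal sub-rectangles, namely $H^{i_0}_{h_{i_0}}$ and $H^{i_0}_{1}$. The key structural fact to invoke here is that $\Gamma(T)$ is a bijection of the finite set $\cS(T)$: this follows because, in any realization $(f,\cR)$ of $T$, the map $\Gamma(T)$ encodes the action of $f$ on the (finite) set of stable boundary components of the rectangles of $\cR$, and $f$ is a homeomorphism, so it permutes these boundary components. (Alternatively one can argue bijectivity purely combinatorially: $\Gamma(T^{-1})=\Upsilon(T)$ is an inverse, using that $\rho_T$ is a bijection $\cH(T)\to\cV(T)$ and that $\theta_T$, $\theta_{T^{-1}}$ pick out the matching extremal sub-rectangles.) Since $\Gamma(T)$ is a bijection, the two $\Gamma(T)$-orbits issuing from $(i_0,1)$ and $(i_0,-1)$ are either equal or disjoint as subsets of $\cS(T)$. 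They cannot be equal: an equality would force some forward iterate to send $(i_0,1)$ to $(i_0,-1)$, but combined with the assumption that the projected codes agree this would produce, in a realization, a stable leaf on which $f$ has two distinct periodic points (one reached from each side), contradicting Proposition \ref{Prop: pseudo-Anosov properties.}(1) — no leaf contains two singularities, and more basically the upper and lower stable boundary of $R_{i_0}$ lie on distinct leaves.

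The cleaner route, which I would actually write up, is to argue directly that the codes differ. Pick a realization $(f,\cR)$ of $T$; by Lemma \ref{Lemm: positive code admisible} each $s$-boundary positive code extends to a code of $\Sigma_A$, and by Proposition \ref{Prop: Projection foliations} (or rather the forthcoming analysis of boundary codes) $\underline I^{+}(i_0,1)$ records the successive rectangles visited by $\partial^s_{+1}R_{i_0}$ under iteration of $f$, while $\underline I^{+}(i_0,-1)$ records those visited by $\partial^s_{-1}R_{i_0}$. These two boundary arcs lie on the stable leaves of distinct periodic points of $f$ (Lemma \ref{Lemm: Boundary of Markov partition is periodic}), or at worst on the two distinct boundary components of a single rectangle; in either case, by the invariance of $\partial^s\cR$ under $f$ and the finiteness of its components, the two forward itineraries cannot coincide for all times — if they did, $\partial^s_{+1}R_{i_0}$ and $\partial^s_{-1}R_{i_0}$ would be carried into the same stable boundary component at every step, forcing them onto a common stable leaf, which is impossible since they bound the (two-dimensional) rectangle $R_{i_0}$ between them. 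Hence $\underline I^{+}(i_0,1)\neq\underline I^{+}(i_0,-1)$, proving injectivity of $I$. The injectivity of $J$ is obtained verbatim by replacing $T$ with $T^{-1}$ and $f$ with $f^{-1}$, since $\Upsilon(T)=\Gamma(T^{-1})$.

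\textbf{Main obstacle.} The delicate point is the degenerate situation $h_{i_0}=1$, where $\theta_T(i_0,1)=\theta_T(i_0,-1)$ and the two orbits start from the \emph{same} element of $\cH(T)$ but with opposite sign bookkeeping. Here one must use the sign coordinate carried along by $\Gamma(T)$ — i.e. one should verify that even though the first coordinates of $\Gamma(T)^m(i_0,1)$ and $\Gamma(T)^m(i_0,-1)$ could conceivably coincide for a while, the signs $\epsilon_m$ differ and eventually force a branching in the $\xi_T$-values, because the rectangle where the orbit first has $h=1$ fails must split. I expect this is where the bulk of the careful case analysis lives, and it is essentially the same phenomenon that the ``no double boundary'' hypothesis (inherited from $A(T)$ mixing) is designed to rule out; invoking Definition \ref{Defi: double boundary} and the hypothesis that $T$ lies in the pseudo-Anosov class (hence $A(T)$ is mixing, hence $T$ has no double $s$-boundary) should close this gap: a chain of rectangles all with a single horizontal sub-rectangle is exactly a double $s$-boundary, which cannot occur, so the branching must happen within $n$ steps.
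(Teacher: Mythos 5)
There is a genuine gap, and it sits exactly where you park it as a ``main obstacle'' rather than where you propose to write the proof. Both of your routes to $\underline{I}^+(i_0,1)\neq\underline{I}^+(i_0,-1)$ fail as stated. The first route rests on $\Gamma(T)$ being a bijection of $\cS(T)$: it is not in general. The map $f$ does not permute stable boundary components; it sends each one \emph{into} a component, and two distinct components can land inside the same component of the same rectangle, so $\Gamma(T)$ need not be injective; moreover $\Upsilon(T)=\Gamma(T^{-1})$ is defined on $\cU(T)$, not on $\cS(T)$, so it cannot serve as an inverse. The ``cleaner route'' has an unjustified leap: equality of the two positive codes only says the images of $\partial^s_{+1}R_{i_0}$ and $\partial^s_{-1}R_{i_0}$ visit the same sequence of \emph{rectangles}; it does not say they sit in the same stable boundary component at each step (when $h_{i_m}=1$ both boundaries necessarily enter the same next rectangle while lying in different components), so no contradiction about leaves follows. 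Worse, the intended contradiction is itself false: the two stable boundary arcs of a rectangle can perfectly well lie on a single stable leaf (e.g.\ a partition whose whole stable boundary is carried by the leaf of one fixed point or singularity), so ``they bound the rectangle, hence lie on distinct leaves'' is not available.

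The decisive step, which your proposal never supplies, is the one the paper proves in Lemma~\ref{Lemm: diferen positive code}: as long as $h_{i_m}=1$ the two $\Gamma(T)$-orbits keep the same index but \emph{opposite} signs ($\epsilon_m=-\epsilon'_m$, by induction), and at the first index $M$ with $h_{i_M}>1$ (which exists by Lemma~\ref{Lemm: positive h-i}; your appeal to mixing/no double $s$-boundary is an acceptable substitute for that part) the two orbits select the opposite extremal sub-rectangles $\theta_T(i_M,\pm\epsilon_M)\in\{1,h_{i_M}\}$. To conclude $i_{M+1}\neq i'_{M+1}$ you must then invoke that the incidence matrix is \emph{binary}: only then does $\rho_T(i_M,1)=(k,l)$ and $\rho_T(i_M,h_{i_M})=(k',l')$ force $k\neq k'$. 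Your sketch says the branching is ``eventually forced'' but never uses binariness, and without it the branching can genuinely fail (both extremal sub-rectangles of $R_{i_M}$ may map into the same rectangle when some $a_{i_M,k}\geq 2$). So the skeleton you describe in the obstacle paragraph is the paper's actual proof, but the sign-induction and the binary-matrix argument at step $M$ still need to be written; the two arguments you did write out cannot replace them.
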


\begin{proof}
	
	Clearly $I$ is a  well defined map as $\Gamma^n(T)$ is itself a well defined map with the same domain. 	If $i_0 \neq i'_0$, then the sequences $\underline{I}^+(i_0,\epsilon_0)$ and $\underline{I}^+(i'_0,\epsilon'_0)$ will differ in their first term, making them distinct. The remaining case involves considering $(i_0,1)$ and $(i_0,-1)$.
	
	Let's denote $\underline{I}^+(i_0,1) = {i_m}$ and $\underline{I}^+(i_0,-1) = {i'_m}$ as their positive codes. We will analyze the sequences ${\Gamma(T)^m(i_0,1)}$ and ${\Gamma(T)^m(i_0,-1)}$ and show that there exists an $m \in \NN$ such that $i_m \neq i'_m$. Our approach starts with the following technical lemma:
	
	\begin{lemm}\label{Lemm: positive h-i}
If $T$ is in the pseudo-Anosov class and $A$ is a binary matrix, then there exists $M \in \mathbb{N}$ such that if  $\underline{I}^+(i_0,1) = \{i_m\}_{m\in \NN}$, then  $h_{i_M} > 1$.
	\end{lemm}
	
	\begin{proof}
		Since $T$ is in the pseudo-Anosov class, there exists a realization $(f,\mathcal{R})$. The infinite sequence ${i_m}$ takes a finite number of values (at most $n$), so there exist natural numbers $m_1 < m_2$ such that $R_{i_{m_1}} = R_{i_{m_2}}$. If such an $M$ does not exist, then for all $m_1 \leq m \leq m_2$, $h_{i_m} = 1$. This implies that $f^{m_2-m_1}(R_{i_{m_1}}) \subset R_{i_{m_2}} = R_{i_{m_1}}$, which is a vertical sub-rectangle of $R_{i_{m_1}}$.
	
	By uniform expansion in the vertical direction, the rectangle $R_{i_{m_1}}$ reduces to a stable interval, and this is not permitted in a Markov partition, leading us to a contradiction. 	
	\end{proof}
	
In view of Lemma $M:=\min\{m\in \NN: h_{i_m}>1\}$ exists. If there is $0\leq m\leq M$ such that $i_m\neq i'_m$, our proof is complete. If not, the following Lemma addresses the remaining situation.
	
	\begin{lemm}\label{Lemm: diferen positive code}
	Suppose that for all $0\leq m\leq M$, $i_m=i_m'$, then $i_{M+1}\neq i'_{M+1}$.
	\end{lemm}
	
	\begin{proof}
		
	Observe that for all $0\leq m\leq M$, if $\Gamma(T)^m(i_0,+1)=(i_m,\epsilon_m)$ then $$\Gamma(T)^m(i_0,-1)=(i_m,-\epsilon_m)$$.
	
	 They have the same index $i_m=i'_m$ but still have inverse $\epsilon$ part, i.e., $\epsilon_m=-\epsilon'_m$. In fact, the first term $\epsilon_0=-\epsilon'_0$, without lost of generality $\epsilon_0=1$  and $\epsilon'_0=-1$,   by hypothesis and considering that  $1=h_{i_0}=h_{i'_0}$ we infer that:
		$$
		\theta_T(i_0,\epsilon_0)=(i_0,h_{i_m})=(i'_0,1)=\theta_T(i'_0,\epsilon_0'),
		$$
Therefore:	
		$$
		\epsilon_1=\epsilon_0\cdot \epsilon_T(i_0,h_{i_0})=-\epsilon'_0\epsilon_T(i'_0,1)=-\epsilon'_1.
		$$
		then continue the argument by induction. In particular: $\Gamma(T)^M(i_0,1)=(i_M,\epsilon_M)$ and  $\Gamma(T)^M(i'_0,-1)=(i_M,-\epsilon_M)$.
		
		 The incidence matrix of $T$ have $\{0,1\}$ has coefficients ${0,1}$, and since that $1\neq h_{i_M}$,  if $\rho_T(i_M,1)=(k,l)$ then $\rho_T(i_M,h_{i_M})=(k',l')$ where $k\neq k'$.
		 
	Consider the  case when, $\theta_T(i_M,\epsilon_M)=(i_M,1)$ and  $\theta_T(i_M,\epsilon'_M)=(i_M, h_{i_M})$. Lets apply the formula of $\Gamma(T)$:
	$$
	\Gamma(T)^{M+1}(i_0,1)=\Gamma(T)(i_M,\epsilon_M)=(\xi_T(i_M,1),\epsilon_M\cdot \epsilon_T(i_M,1))=(k,\epsilon_{M+1})
	$$
	and 				
		$$
	\Gamma(T)^{M+1}(i_0,-1)=\Gamma(T)(i_M,-\epsilon_M)=(\xi_T(i_M,h_{i_M}),-\epsilon_M\cdot \epsilon_T(i_M,h_{i_M}))=(k',\epsilon'_{M+1})
	$$	
	therefore $i_{M+1}=k\neq k'=i'_{M+1}$.
	
		 The situation $\theta_T(i_M,\epsilon_M)=(i_M,h_{i_M})$ and  $\theta_T(i_M,\epsilon'_M)=(i_M, 1)$ is treated similarly. The has been lemma  proved.

	\end{proof}

The proposition follows from the previous lemma. The result for negative codes associated with $u$-boundary labels is proven using a fully symmetric approach with the $u$-generating function.
\end{proof}

If $\Gamma(T)(i,\epsilon)=(i_1,\epsilon_1)$, applying the shift to this code gives another $s$-boundary positive code. That is clearly given by 
$$
\sigma(\underline{I}^+(i,\epsilon))=\underline{I}^+(i_1,\epsilon_1),
$$
where $\Gamma(T)(i_0,\epsilon_0)=(i_1,\epsilon_1)$. Since there are $2n$ different $s$-boundary positive codes, there exist natural numbers $k_1\neq k_2$ with $k_1,k_2\leq 2n$ such that $\sigma^{k_1}(\underline{I}^+(i,\epsilon))=\sigma^{k_2}(\underline{I}^+(i,\epsilon))$, and the code $\underline{I}^+(i,\epsilon)$ is pre-periodic. This implies the following corollary.

\begin{coro}\label{Coro: preperiodic finite s,u boundary codes}
There are exactly $2n$ different $s$-boundary positive codes and $2n$ different $u$-boundary negative codes. Furthermore, every $s$-boundary positive code and every $u$-boundary negative code is pre-periodic under the action of the shift $\sigma$. 

Moreover, for every $s$-boundary positive code $\underline{I}^+(i,\epsilon)$, there exists $k\leq 2n$ such that $\sigma^k(\underline{I}^+(i,\epsilon))$ is periodic. Similarly, for every $u$-boundary negative code $\underline{J}^-(i,\epsilon)$, there exists $k\leq 2n$ such that $\sigma^{-k}(\underline{J}^-(i,\epsilon))$ is periodic.
\end{coro}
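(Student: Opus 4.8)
\textbf{Proof plan for Corollary \ref{Coro: preperiodic finite s,u boundary codes}.}

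The plan is to combine the injectivity result of Proposition \ref{Prop: s code Inyective} with a pigeonhole argument on the finite sets $\cS(T)$ and $\cU(T)$, and to record how the shift $\sigma$ interacts with the generating functions $\Gamma(T)$ and $\Upsilon(T)$. First, I would count: $\cS(T) = \{(i,\epsilon) : 1\leq i\leq n,\ \epsilon\in\{1,-1\}\}$ has exactly $2n$ elements, and by Proposition \ref{Prop: s code Inyective} the assignment $(i,\epsilon)\mapsto \underline{I}^+(i,\epsilon)$ is injective into $\Sigma_A^+$; hence there are precisely $2n$ distinct $s$-boundary positive codes. The same counting with $\cU(T)$ and the injective map $J$ gives exactly $2n$ distinct $u$-boundary negative codes. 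This disposes of the first assertion.

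The second step is the key structural identity: applying the shift to an $s$-boundary positive code produces another $s$-boundary positive code, namely
$$
\sigma(\underline{I}^+(i,\epsilon)) = \underline{I}^+(\Gamma(T)(i,\epsilon)).
$$
This follows directly from the definition $\underline{I}^+(i_0,\epsilon_0) = \{\varsigma_T\circ\Gamma(T)^m(i_0,\epsilon_0)\}_{m\in\NN}$ in Equation \ref{Equa: $s$-boundary code +}: shifting by one deletes the $m=0$ term and re-indexes, which is exactly the orbit of $\Gamma(T)(i_0,\epsilon_0)$ under $\Gamma(T)$. Symmetrically, $\sigma^{-1}(\underline{J}^-(k,\epsilon)) = \underline{J}^-(\Upsilon(T)(k,\epsilon))$, using that $\Upsilon(T) = \Gamma(T^{-1})$ and Equation \ref{Equa: $u$-boundary code -}. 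Thus the shift $\sigma$ restricted to the set $\underline{\cS}^+(T)$ of $s$-boundary positive codes is conjugate, via the bijection $I$, to the self-map $\Gamma(T)$ of the finite set $\cS(T)$; likewise $\sigma^{-1}$ on $\underline{\cJ}^-(T)$ is conjugate to $\Upsilon(T)$ on $\cU(T)$.

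The third step is the pigeonhole conclusion. Since $\Gamma(T)$ maps the $2n$-element set $\cS(T)$ to itself, for any $(i,\epsilon)$ the orbit $\{\Gamma(T)^m(i,\epsilon)\}_{m\geq 0}$ must revisit a value within the first $2n$ iterates: there are $k_1 < k_2 \leq 2n$ with $\Gamma(T)^{k_1}(i,\epsilon) = \Gamma(T)^{k_2}(i,\epsilon)$, hence (transporting via $I$) $\sigma^{k_1}(\underline{I}^+(i,\epsilon)) = \sigma^{k_2}(\underline{I}^+(i,\epsilon))$, so $\underline{I}^+(i,\epsilon)$ is pre-periodic under $\sigma$. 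To get the sharper statement that $\sigma^k(\underline{I}^+(i,\epsilon))$ is actually periodic for some $k\leq 2n$, I would use the standard fact that a self-map of a finite set of cardinality $N$ sends every point into its eventual cycle after at most $N$ steps (the "rho" shape of an orbit): taking $k = k_1 \leq 2n$, the code $\sigma^{k_1}(\underline{I}^+(i,\epsilon)) = \underline{I}^+(\Gamma(T)^{k_1}(i,\epsilon))$ lies on a $\Gamma(T)$-cycle, hence is $\sigma$-periodic. The $u$-case is identical with $\sigma$ replaced by $\sigma^{-1}$, $\Gamma(T)$ by $\Upsilon(T)$, and $I$ by $J$. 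I do not anticipate a serious obstacle here; the only point requiring a little care is verifying the exact re-indexing in $\sigma(\underline{I}^+(i,\epsilon)) = \underline{I}^+(\Gamma(T)(i,\epsilon))$ — one must be careful that the $0$-th entry of $\underline{I}^+(i,\epsilon)$ is $\varsigma_T(i,\epsilon) = i$ (not $\varsigma_T(\Gamma(T)(i,\epsilon))$), which is consistent with Definition \ref{Defi: positive negative boundary codes } and makes the shift identity come out correctly.
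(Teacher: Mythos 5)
Your proposal is correct and follows essentially the same route as the paper: injectivity of $I$ and $J$ on the $2n$-element label sets gives the count, the identity $\sigma(\underline{I}^+(i,\epsilon))=\underline{I}^+(\Gamma(T)(i,\epsilon))$ conjugates the shift to the generating function on a finite set, and pigeonhole within the first $2n$ iterates yields pre-periodicity with the bound $k\leq 2n$. Your extra care about the re-indexing (the $0$-th entry being $i$) and the eventual-cycle argument only makes explicit what the paper leaves implicit.
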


Now we are ready to define a family of admissible codes that project onto the stable and unstable leaves of periodic boundary points of $\cR$.

\begin{defi}\label{Defi: s,u-boundary codes}
	The set of $s$-\emph{boundary codes} of $T$ is:
	\begin{equation}
	\underline{\cS}(T):=\{\underline{w}\in \Sigma_A: \underline{w}_+\in \underline{\cS}^{+}(T)\}.
	\end{equation}
The set of  $u$-\emph{boundary codes} of $T$ is 
\begin{equation}
\underline{\cU}(T):=\{\underline{w}\in \Sigma_A: \underline{w}_-\in \underline{\cU(T)}^{-}(T)\}.
\end{equation}
\end{defi}

Next, Proposition \ref{Prop: positive codes are boundary}  states that the projection of $s$-boundary codes of $T$ through $\pi^f$ is always contained in the stable boundary of the Markov partition $(f,\cR)$. Similarly, Proposition \ref{Prop: boundary points have boundary codes} ensures that these are the only codes in $\Sigma_A$ that project to the stable boundary. This provides us with a symbolic characterization of the boundary of the Markov partition. As we have done so far, we will provide a detailed proof for the stable case, noting that the unstable version follows by symmetry.

\begin{prop}\label{Prop: positive codes are boundary}
Let $(f,\cR)$ be a pair consisting of a homeomorphism and a partition that realizes $T$. Suppose $(i,\epsilon)\in \cS(T)$ is an $s$-boundary label of $T$, and let $\underline{w}\in \underline{\cS}(T)$ be a code such that $\underline{I}^+(i,\epsilon)=\underline{w}_+$. Then, $\pi_f(\underline{w})\in \partial^s_{\epsilon} R_{i}$.

Similarly, if $(i,\epsilon)\in \cU(T)$ is a $u$-boundary label of $T$, and $\underline{w}\in \underline{\cU}(T)$ is a code such that $\underline{J}^-(i,\epsilon)=\underline{w}_-$, then $\pi_f(\underline{w})\in \partial^u_{\epsilon}R_i$.

\end{prop}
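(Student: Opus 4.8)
The plan is to prove the statement by induction on how far the shift pushes the code towards the periodic part of $\underline{I}^+(i,\epsilon)$, using the way the $s$-generating function $\Gamma(T)$ was designed to track stable boundary components under $f$. First I would unwind Definition~\ref{Defi: s-boundary generating funtion}: if $\underline{w}_+=\underline{I}^+(i_0,\epsilon_0)=\{i_m\}_{m\in\NN}$, then by construction $w_0=i_0$, so $\pi_f(\underline{w})\in R_{i_0}$, and the relabelling function $\theta_T$ chooses the horizontal sub-rectangle $H^{i_0}_{\theta_T(i_0,\epsilon_0)}$ (namely $H^{i_0}_1$ if $\epsilon_0=-1$ and $H^{i_0}_{h_{i_0}}$ if $\epsilon_0=1$) whose stable boundary component $\partial^s_{\epsilon_0}R_{i_0}$ equals $\partial^s_{\epsilon_0}H^{i_0}_{\theta_T(i_0,\epsilon_0)}$. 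Applying $f$, the image $f(H^{i_0}_{\theta_T(i_0,\epsilon_0)})=V^{k}_{l}$ is a vertical sub-rectangle of $R_k$ with $k=\xi_T(i_0,\theta_T(i_0,\epsilon_0))$, and the orientation sign $\epsilon_T(i_0,\theta_T(i_0,\epsilon_0))$ tells us exactly which horizontal boundary of $R_k$ receives $f(\partial^s_{\epsilon_0}R_{i_0})$: it lands in $\partial^s_{\epsilon_1}R_k$ with $\epsilon_1=\epsilon_0\cdot\epsilon_T(i_0,\theta_T(i_0,\epsilon_0))$. This is precisely $\Gamma(T)(i_0,\epsilon_0)=(i_1,\epsilon_1)$, together with the fact (already noted when verifying admissibility in Lemma~\ref{Lemm: positive code admisible}) that $f^{-1}(\overset{o}{R_{i_1}})\cap\overset{o}{R_{i_0}}=\overset{o}{H^{i_0}_{\theta_T(i_0,\epsilon_0)}}$.

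The key step is then the identity $\sigma(\underline{I}^+(i_0,\epsilon_0))=\underline{I}^+(i_1,\epsilon_1)$ with $\Gamma(T)(i_0,\epsilon_0)=(i_1,\epsilon_1)$, combined with the semi-conjugacy $f\circ\pi_f=\pi_f\circ\sigma$ from Proposition~\ref{Prop:proyecion semiconjugacion}. From this I would argue as follows. The semi-conjugacy gives $f(\pi_f(\underline{w}))=\pi_f(\sigma(\underline{w}))$, and $\sigma(\underline{w})$ is a code whose positive part is $\underline{I}^+(i_1,\epsilon_1)$. By Corollary~\ref{Coro: preperiodic finite s,u boundary codes}, there is $N\le 2n$ such that $\sigma^N(\underline{I}^+(i_0,\epsilon_0))$ is periodic; a periodic code $\underline{v}$ whose positive part is an $s$-boundary positive code projects to a periodic point which, by the analysis of $\theta_T$ above applied to the periodic cycle, must lie on a periodic stable boundary component of $\cR$ — here I would invoke Lemma~\ref{Lemm: Boundary of Markov partition is periodic}, which says the stable boundary of $\cR$ sits on stable leaves of periodic points, and Lemma~\ref{Lemm: Periodic to peridic}. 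So the statement holds for the (finitely many) codes in the periodic part: $\pi_f(\sigma^N(\underline{w}))\in\partial^s_{\epsilon_N}R_{i_N}$. Then I descend: since $f(\partial^s_{\epsilon_{m}}R_{i_{m}})\subset\partial^s_{\epsilon_{m+1}}R_{i_{m+1}}$ by the $\Gamma(T)$-computation, and $f$ is a bijection, $\pi_f(\sigma^m(\underline{w}))=f^{-1}(\pi_f(\sigma^{m+1}(\underline{w})))\in f^{-1}(\partial^s_{\epsilon_{m+1}}R_{i_{m+1}})$; because $\pi_f(\sigma^m(\underline w))$ lies in $R_{i_m}$ (as $w_m=i_m$) and in the preimage of $\partial^s_{\epsilon_{m+1}}R_{i_{m+1}}$, and $f(H^{i_m}_{\theta_T(i_m,\epsilon_m)})$ meets $R_{i_{m+1}}$ along that boundary, the point must lie in $\partial^s_{\epsilon_m}R_{i_m}$. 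Iterating this descent from $m=N$ down to $m=0$ yields $\pi_f(\underline{w})\in\partial^s_\epsilon R_i$.

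For the unstable statement I would simply apply the stable one to $T^{-1}$ and the pair $(f^{-1},\cR)$, using Remark~\ref{Rema: Inverse type and inverse pAnosov} which identifies $\Upsilon(T)$ with $\Gamma(T^{-1})$ and swaps horizontal and vertical sub-rectangles, so that $u$-boundary codes of $T$ become $s$-boundary codes of $T^{-1}$ and $\partial^u$ becomes $\partial^s$. The main obstacle I anticipate is the careful bookkeeping in the descent step — making sure that at each stage the point $\pi_f(\sigma^m(\underline w))$, known a priori only to be in $R_{i_m}$ and in $f^{-1}(\partial^s_{\epsilon_{m+1}}R_{i_{m+1}})$, is forced onto the \emph{correct} stable boundary component $\partial^s_{\epsilon_m}R_{i_m}$ rather than some interior stable leaf or the opposite boundary. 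This is controlled by the observation that the only way for a point of $\overset{o}{H^{i_m}_{\theta_T(i_m,\epsilon_m)}}\cup\partial^s_{\epsilon_m}R_{i_m}$ to have its $f$-image on $\partial^s_{\epsilon_{m+1}}R_{i_{m+1}}$ is to be on the stable boundary $\partial^s_{\epsilon_m}R_{i_m}$ itself, since $f$ maps horizontal sub-rectangles to vertical ones preserving the stable foliation and the specified orientation sign $\epsilon_T$; I would state this as a short lemma and reduce everything to it.
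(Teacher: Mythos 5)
Your first paragraph already contains the heart of the paper's argument: the one-step statement that $f(\partial^s_{\epsilon_m}R_{i_m})\subset\partial^s_{\epsilon_{m+1}}R_{i_{m+1}}$ with $(i_{m+1},\epsilon_{m+1})=\Gamma(T)(i_m,\epsilon_m)$. Iterated, this is exactly the paper's induction, and it finishes the proof directly: $\partial^s_{\epsilon_0}R_{i_0}$ is then contained in every $H_s:=\overline{\cap_{m=0}^{s}f^{-m}(\overset{o}{R_{i_m}})}$, and since $\cap_s H_s$ is a single stable segment of $R_{i_0}$ which contains $\pi_f(\underline{w})$ (this is how $\pi_f$ is defined and how Lemma~\ref{Lemm: pif is well defined} works), that segment must be $\partial^s_{\epsilon_0}R_{i_0}$ and the conclusion follows. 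The detour through pre-periodicity and backward descent is not only unnecessary; as written it has two genuine gaps. First, the base case: $\sigma^N(\underline{w})$ is \emph{not} a periodic code — only its positive part is, since the negative part of $\underline{w}$ is an arbitrary admissible extension — so Lemma~\ref{Lemm: Periodic to peridic} does not apply to it, and even for the genuinely periodic code $\underline{p}$ with $\underline{p}_+=\underline{I}^+(i_N,\epsilon_N)$, Lemmas~\ref{Lemm: Boundary of Markov partition is periodic} and~\ref{Lemm: Periodic to peridic} only tell you that $\pi_f(\underline{p})$ is periodic and that boundary leaves are periodic leaves; they do not place $\pi_f(\underline{p})$ on the component $\partial^s_{\epsilon_N}R_{i_N}$. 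To get that you need precisely the forward invariance you computed in your first paragraph (plus a contraction argument producing the periodic point \emph{inside} that boundary segment), so the periodic "base case" is not an independent input.

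Second, the descent step is not airtight. From the code you only know $\pi_f(\sigma^m(\underline{w}))\in H^{i_m}_{\theta_T(i_m,\epsilon_m)}$ as a \emph{closed} rectangle, so a priori the point could sit on either of the two stable boundary components of that horizontal sub-rectangle; one of them is $\partial^s_{\epsilon_m}R_{i_m}$, the other is an interior stable segment of $R_{i_m}$. Your proposed auxiliary lemma assumes the point already lies in $\overset{o}{H^{i_m}_{\theta_T(i_m,\epsilon_m)}}\cup\partial^s_{\epsilon_m}R_{i_m}$, which is exactly what is in question. Knowing only that $f(x)\in\partial^s_{\epsilon_{m+1}}R_{i_{m+1}}$ does not exclude the other component, because the rectangles of a pseudo-Anosov Markov partition are not embedded: the two stable boundary components of $R_{i_{m+1}}$ may intersect (at spines and corner identifications), so the image of the "wrong" boundary of $H^{i_m}_{\theta_T(i_m,\epsilon_m)}$, which lies in $\partial^s_{-\epsilon_{m+1}}R_{i_{m+1}}$, can meet $\partial^s_{\epsilon_{m+1}}R_{i_{m+1}}$ at exactly such points, and the backward bookkeeping loses track of the component there. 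The forward argument avoids this entirely because it propagates a whole boundary segment rather than trying to reconstruct the location of a single point from its image; I recommend you discard the periodicity/descent scheme and run your first-paragraph computation as an induction on $s$, exactly as the paper does. Your reduction of the unstable statement to the stable one for $(f^{-1},\cR)$ and $T^{-1}$ is fine.
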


\begin{proof}
	Make $(i,\epsilon)=(i_0,\epsilon_0)$ to make coherent the following notation and $\underline{I}^+(i,\epsilon)=\{i_m\}_{m\in \NN}$. 	For every $s\in \NN$ define the rectangles $\overset{o}{H_s}=\cap_{m=0}^s f^{-1}(\overset{o}{R_{i_m}})$. The limit of the closures of such rectangles when $s$ converge to infinity is a unique stable segment of $R_{i_0}$. The proof is achieved  if $\partial^s_{\epsilon_0}R_{i_0}\subset H_s:=\overline{ \overset{o}{H_s} }$ for all $s\in \NN$. In this order of ideas is enough to argument that for all $s\in \NN$:
	$$
	f^s(\partial^s_{\epsilon_0}R_{i_0}) \subset R_{i_{s}}.
	$$
	We are going to probe  by induction over $s$ something more specific:
	$$
	f^s(\partial_{\epsilon_0} R_{i_0})\subset \partial_{\epsilon_s}R_{i_s}.
	$$  
	
\emph{Base of induction}: For $s=0$. This is the case as $f^0(\partial^s_{\epsilon_0}R_{i_0})\subset \partial^s_{\epsilon_0}R_{i_0}$.
	
\emph{Hypothesis of induction }: Assume  that $f^s(\partial_{\epsilon_0} R_{i_0})\subset \partial_{\epsilon_s}R_{i_s}$ and  $f^s(\partial^s_{\epsilon_0}R_{i_0})\subset R_{i_s}$.

\emph{Induction step}: We are going to prove that
$$
f^{s+1}(\partial_{\epsilon_0} R_{i_0})\subset \partial_{\epsilon_{s+1}}R_{i_{s+1}}
$$ 

 and then  that $f^{s+1}(\partial^s_{\epsilon_0}R_{i_0})\subset R_{i_{s+1}}$. For that reason consider the two following cases:
	
	\begin{itemize}
		\item $\epsilon_{s}=1$. In this situation $f^s(\partial^s_{\epsilon_0}R_{i_0})\subset H^{i_s}_{h_{i_s}}$.  Hence  $f^{s+1}(\partial^s_{i_0}R_{i_0})\subset f(H^{i_s}_{h_{i_s}}) \subset R_{i'_{s+1}}$. Where $R_{i'_{s+1}}$ is the only rectangle such that $\xi_T(i_s,h_{i_s})=(i'_{s+1})$.		
		Even more $f^{s+1}(\partial^s_{\epsilon_0} R_{i_0})\subset \partial^s_{\epsilon'_{s+1}} R_{i'_{s+1}}$, where $\epsilon'_{s+1}$ obey to the formula $\epsilon'_{s+1}= \epsilon_T(i_s,h_{i_s})=\epsilon_s \cdot \epsilon_T(i_s,h_{i_s})$.
		
		\item  $\epsilon_{s}=-1$. In this situation  $f^s(\partial^s_{\epsilon_0}R_{i_0})\subset H^{i_s}_1$. Hence $f^{s+1}(\partial^s_{i_0}R_{i_0})\subset f(H^{i_s}_{1}) \subset R_{i'_{s+1}}$, where $R_{i_{s+1}}$ is the only rectangle such that $\xi_T(i_s,1)=(i'_{s+1}$. 
		Even more, $f^{s+1}(\partial^s_{\epsilon_0} R_{i_0})\subset \partial^s_{\epsilon'_{s+1}} R_{i'_{s+1}}$, where  $\epsilon'_{s+1}$ obey to the formula $\epsilon'_{s+1}= -\epsilon_T(i_s,1)=\epsilon_s \cdot \epsilon_T(i_s,1)$.
	\end{itemize}
	In bot situations:
	$$
	f^{s+1}(\partial^s_{\epsilon_0})R_{i_0})\subset \partial^s_{\epsilon'_{s+1}}R_{i'_{s+1}}
	$$
and they follows the rule:
	$$
	(i'_{s+1},\epsilon'_{s+1})=(\xi_T(i_s,\theta_T(i_s,\epsilon_s)),\epsilon_s\cdot \epsilon_T(i_s,\theta_T(\epsilon_s)))=\Gamma(T)^{s+1}(i_0,\epsilon_0)=(i_{s+1},\epsilon_{s+1}).
	$$
Therefore $f^{s+1}(\partial_{\epsilon_0} R_{i_0})\subset \partial_{\epsilon_{s+1}}R_{i_{s+1}}$, as we claimed and the result is proved.
The unstable case is totally symmetric.

\end{proof}

Proposition \ref{Prop: positive codes are boundary} provides justification for naming the $s$-boundary and $u$-boundary labels of $T$ as such, as they generate codes that are projected to the boundary of the Markov partition. The next proposition asserts that these codes are the only ones that have such a property.

\begin{prop}\label{Prop: boundary points have boundary codes}
	If $\underline{w}\in \Sigma_A$ projects to the stable boundary of the Markov partition $(f,\cR)$ under $\pi_f$, i.e., $\pi_f(\underline{w})\in \partial^s \cR$, then it follows that $\underline{w}\in \underline{\cS}(T)$. Similarly, if $\pi_f(\underline{w})\in \partial^u\cR$, then $\underline{w}\in \underline{\cU}(T)$.
	
\end{prop}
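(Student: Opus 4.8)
The plan is to prove the contrapositive in spirit: assume $\pi_f(\underline{w}) = x \in \partial^s\mathcal{R}$ and show that the positive part $\underline{w}_+$ must coincide with one of the $s$-boundary positive codes $\underline{I}^+(i,\epsilon)$ constructed from the $s$-generating function. The key structural fact I would use is that $\partial^s\mathcal{R}$ is $f$-invariant (Lemma \ref{Lemm: Boundary of Markov partition is periodic} and Proposition \ref{Prop: Markov criterion boundary}), so that once $x$ lands on a stable boundary component of some rectangle, all forward iterates $f^n(x)$ stay on stable boundary components. Thus for every $n \geq 0$ there is a unique pair $(i_n, \epsilon_n)$ with $f^n(x) \in \partial^s_{\epsilon_n} R_{i_n}$, modulo the ambiguity that $x$ might lie on a stable boundary component that is shared by two rectangles. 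This ambiguity is precisely what the sector codes resolve: by Lemma \ref{Lemm: every code is sector code } the code $\underline{w}$ equals a sector code $\underline{e_j(x)}$, and fixing a sector $e$ of $x$ selects, at each time $n$, one definite rectangle $R_{w_n}$ whose stable boundary contains $f^n(x)$, namely the one on whose boundary the sector $f^n(e)$ is incident.

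The main step is then to verify that the transition rule governing $(i_n,\epsilon_n)$ (together with the orientation data encoded by $\epsilon_n$) is exactly the $s$-generating function $\Gamma(T)$ of Definition \ref{Defi: s-boundary generating funtion}. Here I would run essentially the inductive argument of Proposition \ref{Prop: positive codes are boundary} in reverse: if $f^n(x) \in \partial^s_{\epsilon_n} R_{i_n}$ with the sector $f^n(e)$ incident to that boundary component, then $f^n(x)$ lies in the horizontal sub-rectangle $H^{i_n}_{\theta_T(i_n,\epsilon_n)}$ (the top one when $\epsilon_n = +1$, the bottom one when $\epsilon_n = -1$), because a sector incident to the stable boundary of $R_{i_n}$ sits inside the extremal horizontal sub-rectangle. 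Applying $f$ sends this to the vertical sub-rectangle $V^{k}_{l}$ where $k = \xi_T(i_n,\theta_T(i_n,\epsilon_n))$, and the new stable boundary component containing $f^{n+1}(x)$ is $\partial^s_{\epsilon_{n+1}} R_k$ with $\epsilon_{n+1} = \epsilon_n \cdot \epsilon_T(i_n,\theta_T(i_n,\epsilon_n))$ — this is the change-of-orientation bookkeeping. That is precisely $\Gamma(T)(i_n,\epsilon_n) = (i_{n+1},\epsilon_{n+1})$, so by induction $\underline{w}_+ = (i_n)_{n\in\NN} = \underline{I}^+(i_0,\epsilon_0) \in \underline{\cS}^+(T)$, hence $\underline{w} \in \underline{\cS}(T)$. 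The unstable case is obtained by applying this argument to $(f^{-1},\mathcal{R})$, whose geometric type is $T^{-1}$ and whose $s$-generating function is by definition the $u$-generating function $\Upsilon(T)$ of $T$.

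The step I expect to be the main obstacle is the careful handling of the sector bookkeeping: one must check that the sector $f^n(e)$ really is incident to a stable boundary component of a rectangle (not merely that $f^n(x)$ lies on $\partial^s\mathcal{R}$), that the choice of incident rectangle it determines is exactly $R_{w_n}$ from the sector code, and that passing from $f^n(e)$ to $f^{n+1}(e)$ is compatible with the map $f$ sending $H^{i_n}_{\theta_T(i_n,\epsilon_n)}$ to $V^{k}_{l}$ in an orientation-coherent way. The subtlety is that $x$ could be a corner point lying on both stable and unstable boundaries, in which case several sectors are available; but the statement only claims $\underline{w} \in \underline{\cS}(T)$, and each individual sector code still satisfies the transition rule, so this causes no real trouble once one is careful that the argument is carried out sector by sector. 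I would also note at the start that if $\underline{w}$ is periodic this is the already-understood case (Lemma \ref{Lemm: Periodic to peridic}), and the pre-periodicity from Corollary \ref{Coro: preperiodic finite s,u boundary codes} is consistent with $\partial^s\mathcal{R}$ being a finite union of segments each attached to a periodic point (Lemma \ref{Lemm: Boundary of Markov partition is periodic}).
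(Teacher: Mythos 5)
Your overall route is the same as the paper's: show that the positive itinerary of $\underline{w}$ obeys the transition rule of the $s$-generating function $\Gamma(T)$, conclude $\underline{w}_+=\underline{I}^+(i_0,\epsilon_0)$ for some boundary label, and treat the unstable case through $T^{-1}$. The one real difference is how the sign is pinned down. You read $\epsilon_n$ off the geometry at every time (which stable boundary component of the sector's rectangle contains $f^n(x)$), so your recursion starts at $n=0$. The paper works only with the code: it notes that the relation $w_{m+1}=\xi_T(w_m,\theta_T(w_m,\epsilon_m))$ determines $\epsilon_m$ only when $h_{w_m}>1$, uses an adaptation of Lemma \ref{Lemm: positive h-i} to find the minimal $M$ with $h_{w_M}>1$, and then propagates the sign backwards through the initial stretch where $h_{w_m}=1$ to recover $\epsilon_0$ before running $\Gamma(T)$ forward. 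Your version avoids that backward propagation, but only by leaning harder on the geometric incidence assertion discussed next.

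That assertion --- that for every $n\geq 0$, including $n=0$, the sector $f^n(e)$ is incident to a stable boundary component of its own rectangle $R_{w_n}$, equivalently that the horizontal sub-rectangle containing it is extremal --- is precisely what the paper also asserts with no more justification than ``$f^m(x)\in\partial^s\cR$'', and your proposal does not supply it either; moreover your reason for dismissing the corner subtlety is aimed at the wrong phenomenon. The danger is not that $x$ carries several sectors, but that $x$ may lie on $\partial^s R_j$ for some $j$ while meeting the sector's rectangle $R_{w_0}$ only along its unstable side, at a height strictly between the two stable boundaries of $R_{w_0}$ --- a T-junction of the boundary grid, which the hypotheses do not exclude and which arises, for instance, wherever a stable boundary segment terminates in the interior of an unstable boundary segment in the partitions built from compatible graphs. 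For a sector on the far side of that unstable leaf the containing horizontal sub-rectangle need not be extremal, the time-$0$ step does not follow $\Gamma(T)$, and since codes with positive part $\underline{I}^+(w_0,\pm 1)$ project into $\partial^s_{\pm 1}R_{w_0}$ by Proposition \ref{Prop: positive codes are boundary}, the positive part of such a sector code cannot equal either of them; so the stated conclusion $\underline{w}\in\underline{\cS}(T)$ is not reachable by this argument for that code. What the argument does yield (for non-periodic $x$, using that for all large $n$ the iterate $f^n(x)$ lies in the relative interior of a stable boundary segment and off $\partial^u\cR$) is that $\sigma^n(\underline{w})\in\underline{\cS}(T)$ for some $n\geq 0$, i.e. $\underline{w}\in\Sigma_{\cS(T)}$ in the sense of Definition \ref{Defi: stratification Sigma A}, which is the form actually invoked later, e.g. in Lemma \ref{Lemm: Projection Sigma S,U,I}. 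To complete your proof as written you must either establish the incidence claim at time $0$ or rule out these junction configurations for the code at hand; the check you flag cannot be carried out in general, and it is also the thin point of the paper's own proof.
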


\begin{proof}

Like $A(T)$ has coefficients ${0,1}$, the sequence $\underline{w}_+$ determines, for all $m\in \mathbb{N}$, a pair $(w_m,j_m)\in \mathcal{H}(T)$ such that $\xi_T(w_m, j_m)=w_{m+1}$ and a number $\epsilon_T(w_m,j_m)=\epsilon_{m+1} \in {1,-1}$. It is important to note that, like  $f^m(x)\in \partial^s\cR$, in fact,  there are only two  cases (unless $h_{w_m}=1$ where they are the same):
$$
w_{m+1}=\xi_T(w_m,1) \text{ or well } w_{m+1}=\xi_T(w_m,h_{w_m}).
$$

This permit to define $\epsilon_m\in \{-1,+1\}$ as the only number such that:
\begin{equation}\label{Equa: determine epsilon m}
w_{m+1}=\xi_T(w_m,\theta_T(w_m,\epsilon_m)).
\end{equation}
Even more $\epsilon_m$ determine $\epsilon_{m+1}$ by the formula:
$$
\epsilon_{m+1}=\epsilon_m\cdot \epsilon_T(w_m,\theta_T(w_m,\epsilon_m)).
$$
In resume:
\begin{equation}\label{Equa: w determine by gamma}
\Gamma(T)(w_m,\epsilon_m)=(w_{m+1},\epsilon_{m+1})
\end{equation}
follow the rule dictated by the $s$-generating function. So if we know $\epsilon_M$ for certain $M\in \NN$ we can determine  $\sigma^{M}(\underline{w})_+=\underline{I}^+(w_M,\epsilon_M)$, so it rest to determine at least one $\epsilon_M$. 

Lemma \ref{Lemm: positive h-i} can be adapted to this context to prove the existence of a minimal $M\in \mathbb{N}$ such that $h_{w_M}>1$. Then, equation \ref{Equa: determine epsilon m} determines $\epsilon_M$. Now, we need to recover $\epsilon_0$, but we proceed backwards. Since $h_{w_m}=1$ for all $m<1$, we have:

$$
\epsilon_M=\epsilon_{M-1}\cdot \epsilon_T(w_{M-1},1)
$$
and then $\epsilon_{M-1}=\epsilon_M \cdot \epsilon_T(w_{M-1},1)$. By applied this procedure we can determine $\epsilon_0$ and then using \ref{Equa: w determine by gamma} to get that
$$
\Gamma(T)^m(w_0,\epsilon_0)=(w_m,\epsilon_m).
$$
The conclusion is that $\underline{w}_+=\underline{I}^{+}(w_0,\epsilon_0)$. 

The unstable boundary situation is analogous.
\end{proof}

Therefore, $\underline{\cS}(T)$ and $\underline{\cU}(T)$ are the only admissible codes that project to the boundary of a Markov partition. With this in mind, we can distinguish the periodic boundary codes from the non-periodic ones. It is important to note that the cardinality of each of these sets is less than or equal to $2n$.

\begin{defi}\label{Defi: s,u-boundary periodic codes}
	The set of $s$-\emph{boundary periodic codes} of $T$ is:
	\begin{equation}
\text{ Per }(\underline{\cS(T)}):=\{\underline{w}\in \underline{\cS}(T): \underline{w} \text{ is periodic }\}.
	\end{equation}
	The set of  $u$-\emph{boundary periodic codes} of $T$ is 
	\begin{equation}
\text{ Per }(\underline{\cU(T)}):=\{\underline{w}\in \underline{\cU}(T): \underline{w} \text{ is periodic }\}.
	\end{equation}
\end{defi}

\subsection{Decomposition of $\Sigma_{A}$.}
 Now we can describe the stable and unstable leaves of $f$ corresponding to the periodic points of the boundary.

\begin{defi}\label{Defi: stratification Sigma A}
	We define the $s$-\emph{boundary leaves codes} of $T$:
	\begin{equation}
	\Sigma_{\cS(T)}=\{\underline{w}\in \Sigma_A:   \exists k\in\NN \text{ such that } \sigma^k(\underline{w})\in \underline{\cS(T)}\}.
	\end{equation}
	We define the $u$-\emph{boundary leaves codes} of $T$
	\begin{equation}
	\Sigma_{\cU(T)}=\{\underline{w}\in \Sigma_A:  \exists k\in\NN \text{ such that } \sigma^{-k}(\underline{w})\in \underline{\cU(T)}\}.
	\end{equation}
	Finally, the \emph{totally interior codes} of $T$ are
	$$
	\Sigma_{\cI nt(T)}=\Sigma_A\setminus(\Sigma_{\cS(T)} \cup \Sigma_{\cU(T)}).
	$$
\end{defi}

The importance of such a division of $\Sigma_A$ is that its projections are well determined, as indicated by the following lemma.

\begin{lemm}\label{Lemm: Projection Sigma S,U,I}

	A code $\underline{w}\in \Sigma_A$ belongs to $\Sigma_{\cS(T)}$ if and only if its projection $\pi_f(\underline{w})$ is within the stable leaf of a boundary periodic point of $\cR$.
	
A code $\underline{w}\in \Sigma_A$ belongs to $\Sigma_{U(T)}$ if and only if its projection $\pi_f(\underline{w})$ lies within the unstable leaf of a boundary periodic point of $\cR$.

	A code $\underline{w}\in \Sigma_A$ belongs to $\Sigma_{\cI nt(T)}$ if and only if its projection $\pi_f(\underline{w}$ is contained within Int$(f,\cR)$>
\end{lemm}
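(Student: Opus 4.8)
\textbf{Proof plan for Lemma \ref{Lemm: Projection Sigma S,U,I}.}

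The plan is to prove the three equivalences by combining the characterizations already established for boundary codes, the semi-conjugation property of $\pi_f$, and the decomposition of $S$ into totally interior points plus the two boundary laminations. First I would dispose of the $\Sigma_{\cS(T)}$ statement. For the forward direction, suppose $\underline{w}\in \Sigma_{\cS(T)}$, so there is $k\in \NN$ with $\sigma^k(\underline{w})\in \underline{\cS(T)}$. By Proposition \ref{Prop: positive codes are boundary}, $\pi_f(\sigma^k(\underline{w}))\in \partial^s\cR$, and by Lemma \ref{Lemm: Boundary of Markov partition is periodic} this point lies on the stable leaf of a periodic boundary point $p$ of $\cR$. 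Since $\pi_f$ semi-conjugates $\sigma$ with $f$ (Proposition \ref{Prop:proyecion semiconjugacion}), $\pi_f(\underline{w}) = f^{-k}(\pi_f(\sigma^k(\underline{w})))$, and $f^{-k}$ maps the stable leaf of $p$ to the stable leaf of the periodic point $f^{-k}(p)$; hence $\pi_f(\underline{w})$ lies on the stable leaf of a periodic boundary point. For the converse, assume $\pi_f(\underline{w})$ is on the stable leaf of a periodic boundary point $p$. Using the uniform contraction along stable leaves, there exists $k\in \NN$ such that $f^k(\pi_f(\underline{w}))$ lies arbitrarily close to $p$ along $F^s(p)$; since $p$ is a boundary point and the stable boundary of $\cR$ is $f$-invariant, $f^k(\pi_f(\underline{w}))\in \partial^s\cR$. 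Then $\pi_f(\sigma^k(\underline{w})) = f^k(\pi_f(\underline{w}))\in \partial^s\cR$, so by Proposition \ref{Prop: boundary points have boundary codes}, $\sigma^k(\underline{w})\in \underline{\cS(T)}$, i.e. $\underline{w}\in \Sigma_{\cS(T)}$. The $\Sigma_{\cU(T)}$ statement follows by the same argument applied to $f^{-1}$ and $T^{-1}$, using Remark \ref{Rema: Inverse type and inverse pAnosov} and the symmetric versions of the cited propositions, together with the uniform expansion along unstable leaves.

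Next I would treat the $\Sigma_{\cI nt(T)}$ statement, which will essentially be a matter of complementation. By Definition \ref{Defi: stratification Sigma A}, $\underline{w}\in \Sigma_{\cI nt(T)}$ if and only if $\underline{w}\notin \Sigma_{\cS(T)}$ and $\underline{w}\notin \Sigma_{\cU(T)}$. By the two equivalences just proved, this happens if and only if $\pi_f(\underline{w})$ is neither on the stable leaf of a periodic $s$-boundary point (equivalently, by Lemma \ref{Lemm: Boundary of Markov partition is periodic}, of any periodic boundary point) nor on the unstable leaf of a periodic boundary point of $\cR$. By Lemma \ref{Lemm: Caraterization unique codes}, a point of $S$ fails to be on either of these laminations precisely when it is a totally interior point of $\cR$, i.e. $\pi_f(\underline{w})\in \text{Int}(f,\cR)$. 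This chains the equivalences together and gives the claim. The only subtlety here is to make sure the dichotomy "periodic $s$-boundary point versus periodic boundary point" is handled cleanly: a point on the stable leaf of a periodic boundary point that happens to be a $u$-boundary (not $s$-boundary) point still has its stable leaf meeting $\partial^s\cR$, since the stable boundary of $\cR$ consists exactly of the stable leaves of the periodic boundary points (Lemma \ref{Lemm: Boundary of Markov partition is periodic}), so no information is lost.

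I expect the main obstacle to be the converse direction of the first equivalence, namely producing the integer $k$ with $f^k(\pi_f(\underline{w}))\in \partial^s\cR$ from the mere hypothesis that $\pi_f(\underline{w})$ is on the stable leaf of a periodic boundary point. The point is that $\pi_f(\underline{w})$ lies on $F^s(p)$ for a periodic $p\in \partial^s\cR$, but a priori $\pi_f(\underline{w})$ need not itself be in $\partial^s\cR$; one must iterate $f$ forward so that the image is pushed onto a compact piece of stable boundary. The argument is the one already used in Lemma \ref{Lemm: Caraterization unique codes} and Lemma \ref{Lemm: no periodic boundary points}: the stable boundary of $\cR$ is a finite union of compact stable segments, each being a fundamental domain's worth of a periodic stable leaf; by the uniform contraction $\mu^u(f^k[\,\pi_f(\underline{w}),p\,]^s)=\lambda^{-k}\mu^u([\,\pi_f(\underline{w}),p\,]^s)\to 0$, so for $k$ large the segment $f^k[\,\pi_f(\underline{w}),p\,]^s$ is contained in the stable boundary component through $f^k(p)$, forcing $f^k(\pi_f(\underline{w}))\in\partial^s\cR$. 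Once this is in place the rest is bookkeeping with the semi-conjugation and the earlier characterization lemmas.
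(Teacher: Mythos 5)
Your argument is correct and follows essentially the same route as the paper: forward direction from the fact that codes in $\underline{\cS(T)}$ project into $\partial^s\cR$ (plus the semi-conjugation / Lemma \ref{Lemm: Boundary of Markov partition is periodic}), converse by contracting the stable segment $[\pi_f(\underline{w}),p]^s$ forward until its image lands in $\partial^s\cR$ and then invoking Proposition \ref{Prop: boundary points have boundary codes}, and the interior statement by pure complementation via Lemma \ref{Lemm: Caraterization unique codes}; the paper's proof uses Proposition \ref{Prop: Projection foliations} and Corollary \ref{Coro: preperiodic finite s,u boundary codes} for the forward direction, but this is only a cosmetic difference.

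One caveat: your closing remark that "no information is lost" because the stable leaf of a periodic point which is only $u$-boundary "still meets $\partial^s\cR$" is false, and Lemma \ref{Lemm: Boundary of Markov partition is periodic} does not give it. Indeed, two distinct periodic points cannot lie on the same stable leaf, and every stable boundary segment lies on the stable leaf of a periodic point belonging to $\partial^s\cR$; hence if $p$ is periodic, lies in $\partial^u\cR$ but not in $\partial^s\cR$, then $F^s(p)\cap\partial^s\cR=\emptyset$ and codes projecting into $F^s(p)$ are \emph{not} in $\Sigma_{\cS(T)}$. So the first item must be read, as the paper's own converse does, with "$s$-boundary periodic point" (and symmetrically "$u$-boundary periodic point" in the second item); with that reading your main argument goes through unchanged, and the extra remark should simply be dropped.
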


\begin{proof}
The $s$-boundary leaf codes satisfy Definition  \ref{Defi: s,u-leafs}, and according to Proposition \ref{Prop: Projection foliations}, if $\underline{w}\in \Sigma_{\cS(T)}$, their projection lies on the same stable manifold as an $s$-boundary component of $\cR$. Stable boundary components of a Markov partition represent the stable manifold of an $s$-boundary periodic point, and for each $\underline{w}\in \Sigma_{\cS(T)}$, there exists $k=k(\underline{w})\in \mathbb{N}$ such that $\sigma^k(\underline{w})_+$ is a periodic positive code (Corollary \ref{Coro: preperiodic finite s,u boundary codes}). This positive code corresponds to a periodic point on the boundary of $\cR$ within whose stable manifold $\pi_f(\underline{w})$ is contained. This proves  one direction in the first assertion  of the lemma.

If $\underline{v}$ is a periodic boundary code ,$\pi_f(\underline{v})\in \partial^s\cR$, by definition $\underline{v}\in \underline{\cS(T)}$.  Suppose that $\pi_f(\underline{w})$ is on the stable leaf of $\underline{v}$, then there exist $k \in \NN$ such that $\pi_f(\sigma^k(\underline{w}))$ is on the same stable boundary of $\cR$ as $\pi_f(\underline{v})$.  The proposition\ref{Prop: boundary points have boundary codes} implies that $\sigma^k(\underline{w})\in \underline{\cS(T)}$, hence  $\underline{w}\in \Sigma_{\cS(T)}$ by definition. This completes the proof of the first assertion of the lemma. A similar argument proves the unstable case.	
	
 The conclusion of these items is that  $\underline{w}\in \Sigma_{\cS(T)}\cap \Sigma_{\cU(T)}$ if and only if $\pi_f(\underline{w})\in \cF^{s,u}(\text{ Per }^{s,u}(\cR))$. 
 
   As proved in the Lemma \ref{Lemm: Caraterization unique codes} totally interior points are disjoint from the stable and unstable lamination generated by boundary periodic points. If $\underline{w}\in  \Sigma_{\cI nt(T)}$ and $\pi_f(\underline{w})$ is on the stable or unstable leaf of a $s,u$-boundary periodic point, we have seen that $\underline{w}\in \Sigma_{\cS(T),\cU(T)}$ which is not possible, therefore $\pi_f(\underline{w})\in $Int$(f,\cR)$. In the conversely direction, if  $\pi_f(\underline{w})$ is not in the stable or unstable lamination of $s,u$-boundary points, $\underline{w}\notin (\Sigma_{\cS(T)})\cup \Sigma_{\cU(T)}$, so $\underline{w}\in\Sigma_{\cI nt(T)}$. This ends the proof.
 \end{proof}
We have obtained the decomposition 
$$
\Sigma_{A(T)}=\Sigma_{\cI nt(T)} \cup \Sigma_{\cS(T)} \cup \Sigma_{\cU(T)}
$$ 
 and have characterized the image under $\pi_f$ of each of these sets. In the next subsection we use this decomposition to define relations on each of these parts and then extend them to an equivalence relation in $\Sigma_A$.

\subsection{Relations in $\Sigma_{\cS(T)}$ and $\Sigma_{\cU(T)}$}

Let $\underline{w}\in \Sigma_{\cS(T)}\setminus$Per$(\sigma_A)$ be non-periodic $s$-boundary leaf code of $T$. Since it is not a periodic boundary points of $T$ there exists a number $k:=k(\underline{w})\in \ZZ$ with the following properties: 
\begin{itemize}
\item $\sigma^k(\underline{w})\notin \underline{\cS(T)}$, but
\item $\sigma^{k+1}(\underline{w})\in \underline{\cS(T)}$
\end{itemize}
 i.e. $\pi_f(\sigma^k(\underline{w}))\notin\partial^s\cR$ but $\pi_f(\sigma^{k+1}(\underline{w}))\in\partial^s\cR$. This easy observation lead to the next Lemma.

\begin{lemm}\label{Lemm: minumun k}
The number $k:=k(\underline{w})$ is the unique integer such that $f^{k}(\pi_f(\underline{w})) \in \cR\setminus\partial^s\cR$ and for all $k'>k(\underline{w})$,  $f^{k'}(\pi_f(\underline{w}))\in \partial^s\cR \setminus$Per$(f)$.
\end{lemm}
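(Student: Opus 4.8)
\textbf{Proof plan for Lemma \ref{Lemm: minumun k}.}

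The plan is to unpack the definition of $\Sigma_{\cS(T)}$ together with the characterization of the projection $\pi_f$ given in the preceding subsections, and to convert the two bullet conditions defining $k=k(\underline{w})$ into the stated dynamical properties of the point $x:=\pi_f(\underline{w})$. Throughout I will use the semi-conjugation $f\circ\pi_f=\pi_f\circ\sigma$ from Proposition \ref{Prop:proyecion semiconjugacion}, so that $\pi_f(\sigma^m(\underline{w}))=f^m(x)$ for every $m\in\ZZ$, and I will use Proposition \ref{Prop: boundary points have boundary codes}, which says that a code lies in $\underline{\cS(T)}$ if and only if its projection lies in $\partial^s\cR$. Hence the condition $\sigma^{k}(\underline{w})\notin\underline{\cS(T)}$ translates to $f^k(x)\notin\partial^s\cR$, i.e. $f^k(x)\in\cR\setminus\partial^s\cR$, which gives the first claimed property immediately.

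First I would establish the monotone ``staying on the stable boundary'' phenomenon: once an iterate of $x$ lands on $\partial^s\cR$, all later iterates remain there. This is because $\partial^s\cR$ is $f$-invariant (Proposition \ref{Prop: Markov criterion boundary}, or directly Lemma \ref{Lemm: Boundary of Markov partition is periodic}), so $f(\partial^s\cR)\subset\partial^s\cR$; applying this inductively starting from $\sigma^{k+1}(\underline{w})\in\underline{\cS(T)}$, equivalently $f^{k+1}(x)\in\partial^s\cR$, yields $f^{k'}(x)\in\partial^s\cR$ for every $k'\geq k+1$. In particular $\sigma^{k'}(\underline{w})\in\underline{\cS(T)}$ for all $k'>k$. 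This same invariance shows that $k$ is the \emph{unique} integer with $f^k(x)\in\cR\setminus\partial^s\cR$ and $f^{k'}(x)\in\partial^s\cR$ for all $k'>k$: if $k_1<k_2$ both had the property, then $f^{k_1}(x)\notin\partial^s\cR$ while simultaneously $f^{k_1}(x)\in\partial^s\cR$ (since $k_1>k_2$ fails, we instead note $f^{k_2}(x)\in\cR\setminus\partial^s\cR$ forces, via invariance applied from $k_1+1\le k_2$, a contradiction with $f^{k_1+1}(x)\in\partial^s\cR$). I would phrase this uniqueness cleanly by noting that the set $\{m\in\ZZ: f^m(x)\in\partial^s\cR\}$ is upward closed and, by the hypothesis that $\underline{w}$ is a non-periodic $s$-boundary leaf code, is a proper nonempty subset of $\ZZ$ of the form $\{m: m>k\}$ (it cannot be all of $\ZZ$, for then Lemma \ref{Lemm: no periodic boundary points} would force $x$ — hence $\underline{w}$ — to be periodic, contrary to assumption).

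Finally I would upgrade ``$f^{k'}(x)\in\partial^s\cR$'' to ``$f^{k'}(x)\in\partial^s\cR\setminus\mathrm{Per}(f)$'' for $k'>k$. Suppose some $f^{k'}(x)$ with $k'>k$ were a periodic point of $f$. Then its whole forward and backward orbit is periodic, and in particular $f^{k}(x)=f^{-1}(f^{k+1}(x))$ would lie on a periodic orbit every point of which is in the $f$-invariant set $\partial^s\cR$ (using again that $\partial^s\cR$ is $f$-invariant and that a periodic orbit meeting $\partial^s\cR$ must lie entirely in it, since if $f^{m_0}(y)\in\partial^s\cR$ and $y$ has period $P$ then $y=f^{PN-m_0\bmod P}(f^{m_0}(y))\in\partial^s\cR$ for suitable $N$). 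This contradicts $f^k(x)\notin\partial^s\cR$. Hence no $f^{k'}(x)$ with $k'>k$ is periodic, which is exactly the second claimed property. The main obstacle is just bookkeeping: being careful that the hypothesis $\underline{w}\in\Sigma_{\cS(T)}\setminus\mathrm{Per}(\sigma_A)$ genuinely guarantees the existence of an integer $k$ with $f^k(x)\notin\partial^s\cR$ (so that the orbit is not entirely in $\partial^s\cR$), and that it is genuinely finite on the negative side so $k$ is an honest integer rather than $-\infty$ — both of which follow from Lemma \ref{Lemm: no periodic boundary points} and the non-periodicity assumption, but must be invoked explicitly.
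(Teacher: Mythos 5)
Your proof is correct, and it follows the route the paper itself intends: the paper states this lemma without proof, as an immediate consequence of the observation preceding it, and your write-up supplies exactly the missing details — translating the shift conditions through $\pi_f$ via Propositions \ref{Prop: positive codes are boundary} and \ref{Prop: boundary points have boundary codes}, using the $f$-invariance of $\partial^s\cR$ for upward-closedness and uniqueness, and ruling out periodic iterates via Lemma \ref{Lemm: no periodic boundary points} together with Lemma \ref{Lemm: Periodic to peridic} and the non-periodicity of $\underline{w}$. The only simplification available is in the last step: if $f^{k'}(\pi_f(\underline{w}))$ were periodic then $\pi_f(\underline{w})$ itself would be periodic (since $f$ is a homeomorphism), hence $\underline{w}$ would be periodic by Lemma \ref{Lemm: Periodic to peridic}, contradicting the hypothesis directly without passing through the orbit-in-$\partial^s\cR$ argument.
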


Let $x=\pi_f(\underline{w})$, as consequence of Lemma \ref{Lemm: minumun k} there are indices $i\in \{1,\cdots,n\}$ and $j\in \{1,\cdots,h_{i}-1\}$ such that:
\begin{itemize}
\item $f^k(x)$ is the rectangle $R_i$ but not in its stable boundary, i.e. $f^k(x)\in R_{i}\setminus \partial^s R_i$ 
\item $f^k(x)$ is in two adjacent horizontal sub-rectangles of $R_i$, i.e. $f^k(x)\in \partial^s_{+1} H^i_j$ and $x\in \partial_{-1}^s H^i_{j+1}$
\end{itemize}

\begin{comment}
In fact as $x$ is not a periodic point it is not a singularity, so $x$ and $f^k(x)$ have at most $4$ sector codes (because it have at most $4$ sectors). Then $x$ could be in at most $4$ rectangles of $\cR$, but the fact $f^k(x)$ is not in a stable boundary of the  Markov partition implies $f^k(x)$ is in at most $2$ rectangles of $\cR$.
%Take $\phi_T(i,j)=(i_0,l_0,\epsilon_T(i,j))$ and $\phi_T(i,j+1)=(i'_0,l'_0,\epsilon_T(i,j+1))$.
\end{comment}

As $f^{k+1}(x)\in \partial^s \cR$, the rectangles  $H^i_j$ and $H^i_{j+1}$ determine the following conditions:

\begin{itemize}
\item As $f^k(x)\in \partial^s_{+1}H^i_j$,  we have that $f^{k+1}(x)\in \partial^s_{\epsilon_0} R_{i_0}$ where:
\begin{equation}
i_0=\xi_T(i,j) \text{ and  } \epsilon_0=\epsilon_T(i,j)
\end{equation}
\item  As $f^k(x)\in \partial^s_{-1}H^i_{j+1}$ we have that $f^{k+1}(x)\in \partial^s_{\epsilon'_0} R_{i'_0}$ where
\begin{equation}
i_0=\xi_T(i,j+1)  \text{ and  } \epsilon'_0=-\epsilon_T(i,j+1)
\end{equation}
\end{itemize}

This mechanism of identification is illustrated in Figure  \ref{Fig: stable identification}
\begin{figure}[h]
	\centering
	\includegraphics[width=0.6\textwidth]{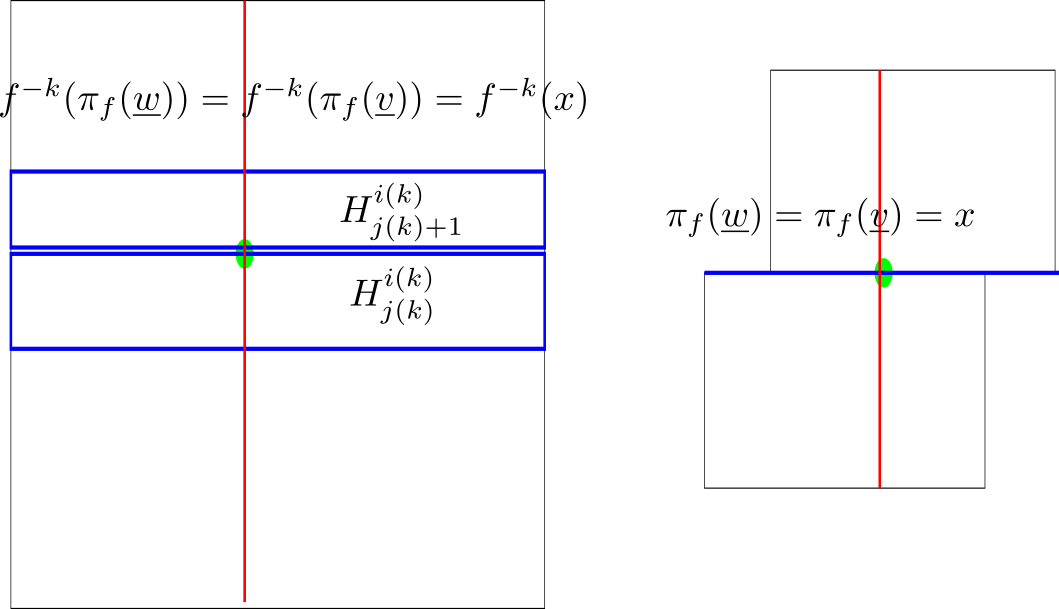}
	\caption{The stable identification mechanism}
	\label{Fig: stable identification}
\end{figure}

This analysis suggest the following definition

\begin{defi}\label{Defi: s realtion in Sigma S no per}
Let $\underline{w},\underline{v}\in \Sigma_{\cS(T)}\setminus$Per$(\sigma_A)$, they are $s$-related and its write $\underline{w}\sim_{s} \underline{v}$ if they are equals or: There exist  $k\in \ZZ$ such that
\begin{itemize}
\item[i)]  $\sigma^k(\underline{w}),\sigma^k(\underline{w})\notin \underline{\cS(T)}$ but $\sigma^{k+1}{\underline{w}},\sigma^{k+1}(\underline{v})\in \underline{\cS(T)}$ 

\item[ii)] The zero terms of $\sigma^k(\underline{w})$ and  $\sigma^k(\underline{v})$ are equals.i.e  $v_k=w_k$ and $h_{w_k}=h_{v_k}>1$. 

\item[iii)] Lets take $i=v_k=w_k$. There exist $j\in \{1, \cdots,h_i -1\}$ such that only one of the next options occurs:
\begin{equation}\label{Equa: Sim s obtion 1}
\xi_T(i,j)=w_{k+1} \text{ and then }  \xi_T(i,j+1)=v_{k+1},
\end{equation}
or
\begin{equation}\label{Equa: Sim s obtion 2}
\xi_T(i,j)=v_{k+1} \text{ and then }  \xi_T(i,j+1)=w_{k+1}.
\end{equation}

\item[iv)] Suppose the positive code of $\sigma^{k+1}(\underline{w})$  is equal to the $s$-boundary code $\underline{I}^{+}(w_{k+1},\epsilon_w)$ and the positive code of $\sigma^{k+1}(\underline{v})$  is equal to the $s$-boundary code $\underline{I}^{+}(v_{k+1},\epsilon_v)$, then:

If equation \ref{Equa: Sim s obtion 1} happen:
\begin{equation}\label{Equa: sim s epsilon 1}
\epsilon_w=\epsilon_T(w_{k},j) \text{ and } \epsilon_v=-\epsilon_T(v_{k},j+1)
\end{equation}
If equation \ref{Equa: Sim s obtion 2} happen:
\begin{equation}\label{Equa: sim s epsilon 2}
\epsilon_v=\epsilon_T(v_{k},j) \text{ and } \epsilon_w=-\epsilon_T(w_{k},j+1)
\end{equation}

\item[v)] The negative codes  $\sigma^{k}(\underline{w})_{-}$ and $\sigma^{k}(\underline{v})_{-}$ are equal.
\end{itemize}
\end{defi}

Our challenge is to show that $\sim_s$ is an equivalence relation in $\Sigma_{\cS(T)}$ and that two $\sim_s$-related  codes project by $\pi_f$ to the same point. The basis of the argument will be that when $\underline{w},\underline{v}\in \underline{\cS(T)}$ and are $\sim_s$-related, they project to adjacent stable boundaries of the Markov partition.  We subsequently can generalize the result to the whole set $\Sigma_{\cS(T)}\setminus$Per$(\sigma_A)$. The situation with the periodic points will be discussed later.

\begin{prop}\label{Prop: sim s equiv in Sigma non per}
The relation $\sim_{s}$ is an equivalence relation in $\Sigma_{\cS(T)}\setminus$Per$(\sigma_A)$.
\end{prop}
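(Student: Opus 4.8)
The plan is to verify the three axioms of an equivalence relation directly from Definition~\ref{Defi: s realtion in Sigma S no per}, exploiting the fact that the defining conditions only involve a finite "synchronizing" window around the index $k$ together with the combinatorial data of $T$. Reflexivity is built into the definition, since we declared $\underline{w}\sim_s\underline{w}$ when the two codes are equal. Symmetry is also essentially immediate: conditions (i), (ii), (v) are manifestly symmetric in $\underline{w}$ and $\underline{v}$, while conditions (iii) and (iv) come in two mutually exclusive cases (Equations~\ref{Equa: Sim s obtion 1} and \ref{Equa: Sim s obtion 2}, with their respective sign rules \ref{Equa: sim s epsilon 1} and \ref{Equa: sim s epsilon 2}), and swapping the roles of $\underline{w}$ and $\underline{v}$ simply interchanges these two cases. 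So the only real content is transitivity.

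For transitivity, suppose $\underline{w}\sim_s\underline{v}$ witnessed by an integer $k$ and a sub-rectangle index $j$, and $\underline{v}\sim_s\underline{u}$ witnessed by an integer $k'$ and index $j'$. The first step is to show that the witnessing index is forced to be the same, i.e. $k=k'$ (after discarding the trivial cases where one of the relations is an equality). This is where Lemma~\ref{Lemm: minumun k} does the work: for a non-periodic $s$-boundary leaf code $\underline{w}$, the integer $k(\underline{w})$ with the property that $\sigma^{k}(\underline{w})\notin\underline{\cS(T)}$ but $\sigma^{k+1}(\underline{w})\in\underline{\cS(T)}$ is unique. Since $\underline{w}\sim_s\underline{v}$ forces $k(\underline{w})=k(\underline{v})$ and $\underline{v}\sim_s\underline{u}$ forces $k(\underline{v})=k(\underline{u})$, we get $k=k(\underline{w})=k(\underline{v})=k(\underline{u})=k'$. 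Then condition (ii) applied to both relations gives $w_k=v_k=u_k=:i$ with $h_i>1$.

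The heart of the matter is to reconcile the sub-rectangle indices $j$ and $j'$. Both $j$ and $j'$ lie in $\{1,\dots,h_i-1\}$, and the key observation is that $j$ (respectively $j'$) is the unique index such that $f^k(\pi_f(\underline{v}))$ lies in $\partial^s H^i_j\cap\partial^s H^i_{j+1}$; put combinatorially, the pair $\{w_{k+1},v_{k+1}\}$ equals $\{\xi_T(i,j),\xi_T(i,j+1)\}$ and is determined, together with the sign data, by condition (iv). Since $\underline{v}$ appears in both relations, and since $\sigma^{k+1}(\underline{v})_+$ is a fixed $s$-boundary positive code $\underline{I}^+(v_{k+1},\epsilon_v)$ with $(v_{k+1},\epsilon_v)$ determined by it (here we use that the map $I$ of Proposition~\ref{Prop: s code Inyective} is injective, so $\epsilon_v$ is recovered unambiguously from the positive code), the index $j$ and the sign conventions are pinned down by $\underline{v}$ alone. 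The same is true for $j'$. One then checks that "$j$ separates $\xi_T(i,\cdot)$ at $v_{k+1}$ from above" and "$j'$ separates $\xi_T(i,\cdot)$ at $v_{k+1}$ from below" are contradictory unless $j'=j\pm1$ in the appropriate direction, which after matching the $\xi_T$ and $\epsilon_T$ values forces $\{w_{k+1},u_{k+1}\}=\{\xi_T(i,j''),\xi_T(i,j''+1)\}$ for a single valid $j''$ (either $j''=j$ with the "$u$ on one side, $w$ on the other" configuration, or $\underline{w}=\underline{u}$). Finally, condition (v) is transitive for free (equality of negative codes), and condition (iv)'s sign bookkeeping propagates because the $\epsilon$'s are all tied to the common value $\epsilon_v$ via $\epsilon_T(i,\cdot)$. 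Assembling these gives a witnessing pair $(k,j'')$ for $\underline{w}\sim_s\underline{u}$.

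The main obstacle I anticipate is precisely this combinatorial case analysis in verifying transitivity: one must carefully track, through the two sign-convention branches of condition (iv), that the $\epsilon$-data attached to $\underline{w}$, $\underline{v}$, $\underline{u}$ are mutually compatible and not merely compatible in pairs — in other words, that the "adjacency relation on horizontal sub-rectangles of $R_i$" encoded by the definition genuinely behaves like an equivalence when chained. A clean way to organize this, which I would adopt, is to reformulate conditions (ii)--(iv) as: $\underline{w}\sim_s\underline{v}$ (at level $k$) iff $\sigma^k(\underline{w})$ and $\sigma^k(\underline{v})$ have the same negative part, the same zeroth symbol $i$ with $h_i>1$, and their positive parts are the two $s$-boundary codes obtained by "splitting" a horizontal crossing of $R_i$ at some interior level $j$ — and then note that two such splittings sharing one of their two outputs must share the level $j$ or be the two halves of a three-fold configuration, which is impossible since a crossing is split at exactly one level. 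I would relegate the periodic case, as the statement does, to the later discussion, restricting throughout to $\Sigma_{\cS(T)}\setminus\mathrm{Per}(\sigma_A)$ where Lemma~\ref{Lemm: minumun k} applies.
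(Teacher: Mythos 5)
Your overall route is the same as the paper's: reflexivity and symmetry are immediate, the witnessing integer $k$ coincides for all three codes by its characterization (Lemma \ref{Lemm: minumun k}), and transitivity reduces to a combinatorial analysis at level $k+1$ showing that the $\sim_s$-partner of $\underline{v}$ is uniquely determined, so that chaining forces $\underline{w}=\underline{u}$; the paper carries this out by assuming one configuration WLOG, observing $j'=j+1$ or $j-1$, and killing $j'=j-1$ with the sign contradiction $\epsilon_T(i,j)=-\epsilon_T(i,j)$.

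Two points at the crux of your write-up need tightening. First, to pin down $j$ from $\underline{v}$ alone you need not only the injectivity of $(i,\epsilon)\mapsto\underline{I}^+(i,\epsilon)$ (Proposition \ref{Prop: s code Inyective}), which recovers $(v_{k+1},\epsilon_v)$ from the positive part, but also the injectivity of $\xi_T(i,\cdot)$, i.e.\ the standing hypothesis that $A(T)$ is binary, so that $v_{k+1}=\xi_T(i,m)$ for a unique $m$; only then does the dichotomy $\epsilon_v=\epsilon_T(i,m)$ versus $\epsilon_v=-\epsilon_T(i,m)$ (Equations \ref{Equa: sim s epsilon 1}--\ref{Equa: sim s epsilon 2}) decide whether $\underline{v}$ occupies the lower or the upper slot of the adjacent pair, and hence determine $j$, the partner's symbol and its sign. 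Second, your parenthetical outcome ``$j''=j$ with $u$ on one side, $w$ on the other'' is inconsistent with the claim you have just established: once $\underline{v}$ pins down its configuration, both relations use the same $j$ and the only possible conclusion is $\underline{w}=\underline{u}$. The opposite-sides configuration is precisely what the sign dichotomy excludes, and if it could occur it would \emph{not} yield $\underline{w}\sim_s\underline{u}$, since then $w_{k+1}=\xi_T(i,m-1)$ and $u_{k+1}=\xi_T(i,m+1)$ sit in non-adjacent slots (and the corresponding points lie on different stable boundaries of $R_i$), so no witnessing $j''$ exists. Delete that case and conclude directly, as the paper does, that the positive parts of $\sigma^{k}(\underline{w})$ and $\sigma^{k}(\underline{u})$ coincide, while item $v)$ of Definition \ref{Defi: s realtion in Sigma S no per} chains the negative parts, whence $\underline{w}=\underline{u}$ and transitivity holds.
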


\begin{proof}
Reflexivity and symmetry are fairly obvious from the definition and we will concentrate on transitivity. Lets to assume $\underline{w}\sim_{s}\underline{v}$ and $\underline{v}\sim_s\underline{u}$. 

The number $k\in \ZZ$ of item $i)$  is unique because it is characterized as:
$$
k(\underline{w})=\min\{z\in \ZZ: \sigma^k(\underline{w})\in \underline{w}\}-1,
$$
 therefore $k:=k(\underline{w})=k(\underline{v})=\underline{u}$ is the same for the three codes.

The $k$-terms of such codes are equal too, as $w_k=v_k$ and $v_k=u_k$, lets take $i=v_k=w_k=u_k$. Without lost of generality we could assume that $j\in \{1,\cdots,h_{i}-1\}$ is such that:
$$
\xi_T(i,j+1)=w_{k+1} \text{ and } \xi_T(i,j)=v_{k+1},
$$ 

Suppose too that $\sigma^{k+1}(\underline{w})_+=\underline{I}^+(w_{k+1},\epsilon_w)$  and $\sigma^{k+1}(\underline{v})_+=\underline{I}^+(v_{k+1},\epsilon_v)$. They satisfy the relations:
$$
\epsilon_w=-\epsilon_T(i,j+1) \text{ and } \epsilon_v=\epsilon_T(i,j).
$$

Like  $\underline{v}\sim_s \underline{u}$ there is a unique $j'\in \in \{1,\cdots,h_{i}-1\}$ such that:
$$
\xi_T(i,j')=u_{k+1}
$$
but the relation, $\underline{v}\sim_s \underline{u}$ implies that, $j'=j+1$ or $j-1$. If we prove that $j'=j+1$ necessarily $\underline{u}=\underline{w}$ and we have finished.   Lets to analyses the situation of $j'=j-1$.

 Suppose $\sigma^{k+1}(\underline{u})_+=\underline{I}^+(u_{k+1},\epsilon_u)$, as $j=(j-1)+1$ we have in the situation of Equation \ref{Equa: Sim s obtion 1} and then we applied Equation \ref{Equa: sim s epsilon 1} to obtain that:
$$
\epsilon_v= -\epsilon_T(i,j) \text{ and } \epsilon_u=\epsilon_T(i,j-1).
$$
Therefore, $\epsilon_T(i,j)=-\epsilon_T(i,j)$ and this is a direct contradiction.  Then $j'=j+1$ and the positive part of $\sigma^k(\underline{w})$ coincides with the positive part of $\sigma^k(\underline{u})$.
 
  Item $v)$ implies, that the negative part of $\sigma^{k}(\underline{w})$ is equal to the negative part of $\sigma^{k}(\underline{v})$, and  the negative part of   $\sigma^{k}(\underline{v})$ is equal to the negative part of $\sigma^{k}(\underline{u})$. Hence, the negative part of   $\sigma^{k}(\underline{w})$ is equal to the negative part  of $\sigma^{k}(\underline{u})$, which implies that $\underline{w}\sim_s \underline{u}$. 
\end{proof}

\begin{rema}\label{Rema: two codes in sim s}
We deduce from the proof that the equivalence class of every code $\underline{w}\in \Sigma_{\cS(T)}\setminus$Per$(\sigma_A)$ have at most $2$ elements.  In fact it has exactly $2$-elements. The number $k(\underline{w})$ always exists  and as $\sigma^k(\underline{w})\notin \underline{\cS(T)}$, its projection is in the complement of $\partial^s\cR$, this implies $\pi_f(\sigma^k{\underline{w}})\notin \partial^s\cR$ but  $\pi_f(\sigma^{k+1}{\underline{w}})\in \partial^s\cR$ therefore is in the boundary of two consecutive horizontal sub-rectangles of $R_{w_k}$, this gives rise to two  different codes $\sim_s$-related.
\end{rema}

It remains to extend the relation $\sim_s$ to the $s$-boundary periodic codes that were defined in \ref{Defi: s,u-boundary periodic codes} by Per$(\underline{\cS(T)})$. Observe first that, Per$(\sigma)\cap\Sigma_{\underline{\cS(T)}}=$Per$(\underline{\cS(T)})$ because if $\underline{w}\in$Per$(\sigma)\cap\Sigma_{\cS(T)}$ there is $K\in \ZZ$ such that for all $k\geq K$, $\sigma^k(\underline{w})\in \underline{\cS(T)}$ but there exist $r>k$ such that $\sigma^r(\underline{w})=\underline{w}$, so $\underline{w}\in \underline{\cS(T)}$ since the beginning. The relation $\sigma_s$ could be extended to $\Sigma_S$ as follows.

\begin{defi}\label{Defi: sim s in per}
Let $\underline{\alpha},\underline{\beta}\in$Per$\underline{\cS(t)}$ be  $s$-boundary periodic codes. They are $s$-related, $\underline{\alpha}\sim_s \underline{\beta}$,  if and only if they are equal or there are $\underline{w},\underline{v} \in \Sigma_{\cS(T)}\setminus$Per$(\sigma)$ such that:
\begin{itemize}
	\item $\underline{w}\sim_{s}\underline{v}$,
	\item There exists $k\in \ZZ$ such that,  $\sigma^k(\underline{w})_+=\underline{\alpha}_+$ and   $\sigma^k(\underline{v})_+=\underline{\beta}_+$.
\end{itemize}
\end{defi}

\begin{prop}\label{Prop: sim s equiv in Sigma S }
The relation $\sim_s$ in $\Sigma_{\cS(T)}$ is an equivalence relation.
\end{prop}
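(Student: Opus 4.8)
The plan is to show that $\sim_s$ on $\Sigma_{\cS(T)} = (\Sigma_{\cS(T)}\setminus\text{Per}(\sigma)) \cup \text{Per}(\underline{\cS(T)})$ is reflexive, symmetric and transitive, leveraging Proposition~\ref{Prop: sim s equiv in Sigma non per}, which already gives us the equivalence property on the non-periodic part. Reflexivity is immediate from the definitions: a non-periodic code is related to itself by Proposition~\ref{Prop: sim s equiv in Sigma non per}, and a periodic code is declared related to itself in Definition~\ref{Defi: sim s in per}. Symmetry is also straightforward: on the non-periodic part it is inherited from Proposition~\ref{Prop: sim s equiv in Sigma non per}, and for periodic codes $\underline{\alpha},\underline{\beta}$, the witnesses $\underline{w}\sim_s\underline{v}$ with $\sigma^k(\underline{w})_+ = \underline{\alpha}_+$, $\sigma^k(\underline{v})_+ = \underline{\beta}_+$ can simply be swapped, using the symmetry of $\sim_s$ on $\Sigma_{\cS(T)}\setminus\text{Per}(\sigma)$. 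The bulk of the work is transitivity, which must be analyzed case by case according to how many of the three codes involved are periodic.

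For transitivity I would organize the argument around the number of periodic codes in the triple $\{\underline{a},\underline{b},\underline{c}\}$ with $\underline{a}\sim_s\underline{b}$ and $\underline{b}\sim_s\underline{c}$. If none is periodic, this is exactly Proposition~\ref{Prop: sim s equiv in Sigma non per}. If all three are periodic, I would use the witnesses from Definition~\ref{Defi: sim s in per} for each relation: there are $\underline{w}\sim_s\underline{v}$ witnessing $\underline{a}\sim_s\underline{b}$ and $\underline{w}'\sim_s\underline{v}'$ witnessing $\underline{b}\sim_s\underline{c}$, with $\sigma^k(\underline{w})_+ = \underline{a}_+$, $\sigma^k(\underline{v})_+ = \underline{b}_+ = \sigma^{k'}(\underline{w}')_+$, and $\sigma^{k'}(\underline{v}')_+ = \underline{c}_+$. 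The key point, which I expect to be the technical heart, is that a non-periodic code $\underline{v}$ with $\sigma^k(\underline{v})_+ = \underline{b}_+$ is determined on its positive tail by the periodic code $\underline{b}$ and on its negative part by the ``escape'' behavior before time $k(\underline{v})$; combining this with Remark~\ref{Rema: two codes in sim s}, which says each non-periodic $\sim_s$-class has exactly two elements, one sees that the equivalence class of $\underline{b}$ among the two-element non-periodic classes ``above'' it is well-defined, so $\underline{w}$ and $\underline{v}'$ land in the same two-element class and hence $\underline{w}\sim_s\underline{v}'$, giving $\underline{a}_+\sim_s\underline{c}_+$ and thus $\underline{a}\sim_s\underline{c}$. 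The mixed cases (one or two periodic codes) are handled by the same device: a relation $\underline{\alpha}\sim_s\underline{\beta}$ between a periodic and a non-periodic code, or among codes of different types, is always realized through a common pair of two-element non-periodic classes whose tails determine the periodic endpoints.

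The main obstacle I anticipate is bookkeeping the compatibility of the shift indices $k$ appearing in the various witness pairs: the witnesses for $\underline{a}\sim_s\underline{b}$ and for $\underline{b}\sim_s\underline{c}$ come with a~priori different shift parameters, and one must check that after re-shifting they can be taken to agree, or argue that the positive codes alone (which are what Definition~\ref{Defi: sim s in per} constrains) suffice to conclude. Here I would exploit that the positive part of an $s$-boundary code is pre-periodic under $\sigma$ with bounded pre-period (Corollary~\ref{Coro: preperiodic finite s,u boundary codes}), so the relation at the level of positive tails stabilizes, and that a code in $\Sigma_{\cS(T)}\setminus\text{Per}(\sigma)$ with prescribed escape index $k(\underline{w})$ and prescribed positive $s$-boundary code is pinned down up to its negative part by the two-element structure of Remark~\ref{Rema: two codes in sim s}. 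Once this is set up cleanly, transitivity reduces to matching labels via the $s$-generating function $\Gamma(T)$, exactly as in the proof of Proposition~\ref{Prop: sim s equiv in Sigma non per}, where the contradiction $\epsilon_T(i,j) = -\epsilon_T(i,j)$ rules out the degenerate branch.
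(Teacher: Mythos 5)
Your skeleton (reflexivity and symmetry are immediate, the non-periodic case is Proposition~\ref{Prop: sim s equiv in Sigma non per}, and the only real content is transitivity on the periodic codes) is the same as the paper's, whose proof simply asserts that transitivity is inherited from the definition. But the device you propose to close the periodic case does not work. The two witness pairs, $(\underline{w},\underline{v})$ for $\underline{a}\sim_s\underline{b}$ and $(\underline{w}',\underline{v}')$ for $\underline{b}\sim_s\underline{c}$, are in general supported at two \emph{different} points of the surface: by Proposition~\ref{Prop: s-relaten implies same projection} each pair projects to a single point, say $x=\pi_f(\underline{w})=\pi_f(\underline{v})$ and $y=\pi_f(\underline{w}')=\pi_f(\underline{v}')$, and nothing in Definition~\ref{Defi: sim s in per} forces $x=y$. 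Hence your claimed relation $\underline{w}\sim_s\underline{v}'$ is impossible whenever $x\neq y$ (again by Proposition~\ref{Prop: s-relaten implies same projection}), and the ``two-element class above $\underline{b}$'' of Remark~\ref{Rema: two codes in sim s} is not well defined: there is one two-element class for each non-periodic point of the boundary component attached to $\underline{b}$, and the partner boundary code may change from point to point along that component. (Also, ``$\underline{a}_+\sim_s\underline{c}_+$'' is not a defined relation; what you must produce is a single non-periodic pair $\underline{w}''\sim_s\underline{v}''$ and one shift $k''$ with $\sigma^{k''}(\underline{w}'')_+=\underline{a}_+$ and $\sigma^{k''}(\underline{v}'')_+=\underline{c}_+$, i.e.\ a non-periodic point having an iterate lying simultaneously on the boundary components determined by $\underline{a}$ and $\underline{c}$.)

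Such a point need not exist, so the obstruction is not the bookkeeping of shift indices you worry about. Concretely, suppose a fixed $s$-boundary point $p$ lies in the interior of the lower stable boundary $A$ of a rectangle $R_1$, while below the leaf $p$ is the common corner of two rectangles $R_2,R_3$ with upper boundaries $B=[\ell_2,p]^s$ and $C=[p,r_3]^s$ (exactly the configuration the corner refinement of Chapter~\ref{Chap: Computations} is later introduced to eliminate). If the three components are periodic, with periodic codes $\underline{a},\underline{b},\underline{c}$, then $\underline{b}\sim_s\underline{a}$ and $\underline{a}\sim_s\underline{c}$, witnessed by non-periodic points in the interiors of $A\cap B$ and of $A\cap C$; but a witness for $\underline{b}\sim_s\underline{c}$ would, by Proposition~\ref{Prop: positive codes are boundary}, have its common projection sent by some iterate into $B\cap C=\{p\}$, forcing the witnessing codes to be periodic by Lemma~\ref{Lemm: Periodic to peridic}. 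So transitivity on $\mathrm{Per}(\underline{\cS(T)})$ cannot be obtained by matching witness pairs at all: one must either pass to the transitive closure of the relation (which is all that is needed for $\sim_T$ later), or work under an extra hypothesis such as the corner property, under which every periodic boundary component has the periodic point as an endpoint and any two related components overlap in a nontrivial arc, making the argument close. You are right that this step is the technical heart — the paper's one-line proof does not address it either — but the specific argument you give breaks down at exactly this point.
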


\begin{proof}
Two codes in $\Sigma_{\cS(T)}$ are $s$-related if and only if both are periodic or non-periodic.We have already determined the non-periodic situation, it remains to analyze what happens in the periodic setting. But reflexivity is for free, while symmetry and transitivity are inherited from the relation in $\Sigma_{\cS(T)}\setminus$Per$(\sigma)$ to the periodic codes in view of the definition we have given.
\end{proof}

Now we are ready to see that $\sim_s$ related codes  are projected to the same point.

\begin{prop}\label{Prop: s-relaten implies same projection}
Let $\underline{w},\underline{v}\in \Sigma_{\cS(T)}$ be to $s$-boundary leaves codes. If  $\underline{w}\sim_{s}\underline{v}$ then $\pi_f(\underline{w})=\pi_f(\underline{v})$.
\end{prop}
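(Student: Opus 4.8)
The plan is to reduce the statement to the case of $s$-boundary codes proper (those in $\underline{\cS(T)}$) and then invoke the geometric structure already set up. First I would handle the periodic case: if $\underline{\alpha}\sim_s\underline{\beta}$ are $s$-boundary periodic codes, Definition \ref{Defi: sim s in per} furnishes non-periodic codes $\underline{w}\sim_s\underline{v}$ and an integer $k$ with $\sigma^k(\underline{w})_+=\underline{\alpha}_+$ and $\sigma^k(\underline{v})_+=\underline{\beta}_+$. Once the non-periodic case is settled we get $\pi_f(\underline{w})=\pi_f(\underline{v})$; applying the semi-conjugacy $f\circ\pi_f=\pi_f\circ\sigma$ (Lemma \ref{Lemm: Semiconjugacy pif}) gives $\pi_f(\sigma^k(\underline{w}))=\pi_f(\sigma^k(\underline{v}))$, and since $\sigma^k(\underline{w})_+=\underline{\alpha}_+$ determines $\pi_f$ on a periodic $s$-boundary code (the positive part pins down the stable segment, and by Proposition \ref{Prop: positive codes are boundary} this is a periodic point on $\partial^s\cR$, unique on that boundary component), we conclude $\pi_f(\underline{\alpha})=\pi_f(\underline{\beta})$. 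So the crux is the non-periodic case.

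For $\underline{w},\underline{v}\in\Sigma_{\cS(T)}\setminus\mathrm{Per}(\sigma)$ with $\underline{w}\sim_s\underline{v}$, let $k=k(\underline{w})=k(\underline{v})$ be the common integer from item i) of Definition \ref{Defi: s realtion in Sigma S no per}. By the semi-conjugacy it suffices to prove $\pi_f(\sigma^k(\underline{w}))=\pi_f(\sigma^k(\underline{v}))$, so I would replace $\underline{w},\underline{v}$ by $\sigma^k(\underline{w}),\sigma^k(\underline{v})$ and assume $k=0$: that is, $\underline{w},\underline{v}\notin\underline{\cS(T)}$ but $\sigma(\underline{w}),\sigma(\underline{v})\in\underline{\cS(T)}$, their negative parts coincide, $w_0=v_0=:i$ with $h_i>1$, and there is $j\in\{1,\dots,h_i-1\}$ realizing one of the two options in items iii)--iv). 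The negative parts being equal means $\pi_f(\underline{w})$ and $\pi_f(\underline{v})$ lie on the same vertical (unstable) segment of $R_i$ — more precisely, for every $n$, the rectangles $\cap_{z=-n}^{0}f^{-z}(\overset{o}{R_{w_z}})=\cap_{z=-n}^{0}f^{-z}(\overset{o}{R_{v_z}})$ coincide, so both projections sit in a common unstable leaf of $R_i$. On the other hand, items iii)--iv) say exactly that $\sigma(\underline{w})_+=\underline{I}^+(w_1,\epsilon_w)$ and $\sigma(\underline{v})_+=\underline{I}^+(v_1,\epsilon_v)$ are the $s$-boundary codes of the upper boundary of $H^i_j$ and the lower boundary of $H^i_{j+1}$ respectively (with the sign bookkeeping matching $\epsilon_T(i,j)$ and $-\epsilon_T(i,j+1)$ as in the identification mechanism of Figure \ref{Fig: stable identification}). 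By Proposition \ref{Prop: positive codes are boundary}, $f(\pi_f(\underline{w}))\in\partial^s_{\epsilon_w}R_{w_1}=f(\partial^s_{+1}H^i_j)$ and $f(\pi_f(\underline{v}))\in\partial^s_{\epsilon_v}R_{v_1}=f(\partial^s_{-1}H^i_{j+1})$; applying $f^{-1}$, $\pi_f(\underline{w})\in\partial^s_{+1}H^i_j$ and $\pi_f(\underline{v})\in\partial^s_{-1}H^i_{j+1}$. But $H^i_j$ and $H^i_{j+1}$ are consecutive horizontal sub-rectangles of $R_i$, so $\partial^s_{+1}H^i_j=\partial^s_{-1}H^i_{j+1}$ is a single stable segment of $R_i$. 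Thus $\pi_f(\underline{w})$ and $\pi_f(\underline{v})$ lie on the same stable segment and on the same unstable leaf of $R_i$; since an unstable leaf and a stable leaf of an (immersed) rectangle meet in at most one point of the interior — or, on the boundary, this common segment is determined — they must be equal. I would make this last step precise by noting that $\pi_f(\underline{w})$, being a point on the stable boundary $\partial^s H^i_j\subset\partial^s\cR$ and on the given unstable leaf, is the unique such intersection point (the stable and unstable foliations of $R_i$ are transverse), and the same description characterizes $\pi_f(\underline{v})$.

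The main obstacle I anticipate is the careful matching of the sign data: one must check that the two options \eqref{Equa: Sim s obtion 1}--\eqref{Equa: Sim s obtion 2} together with \eqref{Equa: sim s epsilon 1}--\eqref{Equa: sim s epsilon 2} are precisely the combinatorial shadow of "$f(\pi_f(\underline{w}))$ and $f(\pi_f(\underline{v}))$ land on the correct boundary component of $R_{w_1}$ resp. $R_{v_1}$ with the correct orientation", i.e. that the $\epsilon$'s prescribed in the definition of $\sim_s$ coincide with the $\epsilon$'s that Proposition \ref{Prop: positive codes are boundary} would assign to the points $\partial^s_{+1}H^i_j$ and $\partial^s_{-1}H^i_{j+1}$ under $f$. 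This is a direct unwinding of the definitions of $\theta_T$, $\xi_T$, $\epsilon_T$ and the $s$-generating function $\Gamma(T)$, using that the image of $\partial^s_{+1}H^i_j$ is contained in $\partial^s_{\epsilon_T(i,j)}$ of the appropriate vertical sub-rectangle while the image of $\partial^s_{-1}H^i_{j+1}$ is contained in $\partial^s_{-\epsilon_T(i,j+1)}$; I would present this as a short lemma or inline computation. Everything else (the reduction via the semi-conjugacy, the identification of equal negative parts with a common unstable leaf, the coincidence of the two boundary segments $\partial^s_{+1}H^i_j=\partial^s_{-1}H^i_{j+1}$) is routine given the results already proved.
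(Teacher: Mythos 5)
Your proposal is correct and takes essentially the same route as the paper's proof: equal negative parts place both projections on a common unstable segment of $R_i$, the $\epsilon$-conditions in the definition of $\sim_s$ force $\pi_f(\underline{w})$ and $\pi_f(\underline{v})$ onto the shared stable segment $\partial^s_{+1}H^i_j=\partial^s_{-1}H^i_{j+1}$, and the two constraints together give equality (the paper likewise reduces to $k=0$ and treats the two options of item iii) symmetrically). The only repairs are cosmetic: $f(\partial^s_{+1}H^i_j)$ is merely contained in $\partial^s_{\epsilon_w}R_{w_1}$, so one should intersect with $V^{w_1}_l$ after noting $\pi_f(\underline{w})\in H^i_j$ (immediate from the definition of $\pi_f$ and the binary incidence matrix); your explicit treatment of the periodic case, which the paper leaves implicit, is a welcome addition.
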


\begin{proof}

Assume that $\underline{w},\underline{v}$ are $s$-related. For simplicity assume that the $k$ in the definition is equal to zero. Like the negative part of such codes is equal $\pi_f(\underline{w})$ and $\pi_f(\underline{v})$ are in the same unstable segment of $R_{w_0}$. Take $w_0=v_0=i$.
		
	We shall prove that $x_w:=\pi_f(\underline{w})$ and $x_v:=\pi_f(\underline{v})$ are in the same stable segment of $R_i$. We can assume that $x_w\in H^{i}_{j+1}$ and $x_v \in H^{i}_{j}$, condition imposed by Item $iii)$ of Definition \ref{Defi: s realtion in Sigma S no per}.
	
	The point $x_w$ is on the stable boundary $\delta_w=+1,-1$ of $H^{i}_{j+1}$ because its image is on the stable boundary of $\cR$. If $\delta_w=+1$ then, following the dynamics of $f$, the point $f(x_w)$ is the boundary $\epsilon_w=\epsilon_T(i,j+1)$, which is not possible since $\epsilon_w=-\epsilon_T(i,j+1)$ by definition of the relation $\sim_s$. Then $\delta_w=-1$ and $x_w$ is on the lower boundary of $H^i_{j+1}$.
	
	Analogously $x_v$ is the boundary component $\delta_v=+1,-1$ of $H^{i}_j$. 	If $\delta_v=-1$,  following the dynamics of $f$, $f(x_v)$ is the boundary component of $R_{v_1}$ corresponding to $\epsilon_v=-\epsilon_T(i,j)$ which is a contradiction with the condition $\epsilon_v=\epsilon_T(i,j)$ imposed by $\sim_s$. Therefore $\delta_v=+1$ and is on the upper boundary of $H^i_j$.
	
	Then $\pi_f(\underline{w})$ and $=\pi_f(\underline{v})$ are on the lower boundary of $H^{i}_{j+1}$ and the upper boundary of $H^{i}_j$ respectively, which are the same stable segment of $R_i$, so $\pi_f(\underline{w})=\pi_f(\underline{v})$.

\end{proof}

In the same spirit of $\sim_s$ there is a equivalence relation $\sim_u$ for the elements in $\Sigma_{\cU(T)}$, there is a easy ways to determine such a relation that appeals to the idea of the inverse of the geometric type $T$ as introduced in \ref{Defi: inverse of Type}. As the Observation \ref{Defi: inverse of Type} says if $(f,\cR)$ represent $T$, then $(f^{-1},\cR)$ represent $T^{-1}$. In this setting $f^{-1}(V^k_l)=H^i_j$ if and only if $f(H^i_j)=V^k_l$ and $f^{-1}$ preserve the horizontal direction restricted to $V^k_l$ if and only if $f$ preserve the horizontal direction restricted to $H^i_j$. We summarize this discussion as follows:

\begin{lemm}\label{Lemm: geometric type of inverse}
If $\cR$ is a geometric Markov partition for the pseudo-Anosov homeomorphism $f$ with geometric type $T$, then:
\begin{itemize}
\item $\cR$ is a geometric Markov partition for $f^{-1}$ in which the horizontal and vertical directions of $(f,\cR)$ become the vertical and horizontal directions of $(f^{-1},\cR)$ respectively.
\item The geometric type of $\cR$ view as Markov partition for $f^{-1}$ is $T^{-1}$.
\end{itemize}
\end{lemm}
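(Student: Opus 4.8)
The statement to prove is Lemma \ref{Lemm: geometric type of inverse}: if $\cR$ is a geometric Markov partition for a generalized pseudo-Anosov homeomorphism $f$ with geometric type $T$, then $\cR$ is also a geometric Markov partition for $f^{-1}$ in which the horizontal and vertical directions are interchanged, and the geometric type of $\cR$ viewed as a Markov partition for $f^{-1}$ is $T^{-1}$.

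The plan is to verify the two bullet points directly from the definitions, exploiting the symmetry already built into the Markov partition axioms and the construction of $T^{-1}$ in Definition \ref{Defi: inverse of Type}. First I would establish that $\cR$ is a Markov partition for $f^{-1}$: by Proposition \ref{Prop: Markov criterion boundary}, it suffices to know that $\partial^s\cR$ is $f$-invariant and $\partial^u\cR$ is $f^{-1}$-invariant, which holds since $\cR$ is a Markov partition for $f$; but the same two conditions, read for $f^{-1}$, say that $\partial^u\cR$ is $f^{-1}$-invariant and $\partial^s\cR = (\partial^s\cR)$ is $(f^{-1})^{-1} = f$-invariant. Thus the roles of the stable and unstable boundaries swap, and $\cR$ satisfies the Markov criterion for $f^{-1}$. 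Concretely, a horizontal sub-rectangle $H^i_j$ of $(f,\cR)$ — a connected component of $\overset{o}{R_i}\cap f^{-1}(\overset{o}{R_k})$ — is exactly a connected component of $\overset{o}{R_i}\cap (f^{-1})^{-1}$ applied to nothing; rather, the vertical sub-rectangles of $(f^{-1},\cR)$, being connected components of $\overset{o}{R_i}\cap (f^{-1})^{-1}(\overset{o}{R_k}) = \overset{o}{R_i}\cap f(\overset{o}{R_k})$, coincide with the vertical sub-rectangles $V^i_l$ of $(f,\cR)$, and symmetrically the horizontal sub-rectangles of $(f^{-1},\cR)$ are the $V$'s of $(f,\cR)$... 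I will have to phrase this carefully: a set which is a vertical sub-rectangle of $R_i$ with respect to $\cR$ does not depend on the homeomorphism, only on the geometry of $\cR$. What depends on $f$ versus $f^{-1}$ is which of these sub-rectangles is labelled "horizontal" (i.e. arises from pulling $\cR$ back) and which is "vertical" (arises from pushing $\cR$ forward). Since $f^{-1}$ reverses the direction of iteration, the horizontal sub-rectangles of $(f^{-1},\cR)$ are the vertical sub-rectangles of $(f,\cR)$ and vice versa. This gives the first bullet.

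Next I would compute the geometric type. Label the horizontal sub-rectangles of $(f^{-1},\cR)$; by the previous paragraph these are the vertical sub-rectangles $\{V^k_l\}$ of $(f,\cR)$, so the number of "horizontal" sub-rectangles of $R_k$ for $f^{-1}$ equals $v_k$, and dually the number of "vertical" sub-rectangles equals $h_k$. This matches $H_k = v_k$ and $V_k = h_k$ in Definition \ref{Defi: inverse of Type}, and the index sets become $\cH(T^{-1}) = \cV(T)$, $\cV(T^{-1}) = \cH(T)$. For the permutation: $f^{-1}$ sends the horizontal sub-rectangle $V^k_l$ (of $(f^{-1},\cR)$) to the vertical sub-rectangle $H^i_j$ (of $(f^{-1},\cR)$) precisely when $f(H^i_j) = V^k_l$, i.e. when $\rho_T(i,j) = (k,l)$; hence $\rho_{T^{-1}}(k,l) = (i,j) = \rho_T^{-1}(k,l)$, as required. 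One subtlety requires attention: the labelling of the $V^k_l$ by $l$ from left to right uses the \emph{horizontal} orientation of $R_k$, while when we view $V^k_l$ as a \emph{horizontal} sub-rectangle of $(f^{-1},\cR)$ we should order it by the \emph{vertical} orientation — but the geometric Markov partition structure fixes both orientations of each $R_k$ simultaneously (one determines the other so as to be compatible with the surface orientation), so the re-reading of left-to-right as bottom-to-top is forced and consistent; I should spell this out. Finally, for $\epsilon$: since $f^{-1}$ restricted to $V^k_l$ preserves the relevant orientation exactly when $f$ restricted to $H^i_j = f^{-1}(V^k_l)$ does, we get $\epsilon_{T^{-1}}(k,l) = \epsilon_T(\rho_T^{-1}(k,l))$, matching Definition \ref{Defi: inverse of Type}.

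The main obstacle — really the only delicate point — is bookkeeping of orientations: making sure that "horizontal orientation of $R_i$ for $f$" corresponds correctly to "vertical orientation of $R_i$ for $f^{-1}$" under the swap, and that this correspondence is the one compatible with how $\epsilon_{T^{-1}}$ is defined. Here I would lean on Definition \ref{Defi: geometric Markov partition} and the convention (following Definition \ref{Defi: vertical and horizontal orientation}) that a choice of vertical orientation on a rectangle determines the horizontal one via the ambient orientation of $S$; the involution $T\mapsto T^{-1}$ is built to be compatible with this, which is why the formula $\epsilon_{T^{-1}}(k,l) = \epsilon_T(\rho_T^{-1}(k,l))$ (with no extra sign) is correct. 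Everything else is an unwinding of Definitions \ref{Defi: horizontal-vertical of the partition}, \ref{Defi: geometric type of a Markov partition}, and \ref{Defi: inverse of Type}, together with Corollary \ref{Coro: image of sub horzontal-vertical } and Proposition \ref{Prop: Markov criterion boundary}.
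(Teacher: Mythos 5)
Your proposal is correct and follows essentially the same route as the paper, which treats the lemma as an immediate observation (the discussion preceding it and Remark \ref{Rema: Inverse type and inverse pAnosov}): the Markov conditions are symmetric under swapping $f\leftrightarrow f^{-1}$ and horizontal $\leftrightarrow$ vertical, the horizontal sub-rectangles of $(f^{-1},\cR)$ are the vertical sub-rectangles of $(f,\cR)$, $f^{-1}(V^k_l)=H^i_j$ iff $f(H^i_j)=V^k_l$ gives $\rho_{T^{-1}}=\rho_T^{-1}$, and the orientation bookkeeping gives $\epsilon_{T^{-1}}(k,l)=\epsilon_T(\rho_T^{-1}(k,l))$. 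Your write-up merely spells out the definitional unwinding (including the orientation-swap subtlety) that the paper leaves implicit.
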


With this notion we could define the relation $\sim_u$ in $\Sigma_{\cU(T)}$ as the relation $\sim_s$ in $\Sigma_{\cS(T^{-1})}$, this formulation is sometimes useful. However, we prefer to give the following definition which is fully symmetric to  \ref{Defi: s realtion in Sigma S no per} and illuminates the mechanism by which codes are identified in $\Sigma_{\cU(T)}$, this insight will be use later when extending the $\sim_s$ and $\sim_u$ relations to $\Sigma_A$.

\begin{defi}\label{Defi: u realtion in Sigma U no per}
	Let $\underline{w},\underline{v}\in \Sigma_{\cU(T)}\setminus$Per$(\sigma)$, they are $u$-related and write $\underline{w}\sim_{u} \underline{v}$ if and only if they are equal or:
	\begin{itemize}
		\item[i)] There exist $z\in \ZZ$ such that $\sigma^z(\underline{w}),\sigma^z(\underline{v})\notin \underline{\cU(T)}$ but $\sigma^{z-1}(\underline{w}),\sigma^{z-1}(\underline{v})\in \underline{\cU(T)}$.
		
		\item[ii)] The $0$-terms, $\sigma^z(\underline{w})_0=w_z$ and  $\sigma^z(\underline{v})_0=v_z$ are equals, i.e. $w_z=v_z:=k\in\{1,\cdots,n\}$  and the number $v_k>1$ (given by $T$).
		
		\item[iii)] There is $l\in \{1, \cdots,v_{k}-1\}$ such that only one of the two possibilities happen:
	\begin{equation}\label{Equa: Sim u 1 option}
	\xi_{T^{-1}}(k,l)=(w_{z-1}) \text{ and  } \xi_{T^{-1}}(k,l+1)=(v_{z-1})
	\end{equation}
		or
	\begin{equation}\label{Equa: Sim u 2 option}
	\xi_{T^{-1}}(k,l)=(v_{z-1}) \text{ and  } \xi_{T^{-1}}(k,l+1)=(w_{z-1})
	\end{equation}
		
		\item[iv)] Suppose the negative code of $\sigma^{z-1}{\underline{w}}$  is equal to the $u$-boundary code $\underline{J}^-(w_{z-1},\epsilon_w)$ and  the negative code of  $\sigma^{z-1}(\underline{v})$  is equal to the $u$-boundary code $\underline{J}^-(v_{z-1},\epsilon_v)$, then:
		
		If Equation \ref{Equa: Sim u 1 option} holds:
	\begin{equation}\label{Equa: Sim u 1 epsilon}
\epsilon_w=\epsilon_{T^{-1}}(k,l) \text{ and } \epsilon_v=-\epsilon_{T^{-1}}(k,l+1)
	\end{equation}
		In the case Equation \ref{Equa: Sim u 2 option} happen:
	\begin{equation}\label{Equa: Sim u 2 epsilon}
		\epsilon_v=-\epsilon_{T^{-1}}(k,l) \text{ and } \epsilon_w=-\epsilon_{T^{-1}}(k,j+1)
	\end{equation}
		\item[v)] The positive codes of $\sigma^z(\underline{w})$ is equal to the positive code of $\sigma^z(\underline{v})$.
	\end{itemize}
\end{defi}

Similarly to Proposition \ref{Prop: sim s equiv in Sigma non per} we could prove $\sim_{u}$ is a equivalent relation in $\Sigma_{U}\setminus$Per$(\sigma_A)$ and then extend the relation to the periodic boundary points as in  \ref{Defi: sim s in per}.

\begin{defi}\label{Defi: sim u in per}
	Let $\underline{\alpha},\underline{\beta}\in$Per$(\Sigma_{\cU(T)})$ be periodic $u$-boundary codes. They are $u$-related if and only if there exist $\underline{w},\underline{v}\in \Sigma_{\cU(T)}\setminus$Per$(\sigma$ such that:
	\begin{itemize}
		\item $\underline{w}\sim_{u}\underline{v}$,
		\item There exist $p\in \ZZ$ such that the negative  codes  $\sigma^p(\underline{w})_-$ is equal to the negative code of $\underline{\alpha}$ and the negative code  $\sigma^p(\underline{v})_-$ coincide with the negative code of $\underline{\beta}$.
	\end{itemize}
\end{defi}

With the same techniques used in \ref{Prop: sim s equiv in Sigma S } we could prove 

\begin{prop}\label{Prop: sim u equiv in Sigma U }
	The relation $\sim_u$ in $\Sigma_{\cU(T)}$ is of equivalence.
\end{prop}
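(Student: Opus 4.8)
The statement to establish is that $\sim_u$, as extended to the full set $\Sigma_{\cU(T)}$ by Definitions~\ref{Defi: u realtion in Sigma U no per} and~\ref{Defi: sim u in per}, is an equivalence relation. The plan is to mirror, almost verbatim, the argument already carried out for $\sim_s$ in Proposition~\ref{Prop: sim s equiv in Sigma S }, invoking the inverse geometric type $T^{-1}$ to transfer everything from the stable to the unstable setting. Concretely, Lemma~\ref{Lemm: geometric type of inverse} tells us that if $(f,\cR)$ realizes $T$ then $(f^{-1},\cR)$ realizes $T^{-1}$, with horizontal and vertical directions exchanged; hence $\Sigma_{\cU(T)}=\Sigma_{\cS(T^{-1})}$ as subsets of $\Sigma_{A}$ (noting $\Sigma_{A(T)}$ and $\Sigma_{A(T^{-1})}$ coincide up to reversing the direction of codes), and Definition~\ref{Defi: u realtion in Sigma U no per} is literally Definition~\ref{Defi: s realtion in Sigma S no per} applied to $T^{-1}$ after the reflection $z\mapsto -z$. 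With this dictionary in place, Proposition~\ref{Prop: sim s equiv in Sigma non per} applied to $T^{-1}$ immediately yields that $\sim_u$ is an equivalence relation on $\Sigma_{\cU(T)}\setminus\mathrm{Per}(\sigma)$, and Proposition~\ref{Prop: sim s equiv in Sigma S } applied to $T^{-1}$ gives the extension to the periodic $u$-boundary codes.

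First I would make the reduction explicit: reflexivity is immediate from the clause ``they are equal or'' in both Definition~\ref{Defi: u realtion in Sigma U no per} and Definition~\ref{Defi: sim u in per}. For symmetry and transitivity I would treat the two cases ``both codes non-periodic'' and ``both codes periodic'' separately, exactly as in the proof of Proposition~\ref{Prop: sim s equiv in Sigma S }. For non-periodic codes, symmetry follows by relabelling ($l\leftrightarrow$ swapping which of Equations~\eqref{Equa: Sim u 1 option}--\eqref{Equa: Sim u 2 option} holds), and transitivity is the symmetric counterpart of Proposition~\ref{Prop: sim s equiv in Sigma non per}: the index $z$ is uniquely characterised as $\max\{p\in\ZZ:\sigma^p(\underline w)\in\underline{\cU(T)}\}+1$, so it is common to the three codes; then one uses the $\epsilon$-compatibility conditions~\eqref{Equa: Sim u 1 epsilon}--\eqref{Equa: Sim u 2 epsilon} to rule out the possibility $l'=l-1$ (which would force $\epsilon_{T^{-1}}(k,l)=-\epsilon_{T^{-1}}(k,l)$, a contradiction), leaving $l'=l+1$, whence the third code equals the first on its positive part; the equality of positive parts in clause $v)$ then propagates through and closes the triangle. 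For periodic $u$-boundary codes one invokes, as in Definition~\ref{Defi: sim u in per}, the auxiliary non-periodic codes realising the identification, and symmetry/transitivity are inherited from the non-periodic case just as in Proposition~\ref{Prop: sim s equiv in Sigma S }. I would also record the analogue of Remark~\ref{Rema: two codes in sim s}: every $\sim_u$-class in $\Sigma_{\cU(T)}\setminus\mathrm{Per}(\sigma)$ has exactly two elements, which is what makes the transitivity bookkeeping terminate.

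The main obstacle, such as it is, is purely notational: one must be careful that the reflection $z\mapsto -z$ used to pass between ``eventually in $\underline{\cS(T^{-1})}$'' (a statement about large positive indices) and ``eventually in $\underline{\cU(T)}$'' (large negative indices) is applied consistently, and that the $u$-generating function $\Upsilon(T)=\Gamma(T^{-1})$ is correctly matched with the backward shift when checking that $\sigma^{z-1}(\underline w)\in\underline{\cU(T)}$ for all sufficiently small $z-1$. Once the dictionary $\Sigma_{\cU(T)}\cong\Sigma_{\cS(T^{-1})}$, $\sim_u\;\cong\;\sim_s$ under reflection is stated precisely, the proof is a one-line appeal to Proposition~\ref{Prop: sim s equiv in Sigma S }; I would choose to spell out the dictionary carefully and then invoke that proposition, rather than re-deriving transitivity from scratch, to avoid redundancy. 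Alternatively, for readers who prefer a self-contained argument, I would include the two-page symmetric transcription of Proposition~\ref{Prop: sim s equiv in Sigma non per} and Proposition~\ref{Prop: sim s equiv in Sigma S } with ``stable'' replaced by ``unstable'', ``$\partial^s$'' by ``$\partial^u$'', ``$h_i$'' by ``$v_i$'', ``$\Gamma$'' by ``$\Upsilon$'', and the direction of the shift reversed — but this adds no mathematical content.
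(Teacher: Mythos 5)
Your proposal is correct and matches the paper's treatment: the paper simply observes that $\sim_u$ can be viewed as $\sim_s$ for the inverse type $T^{-1}$ (equivalently, that the fully symmetric definition admits the same proof as Propositions~\ref{Prop: sim s equiv in Sigma non per} and~\ref{Prop: sim s equiv in Sigma S }), which is exactly the dictionary/transcription you describe. Your explicit characterisation of the index $z$ and the $\epsilon$-contradiction ruling out $l'=l-1$ are just the spelled-out versions of the stable-case argument the paper invokes.
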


And similarly 

\begin{prop}\label{Prop: u-relaten implies same projection}
	Let $\underline{w},\underline{v}\in \Sigma_{\cU(T)}$ be two $s$-boundary leaves codes, if $\underline{w}\sim_{u}\underline{v}$ then $\pi_f(\underline{w})=\pi_f(\underline{v})$.
\end{prop}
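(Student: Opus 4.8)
\textbf{Proof plan for Proposition \ref{Prop: u-relaten implies same projection}.}

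The plan is to mirror exactly the argument given for the stable case in Proposition \ref{Prop: s-relaten implies same projection}, using the duality between $T$ and $T^{-1}$ recorded in Lemma \ref{Lemm: geometric type of inverse}. Recall that if $(f,\cR)$ realizes $T$, then $(f^{-1},\cR)$ realizes $T^{-1}$, and under this identification the vertical (unstable) foliation of $(f,\cR)$ becomes the horizontal (stable) foliation of $(f^{-1},\cR)$, the set $\Sigma_{\cU(T)}$ corresponds to $\Sigma_{\cS(T^{-1})}$, the relation $\sim_u$ on $\Sigma_{\cU(T)}$ is precisely the relation $\sim_s$ on $\Sigma_{\cS(T^{-1})}$, and the projection $\pi_f$ of a code with respect to $(f,\cR)$ coincides with the projection $\pi_{f^{-1}}$ of the reversed code with respect to $(f^{-1},\cR)$, since $\pi_f(\underline w)=\cap_{n}\overline{\cap_{z=-n}^{n}f^{-z}(\overset{o}{R_{w_z}})}$ is symmetric under $z\mapsto -z$ together with $f\mapsto f^{-1}$. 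Therefore Proposition \ref{Prop: u-relaten implies same projection} is literally Proposition \ref{Prop: s-relaten implies same projection} applied to the pseudo-Anosov homeomorphism $f^{-1}$ with the Markov partition $\cR$ of geometric type $T^{-1}$, and one could simply invoke that.

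Since it is instructive, I would also spell out the direct argument in the symmetric language of Definition \ref{Defi: u realtion in Sigma U no per}. Assume $\underline w\sim_u\underline v$ and, after applying a power of $\sigma$, normalize so that the integer $z$ of that definition is $0$: thus $\sigma^{0}(\underline w),\sigma^{0}(\underline v)\notin\underline{\cU(T)}$ while $\sigma^{-1}(\underline w),\sigma^{-1}(\underline v)\in\underline{\cU(T)}$, and $w_0=v_0=k$ with $v_k>1$. By item (v), the positive parts of $\underline w$ and $\underline v$ agree, so $x_w:=\pi_f(\underline w)$ and $x_v:=\pi_f(\underline v)$ lie on the same stable segment $I\subset R_k$; hence it suffices to show they lie on the same unstable segment of $R_k$. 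By item (iii) we may assume $x_w$ lies in the vertical sub-rectangle $V^{k}_{l+1}$ and $x_v$ in $V^{k}_{l}$. Since $\sigma^{-1}(\underline w)\in\underline{\cU(T)}$, the point $f^{-1}(x_w)$ lies on the unstable boundary of $\cR$, so $x_w$ lies on one of the two vertical boundary components of $V^{k}_{l+1}$; the condition $\epsilon_w=-\epsilon_{T^{-1}}(k,l+1)$ coming from item (iv) forces $x_w$ to be on the \emph{left} vertical boundary $\partial^u_{-1}V^k_{l+1}$ (the other choice would make $f^{-1}(x_w)$ land on the boundary component whose label is $+\epsilon_{T^{-1}}(k,l+1)$, a contradiction). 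Symmetrically, the condition $\epsilon_v=\epsilon_{T^{-1}}(k,l)$ forces $x_v$ to lie on the \emph{right} vertical boundary $\partial^u_{+1}V^k_{l}$. But the right boundary of $V^k_l$ and the left boundary of $V^k_{l+1}$ are the same unstable segment of $R_k$, since $V^k_l$ and $V^k_{l+1}$ are consecutive vertical sub-rectangles. Together with the fact that $x_w,x_v$ lie on the common stable segment $I$, a stable and an unstable segment of a rectangle meet in at most one point, so $x_w=x_v$, i.e.\ $\pi_f(\underline w)=\pi_f(\underline v)$. The case where the normalizing integer is nonzero follows by applying $f^{z}$ and using the semi-conjugacy $f\circ\pi_f=\pi_f\circ\sigma$ from Proposition \ref{Prop:proyecion semiconjugacion}; the periodic case reduces to the non-periodic one exactly as in Definition \ref{Defi: sim u in per}.

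The only point requiring a little care — the analogue of the main obstacle in the stable case — is the bookkeeping of the sign conventions: one must check that the $u$-generating function $\Upsilon(T)=\Gamma(T^{-1})$ and the labels $\partial^u_{\pm1}R_i$ are oriented so that ``$\epsilon_w=-\epsilon_{T^{-1}}(k,l+1)$'' really does pin down the left rather than the right boundary, and likewise for $x_v$. This is a purely formal verification, symmetric to the one carried out for $\sim_s$ in Proposition \ref{Prop: s-relaten implies same projection}, and follows from the construction of $\Upsilon(T)$ in Definition \ref{Defi: u-boundary generating funtion} together with Proposition \ref{Prop: positive codes are boundary} applied to $T^{-1}$; no new idea is needed.
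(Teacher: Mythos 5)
Your proposal is correct and follows essentially the same route as the paper, which itself only asserts the unstable statements by symmetry: you reduce to Proposition \ref{Prop: s-relaten implies same projection} via the identification $(f,\cR,T)\leftrightarrow(f^{-1},\cR,T^{-1})$ under code reversal, and your spelled-out direct argument (same stable segment from the equal positive parts, then the $\epsilon$-conditions pinning $x_w$ and $x_v$ to the shared unstable boundary of the consecutive vertical sub-rectangles $V^k_l$ and $V^k_{l+1}$) is exactly the mirror of the paper's stable-case proof. The only caveat is the sign bookkeeping you already flag: Definition \ref{Defi: u realtion in Sigma U no per} as printed contains typos in its $\epsilon$-equations, so in matching option $(1)$ versus option $(2)$ with the corresponding $\epsilon_w,\epsilon_v$ formulas you should take the conventions forced by the stable case rather than the literal printed signs.
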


We have defined $\sim_{s,u}$ in terms of the combinatorial information of the geometric type $T$ so that they do not depend on the homeomorphism-partition representation $(f,\cR)$ of $T$. Note that the Propositions \ref{Prop: s-relaten implies same projection} and \ref{Prop: u-relaten implies same projection} give a sufficient condition for two codes to project to the same point but this is not a necessary condition, since there could be more codes projecting to the same point, for this reason we need to complete our relations to $\sim_T$ in $\Sigma_A$.  We end this subsection by defining an equivalence relation $\sim_I$ on the totally interior codes $\Sigma_{\cI nt(T)}$. As proved in Lemma \ref{Lemm: Projection Sigma S,U,I} totally interior codes are the only codes that project to totally interior points of any representation $(f,\cR)$ of $T$, moreover any totally interior point of $(f,\cR)$ has associated to it a unique totally interior code that projects to it, this inspires the following definition.

\begin{defi}\label{Defi: I sim relation}
Let $\underline{w},\underline{v}\in \Sigma_{\cI nt(T)}$be two totally interior codes, they are $I$-related and we write $\underline{w}\sim_I\underline{v}$ if and only if $\underline{w}=\underline{v}$.
\end{defi} 

The following result follows from  Proposition \ref{Prop: Carterization injectivity of pif} where we have characterized totally interior points as having a single code projecting to them. The equivalent relation part is totally trivial because the relation $\sim_I$ is the equality between codes.

\begin{prop}\label{Prop: totally interior points projection sim I}
The relation $\sim_I$ is an equivalence relation in $\Sigma_{\cI nt(T)}$ and two codes $\underline{w},\underline{v}\in \Sigma_{\cI nt(T)}$ are $\sim_I$ related if and only if $\pi_f(\underline{w})=\underline{v}$, i.e they project to the same point.
\end{prop}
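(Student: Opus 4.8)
The plan is to prove both assertions of Proposition~\ref{Prop: totally interior points projection sim I} by reducing them to results already established in the excerpt. The first assertion---that $\sim_I$ is an equivalence relation---is immediate from Definition~\ref{Defi: I sim relation}, since $\sim_I$ is declared to be the identity relation on $\Sigma_{\cI nt(T)}$, and equality of codes is trivially reflexive, symmetric, and transitive. So the entire content of the proposition lies in the second assertion, and even that is essentially a restatement of earlier work once we line up the right lemmas.

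For the equivalence $\underline{w}\sim_I\underline{v}\iff \pi_f(\underline{w})=\pi_f(\underline{v})$, I would argue in two directions. The forward direction is trivial: if $\underline{w}\sim_I\underline{v}$ then by definition $\underline{w}=\underline{v}$, hence $\pi_f(\underline{w})=\pi_f(\underline{v})$. For the reverse direction, suppose $\underline{w},\underline{v}\in\Sigma_{\cI nt(T)}$ with $\pi_f(\underline{w})=\pi_f(\underline{v})=:x$. Fix any pair $(f,\cR)$ realizing $T$ (which exists because $T$ is in the pseudo-Anosov class, and the incidence matrix is assumed binary throughout this section). By Lemma~\ref{Lemm: Projection Sigma S,U,I}, since $\underline{w}\in\Sigma_{\cI nt(T)}$, the projection $x=\pi_f(\underline{w})$ lies in $\mathrm{Int}(f,\cR)$, i.e. $x$ is a totally interior point of $\cR$. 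Now apply Proposition~\ref{Prop: Carterization injectivity of pif}: a point $x\in S$ is totally interior if and only if $\vert\pi_f^{-1}(x)\vert=1$. Therefore $\pi_f^{-1}(x)$ is a singleton, and since both $\underline{w}$ and $\underline{v}$ belong to it, we conclude $\underline{w}=\underline{v}$, that is, $\underline{w}\sim_I\underline{v}$.

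I do not anticipate any real obstacle here; this proposition is a bookkeeping corollary that packages together Definition~\ref{Defi: I sim relation}, Lemma~\ref{Lemm: Projection Sigma S,U,I}, and Proposition~\ref{Prop: Carterization injectivity of pif}. The only point requiring a sentence of care is the implicit quantifier over realizations: the statement ``$\pi_f(\underline{w})=\pi_f(\underline{v})$'' is phrased relative to a fixed realization $(f,\cR)$ of $T$, so one should remark at the outset that we fix such a realization once and for all, and that the conclusion $\underline{w}=\underline{v}$ is intrinsic to $\Sigma_A$ and hence independent of the choice---which is precisely the payoff that makes $\sim_I$ (and ultimately $\sim_T$) a combinatorial object depending only on $T$. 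A clean write-up is roughly: ``\emph{Proof.} By Definition~\ref{Defi: I sim relation}, $\sim_I$ is the equality relation on $\Sigma_{\cI nt(T)}$, hence an equivalence relation. If $\underline{w}\sim_I\underline{v}$ then $\underline{w}=\underline{v}$ and so $\pi_f(\underline{w})=\pi_f(\underline{v})$. Conversely, assume $\pi_f(\underline{w})=\pi_f(\underline{v})=x$ for $\underline{w},\underline{v}\in\Sigma_{\cI nt(T)}$. By Lemma~\ref{Lemm: Projection Sigma S,U,I}, $x\in\mathrm{Int}(f,\cR)$, so by Proposition~\ref{Prop: Carterization injectivity of pif}, $\vert\pi_f^{-1}(x)\vert=1$; as $\underline{w},\underline{v}\in\pi_f^{-1}(x)$, we get $\underline{w}=\underline{v}$, i.e. $\underline{w}\sim_I\underline{v}$. $\square$''
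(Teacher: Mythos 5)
Your proof is correct and follows essentially the same route as the paper: the paper also treats the equivalence-relation part as trivial (since $\sim_I$ is equality of codes) and derives the projection statement from Lemma~\ref{Lemm: Projection Sigma S,U,I} together with Proposition~\ref{Prop: Carterization injectivity of pif}, which says totally interior points have a single code in their fiber. Your added remark about fixing a realization $(f,\cR)$ is a sensible clarification but not a departure from the paper's argument.
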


\subsection{ The equivalent relation $\sim_T$ on $\Sigma_{A(T)}$}

Finally we are ready to define the relation $\sim_T$ in $\Sigma_A$ that Proposition \ref{Prop: The relation determines projections} claim to exist, it consist essentially in the relation generated by $\sim_s,\sim_u$ and $\sim_I$.

\begin{defi}\label{Defi: Sim-T equivalent relation}
Let $\underline{w},\underline{v}\in \Sigma_{A}$ they are $T$-related and write $\underline{w}\sim_T\underline{v}$ if and only if any of the following disjoint situations occurs:
\begin{itemize}
	\item[i)] $\underline{w},\underline{v}\in \Sigma_{\cI nt(T)}$ and $\underline{w}\sim_I\underline{v}$, i.e. they are equals.
	\item[ii)] $\underline{w},\underline{v}\in \Sigma_{\cS(T)} \cup \Sigma_{\cU(T)}$ and there exist a finite number of codes $\{\underline{x_i}\}_{i=1}^m \subset  \Sigma_{\cS(T)} \cup \Sigma_{\cU(T)}$ such that:
\begin{equation}
	\underline{w}\sim_{s}\underline{x_i}\sim_{u} \underline{x_2}\sim_{s} \cdots \sim_{s} \underline{x_m}\sim_{u} \underline{v},
\end{equation}
or
\begin{equation}
\underline{w}\sim_{u}\underline{x_i}\sim_{s} \underline{x_2}\sim_{u} \cdots \sim_{u} \underline{x_m}\sim_{u} \underline{v}.
\end{equation}
\end{itemize}
\end{defi}

\begin{prop}
The relation $\sim_T$ is an equivalence relation in $\Sigma_A$.
\end{prop}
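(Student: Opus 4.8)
The statement asserts that $\sim_T$, as defined in Definition~\ref{Defi: Sim-T equivalent relation}, is an equivalence relation on $\Sigma_A$. The plan is to verify reflexivity, symmetry, and transitivity directly, splitting into the two disjoint cases built into the definition: codes in $\Sigma_{\cI nt(T)}$ (where $\sim_T$ restricts to $\sim_I$) and codes in $\Sigma_{\cS(T)}\cup\Sigma_{\cU(T)}$ (where $\sim_T$ is the relation generated by the two relations $\sim_s$ and $\sim_u$ via finite alternating chains). The first step is to recall the decomposition $\Sigma_{A(T)} = \Sigma_{\cI nt(T)}\,\sqcup\,(\Sigma_{\cS(T)}\cup\Sigma_{\cU(T)})\setminus\Sigma_{\cI nt(T)}$ established in Definition~\ref{Defi: stratification Sigma A}, and to note that a code cannot be $T$-related to a code in the other piece: if $\underline{w}\in\Sigma_{\cI nt(T)}$, then by Lemma~\ref{Lemm: Projection Sigma S,U,I} its projection lies in $\mathrm{Int}(f,\cR)$, which is disjoint from the projection of any code in $\Sigma_{\cS(T)}\cup\Sigma_{\cU(T)}$; more to the point, the two clauses (i) and (ii) of Definition~\ref{Defi: Sim-T equivalent relation} are mutually exclusive by construction, so we may treat the two pieces separately.

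\textbf{Reflexivity and symmetry.} Reflexivity is immediate: for $\underline{w}\in\Sigma_{\cI nt(T)}$ we have $\underline{w}\sim_I\underline{w}$ since $\sim_I$ is equality; for $\underline{w}\in\Sigma_{\cS(T)}\cup\Sigma_{\cU(T)}$, the relations $\sim_s$ and $\sim_u$ are each reflexive (Propositions~\ref{Prop: sim s equiv in Sigma S }, \ref{Prop: sim u equiv in Sigma U }), so the trivial length-zero chain gives $\underline{w}\sim_T\underline{w}$. For symmetry, on $\Sigma_{\cI nt(T)}$ it is trivial; on the boundary-leaf piece, given a chain $\underline{w}\sim_s\underline{x_1}\sim_u\cdots\sim_u\underline{v}$ realizing $\underline{w}\sim_T\underline{v}$, reverse the chain, using that $\sim_s$ and $\sim_u$ are symmetric (again Propositions~\ref{Prop: sim s equiv in Sigma S } and~\ref{Prop: sim u equiv in Sigma U }), to obtain a chain (of the other ``parity'' pattern, but still alternating $\sim_s$ and $\sim_u$) realizing $\underline{v}\sim_T\underline{w}$. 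One must check that the reversed chain still fits one of the two admissible alternation patterns in clause (ii); since the definition allows both a chain starting with $\sim_s$ and one starting with $\sim_u$, this is automatic.

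\textbf{Transitivity.} This is the substantive step. On $\Sigma_{\cI nt(T)}$ transitivity of $\sim_I$ is equality, hence trivial. On $\Sigma_{\cS(T)}\cup\Sigma_{\cU(T)}$, suppose $\underline{w}\sim_T\underline{v}$ via an alternating chain $C_1$ and $\underline{v}\sim_T\underline{u}$ via an alternating chain $C_2$; concatenating $C_1$ and $C_2$ through the common vertex $\underline{v}$ yields a walk from $\underline{w}$ to $\underline{u}$ whose consecutive edges are labelled by $\sim_s$ or $\sim_u$. If the last edge of $C_1$ and the first edge of $C_2$ carry the same label, say both $\sim_s$, then at $\underline{v}$ we have $\underline{x}\sim_s\underline{v}\sim_s\underline{y}$; using transitivity of $\sim_s$ on $\Sigma_{\cS(T)}$ we collapse these into a single edge $\underline{x}\sim_s\underline{y}$ (absorbing $\underline{v}$), restoring a strictly alternating chain. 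Likewise if both are $\sim_u$, we invoke transitivity of $\sim_u$ on $\Sigma_{\cU(T)}$. This produces an alternating chain from $\underline{w}$ to $\underline{u}$, so $\underline{w}\sim_T\underline{u}$. The one point requiring care here — and the place I expect to spend the most thought — is that collapsing two consecutive same-label edges requires all three codes involved to lie in the \emph{same} one of $\Sigma_{\cS(T)}$ or $\Sigma_{\cU(T)}$ so that the relevant transitivity result ($\sim_s$ on $\Sigma_{\cS(T)}$, or $\sim_u$ on $\Sigma_{\cU(T)}$) actually applies; when $\underline{v}$ lies in the intersection $\Sigma_{\cS(T)}\cap\Sigma_{\cU(T)}$ (i.e.\ projects to a corner periodic point, which is simultaneously an $s$- and $u$-boundary leaf code), one must check the labels are consistent, but this is exactly the situation the alternating chains are designed to handle: the definition already permits chains that switch freely between $\sim_s$ and $\sim_u$ at such vertices, and no collapse is needed there. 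Finally, I would record the corollary that $\pi_f$ descends to the quotient $\Sigma_A/\sim_T$, since Propositions~\ref{Prop: s-relaten implies same projection}, \ref{Prop: u-relaten implies same projection}, and~\ref{Prop: totally interior points projection sim I} show each generating relation preserves $\pi_f$-fibres, hence so does $\sim_T$; the converse inclusion (that $\pi_f$-equal codes are $\sim_T$-related), which is the content of Proposition~\ref{Prop: The relation determines projections}, is proved separately and is not needed to establish that $\sim_T$ is an equivalence relation.
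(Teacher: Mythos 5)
Your proof is correct and follows essentially the same route as the paper: split into the totally interior codes (where $\sim_T$ is equality) and the boundary-leaf codes, with transitivity obtained by concatenating the alternating chains and invoking transitivity of $\sim_s$ and $\sim_u$. Your extra care about collapsing consecutive same-label edges and about codes in $\Sigma_{\cS(T)}\cap\Sigma_{\cU(T)}$ only fills in details the paper leaves implicit.
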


\begin{proof}
For  $\underline{w}\in \Sigma_{\cI nt(T)}$ the relation is reflexive, symmetric and transitive by virtue of equality. 
In the situation $\underline{w},\underline{v} \Sigma_{\cS(T)} \cup \Sigma_{\cU(T)}$, the reflexivity and symmetry are free, the transitivity comes from the concatenation of the codes and the fact that $\sim_{s}$ and $\sim_{u}$ are transitive as well.

\end{proof}

 If $\underline{w}\sim_{s,u}\underline{v}$  then $\sigma(\underline{w})\sim_{s,u}\sigma(\underline{v})$ because the numbers $k,z\in \ZZ$ at Item $i)$ of the definitions \ref{Defi: s realtion in Sigma S no per} and  \ref{Defi: u realtion in Sigma U no per} are changed to $k-1$ and $z-1$ respectively and the rest of the conditions hold for $\sigma(\underline{w})$ and $\sigma(\underline{v})$. The same property holds for the relation $\sim_I$. This remark implies the following lemma which will simplify some arguments.

\begin{lemm}\label{lemma: simT related iterations related}
If two codes $\underline{w},\underline{v}\in \Sigma_A$ are $\sim_T$ related then for all $k\in \ZZ$, $\sigma^k(\underline{w})\sim_T\sigma^k(\underline{v})$.
\end{lemm}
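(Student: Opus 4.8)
The statement to prove is Lemma~\ref{lemma: simT related iterations related}: if $\underline{w}\sim_T\underline{v}$, then $\sigma^k(\underline{w})\sim_T\sigma^k(\underline{v})$ for every $k\in\ZZ$. The plan is to reduce the claim to the three constituent relations $\sim_I$, $\sim_s$, $\sim_u$ via the definition of $\sim_T$ (Definition~\ref{Defi: Sim-T equivalent relation}), prove $\sigma$-equivariance for each of those, and then observe that the chain-of-codes structure in case (ii) of that definition is preserved termwise. Since $\sigma$ is a homeomorphism of $\Sigma_A$ and an iterate $\sigma^k$ is just a composition of $\sigma$ or $\sigma^{-1}$, it suffices to treat $k=1$ and $k=-1$; the general case follows by induction on $|k|$. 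Moreover, by symmetry of all the relations, handling $\sigma$ alone gives $\sigma^{-1}$ as well (apply $\sigma$ to $\sigma^{-1}(\underline{w})$ and $\sigma^{-1}(\underline{v})$).

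First I would dispatch the totally interior case. If $\underline{w},\underline{v}\in\Sigma_{\cI nt(T)}$ with $\underline{w}\sim_I\underline{v}$, then by Definition~\ref{Defi: I sim relation} $\underline{w}=\underline{v}$; since $\Sigma_{\cI nt(T)}$ is defined as $\Sigma_A\setminus(\Sigma_{\cS(T)}\cup\Sigma_{\cU(T)})$ and each of $\Sigma_{\cS(T)}$, $\Sigma_{\cU(T)}$ is $\sigma$-invariant (immediate from Definition~\ref{Defi: stratification Sigma A}, since membership is phrased via existence of an integer shift landing in $\underline{\cS(T)}$ or $\underline{\cU(T)}$), the complement $\Sigma_{\cI nt(T)}$ is $\sigma$-invariant too. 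Hence $\sigma(\underline{w})=\sigma(\underline{v})\in\Sigma_{\cI nt(T)}$, so $\sigma(\underline{w})\sim_I\sigma(\underline{v})$ and therefore $\sigma(\underline{w})\sim_T\sigma(\underline{v})$.

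Next I would prove the key step: $\underline{w}\sim_s\underline{v}\Rightarrow\sigma(\underline{w})\sim_s\sigma(\underline{v})$, and the symmetric statement for $\sim_u$. Consider first $\underline{w},\underline{v}\in\Sigma_{\cS(T)}\setminus\mathrm{Per}(\sigma)$ that are $s$-related via the integer $k$ appearing in Definition~\ref{Defi: s realtion in Sigma S no per}. I claim $\sigma(\underline{w})$ and $\sigma(\underline{v})$ are $s$-related via the integer $k-1$: indeed $\sigma^{k-1}(\sigma(\underline{w}))=\sigma^{k}(\underline{w})$, so conditions (i)--(v) of that definition, which only refer to $\sigma^{k}(\underline{w})$, $\sigma^{k}(\underline{v})$, $\sigma^{k+1}(\underline{w})$, $\sigma^{k+1}(\underline{v})$ and their zero-terms and positive/negative parts, translate verbatim into the corresponding conditions for $\sigma(\underline{w}),\sigma(\underline{v})$ with the index shifted by one. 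For the periodic case, if $\underline{\alpha},\underline{\beta}\in\mathrm{Per}(\underline{\cS(T)})$ are $s$-related through witnesses $\underline{w}\sim_s\underline{v}$ and an integer $k$ with $\sigma^k(\underline{w})_+=\underline{\alpha}_+$, $\sigma^k(\underline{v})_+=\underline{\beta}_+$ (Definition~\ref{Defi: sim s in per}), then $\sigma(\underline{\alpha})$, $\sigma(\underline{\beta})$ are again periodic $s$-boundary codes (periodicity and membership in $\underline{\cS(T)}$ are $\sigma$-invariant by Corollary~\ref{Coro: preperiodic finite s,u boundary codes} and Definition~\ref{Defi: s,u-boundary codes}), and the same witnesses $\underline{w}\sim_s\underline{v}$ with the integer $k-1$ show $\sigma(\underline{\alpha})\sim_s\sigma(\underline{\beta})$, since $\sigma^{k-1}(\underline{w})_+=\sigma(\sigma^{k}(\underline{w}))_+=\sigma(\underline{\alpha})_+$ and likewise for $\underline{\beta}$. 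The argument for $\sim_u$ is identical with Definitions~\ref{Defi: u realtion in Sigma U no per} and~\ref{Defi: sim u in per} and the integer $z$ replaced by $z-1$.

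Finally I would assemble these facts for case (ii) of $\sim_T$. If $\underline{w}\sim_T\underline{v}$ with a chain $\underline{w}\sim_s\underline{x_1}\sim_u\underline{x_2}\sim_s\cdots\sim_u\underline{v}$ (or the chain starting with $\sim_u$), apply $\sigma$ to every code in the chain; by the previous paragraph each link $\sim_s$ or $\sim_u$ survives, producing $\sigma(\underline{w})\sim_s\sigma(\underline{x_1})\sim_u\sigma(\underline{x_2})\sim_s\cdots\sim_u\sigma(\underline{v})$ with all intermediate codes still in $\Sigma_{\cS(T)}\cup\Sigma_{\cU(T)}$ (again by $\sigma$-invariance of these sets). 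Hence $\sigma(\underline{w})\sim_T\sigma(\underline{v})$. Combining with the interior case, $\sigma$-equivariance of $\sim_T$ holds; induction on $|k|$ and the symmetry of $\sim_T$ give the full statement for all $k\in\ZZ$. I do not expect any genuine obstacle here — the content is entirely bookkeeping — the only thing to be careful about is checking that each of conditions (i)--(v) in Definitions~\ref{Defi: s realtion in Sigma S no per} and~\ref{Defi: u realtion in Sigma U no per} really is ``local'' in the sense of referring only to finitely many coordinates around the distinguished index, so that the index shift is harmless; this is evident by inspection of those definitions.
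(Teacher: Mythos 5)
Your proof is correct and follows essentially the same route as the paper: the paper's argument is precisely the remark preceding the lemma, namely that applying $\sigma$ shifts the distinguished integers $k$ and $z$ in Definitions~\ref{Defi: s realtion in Sigma S no per} and~\ref{Defi: u realtion in Sigma U no per} to $k-1$ and $z-1$ while all other conditions persist, and the same holds for $\sim_I$, after which the chain structure of $\sim_T$ gives the claim. Your write-up merely makes the bookkeeping (the $\sigma$-invariance of the strata, the periodic case, and the induction on $|k|$) explicit.
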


The only property that remains to be corroborated in the relation $\sim_T$ to obtain Proposition\ref{Prop: The relation determines projections}   is the one that relates it to the projection. That is the content of the following result.

\begin{prop}\label{Prop: T related iff same projection}
Let $\underline{w},\underline{v}\in \Sigma_A$ be any admissible codes. Then $\underline{w} \sim_T\underline{v}$ if and only if $\pi_f(\underline{w})=\pi_f(\underline{v})$.
\end{prop}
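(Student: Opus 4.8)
The plan is to prove the two implications separately. The forward direction ($\underline{w}\sim_T\underline{v}\Rightarrow\pi_f(\underline{w})=\pi_f(\underline{v})$) is essentially a bookkeeping assembly of results already established. If $\underline{w},\underline{v}\in\Sigma_{\cI nt(T)}$ and $\underline{w}\sim_I\underline{v}$, then $\underline{w}=\underline{v}$ and there is nothing to prove. If instead $\underline{w},\underline{v}\in\Sigma_{\cS(T)}\cup\Sigma_{\cU(T)}$ and they are joined by a finite chain $\underline{w}\sim_s\underline{x_1}\sim_u\underline{x_2}\sim_s\cdots\sim_u\underline{v}$ (or the chain starting with $\sim_u$), then Proposition \ref{Prop: s-relaten implies same projection} gives $\pi_f(\underline{w})=\pi_f(\underline{x_1})$, Proposition \ref{Prop: u-relaten implies same projection} gives $\pi_f(\underline{x_1})=\pi_f(\underline{x_2})$, and so on down the chain; transitivity of equality then yields $\pi_f(\underline{w})=\pi_f(\underline{v})$. (One should note that the codes in the chain are required to lie in $\Sigma_{\cS(T)}\cup\Sigma_{\cU(T)}$ and that the relations $\sim_s,\sim_u$ are exactly the ones for which those two propositions apply, so the chain is legitimate.)

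The converse is the substantial direction: assume $\pi_f(\underline{w})=\pi_f(\underline{v})=:x$ and deduce $\underline{w}\sim_T\underline{v}$. First split by cases on the location of $x$ with respect to the decomposition $S=\mathrm{Int}(f,\cR)\cup\cF^s(\mathrm{Per}^s(\cR))\cup\cF^u(\mathrm{Per}^u(\cR))$ of Lemma \ref{Lemm: Caraterization unique codes}. If $x$ is a totally interior point, then by Proposition \ref{Prop: Carterization injectivity of pif} the fiber $\pi_f^{-1}(x)$ is a single point, so $\underline{w}=\underline{v}$; moreover by Lemma \ref{Lemm: Projection Sigma S,U,I} both codes lie in $\Sigma_{\cI nt(T)}$, hence $\underline{w}\sim_I\underline{v}$ and $\underline{w}\sim_T\underline{v}$. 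Otherwise $x$ lies on the stable or unstable leaf of a periodic boundary point, so by the same Lemma \ref{Lemm: Projection Sigma S,U,I} we have $\underline{w},\underline{v}\in\Sigma_{\cS(T)}\cup\Sigma_{\cU(T)}$, and we must build the connecting $\sim_s/\sim_u$-chain. Here I would use Corollary \ref{Coro: Caracterisation fibers}: both $\underline{w}$ and $\underline{v}$ are sector codes of $x$, say $\underline{w}=\underline{e_a(x)}$ and $\underline{v}=\underline{e_b(x)}$ for sectors $e_a(x),e_b(x)$ of $x$. The key geometric fact is that any two sectors of $x$ are connected by a finite sequence of sectors in which consecutive sectors share either a local stable separatrix or a local unstable separatrix of $x$ (this is just the cyclic structure of the $2k$ sectors around a $k$-prong). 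When two consecutive sectors share a local stable separatrix, the corresponding sector codes project to $x$ which (after a suitable shift, using Lemma \ref{lemma: simT related iterations related}) lies on the stable boundary of two adjacent horizontal sub-rectangles of a common rectangle, and one checks that the defining conditions (i)--(v) of Definition \ref{Defi: s realtion in Sigma S no per} (or Definition \ref{Defi: sim s in per} in the periodic case) hold — this amounts to retracing, combinatorially, the argument of Proposition \ref{Prop: s-relaten implies same projection} in reverse, using Propositions \ref{Prop: positive codes are boundary} and \ref{Prop: boundary points have boundary codes} to identify the positive (resp. negative) tails as the appropriate $s$-boundary (resp. $u$-boundary) codes. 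Symmetrically, sectors sharing a local unstable separatrix give $\sim_u$-related codes. Concatenating these elementary relations along the sequence of sectors yields the required chain, hence $\underline{w}\sim_T\underline{v}$.

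The main obstacle will be the bookkeeping in this converse direction: verifying carefully that two sector codes of $x$ whose sectors are separated by a single local stable separatrix satisfy \emph{all} of conditions (i)--(v) of the definition of $\sim_s$ — in particular pinning down the integer $k$ (via Lemma \ref{Lemm: minumun k}), the coincidence of the $k$-th symbols and of the negative tails, and the sign conditions \eqref{Equa: sim s epsilon 1}/\eqref{Equa: sim s epsilon 2} relating $\epsilon_w,\epsilon_v$ to the $\epsilon_T$-values of the two adjacent horizontal sub-rectangles. The content here is genuinely the same analysis as in Proposition \ref{Prop: s-relaten implies same projection} and its unstable analogue, read in the opposite direction, together with the characterizations of boundary codes; once the one-separatrix step is established, the passage to arbitrary pairs of sectors is a purely combinatorial induction on the cyclic sequence of sectors, and the periodic case is handled by Definitions \ref{Defi: sim s in per} and \ref{Defi: sim u in per} exactly as the non-periodic one. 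Finally, one remarks that this proposition completes the proof of Proposition \ref{Prop: The relation determines projections}, since $\sim_T$ has now been shown to be an algorithmically defined equivalence relation on $\Sigma_{A(T)}$ with $\underline{w}\sim_T\underline{v}\iff\pi_f(\underline{w})=\pi_f(\underline{v})$ for every realization $(f,\cR)$ of $T$.
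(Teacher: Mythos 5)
Your plan is correct and follows essentially the same route as the paper: the forward implication is the same chaining of Proposition \ref{Prop: s-relaten implies same projection}, Proposition \ref{Prop: u-relaten implies same projection} and the $\sim_I$ case, and the converse is the paper's argument that all sector codes of $x=\pi_f(\underline{w})$ are mutually $\sim_T$-related, obtained by relating sector codes of sectors adjacent across a local stable (resp.\ unstable) separatrix via $\sim_s$ (resp.\ $\sim_u$) and concatenating around the cyclic order, with the periodic case handled through Definitions \ref{Defi: sim s in per} and \ref{Defi: sim u in per}. The paper simply organizes the bookkeeping you defer into explicit cases governed by the first hitting times $k(s),k(u)$ of the stable and unstable boundaries, but the mechanism is the same.
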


\begin{proof}

If  $\underline{w}\sim_T\underline{v}$ then we have two options:

\begin{itemize}
\item $\underline{w},\underline{v}\in \Sigma_{\cI nt(T)}$ and they are $\sim_I$-related. By Proposition \ref{Prop: totally interior points projection sim I}  this happen if and only if $\pi_f(\underline{w})=\pi_f(\underline{w})$. This situation is over.
\item  $\underline{w},\underline{v}\in \Sigma_{\cS(T)}\cup\Sigma_{\cU(T)}$. Using alternately Propositions \ref{Prop: s-relaten implies same projection} and \ref{Prop: u-relaten implies same projection} we deduce
$$
\pi_f(\underline{w})=\pi_f(\underline{x_1})=\cdots=\pi_f(\underline{x_m}) =\pi_f(\underline{v}).
$$
and complete a direction of the proposition.
\end{itemize}

Now suppose that  $x=\pi_f(\underline{w})=\pi_f(\underline{v})$, we need to prove that they are $\sim_T$ related. Since the only codes that project to the same point are sector codes of the point (Lemma \ref{Lemm: every code is sector code }), $\underline{w},\underline{v}$ are sector codes of $x$ and the following Lemma implies our Proposition.

\begin{lemm}\label{Lemm: sector codes}
Let $\{\underline{e_i}\}_{i=1}^{2k}$ be the sector codes of the point $x=\pi_f(\underline{e_i})$. Then $\underline{e_i}\sim_T \underline{e_j}$ for all $i,j\in \{1,\cdots,2k\}$.
\end{lemm}

\begin{proof}
If $x$ is a totally interior point the Corollary\ref{Coro: interior periodic points unique code} implies that all sector codes of $x$ are equal to a then $\underline{e_j}\sim_T\underline{e_j}$. In the Figure \ref{Fig: Sim T items i y ii} this correspond to the situation when $f^k(x)$ have all its quadrants, like in item $b)$.

The remaining situation is when  $x\in \cF^s(\text{ Per }^s(\cR))\cup \cF^u(\text{ Per }^u(\cR))$, i.e. $x$ is in stable or unstable lamination generated by $s,u$-boundary periodic points, we will concentrate on this case.  Numbering the sectors of $x$ in cyclic order, we consider three situations depending on where $x$ is located:
\begin{itemize}
\item[i)] $x\in \cF^s(\text{ Per }^s(\cR))$ but $x\notin \cF^u(\text{ Per }^u(\cR))$ (Items $c)$ and $f)$ in Figure \ref{Fig: Sim T items i y ii})
\item[ii)] $x\in \cF^u(\text{ Per }^u(\cR))$ but $x\notin \cF^s(\text{ Per }^s(\cR))$ (Items $a)$ and $d)$ in Figure \ref{Fig: Sim T items i y ii}).
\item[iii)]  $x\in \cF^s(\text{ Per }^s(\cR))\cap\cF^u(\text{ Per }^u(\cR))$ ((Items $b)$ and $e)$ in Figure \ref{Fig: Sim T items i y ii})).
\end{itemize}

In either case we first consider that $x$ is not periodic, which means that no sector code $\underline{e_j}$  is periodic and the point $x$ have $4$ sectors because  is not a periodic point and then it is not a singularity.

\textbf{Item} $i)$ (Look at (Items $c)$ and $f)$ in Figure \ref{Fig: Sim T items i y ii})).
 There exist $k\in \ZZ$  such that $f^k(x)\notin\partial^s\cR$ and like $x\notin  \cF^u(\text{ Per }^u(\cR))$, $f^z(x)\notin \partial^u\cR$ for all $z\in \ZZ$. In particular $f^z(x)\in \overset{o}{\cR}$ for all $z\leq k$, we conclude that : $f^k(x)$ have only four quadrants like $x$ and according with  the definition of the $\sim_s$ relation ( discussion accompanied by the  Figure \ref{Fig: stable identification}):
\begin{itemize}
\item For all $z< k$, all the quadrants of $f^{z}(x)$ are in the same rectangle: $\underline{e_1}_{z}=\underline{e_2}_{z}=\underline{e_3}_{z}=\underline{e_4}_{z}$.
\item $f^{k+n}(x)\in \partial^s\cR$ and therefore its quadrants are like in   (Items $c)$ and $f)$ in Figure \ref{Fig: Sim T items i y ii}) then: $\underline{e_1}_{k+n}= \underline{e_2}_{k+n}$ and $\underline{e_3}_{k+n}= \underline{e_4}_{k+n}$

\item Therefore:  $\underline{e_1}\sim_{s}\underline{e_4}$ and $\underline{e_2}\sim_{s}\underline{e_3}$. 

\item Like $f^z(x)\notin \partial^u\cR$ for all $z\in \ZZ$ the sector codes of $x$ satisfy that: $\underline{e_2}=\underline{e_1}$ and $\underline{e_3}=\underline{e_4}$. 
\end{itemize}
In conclusion $\underline{e_2}\sim_u\underline{e_1}$ and $\underline{e_3}\sim_u\underline{e_4}$ and then: 
 $$
 \underline{e_1}\sim_s\underline{e_4}\sim_u\underline{e_3}\sim_s \underline{e_2}\sim_u\underline{e_1}.
 $$ 
 so $\underline{e_i}\sim_T\underline{e_j}$ for $i,j=1,2,3,4$.
 
\textbf{Item} $ii)$ is argued in the same way. Look at Figure \ref{Fig: Sim T items i y ii} to get the intuition of \textbf{Item} $i)$ and realize that the same idea applies for the second item. In this case up some iteration $f^k(x)$ is in are a rectangle like in items $a)$ or $d)$ in the image and the negative iterations of $f^k(x)$ keep this configurations. Therefore the  terms of the sector codes $\underline{e_1}_{k-n}=\underline{e_4}_{k-n}$ and $\underline{e_2}_{k-n}=\underline{e_3}_{k-n}$ and like $f^{k+n}(x)\in \overset{o}{\cR}$ the therms $\underline{e_{\sigma}}_{k+n}$ is the same for $\sigma=1,2,3,4$: this implies that
\begin{itemize}
\item $\underline{e_1} \sim_u \underline{e_4}$ and $\underline{e_2}\sim_u \underline{e_3}$.
\item  $\underline{e_1}\sim_s \underline{e_2}$ and $\underline{e_3}\sim_s \underline{e_4}$.
\end{itemize}

Finally: $\underline{e_i}\sim_T\underline{e_j}$ for $i,j=1,2,3,4$.
\begin{figure}[h]
	\centering
	\includegraphics[width=0.9\textwidth]{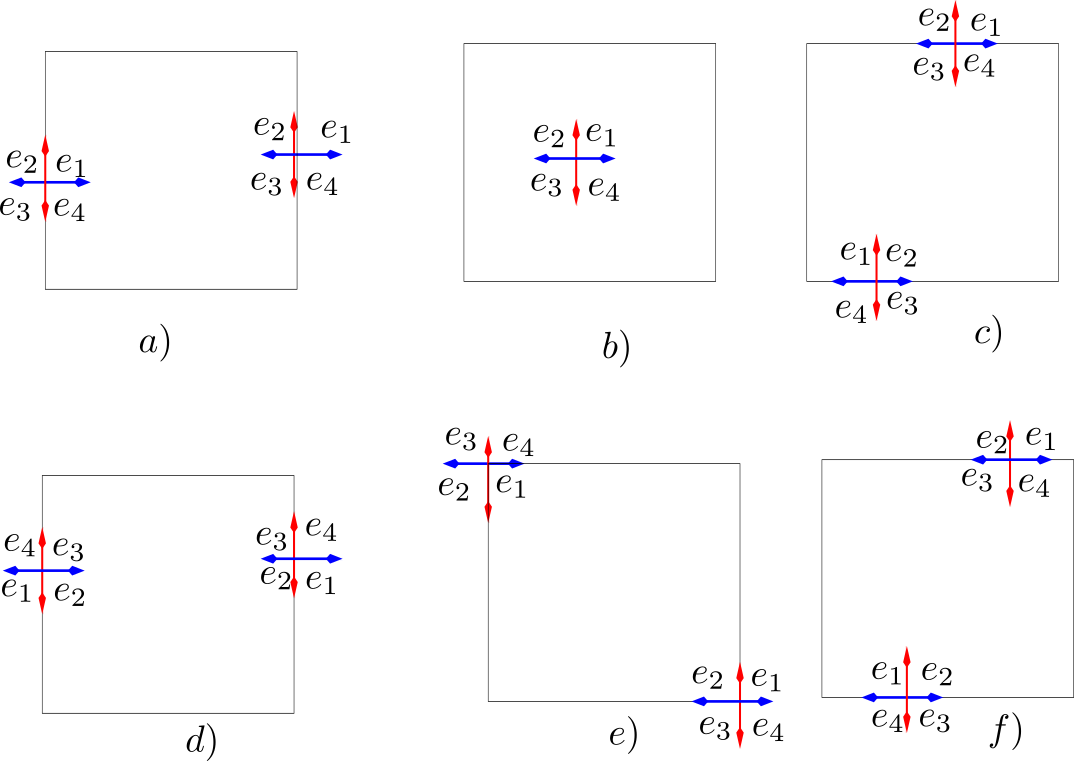}
	\caption{The sector codes that are identify.}
	\label{Fig: Sim T items i y ii}
\end{figure}

\textbf{Item} $iii)$ is the most technical situation. There are unique numbers $k(s),k(u)\in \ZZ$ defined as:
$$
k(s):=\max\{k\in \ZZ:f^k(x)\notin \partial^s \cR \text{ but } f^{k+1}(x)\in \partial^s \cR\},
$$
and 
$$
k(u)=\min\{k\in \ZZ:f^k(x)\notin \partial^u \cR \text{ but } f^{k-1}(x)\in \partial^u \cR\},
$$

The $f$-invariance of $\cR^s$ implies that for all $k>k(s)$, $f^k(x)\in \partial^s \cR$, but for all $k\leq k(s)$,  $f^k(x)\notin \partial^s \cR$. Similarly the $f^{-1}$ invariance of $\partial^u \cR$ implies that for all $k<k(u)$, $f^k(x)\in \partial^u\cR$ but for all $k\geq k(u)$ $f^k(x)\notin \partial^u \cR$.

They are three possibilities to detail: $k(u)<k(s)$, $k(u)=k(s)$ and $k(u)>k(s)$. Lets to divide the proof in this cases.

\textbf{First case} $k(u)<k(s)$. This inequality implies that $f^{k(s)}(x)\in \overset{o}{\cR}$ (and $f^{k(u)}(x)\in \overset{o}{\cR}$), this conditions  implies:

\begin{itemize}
\item The sector codes  take the same value for all $k(u)\leq  k\leq k(s)$.
	
\item  For all $k\geq k(s)$ the configuration of the sectors of $f^k(x)$ is like in Items $c)$ or $f)$ in in figure \ref{Fig: Sim T items i y ii}.

\item For all $k< k(u)$ the configurations of the sector of  $f^{k}(x)$ is like in Items  Item $a)$ or $d)$ in in figure \ref{Fig: Sim T items i y ii}.

\end{itemize}

 In view of Lemma \ref{lemma: simT related iterations related} we can deduce that: 
$$
\underline{e_1}\sim_s\underline{e_4}\sim_u\underline{e_3}\sim_s \underline{e_2}\sim_u\underline{e_1}.
$$
Hence $\underline{e_i}\sim_T\underline{e_j}$ for all $i=1,\cdots,4$.

\textbf{Second case} $k(u)=k(s)$. Analogously, $f^{k(u)}(x)=f^{k(s)}(x)\in \overset{o}{\cR}$ and we repeat the analysis of the previous case to deduce that $x$ have $4$ sector codes and all of them are $\sim_T$ related.  Figure \ref{Fig: k(u) less than k(s)} illustrates the ideas behind our arguments.

\begin{figure}[h]
	\centering
	\includegraphics[width=0.7\textwidth]{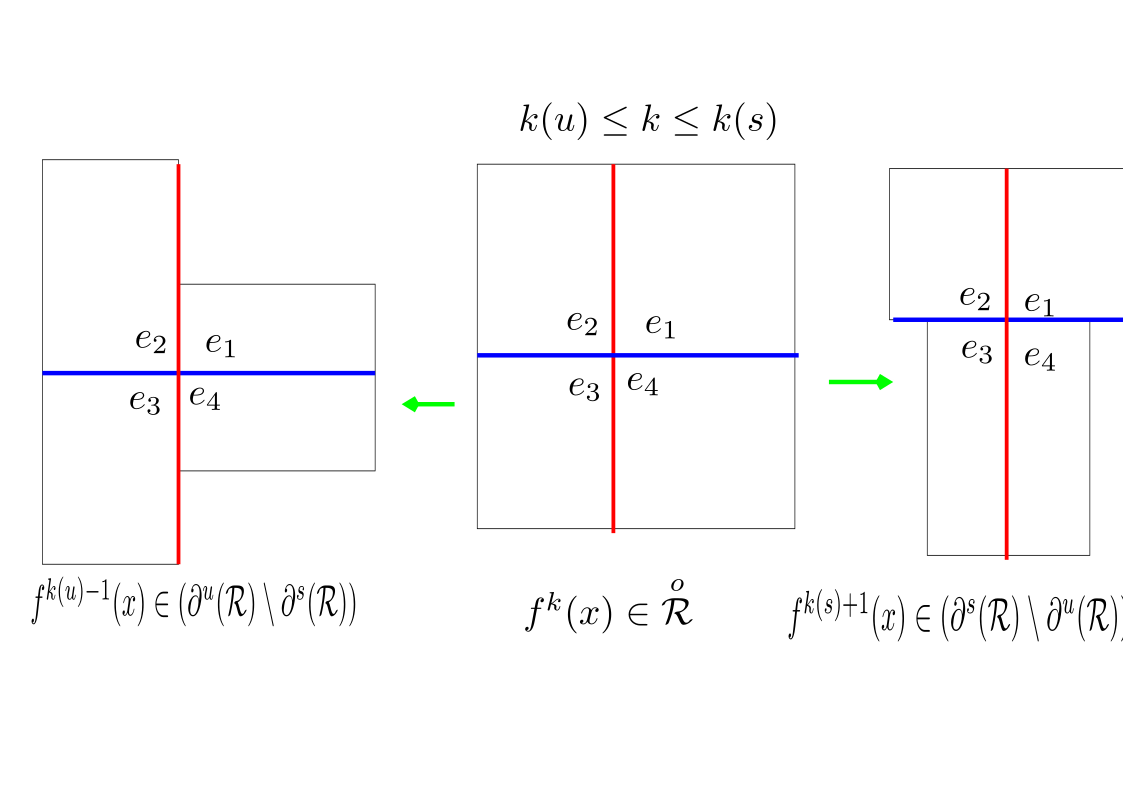}
	\caption{Situation $k(u)\leq k(s)$}
	\label{Fig: k(u) less than k(s)}
\end{figure}

\textbf{Third case} $k(s)<k(u)$.   In this situation $f^{k(s)}(x)\notin \partial^s \cR$ but $f^{k(s)}\in \partial^u\cR$ and for all $k$,  between $k(s)$ and $k(u)$, $f^k(x)$ is a corner point, and this have the following consequences:

\begin{itemize}
\item For all $n\in \NN$ $f^{k(s)-n}(e_1)$ and $ f^{k(s)-n}(e_4)$  are in the same rectangle of $\cR$ and similarly  $f^{k(s)}(e_2)$ and $f^{k(s)}(e_3)$ are in the same rectangle of $\cR$.
\item For all $n\in \NN$, $f^{k(u)+n}(e_1)$ and $f^{k(u)+n}(e_4)$ are in the same rectangle, and  similarly $f^{k(u)+n}(e_2)$ and $f^{k(u)+n}(e_3)$, are in the same rectangle.
\end{itemize}

Now for all $n\in \NN$, the negative part of the codes $\sigma^{k(s)-n}(e_1)$ and  $\sigma^{k(s)-n}(e_1)$ are equal. But for $k\in\{k(s),\cdots,k(u)\}$, $f^{k}(x)$ is in the stable boundary of $\cR$ and the configuration of the sector is like in Items $e)$, for $k\geq k(s)+1$ the configuration of the sector is like in $c)$ or $f)$. This lets us to conclude that $\underline{e_1}\sim_s \underline{e_4}$. The same argument applied for $\underline{e_2}$ and $\underline{e_3}$.

Using Lemma \ref{lemma: simT related iterations related} we deduce that:
$$
\underline{e_1}\sim_s \underline{e_4} \text{ and  } \underline{e_2}\sim_s \underline{e_3}.
$$

 Finally  for all $n\in \NN$, the sector codes of $f^{k(u)+s}(x)$ are like in configurations $c)$ and $f)$ so thee positive therms of $\sigma^{k(u)+1}(e_1)$ and  $\sigma^{k(u)+1}(e_2)$ are equal and $f^{k(u)}(x)$ is a corner point like in items $e)$, in particular the sector codes $\sigma^{k(u)}(\underline{e_1})$ and $\sigma^{k(u)}(\underline{e_1})$ share a unstable boundary of the Markov partition and then $\underline{e_1}\sim_u \underline{e_2}$. This process applied for the rest of sectors. Once again we can use Lemma \ref{lemma: simT related iterations related} to deduce that:
$$
\underline{e_1}\sim_u \underline{e_2} \text{ and  } \underline{e_3}\sim_u \underline{e_4}.
$$  
Then we put all together to obtain that:
$$
\underline{e_1}\sim_s\underline{e_4}\sim_u\underline{e_3}\sim_s \underline{e_2}\sim_u\underline{e_1}.
$$
proving that $\underline{e_i}\sim_T \underline{e_j}$ for $i,j=1,2,3,4$. This mechanism is illustrated in Figure \ref{Fig: k(s) less than k(s)}.

\begin{figure}[h]
	\centering
	\includegraphics[width=0.6\textwidth]{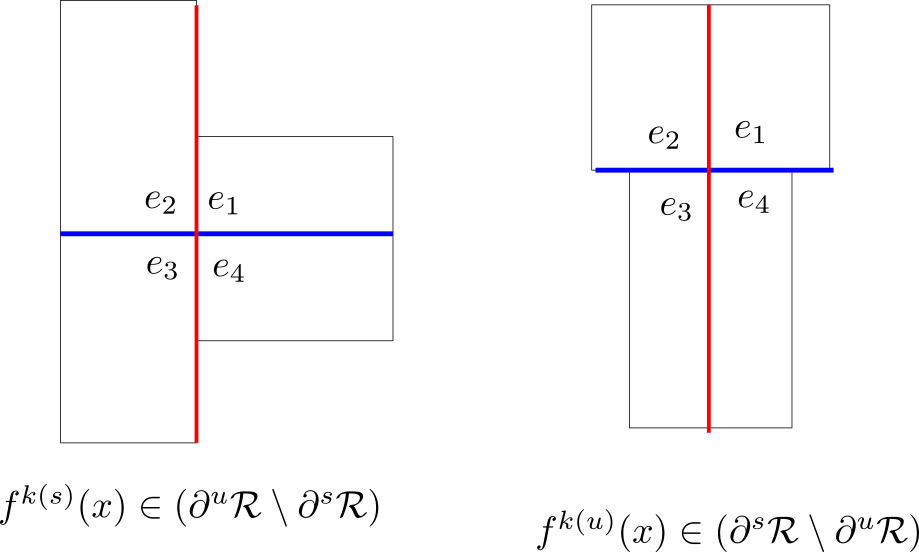}
	\caption{Situation $k(s)\leq k(u)$}
	\label{Fig: k(s) less than k(s)}
\end{figure}

It remains to address the periodic coding case. Let $x$ be a periodic point with $2k$-sectors, we label them with the cyclic order with respect to the surface orientation.  Two adjacent $e_i,e_{i+1}$-sectors are $s$-related if they share a stable local separatrix, as indeed, the stable separatrix lies in unique pair of rectangles and the boundary code of such sides are $s$-related. The reason is the mechanism of stable boundary identification that appears in Figure  \ref{Fig: stable identification} and that motivated the definition of the $\sim_s$-relation. If $ e_i,e_{i+1}$ share the same unstable separatrice they are $u$-related for the same reason. In this way one can get from one sector of $x$ to another by means of a finite number of intermediate sectors, which are alternatively $\sim_s$ and $\sim_u$-related. Then $\underline{e_i}\sim_T\underline{e_j}$ for all $i,j=1,\cdots, 2k$. This argument ends the proof.
\end{proof}

\end{proof}

 %descomposicion de shift y relacion de equivalencia
\section{The geometric type is a total conjugacy invariant.}

All the work of this Chapter is condensed in the following Proposition.

\begin{prop}\label{Prop:  cociente T}
	Let $T$ be a symbolically modelable geometric type and $(f,\cR)$ a realization of $T$, where $f:S\rightarrow S$ is a pseudo-Anosov homeomorphism (with or without Spines). Let $A:=A(T)$ be the incidence matrix of $T$, assume that $A$ is a binary matrix,  $(\Sigma_A,\sigma)$ the  sub-shift of finite-type associated to $T$ and $\pi_f:\Sigma_A\rightarrow S$ is the projection.
	
	The quotient space $\Sigma_T=\Sigma_A/\sim_T$ is equal to $\Sigma_f:=\Sigma_A/\sim_{f}$. Therefore $\Sigma_T$ is a closed surface. The sub-shift of finite type $\sigma$ pass to the quotient by $\sim_T$ to a homeomorphism $\sigma_T:\Sigma_T \rightarrow \Sigma_T$ which is a  generalized pseudo-Anosov homeomorphism topologically conjugate to $f:S\rightarrow S$ via the quotient homeomorphism:
	$$
	[\pi_f]: \Sigma_T=\Sigma_f \rightarrow S.
	$$ 
\end{prop}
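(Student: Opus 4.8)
The plan is to assemble the statement from the pieces already established, the crucial one being Proposition~\ref{Prop: The relation determines projections} (i.e.\ the long technical Proposition~\ref{Prop: T related iff same projection}), which says that for the pair $(f,\cR)$ realizing $T$ and for any $\underline{w},\underline{v}\in\Sigma_A$ we have $\underline{w}\sim_T\underline{v}$ if and only if $\pi_f(\underline{w})=\pi_f(\underline{v})$. This is exactly the assertion that the equivalence relation $\sim_f$ (defined by $\underline w\sim_f\underline v\iff\pi_f(\underline w)=\pi_f(\underline v)$) coincides, \emph{as a relation on $\Sigma_A$}, with $\sim_T$. Hence the quotient sets are literally equal: $\Sigma_T=\Sigma_A/\!\sim_T\ =\ \Sigma_A/\!\sim_f\ =\ \Sigma_f$, and they carry the same quotient topology since both are quotients of the same space by the same relation. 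This is the first step and it is immediate once Proposition~\ref{Prop: T related iff same projection} is granted.

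Next I would invoke Proposition~\ref{Prop: quotien by f} directly: it states that $\Sigma_f$ is homeomorphic to $S$, that the shift descends to a homeomorphism $\sigma_f:\Sigma_f\to\Sigma_f$ which is a generalized pseudo-Anosov homeomorphism, and that $[\pi_f]:\Sigma_f\to S$ is a topological conjugacy between $\sigma_f$ and $f$. Since $\Sigma_T=\Sigma_f$ as topological spaces, the map $\sigma_T$ induced by $\sigma$ through $\sim_T$ is the same map as $\sigma_f$ (they are both ``the map sending $[\underline w]\mapsto[\sigma(\underline w)]$'', well defined because $\sim_T=\sim_f$ is shift-invariant — this shift-invariance is Lemma~\ref{lemma: simT related iterations related}, or alternatively follows from the semi-conjugacy $f\circ\pi_f=\pi_f\circ\sigma$ in Proposition~\ref{Prop:proyecion semiconjugacion}). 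Therefore $\sigma_T=\sigma_f$ is a generalized pseudo-Anosov homeomorphism and $[\pi_f]:\Sigma_T=\Sigma_f\to S$ conjugates $\sigma_T$ with $f$, which is the content of the second bullet. Being a closed orientable surface is inherited from $S$ via this homeomorphism.

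One bookkeeping point I would address explicitly: the statement assumes $T$ is ``symbolically modelable'' with binary incidence matrix $A:=A(T)$, whereas the body of the chapter works with the horizontal refinement $H(T)$ to guarantee a binary matrix and a genuine (single-valued) projection $\pi_f$. So I would either (i) note that the hypothesis ``$A(T)$ binary'' means we are already in the refined situation where $\pi_f$ is well defined, continuous, surjective and finite-to-one by Proposition~\ref{Prop:proyecion semiconjugacion}, so no refinement is needed; or (ii) if one starts from an arbitrary $T$ in the pseudo-Anosov class, pass to $H(T)$ (Proposition~\ref{Prop: Unique horizontal type}, Corollary~\ref{Coro: horizontal type is pseudo-Anosov}), prove the statement for $H(T)$, and transport back using that $H(f,\cR)$ is a Markov partition of the \emph{same} $f$. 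Since the statement as written already fixes $A(T)$ binary, option (i) is the clean route and I would take it.

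\textbf{Main obstacle.} There is essentially no obstacle left at this level: all the real work is Proposition~\ref{Prop: T related iff same projection} (the equality of $\sim_T$ with ``same projection''), which is proved earlier, and Proposition~\ref{Prop: quotien by f} (the quotient by $\sim_f$ is a surface with a pseudo-Anosov quotient map), also proved earlier. The only thing to be careful about is the logical ordering — one must first establish $\sim_T=\sim_f$ as relations, and only then identify the two quotient spaces and the two induced maps; conflating ``$\sim_T$ and $\sim_f$ have the same classes'' with ``$\sim_T=\sim_f$'' is harmless here because equality of equivalence relations is equality of the sets of pairs, but I would phrase it cleanly to avoid any circularity with the construction of $\pi_f$ (which does depend on $f$, while $\sim_T$ does not — that independence is precisely what makes the identification useful for comparing different pseudo-Anosov homeomorphisms, and is the reason Theorem~\ref{Theo: conjugated iff markov partition of same type} follows).
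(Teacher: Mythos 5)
Your proposal is correct and follows essentially the same route as the paper: it deduces $\sim_T=\sim_f$ from Proposition~\ref{Prop: The relation determines projections}, identifies $\Sigma_T$ with $\Sigma_f$ and $\sigma_T$ with the induced shift, and then imports the surface structure, the conjugacy via $[\pi_f]$, and the pseudo-Anosov property from Proposition~\ref{Prop: quotien by f} (the paper cites Proposition~\ref{Prop:  conjugation produces pA} for the last point, which is the same observation). The bookkeeping remark on the binary-matrix hypothesis is a fair clarification but does not change the argument.
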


\begin{proof}
As the proposition \ref{Prop: The relation determines projections} indicates, the relation $\sim_f$ given by $\underline{w}\sim_f\underline{v}$ if and only if $\pi_f(\underline{w})=\pi_f(\underline{w})$ is equal to $\sim_T$, regardless of the pair $(f,\cR)$ that represent $T$. Therefore the quotient spaces $\Sigma_T$ and $\Sigma_f$ are the same.

The Proposition \ref{Prop: quotien by f} implies that $S$ and $\Sigma_f$ are homeomorphic, hence $\Sigma_T$ is homeomorphic to $S$ which is a closed surface. Moreover the shift map $\sigma$ passes to the quotient under $\sim_f$ to a homeomorphism $[\sigma]$ which is topologically conjugate to $f$ via the quotient map $[\pi_f]$. On the other hand the shift $\sigma$ passes to the quotient under $\sim_T$ too, so we have the homeomorphism $\sigma_T:\Sigma_T\rightarrow \Sigma_T$, which is identical to $[\sigma]$ and  we conclude  as in Proposition \ref{Prop: quotien by f} that:
	$$
[\pi_f]^{-1}\circ f\circ [\pi_f]= \sigma_T.
	$$
Finally, the Proposition \ref{Prop:  conjugation produces pA} implies that $\sigma_T:\Sigma_T \rightarrow \Sigma_T$ is  pseudo-Anosov because it is topologically conjugate to a pseudo-Anosov homeomorphism. 
\end{proof}

Observe that if $(g,\cR_g)$ and $(f,\cR_f)$ are two pair that represent $T$. It is possible to consider the product homeomorphism $[\pi_f]^{-1}\circ [\pi_g]$, because if $\underline{w}\in \pi_f^{-1}(x)\cap \pi^{-1}_g(y)\neq \emptyset$, any other codes $\underline{X}\in \pi^{-1}(x)$ and $\underline{Y}\in \pi^{-1}_g(y)$ are $\sim_T$ related, i.e. $\underline{X}\sim_T\underline{w}\sim_T\underline{Y}$, therefore  $ \pi_f^{-1}(x)= \pi^{-1}_g(y)$. It is time to prove our main Theorem \ref{Theo: conjugated iff  markov partition of same type}

\begin{theo*}
	Let $f:S_f\rightarrow S_f$ and $g:S_g \rightarrow S_g$ two pseudo-Anosov homeomorphism maybe with Spines. If $f$ and $g$ had  Markov partitions  of same geometric type then there exist an orientation preserving homeomorphism between the surfaces $h:S_f\rightarrow S_g$ that conjugate them $g=h\circ f\circ h^{-1}$.
\end{theo*}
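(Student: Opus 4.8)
The plan is to use the combinatorial model $(\Sigma_T,\sigma_T)$ built in Proposition~\ref{Prop:  cociente T} as an intermediary. Suppose $f$ and $g$ admit geometric Markov partitions $\cR_f$ and $\cR_g$ of the same geometric type $T$. The first reduction is to pass to the horizontal refinement: by Proposition~\ref{Prop: Unique horizontal type} and Corollary~\ref{Coro: horizontal type is pseudo-Anosov}, the horizontal refinements $H(f,\cR_f)$ and $H(g,\cR_g)$ are geometric Markov partitions of $f$ and $g$ respectively, they have the \emph{same} geometric type $H(T)$ (which is still in the pseudo-Anosov class), and the incidence matrix $A := A(H(T))$ is binary. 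Thus, after replacing $T$ by $H(T)$ and $\cR_{f},\cR_g$ by their horizontal refinements, we may assume the incidence matrix of $T$ is binary, so that the projections $\pi_f:\Sigma_A\to S_f$ and $\pi_g:\Sigma_A\to S_g$ are well defined, continuous, surjective and finite-to-one (Proposition~\ref{Prop:proyecion semiconjugacion}), and Proposition~\ref{Prop: The relation determines projections} applies to both.

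Next I would invoke Proposition~\ref{Prop:  cociente T} for each of the two realizations $(f,\cR_f)$ and $(g,\cR_g)$ of $T$: the quotient $\Sigma_T=\Sigma_A/\!\sim_T$ satisfies $\Sigma_T=\Sigma_f=\Sigma_g$, the induced map $\sigma_T$ is a generalized pseudo-Anosov homeomorphism, and $[\pi_f]:\Sigma_T\to S_f$ and $[\pi_g]:\Sigma_T\to S_g$ are homeomorphisms conjugating $\sigma_T$ with $f$ and with $g$ respectively. The crucial point, already noted after the statement of Proposition~\ref{Prop:  cociente T}, is that $\sim_f$ and $\sim_g$ both coincide with the purely combinatorial relation $\sim_T$, so that $[\pi_f]$ and $[\pi_g]$ are defined on the \emph{same} quotient space and the composition
$$
h := [\pi_g]\circ[\pi_f]^{-1} : S_f \longrightarrow S_g
$$
makes sense and is a homeomorphism. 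It conjugates $f$ to $g$: indeed $[\pi_f]^{-1}\circ f\circ[\pi_f]=\sigma_T=[\pi_g]^{-1}\circ g\circ[\pi_g]$, hence $g=[\pi_g]\circ\sigma_T\circ[\pi_g]^{-1}=h\circ f\circ h^{-1}$. This establishes one direction of the theorem up to the orientation issue; the converse direction (a conjugacy producing Markov partitions of the same type) is Theorem~\ref{Theo: Conjugated partitions same types}, since the image under an orientation-preserving conjugacy of a geometric Markov partition is a geometric Markov partition of the same geometric type.

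The remaining — and main — obstacle is to show that $h$ can be taken orientation-preserving. The subtlety is that the geometric type records a \emph{vertical} orientation on each rectangle but does not a priori pin down the global orientation of the surface, so one must verify that the quotient construction is coherent with orientation. I would argue as follows. Each rectangle $R_i$ of a geometric Markov partition carries, via its chosen parametrization class, a local orientation of $S$ (the frame given by a positively oriented horizontal leaf followed by a positively oriented vertical leaf); since $S$ is oriented and $f\in\mathrm{Hom}_+(S)$, these local orientations are forced to agree with a fixed global orientation of $S$ — this is where Definition~\ref{Defi: vertical and horizontal orientation} and the convention that all rectangles are oriented are used. On the symbolic side, for $\underline w\in\Sigma_{\cI nt(T)}$ the point $[\pi_f](\underline w)$ has a neighbourhood contained in the interior of a single rectangle $R_{w_0}$, and one transports the orientation of $S_f$ near that point to an orientation of $\Sigma_T$ near $[\underline w]$ using the $R_{w_0}$-frame; because the identifications encoded by $\sim_s$ and $\sim_u$ glue stable boundary to stable boundary and unstable boundary to unstable boundary with the $\epsilon$-data chosen precisely so that horizontal and vertical orientations match across the gluing (the identification mechanism of Figures~\ref{Fig: stable identification} and the definitions of $\sim_s,\sim_u$), this local orientation extends to a well-defined global orientation of $\Sigma_T$ that depends only on $T$, not on the realization. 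Then both $[\pi_f]$ and $[\pi_g]$ are, by construction, orientation-preserving with respect to this canonical orientation of $\Sigma_T$ and the given orientations of $S_f$ and $S_g$; hence $h=[\pi_g]\circ[\pi_f]^{-1}$ is orientation-preserving, which completes the proof. The delicate verification here is the coherence of the $\epsilon$-signs across the whole chain of $\sim_s$/$\sim_u$ identifications — essentially a bookkeeping argument that the vertical-orientation data in $T$ is exactly what is needed to make the gluing orientation-compatible, and I expect this to be the most technical part of the argument.
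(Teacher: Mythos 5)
Your proposal is correct and follows essentially the same route as the paper: pass to the horizontal refinement so the incidence matrix of $T$ is binary, obtain $h=[\pi_g]\circ[\pi_f]^{-1}$ from the fact that $\sim_f$ and $\sim_g$ both coincide with the combinatorial relation $\sim_T$ (so both quotients are the same model $(\Sigma_T,\sigma_T)$), and then handle orientation. The only point where the paper is more explicit is precisely the bookkeeping you deferred: it shows $h(H^i_j)=\underline{H}^i_j$ and $h(V^i_l)=\underline{V}^i_l$, which, since $h_i,v_i>2$ after the refinement, forces $h$ to preserve both transverse orientations on each rectangle, and then checks coherence of these orientations across rectangles sharing a stable boundary.
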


\begin{proof}
We already know that if $f$ and $g$ are conjugate they have Markov partitions of the same geometric type, so let's focus on the other direction of the proof.

Let $T := T(f, \cR_f) = T(g, \cR_g)$ be the geometric type of the Markov partitions of $f$ and $g$. If $T$ does not have a binary incidence matrix, we can consider a horizontal refinement of $(f, \cR)$ and $(g, \cR)$ in such a manner that the incidence matrix of such refinement is binary and the number of horizontal and vertical sub-rectangles in the respective Markov partitions is bigger than $2$.  The geometric types of these refinements remain the same. Therefore, we can assume from the beginning that $T$ has an incidence matrix $A$ with coefficients in $\{0, 1\}$ and $v_i,h_i>2$

 The quotient spaces of $\Sigma_A$ by $\sim_f$ and $\sim_g$ are equal to $\Sigma_T$ as proved in Proposition \ref{Prop:  cociente T}, i.e.  $\Sigma_g=\Sigma_{T}=\Sigma_f$. Moreover, the quotient shift  $\sigma_{T}$ is topologically conjugate to $f$ via $[\pi_f]:\Sigma_T \rightarrow S_f$ and to $g$ via $[\pi_g]:\Sigma_T \rightarrow S_g$. Therefore $f$ is topologically conjugate to $g$ by $h:=[\pi_g]\circ[\pi_f]^{-1}:S_f \rightarrow S_g$  which is a well-defined homeomorphism. It rest to prove that $h$ preserve the orientation. 
 
 \begin{lemm}
Let $H^i_j$ and $\underline{H}^i_j$ the respective horizontal sub-rectangles of $\cR_f$ and $\cR_g$, then $h(H^i_j)=\underline{H}^i_j$.
 \end{lemm}
 
 \begin{proof}
The set  $R(i,j)=\{\underline{w}: w_0=i \text{ and } \rho(i,j)=(w_1,l_0) \}$ is such that: $\pi_f(R_i,j)=H^i_j$ and  $\pi_g(\underline{H}^i_j)$. Consider the set $R(i,j)_T$ like the equivalent classes of $\Sigma_T$ that contains and element of $R(i,j)$. Clearly $[\pi_g]\circ[\pi_f]^{-1}(H^i_j)=[\pi_g](R(i,j)_T)=\underline{H}^i_j$
 \end{proof}

This have the following consequence.

\begin{coro}
The homeomorphism $h$ restricted to every rectangle $R_i$ preserve the traverse orientation of its vertical and horizontal foliations. In particular $h$ preserve the orientation restricted to $R_i$.
\end{coro}

\begin{proof}
Let $R_i$ be a rectangle in $\cR_f$ such that $x$ is its bottom-left corner point. This implies that the stable separatrix $I$ that determines the lower boundary of $R_i$ points in the opposite direction from $x$. Similarly, for the left stable boundary $J$ of $R_i$, it also points away from $x$. It's worth noting that within $R_{i}$, there are two sub-rectangles $H^i_1$ and $V^i_1$, whose the intersection defines a sector $e_x$ associated with $x$.

Now, consider $h(x)$, which is a corner point of $\underline{R}_i$. The rectangles $h(H^i_1)=\underline{H}^i_1$ and $h(V^i_1)=\underline{V}^i_1$ that are incident in $h(x)$ define a sector $h(e_x)$. It's necessary that the rectangle $\underline{H}^i_2$, the image of $H^i_2$, be adjacent to $\underline{H}^i_1$. This adjacency ensures the coherent ordering of horizontal sub-rectangles within $\underline{R}_i$, in accordance with the horizontal orientation of $\underline{R}_i$. Simultaneously, $h(V^i_2)$ becomes the vertical sub-rectangle of $\underline{R}_i$ next to $\underline{V}^i_1$. The sequence of rectangles $\underline{V}^i_l$ retains its coherence with the horizontal orientation of $\underline{R}_i$.

As both $h_i$ and $v_i$ are greater than $2$, $h$ simultaneously maps positive-oriented vertical and horizontal sub-rectangles of $R_i$ to positive-oriented vertical and horizontal sub-rectangles of $\underline{R}_i$. This also implies that $h$ preserves the transversal orientation of the foliation, and consequently, it maintains the orientation restricted to $R_i$.

\end{proof}

Finally, if the rectangles $R_i$ and $R_j$ intersect at a stable boundary point $x$, we can assign vertical orientations to $R_i$ and $R_j$ such that $x$ becomes the lower boundary point of $R_i$ and the upper boundary point of $R_j$. Necessary adjustments can be made to the stable directions of these rectangles to ensure that their orientations remain unchanged. By doing so, we can define $\underline{R_i}$ and $\underline{R_j}$ with vertical orientations induced by $h$. Since $h$ preserves the transversal orientation of the foliations in both $R_i$ and $R_j$, it also preserves the horizontal orientations of these rectangles. Consequently, $h$ preserve the orientation in the union of $R_i$ and $R_j$.

\end{proof}

The space $\Sigma_T$ and the homeomorphism $\sigma_T$ are constructed in terms of $T$ and as we have seen represent the conjugacy class of pseudo-Anosov homeomorphisms realizing $T$. In this sense we think that the symbolic dynamical system $(\Sigma_T,\sigma_T)$ deserves a name.

\begin{defi}\label{Defi: symbolic model}
Let $T$ be a geometric type in the pseudo-Anosov class, with an incidence matrix $A$ that is binary. Let $(f, \cR)$ be a pair representing $T$. The pair $(\Sigma_T, \sigma_T)$ represents the symbolic model of the geometric type $T$.
\end{defi}

Finally, we have obtained a combinatorial representation of a geometric type $T$ in the pseudo-Anosov class by solving Topic 1.III in \ref{Prob: Clasification}.

\chapter{The pseudo-Anosov class of geometric types.}\label{Chapter: Realization}

\section*{The realization problem.}

The objective of this section is to prove the following Theorem

\begin{theo}[Algorithmic characterization]\label{Theo: caracterization is algoritmic}
	There exists a finite algorithm that can determine whether a given geometric type $T$ belongs to the pseudo-Anosov class. Such algorithm requires calculating at most $6n$ iterations of the geometric type $T$, where $n$ is the first parameter of $T$.	
\end{theo}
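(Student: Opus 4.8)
The statement follows by combining the equivalence of conditions in Proposition~\ref{Prop: pseudo-Anosov iff basic piece non-impace} with the algorithmic reductions collected in Proposition~\ref{Prop: mixing+genus+impase is algorithm}. The plan is to recall that, by Proposition~\ref{Prop: pseudo-Anosov iff basic piece non-impace}, a geometric type $T$ is in the pseudo-Anosov class if and only if it satisfies the three properties in Item~$iii)$ of that proposition: the incidence matrix $A(T)$ is mixing, the genus of $T$ is finite, and $T$ has no impasse. Thus it suffices to exhibit a finite algorithm that decides each of these three combinatorial conditions, and to bound the number of iterations of $T$ it requires.

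First I would invoke Proposition~\ref{Prop: mixing+genus+impase is algorithm}, which is exactly the needed statement: it provides a finite algorithm checking each of the three properties and asserts that the number of iterations of $T$ needed is at most $6n$. The ingredients behind that proposition, which I would cite rather than reprove, are: (i) Proposition~\ref{Prop: bound positive incidence matriz }, which reduces mixing of the $n\times n$ matrix $A(T)$ to positivity of $A(T)^n = A(T^n)$, hence at most $n$ iterations; (ii) Theorem~\ref{Theo: finite type iff non-obtruction} together with the combinatorial formulations of the type-$(1)$, $(2)$, $(3)$ obstructions (Definitions~\ref{Defi: Type 1 combinatoric}, \ref{Defi: Type 2 combinatoric}, \ref{Defi: Type 3 combinatoric}) and Lemmas~\ref{Lemm: Equiv com and top type 1}, \ref{Lemm: Equiv com and top type 2}, \ref{Lemm: Equiv com and top type 3}, which reduce finiteness of the genus to checking that $T^m$ has none of the three combinatorial conditions for $m \le 6n$; and (iii) Theorem~\ref{Theo: Geometric and combinatoric are equivalent} together with Definition~\ref{Defi: Impasse combinatoric}, which reduces the absence of a topological impasse to checking whether $T^{2n+1}$ has the impasse property. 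Finally, Proposition~\ref{Prop: algoritm iterations type} guarantees that computing $T^m$ for any $m$ is itself algorithmic, so all the checks above are effective.

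To assemble the bound: the mixing test needs $T^n$; the finite-genus test needs all of $T^1,\dots,T^{6n}$; the impasse test needs $T^{2n+1}$. Since $n \ge 1$ we have $n \le 6n$ and $2n+1 \le 6n$, so the total number of iterations of $T$ the algorithm must compute is at most $6n$. Running the three subroutines in turn and returning ``yes'' precisely when $A(T)$ is mixing, $\operatorname{gen}(T) < \infty$, and $T$ has no combinatorial impasse then decides, by Proposition~\ref{Prop: pseudo-Anosov iff basic piece non-impace}, whether $T \in \mathcal{GT}(p\AA)$. This completes the proof.

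\textbf{Main obstacle.} There is no genuine obstacle remaining at this stage, since the substantive work is carried by Proposition~\ref{Prop: pseudo-Anosov iff basic piece non-impace} and Proposition~\ref{Prop: mixing+genus+impase is algorithm}; the only care needed is bookkeeping to confirm that the worst of the three iteration bounds is $6n$ and to state explicitly that each combinatorial condition (mixing of $A(T^m)$, presence of a type-$(1)$/$(2)$/$(3)$ condition for $T^m$, impasse property for $T^m$) is a finite check over finitely many index tuples drawn from the parameters of $T^m$, hence decidable in finite time once $T^m$ is computed.
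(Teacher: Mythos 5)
Your proposal is correct and follows essentially the same route as the paper: the theorem is deduced directly from Proposition~\ref{Prop: pseudo-Anosov iff basic piece non-impace} (reducing membership in the pseudo-Anosov class to mixing of $A(T)$, finite genus, and absence of impasse) combined with Proposition~\ref{Prop: mixing+genus+impase is algorithm}, whose ingredients (the bound $A(T^n)$ positive for mixing, the $6n$-realizer criterion for finite genus via the combinatorial type-$(1)$,$(2)$,$(3)$ conditions, the $T^{2n+1}$ test for the impasse, and the algorithmic computation of powers $T^m$) are exactly the ones you cite. The bookkeeping $\max\{n,\,2n+1,\,6n\}=6n$ for $n\ge 1$ is also how the paper obtains the stated bound.
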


It will be deduce from the following proposition which is the main technical result in this section, we dedicate two subsections to probe it.

 \begin{prop}\label{Prop: pseudo-Anosov iff basic piece non-impace}
	Let $T$ be an abstract geometric type. The following conditions are equivalent.
\begin{itemize}
	\item[i)] The geometric type $T$ is realized as a mixing basic piece of a surface Smale diffeomorphism without impasse.
\item[ii)] The geometric type $T$ is in the pseudo-Anosov class.
\item[iii)] The geometric type $T$ satisfies the following properties:
	\begin{enumerate}
		\item  The incidence matrix $A(T)$ is mixing
		\item The genus of $T$ is finite
		\item $T$ does not have an impasse.
	\end{enumerate}
\end{itemize}
\end{prop}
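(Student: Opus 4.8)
The plan is to prove the cyclic chain of implications $i) \Rightarrow ii) \Rightarrow iii) \Rightarrow i)$, leaning heavily on the Bonatti--Langevin--Béguin theory surveyed in Chapter~\ref{Chapter: Preliminares} and on the derived-from-pseudo-Anosov / derived-from-Anosov correspondence. The conceptual engine is the classical dichotomy relating pseudo-Anosov homeomorphisms and mixing hyperbolic saddle-type basic pieces: blowing up the singular orbits of a pseudo-Anosov homeomorphism along their separatrices produces a Smale diffeomorphism on a surface with boundary whose maximal invariant set is a mixing saddle-type basic piece, and conversely a ``derived from pseudo-Anosov'' (DA) construction collapses the complementary impasse-free discs back to singular points. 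A Markov partition is preserved (up to the combinatorial bookkeeping of the blow-up, which does not change the geometric type) under both operations, so the geometric type is the common invariant that travels back and forth.

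First I would establish $i) \Rightarrow ii)$. Assume $T$ is realized by a mixing saddle-type basic piece $K$ of a Smale surface diffeomorphism $f$ with no topological impasse, via a geometric Markov partition $\cR$ of geometric type $T$. The key point is that, because $K$ has no impasse, the complementary regions of $\delta(K)$ inside the domain $\Delta(K)$ that are ``filled in'' are discs bounded by a single $s$-arc and a single $u$-arc only in the degenerate way that can be collapsed; more precisely, one performs the inverse of the blow-up: each periodic $s$-boundary (respectively $u$-boundary) point of $K$ carries a free unstable (respectively stable) separatrix (Proposition~\ref{Prop: sub-boundary points 2.1.1}), and one collapses each such free separatrix arc down to a point, turning the boundary curves of $\Delta(K)$ into $k$-prong (or $1$-prong) singularities. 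The mixing hypothesis guarantees that $T$ has no double boundaries, so no rectangle degenerates, and the absence of impasse is exactly what is needed for the collapsing map to be well defined and to produce a genuine singular foliation rather than leaving a ``pocket''. The resulting map on the closed surface is a generalized pseudo-Anosov homeomorphism, the image of $\cR$ is a geometric Markov partition, and its geometric type is still $T$ because collapsing free separatrices does not alter which horizontal sub-rectangle maps to which vertical sub-rectangle nor the orientation data. Hence $T \in \mathcal{GT}(p\AA)$.

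Next, $ii) \Rightarrow iii)$. Suppose $T = T(g,\cR_g)$ for a generalized pseudo-Anosov homeomorphism $g$ with geometric Markov partition $\cR_g$. That the incidence matrix $A(T)$ is mixing is Lemma~\ref{Lemm: incidence matriz pA is mixing}. For the finiteness of the genus, blow up $g$ along the separatrices of its singular orbits to obtain a Smale diffeomorphism $\hat g$ on a surface with boundary, with a mixing saddle-type basic piece $\hat K$ realizing $T$; since $\hat K$ sits inside a compact surface of finite genus, Corollary~7.3.10 of \cite{bonatti1998diffeomorphismes} (quoted in Theorem~\ref{Theo: finite genus iff realizable}) forces $\text{gen}(T) < \infty$. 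Finally, $T$ cannot have an impasse: an impasse for $\hat K$ would be a disc in the blown-up surface disjoint from $\hat K$ bounded by an $s$-arc and a $u$-arc, and pushing this forward to $g$ would produce a disc disjoint from the (dense-leaved) invariant foliations of $g$ bounded by a stable and an unstable arc meeting only at their endpoints — impossible since the leaves of $\cF^{s,u}$ are dense in $S$ (Proposition~\ref{Prop: pseudo-Anosov properties.}) and a nontrivial stable leaf meeting such a disc would have nowhere to go. By Theorem~\ref{Theo: Geometric and combinatoric are equivalent} the combinatorial impasse is the right notion here, so $T$ has no impasse in the combinatorial sense either.

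For $iii) \Rightarrow i)$: given that $A(T)$ is mixing, Proposition~\ref{Prop: mixing+genus+impase is algorithm} (or directly Proposition~\ref{Prop: bound positive incidence matriz }) shows $T$ has no double boundaries, so Theorem~\ref{Theo: finite genus iff realizable} together with the finite-genus hypothesis yields a Smale surface diffeomorphism $f$ with a saddle-type basic piece $K$ realizing $T$; mixing of $A(T)$ makes $K$ mixing. The remaining assertion is that one can choose this realization with no impasse, which follows since the absence of a combinatorial impasse for $T$ is equivalent, by Theorem~\ref{Theo: Geometric and combinatoric are equivalent}, to the absence of a topological impasse for $K$. The main obstacle I anticipate is the careful handling of the blow-up / collapse correspondence in $i) \Leftrightarrow ii)$: one must check that blowing up a $1$-prong (spine) behaves correctly, that the Markov partition and its geometric type are genuinely unchanged, and — most delicately — that ``no impasse'' is precisely the obstruction to reversing the blow-up, i.e.\ that an impasse in the Smale picture is exactly what would prevent collapsing back to a singular foliation without creating a spurious complementary region. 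The uniqueness of the domain $\Delta(K)$ (Theorem~\ref{Theo: Presentation in a domain}, Corollary~\ref{Coro: Uniqueness of domain}) is what makes this correspondence canonical and hence well defined on geometric types.
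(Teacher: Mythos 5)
Your chain $i)\Rightarrow ii)\Rightarrow iii)\Rightarrow i)$ is the same cyclic structure the paper uses, and two of the three legs essentially coincide with the paper's argument: for $i)\Rightarrow ii)$ the paper invokes the Bonatti--Jeandenans collapse (Theorem~\ref{Theo: Basic piece projects to pseudo-Anosov}) and then proves in detail that the projected partition $\pi(\cR)$ is again a geometric Markov partition of the same geometric type (Proposition~\ref{Prop: type of basic piece is type of pseudo-Anosov}); what you describe as ``collapsing the free separatrices'' and your assertion that the type is unchanged are exactly these two steps, except that you propose to rebuild the collapse by hand rather than cite it, and you pass over the type-preservation (rectangles project to rectangles, orientability of $S'$, matching of sub-rectangles and of $\epsilon$), which is where the paper spends most of its effort. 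Your $iii)\Rightarrow i)$ is the paper's argument verbatim: mixing of $A(T)$ rules out double boundaries, Theorem~\ref{Theo: finite genus iff realizable} gives a realization as a saddle-type basic piece, mixing makes it a basic piece, and Theorem~\ref{Theo: Geometric and combinatoric are equivalent} transfers the absence of a combinatorial impasse to the absence of a topological one.

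The genuine gap is in $ii)\Rightarrow iii)$. You derive finite genus and absence of impasse by first ``blowing up $g$ along the separatrices of its singular orbits to obtain a Smale diffeomorphism $\hat g$ \ldots with a mixing saddle-type basic piece $\hat K$ realizing $T$'' and then quoting the necessity results. That blow-up claim is nowhere available in the paper and is not a routine step: blowing up a pseudo-Anosov homeomorphism yields, at best, a homeomorphism of a surface with boundary (the paper only records this, citing \cite{fathi2021thurston}); to apply \cite[Corollaire~7.3.10]{bonatti1998diffeomorphismes} (Theorem~\ref{Theo: finite genus iff realizable}) and Theorem~\ref{Theo: Geometric and combinatoric are equivalent} you need an actual Smale diffeomorphism of a compact surface whose saddle-type basic piece carries a geometric Markov partition of the \emph{same} geometric type $T$, i.e.\ you are assuming a statement essentially as strong as $ii)\Rightarrow i)$, which is what the whole proposition is meant to establish. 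So as written the middle leg is circular modulo an unproved hyperbolization/DA construction. The paper avoids this entirely: it proves $ii)\Rightarrow iii)$ directly and combinatorially, by taking an affine realization of $T$ alongside the pseudo-Anosov Markov partition, introducing the induced, gluing and dynamical orientations on corresponding boundary curves, and showing (Lemma~\ref{Lemm: gamma periodic points} and Lemmas~\ref{Lemm: T pA class then no condition 1}--\ref{Lemm: T pA class then no condition 3}, Corollary~\ref{Coro: T pA clas no com impasse}) that a ribbon configuration giving any of the three obstructions or an impasse would force a closed stable leaf, an extra periodic point inside an $s$-arc, or incompatible separatrices at a spine --- contradicting basic pseudo-Anosov properties; finite genus then follows from Theorem~\ref{Theo: finite type iff non-obtruction} without ever producing a Smale realization. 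To repair your argument you would either have to carry out and justify the blow-up-to-Smale construction (including preservation of the geometric type), or replace that leg by a direct argument of the paper's kind.
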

 
In  Proposition \ref{Prop: mixing+genus+impase is algorithm} we demonstrate that  there exist a finite algorithm that start whit any geometric type and determines if the three properties of last item are or not satisfy with the proper bounds in the numbers of iterations. This is discussed in the first section when we translate de definitions of finite genus and impasse that where given in therms of a realization in the Chapter of preliminaries, to some combinatorial and algebraic formulations in therms of the geometric type, after that we can determine in a precise time when $T$ is realizable by check such formulas.

The proof of \ref{Prop: pseudo-Anosov iff basic piece non-impace} is presented  along  three sections: Section \ref{Sec: type basic piece then pA} determine that Item $i)$ implies Item $ii)$, Section \ref{Sec: Tipes PA finite genus but not impasse} that  Item $ii)$ implies Item $iii)$ and Section \ref{Sec: finite genus no impas implies basic piece} close the cycle by proof that Item $iii)$ implies Item $i)$.

\section{The geometric type of a mixing basic piece without impasse}\label{Sec: type basic piece then pA}

In this section, our goal is to prove that if a geometric type $T$ is realized as a mixing, saddle-type basic piece of a surface Smale diffeomorphism without impasse, then $T$ is in the pseudo-Anosov class, in this manner we obtain that Item $(1)$ implies Item $(2)$ in the Proposition \ref{Prop: pseudo-Anosov iff basic piece non-impace}. Our starting point is the following theorem by Christian Bonatti and Emmanuelle Jeandenans.(see \cite[Theorem 8.3.1]{bonatti1998diffeomorphismes}):

\begin{theo}\label{Theo: Basic piece projects to pseudo-Anosov}

	Let $f$ be a Smale diffeomorphism of a compact and oriented surface $S$, $K$ be a non-trivial saddle-type basic piece of $f$, $\Delta(K)$ be its domain, and suppose that $K$ does not have an impasse. Then, there exists a compact surface $S'$, a generalized pseudo-Anosov homeomorphism $\phi$ on $S'$, and a continuous and surjective function $\pi: \Delta(K) \rightarrow S'$ such that:
	
	$$
	\pi\circ f\vert_{\Delta(K)}=\phi\circ \pi.
	$$

Furthermore, the semi-conjugation $\pi$ is injective over the periodic orbits of $K$, except for the finite number of periodic $s,u$-boundary points..
\end{theo}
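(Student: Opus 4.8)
The plan I would follow to establish Theorem \ref{Theo: Basic piece projects to pseudo-Anosov} (the deep input of Bonatti--Jeandenans) is to realize $\phi$ as a quotient of $f\vert_{\Delta(K)}$ obtained by collapsing the complementary regions of the stable and unstable laminations of $K$. First I would analyse the position of $\Delta(K)$ relative to $W^s(K)$ and $W^u(K)$: since $K$ is a saddle-type saturated set without double boundaries it is totally disconnected, and by Proposition \ref{Prop: sub-boundary points 2.1.1} together with the local product structure, the closure of each connected component of $\Delta(K)\setminus W^s(K)$ is a ``strip'' bi-foliated by arcs of unstable leaves and bounded by arcs of stable leaves, finitely many of which are periodic; symmetrically for $W^u(K)$. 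The hypothesis that $K$ has no impasse is used exactly here: it forbids a complementary strip from being a bigon bounded by one $s$-arc and one $u$-arc, which is precisely the configuration that would obstruct the quotient from being a surface.

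Next I would define the equivalence relation $\sim$ on $\Delta(K)$ generated by collapsing, along the unstable direction, every strip complementary to $W^s(K)$ and, along the stable direction, every strip complementary to $W^u(K)$ (this is the same mechanism as the rail equivalence of Chapter \ref{Chapter: Markov partitions}), set $S':=\Delta(K)/\!\sim$ with the quotient topology, and let $\pi$ be the quotient map. The main obstacle is proving that $S'$ is a closed surface: Hausdorffness and second countability follow from the fact that complementary strips are closed and that their transverse widths shrink uniformly under iteration by hyperbolicity, but the delicate point is showing that $S'$ is locally Euclidean, which requires a case-by-case inspection of the quotient of a neighbourhood of a point $x\in K$, where $k$ incident stable sectors and $k$ incident unstable sectors collapse to a chart of a $k$-prong; $k=1$ occurs precisely at some periodic $s,u$-boundary points, which is the source of the spines of $\phi$, and the ends of $\Delta(K)$ (the removed periodic points) get filled in by the same collapsing.

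Finally I would transport the dynamics and the transverse measures. The relation $\sim$ sends $s$-strips to $s$-strips and $u$-strips to $u$-strips, hence is $f$-invariant, so $f\vert_{\Delta(K)}$ descends to a homeomorphism $\phi$ of $S'$ with $\pi\circ f\vert_{\Delta(K)}=\phi\circ\pi$; the images $\pi(W^s(K))$ and $\pi(W^u(K))$ are transverse singular foliations of $S'$, and since the transverse measures $\mu^s,\mu^u$ of the laminations of $K$ are constant along the collapsed directions, they descend to invariant transverse measures on which $\phi$ acts by the dilatation factor of $f\vert_K$, so $(\phi,S')$ is a generalized pseudo-Anosov homeomorphism. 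For the last assertion, a fibre $\pi^{-1}(y)$ reduces to a point unless $y$ lies on a collapsed strip, and by Proposition \ref{Prop: sub-boundary points 2.1.1} the only periodic points lying on periodic complementary strips are the finitely many periodic $s,u$-boundary points of $K$, so $\pi$ is injective over every other periodic orbit. With the theorem in hand, the section's goal (Item (1) $\Rightarrow$ Item (2)) follows by pushing a geometric Markov partition of $K$ of type $T$ through $\pi$ to obtain a geometric Markov partition of $\phi$ of the same type, whence $T\in\mathcal{G}\mathcal{T}(p\AA)$.
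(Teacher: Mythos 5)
Your overall strategy is the right one, and it is the one the paper uses: this theorem is not reproved here but quoted from Bonatti--Jeandenans \cite[Theorem 8.3.1]{bonatti1998diffeomorphismes}, and the text then recalls the relevant equivalence relation $\sim_R$ on $\Delta(K)$ (Definition \ref{Defi: sim R equiv relation}) whose quotient gives $(S',\phi)$. However, there is a genuine gap in the way you define the relation. You collapse only the complementary \emph{strips} of $W^s(K)$ and $W^u(K)$, asserting that every connected component of $\Delta(K)\setminus W^s(K)$ is a strip bi-foliated by $u$-arcs. That is false near the boundary periodic points: the components meeting a free separatrice of a corner point contain the wedge regions $C(p)$ bounded by two free separatrices, and these are not strips, nor are the free separatrices themselves gaps with both endpoints in $K$. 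The paper's exposition handles exactly this via neighbor points, adjacent separatrices and cycles (Definitions \ref{Defi: relation cycle}--\ref{Defi: P(C) region of cycle}): an entire set $\mathcal{P}(\mathcal{C})$ --- all periodic boundary points of a cycle, their free separatrices, and the regions $C(p)$ --- forms a single equivalence class and collapses to one singular point. Your strip-generated relation leaves these wedges and rays essentially untouched, so the quotient would not be a closed surface, and your local-chart analysis (``$k$ incident sectors of a point $x\in K$ give a $k$-prong chart'') misses that a singularity of $\phi$ typically absorbs \emph{several distinct} periodic boundary points lying in one cycle, which is also why $\pi$ fails to be injective exactly on those orbits.

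Relatedly, the role you assign to the no-impasse hypothesis is not where it actually enters. It is not merely that impasses are bigon-shaped complementary components; it is used to make the cycle machinery work: absence of impasse guarantees that a $u$-arc cannot return to the same stable separatrice, so the adjacent separatrice is well defined and distinct (Lemma \ref{Lemm: well define neighboor }), and, combined with the results quoted from \cite{bonatti1998diffeomorphismes}, it rules out infinite chains of arcs in the boundary of the complementary components of $\delta(K)$, so that each cycle class $\mathcal{P}(\mathcal{C})$ is finite and closed and the collapsed point is an honest $k$-prong (possibly a spine). Once the relation is corrected to include the cycle classes, the rest of your plan --- Hausdorffness and local Euclidean structure of the quotient, descent of $f$ and of the transverse measures, and injectivity of $\pi$ off the periodic $s,u$-boundary points --- follows the same lines as the cited proof.
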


The surface $S'$ has an induced orientation through $\pi$, we are going to clarify this point in Lemma \ref{Lemm: S is oriented} . Using the function $\pi$ and the orientation of $S'$, we will establish a more detailed version of Theorem \ref{Theo: Basic piece projects to pseudo-Anosov} because we aim to compare the geometric types of the basic piece and the pseudo-Anosov homeomorphism. We have defined the induced partition Markov partition by an orientation-preserving homeomorphism in the context of pseudo-Anosov homeomorphisms. 
There is a similar construction using $\pi$ that produces the induced Markov partition, which we will refer to in the following Proposition. We will provide more information about this concepts before starting the proof, but let's present the statement first.

\begin{prop}\label{Prop: type of basic piece is type of pseudo-Anosov}
	 Let $f:S\rightarrow S$ be a Smale surface diffeomorphism. Let $K$ be a mixing saddle-type basic piece of $f$ without impasse, and $\mathcal{R}=\{R_i\}_{i=1}^n$ be a geometric Markov partition of $K$ of geometric type $T$. Let $\pi:\Delta(K) \rightarrow S'$ be the projection, and let $\phi$ be the generalized pseudo-Anosov homeomorphism described in Theorem \ref{Theo: Basic piece projects to pseudo-Anosov}.
	 Then, the induced partition $\pi(\mathcal{R})=\{\pi(R_i)\}_{i=1}^n$ is a geometric Markov partition of $\phi$ of geometric type $T$.
\end{prop}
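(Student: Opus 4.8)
The plan is to transfer the combinatorial data of the geometric Markov partition $\mathcal{R}$ of $K$ to the pseudo-Anosov homeomorphism $\phi$ via the semi-conjugation $\pi$, in the same spirit as the analysis of induced Markov partitions carried out for orientation-preserving homeomorphisms in Chapter~\ref{Chapter: Markov partitions}. First I would record what is needed about $\pi$ and $S'$: by Theorem~\ref{Theo: Basic piece projects to pseudo-Anosov}, $\pi:\Delta(K)\to S'$ is continuous, surjective, semi-conjugates $f|_{\Delta(K)}$ with $\phi$, and is injective away from the finitely many periodic $s,u$-boundary points of $K$. I would first prove Lemma~\ref{Lemm: S is oriented}: $S'$ inherits an orientation because $\pi$ is a quotient map collapsing only arcs/disks disjoint from the interior of the rectangles (the identifications happen along the boundary leaves coming from $\delta(K)$), so $\pi$ restricted to the interior of each $R_i$ is an orientation-preserving embedding onto $\overset{o}{\pi(R_i)}$; this is exactly the hypothesis needed to apply the machinery of Definition~\ref{Defi: induced geometric Markov partition} and Theorem~\ref{Theo: Conjugated partitions same types} ``with $h$ replaced by $\pi$''. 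Each $\pi(R_i)$ is a rectangle for $\phi$: the stable/unstable foliations of $K$ (extended to $\Delta(K)$) are sent by $\pi$ to the invariant foliations $(\mathcal{F}^s,\mu^s),(\mathcal{F}^u,\mu^u)$ of $\phi$, and $\pi\circ r_i$ is a valid parametrization, with the orientation induced from that of $R_i$.

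Next I would verify that $\pi(\mathcal{R})=\{\pi(R_i)\}_{i=1}^n$ is a Markov partition of $\phi$. Covering $S'$ is immediate from surjectivity of $\pi$ together with $K\subset\cup R_i$ and the fact that $\pi(\Delta(K))=S'$ (here one uses that $\Delta(K)$ deformation retracts onto, or is ``controlled'' by, the union of the rectangles, so $\pi(\cup R_i)=S'$). Disjointness of interiors: since $\pi$ is injective over the complement of the $s,u$-boundary periodic leaves and the rectangles of a Markov partition of $K$ have disjoint interiors, $\overset{o}{\pi(R_i)}\cap\overset{o}{\pi(R_j)}=\emptyset$ for $i\ne j$ — this is the place where one must argue carefully that $\pi$ does not glue an interior point of $R_i$ to an interior point of $R_j$; I would invoke Proposition~\ref{Prop: sub-boundary points 2.1.1} and the description of $\Delta(K)$ (the identifications of $\pi$ only occur along stable/unstable manifolds of $s,u$-boundary periodic points, which lie in $\partial\mathcal{R}$ after possibly refining, or which in any case do not meet $\overset{o}{R_i}$). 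For the Markov property I would use the characterization of Proposition~\ref{Prop: Markov criterion boundary}: because $\pi$ semi-conjugates $f$ and $\phi$ and maps $\partial^s\mathcal{R}$ ($f$-invariant) onto $\partial^s\pi(\mathcal{R})$ and $\partial^u\mathcal{R}$ ($f^{-1}$-invariant) onto $\partial^u\pi(\mathcal{R})$, the stable boundary of $\pi(\mathcal{R})$ is $\phi$-invariant and the unstable boundary is $\phi^{-1}$-invariant; hence $\pi(\mathcal{R})$ is a Markov partition for $\phi$.

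Finally I would match the geometric types. This is the routine part, mirroring Lemma~\ref{Lemm: Conjugated partitions same type.} and the paragraph preceding Theorem~\ref{Theo: Conjugated partitions same types}: since $\pi(\overset{o}{R_i})=\overset{o}{\pi(R_i)}$ (up to the boundary identifications, which do not affect connected-component counts in the interior), $C$ is a connected component of $\overset{o}{R_i}\cap f^{-1}(\overset{o}{R_j})$ iff $\pi(C)$ is a connected component of $\overset{o}{\pi(R_i)}\cap \phi^{-1}(\overset{o}{\pi(R_j)})$, so horizontal sub-rectangles correspond to horizontal sub-rectangles, $h_i$ and $v_i$ are preserved, and $\pi(H^i_j)=\overline{H^i_j}$, $\pi(V^k_l)=\overline{V^k_l}$ with the chosen orientations. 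The relation $\phi(\overline{H^i_j})=\pi f\pi^{-1}(\overline{H^i_j})=\pi(V^k_l)=\overline{V^k_l}$ gives $\rho=\rho'$, and because $\pi$ preserves the transverse orientations on each $R_i$, the sign $\epsilon(i,j)$ is preserved as well, so $\Phi_T=\Phi_{T'}$ and $T(\phi,\pi(\mathcal{R}))=T$.

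The main obstacle is the disjointness-of-interiors / injectivity-on-interiors step: one must show that the collapsing performed by $\pi$ (passing from $\Delta(K)$, where $K$ is a Cantor-like saturated set, to the surface $S'$ carrying a genuine pseudo-Anosov map) identifies only points lying in the $s,u$-boundary structure, so that no interior point of one rectangle is glued to an interior point of another and the foliations and orientations descend coherently. Handling this cleanly likely requires a preliminary refinement of $\mathcal{R}$ so that all periodic $s,u$-boundary points of $K$ lie on $\partial\mathcal{R}$ (which does not change the geometric type in a way that matters, or can be undone), together with a careful citation of the structure of $\pi$ and $\Delta(K)$ from \cite[Chapter~8]{bonatti1998diffeomorphismes}. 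Once this is in place, the rest is bookkeeping identical to the orientation-preserving case already treated.
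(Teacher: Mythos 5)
There is a genuine gap, and it sits at the heart of your plan: you assert that $\pi$ restricted to the interior of each $R_i$ is an (orientation-preserving) embedding onto $\overset{o}{\pi(R_i)}$, with identifications occurring only along boundary leaves, and you then propose to run the induced-partition machinery of Definition~\ref{Defi: induced geometric Markov partition} and Theorem~\ref{Theo: Conjugated partitions same types} ``with $h$ replaced by $\pi$''. This misreads Theorem~\ref{Theo: Basic piece projects to pseudo-Anosov}: the injectivity statement there concerns periodic orbits only. The map $\pi$ is the quotient by the relation $\sim_R$ of Definition~\ref{Defi: sim R equiv relation}, whose non-trivial classes (minimal rectangles, arcs in non-boundary invariant manifolds, cycles) are dense inside every rectangle of $\mathcal{R}$, because $K$ is locally a Cantor set times an interval and all the complementary gaps get collapsed. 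Hence $\pi|_{\overset{o}{R_i}}$ is very far from injective, $\pi\circ r_i$ is not a parametrization in the sense of Definition~\ref{Defi: rectangle}, and Theorem~\ref{Theo: Conjugated partitions same types} (which needs a conjugating homeomorphism) cannot be invoked. For the same reason your disjointness-of-interiors argument (``$\pi$ is injective over the complement of the $s,u$-boundary leaves'') and your proposed proof of Lemma~\ref{Lemm: S is oriented} do not go through as stated.

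What is actually required, and what the paper does, is precisely the work your outline skips: (i) show that the quotient of $\overset{o}{R_i}$ by $\sim_R$ is an open rectangle by building a parametrization from the transverse (Margulis) measures (Lemma~\ref{Lemm: projection of rectangle by pi}, Lemma~\ref{Lemm: psi bonatti }), and then extend this parametrization continuously to the closed rectangle, checking that each boundary side lands in a single leaf and that the corners are well defined (Lemma~\ref{Lemm: pi(R) is a rectangle}); (ii) prove disjointness of the interiors of $\pi(R_i)$ and $\pi(R_j)$ not from injectivity but by showing that every equivalence class through a point of $\pi(\overset{o}{R_i})$ — singleton, minimal rectangle, or arc — lies entirely in $\overset{o}{R_i}$ (Lemma~\ref{Lemm: disjoint interiors}); (iii) obtain the orientation of $S'$ by checking that the orientations induced by the measure-parametrizations on adjacent rectangles are coherent along shared stable boundaries and around cycles (Lemma~\ref{Lemm: S is oriented}). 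Your covering, boundary-invariance, and type-matching steps (correspondence of horizontal/vertical sub-rectangles, $\rho$ and $\epsilon$ preserved) are in the right spirit and match the paper's Lemmas~\ref{Lemm: the rectangles pi R cover S}, \ref{Lemm: Boundaries are f-invariant}, \ref{Lemm: pi send sub-rec in sub rec} and Corollary~\ref{Coro: Same sub-rectangle}, but they all rest on first knowing that $\pi(R_i)$ is a genuine oriented rectangle and that interiors do not overlap, which must be established by the measure-theoretic quotient argument rather than by any injectivity of $\pi$. The suggested preliminary refinement of $\mathcal{R}$ does not remove this difficulty, since the collapsed classes lie in the interiors of the rectangles no matter how one refines.
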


The proof requires two main results. First, Proposition \ref{Prop: pi cR is Markov partition} demonstrates that $\pi(\cR)$ is a Markov partition for $\phi$. Under certain natural assumptions regarding the orientation of the rectangles in $\pi(\cR)$,  $\pi(\cR)$ possesses a well-defined geometric type. Subsequently, Lemma \ref{Lemm: pi send sub-rec in sub rec} establishes the relationships between the vertical and horizontal sub-rectangles of $\cR$ and $\pi(\cR)$. Finally in the sub subsection \ref{Subsec: Proof Proposition} we provide a concise proof of Proposition \ref{Prop: type of basic piece is type of pseudo-Anosov}.

%%%%%%%%%%%%%
The Bonatti-Jeandenans Theorem \ref{Theo: Basic piece projects to pseudo-Anosov} contains all the ideas that we are going to use achieve Proposition \ref{Prop: type of basic piece is type of pseudo-Anosov}. This imposes the task of explaining how the function $\pi$ collapses the invariant laminations of $f$. In a simplified manner, the strategy in the proof of Bonatti-Jeandenans can be divided into the following parts:

\begin{itemize}
\item An equivalence relation $\sim_{R}$ is defined on the domain of $K$, $\Delta(K)$.

\item It is proven that the quotient space of $\Delta(K)$ by $\sim_{R}$ is a compact surface $S'$, and the diffeomorphism $f$ induces a generalized pseudo-Anosov homeomorphism $\phi$ on $S'$.

\item The function $\pi:S\rightarrow S'$  associates every point in $S$ with its equivalence class, i.e., it is the canonical projection.

\item  It is proved that $\pi$ is a semi-conjugation with the desired properties.
\end{itemize}

Understand the equivalence relation $\sim_{R}$ is to understand the function $\pi$. For this reason, we will explain how they work. The facts about $K$ and its invariant manifolds that were proven in \cite[Proposition 2.1.1]{bonatti1998diffeomorphismes} and recalled in Proposition \ref{Prop: sub-boundary points 2.1.1} will be especially useful at this point.
 
 \subsection{The equivalent relation $\sim_{R}$ described by parts.}
 
Assume that $f:S\rightarrow S$ is a Smale surface diffeomorphism and $K$ is a non-trivial, mixing, saddle-type basic piece of $f$ without double boundary points nor impasse. These are the only general assumptions in this section. 
 
 \subsubsection{Neighbor points and adjacent separatrices}
 
 In Figure \ref{Fig: Neigboor}, there are four periodic points of a saddle type basic piece $K$ with non-double boundaries, let us explain the notations:
 
 \begin{itemize}
\item[i)] The point $p_1$ is a $s$-boundary point but not a $u$-boundary point, while the point $p_3$ is a $u$-boundary point but not a $s$-boundary point.

\item The points $p_2$ and $p_3$ are $s$ and $u$ boundary points, respectively; we refer to them as \emph{corner points}.

\item The non-free separatrices of these periodic points are labeled as $W^s_{1,2}(p_1)$, $W^{s,u}_1(p_2)$, $W^u_{1,2}(p_3)$, and $W^{s,u}_1(p_4)$.

\item The unstable intervals $J_{p_1,p_2}(1,1)$ and $J'_{p_1,p_2}(1,1)$ are $u$-arches that have one endpoint in $W^s_1(p_1)$ and the other in $W^s_1(p_2)$. Similarly, $I_{p_3,p_4}(2,1)$ and $I'_{p_3,p_4}(2,1)$ are $s$-arcs that connect $W^u_2(p_3)$ with $W^u_1(p_4)$.

\item The rest of the arcs follows the same login in their notations.

 \end{itemize}

We are going  to discuss the notion of \emph{neighborhood point}. Take a $s$-boundary periodic point of $K$, in our example we take $p_1$ and let $x\in W^s_1(p_1) \cap K$. Like $x$ is not a $u$-boundary point, there exist a $u$-arc, like $J_{p_1,p_2}(1,1)$, with one end point in $x$ and the other in a point $y\in W^s_1(p_2)$, where $p_2$ is a $s$-boundary point and $W^s_1(p_2)$ is non-free separatrice. In this setting,  $p_2$ is what we call a \emph{neighbor point} of $p_1$ and $W^s_1(p_2)$ is an \emph{adjacent separatrice}  of $W^s_1(p_1)$.

This procedure can be applied to any $u$ or $s$ periodic boundary point. For example $p_2$ is a $u$-boundary point. Therefore if $x\in W^u_1(p_2)\cap K$ there is a $s$-arc, $I_{p_2,p_3}(1,1)$, that have and end point in $x$ and the other in a unstable separatrice of a $u$-boundary point, in the case of the picture such point is $p_3$, we said that $p_3$ is a neighbor point of $p_2$ and $W^u_1(p_2)$ is an adjacent separatrice of $W^u_1(p_2)$.

In general, let $p$ a $s$ (or $u$)-boundary point and $W^s_1(p)$ ( resp. $W^u_1(p)$) a non-free stable (unstable) separatrice. There are two properties that we what to establish:
\begin{enumerate}
\item The adjacent separatrice of $W^s_1(p)$ (resp. $W^u_1(p)$) is unique, and 
\item The adjacent separatrice of $W^s_1(p)$ (resp. $W^u_1(p)$) is different than $W^s_1(p)$ (resp. $W^u_1(p)$).
\end{enumerate}

Since the basic piece $K$ does not have an impasse, there cannot be a $u$-arc joining two points in the same stable separatrice. As a result, the adjacent separatrice of $W^s_1(p)$ cannot be $W^s_1(p)$ itself. However, it is possible that the adjacent separatrice of $W^s_1(p)$ is the other stable separatrice of $p$. We will soon see that this configuration leads to the formation of a spine.  The adjacent separatrices will be well determined  if, for every pair of points $x_1$ and $x_2$ on the same stable separatrice $W^u_1(p)$, the $u$-arcs (see $J_1$ and $J_2$ in Figure \ref{Fig: Neigboor}) that pass through $x_1$ and $x_2$ have their other endpoints in the same stable separatrice.

\begin{figure}[h]
	\centering
	\includegraphics[width=0.8\textwidth]{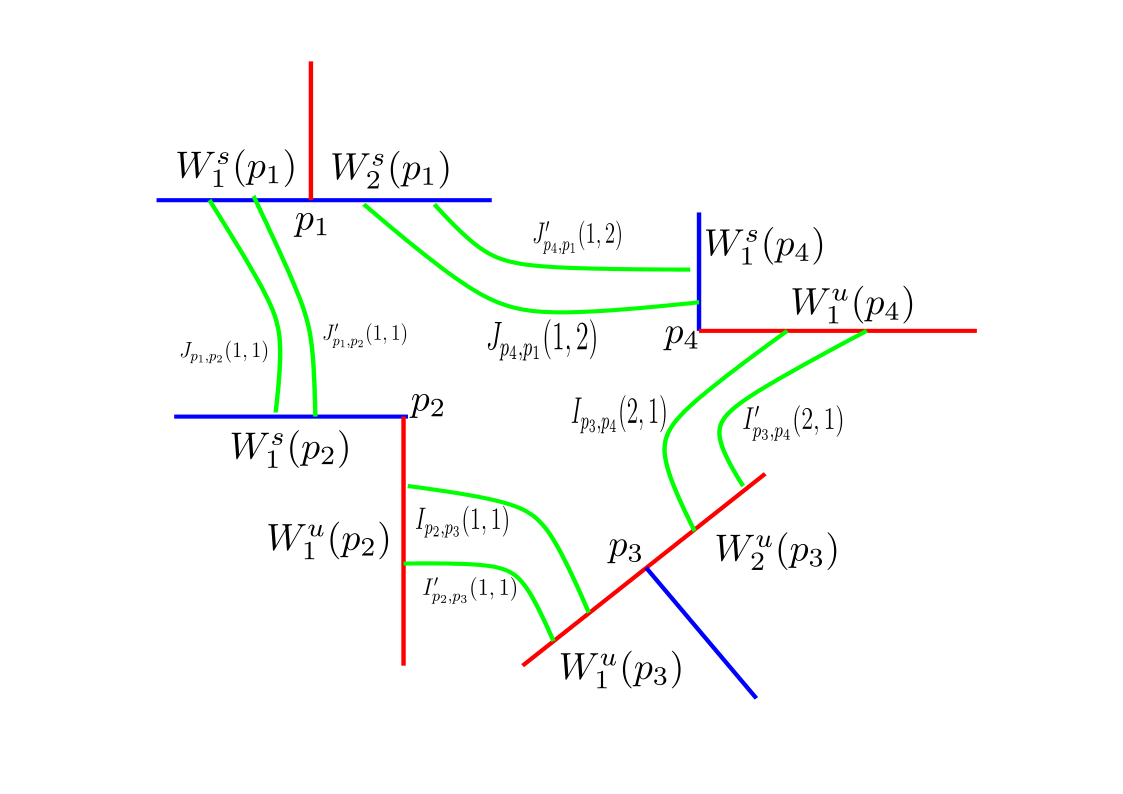}
	\caption{ Neighbors points.}
	\label{Fig: Neigboor}
\end{figure}

These assertions were proven in \cite[Proposition 2.4.9]{bonatti1998diffeomorphismes}. The hypothesis that $K$ does not have an impasse is crucial (see \cite[Definition 2.3.2]{bonatti1998diffeomorphismes} for understanding the statement). We summarize this discussion in the following lemma.

\begin{lemm}\label{Lemm: well define neighboor }
Let $K$ be a saddle-type basic piece without an impasse. If a $u$-arc $J_1$ joins two $s$-boundary stable separatrices $W^s_1(p_1)$ and $W^s_2(p_2)$, then:
\begin{itemize}
\item[i)] The separatrices are different, i.e., $W^s_1(p_1) \neq W^s_2(p_2)$.
\item[ii)]  If $J_2$ is any other $u$-arc with one endpoint in $W^s_1(p_1)$, then the other endpoint of $J_2$ is in $W^s_2(p_2)$.
\end{itemize} 
\end{lemm}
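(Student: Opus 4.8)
\textbf{Proof plan for Lemma \ref{Lemm: well define neighboor }.}

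The plan is to reduce everything to the impasse-free hypothesis and the structure theory of invariant manifolds of saddle-type basic pieces recalled in Proposition \ref{Prop: sub-boundary points 2.1.1}. First I would set up the objects: let $J_1$ be a $u$-arc with endpoints $x_1\in W^s_1(p_1)\cap K$ and $x_2\in W^s_2(p_2)\cap K$, where $W^s_1(p_1)$ and $W^s_2(p_2)$ are stable separatrices of periodic $s$-boundary points (recall from Proposition \ref{Prop: sub-boundary points 2.1.1} that any $s$-boundary point lies in the stable manifold of a periodic $s$-boundary point, so no generality is lost in phrasing the statement with periodic points). The $u$-arc $J_1$ is by definition an interval of an unstable leaf whose interior misses $K$ but whose endpoints lie in $K$. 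Since $x_1,x_2$ are $s$-boundary points, each has a free unstable separatrix on one side (Proposition \ref{Prop: sub-boundary points 2.1.1}, item on free separatrices), and the arc $J_1$ runs along the non-free side; the point is that $J_1$ together with a stable arc between $x_1$ and $x_2$ would bound a disk region, which is exactly the impasse configuration of Definition \ref{Defi: Impasse geo}.

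For item (i), I would argue by contradiction: suppose $W^s_1(p_1)=W^s_2(p_2)=:W$. Then both endpoints of $J_1$ lie on the single stable leaf $W$, and $x_1$, $x_2$ cobound a stable subinterval $\alpha\subset W$. The interior of $\alpha$ need not be disjoint from $K$ a priori, so the first technical step is to shrink: among all $u$-arcs with both endpoints on $W$ and with the property that they, together with the stable subinterval they cobound, bound a disk, choose one that is innermost (this uses compactness of the relevant stable boundary components and the fact that the family of such arcs is controlled — essentially the same extraction argument used in the proof of Lemma \ref{Lemma: gamma not homotopic imagen} and Corollary \ref{key} in the preliminaries, where one iterates the ``if there is a point of $K$ between, there is another ribbon between'' reasoning and uses that the process terminates because $K$ has no double boundary points). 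Having an innermost such arc $J_1'$ with a stable arc $\alpha'$ such that $\overset{o}{D}$ (the disk they bound) is disjoint from $K$, we obtain precisely a topological impasse for $K$ in the sense of Definition \ref{Defi: Impasse geo}. This contradicts the standing hypothesis that $K$ has no impasse. Hence $W^s_1(p_1)\neq W^s_2(p_2)$.

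For item (ii), let $J_2$ be another $u$-arc with one endpoint $y_1\in W^s_1(p_1)$; let its other endpoint be $y_2$, lying on some stable separatrix $W^s_3(p_3)$ of a periodic $s$-boundary point (that $y_2$ lies on such a separatrix is again Proposition \ref{Prop: sub-boundary points 2.1.1}). I want $W^s_3(p_3)=W^s_2(p_2)$. The idea is that the two $u$-arcs $J_1$ and $J_2$, both issuing from the non-free side of $W^s_1(p_1)$, sweep out a family of parallel unstable segments across a rectangle whose stable boundary lies on $W^s_1(p_1)$ on one side; by Remark \ref{Rema: extremal arc} the arc $J_1$ is extremal, i.e. saturated by the unstable lamination on the side away from its ``gap'', and the corresponding rectangle structure forces every $u$-arc starting on $W^s_1(p_1)$ between (or beyond) $J_1$ to land on the same adjacent separatrix. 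Concretely: if $y_2$ were on a stable separatrix different from $W^s_2(p_2)$, then moving $y_1$ continuously along $W^s_1(p_1)$ from $x_1$ to $y_1$ and tracking the other endpoint, one would have to pass a $u$-arc whose far endpoint is a periodic $s$-boundary point $p$ and which therefore has a free unstable separatrix; the two sides of this transition give $u$-arcs landing on two different stable separatrices of the same point $p$, which when combined with the stable boundary of the ambient rectangle yields either a double $s$-boundary point (excluded by Convention \ref{Conv: non double boundary points}) or, after the same innermost-disk extraction as in item (i), an impasse (excluded by hypothesis). Either way we get a contradiction, so $y_2\in W^s_2(p_2)$.

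The main obstacle I anticipate is the ``innermost arc / shrinking'' step: one must make rigorous, from the local product structure of the laminations and the finiteness of periodic boundary points, that from any $u$-arc realizing one of the forbidden configurations one can extract an actual topological impasse (a disk bounded by one $u$-arc and one $s$-arc, disjoint from $K$). This is precisely the content of the machinery around \cite[Corollary 2.4.8, Proposition 2.4.9]{bonatti1998diffeomorphismes}, and the cleanest route is to cite that directly rather than reprove it; the rest of the argument is then the two contradiction arguments sketched above, which are routine given the no-impasse and no-double-boundary hypotheses.
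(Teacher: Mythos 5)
Your proposal ends where the paper itself begins and ends: the paper gives no proof of this lemma at all — it is stated as a summary of \cite[Proposition 2.4.9]{bonatti1998diffeomorphismes} (with the remark that the no-impasse hypothesis is what makes it work), so your recommendation to cite that result directly is exactly the paper's approach. One caution about the auxiliary sketch, in case you ever intend it as a self-contained argument: in item (i) you pass from ``$u$-arc with both endpoints on the same stable separatrix'' to ``impasse'' by assuming the $u$-arc and a stable subinterval cobound a disk, but on a surface of positive genus a simple closed curve made of one unstable and one stable segment need not bound a disk; when the paper needs this very step in the proof of Theorem \ref{Theo: Geometric and combinatoric are equivalent} (iii)$\Rightarrow$(i), it has to iterate the curve under the dynamics and invoke the finite-genus/disjoint-non-isotopic-curves argument of Lemma \ref{Lemma: gamma not homotopic imagen}, and then separately show the disk misses $K$ (Corollary \ref{key}); an ``innermost arc'' extraction alone does not supply this. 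Likewise the continuity/sweeping argument in item (ii) is a heuristic rather than a proof — making it precise is essentially the content of the cited Bonatti--Langevin machinery (their Corollary 2.4.8 and Proposition 2.4.9). So as a plan it is sound precisely because it defers to the reference, as the paper does; as a standalone proof it would still have these two gaps to fill.
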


\begin{defi}
Let $p$ and $q$ be periodic boundary points of the same nature, i.e. $p$ is $s$-boundary (resp. $u$-boundary) if and only if $q$ is $s$-boundary (resp. $u$-boundary) . They are \emph{neighbor points} if one of the following conditions is satisfied:
\begin{enumerate}
\item They are $s$-boundary points and have non-free stable separatrices $W^s_1(p)$ and $W^s_2(q)$, with $W^s_1(p) \neq W^s_2(q)$, for which there exists a $u$-arc with one endpoint in $W^s_1(p)$ and the other in $W^s_2(q)$. In this case, we say that $W^s_1(p)$ is the \emph{adjacent separatrice} of $W^s_2(q)$.

\item They are $u$-boundary points and have unstable separatrices $W^u_1(p)$ and $W^u_2(q)$, with $W^u_1(p) \neq W^u_2(q)$, for which there exists an $s$-arc with one endpoint in $W^u_1(p)$ and the other in $W^u_2(q)$. In this case, we say that $W^u_1(p)$ is the \emph{adjacent separatrice} of $W^u_2(q)$.
\end{enumerate}

\end{defi}

\subsubsection{The cycles}

Next, we define an equivalence relation on the set of periodic $s$-boundary points and $u$-boundary points.

\begin{defi}\label{Defi: relation cycle}
Let $p$ and $q$ two boundary points of $K$ we have the next relations:
 \begin{itemize}
\item  If $p$ and $q$ are $s$-boundary points, then $p$ and $q$ are $\sim_s $-related if they are neighbors points.
\item  If $p$ and $q$ are $u$-boundary points, then $p$ and $q$ are $\sim_u $-related if they are neighbors points.
\item The points $p$ and $q$ are $\sim_c$-related if there is a sequence of boundary periodic points $\{p = p_0, \ldots, p_{k-1} = q\}$ such that $p_i \sim_s p_{i+1}$ or $p_i \sim_u p_{i+1}$ for $i \in \mathbb{Z}/k$ (the integers modulo $k$).
 \end{itemize}
\end{defi}

Is not difficult to see that the relation $\sim_c$ is reflexive, symmetric and transitive, therefore the next lemma is immediate.

\begin{lemm}\label{Lem: equiv cycle}
	The relation $\sim_c$ is a equivalence relation in the finite set of boundary periodic points of $K$, and every equivalent $\mathcal{C}$ class is called a  \emph{cycle}. We denote $\mathcal{C}=\{p_1, \ldots, p_k\}$ to indicate that $p_1\sim_{s,u}p_{i+1}$, in this manner we give an order to the cycle.
\end{lemm}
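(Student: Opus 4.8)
The statement to prove (Lemma~\ref{Lem: equiv cycle}) asserts that $\sim_c$ is an equivalence relation on the finite set of periodic boundary points of $K$. The plan is to verify the three defining properties of an equivalence relation directly from Definition~\ref{Defi: relation cycle}, using the auxiliary relations $\sim_s$ and $\sim_u$ as building blocks.

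First I would establish reflexivity. Given a periodic boundary point $p$, we may take the trivial sequence $\{p_0 = p\}$ with $k=1$; reading indices modulo $k=1$, the (empty) list of consecutive-pair conditions is vacuously satisfied, so $p \sim_c p$. Alternatively, if one prefers a sequence of length at least two, one can exhibit $\{p_0 = p, p_1 = p\}$ and observe that the defining condition only requires $p_i \sim_s p_{i+1}$ \emph{or} $p_i \sim_u p_{i+1}$ for \emph{some} choice at each step, together with the cyclic structure --- but the cleanest route is the length-one sequence. Either way, reflexivity is immediate.

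Next, symmetry. Suppose $p \sim_c q$, witnessed by a sequence $\{p = p_0, p_1, \dots, p_{k-1} = q\}$ with $p_i \sim_{s} p_{i+1}$ or $p_i \sim_u p_{i+1}$ for all $i \in \mathbb{Z}/k$. Here I would use the fact that $\sim_s$ and $\sim_u$ are themselves symmetric: this follows because ``being neighbor points'' is a symmetric notion (the existence of a $u$-arc joining a stable separatrice of $p$ to one of $q$ is the same statement as the existence of a $u$-arc joining a stable separatrice of $q$ to one of $p$), and by Lemma~\ref{Lemm: well define neighboor } these adjacent separatrices are well defined and distinct. Reversing the sequence to $\{q = p_{k-1}, p_{k-2}, \dots, p_0 = p\}$ and relabeling, each consecutive pair is again $\sim_s$- or $\sim_u$-related, so $q \sim_c p$. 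For transitivity, if $p \sim_c q$ via a sequence of length $k$ and $q \sim_c r$ via a sequence of length $l$, I would concatenate the two sequences at the shared endpoint $q$, producing a sequence from $p$ to $r$ in which every consecutive pair is $\sim_s$- or $\sim_u$-related; reindexing modulo the new length gives $p \sim_c r$.

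Finally, I would note that the set of periodic boundary points is finite --- this is precisely the content of items (3) of Proposition~\ref{Prop: sub-boundary points 2.1.1} applied to $s$-boundary and $u$-boundary periodic points --- so the partition into equivalence classes is a partition of a finite set, and each class (a \emph{cycle}) can be written as $\mathcal{C} = \{p_1, \dots, p_k\}$ with a chosen cyclic ordering in which $p_i \sim_{s}$ or $\sim_u$ $p_{i+1}$ for indices read modulo $k$; that such a cyclic enumeration exists within a class follows from the connectivity encoded in the definition of $\sim_c$ together with the fact that within a single class all points are reachable from one another by $\sim_s$/$\sim_u$-steps. The main subtlety --- really the only one --- is being careful that the ``or'' in the definition and the modular indexing are handled consistently when concatenating and reversing sequences; the rest is routine. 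No step here is a serious obstacle, since all the geometric input (well-definedness and symmetry of adjacency, finiteness of periodic boundary points) has already been supplied by Lemma~\ref{Lemm: well define neighboor } and Proposition~\ref{Prop: sub-boundary points 2.1.1}.
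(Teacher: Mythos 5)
Your proof is correct and takes essentially the same approach as the paper, which offers no argument at all beyond asserting that $\sim_c$ is reflexive, symmetric and transitive and calling the lemma immediate; your direct verification (symmetry of the neighbor relation via Lemma \ref{Lemm: well define neighboor }, concatenation of chains for transitivity, finiteness from Proposition \ref{Prop: sub-boundary points 2.1.1}) simply supplies the routine details left to the reader. The one caveat, which you already flag yourself, is that the indexing modulo $k$ in Definition \ref{Defi: relation cycle} must be read as a chain condition on consecutive pairs only (the fact that a class closes up into a genuine cycle is established separately in the paper through the cited Bonatti--Langevin lemmas), and under that reading your length-one reflexivity argument is indeed vacuously valid as claimed.
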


It might not be immediately apparent that if we follow the cycle, we return to the original point, i.e.,  $\mathcal{C}=\{p_i: i\in \ZZ/k \}$, and in particular $p_1=p_k$. However, this is proven in \cite[Lemma 8.3.4]{bonatti1998diffeomorphismes}. Another important feature is that when we follow the adjacent separatrices that determine the cycle, we always return to the same stable or unstable separatrice. In other words, the cycle is closed, this property is ensured by \cite[Lemma 8.3.5]{bonatti1998diffeomorphismes}, which proves that the number of corner points in a cycle is even. Thus, if we start with a stable separatrice, we will end with a stable separatrice of the same point.

In \cite[Proposition 3.1.2]{bonatti1998diffeomorphismes}, it is proven that every connected component of $\Delta(K)\setminus \delta(K)$ (See definitions \ref{Defi: Domain of K} and \ref{Defi: restricted domain} resp.) is homeomorphic to $\mathbb{R}^2$ and is periodic under the action of the diffeomorphism $f$. The boundary of each connected component has two possibilities: it is either the union of two free separatrices of a corner point or it consists of an infinite chain of arcs and their closures. The possibility of having an infinite chain of arcs is excluded by \cite[Proposition 2.6.4, Lemma 2.6.7]{bonatti1998diffeomorphismes}, in conjunction with the fact that there are no impasses in $K$. Consequently, every curve formed by a corner point and its free separatrices corresponds to the boundary of a connected component of $\Delta(K)\setminus \delta(K)$. Let $p$ be a corner point. Based on the previous observations, we can define a set $C(p) \subset \Delta(K)$ as the connected component of $\Delta(K)\setminus\delta(K)$ that is bounded by the (stable or unstable) free separatrices of $p$.

\begin{defi}\label{Defi: P(C) region of cycle}
Let $\mathcal{C} = \{p_1, \ldots, p_k\}$ be a cycle. We denote $\mathcal{P}(\mathcal{C})$ as the union of points in the cycle together with their free separatrices, and the sets $C(p)$ for all the corner points in the cycle.
\end{defi}

\subsubsection{Four different equivalent classes}

Now we can define certain families in $\Delta(K)$ that will serve as models for the equivalence classes for the relation $\sim_R$. They are:

\begin{defi}\label{Defi: equivalen clases sim-r}
		Let $\Delta(K)$ be the domain of the basic piece $K$, consider the four distinguished types of subsets of the domain of $K$.
\begin{itemize}
	\item[i)] The \emph{singletons} $\{x\in K: \, x \text{ is not a boundary point}\}$.
	
	\item[ii)] The \emph{minimal rectangles}, i.e., rectangles whose boundary consists of two unstable arcs and two stable arcs.
	
	\item[ii)] The arcs included in a non-boundary invariant manifold.
	
	\item[iii)] The sets $\cP(C)$, where $C$ is a cycle.
\end{itemize}
\end{defi}

By considering all sets of the form described above, we obtain a partition of $\Delta(K)$ that is invariant under $f$ as its proved in \cite[Lemma 8.3.7]{bonatti1998diffeomorphismes}. Moreover, this partition enables us to define an equivalence relation ( See \cite[Definition 8.3.8]{bonatti1998diffeomorphismes}).

\begin{defi}\label{Defi: sim R equiv relation}
We define $\sim_{R}$ as the equivalence relation where the classes are the components of $\Delta(K)$ that correspond to one of the four families previously described.
\end{defi}

The surface $S'$ in the Bonatti-Jeandenans Theorem \ref{Theo: Basic piece projects to pseudo-Anosov} is the quotient of $S$ by this equivalence relation,  i.e $S'=S/\sim_R$, and the map $\pi: S \rightarrow S'$ is the projection that maps each point to its equivalence class.

\subsection{ The projection of a Markov partition is a Markov partition.}

In this subsection we are going to prove that the projection of the Markov partition $\cR$ by $\pi$ is a Markov partition for $\phi$. For that reason, there are two important aspects to consider: the behavior of rectangles in a Markov partition $(\cR,f)$ under the action of $\pi$, and the behavior of the complement of this Markov partition in $\Delta(K)$. The first situation is summarized in \cite[Lemma 8.4.2]{bonatti1998diffeomorphismes}, which we reproduce below as Lemma \ref{Lemm: projection of rectangle by pi}. Essentially, the lemma states that the interior of a rectangle in the Markov partition projects to the interior of a rectangle under the pseudo-Anosov homeomorphism $\phi$ (see Theorem \ref{Theo: Basic piece projects to pseudo-Anosov}).

\begin{lemm}\label{Lemm: projection of rectangle by pi}
	Let $\cO$ the interior of a rectangle where every boundary component is not isolated in $\cO$ (i.e it boundary is accumulated by $K$ from the inside). The quotient of $\cO$ by the equivalence relation is homeomorphic to  a rectangle $(a_1,a_2)\times (b_1,b_2)\subset \RR^2$. 
	
	Moreover, the stable lamination of $\cO$ given by the segment of stable manifold of $K$, $W^s(K)\cap \cO$ has image the foliation by horizontal lines of the rectangle $(a_1,a_2)\times (b_1,b_2)$ and the unstable lamination of $\cO$ induced by $W^u(K)\cap \cO$ has  image the foliation by vertical lines of such rectangle.
	
	Finally, if the coordinates of the rectangle $(a_1, a_2) \times (b_1, b_2)$ are given by $(s, t)$, then the \emph{Margulis measures} $\upsilon^s$ and $\upsilon^u$ of $f$ pass to induce the measures $dt$ and $ds$, respectively.
	
\end{lemm}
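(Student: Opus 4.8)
**The plan for Lemma~\ref{Lemm: projection of rectangle by pi} (which is cited as \cite[Lemma 8.4.2]{bonatti1998diffeomorphismes}).**

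The plan is to analyze the equivalence relation $\sim_R$ restricted to the interior of a rectangle $\cO$ whose boundary is not isolated in $\cO$, and show that the only nontrivial equivalence classes meeting $\cO$ are individual points, stable arcs of non-boundary leaves, and unstable arcs of non-boundary leaves — never a minimal rectangle and never a set $\cP(\cC)$. The first step is to recall the four types of classes from Definition~\ref{Defi: equivalen clases sim-r}. A set $\cP(\cC)$ contains free separatrices of corner points, and a point of a free separatrix cannot be accumulated by $K$ on the side of the separatrix; combined with the hypothesis that no boundary component of $\cO$ is isolated in $\cO$ (so that $K$ accumulates on every side), one rules out that $\cP(\cC)\cap \cO$ is nonempty. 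A minimal rectangle $Q$ has interior disjoint from $K$; if $Q\cap \cO\neq\emptyset$ its interior would lie in $\cO\setminus K$, but then one of the stable or unstable boundary arcs of $Q$ would be an $s$- or $u$-arc of $\cR$ lying in $\overset{o}{\cO}$, contradicting that the stable/unstable boundary components of $\cO$ are limits of $K$. Hence inside $\cO$ the only classes are: singletons of non-boundary points of $K$, stable arcs contained in non-boundary stable leaves, and unstable arcs contained in non-boundary unstable leaves. This is precisely the structure of the relation that, upon collapsing each stable arc of $W^s(K)\cap\cO$ to a point transversally (and symmetrically for unstable arcs), produces the product of two intervals.

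The second step is to construct the homeomorphism explicitly. Fix a stable leaf-segment $J^u_0$ of $\cO$ (a single unstable segment of the rectangle) and a stable leaf-segment $I^s_0$; use the Margulis transverse measures $\upsilon^s,\upsilon^u$ of $f$ on $\cO$ to define coordinates: to $x\in\cO$ assign $s(x)=\upsilon^s$-distance along $I^s_0$ of the stable leaf through $x$, and $t(x)=\upsilon^u$-distance along $J^u_0$ of the unstable leaf through $x$. Because $\upsilon^{s,u}$ are nonatomic, finite, and fully supported on the product structure of $\cO$, the map $x\mapsto (s(x),t(x))$ descends to a continuous injection of $\cO/\sim_R$ into an open rectangle $(a_1,a_2)\times(b_1,b_2)\subset\RR^2$; surjectivity onto the rectangle follows from the local product structure of the hyperbolic set together with the fact that $\cO$ is ``full'' (no isolated boundary). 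Invariance of the measures, namely $f_*\upsilon^s=\lambda^{-1}\upsilon^s$ and $f_*\upsilon^u=\lambda\upsilon^u$, is what forces these coordinates to send $W^s(K)\cap\cO$ to horizontal lines, $W^u(K)\cap\cO$ to vertical lines, and the measures $\upsilon^s,\upsilon^u$ to $dt$ and $ds$ respectively, since the pushforward of a Margulis measure along the holonomy of the invariant lamination is, up to scale, Lebesgue on the transversal.

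The main obstacle is verifying that the quotient map is genuinely a homeomorphism onto an \emph{honest} rectangle $(a_1,a_2)\times(b_1,b_2)$, rather than onto some more degenerate set: one must check that collapsing the arcs does not identify points that the product coordinates would separate, and conversely that the product coordinates separate all $\sim_R$-classes inside $\cO$. This is where the hypothesis that every boundary component of $\cO$ is accumulated by $K$ from inside is essential — it guarantees the transverse measures $\upsilon^{s,u}$ assign positive mass to every nontrivial sub-transversal, hence the coordinate functions $s,t$ are strictly monotone along transversals and the quotient topology matches the subspace topology of the rectangle. Once this is established, the identification of the images of the laminations and of the Margulis measures with $dt$, $ds$ is a direct bookkeeping with the scaling relations, and the lemma follows.
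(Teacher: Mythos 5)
Your overall strategy (coordinates built from the transverse/Margulis measures, then check that the fibers are exactly the $\sim_R$-classes inside $\cO$) is the same as the paper's, which follows Bonatti--Jeandenans by setting $\psi(m)=(\epsilon\int_0^s d\upsilon^u,\ \nu\int_0^t d\upsilon^s)$ for a parametrization $r$ of $\overline{\cO}$ and observing that $\psi$ descends to a homeomorphism of $\cO/\sim_R$ onto the open affine rectangle. But your execution has two genuine errors that break the argument.

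First, your Step 1 claim that no minimal rectangle meets $\cO$ is false. Inside a Markov rectangle of a saddle-type basic piece, $K\cap\cO$ is locally a product of two Cantor sets; a point of $\cO$ lying in a gap of the stable lamination and a gap of the unstable lamination sits in the interior of a minimal rectangle, and these occur throughout $\overset{o}{\cO}$ (the paper itself uses this later: in the proofs of Lemma~\ref{Lemm: disjoint interiors} and Lemma~\ref{Lemm: pi(R) is a rectangle} the class $\pi^{-1}(x)$ of an interior point is allowed to be a minimal rectangle). Your argument conflates the $s$- and $u$-arcs bounding such a minimal rectangle (arcs of leaves of $W^{s,u}(K)$ whose interiors miss $K$) with boundary components of the Markov rectangle $\cO$; the former lie on interior leaves, so there is no contradiction with the hypothesis that $\partial\cO$ is accumulated by $K$ from inside. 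Ruling out the cycles $\cP(\cC)$ is fine, but the minimal rectangles must be kept: they are exactly the classes on which both measure coordinates are constant.

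Second, and inconsistently with your own Step 1, you assert that $\upsilon^{s,u}$ "assign positive mass to every nontrivial sub-transversal, hence the coordinate functions $s,t$ are strictly monotone along transversals." The transverse measures of a basic piece are supported on the laminations; a sub-transversal contained in a gap has zero mass, so the coordinates are only \emph{non-decreasing} (this is the content of Lemma~\ref{Lemm: psi bonatti }), constant precisely on the gap arcs and therefore on minimal rectangles. Strict monotonicity would mean nothing is collapsed, contradicting the fact that $\sim_R$ identifies arcs; conversely, with the correct weak monotonicity, injectivity of the descended map holds only because the fibers of $(s,t)$ coincide exactly with the $\sim_R$-classes (singletons, arcs, and minimal rectangles). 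That identification of fibers with classes is the heart of the lemma and is missing from your write-up; the hypothesis that every boundary component of $\cO$ is accumulated by $K$ from the inside enters to guarantee that no class meeting $\cO$ involves boundary leaves or cycles and that the image is a genuine nondegenerate open rectangle.
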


The proof relies on an alternative formulation the projection $\pi$ inside these types of rectangles. It begins by considering a parametrization of the rectangle $\overline{\cO}$, $r:[-1,1]^2 \rightarrow \overline{\cO}$ with coordinates $(s,t)$.  Then, a function is defined $\psi:\cO \to (a_1,a_2)\times(b_1,b_2)\subset \RR^2$ as follows:
$$
\psi=(\psi_1,\phi_2)(m)=(\epsilon\int_{0}^{s}d\upsilon^u,\nu \int_{0}^{t} d\upsilon^s).
$$
Where $m \in \cO$ has coordinates $(s,t)$, $\epsilon = +1$ if $s \geq 0$ and $-1$ otherwise, and $\nu = +1$ if $t \geq 0$ and $-1$ otherwise. 

 The pre-image of any point in the rectangle $(a_1,a_2)\times(b_1,b_2)$ under this map coincides with the equivalence classes determined by $\sim_R$ inside $\cO$. Therefore,  $(\psi_1,\phi_2)$  is a covering map. By taking the quotient we obtain a homeomorphism, that intrinsically depends on $r$:
$$
\Psi^{-1}=(\Psi_1,\Psi_2):  \cO/\sim_{R} \rightarrow (a_1,a_2)\times (b_1,b_2)
$$
The homeomorphism has an inverse $\Phi: (a_1,a_2)\times (b_1,b_2) \to \cO/\sim_R$: ,  which serves as a parametrization of $\pi(\cO)$.
The next lemma is implicit in the Bonatti-Jeandenans proof of Lemma \ref{Lemm: projection of rectangle by pi}.

\begin{lemm}\label{Lemm: psi bonatti }
	The function $\psi$ is continuous and respect the parametrization $r$, $\phi_{1,2}$ is  non-decreasing along stable and unstable laminations in the rectangle $\cO$. Even more
	$$
	\pi(\cO)=\Psi( (a_1,a_2)\times (b_1,b_2) ).
	$$
\end{lemm}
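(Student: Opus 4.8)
\textbf{Proof strategy for Lemma \ref{Lemm: psi bonatti }.}

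The plan is to extract the required properties directly from the explicit formula for $\psi$ recalled just before the statement, namely
$$
\psi=(\psi_1,\psi_2)(m)=\left(\epsilon\int_{0}^{s}d\upsilon^u,\ \nu \int_{0}^{t} d\upsilon^s\right),
$$
where $(s,t)$ are the coordinates of $m\in \cO$ with respect to the fixed parametrization $r:[-1,1]^2\to\overline{\cO}$, and $\epsilon,\nu\in\{+1,-1\}$ record the sign of $s$ and $t$. The three assertions to prove are: (a) $\psi$ is continuous; (b) $\psi$ respects $r$ in the sense that each $\psi_i$ is non-decreasing along the corresponding lamination of $\cO$; and (c) $\pi(\cO)=\Psi((a_1,a_2)\times(b_1,b_2))$.

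First I would establish continuity. The Margulis measures $\upsilon^{s}$ and $\upsilon^{u}$ are finite Borel measures on the transverse arcs of $\cO$ assigning zero mass to points (this is part of the Bonatti--Langevin construction, being the analogue of the transverse invariant measure in Definition~\ref{Defi: Transverse measure}); hence the functions $s\mapsto \int_0^s d\upsilon^u$ and $t\mapsto \int_0^t d\upsilon^s$ are continuous and monotone non-decreasing on $[-1,1]$. Multiplying by the sign factors $\epsilon,\nu$ and noting that the sign factor only matters at $s=0$ (resp. $t=0$), where $\int_0^0=0$, shows that $\psi_1$ depends continuously and monotonically on $s$ and $\psi_2$ continuously and monotonically on $t$; since the product structure of the chart $r$ makes $(s,t)\mapsto m$ a homeomorphism onto $\cO$, continuity of $\psi$ on $\cO$ follows. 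This same computation yields (b): along a horizontal leaf of $\cO$ (a stable segment of $W^s(K)\cap\cO$) the coordinate $t$ is constant and $s$ varies monotonically with the orientation induced by $r$, so $\psi_1$ is non-decreasing along it; symmetrically $\psi_2$ is non-decreasing along the vertical (unstable) leaves. Thus $\psi$ respects the parametrization $r$ in the claimed sense.

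For (c) I would argue that the fibres of $\psi$ are exactly the $\sim_R$-equivalence classes inside $\cO$, which is the content already used by Bonatti--Jeandenans when they assert that $\psi$ is a covering map onto $(a_1,a_2)\times(b_1,b_2)$: two points $m,m'\in\cO$ with coordinates $(s,t)$ and $(s',t')$ have $\psi(m)=\psi(m')$ iff $\int_{s}^{s'}d\upsilon^u=0$ and $\int_{t}^{t'}d\upsilon^s=0$, i.e. iff the stable (resp. unstable) arc between them is disjoint from $K$ in its interior, which is precisely the description of the minimal rectangles and arc-type classes of $\sim_R$ in Definition~\ref{Defi: equivalen clases sim-r} restricted to $\cO$. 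Since $\pi$ is by definition the quotient map for $\sim_R$, the image $\pi(\cO)$ is the quotient $\cO/\!\sim_R$, which $\Psi$ identifies homeomorphically with $(a_1,a_2)\times(b_1,b_2)$; hence $\pi(\cO)=\Psi\big((a_1,a_2)\times(b_1,b_2)\big)$, as desired.

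The main obstacle is verifying that the fibres of $\psi$ coincide exactly with the $\sim_R$-classes inside $\cO$ rather than merely being contained in them: one must use the hypothesis that every boundary component of $\cO$ is accumulated by $K$ from inside (so no class inside $\cO$ is a set of type $\cP(\mathcal C)$, and the only classes meeting $\cO$ are singletons, arcs in non-boundary leaves, and minimal rectangles), and one must appeal to the structure of the Margulis measures — that they have full support on $K$-saturated transverse arcs and vanish on $K$-free intervals — to see that $\int_{s}^{s'}d\upsilon^u=0$ forces the intervening stable arc to be $K$-free. Once this identification of fibres is in hand, the remaining points are the elementary continuity and monotonicity facts above, together with the definition of $\Psi$ as the induced homeomorphism on the quotient.
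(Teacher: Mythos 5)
Your proposal is correct, but note that the paper itself gives no proof of this lemma: it is stated as being implicit in the Bonatti--Jeandenans proof of their Lemma 8.4.2 (the paper's Lemma~\ref{Lemm: projection of rectangle by pi}), with the surrounding paragraph only asserting the formula for $\psi$, the fact that its fibres are the $\sim_R$-classes inside $\cO$, and the induced homeomorphism $\Psi$. Your argument simply fleshes out exactly those assertions (non-atomicity and full support of the Margulis measures giving continuity and monotonicity, and the fibre-equals-class identification giving $\pi(\cO)=\Psi((a_1,a_2)\times(b_1,b_2))$), so it follows essentially the same route the paper relies on, and the obstacle you flag — that fibres coincide with, rather than merely sit inside, the $\sim_R$-classes, using that $\partial\cO$ is accumulated by $K$ from inside — is indeed the one nontrivial point.
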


\subsubsection{The induced orientation}

Once we have a the parametrization $\Psi^{-1}$ of  $\pi(\cO)$, whose inverse coincides with $\pi$. We can used $\Psi$ to endow $\pi(\cO)$ with a vertical and horizontal orientations, that is coherent with the vertical and horizontal orientation of $\cO$ that are induced by the parametrization $r$.

\begin{lemm}\label{Lemm: pi preserve transversals}
The homeomorphism $\Phi: (a_1,a_2)\times (b_1,b_2) \times \cO$ preserve the transversal orientations of the vertical and horizontal foliations.
\end{lemm}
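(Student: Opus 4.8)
The plan is to unwind the explicit construction of $\Phi$ recalled just above and check that both of its coordinate components are orientation-preserving along leaves; since $\Phi$ is, by Lemma~\ref{Lemm: psi bonatti }, a fibered homeomorphism sending horizontal lines of $(a_1,a_2)\times(b_1,b_2)$ to stable segments of $\pi(\cO)$ and vertical lines to unstable segments, preserving the transverse orientations of the two foliations is exactly the statement that $\Phi$ restricted to each horizontal line is an increasing parametrization of the corresponding stable segment (with respect to the orientation induced on $\pi(\cO)$ by $\Psi$, equivalently the one pushed from $\cO$ via the parametrization $r$) and likewise for vertical lines and unstable segments. Recall that $\Phi = \Psi$ is the inverse of the quotient homeomorphism $\Psi^{-1}=(\Psi_1,\Psi_2)$ induced by $\psi=(\psi_1,\psi_2)$, where
$$
\psi(m)=\Bigl(\epsilon\int_0^s d\upsilon^u,\ \nu\int_0^t d\upsilon^s\Bigr),
$$
$(s,t)$ being the $r$-coordinates of $m\in\cO$. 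So the first thing I would do is fix a stable (horizontal) leaf of $\cO$, i.e.\ the set $t=t_0$ for some $t_0\in[-1,1]$; along it $\psi_1(s,t_0)=\epsilon\int_0^s d\upsilon^u$ is, by positivity of the Margulis transverse measure $\upsilon^u$ on nontrivial unstable arcs, a strictly increasing function of $s$, and $\psi_2$ is constant along that leaf. Passing to the quotient, $\Psi_1$ is a strictly increasing homeomorphism from the image of the leaf onto its interval of $s$-values, hence its inverse $\Phi$, restricted to the horizontal line $\{t'=\Psi_2(t_0)\}$, is an increasing parametrization onto the stable segment. This is exactly the assertion that $\Phi$ preserves the transverse orientation of the horizontal (stable) foliation.

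The symmetric argument handles the vertical (unstable) foliation: fix an unstable leaf $s=s_0$ of $\cO$; along it $\psi_2(s_0,t)=\nu\int_0^t d\upsilon^s$ is strictly increasing in $t$ by positivity of $\upsilon^s$, and $\psi_1$ is constant, so $\Psi_2$ is a strictly increasing homeomorphism and $\Phi$ restricted to the vertical line is an increasing parametrization of the unstable segment. Finally I would point out that the orientations put on $\pi(\cO)$ are by definition those transported by $\Psi$ (equivalently $\Phi$) from the standard orientations of the coordinate rectangle $(a_1,a_2)\times(b_1,b_2)$, and that these are compatible with the vertical/horizontal orientations of $\cO$ fixed by the parametrization $r$ precisely because, in the displayed formula for $\psi$, the sign corrections $\epsilon$ and $\nu$ are chosen so that $\psi_{1,2}$ are nondecreasing (indeed increasing on each leaf) with respect to $r$, which is the content of Lemma~\ref{Lemm: psi bonatti }. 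Hence $\Phi$ simultaneously preserves both transverse orientations, which is the claim.

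The only genuinely delicate point is bookkeeping the two orientations involved: the one on $\pi(\cO)$ coming from $\Psi$ and the one coming from the parametrization $r$ of $\cO$, and making sure they coincide rather than being opposite. This is settled by the monotonicity statement of Lemma~\ref{Lemm: psi bonatti } (that $\psi_{1,2}$ is non-decreasing along the stable and unstable laminations with respect to $r$), so no further work is needed beyond invoking it; everything else is a direct consequence of the positivity of the Margulis measures $\upsilon^s,\upsilon^u$ on nontrivial transverse arcs, which guarantees the strict monotonicity of $\psi_1$ along stable leaves and of $\psi_2$ along unstable leaves.
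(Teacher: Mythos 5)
Your argument is essentially the paper's own proof: both rest on the monotonicity of $\psi=(\psi_1,\psi_2)$ along the stable and unstable leaves with respect to the orientations fixed by the parametrization $r$ (Lemma \ref{Lemm: psi bonatti }), which upon passing to the quotient makes $\Psi^{-1}$ increasing along leaves, so that $\Phi$ preserves both transverse orientations. The only slip is claiming $\psi_1,\psi_2$ are \emph{strictly} increasing already on $\cO$ — the Margulis measures are supported on the laminations, so across gaps the integrals are merely non-decreasing; strictness appears only after the quotient collapses the equivalence classes, which is exactly what you (and the paper) actually use.
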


\begin{proof}
The parametrization  $r$ of $\cO$ determines the vertical and horizontal positive orientation, furthermore with this orientations, $\psi$ is non-decreasing along the stable and unstable leaves of $\cO$, and when we pass to the quotient $\Psi^{-1}$ is increasing along the stable and unstable leafs of $\pi(\cO)$. This property ensures that $\Phi$ preserves the transversal orientations.
\end{proof}

\begin{defi}\label{defi: orientations induced by pi}
The orientation given to $\pi(\cO)$ by the homeomorphism $\Psi$ is what we define like the \emph{orientation induced by $\pi$} in $\pi(\cO)$. The vertical and horizontal orientations of $\cO$ are such given by $\Phi$ in the therms of Definition  \ref{Defi: vertical and horizontal orientation}. With all this conventions we refer to $\pi(\cO)$ as the \emph{oriented rectangle induced by $\pi$}.
\end{defi}

\subsubsection{The image of rectangle is a rectangle.} The interiors of the rectangles in a Markov partition  satisfy the properties of Lemma \ref{Lemm: projection of rectangle by pi}, but our desire is the projection  by $\pi$ of every rectangle be a rectangle. Therefore, we need to extend the homeomorphism $\Psi$ to a parametrization of a rectangle.

\begin{lemm}\label{Lemm: pi(R) is a rectangle}
	For all $i\in \{1,\cdots,n\}$, $\pi(R_i)$ is a rectangle 
\end{lemm}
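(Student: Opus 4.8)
The plan is to extend the local parametrizations $\Psi$ given by Lemma~\ref{Lemm: projection of rectangle by pi} and Lemma~\ref{Lemm: psi bonatti } to a global parametrization of $\pi(R_i)$. Fix a parametrization $r_i:[-1,1]^2\to R_i$, which endows $R_i$ with vertical and horizontal orientations. Let $\overset{o}{R_i}$ be the interior; its stable and unstable boundary components need not be accumulated by $K$ from the inside (they could be free or isolated), but we can still work as follows: for each interior point $m$ with $r_i$-coordinates $(s,t)$, the sets $W^{s,u}(K)\cap\overset{o}{R_i}$ may fail to accumulate everywhere, yet the relation $\sim_R$ restricted to $\overset{o}{R_i}$ still has as equivalence classes either single points, minimal rectangles, arcs of non-boundary leaves, or pieces of some $\cP(\cC)$. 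The key point is that the map $\psi=(\psi_1,\psi_2)$ defined via the Margulis measures $\upsilon^{s,u}$, namely $\psi_1(m)=\epsilon\int_0^s d\upsilon^u$ and $\psi_2(m)=\nu\int_0^t d\upsilon^s$, is defined on all of $\overset{o}{R_i}$ (the Margulis measures are defined on all transverse arcs, with atoms on the stable/unstable boundary leaves of $K$), is continuous, non-decreasing along the horizontal and vertical leaves respectively, and its fibers are exactly the $\sim_R$-classes inside $\overset{o}{R_i}$. Passing to the quotient we obtain, as in Lemma~\ref{Lemm: psi bonatti }, a homeomorphism $\Psi^{-1}$ from $\pi(\overset{o}{R_i})$ onto an open rectangle $(a_1,a_2)\times(b_1,b_2)$, preserving the transverse orientations of the two foliations.

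Next I would show this homeomorphism extends continuously to the closure. Since $\psi_1$ and $\psi_2$ are monotone and bounded (each integral is bounded by the total $\upsilon^u$- resp. $\upsilon^s$-mass of a horizontal resp. vertical leaf of $R_i$, which is finite), they extend to monotone continuous functions on $[-1,1]^2$, hence $\Psi$ extends to a continuous surjection $\Phi:[a_1,a_2]\times[b_1,b_2]\to\pi(R_i)$. The restriction of $\Phi$ to each horizontal segment $[a_1,a_2]\times\{t_0\}$ is a continuous monotone map onto a stable leaf of $\pi(R_i)$ contained in a single leaf of $\cF^s_\phi$ — here I would invoke the fact from Theorem~\ref{Theo: Basic piece projects to pseudo-Anosov} that $\pi$ sends $W^s(K)$ into the stable foliation of $\phi$, and that on the quotient the induced map is a homeomorphism on each individual leaf (by Lemma~\ref{Lemm: projection of rectangle by pi} the stable lamination of $\overset{o}{R_i}$ maps to the horizontal foliation). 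The analogous statement holds for vertical segments. This is precisely Definition~\ref{Defi: rectangle}, so $\pi(R_i)$ is a rectangle, with parametrization $\Phi$, and it is homeomorphic on the open square onto $\pi(\overset{o}{R_i})$.

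The main obstacle I anticipate is controlling the behaviour of $\Phi$ on the boundary of the square, specifically injectivity on the interior and the possibility that $\Phi$ collapses part of a boundary edge: a boundary leaf of $R_i$ that is a stable (or unstable) separatrice of a corner point lying in a cycle $\cC$ gets collapsed by $\pi$ to a spine or identified with another boundary leaf, so $\Phi$ restricted to $[a_1,a_2]\times\{b_i\}$ need not be injective. However, this is allowed by Definition~\ref{Defi: rectangle}, which only requires $\Phi$ to be a homeomorphism onto its image on the open square $(0,1)^2$ and a homeomorphism onto its image on each closed horizontal/vertical leaf — not a global embedding. So I would carefully separate the two claims: (i) $\Phi|_{(a_1,a_2)\times(b_1,b_2)}$ is a homeomorphism onto $\pi(\overset{o}{R_i})$, which follows from the fiber description of $\psi$ together with $K$ having no double boundary points (this rules out degenerate collapse of the interior into an interval, as in Remark~\ref{Defi: inverse of Type}'s spirit and the no-double-boundary convention); and (ii) $\Phi|_{\text{horiz/vert leaf}}$ is a homeomorphism onto its image, which follows from monotonicity of $\psi_{1,2}$ restricted to a single leaf being strict (again using that interior leaves are not boundary leaves of $K$, so $\upsilon^{u}$ has no atoms along them). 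With these two points established, $\pi(R_i)$ satisfies all clauses of Definition~\ref{Defi: rectangle}, completing the proof of Lemma~\ref{Lemm: pi(R) is a rectangle}.
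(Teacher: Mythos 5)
Your overall strategy --- define the measure map $\psi$ on all of $R_i$, check that its fibers are contained in the fibers of $\pi$, and induce the parametrization by compactness --- is organized genuinely differently from the paper's proof, which instead extends the downstairs homeomorphism of Lemma~\ref{Lemm: projection of rectangle by pi} leaf by leaf and then separately proves that each boundary edge lands in a single leaf (a product-structure argument), continuity at the boundary (via the transverse measures of $\phi$), and the equality $\Psi(H)=\pi(R)$ (via density of $K$). In principle your route buys surjectivity and the single-leaf property for free, since everything is pushed down from $R_i$ itself. However, as written the decisive step is missing. Lemma~\ref{Lemm: psi bonatti } identifies the fibers of $\psi$ with the $\sim_{R}$-classes only on the open rectangle $\cO$, whose boundary is accumulated by $K$ from inside; on $\partial R_i$ the relevant classes are minimal rectangles and cycle sets $\cP(\cC)$, and ``monotone and bounded, hence extends to a continuous surjection $\Phi$'' does not address the two points that carry the content there: (i) well-definedness, i.e.\ that $\psi(x)=\psi(y)$ forces $\pi(x)=\pi(y)$ also when $x,y$ lie on the boundary of $R_i$ --- this requires showing that every $s$- or $u$-gap with endpoints in $K$, in particular a gap lying in an isolated boundary leaf of $R_i$, is contained in a single class (a minimal rectangle or a cycle), which uses the absence of impasses and of double boundaries and is exactly the case analysis the paper performs in another guise; and (ii) continuity of the induced map at boundary points of the affine rectangle, which follows only after (i) via a closed-map/compactness argument, or must be proved directly as the paper does with the measures $\mu^{s,u}$. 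Note also that extending $\psi$ (a map from $R_i$ to the affine rectangle) is not yet extending the parametrization $\Phi$, which goes in the opposite direction; that inference is precisely the fiber inclusion (i).

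In addition, the measure-theoretic facts you invoke are partly wrong and partly the wrong property. The Margulis measures of a saddle-type basic piece are non-atomic, so they do not have atoms on the boundary leaves of $K$; and what strict monotonicity of $\psi_{1},\psi_2$ along a quotient leaf actually requires is full support of $\upsilon^{u},\upsilon^{s}$ on $K$ intersected with transversals (every arc whose interior meets $K$ has positive measure), not absence of atoms on interior leaves. With (i) and (ii) supplied and these corrections made, your argument closes; without them the extension to the closed rectangle --- which is the whole point of the lemma --- is asserted rather than proved.
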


\begin{proof}

Take a rectangle $R$ in the Markov partition. This rectangle has non-isolated boundaries from its interior. In view of Lemma \ref{Lemm: projection of rectangle by pi}, $\pi(\overset{o}{R_i})$ is homeomorphic to a certain afine rectangle $\overset{o}{H}:=(a_1,a_2)\times (b_1,b_2)$ through the parametrization:

$$
\Phi:\overset{o}{H}\rightarrow \pi(\overset{o}{R}).
$$
Such parametrization sends (open) oriented vertical and horizontal intervals of $(a_1,a_2)\times (b_1,b_2)$ to oriented unstable and stable intervals of $\pi(\overset{o}{R})$, respectively.

It remains to show that this parametrization can be continuously extended to the closed rectangle $H=[a_1,a_2]\times [b_1,b_2]$ in such a way that its image is $\pi(R)$. Moreover, it is necessary for $H$ to be a homeomorphism restricted to any vertical or horizontal interval of $H$ whose image is contained in a single stable or unstable leaf of $\pi(R_i)$, as expected. In particular, the image of every boundary of the affine rectangle is contained in a single leaf.

Take a vertical segment $J\subset (a_1,a_2)\times [b_1,b_2]$. The closure of $\overset{o}{J}$ relative to the unstable leaf in which it is contained consists of adding the two boundary points to the image to obtain a closed interval: $\overline{\Psi(\overset{o}{J})}=\Psi(\overset{o}{J})\cup \{B_1,B_2\}$. Since $\pi(R)$ has the induced geometrization by $\pi$, $\Psi(\overset{o}{J})$ is positively oriented. Let $B_1$ be the lower point and $B_2$ be the upper point of $\overline{\Psi(\overset{o}{J})}$. The natural way to extend $\Psi$ to $J$ is given by the formula:

$$
\Psi(s,b_1)=B_1 \text{ and } \Psi(s,b_2)=B_2.
$$

We can do this for every unstable leaf and every stable leaf. The corners are the only points that remain to be determined. They will be naturally determined if we can prove that the image of the interior of the horizontal boundary of $[a_1,a_2]\times [b_1,b_2]$ lies in a single stable leaf, and the same holds for the image of the vertical boundary. Let us explain the idea.

Assume for a moment that the parametrization $\Psi$ sends the left side $\overset{o}{J}$ of $[a_1,a_2]\times (b_1,b_2)$ into an open interval contained in a unique unstable leaf $L^u$. Furthermore, assume that the lower boundary $I$ of $H$ is such that $\Psi(\overset{o}{I})$ lies in a unique stable leaf $F^s$. Then, $\Psi(\overset{o}{J})$ has a unique lower boundary point $A$, and $\Psi(\overset{o}{I})$ has a well-defined left endpoint $B$. These points are forced to be the same.

To visualize that they coincide, consider a sequence $J_n$ of vertical intervals of $[a_1,a_2]\times [b_1,b_2]$ that converges to the left side of $J$, and similarly, a decreasing sequence of intervals $I_n$ that converges to the lower side of the rectangle. The sequence of points $\{x_n\}=\Psi(\overset{o}{I_n})\cap \Psi(\overset{o}{J_n})$ (which lies in the interior of the rectangle) converges simultaneously to the lower point $A$ of $\Psi(I)$ and the left point $B$. Therefore, $\Psi$ is well-defined at the lower left corner point. The same argument can be applied to the other corner configurations.

So lets prove that the boundary is included in a  single leaf. Let be $s,s'\in(a_1,a_2)$ different numbers,  we need to observe that $\Psi(s,b_1)$  and $\Psi(s',b_1)$ are in the same stable leaf.

\begin{figure}[h]
	\centering
	\includegraphics[width=0.6\textwidth]{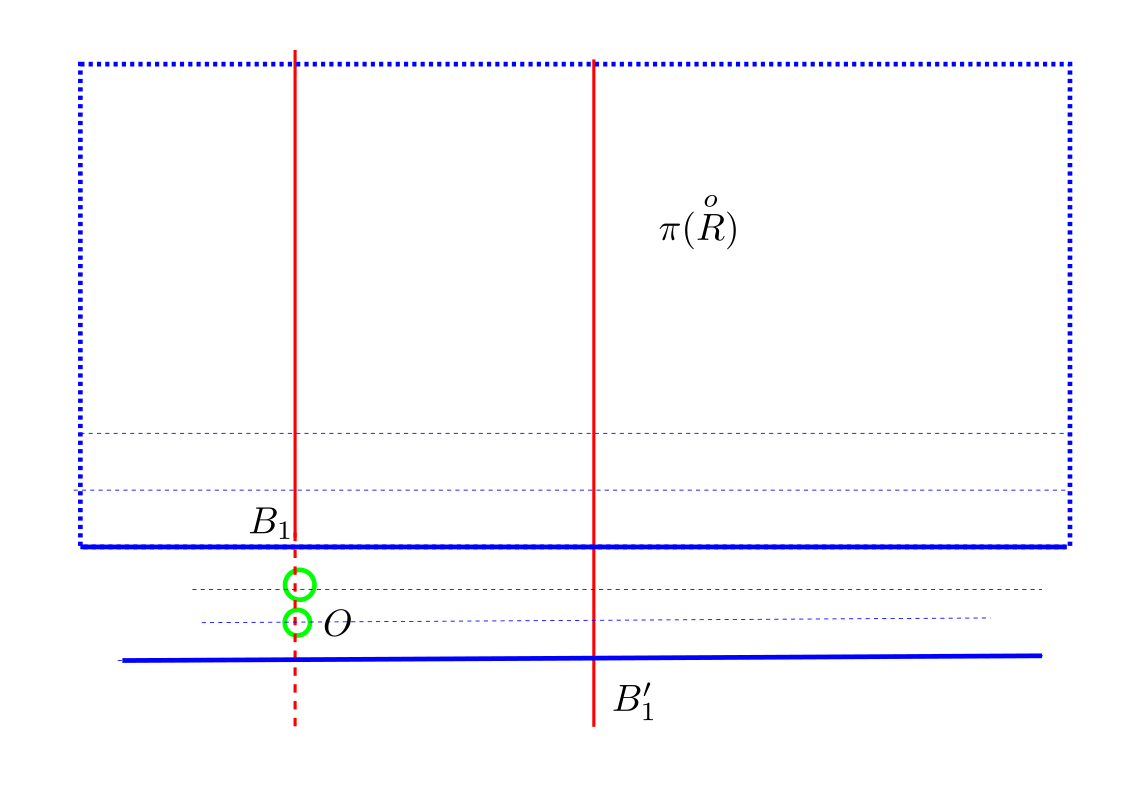}
	\caption{ Same unstable boundary.}
	\label{Fig: Same boundary}
\end{figure}

If $\Psi(s_1,b_1)=B_1$ and $\Psi(s'b_1)=B'_1$ are not in the same leaf, we would have a configuration like Figure \ref{Fig: Same boundary}. Using the horizontal orientation, we can assume that $B_1<B'_1$. However, this would create a problem because,  the intersection of a stable leaf near but above $B'_1$ with the vertical leaf passing trough $B_1$ would induce a point $O$ in the interior of the rectangle $\pi(\overset{o}{R})$. According to the trivial product structure in the interior, this configuration implies that $\Psi(s',b_1)=B'_1$ is in the interior of the corresponding unstable interval $\pi(\overset{o}{R})$, not in its boundary as intended. Therefore, $B_1$ and $B'_1$ must be in the same leaf.

Let $x$ be a stable boundary point of $h(R)$ and let $U$ be an open set containing $x$. The transverse measures of the pseudo-Anosov homeomorphism $\phi$ are Borel measures, and we can observe the existence of an open rectangle with a small enough diameter that can be sent inside $U$ under $\Psi$. This allows us to prove the continuity of $\Psi$ at the boundary points.

Take $J$ to be the interval such that $\Psi(J)$ has its lower point equal to $x$. Let $\epsilon > 0$. There exist vertical segments $\overset{o}{J_1}, \overset{o}{J_2} \subset H$ such that, in the horizontal order of rectangle $H$, $J_1 \leq J < J_2$, with equality only if $J$ is the left side of $H$, in which case it is not difficult to make the necessary adjustments to our arguments.

We have $\Psi(\overset{o}{J_1}) < \Psi(\overset{o}{J}) < \Psi(\overset{o}{J_2})$. We know that the length of every stable interval between $\Psi(\overset{o}{J_1})$ and $\Psi(\overset{o}{J_2})$ is a constant $C(1, 2) > 0$ (due to the invariance of the transverse measures under leaf isotopy). We can choose $J_1$ and $J_2$ to be sufficiently close to each other so that $C(1, 2) < \epsilon/4$.

Now, let $\overset{o}{I}$ be a horizontal leaf in $H$ such that the unstable segments contained into $\Psi(\overset{o}{J_1})$ and $\Psi(\overset{o}{J_2})$, with one endpoint in $\overset{o}{I}$ and the other in the stable leaf trough $x$, have $\mu^s$ length less than $\epsilon/4$.

Continuing from the previous point, we can choose the horizontal leaf $\overset{o}{I}$ to be sufficiently close to $x$ so that the unstable segments contained between $\Psi(\overset{o}{J_1})$ and $\Psi(\overset{o}{J_2})$, with one endpoint in $\overset{o}{I}$ and the other in the stable leaf of $x$, have length less than $\epsilon/4$.

In this way, by considering the rectangle whose boundary is given by the intersections of $I', J_1, J_2$, and the lower boundary of $H$, we can ensure that its image is contained in a rectangle with diameter less than $\epsilon$. For $\epsilon$ small enough, we can provide an open set $H$ whose image is completely contained in $U$. This completes the argument for continuity.
Finally we are going to see that:
$$
\Psi(H)=\pi(R).
$$

Let $x\in \partial^s R$. Without loss of generality, we can assume that $x$ is on the lower boundary of $R$. In order to show that $\pi(R)\subset \Psi(H)$, we need to prove that $\pi(x)$ is in $\Psi(H)$. The equivalence class of $x$ under the relation $\sim_R$ can either be a minimal rectangle or an unstable arc. In either case, there is a point in $K$ that belongs to the same equivalence class as $x$. Since all points in the equivalence class collapse to the same point, we can assume that $x\in K$. Let $x_n\in K\cap \overset{o}{R_n}$ be a sequence converging to $x$. By the continuity of $\pi$, we have $\pi(x_n)\rightarrow \pi(x)$.

After taking a refinement, we can assume that $\pi(x_{n+1})< \pi(x_{n})$ with respect to the vertical and horizontal orientation of the foliations induced by $\pi(\overset{o}{R})$. For every $\pi(x_n)$, there exists a unique point $y_n\in \overset{o}{H}$ such that $\Psi(y_n)=\pi(x_n)$. Moreover, we can assume that the sequence $(y_n)$ is decreasing with respect to both the vertical and horizontal orientation of $H$.

For each point $y_n$, it passes through a unique pair of stable and unstable intervals of $H$, denoted as $I_n$ and $J_n$ respectively. Since the intervals are decreasing and bounded from below with respect to the vertical order, there is a horizontal interval $I$ to which $I_n$ converges. Similarly, $J_n$ converges to an interval $J$, which may or may not be in the boundary of $H$. In any case, there exists a point $y\in I\cap J$ that is accumulated by the sequence $(y_n)$. By continuity, we have $\lim \pi(x)=\pi(x_n)=\lim \Psi(y_n)=\Psi(y)$.

To show the other inclusion $\Psi(H)\subset \pi(R)$, we consider a point $x\in H$ and aim to find a point $y\in R$ such that $\pi(y)=\Psi(x)$. We will focus on the case when $x$ lies on the inferior boundary of $H$, and the argument can be adapted for other boundary cases.

Let $J\subset H$ be the vertical interval passing through $x$. The set $\Psi(J)$ is a vertical interval in $\pi(R)$. Consider $\pi^{-1}(\Psi(\overset{o}{J}))$, which is a horizontal sub-rectangle of $R$ that may be reduced to an open interval. In any case, the stable boundary of this sub-rectangle belongs to a unique equivalence class under $\sim_R$. This equivalence class contains a point $k\in K\cap \partial R$. Let $J'$ be the unstable interval of $R$ that contains $k$ as one of its boundary points.
By continuity and the orientation-preserving properties of $\Psi$ and $\pi$, we have $\Psi(J)=\pi(J')$. Moreover, $\Psi(x)$ corresponds to the inferior boundary of $\pi(J')$ under this identification. Thus, we have found a point $y\in R$ such that $\pi(y)=\Psi(x)$, completing the proof.

\end{proof}

 \subsubsection{The projection of a Markov partition is a Markov partition}

\begin{prop}\label{Prop: pi cR is Markov partition}
	Let $\cR=\{R_i\}_{i=1}^n$ be a Markov partition of $f$. The family of rectangles $\pi(\cR)=\{\overline{\pi(\overset{o}{R_i})}\}_{i=1}^n$ is a  Markov partition of $\phi$.
\end{prop}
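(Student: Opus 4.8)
The plan is to verify the four defining conditions of a Markov partition (Definition \ref{Defi: Markov partition}) for the family $\pi(\cR) = \{\overline{\pi(\overset{o}{R_i})}\}_{i=1}^n$, using the results already established about how $\pi$ acts on rectangles. By Lemma \ref{Lemm: pi(R) is a rectangle} each $\pi(R_i)$ is indeed a rectangle, so $\pi(\cR)$ is a family of rectangles of the pseudo-Anosov homeomorphism $\phi$; it remains to check the covering property, the disjoint-interiors property, and the two sub-rectangle conditions. For the covering property, since $\pi$ is surjective (Theorem \ref{Theo: Basic piece projects to pseudo-Anosov}) and $S = \cup_{i=1}^n R_i$, we get $S' = \pi(S) = \cup_{i=1}^n \pi(R_i) \subset \cup_{i=1}^n \overline{\pi(\overset{o}{R_i})}$, using that $\pi(R_i) = \overline{\pi(\overset{o}{R_i})}$ (which follows from Lemma \ref{Lemm: pi(R) is a rectangle}, where $\Psi(H) = \pi(R_i)$ and $\pi(\overset{o}{R_i}) = \Psi(\overset{o}{H})$).

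For the disjoint-interiors property, I would argue that the interior of $\overline{\pi(\overset{o}{R_i})}$ is $\pi(\overset{o}{R_i}) = \Psi_i(\overset{o}{H_i})$, the image of the open affine rectangle, and that these sets are pairwise disjoint for $i \neq j$. This is where the structure of the equivalence relation $\sim_R$ enters: a point in $\pi(\overset{o}{R_i}) \cap \pi(\overset{o}{R_j})$ would be the image of an equivalence class meeting both $\overset{o}{R_i}$ and $\overset{o}{R_j}$. Since $\overset{o}{R_i} \cap \overset{o}{R_j} = \emptyset$, such a class would have to be a connected $\sim_R$-class (a singleton, a minimal rectangle, an arc in a non-boundary manifold, or a $\mathcal{P}(C)$) that is not contained in a single rectangle of $\cR$. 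Minimal rectangles and arcs in non-boundary invariant manifolds that touch the interior of a rectangle are contained in that rectangle's closure by the Markov property, but I need to rule out an interior point being identified across two interiors. The key observation is that if $x \in \overset{o}{R_i}$ and $y \in \overset{o}{R_j}$ with $x \sim_R y$, then the whole $\sim_R$-class joining them, being connected, would have to cross $\partial \cR$; more precisely any minimal rectangle or arc class containing an interior point of $R_i$ is, by the trivial product structure inside $\overset{o}{R_i}$, contained in $\overline{R_i}$, and its only points outside $\overset{o}{R_i}$ lie on $\partial R_i$ — hence they cannot be interior points of $R_j$. Thus $\pi(\overset{o}{R_i}) \cap \pi(\overset{o}{R_j}) = \emptyset$.

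For the two sub-rectangle conditions, I would use Lemma \ref{Lemm: projection of rectangle by pi} together with the fact that $\pi$ semi-conjugates $f|_{\Delta(K)}$ with $\phi$, i.e. $\pi \circ f = \phi \circ \pi$. Given that $\cR$ is a Markov partition for $f$, each connected component $C$ of $\overset{o}{R_i} \cap f^{-1}(\overset{o}{R_j})$ is the interior of a horizontal sub-rectangle of $R_i$, and $f(C)$ is the interior of a vertical sub-rectangle of $R_j$. I would show that $\pi$ maps a horizontal sub-rectangle of $R_i$ to a horizontal sub-rectangle of $\pi(R_i)$ and likewise for vertical sub-rectangles — this follows because, inside $\overset{o}{R_i}$, $\pi$ collapses along $\sim_R$-classes which respect the stable/unstable laminations (Lemma \ref{Lemm: projection of rectangle by pi}: horizontal leaves go to horizontal lines, vertical leaves to vertical lines), so the image of a sub-rectangle sharing horizontal (resp. vertical) leaves with $R_i$ still shares horizontal (resp. vertical) leaves with $\pi(R_i)$. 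Then applying $\phi = \pi f \pi^{-1}$ and continuity to pass to closures, every connected component of $\overset{o}{\pi(R_i)} \cap \phi^{-1}(\overset{o}{\pi(R_j)})$ is the interior of a horizontal sub-rectangle of $\pi(R_i)$, and its image under $\phi$ is the interior of a vertical sub-rectangle of $\pi(R_j)$.

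The main obstacle I expect is the bookkeeping around the boundary: showing that $\overline{\pi(\overset{o}{R_i})} = \pi(R_i)$ and that the interior of this closure is exactly $\pi(\overset{o}{R_i})$, rather than something larger picking up stray identifications along $\partial \cR$. This is essentially controlled by Lemmas \ref{Lemm: projection of rectangle by pi} and \ref{Lemm: pi(R) is a rectangle}, but one must be careful that distinct rectangles sharing a boundary leaf in $\cR$ do not get their interiors glued by $\pi$; since $\pi$ is injective on the complement of the stable and unstable laminations of the finitely many periodic boundary points, and an interior point of $R_i$ lies in none of these laminations (it is a totally interior point in the sense analogous to the pseudo-Anosov case), this injectivity on interiors resolves the difficulty. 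An alternative, cleaner route for the two sub-rectangle conditions is to invoke Proposition \ref{Prop: Markov criterion boundary} for pseudo-Anosov homeomorphisms: it suffices to check that $\partial^s \pi(\cR)$ is $\phi$-invariant and $\partial^u \pi(\cR)$ is $\phi^{-1}$-invariant, which follows directly from the $f$-invariance of $\partial^s \cR$, the $f^{-1}$-invariance of $\partial^u \cR$, the semi-conjugacy, and the fact that $\pi(\partial^{s,u} R_i) = \partial^{s,u}\pi(R_i)$ — this last identity again coming from Lemma \ref{Lemm: pi(R) is a rectangle}.
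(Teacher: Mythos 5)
Your route is essentially the paper's: each $\pi(R_i)$ is a rectangle by Lemma \ref{Lemm: pi(R) is a rectangle}; disjointness of interiors is obtained by analysing the $\sim_{R}$-class lying over a point of $\pi(\overset{o}{R_i})$ (the paper's Lemma \ref{Lemm: disjoint interiors} proves the stronger statement that $\pi^{-1}(x)\subset \overset{o}{R_i}$, using that a minimal-rectangle class has interior disjoint from $K$ while the boundary leaves of $R_i$ are accumulated by $K$ from inside); and the two Markov conditions are handled exactly by your ``alternative, cleaner route'': $\pi(\partial^{s,u}R_i)=\partial^{s,u}\pi(R_i)$, the semi-conjugacy $\pi\circ f=\phi\circ\pi$, and the boundary-invariance criterion of Proposition \ref{Prop: Markov criterion boundary} --- this is precisely the paper's Lemma \ref{Lemm: Boundaries are f-invariant}, so your first, more laborious route through horizontal and vertical sub-rectangles is not needed.

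There is, however, one genuine gap: the covering step. In this proposition $f$ is a Smale diffeomorphism and $\cR$ is a Markov partition of the saddle-type basic piece $K$ in the sense of Definition \ref{Defi: Markov partition for Smale difeos}: the rectangles $R_i$ are pairwise \emph{disjoint} embedded rectangles satisfying only $K\subset \cup_{i=1}^n R_i$; they do not cover $S$, and $\pi$ is defined on $\Delta(K)$, not on $S$. Hence the identity you invoke, $S'=\pi(S)=\cup_{i=1}^n\pi(R_i)$ with ``$S=\cup_{i=1}^n R_i$'', is not available, and surjectivity of $\pi$ alone does not place every point of $S'$ in some $\pi(R_i)$. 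The missing observation --- which is the entire content of the paper's Lemma \ref{Lemm: the rectangles pi R cover S} --- is that every $\sim_{R}$-equivalence class contains a point of $K$ (singletons are points of $K$; minimal rectangles and arcs have their corners or endpoints in $K$; the sets attached to cycles contain the periodic points of the cycle), so that $\pi(K)=S'$ and a fortiori $\cup_{i=1}^n\pi(R_i)=S'$. The same misreading of the setting makes part of your disjointness discussion superfluous: since distinct rectangles of $\cR$ are disjoint, not merely with disjoint interiors, there is no possible gluing along shared boundary leaves to rule out, and the only point to establish is that the class over a point of $\pi(\overset{o}{R_i})$ does not leave $R_i$, which is what the paper's Lemma \ref{Lemm: disjoint interiors} proves.
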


\begin{lemm}\label{Lemm: disjoint interiors}
	If $i\neq j$, then  $\pi(\overset{o}{R_i})\cap \pi(\overset{o}{R_j})= \emptyset$
\end{lemm}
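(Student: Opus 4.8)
The plan is to show that for distinct indices $i \neq j$, the sets $\pi(\overset{o}{R_i})$ and $\pi(\overset{o}{R_j})$ are disjoint by tracing a hypothetical common point back to a contradiction with the structure of the equivalence relation $\sim_R$. First I would suppose, for contradiction, that there exists a point $w \in \pi(\overset{o}{R_i}) \cap \pi(\overset{o}{R_j})$. By definition of the projection $\pi$, the fibre $\pi^{-1}(w)$ is a single equivalence class $E$ of $\sim_R$, and the assumption means $E$ meets both $\overset{o}{R_i}$ and $\overset{o}{R_j}$. So there are points $x \in E \cap \overset{o}{R_i}$ and $y \in E \cap \overset{o}{R_j}$ with $x \sim_R y$.

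Next I would recall the four possible types of equivalence classes listed in Definition~\ref{Defi: equivalen clases sim-r}: singletons of non-boundary points of $K$, minimal rectangles, arcs contained in a non-boundary invariant manifold, and the sets $\mathcal{P}(\mathcal{C})$ associated with cycles. The singleton case is impossible since then $x = y$, but $\overset{o}{R_i} \cap \overset{o}{R_j} = \emptyset$ because $\cR$ is a Markov partition (Definition~\ref{Defi: Markov partition}), so $x$ cannot lie in both interiors. For the remaining three cases, the key observation is that each such equivalence class $E$ is a \emph{connected} subset of $S$, and moreover it is contained in the union $W^s(K) \cup W^u(K) \cup \{\text{disks bounded by polygons of invariant arcs}\}$, hence in the restricted domain $\delta(K)$; in particular $E$ consists of stable and unstable arcs of $K$ together, possibly, with a $2$-dimensional region (a minimal rectangle or a $\mathcal{P}(\mathcal{C})$-region). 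The essential point I would use is that such a connected set $E$ meeting the open rectangle $\overset{o}{R_i}$ must either be entirely contained in $\overset{o}{R_i}$ or must exit $\overset{o}{R_i}$ through its boundary $\partial R_i$: but the boundary of $R_i$ lies on the stable and unstable leaves of periodic points of $f$ (Lemma~\ref{Lemm: Boundary of Markov partition is periodic}), which are boundary leaves of $K$, while the invariant arcs forming $E$ lie in \emph{non-boundary} manifolds (for cases ii and iii) — here I must be careful about the $\mathcal{P}(\mathcal{C})$ case, where $E$ does involve boundary separatrices, and handle it by using that the free separatrices bounding $C(p)$ are disjoint from $K$ whereas $\overset{o}{R_i}$ accumulates $K$ from inside.

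The main obstacle I anticipate is the careful case analysis for equivalence classes of type ii, iii, and iv: I need to argue that a connected equivalence class cannot straddle two interiors of rectangles with disjoint interiors. The cleanest route is probably to observe that since $\overset{o}{R_i}$ and $\overset{o}{R_j}$ are open and disjoint, and $E$ is connected, if $E$ meets both then $E$ must meet $S \setminus (\overset{o}{R_i} \cup \overset{o}{R_j})$, in particular $E$ meets $\partial R_i \cup \partial R_j \subset \partial \cR$. Then I would use the structure of $\partial \cR$: every point of $\partial \cR$ lies on a stable or unstable leaf of a periodic boundary point of $\cR$, which (by Example~\ref{Exem: Markov boundary is adapted} and the theory of saturated sets) is a boundary leaf of $K$. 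On the other hand, by inspection of Definition~\ref{Defi: equivalen clases sim-r}, any equivalence class that contains a point of a boundary leaf of $K$ must be of the form $\mathcal{P}(\mathcal{C})$, and such a class, restricted to a neighborhood of a point of $\overset{o}{R_i}$, would force that neighborhood to be disjoint from $K$ — contradicting that interior points of rectangles in a Markov partition of a pseudo-Anosov homeomorphism are accumulated by $K$ (here $K = S$ is the whole surface in the pseudo-Anosov setting, or one uses density of invariant leaves). I would then conclude $E \subset \overset{o}{R_i}$, contradicting $y \in E \cap \overset{o}{R_j}$, and hence $\pi(\overset{o}{R_i}) \cap \pi(\overset{o}{R_j}) = \emptyset$.
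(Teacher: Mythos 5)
Your overall strategy (pass to the fibre $\pi^{-1}(w)$, which is a single $\sim_R$-class, and run a case analysis over the four types of classes) is the same as the paper's, but the two steps you use to rule out a class crossing $\partial R_i$ contain genuine errors. First, the lemma lives in the Smale-diffeomorphism setting: $K$ is a saddle-type basic piece of $f:S\to S$, the rectangles of $\cR$ are \emph{disjoint} embedded rectangles containing $K$, and $K\neq S$. So your closing contradiction --- ``interior points of rectangles are accumulated by $K$'' or ``$K=S$ in the pseudo-Anosov setting'' --- is not available: a point of $\overset{o}{R_i}$ may well lie in a gap of $K$ (indeed, precisely when its class is a minimal rectangle or an arc, its class has interior disjoint from $K$), and no contradiction arises from that. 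Second, the classification claim you lean on, namely that any equivalence class meeting a boundary leaf of $K$ must be of type $\cP(\cC)$, is false: a minimal rectangle is bounded by two $s$-arcs and two $u$-arcs, and such an $s$-arc can perfectly well sit inside the stable boundary leaf $\partial^s R_i$ (this is exactly the delicate configuration one has to exclude, not one that can be excluded by inspection of Definition~\ref{Defi: equivalen clases sim-r}).

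What the paper does instead, and what your argument is missing, is a direct reason why a class $E$ meeting $\overset{o}{R_i}$ cannot reach $\partial R_i$: cycles ($\cP(\cC)$-classes) do not meet the interiors of the rectangles at all, so they are out from the start; if $E$ is a minimal rectangle, its interior is disjoint from $K$, whereas $\partial^s R_i$ is a boundary leaf of $K$ that is isolated from $K$ only on the \emph{outside} of $R_i$ and accumulated by $K$ from inside $R_i$ --- hence the stable (and likewise unstable) boundary arcs of the minimal rectangle cannot lie on $\partial R_i$, and the minimal rectangle stays in $\overset{o}{R_i}$; if $E$ is an arc, it lies by definition in a \emph{non-boundary} invariant manifold, so it cannot touch $\partial R_i$ either. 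The correct conclusion of the case analysis is therefore not an immediate contradiction but the containment $\pi^{-1}(w)\subset \overset{o}{R_i}$, which only then contradicts $\pi^{-1}(w)\cap\overset{o}{R_j}\neq\emptyset$ since $\overset{o}{R_i}\cap\overset{o}{R_j}=\emptyset$. Your connectedness observation (that $E$ must meet $\partial R_i$ if it meets both interiors) is fine and can be kept, but the argument excluding that intersection has to be replaced by the isolation/saturation argument above.
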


\begin{proof}
	
	Please observe that $\pi\left(\overset{o}{R_i}\right)=\overset{o}{\left(\pi(R_i)\right)}$.  If $x\in \pi(\overset{o}{R_i})$, then $\pi^{-1}(x)$ is an equivalence class determined by the relation $\sim_R$, and we need to argue that $\pi^{-1}(x)\subset \overset{o}{R_i}$. This implies that if $\pi(\overset{o}{R_i})\cap \pi(\overset{o}{R_j}) \neq \emptyset$, then $\overset{o}{R_i}=\overset{o}{R_j}$.

	The points in $\pi^{-1}(x)$ do not belong to a cycle since cycles do not project to the interior of the rectangles. If $\pi^{-1}(x)$ is a singleton in the basic set, then $\pi^{-1}(x)\subset \overset{o}{R_i}$. If $\pi^{-1}(x)$ is a minimal rectangle, its stable (or unstable) boundary is different from the stable boundary of $R_i$ because $\pi^{-1}(x)$ is a minimal rectangle whose interior is disjoint from the basic piece $K$. Since the stable boundary of $R_i$ is isolated from one side, these two leaves are different, and the minimal rectangle lies in the interior of $R_i$. In the case where $\pi^{-1}(x)$ is an arc of a non-isolated leaf, we have $\pi^{-1}(x)\subset \overset{o}{R_i}$ because its endpoints do not belong to the stable boundary of $R_i$.
\end{proof}

\begin{lemm}\label{Lemm: the rectangles pi R cover S}
	The function $\pi$ restricted to $K$ is surjective, and $\cup_{i}^n \pi(R_i)=S'$.
\end{lemm}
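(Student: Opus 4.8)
\textbf{Plan for the proof of Lemma \ref{Lemm: the rectangles pi R cover S}.} The statement has two parts: first, that $\pi$ restricted to $K$ is surjective onto $S'$, and second, that the rectangles $\pi(R_i)$ cover $S'$. The plan is to deduce the first assertion from general properties of the equivalence relation $\sim_R$ already established in the excerpt, and then derive the second assertion from the first together with the fact that $\cR$ is a Markov partition.

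For the surjectivity of $\pi|_K$: every point $y\in S'$ is, by definition of the quotient $S' = \Delta(K)/\sim_R$, the image of an equivalence class $C$ for $\sim_R$ inside $\Delta(K)$. By the description in Definition \ref{Defi: equivalen clases sim-r}, the class $C$ is one of four types: a singleton $\{x\}$ with $x\in K$ a non-boundary point; a minimal rectangle (whose boundary consists of $s$-arcs and $u$-arcs, so its corners lie in $K$); an arc contained in a non-boundary invariant manifold (whose endpoints lie in $K$); or a set $\cP(\mathcal{C})$ for a cycle $\mathcal{C}$ (which contains the periodic boundary points of the cycle, all in $K$). In every case $C\cap K\neq\emptyset$. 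Thus, picking any $x\in C\cap K$, we get $\pi(x)=y$, which proves $\pi(K)=S'$. I would also record here (for use in the next lemma and in Proposition \ref{Prop: pi cR is Markov partition}) that $\pi^{-1}(y)$ is precisely this single equivalence class $C$.

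For the covering assertion: since $\cR=\{R_i\}_{i=1}^n$ is a Markov partition of $K$ (in the diffeomorphism sense, Definition \ref{Defi: Markov partition for Smale difeos}), we have $K\subset \cup_{i=1}^n R_i$. Applying $\pi$ and using the surjectivity just proved, $S' = \pi(K) \subset \pi(\cup_{i=1}^n R_i) = \cup_{i=1}^n \pi(R_i) \subset \cup_{i=1}^n \overline{\pi(\overset{o}{R_i})}$, where the last inclusion uses that each $R_i = \overline{\overset{o}{R_i}}$ (the rectangles have no degenerate structure since $K$ has no double boundaries, Convention \ref{Conv: non double boundary points}) together with continuity of $\pi$, which gives $\pi(R_i)=\pi(\overline{\overset{o}{R_i}})\subset \overline{\pi(\overset{o}{R_i})}$. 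Hence $\cup_{i=1}^n \pi(R_i) = S'$, as required.

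The main obstacle I anticipate is not in the logical skeleton above, which is short, but in being careful about one subtlety: I must make sure that each $\pi(R_i)$ is genuinely the rectangle $\overline{\pi(\overset{o}{R_i})}$ appearing in the statement of Proposition \ref{Prop: pi cR is Markov partition}, rather than some strictly larger or smaller set. This is exactly what Lemma \ref{Lemm: pi(R) is a rectangle} provides — there the continuous extension of the parametrization $\Psi$ is shown to have image $\pi(R_i)$ and to agree with $\overline{\pi(\overset{o}{R_i})}$ — so I would simply invoke that lemma rather than re-prove it. With that cited, the covering statement is immediate and the lemma is complete. No genuinely hard estimate is needed; the content is purely set-theoretic bookkeeping on top of the structural results already in place.
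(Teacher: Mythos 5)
Your proposal is correct and follows essentially the same route as the paper: the paper's proof likewise observes that every $\sim_R$-equivalence class contains a point of $K$ and that $K\subset\cup_{i=1}^n R_i$, so $\pi(K)=S'$ and the rectangles $\pi(R_i)$ cover $S'$. Your extra remark identifying $\pi(R_i)$ with $\overline{\pi(\overset{o}{R_i})}$ via Lemma \ref{Lemm: pi(R) is a rectangle} is a harmless refinement, not a divergence.
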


\begin{proof}
	It is enough to observe that in any equivalence class of $\sim_{R}$ there is an element of $K$. Since $K$ is contained in $\cup_{i=1}^n R_i$, we have our result.
\end{proof}

\begin{lemm}\label{Lemm: Boundaries are f-invariant}
	The stable boundary of $\pi(\cR)$, $\partial^s \pi(\cR) = \cup_{i=1}^n \partial^s \pi(R_i)$, is $\phi$-invariant. Similarly, the unstable boundary of $\pi(\cR)$, $\partial^u \pi(\cR) = \cup_{i=1}^n \partial^u \pi(R_i)$, is $\phi^{-1}$-invariant.
	
\end{lemm}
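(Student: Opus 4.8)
The plan is to show that the two families of rectangles $\pi(\cR)$ and $f$-dynamics match up so that the invariance of the $s$- and $u$-boundaries of the original Markov partition $\cR$ is transported through the semi-conjugation $\pi$. First I would recall that the semi-conjugation identity $\pi\circ f|_{\Delta(K)}=\phi\circ\pi$ from Theorem \ref{Theo: Basic piece projects to pseudo-Anosov} lets us commute $\pi$ and $f$ freely: for any subset $X\subset\Delta(K)$ we have $\phi(\pi(X))=\pi(f(X))$ and $\phi^{-1}(\pi(X))=\pi(f^{-1}(X))$. So it suffices to understand the image under $\pi$ of the stable and unstable boundaries of the rectangles $R_i$.

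The key step is to identify $\partial^s\pi(R_i)$ with $\pi(\partial^s R_i)$ and $\partial^u\pi(R_i)$ with $\pi(\partial^u R_i)$. This was essentially established in the proof of Lemma \ref{Lemm: pi(R) is a rectangle}: there the parametrization $\Psi$ of $\pi(R_i)$ was built by continuously extending the parametrization of $\pi(\overset{o}{R_i})$, sending the horizontal boundary of the affine rectangle $H=[a_1,a_2]\times[b_1,b_2]$ to stable leaves and its vertical boundary to unstable leaves, and in the last part of that proof it was shown that $\Psi(H)=\pi(R_i)$. Tracking which pieces of $\partial R_i$ map to the boundary of $\pi(R_i)$, one gets $\partial^s\pi(R_i)=\pi(\partial^s R_i)$ and $\partial^u\pi(R_i)=\pi(\partial^u R_i)$. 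I would spell this out as a short sub-lemma, using that the equivalence classes of $\sim_R$ meeting $\partial^s R_i$ are either minimal rectangles or unstable arcs whose endpoints lie on $\partial^s R_i$, so that $\pi$ does not move a stable-boundary point of $R_i$ off the stable boundary of $\pi(R_i)$.

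Granting this identification, the proof is immediate. Since $\cR$ is a Markov partition of $f$, Proposition \ref{Prop: Markov criterion boundary diffeomorphism} (or the defining property) gives that $\partial^s\cR=\cup_{i=1}^n\partial^s R_i$ is $f$-invariant, i.e. $f(\partial^s\cR)\subset\partial^s\cR$. Applying $\pi$ and using the commutation identity,
\begin{equation*}
\phi\left(\partial^s\pi(\cR)\right)=\phi\left(\pi(\partial^s\cR)\right)=\pi\left(f(\partial^s\cR)\right)\subset\pi\left(\partial^s\cR\right)=\partial^s\pi(\cR),
\end{equation*}
so $\partial^s\pi(\cR)$ is $\phi$-invariant; and since $\phi$ is a homeomorphism of a compact surface and $\partial^s\pi(\cR)$ is compact, the inclusion is actually an equality. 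The same computation with $f^{-1}$ and the unstable boundaries, using $f^{-1}$-invariance of $\partial^u\cR$, shows $\partial^u\pi(\cR)$ is $\phi^{-1}$-invariant.

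The main obstacle I anticipate is the sub-lemma $\partial^{s,u}\pi(R_i)=\pi(\partial^{s,u}R_i)$: one must be careful that $\pi$ neither collapses a boundary leaf of $R_i$ into the interior of $\pi(R_i)$ nor creates new boundary points not coming from $\partial R_i$. The first is ruled out exactly as in Lemma \ref{Lemm: pi(R) is a rectangle} via the product structure on $\pi(\overset{o}{R_i})$ (a point of $\partial^s R_i$ cannot project to an interior stable leaf, since the stable boundary of $R_i$ is isolated from $K$ on one side, so its image stays extremal), and the second follows because $\Psi(H)=\pi(R_i)$ with the boundary of $H$ mapping onto $\partial(\pi(R_i))$. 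Once this is clean, everything else is a one-line diagram chase using the semi-conjugacy and compactness. With Lemmas \ref{Lemm: disjoint interiors}, \ref{Lemm: the rectangles pi R cover S} and \ref{Lemm: Boundaries are f-invariant} in hand, Proposition \ref{Prop: pi cR is Markov partition} then follows by invoking Proposition \ref{Prop: Markov criterion boundary} for the generalized pseudo-Anosov homeomorphism $\phi$.
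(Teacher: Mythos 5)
Your proposal is correct and follows essentially the same route as the paper: the paper's proof also reduces the statement to the identification $\partial^s\pi(R_i)=\pi(\partial^s R_i)$ (which it treats as clear from the construction of $\pi(R_i)$ in Lemma \ref{Lemm: pi(R) is a rectangle}) and then applies the semi-conjugation $\pi\circ f=\phi\circ\pi$ together with the $f$-invariance of $\partial^s\cR$, with the symmetric argument for the unstable boundary. The only caveat is your parenthetical claim that compactness upgrades the inclusion $\phi(\partial^s\pi(\cR))\subset\partial^s\pi(\cR)$ to an equality: this is false in general (a homeomorphism can map a compact set properly into itself, and indeed $\phi$ contracts the stable segments forming $\partial^s\pi(\cR)$), but it is also unnecessary, since invariance here means only the inclusion.
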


 Proposition \ref{Prop: pi cR is Markov partition} follows from  Lemmas \ref{Lemm: pi(R) is a rectangle}, \ref{Lemm: disjoint interiors}, \ref{Lemm: the rectangles pi R cover S}, and \ref{Lemm: Boundaries are f-invariant}.

\begin{proof}
We need to show that $\partial^s \pi(\cR)=\cup_{i=1}^n\partial^s\pi(R_i)$ is $f$-invariant and $\partial^u \pi(\cR)=\cup_{i=1}^n\partial^u\pi(R_i)$ is $f^{-1}$-invariant. 
Given $i\in\{1,\cdots,n\}$, it is clear that $\partial^s\pi(R_i)=\pi(\partial^sR_i)$,This implies the following contentions and equalities between sets:
\begin{eqnarray*}
	\phi(\partial^s\pi(R_i))=\phi(\pi(\partial^sR_i))=\\
	\pi(f(\partial^sR_i))\subset \pi(\partial^s \cR)=\partial^s\pi(\cR).
\end{eqnarray*}
This proves $\partial^s \pi(\cR)$ is $\phi$-invariant. A similar argument proves the unstable boundary of $\pi(\cR)$ is  $\phi^{-1}$-invariant.
\end{proof}

\subsection{The surface $S$ is oriented}

Before to prove that a geometric Markov partition of $f$ projects to a geometric Markov partition we shall to determine  that the surface $S'$ is orientable, because this is the setting where a geometric Markov partition was defined (See Definition \ref{Defi: geometric Markov partition}).  Once we determine that $S'$ is orientable,  we can endow it with the orientation induced by the quotient homeomorphism $[\pi]: S/\phi \rightarrow S'$.  This orientation assigns to each rectangle in $\pi(\cR)$ the orientation induced by $\pi$.

The core of the proof in the next lemma that the orientation induced by $\pi$ in the rectangles of the Markov partition $\pi(\cR)$ is consistent along the boundary of the partition, which means that the orientations of adjacent squares in $\pi(\cR)$ match along their shared boundaries.

\begin{lemm}\label{Lemm: S is oriented}
	The surface $S'$ is orientable. 
\end{lemm}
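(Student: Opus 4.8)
The strategy is to build an orientation of $S'$ directly from the orientation of $S$ by using the projection $\pi$ and the Markov partition. Recall from Lemma~\ref{Lemm: pi(R) is a rectangle} and Proposition~\ref{Prop: pi cR is Markov partition} that $\pi(\cR)=\{\pi(R_i)\}_{i=1}^n$ is a Markov partition of $\phi$, and from Definition~\ref{defi: orientations induced by pi} that each $\pi(R_i)$ carries a well-defined \emph{induced orientation}: the one making the parametrization $\Phi_i$ of $\pi(R_i)$ coherent with a fixed parametrization $r_i$ of $R_i$. The first step is to fix, for every $i$, a parametrization $r_i$ of $R_i$ that is orientation-preserving as a map from $[-1,1]^2$ (with its standard orientation) into the oriented surface $S$; this is possible because $S$ is oriented and each $R_i$ has a homeomorphic interior. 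Transporting this through $\pi$, each $\pi(R_i)$ is given the induced orientation, and these local orientations cover $S'=\cup_i \pi(R_i)$ by Lemma~\ref{Lemm: the rectangles pi R cover S}.

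\textbf{The key step} is to check that these local orientations agree on overlaps, i.e.\ along $\partial\pi(\cR)$. Since the interiors $\pi(\overset{o}{R_i})$ are pairwise disjoint (Lemma~\ref{Lemm: disjoint interiors}), the only place two rectangles meet is along common boundary arcs, which are stable or unstable segments of $\phi$. So it suffices to show: if $\pi(R_i)$ and $\pi(R_j)$ share a boundary arc $\gamma$ (say a stable arc, contained in $\partial^s\pi(R_i)\cap\partial^s\pi(R_j)$ or in $\partial^s\pi(R_i)\cap\partial^u\pi(R_j)$), then the two induced orientations match near $\gamma$. Here I would argue upstairs in $S$: the arc $\gamma$ is $\pi(\tilde\gamma)$ for a boundary segment $\tilde\gamma$ shared (up to collapsing of $\sim_R$-classes) by $R_i$ and $R_j$, and since the chosen parametrizations $r_i,r_j$ are both orientation-preserving into the oriented surface $S$, the change-of-coordinates $r_j^{-1}\circ r_i$ near $\tilde\gamma$ is orientation-preserving. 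Because the induced parametrizations $\Phi_i,\Phi_j$ are, by Lemma~\ref{Lemm: pi preserve transversals}, coherent with $r_i,r_j$ on the transverse foliations (each $\Phi_k^{-1}$ is increasing along the stable and unstable leaves), the transition $\Phi_j^{-1}\circ\Phi_i$ near $\gamma$ preserves both transverse orientations of the foliations of $\phi$, hence preserves orientation. One must run through the small bookkeeping of the possible incidence configurations (stable-stable, stable-unstable, and the corner cases), using Lemma~\ref{Lemm: projection of rectangle by pi} and Lemma~\ref{Lemm: psi bonatti } to know that $\pi$ restricted to the relevant neighbourhoods is a genuine homeomorphism onto its image respecting the product structure.

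Once coherence on overlaps is established, the atlas $\{(\pi(\overset{o}{R_i}),\Phi_i^{-1})\}$, together with charts around boundary and corner points obtained by the same recipe, is an oriented atlas for $S'$, so $S'$ is an orientable surface. A clean alternative phrasing, which avoids enumerating all corner cases by hand, is to invoke that a surface is orientable iff it contains no embedded M\"obius band: if $S'$ contained one, its preimage structure would force, along some loop of rectangles in $\pi(\cR)$, an odd number of orientation reversals of the transverse foliations of $\phi$; pulling this back through $\pi$ to the corresponding loop of rectangles $R_{i_1},\dots,R_{i_k}$ in $\cR$ (which meet along boundary segments in the oriented surface $S$) yields an orientation-reversing loop in $S$, contradicting that $S$ is oriented.

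\textbf{Main obstacle.} The genuine difficulty is the boundary/corner bookkeeping of the key step: $\pi$ is only a semiconjugacy, it collapses $\sim_R$-classes (minimal rectangles, leaf arcs, the sets $\mathcal P(\mathcal C)$), so a boundary arc of $\pi(R_i)$ need not be the literal $\pi$-image of a single boundary arc of $R_i$, and at corner points (where cycles $\mathcal C$ get collapsed, possibly creating spines of $\phi$) the local picture is more delicate. The care needed is to verify that even at such collapsed points the induced orientations of the finitely many incident rectangles of $\pi(\cR)$ still fit together — which is exactly where one uses that a cycle contains an even number of corner points and that $K$ has no impasse, so that the local model around $\pi$ of a corner point is the standard $k$-prong (or $1$-prong) neighbourhood, carrying an unambiguous orientation inherited from $S$.
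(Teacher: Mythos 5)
Your overall strategy is the same as the paper's (equip each $\pi(R_i)$ with the orientation induced by $\pi$, cover $S'$, and check coherence along the identified boundaries, treating collapsed cycles separately), but the way you argue coherence has a concrete flaw. In this setting the rectangles $R_1,\dots,R_n$ are \emph{pairwise disjoint} in $S$ — this is a Markov partition of a saddle-type basic piece of a Smale diffeomorphism, not of a pseudo-Anosov map — so the composition $r_j^{-1}\circ r_i$ "near $\tilde\gamma$" is simply not defined: there is no segment of $\partial R_i$ that is also a segment of $\partial R_j$, and downstairs $\pi(R_i)\cap\pi(R_j)$ is only a one-dimensional arc, never an open overlap, so $\Phi_j^{-1}\circ\Phi_i$ cannot be tested for orientation-preservation in the chart-transition sense either. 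The same false adjacency picture undermines your M\"obius-band alternative ("the corresponding loop of rectangles in $\cR$ which meet along boundary segments in $S$" — they do not meet). What actually happens is that $\pi(R_i)$ and $\pi(R_j)$ become adjacent because a band foliated by $u$-arcs (or $s$-arcs) lying \emph{between} $\partial^s R_i$ and $\partial^s R_j$ is collapsed; coherence must therefore be phrased as: the two rectangles induce opposite boundary orientations on the identified arc, equivalently the quotient of the oriented neighbourhood $R_i\cup(\text{band})\cup R_j\subset S$ is an oriented neighbourhood of the arc in $S'$. Your ambient-orientation idea can be repaired along these lines (both rectangles inherit the orientation of $S$ and sit on opposite sides of the collapsed band), and this is in substance what the paper does, except that it encodes the choice dynamically, by re-geometrizing so that the horizontal directions of the two stable boundaries point towards their periodic $s$-boundary points and then checking that the collapse of the $u$-arc respects these orientations.

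The second, and larger, omission is the cycle case, which is where the paper does the real work and which you only gesture at. At a collapsed cycle $\mathcal{C}=\{p_1,\dots,p_k\}$ the whole region $\cP(\mathcal{C})$ (points of the cycle, their free separatrices, and the disks $C(p)$ at corner points) is crushed to a single point of $S'$, around which finitely many rectangles $\pi(R_{i_1}),\dots,\pi(R_{i_m})$ are arranged cyclically, possibly producing a $1$-prong; here the "opposite sides of a band" picture no longer applies directly, and one must check that going once around the cycle the induced orientations close up consistently. The paper handles this by orienting every non-free stable separatrice of the cycle towards its periodic point, orienting each incident rectangle compatibly, and using that consecutive rectangles of the cycle glue along adjacent separatrices while the cyclic order around the image point is preserved (here the evenness of the number of corner points in a cycle, i.e.\ \cite[Lemma 8.3.5]{bonatti1998diffeomorphismes}, and the absence of impasse enter). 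Without carrying out this verification your argument establishes orientability only away from the finitely many cycle points, which is not yet enough to conclude.
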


\begin{proof}
Like the union of all $\pi(R_i)$ covers the whole surface $S'$, in order to obtain a orientation on it, is necessary to check that the orientations of the rectangles $\{\pi(R_i)\}_{i=1}^n$ are coherent in their boundaries. Let us study the situation of two adjacent stable boundaries
Let $x_i\in \partial^s R_i$ and $x_j\in \partial^s R_j$ be two points such that $\pi(x_i)=\pi(x_j)$. We will consider the case where they are not periodic $s$ or $u$-boundary points (i.e., they do not belong to a cycle). Without loss of generality, let's assume that $x_i$ and $x_j$ belong to the same unstable leaf $F^u$ and are both contained in $K$. 

Possibly after changing the vertical orientation of $R_i$ and/or $R_j$, it can be assumed that the horizontal direction of the stable boundaries where the points $x_i$ and $x_j$ of these rectangles meet, point to the respective periodic $s$-boundary points. This change the geometrization of the Markov partition, but  for this new geometrization, there is a unique orientation of the unstable leaf $F^u$ containing $J$ such that the vertical and horizontal orientations of $R_i$ and $R_j$, with respect to this geometrization, match the orientation induced by $\pi$. When we collapse the $u$-arc $J$ with end point $x_i$ and $x_j$, the stable and unstable directions of the leaves passing trough $\pi(x_i)=\pi(x_j)$ preserve their geometrization in every rectangle, and the resulting orientation at $\pi(x_j)$ is coherent for the two adjacent rectangles $\pi(R_i)$ and $\pi(R_j)$. (see \ref{Fig: Gluing orientation}).

\begin{figure}[h]
	\centering
	\includegraphics[width=0.4\textwidth]{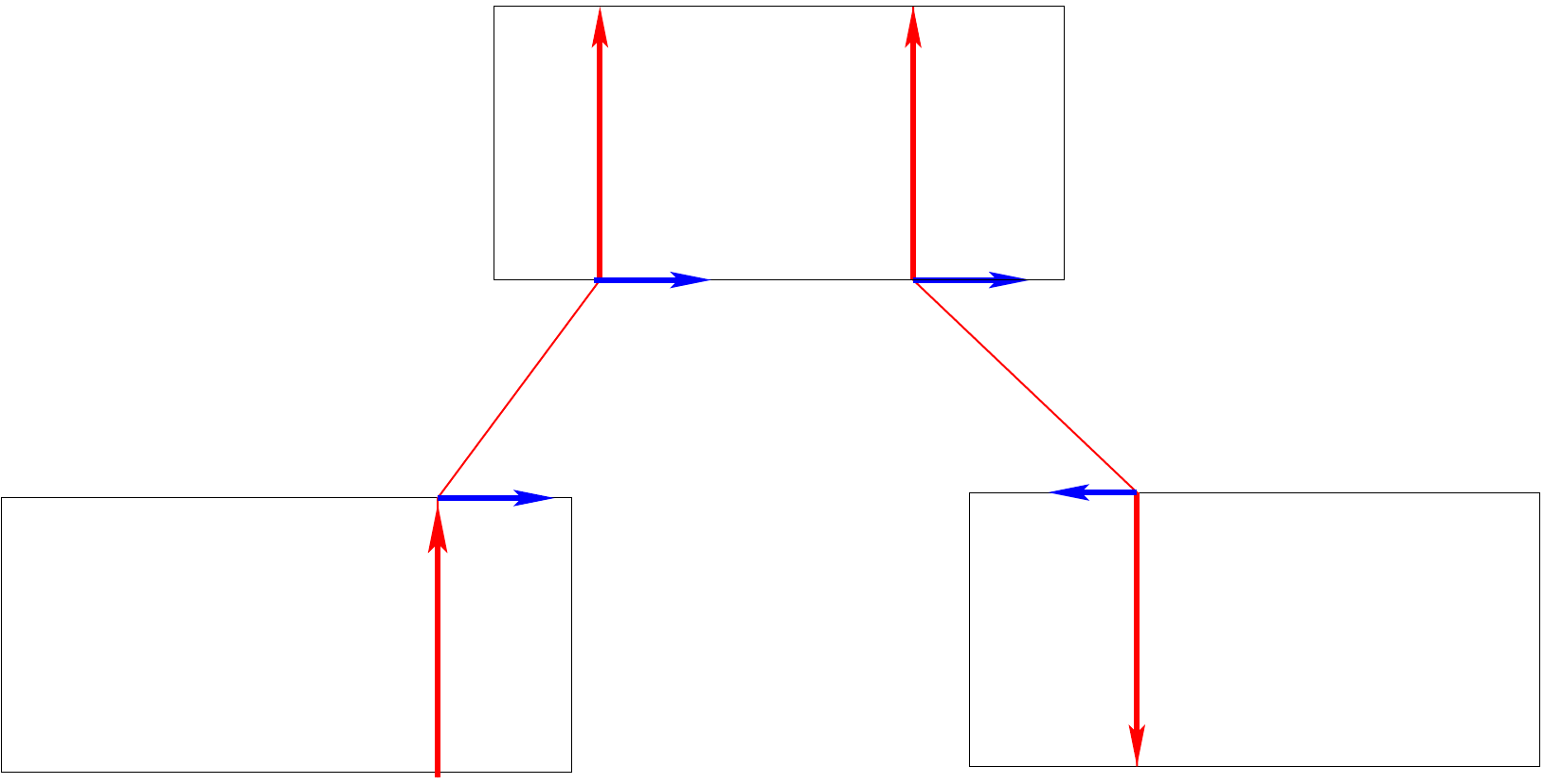}
	\caption{ The projection by $\pi$ match in the boundary}
	\label{Fig: Gluing orientation}
\end{figure}

The situation for a cycle is similar. Consider a cycle $[p_1,\cdots, p_k]$ and assign orientations to the non-free stable separatrices of each point in the cycle, pointing towards the periodic boundary point. Let's take a pair of rectangles $R_i$ and $R_j$ that contain two neighboring points $p_i$ and $p_j$ in the cycle. The stable boundaries of these rectangles intersect the two adjacent stable separatrices determined by $p_i$ and $p_j$.

We can orient $R_i$ and $R_j$ by completing their vertical orientations with the previously chosen horizontal orientation. After collapsing these two rectangles, $\pi(R_i)$ and $\pi(R_j)$ become two rectangles that share a stable interval in their boundary. The horizontal orientation of $\pi(R_i)$ and $\pi(R_j)$ is induced by the orientation of the two adjacent stable separatrices determined by $p_i$ and $p_j$. This forces a coherent orientation when we glue $\pi(R_i)$ with $\pi(R_j)$. This orientation consistency applies to all rectangles in the cycle, and the cyclic order around $\pi(p_i)$ must also be preserved.

\end{proof}

\begin{defi}\label{Defi: Orientation S'}
We endow the surface $S'$ with the orientation induced by the quotient homeomorphism $[\pi]: S/\phi \rightarrow S'$. 
\end{defi}

\subsection{The geometric type of the projection}
  The orientation of $S'$ given in Definition \ref{Defi: Orientation S'}, assigns to each rectangle in $\pi(\cR)$ the orientation induced by $\pi$ and this orientation is consistent along the boundary of the partition.

\begin{defi}\label{Defi: geometric partition induced by}
Let $\cR$ a geometric Markov partition of $f$ whose geometric type is $T$. Let $\pi(\cR)$ the geometric Markov partition of $\phi$ that consist of the  rectangles  whose orientation is induced by $\pi$,  its geometric type is denoted by $\pi(T)$ and we call $\pi(\cR)$ the geometric Markov partition induced by $\pi$.
\end{defi}

 Proposition \ref{Prop: type of basic piece is type of pseudo-Anosov} is consequence of this other proposition
 
 \begin{prop}\label{Prop: same type projection }
The geometric type $T$ is equal to $\pi(T)$
 \end{prop}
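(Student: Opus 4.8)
The plan is to show that the projection $\pi$ sends the combinatorial data of the Markov partition $\cR$ of $f$ bijectively onto that of the induced partition $\pi(\cR)$ of $\phi$, and that it respects both the ordering and the orientations of the sub-rectangles. Recall that by Proposition \ref{Prop: pi cR is Markov partition} we already know $\pi(\cR)=\{\pi(R_i)\}_{i=1}^n$ is a Markov partition of $\phi$, that by Lemma \ref{Lemm: S is oriented} and Definition \ref{Defi: Orientation S'} the surface $S'$ is oriented and each $\pi(R_i)$ carries the orientation induced by $\pi$, and that by Lemma \ref{Lemm: pi preserve transversals} the parametrization $\Phi$ of $\pi(R_i)$ preserves the transversal orientations of the stable and unstable foliations. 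So $\pi(\cR)$ is genuinely a geometric Markov partition, with the same number $n$ of rectangles indexed by the same labels, and $\pi(T)$ is well defined.

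First I would prove the analog of Lemma \ref{Lemm: Conjugated partitions same type.} in this setting: namely that $H$ is a horizontal sub-rectangle of $(f,\cR)$ if and only if $\pi(H)$ is a horizontal sub-rectangle of $(\phi,\pi(\cR))$, and similarly for vertical sub-rectangles. The key observation, exactly as in Lemma \ref{Lemm: Conjugated partitions same type.}, is that $\pi(\overset{o}{R_i})=\overset{o}{\pi(R_i)}$ (shown inside the proof of Lemma \ref{Lemm: disjoint interiors}) together with the semi-conjugation $\pi\circ f=\phi\circ\pi$; hence $C$ is a connected component of $\overset{o}{R_i}\cap f^{\pm1}(\overset{o}{R_j})$ if and only if $\pi(C)$ is a connected component of $\overset{o}{\pi(R_i)}\cap \phi^{\pm1}(\overset{o}{\pi(R_j)})$. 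For this I will need that $\pi$ restricted to $\overset{o}{R_i}$ is "monotone on leaves" (it collapses equivalence classes of $\sim_R$, which within a rectangle are stable arcs, unstable arcs, minimal rectangles or points, all of which lie in a single stable-times-unstable product box), which is precisely the content of Lemma \ref{Lemm: projection of rectangle by pi} and Lemma \ref{Lemm: psi bonatti }. This gives $n'=n$, $h_i'=h_i$ and $v_i'=v_i$, and identifies $\pi(H^i_j)$ with the $j$-th horizontal sub-rectangle of $\pi(R_i)$.

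Next I would check that the labelings agree, i.e. that $\pi$ preserves the bottom-to-top order of horizontal sub-rectangles and the left-to-right order of vertical sub-rectangles. This follows because, by Definition \ref{Defi: Orientation S'} / Definition \ref{defi: orientations induced by pi}, the vertical and horizontal orientations of $\pi(R_i)$ are the ones induced by $\pi$ from the chosen orientation of $R_i$, and $\Phi$ (equivalently $\Psi^{-1}$) is increasing along stable and unstable leaves (Lemma \ref{Lemm: pi preserve transversals}); so an increasing chain $H^i_1,\dots,H^i_{h_i}$ in $R_i$ maps to an increasing chain $\pi(H^i_1),\dots,\pi(H^i_{h_i})$ in $\pi(R_i)$, whence $\pi(H^i_j)=\overline{H^i_j}$ in the notation of $\pi(\cR)$, and likewise $\pi(V^k_l)=\overline{V^k_l}$. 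Combining this with the previous step and the semi-conjugation $\phi\circ\pi=\pi\circ f$: if $f(H^i_j)=V^k_l$ then $\phi(\overline{H^i_j})=\phi(\pi(H^i_j))=\pi(f(H^i_j))=\pi(V^k_l)=\overline{V^k_l}$, so $\rho_{\pi(T)}=\rho_T$. Finally, for the $\epsilon$-coefficients, one uses again that $\pi$ preserves the vertical orientations between $R_i$ and $\pi(R_i)$ and between $R_k$ and $\pi(R_k)$: $f$ carries the positive vertical orientation of $H^i_j$ (w.r.t. $R_i$) to the positive vertical orientation of $V^k_l$ (w.r.t. $R_k$) exactly when $\phi$ carries the positive vertical orientation of $\overline{H^i_j}$ (w.r.t. $\pi(R_i)$) to that of $\overline{V^k_l}$ (w.r.t. $\pi(R_k)$); hence $\epsilon_{\pi(T)}(i,j)=\epsilon_T(i,j)$. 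This yields $\pi(T)=T$, and then Proposition \ref{Prop: type of basic piece is type of pseudo-Anosov} is immediate.

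I expect the main obstacle to be the orientation bookkeeping, i.e. being fully rigorous that the orientation of $\pi(R_i)$ \emph{induced by $\pi$} (through the quotient homeomorphism $[\pi]$ and Definition \ref{Defi: Orientation S'}) is the same as the orientation obtained by first orienting $R_i$ and then pushing it forward leaf-by-leaf via $\Psi$; in other words, reconciling the local description of Lemma \ref{Lemm: pi preserve transversals} with the global orientation of $S'$ built in Lemma \ref{Lemm: S is oriented}. The delicate point is that $\Psi$ depends on a choice of parametrization $r$ of $R_i$, so one must verify that changing $r$ within its equivalence class (Definition \ref{Defi: equivalent parametrizations}) does not affect the induced orientation, and that the gluings along the boundary $\partial\pi(\cR)$ described in the proof of Lemma \ref{Lemm: S is oriented} are compatible with these local parametrizations simultaneously for all rectangles. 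Once that compatibility is in place, the identification of $\rho$ and $\epsilon$ is a routine transcription of the argument used for Theorem \ref{Theo: Conjugated partitions same types}.
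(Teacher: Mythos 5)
Your proposal is correct and follows essentially the same route as the paper: you establish the bijection between horizontal/vertical sub-rectangles via $\pi(\overset{o}{R_i})=\overset{o}{\pi(R_i)}$ and the disjointness of images of distinct sub-rectangles (the paper's Lemma \ref{Lemm: pi send sub-rec in sub rec}), use the semi-conjugation $\pi\circ f=\phi\circ\pi$ to get $\rho_{\pi(T)}=\rho_T$ (Corollary \ref{Coro: Same sub-rectangle}), and use the $\pi$-induced orientations to get $\epsilon_{\pi(T)}=\epsilon_T$ (Remark \ref{Rema: pi preserve the orientation}). The orientation bookkeeping you flag is exactly what the paper disposes of by defining the orientation of $\pi(R_i)$ as the one induced by $\pi$ (Definitions \ref{defi: orientations induced by pi} and \ref{Defi: Orientation S'}), so no further argument is needed there.
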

The proof will arrive from another Lemma.

\subsubsection{Same sub-rectangles:} For each $i \in \{1, \ldots, n\}$, the rectangle $R_i$ of the Markov partition $\cR = \{R_i\}_{i=1}^n$ for the Smale diffeomorphism $(f, S)$ has horizontal sub-rectangles $\{H^i_j\}_{j=1}^n$, which are enumerated in increasing order with respect to the vertical order in $R_i$. Similarly, the horizontal sub-rectangles of the rectangle $\pi(R_i)$ of the Markov partition $\pi(\cR)$ for the pseudo-Anosov homeomorphism $(\phi, S')$ are enumerated as $\{\underline{H^i_j}\}_{j=1}^{\underline{h_i}}$ in increasing order with respect to the vertical orientation of $\pi(R_i)$ induced by $\pi$.

\begin{lemm}\label{Lemm: pi send sub-rec in sub rec}
With the previous notation, we observe that $h_i = \underline{h_i}$, meaning that the number of horizontal sub-rectangles in $R_i$ is equal to the number of horizontal sub-rectangles in $\pi(R_i)$. Furthermore, for each $j \in \{1, \ldots, n\}$, we have $\pi(H^i_j) = \underline{H^i_j}$.

Similarly, if $\{V^k_l\}_{l=1}^{v_k}$ are the vertical sub-rectangles of $R_k$ and $\{\underline{V^k_l}\}_{l=1}^{\underline{v_k}}$ are the vertical sub-rectangles of $\pi(R_k)$, we have $v_k = \underline{v_k}$, indicating that the number of vertical sub-rectangles in $R_k$ is equal to the number of vertical sub-rectangles in $\pi(R_k)$. Moreover, we have $\pi(V^k_l) = \underline{V^k_l}$ for each $l \in \{1, \ldots, v_k\}$.

\end{lemm}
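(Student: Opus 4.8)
The plan is to exploit the fact, established in the preceding lemmas, that $\pi$ restricted to the interior of each rectangle $R_i$ is (after collapsing the $\sim_R$-classes) a parametrization-preserving homeomorphism onto the interior of $\pi(R_i)$, and moreover, by Lemma~\ref{Lemm: pi preserve transversals}, it preserves the transverse orientations of the stable and unstable foliations. First I would recall that the horizontal sub-rectangles of $R_i$ are exactly the closures of the connected components of $\overset{o}{R_i}\cap f^{-1}(\overset{o}{R_j})$ for $j\in\{1,\dots,n\}$ (Definition~\ref{Defi: horizontal-vertical of the partition}), and similarly for $\pi(\mathcal{R})$ with $\phi$ in place of $f$. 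Using the semi-conjugation $\pi\circ f = \phi\circ\pi$ on $\Delta(K)$ together with $\pi(\overset{o}{R_i})=\overset{o}{\pi(R_i)}$ (shown in the proof of Lemma~\ref{Lemm: disjoint interiors}) and the surjectivity/closedness of $\pi$, I would show
$$
\phi(\overset{o}{\pi(R_i)})\cap\overset{o}{\pi(R_j)} \;=\; \pi\bigl(f(\overset{o}{R_i})\cap\overset{o}{R_j}\bigr),
$$
and that $\pi$ restricted to $\overset{o}{R_i}$ induces a bijection between the connected components of $f(\overset{o}{R_i})\cap\overset{o}{R_j}$ and those of $\phi(\overset{o}{\pi(R_i)})\cap\overset{o}{\pi(R_j)}$. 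This is where Lemma~\ref{Lemm: disjoint interiors} matters: distinct components in the source cannot be merged in the target because their preimages under $\pi$ would then have to be $\sim_R$-equivalent, which is incompatible with being disjoint connected pieces of $\overset{o}{R_i}$ separated by stable boundary leaves. The equality $h_i=\underline{h_i}$ and $\pi(H^i_j)=\underline{H^i_j}$ up to relabeling then follows by counting; the key point for the exact identification of indices (rather than just cardinalities) is that $\pi$ preserves the vertical orientation of $R_i$ (by the induced orientation convention of Definition~\ref{defi: orientations induced by pi} and Lemma~\ref{Lemm: pi preserve transversals}), so the bottom-to-top enumeration of the $H^i_j$ is carried to the bottom-to-top enumeration of the $\underline{H^i_j}$.

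For the vertical sub-rectangles the argument is symmetric: the vertical sub-rectangles of $R_k$ are the closures of connected components of $f(\overset{o}{R_i})\cap\overset{o}{R_k}$, so one applies the same reasoning with $f^{-1}$, using that $\pi$ conjugates $f^{-1}$ with $\phi^{-1}$ and preserves the horizontal (unstable) orientation. Since the set of vertical sub-rectangles of $\pi(R_k)$ is precisely the image under $\phi$ of the horizontal sub-rectangles of $\pi(\mathcal{R})$ (Corollary~\ref{Coro: image of sub horzontal-vertical }, applied now to $\phi$, which is legitimate because Proposition~\ref{Prop: pi cR is Markov partition} already gives that $\pi(\mathcal{R})$ is a Markov partition of $\phi$), and the horizontal ones have been handled, the count $v_k=\underline{v_k}$ and the identification $\pi(V^k_l)=\underline{V^k_l}$ come out. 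One has to be a little careful that the left-to-right enumeration is compatible: this again reduces to $\pi$ preserving the horizontal orientation of $R_k$.

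The main obstacle I anticipate is handling the boundary carefully — the homeomorphism $\Psi$ from Lemma~\ref{Lemm: pi(R) is a rectangle} is a genuine homeomorphism only after passing to the quotient, and on $\partial R_i$ the map $\pi$ is far from injective (it collapses minimal rectangles, non-isolated arcs, and cycles). So the clean statement "$\pi$ is a homeomorphism $R_i\to\pi(R_i)$" is false, and I cannot simply transport the sub-rectangle structure by a homeomorphism. The workaround is to phrase everything in terms of \emph{interiors}: the horizontal and vertical sub-rectangles of a Markov partition are determined by the connected components of the relevant \emph{open} intersections, and $\pi$ does behave like a bijection on those. Then one takes closures at the very end and invokes that the closure of a component of $\overset{o}{R_i}\cap f^{-1}(\overset{o}{R_j})$ is a well-defined horizontal sub-rectangle (a fact already recorded right after Definition~\ref{Defi: horizontal-vertical of the partition}) and that $\pi$ of such a closure is the closure of $\pi$ of the interior, since $\pi$ is continuous and closed. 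A secondary point requiring attention is confirming that $\phi$ together with $\pi(\mathcal{R})$ genuinely satisfies $\pi(\overset{o}{R_i})=\overset{o}{\pi(R_i)}$ \emph{and} that no spurious extra horizontal sub-rectangles of $\pi(R_i)$ appear that are not images of sub-rectangles of $R_i$; this is ruled out because any such piece would pull back under $\pi$ to a nonempty open subset of $\overset{o}{R_i}\cap f^{-1}(\overset{o}{R_j})$, hence lie in one of the existing components.
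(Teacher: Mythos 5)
Your proposal is correct and follows essentially the same route as the paper: identify $\pi(\overset{o}{H^i_j})$ as a connected component of $\overset{o}{\pi(R_i)}\cap\phi^{-1}(\overset{o}{\pi(R_k)})$ via the semi-conjugation and $\pi(\overset{o}{R_i})=\overset{o}{\pi(R_i)}$, use the adaptation of the disjoint-interiors argument to see distinct sub-rectangles have disjoint images, and invoke the $\pi$-preserved vertical (resp. horizontal) order to match the labels, with the vertical case handled symmetrically. Your extra care about working with interiors and ruling out spurious components only makes explicit what the paper leaves implicit.
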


\begin{proof}
		 
Let $H:=H^i_j$ be a connected component of $R_i \cap f^{-1}(R_k)$, which is an horizontal sub-rectangle of the Markov partition $\cR$. Therefore, $\overset{o}{H}$ is a connected component of $\overset{o}{R_i} \cap f^{-1}(\overset{o}{R_k})$ for certain indexes of rectangles.
	 
 Now, let $\pi(\overset{o}{H})$ is a connected component of $\pi(\overset{o}{R_i}) \cap \pi(f^{-1}(\overset{o}{R_k}))= \overset{o}{\pi(R_i)} \cap \phi^{-1}(\pi(\overset{o}{R_k}))$. Therefore $\pi(\overset{o}{H})$ is an horizontal sub-rectangle of $\pi(\cR)$. If $H_1,H_2$ are two different  horizontal sub-rectangles of $R_i$. Their interior don't intersect and a adaptation of the argument in \ref{Lemm: disjoint interiors} implied $\pi(\overset{o}{H_1})\cap \pi(\overset{o}{H_2})=\emptyset$. Therefore there is a bijection between the horizontal sub-rectangles of $R_i$ and those of $\pi(R_i)$.
 
The projection $\pi$ preserve the vertical order of the rectangles, then the rectangles $H^i_{j}$ and $H^i_{j+1}$ are projected in adjacent rectangles preserving the order, hence $\pi(H^i_{j})=\underline{H^i_{j}}$.

The proof of the assertions concerning the vertical sub-rectangles is completely similar.
\end{proof}

\begin{coro}\label{Coro: Same sub-rectangle}
For all $i\in \{1,\cdots, n\}$ and $j\in \{1,\cdots,h_i\}$, $\phi(\underline{H^i_j})=\underline{V^k_l}$ if and only if $f(H^i_j)=V^k_l$.
\end{coro}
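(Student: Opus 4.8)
The plan is to deduce Corollary~\ref{Coro: Same sub-rectangle} directly from Lemma~\ref{Lemm: pi send sub-rec in sub rec} together with the semi-conjugation relation $\pi\circ f|_{\Delta(K)}=\phi\circ\pi$ from Theorem~\ref{Theo: Basic piece projects to pseudo-Anosov}. The two statements to be proved are equivalences, so I would prove the implication $f(H^i_j)=V^k_l \Rightarrow \phi(\underline{H^i_j})=\underline{V^k_l}$ and then observe that the converse follows by the same argument read backwards (using that $\pi$ restricted to the relevant sub-rectangle interiors is a homeomorphism onto its image, and that $f$ and $\phi$ are homeomorphisms).

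First I would fix $i\in\{1,\dots,n\}$ and $j\in\{1,\dots,h_i\}$ and suppose $f(H^i_j)=V^k_l$ for the appropriate $(k,l)\in\cV(T)$. By definition of the horizontal/vertical sub-rectangles of the Markov partition, this means $f(\overset{o}{H^i_j})$ is the interior of the vertical sub-rectangle $V^k_l$ of $R_k$. Applying $\pi$ and using the semi-conjugacy, $\pi(f(\overset{o}{H^i_j}))=\phi(\pi(\overset{o}{H^i_j}))$. By Lemma~\ref{Lemm: pi send sub-rec in sub rec}, $\pi(\overset{o}{H^i_j})=\overset{o}{\underline{H^i_j}}$ and $\pi(\overset{o}{V^k_l})=\overset{o}{\underline{V^k_l}}$; moreover $\pi(f(\overset{o}{H^i_j}))=\pi(\overset{o}{V^k_l})=\overset{o}{\underline{V^k_l}}$. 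Hence $\phi(\overset{o}{\underline{H^i_j}})=\overset{o}{\underline{V^k_l}}$, and taking closures (both $\underline{H^i_j}$ and $\underline{V^k_l}$ are the closures of their interiors, being rectangles of the partition $\pi(\cR)$ by Proposition~\ref{Prop: pi cR is Markov partition}) gives $\phi(\underline{H^i_j})=\underline{V^k_l}$, as claimed.

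For the converse, suppose $\phi(\underline{H^i_j})=\underline{V^k_l}$. Since $\pi$ is surjective and, by Lemma~\ref{Lemm: pi send sub-rec in sub rec}, the horizontal sub-rectangles of $(\phi,\pi(\cR))$ are exactly the images under $\pi$ of the horizontal sub-rectangles of $(f,\cR)$ (and likewise for vertical ones), the set $f(\overset{o}{H^i_j})$ is a connected component of $f(\overset{o}{R_i})\cap\overset{o}{R_m}$ for a unique rectangle $R_m$; say $f(H^i_j)=V^m_{l'}$. By the implication just proved, $\phi(\underline{H^i_j})=\underline{V^m_{l'}}$. Since $\phi$ is a homeomorphism and the vertical sub-rectangles of $\pi(\cR)$ have pairwise disjoint interiors (adapting Lemma~\ref{Lemm: disjoint interiors}), the equality $\underline{V^m_{l'}}=\underline{V^k_l}$ forces $(m,l')=(k,l)$, hence $f(H^i_j)=V^k_l$.

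No serious obstacle is anticipated here: the corollary is essentially a bookkeeping consequence of the already-established fact (Lemma~\ref{Lemm: pi send sub-rec in sub rec}) that $\pi$ carries the horizontal and vertical sub-rectangles of $\cR$ bijectively and order-preservingly onto those of $\pi(\cR)$, combined with the intertwining $\pi\circ f=\phi\circ\pi$. The only point requiring a little care is ensuring that the equalities between interiors upgrade to equalities between the closed sub-rectangles, which is handled by the fact that all these sub-rectangles are rectangles of Markov partitions and hence coincide with the closures of their interiors. This corollary, together with the identification $h_i=\underline{h_i}$, $v_k=\underline{v_k}$ of Lemma~\ref{Lemm: pi send sub-rec in sub rec} and the analysis of how $\pi$ acts on the vertical orientations (Definition~\ref{defi: orientations induced by pi} and Lemma~\ref{Lemm: pi preserve transversals}), is exactly what is needed to conclude in Subsection~\ref{Subsec: Proof Proposition} that $\rho_{\pi(T)}=\rho_T$ and $\epsilon_{\pi(T)}=\epsilon_T$, i.e. that $\pi(T)=T$ as in Proposition~\ref{Prop: same type projection }.
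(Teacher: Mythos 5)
Your proof is correct and follows essentially the same route as the paper: the forward implication via $\pi\circ f=\phi\circ\pi$ together with $\pi(H^i_j)=\underline{H^i_j}$, $\pi(V^k_l)=\underline{V^k_l}$ from Lemma~\ref{Lemm: pi send sub-rec in sub rec}, and the converse by applying that implication to $f(H^i_j)=V^{k'}_{l'}$ and using that distinct projected sub-rectangles cannot coincide. Your extra care about passing from interiors to closures and the explicit appeal to disjointness of interiors are just more detailed bookkeeping of the same argument.
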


\begin{proof}

If $f(H^i_j)=V^k_l$, then $\pi(f(H^i_j))=\pi(V^k_l)$. Using the semi-conjugation, we obtain that $\phi(\pi(H^i_j))=\phi(\underline{H^i_j})=\underline{V^k_l}$.

Conversely, if $\phi(\underline{H^i_j})=\underline{V^k_l}$, then $\underline{H^i_j}=\pi(H^i_j)$ and $\underline{V^k_l}=\pi(V^k_l)$. In the case that $f(H^i_j)=V^{k'}{l'}$, it is clear that $\underline{V^k_l}=\pi(f(H^i_j))=\pi(V^{k'}{l'})=\underline{V^{k'}_{l'}}$, which implies $k'=k$ and $l'=l$.

\end{proof}

\begin{rema}\label{Rema: pi preserve the orientation}
As the orientation in $\pi(R_i)$ is induced by $\pi$, it is clear that $\pi$ preserves the vertical orientation relative to $R_i$ and $\pi(R_i)$. Similarly, $\pi$ preserves the horizontal orientation relative to $R_k$ and $\pi(R_k)$.

This implies that $f$ restricted to $H^i_j$ preserves the vertical orientation if and only if $\phi$ restricted to $\underline{H^i_j}$ preserves the vertical orientation.
\end{rema}

\subsubsection{Proof of proposition \ref{Prop: type of basic piece is type of pseudo-Anosov}. } \label{Subsec: Proof Proposition}
 After \ref{Coro: Same sub-rectangle} and Remark \ref{Rema: pi preserve the orientation} we can conclude the proof of Proposition \ref{Prop: type of basic piece is type of pseudo-Anosov}.
 
\begin{proof}
	Let  $T(\cR)=\{n,\{(h_i,v_i)\}_{i=1}^n, (\rho,\epsilon)\}$ be the geometric type of $\cR$, and let $\pi(T)=\{n',\{(h'_i,v'_i)\}_{i=1}^{n'}, (\rho',\epsilon')\}$  be the geometric type of $\pi(\cR)$.
	
	The number of rectangles in $\cR$ and $\pi(\cR)$ is equal to $n$, and for every $i\in \{1,\cdots, n\}$, Lemma \ref{Lemm: pi send sub-rec in sub rec} implies that $h_i=h_i'$ and $v_i=v_i'$.
	
	The function $\rho(i,j)=(k,l)$ if and only if $f(H^i_j)=V^k_l$. Corollary \ref{Coro: Same sub-rectangle} states that this occurs if and only if $\phi(\underline{H^i_j})=\underline{V^k_l}$. In terms of the geometric type, this is equivalent to the fact that $\rho'(i,j)=(k,l)$.
	
	Finally, Remark \ref{Rema: pi preserve the orientation} tells us that $\epsilon(i,j)$ is $1$ if and only if $\epsilon'(i,j)$ is $1$. This completes our proof.

\end{proof}
 
\section{All geometric types of the pseudo-Anosov class have finite genus and no impasse.}\label{Sec: Tipes PA finite genus but not impasse}

In this section, we assume that $T$ is a geometric type in the pseudo-Anosov class. We are going to prove that its incidence matrix is mixing, it genus is finite and does not display any impasse. The arguments of this section use the equivalence between the combinatorial and topological conditions of finite genus and impasse.

Nowadays the following Proposition is a  classic result, its proof could be found for example in \cite{farb2011primer} and \cite{fathi2021thurston} and directly implies  the first Item of Proposition \ref{Prop: pseudo-Anosov iff basic piece non-impace}.

\begin{prop}\label{Prop: Incidence matrix mixing}
	Let $\cR$ be the Markov partition of a generalized pseudo-Anosov homeomorphism  then its incidence matrix is \emph{mixing}, i.e. there exists a number $n\in \NN$ such that for all $i,j\in {1,\cdots,n}$, $a_{i,j}^{(m)}>0$.
\end{prop}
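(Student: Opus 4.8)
The plan is to reduce the statement to a single uniform estimate on long unstable arcs and then feed it the hyperbolic dynamics. First I would record the bookkeeping: for every $m\in\NN$ the matrix $A^m$ is the incidence matrix of $\cR$ viewed as a Markov partition of $f^m$, since iterating the Markov property of Definition \ref{Defi: Markov partition} shows that each non\-empty connected component of $f^m(\overset{o}{R_i})\cap\overset{o}{R_j}$ is the interior of a vertical sub\-rectangle of $R_j$, and the number of these components equals the coefficient $a_{ij}^{(m)}$ of $A^m$. Consequently $a_{ij}^{(m)}>0$ if and only if $f^m(\overset{o}{R_i})\cap\overset{o}{R_j}\neq\emptyset$, and proving mixing amounts to producing one integer $n$ with $f^n(\overset{o}{R_i})\cap\overset{o}{R_j}\neq\emptyset$ for every pair $i,j$.

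The central step is the following claim: there is a constant $L>0$ such that every arc $\gamma$ contained in a leaf of $\cF^u$ with unstable length $\mu^s(\gamma)\geq L$ meets $\overset{o}{R_j}$ for every $j\in\{1,\dots,n\}$. Since there are finitely many rectangles, it is enough to fix $j$ and produce an $L_j$, then take $L=\max_j L_j$. I would argue by contradiction: if no such $L_j$ existed there would be unstable arcs $\gamma_k$ with $\mu^s(\gamma_k)\to\infty$ and $\gamma_k\subset S\setminus\overset{o}{R_j}$, a closed set. Choosing an interior $\mu^s$\-midpoint $x_k$ of $\gamma_k$ and passing to a subsequence with $x_k\to x$, the $C^0$ regularity of the singular foliation $\cF^u$ together with the continuity and positivity of the transverse measure $\mu^s$ forces, for each fixed length $\ell$, the sub\-arc of $\gamma_k$ of $\mu^s$\-length $\ell$ centred at $x_k$ to converge to the sub\-arc of the leaf $\cF^u(x)$ of $\mu^s$\-length $\ell$ centred at $x$; as $S\setminus\overset{o}{R_j}$ is closed, this limit sub\-arc avoids $\overset{o}{R_j}$, and letting $\ell\to\infty$ gives $\cF^u(x)\subset S\setminus\overset{o}{R_j}$, contradicting the minimality of $\cF^u$ from Proposition \ref{Prop: pseudo-Anosov properties.} (every unstable leaf is dense in $S$).

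Granting the claim, the proof finishes quickly. For each $i$ I would fix an arc $J_i\subset\overset{o}{R_i}$ contained in a vertical leaf of $R_i$ with $\mu^s(J_i)>0$. Since $f(\cF^u)=\cF^u$ and $f_*\mu^s=\lambda^{-1}\mu^s$ with $\lambda>1$, the set $f^n(J_i)$ is again an arc inside a leaf of $\cF^u$ with $\mu^s(f^n(J_i))=\lambda^n\mu^s(J_i)\to\infty$; hence there is $n_i$ with $\lambda^n\mu^s(J_i)\geq L$ for all $n\geq n_i$. Put $N=\max_i n_i$. For every $n\geq N$, every $i$ and every $j$, the arc $f^n(J_i)\subset f^n(\overset{o}{R_i})$ has unstable length $\geq L$, so by the claim it meets $\overset{o}{R_j}$; thus $f^n(\overset{o}{R_i})\cap\overset{o}{R_j}\neq\emptyset$, i.e. $a_{ij}^{(n)}>0$ for all $i,j$. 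In particular $A^N$ has all entries positive, so $A(\cR)$ is mixing. The main obstacle is the compactness argument in the claim: one must make precise the sense in which a sequence of ever longer unstable arcs avoiding a fixed open rectangle accumulates onto a full unstable leaf avoiding that rectangle. This is where the $C^0$ structure of the singular foliation and the good behaviour of the transverse measure are genuinely needed; the presence of prong (and spine) singularities requires a little care, but it causes no real difficulty because $\overset{o}{R_j}$ is disjoint from $\textbf{Sing}(f)$ and the arcs are only asked to stay outside $\overset{o}{R_j}$.
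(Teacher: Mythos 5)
Your argument is correct and is essentially the classical proof that the paper itself does not write out but delegates to \cite{fathi2021thurston} and \cite{farb2011primer}: a uniform length $L$ such that every unstable arc of $\mu^s$-length at least $L$ meets every $\overset{o}{R_j}$ (minimality of $\cF^u$ plus a compactness argument), followed by uniform expansion of $\mu^s$-length along unstable arcs under $f$. The only two points to tighten are ones you already flag: near a singular limit leaf the accumulating arcs may branch and only yield a separatrix contained in $S\setminus\overset{o}{R_j}$, which still gives the contradiction since the closure of a half-leaf is saturated (hence equals $S$ by density of leaves); and to pass from $f^{n}(\overset{o}{R_i})\cap\overset{o}{R_j}\neq\emptyset$ to $a^{(n)}_{ij}>0$ one should pick the intersection point so that its intermediate iterates avoid the finitely many boundary arcs of $\cR$, which is possible because the intersection is open.
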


According to Proposition \ref{Prop: Types transitive have a realization} any geometric type without double boundaries admits a realization. The following Lemma can be applied to a geometric type $T$ in the pseudo-Anosov class and implies that $T$ does not have double boundaries.

\begin{lemm}\label{Lemm: T mixing implies non double boundaries}
If the incidence matrix of $T$ is mixing, then $T$ does not have double boundaries.
\end{lemm}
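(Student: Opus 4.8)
The plan is to prove the contrapositive: if $T$ has a double boundary, then its incidence matrix $A(T)$ is not mixing. Recall from Definition \ref{Defi: double boundary} that a double $s$-boundary is a cycle of indices $i_1 \to i_2 \to \cdots \to i_k \to i_{k+1} = i_1$ such that for all $t \in \{2,\dots,k\}$ we have $h_{i_t} = 1$ and $\phi_T(i_t,1) = (i_{t+1},l)$ for some $l$. The key observation is that when a rectangle $R_{i_t}$ has a single horizontal sub-rectangle, the corresponding row of $A(T)$ (indexed by $i_t$) has exactly one nonzero entry, namely a $1$ in column $i_{t+1}$. Indeed, by Definition \ref{Defi: Incidence matriw of a type}, $a_{i_t,k} = \#\{j \in \{1,\dots,h_{i_t}\} : \Phi_T(i_t,j) = (k,l,\epsilon(i_t,j))\}$, and since $h_{i_t} = 1$ and $\rho_T(i_t,1) = (i_{t+1},l)$, the only such $j$ is $j=1$ and the only such $k$ is $i_{t+1}$.

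First I would set up the argument in terms of the canonical basis $\{e_1,\dots,e_n\}$ of $\RR^n$, viewing $A := A(T)$ as acting on column vectors. The fact just noted says that $A^{\mathsf{T}} e_{i_t}$ has support contained in $\{e_{i_{t+1}}\}$; more precisely, since $a_{i_t,i_{t+1}} = 1$, we get $A^{\mathsf{T}} e_{i_t} = e_{i_{t+1}}$ for each $t \in \{2,\dots,k\}$ (reading indices cyclically so that $i_{k+1} = i_1$, and noting that the hypothesis on the cycle forces $h_{i_t}=1$ for those $t$; if $i_1$ also satisfies $h_{i_1}=1$ the same holds at $i_1$, otherwise one simply uses the partial orbit). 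Iterating, for any $m \in \NN$ we obtain $(A^{\mathsf{T}})^m e_{i_2} = e_{i_{2+m}}$ where the subscript is taken modulo the length of the cycle. In particular the vector $(A^{\mathsf{T}})^m e_{i_2}$ is always a single standard basis vector, so the column of $A^m$ indexed by $i_2$ has exactly one nonzero entry. Hence $A^m$ can never be a positive matrix, for any $m$, which means $A$ is not mixing. This contradicts the hypothesis that $A(T)$ is mixing, so $T$ has no double $s$-boundary.

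Next I would handle the double $u$-boundary case. By Definition \ref{Defi: double boundary}, a double $u$-boundary of $T$ is precisely a double $s$-boundary of $T^{-1}$, and by Definition \ref{Defi: inverse of Type} the incidence matrix of $T^{-1}$ is $A(T^{-1}) = A(T)^{\mathsf{T}}$ (this is immediate from the definitions: the role of horizontal and vertical sub-rectangles is swapped, and $\rho_{T^{-1}} = \rho_T^{-1}$). Since $A(T)$ is mixing if and only if $A(T)^{\mathsf{T}}$ is mixing — the condition ``some power is strictly positive'' is transpose-invariant — the first case applied to $T^{-1}$ shows that $T^{-1}$ has no double $s$-boundary, i.e. $T$ has no double $u$-boundary. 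Combining the two cases gives the lemma.

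I do not anticipate a serious obstacle here; the statement is essentially bookkeeping with the definition of the incidence matrix. The one point requiring a little care is the boundary case where the index $i_1$ of the cycle does not itself satisfy $h_{i_1} = 1$ (the definition only requires this for $t \in \{2,\dots,k\}$): in that situation one should track the orbit starting from $e_{i_2}$ rather than $e_{i_1}$, and observe that after enough iterations one lands back in $\{e_{i_2},\dots,e_{i_k}\}$ and cycles forever among single basis vectors, which is all that is needed to rule out positivity of any $A^m$. It may be cleanest to simply state: the subspace spanned by $\{e_{i_2},\dots,e_{i_k}\}$ is $A^{\mathsf{T}}$-invariant and $A^{\mathsf{T}}$ permutes these basis vectors cyclically, so $A$ restricted to a certain coordinate subspace is a permutation matrix, which prevents $A^m$ from being everywhere positive.
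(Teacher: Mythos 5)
Your proof is correct and is essentially the paper's own argument: a row with $h_i=1$ has a single entry, equal to $1$, so a double $s$-boundary cycle makes $A$ (equivalently $A^{\mathsf{T}}$) permute a set of standard basis vectors, hence no power of $A$ is positive, contradicting mixing, and the $u$-case follows by passing to $T^{-1}$. Your extra care about the index $i_1$ addresses what is evidently a typo in Definition \ref{Defi: double boundary} (the condition $h_{i_t}=1$ is meant to hold for every index of the cycle, exactly as the paper's proof assumes; under the literal reading the lemma would even be false, e.g.\ $n=2$, $h_1=2$, $h_2=1$ with incidence matrix $\begin{pmatrix}1&1\\1&0\end{pmatrix}$, which is mixing), and in fact your fallback claim that $\mathrm{span}\{e_{i_2},\dots,e_{i_k}\}$ is $A^{\mathsf{T}}$-invariant does not hold when $h_{i_1}>1$ since $A^{\mathsf{T}}e_{i_k}=e_{i_1}$ --- but that degenerate reading need not be covered.
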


\begin{proof}

	Let $\{e_i\}_{i=1}^n$  be the canonical basis of $\mathbb{R}^n$, and let $A$ be the incidence matrix of $T$. We recall that the entry $a_{ij}$ represents the number of horizontal sub-rectangles of $R_i$ that are mapped to $R_j$. In particular, if $h_i=1$, there exists a unique $j_0$ such that $f(R_i)\cap R_{j_0}$ is a horizontal sub-rectangle of $R_{j_0}$. This implies that $a_{ij_0}=1$, and for all $j\neq j_0$, $a_{ij}=0$.
	
	The condition of double $s$-boundaries implies the existence of $\{i_s\}_{s=1}^S\subset\{1,\cdots,n\}$  such that$A(e_{i_s})=e_{i_{s+1}}$ for $1\leq s<S$ and $A(e_{i_S})=e_{i_1}$. This forces $A^m$ to not be positive definite for any $m\in \mathbb{N}$, which contradicts the mixing property of the incidence matrix.
	
	Therefore, if $T$ has a mixing incidence matrix, it does not have double $s$-boundaries.
	
\end{proof}

This lemma have the following corollaries.

\begin{coro}\label{coro: pseudo Anosov non-doble boundaries}
	If $T$ is in the pseudo-Anosov class, then $T$ does not have double boundaries.
\end{coro}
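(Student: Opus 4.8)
The statement to prove is Corollary \ref{coro: pseudo Anosov non-doble boundaries}: if $T$ is in the pseudo-Anosov class, then $T$ does not have double boundaries. This is an immediate consequence of the two preceding results in this section, and the plan is simply to chain them together.

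First I would invoke Proposition \ref{Prop: Incidence matrix mixing}, which guarantees that the incidence matrix of any Markov partition of a generalized pseudo-Anosov homeomorphism is mixing. Since $T$ is in the pseudo-Anosov class, by Definition \ref{Defi: pseudo-Anosov class} there is a generalized pseudo-Anosov homeomorphism $f:S\rightarrow S$ with a geometric Markov partition $\cR$ such that $T = T(f,\cR)$. The incidence matrix $A(T)$ coincides with the incidence matrix $A(\cR)$ of that partition (as noted in Subsection \ref{Subsec: Incidence matrix}), so $A(T)$ is mixing.

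Then I would apply Lemma \ref{Lemm: T mixing implies non double boundaries}: since $A(T)$ is mixing, $T$ does not have double boundaries. This concludes the proof. There is no real obstacle here; the only thing to be careful about is making sure the quantifiers in Definition \ref{Defi: pseudo-Anosov class} are matched correctly so that the Markov partition whose incidence matrix we consider is genuinely the one realizing $T$. The argument would be written out in roughly three sentences:

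\begin{proof}
Since $T$ is in the pseudo-Anosov class, there exists a generalized pseudo-Anosov homeomorphism $f:S\rightarrow S$ with a geometric Markov partition $\cR$ such that $T=T(f,\cR)$. By Proposition \ref{Prop: Incidence matrix mixing}, the incidence matrix $A(\cR)=A(T)$ is mixing. Hence, by Lemma \ref{Lemm: T mixing implies non double boundaries}, $T$ does not have double boundaries.
\end{proof}
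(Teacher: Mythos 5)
Your proposal is correct and follows exactly the paper's route: the paper derives this corollary by combining Proposition \ref{Prop: Incidence matrix mixing} (the incidence matrix of a Markov partition of a generalized pseudo-Anosov homeomorphism is mixing) with Lemma \ref{Lemm: T mixing implies non double boundaries}, just as you do. The argument is complete as written.
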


\begin{coro}\label{Coro: T pA then realization}
If $T$  is a geometric type in the pseudo-Anosov class, $T$ it admits a realization.
\end{coro}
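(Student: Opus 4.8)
The statement to prove is Corollary \ref{Coro: T pA then realization}: if $T$ is a geometric type in the pseudo-Anosov class, then $T$ admits a realization (in the sense of Definition \ref{Defi: realization}, i.e., a concretization whose maximal invariant set is hyperbolic).

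The plan is to chain together three results that are already available in the excerpt. First, since $T$ is in the pseudo-Anosov class, Proposition \ref{Prop: Incidence matrix mixing} applies: the incidence matrix $A(T)$ of any Markov partition realizing $T$ is mixing. (Here one should note that $A(T)$, as defined combinatorially in Definition \ref{Defi: Incidence matriw of a type}, coincides with the incidence matrix of the Markov partition that realizes $T$, so the classical statement transfers to the abstract geometric type.) Second, I would invoke Corollary \ref{coro: pseudo Anosov non-doble boundaries}, or equivalently apply Lemma \ref{Lemm: T mixing implies non double boundaries} directly to the mixing matrix $A(T)$, to conclude that $T$ has no double $s$- or $u$-boundaries. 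Third, having established that $T$ is a geometric type without double boundaries, Proposition \ref{Prop: Types transitive have a realization} immediately yields that $T$ admits a realization, which is exactly the desired conclusion.

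So the proof is essentially a two-line deduction: pseudo-Anosov class $\Rightarrow$ mixing incidence matrix (Prop.\ \ref{Prop: Incidence matrix mixing}) $\Rightarrow$ no double boundaries (Cor.\ \ref{coro: pseudo Anosov non-doble boundaries}) $\Rightarrow$ admits a realization (Prop.\ \ref{Prop: Types transitive have a realization}). There is no real obstacle here; the only point requiring a word of care is the identification of the abstractly-defined incidence matrix $A(T)$ with the incidence matrix of a realizing pair $(f,\mathcal{R})$, which is guaranteed by the definitions since $T=T(f,\mathcal{R})$ and the formula for $A(T)$ in Definition \ref{Defi: Incidence matriw of a type} is precisely the one satisfied by $A(f,\mathcal{R})$. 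I would write it up as follows.

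\begin{proof}
Since $T$ is in the pseudo-Anosov class, there exist a generalized pseudo-Anosov homeomorphism $f$ and a geometric Markov partition $\mathcal{R}$ of $f$ with $T=T(f,\mathcal{R})$. By Proposition \ref{Prop: Incidence matrix mixing}, the incidence matrix of $\mathcal{R}$ is mixing, and by Definition \ref{Defi: Incidence matriw of a type} this matrix is exactly $A(T)$; hence $A(T)$ is mixing. By Corollary \ref{coro: pseudo Anosov non-doble boundaries} (equivalently, by applying Lemma \ref{Lemm: T mixing implies non double boundaries} to the mixing matrix $A(T)$), the geometric type $T$ has no double $s$- or $u$-boundaries. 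Therefore Proposition \ref{Prop: Types transitive have a realization} applies to $T$ and gives a realization of $T$.
\end{proof}
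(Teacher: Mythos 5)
Your proof is correct and follows exactly the paper's route: the corollary is stated there as an immediate consequence of Proposition \ref{Prop: Incidence matrix mixing}, Lemma \ref{Lemm: T mixing implies non double boundaries} (via Corollary \ref{coro: pseudo Anosov non-doble boundaries}), and Proposition \ref{Prop: Types transitive have a realization}. Your added remark identifying the abstract $A(T)$ with the incidence matrix of the realizing pair $(f,\mathcal{R})$ is a sensible clarification that the paper leaves implicit.
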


\subsection{Some convention in notation and orientations}

\subsubsection{Corresponding rectangles and curves.}
 Let $T$ be a geometric type in the pseudo-Anosov class that we denoted by:
$$
T:=\{n,\{h_i,v_i\}_{i=1}^n, \Phi:=(\rho,\epsilon)\}.
$$
In future discussions, we will compare two different objects associated to a to the geometric type $T$:

\begin{itemize}
\item[i] A \emph{realization} by \emph{disjoint rectangles} of the geometric type $T$. We shall denote such realization by a pair \emph{partition/diffeomorphism}: $\{\cR=\{R_i\}_{i=1}^n, \phi\}$. 

\item[ii)] A \emph{geometric Markov partition} by non-disjoint rectangles, $\tilde{\cR}=\{\tilde{ R_{i}}\}_{i=1}^n,$ of a pseudo-Anosov homeomorphism $f:S\rightarrow S$ with geometric type $T$.
\end{itemize}

We shall to fix some notations:

\begin{itemize}
\item[i)] For all $i\in \{1,\cdots,n\}$, $\tilde{R_i}$ is a rectangles in the Markov partition and $R_i$ is a rectangle in the realization.

\item[ii)]  The vertical and horizontal sub-rectangles of the realization are denote by:
$$
\{V^k_l: (k,l)\in \cV(T)\} \text{ and } \{H^i_j: (i,j)\in \cH(T)\}
$$.
\item[iii)] The vertical and horizontal sub-rectangles of the Markov partition are denoted by:
$$
\{\tilde{V^k_l}:(k,l)\in \cV(T)\} \text{ and  } \{ \tilde{H^i_j}: (i,j)\in \cH(T) \}.
$$

\item[iv)] If $\alpha$ is the lower boundary of a vertical sub-rectangle $V^k_l$ in the realization, then $\tilde{\alpha}$ is the lower boundary of the corresponding vertical sub-rectangle $\tilde{V^k_l}$ in the Markov partition. Similarly, if $\alpha$ is the left boundary of $H^i_j$, then $\tilde{\alpha}$ is the left boundary of $\tilde{H^i_j}$.

\item[v)] Take two vertical sub-rectangles $V^k_{l_1}$ and $V^k_{l_2}$, with $l_1<l_2$, and let $\alpha$ and $\beta$ be the horizontal boundaries of these sub-rectangles. Suppose both boundaries are contained in either the upper or lower boundary of $R_k$. Without loss of generality, let's assume they are in the lower boundary of $R_k$. Using the fact that $l_1<l_2$, we can write $\alpha=[a_1,a_2]^s<\beta=[b_1,b_2]^s$, and we denote 
$$
\gamma=[a_2,b_1]^s
$$
 as the curve contained in the lower boundary of $R_k$ and lying between $\alpha$ and $\beta$.

\item[vi)] The curve $\tilde{\gamma}$ is the curve contained in the lower stable boundary of $\tilde{R_k}$ that corresponds to the curve obtained by taking the union of the lower boundaries of all vertical sub-rectangles in the Markov partition $\tilde{\cR}$ that lie between $\tilde{V^k_{l_1}}$ and $\tilde{V^k_{l_2}}$. This is
$$
\tilde{\gamma}:=\cup_{l_1<l<l_2}\partial^s_{-}\tilde{V^k_l}.
$$

\end{itemize}

With respect to Item $v)$ and $vi)$: if $l_2=l_1 +1$ the curve $\tilde{\gamma}$ consist in a point but $\gamma$ is a non-trivial interval.

\begin{rema}\label{Rema: geometric type rules the images}
Suppose that: $\Phi_T(i,j)=(k,l,\epsilon)$. This implies that: $\phi(\tilde{H^i_j}) = \tilde{ V^k_l}$ and  $f(H^i_j)=V^k_l$.
\end{rema}

\subsubsection{Orientation} Here we should define three different orientations of a curve contained in the stable boundary of the Markov partition and the realization. This will complete the notation that we require to deduce that the realization doesn't have the topological obstructions by looking at what kind of obstruction it induces in the Markov partition.

Let $H^i_j$ and $V^k_l$ be a horizontal and a vertical sub-rectangle of $R_i$ in the realization. They have orientations induced by the rectangle $R_i$ in which they are contained. The function $\epsilon_T$ in the geometric type $T$ measures the change in relative vertical orientation induced by the action of $f$.

\begin{defi}\label{Defi: induced and glin for realization}
	Let $\alpha$ be a horizontal boundary component of $V^k_l. $ Since $R_k$ is a geometrized rectangle, there are two types of orientations for this curve:
\begin{itemize}
	\item The \emph{induced orientation} of $\alpha$ is the one that is inherited from the horizontal orientation of $R_k$. This means that $\alpha$ is oriented in the same direction as the horizontal sides of $R_k$.
	
	\item  	The \emph{gluing orientation} of $\alpha$, defined as follows: If $\epsilon_T(i,j) = 1$, then $\alpha$ has the induced orientation. However, if $\epsilon(i,j) = -1$, then $\alpha$ has the inverse orientation of the induced orientation. In other words, if the change in relative orientation between $H^i_j$ and $V^k_l$ is negative, then the gluing orientation of $\alpha$ is the opposite of the induced orientation.
	
\end{itemize}
\end{defi}

Similarly, for the Markov partition, we can define the induced orientation and gluing orientation of the curve  $\tilde{\alpha}$.

\begin{defi}\label{Defi: Induced/gluin in Markov}
	Let  $\tilde{\alpha}$ be the corresponding curve in the Markov partition:
\begin{itemize}
	\item The \emph{induced orientation} of $\tilde{\alpha}$ is the one that is inherited from the horizontal orientation of $\tilde{R}_k$.
	
	\item  The \emph{gluing orientation} of $\tilde{\alpha}$ is defined in a similar manner as for the concretization. If $\epsilon(i,j) = 1$, then $\tilde{\alpha}$ has the induced orientation. However, if $\epsilon(i,j) = -1$, then $\tilde{\alpha}$ has the inverse orientation of the induced orientation.
\end{itemize}
\end{defi}

 If $\tilde{\alpha}$ is contained in the stable boundary of two rectangle $R_1$ and $R_2$, the induced orientation of $\tilde{\alpha}$ coincides with its glue orientation if and the horizontal orientation of $R_1$ and $R_2$ is coherent along the stable boundaries that contains $\tilde{\alpha}$.

The boundary of a geometric Markov partition is contained in the stable leaves of periodic points and we can define another orientation in such stable boundaries.

\begin{defi}\label{Def: dynmaic orientation}
	Let $\tilde{O}$ be a periodic point of $f$ and $\tilde{L}$ be a stable separatrix of $\tilde{O}$. The  \emph{dynamic orientation} of the separatrix  $\underline{L}$ is defined by declaring that: for all $x \in \tilde{L}$, $f^{2Pm}(x) < x$, where $P$ is the period of $f$. 
\end{defi}

This orientation points towards the periodic point. Suppose that $\tilde{L}$ intersects the stable boundary of $\tilde{R_i}$ in an interval $\tilde{I}$, which may or may not contain a periodic point $\tilde{P}$. The \emph{dynamic orientation} of $\tilde{I}$ is the orientation induced by the dynamic orientation of the separatrix  $\tilde{L}$ within the interval. Let's address the case of the realization.

\begin{defi} \label{Defi: Dynac orinted for realization}
Suppose $O$ is a periodic point located on the horizontal boundary $L$ of $R_i$. Let $I$ be a connected component of $L \setminus \{O\}$. It can be observed that $I$ is mapped into  a smaller interval under the action of $\phi$, and after a certain number of iterations, it becomes contained within the interior of $I$.  This contraction property in  us to define the \emph{dynamical orientation} of $I$ in such a way that it points towards the periodic point $O$.
\end{defi}

Suppose that in the realization, there is a ribbon $r$ joining the boundary $\alpha$ of $V^k_l$ with the boundary $\beta$ of $V^{k'}_{l'}$. This configuration implies that in the Markov partition of $f$, the boundary $\tilde{\alpha}$ of $\tilde{V^k_l}$ is identified with the boundary $\tilde{\beta}$ of $\tilde{V^{k'}_{l'}}$. This identification preserves the \emph{gluing orientation} of $\tilde{\alpha}$ and $\tilde{\beta}$. However, it is possible for the curves to have different induced orientations. The induced orientation may vary along the curves, but the gluing orientation remains the same, ensuring coherence in the identification process.

\subsection{Finite genus and not impasse} After establishing the formalism, we proceed to prove that $T$ has no impasse and none of the obstructions for finite genus. We begin by assuming that $T$ satisfies the topological conditions for have fine genus and not impasse in the first realizer $\cR_1$, which implies the existence of certain ribbons connecting the boundaries $\alpha$ and $\beta$ of vertical sub-rectangles in the realization. This implies that $\tilde{\alpha}$ and $\tilde{\beta}$ are identified in the Markov partition, and this identification must be consistent with the induced, gluing, and dynamic orientations previously defined. By carefully considering these orientations, we will be able to derive certain contradiction, thereby establishing that $T$ has finite genus and not impasse.

In the next lemma we are going to prove that if there is a stripe that joints two stable intervals inside a rectangle $R_k$ of the realization implies that such rectangle contain a spine in one of its horizontal boundaries, like is indicated in Figure \ref{Fig: Colapse point}. 

\begin{figure}[h]
	\centering
	\includegraphics[width=0.4\textwidth]{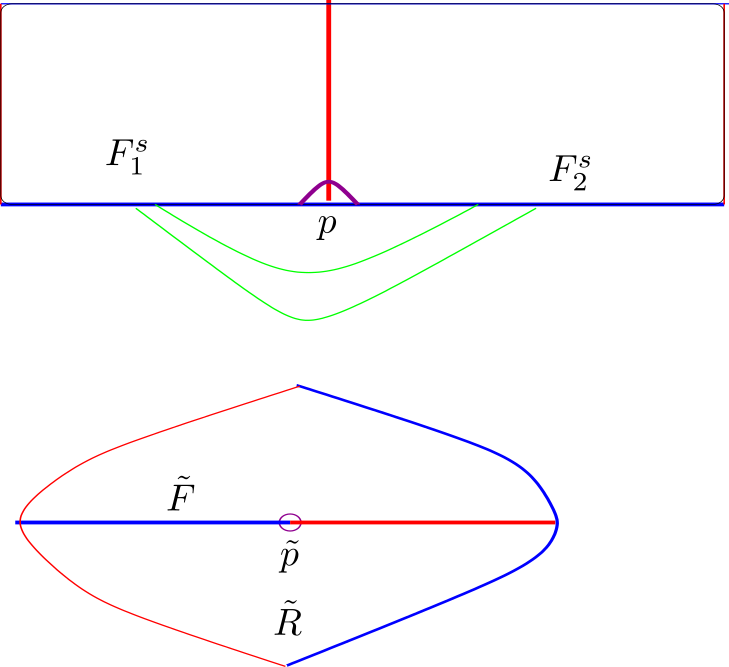}
	\caption{ Two stable separatrices collapse in a Spine }
	\label{Fig: Colapse point}
\end{figure}

\begin{lemm}\label{Lemm: gamma periodic points}
Consider a rectangle $R_k$, with the horizontal orientation of $R_k$ suppose there exists a ribbon joining the boundaries $\alpha=[a_1,a_2]^s\subset V^k_{l_1}$ and $\beta:=[b_1,b_2]^s \subset V^k_{l_2}$, assume that $l_1<l_2$ and  let $\gamma=[a_2,b_1]^s$ the curve between $\alpha$ and $\beta$. Let  $\tilde{\alpha}:=[a_1',a_2']^s$, $\tilde{\beta}:=[b_1',b_2']^s$ and $\tilde{\gamma}:=[a_1',b_2']$ be the previously defined curves with the induced orientation in $\tilde{R_k}$. In this circumstances:
\begin{itemize}
\item[i)] The curve $\tilde{\gamma}$ is a closed interval with one end point equal to a Spine type periodic point of $f$.
\item[ii)] The curve $\gamma$ contains a periodic point.
\end{itemize}
\end{lemm}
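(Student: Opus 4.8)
The strategy is to transfer the geometric identifications forced by a ribbon in the realization to the Markov partition $\tilde{\cR}$, and then read off the consequences on the boundary of $\tilde{\cR}$, which we know lies on stable leaves of periodic points by Lemma~\ref{Lemm: Boundary of Markov partition is periodic}. First I would recall what the existence of a ribbon joining $\alpha\subset V^k_{l_1}$ to $\beta\subset V^k_{l_2}$ means combinatorially. By the correspondence between ribbons and pairs of consecutive horizontal sub-rectangles established in Subsection~\ref{Sub-sec: Top impasse genus}, such a ribbon is the image under some $\phi^m$ of a horizontal stripe $\tilde H^{i}_{j}$ sitting between two consecutive horizontal sub-rectangles of $R_i$ whose $\phi^m$-images are exactly the vertical sub-rectangles with horizontal boundaries $\alpha$ and $\beta$. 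The key point is that $\alpha$ and $\beta$ both lie on the \emph{same} horizontal boundary of $R_k$ (say the lower one), so the whole segment $\gamma=[a_2,b_1]^s$ also lies on the lower boundary of $R_k$, i.e.\ on the lower boundary of all the intervening vertical sub-rectangles $V^k_l$, $l_1<l<l_2$.

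Next I would pass to $\tilde{\cR}$ via Remark~\ref{Rema: geometric type rules the images}: the same combinatorial data that produces the ribbon in $\cR_m$ produces, in the pseudo-Anosov picture, an identification of $\tilde\alpha$ with $\tilde\beta$ as boundary components of $\tilde V^k_{l_1}$ and $\tilde V^k_{l_2}$ inside $\tilde R_k$ — but now, since the rectangles of $\tilde{\cR}$ are not disjoint, being ``joined by a ribbon'' means that $\tilde\alpha$, $\tilde\gamma$ and $\tilde\beta$ together form a single stable arc on the lower boundary of $\tilde R_k$, and the iteration $f^m$ maps the stripe $\tilde H^i_j$ onto a vertical sub-rectangle of $\tilde R_k$ whose two vertical boundaries are $f^m$-images of the left boundary of $\tilde H^i_j$ and the right boundary of $\tilde H^i_{j+1}$; those two vertical boundaries get glued along $\tilde\gamma$. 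Now I invoke the gluing orientation: the ribbon identification respects the gluing orientation of $\tilde\alpha$ and $\tilde\beta$, and following the boundary arc $\tilde\alpha\cup\tilde\gamma\cup\tilde\beta$ while tracking orientations forces the two ends of $\tilde\gamma$ to be compared with the dynamic orientation (Definition~\ref{Def: dynmaic orientation}). Because the stable boundary of $\tilde{\cR}$ is $f$-invariant (Proposition~\ref{Prop: Markov criterion boundary}) and $\tilde\gamma$ sits on the stable leaf of a periodic point, contraction along stable leaves toward that periodic point means the two endpoints of $\tilde\gamma$ are carried toward each other under iteration; the only way the gluing identification is consistent with this contraction is if $\tilde\gamma$ degenerates, in the sense that its two endpoints coincide with a single periodic point $\tilde O$ at which the two separatrices carrying $\tilde\alpha$ and $\tilde\beta$ meet — i.e.\ $\tilde O$ is a $1$-prong (spine). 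This is exactly the configuration in Figure~\ref{Fig: Colapse point}, and it gives item~(i): $\tilde\gamma$ is a closed interval with one endpoint a spine of $f$. I should be careful here to rule out the alternative that $\tilde\gamma$ is nontrivial but still periodic-free; this is excluded precisely because then the ribbon would persist under all negative iterations of $\phi$, contradicting either finite genus or producing an impasse — so I would use the standing hypothesis (embedded in the lemma's place in Section~\ref{Sec: Tipes PA finite genus but not impasse}, where $T$ is already known to be pseudo-Anosov) that no impasse and no type-$(1)$ obstruction occurs.

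For item~(ii), once I know $\tilde\gamma$ collapses to a point which is a periodic spine $\tilde O$, I transfer back to the realization. The homeomorphism-type correspondence between $(\cR,\phi)$ and $(\tilde{\cR},f)$ matches horizontal boundaries of vertical sub-rectangles: the periodic point $\tilde O$ on the stable boundary of $\tilde R_k$ corresponds, under the dynamical orientation and the matching of sub-rectangle boundaries given by Lemma~\ref{Lemm: pi send sub-rec in sub rec}-type arguments (or directly by Remark~\ref{Rema: geometric type rules the images}), to a periodic point $O$ of $\phi$ lying on the lower boundary of $R_k$. Since $\alpha$ and $\beta$ are the horizontal boundaries of $V^k_{l_1}$ and $V^k_{l_2}$ with $l_1<l_2$, the segment $\gamma=[a_2,b_1]^s$ between them is exactly the image under the matching of the collapsed arc $\tilde\gamma$, hence contains the point $O$, which is periodic for $\phi$. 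That is item~(ii).

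\textbf{Main obstacle.} The delicate step is the orientation bookkeeping in the middle paragraph: showing that the gluing orientation together with $f$-invariance of $\partial^s\tilde{\cR}$ genuinely forces $\tilde\gamma$ to degenerate to a spine, rather than merely to a short arc. The argument has to combine three orientations — induced, gluing, and dynamic — consistently, and one must check that the ribbon being glued ``from the outside'' (as in Remark~\ref{Rema: extremal arc}, the extremal-arc saturation) is incompatible with $\tilde\gamma$ containing an interior arc disjoint from $\mathrm{Sing}(f)$; this uses minimality of the invariant foliations (Proposition~\ref{Prop: pseudo-Anosov properties.}) to say that any such interior arc would be accumulated by leaves, contradicting its position on the boundary of a rectangle. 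I expect this to require a careful case analysis according to the sign pattern of $\epsilon_T$ along the two consecutive sub-rectangles producing the ribbon, parallel to the case analysis in Lemma~\ref{Lemm: Equiv com and top type 1}.
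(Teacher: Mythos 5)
Your central geometric claim is not what the lemma asserts, and it is where the argument breaks. You conclude that $\tilde{\gamma}$ ``degenerates'', i.e.\ that its two endpoints coincide at a single periodic point $\tilde{O}$ ``at which the two separatrices carrying $\tilde{\alpha}$ and $\tilde{\beta}$ meet'', and you then treat $\tilde{\gamma}$ as collapsed to a point when deriving item (ii). What the ribbon actually forces in the pseudo-Anosov picture is different: the two consecutive horizontal sub-rectangles producing the ribbon are adjacent in $\tilde{\cR}$ (the stripe between them is empty), so the image of their common stable boundary appears simultaneously as $\tilde{\alpha}$ and as $\tilde{\beta}$; hence $\tilde{\alpha}=\tilde{\beta}$ as subsets of the surface and the lower boundary of $\tilde{R_k}$ is \emph{folded}, while $\tilde{\gamma}$ remains, in general, a nontrivial closed interval. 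The spine is the \emph{free} endpoint of $\tilde{\gamma}$ (the fold point $\tilde{o}$, the endpoint not lying in $\tilde{\alpha}=\tilde{\beta}$), not the endpoint where $\tilde{\gamma}$ meets $\tilde{\alpha}$ and $\tilde{\beta}$; a point where two distinct separatrices met would not be a $1$-prong in the sense of Definition~\ref{Defi: Singular foliation} anyway. If $\tilde{\gamma}$ always collapsed to a point, item (i) as stated would be wrong and the later application in Lemma~\ref{Lemm: T pA class then no condition 1} (where a vertical sub-rectangle sits strictly between $V^k_{l_1}$ and $V^k_{l_2}$, so $\tilde{\gamma}$ has nonempty interior containing the periodic point with both its local separatrices identified) would make no sense. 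Moreover, your mechanism for the degeneration — contraction along the stable leaf carrying the endpoints toward the periodic point — proves nothing of the sort: distinct points on a stable leaf converge under iteration without ever coinciding. The paper's argument is purely local and topological: using the parametrization $\Psi:[0,1]^2\to \tilde{R_k}$, the identification of the two parameter intervals carrying $\tilde{\alpha}$ and $\tilde{\beta}$ forces a half-disk centered at the fold parameter to map onto a full neighborhood of $\tilde{o}$, with its diameter on $\partial^s$ collapsed to an interval ending at $\tilde{o}$; hence $\tilde{o}$ has a single separatrix, is a spine, and, being a singularity, is periodic. No orientation case analysis parallel to Lemma~\ref{Lemm: Equiv com and top type 1} and no appeal to minimality of the foliations is needed.

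There is also a circularity you cannot afford: you invoke, as a ``standing hypothesis'', that $T$ has no impasse and no type-$(1)$ obstruction in order to rule out the case that $\tilde{\gamma}$ is nontrivial and periodic-free. But Lemma~\ref{Lemm: gamma periodic points} is precisely the ingredient the paper uses to \emph{prove} Corollary~\ref{Coro: T pA clas no com impasse} and Lemma~\ref{Lemm: T pA class then no condition 1} for geometric types in the pseudo-Anosov class; those facts are not available at this point, so that step of your proof assumes what is later to be deduced from the lemma. Finally, your item (ii) inherits the error above: the periodic point of the realization inside $\gamma$ should be obtained from the fact that the fold point $\tilde{o}$ lies at a boundary parameter strictly between those of $\tilde{\alpha}$ and $\tilde{\beta}$, which under the combinatorial matching of the stable boundaries of the vertical sub-rectangles of $(\cR,\phi)$ and $(\tilde{\cR},f)$ corresponds to a point of $\gamma=[a_2,b_1]^s$, not from a collapse of $\tilde{\gamma}$ to a point.
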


\begin{proof}

The curve $\tilde{\gamma}$ is a closed manifold of dimension one, it is either an interval or a circle. Since $\tilde{\gamma}$ is contained in the stable leaf $I$ of a periodic point $\tilde{ O}$ of the pseudo-Anosov homeomorphism $f$, it couldn't be a circle therefore is closed interval $I$. We claim that such interval must contains an end  point of a stable separatrix  of a periodic point  $\tilde{O}$ of $f$, and then such end point is $\tilde{ O}$.

 Look at figure \ref{Fig: tildeRk} to follow our next arguments, the curve $\tilde{\gamma}$ is the union of all the lower  (or upper) boundaries of $\tilde{ V^k_l}$ with $l_1<l<l_2$ an it is contained in inferior boundary of $\tilde{R_k}$. It could be a single point if $l_2=l_1$ or the not trivial interval, $\tilde{\gamma}:=[a_1',b_2']$, lets  to consider the last situation, let $\tilde{o}$ the extreme point of $I$ that is not in $\tilde{\alpha}$ nor $\tilde{\beta}$.
 
 \begin{figure}[h]
 	\centering
 	\includegraphics[width=0.4\textwidth]{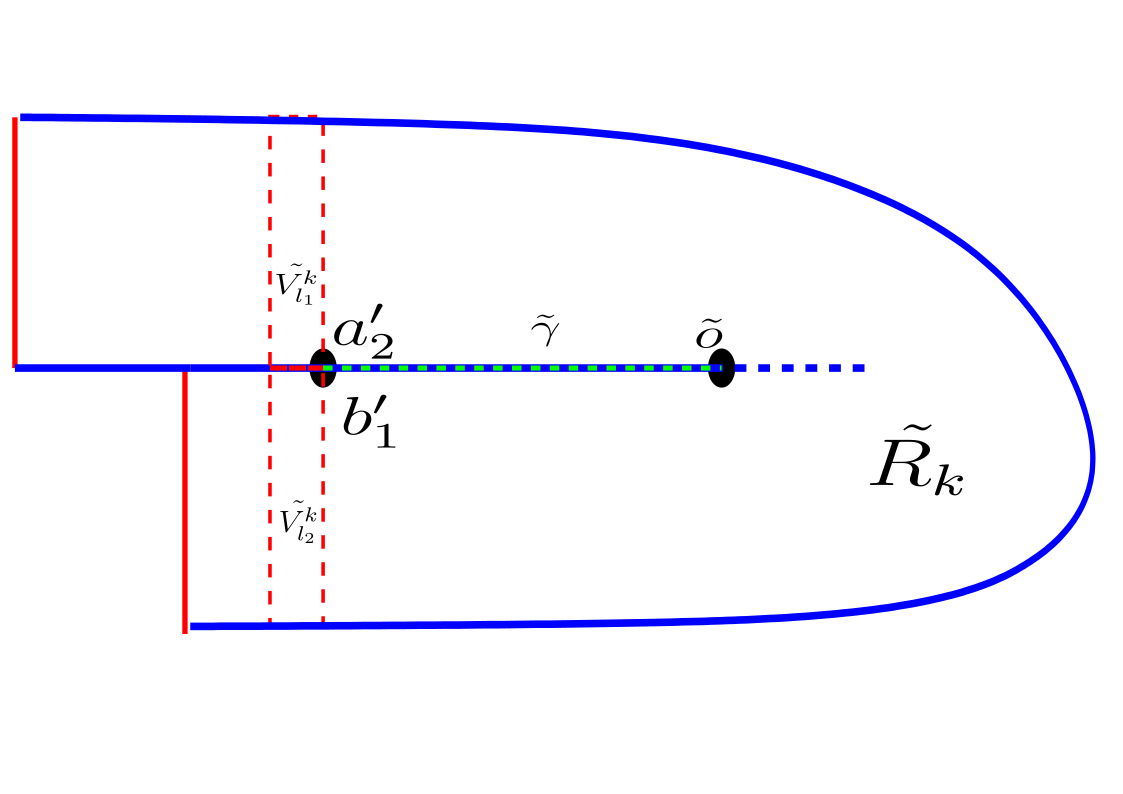}
 	\caption{Rectangle $\tilde{R_k}$ }
 	\label{Fig: tildeRk}
 \end{figure}

Let $\Psi: H := [0,1]\times[0,1] \rightarrow \tilde{R_k}$ the parametrization of the rectangle $\tilde{R_k}$. Following the induced horizontal orientation of $\tilde{R_k}$ the point $\tilde{o}$ lies in the interior of the lower boundary of $\tilde{R_k}$ and we can assume that $\Psi(t_0,0) = \tilde{o}$ for some $t_0 \in (0,1)$. Since $\tilde{R_k}$ is a rectangle and there exist two closed intervals $A_1$ and $A_2$ in $H$ such that by the action of $\Phi$ these intervals are identified, i.e. $\Phi(A_1) =\tilde{ \alpha}= \tilde{\beta}=\Psi(A_2)$, then there exists closed  semicircle $U'\Subset \overset{o}{H}\cup \partial^s H$ with center at $(t_0,0)$ that contains its  that $\Psi(U')=U'\Subset \overset{o}{R_k}$ (see Figure) . Consequently, $\tilde{o}$ has an open neighborhood $\overset{o}{U}$ that is contained in the interior of $\tilde{R_k}$, that have center in $\tilde{o}$. Moreover, it is clear that the diameter of $U'$ was collapsed by the action of $\Phi$ in a closed interval with a extreme in $\tilde{o}$ and therefore $\tilde{o}$ has only one separatrice. This allows us to deduce that $\tilde{o}$ is a Spine, and so it is equal to the periodic point $\tilde{O}$.

 \begin{figure}[h]
	\centering
	\includegraphics[width=0.5\textwidth]{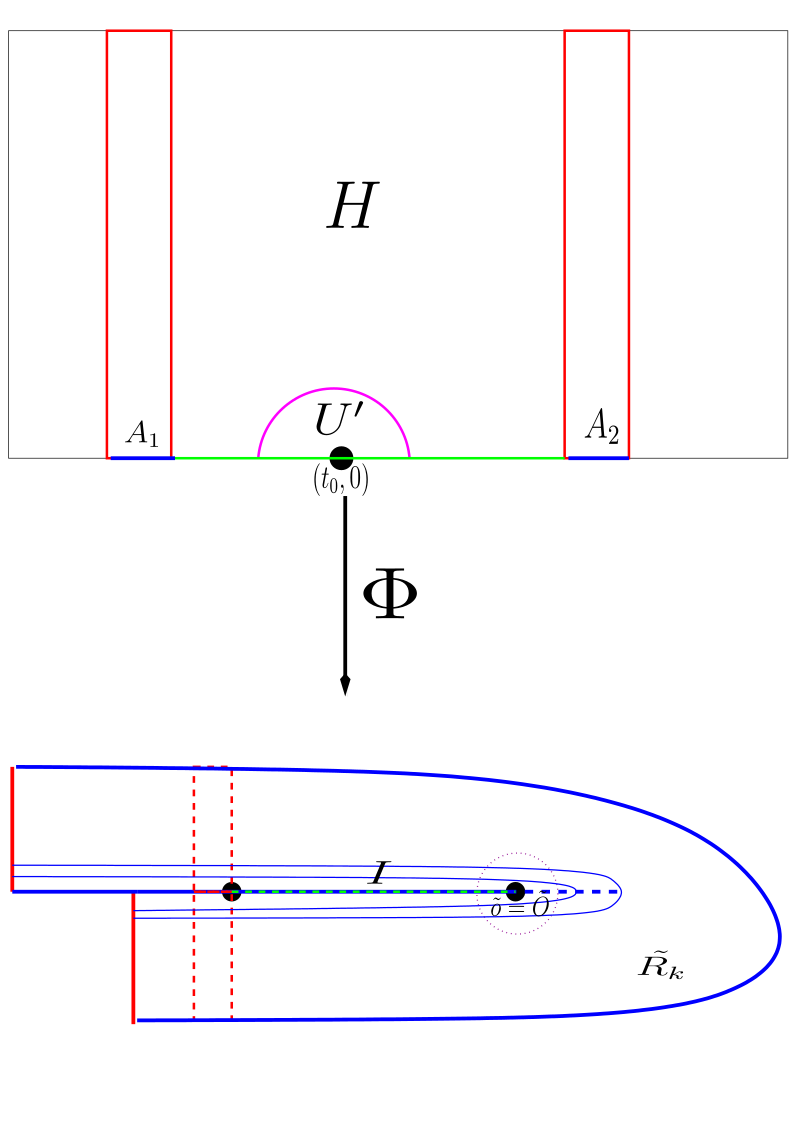}
	\caption{The projection by $\Phi$ of the rectangle $H$}
	\label{Fig: Rectangle H}
\end{figure}

The situation when $I$ is a point is similarly proved.

\end{proof}

\begin{coro}\label{Coro: T pA clas no com impasse}
A geometric type $T$ in the pseudo-Anosov class does not have combinatorial impasses.
\end{coro}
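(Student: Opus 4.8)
The plan is to deduce Corollary \ref{Coro: T pA clas no com impasse} from the structural dichotomy established in Lemma \ref{Lemm: gamma periodic points}, arguing by contradiction. Suppose $T$ is in the pseudo-Anosov class and has a combinatorial impasse. By Definition \ref{Defi: Impasse combinatoric} there is $m\in\NN$ such that $T^m$ has the impasse property, so there exist $(i,j),(i,j+1)\in\cH(T^m)$ with, say, $(\rho,\epsilon)(i,j)=(k,l,\epsilon(i,j))$ and $(\rho,\epsilon)(i,j+1)=(k,l+1,-\epsilon(i,j))$ (the other case being symmetric). Since $T$ is in the pseudo-Anosov class, so is $T^m$: indeed $T^m$ is realized by $(f^m,\tilde\cR)$ whenever $(f,\tilde\cR)$ realizes $T$, and $f^m$ is again generalized pseudo-Anosov with the same invariant foliations. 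Thus it suffices to show that a geometric type in the pseudo-Anosov class cannot itself have the impasse property; I will therefore assume from the start that $T$ has the impasse property with witnesses $(i,j),(i,j+1)$ as above.

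Next I would pass to a realization $(\cR=\{R_s\}_{s=1}^n,\phi)$ of $T$ by disjoint rectangles, which exists by Corollary \ref{Coro: T pA then realization} (via Lemma \ref{Lemm: T mixing implies non double boundaries} and Proposition \ref{Prop: Types transitive have a realization}), together with a geometric Markov partition $\tilde\cR=\{\tilde R_s\}_{s=1}^n$ of a pseudo-Anosov homeomorphism $f:S\to S$ with geometric type $T$, and I would use the correspondence notation fixed in the preceding subsection ($\alpha\leftrightarrow\tilde\alpha$, $H^i_j\leftrightarrow\tilde H^i_j$, $V^k_l\leftrightarrow\tilde V^k_l$, etc.). The impasse condition on $T$ says precisely the following about the image stripe: $\phi(\tilde H^i_j)=\tilde V^k_l$ and $\phi(\tilde H^i_{j+1})=\tilde V^k_{l+1}$, these two vertical sub-rectangles of $\tilde R_k$ are consecutive ($l,l+1$), and the sign flip $\epsilon_T(i,j+1)=-\epsilon_T(i,j)$ forces the image $\tilde r$ of the horizontal stripe $\tilde H^{i}$ between $\tilde H^i_j$ and $\tilde H^i_{j+1}$ to be a ribbon joining the same horizontal boundary of $\tilde V^k_l$ to the same horizontal boundary of $\tilde V^k_{l+1}$ (both upper, or both lower, of $R_k$) — this is exactly the geometric meaning of $\epsilon$ extracted in the discussion around Figure \ref{Fig: stable identification} and used in the proof of Theorem \ref{Theo: Geometric and combinatoric are equivalent}. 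Because $l$ and $l+1$ are consecutive, the curve $\tilde\gamma:=\cup_{l<l'<l+1}\partial^s_{\pm}\tilde V^k_{l'}$ between them degenerates to a single point. I would then invoke Lemma \ref{Lemm: gamma periodic points}: the collapsing argument in that lemma shows that this point is a spine-type periodic point $\tilde O$ of $f$, i.e. $f$ has a $1$-prong singularity. But $f$ being a generalized pseudo-Anosov homeomorphism is allowed to have spines, so a $1$-prong alone is not yet a contradiction; the real point is that the ribbon $\tilde r$ then joins two local separatrices that are the two sides of a single stable separatrix of $\tilde O$ — i.e. $\tilde r$ realizes an impasse disk in $S$ disjoint from nothing useful. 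Concretely: the $u$-arc on the outer boundary of $\tilde r$ together with the stable segment through $\tilde O$ bounds a disk whose interior is disjoint from $\tilde\cR^{\text{(something)}}$, but $f$ pseudo-Anosov has no wandering disks of this kind because every leaf of $\cF^s$ is dense (Proposition \ref{Prop: pseudo-Anosov properties.}), so no nontrivial open disk can be disjoint from the stable lamination. This density contradiction is what rules out the impasse.

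Let me be more precise about the last step, since that is where the main obstacle lies. After applying Lemma \ref{Lemm: gamma periodic points} I know the stripe $\tilde H^i$ has image a ribbon $\tilde r$ both of whose horizontal sides lie on one stable separatrix of the spine $\tilde O$; iterating $\phi^{-1}$ (equivalently, pulling $\tilde r$ back by $f$ a bounded number of times, using the pre-periodicity governed by Corollary \ref{Coro: preperiodic finite s,u boundary codes}) one produces an honest impasse disk $\overset{o}{D}\subset S$ for $f$ in the sense that $\partial D$ is the union of a $u$-arc and an $s$-arc of the partition and $\overset{o}{D}$ is disjoint from $\partial^s\tilde\cR$. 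Now the interior $\overset{o}{D}$ is an open set, so by minimality of $\cF^s$ (Proposition \ref{Prop: pseudo-Anosov properties.}, item 2) it must meet $\partial^s\tilde\cR$, since the stable boundary of a Markov partition contains a whole leaf, which is dense — contradiction. The anticipated difficulty is bookkeeping the orientations: I must verify that the sign configuration $\epsilon_T(i,j+1)=-\epsilon_T(i,j)$ genuinely forces the ribbon's two ends onto the \emph{same} stable boundary component of $R_k$ (rather than opposite ones), using the induced/gluing orientation formalism of Definitions \ref{Defi: induced and glin for realization} and \ref{Defi: Induced/gluin in Markov}; once the gluing orientation is matched up, the degenerate $\tilde\gamma$ and hence the spine appear automatically, and the density argument closes the proof. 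I would therefore organize the write-up as: (1) reduce to the impasse property via the power trick; (2) translate it into a ribbon of first generation with consecutive endpoints and a sign flip; (3) apply Lemma \ref{Lemm: gamma periodic points} to get the degenerate $\tilde\gamma$ and the spine; (4) build the impasse disk and derive the contradiction with minimality of the foliations.
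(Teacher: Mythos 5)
Your steps (1)--(3) — the power-trick reduction, the translation of the impasse condition into a ribbon joining consecutive vertical sub-rectangles on the same stable side, and the appeal to Lemma \ref{Lemm: gamma periodic points} — coincide with the paper's proof. The gap is in your step (4), the final contradiction. First, the ``impasse disk $\overset{o}{D}\subset S$'' you want to build does not exist in the pseudo-Anosov surface: in a Markov partition of a pseudo-Anosov homeomorphism the consecutive horizontal sub-rectangles $\tilde H^i_j$ and $\tilde H^i_{j+1}$ \emph{share} their stable boundary (they tile $\tilde R_i$), so there is no nondegenerate stripe between them and hence no ribbon $\tilde r$ as a subset of $S$; the stripe and its image ribbon live only in the realization by disjoint rectangles (the realizer $\cR_m$), and the semiconjugacy collapses exactly the region you are trying to use as a disk. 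Second, even granting some open disk disjoint from $\partial^s\tilde\cR$, that is not a contradiction: $\partial^s\tilde\cR$ is a finite union of \emph{compact} stable segments, it does not contain a whole leaf, and minimality of $\cF^s$ (Proposition \ref{Prop: pseudo-Anosov properties.}) only says that each complete leaf is dense — indeed any small disk inside the interior of a rectangle is disjoint from $\partial^s\tilde\cR$. So the ``density contradiction'' as stated is invalid, and you are right to sense that the spine alone is not a contradiction either, since generalized pseudo-Anosov maps admit spines.

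The paper closes the argument differently, and this is the missing idea: the contradiction is read off back in the realization, not in $S$. When $l_2=l_1+1$ the curve $\tilde\gamma$ is a single point, and Lemma \ref{Lemm: gamma periodic points} then says that this point is a (periodic) spine and, correspondingly, that the nontrivial stable interval $\gamma\subset\partial^s R_k$ of the realization lying between $V^k_l$ and $V^k_{l+1}$ \emph{contains a periodic point}. But $\gamma$ is precisely the $s$-arc produced by the impasse configuration: its interior is disjoint from the maximal invariant set (points of $K$ on $\partial^s R_k$ lie in the vertical sub-rectangles) and its endpoints are not periodic (as in the impasse analysis of Lemma \ref{Lemm: well suit impasse}), so it cannot carry a periodic point. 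This is the contradiction; the power trick you already have (realizing $T^m$ by $f^m$) then upgrades ``no impasse property'' to ``no combinatorial impasse,'' exactly as in the paper. Your orientation bookkeeping concern about the sign flip forcing both ends of the ribbon onto the same stable boundary component is legitimate but is not the obstruction — that is exactly the hypothesis under which Lemma \ref{Lemm: gamma periodic points} is applied.
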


\begin{proof}
First, observe that the geometric type $T$ does not have the impasse condition. If there were an impasse, it would be in a situation as described in the previous lemma, where $\tilde{\alpha}$ and $\tilde{\beta}$ are the horizontal boundaries that are identified. The curve $\tilde{\gamma}$ would be reduced to a periodic point, and $\gamma$ would have a periodic point. However, $\gamma$ is the $s$-arc given by the impasse, so it cannot have periodic points. This contradiction implies that $T$ does not have the impasse condition.

Now, if $T$ represents (via a Markov partition) the pseudo-Anosov homeomorphism $f$, then $f^n$ has a Markov partition of geometric type $T^n$. Since $f^n$ is pseudo-Anosov, $T^n$ is also in the pseudo-Anosov class and does not have the impasse condition. This implies that $T$ does not have an impasse.
\end{proof}

\begin{lemm}\label{Lemm: T pA class then no condition 1}
A geometric type $T$ in the pseudo-Anosov class does not satisfy the combinatorial condition of type $(1)$.
\end{lemm}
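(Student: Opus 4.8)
The plan is to argue by contradiction, reusing the geometric/realization dictionary already set up in the preceding subsection. Suppose $T$ is in the pseudo-Anosov class and some power $T^m$ satisfies the combinatorial condition of type $(1)$. Since every power of a pseudo-Anosov homeomorphism is pseudo-Anosov and $T^m$ is the geometric type of a Markov partition of that power, $T^m$ is itself in the pseudo-Anosov class; so it suffices to derive a contradiction assuming that $T$ \emph{itself} satisfies the combinatorial condition of type $(1)$. By Corollary \ref{Coro: T pA then realization}, $T$ admits a realization $(\cR=\{R_i\}_{i=1}^n,\phi)$, and since $T$ is pseudo-Anosov it is also realized by a geometric Markov partition $\tilde\cR=\{\tilde R_i\}_{i=1}^n$ of a genuine pseudo-Anosov homeomorphism $f:S\to S$, following exactly the notational conventions (corresponding rectangles, curves, induced/gluing/dynamic orientations) fixed in the previous subsection.

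First I would translate the combinatorial data. By Lemma \ref{Lemm: Equiv com and top type 1}, the combinatorial condition of type $(1)$ for $T$ is equivalent to the Markov partition in the first realizer $\cR_1$ exhibiting the topological obstruction of type $(1)$: there is a rectangle $R_k$ of $\cR_0$, a horizontal boundary component $A$ of $R_k$, a ribbon $r$ with both horizontal boundaries in $A$ — joining the horizontal boundaries $\alpha$ and $\beta$ of two vertical sub-rectangles $V^k_{l_0}$ and $V^k_{l_2}$ of the realization — and a second ribbon $r'\ne r$ with a single horizontal boundary $\alpha'$ in $A$, lying \emph{between} the horizontal boundaries of $r$; here $\alpha'$ is a horizontal boundary of $V^k_{l_1}$ with $l_0<l_1<l_2$, while the other horizontal boundary of $r'$ sits in a different stable boundary component, namely that of $V^{k'}_{l'}$ with $(k',l')\notin\{(k,l):l_0\le l\le l_2\}$. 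I would then pass to the Markov partition $\tilde\cR$: the ribbon $r$ forces the curves $\tilde\alpha$ and $\tilde\beta$ (horizontal boundaries of $\tilde V^k_{l_0}$, $\tilde V^k_{l_2}$) to be identified in $S$, respecting the gluing orientation; by Lemma \ref{Lemm: gamma periodic points} the intermediate curve $\tilde\gamma$ is a closed stable interval whose free endpoint is a spine-type periodic point $\tilde O$ of $f$, and the corresponding curve $\gamma=[a_2,b_1]^s$ on $A$ contains a periodic point. The second ribbon $r'$ then forces $\tilde\alpha'$ (the horizontal boundary of $\tilde V^k_{l_1}$, which lies inside $\tilde\gamma$) to be identified with a horizontal boundary $\tilde{\alpha'}$ of $\tilde V^{k'}_{l'}$.

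The contradiction I expect to extract is the following: the curve $\tilde\alpha'\subset\tilde\gamma$ lies on the stable separatrix $\tilde L$ of the spine $\tilde O$, with $\tilde\gamma$ being (a subinterval of) that separatrix; since $\tilde O$ has only one separatrix, the stable leaf through $\tilde\alpha'$ is exactly $\tilde L$. But the identification given by $r'$ glues $\tilde\alpha'$ to a horizontal boundary curve of a \emph{different} vertical sub-rectangle $\tilde V^{k'}_{l'}$, i.e. it says $\tilde\alpha'$ also lies on the stable boundary of $\tilde R_{k'}$ with $(k',l')$ outside the block between $\tilde V^k_{l_0}$ and $\tilde V^k_{l_2}$. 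Comparing with the dynamic orientation of $\tilde L$: the whole separatrix $\tilde L$ converges to the spine $\tilde O$, so $\tilde\alpha'$, being on $\tilde L$ between $\tilde O$ and $\tilde\alpha$ (resp. $\tilde\beta$), is squeezed between two identified curves $\tilde\alpha,\tilde\beta$ on a single separatrix; a curve strictly interior to such a "pinched" separatrix cannot reach a boundary component outside the pinched block without the induced, gluing and dynamic orientations becoming mutually incompatible — concretely, following $\tilde L$ toward $\tilde O$ one would have to pass $\tilde\alpha$ or $\tilde\beta$, contradicting $l_0<l_1<l_2$ together with $(k',l')\notin\{(k,l):l_0\le l\le l_2\}$. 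I would make this precise by tracking the three orientations of $\tilde\gamma$ exactly as in the proof of Lemma \ref{Lemm: gamma periodic points} and in the orientation bookkeeping preceding it.

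The main obstacle will be the orientation bookkeeping in this last step: making rigorous the claim that an interior curve $\tilde\alpha'$ of the separatrix-with-pinched-ends $\tilde\gamma$ cannot be glued to a boundary component outside the block, using only the combinatorial inequalities on the indices $l_0<l_1<l_2$ and $(k',l')\notin\{(k,l):l_0\le l\le l_2\}$. I anticipate this requires carefully distinguishing the two sub-cases of the combinatorial condition (the sign patterns $\epsilon_T(i_0,j_0)=\pm1$, $\epsilon_T(i_0,j_0+1)=\mp1$, and the two placements of $(k',l')$) and checking in each that the gluing orientation forces $r'$ to land on the "wrong side" of the spine, contradicting that $\tilde\gamma$ is a single separatrix accumulating on $\tilde O$. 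Everything else — the reduction to $m=1$, the existence of the realization, and the spine conclusion — is immediate from the cited results.
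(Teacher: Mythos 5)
Your plan is correct and follows essentially the same route as the paper's proof: translate the combinatorial condition to the topological type-$(1)$ obstruction in the first realizer via Lemma \ref{Lemm: Equiv com and top type 1}, invoke Lemma \ref{Lemm: gamma periodic points} to place a spine/periodic point in $\gamma$ with both separatrices identified, and then use the agreement of the gluing and dynamical orientations to force the second ribbon's other boundary to lie inside $\gamma$, contradicting $(k',l')\notin\{(k,l):l_0\le l\le l_2\}$. The orientation bookkeeping you flag as the remaining work is exactly the short argument the paper supplies with its figure, so no genuinely different ideas are involved.
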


\begin{proof}

Suppose $T$ satisfies the combinatorial condition of type $1$. This implies that the $1$-realization of $T$ has the topological obstruction of type $1$, according to the Lemma \ref{Lemm: Equiv com and top type 1}. Therefore, there exist curves $\alpha$ and $\alpha'$ in the boundary of a rectangle $R_k$ in the realization that are joined by a ribbon $r$. Let $\gamma$ the curve that is comprised between $\alpha$ and $\alpha'$ with respect the vertical orientation of $R_k$.  Then  there exists another ribbon $r'$ with a horizontal boundary $\beta$ that is contained within the interior of $\gamma$, while the other boundary of $r'$ is located outside of $\gamma$.

The curve  $\tilde{\gamma}$ contains  $\tilde{\beta}$  in particular $\tilde{\gamma}$ is a non-trivial interval and due to the identification induced by the ribbon $r'$ it contains $\tilde{\beta'}$. However, like it was established in Lemma \ref{Lemm: gamma periodic points}, the stable boundary of $R_k$ is collapsed into a Spine, the curve $\gamma$ have a periodic point $p$ in their interior and moreover bout separatrices of $p$ are identified. 

Is not difficult to see that the  gluing orientation and the dynamical orientation of $\gamma$ must be the same, for that reason look at Figure \ref{Fig: Type 1 obstruction} where:

\begin{itemize}
\item The horizontal h orientation of the stable boundary of $R_k$ is indicated with a blue solid arrow.
\item The induced orientation in $\tilde{\gamma}$ is the indicated with two dotted blue lines.
\item The gluing orientation in $\tilde{\gamma}$ is the indicated with two dotted black lines,and clearly they point towards the spine $\tilde{ O}$.
\end{itemize}

  Hence, any interval $I\subset \gamma$ that is  one separatrice of $p$ must be identify with another interval $I'$ in the other stable separatrice of $p$, and they are the only intervals that are identify. In particular $\beta$ must me identify with another interval inside $\gamma$, like this interval is $\beta'$ we deduce that $\beta'$ is inside $\gamma$ and this is a contradiction.

\begin{figure}[h]
	\centering
	\includegraphics[width=0.7\textwidth]{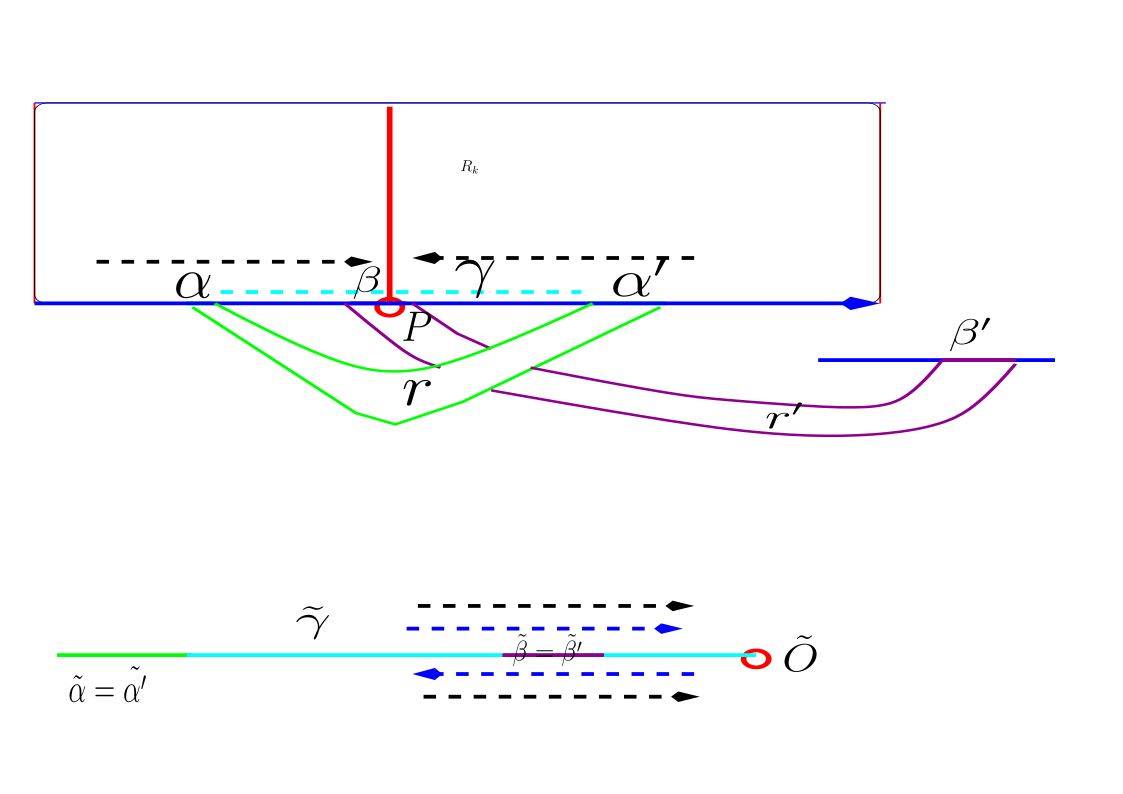}
	\caption{ Type $1$ obstruction}
	\label{Fig: Type 1 obstruction}
\end{figure}
\end{proof}

If $T$ is a geometric type in the pseudo-Anosov class and $f$ is a generalized pseudo-Anosov homeomorphism that realizes $T$, it follows that $T^n$ is the geometric type of a Markov partition of $f^n$. Since $f^n$ is also pseudo-Anosov, we can conclude that $T^n$ does not have the type $(1)$ property or an impasse. In particular, for any positive integer $n$, $T^{6n}$ does not have the type $(1)$ property, and $T^{2n+1}$ does not have an impasse. This discussion leads to the following corollary.

\begin{coro}\label{Lemm: T pA class then no obstruction 1}
A geometric type $T$ in the pseudo-Anosov class don't have the type-$(1)$ obstruction or impasse.
\end{coro}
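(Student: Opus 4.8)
The plan is to deduce Corollary \ref{Lemm: T pA class then no obstruction 1} directly from the two preceding results, Lemma \ref{Lemm: T pA class then no condition 1} (no combinatorial condition of type $(1)$) and Corollary \ref{Coro: T pA clas no com impasse} (no combinatorial impasse), by exploiting the fact that the pseudo-Anosov class is closed under taking powers of the geometric type. The key observation is that a ``combinatorial obstruction'' of type $(1)$ (resp. a combinatorial impasse) for $T$ means, by Definition \ref{Defi: Type 1 combinatoric} (resp. Definition \ref{Defi: Impasse combinatoric}), that \emph{some} power $T^m$ satisfies the combinatorial \emph{condition} of type $(1)$ (resp. has the impasse property). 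So to rule out these obstructions for $T$, it suffices to rule out the combinatorial condition of type $(1)$ and the impasse property for every power $T^m$.

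First I would recall that if $T$ is in the pseudo-Anosov class, realized by $(f,\cR)$ with $T=T(f,\cR)$, then by Definition \ref{Defi: powers of the type} the power $T^m$ is precisely the geometric type of the geometric Markov partition $(f^m,\cR)$. Since $f^m$ is again a generalized pseudo-Anosov homeomorphism (with the same invariant foliations, as noted before Definition \ref{Defi: powers of the type}), the partition $\cR$ is a Markov partition for $f^m$, and hence $T^m$ is itself in the pseudo-Anosov class. This is the crucial structural point; I would state it explicitly as it is the engine of the argument.

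Next I would apply Lemma \ref{Lemm: T pA class then no condition 1} and Corollary \ref{Coro: T pA clas no com impasse} to each $T^m$: since $T^m$ belongs to the pseudo-Anosov class, $T^m$ does not satisfy the combinatorial condition of type $(1)$ and does not have the impasse property. Because this holds for every $m\in\NN_{>0}$, by the very definition of the combinatorial obstruction of type $(1)$ and of the combinatorial impasse, $T$ has neither. Finally, to convert this into the topological statement, I would invoke Lemma \ref{Lemm: Equiv com and top type 1}, which gives the equivalence between the combinatorial obstruction of type $(1)$ and the topological obstruction of type $(1)$ in the realizer $\cR_m$; similarly Theorem \ref{Theo: Geometric and combinatoric are equivalent} relates the combinatorial impasse (via $T^{2n+1}$ having the impasse property) with the existence of a topological impasse. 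Thus $T$ has no type-$(1)$ obstruction and no impasse, whether phrased combinatorially or topologically.

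I do not expect a genuine obstacle here, as the corollary is essentially a bookkeeping consequence of the closure of the pseudo-Anosov class under powers together with the two preceding results. The only point requiring a little care is making sure the quantifiers line up: the combinatorial \emph{condition} is a property of a single geometric type, while the combinatorial \emph{obstruction} quantifies over all powers, and Corollary \ref{Coro: T pA clas no com impasse}/Lemma \ref{Lemm: T pA class then no condition 1} are stated at the level of the condition/property applied to an arbitrary member of the pseudo-Anosov class; one must feed each power $T^m$ into these statements to conclude about the obstruction for $T$. A clean one-line observation that $T^m$ is in the pseudo-Anosov class whenever $T$ is handles this completely.
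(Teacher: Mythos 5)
Your argument is correct and coincides with the paper's: the paper likewise observes that $T^m$ is the geometric type of the Markov partition $(f^m,\cR)$ for the pseudo-Anosov map $f^m$, hence $T^m$ lies in the pseudo-Anosov class, and then applies Lemma \ref{Lemm: T pA class then no condition 1} and Corollary \ref{Coro: T pA clas no com impasse} to every power to exclude the type-$(1)$ obstruction and the impasse. Your extra remark on lining up the condition/obstruction quantifiers and the combinatorial--topological equivalences is exactly the bookkeeping the paper leaves implicit.
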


\begin{figure}[h]
	\centering
	\includegraphics[width=0.7\textwidth]{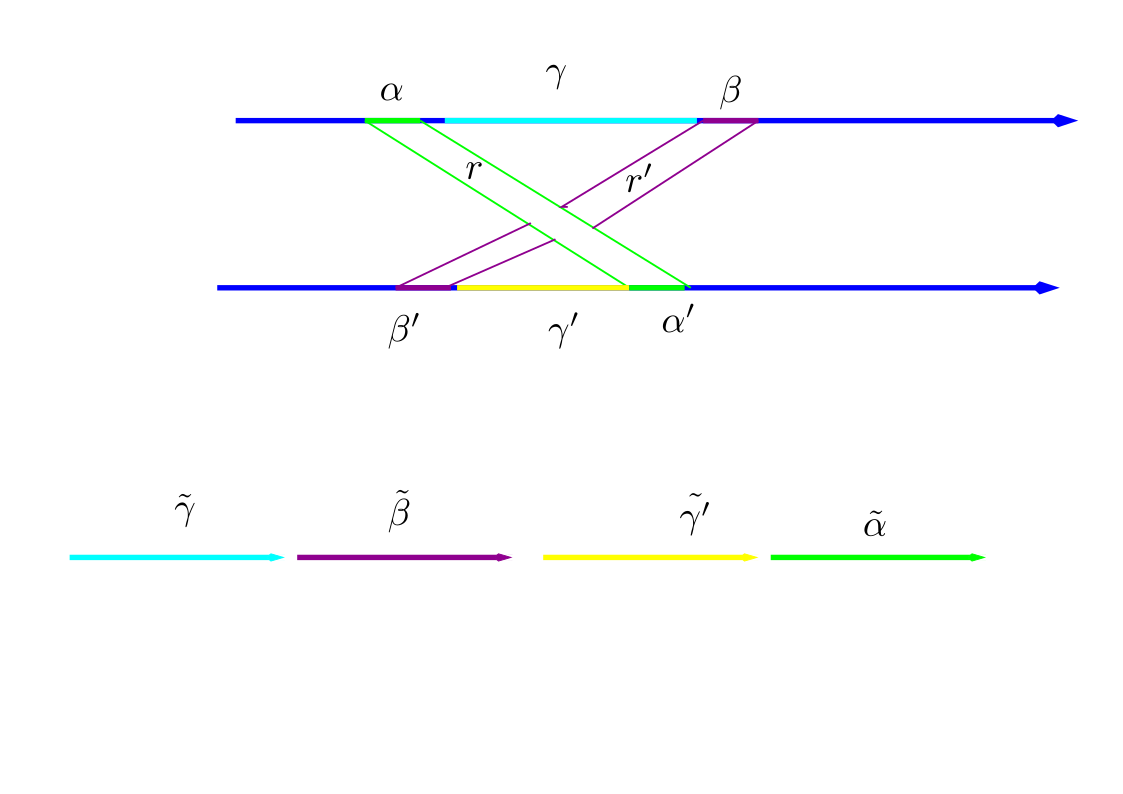}
	\caption{ Type $2$ obstruction}
	\label{Fig: Type 2 obstruction}
\end{figure}

\begin{lemm}\label{Lemm: T pA class then no condition 2}
A geometric type $T$ in the pseudo-Anosov class does not have the combinatorial condition of type-$(2)$. 
\end{lemm}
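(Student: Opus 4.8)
The plan is to mirror exactly the strategy used for the type-$(1)$ condition in Lemma~\ref{Lemm: T pA class then no condition 1}, now applied to the type-$(2)$ combinatorial condition. I would begin by assuming, for contradiction, that $T$ is in the pseudo-Anosov class and satisfies the combinatorial condition of type-$(2)$. By Lemma~\ref{Lemm: Equiv com and top type 2}, this is equivalent to the Markov partition $\cR_0$ exhibiting the topological obstruction of type-$(2)$ in the first realizer $\cR_1$. Fix a realization $(\cR,\phi)$ of $T$ (which exists by Corollary~\ref{Coro: T pA then realization}) and a geometric Markov partition $\tilde{\cR}=\{\tilde{R_i}\}_{i=1}^n$ of a generalized pseudo-Anosov homeomorphism $f$ realizing $T$. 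The type-$(2)$ obstruction gives two distinct horizontal boundary components $A$ and $A'$ of $\cR_0$, a ribbon $r$ joining a boundary $\alpha\subset A$ of some $V^{k_1}_{l^1}$ to a boundary $\alpha'\subset A'$ of some $V^{k_2}_{l_2}$, and a second ribbon $r'$ joining $\beta\subset A$ (a boundary of $V^{k_1}_{l^2}$) to $\beta'\subset A'$ (a boundary of $V^{k_2}_{l_1}$), such that, with orientations on $A$ and $A'$ compatible with the gluing orientation of $r$, the relative order of $\alpha,\beta$ in $A$ is opposite to the relative order of $\alpha',\beta'$ in $A'$.

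Next I would translate these identifications to the Markov partition. The ribbon $r$ forces the curves $\tilde{\alpha}$ and $\tilde{\alpha'}$ in the stable boundary of $\tilde{\cR}$ to be identified, and this identification preserves the \emph{gluing orientation} (Definition~\ref{Defi: Induced/gluin in Markov}); similarly $r'$ identifies $\tilde{\beta}$ with $\tilde{\beta'}$ preserving gluing orientation. Now $\tilde{\alpha}$ and $\tilde{\beta}$ both lie on the stable boundary component of $\tilde{\cR}$ corresponding to $A$, hence on a single stable leaf $\tilde{L}$ of a periodic point, and likewise $\tilde{\alpha'},\tilde{\beta'}$ lie on the stable leaf $\tilde{L'}$ corresponding to $A'$. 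The key observation, parallel to the argument accompanying Figure~\ref{Fig: Type 2 obstruction}, is that once we fix a gluing orientation on $\tilde{L}$ (equivalently on $A$), the gluing orientation on $\tilde{L'}$ is determined: it is the unique orientation for which the identification $\tilde{\alpha}\leftrightarrow\tilde{\alpha'}$ via the ribbon $r$ is orientation-preserving. Because both $r$ and $r'$ identify boundaries lying on $\tilde{L}$ with boundaries lying on $\tilde{L'}$, and the identification along a ribbon necessarily respects the gluing orientations of both endpoints, the cyclic/linear order of $\tilde{\alpha},\tilde{\beta}$ along $\tilde{L}$ must match the order of $\tilde{\alpha'},\tilde{\beta'}$ along $\tilde{L'}$ with respect to these gluing orientations. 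But the hypothesis of the type-$(2)$ obstruction says precisely that, after fixing orientations on $A$ and $A'$ compatible with $r$, the order of $\beta$ relative to $\alpha$ in $A$ is the \emph{reverse} of the order of $\beta'$ relative to $\alpha'$ in $A'$. Transporting this to $\tilde{L}$ and $\tilde{L'}$ yields the assertion that along the two leaves the induced identification must be simultaneously order-preserving (from matching gluing orientations) and order-reversing (from the obstruction), which is impossible. This contradiction shows $T$ cannot satisfy the combinatorial condition of type-$(2)$.

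The main obstacle, and the step demanding the most care, is the bookkeeping of the three orientations — induced, gluing, and dynamical — along $\tilde{L}$ and $\tilde{L'}$, and showing rigorously that a ribbon identification is always \emph{gluing}-orientation-preserving while it may reverse the \emph{induced} orientation. This is exactly the subtlety that Definitions~\ref{Defi: induced and glin for realization} and~\ref{Defi: Induced/gluin in Markov} were set up to handle, and the argument is the analogue of the point made in the proof of Lemma~\ref{Lemm: T pA class then no condition 1} that "the gluing orientation and the dynamical orientation of $\gamma$ must be the same"; here the statement is that the two leaves $\tilde{L},\tilde{L'}$ inherit gluing orientations that are rigidly linked by any one ribbon joining them, so a second ribbon cannot reverse that link. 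I would present this via the picture in Figure~\ref{Fig: Type 2 obstruction}, tracking the signs $\epsilon_T$ attached to the four pairs $(i_1,j_1),(i_1,j_1+1),(i_2,j_2),(i_2,j_2+1)$ exactly as enumerated in the four cases of Definition~\ref{Defi: Type 2 combinatoric}, and checking that in each of the four cases the compatibility conditions on the $\epsilon_T$'s force the gluing orientations to agree, producing the contradiction.

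Finally, to promote this from the combinatorial \emph{condition} to the combinatorial \emph{obstruction}, I would invoke the standard closure argument: if $T$ is in the pseudo-Anosov class and $f$ realizes $T$, then $f^n$ is pseudo-Anosov with a Markov partition of geometric type $T^n$, so $T^n$ is in the pseudo-Anosov class and, by the statement just proved, does not satisfy the combinatorial condition of type-$(2)$; hence no power $T^m$ satisfies it, which is exactly the statement that $T$ has no combinatorial obstruction of type-$(2)$. I would record this as a corollary immediately following the lemma, in the same format as Corollary~\ref{Lemm: T pA class then no obstruction 1}.
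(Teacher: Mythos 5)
Your outline follows the paper's route (reduce to the topological obstruction via Lemma~\ref{Lemm: Equiv com and top type 2}, transfer the two ribbons to identifications of boundary arcs that preserve the gluing orientation, and close with the power argument $T^m = T(f^m,\cR)$ for the obstruction), but the decisive step has a genuine gap. The clash you announce --- that the identification would have to be simultaneously order-preserving (ribbons respect gluing orientations) and order-reversing (the crossed pattern of the type-$(2)$ obstruction) --- is not an abstract impossibility. If the stable arcs involved lie on a \emph{closed} curve, two connected arcs $\tilde{A}\supset\tilde{\alpha}\cup\tilde{\beta}$ and $\tilde{A'}\supset\tilde{\alpha}\cup\tilde{\beta}$, coherently oriented, can perfectly well meet the two glued sub-arcs in opposite linear orders, because their complementary ``gaps'' sit in different components of the complement. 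So the order clash by itself proves nothing; what it actually shows is that the arcs concatenate into a closed curve contained in a stable leaf, and the contradiction must be drawn from the fact that the invariant foliations of a generalized pseudo-Anosov homeomorphism have no closed leaves (Proposition~\ref{Prop: pseudo-Anosov properties.}). Your write-up never invokes this property, yet it is the essential dynamical input; without it the argument is incomplete.

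This is exactly how the paper concludes, and it also explains a case your uniform argument misses. When the dynamical orientations of $\tilde{\alpha}$ and $\tilde{\beta}$ agree, the paper exhibits the closed curve $\tilde{\alpha'}\cdot\tilde{\gamma'}\cdot\tilde{\beta}\cdot\tilde{\gamma}$ inside a stable leaf and contradicts the absence of closed leaves. When they disagree, a periodic point $\tilde{O}$ lies on the boundary component between $\tilde{\alpha}$ and $\tilde{\beta}$; the boundary arc then spans two stable separatrices, the ``orders must match along one oriented leaf'' reasoning breaks down, and the paper instead tracks dynamical orientations to obtain the cyclic inequality $\tilde{O}<\tilde{\beta'}<\tilde{\alpha'}<\tilde{O}$, which again is only possible on a closed leaf. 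To repair your proof, convert the order clash into one of these two configurations (splitting according to whether a periodic point separates the two glued arcs, i.e.\ whether their dynamical orientations agree) and finish by citing the no-closed-leaf property; the remaining bookkeeping of induced, gluing and dynamical orientations in the four cases of Definition~\ref{Defi: Type 2 combinatoric} is then the same as in the paper.
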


\begin{proof}

Assume this is not true. In view of Lemma \ref{Lemm: Equiv com and top type 2}, $T$ has the topological condition of type-$(2)$ in its $1$-realization. This implies the existence of a ribbon $r$ joining two curves $\alpha$ and $\alpha'$ that lie on different sides of the stable boundary of the Markov partition. Similarly, there is another ribbon $r'$ joining curves $\beta$ and $\beta'$, where $\beta$ is on the same stable side as $\alpha$ and $\beta'$ is on the same side as $\alpha'$. These curves correspond to the boundaries of vertical sub-rectangles in the realization. The type-$(2)$ condition imposes certain conditions about the orientation on these curves, namely:

\begin{itemize}
\item  The gluing orientation of $\alpha$ is equal to its induced orientation, and the gluing orientation of $\alpha'$ is equal to its respective induced orientation.

\item  The gluing orientation of $\beta$ coincides with its induced orientation if and only if the r gluing and the induced orientations of $\beta'$ coincides.
\end{itemize}

As usual, $\gamma$ is the curve between $\alpha$ and $\beta$ with respect to the induced orientation: $\alpha \leq \gamma \leq \beta$. Similarly, $\gamma'$ is the curve between $\beta'$ and $\alpha'$ with respect to the induced orientation: $\beta' \leq \gamma' \leq \alpha'$. The same orientation properties hold for the curves $\tilde{\alpha}$, $\tilde{\alpha'}$, $\tilde{\beta}$, $\tilde{\beta'}$, $\tilde{\gamma}$, and $\tilde{\gamma'}$ in the Markov partition of $f$.

Furthermore, the dynamic orientation of $\tilde{\alpha}$ matches its induced orientation if and only if the dynamic orientation of $\tilde{\alpha'}$ matches the induced orientation. We adopt the convention that the induced orientation, gluing orientation, and dynamic orientation of $\tilde{\alpha}$ and $\tilde{\alpha'}$ are the same and fixed from now on. Even more, we assume that the dynamical orientation and gluing orientation of $\tilde{\beta}$ and $\tilde{\beta'}$ are the same, as they both point towards the same periodic point. We shall analyze two disjoint situations: either the dynamical orientation of $\tilde{\beta}$ is the same as that of $\tilde{\alpha}$, or it is different.

If the dynamical orientation of $\tilde{\alpha}$ and $\tilde{\beta}$ are the same, we define the curve $\tilde{L}$ as follows:
$$
\tilde{L}:=\tilde{\alpha'}\cdot \tilde{\gamma'}\cdot \tilde{\beta}\cdot \tilde{\gamma}.
$$

This curve $\tilde{L}$ is a simple closed curve because its entire interior is oriented with respect to the dynamical orientation in a stable leaf. However, this is not possible because the stable foliation of $f$ does not have closed leaves.

If the dynamical orientations of $\tilde{\alpha}$ and $\tilde{\beta}$ are different, this implies that $\tilde{\gamma}$ contains a periodic point $\tilde{O}$, and we have the following inequalities with respect to the induced orientation: 
$$
\tilde{\alpha} < \tilde{O} < \tilde{\beta}.
$$ 
Like the induced and dynamical orientations of $\tilde{\alpha}$ and $\tilde{\alpha'}$ coincide: $\tilde{\alpha'}<\tilde{O}$. Similarly, the dynamical orientation of $\tilde{\beta}$ matches that of $\tilde{\beta'}$ and they are the inverse of the induced orientation, so $\tilde{O}<\tilde{\beta'}$. This implies that, with respect to the induced orientation:
$$
\tilde{ O}<\tilde{\beta'}< \tilde{\alpha'}<\tilde{O}
$$
so the only way for this to occur is to have a closed leaf, which contradicts the fact that the stable foliation of $f$ does not have closed leaves.
This ends our proof by contradiction.
\end{proof}

\begin{coro}\label{Lemm: T pA class then no obstruction 2}
A geometric type $T$ in the pseudo-Anosov class does not have the type-$(2)$ obstruction.
\end{coro}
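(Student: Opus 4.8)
The final statement to prove is Corollary~\ref{Lemm: T pA class then no obstruction 2}: \emph{a geometric type $T$ in the pseudo-Anosov class does not have the type-$(2)$ obstruction}.

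\smallskip

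The plan is to deduce this corollary from Lemma~\ref{Lemm: T pA class then no condition 2} by the same ``iterate and apply'' device that was used for the type-$(1)$ obstruction in Corollary~\ref{Lemm: T pA class then no obstruction 1}. Recall that by definition (Definition~\ref{Defi: Type 2 combinatoric}), $T$ has the \emph{type-$(2)$ obstruction} if and only if there exists some $m\in\NN$ such that $T^m$ satisfies the type-$(2)$ \emph{combinatorial condition}. So it suffices to show that for every $m\in\NN$, the geometric type $T^m$ does \emph{not} satisfy the combinatorial condition of type-$(2)$; Lemma~\ref{Lemm: T pA class then no condition 2} gives exactly this for $m=1$, and we only need to know it applies to every power $T^m$.

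\smallskip

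First I would invoke the closure of the pseudo-Anosov class under passing to powers. Concretely: if $T$ is in the pseudo-Anosov class, there is a generalized pseudo-Anosov homeomorphism $f:S\to S$ with a geometric Markov partition $\cR$ of geometric type $T$, by Definition~\ref{Defi: pseudo-Anosov class}. For any $m\in\NN_{+}$, the map $f^m$ is again a generalized pseudo-Anosov homeomorphism with the same invariant foliations, and $\cR$ is a geometric Markov partition for $f^m$ whose geometric type is, by Definition~\ref{Defi: powers of the type}, precisely $T^m$. Hence $T^m$ is again in the pseudo-Anosov class. Now apply Lemma~\ref{Lemm: T pA class then no condition 2} to $T^m$: it does not satisfy the combinatorial condition of type-$(2)$. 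Since $m$ was arbitrary, there is no $m\in\NN$ for which $T^m$ satisfies that condition, and therefore $T$ does not have the combinatorial obstruction of type-$(2)$. (One should treat $m=0$ separately if needed, but $T^0$ is the trivial identity-type and clearly does not satisfy the condition, or one simply restricts attention to $m\geq 1$ as in Definition~\ref{Defi: Type 2 combinatoric}.)

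\smallskip

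There is essentially no obstacle here: the only subtlety worth spelling out is the bookkeeping that ``$T$ in the pseudo-Anosov class $\Rightarrow$ $T^m$ in the pseudo-Anosov class,'' which follows immediately from Definition~\ref{Defi: powers of the type} and the elementary fact that powers of generalized pseudo-Anosov homeomorphisms are generalized pseudo-Anosov. The genuine mathematical content—that the combinatorics of type-$(2)$ would force a closed leaf of the stable foliation, which is forbidden by Proposition~\ref{Prop: pseudo-Anosov properties.}—has already been carried out in the proof of Lemma~\ref{Lemm: T pA class then no condition 2}. So the corollary is a one-line consequence, exactly parallel to how Corollary~\ref{Lemm: T pA class then no obstruction 1} was obtained from Lemma~\ref{Lemm: T pA class then no condition 1}.
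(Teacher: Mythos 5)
Your proposal is correct and follows exactly the paper's own argument: since $T^m$ is realized by the pseudo-Anosov homeomorphism $f^m$ with the same Markov partition, Lemma~\ref{Lemm: T pA class then no condition 2} applies to every power $T^m$, so no power satisfies the type-$(2)$ combinatorial condition and hence $T$ has no type-$(2)$ obstruction. The extra bookkeeping you include (Definition~\ref{Defi: powers of the type} and the $m=0$ remark) is harmless and does not change the route.
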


\begin{proof}

It is sufficient to prove that $T^m$ does not have the type-$(2)$ condition. However, $T^m$ is a geometric type in the pseudo-Anosov class associated with $f^m$. By Lemma \ref{Lemm: T pA class then no condition 2}, we know that $T^m$ does not have the type-$(2)$ condition. Therefore, it follows that $T$ does not have the type-$(2)$ obstruction.
\end{proof}

%%%%%%%%% Type 3 condition%%%%%

\begin{lemm}\label{Lemm: T pA class then no condition 3}
A geometric type $T$ in the pseudo-Anosov class does not have the combinatorial condition of type $(3)$.
\end{lemm}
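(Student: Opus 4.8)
The plan is to follow the template of Lemmas \ref{Lemm: T pA class then no condition 1} and \ref{Lemm: T pA class then no condition 2}: argue by contradiction, pass from the combinatorial condition to a topological obstruction in a realizer, and then exploit the rigidity of the stable foliation of a pseudo-Anosov homeomorphism. So suppose $T$ belongs to the pseudo-Anosov class and satisfies the combinatorial condition of type $(3)$, and fix a pseudo-Anosov homeomorphism $f:S\to S$ with a geometric Markov partition $\tilde{\cR}$ of geometric type $T$. Since the combinatorial condition of type $(3)$ is in particular the combinatorial obstruction of type $(3)$ (take the power $m=1$ in Definition \ref{Defi: Type 3 combinatoric}), Lemma \ref{Lemm: Equiv com and top type 3} gives that, for a suitable realization of $T$, the Markov partition carries the topological obstruction of type $(3)$ in some realizer $\cR_m$; concretely, there are three pairwise distinct embryonic separatrices $S_1,S_2,S_3$ sitting on periodic stable boundary components $A_{k_1},A_{k_2},A_{k_3}$, a ribbon $r$ with one horizontal boundary on $S_1$ and the other on $S_2$, and a ribbon $r'$ with one horizontal boundary on $S_1$ and the other on $S_3$. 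By Corollary \ref{coro: pseudo Anosov non-doble boundaries} and Corollary \ref{Coro: T pA then realization}, $T$ has no double boundaries and such a realization exists, so this picture is legitimate.

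Next I would transport this configuration to $\tilde{\cR}$, using the notation and the three orientations (induced, gluing, dynamical) set up in Definitions \ref{Defi: induced and glin for realization}--\ref{Defi: Dynac orinted for realization}. Each $A_{k_i}$ corresponds to a stable boundary component $\tilde{A}_{k_i}$ of $\tilde{R}_{k_i}$, which by Lemma \ref{Lemm: Boundary of Markov partition is periodic} lies on the stable leaf $\tilde{L}_i$ of a periodic $s$-boundary point $\tilde{p}_i$ of $f$, and $S_i$ corresponds to a sub-arc $\tilde{S}_i\subset \tilde{A}_{k_i}\subset \tilde{L}_i$. Exactly as in the proof of Lemma \ref{Lemm: T pA class then no condition 2}, the presence of the ribbon $r$ means that the arcs $\tilde{\alpha}\subset\tilde{S}_1$ and $\tilde{\beta}\subset\tilde{S}_2$ cut out by $r$ are identified in $\tilde{\cR}$ (they form the single stable segment separating two vertical sub-rectangles), and the gluing orientations of $\tilde\alpha$ and $\tilde\beta$ agree under this identification; in particular $\tilde{L}_1=\tilde{L}_2$. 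The ribbon $r'$ forces $\tilde{L}_1=\tilde{L}_3$ in the same way. Hence the three arcs $\tilde{S}_1,\tilde{S}_2,\tilde{S}_3$ all lie on a single stable leaf $\tilde{L}:=\tilde{L}_1$ of $f$.

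Then I would fix once and for all the induced orientation of $\tilde{L}$ together with its dynamical orientation (pointing towards the periodic point of $\tilde L$), and read off from the type-$(3)$ condition what the $\epsilon$-data imposes. The requirements $\Phi(\sigma,j_\sigma)=(\sigma,l_\sigma,\epsilon=1)$ in (\ref{Equ: periodic boundary}) say that the relevant boundary sub-rectangles of the $R_{k_i}$ preserve the vertical orientation, which translates into the gluing orientations of the arcs $\tilde{\alpha},\tilde{\alpha}',\tilde{\beta},\tilde{\gamma},\tilde{\gamma}'$ coinciding with their induced orientations; together with the inequalities $l_1<l_1'<l_{k_1}$ (resp.\ $l_{k_1}<l_1<l_1'$) and their analogues in item $iv)$ of the definition, this pins down the cyclic position of $\tilde\alpha,\tilde\alpha'$ on $\tilde{S}_1$ and of their partners on $\tilde{S}_2,\tilde{S}_3$. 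Concatenating $\tilde{S}_1$ with the sub-arc of $\tilde{L}$ joining $\tilde{\alpha}$ to $\tilde{\alpha}'$, then jumping through the two identifications produced by $r$ and $r'$ and closing up along $\tilde{L}$, produces either a closed leaf of $\cF^s(f)$ or a strictly increasing cycle of finitely many points on the line $\tilde{L}$; the first is impossible by Proposition \ref{Prop: pseudo-Anosov properties.}, and the second is absurd on a linearly ordered line. This contradiction proves the lemma.

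The main obstacle I anticipate is the orientation and combinatorial bookkeeping in the last step: the type-$(3)$ condition splits into the three sub-cases of item $iv)$ — $p_3$ distinct from both $p_1$ and $p_2$, $p_3=p_2$, and $p_3=p_1$ — and in each case one must verify that once the gluing orientations are matched the ribbons really force an orientation-coherent loop on $\tilde{L}$ rather than a consistent arrangement. This is the same phenomenon that made Lemma \ref{Lemm: T pA class then no condition 2} delicate, now with three separatrices instead of two; handling it cleanly will probably require drawing the analogue of Figure \ref{Fig: Type 2 obstruction} for each sub-case and arguing, as in Lemma \ref{Lemm: gamma periodic points}, that a stable boundary folded onto itself must collapse onto a spine, which then pins the periodic point between the two relevant arcs and rules out the remaining configuration.
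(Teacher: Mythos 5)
Your reduction steps are fine and coincide with the paper's: passing from the combinatorial condition to the topological obstruction of type $(3)$ via Lemma \ref{Lemm: Equiv com and top type 3}, and observing that the two ribbons force the three embryonic separatrices to land on a single stable leaf (in the paper: the curves $\tilde{\alpha},\tilde{\alpha}',\tilde{\beta},\tilde{\beta}'$ all lie on one stable separatrix $\tilde{F}$ of a periodic point, and none of them contains that point). The gap is in your concluding step, which is the actual content of the lemma. After the quotient, the arcs $\tilde{\alpha}$ and $\tilde{\alpha}'$ are literally the same stable segment, and likewise $\tilde{\beta}$ and $\tilde{\beta}'$; there are only two ribbons, both anchored on $S_1$, so "jumping through the identifications and closing up along $\tilde{L}$" does not produce a loop of arcs with coherent dynamical orientation as in the type-$(2)$ case, nor any finite "increasing cycle" of points on the leaf. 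Nothing in the type-$(3)$ data forces a closed leaf, so the contradiction you invoke is not derived, and you yourself concede the bookkeeping is unresolved. The spine-collapse mechanism of Lemma \ref{Lemm: gamma periodic points} that you propose as a fallback also does not apply here: it concerns a ribbon whose two horizontal boundaries lie on the boundary of the \emph{same} rectangle, which is the type-$(1)$/impasse configuration, not the type-$(3)$ one.

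The contradiction the paper actually uses is of a different nature and is what your argument is missing. The leaf $\tilde{F}$ has exactly two sides: on one side sits the rectangle whose periodic stable boundary carries $\tilde{\alpha}$ and $\tilde{\beta}$ (the images of the arcs in $S_1$), and on the other side sit the rectangle(s) carrying $\tilde{\alpha}'$ and $\tilde{\beta}'$. On a given side of a stable separatrix there is exactly one stable boundary component of the Markov partition that is \emph{periodic} (all others are only pre-periodic), a fact already used in the proof of Lemma \ref{Lemm: well suit impasse}. Hence either the rectangles containing $\tilde{\alpha}'$ and $\tilde{\beta}'$ are distinct, which is impossible because both boundaries would have to be periodic (they come from embryonic separatrices), or they coincide; in the latter case the coherence of the dynamical orientation with the induced orientation forces $\alpha'$ and $\beta'$ to lie in the same embryonic separatrix of the realization, contradicting $S_2\neq S_3$. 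To repair your proof you should replace the "closed leaf / increasing cycle" step by this two-case side-of-the-leaf argument.
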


\begin{figure}[h]
	\centering
	\includegraphics[width=0.7\textwidth]{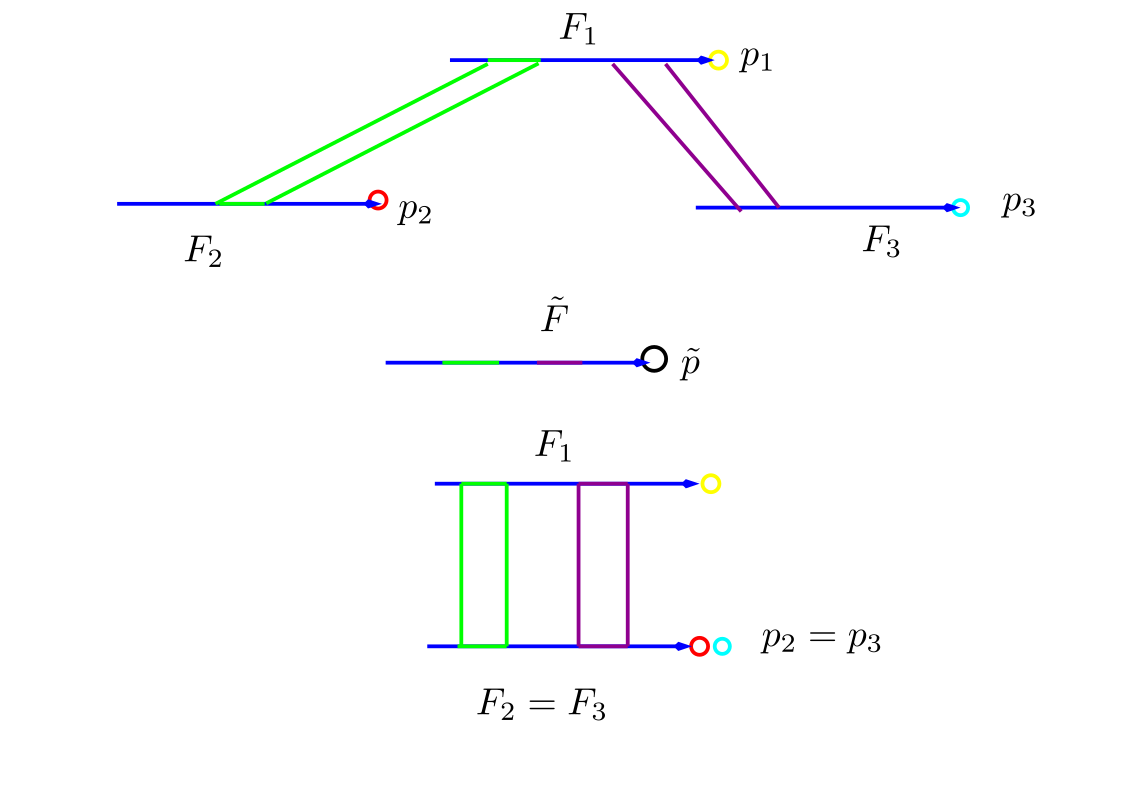}
	\caption{ Type $3$ obstruction}
	\label{Fig: Type 3 obstruction}
\end{figure}

\begin{proof}

Suppose $T$ has the combinatorial condition of type $(3)$. Then, according to Lemma \ref{Lemm: Equiv com and top type 3}, the first realization of $T$ has the topological obstruction of type $(3)$. Let $\alpha$ and $\alpha'$ be the curves identified by the ribbon $r$, and let $r'$ be the other ribbon given by the combinatorial condition that connects $\beta$ and $\beta'$. Suppose that $\alpha$ and $\beta$ are contained in the separatrix  $F^s_1(p_0)$, while $\beta'$ is contained in $F^s_1(p_1)$ and $\alpha'$ is contained in $F^s_1(p_2)$, where $p_0$, $p_1$, and $p_2$ are periodic points. By hypothesis  $F^s_1(p_0)$, $F^s_1(p_1)$, and $F^s_1(p_2)$ are all distinct and the curves are all embrionary . Now, let us analyze what happens in the Markov partition.

By the identifications, the curves $\tilde{\alpha}$, $\tilde{\alpha'}$, $\tilde{\beta}$, and $\tilde{\beta'}$ are all contained in the same stable separatrix  $\tilde{F}$ associated with a periodic point $\tilde{O}$ and they don't contain this point (remember they comes from embrionary separatrices).

Now, the separatrix  $\tilde{F}$ has only two sides. On one side, we have the rectangle $\tilde{R_k}$ which contains $\tilde{\alpha}$ and $\tilde{\beta}$ in their periodic boundary. On the other side, we have the rectangle $\tilde{R_a}$ which contains $\tilde{\alpha'}$, and $\tilde{R_b}$ which contains $\tilde{\beta'}$.

If $\tilde{R_a}$ is not the same as $\tilde{R_b}$, then only one of these rectangles can have a periodic stable boundary. However, our hypothesis states that both $\alpha'$ and $\beta'$ are in embrionary separatrices, which means that both $\tilde{R_a}$ and $\tilde{R_b}$ should have periodic boundaries containing these curves. This leads to a contradiction.

Therefore, the rectangle $R_a$ of the Markov partition contains, in one of its periodic boundaries, the curves $\alpha'$ and $\beta'$. However, by the coherence of the dynamical orientation with the induced orientations, $\alpha'$ and $\beta'$ should be in the same stable separatrix. This contradicts our assumption.

\end{proof}

\begin{coro}\label{Lemm: T pA class then no obstruction 3}
A geometric type $T$ in the pseudo-Anosov class doesn't have the type  $(3)$ combinatorial obstruction.
\end{coro}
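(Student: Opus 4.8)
The final statement to prove is Corollary~\ref{Lemm: T pA class then no obstruction 3}, which asserts that a geometric type $T$ in the pseudo-Anosov class does not have the type-$(3)$ combinatorial obstruction. The plan is to reduce this corollary to the preceding Lemma~\ref{Lemm: T pA class then no condition 3} by exactly the same bootstrapping argument used for the type-$(1)$ and type-$(2)$ obstructions in Corollaries~\ref{Lemm: T pA class then no obstruction 1} and~\ref{Lemm: T pA class then no obstruction 2}. Recall that, by Definition~\ref{Defi: Type 3 combinatoric}, $T$ has the combinatorial obstruction of type-$(3)$ precisely when there exists $m \in \NN$ such that $T^m$ satisfies the combinatorial condition of type-$(3)$. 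So it suffices to show that \emph{no} power $T^m$ satisfies the combinatorial condition of type-$(3)$.

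First I would invoke the definition of the pseudo-Anosov class (Definition~\ref{Defi: pseudo-Anosov class}): since $T$ is in $\mathcal{G}\mathcal{T}(p\AA)$, there is a generalized pseudo-Anosov homeomorphism $f:S\to S$ with a geometric Markov partition $\cR$ of geometric type $T$, i.e. $T = T(f,\cR)$. Next I would recall from Subsection ``The powers of a geometric type'' (Definition~\ref{Defi: powers of the type}) that for every $m \in \NN$, the map $f^m$ is again a generalized pseudo-Anosov homeomorphism, that $\cR$ is again a geometric Markov partition for $f^m$, and that the geometric type of $(f^m,\cR)$ is precisely $T^m$. In particular $T^m \in \mathcal{G}\mathcal{T}(p\AA)$ for every $m$: each power of a geometric type in the pseudo-Anosov class is itself in the pseudo-Anosov class.

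Now I would apply Lemma~\ref{Lemm: T pA class then no condition 3} to the geometric type $T^m$: since $T^m$ is in the pseudo-Anosov class, it does not satisfy the combinatorial condition of type-$(3)$. As $m$ was arbitrary, no power of $T$ satisfies this condition, so by Definition~\ref{Defi: Type 3 combinatoric} the geometric type $T$ does not have the combinatorial obstruction of type-$(3)$. This completes the proof. I do not expect any genuine obstacle here: the corollary is a purely formal consequence of the already-established Lemma~\ref{Lemm: T pA class then no condition 3} together with the stability of the pseudo-Anosov class under taking powers, and the proof mirrors verbatim the structure of the two preceding corollaries. The only point requiring a line of care is making explicit that ``$T^m$ is in the pseudo-Anosov class'' — which follows because $(f^m,\cR)$ realizes $T^m$ and $f^m$ is pseudo-Anosov — but this is exactly the observation already used silently in Corollaries~\ref{Lemm: T pA class then no obstruction 1} and~\ref{Lemm: T pA class then no obstruction 2}, so no new work is needed.
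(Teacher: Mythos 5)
Your proposal is correct and follows exactly the paper's argument: since $(f^m,\cR)$ realizes $T^m$ and $f^m$ is again pseudo-Anosov, every power $T^m$ lies in the pseudo-Anosov class, so Lemma~\ref{Lemm: T pA class then no condition 3} rules out the type-$(3)$ combinatorial condition for each $T^m$, and hence the obstruction for $T$. No gaps; this is the same reduction the paper uses for the type-$(1)$ and type-$(2)$ corollaries.
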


\begin{proof}

Similarly to the other obstruction, for all $m\in \mathbb{N}$, $T^m$ is the geometric type of the pseudo-Anosov homeomorphism $f^m$. This implies that $T^m$ doesn't have the type $(3)$ combinatorial condition.
\end{proof}

\section{Finite genus and no-impasse implies basic piece without impasse.}\label{Sec: finite genus no impas implies basic piece}

In this part we are going to prove the next implication of Proposition \ref{Prop: pseudo-Anosov iff basic piece non-impace}. It correspond tho the following proposition.

\begin{prop}\label{Prop: Charaterization pseudo Anosov class}
A geometric type $T$  which satisfy the following properties:
	\begin{enumerate}
	\item The incidence matrix in transitive.
	\item The genus of $T$ is finite.
	\item $T$ don't have impasse.
	\end{enumerate}
is realized as a \emph{mixing} basic piece of a surface Smale diffeomorphism without topological impasse
\end{prop}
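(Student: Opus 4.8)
# Proof Proposal for Proposition 4.15

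The plan is to build explicitly a Smale diffeomorphism realizing the geometric type $T$ as a mixing saddle-type basic piece, and then verify that the absence of an impasse is inherited from the hypotheses. Since $A(T)$ is mixing, Lemma~\ref{Lemm: T mixing implies non double boundaries} gives that $T$ has no double boundaries, so by Proposition~\ref{Prop: Types transitive have a realization} there exists a realization $(\{R_i\}_{i=1}^n,\phi)$ of $T$ by disjoint rectangles; we may take it affine. From this realization, for each $m$ we have the $m$-realizer $\cR_m$, which is an HV-surface of genus $g_m$, and $\{g_m\}$ is non-decreasing and bounded because $\mathrm{gen}(T)<\infty$. Thus the sequence stabilizes: there exists $m_0$ with $g_m=g_{m_0}$ for all $m\geq m_0$. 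The first key step is to promote this stabilization into a genuine compact surface: I would glue the realizers along their embeddings $\cR_m\hookrightarrow\cR_{m+1}$ to form the direct limit $\cR_\infty=\bigcup_m\cR_m$, which carries the partially-defined dynamics $\phi_\infty$ with $\phi_\infty([x,i])=[\phi(x),i+1]$. Since the genus no longer grows and (using that $T$ has no double boundaries, hence no rectangle degenerates to an interval) the complement of the rectangles consists of finitely many "ribbon ends'' of bounded complexity, one shows $\cR_\infty$ has finite topological type; its ends can be capped off (this is exactly the Bonatti–Langevin construction of the domain $\Delta(K)$ from a filtration — see \cite[Chapter 3]{bonatti1998diffeomorphismes} and the discussion around Definition~\ref{Defi: Domain of K}), yielding a compact surface $\Delta$ with a Smale diffeomorphism $F$ extending $\phi_\infty$, for which $\{R_i\}$ is a Markov partition of a maximal invariant saddle-type set $K$.

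The second step is to check that $K$ is a \emph{basic piece}, i.e.\ transitive and maximal, and that it is \emph{mixing}. Transitivity and mixing follow from the mixing of the incidence matrix $A(T)$: the associated sub-shift of finite type $(\Sigma_{A(T)},\sigma)$ is topologically mixing, and the standard coding argument (each rectangle coded by its symbol; $K$ is the image of $\Sigma_{A(T)}$ under the obvious projection, which is finite-to-one and a semiconjugacy) transfers mixing to $K$. Maximality among transitive hyperbolic sets is automatic once $K$ is the maximal invariant set in the invariant neighborhood $\Delta$ and $K$ is transitive. The absence of double $s$- or $u$-boundary points for $K$ is guaranteed combinatorially by the no-double-boundary property of $T$, via \cite[Proposition 7.2.2]{bonatti1998diffeomorphismes}, which keeps us inside Convention~\ref{Conv: non double boundary points}.

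The third and most delicate step is to rule out a topological impasse for $K$. By Theorem~\ref{Theo: Geometric and combinatoric are equivalent}, $K$ has a topological impasse if and only if $T$ has a combinatorial impasse, i.e.\ iff $T^{2n+1}$ has the impasse property. But hypothesis $(3)$ says precisely that $T$ has no impasse; unwinding Definition~\ref{Defi: Impasse combinatoric}, this means no power $T^m$ has the impasse property, so in particular $T^{2n+1}$ does not, and hence $K$ has no topological impasse. Here one must be slightly careful: Theorem~\ref{Theo: Geometric and combinatoric are equivalent} is stated for a geometric type of finite genus realized as a saddle-type basic piece — which is exactly the situation we have just constructed — so it applies, and the equivalence $(ii)\Leftrightarrow(iii)$ there closes the argument. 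I expect this third step to be the main obstacle, not because the logic is hard but because it requires the construction of the first two steps to be clean enough that $K$ genuinely satisfies all the standing hypotheses of the Bonatti–Langevin–Béguin theory (saddle-type, no double boundaries, finite genus), so that Theorems~\ref{Theo: finite genus iff realizable} and~\ref{Theo: Geometric and combinatoric are equivalent} may be invoked verbatim. In fact, the cleanest route is: invoke Theorem~\ref{Theo: finite genus iff realizable} directly — since $T$ has no double boundaries and $\mathrm{gen}(T)<\infty$, it \emph{is} realizable as a saddle-type basic piece $K$ of some surface Smale diffeomorphism — then upgrade to mixing using $A(T)$ mixing (passing to a power and regrouping pieces if necessary, but mixing of $A(T)$ makes this unnecessary), and finally apply Theorem~\ref{Theo: Geometric and combinatoric are equivalent} together with hypothesis $(3)$ to conclude $K$ has no impasse. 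This reduces the proof to assembling three cited results, with the genuine content being the verification that the realization produced by Theorem~\ref{Theo: finite genus iff realizable} can be taken mixing, which is immediate from the mixing of the incidence matrix.
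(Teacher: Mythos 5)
Your final "cleanest route" is exactly the paper's proof: cite Theorem~\ref{Theo: finite genus iff realizable} (its hypotheses hold since mixing of $A(T)$ rules out double boundaries and $\mathrm{gen}(T)<\infty$), use mixing of the incidence matrix to conclude the realized saturated set is a mixing basic piece, and apply Theorem~\ref{Theo: Geometric and combinatoric are equivalent} with hypothesis $(3)$ to exclude a topological impasse. The hand-built direct-limit construction in your first two paragraphs is unnecessary (you correctly discard it), so the proposal is essentially the same argument as the paper's and is correct.
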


\begin{proof}
 The Theorem \ref{Theo: finite genus iff realizable}  due to Bonatti-Langevin-Beguin implies that $T$ is realized as a saturated basic set $K$ of a surface Smale diffeomorphism . Like the incidence matrix is mixing, we deduce $K$ is mixing and then it is basic piece (Look at \cite[Proposition 18.7.7]{hasselblatt2002handbook}). The Theorem \ref{Theo: Geometric and combinatoric are equivalent} implies that $K$ does not have a topological impasse.
\end{proof}

\chapter{Some computations}\label{Chap: Computations}

\section{Partitioning a Markov Partition Using a Periodic Code.}\label{Sec: Cut in a code}

Let $T$ be a geometric type in the pseudo-Anosov class with binary incidence matrix $A := A(T)$. Consider $f: S \rightarrow S$ as a generalized pseudo-Anosov homeomorphism, which has a geometric Markov partition $\cR$ of geometric type $T$. In this section, $\pi_f: \Sigma_A \rightarrow S$ is the projection from the Shift space to the surface ( Definition \ref{Defi: projection pi} ). Let $\underline{w} \in \Sigma_A$ be a periodic code of period $P \geq 1$, and let $p := \pi_f(\underline{w})$ be the corresponding periodic point of $f$ under this projection. In this section, we will construct a geometric Markov partition for $f$ that includes $p$ as an $s$-boundary point and calculate its geometric type in terms of $\underline{w}$ and $T$. Our main technical assumption is that the periodic code $\underline{w}$ is not $s$-boundary, at the end of our exposition we are going to remark why this is not a limitation for our future applications.

\subsection{The $s$-boundary refinement with respect to a periodic code.}

We will construct the $s$-\emph{boundary refinement} of the Markov partition $\cR$ by dividing it along the orbit of a stable interval $I$ containing $p=\pi_f(\underline{w})$. Let's begin by characterizing the orbit of this interval.

\begin{defi} \label{Def: Stable intervals of codes}
	
Let $T$ be a geometric type in the pseudo-Anosov class with a binary incidence matrix $A := A(T)$. Consider $\underline{w} \in \Sigma_A$ as a periodic code. Define $\underline{F}^s(\underline{w})$ as the stable leaf codes of $\underline{w}$ ( Definition \ref{Defi: s,u-leafs}). Let $t\in\NN$, the stable manifold codes for $\sigma^t(\underline{w})$, where the non-negative code matches that of $\sigma^t(\underline{w})$, are denoted by:
	
	\begin{equation}\label{Equa: Projection stable intervals}
	\underline{I}_{t, \underline{w}} := \{\underline{v}\in \underline{F}^s(\sigma^t(\underline{w})): v_{n}=w_{t+n} \text{ for all } n\in \NN\}.
	\end{equation}

\end{defi}

If $\pi_f^{-1}(\pi_f(\underline{w}))$ has more than one element, it's possible that there exist $t_1$ and $t_2$ (both not multiples of the period of $p$, to be denoted  $P$) such that $\sigma^{t_1}(\underline{w}) \neq \sigma^{t_2}(\underline{w})$ but their projections coincide. The following lemma clarifies this situation.

\begin{lemm}\label{Lemm: who the intervals intersect }
Let  $I_{t,\underline{w}} := \pi_f(\underline{I}_{t,\underline{w}})$,  then: 

\begin{itemize}
\item[i)] $I_{t,\underline{w}}$  is a unique stable interval of $R_{w_t}$ that contains $f^t(p)$.

\item[ii)] If $\underline{w}$ is a $s$ boundary code, then $I_{t,\underline{w}}$ is a stable boundary component of $R_{w_t}$.

\item[iii)] If $\underline{w}$ is not a $s$-boundary code and there exist $t_1$ and $t_2$ in $\NN$ such that $\sigma^{t_1}(\underline{w}) \neq \sigma^{t_2}(\underline{w})$ but their projections coincide, i.e. $f^t(p):=\pi_f(\sigma^{t_1}(\underline{w}))=\pi_f(\sigma^{t_2}(\underline{w}))$, then the stable intervals $I_{t_1, \underline{w}}$ and $I_{t_2, \underline{w}}$ have disjoint interior and only intersect at the point $f^t(p)$.

\end{itemize}
 
 Let us point that in item $iii)$, $t_1-t_2$ cannot be a multiple of the period of $P$. Because if this were the case,  $t_1=t_2+mP$ and therefore:
 
 $$
 \sigma^{t_1}(\underline{w})=\sigma^{t_2}(\sigma^{mP}(\underline{w}))=\sigma^{t_2}.
 $$

\end{lemm}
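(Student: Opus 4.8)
The plan is to unwind the definitions and use the well-behavedness of the projection $\pi_f$ established in Proposition~\ref{Prop:proyecion semiconjugacion}. First I would address item i): by Definition~\ref{Defi: s,u-leafs}, the codes in $\underline{I}_{t,\underline{w}}$ are exactly those stable leaf codes of $\sigma^t(\underline{w})$ whose non-negative part agrees with that of $\sigma^t(\underline{w})$. Repeating the computation inside the proof of Lemma~\ref{Lemm: pif is well defined}: for $n\in\NN$, the rectangles $H_n=\cap_{z=0}^n f^{-z}(\overset{o}{R_{w_{t+z}}})$ are horizontal sub-rectangles of $R_{w_t}$ whose unstable length tends to zero, so $\cap_{z\geq 0} f^{-z}(\overline{R_{w_{t+z}}})$ is a single stable segment $I_{t,\underline{w}}$ of $R_{w_t}$. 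Since every $\underline{v}\in\underline{I}_{t,\underline{w}}$ satisfies $v_n=w_{t+n}$ for $n\geq 0$, Proposition~\ref{Prop: Projection foliations} (applied to the positive halves) shows $\pi_f(\underline{v})$ lies on this same stable segment, and conversely every point of the open segment is hit (using Proposition~\ref{Prop: Projection foliations}, second bullet, or the density of unstable leaves). In particular $f^t(p)=\pi_f(\sigma^t(\underline{w}))\in I_{t,\underline{w}}$ because $\sigma^t(\underline{w})\in\underline{I}_{t,\underline{w}}$ by construction. This gives i).

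Next, item ii): if $\underline{w}\in\Sigma_A$ is an $s$-boundary code, then so is $\sigma^t(\underline{w})$, and by Proposition~\ref{Prop: positive codes are boundary} the positive part $\underline{I}^+(w_t,\epsilon)$ of $\sigma^t(\underline{w})$ forces $\pi_f(\sigma^t(\underline{w}))\in\partial^s_\epsilon R_{w_t}$; moreover the proof of that proposition shows the whole segment $\cap_{z\geq 0}f^{-z}(\overline{R_{w_{t+z}}})$ equals the corresponding stable boundary component $\partial^s_\epsilon R_{w_t}$. Hence $I_{t,\underline{w}}=\partial^s_\epsilon R_{w_t}$, which is ii).

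For item iii) I would argue by contradiction on the interiors. Suppose $\overset{o}{I_{t_1,\underline{w}}}\cap\overset{o}{I_{t_2,\underline{w}}}\neq\emptyset$; since both are stable segments of the \emph{same} rectangle $R_{w_{t_1}}=R_{w_{t_2}}$ (the zeroth symbols of $\sigma^{t_1}(\underline{w})$ and $\sigma^{t_2}(\underline{w})$ coincide, as both codes project to $f^t(p)$ and $\pi_f$ is finite-to-one with only sector codes in the fiber, forcing the same zeroth letter when the point is not a boundary point), and since $\underline{w}$ is not an $s$-boundary code, none of these segments is a boundary component of $R_{w_t}$; two stable segments of a rectangle that meet in their interiors and are not boundary components must coincide, i.e. $I_{t_1,\underline{w}}=I_{t_2,\underline{w}}$. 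Then all the codes in $\underline{I}_{t_1,\underline{w}}$ and $\underline{I}_{t_2,\underline{w}}$ project into the same stable segment; comparing the two distinguished codes $\sigma^{t_1}(\underline{w})$ and $\sigma^{t_2}(\underline{w})$, both project to $f^t(p)$, so by Proposition~\ref{Prop: T related iff same projection} they are $\sim_T$-related, hence by Lemma~\ref{Lemm: sector codes} they are sector codes of $f^t(p)$; the only way two distinct sector codes have the same \emph{entire} non-negative part is if $f^t(p)$ lies on two rectangles sharing a stable boundary through $f^t(p)$ with $f^n(f^t(p))$ never leaving that shared stable boundary for $n\geq 0$ — and then by Lemma~\ref{Lemm: no periodic boundary points} (or directly, by $f$-invariance of $\partial^s\cR$ plus expansion applied backwards) $\underline{w}$ would be an $s$-boundary code, contradicting the hypothesis. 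So the interiors are disjoint, and since $f^t(p)$ lies in both segments (item i)), the intersection is exactly $\{f^t(p)\}$. The parenthetical remark at the end is immediate: $t_1-t_2=mP$ would give $\sigma^{t_1}(\underline{w})=\sigma^{t_2}(\underline{w})$, contrary to the assumption $\sigma^{t_1}(\underline{w})\neq\sigma^{t_2}(\underline{w})$.

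The main obstacle I anticipate is item iii), specifically nailing down cleanly that two distinct codes $\sigma^{t_1}(\underline{w})$, $\sigma^{t_2}(\underline{w})$ projecting to the same non-boundary point $f^t(p)$ cannot have identical non-negative parts unless $\underline{w}$ is an $s$-boundary code; this requires combining the characterization of fibers of $\pi_f$ as sets of sector codes (Corollary~\ref{Coro: Caracterisation fibers}) with the identification mechanism of the relation $\sim_s$ (the adjacency of horizontal sub-rectangles along a stable boundary) to see that distinct sector codes agreeing on the whole positive half must be glued along a stable boundary, which in turn is $f$-invariant and hence makes the code $s$-boundary. Everything else is routine unwinding of the quantitative estimates already present in the proof of Lemma~\ref{Lemm: pif is well defined} and the boundary-code analysis of Proposition~\ref{Prop: positive codes are boundary}.
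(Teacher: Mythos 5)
Your treatments of items i) and ii) are essentially the paper's: for i) the paper likewise uses the binary incidence matrix to trap all codes with the given positive part inside the nested horizontal sub-rectangles $H^{w_{t+n}}_{j_{t+n}}$, whose intersection is a single stable segment of $R_{w_t}$, and for ii) it simply invokes that $s$-boundary codes project to the stable boundary. The problem is item iii), where your argument has a genuine gap. Your contradiction starts from the claim that $I_{t_1,\underline{w}}$ and $I_{t_2,\underline{w}}$ are stable segments of the \emph{same} rectangle, justified by ``the zeroth letters coincide when the point is not a boundary point.'' But in the situation of iii) the point $f^t(p)$ is \emph{necessarily} a boundary point: since $\underline{w}$ is periodic and two distinct codes project to $f^t(p)$, the point is not totally interior (Proposition \ref{Prop: Carterization injectivity of pif}), and since $\underline{w}$ is not an $s$-boundary code it must be a $u$-boundary periodic point. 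Precisely in that case the two sector codes $\sigma^{t_1}(\underline{w})$ and $\sigma^{t_2}(\underline{w})$ may have different zeroth letters ($w_{t_1}\neq w_{t_2}$ is perfectly possible, the unstable boundary can separate two different rectangles), so the reduction to two segments of one rectangle is unjustified. The subsequent step is also off: you argue about ``two distinct sector codes with the same entire non-negative part,'' but $\sigma^{t_1}(\underline{w})$ and $\sigma^{t_2}(\underline{w})$ are different shifts of the same periodic code and in general have different positive parts; nothing in your contradiction hypothesis produces equality of positive parts, so the appeal to the $\sim_s$ gluing mechanism does not get off the ground.

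The paper's route for iii) is what your argument is missing: since $f^t(p)$ is a $u$-boundary but not $s$-boundary periodic point, exactly two codes project to it, namely $\sigma^{t_1}(\underline{w})$ and $\sigma^{t_2}(\underline{w})$, one for each side of the local unstable leaf; then the second bullet of Proposition \ref{Prop: Projection foliations} (the $u$-boundary case, with $\underline{v}_0=w_{t_i}$) shows that $\underline{F}^s(\sigma^{t_i}(\underline{w}))$, and hence $\underline{I}_{t_i,\underline{w}}$, projects into the stable separatrice of $f^t(p)$ that enters $R_{w_{t_i}}$ on that side. The two separatrices are distinct, and since the stable foliation has no closed leaves they are disjoint away from $f^t(p)$, so the two intervals have disjoint interiors and meet exactly at $f^t(p)$. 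If you want to keep a proof by contradiction, you must still establish this correspondence between each interval and a specific separatrice; that is the step your write-up skips, and it cannot be replaced by the ``same rectangle / same positive part'' reduction. Your handling of the final parenthetical remark about $t_1-t_2$ not being a multiple of the period is fine.
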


\begin{proof}
\textbf{Item} $i)$. According to Proposition \ref{Prop: Projection foliations}, the set $\underline{I}_{t, \underline{w}}$ is contained in the stable manifold of $\pi_f(\underline{w})$. If $\underline{w}$ is a $u$-boundary code, then $\underline{I}_{t, \underline{w}}$ is in a unique stable separatrice. In this case, $\underline{I}_{t, \underline{w}}$ is contained in this separatrice and cover a stable interval contained in $R_{w_t}$. If $p$ is an interior point, $\underline{I}_{t, \underline{w}}$ is contained in both separatrices of $f^t(p)$ and covers a whole stable interval of $R_{w_t}$. Just remains to prove that $I_{t, \underline{w}}$ is indeed a unique  stable interval.

If the positive code of $\sigma^t(\underline{v})$ coincides with $\sigma^t(\underline{v})$, then, for all $n\in \NN$, $v_{t+n}=w_{t+n}$ and $v_{t+n+1}=w_{t+n+1}$, and since the incidence matrix is binary, we can deduce that their projections are in the same horizontal sub-rectangle of $\cR$,  i.e.  $\pi_f(\sigma^{t+n}(\underline{w})),\pi_f(\sigma^{t+n}(\underline{v}))\in H^{w_{t+n}}_{j_{t+n}}$. Then:

\begin{eqnarray}
\pi_f(\sigma^t{\underline{w}}) \in \pi_f(\underline{I}_{t,\underline{w}}) \subset \bigcap_{n\in \NN} H^{w_{t+n}}_{j_{t+n}}=I \text{ and } \\
\pi_f(\sigma^t{\underline{v}})  \in \pi_f(\underline{I}_{t,\underline{w}}) \subset \in \bigcap_{n\in \NN} H^{v_{t+n}}_{j_{t+n}}=I.
\end{eqnarray}

Therefore, $I_{t,\underline{w}}$ is contained in $I$, and $I$ is a unique horizontal interval of $R_{w_t}$, this probes the first item.

\textbf{Item} $ii)$. The projection of a $s$-boundary code it is always a $s$-boundary point, therefore $I_{t,\underline{w}}$ is a stable interval of $R_{w_t}$ contained in the stable boundary of such rectangle, i.e. it is a stable boundary component of $R_{w_t}$.

\textbf{Item} $iii)$. Considering that $\underline{w}$ is not an $s$-boundary code and multiple codes project to the same point, $f^t(p)$ becomes a $u$-boundary point but not an $s$-boundary point. Consequently, only two distinct codes project to $f^t(p),$ specifically $\sigma^{t_1}(\underline{w})$ and $\sigma^{t_2}(\underline{w})$. Consequently, $\underline{F}^s(\sigma^{t_1}(\underline{w}))$ and $\underline{F}^s(\sigma^{t_2}(\underline{w}))$ are mapped to different stable separatrices of $f^t(p)$. This implies that $I_{t_1, \underline{w}}$ and $I_{t_2, \underline{w}}$ belong to separate separatrices and have non-overlapping interiors. Since $f$ has no closed leaves, their combination does not form a closed curve, only intersecting at $f^t(p)$.

\end{proof}

\begin{defi}\label{Defi: s interval of R-i induced by  t }
	We call to $I_{t,\underline{w}} := \pi_f(\underline{I}_{t,\underline{w}})$ the stable interval of $R_i$ induced by the iteration $t$ of $\underline{w}$.
\end{defi}

\begin{rema}\label{Rema; finite intervals}
If $\underline{w}$ has period $P$ then $I_{t,\underline{w}} =I_{t+P,\underline{w}}$. 
\end{rema}

\begin{defi}\label{Defi: Stable intervals inside R-i}
For every rectangle index $i \in \{1, \cdots, n\}$, the set of stable intervals of the rectangle $R_i \in \cR$, determined by the periodic code $\underline{w}$, is given by:
\begin{equation}
\cI(i,\underline{w}):=\{I_{t,\underline{w}}: w_t=i \text{ and } t\in \NN \}.
\end{equation}
	Let $\cI(\underline{w})=\cup_{i=1}^n \cI(i,\underline{w})$ set that contain to all of such intervals.
\end{defi}

By Remark \ref{Defi: s interval of R-i induced by  t }, $\cI(i,\underline{w})$ and $\cI(\underline{w})$ are finite sets.

\begin{conv*}
To avoid overwhelming notation, we use $\cup \cI(i,\underline{w})$ and $\cup \cI(\underline{w})$ to represent the union of all stable intervals contained within these finite sets.
\end{conv*}

The next lemma will be useful for establishing the $f$-invariance of the family $\cI(\underline{w})$.

\begin{lemm}\label{Lemm: image of stable intervals}
For every interval $I_{i,\underline{w}}\in \cI_(i,\underline{w})$, its image under $f$ satisfies that: 
$$
f(I_{t,\underline{w}})\subset I_{t+1,\underline{w}}.
$$
\end{lemm}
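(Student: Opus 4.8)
## Proof proposal for Lemma (image of stable intervals)

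The plan is to unwind both sides of the claimed inclusion through the projection $\pi_f$ and use the semi-conjugacy relation $f \circ \pi_f = \pi_f \circ \sigma$ established in Proposition~\ref{Prop:proyecion semiconjugacion}. First I would recall the defining description: $I_{t,\underline{w}} = \pi_f(\underline{I}_{t,\underline{w}})$, where $\underline{I}_{t,\underline{w}}$ is the set of stable leaf codes $\underline{v} \in \underline{F}^s(\sigma^t(\underline{w}))$ whose non-negative part agrees with that of $\sigma^t(\underline{w})$, i.e. $v_n = w_{t+n}$ for all $n \in \NN$. So the proof reduces to a purely symbolic statement about the shift, namely that $\sigma(\underline{I}_{t,\underline{w}}) \subset \underline{I}_{t+1,\underline{w}}$, after which applying $\pi_f$ and the semi-conjugacy gives $f(I_{t,\underline{w}}) = f(\pi_f(\underline{I}_{t,\underline{w}})) = \pi_f(\sigma(\underline{I}_{t,\underline{w}})) \subset \pi_f(\underline{I}_{t+1,\underline{w}}) = I_{t+1,\underline{w}}$.

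The symbolic step is the heart of the argument and is where I would spend care, although it is not hard. Take $\underline{v} \in \underline{I}_{t,\underline{w}}$. By definition $\underline{v} \in \underline{F}^s(\sigma^t(\underline{w}))$, which means there is $Z \in \ZZ$ with $v_z = (\sigma^t\underline{w})_z = w_{t+z}$ for all $z \geq Z$; applying $\sigma$ shifts the index window, so $\sigma(\underline{v}) \in \underline{F}^s(\sigma^{t+1}(\underline{w}))$ as well, since $\underline{F}^s$ is $\sigma$-invariant (this is essentially the content of Lemma~\ref{lemma: simT related iterations related} applied to the stable-leaf relation, but it is also immediate from Definition~\ref{Defi: s,u-leafs}). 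Moreover the non-negative part condition $v_n = w_{t+n}$ for all $n \in \NN$ becomes, for $\sigma(\underline{v})$, the statement $(\sigma\underline{v})_n = v_{n+1} = w_{t+1+n}$ for all $n \in \NN$, which is exactly the condition that $\sigma(\underline{v}) \in \underline{I}_{t+1,\underline{w}}$. Hence $\sigma(\underline{I}_{t,\underline{w}}) \subset \underline{I}_{t+1,\underline{w}}$.

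Then I would close the argument by the computation already indicated: using that $\pi_f$ is a semi-conjugacy (so $f \circ \pi_f = \pi_f \circ \sigma$) and that $\pi_f$ is a genuine map (Lemma~\ref{Lemm: pif is well defined}), we get
$$
f(I_{t,\underline{w}}) = f(\pi_f(\underline{I}_{t,\underline{w}})) = \pi_f(\sigma(\underline{I}_{t,\underline{w}})) \subset \pi_f(\underline{I}_{t+1,\underline{w}}) = I_{t+1,\underline{w}},
$$
which is the assertion. I expect the only subtlety — hardly an obstacle — to be bookkeeping the index shift carefully so that the non-negative-part condition transforms correctly under $\sigma$; one must be sure that shifting by one does not lose the agreement at index $0$, but since the agreement holds for \emph{all} $n \in \NN$ for $\underline{v}$, it holds for all $n \in \NN$ for $\sigma(\underline{v})$ starting from the term that was index $1$ for $\underline{v}$. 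No measure-theoretic or geometric input beyond Proposition~\ref{Prop:proyecion semiconjugacion} is needed, and the inclusion (rather than equality) is the natural statement because $\sigma$ may not be surjective onto $\underline{I}_{t+1,\underline{w}}$ at the symbolic level, though in fact equality also holds and could be noted if convenient.
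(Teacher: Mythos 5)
Your proof is correct and follows essentially the same route as the paper's: the paper's argument is exactly the chain $f(\pi_f(\underline{I}_{t,\underline{w}}))\subset \pi_f(\sigma(\underline{I}_{t,\underline{w}}))=\pi_f(\underline{I}_{t+1,\underline{w}})=I_{t+1,\underline{w}}$, relying on the semi-conjugacy of Proposition~\ref{Prop:proyecion semiconjugacion}. You merely make explicit the symbolic index-shift step $\sigma(\underline{I}_{t,\underline{w}})\subset\underline{I}_{t+1,\underline{w}}$, which the paper leaves implicit.
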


\begin{proof}
	The proof follows from the next computation:
	
	\begin{eqnarray*}
		f(\pi_f(\underline{I}_{t,\underline{w}}))\subset \pi_f(\sigma(\underline{I}_{t,\underline{w}}))=\\ 
		\pi_f(\underline{I}_{t+1,\underline{w}}) = I_{t+1,\underline{w}}.
	\end{eqnarray*}
	
\end{proof}

We are going to divide the rectangles of $\cR$ trough the stable intervals in the union of all $\cI(i,\underline{w})$.

\begin{lemm}\label{Lemm: rectangles in R S(p)}
Let $\tilde{R}$ be the closure of a connected component of $\overset{o}{R_i}\setminus \cI(\underline{w})$. Then $\tilde{R}$ is a horizontal sub-rectangle of $R_i$
\end{lemm}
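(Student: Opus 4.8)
My goal is to show that if $\tilde R$ is the closure of a connected component $C$ of $\overset{o}{R_i}\setminus \cup\cI(\underline{w})$, then $\tilde R$ is a horizontal sub-rectangle of $R_i$. First I would note that $\cup\cI(i,\underline{w})$ is a finite union of stable intervals of $R_i$ (by Remark~\ref{Rema; finite intervals} and the definition of $\cI(i,\underline{w})$), each of which is a horizontal leaf of $R_i$ by Lemma~\ref{Lemm: who the intervals intersect }~(i). So $\overset{o}{R_i}\setminus\cup\cI(\underline{w})$ is obtained from the open rectangle $\overset{o}{R_i}$ by removing finitely many horizontal leaves together with possibly portions of horizontal leaves that enter $R_i$ only partially. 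The main structural point is that removing horizontal leaves from an open rectangle, whose horizontal foliation is the trivial product foliation, splits it into horizontal ``slabs'', and the closure of each slab is a horizontal sub-rectangle.

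The key steps, in order, would be: (1) Work through a parametrization $\rho\colon [0,1]^2\to R_i$ of the rectangle; identify $\overset{o}{R_i}$ with $(0,1)^2$ and the horizontal leaves with the sets $\rho((0,1)\times\{t\})$. (2) For each stable interval $I\in\cI(i,\underline{w})$, observe that $I\cap\overset{o}{R_i}$, being a connected subset of a single horizontal leaf of $R_i$, is of the form $\rho(U_I\times\{t_I\})$ for some open subinterval $U_I\subseteq(0,1)$ and a level $t_I\in(0,1)$; in fact $\overset{o}{I}\subseteq\overset{o}{R_i}$ whenever $I$ is not a stable boundary component, and by Lemma~\ref{Lemm: who the intervals intersect } its endpoints lie on $\mathcal{F}^u(\textbf{Sing}(f))$ or on $\partial^u R_i$. (3) Use the definition of a horizontal sub-rectangle (Definition~\ref{Defi: vertical/horizontal sub rectangles}): since $C$ is a connected component of the complement, every horizontal leaf of $R_i$ through a point of $C$ either lies entirely in $C$ or meets $\cup\cI(\underline{w})$; show that the set of levels $t$ such that $\rho((0,1)\times\{t\})\subseteq C$ is an open interval $(t_-,t_+)$, and that $C=\rho((0,1)\times(t_-,t_+))$. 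The content here is that no stable interval of $\cI(i,\underline{w})$ can ``partially'' cut a horizontal leaf of $R_i$ in a way that disconnects it — this follows from Lemma~\ref{Lemm: who the intervals intersect }~(iii): two such intervals either are disjoint in their interiors or meet only at a single point $f^t(p)$, and a single horizontal leaf of $R_i$ is not met by any interval of $\cI(i,\underline{w})$ except at such isolated level sets; hence the traces of the removed intervals partition the level parameter into finitely many open subintervals. (4) Finally, take closures: $\tilde R=\overline{C}$ is then a rectangle with $\partial^s\tilde R$ contained in $\cup\cI(i,\underline{w})\cup\partial^s R_i$ and with each of its horizontal leaves equal to a horizontal leaf of $R_i$, which is exactly the definition of a horizontal sub-rectangle of $R_i$; moreover the parametrization $\rho$ restricts (after an affine reparametrization of the second coordinate) to a parametrization of $\tilde R$, so it genuinely is a rectangle in the sense of Definition~\ref{Defi: rectangle}.

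The main obstacle I anticipate is dealing cleanly with the fact that the intervals in $\cI(i,\underline{w})$ need not be \emph{embedded} and may only partially cross $R_i$ (their endpoints can be interior points of $R_i$ when they end on $\mathcal{F}^u(\textbf{Sing}(f))$), so that a priori a removed interval might cut a horizontal leaf into two pieces without separating the whole rectangle. I would resolve this by invoking Lemma~\ref{Lemm: who the intervals intersect }: distinct intervals of $\cI(\underline{w})$ have disjoint interiors except for isolated intersection points $f^t(p)$, and each such interval lies in a single horizontal leaf; so along almost every horizontal leaf of $R_i$ the complement of $\cup\cI(\underline{w})$ is the whole leaf, and the ``bad'' levels form a finite set. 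Consequently the partition into connected components really is by horizontal slabs. A secondary technical point is the behaviour at $\partial^u R_i$, where a removed interval's endpoint sits, but this only affects $\partial^u\tilde R\subseteq\partial^u R_i$ and does not disrupt the horizontal-leaf structure. Once these points are checked, the conclusion that $\tilde R$ is a horizontal sub-rectangle is immediate from the definitions.
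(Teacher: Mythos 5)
Your overall plan --- cut $\overset{o}{R_i}$ along the stable segments and take closures --- is the same as the paper's, but there is a genuine gap at the one point where the argument has real content: you never establish that every interval of $\cI(\underline{w})$ meeting $\overset{o}{R_i}$ is a \emph{full} horizontal segment of $R_i$, crossing from one unstable boundary to the other. The paper gets this in two steps: (a) an interval $I_{t,\underline{w}}$ with $w_t=j\neq i$ is disjoint from $\overset{o}{R_i}$, because every code projecting to a point of $\overset{o}{R_i}$ has zero term $i$ (all sectors of such a point lie in the interior of $R_i$), whereas every point of $I_{t,\underline{w}}$ is the projection of a code with zero term $j$; and (b) for $w_t=i$, Lemma \ref{Lemm: who the intervals intersect } and its proof identify $I_{t,\underline{w}}=\pi_f(\underline{I}_{t,\underline{w}})$ with the whole stable segment $\bigcap_{n\in\NN}H^{w_{t+n}}_{j_{t+n}}$ of $R_i$ through $f^t(p)$, so it is a complete horizontal leaf of $R_i$. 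With (a) and (b) in hand, the components of $\overset{o}{R_i}\setminus\cup\cI(\underline{w})$ are horizontal slabs and the lemma follows at once, which is exactly the paper's short proof.

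You instead allow the removed intervals to ``enter $R_i$ only partially'' (endpoints on $\cF^u(\textbf{Sing}(f))$ in the interior of $R_i$) and then claim this is harmless because distinct intervals of $\cI(\underline{w})$ have disjoint interiors and the ``bad levels'' form a finite set. That inference is invalid: if a single removed interval covered only a proper subinterval of a horizontal leaf, the regions above and below it would be connected to each other through the uncovered part of that leaf, so the connected component would be a slab with a slit, and its closure would not be a horizontal sub-rectangle; neither the finiteness of the bad levels nor the disjointness of the interiors of \emph{different} intervals excludes this. You also silently restrict attention to $\cI(i,\underline{w})$ without proving point (a). So either you prove (a) and (b) --- which is what the symbolic description of $I_{t,\underline{w}}$ and the binary incidence matrix give you --- or your step (3) fails as stated; the remaining steps (parametrization, taking closures, locating $\partial^s\tilde R$) are routine once full crossing is known.
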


\begin{proof}

If $i\neq j$, then $\overset{o}{R_i}\cap \cup \cI(j,\underline{x})=\emptyset$, because for any point $x\in R_i$ and every code $\underline{v}\in \pi^{-1}_f(x)$, $v_0=k$. In contrast, if $y\in \cup\cI(j,\underline{w})$, at least one code $\underline{w}'=\sigma^t(\underline{w})$ in $\pi^{-1}(x)$ has its zero term equal to $j$, i.e., $w'_0=j$. Therefore, $x\neq y$.

We only need to consider the case when $I_{t,\underline{w}}\subset R_i$. In this situation, $I_{t,\underline{w}}$ is a horizontal interval of $R_i$. Consequently, each connected component of $\overset{o}{R_i}\setminus \cup \cI(i,\underline{w})$ is a horizontal sub-rectangle $H$ of $R_i$ minus its stable boundaries, and its closure coincides with $H$. This argument proves our lemma.
\end{proof}

These will be the rectangles in the Markov partition that we are looking for.

\begin{defi}\label{Lemm: cR-S(p) is markov part}
Let 

\begin{equation}
\cR_{S(\underline{w})}=\{\tilde{R}_r\}_{r=1}^N
\end{equation}

be the family of rectangles that were described in Lemma \ref{Lemm: rectangles in R S(p)}.
\end{defi}

\begin{lemm}\label{Lemm: Markov partition }
The family of rectangles $\cR_{S(\underline{w})}$  is a Markov partition of $f: S\rightarrow S$.
\end{lemm}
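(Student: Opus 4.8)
The plan is to apply the characterization of Markov partitions given in Proposition~\ref{Prop: Markov criterion boundary}: since $\cR_{S(\underline{w})}=\{\tilde R_r\}_{r=1}^N$ is a family of rectangles that covers $S$ (it is obtained by subdividing the rectangles of $\cR$) and whose interiors are pairwise disjoint (the interiors of the $\tilde R_r$ are connected components of $\overset{o}{R_i}\setminus\cup\cI(\underline{w})$, which are disjoint), it suffices to verify that $\partial^s\cR_{S(\underline{w})}$ is $f$-invariant and $\partial^u\cR_{S(\underline{w})}$ is $f^{-1}$-invariant. The covering and disjointness properties follow immediately from Lemma~\ref{Lemm: rectangles in R S(p)}, so I would state them briefly and move to the two invariance conditions.

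For the unstable boundary: by construction the new rectangles $\tilde R_r$ are horizontal sub-rectangles of the rectangles of $\cR$ (Lemma~\ref{Lemm: rectangles in R S(p)}), hence each $\partial^u\tilde R_r$ is contained in $\partial^u R_i$ for the corresponding $i$. Therefore $\partial^u\cR_{S(\underline{w})}\subset\partial^u\cR$. Since $\partial^u\cR$ is $f^{-1}$-invariant (as $\cR$ is a Markov partition, via Proposition~\ref{Prop: Markov criterion boundary}) and cutting by horizontal intervals does not add any new vertical boundary, I would argue that in fact $\partial^u\cR_{S(\underline{w})}=\partial^u\cR$: every unstable segment of $R_i$ that is a boundary of some $R_i$ remains a boundary of one of the pieces $\tilde R_r$. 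Thus $f^{-1}(\partial^u\cR_{S(\underline{w})})=f^{-1}(\partial^u\cR)\subset\partial^u\cR=\partial^u\cR_{S(\underline{w})}$, which gives the $f^{-1}$-invariance.

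For the stable boundary, the key point is that $\partial^s\cR_{S(\underline{w})}=\partial^s\cR\,\cup\,(\cup\cI(\underline{w}))$. The inclusion ``$\supseteq$'' is clear from the definition of the pieces $\tilde R_r$ as closures of components of $\overset{o}{R_i}\setminus\cup\cI(\underline{w})$, and ``$\subseteq$'' holds because any stable boundary component of a piece $\tilde R_r$ is either part of an original stable boundary of $\cR$ or part of one of the cutting intervals $I_{t,\underline{w}}$. Now $\partial^s\cR$ is $f$-invariant since $\cR$ is a Markov partition. For the cutting intervals, Lemma~\ref{Lemm: image of stable intervals} gives $f(I_{t,\underline{w}})\subset I_{t+1,\underline{w}}$, and $I_{t+1,\underline{w}}\in\cI(w_{t+1},\underline{w})\subset\cI(\underline{w})$, so $f(\cup\cI(\underline{w}))\subset\cup\cI(\underline{w})$. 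Combining these two facts, $f(\partial^s\cR_{S(\underline{w})})=f(\partial^s\cR)\cup f(\cup\cI(\underline{w}))\subset\partial^s\cR\cup\cup\cI(\underline{w})=\partial^s\cR_{S(\underline{w})}$, establishing the $f$-invariance.

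The main obstacle I anticipate is the bookkeeping needed to justify the set equality $\partial^s\cR_{S(\underline{w})}=\partial^s\cR\cup(\cup\cI(\underline{w}))$ cleanly — in particular making sure that when a cutting interval $I_{t,\underline{w}}$ only touches, rather than crosses, a rectangle (the situation in Lemma~\ref{Lemm: who the intervals intersect}, item iii), the resulting pieces are still genuine rectangles and their stable boundaries are accounted for. I would handle this by invoking Lemma~\ref{Lemm: rectangles in R S(p)} directly, which already tells us each component of $\overset{o}{R_i}\setminus\cup\cI(\underline{w})$ has closure a horizontal sub-rectangle of $R_i$, so the stable boundary of each piece is automatically a union of (sub-arcs of) original stable boundaries and cutting intervals. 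Once the two invariance conditions are in hand, Proposition~\ref{Prop: Markov criterion boundary} finishes the proof.
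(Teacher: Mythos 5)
Your proposal is correct and follows essentially the same route as the paper: covering and disjoint interiors come from the fact that the pieces are horizontal sub-rectangles of the $R_i$, the unstable boundary is unchanged and hence $f^{-1}$-invariant, and the stable boundary splits into old boundary components (handled by the Markov property of $\cR$) plus the cutting intervals $I_{t,\underline{w}}$ (handled by Lemma~\ref{Lemm: image of stable intervals}), after which the boundary-invariance criterion of Proposition~\ref{Prop: Markov criterion boundary} concludes. No gaps.
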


\begin{proof}
Every rectangle in $\cR_{S(\underline{w})}$ is a horizontal sub-rectangle of a rectangle $R_i$ in $\cR$ and they don't overlap. Therefore, any pair of rectangles in $\cR_{S(\underline{w})}$ has disjoint interiors, and their union is the whole surface $S$.

The unstable boundary of $\cR_{S(\underline{w})}$ is identical to the unstable boundary of $\cR$ and is thus $f^{-1}$-invariant. To complete our analysis, we need to confirm that the stable boundary is also $f$-invariant.

The stable boundary of $\cR_{S(\underline{w})}$ consists of two kinds of elements: stable intervals from $\cR$ that are part of the stable boundary of $\cR$, and stable segments of the form $I_{t,\underline{w}}$. As $\cR$ itself is a Markov partition of $f$, the first type of intervals have an image within the stable boundary of $\cR$, which is included in $\partial^s \cR_{S(\underline{w})}$.

Lemma \ref{Lemm: image of stable intervals} implies that the image of any interval $I_{t,\underline{w}}$ under $f$ is contained within $I_{t+1,\underline{w}}$, which is contained in the stable boundary of $\cR_{S(\underline{w})}$. This establishes that the stable boundary of $\cR_{S(\underline{w})}$ is $f$-invariant. Then, $\cR_{S(\underline{w})}$ is a Markov partition for $f$.

\end{proof}

\subsection{The geometrization of $\cR_{S(\underline{w})}$} Establishing a geometrization for the Markov partition $\cR_{S(\underline{w})}$, requires to assign labels and orientations to the rectangles in $\cR_{S(\underline{w})}$.

\begin{defi}\label{Defi: Orientation in R-S(w)}
If the rectangle $\tilde{R} \in \cR_{S(\underline{w})}$ is contained within the rectangle $R_i \in \cR$, we assign to the rectangle $\tilde{R}$ the vertical and horizontal orientations that are consistent with the respective vertical and horizontal orientations of the leaves in $R_i$.
\end{defi}
		
\begin{defi}\label{Defi: Label of R-S(w)}
We label the rectangles in $\mathcal{R}_{S(\underline{w})}$ using the \emph{lexicographic order}. We start by numbering the horizontal sub-rectangles of $R_i$ that belong to $\mathcal{R}_{S(\underline{w})}$ from the bottom to the top with respect the vertical orientation of $R_i$ as: $\{\tilde{R}_{(i,s)}\}_{s=1}^{N_i}$ , where $N_i$ is a positive integer. In this manner, the rectangle $\tilde{R}_{(i,s)} \in \cR_{S(\underline{w})}$ is contained within $R_i$ at vertical position $s$. Then we fix the label: $\tilde{R}_{(i,s)}:=\tilde{R}_{r}$, where the number $r$ is determined by the formula:
$$
r=\tilde{r}(i,s):=\sum_{i'<i}^{i} N_{i'} + s.
$$
\end{defi}

\begin{defi}\label{Defi: Geometric s-boundary refinament}
	The  Markov partition $\cR_{S(\underline{w})}$ endowed with the geometrization given by Definitions \ref{Defi: Orientation in R-S(w)} and \ref{Defi: Label of R-S(w)} is called the $s$-\emph{boundary refinement of $\cR$ with respect to the code $\underline{w}$} and its geometric type is denoted by:

\begin{equation}\label{Equa: the s boundary type}
T_{S(\underline{w})}=\{N,\{H_r,V_r\}_{r=1}^N,\Phi_{S(\underline{w})}:=(\rho_{S(\underline{w})},\epsilon_{S(\underline{w})})\}.
\end{equation}
And we recall the notation to such the rectangles in the partition:
\begin{equation}
\cR_{S(\underline{w})}:=\{\tilde{R}_r\}_{r=1}^N
\end{equation}

\end{defi}

\begin{rema}\label{Rema: Why not s boundary code?}
If the periodic code $\underline{w}$ is an $s$-boundary code, Item $ii)$ of Lemma \ref{Lemm: who the intervals intersect } tells that  every interval in the set $\cI_(i,\underline{w})$ is contained within the stable boundary of $R_i$. Consequently, the set of rectangles in $\cR_{S(\underline{w})}$ coincides with $\cR$. The geometrization of $\cR_{S(\underline{w})}$ is the same as that of $\cR$, and the geometric type $T_{S(\underline{w})}$ is identical to $T$.

 This observation justifies considering $\underline{w}$ as a periodic code that is not $s$-boundary because, in this case, cutting the Markov partition $\cR$ along $\cI_(\underline{w})$ has no effect on $\cR$ and will not change the geometric type $T$. Therefore from now on we are should assume that $\underline{w}$ is not a $s$-boundary code.
\end{rema}

\begin{figure}[h]
	\centering
	\includegraphics[width=0.7\textwidth]{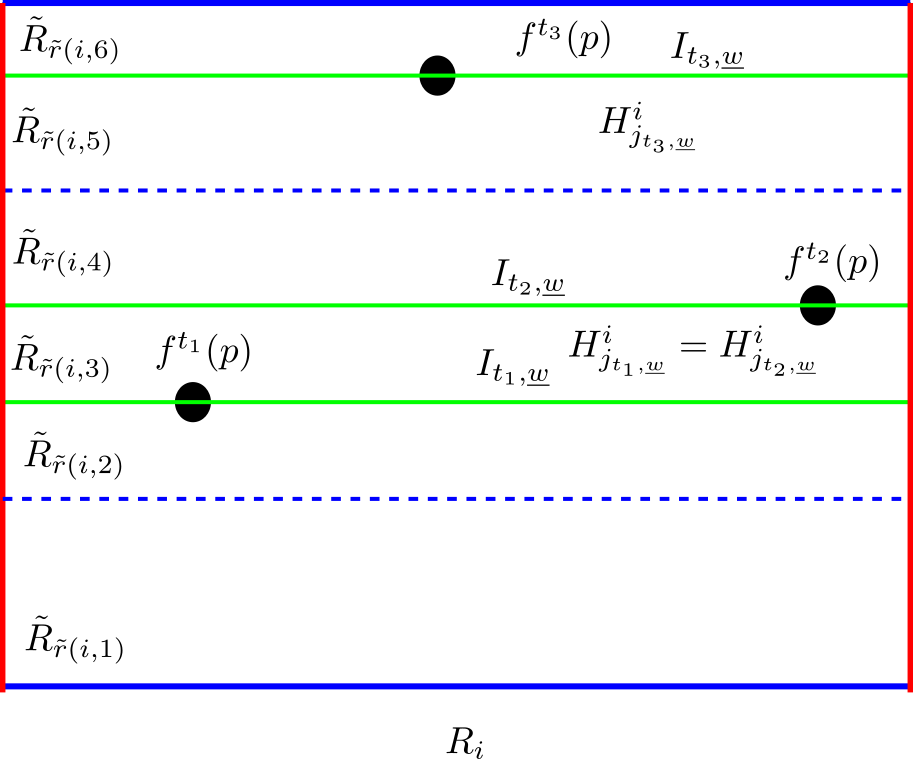}
	\caption{The set $\cI(i,\underline{w})$ and the rectangles $H^i_{ j_{t,\underline{w}} }$ }
	\label{Fig: Inva boundary}
\end{figure}

We will begin by determining the parameters in the geometric type $T_{S(\underline{w})}$ in the following order:

\begin{enumerate}
\item Determine the number $N$ of rectangles in $\cR_{S(\underline{w})}$.
\item For each rectangle $\tilde{R}_r$, find the pair $(i,s)$ such that $r=\tilde{r}(i,s)$.
\item Then, determine the number of vertical sub-rectangles in $\tilde{R}_r$.
\item Next, we identify the such  intervals in $\cI(\underline{w})$ that are the stable boundary of $\tilde{R}_r$.
\item Afterward, calculate the number of horizontal sub-rectangles in $\tilde{R}_r$ and determine the permutation $\rho_{S(\underline{w})}$.
\item Finally, establish the function $\epsilon_{S(\underline{w})}$.
\end{enumerate}

\subsection{The number of rectangles in $\cR_{S(\underline{w})}$}

The number of rectangles in $\cR_{S(\underline{w})}$ is equal to the sum over all integers $i\in\{1,\cdots,n\}$ of the total number of sub-rectangles in $R_i$ obtained by cutting it along $\cI(i,\underline{w})$.  In fact, since the number  of horizontal intervals in $\cI(\underline{w})$ is equal to the period of $\underline{w}$:

$$
N=n + \text{Per}(\underline{w}).
$$

Anyway for our future arguments we are going to develop another approach.

\begin{defi}\label{Defi: cO(i) and O(i)}
Let $\underline{w}\in \Sigma_A$ be a non-$s$-boundary periodic code with a period $P$. Then, for every $i \in \{1, \ldots, P-1\}$, we take:
\begin{equation}\label{Equa: cO(i,w)}
\cO(i,\underline{w})=\{(t,\underline{w}): t \in \{0, \ldots, P-1\} \text{ and } w_t=i \}.
\end{equation}
The cardinality of this set is denoted by $O(i,\underline{w})$.

\end{defi}

The next lemma is nothing  but a direct observation.

\begin{lemm}\label{Lemm: Number  of rectangles in a rec of the s refinaent}
For all $i \in \{1, \ldots, n\}$, the number of rectangles in the Markov partition $\cR_{S(\underline{w})}$ that are contained in $R_i$ is equal to $O(i,\underline{w})  + 1$.
\end{lemm}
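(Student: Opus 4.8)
\textbf{Proof plan for Lemma \ref{Lemm: Number of rectangles in a rec of the s refinaent}.}

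The plan is to count the rectangles of $\cR_{S(\underline{w})}$ contained in $R_i$ by counting the stable intervals of $\cI(i,\underline{w})$ that lie in the \emph{interior} of $R_i$, since cutting a rectangle along $m$ disjoint interior horizontal stable intervals produces exactly $m+1$ horizontal sub-rectangles. By Lemma \ref{Lemm: rectangles in R S(p)}, each connected component of $\overset{o}{R_i}\setminus\cup\cI(\underline{w})$ closes up to a horizontal sub-rectangle of $R_i$, and these are precisely the rectangles of $\cR_{S(\underline{w})}$ contained in $R_i$; so the count reduces to: how many distinct stable intervals $I_{t,\underline{w}}$ with $w_t=i$ meet the interior of $R_i$.

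First I would observe that the intervals $I_{t,\underline{w}}$ with $w_t=i$ that are \emph{not} in the interior of $R_i$ are exactly those contained in $\partial^s R_i$. Here the standing assumption that $\underline{w}$ is not an $s$-boundary code is essential: by Lemma \ref{Lemm: who the intervals intersect } item $ii)$ (and the discussion around Remark \ref{Rema: Why not s boundary code?}), if $\underline{w}$ were $s$-boundary all these intervals would sit on $\partial^s\cR$, whereas for a non-$s$-boundary code $\underline{w}$, no $\sigma^t(\underline{w})$ is an $s$-boundary code, hence $\pi_f(\sigma^t(\underline{w}))=f^t(p)$ is never an $s$-boundary point and $I_{t,\underline{w}}$ always meets $\overset{o}{R_i}$. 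Thus every interval in $\cI(i,\underline{w})$ is an interior horizontal stable interval of $R_i$, and it remains to count them, i.e.\ to count the distinct values of $I_{t,\underline{w}}$ as $t$ ranges over $\{t\in\NN: w_t=i\}$.

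Next I would use Remark \ref{Rema; finite intervals}, namely $I_{t,\underline{w}}=I_{t+P,\underline{w}}$ where $P=\text{Per}(\underline{w})$, to reduce the range of $t$ to one period, $t\in\{0,\dots,P-1\}$; this is exactly the index set $\cO(i,\underline{w})$ of Definition \ref{Defi: cO(i) and O(i)}, of cardinality $O(i,\underline{w})$. The one remaining point — and the main (minor) obstacle — is to check that the map $t\mapsto I_{t,\underline{w}}$ is injective on $\{0,\dots,P-1\}$, i.e.\ that distinct iterates within a single period give distinct stable intervals. If $I_{t_1,\underline{w}}=I_{t_2,\underline{w}}$ with $0\le t_1<t_2\le P-1$ and $w_{t_1}=w_{t_2}=i$, then $f^{t_1}(p)$ and $f^{t_2}(p)$ lie in the same stable interval of $R_i$; applying $f^{-t_1}$ and using that $p$ is periodic of period $P$ with $t_2-t_1<P$, we get that $p$ and $f^{t_2-t_1}(p)$ are two distinct points of the orbit of $p$ on one stable leaf, so the $\mu^s$-lengths of the separating stable segment get multiplied by $\lambda^{-k}$ under forward iteration and the orbit would accumulate, contradicting that the orbit of $p$ is finite (the same uniform-contraction argument used in Lemma \ref{Lemm: no periodic boundary points} and Lemma \ref{Lemm: Boundary of Markov partition is periodic}); alternatively one invokes that the interval lengths in the finite set $\cI(i,\underline{w})$ are bounded above while forward iteration of a nontrivial stable segment joining two orbit points grows without bound. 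Hence the $O(i,\underline{w})$ interior stable intervals are pairwise distinct, cutting $R_i$ along them yields $O(i,\underline{w})+1$ horizontal sub-rectangles, and these are exactly the rectangles of $\cR_{S(\underline{w})}$ inside $R_i$, proving the lemma.
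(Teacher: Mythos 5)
The paper itself offers no argument here (it labels the lemma "a direct observation"), so your write-up is the verification the paper skips, and your overall route — each $I_{t,\underline{w}}$ is a spanning stable interval meeting $\overset{o}{R_i}$ because $\underline{w}$ is not $s$-boundary, the intervals repeat with the period, and cutting along $O(i,\underline{w})$ distinct interior cuts yields $O(i,\underline{w})+1$ pieces — is exactly the intended observation. The one step I would not accept as written is your injectivity argument: you say "$p$ is periodic of period $P$" and conclude that $p$ and $f^{t_2-t_1}(p)$ are \emph{distinct} points of the orbit. But $P$ is the period of the \emph{code} $\underline{w}$, and the period $Q$ of the \emph{point} $p=\pi_f(\underline{w})$ only divides $P$; it can be a proper divisor when $\pi_f$ is not injective over the orbit (e.g.\ when $p$ is a $u$-boundary point whose sectors are permuted by $f^Q$, cf.\ the proof of Lemma \ref{Lemm: Periodic to peridic}). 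In that case one can have $t_1\neq t_2$ in $\{0,\dots,P-1\}$ with $\sigma^{t_1}(\underline{w})\neq\sigma^{t_2}(\underline{w})$ but $f^{t_1}(p)=f^{t_2}(p)$, and your contraction argument says nothing. The patch is already in the paper: in that situation item $iii)$ of Lemma \ref{Lemm: who the intervals intersect } states that $I_{t_1,\underline{w}}$ and $I_{t_2,\underline{w}}$ have disjoint interiors, so they are still two distinct interior cuts and the count $O(i,\underline{w})+1$ is unaffected. So the injectivity check should be split into two cases: distinct projections (your contraction argument, which is fine — two distinct points of a finite orbit cannot lie on one stable leaf, since the stable arc joining them would be $f^{P}$-invariant yet contracted by $\lambda^{-P}$) and coincident projections (quote item $iii)$).

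Two smaller slips in the same passage: stable arcs are measured by $\mu^u$, not $\mu^s$ (the paper's convention, as in Lemma \ref{Lemm: no periodic boundary points}), and it is \emph{backward} iteration that expands a stable segment — forward iteration contracts it — so your alternative phrasing "forward iteration \dots grows without bound" should be reversed. With these repairs the proof is complete and consistent with the paper's framework.
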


We have an immediate corollary:

\begin{coro}\label{Coro: Number N in the type T-S(w)}
The number $N$ in the geometric type $T_{S(\underline{w})}$ that is equal to the number of rectangles in the $s$-boundary refinement, it is determined by the following formula:
\begin{equation}\label{Equa: Number N }
N=\sum_{i=1}^{n} O(i,\underline{w})+1.
\end{equation}
\end{coro}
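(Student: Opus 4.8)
\textbf{Plan of proof for Corollary~\ref{Coro: Number N in the type T-S(w)}.}

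The statement is an immediate bookkeeping consequence of Lemma~\ref{Lemm: Number of rectangles in a rec of the s refinaent}, so the plan is essentially to assemble the counts rectangle-by-rectangle. First I would recall that the family $\cR_{S(\underline{w})}$ was obtained by cutting each original rectangle $R_i\in\cR$ along the finite set of stable intervals $\cI(i,\underline{w})$, and that by Lemma~\ref{Lemm: rectangles in R S(p)} every connected component of $\overset{o}{R_i}\setminus \cup\cI(\underline{w})$ has closure a horizontal sub-rectangle of $R_i$. Moreover, since for $i\neq j$ we have $\overset{o}{R_i}\cap \cup\cI(j,\underline{w})=\emptyset$ (as shown in the proof of Lemma~\ref{Lemm: rectangles in R S(p)}), the rectangles of $\cR_{S(\underline{w})}$ are partitioned according to which $R_i$ contains them; hence $N=\sum_{i=1}^n N_i$, where $N_i$ is the number of rectangles of $\cR_{S(\underline{w})}$ contained in $R_i$.

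Next I would invoke Lemma~\ref{Lemm: Number of rectangles in a rec of the s refinaent} directly: it states that $N_i=O(i,\underline{w})+1$ for each $i\in\{1,\dots,n\}$. Substituting this into the decomposition $N=\sum_{i=1}^n N_i$ yields exactly
$$
N=\sum_{i=1}^{n} \bigl(O(i,\underline{w})+1\bigr),
$$
which is the claimed formula~\eqref{Equa: Number N }. (As a sanity check, one may note that $\sum_{i=1}^n O(i,\underline{w})$ counts the pairs $(t,\underline{w})$ with $t\in\{0,\dots,P-1\}$ once each, so it equals $P=\text{Per}(\underline{w})$, recovering $N=n+\text{Per}(\underline{w})$ consistent with the remark preceding the definition of $\cO(i,\underline{w})$; this is optional and need not be included.)

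There is essentially no obstacle here: the only subtlety worth spelling out in the write-up is why distinct iterations $t_1\neq t_2$ in $\{0,\dots,P-1\}$ with $w_{t_1}=w_{t_2}=i$ really do produce distinct cuts inside $R_i$, so that the cardinality $O(i,\underline{w})$ correctly counts the number of cutting intervals. This is handled by Item~$iii)$ of Lemma~\ref{Lemm: who the intervals intersect }, since $\underline{w}$ is assumed not to be an $s$-boundary code: two such intervals $I_{t_1,\underline{w}}$ and $I_{t_2,\underline{w}}$ either coincide only when $t_1-t_2$ is a multiple of $P$ (excluded by restricting $t$ to $\{0,\dots,P-1\}$) or else meet only at a single $u$-boundary point, hence cut $R_i$ into genuinely different pieces. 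Once this is recorded, the corollary follows by summation.
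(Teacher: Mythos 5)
Your argument is correct and is essentially the paper's own route: the corollary is obtained by summing the count of Lemma~\ref{Lemm: Number  of rectangles in a rec of the s refinaent} over the rectangles $R_i$, exactly as you do, with the distinctness of the cutting intervals already supplied by Lemma~\ref{Lemm: who the intervals intersect }. Your reading of the formula with the $+1$ inside the summand, i.e. $N=\sum_{i=1}^n\bigl(O(i,\underline{w})+1\bigr)=n+\text{Per}(\underline{w})$, is the intended interpretation, consistent with the remark preceding Definition~\ref{Defi: cO(i) and O(i)}.
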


In the following sub-sections, we are going to consider a rectangle $R_r$ and we are going to assume that we know the terms $(i, s)$ such that $r=\tilde{r}(i,s)$. The next lemma indicates how to determine such parameters and justifies our future hypotheses regarding them.

\begin{lemm}\label{lemm: determinating r=(i,s)}
Let $\tilde{R}_r$ a rectangle in $\cR_{S(\underline{w})}$ then we can determine two unique numbers $(i_r,s_r)$ such that $r=\tilde{r}(i_r,s_r)$ in following manner:
\begin{itemize}
	\item[i)] There exist unique $i_r\in \{1,\cdots,n\}$ such that:
	$$
	\sum_{i<i_r} O(i,\underline{w}) +1 < r \leq \sum_{i\leq i_r} O(i,\underline{w}) +1 
	$$
	\item[ii)] Take $s_r:= r - \sum_{i<i_r} O(i,\underline{w}) +1$.
\end{itemize}
\end{lemm}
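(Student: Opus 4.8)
The claim in Lemma \ref{lemm: determinating r=(i,s)} is essentially a bookkeeping statement about the lexicographic labelling introduced in Definition \ref{Defi: Label of R-S(w)}, so the plan is to unwind that definition and verify that the stated formulas invert the labelling map $\tilde r$.

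First I would recall that, by Lemma \ref{Lemm: Number  of rectangles in a rec of the s refinaent}, the rectangle $R_i \in \cR$ contains exactly $O(i,\underline{w})+1$ rectangles of $\cR_{S(\underline{w})}$, namely $\tilde R_{(i,1)},\dots,\tilde R_{(i,O(i,\underline{w})+1)}$, ordered from bottom to top. Hence the labelling function of Definition \ref{Defi: Label of R-S(w)} reads
$$
\tilde r(i,s)=\sum_{i'<i}\bigl(O(i',\underline{w})+1\bigr)+s,\qquad 1\le s\le O(i,\underline{w})+1 .
$$
(Here I am reconciling the notation $N_{i'}=O(i',\underline{w})+1$ with Definition \ref{Defi: Label of R-S(w)}; this identification is exactly Lemma \ref{Lemm: Number  of rectangles in a rec of the s refinaent}.) The key observation is that, as $i$ ranges over $\{1,\dots,n\}$, the images of $\tilde r(i,\cdot)$ partition $\{1,\dots,N\}$ into consecutive blocks $B_i$ of length $O(i,\underline{w})+1$, with $B_i=\bigl(\sum_{i'<i}(O(i',\underline{w})+1),\ \sum_{i'\le i}(O(i',\underline{w})+1)\bigr]\cap\NN$. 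Since $\sum_{i'<i}(O(i',\underline{w})+1)=\sum_{i'<i}O(i',\underline{w})+ (i-1)$, a harmless reindexing shows the stated inequality
$$
\sum_{i<i_r}O(i,\underline{w})+1 < r \le \sum_{i\le i_r}O(i,\underline{w})+1
$$
picks out precisely the block $B_{i_r}$ to which $r$ belongs; the blocks being disjoint and covering $\{1,\dots,N\}$ (whose total cardinality is $N=\sum_i(O(i,\underline{w})+1)$ by Corollary \ref{Coro: Number N in the type T-S(w)}), such an $i_r$ exists and is unique. Then $s_r:=r-\sum_{i<i_r}O(i,\underline{w})+1$ is exactly the offset of $r$ inside the block $B_{i_r}$, hence lies in $\{1,\dots,O(i_r,\underline{w})+1\}$ and satisfies $\tilde r(i_r,s_r)=r$; uniqueness of $s_r$ is immediate once $i_r$ is fixed because $\tilde r(i_r,\cdot)$ is strictly increasing.

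I would therefore organize the proof as: (1) state the explicit form of $\tilde r$ coming from Definition \ref{Defi: Label of R-S(w)} together with the identification $N_i=O(i,\underline{w})+1$; (2) observe that the $\tilde r(i,\cdot)$'s produce disjoint consecutive integer blocks whose union is $\{1,\dots,N\}$, using Corollary \ref{Coro: Number N in the type T-S(w)} for the count; (3) conclude existence and uniqueness of $i_r$ from ``$r$ lies in exactly one block''; (4) read off $s_r$ as the in-block offset. There is no real obstacle here — the only mild subtlety is keeping the off-by-one bookkeeping consistent (the ``$+1$'' that appears once rather than once per summand in the displayed inequalities), which is just a matter of writing $\sum_{i'<i}(O(i',\underline{w})+1)=\bigl(\sum_{i'<i}O(i',\underline{w})\bigr)+(i-1)$ and noting the constant shift is absorbed. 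Accordingly this will be a short lemma whose proof is a two-paragraph verification rather than anything requiring a new idea.
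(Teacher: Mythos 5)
Your proposal is correct and follows essentially the same route as the paper: both simply unwind the lexicographic labelling $\tilde r(i,s)=\sum_{i'<i}N_{i'}+s$ with $N_{i'}=O(i',\underline{w})+1$ (the paper phrases it as taking the maximal $i_0$ satisfying the left inequality, you phrase it as $r$ lying in exactly one of the consecutive blocks partitioning $\{1,\dots,N\}$, which is the same bookkeeping). Your remark that the ``$+1$'' in the displayed inequalities must be read inside each summand, so that the blocks have length $O(i,\underline{w})+1$ and cover all of $\{1,\dots,N\}$, is the correct reading of the statement and consistent with Definition \ref{Defi: Label of R-S(w)}.
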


\begin{proof}
The only point that we need to argue is the first item, as the second one is totally determined by such a number $i_r$. But since $r \in \{1, \cdots, \sum_{i=1}^n O(i, \underline{w}) + 1\}$, in fact, there exists a maximum $i_0\in\{1,\cdots, n\}$ such that $\sum_{i<i_0} O(i, \underline{w}) + 1 < r$. Clearly $i_r = i + 1$ satisfies the second equality because if this is not the case:  $\sum_{i<i_0+1} O(i, \underline{w}) + 1 < r$ contradicting that $i_0$ is the maximum with this property.
\end{proof}

\subsection{The vertical sub-rectangles in $T_{S(\underline{w})}$.}

Now, we are going to determine the number of vertical sub-rectangles in every rectangle $\tilde{R}_r\in \cR{S(\underline{w})}$, but first, we need to characterize them.

\begin{defi}\label{Defi: Vertical sub R-S(w)}
Let $\tilde{R}_r$ be a rectangle in the geometric Markov partition $\cR_{S(\underline{w})}$, and suppose that $r = \tilde{r}(i, s)$ for some $s$. Let $V^i_l$ be a vertical sub-rectangle of the Markov partition $(f, \cR)$ contained in $R_i$, define:
$$
\tilde{V}^{r}_l:= \overline{\overset{o}{\tilde{R}_r} \cap \overset{o}{V^i_l}},
$$
as the closure of the intersection between the interior of  $\tilde{R}_r$ and the interior of  $V^i_l$.

\end{defi}

\begin{lemm}\label{Lemm: Caracterization of vertica sub of R-S(w)}
The set $\tilde{V}^{r}_l$ is a vertical sub-rectangle of $\tilde{R}_r$
\end{lemm}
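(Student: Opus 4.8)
The plan is to verify that $\tilde V^r_l = \overline{\overset{o}{\tilde R_r}\cap\overset{o}{V^i_l}}$ satisfies the definition of a vertical sub-rectangle (Definition \ref{Defi: vertical/horizontal sub rectangles}): namely that it is itself a rectangle, and that for every $x$ in its interior the vertical leaf of $\tilde V^r_l$ through $x$ coincides with the vertical leaf of $\tilde R_r$ through $x$. First I would recall that, by construction (Lemma \ref{Lemm: rectangles in R S(p)}), $\tilde R_r$ is a \emph{horizontal} sub-rectangle of $R_i$, so that for every $x\in\overset{o}{\tilde R_r}$ the horizontal leaf of $\tilde R_r$ through $x$ equals the horizontal leaf of $R_i$ through $x$, and the vertical leaf of $\tilde R_r$ through $x$ is the connected component containing $x$ of the intersection of the vertical leaf of $R_i$ through $x$ with $\tilde R_r$. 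Similarly $V^i_l$ is a vertical sub-rectangle of $R_i$, so the vertical leaf of $V^i_l$ through a point of its interior is the full vertical leaf of $R_i$ through that point.

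The key geometric step is to analyze the intersection $\overset{o}{\tilde R_r}\cap\overset{o}{V^i_l}$ inside the rectangle $R_i$: both $\tilde R_r$ and $V^i_l$ are sub-rectangles of the \emph{same} rectangle $R_i$, one horizontal and one vertical, so inside $R_i$ their interiors meet in a set whose closure is a rectangle $Q$ whose horizontal boundary lies in $\partial^s\tilde R_r$ (hence consists of two intervals of $\cup\cI(\underline w)\cup\partial^s\cR$) and whose vertical boundary lies in $\partial^u V^i_l\subset\partial^u R_i = \partial^u\tilde R_r$. This is the same kind of argument used in the proof of Proposition \ref{Prop:compatibles implies Markov partition} and Lemma \ref{Lemm: Algoritmic refinament}: a horizontal strip meets a vertical strip of a common ambient rectangle in (the closure of) a single product rectangle, because a stable leaf of $R_i$ and an unstable leaf of $R_i$ meet in at most one point in the interior. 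One must check $\overset{o}{\tilde R_r}\cap\overset{o}{V^i_l}$ is connected; this holds because $\tilde R_r$ is bounded by two horizontal leaves of $R_i$ and $V^i_l$ by two vertical leaves of $R_i$, and the trivial bi-foliated product structure on $\overset{o}{R_i}$ forces the intersection of the corresponding open sub-rectangles to be a single open sub-rectangle. Hence $\tilde V^r_l = Q$ is a rectangle.

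Next I would verify the leaf-coincidence condition. Take $x\in\overset{o}{\tilde V^r_l}=\overset{o}{\tilde R_r}\cap\overset{o}{V^i_l}$. The vertical leaf of $\tilde V^r_l$ through $x$ is, by the previous paragraph, the connected component through $x$ of (vertical leaf of $R_i$ through $x$) $\cap\,\tilde V^r_l$. But $\tilde V^r_l\subset\tilde R_r$ and the vertical leaf of $\tilde R_r$ through $x$ is the connected component through $x$ of (vertical leaf of $R_i$ through $x$) $\cap\,\tilde R_r$; since $x\in\overset{o}{V^i_l}$ and the full vertical leaf of $R_i$ through $x$ is contained in $\overline{V^i_l}$ (because $V^i_l$ is a vertical sub-rectangle of $R_i$), the extra constraint "$\cap\,\overline{V^i_l}$" does not further cut that vertical leaf of $\tilde R_r$ in a neighbourhood of $x$, so the two vertical leaves agree. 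Therefore $\tilde V^r_l$ is a vertical sub-rectangle of $\tilde R_r$, which is the claim.

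The main obstacle I anticipate is purely the bookkeeping of connectedness and of boundary behaviour: making sure that $\overset{o}{\tilde R_r}\cap\overset{o}{V^i_l}$ is nonempty exactly when $V^i_l$ genuinely crosses the strip $\tilde R_r$ and is then a \emph{single} connected open rectangle (no pathology from the rectangles being only immersed, not embedded). This is handled by working inside $R_i$, whose interior carries the trivial product foliation, so the self-intersections of $R_i$ on its boundary never interfere with the intersection of the two \emph{open} sub-rectangles; this is exactly the principle already invoked repeatedly in the construction of horizontal refinements and compatible Markov partitions, so no new idea is required beyond citing that product structure.
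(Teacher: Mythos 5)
Your proof is correct and follows essentially the same route as the paper: the paper's own argument is just the observation that the interiors of a horizontal and a vertical sub-rectangle of the same $R_i$ meet in a single connected component (by the product structure of $\overset{o}{R_i}$) and that $V^i_l$ crosses $\tilde R_r$ from bottom to top. You have merely spelled out the connectedness and leaf-coincidence details that the paper leaves implicit, so no further changes are needed.
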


\begin{proof}
As the intersection of the interiors of a horizontal and a vertical sub-rectangle of $R_i$ has a unique connected component, in particular, $\overset{o}{V^i_l} \cap \overset{o}{\tilde{R}_r}$ has a unique connected component. Clearly, $V^i_l$ crosses $\tilde{R}_r$ from the bottom to the top, and therefore, $\tilde{V}^{r}_l$ is a vertical sub-rectangle of $\tilde{R}_r$.
\end{proof}

\begin{rema}\label{Rema: Coherence orientatios of R_i and R_r}
The horizontal and vertical orientations of $\tilde{R}_r$ coincide with those of $R_i$. Therefore, the labeling $\{\tilde{V}^{r}_l\}_{l=1}^{v_i}$ of the vertical sub-rectangles, as given in Definition \ref{Defi: Vertical sub R-S(w)}, is coherent with the horizontal direction of $\tilde{R}_r$.
\end{rema}

\begin{figure}[h]
	\centering
	\includegraphics[width=0.7\textwidth]{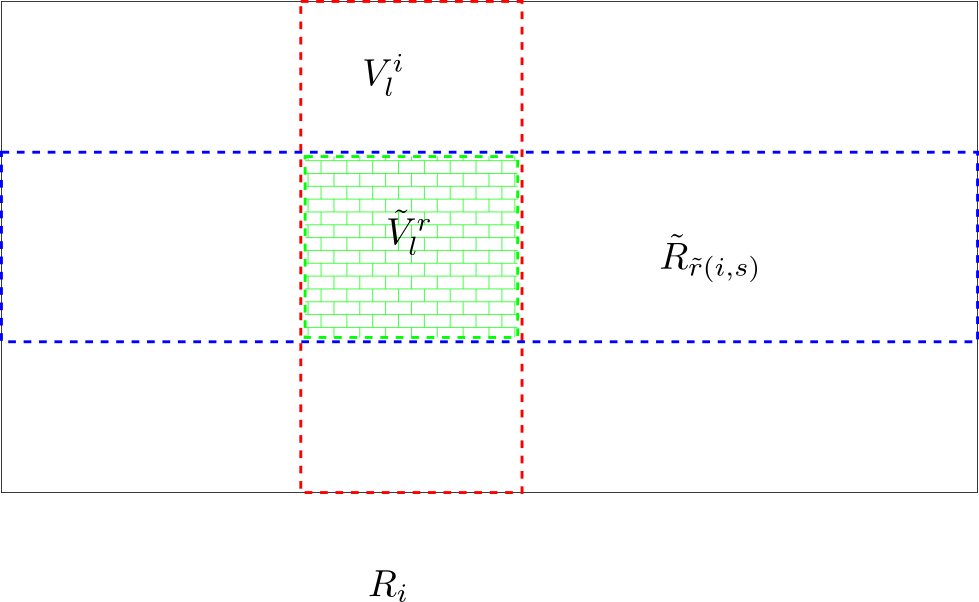}
	\caption{The vertical sub-rectangle $\tilde{V}^r_l$ of $\tilde{R}_{r(i,s)}$. }
	\label{Fig: Vertical sub}
\end{figure}

Before we continue, let us prove the next technical lemma.

\begin{lemm}\label{Lemm: unique sub (i,j) for a r}
Let $\tilde{R}_r\in \cR_{S(\underline{w})}$ with $r=\tilde{r}(i,s)$ for some $s$. Suppose $\tilde{H} \subset \tilde{R}_r$ is a horizontal sub-rectangle of $(f,\cR_{S(\underline{w})})$. Since $i$ was already determined before, there exists a unique pair $j\in\{1,\cdots,h_i\}$ such that $(i,j)\in \cH(T)$ and  $\overset{o}{H} \subset \overset{o}{H^i_{j}}$, i.e., $H$ is contained in the interior of the horizontal sub-rectangle $H^i_j$ of  $(f,\cR)$.
\end{lemm}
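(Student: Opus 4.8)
\textbf{Proof strategy for Lemma \ref{Lemm: unique sub (i,j) for a r}.}

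The plan is to exploit the basic structure of the $s$-boundary refinement: each rectangle $\tilde R_r$ with $r=\tilde r(i,s)$ is, by Lemma \ref{Lemm: rectangles in R S(p)} and Definition \ref{Defi: Label of R-S(w)}, a horizontal sub-rectangle of $R_i$ (one of the $O(i,\underline w)+1$ slices produced by cutting $R_i$ along the finitely many stable intervals of $\cI(i,\underline w)$). In particular $\overset{o}{\tilde R_r}\subset \overset{o}{R_i}$, and along its interior the horizontal (stable) leaves of $\tilde R_r$ agree with those of $R_i$. So the first step is to record that $\tilde R_r$ is a genuine horizontal sub-rectangle of the \emph{original} rectangle $R_i$, not just of some rectangle in $\cR_{S(\underline w)}$.

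Next I would use the fact that $H$ is a horizontal sub-rectangle of $(f,\cR_{S(\underline w)})$ contained in $\tilde R_r$, hence its interior $\overset{o}{H}$ is a connected component of $\overset{o}{\tilde R_r}\cap f^{-1}(\overset{o}{\tilde R_{r'}})$ for some $r'$, where $\tilde R_{r'}\subset R_{i'}$ for the appropriate $i'$. Since $\overset{o}{\tilde R_r}\subset \overset{o}{R_i}$ and $f^{-1}(\overset{o}{\tilde R_{r'}})\subset f^{-1}(\overset{o}{R_{i'}})$, we get $\overset{o}{H}\subset \overset{o}{R_i}\cap f^{-1}(\overset{o}{R_{i'}})$. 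The connected component of $\overset{o}{R_i}\cap f^{-1}(\overset{o}{R_{i'}})$ containing $\overset{o}{H}$ is, by the Markov property of $\cR$ (Definition \ref{Defi: Markov partition}), precisely the interior of a horizontal sub-rectangle $H^i_j$ of $(f,\cR)$ for a unique $j$; since $A(T)$ is binary, this sub-rectangle is the only one of $R_i$ mapping into $R_{i'}$, and in general uniqueness of $j$ follows because distinct horizontal sub-rectangles $H^i_j$ of $(f,\cR)$ have disjoint interiors which cover $\overset{o}{R_i}$ up to their stable boundaries, so the connected set $\overset{o}{H}$ lies in exactly one of them. Thus $\overset{o}{H}\subset \overset{o}{H^i_j}$ for that unique $j\in\{1,\dots,h_i\}$, which is the claim.

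I do not expect any serious obstacle here: the statement is essentially a bookkeeping consequence of two facts already in hand — that each $\tilde R_r$ sits inside a single $R_i$ as a horizontal slice, and that the horizontal sub-rectangles of a Markov partition are indexed by the connected components of $\overset{o}{R_i}\cap f^{-1}(\overset{o}{R_j})$, which partition (the interior of) $R_i$. The only point requiring a line of care is the \emph{uniqueness} of $j$, which I would phrase via the disjointness of the interiors $\{\overset{o}{H^i_{j}}\}_{j=1}^{h_i}$ together with the connectedness of $\overset{o}{H}$; this forbids $\overset{o}{H}$ from straddling two distinct $H^i_j$'s. This lemma will then feed into the subsequent computation of the number of horizontal sub-rectangles of $\tilde R_r$ and of the permutation $\rho_{S(\underline w)}$, where the pair $(i,j)$ attached to each such $\tilde H$ is exactly the data needed to read off $\Phi_T(i,j)$.
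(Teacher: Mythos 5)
Your proof is correct, but it follows a different mechanism than the paper's. The paper argues by contradiction, pushing forward under $f$: if $\overset{o}{\tilde{H}}$ met the interiors of two adjacent sub-rectangles $H^i_{j_1},H^i_{j_2}$ of $(f,\cR)$, it would contain in its interior the shared stable boundary interval $I$, and then $f(\overset{o}{I})\subset \partial^s\cR\subset \partial^s\cR_{S(\underline{w})}$ would meet $f(\overset{o}{\tilde{H}})$, which is impossible because $f(\tilde{H})$ is a vertical sub-rectangle of $\cR_{S(\underline{w})}$ and its interior cannot touch the stable boundary of that partition. You instead argue directly: $\overset{o}{\tilde{H}}$ is a connected component of $\overset{o}{\tilde{R}_r}\cap f^{-1}(\overset{o}{\tilde{R}_{r'}})$, hence a connected subset of $\overset{o}{R_i}\cap f^{-1}(\overset{o}{R_{i'}})$, and so lies inside a single connected component of the latter, which by the Markov property of $(f,\cR)$ is some $\overset{o}{H^i_j}$. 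This is a legitimate and arguably cleaner route: it uses only that $\cR_{S(\underline{w})}$ refines $\cR$ rectangle-by-rectangle and the componentwise characterization of horizontal sub-rectangles, without invoking the $f$-invariance of $\partial^s\cR_{S(\underline{w})}$; the paper's version, in exchange, is the same style of boundary-image argument it reuses elsewhere in the chapter.

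One small caution: your closing sentence, that uniqueness follows "because distinct $H^i_j$ have disjoint interiors which cover $\overset{o}{R_i}$ up to their stable boundaries, so the connected set $\overset{o}{H}$ lies in exactly one of them," is not a valid inference on its own — a connected set can perfectly well straddle the common stable boundary of two adjacent $H^i_j$'s. What actually does the work is your earlier containment $\overset{o}{\tilde{H}}\subset \overset{o}{R_i}\cap f^{-1}(\overset{o}{R_{i'}})$, whose connected components are exactly the relevant $\overset{o}{H^i_j}$'s; once containment in one such interior is established, uniqueness is immediate from disjointness of the interiors (the binary-matrix remark is not needed for the lemma). Rephrase that sentence and the argument is complete.
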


\begin{proof}
We claim that $\tilde{H}$ intersects a unique horizontal sub-rectangle $H^i_{j}$ in its interior. If this is not the case, there are two adjacent sub-rectangles of $R_i$, $H^i_{j_1}$ and $H^i_{j_2}$, sharing a stable boundary $I$ that lies inside the interior of $H$, i.e., $\overset{o}{I}\subset \overset{o}{H}$. Then, $f(\overset{o}{H})$ will intersect the stable boundary of $\cR$ in the stable interval $f(\overset{o}{I})$. However, this interval is contained in the stable boundary of $\cR_{S(\underline{w})}$, leading to a contradiction. Therefore, $\overset{o}{H}$ is contained in the interior of a unique horizontal sub-rectangle $H^i_{j}$.
\end{proof}

\begin{defi}\label{Defi: the unique pair (i,j-H)}
Let $\tilde{R}_r\in \cR_{S(\underline{w})}$ with $r=\tilde{r}(i,s)$. Let $\tilde{H} \subset \tilde{R}_r$ be a horizontal sub-rectangle of $\cR_{S_(\underline{w})}$, we denote $(i,j_{\tilde{H} })\in \cH(T)$ as the only pair that was determine in Lemma \ref{Lemm: unique sub (i,j) for a r}.
\end{defi}

\begin{lemm}\label{Lemma: Vertical sub R-ps}
The rectangles in $\{\tilde{V}^{r}_l\}_{l=1}^{v_i}$, as described in Definition \ref{Defi: Vertical sub R-S(w)}, are the vertical sub-rectangles of the Markov partition $(f,\cR_{S_(\underline{w})})$ that are contained in $\tilde{R}_r$.
\end{lemm}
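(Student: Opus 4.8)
The statement to prove is Lemma \ref{Lemma: Vertical sub R-ps}: that the rectangles $\{\tilde V^r_l\}_{l=1}^{v_i}$ of Definition \ref{Defi: Vertical sub R-S(w)} are exactly the vertical sub-rectangles of the Markov partition $(f,\cR_{S(\underline w)})$ contained in $\tilde R_r$. The plan is to prove two containments: every $\tilde V^r_l$ is genuinely a vertical sub-rectangle of $(f,\cR_{S(\underline w)})$ contained in $\tilde R_r$, and conversely every vertical sub-rectangle of $(f,\cR_{S(\underline w)})$ sitting inside $\tilde R_r$ appears in this list. The first half is essentially Lemma \ref{Lemm: Caracterization of vertica sub of R-S(w)}, which I would invoke directly; the only extra point to note is that these vertical sub-rectangles, as sub-rectangles of the Markov partition $\cR_{S(\underline w)}$, are the closures of connected components of $\overset{o}{\tilde R_r}\cap f(\overset{o}{\tilde R_{r'}})$ for the various $r'$ — so I must check that each $\tilde V^r_l$ is of this form, not merely a vertical sub-rectangle in the naive geometric sense.

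First I would recall that $f(\tilde R_{r'})$ is a vertical sub-rectangle of $\cR_{S(\underline w)}$: by Lemma \ref{Lemm: Markov partition } (and the general theory recalled in Corollary \ref{Coro: image of sub horzontal-vertical }) $\cR_{S(\underline w)}$ is a Markov partition, so the image under $f$ of any of its rectangles decomposes into vertical sub-rectangles of the partition. Now fix $\tilde R_r\subset R_i$ with $r=\tilde r(i,s)$. Take any vertical sub-rectangle $\tilde V$ of $\cR_{S(\underline w)}$ contained in $\tilde R_r$; it is the closure of a connected component $C$ of $\overset{o}{\tilde R_r}\cap f(\overset{o}{\tilde R_{r'}})$ for some $r'$. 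Since $\overset{o}{\tilde R_r}\subset \overset{o}{R_i}$ and $f(\overset{o}{\tilde R_{r'}})\subset f(\overset{o}{R_{i'}})$ where $\tilde R_{r'}\subset R_{i'}$, the set $C$ is contained in $\overset{o}{R_i}\cap f(\overset{o}{R_{i'}})$, hence in one of its connected components, whose closure is a vertical sub-rectangle $V^i_l$ of $(f,\cR)$. Thus $C\subset \overset{o}{V^i_l}$, and since $\tilde V$ is a vertical sub-rectangle of $\tilde R_r$, it must cross $\tilde R_r$ from bottom to top; combined with $C\subset \overset{o}{\tilde R_r}\cap \overset{o}{V^i_l}$ and the uniqueness of the connected component of that intersection (intersection of a horizontal and a vertical sub-rectangle of the embedded-interior rectangle $R_i$ is connected), we get $\tilde V=\overline{\overset{o}{\tilde R_r}\cap \overset{o}{V^i_l}}=\tilde V^r_l$. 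This gives one inclusion.

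For the reverse inclusion I would argue that each $\tilde V^r_l$ really does arise as the closure of a connected component of $\overset{o}{\tilde R_r}\cap f(\overset{o}{\tilde R_{r'}})$ for a suitable $r'$. Since $V^i_l$ is a vertical sub-rectangle of $(f,\cR)$, we have $V^i_l=\overline{f(\overset{o}{H^{i''}_{j}})\cap\overset{o}{R_i}}$ for some $(i'',j)$ with $\rho_T(i'',j)=(i,l)$; that is, $\overset{o}{V^i_l}$ is a connected component of $f(\overset{o}{R_{i''}})\cap\overset{o}{R_i}$. Intersecting with $\overset{o}{\tilde R_r}$ (a horizontal sub-rectangle of $R_i$) and using that $f(\overset{o}{H^{i''}_j})$ decomposes, after intersecting with the cut $\cI(\underline w)$, into the interiors of rectangles $f(\overset{o}{\tilde R_{r'}})$ with $\tilde R_{r'}\subset R_{i''}$ and $\tilde R_{r'}$ contained in $H^{i''}_j$, one sees that $\overset{o}{\tilde V^r_l}=\overset{o}{\tilde R_r}\cap\overset{o}{V^i_l}$ is a connected component of $\overset{o}{\tilde R_r}\cap f(\overset{o}{\tilde R_{r'}})$ for exactly one such $r'$. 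Hence $\tilde V^r_l$ is indeed a vertical sub-rectangle of the Markov partition $(f,\cR_{S(\underline w)})$. Combining the two inclusions, together with Remark \ref{Rema: Coherence orientatios of R_i and R_r} which guarantees the indexing $\{\tilde V^r_l\}_{l=1}^{v_i}$ is the correct left-to-right order, finishes the proof.

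\textbf{Main obstacle.} The delicate point is the bookkeeping in the reverse inclusion: one must be sure that when a horizontal sub-rectangle $H^{i''}_j$ of $(f,\cR)$ is cut by the stable intervals of $\cI(\underline w)$ into the pieces of $\cR_{S(\underline w)}$, the image $f(H^{i''}_j)$ is cut correspondingly into the $f$-images of those pieces, and that these images are precisely the rectangles $\tilde R_{r'}$ appearing in the intersection — in other words, that the cut $\cI(\underline w)$ interacts with $f$ in the expected $f$-invariant way. This is where Lemma \ref{Lemm: image of stable intervals} (that $f(I_{t,\underline w})\subset I_{t+1,\underline w}$) and the $f$-invariance of $\partial^s\cR_{S(\underline w)}$ established in Lemma \ref{Lemm: Markov partition } do the real work; the rest is the routine ``intersection of a horizontal and a vertical sub-rectangle of an interior-embedded rectangle is a connected rectangle'' fact, which I would cite rather than re-prove.
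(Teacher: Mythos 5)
Your overall route is the same as the paper's, just read forwards instead of backwards: the paper pulls $\tilde{V}^r_l$ back by $f^{-1}$, shows the preimage is a horizontal sub-rectangle of the refined partition sitting inside a unique $H^k_j$ and a unique $\tilde{R}_{r'}$, and then uses the horizontal/vertical bijection; you push forward and identify $\overset{o}{\tilde{V}^r_l}$ as a connected component of $\overset{o}{\tilde{R}_r}\cap f(\overset{o}{\tilde{R}_{r'}})$. However, as written there are two load-bearing steps that are not correct. First, your justification of the reverse inclusion rests on the claim that $f(\overset{o}{H^{i''}_{j}})$, cut by $\cI(\underline{w})$, decomposes into the sets $f(\overset{o}{\tilde{R}_{r'}})$ for rectangles $\tilde{R}_{r'}$ \emph{contained in} $H^{i''}_j$. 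That is false in general: a rectangle of $\cR_{S(\underline{w})}$ need not be contained in a single horizontal sub-rectangle of $(f,\cR)$, nor conversely — the cut intervals $I_{t,\underline{w}}$ lie in the interiors of the $H^{i''}_{j}$'s (Corollary \ref{Coro: projetion stable segments}), so $H^{i''}_j$ can straddle several $\tilde{R}_{r'}$, or be strictly contained in one; this is precisely why the paper later needs the three-case analysis of Proposition \ref{Prop: determination case of ch(r) inside R-r}. The pieces of the decomposition are the intersections $f(\overset{o}{H^{i''}_j}\cap\overset{o}{\tilde{R}_{r'}})$, not whole images $f(\overset{o}{\tilde{R}_{r'}})$. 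The conclusion you want is still true, but the argument has to be run differently: the slab $f^{-1}(\tilde{V}^r_l)$ has interior disjoint from $\cup\cI(\underline{w})$ (if it met a cut interval, its $f$-image would meet $\cup\cI(\underline{w})\subset\partial^s\cR_{S(\underline{w})}$ by Lemma \ref{Lemm: image of stable intervals}, contradicting that this image lies in $\overset{o}{\tilde{R}_r}$), hence lies in a unique $\tilde{R}_{r'}$, and its stable boundaries are mapped into $\partial^s\cR_{S(\underline{w})}$ — which is exactly the paper's argument via $f^{-1}$.

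Second, in your first inclusion the step "$C\subset\overset{o}{\tilde{R}_r}\cap\overset{o}{V^i_l}$, the intersection is connected, and $\tilde{V}$ crosses $\tilde{R}_r$ from bottom to top, hence $\tilde{V}=\overline{\overset{o}{\tilde{R}_r}\cap\overset{o}{V^i_l}}$" is a non sequitur: containment in a connected set plus full height gives no control on the width of $\tilde{V}$, so equality does not follow. What is missing is the comparison of unstable boundaries: since $\tilde{R}_{r'}$ is a horizontal sub-rectangle of $R_{i'}$ spanning its full width, the unstable sides of $C$ lie on the unstable boundary of $f(R_{i'})$, i.e.\ on the unstable boundary of $V^i_l$, while the stable sides of $\tilde{V}$ lie in $\partial^s\tilde{R}_r$; only then does $\overset{o}{\tilde{V}}=\overset{o}{\tilde{R}_r}\cap\overset{o}{V^i_l}$ follow. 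This is exactly the point the paper makes explicitly ("the unstable boundary of $\tilde{V}$ is contained within the unstable boundary of $f(H^k_j)=V^k_l$, and the stable boundaries of $\tilde{V}$ are within the stable boundary of $\tilde{R}_r$"), so you should add the corresponding sentence rather than appeal to connectedness alone.
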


\begin{proof}

Assume that $f^{-1}(V^i_l)=H^k_j$. Since the vertical boundaries of $\tilde{V}^r_l$ are contained within the vertical boundaries of $V^i_l$, then  $f^{-1}(\tilde{V}^r_l)$ is a horizontal sub-rectangle of $H^k_{j}$ whose unstable boundary is contained in $\partial^u R_k$. Furthermore, $f^{-1}(\tilde{V}^r_l)$ does not intersect the orbit of $p$ in its interior. Hence, there exists a unique $\tilde{R}_{r'}\in \cR_{S(\underline{w})}$ such that $f^{-1}(\tilde{V}^r_l)\subset \tilde{R}_{r'}$ and it is a horizontal sub-rectangle of $R_k$ whose stable boundary  ( under the action of $f$) is send to the stable boundary of the Markov partition $\cR_{S(\underline{w})}$. This implies that $f^{-1}(\tilde{V}^r_l)$ is a horizontal sub-rectangle of $(f,\cR_{S(\underline{w})})$, and then $\tilde{V}^r_l$ is a vertical sub-rectangle of $(f,\cR_{S(\underline{w})})$.

	Let $\tilde{V}\subset \tilde{R}_r $ be some vertical sub-rectangle of $(f,\cR_{S(\underline{w})})$. We know that $f^{-1}(\tilde{V})$ is a horizontal sub-rectangle of a certain rectangle $\tilde{R}_{r'}\subset R_k\in \cR$. Since $p:=\pi_f(\underline{w})$ is not $s$-boundary, the interior of $f^{-1}(\tilde{V})$ does not intersect the stable boundary of any horizontal sub-rectangles of $R_k$. Therefore, $f^{-1}(\tilde{V})$ is contained in a unique horizontal sub-rectangle $H^k_j$  of the Markov partition $(f,\cR)$, where $j:=j_{f^{-1}(V)}$ is given by  Lemma \ref{Lemm: unique sub (i,j) for a r}.
	
	 Consequently, the unstable boundary of $\tilde{V}$ is contained within the unstable boundary of $f(H^k_j)=V^k_l$, and the stable boundaries of $\tilde{V}$ are within the stable boundary of $\tilde{R}_r$. As a consequence:	
	$$
	\overset{o}{\tilde{V}}=\overset{o}{R_r}\cap \overset{o}{V^i_l}.
	$$
	and $\tilde{V}$ is one of the rectangles described in Definition \ref{Defi: Vertical sub R-S(w)} .
\end{proof}

We have a immediate corollaries.

\begin{coro}\label{Coro: number of vertical sub rec}
	If $r=\tilde{r}(i,s)$  then $V_r=v_i$.
\end{coro}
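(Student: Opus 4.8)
The plan is to read the statement off directly from Lemma \ref{Lemma: Vertical sub R-ps}, so that the proof is essentially a bookkeeping matter. Recall that in the geometric type $T_{S(\underline{w})}=\{N,\{H_r,V_r\}_{r=1}^N,\Phi_{S(\underline{w})}\}$ of the $s$-boundary refinement, the integer $V_r$ is by definition the number of vertical sub-rectangles of the Markov partition $(f,\cR_{S(\underline{w})})$ that are contained in $\tilde R_r$, enumerated in accordance with the horizontal orientation of $\tilde R_r$ fixed in Definition \ref{Defi: Orientation in R-S(w)}. So the whole task is to count this family.

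First I would use Lemma \ref{lemm: determinating r=(i,s)} to fix the unique pair $(i,s)$ with $r=\tilde r(i,s)$, so that $\tilde R_r$ is a horizontal sub-rectangle of $R_i\in\cR$. Then Lemma \ref{Lemma: Vertical sub R-ps} tells us exactly what the vertical sub-rectangles of $(f,\cR_{S(\underline{w})})$ contained in $\tilde R_r$ are: they form the family $\{\tilde V^r_l\}_{l=1}^{v_i}$, where $\tilde V^r_l=\overline{\overset{o}{\tilde R_r}\cap\overset{o}{V^i_l}}$ and $\{V^i_l\}_{l=1}^{v_i}$ is the list of vertical sub-rectangles of $(f,\cR)$ contained in $R_i$ (whose cardinality is $v_i$ by the definition of the geometric type $T$). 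Each $\tilde V^r_l$ is indeed a vertical sub-rectangle of $\tilde R_r$ by Lemma \ref{Lemm: Caracterization of vertica sub of R-S(w)}, in particular it has nonempty interior $\overset{o}{\tilde R_r}\cap\overset{o}{V^i_l}$.

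The only remaining point is to check that $l\mapsto\tilde V^r_l$ is injective, so that the family genuinely has $v_i$ members; this is routine. Since the vertical sub-rectangles $V^i_1,\dots,V^i_{v_i}$ of the Markov partition $\cR$ have pairwise disjoint interiors, so do the sets $\overset{o}{\tilde R_r}\cap\overset{o}{V^i_l}$, and as these are all nonempty, the closures $\tilde V^r_l$ are pairwise distinct. Moreover, by Remark \ref{Rema: Coherence orientatios of R_i and R_r} the orientations of $\tilde R_r$ agree with those of $R_i$, so the labelling $l=1,\dots,v_i$ used in Definition \ref{Defi: Vertical sub R-S(w)} respects the horizontal orientation of $\tilde R_r$; hence this enumeration is precisely the one entering the definition of $V_r$. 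Combining these observations gives $V_r=v_i$. I do not foresee any real obstacle: the substantive content is already contained in Lemma \ref{Lemma: Vertical sub R-ps}, and the corollary is just its translation into the language of the geometric type $T_{S(\underline{w})}$.
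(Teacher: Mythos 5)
Your proof is correct and follows the same route as the paper: the corollary is stated there as an immediate consequence of Lemma \ref{Lemma: Vertical sub R-ps}, which identifies the vertical sub-rectangles of $(f,\cR_{S(\underline{w})})$ inside $\tilde R_r$ with the family $\{\tilde V^r_l\}_{l=1}^{v_i}$. Your additional bookkeeping (injectivity of $l\mapsto\tilde V^r_l$ and compatibility of the labelling with the orientation of $\tilde R_r$) just makes explicit what the paper leaves implicit.
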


\begin{prop}\label{Prop: vertical possition is delimitate.}
Let $\tilde{H}\subset \tilde{R}_r$ be a horizontal sub-rectangle of $(f,\cR_{S(\underline{w})})$, assume that $\tilde{H}\subset H^i_j$ and $f(H^i_j)=V^k_l$. If $f(H)=\tilde{V}^{r'}_{l'}$, then $l=l'$.
\end{prop}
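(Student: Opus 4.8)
}

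The statement says that the ``horizontal position'' of the vertical sub-rectangle $f(H)$ inside $\tilde R_{r'}$ is the same as the horizontal position $l$ of $V^k_l = f(H^i_j)$ inside $R_k$. The plan is to exploit the fact that the horizontal (unstable) foliation and its orientation are not disturbed by the $s$-boundary refinement: cutting along the stable intervals of $\cI(\underline w)$ only subdivides rectangles \emph{horizontally}, so it does not touch the unstable boundary of any rectangle, nor the left-to-right order of vertical sub-rectangles. Concretely, by Lemma~\ref{Lemma: Vertical sub R-ps} (more precisely Definition~\ref{Defi: Vertical sub R-S(w)} and its characterization), every vertical sub-rectangle of $(f,\cR_{S(\underline w)})$ contained in a rectangle $\tilde R_{r'}\subset R_k$ is exactly of the form $\tilde V^{r'}_{m}=\overline{\overset{o}{\tilde R_{r'}}\cap \overset{o}{V^k_m}}$ for a unique $m\in\{1,\dots,v_k\}$, and Remark~\ref{Rema: Coherence orientatios of R_i and R_r} guarantees that the indexing $\{\tilde V^{r'}_m\}_{m=1}^{v_k}$ is coherent with the horizontal orientation inherited from $R_k$.

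First I would set $k$ so that $\tilde R_{r'}\subset R_k$ (this is forced: since $f(H)\subset V^k_l\subset R_k$ and $f(H)$ is a vertical sub-rectangle of $\tilde R_{r'}$, we have $\overset{o}{\tilde R_{r'}}\cap\overset{o}{R_k}\neq\emptyset$, hence $\tilde R_{r'}\subset R_k$ by disjointness of interiors of distinct rectangles of $\cR$). Next, from $\tilde H\subset H^i_j$ and $f$ being a homeomorphism that preserves the trivial bi-foliation, $f(\tilde H)$ is a vertical sub-rectangle of $(f,\cR_{S(\underline w)})$ contained in $f(H^i_j)=V^k_l$; in particular the unstable boundary of $f(\tilde H)$ is contained in the unstable boundary of $V^k_l$, i.e.\ $f(\tilde H)\subset V^k_l$. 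On the other hand $f(\tilde H)=\tilde V^{r'}_{l'}$ by hypothesis, and $\tilde V^{r'}_{l'}=\overline{\overset{o}{\tilde R_{r'}}\cap\overset{o}{V^k_{l'}}}$ by the characterization of vertical sub-rectangles. Comparing unstable boundaries: the interior $\overset{o}{f(\tilde H)}$ is contained both in $\overset{o}{V^k_l}$ and in $\overset{o}{V^k_{l'}}$. Since distinct vertical sub-rectangles $V^k_l,V^k_{l'}$ of $R_k$ have disjoint interiors, we conclude $V^k_l=V^k_{l'}$, hence $l=l'$.

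The only point that needs a little care — and the main (modest) obstacle — is making sure that $\tilde H \subset H^i_j$ together with $f(H^i_j)=V^k_l$ really forces $f(\tilde H)\subset V^k_l$ at the level of the refined partition, rather than merely $f(\tilde H)$ lying inside some vertical strip of $R_k$ whose unstable boundary could a priori sit in a different $V^k_m$. This is resolved by recalling (Lemma~\ref{Lemm: unique sub (i,j) for a r} and Definition~\ref{Defi: the unique pair (i,j-H)}) that $j$ is precisely the unique index with $\overset{o}{\tilde H}\subset\overset{o}{H^i_j}$, so that $f(\overset{o}{\tilde H})\subset f(\overset{o}{H^i_j})=\overset{o}{V^k_l}$, and the unstable boundary of $f(\tilde H)$, being a limit of unstable leaves of $f(\tilde H)$, lands in the unstable boundary of $V^k_l$. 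Thus $f(\tilde H)$ is, by Definition~\ref{Defi: Vertical sub R-S(w)}, the vertical sub-rectangle $\tilde V^{r'}_{l}$; combined with the hypothesis $f(H)=\tilde V^{r'}_{l'}$ and the uniqueness of the index in that labeling (coherence with horizontal orientation, Remark~\ref{Rema: Coherence orientatios of R_i and R_r}), we get $l'=l$, finishing the proof.
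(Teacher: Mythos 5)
Your argument is correct and is essentially the paper's own proof: both deduce $f(\tilde H)\subset f(H^i_j)=V^k_l$ from $\tilde H\subset H^i_j$, identify $f(\tilde H)=\tilde V^{r'}_{l'}=\overline{\overset{o}{\tilde R_{r'}}\cap\overset{o}{V^k_{l'}}}$ via the characterization of vertical sub-rectangles of the refinement, and conclude $l=l'$ from the disjointness of the interiors of distinct vertical sub-rectangles of $R_k$. Your extra care about locating $\tilde R_{r'}$ inside $R_k$ and about the unstable boundary is just a more detailed write-up of the same route.
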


\begin{proof}
	As $\tilde{H}\subset H^i_j$, then $f(\tilde{H})\subset f( H^i_j)=V^k_l$. Simultaneously, $f(H)=\tilde{V}^{r'}_{l'}=\overline{ \overset{o}{\tilde{R}_{r'}}\cap \overset{o}{V^k_{l}} }$. Given that the intersection $\overset{o}{\tilde{R}_{r'}}\cap \overset{o}{V^k_{l}}$ has only one connected component, it is necessary that $\overset{o}{V^k_{l}}=\overset{o}{V^k_{l'}}$ and thus $l=l'$.
\end{proof}

\subsection{The stable boundary of a rectangle $\tilde{R}_r$}

The first task will be to determine the stable boundaries of a rectangle $\tilde{R}_{r} \in \cR_{S(\underline{w})}$. In view of Lemma \ref{lemm: determinating r=(i,s)}, we can add to our hypothesis the equality $r=\tilde{r}(i,s)$, as we can always determine the parameter $(i,s)$.

In the rest of this subsection we are going to assume that: 
\begin{itemize}
\item The geometric type $T=\{n,\{h_i,v_i\}_{i=1}^n, \Phi_T=(\rho_t,\epsilon_T)\}$  is in the pseudo-Anosov class
\item Its incidence matrix $A:=A(T)$ is binary.
\item The  periodic code $\underline{w}\in \Sigma_A$ have period $P\geq 1$ and its is not $s$-boundary.
\end{itemize}

The following lemma establishes that for any given terms $w_t$ and $w_{t+1}$ in $\underline{w}$, there exists a unique horizontal sub-rectangle $H^{w_t}_{j_t}$ whose image corresponds to a vertical sub-rectangle $V^{w_{t+1}}_{l_{t+1}}$ contained within $R_{w_{t+1}}$. We have expressed this in terms of $\rho_t$ to make explicit  that this condition is entirely determined by the geometric type $T$ and $\underline{w}$.

\begin{lemm}\label{Lemm: unique hriwontal for every prjection} 
Let $t\in \{1,\cdots,P\}$. There is a unique index $j_{t,\underline{w}}\in \{1,\cdots, w_{w_t}\}$ that satisfies the following condition: $(w_t,j_{t,\underline{w}})\in \cH(T)$ is the sole pair of indices for which $\rho_T(w_t,j_{t,\underline{w}})=(w_{t+1},l_{t+1})\in \cV(T)$ (for certain $l_{t+1}$). Furthermore, $l_{t+1}\in \{1,\cdots,v_{w_{t+1}}\}$ is also uniquely determined.
\end{lemm}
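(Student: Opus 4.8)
The plan is to unwind the definitions and use the fact that $A := A(T)$ is binary together with the fact that $\underline{w} \in \Sigma_A$ is an admissible code. First I would fix $t \in \{1,\dots,P\}$ and consider the two consecutive symbols $w_t$ and $w_{t+1}$ of $\underline{w}$. Since $\underline{w}$ is admissible, $a_{w_t, w_{t+1}} = 1$, which by Definition~\ref{Defi: Incidence matriw of a type} means that there is exactly one value of $j \in \{1,\dots,h_{w_t}\}$ for which $\Phi_T(w_t, j) = (w_{t+1}, l, \epsilon(w_t,j))$, i.e. for which $\rho_T(w_t, j) \in \{w_{t+1}\} \times \{1,\dots,v_{w_{t+1}}\}$. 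Call this unique index $j_{t,\underline{w}}$; this is exactly the index we want. Geometrically, in any realization $(f,\cR)$ of $T$, this says that among the horizontal sub-rectangles $\{H^{w_t}_j\}_{j=1}^{h_{w_t}}$ of $R_{w_t}$, precisely one has image under $f$ a vertical sub-rectangle of $R_{w_{t+1}}$ — which is the standard content of the incidence matrix being binary.

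Next I would address the uniqueness of $l_{t+1}$. Once $j_{t,\underline{w}}$ is fixed, $\rho_T$ is a \emph{function} (indeed a bijection) from $\cH(T)$ to $\cV(T)$, so $\rho_T(w_t, j_{t,\underline{w}})$ is a single well-defined element $(k,l) \in \cV(T)$. By the choice of $j_{t,\underline{w}}$ we have $k = w_{t+1}$, and then $l =: l_{t+1} \in \{1,\dots,v_{w_{t+1}}\}$ is forced. So $l_{t+1}$ is uniquely determined as the second coordinate of $\rho_T(w_t, j_{t,\underline{w}})$. I would also remark that $l_{t+1}$ is precisely the index such that $V^{w_{t+1}}_{l_{t+1}}$ is the vertical sub-rectangle of $R_{w_{t+1}}$ that is the image under $f$ of $H^{w_t}_{j_{t,\underline{w}}}$; this is the description that the subsequent subsections will use. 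Finally, since the period is $P$, all the relevant data $(w_t, w_{t+1})$ for $t \in \{1,\dots,P\}$ exhausts the periodic pattern, so the family $\{j_{t,\underline{w}}\}$ and $\{l_{t+1}\}$ is finite and entirely computable from $T$ and $\underline{w}$.

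I do not expect any serious obstacle here: the statement is essentially a repackaging of "binary incidence matrix $+$ admissible code" into notation convenient for the stable-boundary refinement computation. The only point requiring a small argument is making explicit why binarity of $A$ forces the uniqueness of $j_{t,\underline{w}}$ — this follows directly from the formula $a_{i,k} = \#\{j \in \{1,\dots,h_i\} : \Phi_T(i,j) = (k,l,\epsilon(i,j))\}$ in Definition~\ref{Defi: Incidence matriw of a type}, evaluated at $i = w_t$, $k = w_{t+1}$, giving a count equal to $1$ because $\underline{w}$ is admissible. The rest is bookkeeping about $\rho_T$ being a well-defined bijection.
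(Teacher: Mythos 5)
Your argument is correct and essentially identical to the paper's: the paper also deduces existence of $j_{t,\underline{w}}$ from admissibility of $\underline{w}$ (at least one horizontal sub-rectangle of $R_{w_t}$ mapped into $R_{w_{t+1}}$) and uniqueness from the incidence matrix being binary (at most one), then gets uniqueness of $l_{t+1}$ from $\rho_T$ being a well-defined map. Your phrasing via the count $a_{w_t,w_{t+1}}=1$ is just a compact repackaging of the same two facts, so there is nothing further to add.
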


\begin{proof}
Let's suppose that $w_t=i$. As the incidence matrix of $T$ is binary, there exists at most one horizontal sub-rectangle $H^i_j$ of $R_i$ such that $f(H^i_j)\subset R_{w_{t+1}}$. However, since $\underline{w}$ is an admissible code, there exists at least one horizontal sub-rectangle $H^i_j$ of $R_i$ such that $f(H^i_j)\subset R_{w_{t+1}}$. Therefore, there exists only one rectangle of $R_i$ such that $f(H^i_j)\subset R_{w_{t+1}}$, and we declare $j_{t,\underline{w}}$ as this unique number. Clearly, if $\rho_T(i,j_{t,\underline{w}})=(w_{t+1},l_{t+1})$, then $l_{t+1}\in \{1,\cdots,h_{w_{t+1}}\}$ is uniquely determined.
\end{proof}

\begin{coro}\label{Coro: projetion stable segments}
Assume that $w_t=i$, then the stable interval $I_{t,\underline{w}}$ is contained in $H^i_{j_{t,\underline{w}}}\setminus \partial^s H^i_{j_{t,\underline{w}}}$.
\end{coro}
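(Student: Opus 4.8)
The plan is to combine the explicit description of $I_{t,\underline{w}}$ obtained in the proof of Lemma~\ref{Lemm: who the intervals intersect } with the uniqueness statement just established in Lemma~\ref{Lemm: unique hriwontal for every prjection}. Recall (Definition~\ref{Def: Stable intervals of codes}) that $I_{t,\underline{w}}=\pi_f(\underline{I}_{t,\underline{w}})$, where $\underline{I}_{t,\underline{w}}$ consists of those stable leaf codes $\underline{v}$ of $\sigma^t(\underline{w})$ whose non-negative part agrees with that of $\sigma^t(\underline{w})$; in particular $v_0=w_t=i$ and $v_1=w_{t+1}$ for every such $\underline{v}$, and $f^t(p)=\pi_f(\sigma^t(\underline{w}))\in I_{t,\underline{w}}$. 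Since $A:=A(T)$ is binary, the argument in the proof of Lemma~\ref{Lemm: who the intervals intersect } shows that $\pi_f(\underline{v})$ lies in the unique horizontal sub-rectangle of $R_i$ whose image under $f$ is a vertical sub-rectangle of $R_{w_{t+1}}$; by Lemma~\ref{Lemm: unique hriwontal for every prjection} this sub-rectangle is exactly $H^i_{j_{t,\underline{w}}}$. Hence $I_{t,\underline{w}}\subset H^i_{j_{t,\underline{w}}}$, and it remains only to check that $I_{t,\underline{w}}$ avoids $\partial^s H^i_{j_{t,\underline{w}}}$.

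For the second part I would argue by contradiction. By item (i) of Lemma~\ref{Lemm: who the intervals intersect }, $I_{t,\underline{w}}$ is a single full-width stable leaf of $R_i$; being contained in the horizontal sub-rectangle $H^i_{j_{t,\underline{w}}}$, it is therefore either an interior stable leaf of $H^i_{j_{t,\underline{w}}}$ or it coincides with one of the two components of $\partial^s H^i_{j_{t,\underline{w}}}$. Suppose the latter. Then $f^t(p)\in I_{t,\underline{w}}\subset \partial^s H^i_{j_{t,\underline{w}}}$. Since $f$ maps $H^i_{j_{t,\underline{w}}}$ homeomorphically onto the vertical sub-rectangle $V^{w_{t+1}}_{l_{t+1}}$ preserving the stable foliation, one has $f(\partial^s H^i_{j_{t,\underline{w}}})=\partial^s V^{w_{t+1}}_{l_{t+1}}\subset \partial^s R_{w_{t+1}}\subset \partial^s\cR$, so $f^{t+1}(p)\in\partial^s\cR$. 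Now $\partial^s\cR$ is $f$-invariant and $p$ is periodic, so iterating $f$ shows that the entire orbit of $p$ — in particular $p$ itself — lies in $\partial^s\cR$; by Proposition~\ref{Prop: boundary points have boundary codes} every code projecting onto $p$, and in particular $\underline{w}$, is then an $s$-boundary code, contradicting the standing hypothesis that $\underline{w}$ is not $s$-boundary. This forces $I_{t,\underline{w}}\cap\partial^s H^i_{j_{t,\underline{w}}}=\emptyset$, which is the assertion.

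The one genuinely delicate point is the last step: the stable boundary of an \emph{interior} horizontal sub-rectangle $H^i_j$ of the Markov partition need not itself lie on $\partial^s\cR$, so one cannot contradict the hypothesis directly from $f^t(p)\in\partial^s H^i_{j_{t,\underline{w}}}$. The trick that resolves this is to first push forward by $f$ once, exploiting that $f(H^i_{j_{t,\underline{w}}})$ is a bona fide vertical sub-rectangle of the partition — whose stable boundary \emph{is} contained in $\partial^s\cR$ — and only then invoke the $f$-invariance of $\partial^s\cR$, the periodicity of $p$, and the characterization of $s$-boundary codes from Proposition~\ref{Prop: boundary points have boundary codes}. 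Everything else is immediate from the definitions and lemmas recalled above.
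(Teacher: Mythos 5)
Your proof is correct, and its first half is exactly the paper's: both use the binary incidence matrix together with Lemma~\ref{Lemm: unique hriwontal for every prjection} to place every code of $\underline{I}_{t,\underline{w}}$, and hence the whole interval, inside $H^i_{j_{t,\underline{w}}}$. Where you diverge is the boundary-avoidance step. The paper disposes of it in one line at the level of laminations: since $\underline{w}$ is periodic and not an $s$-boundary code, $\pi_f(\underline{w})$ does not lie on the stable lamination of the $s$-boundary periodic points (Lemma~\ref{Lemm: Projection Sigma S,U,I}, plus the fact that a periodic code in $\Sigma_{\cS(T)}$ is itself an $s$-boundary code), while $\partial^s H^i_j\subset f^{-1}(\partial^s\cR)$ is contained in that lamination, so no point of $I_{t,\underline{w}}$ can meet it. You instead argue by contradiction along the orbit of $p$: push forward once so that $f^{t+1}(p)\in\partial^s V^{w_{t+1}}_{l_{t+1}}\subset\partial^s\cR$, then use the $f$-invariance of $\partial^s\cR$ and the periodicity of $p$ to get $p\in\partial^s\cR$, and conclude with Proposition~\ref{Prop: boundary points have boundary codes}; this is a legitimate alternative that needs only the symbolic characterization of boundary points rather than the leaf-level statement, which is a modest gain in economy. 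The one place where the paper's formulation is more robust is your dichotomy ``interior leaf or coincides with a boundary component'': as written it silently excludes a partial contact, e.g.\ an endpoint of $I_{t,\underline{w}}$ (which sits on $\partial^u R_i$) meeting $\partial^s H^i_j$ at an identified corner of an immersed rectangle, and item $i)$ of Lemma~\ref{Lemm: who the intervals intersect } only asserts that $I_{t,\underline{w}}$ is a stable interval, its full width being extracted from the proof rather than the statement. The lamination argument rules out any intersection at once; to close this corner case within your scheme, note that any intersection point lies on the stable leaf through $f^t(p)$ and is mapped by $f$ into $\partial^s\cR$, so by Lemma~\ref{Lemm: Boundary of Markov partition is periodic} the stable leaf of $p$ is the stable leaf of an $s$-boundary periodic point, and the periodic code $\underline{w}$ would again be an $s$-boundary code, a contradiction.
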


\begin{proof}

It is evident that $\pi_f(\underline{w})\in H^i_{j_{t,\underline{w}}}$. Now, consider another code $\underline{v}\in \underline{F}^s(\sigma^t(\underline{w}))$ that projects to $I_{t,\underline{w}}$. As in Lemma \ref{Lemm: unique hriwontal for every prjection}, there exists only one index $j_{t, \underline{v}}\in \{1,\ldots, h_{v_t}\}$ such that $\rho_T(i, j_{t, \underline{v}})=(v_{t+1},l')$, and therefore $\pi_f(\underline{v})\in H^i_{j_{t, \underline{v}}}$. However, due to the matching of non-negative codes between $\underline{w}$ and $\underline{v}$, we have $w_{t}=v_{t}$ and $w_{t+1}=v_{t+1}$, leading to $j_{t, \underline{v}}=j_{t, \underline{w}}$. Consequently, $\pi_f(\underline{v})\in H^i_{j_{t,\underline{w}}}$.

Since $\underline{w}$ is not an $s$-boundary point, the projection of its stable leaf codes cannot intersect the stable leaves of  $s$-boundary periodic points. In particular, it does not intersect the stable boundary of the horizontal sub-rectangles of $(f,\cR)$. Hence, we deduce that $I_{t,\underline{w}} \subset H^i_{j_{t,\underline{w}}}\setminus \partial^s H^i_{j_{t,\underline{w}}}$. This confirms the claim.

\end{proof}

\begin{defi}\label{Defi: vertical order of intervals}
Consider two horizontal segments, $I$ and $I'$, within the rectangle $R_i$. We say that $I$ is positioned below $I'$  in relation to the vertical orientation of $R_i$ if the intersection of the interior of segment $\overset{o}{I}$ with a vertical segment $J$ within $R_i$ is lower than the intersection of $\overset{o}{I'}$ with $J$. In this case we write $I<I'$.
\end{defi}

\begin{rema}\label{Rema: well defined order intervals}
The trivial bi-foliated structure of $\overset{o}{R_i}$ ensure that this ordering is independent of the specific vertical segment $J$.
\end{rema}

\begin{lemm}\label{Lemm: first order in O(i)}
Let $I_{t_1,\underline{w}}$ and $I_{t_2,\underline{w}}$ be two stable segments contained within $R_i$, i.e., $w_{t_1}=w_{t_2}=i$. Suppose that $j_{t_1,\underline{w}}<j_{t_2,\underline{w}}$. Then, with respect to the vertical orientation of $R_i$:
$$
I_{t_1,\underline{w}}<I_{t_2,\underline{w}}.
$$
\end{lemm}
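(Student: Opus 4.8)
\textbf{Proof plan for Lemma \ref{Lemm: first order in O(i)}.}

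The plan is to reduce the statement about the relative vertical position of the two stable segments $I_{t_1,\underline{w}}$ and $I_{t_2,\underline{w}}$ inside $R_i$ to the already-established fact (Corollary \ref{Coro: projetion stable segments}) that each $I_{t,\underline{w}}$ is contained in the interior of a single horizontal sub-rectangle $H^i_{j_{t,\underline{w}}}$ of the original Markov partition $(f,\cR)$, together with the way the horizontal sub-rectangles of $R_i$ are ordered. Recall that the horizontal sub-rectangles $\{H^i_j\}_{j=1}^{h_i}$ of $R_i$ are labelled from bottom to top with respect to the vertical orientation of $R_i$ (Definition \ref{Defi: geometric type of a Markov partition} and the conventions preceding it). So $j_{t_1,\underline{w}} < j_{t_2,\underline{w}}$ means precisely that $H^i_{j_{t_1,\underline{w}}}$ lies strictly below $H^i_{j_{t_2,\underline{w}}}$ in $R_i$; equivalently, these sub-rectangles have disjoint interiors and, fixing any vertical segment $J$ of $R_i$, the interval $\overset{o}{H^i_{j_{t_1,\underline{w}}}} \cap J$ is strictly below $\overset{o}{H^i_{j_{t_2,\underline{w}}}}\cap J$.

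First I would fix a vertical segment $J$ of $R_i$ (say one through an interior point), using Remark \ref{Rema: well defined order intervals} to guarantee that the ordering defined via $J$ is independent of the choice. Then, by Corollary \ref{Coro: projetion stable segments}, $I_{t_1,\underline{w}}\subset \overset{o}{H^i_{j_{t_1,\underline{w}}}}$ (strictly in the interior, i.e. disjoint from $\partial^s H^i_{j_{t_1,\underline{w}}}$) and likewise $I_{t_2,\underline{w}}\subset \overset{o}{H^i_{j_{t_2,\underline{w}}}}$. Because the foliated structure of $\overset{o}{R_i}$ is a trivial product, the stable leaf $I_{t_1,\underline{w}}$ meets $J$ in a single point $x_1$, the stable leaf $I_{t_2,\underline{w}}$ meets $J$ in a single point $x_2$, and $x_1 \in \overset{o}{H^i_{j_{t_1,\underline{w}}}}\cap J$, $x_2\in \overset{o}{H^i_{j_{t_2,\underline{w}}}}\cap J$. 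Since $j_{t_1,\underline{w}} < j_{t_2,\underline{w}}$, the interior of the $J$-slice of $H^i_{j_{t_1,\underline{w}}}$ is entirely below the interior of the $J$-slice of $H^i_{j_{t_2,\underline{w}}}$ (consecutive horizontal sub-rectangles of $R_i$ share at most a stable boundary, and the labelling is monotone along $J$). Hence $x_1 < x_2$ along $J$, which by Definition \ref{Defi: vertical order of intervals} is exactly the assertion $I_{t_1,\underline{w}} < I_{t_2,\underline{w}}$.

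The only point requiring a small amount of care — and the one I expect to be the main (though minor) obstacle — is making rigorous the claim that ``$j_{t_1,\underline{w}}<j_{t_2,\underline{w}}$ implies the $J$-slice of $H^i_{j_{t_1,\underline{w}}}$ lies below that of $H^i_{j_{t_2,\underline{w}}}$.'' This is essentially the defining property of the bottom-to-top labelling of the horizontal sub-rectangles, but one should spell out that the $H^i_j$ have pairwise disjoint interiors, that their $J$-slices are disjoint subintervals of $J$ except possibly for shared endpoints, and that the indexing is chosen precisely so that $j\mapsto (H^i_j\cap J)$ is increasing for the order on $J$ induced by the vertical orientation of $R_i$; then the strict inclusions $I_{t_k,\underline{w}}\subset \overset{o}{H^i_{j_{t_k,\underline{w}}}}$ upgrade the weak inequality between the slices to a strict inequality between the points $x_1$ and $x_2$. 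No computation beyond this bookkeeping is needed, so the proof is short.
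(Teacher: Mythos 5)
Your proposal is correct and follows essentially the same route as the paper: invoke Corollary \ref{Coro: projetion stable segments} to place $I_{t_k,\underline{w}}$ inside $H^i_{j_{t_k,\underline{w}}}$ and then use the bottom-to-top labelling of the horizontal sub-rectangles of $R_i$ to conclude $I_{t_1,\underline{w}}<I_{t_2,\underline{w}}$. The paper simply states the ordering of the sub-rectangles as immediate, whereas you spell out the slice-by-a-vertical-segment bookkeeping; that extra care is harmless but not a different argument.
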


\begin{proof}

As was observed in Corollary \ref{Coro: projetion stable segments}, $I_{t_1,\underline{w}}\subset H^i_{j_{t_1,\underline{w}}}$ and $I_{t_2,\underline{w}}\subset H^i_{j_{t_2,\underline{w}}}$. With respect to the vertical orientation of $R_i$ any stable interval contained in $H^i_{j_{t_1,\underline{w}}}$ lies below any stable interval contained in $H^i_{j_{t_2,\underline{w}}}$. In particular, we have $I_{t_1,\underline{w}}<I_{t_2,\underline{w}}$, as we claimed.
\end{proof}

\begin{lemm}\label{Lemm: diferentation moment of the order}
 If there exist two distinct numbers $t_1$ and $t_2$ in $\{0,\cdots, P-1\}$ such that $\sigma^{t_1}(\underline{w}) \neq \sigma^{t_2}(\underline{w})$ but both have $w_{t_1} = w_{t_2} = i$, then we consider the two elements in $\cO(i,\underline{w})$ determined by these iterations: $(t_1,\underline{w})$ and $(t_2,\underline{w})$. In this case:

\begin{itemize}
\item[i)]   There exists an natural number $M$ between $1$ and $P-1$ such that $w_{t_1+M} \neq w_{t_2+M}$, but for all $m$ from $0$ to $M-1$, $w_{t_1+m}=w_{t_2+m}$.

\item[ii)]  If $M=1$, then $j_{t_1,\underline{w}}\neq j_{t_2,\underline{w}}$. However, if $M\geq 2$, for all $m\in \{0,\cdots,M-2\}$, the indices $j_{t_1+m ,\underline{w}}$ and $j_{t_2+m,\underline{w}}$ (defined in \ref{Defi: the unique pair (i,j-H)}) are equal. Thus, for all $m$ in the range $0$ to $M-2$:
$$
\epsilon_T(w_{t_1+m},j_{t_1 +m, \underline{w}})=\epsilon_T(w_{t_2+m},j_{t_2 +m, \underline{w}}).
$$

\item[iii)]  The indices $j_{t_1+M-1,\underline{w}}, j_{t_2+M-1,\underline{w}}\in \{1,\cdots, h_{w_{t_1 +M-1}}=h_{w_{t_2 +M-1}}\}$ are different.
\end{itemize}

\end{lemm}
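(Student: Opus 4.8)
\textbf{Proof plan for Lemma \ref{Lemm: diferentation moment of the order}.}

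The plan is to exploit the fact that $\sigma^{t_1}(\underline{w})$ and $\sigma^{t_2}(\underline{w})$ are distinct elements of $\Sigma_A$ that agree in their $0$-th coordinate, and to track the first place where the two codes diverge on the positive side, using that $\underline{w}$ is not an $s$-boundary code. For item $i)$, since $\sigma^{t_1}(\underline{w})\neq\sigma^{t_2}(\underline{w})$, there must be some index where the codes differ. A priori this index could be negative, but I claim it cannot: if the two codes agreed on all non-negative coordinates, then by Lemma \ref{Lemm: who the intervals intersect } (applied through the set $\underline{I}_{t_1,\underline{w}}$ and $\underline{I}_{t_2,\underline{w}}$), both would project to the same stable interval $I_{t_1,\underline{w}}=I_{t_2,\underline{w}}$, and more precisely the definition of $\underline{I}_{t,\underline{w}}$ in \ref{Def: Stable intervals of codes} only fixes the non-negative part, so if in addition the negative parts agreed the two $\sigma^{t}(\underline{w})$ would be literally equal — contradiction. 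Actually the cleaner route: two codes projecting to the same point of $S$ are sector codes of that point (Lemma \ref{Lemm: every code is sector code }); since $f^{t}(p)$ ($t=t_1$ say, identifying via the iteration) is a $u$-boundary point that is not $s$-boundary (because $\underline{w}$ is not $s$-boundary, so by Lemma \ref{Lemm: who the intervals intersect }$(iii)$ the multiplicity of codes forces $u$-boundary-but-not-$s$-boundary), the codes' negative parts already differ or the positive parts differ. Using that the non-negative parts were chosen to match in the construction of $I_{t,\underline{w}}$ — wait, here $\sigma^{t_1}(\underline{w})$ and $\sigma^{t_2}(\underline{w})$ are themselves the codes under consideration, not the auxiliary leaf codes — the hypothesis is merely that they project to the same point while being distinct. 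So they are two distinct sector codes of $f^t(p)$, hence they do differ somewhere; let $M\geq 1$ be minimal with $w_{t_1+M}\neq w_{t_2+M}$ among $m\geq 1$ — and one checks $w_{t_1}=w_{t_2}=i$ is given, so $M\geq 1$ and the minimality gives $w_{t_1+m}=w_{t_2+m}$ for $0\leq m\leq M-1$. That $M\leq P-1$ follows from periodicity: both codes are periodic of period $P$, so if they agreed on $\{0,\dots,P-1\}$ on the positive side they would agree everywhere positive, hence (being periodic) everywhere, contradicting distinctness — so $M<P$.

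For item $ii)$, I would argue by induction on $m$ from $0$ to $M-2$ that $j_{t_1+m,\underline{w}}=j_{t_2+m,\underline{w}}$. The index $j_{t,\underline{w}}$ is, by Lemma \ref{Lemm: unique hriwontal for every prjection}, the unique $j$ with $(w_t,j)\in\cH(T)$ and $\rho_T(w_t,j)=(w_{t+1},\cdot)$ — so $j_{t,\underline{w}}$ depends \emph{only} on the pair $(w_t,w_{t+1})$. For $0\leq m\leq M-2$ we have $w_{t_1+m}=w_{t_2+m}$ and $w_{t_1+m+1}=w_{t_2+m+1}$ (since $m+1\leq M-1$), hence the two defining pairs coincide and therefore $j_{t_1+m,\underline{w}}=j_{t_2+m,\underline{w}}$. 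The equality of $\epsilon_T$ values is then immediate since $\epsilon_T(w_{t_1+m},j_{t_1+m,\underline{w}})=\epsilon_T(w_{t_2+m},j_{t_2+m,\underline{w}})$ because all four arguments agree. Item $iii)$ is the key point and the one I expect to be the main obstacle: I must show $j_{t_1+M-1,\underline{w}}\neq j_{t_2+M-1,\underline{w}}$. Here the two codes still agree at position $M-1$ (i.e. $w_{t_1+M-1}=w_{t_2+M-1}=:k$) but differ at position $M$. By definition, $j_{t_1+M-1,\underline{w}}$ is the unique $j$ with $\rho_T(k,j)=(w_{t_1+M},\cdot)$ and $j_{t_2+M-1,\underline{w}}$ is the unique $j'$ with $\rho_T(k,j')=(w_{t_2+M},\cdot)$; since $A(T)$ is binary, the map $j\mapsto\xi_T(k,j)$ sending a horizontal sub-rectangle index of $R_k$ to the target rectangle index is \emph{injective} (each target rectangle is hit by at most one horizontal sub-rectangle of $R_k$). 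As $w_{t_1+M}\neq w_{t_2+M}$, injectivity forces $j_{t_1+M-1,\underline{w}}\neq j_{t_2+M-1,\underline{w}}$.

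So the real content is just: (a) locating $M$ via periodicity plus the sector-code characterization of codes with equal projection, and (b) the observation that binariness of $A(T)$ makes $\xi_T(k,\cdot)$ injective, which is what separates the $j$-indices at the divergence step while keeping them equal before it. The main obstacle I anticipate is making step (a) airtight — specifically, being careful that "distinct codes with the same projection" really does entail a divergence in the \emph{positive} coordinates within one period. I would handle this by invoking Lemma \ref{Lemm: who the intervals intersect }$(iii)$ together with Lemma \ref{Lemm: every code is sector code }: a point with several codes is either $s$-boundary (excluded) or has its multiple codes distinguished by how they sit on the two sides of a $u$-arc, and the discussion preceding Lemma \ref{Lemm: who the intervals intersect } shows the two relevant codes $\sigma^{t_1}(\underline{w}),\sigma^{t_2}(\underline{w})$ are precisely such a pair, whose stable-leaf codes $\underline{I}_{t_\bullet,\underline{w}}$ land in disjoint stable separatrices — forcing a positive-coordinate divergence. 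Everything else is bookkeeping with the definitions of $j_{t,\underline{w}}$ and $\epsilon_T$.
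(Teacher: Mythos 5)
Your proof is correct and follows essentially the same route as the paper: item (i) by the periodicity argument (agreement of the two shifted codes on a full period $\{0,\dots,P-1\}$ would force them to coincide everywhere), item (ii) because the binary incidence matrix makes $j_{t,\underline{w}}$ a function of the consecutive pair $(w_t,w_{t+1})$ alone, hence also the equality of the $\epsilon_T$ values, and item (iii) because $\rho_T(k,\cdot)$ cannot send a single index to two different target rectangles. One caution: your backup route through sector codes misreads the hypothesis — the lemma does not assume $\pi_f(\sigma^{t_1}(\underline{w}))=\pi_f(\sigma^{t_2}(\underline{w}))$; the points $f^{t_1}(p)$ and $f^{t_2}(p)$ are in general distinct points of the orbit that merely lie in the same rectangle $R_i$ — but this detour is unnecessary, since the periodicity argument you give already produces the divergence at some positive index $M\le P-1$.
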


\begin{proof}
\textbf{Item} $i)$. Since $\sigma^{t_1}(\underline{w})\neq \sigma^{t_2}(\underline{w})$, and $\underline{w}$ has a period of $P$, there must exist an integer $M \in \{0, \cdots, P-1\}$ such that $w_{t_1+M} \neq w_{t_2+M}$, and $M$ is chosen as the smallest integer with this property. We claim that $M\neq 0$ because $w_{t_1+0}=w_{t_2+0}$. Since $M$ is the minimum integer where $w_{t_1+M} \neq w_{t_2+M}$, then for all $0\leq m<M$, it holds that $w_{t_1+m}= w_{t_2+m}$.

\textbf{Item} $ii)$.If $M=1$, then $w_{t_1+1} \neq w_{t_2+1}$. Since the incidence matrix $A$ is binary, this implies that $j_{t_1,\underline{w}} \neq j_{t_2,\underline{w}}$.

Now, let's consider the case when $M>1$. For each $m \in \{0, \cdots, M-2\}$, the indices $j_{t_1+m,\underline{w}}$ and $j_{t_2+m,\underline{w}}$ are the only ones that satisfy $\rho_T(w_{t_1+m},j_{t_1+m,\underline{w}})=(w_{t_1+m+1},l_m)$ and $\rho_T(w_{t_2+m},j_{t_2+m,\underline{w}})=(w_{t_2+m+1},l'_m)$. However, since $w_{t_1+m+1}=w_{t_2+m+1}$, it follows that $j_{t_1+m,\underline{w}}=j_{t_2+m,\underline{w}}$. Consequently, we have:
 $$
\epsilon_T(w_{t_1+m},j_{t_1 +m, \underline{w}})=\epsilon_T(w_{t_2+m},j_{t_2 +m, \underline{w}}).
$$

\textbf{Item} $iii)$. Remember that $\rho_T(w_{t_1 +M-1},j_{t_1+M-1,\underline{w}})=(w_{t_1+M},l)$ and $\rho_T(w_{t_2 +M-1},j_{t_2+M-1,\underline{w}})=(w_{t_2+M},l')$. Since $w_{t_1+M}\neq w_{t_2+M}$ and the incidence matrix of $T$ is binary, it is necessarily the case that $j_{t_1+M-1,\underline{w}}\neq j_{t_2+M-1,\underline{w}}$.
\end{proof}

\begin{defi}\label{Defi: Intetchage order funtion}
Let $T$ be a geometric type in the pseudo-Anosov class with a binary incidence matrix $A$. Suppose $\underline{w}\in \Sigma_A$ is a periodic code of period $P$ that is not an $s$-boundary code. Assume that $\sigma^{t_1}(\underline{w}) \neq \sigma^{t_2}(\underline{w})$ but $w_{t_1} = w_{t_2}=i$. Let $M$ be the number determined in Lemma \ref{Lemm: diferentation moment of the order}. We define the \emph{interchange order} between $(t_1, \underline{w})$ and $(t_2, \underline{w})$ as follows:

\begin{itemize}
\item If $M=1$ then $\delta((t_1,\underline{w}), (t_2,\underline{w})):=1$.
\item If $M>1$ then
$$
\delta((t_1,\underline{w}), (t_2,\underline{w}))=\prod_{m=0}^{M-2}\epsilon_T(w_{t_1+m},j_{t_1 +m, \underline{w}})=\prod_{m=0}^{M-2}\epsilon_T(w_{t_2+m},j_{t_2 +m, \underline{w}}).
$$
\end{itemize}
\end{defi}

\begin{lemm}\label{Lemm: Comparacion de ordenes}
Let $T$ be a geometric type in the pseudo-Anosov class with a binary incidence matrix $A$. Suppose $\underline{w}\in \Sigma_A$ is a periodic code of period $P\geq 1$ that is not an $s$-boundary code. Assume that $\sigma^{t_1}(\underline{w}) \neq \sigma^{t_2}(\underline{w})$ but $w_{t_1} = w_{t_2}=i$. Consider a realization $(f,\cR)$ of the geometric type $T$ by a pseudo-Anosov homeomorphism $f:S\to S$. Let $I_{t_1,\underline{w}}$ and $I_{t_2,\underline{w}}$ be the two stable segments determined by the numbers $t_1$ and $t_2$. Then, $I_{t_1,\underline{w}} < I_{t_2,\underline{w}}$ with respect to the vertical orientation of $R_i\in \cR$ if and only if one of the following situations occurs:
 \begin{itemize}
 	\item $j_{t_1+M-1}<j_{t_2+M-1}$ and $\delta((t_1,\underline{w}), (t_2,\underline{w}))=1$, or
	\item $j_{t_1+M-1}>j_{t_2+M-1}$ and $\delta((t_1,\underline{w}), (t_2,\underline{w}))=-1$, 
 \end{itemize}

\end{lemm}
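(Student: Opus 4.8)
The statement says that the vertical order of the two stable intervals $I_{t_1,\underline{w}}$ and $I_{t_2,\underline{w}}$ inside $R_i$ is completely governed by the comparison of the indices $j_{t_1+M-1,\underline{w}}$ and $j_{t_2+M-1,\underline{w}}$, corrected by the interchange order $\delta((t_1,\underline{w}),(t_2,\underline{w}))$. The plan is to reduce the problem, via a well-chosen iterate of $f$, to the already established Lemma~\ref{Lemm: first order in O(i)}, which handles precisely the case $M=1$.

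First I would observe that the order relation $<$ between stable intervals of $R_i$ interacts well with the dynamics. Concretely, if $I<I'$ are two stable intervals both contained in a common horizontal sub-rectangle $H^i_j$ of $(f,\cR)$, then (since $f$ restricted to $H^i_j$ is a homeomorphism onto the vertical sub-rectangle $V^k_l=f(H^i_j)$ that either preserves or reverses the vertical orientation according to $\epsilon_T(i,j)$) we have $f(I)<f(I')$ when $\epsilon_T(i,j)=1$ and $f(I)>f(I')$ when $\epsilon_T(i,j)=-1$. I would package this as a short preliminary lemma, proved directly from Definition~\ref{Defi: vertical order of intervals} and Remark~\ref{Rema: well defined order intervals} together with the definition of $\epsilon_T$. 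Iterating this observation along the common initial block of $\sigma^{t_1}(\underline{w})$ and $\sigma^{t_2}(\underline{w})$ is the heart of the argument.

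Next I would apply this to the intervals $I_{t_1,\underline{w}}$ and $I_{t_2,\underline{w}}$. By Lemma~\ref{Lemm: diferentation moment of the order}, for all $m\in\{0,\dots,M-2\}$ the two intervals $f^m(I_{t_1,\underline{w}})\subset I_{t_1+m,\underline{w}}$ and $f^m(I_{t_2,\underline{w}})\subset I_{t_2+m,\underline{w}}$ (using Lemma~\ref{Lemm: image of stable intervals}) lie in the \emph{same} horizontal sub-rectangle $H^{w_{t_1+m}}_{j_{t_1+m,\underline{w}}}=H^{w_{t_2+m}}_{j_{t_2+m,\underline{w}}}$ of $(f,\cR)$, because $w_{t_1+m}=w_{t_2+m}$ and $j_{t_1+m,\underline{w}}=j_{t_2+m,\underline{w}}$ there. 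Hence applying the preliminary lemma $M-1$ times, the order between $I_{t_1,\underline{w}}$ and $I_{t_2,\underline{w}}$ is reversed exactly $\#\{m\in\{0,\dots,M-2\}:\epsilon_T(w_{t_1+m},j_{t_1+m,\underline{w}})=-1\}$ times; that is, $I_{t_1,\underline{w}}<I_{t_2,\underline{w}}$ iff $f^{M-1}(I_{t_1,\underline{w}})$ and $f^{M-1}(I_{t_2,\underline{w}})$ are in the order prescribed by the sign $\delta((t_1,\underline{w}),(t_2,\underline{w}))=\prod_{m=0}^{M-2}\epsilon_T(w_{t_1+m},j_{t_1+m,\underline{w}})$ (Definition~\ref{Defi: Intetchage order funtion}). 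At step $M-1$ the intervals $f^{M-1}(I_{t_1,\underline{w}})\subset I_{t_1+M-1,\underline{w}}$ and $f^{M-1}(I_{t_2,\underline{w}})\subset I_{t_2+M-1,\underline{w}}$ now sit in the rectangle $R_{w_{t_1+M-1}}$ with $w_{t_1+M-1}=w_{t_2+M-1}$ but with distinct horizontal-sub-rectangle indices $j_{t_1+M-1,\underline{w}}\neq j_{t_2+M-1,\underline{w}}$ (Lemma~\ref{Lemm: diferentation moment of the order}(iii)); by Lemma~\ref{Lemm: first order in O(i)}, $f^{M-1}(I_{t_1,\underline{w}})<f^{M-1}(I_{t_2,\underline{w}})$ iff $j_{t_1+M-1,\underline{w}}<j_{t_2+M-1,\underline{w}}$. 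Combining these two facts gives exactly the two alternatives in the statement.

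I expect the main obstacle to be bookkeeping rather than a genuine difficulty: one has to be careful that $f^m(I_{t_1,\underline{w}})$ is really a (nonempty) stable subinterval of $I_{t_1+m,\underline{w}}$ and lies in the interior of the relevant horizontal sub-rectangle, so that Lemma~\ref{Lemm: first order in O(i)} and the orientation bookkeeping apply cleanly. This follows from Corollary~\ref{Coro: projetion stable segments} (the intervals avoid the stable boundary of the $H^i_j$, since $\underline{w}$ is not an $s$-boundary code) together with Lemma~\ref{Lemm: image of stable intervals}; I would state it as an explicit sub-claim before running the induction. The orientation-tracking preliminary lemma is the only new ingredient, and once it is in place the argument is a clean induction on $m$ terminating in an application of Lemma~\ref{Lemm: first order in O(i)}.
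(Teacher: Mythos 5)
Your proposal is correct and follows essentially the same route as the paper: track the pair of intervals (equivalently, the strip they bound) under $f^m$ along the common block $m=0,\dots,M-2$, where the shared indices $j_{t_1+m,\underline{w}}=j_{t_2+m,\underline{w}}$ keep them in one horizontal sub-rectangle and the orientation changes multiply to $\delta((t_1,\underline{w}),(t_2,\underline{w}))$, then conclude at step $M-1$ with Lemma~\ref{Lemm: first order in O(i)} since $j_{t_1+M-1,\underline{w}}\neq j_{t_2+M-1,\underline{w}}$. Your explicit order-preservation sub-lemma and induction are just a cleaner packaging of the boundary-tracking the paper does with the rectangle $H$.
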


\begin{proof}
The case when $M=1$ reduces to Lemma \ref{Lemm: first order in O(i)}, so let's assume that $M>1$. Assume that $I_{t_1,\underline{w}} < I_{t_2,\underline{w}}$ and call $H$ the horizontal sub-rectangle determined by such stable intervals. For all $0\leq m \leq M-2$, $j_{t_1+m}=j_{t_2+m}$ and $f^{m}(H)$ is contained in a unique horizontal sub-rectangle of $R_{w_{t_1+m}}$. In this case, after applying $f^{M-1}$ to $H$, we have two possibilities:

\begin{itemize}
\item[i)]  $j_{t_1+M-1}<j_{t_2+M-1}$ and $f^{M-1}$ preserves the vertical orientation, which happens if and only if $\delta((t_1,\underline{w}), (t_2,\underline{w}))=1$, or
\item[ii)]  $j_{t_1+M-1}>j_{t_2+M-1}$ and $f^{M-1}$ changes the vertical orientations, which happens if and only if $\delta((t_1,\underline{w}), (t_2,\underline{w}))=-1$.
\end{itemize}

Now, assume that $j_{t_1+M-1}<j_{t_2+M-1}$ and $\delta((t_1,\underline{w}), (t_2,\underline{w}))=1$. Let  $H$ be  the horizontal sub-rectangle of $R_{w_{t_1}}$ determined by the intervals $I_{t_1,\underline{w}}$ and $I_{t_2,\underline{w}}$, we need to determine what of such intervals is the upper boundary of $H$.

The number $\delta((t_1,\underline{w}), (t_2,\underline{w}))$ measures the change in the relative positions of $f^{M-1}(I_{t_1,\underline{w}})$ and $f^{M-1}(I_{t_2,\underline{w}})$ inside the rectangle $R_{w_{t_1+M-1}}$. In this case, the relative position of $f^{M-1}(I_{t_1,\underline{w}})$ and $f^{M-1}(I_{t_2,\underline{w}})$ inside $R_{w_{t_1+M-1}}$ is the same as the relative position of $I_{t_1+M-1,\underline{w}}$ and $I_{t_2+M-1,\underline{w}}$ inside $R_i$. Since $\delta((t_1,\underline{w}), (t_2,\underline{w}))=1$, the upper boundary of $f^{M-1}(H)$ coincides with the image of the upper boundary of $H$ and similarly, the inferior boundary of $H$ corresponds to the inferior boundary of $f^{M-1}(H)$.

Furthermore, since $j_{t_1+M-1}<j_{t_2+M-1}$ and according to Lemma \ref{Lemm: first order in O(i)}, $I_{t_1+M-1,\underline{w}} < I_{t_2+M-1,\underline{w}}$. So, we can conclude that the inferior boundary of $f^{M-1}(H)$ is contained within $I_{t_1+M-1,\underline{w}}$ and then the inferior boundary of $H$ is $I_{t_1,\underline{w}}$, and its upper boundary is $I_{t_2,\underline{w}}$. In this manner, we have $I_{t_1,\underline{w}}<I_{t_2,\underline{w}}$ with respect to the vertical order in $R_i$.

Let's consider a scenario where $j_{t_1+M-1}>j_{t_2+M-1}$ and $\delta((t_1,\underline{w}), (t_2,\underline{w}))=-1$. In this case, we take $H$ as the horizontal sub-rectangle of $R_{w_{t_1}}$ determined by the intervals $I_{t_1,\underline{w}}$ and $I_{t_2,\underline{w}}$.
The negative value of $\delta((t_1,\underline{w}), (t_2,\underline{w}))$ indicates that $f^{M-1}\vert_H$ alters the relative positions of the stable boundaries of $H$ with respect to the vertical order in $R_i$. Consequently, the upper boundary of $f^{M-1}(H)$ is the image of the inferior boundary of $H$, and simultaneously, the lower boundary of $f^{M-1}(H)$ is the image  of the upper boundary of $H$.

Given that $j_{t_1+M-1}>j_{t_2+M-1}$, it follows that the lower boundary of $f^{M-1}(H)$ is enclosed within $H^{w_{t_1+M-1}}_{j_{t_1+M-1,\underline{w}}}$, and the upper boundary is encompassed within $H^{w_{t_2+M-1}}_{j_{t_2+M-1,\underline{w}}}$. Therefore by  Corollary \ref{Coro: projetion stable segments}  we have that $I_{t_1+M-1,\underline{w}} > I_{t_2+M-1,\underline{w}}$, and the upper boundary of $f^{M-1}(H)$ is confined within $I_{t_1+M-1,\underline{w}}$.
 Using  Lemma \ref{Lemm: image of stable intervals}, we can deduce that   $f^{M-1}(I_{t_1,\underline{w}})\subset I_{t_1+M-1,\underline{w}}$  and then that $I_{t_1,\underline{w}}$ is the inferior boundary of $H$. Finally we get that: $I_{t_1,\underline{w}}<I_{t_2,\underline{w}}$ with respect to the vertical order in $R_i$.
 
\end{proof}

We will now establish a formal ordering for the set of stable segments within $R_i$, which constitute the stable boundaries of the rectangles contained in $\cR_{S(\underline{w})}$ that also lie within $R_i$.

\begin{defi}\label{Lemm: Boundaries code}
Let $R_i\in\cR$ be any rectangle, denote $I_{i,-1}:=\partial^s_{-1} R_i$ as the lower boundary of $R_i$, and $I_{i,+1}:=\partial^s_{+1} R_i$ as the upper boundary of $R_i$. Define:
$$
\underline{s}:\cO(i,\underline{w})\cup \{(i,+1), (i,-1) \} \rightarrow \{0,1,\cdots,O(i,\underline{w}),O(i,\underline{w})+1\},
$$
as the unique function such that:
\begin{itemize}
\item $\underline{s}(t_1,\underline{w})< \underline{s}(t_1,\underline{w})$ if and only if  $I_{t_1,\underline{w}}< I_{t_1,\underline{w}}$ respect the vertical  order in $R_i$.
\item It assigns $0$ to   $(i,-1)$ and  $O(i,\underline{w})+1$ to  $(i,+1)$.
\end{itemize}

We call this function $\underline{s}$ \emph{the order induced} in $\cO(i,\underline{w})$ by the geometric type $T$.

\end{defi}

The boundaries of any rectangle $\tilde{R}_r$ of $\cR_{S(\underline{w})}$ contained in $R_i$ are defined by two consecutive horizontal intervals in $\cO(i,\underline{w})\cup \{I_{i,-1},I_{i,+1}\}$. The previous order enables us to describe them accurately based on the relative position of $\tilde{R}_r$ inside $R_i$.

\begin{lemm}\label{Lemm: Determine boundaris of R-r}

Let $\tilde{R}_r$ be a rectangle in the geometric Markov partition $\cR_{S(\underline{w})}$ with $r=\tilde{r}(i,s)$. Then, the stable boundary of $\tilde{R}_r$ consists of two stable segments of $R_i$ determined as follows:

\begin{itemize}
\item[i)] If $s=1$, the lower boundary of $\tilde{R}_r$ is $I_{i,-1}$. If  $O(i,\underline{w})=0$, then its upper boundary is $I_{i,+1}$. However, if $O(i,\underline{w})>0$, there exists a unique $(t,\underline{w})\in \cO(i,\underline{w})$ such that $\underline{s}(t,\underline{w})=1$, and the upper boundary of $\tilde{R}_r$ is the stable segment $I_{t,\underline{w}}$.

\item[ii)] If $O(i,\underline{w})>1$ and $s \neq 1, O(i,\underline{w})+1$, there are unique $(t_1,\underline{w}), (t_2,\underline{w})\in \cO(i,\underline{w})$ such that: $\underline{s}(t_1,\underline{w})=s-1$, $\underline{s}(t_2,\underline{w})=s$. Furthermore, the lower boundary of $\tilde{R}_r$ is the stable segment $I_{t_1,\underline{w}}$, and the upper boundary of $\tilde{R}_r$ is the stable segment $I_{t_2,\underline{w}}$.

\item[iii)] If $O(i,\underline{w})>0$ and $s=O(i,\underline{w})+1$, then the upper boundary of $\tilde{R}_r$ is the stable segment $I_{i,+1}$. Additionally, there exists a unique $(t,\underline{w})\in \cO(i,\underline{w})$ such that $\underline{s}(t,\underline{w})=O(i,\underline{w})$, and the inferior boundary of $\tilde{R}_r$ is the stable segment $I_{t,\underline{w}}$.
\end{itemize}

\end{lemm}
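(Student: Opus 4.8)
The statement is really a bookkeeping lemma: it unwinds the two labelling conventions (the lexicographic labelling of $\cR_{S(\underline w)}$ from Definition \ref{Defi: Label of R-S(w)}, and the order $\underline s$ on $\cO(i,\underline w)$ from Definition \ref{Lemm: Boundaries code}) and checks that the rectangle sitting at vertical position $s$ inside $R_i$ has the stable boundaries indicated. The plan is to prove it by a direct identification of the connected components of $\overset{o}{R_i}\setminus\cup\cI(i,\underline w)$ with the intervals determined by consecutive values of $\underline s$, treating the three cases ($s=1$, $1<s<O(i,\underline w)+1$, $s=O(i,\underline w)+1$) exactly as displayed. The only genuine input is Lemma \ref{Lemm: rectangles in R S(p)}, which already says every component of $\overset{o}{R_i}\setminus\cup\cI(i,\underline w)$ is (the interior of) a horizontal sub-rectangle of $R_i$, together with Corollary \ref{Coro: projetion stable segments} (each $I_{t,\underline w}$ lies in the interior of a horizontal sub-rectangle of $(f,\cR)$, hence strictly between $I_{i,-1}$ and $I_{i,+1}$) and Lemma \ref{Lemm: who the intervals intersect } (the intervals $I_{t,\underline w}\subset R_i$ have pairwise disjoint interiors, since $\underline w$ is not $s$-boundary).

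\textbf{Step 1.} First I would record the structural picture inside a fixed rectangle $R_i\in\cR$. The set $\cI(i,\underline w)=\{I_{t,\underline w}: w_t=i\}$ has exactly $O(i,\underline w)$ elements (by Remark \ref{Rema; finite intervals} the map $(t,\underline w)\mapsto I_{t,\underline w}$ factors through $\cO(i,\underline w)$, and on $\cO(i,\underline w)$ it is injective because distinct iterations give distinct stable segments with disjoint interiors by Lemma \ref{Lemm: who the intervals intersect }). By Corollary \ref{Coro: projetion stable segments} none of these intervals meets $\partial^s R_i$. Hence $\{I_{i,-1}\}\cup\cI(i,\underline w)\cup\{I_{i,+1}\}$ is a totally ordered family of $O(i,\underline w)+2$ disjoint horizontal segments of $R_i$ with respect to the vertical order of Definition \ref{Defi: vertical order of intervals} (well-defined by Remark \ref{Rema: well defined order intervals}), whose minimum is $I_{i,-1}$ and maximum is $I_{i,+1}$. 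The function $\underline s$ of Definition \ref{Lemm: Boundaries code} is by construction the order isomorphism of this family onto $\{0,1,\dots,O(i,\underline w)+1\}$, so it makes sense to speak of the element of $\cO(i,\underline w)$ (or of $\{(i,\pm1)\}$) with prescribed $\underline s$-value, and that value is unique.

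\textbf{Step 2.} Next I would match connected components with consecutive $\underline s$-levels. Cutting $R_i$ along the $O(i,\underline w)$ interior segments produces exactly $O(i,\underline w)+1$ horizontal sub-rectangles (this is Lemma \ref{Lemm: Number  of rectangles in a rec of the s refinaent}), and each such sub-rectangle $\tilde R$ has, as its stable boundary, precisely the pair of segments consecutive in the total order of Step 1; that is, if $\tilde R$ is bounded below by the segment at $\underline s$-level $a$ then it is bounded above by the segment at level $a+1$, for a unique $a\in\{0,\dots,O(i,\underline w)\}$. Now invoke the labelling of Definition \ref{Defi: Label of R-S(w)}: the rectangles contained in $R_i$ are indexed $\tilde R_{(i,1)},\dots,\tilde R_{(i,N_i)}$ from bottom to top with respect to the vertical orientation of $R_i$, and $N_i=O(i,\underline w)+1$. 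So the rectangle with vertical position $s$ is exactly the sub-rectangle whose lower boundary is the segment at $\underline s$-level $s-1$ and whose upper boundary is the segment at $\underline s$-level $s$. Translating ``$\underline s$-level $0$ $=$ $I_{i,-1}$'', ``$\underline s$-level $O(i,\underline w)+1$ $=$ $I_{i,+1}$'', and ``intermediate $\underline s$-level $=$ some $I_{t,\underline w}$ with $\underline s(t,\underline w)$ equal to that level'' yields verbatim the three cases $i)$, $ii)$, $iii)$ of the statement, including the degenerate subcase $O(i,\underline w)=0$ in item $i)$ where $\tilde R_r=R_i$ and its boundaries are $I_{i,\pm1}$. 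Finally one remarks that $r=\tilde r(i,s)$ with $(i,s)$ recoverable from $r$ by Lemma \ref{lemm: determinating r=(i,s)}, so the hypothesis ``$r=\tilde r(i,s)$'' is legitimate.

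\textbf{Main obstacle.} There is no deep obstacle; the work is entirely in making the translation between the two orderings airtight. The one point that needs a little care is the well-definedness and uniqueness claims (``there exists a unique $(t,\underline w)\in\cO(i,\underline w)$ with $\underline s(t,\underline w)=s-1$'', etc.): these rest on the injectivity of $(t,\underline w)\mapsto I_{t,\underline w}$ on $\cO(i,\underline w)$ and on the fact — guaranteed precisely by the hypothesis that $\underline w$ is \emph{not} an $s$-boundary code (cf.\ Remark \ref{Rema: Why not s boundary code?} and Corollary \ref{Coro: projetion stable segments}) — that the interior segments never degenerate onto $\partial^s R_i$ or onto each other. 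Once these finiteness facts are in place, the three-case description is immediate from the definition of the lexicographic labelling.
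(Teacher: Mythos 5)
Your proof is correct and follows essentially the same route as the paper: both arguments identify the rectangle at vertical position $s$ inside $R_i$ with the strip bounded by the segments at consecutive $\underline{s}$-levels $s-1$ and $s$ in the ordered family $\{I_{i,-1}\}\cup\cI(i,\underline{w})\cup\{I_{i,+1}\}$, and then read off the three cases. Your Step 1 merely makes explicit the injectivity and disjointness facts that the paper leaves implicit, which is a harmless refinement rather than a different method.
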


\begin{proof}
	Recall that $s$ is the vertical position of $\tilde{R}_r$ inside $R_i$ but $\tilde{s}(t,\underline{w})$ is the position of the horizontal interval $I_{t,\underline{w}}$ inside $R_i$.
	
\textbf{Item } $i)$. Since $s=1$, within the vertical ordering of $R_i$, $\tilde{R}_r$ represents the first rectangle from $\cR_{S(\underline{w})}$ contained within $R_i$. Therefore, its lower boundary coincides with the lower boundary of $R_i$, which is $I_{i,-1}$.

If $O(i,\underline{w})=0$, then $\tilde{R}_r=R_i$, and it's the only rectangle from $\cR_{S(\underline{w})}$ within $R_i$. Consequently, the upper boundary of $\tilde{R}_r$ must be $I_{i,+1}$.

However, if $O(i,\underline{w})>0$, the upper boundary of $\tilde{R})_r$ must correspond to a stable interval $I_{t,\underline{w}}\in \cI(i,\underline{w})$, which occupies the first position according to the order $\underline{s}$; hence, $\underline{s}(t,\underline{w})=1$.

\textbf{Item } $ii)$. Since $\tilde{R}_r$ occupies the vertical position $1 < s < O(i,\underline{w})+1$ among all the rectangles in $\cR_{S(\underline{w})}$ within $R_i$, its stable boundaries cannot coincide with $I_{i,-1}$ or $I_{i,+1}$. Therefore, its stable boundary is defined by two consecutive stable segments in $\cI(i,\underline{w})$, denoted as $I_{t_1,\underline{w}} < I_{t_2,\underline{w}}$. Furthermore, the interval $I_{t_1,\underline{w}}$ holds the position $s-1$ relative to the elements in $\cI(i,\underline{w})\cup {I_{i,+1}, I_{i,-1}}$, and $I_{t_2,\underline{w}}$ is positioned at $s$. This leads to the conclusion that $\underline{s}(t_1,\underline{w})=s-1$ and $\underline{s}(t_2,\underline{w})=s$.

\textbf{Item }  $iii)$.Since $s=O(i,\underline{w})+1$, $\tilde{R}_r$ is the upper rectangle of $\cR_{S(\underline{w})}$ contained in $R_i$. Consequently, its upper boundary is $I_{i,+1}$, and its lower boundary is the segment $I_{t,\underline{w}}$ located in position $O(i,\underline{w})$ among the elements of $\cI(i,\underline{w})\cup \{I_{i,+1}, I_{i,-1}\}$. Therefore, $\underline{s}(t,\underline{w})=O(i,\underline{w})$.

\end{proof}

Let's determine the image of the stable boundaries of a rectangle $R_r$. We are going to use Lemma \ref{Lemm: image of stable intervals} as our main tool.

\begin{lemm}\label{Lemm: imge of boundaries}

Let $\tilde{R}_r$, where $r=\tilde{r}(i,s)$, be a rectangle in the geometric Markov partition $\cR_{S(\underline{w})}$. We can determine the image of the horizontal boundary of $\tilde{R}_r$ under $f$ in the following manner:
	
	\begin{itemize}
		\item[i)] If the lower boundary of $\tilde{R}_r$ is $I_{i,-1}$ and $\Phi_T(i,1)=(k,l,\epsilon)$, then $f(I_{i,-1})$ is contained within $I_{k,-1}$ if $\epsilon=1$, and it is contained within $I_{k,+1}$ otherwise.

		\item[ii)] If one boundary component of $\tilde{R}_r$ is the stable segment $I_{t,\underline{w}}$, then $f(I_{t,\underline{w}})\subset I_{t+1,\underline{w}}$.
		
		\item[iii)]  If $I_{i,+1}$ is the upper boundary of $\tilde{R}_r$ and $\Phi_T(i,h_i)=(k,l,\epsilon)$, then $f(I_{i,+1})$ is contained in $I_{k,+1}$ if $\epsilon=1$ and is contained in $I_{k,-1}$ otherwise.

	\end{itemize}

\end{lemm}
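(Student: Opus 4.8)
\textbf{Proof plan for Lemma \ref{Lemm: imge of boundaries}.}
The plan is to treat each of the three boundary components of $\tilde{R}_r$ separately, using the combinatorial bookkeeping already established and the fact that $\cR_{S(\underline{w})}$ is a Markov partition whose stable boundary is $f$-invariant (Lemma \ref{Lemm: Markov partition }). The three items correspond to the three kinds of stable segments that can arise as a boundary component of a rectangle of $\cR_{S(\underline{w})}$: a piece of $\partial^s_{-1} R_i$, a piece of $\partial^s_{+1} R_i$, or one of the induced stable intervals $I_{t,\underline{w}}$ from $\cI(\underline{w})$, according to the description in Lemma \ref{Lemm: Determine boundaris of R-r}.

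For item $(ii)$ there is essentially nothing to do beyond invoking the already-proved Lemma \ref{Lemm: image of stable intervals}, which states precisely that $f(I_{t,\underline{w}})\subset I_{t+1,\underline{w}}$; so the content of the lemma is carried entirely by items $(i)$ and $(iii)$. For item $(i)$: the lower boundary $I_{i,-1}=\partial^s_{-1}R_i$ is the lower stable boundary of the horizontal sub-rectangle $H^i_1$ of the original partition $(f,\cR)$, since $H^i_1$ shares its lower boundary with $R_i$. If $\Phi_T(i,1)=(k,l,\epsilon)$, then $f(H^i_1)=V^k_l$, a vertical sub-rectangle of $R_k$, and $f$ sends the lower boundary of $H^i_1$ into one of the two horizontal boundary components of $V^k_l$, which are contained in $\partial^s R_k$. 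The sign $\epsilon=\epsilon_T(i,1)$ records whether the vertical orientation is preserved: if $\epsilon=1$ the image of the lower boundary is the lower horizontal boundary of $V^k_l$, hence contained in $\partial^s_{-1}R_k=I_{k,-1}$; if $\epsilon=-1$ it is the upper horizontal boundary, hence contained in $I_{k,+1}$. This is exactly the argument used in the proof of Proposition \ref{Prop: positive codes are boundary} (the $s$-generating function $\Gamma(T)$ encodes this very mechanism), so I would simply reproduce it in this local setting. Item $(iii)$ is the mirror image: $I_{i,+1}=\partial^s_{+1}R_i$ is the upper boundary of the horizontal sub-rectangle $H^i_{h_i}$, so with $\Phi_T(i,h_i)=(k,l,\epsilon)$ one has $f(H^i_{h_i})=V^k_l$, and $f(I_{i,+1})$ lies in the upper horizontal boundary of $V^k_l$ (contained in $I_{k,+1}$) when $\epsilon=1$, and in the lower one (contained in $I_{k,-1}$) when $\epsilon=-1$.

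The only subtlety — and the one mildly nontrivial point — is ensuring that the images $f(I_{i,\pm1})$ and $f(I_{t,\underline{w}})$ are genuinely contained in the named segments $I_{k,\pm1}$, $I_{t+1,\underline{w}}$ of the refined partition rather than merely meeting them; this is where one uses that $\cR_{S(\underline{w})}$ is a Markov partition with $f$-invariant stable boundary, together with the characterization in Lemma \ref{Lemm: who the intervals intersect } that each $I_{t,\underline{w}}$ is a single well-defined stable interval of $R_{w_t}$ contained in the interior of a horizontal sub-rectangle $H^{w_t}_{j_{t,\underline{w}}}$ (Corollary \ref{Coro: projetion stable segments}). There is no real obstacle here: the lemma is a direct unwinding of definitions and of results already in hand, so the ``hard part'' is only the notational care of matching the $\epsilon$-signs with the two horizontal boundary components of the target vertical sub-rectangle $V^k_l$, which is handled uniformly by the convention $\theta_T(i,-1)=1$, $\theta_T(i,1)=h_i$ introduced around Equation \ref{Equa: theta T relabel}.
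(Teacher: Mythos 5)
Your proposal is correct and follows essentially the same route as the paper: item (ii) is exactly an appeal to Lemma \ref{Lemm: image of stable intervals}, while items (i) and (iii) are the observation that $I_{i,\mp1}$ is the lower/upper boundary of $H^i_1$ (resp.\ $H^i_{h_i}$), whose image lands in the lower or upper stable boundary of $R_k$ according to the sign $\epsilon_T$ — the paper simply states this more tersely. Your extra remarks about the $f$-invariance of $\partial^s\cR_{S(\underline{w})}$ and the location of the $I_{t,\underline{w}}$ are consistent with, and slightly more explicit than, the paper's argument.
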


\begin{proof}
\textbf{Item} $i)$. It's evident that the image of $I_{i,-1}$ under $f$ will be contained either in the upper or lower boundary of $R_k$, depending on how $f$ alters the vertical orientation. Specifically, if $\epsilon=1$, then $f(I_{i,-1})$ is within the lower boundary of $R_k$, whereas if $\epsilon=-1$, it will be situated in the upper boundary of $R_k$.

\textbf{Item}  $ii)$. This is Lemma \ref{Lemm: image of stable intervals}.

\textbf{Item} $iii)$. The argument is the same as in Item $i)$.

\end{proof}

\subsection{The relative position of $H^i_j$ with respect to $\tilde{R}_r.$}

The number of horizontal sub-rectangles within $\tilde{R}_r$ can be determined by counting the number of sub-rectangles contained within each $H^i_j$ that intersect $\tilde{R}_r$   and then summing up all of these quantities. 

\begin{defi}\label{Defi cH(r) set}
	Let $\tilde{R}_r\in \cR_{S(\underline{w})}$ with $r=\tilde{r}(i,s)$. We define:
	$$
	\cH(r):=\{(i,j)\in \cH(T): \overset{o}{H^i_{j}}\cap \overset{o}{\tilde{R}_r}\neq \emptyset\}.
	$$
\end{defi}

If we know the stable segments that delineate $\tilde{R}_r$, we can determine the indices of the horizontal sub-rectangles within $\cR$ that contain the upper and lower boundaries of $\tilde{R}_r$. Let's assume  these horizontal sub-rectangles are in positions $j{-} < j{+}$. Thus, we can define $\cH(r)$ as the set of indices $(i, j)$ where $j$ ranges from $j_{-}$ to $j_{+}$. The following lemma provides a more detailed explanation of this observation.

\begin{lemm}\label{Lemm: determination cH(r)}
Let $\tilde{R}_r$ be a rectangle within the geometric Markov partition $\cR_{S(\underline{w})}$, where $r=\tilde{r}(i,s)$. There are three distinct scenarios to consider:
	
	\begin{itemize}
		\item[i)] If $O(i,\underline{w})=0$, then  $\cH(r)=\{(i,j)\in \cH(T)\}$.

		\item[ii)] If $s\neq 1$ and $s\neq O(i,\underline{w})+1$, we identify the lower boundary of $\tilde{R}_r$ as $I_{t_1,\underline{w}}$, and the upper boundary as $I_{t_2,\underline{w}}$ (given by Item $ii)$ of Lemma \ref{Lemm: Determine boundaris of R-r}), with corresponding indices $j_{t_1,\underline{w}}$ and $j_{t_2,\underline{w}}$ determined by Lemma \ref{Lemm: unique hriwontal for every prjection}. In this situation, $\cH(r)=\{(i,j)\in \cH(T): j_{t_1,\underline{w}} \leq j \leq j_{t_2,\underline{w}} \}$.

		\item[iii)]	If $O(i,\underline{w})>0$ and $s=O(i,\underline{w})+1$, we designate the upper boundary of $\tilde{R}_r$ as $I_{i,+1}$ and the lower boundary as $I_{t,\underline{w}}$, with $j_{t,\underline{w}}$ determined by Lemma \ref{Lemm: unique hriwontal for every prjection}. In this scenario, $\cH(r)=\{(i,j)\in \cH(T): j_{t,\underline{w}} \leq j \leq h_i \}$.
		
		\item[iv)]	If $O(i,\underline{w})>0$ and $s=1$, the inferior boundary of $\tilde{R}_r$ is $I_{i,-1}$ and its upper boundary is denoted by $I_{t,\underline{w}}$, where $j_{t,\underline{w}}$ was determined by Lemma \ref{Lemm: unique hriwontal for every prjection}. In this scenario, $\cH(r)=\{(i,j)\in \cH(T): 1 \leq j \leq j_{t,\underline{w}} \}$.
		
	\end{itemize}
	
\end{lemm}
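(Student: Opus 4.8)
The statement is a bookkeeping lemma whose content is entirely combinatorial once we understand two facts already established: first, that each rectangle $\tilde{R}_r\in\cR_{S(\underline{w})}$ is a horizontal sub-rectangle of a unique $R_i\in\cR$ (Lemma \ref{Lemm: rectangles in R S(p)}), and second, that its stable boundaries are explicitly identified in Lemma \ref{Lemm: Determine boundaris of R-r} as either boundary components $I_{i,\pm 1}$ of $R_i$ or stable intervals $I_{t,\underline{w}}\in\cI(i,\underline{w})$. The plan is to translate these boundary data into the index range defining $\cH(r)$, case by case, exactly along the four-way split of Lemma \ref{Lemm: Determine boundaris of R-r}.

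First I would record the guiding principle: a horizontal sub-rectangle $H^i_j$ of $(f,\cR)$ has interior meeting the interior of $\tilde{R}_r$ if and only if $H^i_j$ lies (weakly) between the two horizontal sub-rectangles of $R_i$ that contain the lower and upper stable boundaries of $\tilde{R}_r$. This is immediate from the trivial bi-foliated product structure of $\overset{o}{R_i}$: the horizontal sub-rectangles $\{H^i_j\}_{j=1}^{h_i}$ are linearly ordered by the vertical orientation of $R_i$ (Definition \ref{Defi: vertical order of intervals} and Remark \ref{Rema: well defined order intervals}), and $\tilde{R}_r$ is the sub-rectangle of $R_i$ cut out between two consecutive stable segments in $\cI(i,\underline{w})\cup\{I_{i,-1},I_{i,+1}\}$. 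Then $\overset{o}{H^i_j}\cap\overset{o}{\tilde{R}_r}\neq\emptyset$ precisely when $H^i_j$ is not strictly below the lower boundary segment of $\tilde{R}_r$ and not strictly above its upper boundary segment. Combined with Corollary \ref{Coro: projetion stable segments}, which locates each $I_{t,\underline{w}}$ inside the interior of the unique $H^i_{j_{t,\underline{w}}}$, this pins down the endpoints of the index range.

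The four cases are then routine: in case $(i)$, $O(i,\underline{w})=0$ forces $\tilde{R}_r=R_i$, so $\cH(r)$ is all of $\{(i,j)\in\cH(T)\}$. In case $(ii)$, by Item $ii)$ of Lemma \ref{Lemm: Determine boundaris of R-r} the lower boundary is $I_{t_1,\underline{w}}$ and the upper boundary is $I_{t_2,\underline{w}}$; since $I_{t_1,\underline{w}}\subset\overset{o}{H^i_{j_{t_1,\underline{w}}}}$ and $I_{t_2,\underline{w}}\subset\overset{o}{H^i_{j_{t_2,\underline{w}}}}$, and since $I_{t_1,\underline{w}}<I_{t_2,\underline{w}}$ implies $j_{t_1,\underline{w}}\leq j_{t_2,\underline{w}}$ (using Lemma \ref{Lemm: first order in O(i)} in the direction that distinct horizontal sub-rectangles give strictly ordered intervals, and noting equality is possible when both segments lie in the same $H^i_j$), the guiding principle gives $\cH(r)=\{(i,j): j_{t_1,\underline{w}}\leq j\leq j_{t_2,\underline{w}}\}$. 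Cases $(iii)$ and $(iv)$ are the boundary variants: one of the two delimiting segments is $I_{i,+1}$ (respectively $I_{i,-1}$), which forces the upper (respectively lower) index to be $h_i$ (respectively $1$), while the other endpoint is $j_{t,\underline{w}}$ as before; here I would invoke Item $iii)$ (respectively the symmetric statement, which should be added as ``Item $i)$'') of Lemma \ref{Lemm: Determine boundaris of R-r}.

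The only genuine subtlety — and the step I expect to require the most care — is the edge behaviour of the index range: whether the endpoints $j_{t_1,\underline{w}}$ and $j_{t_2,\underline{w}}$ are included, and whether the two delimiting stable segments of $\tilde{R}_r$ can lie inside the \emph{same} horizontal sub-rectangle $H^i_j$ of $(f,\cR)$. Since $\underline{w}$ is assumed not to be an $s$-boundary code, Corollary \ref{Coro: projetion stable segments} guarantees each $I_{t,\underline{w}}$ sits in the \emph{interior} of its $H^i_{j_{t,\underline{w}}}$, so $\overset{o}{H^i_{j_{t,\underline{w}}}}$ indeed meets $\overset{o}{\tilde{R}_r}$ on both sides, justifying the non-strict inequalities $j_{t_1,\underline{w}}\leq j\leq j_{t_2,\underline{w}}$ in the definition of $\cH(r)$. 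The degenerate possibility $j_{t_1,\underline{w}}=j_{t_2,\underline{w}}$ occurs exactly when two consecutive segments of $\cI(i,\underline{w})$ fall in the same $H^i_j$, and then $\cH(r)$ is the singleton $\{(i,j_{t_1,\underline{w}})\}$, consistent with the formula. I would close by remarking that each case is mutually exclusive and exhaustive given the value of $O(i,\underline{w})$ and the position $s$, so the four items of the lemma cover every rectangle of $\cR_{S(\underline{w})}$.
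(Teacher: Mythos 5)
Your proposal is correct and follows essentially the same route as the paper: identify the stable boundaries of $\tilde{R}_r$ via Lemma \ref{Lemm: Determine boundaris of R-r}, locate them in the interiors of the horizontal sub-rectangles $H^i_{j_{t,\underline{w}}}$ (Corollary \ref{Coro: projetion stable segments}), and conclude that $\cH(r)$ is exactly the interval of indices between the two delimiting sub-rectangles, with the paper's own proof being a terser version of this case-by-case argument. One small remark: the boundary data for the case $s=1$ is already provided by Item $i)$ of Lemma \ref{Lemm: Determine boundaris of R-r}, so no additional item needs to be added there.
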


\begin{proof}
	\textbf{Item} $i)$. If $O(i,\underline{w})=0$, then $\tilde{R}_r=R_i$, indicating that $\cH(r)$ must include the labels of all the horizontal sub-rectangles within the Markov partition $\cR$ that are contained in $R_i$. Thus, $\cH(r)=\{(i,j)\in \cH(T)\}$.
	
	\textbf{Item} $ii)$. The lower boundary of $\tilde{R}_r$ lies within $H^i_{j_{t_1,\underline{w}}}$, while the upper boundary is contained in $H^i_{j_{t_2,\underline{w}}}$. It's evident that all the horizontal sub-rectangles of the form  $H^i_j$ with $j_{t_1,\underline{w}}\leq j \leq j_{t_2,\underline{w}}$ are the only ones that satisfy that $\overset{o}{H^i_{j}}\cap \overset{o}{\tilde{R}_r}$. This implies that $\cH(r)=\{(i,j)\in \cH(T): j_{t_1,\underline{w}} \leq j \leq j_{t_2,\underline{w}} \}$.
		
	\textbf{Item} $iii)$. Every horizontal sub-rectangle $H^i_j$ with $j_{t,\underline{w}}\leq  j \leq \leq h_i$ satisfies that $\overset{o}{H^i_{j}}\cap \overset{o}{\tilde{R}_r}\neq \emptyset$, and these are the only horizontal sub-rectangles within $\cR$ with this property. Thus, $\cH(r)=\{(i,j)\in \cH(T): j_{t,\underline{w}} \leq j \leq h_i \}$.
	
	\textbf{Item} $iv)$. The argument is the same as in Item $iii)$.
			
		\end{proof}

\begin{figure}[h]
	\centering
	\includegraphics[width=0.7\textwidth]{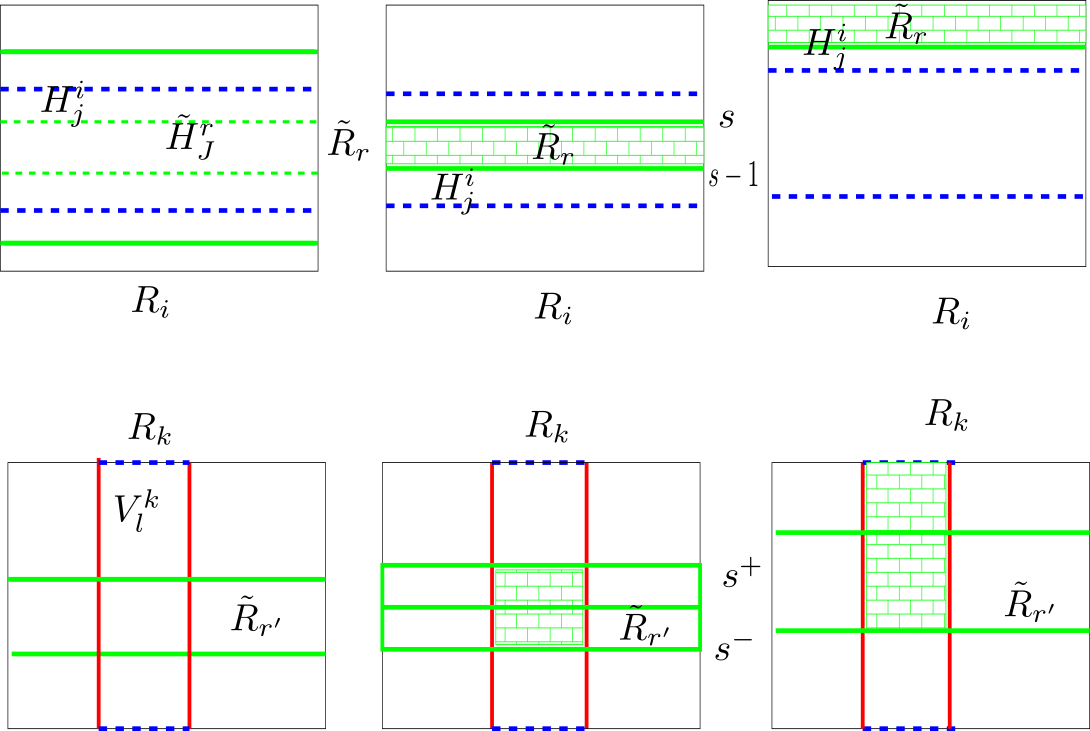}
	\caption{Three configurations for horizontal sub-rectangles of $\tilde{R}_{r}$ inside $\cH(r)$ }
	\label{Fig: 3 cases}
\end{figure}

Let $\tilde{R}_r\in \cR_{S(\underline{w})}$ be a rectangle with $r=\tilde{r}(i,s)$. For each $(i,j)\in \cH(r)$, there are three possible situations regarding the placement of $\tilde{R}_r$ with respect to the sub-rectangle $H^i_j$ of $R_i\in \cR$:
	
	\begin{itemize}
		\item[i)] $H^i_j\subset \tilde{R}_r$ 
		\item[ii)]$\tilde{R}_r \subset H^i_j\setminus\partial^s H^i_j$
		\item[iii)]  $H^i_j$ contains only one horizontal boundary component of $\tilde{R}_r$.
	\end{itemize}
Let's establish a criterion for every pair $(i,j)\in \cH(r)$ to know what kind of behavior they have.

\begin{prop}\label{Prop: determination case of ch(r) inside R-r}
Let $\tilde{R}_r\in \cR_{S(\underline{w})}$ be a rectangle with $r=\tilde{r}(i,s)$. 

Assume that $\tilde{R}_r\neq R_i$ and $ \cH(r)=\{(i,j_1),\cdots, (i,j_K)\}$ with $j_k<j_{k+1}$. Let $(i,j_k)\in \cH(r)$ in this manner:

\begin{itemize}
	\item[i)]  $\tilde{R}_r \subset H^i_{j_k}\setminus\partial^s H^i_{j_k}$ if and only if $\cH(r)=\{(i,j_k)\}$, i.e. $K=1$.
	
	\item[ii)]  $H^i_{j_k}\subset \tilde{R}_r$ if and only if $j_1<j_k<j_K$.
	
	\item[iii)]  The sub-rectangle $H^i_{j_k}$ contains only one horizontal boundary component of $\tilde{R}_r$ if and only if $K> 1$, i. e. the cardinality of $\cH(r)$ is bigger than $1$ ( $\sharp(\cH(r))>1$ ).  In this case, $H^i_{j_k}$ contains  the inferior boundary of $\tilde{R}_r$ if $k=1$ and the upper boundary of $\tilde{R}_r$ if $k=K$.
\end{itemize}

 In case that  $\tilde{R}_r= R_i$, for all $(i,j)\in \cH(r)$,   $H^i_j\subset \tilde{R}_r$.

\end{prop}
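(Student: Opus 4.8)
The proposition is a statement about the relative vertical position of a rectangle $\tilde{R}_r=\tilde{R}_{\tilde r(i,s)}$ of the refined partition $\cR_{S(\underline{w})}$ and the horizontal sub-rectangles $H^i_j$ of the original partition $\cR$ that it meets, i.e.\ the members of $\cH(r)$. The whole argument rests on two facts already established in the preceding subsections: first, Corollary \ref{Coro: projetion stable segments}, which says that each stable interval $I_{t,\underline{w}}$ bounding a rectangle of $\cR_{S(\underline{w})}$ lies strictly inside the interior of a single $H^i_{j_{t,\underline{w}}}$ (away from its stable boundary), because $\underline{w}$ is not an $s$-boundary code; and second, Lemma \ref{Lemm: Determine boundaris of R-r} together with Lemma \ref{Lemm: determination cH(r)}, which describe exactly which stable segments bound $\tilde{R}_r$ and exactly which indices $j$ occur in $\cH(r)$ (namely the contiguous block $j_{t_1,\underline{w}}\le j\le j_{t_2,\underline{w}}$, with the obvious modifications at $s=1$ or $s=O(i,\underline{w})+1$, where one end is replaced by $1$ or $h_i$). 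So the plan is essentially to combine these two lemmas and read off the three cases.

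First I would dispose of the trivial case $\tilde{R}_r=R_i$ (equivalently $O(i,\underline{w})=0$, $s=1$): then by Item $i)$ of Lemma \ref{Lemm: determination cH(r)} every $(i,j)\in\cH(T)$ lies in $\cH(r)$ and each $H^i_j$ is literally a horizontal sub-rectangle of $\tilde{R}_r=R_i$; this is immediate from the definitions. For the main case $\tilde{R}_r\neq R_i$, write $\cH(r)=\{(i,j_1),\dots,(i,j_K)\}$ with $j_1<\dots<j_K$, as given by Lemma \ref{Lemm: determination cH(r)}; note that this index set is always a contiguous block. I would then argue the three items in turn. For Item $i)$: if $K=1$, then $\cH(r)=\{(i,j_1)\}$ means $\overset{o}{\tilde{R}_r}$ meets only the interior of $H^i_{j_1}$, and since both stable boundaries of $\tilde{R}_r$ are among $\{I_{t_1,\underline{w}},I_{t_2,\underline{w}}\}\cup\{I_{i,\pm1}\}$ and each $I_{t,\underline{w}}$ lies strictly inside $\overset{o}{H^i_{j_1}}$ by Corollary \ref{Coro: projetion stable segments} (and, when the boundary is $I_{i,\pm1}$, it is itself the corresponding boundary of $H^i_1$ or $H^i_{h_i}$, forcing $j_1=1$ or $h_i$), we get $\tilde{R}_r\subset H^i_{j_1}\setminus\partial^s H^i_{j_1}$; conversely if $\tilde{R}_r$ is contained in $H^i_{j}\setminus\partial^s H^i_j$ then it meets no other $H^i_{j'}$ in its interior, so $K=1$. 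For Item $iii)$: if $K>1$, the lower boundary of $\tilde{R}_r$ is the stable segment sitting in position $j_1$ of the $H$-indexing — by Lemma \ref{Lemm: Determine boundaris of R-r} it is $I_{t_1,\underline{w}}$ (or $I_{i,-1}$ when $s=1$), which lies in $\overset{o}{H^i_{j_1}}$ (or is $\partial^s_{-1}H^i_1$); hence $H^i_{j_1}$ contains exactly the lower boundary of $\tilde{R}_r$ and no other, and symmetrically $H^i_{j_K}$ contains exactly the upper boundary; the converse direction is the observation that a single $H^i_j$ can contain one boundary component of $\tilde{R}_r$ only when there is more than one $H$ meeting $\tilde{R}_r$. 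For Item $ii)$: the remaining indices $j_1<j_k<j_K$ are exactly those for which the whole horizontal strip $H^i_{j_k}$ lies between the two bounding segments of $\tilde{R}_r$ without touching them, i.e.\ $H^i_{j_k}\subset\tilde{R}_r$; this follows because $H^i_{j_k}$ is squeezed between $I_{t_1,\underline{w}}$ (below $\overset{o}{H^i_{j_1}}$, hence below $H^i_{j_k}$) and $I_{t_2,\underline{w}}$ (above $H^i_{j_k}$), using the monotonicity of the vertical order established in Lemma \ref{Lemm: first order in O(i)}.

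I do not expect a genuine obstacle here: the proposition is a bookkeeping consequence of the structural lemmas just proved, and the only points requiring a little care are the boundary cases $s=1$ and $s=O(i,\underline{w})+1$ (where one of the bounding segments of $\tilde{R}_r$ is an actual stable boundary $I_{i,\pm1}$ of $R_i$ rather than an interior segment $I_{t,\underline{w}}$) and the degenerate case $\tilde{R}_r=R_i$. These are handled by appealing to the corresponding items of Lemma \ref{Lemm: Determine boundaris of R-r} and Lemma \ref{Lemm: determination cH(r)}, so the mild ``hard part'' is simply organising the case analysis cleanly and making sure the index blocks $\cH(r)$ match the bounding segments correctly in each subcase. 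A figure (analogous to Figure \ref{Fig: 3 cases}) showing the three configurations would accompany the written proof.
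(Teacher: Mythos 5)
Your plan is correct and follows essentially the same route as the paper: a case-by-case bookkeeping argument using the fact that the cutting segments $I_{t,\underline{w}}$ lie in the interiors of specific $H^i_j$'s, that the boundaries of $\tilde{R}_r$ are identified by Lemma \ref{Lemm: Determine boundaris of R-r}, and that $\cH(r)$ is a contiguous block, with the paper merely phrasing items $ii)$–$iii)$ slightly more directly via betweenness of the $H^i_{j_{k\pm1}}$ rather than quoting Lemma \ref{Lemm: determination cH(r)}. The edge case you flag (a boundary of $\tilde{R}_r$ equal to $I_{i,\pm1}$, hence lying on $\partial^s H^i_1$ or $\partial^s H^i_{h_i}$) is treated no more carefully in the paper's own proof, so your handling matches it.
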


\begin{proof}

\textbf{Item} $i)$. If $\tilde{R}_r \subset H^i_{j_k}\setminus \partial^s H^i_{j_k}$, the only horizontal sub-rectangle of $R_i$ that intersects the interior of $\tilde{R}_r$ is $H^i_{j_k}$, so $\cH(r)=\{(i,j_k)\}$ and $K=1$.

Conversely, if $\cH(r)=\{(i,j_1)\}$, the only horizontal sub-rectangle of $R_i$ that intersects the interior of $\tilde{R}_r$ is $H^i_{j_1}$, then $\overset{o}{\tilde{R}_r} \subset H^i_{j_1}$. Since the stable boundaries of $\tilde{R}_r$ are disjoint from the stable boundaries of $H^i_{j_k}$, it is necessary that $\tilde{R}_r \subset H^i_{j_k}\setminus\partial^s H^i_{j_k}$.

\textbf{Item} $ii)$. The horizontal sub-rectangles $H^i_{j_{k-1}}$ and $H^{i}_{j_{k+1}}$ intersect the interior of $\tilde{R}_r$, and $H^i_{j_{k-1}}$ is below $H^i_{j_k}$, and $H^{i}_{j_{k+1}}$ is above $H^i_{j_k}$, then it is necessary that $H^i_{j_k}\subset \tilde{R}_r$. 

Conversely, if $H^i_{j_k}\subset \tilde{R}_r$ and $\tilde{R}_r \neq R_i$, the contention is proper, which implies the existence of horizontal sub-rectangles of $R_i$, $H^i_{j{k+1}}$ above $H^i_{j_k}$ and $H^i_{j_{k-1}}$ below $H^i_{j_k}$. They intersect the interior of $\tilde{R}_r$ and therefore, $(i,j_{k-1})$ and $(i,j_{k+1})$ are elements of $\cH(r)$. This implies that  $j_1\leq j_{k-1}<j_k<j_{k+1}\leq j_K$.

\textbf{Item} $iii)$.If $H^i_{j_k}$ contains only the inferior boundary of $\tilde{R}_r$, the index $j_k$ satisfies that for all $j' \in \cH(r)$, $j' \geq j_k$. Since the upper boundary of $\tilde{R}_r$ is not contained in the interior of $H^i_{j_k}$, there exists another $j' > j$ such that the upper boundary of $\tilde{R}_r$ is contained in the interior of $H^i_{j'}$. Consequently, the interior of $\tilde{R}_r$ intersects the interior of $H^i_{j'}$. Hence, $(i,j')\in \cH(r)$, $K>1$ and $j_k=j_1$. The case where $H^i_{j_k}$ contains only the upper boundary of $\tilde{R}_r$ is entirely symmetric.

Finally if $\tilde{R}_r= R_i$, then $\cH(r)=\{(i,j)\in \cH(T)\}$ and  then every horizontal sub-rectangle $H^i_j$ is contained in $R_i=\tilde{R}_r$, therefore $H^i_j\subset \tilde{R}_r$.

\end{proof}

\subsection{The number of horizontal sub-rectangles in $\tilde{R}_r$}

We will count the number of horizontal sub-rectangles within the Markov partition $\cR_{S(\underline{w})}$ in the intersection $\tilde{R}_r\cap H^i_j$, and we will also establish how these sub-rectangles are rearranged by  $f$. For that reason we consider the three different cases that we outlined in Proposition \ref{Prop: determination case of ch(r) inside R-r}.

The strategy will be:

\begin{enumerate}
\item For every $(i,j)\in \cH(r)$, compute  the number of horizontal sub-rectangles contained within $\tilde{R}_r\cap H^i_j$ and call it $\underline{h}(r,j)$.

\item Label the rectangles within $\tilde{R}_r\cap H^i_j$ as $\{\tilde{H}^r_{j,J}\}_{J=1}^{\underline{h}(r,j)}$.

\item Determine the function $\underline{\rho}_{(r,j)}$  that, when given $(r,J)$, returns a value $(r',l')$ such that $f(\tilde{H}^r_{j,J})=\tilde{V}^{r'}_{l'}$.

\item Calculate $H_r$ by summing $\underline{h}(r,j)$ over all $j$ such that $(i,j)\in \cH(r)$, which gives us the number of horizontal sub-rectangles within $\tilde{R}_r$.

\item Enumerate the horizontal sub-rectangles within $\tilde{R}_r$ by $\{\tilde{H}^r_{\underline{J}}\}_{\underline{J}=1}^{H_r}$ and specify how to determine the  \emph{relative position} of  a rectangle  $\tilde{H}^r_{\underline{J}}$.  This means finding a unique index $j$ and a unique $J$ such that  $\tilde{H}^r_{\underline{J}}=\tilde{H}^r_{j,J}$.
	
\item Finally, define $\rho_{S(\underline{w})}(r,\underline{J})$ as $\rho_{(r,j)}(r,J)$, where $j$ and $J$ were determined in the previous step.

\end{enumerate}

\subsubsection{First case: $\tilde{R}_r \subset H^i_{j}\setminus\partial^s H^i_{j}$ }

In some sense, this is the simplest scenario because all the horizontal sub-rectangles of $\tilde{R}_r$ are contained within $H^i_j$. This allows us to directly compute the value for $H_r$ and the function $\rho_{S(\underline{w})}$ when restricted to the horizontal sub-rectangles of $\tilde{R}_r$.

\begin{lemm}\label{Lemm: number horizontal sub case II}
	Let $\tilde{R}_r\in \cR_{S(\underline{w})}$, with $r=\tilde{r}(i,s)$. Suppose we have a pair of indices $(i,j)\in \cH(r)$ such that $\tilde{R}_r \subset H^i_j\setminus\partial^s H^i_j$. In this case $s\neq 1$ and $s\neq O(i,\underline{w})+1$ and we take $I_{t_1,\underline{w}}$ as the lower boundary of $\tilde{R}_r$ and $I_{t_2,\underline{w}}$ as the upper boundary of $\tilde{R}_r$.
	
	Assume that $\rho_T(i,j)=(k,l)$. Let $s^-=\underline{s}(t_1+1,\underline{w})$ and $s^+=\underline{s}(t_2+1,\underline{w})$. Then,  $w_{t_1+1}=w_{t_2+1}=k$ and the number of horizontal sub-rectangles of $\cR_{S(\underline{w})}$ contained in the intersection $\tilde{R}_r\cap H^i_j$ is given by the formula:
	
	\begin{equation}\label{Equa: h (i,j,r) caso II}
	\underline{h}(r,j)= \vert s^- - s^+\vert.
	\end{equation} 
	
\end{lemm}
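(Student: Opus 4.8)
\textbf{Proof proposal for Lemma \ref{Lemm: number horizontal sub case II}.}

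The plan is to count the horizontal sub-rectangles of $(f,\cR_{S(\underline{w})})$ inside $\tilde{R}_r \cap H^i_j$ by transporting everything through $f$ to the rectangle $R_k$, where things become a clean bookkeeping exercise about the stable intervals in $\cI(k,\underline{w})$. First I would observe that since $\tilde{R}_r \subset H^i_j \setminus \partial^s H^i_j$, by Item $i)$ of Proposition \ref{Prop: determination case of ch(r) inside R-r} we have $\cH(r) = \{(i,j)\}$; in particular $\tilde{R}_r \neq R_i$, so $O(i,\underline{w}) > 0$, and since the stable boundaries $I_{t_1,\underline{w}}$ and $I_{t_2,\underline{w}}$ of $\tilde{R}_r$ both lie in the interior of $H^i_j$, neither of them can be $I_{i,-1}$ nor $I_{i,+1}$; hence $s \neq 1$ and $s \neq O(i,\underline{w})+1$, and Item $ii)$ of Lemma \ref{Lemm: Determine boundaris of R-r} applies, giving the well-defined $(t_1,\underline{w}), (t_2,\underline{w}) \in \cO(i,\underline{w})$ with $\underline{s}(t_1,\underline{w}) = s-1$, $\underline{s}(t_2,\underline{w}) = s$.

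Next I would apply $f$ to the picture. Since $\tilde{R}_r \subset H^i_j\setminus\partial^s H^i_j$ and $f(H^i_j) = V^k_l$ with $\rho_T(i,j) = (k,l)$, the image $f(\tilde{R}_r)$ is a vertical sub-rectangle of $R_k$ whose horizontal boundaries are $f(I_{t_1,\underline{w}})$ and $f(I_{t_2,\underline{w}})$. By Item $ii)$ of Lemma \ref{Lemm: imge of boundaries} (equivalently Lemma \ref{Lemm: image of stable intervals}), $f(I_{t_1,\underline{w}}) \subset I_{t_1+1,\underline{w}}$ and $f(I_{t_2,\underline{w}}) \subset I_{t_2+1,\underline{w}}$; in particular these are intervals of $\cI(w_{t_1+1},\underline{w})$ and $\cI(w_{t_2+1},\underline{w})$ respectively, and because $f(\tilde{R}_r) \subset R_k$ both intervals lie in $R_k$, forcing $w_{t_1+1} = w_{t_2+1} = k$. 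Now the horizontal sub-rectangles of $(f,\cR_{S(\underline{w})})$ contained in $\tilde{R}_r \cap H^i_j = \tilde{R}_r$ correspond, under $f$, exactly to the vertical sub-rectangles of $(f,\cR_{S(\underline{w})})$ contained in $f(\tilde{R}_r)$; and by Lemma \ref{Lemma: Vertical sub R-ps} the vertical sub-rectangles of $(f,\cR_{S(\underline{w})})$ inside a fixed rectangle $\tilde{R}_{r'} \subset R_k$ are in bijection with the vertical sub-rectangles $V^k_l$ of $(f,\cR)$ — but here $f(\tilde{R}_r)$ is itself a vertical sub-rectangle of $R_k$, so what we are really counting is the number of rectangles $\tilde{R}_{r'}$ of $\cR_{S(\underline{w})}$ that stack up inside $R_k$ between the two stable levels $I_{t_1+1,\underline{w}}$ and $I_{t_2+1,\underline{w}}$. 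That number is precisely the number of consecutive "slots'' of the ordered set $\cO(k,\underline{w}) \cup \{(k,\pm 1)\}$ strictly between positions $\underline{s}(t_1+1,\underline{w})$ and $\underline{s}(t_2+1,\underline{w})$ plus one, i.e.\ $|s^- - s^+|$ with $s^- = \underline{s}(t_1+1,\underline{w})$ and $s^+ = \underline{s}(t_2+1,\underline{w})$. The absolute value is needed because $f$ restricted to $H^i_j$ may reverse the vertical order, so a priori we do not know whether $s^- < s^+$ or $s^- > s^+$; in either case the count of levels strictly between them is $|s^- - s^+|$, which each contributes one horizontal sub-rectangle of $\tilde R_r$.

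I expect the main obstacle to be the careful justification of the bijection "horizontal sub-rectangles of $\tilde R_r$ $\leftrightarrow$ rectangles of $\cR_{S(\underline w)}$ inside $R_k$ sitting between the two stable levels'', together with the bookkeeping that counting consecutive slots of $\cO(k,\underline w)\cup\{(k,\pm1)\}$ between $s^-$ and $s^+$ indeed yields $|s^- - s^+|$ rather than $|s^- - s^+|\pm 1$. This hinges on the fact (Corollary \ref{Coro: projetion stable segments}) that the $I_{t,\underline w}$ avoid the stable boundary of the $H^i_j$'s, so no "merging'' of levels occurs, and on Lemma \ref{Lemm: Determine boundaris of R-r} describing exactly which two consecutive levels bound each $\tilde R_{r'}$. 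Once that correspondence is set up cleanly, the formula $\underline{h}(r,j) = |s^- - s^+|$ is immediate. The remaining bureaucratic steps (labelling the $\tilde H^r_{j,J}$, defining $\underline\rho_{(r,j)}$ via $f(\tilde H^r_{j,J}) = \tilde V^{r'}_{l'}$ using Proposition \ref{Prop: vertical possition is delimitate.} to pin down the vertical index $l' = l$, then summing over $j$ and reading off the relative positions) are routine and will be handled in the subsequent lemmas of the section.
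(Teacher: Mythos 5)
Your proposal is correct and follows essentially the same route as the paper: push $\tilde{R}_r$ forward by $f$ into $R_k$, identify its image as the region of $R_k$ bounded by the stable intervals $I_{t_1+1,\underline{w}}$ and $I_{t_2+1,\underline{w}}$ (which forces $w_{t_1+1}=w_{t_2+1}=k$), and count the rectangles of $\cR_{S(\underline{w})}$ stacked between those two levels, giving $\vert s^- - s^+\vert$. Your write-up just spells out the level-counting and the orientation-reversal caveat a bit more explicitly than the paper does.
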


\begin{proof}
	Look at Figure  \ref{Fig: Caso I}. Since $\tilde{R}_r \subset H^i_j\setminus\partial^s H^i_j$, $\tilde{R}_r$ doesn't share a stable boundary component with $R_i$. This means $s\neq 1$ and $s \neq O(i,\underline{w})+1$, and we can take the intervals $I_{t_1,\underline{w}}$ and $I_{t_2,\underline{w}}$ as described in the Lemma.	Since $f(\tilde{R}_r)\subset f(H^i_j)\subset R_k$, then $w_{t_1+1}=w_{t_2+1}=k$, as we claimed. Then the number of horizontal sub-rectangles of $\tilde{R}_r$ consists of all the rectangles in $\cR_{S(\underline{w})}$ contained in $R_k$ and located between the intervals $I_{t_1+1,\underline{w}}$ and $I_{t_2+2,\underline{w}}$. If their relative positions are given by the numbers $s^-$ and $s^+$, then the number of such rectangles is given by $\vert s^- - s^+\vert$ and we has proved our Lemma.
	
\end{proof}

\begin{defi}\label{Defi: order rectangles case II}
	The horizontal sub-rectangles of $\tilde{R}_r$ as described in Lemma \ref{Lemm: number horizontal sub case II} are labeled from the bottom to the top with respect to the orientation of $\tilde{R}_r$ as:
	$$
	\{\tilde{H}^r_{j,J}\}_{J=1}^{\underline{h}(r,j)}.
	$$
\end{defi}

\begin{rema}
	As established in Item $i)$ of Proposition \ref{Prop: determination case of ch(r) inside R-r}, the set $\cH(r)$ contains only one element. Therefore, $H_r = \vert s^- - s^+\vert$, 
\end{rema}

\begin{lemm} \label{Lemm: Permutation rho caso II}
	With the hypothesis of Lemma \ref{Lemm: number horizontal sub case II}, assume that $\Phi_T(i,j)=(k,l,\epsilon)$ and $f(\tilde{H}^r_{j,J})=\tilde{V}^{r'}_{l'}$. Then:
	\begin{itemize}
		\item[i)] 	If $\epsilon=\epsilon(i,j) = 1$, then $s^- < s^+$, $r'=\tilde{r}(k, s^- + J)$ and $l=l'$.
		\item[ii)] If $\epsilon=\epsilon(i,j) = -1$, then $s^- > s^+$, $r'=\tilde{r}(k, s^- - J+1)$ and $l=l'$.
	\end{itemize}
\end{lemm}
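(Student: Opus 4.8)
The statement is a direct continuation of Lemma \ref{Lemm: number horizontal sub case II}, so I would carry over all of its notation and conclusions: $\tilde{R}_r\subset H^i_j\setminus\partial^s H^i_j$, $s\neq 1$, $s\neq O(i,\underline{w})+1$, the lower boundary of $\tilde{R}_r$ is $I_{t_1,\underline{w}}$ and the upper boundary is $I_{t_2,\underline{w}}$, and $w_{t_1+1}=w_{t_2+1}=k$. The key geometric fact to exploit is Lemma \ref{Lemm: image of stable intervals}: $f(I_{t_1,\underline{w}})\subset I_{t_1+1,\underline{w}}$ and $f(I_{t_2,\underline{w}})\subset I_{t_2+1,\underline{w}}$, so that $f(\tilde{R}_r)$ is the (closure of the) horizontal strip of $R_k$ bounded by the two stable segments $I_{t_1+1,\underline{w}}$ and $I_{t_2+1,\underline{w}}$, whose relative positions inside $R_k$ are $s^- = \underline{s}(t_1+1,\underline{w})$ and $s^+ = \underline{s}(t_2+1,\underline{w})$.

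The first step is to pin down which of $I_{t_1+1,\underline{w}}$, $I_{t_2+1,\underline{w}}$ is the lower and which is the upper boundary of $f(\tilde{R}_r)$ inside $R_k$, and this is exactly where the sign $\epsilon=\epsilon_T(i,j)$ enters. Since $f|_{H^i_j}$ maps $H^i_j$ onto $V^k_l$ preserving the vertical orientation of $H^i_j$ (inherited from $R_i$) relative to that of $V^k_l$ (inherited from $R_k$) precisely when $\epsilon=1$, and reversing it when $\epsilon=-1$, I get: if $\epsilon=1$ then $f$ sends the lower boundary $I_{t_1,\underline{w}}$ of $\tilde{R}_r$ to the lower boundary of $f(\tilde{R}_r)$, i.e. $I_{t_1+1,\underline{w}}$ sits below $I_{t_2+1,\underline{w}}$ in $R_k$, which means $s^- < s^+$; symmetrically, if $\epsilon=-1$ then $I_{t_1+1,\underline{w}}$ sits above $I_{t_2+1,\underline{w}}$, so $s^- > s^+$. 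This also shows $s^-\neq s^+$, consistent with $\underline{h}(r,j)=|s^--s^+|\geq 1$ from the previous lemma.

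The second step is the bookkeeping of labels. For $\epsilon=1$: the horizontal sub-rectangles $\tilde{H}^r_{j,J}$, $J=1,\dots,\underline{h}(r,j)$, are labeled bottom-to-top inside $\tilde{R}_r$; since $f|_{H^i_j}$ preserves vertical orientation, $f(\tilde{H}^r_{j,J})$ is the $J$-th horizontal strip, counted from the bottom, among the strips of $R_k$ lying between $I_{t_1+1,\underline{w}}$ (position $s^-$) and $I_{t_2+1,\underline{w}}$ (position $s^+$). By the labeling convention of Definition \ref{Defi: Label of R-S(w)}, this strip is the rectangle of $\cR_{S(\underline{w})}$ occupying vertical position $s^- + J$ inside $R_k$, i.e. $r'=\tilde{r}(k,s^-+J)$. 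For $\epsilon=-1$, the map reverses vertical orientation, so $f(\tilde{H}^r_{j,J})$ is the $J$-th strip counted from the \emph{top}; the strips between positions $s^+$ and $s^-$ counted from the top are at vertical positions $s^-, s^--1, \dots$, so the $J$-th is at position $s^- - J + 1$, giving $r'=\tilde{r}(k,s^--J+1)$. Finally, that $l'=l$ in both cases is exactly Proposition \ref{Prop: vertical possition is delimitate.} applied to $\tilde{H}=\tilde{H}^r_{j,J}\subset H^i_j$ with $\rho_T(i,j)=(k,l)$, since $f(\tilde{H}^r_{j,J})=\tilde{V}^{r'}_{l'}$.

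I expect the main obstacle to be nothing deep but rather the careful verification that the labeling conventions line up: one must be sure that ``$J$-th strip from the bottom between positions $s^-$ and $s^+$'' translates to vertical position $s^-+J$ and not $s^-+J-1$ or $s^-+J+1$ (and the analogous off-by-one check in the $\epsilon=-1$ case). This is settled by checking the extreme cases $J=1$ and $J=\underline{h}(r,j)=|s^--s^+|$ against Definition \ref{Defi: Label of R-S(w)} and Lemma \ref{Lemm: Determine boundaris of R-r}, which I would do explicitly but briefly. A small secondary point worth stating clearly is why $w_{t_1+1}=w_{t_2+1}=k$ forces both segments $I_{t_1+1,\underline{w}}$ and $I_{t_2+1,\underline{w}}$ to genuinely lie in $R_k$ (not merely have image there), which is immediate from Definition \ref{Defi: Stable intervals inside R-i} since $I_{t+1,\underline{w}}\in\cI(w_{t+1},\underline{w})$.
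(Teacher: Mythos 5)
Your proposal is correct and follows essentially the same route as the paper's proof: use $\epsilon_T(i,j)$ to decide whether $f|_{\tilde R_r\cap H^i_j}$ preserves or reverses the vertical order of the image boundaries $I_{t_1+1,\underline{w}}$, $I_{t_2+1,\underline{w}}$ (giving $s^-<s^+$ or $s^->s^+$), then track the bottom-to-top labeling to get $\tilde r(k,s^-+J)$ resp. $\tilde r(k,s^--J+1)$, and invoke Proposition \ref{Prop: vertical possition is delimitate.} for $l=l'$. The extra off-by-one check at $J=1$ and $J=\underline{h}(r,j)$ that you flag is exactly the anchoring the paper does (e.g. noting that for $\epsilon=-1$ the rectangle whose upper boundary sits at position $s^-$ is $\tilde R_{\tilde r(k,s^-)}$), so no gap remains.
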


\begin{proof}
	\textbf{Item} $i)$. If $\epsilon(i,j) = 1$ then $f$ preserves the vertical orientation of $\tilde{R}_r$, and then $I_{t_1+1,\underline{w}} < I_{t_2+1,\underline{w}}$, so  $s^- < s^+$.
	
	The rectangle $\tilde{H}^r_{j,J}$ is in vertical position $J$ inside $\tilde{R}_r\cap H^i_j$ and it is sent to the rectangle whose position inside $R_k$ is at vertical position $J$  above the the rectangle in position $s^-$ withing $R_k$. Therefore, $f(\tilde{H}^r_{j,J})$ is inside a rectangle $\tilde{R}_{r'}$ inside to  $R_k$ whose position  is equal to $s^- + J$. In other words, $r'=\tilde{r}(k,s^-+J)$.
	
	\textbf{Item} $ii)$. If $\epsilon(i,j) = -1$ then $f$ inverts the vertical orientation of $\tilde{R}_r$. Then, $I_{t_1+1,\underline{w}} > I_{t_2+1,\underline{w}}$ and $s^- > s^+$.
	
	 The rectangle $\tilde{H}^r_{j,1}$ that is positioned at place $1$ inside $\tilde{R}_r\cap H^i_j$ is sent into to the rectangle $\tilde{R}_{r'}$, whose upper boundary is at position $s^-$, therefore $r'=s^-=s^--(1-1)$. In general the upper boundary of the rectangles $\tilde{R}\subset R_k$ determine its position inside $R_k$. That means that  $f(\tilde{H}^r_{j,J})$ is inside a rectangle $\tilde{R}_{r'}$ withing  $R_k$ whose position  is given by $s^- - J+1$. In other words, $r'=\tilde{r}(k,s^- -J+1)$.
	
	Finally in bot cases,  Proposition \ref{Prop: vertical possition is delimitate.} implies that $l'=l$.
\end{proof}

\begin{defi}\label{Defi: permutation caso II}
	With the hypothesis of Lemma \ref{Lemm: number horizontal sub case II}, assume that $\Phi_T(i,j)=(k,l,\epsilon)$. Then: 
	
	If $\epsilon(i,j) = 1$ we define:
	
	\begin{equation}\label{Equ: rho caso 2, +}
	\underline{\rho}_{(r,j)}(r,J)=(\tilde{r}(k, s^-+J), l  ).
	\end{equation}
	
	and if $\epsilon(i,j) = -1$:
	\begin{equation}\label{Equ: rho caso 2, -}
	\underline{\rho}_{(r,j)}(r,J)=(\tilde{r}(k, s^+  -J+1) , l    ).
	\end{equation}
\end{defi}

\begin{rema}\label{Rema: Justifcation notation case I}
In this particular case the function $\rho_{S(\underline{w})}(r,J)$ is determined by either Equation \ref{Equ: rho caso 2, -} or Equation \ref{Equ: rho caso 2, +}. Towards the end of this section, we will provide a unified approach of all the cases that we are going to consider, for this reason we are keeping the notation $\tilde{H}^r_{j,J}$ even if we can change it  by $\tilde{H}^r_{J}$ as the parameter $j$ is not important anymore.
\end{rema}

\begin{figure}[h]
	\centering
	\includegraphics[width=0.8\textwidth]{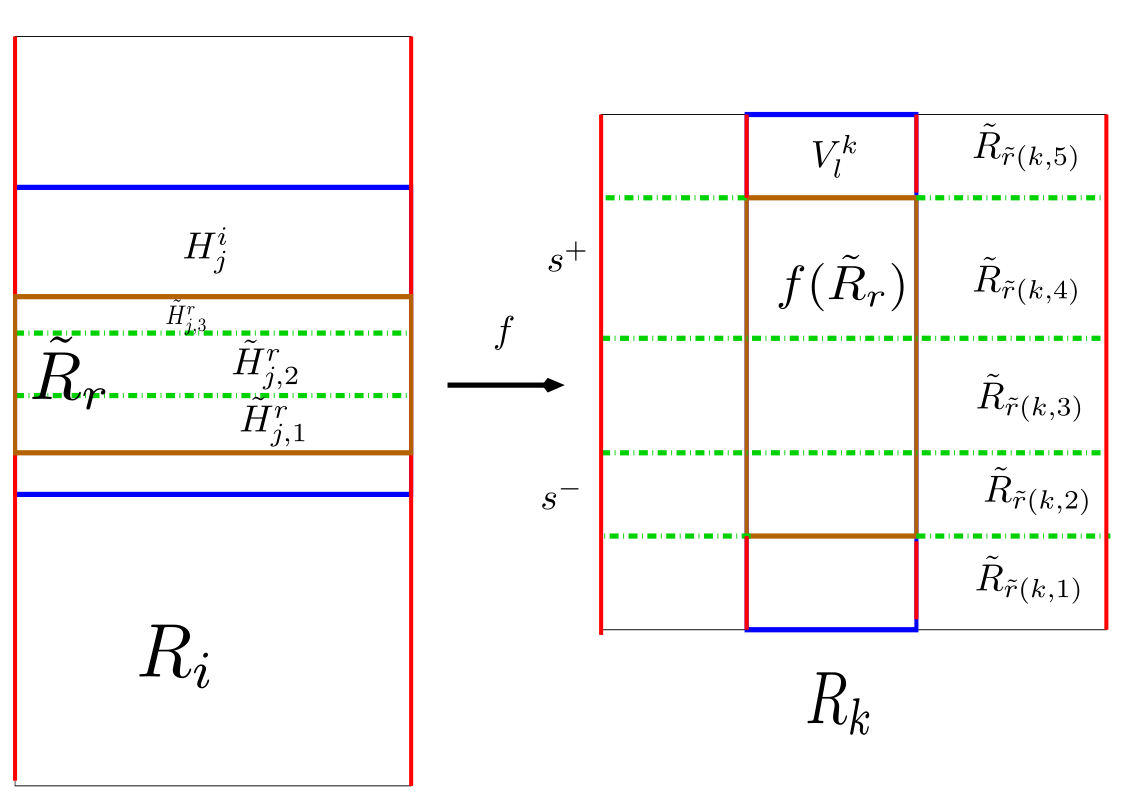}
	\caption{First case: $\tilde{R}_r \subset H^i_{j}\setminus\partial^s H^i_{j}$ }
	\label{Fig: Caso I}
\end{figure}

\subsubsection{Second case: $H^i_{j}\subset \tilde{R}_r$ }

\begin{lemm}\label{Lemm: number horizontal sub case I}

Let $\tilde{R}_r\in \cR_{S(\underline{w})}$ be a rectangle, with $r=\tilde{r}(i,s)$. Let $(i,j)\in \cH(r)$ be any pair of indices such that $H^i_j\subset \tilde{R}_r$ and assume that $\rho_T(i,j)=(k,l)$. Then, the number of horizontal sub-rectangles of $\cR_{S(\underline{w})}$ that are contained in the intersection $\tilde{R}_r\cap H^i_j$ is given by the formula:

\begin{equation}\label{Equa: h (i,j,r) caso I}
\underline{h}(r,j)=O(k,\underline{w})+1.
\end{equation}

		\end{lemm}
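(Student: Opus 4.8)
\textbf{Plan of proof for Lemma \ref{Lemm: number horizontal sub case I}.}

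The plan is to count the horizontal sub-rectangles of $(f,\cR_{S(\underline{w})})$ that lie inside $H^i_j$ by transporting the question, via $f$, to counting rectangles of $\cR_{S(\underline{w})}$ that lie inside the vertical sub-rectangle $V^k_l = f(H^i_j)$. Since $H^i_j \subset \tilde R_r$ is a horizontal sub-rectangle of $R_i$ of the original partition, its image $f(H^i_j)$ is the vertical sub-rectangle $V^k_l$ of $R_k$, and the horizontal sub-rectangles of $\cR_{S(\underline{w})}$ inside $H^i_j$ correspond bijectively (under $f$) to the vertical sub-rectangles of $\cR_{S(\underline{w})}$ inside $V^k_l$. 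This is exactly the content of Corollary \ref{Coro: image of sub horzontal-vertical } applied to the refined partition, together with the fact (Lemma \ref{Lemm: Markov partition }) that $\cR_{S(\underline{w})}$ is a genuine Markov partition for $f$.

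First I would observe that the vertical sub-rectangles of $\cR_{S(\underline{w})}$ contained in $V^k_l$ are precisely the sets $\tilde V^{r'}_{l}$ where $\tilde R_{r'}$ ranges over those rectangles of $\cR_{S(\underline{w})}$ that are contained in $R_k$ and whose interior meets $V^k_l$; this uses Definition \ref{Defi: Vertical sub R-S(w)} and Lemma \ref{Lemma: Vertical sub R-ps}, which identify the vertical sub-rectangles of $(f,\cR_{S(\underline{w})})$ inside a rectangle $\tilde R_{r'}$ as $\{\tilde V^{r'}_{l'}\}_{l'=1}^{v_k}$. Since $V^k_l$ is a full vertical sub-rectangle of $R_k$ — it runs from the lower to the upper stable boundary of $R_k$ — every rectangle $\tilde R_{r'}$ of $\cR_{S(\underline{w})}$ contained in $R_k$ meets $\overset{o}{V^k_l}$ in a single connected component. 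Hence the number of vertical sub-rectangles of $\cR_{S(\underline{w})}$ inside $V^k_l$ equals the number of rectangles of $\cR_{S(\underline{w})}$ contained in $R_k$, and by Lemma \ref{Lemm: Number  of rectangles in a rec of the s refinaent} this number is $O(k,\underline{w})+1$. Pulling back by $f^{-1}$ gives $\underline{h}(r,j)=O(k,\underline{w})+1$, which is the claimed formula \eqref{Equa: h (i,j,r) caso I}.

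The step I expect to be the main obstacle — or at least the one requiring the most care — is verifying that $f$ really does set up a bijection between the horizontal sub-rectangles of $\cR_{S(\underline{w})}$ inside $H^i_j$ and the vertical sub-rectangles of $\cR_{S(\underline{w})}$ inside $V^k_l$, with no loss or duplication. Concretely, one must check that $f^{-1}$ of each vertical sub-rectangle $\tilde V^{r'}_l \subset V^k_l$ is a horizontal sub-rectangle of $\cR_{S(\underline{w})}$ whose interior lies inside $\overset{o}{H^i_j}$ (so that it is genuinely counted in $\tilde R_r \cap H^i_j$), and conversely that every horizontal sub-rectangle of $\cR_{S(\underline{w})}$ inside $H^i_j$ arises this way. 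This is where the hypothesis $H^i_j\subset \tilde R_r$ (from Item $ii)$ of Proposition \ref{Prop: determination case of ch(r) inside R-r}) is used: it guarantees that no stable interval of $\cI(\underline{w})$ cuts across $\partial^s H^i_j$, so the refining cuts inside $H^i_j$ come only from the cuts landing inside $R_k$ under $f$. The argument runs parallel to the proof of Lemma \ref{Lemma: Vertical sub R-ps}, using that $\underline{w}$ is not an $s$-boundary code so that the orbit of $p$ never meets the stable boundary of the sub-rectangles of $\cR$. Once this bijection is in place, the count is immediate from Lemma \ref{Lemm: Number  of rectangles in a rec of the s refinaent}.
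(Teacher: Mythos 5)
Your proposal is correct and follows essentially the same route as the paper: the count is transported by $f$ to the vertical sub-rectangle $V^k_l=f(H^i_j)$, which crosses $R_k$ from bottom to top and therefore meets every one of the $O(k,\underline{w})+1$ rectangles of $\cR_{S(\underline{w})}$ contained in $R_k$. The bijection between horizontal sub-rectangles inside $\tilde{R}_r\cap H^i_j$ and vertical sub-rectangles inside $V^k_l$, which you spell out carefully, is exactly the step the paper's shorter proof leaves implicit.
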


\begin{proof}
See  Figure  \ref{Fig: Caso II}. The vertical sub-rectangle $V^k_l=f(H^i_j)$ of $R_k\in \cR$  intersects to all the rectangles $\tilde{R}_{r'}\in \cR_{S(\underline{w})}$ that are contained in $R_k$, but by definition this quantity is equal to $O(k,\underline{w})+1$. Even more, this number coincides with the number of horizontal sub-rectangles of $\cR_{S(\underline{w})}$ that are contained inside $\tilde{R}_r\cap H^i_j$ and therefore: 

\begin{equation}\label{Equ: h(i,j,r) caso 1}
\underline{h}(r,j)= O(k,\underline{w})+1,
\end{equation}

is  the number of horizontal sub-rectangles of $\cR_{S(\underline{w})}$ inside $\tilde{R}_r\cap H^i_j$.

\end{proof}

We proceed to label such horizontal sub-rectangles.

\begin{defi}\label{Defi: order rectangles case I}
The horizontal sub-rectangles of $\tilde{R}_r$ described in Lemma \ref{Lemm: number horizontal sub case I} are labeled from the bottom to the top with respect to the vertical orientation of $\tilde{R}_r$ as:
$$
\{\tilde{H}^r_{j,J}\}_{J=1}^{\underline{h}(r,j)}.
$$
\end{defi}

		\begin{figure}[h]
	\centering
	\includegraphics[width=0.8\textwidth]{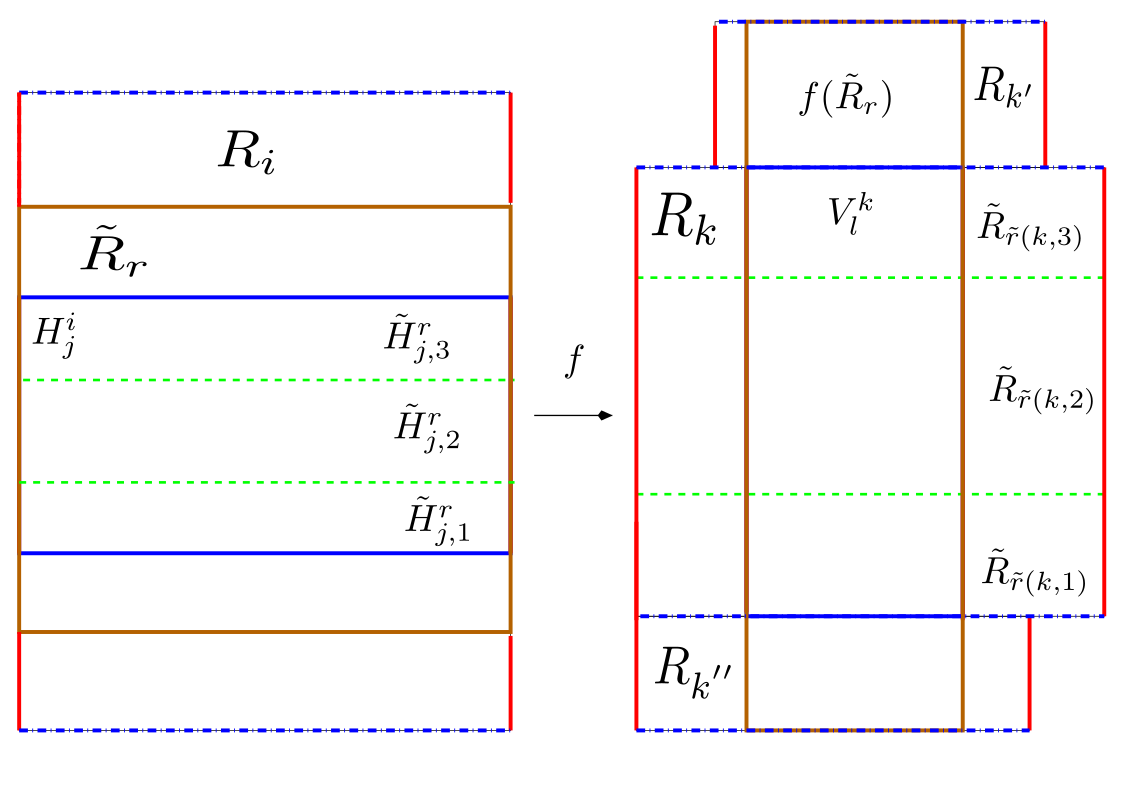}
	\caption{Second case: $H^i_{j}\subset \tilde{R}_r$  }
	\label{Fig: Caso II}
\end{figure}

Let's  determine the image of any such horizontal sub-rectangle of $\tilde{R}_r\cap H^i_j$.
		
		\begin{lemm}\label{Lemm: Permutation rho caso I}
	Let $\tilde{R}_r\in \cR_{S(\underline{w})}$, with $r=\tilde{r}(i,s)$. Let $(i,j)\in \cH(r)$ be any pair of indices such that $H^i_j\subset \tilde{R}_r$, and let $\tilde{H}^r_{j,J}$ be a horizontal sub-rectangle of $\tilde{R}_r$. Assume that $\rho_T(i,j)=(k,l)$  and that $f(\tilde{H}^r_{j,J})=\tilde{V}^{r'}_{l'}$. 
\begin{itemize}
\item[i)] If $\epsilon_T(i,j)=1$,  $r'=\tilde{r}(k,J)$ and $l=l'$.

\item[ii)] If  $\epsilon_T(i,j)=-1$, then $r'=\tilde{r}(k,O(k,\underline{w})+1 -J+1)$ and $l=l'$.
\end{itemize}
		
		\end{lemm}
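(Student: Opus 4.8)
\textbf{Proof plan for Lemma \ref{Lemm: Permutation rho caso I}.}

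The plan is to exploit the coherence of orientations between $\tilde{R}_r$ and the ambient rectangle $R_i$ (Definition \ref{Defi: Orientation in R-S(w)}), together with the fact that $f$ maps the horizontal sub-rectangle $H^i_j$ onto the vertical sub-rectangle $V^k_l$ following the rule dictated by $\Phi_T$. First I would record the setup: since $H^i_j \subset \tilde{R}_r$, the sub-rectangle $H^i_j$ inherits the vertical orientation of $R_i$, which is the same as that of $\tilde{R}_r$, and $f(H^i_j)=V^k_l$ is a vertical sub-rectangle of $R_k$ crossing $R_k$ from bottom to top. By Lemma \ref{Lemm: number horizontal sub case I}, the horizontal sub-rectangles $\{\tilde H^r_{j,J}\}_{J=1}^{O(k,\underline w)+1}$ of $\tilde R_r$ contained in $H^i_j$ are exactly the pre-images under $f$ of the $O(k,\underline w)+1$ pieces $\tilde V^{r'}_l$, where $r'$ ranges over the indices $\tilde r(k,s')$ with $s'\in\{1,\dots,O(k,\underline w)+1\}$; here $l'=l$ is forced by Proposition \ref{Prop: vertical possition is delimitate.}, since $f(\tilde H^r_{j,J})\subset f(H^i_j)=V^k_l$.

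Next I would track the vertical ordering. The rectangles $\tilde R_{\tilde r(k,s')}$ contained in $R_k$ are labelled (Definition \ref{Defi: Label of R-S(w)}) from bottom to top by $s'=1,\dots,O(k,\underline w)+1$ according to the vertical orientation of $R_k$. The horizontal sub-rectangle $\tilde H^r_{j,J}$ sits at vertical position $J$ inside $H^i_j$ (Definition \ref{Defi: order rectangles case I}), counted with the vertical orientation of $\tilde R_r$, equivalently of $H^i_j$. Now I split on $\epsilon_T(i,j)$. If $\epsilon_T(i,j)=1$, then $f|_{H^i_j}$ preserves the vertical orientation relative to $R_i$ and $R_k$; hence $f$ sends the piece at position $J$ in $H^i_j$ to the piece at position $J$ inside $V^k_l\subset R_k$, i.e. $f(\tilde H^r_{j,J})=\tilde V^{\tilde r(k,J)}_l$, giving $r'=\tilde r(k,J)$. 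If $\epsilon_T(i,j)=-1$, then $f|_{H^i_j}$ reverses the vertical orientation, so the piece at position $J$ from the bottom goes to the piece at position $J$ from the top, that is, position $(O(k,\underline w)+1)-(J-1)=O(k,\underline w)+1-J+1$ counted from the bottom; hence $r'=\tilde r(k,O(k,\underline w)+1-J+1)$. In both cases $l'=l$ as noted.

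The only genuinely delicate point is the bookkeeping of the orientation reversal: one must be careful that the index of $\tilde R_{r'}$ inside $R_k$ is determined by counting pieces from the bottom in the vertical orientation of $R_k$, and that reversing orientation turns "the $J$-th from below" into "the $J$-th from above", which is the $(O(k,\underline w)+2-J)$-th from below — this is where an off-by-one error is easiest to make, so I would double-check against the extreme cases $J=1$ and $J=O(k,\underline w)+1$ (which should map to the top and bottom pieces respectively). Everything else is a direct translation of the definitions and of Proposition \ref{Prop: vertical possition is delimitate.}, exactly parallel to the argument already carried out in Lemma \ref{Lemm: Permutation rho caso II} for the first case. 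I would then record the resulting formula for $\underline\rho_{(r,j)}$ in a companion definition, as was done there.
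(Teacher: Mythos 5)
Your proposal is correct and follows essentially the same argument as the paper: counting that $f(H^i_j)=V^k_l$ meets all $O(k,\underline{w})+1$ rectangles of $\cR_{S(\underline{w})}$ inside $R_k$, using the coherence of the induced orientations to send position $J$ to position $J$ (resp. to position $O(k,\underline{w})+1-J+1$ when $\epsilon_T(i,j)=-1$), and invoking Proposition \ref{Prop: vertical possition is delimitate.} for $l'=l$. Your check of the extreme cases $J=1$ and $J=O(k,\underline{w})+1$ matches the paper's parenthetical verification for $J=1$, so no gap remains.
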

		
		\begin{proof}
\textbf{Item} $i)$. If $\epsilon(i,j)=1$, the vertical orientation is preserved by $f$ restricted to $\tilde{R}_r$, and the horizontal sub-rectangle $\tilde{H}^r_{j,J}$ that is in position $J$ inside $\tilde{R}_r\cap H^i_j$ is sent to the rectangle $\tilde{R}_{r'}$ located in position $J$ inside $R_k$. This means that $r'=\tilde{r}(k,J)$ and in view of Proposition \ref{Prop: vertical possition is delimitate.}, $l'=l$.

\textbf{Item} $i)$. If $\epsilon(i,j)=-1$, the vertical orientation is invert by $f$ restricted to $\tilde{R}_r\cap H^i_j$, and the horizontal sub-rectangle $\tilde{H}^r_{j,J}$  that is in position $J$ inside $\tilde{R}_r\cap H^i_j$ is sent to the rectangle $\tilde{R}_{r'}$ located in position $J-1$ below the top rectangle of $\cR_{S(\underline{w})}$ (for $J=1$ the position is $O(k,\underline{w}))+1)$, and this position is equal to $O(k,\underline{w})+1-J+1$ inside $R_k$. This means that $r'=\tilde{r}(k,O(k,\underline{w})+1-J+1)$. Finally,  by  Proposition \ref{Prop: vertical possition is delimitate.},  $l'=l$.

		\end{proof}

\begin{defi}\label{Defi: permutation caso I}
Let $\tilde{R}_r\in \cR_{S(\underline{w})}$, with $r=\tilde{r}(i,s)$. Let $(i,j)\in \cH(r)$ be a pair of indices such that $H^i_j\subset \tilde{R}_r$, and let $\tilde{H}^r_{j,J}$ be  an horizontal sub-rectangle of $\tilde{R}_r\cap H^i_j$. Assume that $\rho_T(i,j)=(k,l)$. 

  If  $\epsilon(i,j)=1$ we set:
  
\begin{equation}\label{Equ: rho caso 1, +}
\underline{\rho}_{(r,j)}(r,J)=(\tilde{r}(k,J),l),
\end{equation}

If $\epsilon(i,j)=-1$ we set:

\begin{equation} \label{Equ: rho caso 1, -}
\underline{\rho}_{(r,j)}(r,J)=(\tilde{r}(k,O(k,\underline{w})+1 -J+1),l).
\end{equation}
\end{defi}

\subsubsection{Third case: $H^i_{j}$ contains only one horizontal boundary component of $\tilde{R}_r$ }

	We consider the case when $H^i_j$ contains the upper boundary of $\tilde{R}_r$.
 
 	\begin{lemm}\label{Lemm: number horizontal sub case III, upper}
 	
 	Consider $\tilde{R}_r\in \cR_{S(\underline{w})}$, where $r=\tilde{r}(i,s)$, and assume that $O(i,\underline{w})>0$.  Take a pair of indices $(i,j)\in \cH(r)$ such that $H^i_j$ exclusively contains the \emph{upper boundary} of $\tilde{R}_r$. Let $I_{t,\underline{w}}$ be the upper boundary of $\tilde{R}_r$. Let $s^+=\underline{s}(t+1,\underline{w})$ and assume that $\rho_T(i,j)=(k,l)$. Then, the number of horizontal sub-rectangles of  $\cR_{S(\underline{w})}$ that are contained inside  $\tilde{R}_r\cap H^i_j$ can be computed using one of the following formulas:

\begin{itemize}
	\item If $\epsilon_T(i,j)=1$:
	
	\begin{equation}\label{Equ: h(i,j,r), caso 3, +, up}
	\underline{h}(r,j)=s^+.
	\end{equation} 
	
	\item If $\epsilon_T(i,j)=-1$
	\begin{equation}\label{Equ: h(i,j,r), caso 3, -,up}
	\underline{h}(r,j)=O(k,\underline{w})+1-s^+ .
	\end{equation} 
\end{itemize}

	 \end{lemm}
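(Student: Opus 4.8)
The setup is that $\tilde{R}_r$ is a rectangle of $\cR_{S(\underline{w})}$ contained in $R_i$, and $H^i_j$ is a horizontal sub-rectangle of $(f,\cR)$ that meets $\tilde{R}_r$ in such a way that only the upper stable boundary $I_{t,\underline{w}}$ of $\tilde{R}_r$ lies inside $H^i_j$; the lower boundary of $\tilde{R}_r$ lies below $H^i_j$. The horizontal sub-rectangles of $\cR_{S(\underline{w})}$ contained in $\tilde{R}_r\cap H^i_j$ are exactly those rectangles of $\cR_{S(\underline{w})}$ whose interiors intersect $\overset{o}{H^i_j}\cap\overset{o}{\tilde{R}_r}$; since $f(H^i_j)=V^k_l$ and $f$ maps this intersection onto a union of full-height strips of $R_k$ clipped to the image of $\tilde{R}_r$, counting them amounts to counting how many rectangles $\tilde{R}_{r'}\subset R_k$ lie between the image $f(I_{t,\underline{w}})\subset I_{t+1,\underline{w}}$ and the appropriate horizontal boundary component of $R_k$. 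The whole argument is thus a bookkeeping computation analogous to Lemmas \ref{Lemm: number horizontal sub case II} and \ref{Lemm: number horizontal sub case I}, and the only subtlety is tracking which boundary of $R_k$ the image lands on, which is governed by the sign $\epsilon_T(i,j)$.

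First I would fix notation: write $H:=\tilde{R}_r\cap H^i_j$, note that the image $f(H)$ is a vertical sub-rectangle of $R_k$ sitting inside $V^k_l$, one of whose stable boundaries is contained in $f(I_{t,\underline{w}})\subset I_{t+1,\underline{w}}$ (by Lemma \ref{Lemm: image of stable intervals}) and the other of which is contained in a stable boundary component of $R_k$ — either the upper one $I_{k,+1}$ or the lower one $I_{k,-1}$. The rectangles of $\cR_{S(\underline{w})}$ contained in $f(H)$ are precisely the $\tilde{R}_{r'}\subset R_k$ lying (in the vertical order of $R_k$) between these two boundaries, and since $f|_H$ is a homeomorphism preserving the stable/unstable foliations, this count equals $\underline{h}(r,j)$. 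Now the case split: if $\epsilon_T(i,j)=1$, then $f$ maps $H^i_j$ preserving vertical orientation, so the lower boundary of $H^i_j$ (which lies below $\tilde{R}_r$, hence its image lies below $f(H)$) goes to the lower boundary of $R_k$, i.e.\ $I_{k,-1}$; consequently $f(H)$ is the region of $R_k$ from $I_{k,-1}$ up to $f(I_{t,\underline{w}})$, and the number of rectangles of $\cR_{S(\underline{w})}$ in it is exactly the position $s^+=\underline{s}(t+1,\underline{w})$ of $I_{t+1,\underline{w}}$ in the induced order of $R_k$ (Definition \ref{Lemm: Boundaries code}), giving $\underline{h}(r,j)=s^+$. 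If instead $\epsilon_T(i,j)=-1$, then $f$ reverses vertical orientation, so the lower boundary of $H^i_j$ maps to the upper boundary $I_{k,+1}$ of $R_k$, and $f(H)$ is the region from $f(I_{t,\underline{w}})\subset I_{t+1,\underline{w}}$ up to $I_{k,+1}$; there are $O(k,\underline{w})+1$ rectangles of $\cR_{S(\underline{w})}$ inside $R_k$ in total (Lemma \ref{Lemm: Number  of rectangles in a rec of the s refinaent}), and $s^+$ of them lie at or below $I_{t+1,\underline{w}}$, so the number strictly above is $O(k,\underline{w})+1-s^+$, giving $\underline{h}(r,j)=O(k,\underline{w})+1-s^+$.

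The step I expect to require the most care is the correct identification of which stable boundary component of $R_k$ receives the image of the lower boundary of $H^i_j$, and relatedly making precise the claim that the count of rectangles between two stable segments in $R_k$ is read off from the values of the order function $\underline{s}$. Here I would invoke Lemma \ref{Lemm: imge of boundaries} to pin down that $f$ sends the relevant boundary of $R_i$ (through $H^i_j$) to $I_{k,+1}$ or $I_{k,-1}$ according to $\epsilon_T(i,j)$, and I would use the definition of $\underline{s}$ together with Lemma \ref{Lemm: who the intervals intersect } (so that distinct stable intervals in $R_k$ are genuinely nested/disjoint and the order is well defined) to convert ``region between two stable boundaries'' into a difference of order values. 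A minor point worth spelling out is that in the subcase $\epsilon_T(i,j)=1$ the relevant region's bottom is a genuine boundary component $I_{k,-1}$ which receives order value $0$, so the count is $s^+-0=s^+$; and symmetrically $I_{k,+1}$ has order value $O(k,\underline{w})+1$. Once these identifications are in place the formulas \eqref{Equ: h(i,j,r), caso 3, +, up} and \eqref{Equ: h(i,j,r), caso 3, -,up} follow immediately, and the symmetric statement for the case where $H^i_j$ contains the \emph{lower} boundary of $\tilde{R}_r$ is handled by the same argument with the roles of ``up'' and ``down'' interchanged (which I would state as a companion lemma rather than reprove in full).
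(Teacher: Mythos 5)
Your proposal is correct and follows essentially the same route as the paper's proof: in both cases one tracks the image of the upper boundary $I_{t,\underline{w}}$ into $I_{t+1,\underline{w}}$ at position $s^+$ in $R_k$, uses $\epsilon_T(i,j)$ to decide whether the image of the lower boundary of $H^i_j$ lands in $I_{k,-1}$ or $I_{k,+1}$, and counts the rectangles of $\cR_{S(\underline{w})}$ lying between these two stable segments. Your explicit appeal to the order function $\underline{s}$ (with the conventions $\underline{s}(i,-1)=0$, $\underline{s}(i,+1)=O(k,\underline{w})+1$) just makes the paper's counting step slightly more formal; there is no substantive difference.
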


	 		\begin{figure}[h]
	 	\centering
	 	\includegraphics[width=0.8\textwidth]{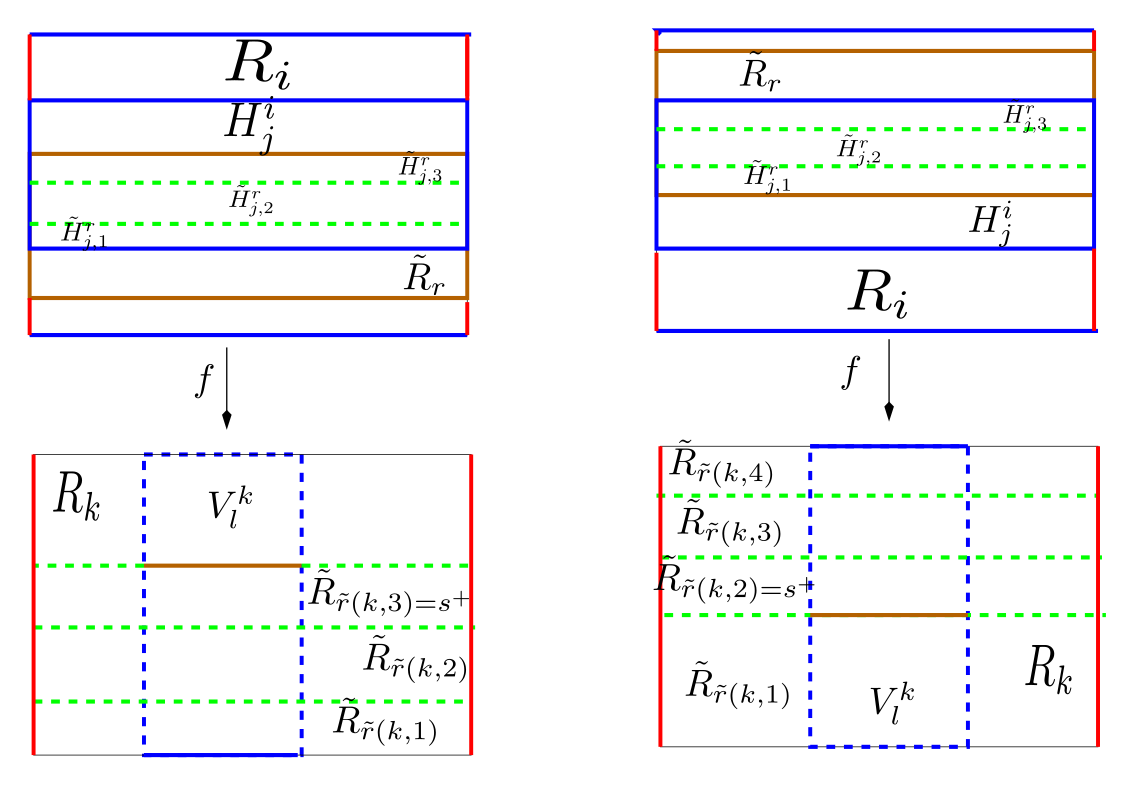}
	 	\caption{Third case: $H^i_{j}$ contains only one horizontal boundary component of $\tilde{R}_r$  }
	 	\label{Fig: Caso III}
	 \end{figure}

	 \begin{proof}
	 	
	Look the  Figure  \ref{Fig: Caso III} to get some pictorial intuition.
	
\textbf{Case} $\epsilon_T(i,j)=1$. The upper boundary of $\tilde{R}_r$ is mapped to the stable segment $I_{t+1,\underline{w}}$, positioned at place $s^+$ inside $R_k$ and the inferior  boundary of $\tilde{R}_r\cap H^i_j$ coincides with the inferior  boundary of $H^i_j$ and since $\epsilon_T(i,j)=1$, $f$ preserves the vertical orientation of $\tilde{R}_r\cap H^i_j$. Consequently, the image of the inferior  boundary of $\tilde{R}_r\cap H^i_j$ is contained in the inferior boundary of $R_k$. The number of rectangles in $\cR_{S(\underline{w})}$ between the segment $I_{t+1,\underline{w}}$ and the inferior boundary of $R_k$ is given by $s^+$ and this number is equal to the number of horizontal sub-rectangles of $\tilde{R}_r \cap H^i_j$.

\textbf{Case} $\epsilon_T(i,j)=-1$. The upper boundary of $\tilde{R}_r\cap H^i_j$ is equal to the upper boundary of $\tilde{R}_r$ and  is mapped by $f$ to the stable segment $I_{t+1,\underline{w}}$, positioned at place $s^+$ inside $R_k$. The inferior boundary of $\tilde{R}_r\cap H^i_j$ coincides with the inferior boundary of $H^i_j$ , but since $\epsilon_T(i,j)=-1$, $f$ inverts the vertical orientation of $\tilde{R}_r\cap H^i_j$. Consequently, the image of the inferior  boundary of $\tilde{R}_r\cap H^i_j$ is contained in the upper boundary of $R_k$. The number of rectangles in $\cR_{S(\underline{w})}$ between the segment $I_{t+1,\underline{w}}$ and the upper boundary of $R_k$ is given by $O(k,\underline{w})+1-s^+$ and this number is equal to the number of horizontal sub-rectangles of $\tilde{R}_r \cap H^i_j$.

	 \end{proof}
	 
	 The next lemma  addresses the situation in which the rectangle $H^i_j$ just contains the inferior boundary of $\tilde{R}_r$. Its proof follows a similar approach to that of Lemma \ref{Lemm: number horizontal sub case III, upper}, so we won't delve into further details.

\begin{lemm}\label{Lemm: number horizontal sub case III, lower}
	 	
Consider $\tilde{R}_r\in \cR_{S(\underline{w})}$, where $r=\tilde{r}(i,s)$, and assume that $O(i,\underline{w})>0$. Take a pair of indices $(i,j)\in \cH(r)$ such that  $H^i_j$ just contains the lower boundary of $\tilde{R}_r$. Let $I_{t,\underline{w}}$ be the lower boundary of $\tilde{R}_r$, and denote $s^+=\underline{s}(t+1,\underline{w})$. In this scenario, the number of horizontal sub-rectangles of  $\cR_{S(\underline{w})}$ that are contained in  $\tilde{R}_r\cap H^i_j$ can be determined using one of the following formulas:

\begin{itemize}
	\item If $\epsilon_T(i,j)=1$:
	
	\begin{equation}\label{Equ: h(i,j,r), caso 3, +}
	\underline{h}(r,j)=O(k,\underline{w})+1-s^+ .
	\end{equation} 
	
	\item If $\epsilon_T(i,j)=-1$
	\begin{equation}\label{Equ: h(i,j,r), caso 3, -}
	\underline{h}(r,j)= s^+ .
	\end{equation} 
\end{itemize}
\end{lemm}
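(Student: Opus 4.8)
The statement to prove is a counting lemma entirely parallel to Lemma \ref{Lemm: number horizontal sub case III, upper}, but now the horizontal sub-rectangle $H^i_j$ of $R_i$ contains the \emph{lower} boundary of $\tilde{R}_r$ rather than the upper one. The plan is to mimic the argument of the previous lemma, switching the roles of the upper and lower boundaries of $\tilde{R}_r \cap H^i_j$. Recall that $\tilde{R}_r \cap H^i_j$ is a horizontal sub-rectangle of $\cR_{S(\underline{w})}$ contained in $H^i_j$: one of its horizontal boundaries (the lower one) is the segment $I_{t,\underline{w}}$, which is the lower boundary of $\tilde{R}_r$, and the other (the upper one) coincides with the upper boundary of $H^i_j$, i.e. with the upper boundary of $R_i$ inside the coordinates of $H^i_j$. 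The count $\underline{h}(r,j)$ is the number of rectangles of $\cR_{S(\underline{w})}$ contained in the vertical sub-rectangle $V^k_l = f(H^i_j)$ lying between the image of the lower boundary of $\tilde R_r \cap H^i_j$ and the image of its upper boundary.

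First I would record that $f(I_{t,\underline{w}}) \subset I_{t+1,\underline{w}}$ by Lemma \ref{Lemm: image of stable intervals}, and that $I_{t+1,\underline{w}}$ occupies position $s^+ = \underline{s}(t+1,\underline{w})$ among the stable segments of $R_k$ that bound the rectangles of $\cR_{S(\underline{w})}$ inside $R_k$. This is the image of the \emph{lower} boundary of $\tilde R_r \cap H^i_j$. Next I would locate the image of the \emph{upper} boundary of $\tilde R_r \cap H^i_j$: since this upper boundary is the upper boundary of $H^i_j$, its image under $f$ is a horizontal boundary component of $R_k$, and which one depends on the sign $\epsilon_T(i,j)$. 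If $\epsilon_T(i,j)=1$, then $f$ preserves the vertical orientation of $H^i_j$, so the image of the upper boundary of $H^i_j$ lies in the upper boundary of $R_k$; hence the number of rectangles of $\cR_{S(\underline{w})}$ inside $R_k$ between position $s^+$ and the top is $O(k,\underline{w})+1-s^+$, giving $\underline{h}(r,j)=O(k,\underline{w})+1-s^+$. If $\epsilon_T(i,j)=-1$, then $f$ reverses the vertical orientation of $H^i_j$, so the image of the upper boundary of $H^i_j$ lies in the lower boundary of $R_k$; the rectangles of $\cR_{S(\underline{w})}$ inside $R_k$ between the lower boundary and position $s^+$ number exactly $s^+$, giving $\underline{h}(r,j)=s^+$. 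Throughout, one must keep in mind that the inferior boundary of $\tilde R_r \cap H^i_j$, being $I_{t,\underline{w}}$, is not an $s$-boundary leaf (since $\underline{w}$ is not $s$-boundary), so it lies strictly inside $H^i_j$ and does not coincide with a boundary of $R_i$; this is what guarantees the counts above are genuine and the two cases are exhaustive.

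There is essentially no new obstacle here: the lemma is the mirror image of Lemma \ref{Lemm: number horizontal sub case III, upper} under the symmetry exchanging ``upper'' and ``lower,'' and the same pictorial heuristic (Figure \ref{Fig: Caso III}) applies after a vertical flip. The one point requiring a small amount of care is the bookkeeping of which horizontal boundary of $R_k$ receives the image of the upper boundary of $H^i_j$ as a function of $\epsilon_T(i,j)$ — this is exactly the reversal of the bookkeeping in the preceding lemma, which is why the formulas \eqref{Equ: h(i,j,r), caso 3, +} and \eqref{Equ: h(i,j,r), caso 3, -} come out with the roles of $s^+$ and $O(k,\underline{w})+1-s^+$ swapped relative to \eqref{Equ: h(i,j,r), caso 3, +, up} and \eqref{Equ: h(i,j,r), caso 3, -,up}. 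Since the authors of the excerpt explicitly say they will omit the details, the proof proposal is simply: carry out the argument of Lemma \ref{Lemm: number horizontal sub case III, upper} verbatim, interchanging the words ``upper'' and ``inferior/lower'' everywhere, and interchanging the outputs $s^+ \leftrightarrow O(k,\underline{w})+1-s^+$ in the two sign cases.
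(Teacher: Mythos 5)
Your proposal is correct and is exactly the argument the paper intends: the paper omits this proof, stating only that it follows the same approach as Lemma \ref{Lemm: number horizontal sub case III, upper}, and your mirror argument (tracking $f(I_{t,\underline{w}})\subset I_{t+1,\underline{w}}$ at position $s^+$ and sending the upper boundary of $H^i_j$ to the upper or lower boundary of $R_k$ according to $\epsilon_T(i,j)$) carries out precisely that with the correct counts $O(k,\underline{w})+1-s^+$ and $s^+$.
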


	%%%%%%%%%%%%%%%%%%%%%%%%%%%%%%%%%%% 
	 
	 \begin{defi}\label{Defi: order rectangles case III}
The horizontal sub-rectangles of $\tilde{R}_r$ described in Lemma \ref{Lemm: number horizontal sub case III, upper}  and Lemma \ref{Lemm: number horizontal sub case III, lower}  are labeled from the bottom to the top with the vertical orientation of $\tilde{R}_r$ as:
$$
\{\tilde{H}^r_{j,J}\}_{J=1}^{\underline{h}(r,j)}.
$$
	 \end{defi}

	%%%%%%%%%%%%%%%%%%%%%%%%%%%%%%%%%%%%%%%%%%%%%% Upper

	 \begin{lemm} \label{Lemm: Permutation rho caso, up III}
With the hypothesis of Lemma \ref{Lemm: number horizontal sub case III, upper}, assume that $\Phi_T(i,j)=(k,l,\epsilon_T(i,j))$ and $f(\tilde{H}^r_{j,J})=\tilde{V}^{r'}_{l'}$.  Suppose that  $H^i_j$  just contains the upper boundary of $\tilde{R}_r$, then we have the following situations: 
\begin{itemize}
\item[i)] 	If $\epsilon_T(i,j)=1$, then $r'=\tilde{r}(k,  J)$ and $l=l'$

\item[ii)] If $\epsilon_T(i,j)=-1$, then $r'=\tilde{r}(k, O(k,\underline{w})+1-J+1)$ and $l=l'$.
\end{itemize}

	 \end{lemm}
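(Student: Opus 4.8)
\textbf{Proof proposal for Lemma \ref{Lemm: Permutation rho caso, up III}.}

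The plan is to track the vertical position of the horizontal sub-rectangle $\tilde{H}^r_{j,J}$ under the action of $f$, reasoning exactly as in the proofs of Lemmas \ref{Lemm: Permutation rho caso I} and \ref{Lemm: Permutation rho caso II}, but now taking into account that $H^i_j$ meets $\tilde{R}_r$ only along the upper boundary of $\tilde{R}_r$. The first thing I would record is that, since $\rho_T(i,j)=(k,l)$ and $\tilde{H}^r_{j,J}\subset H^i_j$, we have $f(\tilde{H}^r_{j,J})\subset f(H^i_j)=V^k_l$; together with Proposition \ref{Prop: vertical possition is delimitate.} this already gives $l'=l$ in both cases, so the only real content is the computation of $r'$, i.e.\ the vertical position of $f(\tilde{H}^r_{j,J})$ inside $R_k$.

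For the case $\epsilon_T(i,j)=1$, the argument runs as follows. By the labelling convention of Definition \ref{Defi: order rectangles case III}, the rectangle $\tilde{H}^r_{j,1}$ is the bottommost horizontal sub-rectangle of $\tilde{R}_r\cap H^i_j$, and its lower boundary is exactly the lower boundary of $H^i_j$ (this is the content of the case analysis preceding Lemma \ref{Lemm: number horizontal sub case III, upper}: $H^i_j$ carries only the upper boundary of $\tilde{R}_r$, so the lower side of $\tilde{R}_r\cap H^i_j$ is the lower side of $H^i_j$). Since $\epsilon_T(i,j)=1$, the map $f$ preserves the vertical orientation, so $f$ sends the lower boundary of $H^i_j$ into the lower boundary of $R_k$, namely $I_{k,-1}$. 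Hence $f(\tilde{H}^r_{j,1})$ is the bottommost rectangle of $\cR_{S(\underline{w})}$ inside $R_k$, i.e.\ it sits at vertical position $1$; and $f(\tilde{H}^r_{j,J})$, being $J-1$ steps above it in $\tilde{R}_r\cap H^i_j$ and with orientation preserved, sits at vertical position $J$ inside $R_k$. Thus $r'=\tilde{r}(k,J)$. For the case $\epsilon_T(i,j)=-1$, the map $f$ reverses the vertical orientation, so now $f$ sends the lower boundary of $H^i_j$ into the \emph{upper} boundary of $R_k$; therefore $f(\tilde{H}^r_{j,1})$ is the topmost rectangle of $\cR_{S(\underline{w})}$ inside $R_k$, at position $O(k,\underline{w})+1$, and in general $f(\tilde{H}^r_{j,J})$ is $J-1$ steps below it, i.e.\ at position $O(k,\underline{w})+1-(J-1)=O(k,\underline{w})+1-J+1$. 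Hence $r'=\tilde{r}(k,O(k,\underline{w})+1-J+1)$.

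I do not expect any serious obstacle here: this is the direct analogue of Lemma \ref{Lemm: Permutation rho caso I}, and all the required orientation bookkeeping has already been set up. The one point to be careful about is consistency with the count $\underline{h}(r,j)$ from Lemma \ref{Lemm: number horizontal sub case III, upper}: when $\epsilon_T(i,j)=1$ we have $\underline{h}(r,j)=s^+$, and the image rectangles occupy positions $1,\dots,s^+$ inside $R_k$, which are exactly the $s^+$ rectangles of $\cR_{S(\underline{w})}$ below the segment $I_{t+1,\underline{w}}$ (the image of the upper boundary of $\tilde{R}_r$), as it must be; when $\epsilon_T(i,j)=-1$ we have $\underline{h}(r,j)=O(k,\underline{w})+1-s^+$, and the image rectangles occupy positions $O(k,\underline{w})+1,\,O(k,\underline{w}),\,\dots$ down to $s^+{+}1$, again matching. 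Verifying these endpoint computations is the only step that requires care, and it is routine. Finally, an entirely symmetric argument (interchanging the roles of upper and lower boundary, and correspondingly the roles of $I_{k,-1}$ and $I_{k,+1}$) handles the companion statement where $H^i_j$ contains only the lower boundary of $\tilde{R}_r$, which I would state and prove immediately afterward in parallel with Lemma \ref{Lemm: number horizontal sub case III, lower}.
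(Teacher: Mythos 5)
Your proposal is correct and follows essentially the same approach as the paper: track the orientation via $\epsilon_T(i,j)$, locate the images of the two stable boundaries of $\tilde{R}_r\cap H^i_j$ inside $R_k$, and use Proposition \ref{Prop: vertical possition is delimitate.} to get $l'=l$. Worth noting: for $\epsilon_T(i,j)=1$ your endpoint computation correctly yields the stated formula $r'=\tilde{r}(k,J)$ (image spanning positions $1,\dots,s^+$ from the bottom boundary $I_{k,-1}$), whereas the paper's own proof of that item writes $r'=\tilde{r}(k,s^{+}+J)$ — the formula belonging to the lower-boundary companion Lemma \ref{Lemm: Permutation rho caso, low III} — so your bookkeeping is the one consistent with the statement.
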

 
 \begin{proof}
\textbf{Item} $i)$. When $\epsilon_T(i,j) = 1$  $f$ preserves the vertical orientation  restricted to $\tilde{R}_r\cap H^i_j$. Consequently, the horizontal sub-rectangle $\tilde{H}^r_{j,J}$ that is located at position $J$ within $\tilde{R}_r\cap H^i_j$, is send to the rectangle $\tilde{R}_{r'}$ that is located at position $J$ over the rectangle that is in the position $s^+$ within $R_k$, therefore $r'=\tilde{r}(k,s^+ + J)$.
 
\textbf{Item}  $ii)$. When $\epsilon_T(i,j) = -1$,  $f$ inverts the vertical orientation  of $\tilde{R}_r\cap H^i_j$. Consequently, the horizontal sub-rectangle $\tilde{H}^r_{j,J}$ that is located at position $J$ within $\tilde{R}_r\cap H^i_j$, is send to the rectangle located at position $J-1$ below the upper rectangle in $R_k$, this is given by  $O(k,\underline{w})+1-(J-1)$ withing $R_k$. Therefore, $r'=\tilde{r}(k,O(k,\underline{w})+1-J+1)$. 

In bot cases  Proposition \ref{Prop: vertical possition is delimitate.} implies that $l'=l$. 

 \end{proof}

 The proof of the next lemma is the same, but in this case we consider the case when $H^i_j$ contains the inferior boundary of $\tilde{R}_r$.

 \begin{lemm} \label{Lemm: Permutation rho caso, low III}
 	With the hypothesis of Lemma \ref{Lemm: number horizontal sub case III, lower}, assume that $\Phi_T(i,j)=(k,l,\epsilon_T(i,j))$ and $f(\tilde{H}^r_{j,J})=\tilde{V}^{r'}_{l'}$.  	Suppose that  $H^i_j$  just contains the inferior boundary of $\tilde{R}_r$, then we have the following situations: 
 	
 	\begin{itemize}
 		\item[i)] 	If $\epsilon_T(i,j)=1$, then $r'=\tilde{r}(k, s^+ + J)$ and $l=l'$

 		\item[ii)] If $\epsilon_T(i,j)=-1$, then $r'=\tilde{r}(k, s^+-J+1)$ and $l=l'$.
 	\end{itemize} 	
 	
 \end{lemm}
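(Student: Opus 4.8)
This lemma is the exact mirror of Lemma \ref{Lemm: Permutation rho caso, up III}, so the plan is to carry out the same argument but with the roles of the upper and lower boundary of $\tilde{R}_r\cap H^i_j$ interchanged. First I would recall the set-up: $H^i_j$ contains only the \emph{inferior} boundary of $\tilde{R}_r$, which is the stable segment $I_{t,\underline{w}}$, so the inferior boundary of $\tilde{R}_r\cap H^i_j$ equals $I_{t,\underline{w}}$ and is mapped by $f$ into the stable segment $I_{t+1,\underline{w}}$, sitting at position $s^+=\underline{s}(t+1,\underline{w})$ inside $R_k$ (here $\rho_T(i,j)=(k,l)$ and $f(H^i_j)=V^k_l$). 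The upper boundary of $\tilde{R}_r\cap H^i_j$ coincides with the upper boundary of $H^i_j$, hence its image lies in the upper boundary of $R_k$ when $\epsilon_T(i,j)=1$ and in the inferior boundary of $R_k$ when $\epsilon_T(i,j)=-1$, by the vertical-orientation bookkeeping already used repeatedly (e.g. Lemma \ref{Lemm: imge of boundaries}). The count $\underline{h}(r,j)$ in these two subcases is the one given by Lemma \ref{Lemm: number horizontal sub case III, lower}, namely $O(k,\underline{w})+1-s^+$ and $s^+$ respectively; I will not re-derive it.

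The core of the proof is then the position tracking for an individual horizontal sub-rectangle $\tilde{H}^r_{j,J}$, labelled from bottom to top inside $\tilde{R}_r\cap H^i_j$ as in Definition \ref{Defi: order rectangles case III}. In the case $\epsilon_T(i,j)=1$, $f$ preserves the vertical orientation of $\tilde{R}_r\cap H^i_j$; since the bottom of this strip maps onto $I_{t+1,\underline{w}}$, which is at position $s^+$ inside $R_k$, the strip in position $J$ maps onto the rectangle of $\cR_{S(\underline{w})}$ that is $J$ places above that, i.e.\ $r'=\tilde{r}(k,s^+ + J)$. In the case $\epsilon_T(i,j)=-1$, $f$ reverses the vertical orientation, so the bottom strip $J=1$ still maps onto the rectangle whose \emph{upper} boundary is $I_{t+1,\underline{w}}$, i.e.\ position $s^+$; counting downward, the strip in position $J$ maps onto the rectangle at position $s^+ - (J-1)=s^+-J+1$ inside $R_k$, giving $r'=\tilde{r}(k,s^+ - J+1)$. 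In both subcases, the horizontal index is unchanged: $f(\tilde{H}^r_{j,J})=\tilde{V}^{r'}_{l'}$ is the closure of $\overset{o}{\tilde{R}_{r'}}\cap\overset{o}{V^k_l}$ because $\tilde{H}^r_{j,J}\subset H^i_j$ forces $f(\tilde{H}^r_{j,J})\subset f(H^i_j)=V^k_l$, so Proposition \ref{Prop: vertical possition is delimitate.} yields $l'=l$.

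Concretely, the steps in order are: (1) identify the inferior boundary of $\tilde{R}_r\cap H^i_j$ with $I_{t,\underline{w}}$ and compute the position of its image $I_{t+1,\underline{w}}$ inside $R_k$ as $s^+$; (2) determine, from the sign of $\epsilon_T(i,j)$, which horizontal boundary of $R_k$ the image of the upper boundary of $\tilde{R}_r\cap H^i_j$ lands on, and hence whether one counts upward from $s^+$ or downward from $s^+$; (3) translate "position $J$ inside the strip" into a position inside $R_k$, obtaining $\tilde{r}(k,s^+ + J)$ in the orientation-preserving case and $\tilde{r}(k,s^+ - J+1)$ in the orientation-reversing case; (4) invoke Proposition \ref{Prop: vertical possition is delimitate.} for $l'=l$. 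I do not anticipate a genuine obstacle here; the only point requiring minor care is the off-by-one convention in Step (3) in the $\epsilon_T(i,j)=-1$ case — making sure that $J=1$ corresponds to position $s^+$ (not $s^+-1$ or $s^++1$) — which is pinned down by the convention, established in Lemma \ref{Lemm: Determine boundaris of R-r} and used throughout, that the position of a rectangle $\tilde{R}\subset R_k$ is read off from its \emph{upper} boundary together with the bottom-to-top labelling of Definition \ref{Defi: order rectangles case III}.

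\begin{proof}
Suppose that $H^i_j$ contains only the inferior boundary of $\tilde{R}_r$; this inferior boundary equals the stable segment $I_{t,\underline{w}}$ determined in Lemma \ref{Lemm: number horizontal sub case III, lower}. Write $\rho_T(i,j)=(k,l)$, so $f(H^i_j)=V^k_l$, and let $s^+=\underline{s}(t+1,\underline{w})$ be the position of the stable segment $I_{t+1,\underline{w}}$ inside $R_k$. By Lemma \ref{Lemm: image of stable intervals}, $f(I_{t,\underline{w}})\subset I_{t+1,\underline{w}}$, so the image of the inferior boundary of the strip $\tilde{R}_r\cap H^i_j$ lies in $I_{t+1,\underline{w}}$, which sits at position $s^+$ inside $R_k$. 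The upper boundary of $\tilde{R}_r\cap H^i_j$ coincides with the upper boundary of $H^i_j$; hence, by the vertical orientation analysis of Lemma \ref{Lemm: imge of boundaries}, its image lies in the upper boundary of $R_k$ when $\epsilon_T(i,j)=1$ and in the inferior boundary of $R_k$ when $\epsilon_T(i,j)=-1$.

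Assume first $\epsilon_T(i,j)=1$. Then $f$ preserves the vertical orientation of the strip $\tilde{R}_r\cap H^i_j$. The sub-rectangle $\tilde{H}^r_{j,1}$, which occupies position $1$ at the bottom of the strip, is mapped onto the rectangle of $\cR_{S(\underline{w})}$ contained in $R_k$ whose inferior boundary is $I_{t+1,\underline{w}}$, i.e.\ the rectangle located one place above position $s^+$; thus its image lies in the rectangle at position $s^+ + 1$ inside $R_k$. Proceeding upward, $\tilde{H}^r_{j,J}$ is mapped onto the rectangle located at position $s^+ + J$ inside $R_k$, so $r'=\tilde{r}(k, s^+ + J)$.

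Assume now $\epsilon_T(i,j)=-1$. Then $f$ reverses the vertical orientation of the strip $\tilde{R}_r\cap H^i_j$, so the bottom sub-rectangle $\tilde{H}^r_{j,1}$ is mapped onto the rectangle of $\cR_{S(\underline{w})}$ contained in $R_k$ whose upper boundary is $I_{t+1,\underline{w}}$; since the position of a rectangle inside $R_k$ is read off from its upper boundary, this is the rectangle at position $s^+$. Proceeding downward as $J$ increases, $\tilde{H}^r_{j,J}$ is mapped onto the rectangle at position $s^+ - (J-1)=s^+ - J + 1$ inside $R_k$, so $r'=\tilde{r}(k, s^+ - J + 1)$.

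Finally, since $\tilde{H}^r_{j,J}\subset H^i_j$, we have $f(\tilde{H}^r_{j,J})\subset f(H^i_j)=V^k_l$, and because $f(\tilde{H}^r_{j,J})=\tilde{V}^{r'}_{l'}$ is, by Definition \ref{Defi: Vertical sub R-S(w)}, the closure of $\overset{o}{\tilde{R}_{r'}}\cap\overset{o}{V^k_{l'}}$, Proposition \ref{Prop: vertical possition is delimitate.} yields $l'=l$ in both cases.
\end{proof}
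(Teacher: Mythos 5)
Your proof is correct and follows exactly the route the paper intends: the paper gives no separate argument for this lemma, stating only that it is proved like the upper-boundary case (Lemma \ref{Lemm: Permutation rho caso, up III}), and your position-tracking — image of $I_{t,\underline{w}}$ landing in $I_{t+1,\underline{w}}$ at position $s^+$, counting upward from $s^+$ when $\epsilon_T(i,j)=1$ and downward from $s^+$ when $\epsilon_T(i,j)=-1$, then Proposition \ref{Prop: vertical possition is delimitate.} for $l'=l$ — is precisely that mirrored argument. Your care with the off-by-one convention (the rectangle whose upper boundary is $I_{t+1,\underline{w}}$ sits at position $s^+$, the one whose lower boundary is $I_{t+1,\underline{w}}$ at $s^++1$, as pinned down by Lemma \ref{Lemm: Determine boundaris of R-r}) is exactly what is needed and matches the stated formulas.
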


 %%%%%%%%%%%%%%%%%%%%%%%%%%%%%%%%%%%%%%%%%
 
 \begin{defi}\label{Defi: permutation caso, up III}
	With the hypothesis of Lemma \ref{Lemm: number horizontal sub case III, upper}, assume that $\Phi_T(i,j)=(k,l,\epsilon)$. Suppose that  $H^i_j$  just contains the upper boundary of $\tilde{R}_r$: 		
	
	\begin{itemize}
		\item[i)]  	If $\epsilon_T(i,j)=1$, we define:
		\begin{equation}\label{Equ: rho caso 3, +}
		\underline{\rho}_{(r,j)}(r,J)=(\tilde{r}(k,  J), l).
		\end{equation}
		
		\item[ii)] if $\epsilon_T(i,j)=-1$ we take:
		
		\begin{equation}\label{Equ: rho caso 3, -}
		\underline{\rho}_{(r,j)}(r,J)=(\tilde{r}(k, O(k,\underline{w})+1-J+1),l).
		\end{equation}
	\end{itemize} 	
\end{defi}

  \begin{defi}\label{Defi: permutation caso, low III}
 	With the hypothesis of Lemma \ref{Lemm: number horizontal sub case III, lower}, assume that $\Phi_T(i,j)=(k,l,\epsilon_T(i,j))$. Suppose that  $H^i_j$  just contains the lower boundary of $\tilde{R}_r$, then:	
 	
 	\begin{itemize}
 		\item[i)]  	If $\epsilon_T(i,j)=1$, we define:
 		\begin{equation}\label{Equ: rho caso 3, +,low}
 		\underline{\rho}_{(r,j)}(r,J)=(\tilde{r}(k, s^+ + J), l).
 		\end{equation}
 		
 		\item[ii)] if $\epsilon_T(i,j)=-1$ we take:
 		
 		\begin{equation}\label{Equ: rho caso 3, -,low}
 		\underline{\rho}_{(r,j)}(r,J)=(\tilde{r}(k, s^+-J+1),l).
 		\end{equation}
 	\end{itemize} 	
 	
 \end{defi}

\subsection{The number $H_r$}
 
 \begin{coro}\label{Coro: Number of horizontal sub}
 Let $\tilde{R}_r\in \cR_{S(\underline{w})}$ with $r=\tilde{r}(i,s)$, then the number $H_r$ of horizontal sub-rectangles of the Markov partition $(f,\cR_{S(\underline{w})})$ that are contained in $\tilde{R}_r$ is given by:
 	\begin{equation}
 	H_r=\sum_{\{j:(i,j)\in \cH(r)\}}\underline{h}(r,j).
 	\end{equation}
 \end{coro}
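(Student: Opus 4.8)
The statement to prove is Corollary~\ref{Coro: Number of horizontal sub}, which asserts the additivity formula $H_r = \sum_{\{j : (i,j)\in\cH(r)\}} \underline{h}(r,j)$ for the number of horizontal sub-rectangles of $(f,\cR_{S(\underline{w})})$ contained in a fixed rectangle $\tilde{R}_r$ with $r = \tilde{r}(i,s)$. The plan is to view the collection of horizontal sub-rectangles of $\tilde{R}_r$ as being partitioned according to which horizontal sub-rectangle $H^i_j$ of the original partition $(f,\cR)$ they meet. Concretely, first I would recall from Definition~\ref{Defi cH(r) set} that $\cH(r)$ indexes exactly those $H^i_j$ whose interior meets $\overset{o}{\tilde{R}_r}$, so that $\tilde{R}_r = \bigcup_{\{j:(i,j)\in\cH(r)\}} (\tilde{R}_r \cap H^i_j)$ up to measure-zero boundary pieces.

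The key step is to show that every horizontal sub-rectangle $\tilde{H}$ of $(f,\cR_{S(\underline{w})})$ contained in $\tilde{R}_r$ lies in the interior of a \emph{unique} $H^i_j$ with $(i,j)\in\cH(r)$. This is precisely the content of Lemma~\ref{Lemm: unique sub (i,j) for a r} together with Definition~\ref{Defi: the unique pair (i,j-H)}: the argument there (that $\overset{o}{\tilde{H}}$ cannot straddle the common stable boundary of two adjacent $H^i_{j_1}, H^i_{j_2}$, since otherwise $f(\overset{o}{\tilde{H}})$ would hit $\partial^s\cR_{S(\underline{w})}$) gives both existence and uniqueness. Hence the set of horizontal sub-rectangles of $\tilde{R}_r$ is the disjoint union, over $j$ with $(i,j)\in\cH(r)$, of the set of those $\tilde{H}$ with $j_{\tilde{H}} = j$. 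By the case analysis of Proposition~\ref{Prop: determination case of ch(r) inside R-r}, each such $\tilde{H}$ with $j_{\tilde{H}}=j$ is exactly a horizontal sub-rectangle of the intersection $\tilde{R}_r \cap H^i_j$, and conversely each horizontal sub-rectangle of $\tilde{R}_r \cap H^i_j$ is a horizontal sub-rectangle of $(f,\cR_{S(\underline{w})})$ — this last point being verified in the three cases by Lemmas~\ref{Lemm: number horizontal sub case II}, \ref{Lemm: number horizontal sub case I}, \ref{Lemm: number horizontal sub case III, upper}, and \ref{Lemm: number horizontal sub case III, lower}, where $\underline{h}(r,j)$ was \emph{defined} to count precisely these rectangles.

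It then remains to sum: $H_r = \sum_{\{j:(i,j)\in\cH(r)\}} \#\{\tilde{H} \subset \tilde{R}_r : j_{\tilde{H}} = j\} = \sum_{\{j:(i,j)\in\cH(r)\}} \underline{h}(r,j)$, which is the claimed formula. The main obstacle — really the only substantive point — is making rigorous the bijection between horizontal sub-rectangles of $\tilde{R}_r$ intersecting $H^i_j$ and horizontal sub-rectangles of $\tilde{R}_r \cap H^i_j$; but this has essentially been established piecemeal in the preceding lemmas, so the proof of the corollary itself is a short bookkeeping argument invoking Lemma~\ref{Lemm: unique sub (i,j) for a r} for the partition into disjoint classes and the definitions of $\underline{h}(r,j)$ for the count in each class. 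I would also note in passing (for completeness, matching the degenerate case $\tilde{R}_r = R_i$ of Proposition~\ref{Prop: determination case of ch(r) inside R-r}) that when $O(i,\underline{w}) = 0$ the formula still holds since then $\cH(r) = \{(i,j)\in\cH(T)\}$ and each $\underline{h}(r,j) = \underline{h}(i,j)$ recovers the count of horizontal sub-rectangles of $R_i$.
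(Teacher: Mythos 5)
Your proposal is correct and follows exactly the argument the paper intends: Lemma~\ref{Lemm: unique sub (i,j) for a r} gives the partition of the horizontal sub-rectangles of $\tilde{R}_r$ by the unique index $j_{\tilde{H}}$ with $(i,j_{\tilde{H}})\in\cH(r)$, and the counts $\underline{h}(r,j)$ in each class are precisely what Lemmas~\ref{Lemm: number horizontal sub case II}, \ref{Lemm: number horizontal sub case I}, \ref{Lemm: number horizontal sub case III, upper} and \ref{Lemm: number horizontal sub case III, lower} establish, so the formula is just the resulting sum. (Only your closing parenthetical is loosely phrased — in the case $\tilde{R}_r=R_i$ each $\underline{h}(r,j)$ equals $O(k,\underline{w})+1$ with $\rho_T(i,j)=(k,l)$, not a previously defined quantity $\underline{h}(i,j)$ — but this does not affect the argument.)
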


 \subsection{The function $\rho_{S(\underline{w})}$ in the geometric type $T_{S(\underline{w})}$}

 \begin{figure}[h]
 	\centering
 	\includegraphics[width=0.8\textwidth]{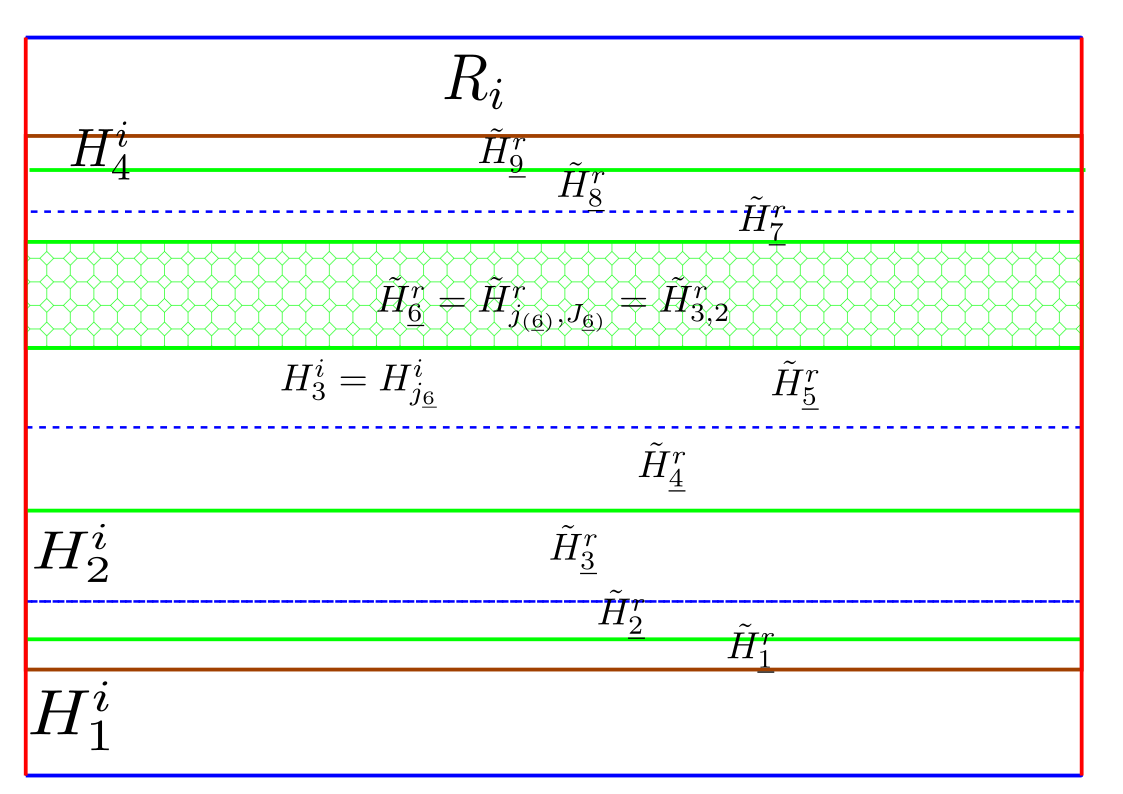}
 	\caption{The relative position of $\tilde{H}^r_{\underline{J}}$ for $\underline{J}=\underline{6}$ }
 	\label{Fig: Relative}
 \end{figure}

\begin{defi}\label{Defi: horizontal sub in R-r}
The sub-rectangles of the geometric Markov partition $\cR_{S(\underline{w})}$ contained in $\tilde{R}_r$ are labeled as:
$$
\{H^r_{\underline{J}}\}_{\underline{J}=1}^{H_r}
$$
from the bottom to the top with respect to the vertical orientation of  $\tilde{R}_r$.
\end{defi}

We've explained how to compute $\underline{\rho}_{(r,j)}(r,J)$ for the three distinct cases. Now, we need to integrate them into a unified function,  $\rho_{S(\underline{w})}$. The initial step in this process is to find a pair $(j,J)$ than, once given $r=\tilde{r}(i,s)$ and $\underline{J}\in\{1,\cdots,H_r\}$,  $\tilde{H}^r_{j,J}=\tilde{H}^r_{\underline{J}}$. We suggest to look the Figure 	\ref{Fig: Relative} to get some intuition about our procedure and definitions below.

\begin{lemm}\label{Lemm: parameter J}
For each $\underline{J}\in\{1,\cdots, H_r\}$, there exists a unique index $j_{(\underline{J})}$ such that $(i,j)\in\cH(r)$  and  $\tilde{H}^r_{\underline{J}}\subset H^i_j\cap \tilde{R}_r$. The index $j_{(\underline{J})}$  is determined by satisfy the next inequalities:
	
	\begin{equation}\label{Equa: Cut the index}
	\sum_{\{j': (i,j')\in \cH(r)  \text{ and } j'<j_{(\underline{J})}\} }\underline{h}(j',r)< \underline{J} \leq  \sum_{\{j': (i,j')\in \cH(r)  \text{ and } j'\leq j_{(\underline{J})}\} }\underline{h}(j',r).
	\end{equation} 
	
\end{lemm}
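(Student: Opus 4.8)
\textbf{Proof proposal for Lemma \ref{Lemm: parameter J}.}

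The plan is to count horizontal sub-rectangles of $\cR_{S(\underline{w})}$ inside $\tilde{R}_r$ in blocks indexed by the pairs $(i,j)\in\cH(r)$, ordered according to the vertical orientation of $\tilde{R}_r$. The key observation is that the collection $\{H^i_j\cap\tilde{R}_r : (i,j)\in\cH(r)\}$ stratifies $\tilde{R}_r$ into consecutive horizontal bands, and within each band the $\underline{h}(j,r)$ sub-rectangles $\{\tilde{H}^r_{j,J}\}_{J=1}^{\underline{h}(j,r)}$ are consecutive in the global labeling $\{\tilde{H}^r_{\underline{J}}\}_{\underline{J}=1}^{H_r}$ of Definition \ref{Defi: horizontal sub in R-r}. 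I would first record this stratification precisely: by Lemma \ref{Lemm: determination cH(r)} the set $\cH(r)$ is of the form $\{(i,j)\in\cH(T): j_{-}\leq j\leq j_{+}\}$ (with the appropriate $j_{-},j_{+}$ depending on which of the four cases of that lemma applies), so $\cH(r)=\{(i,j_1),\dots,(i,j_K)\}$ with $j_1<\cdots<j_K$ consecutive integers. Since the horizontal sub-rectangles $H^i_{j_1}<\cdots<H^i_{j_K}$ are ordered by the vertical orientation of $R_i$, which coincides with that of $\tilde{R}_r$ by Definition \ref{Defi: Orientation in R-S(w)}, the bands $H^i_{j_k}\cap\tilde{R}_r$ appear in $\tilde{R}_r$ from bottom to top as $k$ runs from $1$ to $K$.

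Next I would argue that every sub-rectangle of $(f,\cR_{S(\underline{w})})$ contained in $\tilde{R}_r$ lies inside exactly one band. That it lies in \emph{some} band: $\tilde{H}^r_{\underline{J}}$ is a horizontal sub-rectangle of $\tilde{R}_r$, hence $f(\tilde{H}^r_{\underline{J}})$ is a vertical sub-rectangle of $\cR_{S(\underline{w})}$, hence of $\cR$; so $f(\tilde{H}^r_{\underline{J}})\subset V^k_l$ for some $(k,l)$, and therefore $\tilde{H}^r_{\underline{J}}\subset f^{-1}(V^k_l)$, which is a horizontal sub-rectangle of $\cR$ contained in some $H^i_j$. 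The interior of $\tilde{H}^r_{\underline{J}}$ meets the interior of $\tilde{R}_r$, so $(i,j)\in\cH(r)$. That it lies in \emph{exactly one}: this is the content of the argument already used in Lemma \ref{Lemm: unique sub (i,j) for a r} — if $\overset{o}{\tilde{H}^r_{\underline{J}}}$ met two adjacent bands it would contain in its interior a stable boundary component $I$ of some $H^i_{j}$, and then $f(\overset{o}{I})$ would lie in $\partial^s\cR$, hence in $\partial^s\cR_{S(\underline{w})}$, contradicting that $\tilde{H}^r_{\underline{J}}$ is a single horizontal sub-rectangle of $\cR_{S(\underline{w})}$. Combining this with the band ordering, the global index $\underline{J}$ of $\tilde{H}^r_{j_k,J}$ equals $\bigl(\sum_{m<k}\underline{h}(j_m,r)\bigr)+J$; summing over the bands below $(i,j_k)$ and noting $1\leq J\leq\underline{h}(j_k,r)$ gives exactly the inequalities \eqref{Equa: Cut the index}, and $H_r=\sum_{k=1}^{K}\underline{h}(j_k,r)$ (which is Corollary \ref{Coro: Number of horizontal sub}) guarantees that the indices $\underline{J}$ produced this way exhaust $\{1,\dots,H_r\}$. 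Uniqueness of $j_{(\underline{J})}$ is immediate from strict monotonicity of the partial sums $k\mapsto\sum_{m\leq k}\underline{h}(j_m,r)$, since each $\underline{h}(j_m,r)\geq 1$.

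I do not expect a serious obstacle here: the statement is essentially bookkeeping about a lexicographic enumeration, and all the substantive facts — that bands are genuine horizontal sub-rectangles of $R_i$ (Lemma \ref{Lemm: rectangles in R S(p)}, Proposition \ref{Prop: determination case of ch(r) inside R-r}), that their union is $\tilde{R}_r$, that a sub-rectangle of $\cR_{S(\underline{w})}$ cannot straddle two of them (Lemma \ref{Lemm: unique sub (i,j) for a r}), and the counts $\underline{h}(j,r)$ themselves (Lemmas \ref{Lemm: number horizontal sub case II}, \ref{Lemm: number horizontal sub case I}, \ref{Lemm: number horizontal sub case III, upper}, \ref{Lemm: number horizontal sub case III, lower}) — are already established. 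The only point requiring a little care is making sure the orientation conventions are aligned so that ``bottom to top in $\tilde{R}_r$'' matches ``increasing $j$ in $R_i$'' matches ``increasing $J$ within a band''; this is exactly Definitions \ref{Defi: Orientation in R-S(w)} and \ref{Defi: order rectangles case II}, \ref{Defi: order rectangles case I}, \ref{Defi: order rectangles case III}, and once those are invoked the inequality \eqref{Equa: Cut the index} is forced. With $j_{(\underline{J})}$ in hand, one then sets $J:=\underline{J}-\sum_{\{j':(i,j')\in\cH(r),\,j'<j_{(\underline{J})}\}}\underline{h}(j',r)$ and defines $\rho_{S(\underline{w})}(r,\underline{J}):=\underline{\rho}_{(r,j_{(\underline{J})})}(r,J)$, using whichever of Definitions \ref{Defi: permutation caso II}, \ref{Defi: permutation caso I}, \ref{Defi: permutation caso, up III}, \ref{Defi: permutation caso, low III} is appropriate to the placement of $\tilde{R}_r$ relative to $H^i_{j_{(\underline{J})}}$.
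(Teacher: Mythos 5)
Your argument is correct, and it is simply the detailed bookkeeping behind the paper's own treatment, which dismisses the lemma as "immediate": the band stratification, the exclusivity argument recycled from Lemma \ref{Lemm: unique sub (i,j) for a r}, and the strict monotonicity of the partial sums are exactly what make the inequalities \eqref{Equa: Cut the index} pin down $j_{(\underline{J})}$ uniquely. No gap here; your write-up is if anything more complete than the paper's.
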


\begin{proof}
This is immediate.
\end{proof}

\begin{defi}\label{Defi: Horizontal parameter}
Given $\underline{J}\in \{1,\cdots, H_r\}$ we call the index $j_{(\underline{J})}$ that was determined in Lemma \ref{Lemm: parameter J} the \emph{horizontal parameter} of $\underline{J}$.
\end{defi}

\begin{defi}\label{Defi: relative index }
Let $r=\tilde{r}(i,s)$, $\underline{J}\in \{1,\cdots, H_r\}$, and let $j_{(\underline{J})}$ be the horizontal parameter of $\underline{J}$. The \emph{relative index} of $H^r_J$ inside of $\tilde{R}_r\cap H^i_{j_{\underline{J}}}$ is given by:
\begin{equation}\label{Equa: relative index}
J_{(\underline{J})}:=\underline{J}-\sum_{j'<j_{({\underline{J})}}}\underline{h}(r,j').
\end{equation}

\end{defi} 

\begin{defi}\label{Defi: relative position }
Let $\underline{J}\in \{1,\cdots, H_r\}$ the \emph{relative position} of $\underline{J}$ inside $\tilde{R}_r$ is the pair $(j_{(\underline{J})},J_{(\underline{J})})$ of the horizontal parameter of $\underline{J}$ and the relative index of $H^r_J$ inside of $\tilde{R}_r\cap H^i_{j_{(\underline{J}})}$
\end{defi}

\begin{lemm}\label{Lemm: relative position relation}
The rectangle $\tilde{H}^r_{j_{(\underline{J})},J_{(\underline{J})}}$ and the rectangle $\tilde{H}^r_{\underline{J}}$ are the same.
\end{lemm}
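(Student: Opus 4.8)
\textbf{Proof strategy for Lemma \ref{Lemm: relative position relation}.}

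The plan is to unwind the definitions and track two independent enumerations of the same family of rectangles. On one side we have the global enumeration $\{\tilde{H}^r_{\underline{J}}\}_{\underline{J}=1}^{H_r}$ of all horizontal sub-rectangles of $(f,\cR_{S(\underline{w})})$ contained in $\tilde{R}_r$, labeled from bottom to top with respect to the vertical orientation of $\tilde{R}_r$ (Definition \ref{Defi: horizontal sub in R-r}). On the other side, for each $j$ with $(i,j)\in\cH(r)$, we have the local enumeration $\{\tilde{H}^r_{j,J}\}_{J=1}^{\underline{h}(r,j)}$ of the horizontal sub-rectangles of $\cR_{S(\underline{w})}$ contained in the intersection $\tilde{R}_r\cap H^i_j$, again labeled from bottom to top with respect to the vertical orientation of $\tilde{R}_r$ (Definitions \ref{Defi: order rectangles case II}, \ref{Defi: order rectangles case I} and \ref{Defi: order rectangles case III}). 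The claim is precisely that the two indexings are related by the ``base-$\underline{h}(r,\cdot)$'' bookkeeping given by Equations \ref{Equa: Cut the index} and \ref{Equa: relative index}.

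First I would observe that, by Proposition \ref{Prop: determination case of ch(r) inside R-r} and the discussion of the three cases, the sets $\{\tilde{R}_r\cap H^i_j : (i,j)\in\cH(r)\}$ partition $\tilde{R}_r$ into horizontal strips whose interiors are disjoint, and these strips are stacked in $\tilde{R}_r$ in the same order as the indices $j$ (since the $H^i_j$ are ordered from bottom to top in $R_i$ by the vertical orientation, which agrees with the one of $\tilde{R}_r$ by Definition \ref{Defi: Orientation in R-S(w)}). Hence every horizontal sub-rectangle of $\tilde{R}_r$ lies in exactly one such strip $\tilde{R}_r\cap H^i_{j}$; for the rectangle $\tilde{H}^r_{\underline{J}}$ this strip is the one indexed by the horizontal parameter $j_{(\underline{J})}$ of Definition \ref{Defi: Horizontal parameter}, which is well defined by Lemma \ref{Lemm: parameter J}. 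Within a fixed strip $\tilde{R}_r\cap H^i_{j}$ the horizontal sub-rectangles of $\cR_{S(\underline{w})}$ contained there are exactly $\{\tilde{H}^r_{j,J}\}_{J=1}^{\underline{h}(r,j)}$, and since both the local order and the global order are the bottom-to-top order induced by the vertical orientation of $\tilde{R}_r$, they must agree as orders on this finite set.

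Then I would count: the number of horizontal sub-rectangles of $\tilde{R}_r$ lying strictly below the strip $\tilde{R}_r\cap H^i_{j_{(\underline{J})}}$ equals $\sum_{j'<j_{(\underline{J})}}\underline{h}(r,j')$, by Corollary \ref{Coro: Number of horizontal sub} applied to the union of the lower strips. Consequently the global index $\underline{J}$ of $\tilde{H}^r_{\underline{J}}$ exceeds this sum by exactly its position inside its own strip, which is the local index $J$; this is precisely Equation \ref{Equa: relative index} defining $J_{(\underline{J})}$, so the position of $\tilde{H}^r_{\underline{J}}$ inside $\tilde{R}_r\cap H^i_{j_{(\underline{J})}}$ is $J_{(\underline{J})}$. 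Since within that strip the local enumeration $\{\tilde{H}^r_{j_{(\underline{J})},J}\}_J$ is order-preserving with the global one, the rectangle occupying position $J_{(\underline{J})}$ in the strip is $\tilde{H}^r_{j_{(\underline{J})},J_{(\underline{J})}}$, and this is the same rectangle as $\tilde{H}^r_{\underline{J}}$. That is the assertion.

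The argument is essentially bookkeeping, so I do not expect a serious obstacle; the only point requiring a little care is to make sure the inequalities in Equation \ref{Equa: Cut the index} pin down $j_{(\underline{J})}$ uniquely and are consistent with the convention that, when $\cH(r)$ has more than one element, the extreme strips $\tilde{R}_r\cap H^i_{j_1}$ and $\tilde{R}_r\cap H^i_{j_K}$ are genuine (non-degenerate) horizontal sub-rectangles, so that $\underline{h}(r,j)\geq 1$ for every $(i,j)\in\cH(r)$. This positivity is what guarantees that the partial sums $\sum_{j'\leq j}\underline{h}(r,j')$ are strictly increasing in $j$, hence that the half-open intervals they delimit are nonempty and cover $\{1,\dots,H_r\}$ without overlap, which is exactly what Lemma \ref{Lemm: parameter J} requires; it follows from the case analysis of Lemmas \ref{Lemm: number horizontal sub case II}, \ref{Lemm: number horizontal sub case I}, \ref{Lemm: number horizontal sub case III, upper} and \ref{Lemm: number horizontal sub case III, lower} together with the fact that $\underline{w}$ is not an $s$-boundary code, so no strip degenerates to a single stable leaf.
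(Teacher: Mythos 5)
Your proof is correct and follows essentially the same route as the paper: identify the strip $\tilde{R}_r\cap H^i_{j_{(\underline{J})}}$ containing $\tilde{H}^r_{\underline{J}}$ via Equation \ref{Equa: Cut the index}, note that the global bottom-to-top index equals the count of sub-rectangles in the lower strips plus the local position, and conclude via Equation \ref{Equa: relative index}. Your added remarks on the strict monotonicity of the partial sums and $\underline{h}(r,j)\geq 1$ only make explicit what the paper leaves implicit.
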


\begin{proof}
The rectangle $\tilde{H}^r_{\underline{J}}$ within $\tilde{R}_r$ holds position $\underline{J}$ among all its horizontal sub-rectangles.  If we know which horizontal sub-rectangle $H^i_j$ (or $\cR$ ) contains $\tilde{H}^r_{\underline{J}}$, we can determine the index $\underline{J}$ as the sum of all the total of horizontal sub-rectangle of $\tilde{R}_r$ that are contained in $H^i_{j'}\cap \tilde{R}_r$ with $j'<j$ and then determine the position $J$ of $\tilde{H}^r_{\underline{J}}$ inside $H^i_{j}\cap \tilde{R}_r$.
	
Equation \ref{Equa: Cut the index} specifies that $\tilde{H}^r_{\underline{J}}$ is not included in the horizontal sub-rectangles $H^i_{j'}$ for $j' < j_{ (\underline{J}) }$ but is contained in $H^i_{ j_{ (\underline{J}) } }$. Even more, equation \ref{Equa: relative index} said that $\tilde{H}^r_{ \underline{J} }$ holds  the position $J_{(\underline{J})}$ inside $H^i_{ j_{ (\underline{J}) } }$.   Thus, we can conclude that: 
	$$
	\tilde{H}^r_{j_{(\underline{J})},J_{(\underline{J})}}=\tilde{H}^r_{\underline{J}}
	$$ 
	as was claimed.
\end{proof}

The relative position of $\tilde{H}^r_{\underline{J}}$ inside the rectangle $\tilde{R}_r\cap H^i_j$ allows us to apply the corresponding formula to this $J_{(\underline{w})}$ in order to compute the label of the vertical sub-rectangle of $\cR_{S(\underline{w})}$ of its image.

\begin{coro}\label{Coro: The permutation}
Let $r=\tilde{r}(i,s)$ and $\underline{J}\in \{1,\cdots, H_r\}$. Let $(j_{(\underline{J})},J_{(\underline{J})})$ the relative position of $\underline{J}$ as was given in Definition \ref{Defi: relative position }. Then the function $\rho_{S(\underline{w})}$ is given by the formula:
		\begin{equation}\label{Equa: The permutation}
	\rho_{S(\underline{w})}(r,\underline{J}):=\underline{\rho}_{(r,j_{(\underline{J})})}(r,J_{(\underline{J})}).
	\end{equation}
\end{coro}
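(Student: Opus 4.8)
The statement to prove is Corollary~\ref{Coro: The permutation}, which asserts the formula $\rho_{S(\underline{w})}(r,\underline{J})=\underline{\rho}_{(r,j_{(\underline{J})})}(r,J_{(\underline{J})})$. The plan is to assemble this identity directly from the pieces already established in the preceding lemmas, since essentially all the work has been done; what remains is bookkeeping to verify that the local descriptions glue into the global function $\rho_{S(\underline{w})}$.

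First I would recall the definition of $\rho_{S(\underline{w})}$ from the geometric type $T_{S(\underline{w})}$: by Definition~\ref{Defi: geometric type of a Markov partition} applied to the geometric Markov partition $(f,\cR_{S(\underline{w})})$, we have $\rho_{S(\underline{w})}(r,\underline{J})=(r',l')$ if and only if $f(\tilde{H}^r_{\underline{J}})=\tilde{V}^{r'}_{l'}$, where $\{\tilde{H}^r_{\underline{J}}\}_{\underline{J}=1}^{H_r}$ is the bottom-to-top labelling of horizontal sub-rectangles of $\tilde{R}_r$ given in Definition~\ref{Defi: horizontal sub in R-r}. So the content of the corollary is exactly to compute $f(\tilde{H}^r_{\underline{J}})$ and identify its label. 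Next I would invoke Lemma~\ref{Lemm: relative position relation}, which identifies $\tilde{H}^r_{\underline{J}}$ with $\tilde{H}^r_{j_{(\underline{J})},J_{(\underline{J})}}$, where $(j_{(\underline{J})},J_{(\underline{J})})$ is the relative position from Definition~\ref{Defi: relative position }. This reduces the computation of $f(\tilde{H}^r_{\underline{J}})$ to the computation of $f(\tilde{H}^r_{j_{(\underline{J})},J_{(\underline{J})}})$, which is precisely what $\underline{\rho}_{(r,j_{(\underline{J})})}$ records.

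Then I would split into the three cases governing the position of $H^i_{j_{(\underline{J})}}$ relative to $\tilde{R}_r$, as dichotomized in Proposition~\ref{Prop: determination case of ch(r) inside R-r}: the case $\tilde{R}_r\subset H^i_{j}\setminus\partial^s H^i_{j}$ (handled by Lemma~\ref{Lemm: Permutation rho caso II} and Definition~\ref{Defi: permutation caso II}), the case $H^i_j\subset\tilde{R}_r$ (handled by Lemma~\ref{Lemm: Permutation rho caso I} and Definition~\ref{Defi: permutation caso I}), and the case where $H^i_j$ contains exactly one horizontal boundary component of $\tilde{R}_r$, which itself subdivides according to whether that component is the upper or lower boundary (Lemmas~\ref{Lemm: Permutation rho caso, up III} and~\ref{Lemm: Permutation rho caso, low III}, Definitions~\ref{Defi: permutation caso, up III} and~\ref{Defi: permutation caso, low III}). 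In each case the respective lemma computes $f(\tilde{H}^r_{j,J})=\tilde{V}^{r'}_{l'}$ with $r'=\tilde{r}(k,\cdot)$ and $l'=l$ (the equality $l'=l$ coming uniformly from Proposition~\ref{Prop: vertical possition is delimitate.}), and the matching definition records this data as $\underline{\rho}_{(r,j)}(r,J)$. Substituting $j=j_{(\underline{J})}$ and $J=J_{(\underline{J})}$ and using that these lemmas cover all cases, we obtain $f(\tilde{H}^r_{\underline{J}})=\tilde{V}^{r'}_{l'}$ with $(r',l')=\underline{\rho}_{(r,j_{(\underline{J})})}(r,J_{(\underline{J})})$, which is the claimed formula.

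The main obstacle is not conceptual but organizational: one must be careful that the case analysis of Proposition~\ref{Prop: determination case of ch(r) inside R-r} is genuinely exhaustive and that the labelling conventions match up — specifically that the bottom-to-top order on $\{\tilde{H}^r_{\underline{J}}\}_{\underline{J}}$ restricted to those contained in a fixed $H^i_{j}\cap\tilde{R}_r$ agrees, via the index shift $\underline{J}\mapsto J_{(\underline{J})}$ of Equation~\eqref{Equa: relative index}, with the local bottom-to-top order $\{\tilde{H}^r_{j,J}\}_J$ used in Definitions~\ref{Defi: order rectangles case II}, \ref{Defi: order rectangles case I}, and~\ref{Defi: order rectangles case III}. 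This compatibility is exactly the content of Lemma~\ref{Lemm: relative position relation}, so once that lemma is in hand the corollary follows by concatenating the per-block formulas; I would therefore present the proof as a short verification that (i) $\rho_{S(\underline{w})}$ is determined blockwise by the $\underline{\rho}_{(r,j)}$, and (ii) the relative position $(j_{(\underline{J})},J_{(\underline{J})})$ is the correct index translating a global label $\underline{J}$ into the block label on which $\underline{\rho}_{(r,j_{(\underline{J})})}$ is defined.
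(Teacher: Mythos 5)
Your proposal is correct and follows the same route as the paper, which simply cites Lemma \ref{Lemm: relative position relation} together with the preceding case-by-case constructions of $\underline{\rho}_{(r,j)}$; you merely spell out the bookkeeping the paper leaves implicit.
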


\begin{proof}
This is just a direct consequence of our previous constructions and the Lemma \ref{Lemm: relative position relation}.
\end{proof}

\subsection{The  orientation function $\epsilon_{S(\underline{w})}$}

\begin{lemm}\label{Lemm: The orientation}
	Let $r=\tilde{r}(i,s)$ and $\underline{J}\in \{1,\cdots, H_r\}$. Let $j_{(\underline{J})}$ the horizontal parameter of $\underline{J}$ (Definition  \ref{Defi: Horizontal parameter}). Then:
	\begin{equation}\label{Equa: permtation in the s refinament}
	\epsilon_{S(\underline{w})}(r,\underline{J}):=\epsilon_T(i,j_{(\underline{J})}).
	\end{equation}
\end{lemm}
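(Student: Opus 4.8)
The statement to be proved, Lemma \ref{Lemm: The orientation}, asserts that the orientation-change function of the $s$-boundary refinement at the rectangle $\tilde H^r_{\underline J}$ equals $\epsilon_T(i,j_{(\underline J)})$, where $(i,s)$ is the pair with $r=\tilde r(i,s)$ and $j_{(\underline J)}$ is the horizontal parameter of $\underline J$. The plan is to unwind the definitions and reduce everything to the observation, already used repeatedly in this section, that the vertical and horizontal orientations of $\tilde R_r$ are precisely those inherited from $R_i$ (Definition \ref{Defi: Orientation in R-S(w)}), and symmetrically that the orientations of every vertical sub-rectangle $\tilde V^{r'}_{l'}\subset \tilde R_{r'}$ are those inherited from $R_{i'}$ where $r'=\tilde r(i',s')$.

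First I would recall, via Lemma \ref{Lemm: relative position relation}, that $\tilde H^r_{\underline J}=\tilde H^r_{j_{(\underline J)},\,J_{(\underline J)}}$, so in particular $\tilde H^r_{\underline J}\subset H^i_{j_{(\underline J)}}\cap \tilde R_r$; write $j:=j_{(\underline J)}$ for brevity, so $(i,j)\in\cH(r)$ and $\overset{o}{\tilde H^r_{\underline J}}\subset \overset{o}{H^i_j}$. Next I would appeal to Lemma \ref{Lemma: Vertical sub R-ps} and Proposition \ref{Prop: vertical possition is delimitate.}: the image $f(\tilde H^r_{\underline J})$ is a vertical sub-rectangle $\tilde V^{r'}_{l'}$ of $(f,\cR_{S(\underline w)})$, and since $\tilde H^r_{\underline J}\subset H^i_j$ with $\Phi_T(i,j)=(k,l,\epsilon_T(i,j))$, this vertical sub-rectangle is contained in $V^k_l\subset R_k$, with $r'=\tilde r(k,s')$ for the appropriate $s'$. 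Then the orientation bookkeeping: $\epsilon_{S(\underline w)}(r,\underline J)=1$ means $f$ sends the positive vertical orientation of $\tilde H^r_{\underline J}$ (as a horizontal sub-rectangle of $\tilde R_r$) to the positive vertical orientation of $\tilde V^{r'}_{l'}$ (as a vertical sub-rectangle of $\tilde R_{r'}$). By Definition \ref{Defi: Orientation in R-S(w)}, the vertical orientation of $\tilde R_r$ is that of $R_i$ and the vertical orientation of $\tilde R_{r'}$ is that of $R_k$. Hence the positive vertical orientation of $\tilde H^r_{\underline J}$ relative to $\tilde R_r$ is exactly its positive vertical orientation relative to $H^i_j$ (which is relative to $R_i$), and likewise for $\tilde V^{r'}_{l'}$ relative to $R_k$. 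Therefore $f$ preserves the $\tilde R_r$-to-$\tilde R_{r'}$ vertical orientation if and only if $f$ preserves the $R_i$-to-$R_k$ vertical orientation of $H^i_j$, which by Definition \ref{Defi: Permutation type} is exactly the condition $\epsilon_T(i,j)=1$. Running the same equivalence for the value $-1$ finishes the identification $\epsilon_{S(\underline w)}(r,\underline J)=\epsilon_T(i,j_{(\underline J)})$.

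The genuinely routine part is the orientation bookkeeping; there is essentially no obstacle, because the whole point of the definition of the geometrization of $\cR_{S(\underline w)}$ (Definitions \ref{Defi: Orientation in R-S(w)} and \ref{Defi: Label of R-S(w)}) was to make the orientations of the new rectangles restrict from those of the old ones, so that $\epsilon$ is simply inherited. The one point requiring a little care is making sure the horizontal parameter $j_{(\underline J)}$ is well-defined and that $\tilde H^r_{\underline J}$ does indeed lie inside a single $H^i_j$ and not straddle the stable boundary between two of them — but this is exactly the content of Lemma \ref{Lemm: unique sub (i,j) for a r} together with Lemma \ref{Lemm: parameter J}, which we may invoke. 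Thus the expected main (and only real) subtlety is confirming that the three-case analysis behind the definition of $\underline\rho_{(r,j)}$ is consistent with a single formula for $\epsilon$; but since in each of Definitions \ref{Defi: permutation caso II}, \ref{Defi: permutation caso I}, \ref{Defi: permutation caso, up III}, and \ref{Defi: permutation caso, low III} the sign of the orientation change of $\tilde H^r_{j,J}$ is controlled solely by $\epsilon_T(i,j)$ (never by $J$, $s$, $r$, or the interchange data), the formula $\epsilon_{S(\underline w)}(r,\underline J)=\epsilon_T(i,j_{(\underline J)})$ holds uniformly across all cases. I would close by remarking that this completes the determination of all parameters of the geometric type $T_{S(\underline w)}$ listed at the start of the subsection.
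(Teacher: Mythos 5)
Your proposal is correct and follows essentially the same route as the paper: the paper's proof is the one-line observation that since $\tilde H^r_{\underline J}\subset H^i_{j_{(\underline J)}}$ and the orientations of the refined rectangles are inherited from the original ones, the orientation change is governed solely by $\epsilon_T(i,j_{(\underline J)})$. Your longer unwinding of the definitions and the cross-check against the three-case definitions of $\underline\rho_{(r,j)}$ is just a more explicit version of that same argument.
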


\begin{proof}

Since $\tilde{H}^r_{\underline{J}}\subset H^i_{j_{(\underline{J})}}$, the reversal or preservation of the horizontal orientation only depends on $\epsilon_T(i,j)$. This gives the equality.
\end{proof}

\subsection{Conclusion}

\begin{coro}\label{Coro: Algoritm computation TS(w)}
Let $T$ be a geometric type in the pseudo-Anosov class with a binary incidence matrix denoted as $A:=A(T)$. Given a periodic code $\underline{w}\in \Sigma_A$, there is algorithm to compute the   geometric type:

\begin{equation*}
T_{S(\underline{w})}:=\{N,\{H_r,V_r\}_{r=1}^N,\Phi_{S(\underline{w})}:=(\rho_{S(\underline{w})},\epsilon_{S(\underline{w})})\}.
\end{equation*}

in therms of the given geometric type $T$ and the code $\underline{w}$. 
\end{coro}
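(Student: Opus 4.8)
The plan is to assemble the corollary from the long chain of lemmas that precede it: the statement is really a bookkeeping theorem asserting that every quantity appearing in $T_{S(\underline{w})}$ has been given by an explicit, finitely-computable formula in terms of $T$ and $\underline{w}$. So the proof is essentially a verification that (a) the hypotheses needed by each lemma can themselves be decided algorithmically, and (b) the formulas, strung together in the right order, terminate. I would open by recalling the reduction already justified in Remark \ref{Rema: Why not s boundary code?}: if $\underline{w}$ is an $s$-boundary code then $\cR_{S(\underline{w})}=\cR$ and $T_{S(\underline{w})}=T$, which is trivially computable; deciding whether $\underline{w}$ is $s$-boundary is algorithmic because, by Corollary \ref{Coro: preperiodic finite s,u boundary codes}, it amounts to checking whether $\underline{w}_+$ equals one of the finitely many $s$-boundary positive codes $\underline{I}^+(i,\epsilon)$, each of which is generated by iterating $\Gamma(T)$ at most $2n$ times. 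Hence from now on I may assume $\underline{w}$ is periodic of period $P$ and not $s$-boundary, exactly the standing hypothesis of the section.

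Next I would walk through the data of $T_{S(\underline{w})}=\{N,\{(H_r,V_r)\}_{r=1}^N,\Phi_{S(\underline{w})}=(\rho_{S(\underline{w})},\epsilon_{S(\underline{w})})\}$ in the order indicated by the enumerated plan of the section. First, $N$ is given by Corollary \ref{Coro: Number N in the type T-S(w)} via $N=\sum_{i=1}^n O(i,\underline{w})+1$, and each $O(i,\underline{w})$ is obtained by scanning the finite block $w_0,\dots,w_{P-1}$ and counting occurrences of $i$ (Definition \ref{Defi: cO(i) and O(i)}); computing $P$ itself is algorithmic since $\underline{w}\in\Sigma_A$ is given periodic. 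Second, for each $r\in\{1,\dots,N\}$, Lemma \ref{lemm: determinating r=(i,s)} recovers the pair $(i_r,s_r)$ with $r=\tilde r(i_r,s_r)$ by comparing $r$ against the partial sums $\sum_{i<i_0}O(i,\underline{w})+1$. Third, $V_r=v_{i_r}$ by Corollary \ref{Coro: number of vertical sub rec}. Fourth, the order function $\underline{s}$ on $\cO(i,\underline{w})$ (Definition \ref{Lemm: Boundaries code}) is computed using the interchange-order rule: for each pair of iterates $t_1,t_2$ with $w_{t_1}=w_{t_2}$, Lemma \ref{Lemm: diferentation moment of the order} produces $M$ by comparing the blocks $w_{t_1+\cdot}$ and $w_{t_2+\cdot}$, the indices $j_{t+m,\underline{w}}$ come from Lemma \ref{Lemm: unique hriwontal for every prjection} (a binary-matrix lookup in $\rho_T$), and then $\delta((t_1,\underline{w}),(t_2,\underline{w}))$ is the product $\prod_{m=0}^{M-2}\epsilon_T(w_{t_1+m},j_{t_1+m,\underline{w}})$; Lemma \ref{Lemm: Comparacion de ordenes} converts this into the linear order $\underline{s}$. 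From $\underline{s}$, Lemma \ref{Lemm: Determine boundaris of R-r} names the two stable intervals bounding $\tilde R_r$, and Lemma \ref{Lemm: determination cH(r)} produces the index set $\cH(r)$.

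With $\cH(r)$ in hand, for each $(i,j)\in\cH(r)$ Proposition \ref{Prop: determination case of ch(r) inside R-r} decides which of the three cases (first: $\tilde R_r\subset H^i_j\setminus\partial^s H^i_j$; second: $H^i_j\subset\tilde R_r$; third: $H^i_j$ contains exactly one horizontal boundary of $\tilde R_r$) applies, and then the corresponding lemma — \ref{Lemm: number horizontal sub case II}, \ref{Lemm: number horizontal sub case I}, or \ref{Lemm: number horizontal sub case III, upper}/\ref{Lemm: number horizontal sub case III, lower} — outputs $\underline h(r,j)$, and the matched definition \ref{Defi: permutation caso II}, \ref{Defi: permutation caso I}, \ref{Defi: permutation caso, up III}, or \ref{Defi: permutation caso, low III} outputs the partial permutation $\underline\rho_{(r,j)}$, always an expression involving only $O(k,\underline w)$, the $\underline s$-values $s^\pm=\underline s(t\pm\!1,\underline w)$, and $\tilde r(k,\cdot)$. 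Summing, $H_r=\sum_{\{j:(i,j)\in\cH(r)\}}\underline h(r,j)$ (Corollary \ref{Coro: Number of horizontal sub}). Finally, given $\underline J\in\{1,\dots,H_r\}$, Lemma \ref{Lemm: parameter J} pins down the horizontal parameter $j_{(\underline J)}$ by the cut inequality \ref{Equa: Cut the index}, Definition \ref{Defi: relative index } gives the relative index $J_{(\underline J)}$, and then $\rho_{S(\underline w)}(r,\underline J)=\underline\rho_{(r,j_{(\underline J)})}(r,J_{(\underline J)})$ by Corollary \ref{Coro: The permutation} while $\epsilon_{S(\underline w)}(r,\underline J)=\epsilon_T(i,j_{(\underline J)})$ by Lemma \ref{Lemm: The orientation}. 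Each step touches only finitely many iterates of $\underline w$ (at most $P$ of them) and finitely many entries of $\rho_T,\epsilon_T$, so the whole procedure halts; this establishes the corollary.

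The main obstacle, such as it is, is not mathematical depth but organizational completeness: I must make sure the case analysis in Proposition \ref{Prop: determination case of ch(r) inside R-r} genuinely partitions $\cH(r)$ with no overlap and no gap, that the degenerate sub-cases ($O(i,\underline w)=0$, i.e. $\tilde R_r=R_i$; $s=1$ or $s=O(i,\underline w)+1$, i.e. $\tilde R_r$ meeting $\partial^s R_i$) are each dispatched by the corresponding clause of Lemma \ref{Lemm: Determine boundaris of R-r} and Lemma \ref{Lemm: determination cH(r)}, and that the $\underline s$-values $s^\pm$ needed in the third-case formulas are always defined because the relevant iterate $t+1$ indeed has $w_{t+1}=k$. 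I would present this as a short paragraph cross-referencing all the lemmas rather than re-deriving anything, since every individual formula has already been proved; the corollary is the statement that, read as an algorithm, they compose.
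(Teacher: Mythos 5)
Your proposal is correct and follows essentially the same route as the paper: the paper's proof is likewise a short assembly argument citing Corollary \ref{Coro: Number N in the type T-S(w)} for $N$, Corollary \ref{Coro: number of vertical sub rec} for $V_r$, Corollary \ref{Coro: Number of horizontal sub} for $H_r$, Corollary \ref{Coro: The permutation} for $\rho_{S(\underline{w})}$, and Lemma \ref{Lemm: The orientation} for $\epsilon_{S(\underline{w})}$, noting each is an explicit finite computation from $T$ and $\underline{w}$. Your additional preliminary reduction (deciding whether $\underline{w}$ is $s$-boundary via the finitely many codes $\underline{I}^+(i,\epsilon)$) is handled in the paper by Remark \ref{Rema: Why not s boundary code?} as a standing assumption, so the two arguments coincide in substance.
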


\begin{proof}
Each parameter in $T_{S(\underline{w})}$ has been rigorously determined through the following steps:
 
 \begin{enumerate}
 	\item The number $N$ was calculated in Corollary  \ref{Coro: Number N in the type T-S(w)}.
 	\item The values of $V_r$ were established in Corollary \ref{Coro: number of vertical sub rec}. \ref{Coro: number of vertical sub rec}
 	\item The formula for $H_r$ was provided in Corollary \ref{Coro: Number of horizontal sub}
 	\item The permutation $\rho_{S(\underline{w})}$ was obtained in Corollary \ref{Coro: The permutation}
 	\item The function $\epsilon_{S(\underline{w})}$ was established in Lemma \ref{Lemm: The orientation}
 	\end{enumerate}
 
It's important to note that each of these results is derived from an explicit algorithm that solely relies on the combinatorial information of $T$ and $\underline{w}$. 
\end{proof}

\section{Cut along a family of periodic codes}

Let $T$ be a geometric type in the pseudo-Anosov class with a binary incidence matrix $A:=A(T)$. Consider a generalized pseudo-Anosov homeomorphism $f:S\rightarrow S$ with a geometric Markov partition $\cR$ of geometric type $T$. Suppose we have a finite family of non-$s$-boundary periodic codes:
 $$
\cW=\{\underline{w}^1,\cdots,\underline{w}^Q\}\in \Sigma_A.
$$

In this section, we aim to generalize the procedure for cutting along the stable intervals determined by a single periodic code to cutting along the family of stable intervals determined by the family $\cW$.

\begin{conv}
For the remainder of this section, we will maintain  fixed:
\begin{itemize}
\item The family of non $s$-boundary periodic codes $\cW=\{\underline{w}^1,\cdots,\underline{w}^Q\}$.
\item The   geometric type $T=\{n,\{h_i,v_i\}_{i=1}^n,\Phi_t:=(\rho_T,\epsilon_T)\}$ and its associated incidence matrix $A:=A(T)$, with the assumption that $A$  is binary.
\item A generalized pseudo-Anosov homeomorphism $f:S \rightarrow S$.
that have a geometric Markov partition  $\cR=\{R_i\}_{i=1}^n$  with  geometric type $T$.
\end{itemize}
\end{conv}

\subsection{The Markov partition $\cR_{S(\cW)}$.}

Lemma \ref{Lemm: who the intervals intersect } holds for every code $\underline{w}^q\in \cW$, and we can define the set of intervals that are contained into the same rectangle $R_i$ of $\cR$.

\begin{defi}\label{Defi: Stable intervals inside R-i, cW}

For every $i\in \{1,\cdots, n\}$ the set of stable intervals  determined by the family $\cW$ that are contained within $R_i$ is given by:

	\begin{equation}\label{Equa: cW horizontal sub}
	\cI(i,\cW):=\cup_{q=1}^{Q}\cI(i,\underline{w}^q).
	\end{equation}
	
\end{defi}

\begin{lemm}\label{Lemm: rectangles in R S(cW)}
	Let $\tilde{R}$ be the closure of a connected component of $\overset{o}{R_i}\setminus \cup \cI(i,\cW)$. Then $\tilde{R}$ is a horizontal sub-rectangle of $R_i$.
\end{lemm}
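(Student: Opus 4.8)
The plan is to reduce this lemma to the single-code case that was already handled in Lemma~\ref{Lemm: rectangles in R S(cW)}'s predecessor (namely Lemma~\ref{Lemm: rectangles in R S(p)}), using the observation that cutting along a finite union of stable intervals is just iterating the one-interval cut. First I would recall the key geometric facts already established: each $\cI(i,\underline{w}^q)$ is a finite set of stable intervals of $R_i$ (Remark~\ref{Rema; finite intervals} applied to each code), so $\cI(i,\cW)=\cup_{q=1}^Q \cI(i,\underline{w}^q)$ is again a finite set of stable intervals of $R_i$. Moreover, by Corollary~\ref{Coro: projetion stable segments} every such interval $I_{t,\underline{w}^q}$ lies in the interior of some horizontal sub-rectangle $H^i_{j_{t,\underline{w}^q}}$ of $(f,\cR)$, since each $\underline{w}^q$ is a non-$s$-boundary code; in particular no interval of $\cI(i,\cW)$ meets the stable boundary of $R_i$, and no interval is reduced to a point.

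The main step is then purely topological: I would argue that if $\{I_1,\dots,I_m\}$ is a finite family of pairwise ``compatible'' stable intervals in the interior of $R_i$ — meaning each $I_\ell$ is a full stable segment of $R_i$ (it crosses $R_i$ from its left unstable boundary to its right unstable boundary) and their interiors are pairwise disjoint or coincide — then the closure of any connected component of $\overset{o}{R_i}\setminus\cup_\ell I_\ell$ is a horizontal sub-rectangle of $R_i$. For this I would invoke the trivial bi-foliated product structure on $\overset{o}{R_i}$: via a parametrization $\rho:(0,1)^2\to\overset{o}{R_i}$ each $\overset{o}{I_\ell}$ corresponds to a horizontal line $(0,1)\times\{t_\ell\}$, the complement of finitely many horizontal lines in $(0,1)^2$ is a disjoint union of horizontal open strips $(0,1)\times(t_\ell,t_{\ell+1})$, and the closure of $\rho$ of such a strip, together with the appropriate stable boundary pieces, is exactly a horizontal sub-rectangle of $R_i$ in the sense of Definition~\ref{Defi: vertical/horizontal sub rectangles}. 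The only subtlety is that two intervals in $\cI(i,\cW)$ coming from different codes may coincide or have overlapping interiors — but Lemma~\ref{Lemm: who the intervals intersect } item~iii) tells us that distinct stable intervals $I_{t_1,\underline{w}}$ and $I_{t_2,\underline{w}}$ have disjoint interiors (intersecting at most at a single $u$-boundary point), and the same argument carries over when the two intervals come from two different codes of $\cW$, since in all cases they are stable intervals of the same rectangle $R_i$ lying on stable leaves of $u$-boundary periodic points and $f$ has no closed leaves.

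I would verify once more that the boundary components of such a component $\tilde R$ are of the right type: its two stable (horizontal) boundaries lie in $\cup\cI(i,\cW)\cup\partial^s R_i$ and hence are full stable segments of $R_i$, while its two unstable (vertical) boundaries coincide with the left and right unstable boundaries of $R_i$, so $\tilde R$ is genuinely a horizontal sub-rectangle of $R_i$ and not merely a rectangle contained in $R_i$. I do not expect any real obstacle here; the only thing to be careful about is to phrase the disjointness-or-coincidence dichotomy for intervals coming from different codes, which as noted follows verbatim from the proof of Lemma~\ref{Lemm: who the intervals intersect }, combined with the finiteness of $\cW$ and of each $\cI(i,\underline{w}^q)$. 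The proof is essentially the same as that of Lemma~\ref{Lemm: rectangles in R S(p)}, with ``$\cI(\underline{w})$'' replaced by the larger but still finite family $\cI(i,\cW)$, so I would simply indicate this and fill in the one new point about intervals from distinct codes.
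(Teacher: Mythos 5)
Your proposal is correct and follows essentially the same route as the paper, whose proof simply observes that cutting $\overset{o}{R_i}$ along the finite family $\cI(i,\cW)$ works exactly as in the single-code case of Lemma~\ref{Lemm: rectangles in R S(p)}. Your extra remark that intervals coming from distinct codes of $\cW$ either coincide or have disjoint interiors (via the argument of Lemma~\ref{Lemm: who the intervals intersect }) is a harmless elaboration of what the paper leaves implicit.
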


\begin{proof}
The argument is the same as in Lemma \ref{Lemm: Stable intervals of codes}. Cutting $\overset{o}{R_i}$ along the finite family of stable intervals in $\cI(i,\cW)$ produces connected components whose closures are horizontal sub-rectangles of $R_i$.

\end{proof}

\begin{defi}\label{Lemm: cR-S(cW) is markov part}

The family, which includes all the rectangles as described in Lemma \ref{Lemm: rectangles in R S(cW)}, corresponding to all the rectangles $R_i$ in $\cR$, is denoted as: $\cR_{S(\cW)}$.
\end{defi}

\begin{lemm}\label{Lemm: Markov partition cW }
	The family of rectangles $\cR_{S(\cW)}$  is a Markov partition for $f$.
\end{lemm}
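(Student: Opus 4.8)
The statement to prove is that $\cR_{S(\cW)}$ is a Markov partition of $f$, where $\cW=\{\underline{w}^1,\dots,\underline{w}^Q\}$ is a family of non-$s$-boundary periodic codes. The plan is to mimic exactly the argument already used for a single code in Lemma~\ref{Lemm: Markov partition }, verifying the four defining conditions of a Markov partition one by one: the rectangles cover $S$, their interiors are pairwise disjoint, the stable boundary is $f$-invariant, and the unstable boundary is $f^{-1}$-invariant. By Proposition~\ref{Prop: Markov criterion boundary} the last two conditions suffice once the first two are established, so the work splits cleanly into a ``covering/disjointness'' part and an ``invariance of boundaries'' part.

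\textbf{Covering and disjointness.} By Lemma~\ref{Lemm: rectangles in R S(cW)}, each element $\tilde R\in\cR_{S(\cW)}$ is a horizontal sub-rectangle of some $R_i\in\cR$, obtained as the closure of a connected component of $\overset{o}{R_i}\setminus \cup\cI(i,\cW)$. First I would note that the union of all these closures recovers each $R_i$ (since removing a finite union of stable segments and taking closures of the pieces gives back the whole rectangle), and since $\cup_{i=1}^n R_i = S$, the family $\cR_{S(\cW)}$ covers $S$. For disjointness of interiors: two rectangles coming from the same $R_i$ are distinct connected components of $\overset{o}{R_i}\setminus\cup\cI(i,\cW)$, hence have disjoint interiors; two rectangles coming from $R_i$ and $R_j$ with $i\neq j$ have interiors contained in $\overset{o}{R_i}$ and $\overset{o}{R_j}$ respectively, which are disjoint because $\cR$ is a Markov partition. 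This is the routine part, essentially identical to the single-code case.

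\textbf{Invariance of the boundaries.} The unstable boundary of $\cR_{S(\cW)}$ coincides with the unstable boundary of $\cR$, because cutting along stable segments does not introduce new unstable boundary components; hence $\partial^u\cR_{S(\cW)}=\partial^u\cR$ is $f^{-1}$-invariant since $\cR$ is Markov. For the stable boundary, I would decompose $\partial^s\cR_{S(\cW)}$ into two types of stable intervals: those lying in $\partial^s\cR$, whose images under $f$ stay in $\partial^s\cR\subset\partial^s\cR_{S(\cW)}$ by the Markov property of $\cR$; and those of the form $I_{t,\underline{w}^q}$ for some $\underline{w}^q\in\cW$ and $t\in\NN$. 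For the latter, Lemma~\ref{Lemm: image of stable intervals} (applied to each code $\underline{w}^q$ separately) gives $f(I_{t,\underline{w}^q})\subset I_{t+1,\underline{w}^q}\subset \partial^s\cR_{S(\cW)}$. Combining both cases, $f(\partial^s\cR_{S(\cW)})\subset\partial^s\cR_{S(\cW)}$. Together with Proposition~\ref{Prop: Markov criterion boundary}, this finishes the proof.

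\textbf{Main obstacle.} There is no serious obstacle here: the statement is a direct generalization of Lemma~\ref{Lemm: Markov partition } from one periodic code to a finite family, and every ingredient (Lemma~\ref{Lemm: rectangles in R S(cW)}, Lemma~\ref{Lemm: image of stable intervals}, Proposition~\ref{Prop: Markov criterion boundary}) is already available and applies verbatim to each code $\underline{w}^q$. The only point requiring minimal care is to observe that the finiteness of the family $\cW$ guarantees $\cup\cI(i,\cW)$ is a finite union of stable segments, so that Lemma~\ref{Lemm: rectangles in R S(cW)} indeed produces finitely many rectangles with the horizontal sub-rectangle structure; everything else is bookkeeping.
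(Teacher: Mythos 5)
Your proposal is correct and follows essentially the same route as the paper: covering and disjointness of interiors from the horizontal sub-rectangle structure of Lemma~\ref{Lemm: rectangles in R S(cW)}, the unstable boundary unchanged and hence $f^{-1}$-invariant, and the stable boundary split into intervals of $\partial^s\cR$ plus the segments $I_{t,\underline{w}^q}$ handled by Lemma~\ref{Lemm: image of stable intervals} applied codewise, concluding via the boundary criterion. No gaps worth noting.
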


\begin{proof}
The interiors of two distinct horizontal sub-rectangles within $R_i$ do not intersect, and a horizontal sub-rectangle from $R_i$ does not overlap with the interior of a horizontal sub-rectangle from $R_j$ if $j\neq i$. Consequently, the interiors of two different rectangles in $\cR_{\cW}$ are disjoint.

The union $\cup \cR_{S(\cW)}$ coincides with the union of the rectangles in $\cR$, but $\cR$ is  Markov partition for $f$, therefore  $\cup \cR_{S(\cW)}=\cup \cR =S$.

The stable boundary of $\cR_{S(\cW)}$ consists of two types of stable intervals: those that are part of the stable boundary of $\cR$ and whose image is contained in $\partial^s\cR \subset \partial^s \cR_{S(\cW)}$, and those of the form $I_{t,\underline{w}^q}$ for some $\underline{w}^q\in \cW$ that, as stated in Lemma \ref{Lemm: image of stable intervals}: $f(I_{t,\underline{w}^q}) \subset I_{t+1,\underline{w}^q}\subset \partial^s\cR_{S(\cW)}$. Therefore, $\partial^s\cR_{S(\cW)}$ is  $f$-invariant.

Since the unstable boundary of $\cR_{S(\cW)}$ coincides with the unstable boundary of $\cR$, $\partial^u\cR_{S(\cW)}$ is $f^{-1}$-invariant.

This proves that $\cR_{S(\cW)}$ is a Markov partition of $f$.
\end{proof}

\subsection{The geometrization of $\cR_{S(\cW)}$}

Let's choose an  orientation for every rectangle in $\cR_{S(\cW)}$ and label its elements.

\begin{defi}\label{Defi: Orientation in R-S(cW)}
	If the rectangle $\tilde{R} \in \cR_{S(\cW)}$ is contained within the rectangle $R_i \in \cR$, we assign to $\tilde{R}$ the vertical and horizontal orientations that are consistent with the respective vertical and horizontal orientations of $R_i$.
\end{defi}

\begin{defi}\label{Defi: Label of R-S(cW)}
	We label the rectangles in $\mathcal{R}_{S(\cW)}$ using the \emph{lexicographic order}. To do this, we begin by numbering the horizontal sub-rectangles of $R_i$ that belong to $\mathcal{R}_{S(\cW)}$ from the bottom to the top, denoting them as $\{\tilde{R}_{i,s}\}_{s=1}^{N_i}$, where $N_i \geq 1$.
	Each rectangle $\tilde{R}_{i,s} \in \cR_{S(\underline{w})}$, which is contained within $R_i$ at vertical position $s$, is labeled as $\tilde{R}:=\tilde{R}_{r}$, where the number $r$ is determined by the formula:
	$$
	r=\tilde{r}(i,s):=\sum_{i'<i}^{i} N_{i'} + s.
	$$
\end{defi}

\begin{defi}\label{Defi: Geometric s-boundary refinament cW}
	The Markov partition $\cR_{S(\cW)}$ with the geometrization given by Definitions \ref{Defi: Orientation in R-S(cW)} and \ref{Defi: Label of R-S(cW)} is called the $s$-\emph{boundary refinement} of $\cR$ with respect to the family $\cW$. Its geometric type is denoted by:
	
	\begin{equation}
	T_{S(\cW)}=\{N,\{H_r,V_r\}_{r=1}^N,\Phi_{S(\cW)}:=(\rho_{S(\cW)},\epsilon_{S(\cW)})\}.
	\end{equation}
	
	We give the following notation to the rectangles of the partition:
	\begin{equation}
	\cR_{S(\cW)}:=\{\tilde{R}_r\}_{r=1}^N
	\end{equation}
	
\end{defi}

\subsection{The number of rectangles in $\cR_{S(\cW)}$}
 
 Let $\underline{w}^q\in \cW$ be any code. We'll denote $\textbf{Per}(\underline{w}^q)$ as the period of $\underline{w}^q$, and let $w^q_t$ represent the term in position $t$ of the code $\underline{w}^q$.
 
\begin{defi}\label{Defi: cO(i,cW) and O(i,cW)}
For every $i\in \{1,\cdots, n\}$ let:
	\begin{equation}
\cO(i,\cW)=\cup_{q=1}^Q	\cO(i,\underline{w}^q)=\{(t,\underline{w}^q): t \in \{0, \ldots, \textbf{Per}(\underline{w}^q)-1\} \text{ and } w^q_t=i \}.
	\end{equation}
the indexes of the stable intervals determined by $\cW$ that are contained into $R_i$. Let   $O(i,\cW)$ be its cardinality.
\end{defi}

The next lemma is immediate.

\begin{lemm}\label{Lemm: Number  of rectangles in a rec of the s refinaent, cW}
	For all $i \in \{1, \ldots, n\}$, the number of rectangles in the Markov partition $\cR_{S(\cW)}$ that are contained in $R_i$ is equal to $O(i,\cW)  + 1$
\end{lemm}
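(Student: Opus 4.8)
The plan is to realize the rectangles of $\cR_{S(\cW)}$ lying in a fixed $R_i$ as the closures of the connected components obtained by cutting the open rectangle $\overset{o}{R_i}$ along the finite family of stable intervals $\cup\,\cI(i,\cW)$, and then to count those components. By Lemma \ref{Lemm: rectangles in R S(cW)} together with Definition \ref{Lemm: cR-S(cW) is markov part}, these rectangles are precisely the closures of the connected components of $\overset{o}{R_i}\setminus\cup\,\cI(i,\cW)$, so it suffices to count them. Next, each interval $I_{t,\underline{w}^q}$ occurring in $\cI(i,\cW)$ (i.e.\ with $w^q_t=i$) is a \emph{full} horizontal leaf of $R_i$: by Corollary \ref{Coro: projetion stable segments} it lies in $H^i_{j_{t,\underline{w}^q}}\setminus\partial^s H^i_{j_{t,\underline{w}^q}}$, and a horizontal sub-rectangle of $R_i$ shares its horizontal leaves with $R_i$ by Definition \ref{Defi: vertical/horizontal sub rectangles}. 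Hence $\cup\,\cI(i,\cW)$ is a finite union of pairwise interior-disjoint horizontal leaves of $R_i$, and removing $m$ distinct such leaves from an open rectangle produces exactly $m+1$ components; the count therefore reduces to showing that the number of \emph{distinct} horizontal leaves occurring in $\cI(i,\cW)$ equals $O(i,\cW)=\#\,\cO(i,\cW)$.

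The crux is this distinctness claim: distinct index pairs $(t_1,\underline{w}^{q_1}),(t_2,\underline{w}^{q_2})\in\cO(i,\cW)$ must give distinct leaves. If instead $I_{t_1,\underline{w}^{q_1}}=I_{t_2,\underline{w}^{q_2}}$, then this common leaf contains both $f^{t_1}(p_1)$ and $f^{t_2}(p_2)$, where $p_r=\pi_f(\underline{w}^{q_r})$ is periodic by Lemma \ref{Lemm: Periodic to peridic}; two points on a common stable leaf of $f$ have forward orbits whose distance tends to $0$, yet for periodic orbits this distance is periodic in the iterate, hence constant and equal to $0$, so $f^{t_1}(p_1)=f^{t_2}(p_2)$. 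Since for a fixed code the index $t$ ranges only over a full period $\{0,\dots,\textbf{Per}(\underline{w}^q)-1\}$, this forces $q_1\neq q_2$; and the standing hypothesis that $\cW$ is a family of codes of pairwise distinct periodic orbits (the multi-code analogue of Remark \ref{Rema; finite intervals}, without which the count would fail through repetition of a whole orbit of intervals) rules out $q_1\neq q_2$ as well. Putting the three steps together yields exactly $O(i,\cW)+1$ rectangles of $\cR_{S(\cW)}$ inside $R_i$.

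I expect this last, distinctness step to be the only genuine obstacle: the identification with connected components and the observation that the cut is along full horizontal leaves are direct transcriptions of the single-code constructions of the previous section, while the distinctness argument is the verbatim analogue of the reasoning tacitly underlying Lemma \ref{Lemm: Number  of rectangles in a rec of the s refinaent}. If one prefers to avoid invoking the standing hypothesis on $\cW$, the statement should be read as counting the distinct leaves in $\cI(i,\cW)$, but under the natural assumption that the chosen codes represent distinct orbits the formula $O(i,\cW)+1$ holds as stated.
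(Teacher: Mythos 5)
Your overall route---identifying the rectangles of $\cR_{S(\cW)}$ inside $R_i$ with the closures of the connected components of $\overset{o}{R_i}\setminus\cup\,\cI(i,\cW)$ and then counting the cuts---is exactly what the paper intends; it offers no proof at all (the lemma is declared immediate), so the only substantive content of your write-up is the distinctness step, and there one inference is not justified. From $I_{t_1,\underline{w}^{q_1}}=I_{t_2,\underline{w}^{q_2}}$ you correctly deduce $f^{t_1}(p_1)=f^{t_2}(p_2)$, but the claim that this ``forces $q_1\neq q_2$'' because $t$ ranges over a full period of the code conflates the period of the code under $\sigma$ with the period of its projection under $f$: for a non-$s$-boundary code whose projection is a $u$-boundary periodic point, $\pi_f^{-1}(p)$ contains more than one sector code and the $\sigma$-period of $\underline{w}^q$ can be a proper multiple of $\textbf{Per}(p,f)$, so $f^{t_1}(p)=f^{t_2}(p)$ with $t_1\neq t_2\in\{0,\dots,\textbf{Per}(\underline{w}^q)-1\}$ is not excluded. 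The repair is already in the paper: if $q_1=q_2$ and the projections coincide, then $\sigma^{t_1}(\underline{w}^{q})\neq\sigma^{t_2}(\underline{w}^{q})$ are distinct codes projecting to the same point, and item $iii)$ of Lemma \ref{Lemm: who the intervals intersect } (whose proof uses only that the two codes are distinct, non-$s$-boundary, and project to the same point) gives intervals with disjoint interiors, contradicting the assumed equality; the same argument disposes of distinct codes $\underline{w}^{q_1}\neq\underline{w}^{q_2}$ projecting to the same orbit.

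Two further remarks. Your caveat about $\cW$ is well taken: if two members of $\cW$ lie in the same $\sigma$-orbit, the formal pairs $(t,\underline{w}^q)$ overcount the distinct cut leaves and the formula $O(i,\cW)+1$ fails; the paper never states this hypothesis, but counting distinct leaves (equivalently, taking the codes in pairwise distinct $\sigma$-orbits) is clearly the intended normalization. Finally, the component count also needs each $I_{t,\underline{w}^q}$ to be a \emph{full} horizontal leaf of $R_i$, since a cut that does not span the rectangle does not disconnect it; this is supplied by item $i)$ of Lemma \ref{Lemm: who the intervals intersect } together with the proof of Lemma \ref{Lemm: rectangles in R S(cW)}, and is a cleaner citation than the containment $I_{t,\underline{w}^q}\subset H^i_j\setminus\partial^s H^i_j$ from Corollary \ref{Coro: projetion stable segments}, which by itself only locates the cut inside a leaf rather than showing it spans it.
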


\begin{coro}\label{Coro: Number N in the type T-S(cW)}
	The number $N$ in the geometric type $T_{S(\cW)}$ is equal to the number of rectangles in the $s$-boundary refinement respect to $\cW$ and is determined by the following formula:
	\begin{equation}\label{Equa: Number N,cW }
	N=\sum_{i=1}^{n} O(i,\cW)+1.
	\end{equation}
\end{coro}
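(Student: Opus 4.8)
\textbf{Proof proposal for Corollary \ref{Coro: Number N in the type T-S(cW)}.}

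The plan is to read off the number $N$ directly from the geometry of the refinement $\cR_{S(\cW)}$, using the fact that this Markov partition is obtained by slicing each rectangle $R_i \in \cR$ along the finite family of stable intervals $\cI(i,\cW)$. The key structural input is Lemma \ref{Lemm: Number  of rectangles in a rec of the s refinaent, cW}, which identifies the number of rectangles of $\cR_{S(\cW)}$ contained in a fixed $R_i$ with $O(i,\cW)+1$. Since $\cup \cR_{S(\cW)} = \cup \cR = S$ and each rectangle of $\cR_{S(\cW)}$ is, by construction (Lemma \ref{Lemm: rectangles in R S(cW)}), a horizontal sub-rectangle of exactly one $R_i$, the family $\cR_{S(\cW)}$ partitions into $n$ disjoint groups indexed by $i \in \{1,\dots,n\}$.

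Concretely, first I would note that the $n$ rectangles of the original partition have pairwise disjoint interiors (they form a Markov partition of $f$), so no rectangle of $\cR_{S(\cW)}$ can be a horizontal sub-rectangle of two distinct $R_i$'s; hence the assignment of each $\tilde R \in \cR_{S(\cW)}$ to the unique index $i$ with $\tilde R \subset R_i$ is well-defined and gives a genuine partition of the index set $\{1,\dots,N\}$ into the $n$ blocks
\[
\{ r : \tilde R_r \subset R_i \}, \qquad i = 1, \dots, n.
\]
Second, I would apply Lemma \ref{Lemm: Number  of rectangles in a rec of the s refinaent, cW} to each block: the $i$-th block has cardinality $O(i,\cW)+1$. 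Summing these cardinalities over $i$ yields
\[
N = \sum_{i=1}^{n} \bigl( O(i,\cW) + 1 \bigr) = \sum_{i=1}^{n} O(i,\cW) + n,
\]
which is exactly the asserted formula (noting that $\sum_{i=1}^n 1 = n$, so the displayed equation in the statement and this one agree). Since $N$ is by definition the first parameter of the geometric type $T_{S(\cW)}$ (Definition \ref{Defi: Geometric s-boundary refinament cW}), this completes the identification.

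There is essentially no obstacle here: the content is entirely contained in Lemma \ref{Lemm: Number  of rectangles in a rec of the s refinaent, cW} and the disjointness of the rectangles of $\cR$, both already established. The only point requiring a word of care is that the quantities $O(i,\cW)$ are finite, so that the sum makes sense and $N < \infty$; this follows because each $\cO(i,\underline{w}^q)$ is a subset of the finite set $\{0,\dots,\textbf{Per}(\underline{w}^q)-1\}$ and $\cW$ is a finite family, so $\cO(i,\cW)$ is a finite union of finite sets. With that observation in place the corollary is an immediate bookkeeping consequence of the preceding lemma.
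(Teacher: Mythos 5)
Your proof is correct and is essentially the paper's own argument: the corollary is obtained precisely by assigning each rectangle of $\cR_{S(\cW)}$ to the unique $R_i$ containing it and summing the counts $O(i,\cW)+1$ from the preceding lemma over $i$. Your reading of the displayed formula with the $+1$ inside the sum is the intended one, consistent with the single-code case where $N=n+\textbf{Per}(\underline{w})$.
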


\subsection{The vertical sub-rectangles in $T_{S(\cW)}$.}

\begin{defi}\label{Defi: Vertical sub R-S(cW)}
	Let $\tilde{R}_r$ be a rectangle in the geometric Markov partition $\cR_{S(\cW)}$ and suppose that $r = \tilde{r}(i, s)$. Let $V^i_l$ be a vertical sub-rectangle of the Markov partition $(f, \cR)$ contained in $R_i$. Define:
	$$
	\tilde{V}^{r}_l:= \overline{\overset{o}{\tilde{R}_r} \cap \overset{o}{V^i_l}}.
	$$
	as the closure of the intersection between the interior of  $\tilde{R}_r$ and the interior of  $V^i_l$.
	
\end{defi}

The proof of the following Lemma is  completely analogous to the such presented  in Lemma \ref{Lemm: Caracterization of vertica sub of R-S(w)}.

\begin{lemm}\label{Lemm: Caracterization of vertica sub of R-S(cW)}
	The set $\tilde{V}^{r}_l$ is a unique horizontal sub-rectangle of $\tilde{R}_r$
\end{lemm}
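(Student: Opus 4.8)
\textbf{Plan of proof for Lemma \ref{Lemm: Caracterization of vertica sub of R-S(cW)}.}
The statement to prove is that, for a rectangle $\tilde{R}_r \in \cR_{S(\cW)}$ with $r=\tilde{r}(i,s)$ and a vertical sub-rectangle $V^i_l$ of $(f,\cR)$ contained in $R_i$, the set $\tilde{V}^r_l = \overline{\overset{o}{\tilde{R}_r}\cap \overset{o}{V^i_l}}$ is a (unique) vertical sub-rectangle of $\tilde{R}_r$. The plan is to imitate, essentially verbatim, the argument of Lemma \ref{Lemm: Caracterization of vertica sub of R-S(w)}, which treated the single-code case; the only point that needs checking is that the relevant geometric facts still hold when $R_i$ has been cut along the full finite family $\cI(i,\cW)$ of stable intervals rather than along $\cI(i,\underline{w})$ for a single periodic code.

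First I would recall that by Lemma \ref{Lemm: rectangles in R S(cW)} the rectangle $\tilde{R}_r$ is a horizontal sub-rectangle of $R_i$, so in particular $\overset{o}{\tilde{R}_r}\subset \overset{o}{R_i}$ and every horizontal leaf of $\tilde{R}_r$ is a full horizontal leaf of $R_i$. Next I would invoke the trivial bi-foliated product structure on $\overset{o}{R_i}$: the intersection of the interior of a horizontal sub-rectangle of $R_i$ with the interior of a vertical sub-rectangle of $R_i$ has exactly one connected component, and its closure is again a rectangle. Applying this with the horizontal sub-rectangle $\tilde{R}_r$ and the vertical sub-rectangle $V^i_l$ gives that $\overset{o}{\tilde{R}_r}\cap \overset{o}{V^i_l}$ is connected, hence $\tilde{V}^r_l$ is a well-defined rectangle, and uniqueness is automatic. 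It then remains to see that $\tilde{V}^r_l$ is \emph{vertical} in $\tilde{R}_r$, i.e. that for every $x$ in its interior the vertical leaf of $\tilde{R}_r$ through $x$ coincides with the vertical leaf of $\tilde{V}^r_l$ through $x$; this follows because $V^i_l$ is vertical in $R_i$, so its vertical leaves run from the lower to the upper boundary of $R_i$ and a fortiori cross $\tilde{R}_r$ from its lower to its upper horizontal boundary, the latter being pieces of horizontal leaves of $R_i$ (either boundary components of $R_i$ or intervals of the form $I_{t,\underline{w}^q}$). Equivalently, since $\partial^u \tilde{V}^r_l\subset \partial^u V^i_l\subset \partial^u R_i=\partial^u\cR_{S(\cW)}$ by Lemma \ref{Lemm: Markov partition cW }, the unstable boundary of $\tilde{V}^r_l$ lies in the unstable boundary of the refined partition, which forces $\tilde{V}^r_l$ to be a vertical sub-rectangle of $\tilde{R}_r$.

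The only genuinely new input compared to the one-code case is that the stable intervals $I_{t,\underline{w}^q}$, $q=1,\dots,Q$, lying inside $R_i$ are all disjoint in their interiors or meet only at a single $u$-boundary point — this is exactly the content of item $iii)$ of Lemma \ref{Lemm: who the intervals intersect } applied successively to the finitely many codes in $\cW$ — so that cutting $R_i$ along $\cup\cI(i,\cW)$ indeed yields rectangles and not degenerate pieces. I do not anticipate a real obstacle here: the whole argument is ``soft'' and purely uses the product structure in the interior of $R_i$ together with the disjointness just mentioned; there is no numerical computation involved, in contrast with the subsequent lemmas of the section. The main (minor) care point is bookkeeping: one must be sure that the horizontal boundaries of $\tilde{R}_r$, which by Lemma \ref{Lemm: Determine boundaris of R-r, cW} are among the intervals $I_{i,\pm 1}$ and $I_{t,\underline{w}^q}$, are genuine horizontal leaves of $R_i$, so that $\tilde R_r$ really is a horizontal sub-rectangle; but this is already established in Lemma \ref{Lemm: rectangles in R S(cW)}, so the proof reduces to citing it and repeating the product-structure argument of Lemma \ref{Lemm: Caracterization of vertica sub of R-S(w)}.
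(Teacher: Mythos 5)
Your proof is correct and follows essentially the same route as the paper: the paper simply declares the proof analogous to the single-code Lemma \ref{Lemm: Caracterization of vertica sub of R-S(w)}, whose product-structure argument (unique connected component of the intersection plus vertical leaves of $V^i_l$ crossing $\tilde{R}_r$ from bottom to top) is exactly what you reproduce, with the added disjointness check from Lemma \ref{Lemm: who the intervals intersect }. You also correctly read the statement as asserting that $\tilde{V}^r_l$ is a \emph{vertical} sub-rectangle of $\tilde{R}_r$, consistent with the single-code version, despite the word ``horizontal'' appearing in the statement.
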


\begin{rema}\label{Rema: Order of vertical coherent with R_i}
The labeling of the vertical sub-rectangles of $\tilde{R}_r$: 
	$$
\{\tilde{V}^{r}_l\}_{l=1}^{v_i},
$$
is coherent with the horizontal orientation of $R_i$.
\end{rema}

Taking in account the previous Remark \ref{Rema: Order of vertical coherent with R_i}, the argument used to prove Lemma \ref{Lemm: unique sub (i,j) for a r} can be applied to demonstrate the following generalized result:

\begin{lemm}\label{Lemm: unique sub (i,j) for a r,cW}
	Let $\tilde{R}_r\in \cR_{S(\cW)}$ with $r=\tilde{r}(i,s)$. Let $\tilde{H} \subset \tilde{R}_r$ be a horizontal sub-rectangle of $\cR_{S(\cW)}$. Then there is a unique pair $(i,j)\in \cH(T)$ such that $\overset{o}{H} \subset \overset{o}{H^i_{j}}$.
\end{lemm}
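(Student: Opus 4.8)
\textbf{Proof proposal for Lemma \ref{Lemm: unique sub (i,j) for a r,cW}.}

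The plan is to reduce this statement to the already-established Lemma \ref{Lemm: unique sub (i,j) for a r}, which handles exactly the same assertion for the $s$-boundary refinement along a single periodic code. The two essential ingredients there were: first, that $\tilde{R}_r$ is a horizontal sub-rectangle of $R_i$ (so that its interior sits inside $\overset{o}{R_i}$ and its vertical leaves are vertical leaves of $R_i$); and second, that the stable boundary of the refined partition is $f$-invariant, so that a stable leaf of the Markov partition $(f,\cR)$ crossing the interior of a rectangle of the refinement cannot occur. Both ingredients are available in the present setting: Lemma \ref{Lemm: rectangles in R S(cW)} gives that each $\tilde{R}_r$ is a horizontal sub-rectangle of some $R_i$, and in the proof of Lemma \ref{Lemm: Markov partition cW } we showed that $\partial^s\cR_{S(\cW)}$ is $f$-invariant (it consists of stable intervals of $\partial^s\cR$ together with intervals of the form $I_{t,\underline{w}^q}$, each of which is carried by $f$ into an interval of the same list).

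First I would argue existence of the pair: since $\tilde{H}\subset \tilde{R}_r\subset R_i$ and $\tilde{H}$ is a horizontal sub-rectangle of $\cR_{S(\cW)}$, its interior $\overset{o}{\tilde{H}}$ is contained in $\overset{o}{R_i}$, which is partitioned (up to the finitely many stable boundary leaves) by the interiors of the horizontal sub-rectangles $H^i_1,\dots,H^i_{h_i}$ of the original Markov partition $(f,\cR)$. So $\overset{o}{\tilde{H}}$ meets $\overset{o}{H^i_j}$ for at least one $j$. Next I would rule out that it meets two of them. If $\overset{o}{\tilde{H}}$ intersected the interiors of two distinct $H^i_{j_1}$ and $H^i_{j_2}$, then, because $\tilde{H}$ is a rectangle whose vertical leaves are vertical leaves of $R_i$ (a horizontal sub-rectangle crosses $R_i$ from bottom to top), its interior would contain a stable boundary component $I$ shared by two consecutive horizontal sub-rectangles of $(f,\cR)$, hence $\overset{o}{I}\subset\overset{o}{\tilde{H}}$. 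Then $f(\overset{o}{I})$ is a stable interval contained in $\partial^s\cR$, hence in $\partial^s\cR_{S(\cW)}$, while at the same time $f(\overset{o}{\tilde{H}})$ is (the interior of) a vertical sub-rectangle of $\cR_{S(\cW)}$ and $f(\overset{o}{I})$ would cross its interior; this contradicts the disjointness of $\partial^s\cR_{S(\cW)}$ from the interior of the rectangles of $\cR_{S(\cW)}$. Therefore $\overset{o}{\tilde{H}}$ is contained in the interior of a single $H^i_j$, and since also the stable boundary of $\tilde{H}$ is disjoint from $\partial^s H^i_j$ (it lies in $\partial^s\cR_{S(\cW)}$ which, along $R_i$, contains $\partial^s H^i_j$ only at the stable boundary of the original sub-rectangles, already excluded), we get $\tilde{H}\subset H^i_j\setminus\partial^sH^i_j$, in particular $\overset{o}{\tilde{H}}\subset\overset{o}{H^i_j}$. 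Uniqueness of $j$ follows because the interiors $\overset{o}{H^i_j}$ are pairwise disjoint.

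I do not anticipate a genuine obstacle here; the only point requiring a little care is the bookkeeping that ensures that a stable leaf of $(f,\cR)$ sitting inside the interior of a refined rectangle would violate the $f$-invariance of $\partial^s\cR_{S(\cW)}$ — this is precisely the mechanism used in the single-code case, and it transfers verbatim once one notes that $\partial^s\cR\subset\partial^s\cR_{S(\cW)}$ and that $\cR_{S(\cW)}$ is indeed a Markov partition (Lemma \ref{Lemm: Markov partition cW }). Everything else is the same local rectangle combinatorics already carried out for Lemma \ref{Lemm: unique sub (i,j) for a r}.
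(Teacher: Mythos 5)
Your proof is correct and is essentially the paper's argument: the paper simply invokes the single-code Lemma \ref{Lemm: unique sub (i,j) for a r}, whose proof is exactly your mechanism — a stable boundary leaf of $(f,\cR)$ inside $\overset{o}{\tilde{H}}$ would map under $f$ into $\partial^s\cR\subset\partial^s\cR_{S(\cW)}$ and cross the interior of a vertical sub-rectangle of the refinement, a contradiction. Your write-up just spells out the existence step and the invariance bookkeeping more explicitly.
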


\begin{defi}\label{Defi: the unique pair (i,j-H),cW}
	Let $\tilde{R}_r\in \cR_{S_(\cW)}$ with $r=\tilde{r}(i,s)$. Let $\tilde{H} \subset \tilde{R}_r$ be a horizontal sub-rectangle of $\cR_{S_(\cW)}$, we call $(i,j_{\tilde{H} })$ to the only pair that was determine in Lemma \ref{Lemm: unique sub (i,j) for a r,cW}.
\end{defi}

\begin{lemm}\label{Lemma: Vertical sub R-ps, cW}
	The rectangles $\{\tilde{V}^{r}_l\}_{l=1}^{v_i}$  described in Definition \ref{Defi: Vertical sub R-S(w)} are the vertical sub-rectangles of the Markov partition $(f,\cR_{S(\cW)})$ contained in $\tilde{R}_r$.
\end{lemm}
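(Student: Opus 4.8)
The statement to prove is Lemma~\ref{Lemma: Vertical sub R-ps, cW}: the family $\{\tilde{V}^{r}_l\}_{l=1}^{v_i}$ described in Definition~\ref{Defi: Vertical sub R-S(cW)} is exactly the set of vertical sub-rectangles of the Markov partition $(f,\cR_{S(\cW)})$ contained in $\tilde{R}_r$, where $r=\tilde{r}(i,s)$. The plan is to mirror, almost verbatim, the argument already used to establish the single-code version, Lemma~\ref{Lemma: Vertical sub R-ps}, and to check that the only place where ``one code'' was used — namely that $\pi_f(\underline{w})$ is not an $s$-boundary point, so that a horizontal sub-rectangle of $\cR_{S(\underline{w})}$ meets only one horizontal sub-rectangle of $\cR$ in its interior — now holds because \emph{every} code in $\cW$ is assumed non-$s$-boundary, and this is precisely the content of Lemma~\ref{Lemm: unique sub (i,j) for a r,cW}.

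First I would prove the inclusion ``$\tilde{V}^r_l$ is a vertical sub-rectangle of $(f,\cR_{S(\cW)})$''. Fix $l$ and suppose $f^{-1}(V^i_l)=H^k_j$ for the appropriate $(k,j)\in\cH(T)$ (this pair exists and is unique because the incidence matrix $A$ is binary and $\cR$ is a Markov partition of $f$). Since the vertical boundaries of $\tilde{V}^r_l$ lie inside the vertical boundaries of $V^i_l$, the pre-image $f^{-1}(\tilde{V}^r_l)$ is a horizontal sub-rectangle of $H^k_j$ whose unstable boundary is contained in $\partial^u R_k$, and whose interior is disjoint from $\cup\cI(\cW)$ — because $f^{-1}(\cup\cI(k,\cW))\subset\cup\cI(k,\cW)$ by Lemma~\ref{Lemm: image of stable intervals} applied to each code of $\cW$, combined with the $f$-invariance of $\partial^s\cR$. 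Hence there is a unique $\tilde{R}_{r'}\in\cR_{S(\cW)}$ with $f^{-1}(\tilde{V}^r_l)\subset\tilde{R}_{r'}$, and $f^{-1}(\tilde{V}^r_l)$ is a horizontal sub-rectangle of $(f,\cR_{S(\cW)})$, whence $\tilde{V}^r_l$ is a vertical sub-rectangle of $(f,\cR_{S(\cW)})$ contained in $\tilde{R}_r$.

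For the reverse inclusion, take an arbitrary vertical sub-rectangle $\tilde{V}\subset\tilde{R}_r$ of $(f,\cR_{S(\cW)})$. Then $f^{-1}(\tilde{V})$ is a horizontal sub-rectangle of some $\tilde{R}_{r'}\subset R_k\in\cR$. Here is the step where the hypothesis that all codes in $\cW$ are non-$s$-boundary enters: by Lemma~\ref{Lemm: unique sub (i,j) for a r,cW} the interior of $f^{-1}(\tilde{V})$ lies inside the interior of a \emph{single} horizontal sub-rectangle $H^k_j$ of $(f,\cR)$, so that its stable boundaries never meet the stable boundaries of the $H^k_{j'}$. Consequently the unstable boundary of $\tilde{V}=f(f^{-1}(\tilde{V}))$ is contained in the unstable boundary of $f(H^k_j)=V^i_l$ for the index $l$ with $\rho_T(k,j)=(i,l)$, while its stable boundaries are contained in $\partial^s\tilde{R}_r$; therefore $\overset{o}{\tilde{V}}=\overset{o}{\tilde{R}_r}\cap\overset{o}{V^i_l}$, i.e.\ $\tilde{V}=\tilde{V}^r_l$ is one of the rectangles of Definition~\ref{Defi: Vertical sub R-S(cW)}. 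Combining the two inclusions proves the lemma.

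The main (and only genuine) obstacle is bookkeeping: one must be careful that the invariance statement ``$f^{-1}$ of the stable-boundary cut set is contained in itself'' really does hold for the \emph{union} $\cup\cI(\cW)$ and not just for a single orbit — this is where one invokes Lemma~\ref{Lemm: image of stable intervals} coordinate by coordinate over $q=1,\dots,Q$ — and that Lemma~\ref{Lemm: unique sub (i,j) for a r,cW} indeed supplies a \emph{unique} $(i,j)$, so that the index $l$ produced by $\rho_T$ is unambiguous. Everything else is a direct transcription of the proof of Lemma~\ref{Lemma: Vertical sub R-ps}, since $\cR_{S(\cW)}$ is built exactly like $\cR_{S(\underline{w})}$ but with a larger (still finite, still $f$-forward-invariant) family of cutting intervals. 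I would also record, as in the single-code case, the immediate corollary $V_r=v_i$ (the analogue of Corollary~\ref{Coro: number of vertical sub rec}) together with the positional refinement $l=l'$ whenever $f(\tilde H)=\tilde V^{r'}_{l'}$ and $\tilde H\subset H^i_j$ with $f(H^i_j)=V^k_l$ (the analogue of Proposition~\ref{Prop: vertical possition is delimitate.}), since those are needed downstream and follow from the same argument with no extra work.
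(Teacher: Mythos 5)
Your proof is correct and follows essentially the same route as the paper's: both inclusions are obtained by pulling back to horizontal sub-rectangles of the refined partition and invoking the uniqueness statement of Lemma \ref{Lemm: unique sub (i,j) for a r,cW}, exactly as in the single-code case. One small slip in a parenthetical: the invariance you actually need is the forward one, $f(\cup \cI(\cW))\subset \cup\cI(\cW)\subset \partial^s\cR_{S(\cW)}$ as given by Lemma \ref{Lemm: image of stable intervals} (together with $f$-invariance of $\partial^s\cR$), not $f^{-1}(\cup\cI(\cW))\subset\cup\cI(\cW)$ as written, though the way you then use it is the correct one.
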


\begin{proof}

Consider a single rectangle $\tilde{V}^r_{l}$. Following the reasoning from Lemma \ref{Lemma: Vertical sub R-ps}, we can observe that $f^{-1}(\tilde{V}^r_{l})$ is a subset of $f^{-1}(V^k_l)$, and specifically, it is contained in some horizontal sub-rectangle of $\cR$, $H^i_{j}$. The preimage $f^{-1}(\tilde{V}^r_{l})$ is a horizontal sub-rectangle of $H^i_j$ whose interior don't intersect $\partial^s \cR_{S(\cW)}$, then  and it's exclusively contained within a unique rectangle $\tilde{R}_r$. Even more,  the stable boundary of $f^{-1}(\tilde{V}^r_{l})$ has  image contained within the stable boundary of the Markov partition $\cR_{S(\cW)}$. Therefore, $f^{-1}(\tilde{V}^r_{l})$ is a horizontal sub-rectangle of $\cR_{S(\cW)}$, and then $\tilde{V}^{r}_l$ is a vertical sub-rectangle of the Markov partition $\cR_{S(\cW)}$.

Conversely, if $\tilde{V}$ is a vertical sub-rectangle of $\cR_{S(\cW)}$, then $f^{-1}(\tilde{V})=\tilde{H}$ is a horizontal sub-rectangle of $\cR_{S(\cW)}$. Let $(i,j_{\tilde{H}})$ be the indexes give in Definition \ref{Defi: the unique pair (i,j-H),cW}. Thus, $\tilde{V}$ is a horizontal sub-rectangle of $V^k_l=f(H^i_{j_{\tilde{H}}})$, and its interior does not intersect the stable boundaries of $\cR_{S(\cW)}$. However, its stable boundaries are contained in $\partial^s \cR_{S(\cW)}$. This implies that $\tilde{V}\subset V^k_l\cap \tilde{R}_{r}$ for some $r\in \{1,\cdots, n\}$, and it turns out to be one of the rectangles $\tilde{V}^{r}_l$ given by $\tilde{V}^{r}_l:= \overline{\overset{o}{\tilde{R}_r} \cap \overset{o}{V^k_l}}$.

\end{proof}

This result  have the next immediate corollary.

\begin{coro}\label{Coro: number of vertical sub rec, cW}
	If $r=\tilde{r}(i,s)$  then $V_r=v_i$.
\end{coro}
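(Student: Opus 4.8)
\textbf{Proof proposal for Corollary \ref{Coro: number of vertical sub rec, cW}.}

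The plan is to observe that this corollary is an immediate consequence of Lemma \ref{Lemma: Vertical sub R-ps, cW}, which has just been established. First I would recall that $V_r$ denotes, by definition of the geometric type $T_{S(\cW)}$, the number of vertical sub-rectangles of the Markov partition $\cR_{S(\cW)}$ that are contained in the rectangle $\tilde{R}_r$. Since we are assuming $r = \tilde{r}(i,s)$, the rectangle $\tilde{R}_r$ is a horizontal sub-rectangle of $R_i \in \cR$. Lemma \ref{Lemma: Vertical sub R-ps, cW} identifies precisely the set of vertical sub-rectangles of $(f,\cR_{S(\cW)})$ contained in $\tilde{R}_r$: they are exactly the rectangles in the family $\{\tilde{V}^{r}_l\}_{l=1}^{v_i}$, where $\tilde{V}^{r}_l = \overline{\overset{o}{\tilde{R}_r}\cap \overset{o}{V^i_l}}$ and $\{V^i_l\}_{l=1}^{v_i}$ are the vertical sub-rectangles of the Markov partition $(f,\cR)$ contained in $R_i$.

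Next I would note that this family $\{\tilde{V}^{r}_l\}_{l=1}^{v_i}$ consists of exactly $v_i$ distinct rectangles. Indeed, distinctness follows because the rectangles $V^i_l$ have disjoint interiors for distinct $l$ (they are the vertical sub-rectangles of the Markov partition $\cR$ contained in $R_i$), so the sets $\overset{o}{\tilde{R}_r}\cap\overset{o}{V^i_l}$ are pairwise disjoint and, by Lemma \ref{Lemm: Caracterization of vertica sub of R-S(cW)}, each is non-empty (it is a vertical sub-rectangle of $\tilde{R}_r$, hence not degenerate). Therefore the index $l$ ranges over $\{1,\dots,v_i\}$ with no repetitions and no gaps, giving $v_i$ rectangles. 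Combining this count with the identification supplied by Lemma \ref{Lemma: Vertical sub R-ps, cW} yields $V_r = v_i$, which is exactly the claim.

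There is essentially no obstacle here: the corollary is purely a bookkeeping consequence of the already-proven structural lemma, and the only point requiring a word of care is confirming that the family $\{\tilde{V}^r_l\}_{l=1}^{v_i}$ is enumerated without repetition — which is handled by the disjointness of the interiors of the $V^i_l$ and the non-degeneracy guaranteed by Lemma \ref{Lemm: Caracterization of vertica sub of R-S(cW)}. Hence the proof consists of one short paragraph citing these two lemmas.

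\begin{proof}
By definition of the geometric type $T_{S(\cW)}$, the parameter $V_r$ equals the number of vertical sub-rectangles of the Markov partition $(f,\cR_{S(\cW)})$ contained in $\tilde{R}_r$. Since $r=\tilde{r}(i,s)$, the rectangle $\tilde{R}_r$ is a horizontal sub-rectangle of $R_i\in\cR$. By Lemma \ref{Lemma: Vertical sub R-ps, cW}, the vertical sub-rectangles of $(f,\cR_{S(\cW)})$ contained in $\tilde{R}_r$ are exactly the members of the family $\{\tilde{V}^{r}_l\}_{l=1}^{v_i}$, where $\{V^i_l\}_{l=1}^{v_i}$ is the family of vertical sub-rectangles of $(f,\cR)$ contained in $R_i$. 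By Lemma \ref{Lemm: Caracterization of vertica sub of R-S(cW)} each $\tilde{V}^{r}_l$ is a genuine (non-degenerate) vertical sub-rectangle of $\tilde{R}_r$, and since the interiors of the $V^i_l$ are pairwise disjoint, the sets $\overset{o}{\tilde{R}_r}\cap\overset{o}{V^i_l}$ are pairwise disjoint as well; hence the rectangles $\tilde{V}^{r}_l$ are pairwise distinct. Therefore the family $\{\tilde{V}^{r}_l\}_{l=1}^{v_i}$ has exactly $v_i$ elements, and we conclude $V_r=v_i$.
\end{proof}
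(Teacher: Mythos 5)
Your proof is correct and follows the same route as the paper, which simply states this as an immediate consequence of Lemma \ref{Lemma: Vertical sub R-ps, cW}; you merely make the counting explicit (pairwise distinctness from the disjoint interiors of the $V^i_l$ and non-degeneracy from Lemma \ref{Lemm: Caracterization of vertica sub of R-S(cW)}), which is a harmless and accurate filling-in of the bookkeeping.
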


The following result is a generalization of Proposition \ref{Prop: vertical possition is delimitate.}, and it is proven in a similar manner. The primary element in this proof is the consistency of the horizontal orientation of $R_i$ and the labeling of the vertical sub-rectangles $\{\tilde{V}^{r}_l\}_{l=1}^{v_i}$ within $\tilde{R}_r\subset R_i$. 

\begin{coro}\label{Coro: vertical possition is delimitate,cW}
	Let $\tilde{H}\subset \tilde{R}_r$ any horizontal sub-rectangle of $\cR_{S(\cW}$,  assume that $\tilde{H}\subset H^i_j$ and $f(H^i_j)=V^k_l$. If $f(H)=\tilde{V}^{r'}_{l'}$ then $l=l'$.
\end{coro}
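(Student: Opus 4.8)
\textbf{Proof proposal for Corollary \ref{Coro: vertical possition is delimitate,cW}.}

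The plan is to reduce this statement to the single-code case already established in Proposition \ref{Prop: vertical possition is delimitate.}, since the only new ingredient relative to that setting is that the stable cuts now come from the whole family $\cW$ rather than from one code. First I would fix a horizontal sub-rectangle $\tilde{H}\subset\tilde{R}_r$ of $\cR_{S(\cW)}$ with $\tilde{H}\subset H^i_j$ and $f(H^i_j)=V^k_l$, and set $\tilde{V}:=f(\tilde{H})$, which by Lemma \ref{Lemma: Vertical sub R-ps, cW} is a vertical sub-rectangle of $\cR_{S(\cW)}$, say $\tilde{V}=\tilde{V}^{r'}_{l'}$ with $r'=\tilde{r}(k,s')$ for a suitable vertical position $s'$. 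By Definition \ref{Defi: Vertical sub R-S(cW)} we have $\tilde{V}^{r'}_{l'}=\overline{\overset{o}{\tilde{R}_{r'}}\cap\overset{o}{V^k_{l'}}}$, so it suffices to prove $l=l'$, i.e.\ that the vertical sub-rectangle of $R_k$ (in the original partition $\cR$) that contains $\tilde{V}$ is exactly $V^k_l$.

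The key step is the following containment chain: since $\tilde{H}\subset H^i_j$, applying $f$ gives $\tilde{V}=f(\tilde{H})\subset f(H^i_j)=V^k_l$; in particular the unstable boundary of $\tilde{V}$ lies in the unstable boundary of $V^k_l$. On the other hand $\tilde{V}=\tilde{V}^{r'}_{l'}$ has its unstable boundary in the unstable boundary of $V^k_{l'}$. Now I would invoke the fact, already used repeatedly in this section (it rests on $R_k$ being an embedded rectangle in the disjoint realization, or on the unique-connected-component property of the intersection of a horizontal and a vertical sub-rectangle), that two distinct vertical sub-rectangles $V^k_{l}$ and $V^k_{l'}$ of $R_k$ have interiors with disjoint closures in the unstable direction; hence a vertical sub-rectangle of $\cR_{S(\cW)}$ whose interior meets $\overset{o}{V^k_l}$ cannot also meet $\overset{o}{V^k_{l'}}$ for $l'\neq l$. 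Since $\overset{o}{\tilde{V}}\subset\overset{o}{V^k_l}$ and $\overset{o}{\tilde{V}}=\overset{o}{\tilde{R}_{r'}}\cap\overset{o}{V^k_{l'}}\neq\emptyset$, this forces $\overset{o}{V^k_{l'}}=\overset{o}{V^k_l}$, hence $l'=l$. The converse direction of the ``if and only if'' is the trivial observation that, given $f(\tilde H)=\tilde V^{r'}_{l'}$, the same argument reads backwards: $\tilde V^{r'}_{l'}\subset V^k_l$ already by construction of $\tilde H\subset H^i_j$, so the equality $l=l'$ is forced and conversely if $l=l'$ then $f(\tilde H)$ is precisely that rectangle.

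The main (and essentially only) obstacle is making sure the disjointness statement for vertical sub-rectangles of $R_k$ is legitimately available in the generalized setting: in Proposition \ref{Prop: vertical possition is delimitate.} it was phrased for the refinement along one periodic code, but the property it rests on is a property of the original Markov partition $\cR$ together with its sub-rectangles $\{V^k_l\}$, which is untouched by the cutting process. So the cleanest route is to note that the images $f(H^i_j)$ are among the $V^k_l$ of the \emph{original} geometric type $T$, that $\tilde V\subset V^k_l$ is immediate from $\tilde H\subset H^i_j$, and then quote the single-code Proposition \ref{Prop: vertical possition is delimitate.} verbatim applied to the intermediate partition obtained by cutting along just one $\underline{w}^q\in\cW$ — or simply re-run its one-line argument. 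I expect no genuine difficulty here; the corollary is a bookkeeping consequence of Lemma \ref{Lemma: Vertical sub R-ps, cW} and the embeddedness of the $V^k_l$ inside $R_k$, and the write-up should be a short paragraph.
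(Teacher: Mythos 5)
Your argument is correct and is essentially the paper's own: the paper proves the single-code version by noting $f(\tilde H)\subset f(H^i_j)=V^k_l$ while $f(\tilde H)=\overline{\overset{o}{\tilde R_{r'}}\cap\overset{o}{V^k_{l'}}}$, so disjointness of the interiors of distinct vertical sub-rectangles of $R_k$ forces $l=l'$, and it states that the family version is proved "in the same manner," which is exactly your reduction/re-run. Only remark: the statement is not an "if and only if," so your paragraph on the converse direction is unnecessary.
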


\subsection{The stable boundary of a rectangle $\tilde{R}_r \in \cR_{S(\cW)}$}

Almost any result in this sub-section can be proved in the same manner as the respective result was proved for only one code. For example, the next lemma is proved with the same strategy developed in Lemma  \ref{Lemm: diferentation moment of the order}. The number $M$ is the minimum natural number such that $w^1_{t+1+M}\neq w^2_{t_2+M}$, and all the other properties follow from this number.

\begin{lemm}\label{Lemm: diferentation moment of the order,cW}

	Suppose that $\underline{w}^1$ and $\underline{w}^2$ are two codes in $\cW$ with periods $P_1$ and $P_2$, respectively. Let $t_1$ be a number in $\{0,\cdots, P_1-1\}$ and $t_2$ be a number in $\{0,\cdots, P_2-1\}$ such that $\sigma^{t_1}(\underline{w}^1) \neq \sigma^{t_2}(\underline{w}^2)$, but both $w^1_{t_1}$ and $w^2_{t_2}$ are equal to  $i\in\{1,\cdots,n\}$. Consider the two elements in $\cO(i,\cW)$ determined by these iterations: $(t_1,\underline{w}^1)$ and $(t_2,\underline{w}^2).$ In this way:

	\begin{itemize}
		\item[i)]  There exists an integer $M\in\{1,\cdots, P-1\}$ such that $w^1_{t_1+M} \neq w^2_{t_2+M}$, but for all $0 \leq m \leq M-1$, the numbers $w^1_{t_1+m}$ and $w^2_{t_2+m}$ are equal.
		
		\item[ii)] 	If $M=1$, the indexes $j_{t_1,\underline{w}^1}$ and $j_{t_2,\underline{w}^2}$ (Definition \ref{Defi: the unique pair (i,j-H)}) are distinct. However, when $M\geq 2$, for every $m$ in the set $\{0,\cdots,M-2\}$, the indices $j_{t_1+m,\underline{w}^1}$ and $j_{t_2+m,\underline{w}^1}$ (see Definition \ref{Defi: the unique pair (i,j-H)}) are equal. Thus, for all $m\in\{0,\cdots,M-2\}$:
		$$
		\epsilon_T(w^1_{t_1+m},j_{t_1 +m, \underline{w}^1})=\epsilon_T(w^2_{t_2+m},j_{t_2 +m, \underline{w}^2}).
		$$
		
		\item[iii)]  The  indices  $j_{t_1+M-1,\underline{w}^1}$ and $j_{t_2+M-1,\underline{w}^2}$ in $\{1,\cdots, h_{w^1_{t_1 +M-1}}=h_{w^2_{t_2 +M-1}}\}$  are different.
	\end{itemize}
	
\end{lemm}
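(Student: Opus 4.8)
\textbf{Proof plan for Lemma \ref{Lemm: diferentation moment of the order,cW}.}

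The plan is to reduce this statement to the single-code version, Lemma \ref{Lemm: diferentation moment of the order}, by a change of point of view: instead of comparing two iterations of the same periodic code $\underline{w}$, we compare an iteration of $\underline{w}^1$ with an iteration of $\underline{w}^2$, but observe that the three items only involve \emph{finitely many consecutive terms} of each code and the data $\cH(T)$, $\Phi_T$ attached to those terms. Thus the combinatorial content is identical; the only thing that genuinely changes is that $\sigma^{t_1+m}(\underline{w}^1)$ and $\sigma^{t_2+m}(\underline{w}^2)$ need not ever coincide as full bi-infinite codes even after the blocks agree, which is exactly what makes the hypothesis $\sigma^{t_1}(\underline{w}^1)\neq\sigma^{t_2}(\underline{w}^2)$ the right one to use (rather than an $s$-boundary condition).

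First I would establish item (i). Since $\underline{w}^1,\underline{w}^2$ are periodic, the sequences $(w^1_{t_1+m})_{m\geq 0}$ and $(w^2_{t_2+m})_{m\geq 0}$ are eventually periodic; if they agreed for \emph{every} $m\geq 0$, then by periodicity the positive halves of $\sigma^{t_1}(\underline{w}^1)$ and $\sigma^{t_2}(\underline{w}^2)$ would agree. One then has to rule out that the negative halves still differ — but a code in $\Sigma_A$ with $A$ binary is determined on the negative side by its positive side together with membership in $\Sigma_A$? No: that is false in general. Here is where I must be careful: the correct reduction is to note that $\sigma^{t_1}(\underline{w}^1)$ and $\sigma^{t_2}(\underline{w}^2)$ are both periodic, so if their forward orbits under $\sigma$ visit the same symbol with the same forward itinerary forever, periodicity forces the two periodic codes to be shifts of one another in a way that, combined with $w^1_{t_1}=w^2_{t_2}$, yields $\sigma^{t_1}(\underline{w}^1)=\sigma^{t_2}(\underline{w}^2)$, contradicting the hypothesis. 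So $M:=\min\{m\geq 1: w^1_{t_1+m}\neq w^2_{t_2+m}\}$ exists and is finite, and minimality gives the agreement for $0\leq m\leq M-1$; note $M\geq 1$ because $w^1_{t_1}=w^2_{t_2}=i$ by assumption.

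Next, items (ii) and (iii) follow verbatim from the proof of Lemma \ref{Lemm: diferentation moment of the order}, substituting $\underline{w}^1$ for the role of $\underline{w}$ at time $t_1$ and $\underline{w}^2$ for $\underline{w}$ at time $t_2$. Concretely: for $0\leq m\leq M-1$ the pair $(w^1_{t_1+m}, j_{t_1+m,\underline{w}^1})$ is the \emph{unique} element of $\cH(T)$ whose image under $\rho_T$ lies in $R_{w^1_{t_1+m+1}}$ (Lemma \ref{Lemm: unique hriwontal for every prjection}, which only reads off two consecutive symbols and the binary matrix $A$), and likewise for $\underline{w}^2$; since $w^1_{t_1+m}=w^2_{t_2+m}$ and $w^1_{t_1+m+1}=w^2_{t_2+m+1}$ for $0\leq m\leq M-2$, these unique indices coincide, giving the equality of the $\epsilon_T$-values in item (ii). For item (iii), at level $M-1$ we have $w^1_{t_1+M-1}=w^2_{t_2+M-1}$ but $w^1_{t_1+M}\neq w^2_{t_2+M}$, so since $A$ is binary the two horizontal sub-rectangles $H^{w^1_{t_1+M-1}}_{j_{t_1+M-1,\underline{w}^1}}$ and $H^{w^2_{t_2+M-1}}_{j_{t_2+M-1,\underline{w}^2}}$ have images in distinct rectangles, forcing $j_{t_1+M-1,\underline{w}^1}\neq j_{t_2+M-1,\underline{w}^2}$. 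I expect the only real obstacle to be the careful argument in item (i) that "forward itineraries agree forever" genuinely implies $\sigma^{t_1}(\underline{w}^1)=\sigma^{t_2}(\underline{w}^2)$: this uses that both are periodic, so the full code is recovered from a single period of the forward itinerary. Everything else is a transcription of the already-proved single-code lemma.
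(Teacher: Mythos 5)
Your proposal is correct and follows essentially the same route as the paper, which itself proves this lemma by declaring it "the same strategy" as the single-code Lemma \ref{Lemm: diferentation moment of the order}: take $M$ to be the least $m\geq 1$ where the forward symbols disagree, and read items (ii)–(iii) off the uniqueness of the index $j$ given two consecutive symbols (binary incidence matrix, Lemma \ref{Lemm: unique hriwontal for every prjection}). Your extra care in item (i) — that forward agreement plus periodicity of both bi-infinite codes forces $\sigma^{t_1}(\underline{w}^1)=\sigma^{t_2}(\underline{w}^2)$, so $M$ exists — is exactly the one point the paper leaves implicit, and your resolution is the right one.
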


This permits us to define the interchange function as  in Definition \ref{Defi: Intetchage order funtion} but this time respect two non necessarily equal  codes in $\cW$.

\begin{defi}\label{Defi: Intetchage order funtion,cW}
	
	Suppose that $\underline{w}^1$ and $\underline{w}^2$ are two codes in $\cW$ with periods $P_1$ and $P_2$, respectively. Let $t_1$ be a number in $\{0,\cdots, P_1-1\}$ and $t_2$ be a number in $\{0,\cdots, P_2-1\}$ such that $\sigma^{t_1}(\underline{w}^1) \neq \sigma^{t_2}(\underline{w}^2)$ but $w^1_{t_1} = w^2_{t_2}=i$. Let $M$ be the number determined in Lemma \ref{Lemm: diferentation moment of the order,cW}. We define the \emph{interchange order} between $(t_1, \underline{w}^1)$ and $(t_2, \underline{w}^2)$ as follows:
	
	\begin{itemize}
		\item If $M=1$ then $\delta((t_1,\underline{w}^1),(t_2,\underline{w}^2))=1$.
				\item If $M>1$ then
		$$
	\delta((t_1,\underline{w}^1),(t_2,\underline{w}^2))=\prod_{m=0}^{M-2}\epsilon_T(w^1_{t_1+m},j_{t_1 +m, \underline{w}^1})=\prod_{m=0}^{M-2}\epsilon_T(w^2_{t_2+m},j_{t_2 +m, \underline{w}^2}).
		$$
	\end{itemize}
\end{defi}

Lemma \ref{Lemm: Comparacion de ordenes} admits the following generalization whose proof is essentially the same.

\begin{lemm}\label{Lemm: Comparacion de ordenes, cW}
	Let $I_{t_1,\underline{w}^1}$ and $I_{t_2,\underline{w}^2}$ be two stable segments such that $w^1_{t_1}=w^2_{t_2}=i$. Then, $I_{t_1,\underline{w}^1} < I_{t_2,\underline{w}^2}$ with respect to the vertical orientation of $R_i$ if and only if one of the following situations occurs:
	
	\begin{itemize}
		\item $j_{t_1+M-1,\underline{w}^1}<j_{t_2+M-1,\underline{w}^2}$ and $\delta((t_1,\underline{w}^1)(t_2,\underline{w}^2))=1$, or
		\item $j_{t_1+M-1,\underline{w}^1}>j_{t_2+M-1,\underline{w}^2}$ and $\delta((t_1,\underline{w}^1)(t_2,\underline{w}^2))=-1$
	\end{itemize}
\end{lemm}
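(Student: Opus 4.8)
The statement to establish is Lemma \ref{Lemm: Comparacion de ordenes, cW}, the two-code generalization of the single-code comparison result Lemma \ref{Lemm: Comparacion de ordenes}. The plan is to reduce it directly to the single-code situation by transporting both stable intervals to a common rectangle via the dynamics and exploiting that $\underline{w}^1$ and $\underline{w}^2$ share a common prefix after $t_1$ and $t_2$. First I would invoke Lemma \ref{Lemm: diferentation moment of the order,cW} to fix the integer $M$ and record its consequences: for all $0 \le m \le M-1$ we have $w^1_{t_1+m}=w^2_{t_2+m}$, the indices $j_{t_1+m,\underline{w}^1}$ and $j_{t_2+m,\underline{w}^2}$ agree for $0\le m\le M-2$, while $j_{t_1+M-1,\underline{w}^1}\neq j_{t_2+M-1,\underline{w}^2}$. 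This means that the horizontal sub-rectangle $H$ of $R_i$ bounded below and above by $I_{t_1,\underline{w}^1}$ and $I_{t_2,\underline{w}^2}$ (in some order) has the property that $f^m(H)$ lies inside a single horizontal sub-rectangle of $R_{w^1_{t_1+m}}$ for each $0\le m\le M-1$, since these two intervals follow the same horizontal sub-rectangles of $\cR$ up to time $M-1$.

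\textbf{Key steps.} The core computation mirrors the single-code proof. First I would observe that $\delta((t_1,\underline{w}^1),(t_2,\underline{w}^2))$ is exactly the sign recording whether the map $f^{M-1}$ restricted to $H$ preserves or reverses the vertical orientation inherited from $R_i$; this follows because at each intermediate step $0\le m\le M-2$ the common index $j_{t_1+m,\underline{w}^1}$ contributes the factor $\epsilon_T(w^1_{t_1+m},j_{t_1+m,\underline{w}^1})$ to the change of vertical orientation, exactly as in the proof of Lemma \ref{Lemm: Comparacion de ordenes}. Second, using Lemma \ref{Lemm: image of stable intervals} I would track that $f^{M-1}(I_{t_1,\underline{w}^1})\subset I_{t_1+M-1,\underline{w}^1}$ and $f^{M-1}(I_{t_2,\underline{w}^2})\subset I_{t_2+M-1,\underline{w}^2}$, and that these two images now lie in a common rectangle $R_{w^1_{t_1+M-1}}$ inside distinct horizontal sub-rectangles $H^{w^1_{t_1+M-1}}_{j_{t_1+M-1,\underline{w}^1}}$ and $H^{w^1_{t_1+M-1}}_{j_{t_2+M-1,\underline{w}^2}}$. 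Third, by Corollary \ref{Coro: projetion stable segments} (applied to each code separately) and Lemma \ref{Lemm: first order in O(i)}, the relative vertical order of $I_{t_1+M-1,\underline{w}^1}$ and $I_{t_2+M-1,\underline{w}^2}$ inside $R_{w^1_{t_1+M-1}}$ is dictated by the comparison $j_{t_1+M-1,\underline{w}^1} \lessgtr j_{t_2+M-1,\underline{w}^2}$. Combining the orientation sign $\delta$ with this order of images, exactly as in the two bullet-point cases of Lemma \ref{Lemm: Comparacion de ordenes}, yields the desired equivalence: $I_{t_1,\underline{w}^1}<I_{t_2,\underline{w}^2}$ in $R_i$ iff either ($j_{t_1+M-1,\underline{w}^1}<j_{t_2+M-1,\underline{w}^2}$ and $\delta=1$) or ($j_{t_1+M-1,\underline{w}^1}>j_{t_2+M-1,\underline{w}^2}$ and $\delta=-1$). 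The base case $M=1$ reduces to Lemma \ref{Lemm: first order in O(i)} directly since then $\delta=1$ and the intervals already sit in $R_i$ inside distinct horizontal sub-rectangles indexed by $j_{t_1,\underline{w}^1}\neq j_{t_2,\underline{w}^2}$.

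\textbf{Main obstacle.} The only subtle point — and essentially the one already handled in the single-code case — is verifying that the interchange sign $\delta$ really encodes the orientation change of $f^{M-1}$ on the \emph{whole} horizontal band $H$ and not just along one boundary. This requires knowing that $H$ genuinely follows a single horizontal sub-rectangle at each step $m\le M-2$, which is where the hypothesis $w^1_{t_1+m}=w^2_{t_2+m}$ together with the binary incidence matrix (forcing $j_{t_1+m,\underline{w}^1}=j_{t_2+m,\underline{w}^2}$) is essential; this is precisely part (ii) of Lemma \ref{Lemm: diferentation moment of the order,cW}. Once that is in place, the bookkeeping of upper versus lower boundary of $f^{M-1}(H)$ under orientation reversal is routine and identical to the argument in Lemma \ref{Lemm: Comparacion de ordenes}, so I would simply refer to that proof for the remaining case analysis rather than repeat it.
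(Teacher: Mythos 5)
Your proposal is correct and follows essentially the same route as the paper, which proves the single-code version (Lemma \ref{Lemm: Comparacion de ordenes}) by tracking the horizontal band $H$ through the common prefix, reading the orientation change off $\delta$, and comparing positions at time $M-1$ via Lemma \ref{Lemm: first order in O(i)}; the paper then states that the two-code case is proved in the same manner, which is exactly the reduction you carry out. Your use of Lemma \ref{Lemm: diferentation moment of the order,cW}, Lemma \ref{Lemm: image of stable intervals}, and Corollary \ref{Coro: projetion stable segments}, together with the $M=1$ base case, matches the intended argument.
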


Now, we can establish a formal order for the set of stable segments in $R_i$ that are part of the stable boundary of the rectangles in $\cR_{S(\cW)}$ contained within $R_i$.

\begin{defi}\label{Lemm: Boundaries code,CW}
	Let $R_i\in\cR$ be any rectangle, denote $I_{i,-1}:=\partial^s_{-1} R_i$ as the lower boundary of $R_i$, and $I_{i,+1}:=\partial^s_{+1} R_i$ as the upper boundary. Define
	$$
	\underline{s}:\cO(i,\cW)\cup \{(i,+1), (i,-1)\} \rightarrow \{0,1,\cdots,O(i,\cW),O(i,\cW)+1\}.
	$$
	as the unique function such that:
	\begin{itemize}
		\item $\underline{s}(t_1,\underline{w}^1)< \underline{s}(t_2,\underline{w}^2)$ if and only if  $I_{t_1,\underline{w}^1}< I_{t_1,\underline{w}^2}$. 
		\item It assigns $0$ to $(i,-1)$ and  $O(i,\cW)+1$ to  $(i,+1)$.
	\end{itemize}

\end{defi}

Lemma \ref{Lemm: Determine boundaris of R-r, cW} is a generalization of \ref{Lemm: Determine boundaris of R-r} , and Lemma \ref{Lemm: imge of boundaries,cW} is a generalization of  \ref{Lemm: imge of boundaries}. Their proofs are exactly the same as the results they generalize.

\begin{lemm}\label{Lemm: Determine boundaris of R-r, cW}
	Let $\tilde{R}_r$ be a rectangle in the geometric Markov partition $\cR_{S(\cW)}$ with $r=\tilde{r}(i,s)$. Then, the stable boundary of $\tilde{R}_r$ consists of two stable segments of $R_i$ determined as follows:
	
	\begin{itemize}
		\item[i)] If $s=1$, the lower boundary of $\tilde{R}_r$ is $I_{i,-1}$ and if  $O(i,\cW)=0$,  its upper boundary is $I_{i,+1}$. However, if $O(i,\cW)>0$, there exists a unique $(t,\underline{w}^{q})\in \cO(i,\cW)$ such that $\underline{s}(t,\underline{w}^q)=1$, and the upper boundary of $\tilde{R}_r$ is the stable segment $I_{t,\underline{w}^q}$.
		
		\item[ii)] If $O(i,\cW)>1$, $s \neq 1$ and $s\neq O(i,\cW)+1$, there are unique $(t_1,\underline{w}^1), (t_2,\underline{w}^2)\in \cO(i,\cW)$ such that: $\underline{s}(t_1,\underline{w}^1)=s-1$, $\underline{s}(t_2,\underline{w}^2)=s$. Furthermore, the lower boundary of $\tilde{R}_r$ is the stable segment $I_{t_1,\underline{w}^1}$, and the upper boundary of $\tilde{R}_r$ is the stable segment $I_{t_2,\underline{w}^2}$.
		
		\item[iii)] If $O(i,\cW)>0$ and $s=O(i,\cW)+1$, then the upper boundary of $\tilde{R}_r$ is the stable segment $I_{i,+1}$. Additionally, there exists a unique $(t,\underline{w}^q)\in \cO(i,\cW)$ such that $\underline{s}(t,\underline{w}^q)= O(i,\cW)$, and the inferior boundary of $\tilde{R}_r$ is the stable segment $I_{t,\underline{w}^q}$.
	\end{itemize}
	
\end{lemm}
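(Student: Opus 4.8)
\textbf{Plan of proof for Lemma \ref{Lemm: Determine boundaris of R-r, cW}.}

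The plan is to argue exactly as in the single-code case (Lemma \ref{Lemm: Determine boundaris of R-r}), replacing the finite ordered set of stable segments $\cI(i,\underline{w})\cup\{I_{i,-1},I_{i,+1}\}$ by the larger finite ordered set $\cO(i,\cW)\cup\{(i,-1),(i,+1)\}$, which is linearly ordered by $\underline{s}$ as defined in Definition \ref{Lemm: Boundaries code,CW}. The starting point is the observation (from Lemma \ref{Lemm: rectangles in R S(cW)}) that each rectangle $\tilde{R}_r\in\cR_{S(\cW)}$ with $r=\tilde{r}(i,s)$ arises as the closure of a connected component of $\overset{o}{R_i}\setminus\cup\cI(i,\cW)$, so its stable boundary consists of exactly two consecutive horizontal segments among $\partial^s R_i=\{I_{i,-1},I_{i,+1}\}$ together with the stable intervals $I_{t,\underline{w}^q}$ for $(t,\underline{w}^q)\in\cO(i,\cW)$, where ``consecutive'' is meant with respect to the vertical order in $R_i$.

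First I would recall, via Lemma \ref{Lemm: who the intervals intersect }, that each interval $I_{t,\underline{w}^q}$ with $w^q_t=i$ is a well-defined stable interval in the interior of $R_i$ disjoint from $\partial^s R_i$ (here using that every code in $\cW$ is not $s$-boundary, just as in Remark \ref{Rema: Why not s boundary code?}), and that distinct such intervals either coincide or have disjoint interiors; Lemma \ref{Lemm: Comparacion de ordenes, cW} then guarantees that $\underline{s}$ is a genuine total order consistent with the vertical order of $R_i$. The index $s=\tilde{r}^{-1}$-component records the vertical position of $\tilde{R}_r$ among the $O(i,\cW)+1$ rectangles of $\cR_{S(\cW)}$ inside $R_i$, by Lemma \ref{Lemm: Number  of rectangles in a rec of the s refinaent, cW}. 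Hence: if $s=1$, the lower boundary of $\tilde{R}_r$ is the bottom of the stack, i.e.\ $I_{i,-1}$, and the upper boundary is the segment occupying $\underline{s}$-position $1$ (or $I_{i,+1}$ if there are no interior segments, i.e.\ $O(i,\cW)=0$); if $1<s<O(i,\cW)+1$ the two boundaries are the unique segments at $\underline{s}$-positions $s-1$ and $s$; and if $s=O(i,\cW)+1$ the upper boundary is $I_{i,+1}$ and the lower boundary is the segment at $\underline{s}$-position $O(i,\cW)$. Uniqueness of these segments is immediate because $\underline{s}$ is injective.

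The three cases are then just a bookkeeping translation of ``$\tilde{R}_r$ is the $s$-th rectangle from the bottom'' into the language of the order $\underline{s}$, and each is closed off by the same one-line argument used in Lemma \ref{Lemm: Determine boundaris of R-r}. I do not anticipate a genuine obstacle: the only point requiring care is that $\underline{s}$ is well-defined and total, i.e.\ that any two stable intervals coming from possibly different codes in $\cW$ but lying in the same rectangle $R_i$ are comparable in the vertical order, and this is precisely the content of Lemma \ref{Lemm: Comparacion de ordenes, cW}, which in turn rests on Lemma \ref{Lemm: diferentation moment of the order,cW}. Once that is invoked, the proof is a direct transcription of the single-code argument.
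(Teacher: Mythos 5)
Your plan is correct and coincides with the paper's treatment: the paper simply states that the proof is the same as in the single-code case (Lemma \ref{Lemm: Determine boundaris of R-r}), and your transcription, with the only delicate point being the totality of the order $\underline{s}$ via Lemma \ref{Lemm: Comparacion de ordenes, cW}, is exactly that argument.
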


\begin{lemm}\label{Lemm: imge of boundaries,cW}
	
	Let $\tilde{R}_r$ with $r=\tilde{r}(i,s)$ be a rectangle in the geometric Markov partition $\cR_{S(\cW)}$. Then, the stable boundary components of $\tilde{R}_r$ have  images under $f$ determined in the following manner:
	
	\begin{itemize}
		\item[i)] If the lower boundary of $\tilde{R}_r$ is $I_{i,-1}$ and $\Phi_T(i,1)=(k,l,\epsilon)$, then: if $\epsilon=1$,   $f(I_{i,-1}) \subset I_{k,-1}$, but if $\epsilon=-1$, $f(I_{i,-1}) \subset I_{k,+1}$.

		\item[ii)] If one boundary component of $\tilde{R}_r$ is the stable segment $I_{t,\underline{w}}$, then $f(I_{t,\underline{w}})\subset I_{t+1,\underline{w}}$.
		
		\item[iii)]  If $I_{i,+1}$ is the upper boundary of $\tilde{R}_r$ and $\Phi_T(i,h_i)=(k,l,\epsilon)$, then:  if $\epsilon=1$,   $f(I_{i,+1}) \subset I_{k,+1}$, but if $\epsilon=-1$, $f(I_{i,+1}) \subset I_{k,-1}$.
	\end{itemize}

\end{lemm}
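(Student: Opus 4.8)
\textbf{Proof plan for Lemma \ref{Lemm: imge of boundaries,cW}.}

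The plan is to reduce this statement to the already-established single-code version, Lemma \ref{Lemm: imge of boundaries}, by a direct comparison of stable boundaries. Recall that $\cR_{S(\cW)}$ is obtained by cutting $\cR$ along the union of all intervals in $\cI(i,\cW)=\cup_{q=1}^Q \cI(i,\underline{w}^q)$, while $\cR_{S(\underline{w}^q)}$ is obtained by cutting only along $\cI(i,\underline{w}^q)$. The key structural observation is that the stable boundary $\partial^s \cR_{S(\cW)}$ is the union $\partial^s\cR \cup \bigcup_{q=1}^Q \bigcup_t I_{t,\underline{w}^q}$, so every stable boundary component of a rectangle $\tilde{R}_r \in \cR_{S(\cW)}$ is either of the form $I_{i,\pm 1}=\partial^s_{\pm 1}R_i$ (a stable boundary of the original partition $\cR$), or of the form $I_{t,\underline{w}^q}$ for some $q$ and some $t$. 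This is exactly the three-case dichotomy stated in the Lemma, and it was already used implicitly in Lemma \ref{Lemm: Determine boundaris of R-r, cW}.

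The execution then proceeds case by case. For Item $ii)$, when a boundary component of $\tilde{R}_r$ is $I_{t,\underline{w}}$ with $\underline{w}\in\cW$, the claim $f(I_{t,\underline{w}})\subset I_{t+1,\underline{w}}$ is nothing but Lemma \ref{Lemm: image of stable intervals} applied to the code $\underline{w}$; this lemma was proved purely in terms of the projection $\pi_f$ and the shift, so it is valid regardless of which family $\cW$ we are working with. For Items $i)$ and $iii)$, the boundary component is $I_{i,-1}=\partial^s_{-1}R_i$ (resp. $I_{i,+1}=\partial^s_{+1}R_i$), a stable boundary of the original Markov partition $\cR$ with geometric type $T$. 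Here one argues exactly as in the proof of Item $i)$ of Lemma \ref{Lemm: imge of boundaries}: since $\partial^s_{-1}R_i$ is the lower boundary of the horizontal sub-rectangle $H^i_1$ of $\cR$, and $\Phi_T(i,1)=(k,l,\epsilon)$ means $f(H^i_1)=V^k_l$ is a vertical sub-rectangle of $R_k$ with the relative vertical orientation recorded by $\epsilon$, the image $f(\partial^s_{-1}R_i)$ lies on the lower stable boundary of $R_k$ when $\epsilon=1$ and on the upper stable boundary of $R_k$ when $\epsilon=-1$; that is, $f(I_{i,-1})\subset I_{k,-1}$ or $f(I_{i,-1})\subset I_{k,+1}$ accordingly. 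The symmetric argument using $\Phi_T(i,h_i)=(k,l,\epsilon)$ and the fact that $\partial^s_{+1}R_i$ is the upper boundary of $H^i_{h_i}$ handles Item $iii)$.

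I do not anticipate a genuine obstacle here: the statement is a routine assembly of Lemma \ref{Lemm: image of stable intervals} (for the new cutting curves) with the orientation bookkeeping already carried out for $\cR$ in Lemma \ref{Lemm: imge of boundaries} (for the old boundary curves). The only point requiring a little care is making explicit that the three listed cases are genuinely exhaustive — i.e. that every stable boundary component of $\tilde{R}_r$ falls into exactly one of them — which follows from the description of $\partial^s\cR_{S(\cW)}$ above together with the fact (from Lemma \ref{Lemm: Determine boundaris of R-r, cW}) that a rectangle $\tilde{R}_r$ with $r=\tilde{r}(i,s)$ has $I_{i,-1}$ as lower boundary precisely when $s=1$ and $I_{i,+1}$ as upper boundary precisely when $s=O(i,\cW)+1$, and otherwise both its boundary components are of the form $I_{t,\underline{w}^q}$.
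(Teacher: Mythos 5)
Your proposal is correct and follows the same route as the paper, which simply observes that the proof of this lemma is identical to the single-code case: Item $ii)$ is Lemma \ref{Lemm: image of stable intervals}, and Items $i)$ and $iii)$ repeat the orientation bookkeeping of Lemma \ref{Lemm: imge of boundaries} since $I_{i,\pm 1}$ are boundaries of $H^i_1$ and $H^i_{h_i}$ in the original partition. Your added remark on the exhaustiveness of the three cases (via Lemma \ref{Lemm: Determine boundaris of R-r, cW}) is a harmless clarification the paper leaves implicit.
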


\subsection{The relative position of $H^i_j$ with respect to $\tilde{R}_r$}

Like we did before, we are going to determine the set of horizontal sub-rectangles of $\cR$ that intersect a certain rectangle $R_r$ from $\cR_{S(\cW)}$.

\begin{defi}\label{Defi cH(r) set, cW}
	Let $\tilde{R}_r\in \cR_{S(\cW)}$ with $r=\tilde{r}(i,s)$. We define:
	$$
	\cH(r):=\{(i,j)\in \cH(T): \overset{o}{H^i_{j}}\cap \overset{o}{\tilde{R}_r}\neq \emptyset\}.
	$$
\end{defi}

The fundamental tool to prove the next result is Lemma \ref{Lemm: unique hriwontal for every prjection} , which we presented earlier and is valid for any element in $\cO(i,\cW)$. We'll leave the proof out as the philosophy is the same as our previous Lemma \ref{Lemm: determination cH(r)}.

\begin{lemm}\label{Lemm: determination cH(r), cW}
	Let $\tilde{R}_r$ be a rectangle in the geometric Markov partition $\cR_{S(\underline{w})}$ with $r=\tilde{r}(i,s)$. Then the set $\cH(r)$ is determined by one of the three following formulas:
	
	\begin{itemize}
		\item[i)] If $O(i,\cW)=0$, then:  $\cH(r)=\{(i,j)\in \cH(T)\}$.

		\item[ii)]  If $O(i,\cW)>0$, $s \neq 1$ and $s \neq O(i,\cW) + 1$, let $I_{t_1,\underline{w}^1}$ be the inferior boundary of $\tilde{R}_r$, and let $I_{t_2,\underline{w}^2}$ be the upper boundary of $\tilde{R}_r$. Let $j_{t_1,\underline{w}^1}$ and $j_{t_1,\underline{w}^2}$ be the indices determined in Lemma \ref{Lemm: unique hriwontal for every prjection}. In this situation:
		 $$
		\cH(r)=\{(i,j)\in \cH(T): j_{t_1,\underline{w}^1} \leq j \leq j_{t_2,\underline{w}^2} \}.
		$$
		
		\item[iii)] If $O(i,\cW)>0$ and $s=O(i,\cW)+1$, let $I_{t,\underline{w}^1}$ be the inferior boundary of $\tilde{R}_r$ and  $I_{i,+1}$ the upper boundary of $\tilde{R}_r$. Let $j_{t,\underline{w}^1}$ be the index determined in \ref{Lemm: unique hriwontal for every prjection}. In this manner, 
		$$
		\cH(r)=\{(i,j)\in \cH(T): j_{t,\underline{w}^1} \leq j \leq h_i \}.
		$$
		\item[iv)] If $O(i,\cW)>0$ and $s=1$, let $I_{t,\underline{w}^1}$ be the upper boundary of $\tilde{R}_r$, let $I_{i,-1}$ be its inferior boundary and let $j_{t,\underline{w}^1}$ be the index determined in \ref{Lemm: unique hriwontal for every prjection}. In this manner:
		$$
		\cH(r)=\{(i,j)\in \cH(T): 1 \leq j \leq j_{t,\underline{w}^1} \}.
		$$
		
	\end{itemize}
	
\end{lemm}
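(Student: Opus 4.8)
The statement to prove describes the set $\cH(r)$ for a rectangle $\tilde R_r \in \cR_{S(\cW)}$ with $r = \tilde r(i,s)$, and it is the exact analogue of Lemma~\ref{Lemm: determination cH(r)}, the single-code version. My plan is to reduce everything to the observation that $\cH(r)$ depends only on which horizontal sub-rectangles $H^i_j$ of the original partition $\cR$ intersect the interior of $\tilde R_r$, and that this is completely controlled by the two stable intervals forming $\partial^s \tilde R_r$, together with Corollary~\ref{Coro: projetion stable segments} (and its evident adaptation to elements of $\cO(i,\cW)$), which tells us that each stable interval $I_{t,\underline w^q}$ lies strictly inside a specific horizontal sub-rectangle $H^i_{j_{t,\underline w^q}}$ of $\cR$.

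\textbf{Step 1: identify the stable boundary of $\tilde R_r$.} First I would invoke Lemma~\ref{Lemm: Determine boundaris of R-r, cW}, which was already established just above, to pin down the lower and upper boundary components of $\tilde R_r$ in each of the four cases (namely $O(i,\cW)=0$; $s=1$ with $O(i,\cW)>0$; $1<s<O(i,\cW)+1$; and $s=O(i,\cW)+1$). This tells us that $\partial^s \tilde R_r$ consists either of $I_{i,-1}$ and/or $I_{i,+1}$, or of two stable intervals of the form $I_{t,\underline w^q}$ determined by consecutive values of the order function $\underline s$.

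\textbf{Step 2: locate each boundary component inside a sub-rectangle of $\cR$.} For each interval $I_{t,\underline w^q}$ appearing as a boundary of $\tilde R_r$, I would apply the $\cW$-version of Corollary~\ref{Coro: projetion stable segments} to conclude $I_{t,\underline w^q} \subset H^i_{j_{t,\underline w^q}} \setminus \partial^s H^i_{j_{t,\underline w^q}}$, where $j_{t,\underline w^q}$ is the unique index furnished by Lemma~\ref{Lemm: unique hriwontal for every prjection}. The intervals $I_{i,\pm 1}$ are by definition the boundaries of $R_i$ and hence sit in $H^i_1$ (the lower one) and $H^i_{h_i}$ (the upper one) respectively. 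Then the key geometric fact is that a horizontal sub-rectangle $H^i_j$ of $\cR$ meets $\overset{o}{\tilde R_r}$ precisely when $j$ lies (weakly) between the index of the sub-rectangle containing the lower boundary of $\tilde R_r$ and the index of the sub-rectangle containing the upper boundary; this is because the trivial bifoliated structure of $\overset{o}{R_i}$ makes the vertical order on stable segments a genuine linear order (Remark~\ref{Rema: well defined order intervals}), and the sub-rectangles $H^i_j$ of $\cR$ are ordered consistently with it. Assembling these gives exactly the four formulas in the statement.

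\textbf{Main obstacle.} The argument is essentially bookkeeping, and I expect the only real subtlety to be the \emph{strictness} of the inclusions at the boundary cases $s=1$ and $s = O(i,\cW)+1$: one must check that when $\tilde R_r$ shares its lower boundary $I_{i,-1}$ with $R_i$, the sub-rectangle $H^i_1$ genuinely intersects $\overset{o}{\tilde R_r}$ (so that $(i,1)\in\cH(r)$), rather than merely touching it along the boundary. This follows because $H^i_1$ has positive unstable width and $\tilde R_r$ contains a stable strip of $R_i$ adjacent to $I_{i,-1}$, but it is the point where one must be careful not to conflate "shares a boundary leaf'' with "has intersecting interiors''. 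The same care is needed in establishing the correct endpoint index $h_i$ (resp.\ $1$) for the upper (resp.\ lower) extreme cases. Beyond that, uniqueness of the indices $j_{t,\underline w^q}$ is already guaranteed by Lemma~\ref{Lemm: unique hriwontal for every prjection} since $A(T)$ is binary, so no further work is required there.
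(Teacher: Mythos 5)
Your proposal is correct and follows essentially the same route as the paper: the paper proves the single-code version by locating the two stable boundary components of $\tilde R_r$ inside the horizontal sub-rectangles $H^i_{j}$ given by Lemma \ref{Lemm: unique hriwontal for every prjection} (via Corollary \ref{Coro: projetion stable segments}) and then taking all indices between them, and it explicitly states that the family version is proved by the same argument. Your extra attention to the extreme cases $s=1$ and $s=O(i,\cW)+1$ is a sound refinement of a point the paper treats as evident, not a deviation in method.
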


Let $\tilde{R}_r\in \cR_{S(\cW)}$ be a rectangle with $r=\tilde{r}(i,s)$. For each $(i,j)\in \cH(r)$, there are three possible situations regarding the placement of $\tilde{R}_r$ with respect to the sub-rectangle $H^i_j$ of $R_i\in \cR$:

\begin{itemize}
	\item[i)] $H^i_j\subset \tilde{R}_r$ 
	\item[ii)]$\tilde{R}_r \subset H^i_j\setminus\partial^s H^i_j$
	\item[iii)]  $H^i_j$ contains only one horizontal boundary component of $\tilde{R}_r$.
\end{itemize}

As done in Proposition \ref{Prop: determination case of ch(r) inside R-r}, we need to provide a criterion to determine which of the three situations previously mentioned we are dealing with, in fact the criterion is the same, but we reproduce here just for completeness.

\begin{prop}\label{Prop: determination case of ch(r) inside R-r, cW}
	Let $\tilde{R}_r\in \cR_{S(\cW)}$ be a rectangle with $r=\tilde{r}(i,s)$. 
	
	Assume that $\tilde{R}_r\neq R_i$ and $ \cH(r)=\{(i,j_1),\cdots, (i,j_K)\}$ with $j_k<j_{k+1}$. Let $(i,j_k)\in \cH(r)$ in this manner:
	
	\begin{itemize}
		\item[i)]  $\tilde{R}_r \subset H^i_{j_k}\setminus\partial^s H^i_{j_k}$ if and only if $\cH(r)=\{(i,j_k)\}$, i.e. $K=1$.
		
		\item[ii)]  $H^i_{j_k}\subset \tilde{R}_r$ if and only if $j_1<j_k<j_K$.
		
		\item[iii)]  The sub-rectangle $H^i_{j_k}$ contains only one horizontal boundary component of $\tilde{R}_r$ if and only if $K> 1$, i. e. the cardinality of $\cH(r)$ is bigger than $1$ ( $\sharp(\cH(r))>1$ ).  In this case, $H^i_{j_k}$ contains  the inferior boundary of $\tilde{R}_r$ if $k=1$ and the upper boundary of $\tilde{R}_r$ if $k=K$.
	\end{itemize}
	
	In case that  $\tilde{R}_r= R_i$, for all $(i,j)\in \cH(r)$,   $H^i_j\subset \tilde{R}_r$.
	
\end{prop}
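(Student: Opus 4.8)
The statement is the direct analogue of Proposition \ref{Prop: determination case of ch(r) inside R-r}, now for the $s$-boundary refinement along a family $\cW$ of periodic codes rather than a single code. Since every rectangle $\tilde R_r\in\cR_{S(\cW)}$ with $r=\tilde r(i,s)$ is by construction a horizontal sub-rectangle of $R_i\in\cR$ (Lemma \ref{Lemm: rectangles in R S(cW)}), and since the horizontal sub-rectangles $H^i_j$ of $R_i$ are linearly ordered by their vertical position inside $R_i$, the whole argument is a combinatorial book-keeping exercise about which consecutive block $\{(i,j_1),\dots,(i,j_K)\}$ of these $H^i_j$'s meets $\overset{o}{\tilde R_r}$. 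The key observation, which I will invoke throughout, is that $\cH(r)$ is exactly the set of indices $(i,j)$ with $\overset{o}{H^i_j}\cap\overset{o}{\tilde R_r}\neq\emptyset$ (Definition \ref{Defi cH(r) set, cW}), and that this set is an \emph{interval} of consecutive $j$'s, as already established in Lemma \ref{Lemm: determination cH(r), cW}. The three configurations to be distinguished are geometrically exclusive and exhaustive for $\tilde R_r\subsetneq R_i$, because $\tilde R_r$ is contained in $R_i$, is a union of (parts of) consecutive $H^i_j$'s, and its stable boundaries (given by Lemma \ref{Lemm: Determine boundaris of R-r, cW}) either coincide with those of $R_i$ or are interior stable segments $I_{t,\underline w^q}$ that by Corollary \ref{Coro: projetion stable segments} lie strictly inside some $H^i_j$.

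First I would treat item (i): if $\tilde R_r\subset H^i_{j_k}\setminus\partial^s H^i_{j_k}$, then no other horizontal sub-rectangle of $R_i$ meets $\overset{o}{\tilde R_r}$, so $\cH(r)=\{(i,j_k)\}$, i.e.\ $K=1$. Conversely, if $K=1$ then $\overset{o}{\tilde R_r}\subset\overset{o}{H^i_{j_1}}$, and since the stable boundaries of $\tilde R_r$ are disjoint from $\partial^s H^i_{j_1}$ (because $\tilde R_r\neq R_i$ forces both its stable boundaries to be interior segments $I_{t,\underline w^q}\subset H^i_{j_1}\setminus\partial^s H^i_{j_1}$, by Corollary \ref{Coro: projetion stable segments} and Lemma \ref{Lemm: Determine boundaris of R-r, cW}), we get $\tilde R_r\subset H^i_{j_1}\setminus\partial^s H^i_{j_1}$. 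For item (ii): if $j_1<j_k<j_K$ then $H^i_{j_{k-1}}$ lies below $H^i_{j_k}$ inside $R_i$ and $H^i_{j_{k+1}}$ lies above it, and both meet $\overset{o}{\tilde R_r}$; since $\tilde R_r$ is connected and $H^i_{j_k}$ sits between them, $H^i_{j_k}\subset\tilde R_r$. The converse uses that $H^i_{j_k}\subset\tilde R_r\subsetneq R_i$ is a proper containment, hence there exist horizontal sub-rectangles of $R_i$ immediately above and below $H^i_{j_k}$ still meeting $\overset{o}{\tilde R_r}$, which forces $j_1\le j_{k-1}<j_k<j_{k+1}\le j_K$. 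For item (iii): the two remaining sub-rectangles in an interval $\cH(r)$ with $K>1$ are the extremal ones $j_1$ and $j_K$; the one containing the inferior boundary of $\tilde R_r$ must be the lowest, i.e.\ $j_1$ (all other $j'\in\cH(r)$ satisfy $j'\ge j_1$, and the upper boundary of $\tilde R_r$ is not in $H^i_{j_1}$ since $K>1$), and symmetrically the one containing the upper boundary is $j_K$. The final clause, $\tilde R_r=R_i$, is immediate: then $\cH(r)=\{(i,j):(i,j)\in\cH(T)\}$ and every $H^i_j\subset R_i=\tilde R_r$.

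The only mild subtlety — which I expect to be the ``main obstacle,'' though it is not really hard — is making precise the dichotomy ``$\tilde R_r=R_i$ versus $\tilde R_r\subsetneq R_i$'' and ensuring that in the proper-containment case the stable boundaries of $\tilde R_r$ genuinely lie in the \emph{interiors} of the $H^i_j$'s, so that no degenerate overlap (a stable boundary of $\tilde R_r$ coinciding with a $\partial^s H^i_j$) can occur. This is exactly where the hypothesis that $\cW$ consists of non-$s$-boundary codes enters: by Corollary \ref{Coro: projetion stable segments} (applied to each $\underline w^q\in\cW$), each interior cutting segment $I_{t,\underline w^q}$ satisfies $I_{t,\underline w^q}\subset H^i_{j_{t,\underline w^q}}\setminus\partial^s H^i_{j_{t,\underline w^q}}$. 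Once this is noted, all three items reduce to the order-interval combinatorics of $\cH(r)$ described above, and the proof is a direct transcription of the argument of Proposition \ref{Prop: determination case of ch(r) inside R-r}, with ``$\cI(i,\underline w)$'' replaced by ``$\cI(i,\cW)$'' and Lemma \ref{Lemm: determination cH(r)} replaced by Lemma \ref{Lemm: determination cH(r), cW}. I would therefore present the proof by explicitly reducing to that earlier proposition, spelling out only the two converse implications in (i) and (ii) where the order structure of $\cH(r)$ is used.
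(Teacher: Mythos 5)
Your proposal is correct and takes essentially the same route as the paper: for this family version the paper gives no new argument at all (it states that the criterion and proof are the same as in the single-code Proposition \ref{Prop: determination case of ch(r) inside R-r}), and your write-up is a faithful transcription of that earlier argument — interiority of the cutting segments $I_{t,\underline{w}^q}$ via Corollary \ref{Coro: projetion stable segments}, the interval structure of $\cH(r)$ from Lemma \ref{Lemm: determination cH(r), cW}, and the consecutive-sub-rectangle argument for items (ii) and (iii). The only small overstatement is your parenthetical claim that $\tilde{R}_r\neq R_i$ forces \emph{both} stable boundaries of $\tilde{R}_r$ to be interior segments (when $s=1$ or $s=O(i,\cW)+1$ one of them still lies in $\partial^s R_i$), but the paper's own proof of the single-code case glosses over exactly the same point, so this does not distinguish your argument from theirs.
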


\subsection{The number of horizontal sub-rectangles of $\cR_{S(\cW)}$}
We are going to count how many rectangles in the Markov partition $\cR_{S(\cW)}$ are contained in the intersection $\tilde{R}_r\cap H^i_j$ and describe the way that $f$ send such horizontal sub-rectangles into vertical sub-rectangles. Demonstrations of all the following Lemmas is a compilation of the arguments developed in the previous section, and we believe they don't require further explanation. However, we enunciate them and give some hits about their proof for completeness.

\subsubsection{Fist case: $\tilde{R}_r \subset H^i_{j}\setminus\partial^s H^i_{j}$ }

\begin{lemm}\label{Lemm: number horizontal sub case I, cW}
	Let $\tilde{R}_r\in \cR_{S(cW)}$, with $r=\tilde{r}(i,s)$. Suppose we have a pair of indices $(i,j)\in \cH(r)$ such that $\tilde{R}_r \subset H^i_j\setminus\partial^s H^i_j$. In this case $s\neq 1$ and $s\neq O(i,\cW)+1$ and we take $I_{t_1,\underline{w}^1}$ as the lower boundary of $\tilde{R}_r$ and $I_{t_2,\underline{w}^2}$ as the upper boundary of $\tilde{R}_r$.
	
	Assume that $\rho_T(i,j)=(k,l)$. Let $s^-=\underline{s}(t_1+1,\underline{w}^1)$ and $s^+=\underline{s}(t_2+1,\underline{w}^1)$. Then,  $w^2_{t_1+1}=w^2_{t_2+1}=k$ and the number of horizontal sub-rectangles of $\cR_{S(\cW)}$ contained in the intersection $\tilde{R}_r\cap H^i_j$ is given by the formula:
	
	\begin{equation}\label{Equa: h (i,j,r) caso I, cW}
	\underline{h}(r,j)= \vert s^- - s^+\vert.
	\end{equation} 
	
\end{lemm}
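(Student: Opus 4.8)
The statement is the $\cW$-analogue of Lemma \ref{Lemm: number horizontal sub case II} (the first of the three cases in the single-code analysis), and the plan is to adapt that proof essentially verbatim, carefully tracking how the presence of two distinct codes $\underline{w}^1$ and $\underline{w}^2$ changes the bookkeeping. First I would observe that the hypothesis $\tilde{R}_r \subset H^i_j \setminus \partial^s H^i_j$ means $\tilde{R}_r$ shares no stable boundary component with $R_i$, so its relative vertical position $s$ inside $R_i$ is strictly between $1$ and $O(i,\cW)+1$. Hence, by Item (ii) of Lemma \ref{Lemm: Determine boundaris of R-r, cW}, there are unique pairs $(t_1,\underline{w}^1), (t_2,\underline{w}^2) \in \cO(i,\cW)$ with $\underline{s}(t_1,\underline{w}^1) = s-1$ and $\underline{s}(t_2,\underline{w}^2) = s$, giving the lower and upper boundaries $I_{t_1,\underline{w}^1}$ and $I_{t_2,\underline{w}^2}$ of $\tilde{R}_r$. (I would be careful here: the two boundary segments of a single rectangle $\tilde{R}_r$ may come from two \emph{different} codes in $\cW$, which is the essential new phenomenon compared to the single-code case, and the statement of the lemma already reflects this by writing $I_{t_1,\underline{w}^1}$ and $I_{t_2,\underline{w}^2}$.)

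The second step is to apply $f$ and identify where the boundaries go. Since $\tilde{R}_r \subset H^i_j$, we have $f(\tilde{R}_r) \subset f(H^i_j) \subset R_k$, where $\rho_T(i,j) = (k,l)$; in particular $f$ maps $I_{t_1,\underline{w}^1}$ into the stable boundary of $R_k$ and likewise for $I_{t_2,\underline{w}^2}$. By Item (ii) of Lemma \ref{Lemm: imge of boundaries,cW}, $f(I_{t,\underline{w}^q}) \subset I_{t+1,\underline{w}^q}$, so $f(I_{t_1,\underline{w}^1}) \subset I_{t_1+1,\underline{w}^1}$ and $f(I_{t_2,\underline{w}^2}) \subset I_{t_2+1,\underline{w}^2}$. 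Since these image intervals lie in $R_k$, we conclude $w^1_{t_1+1} = w^2_{t_2+1} = k$, which establishes the first claimed equality in the lemma. Then the horizontal sub-rectangles of $\cR_{S(\cW)}$ contained in $\tilde{R}_r \cap H^i_j$ are precisely the preimages under $f|_{H^i_j}$ of the rectangles of $\cR_{S(\cW)}$ contained in $R_k$ that lie (vertically) between $I_{t_1+1,\underline{w}^1}$ and $I_{t_2+1,\underline{w}^1}$; the count of these is $|\underline{s}(t_1+1,\underline{w}^1) - \underline{s}(t_2+1,\underline{w}^1)| = |s^- - s^+|$, using the order function $\underline{s}$ from Definition \ref{Lemm: Boundaries code,CW} to convert "between" into a difference of indices. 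This gives Equation \ref{Equa: h (i,j,r) caso I, cW}.

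The only genuine subtlety — and the step I expect to require the most care rather than the most ingenuity — is justifying that $\tilde{R}_r \cap H^i_j$, viewed through $f$, really does consist of a clean vertical stack of $|s^- - s^+|$ sub-rectangles corresponding bijectively to the rectangles of $\cR_{S(\cW)}$ between those two image segments. This relies on the fact that $f(H^i_j) = V^k_l$ is a vertical sub-rectangle of $R_k$ crossing $R_k$ from bottom to top (so it meets every horizontal cut $I_{t,\underline{w}^q} \subset R_k$ transversally in exactly one stable interval), together with Corollary \ref{Coro: vertical possition is delimitate,cW}, which pins down the vertical-position index $l$ and guarantees no degeneracy. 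One also needs that $f$ restricted to $H^i_j$ is a homeomorphism onto $V^k_l$ respecting the foliations, so the preimage of each such horizontal cut is a single stable interval inside $H^i_j$, and these cuts slice $\tilde{R}_r \cap H^i_j$ into exactly $|s^- - s^+|$ pieces. Since all of these facts are already available from the earlier development, the proof is a matter of assembling them in the right order; I would present it by closely mirroring the wording of Lemma \ref{Lemm: number horizontal sub case II} with the code labels adjusted.
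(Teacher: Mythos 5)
Your proof is correct and follows essentially the same route as the paper: the paper gives no standalone argument here, only a remark that once $s^-$ and $s^+$ are taken with respect to the (possibly distinct) codes $\underline{w}^1$ and $\underline{w}^2$, the proof of the single-code case (Lemma \ref{Lemm: number horizontal sub case II}) carries over verbatim, which is exactly what you do — including your correct reading $w^1_{t_1+1}=w^2_{t_2+1}=k$ and the observation that the two stable boundaries of $\tilde{R}_r$ may come from different codes (the statement's $\underline{s}(t_2+1,\underline{w}^1)$ is evidently a typo for $\underline{s}(t_2+1,\underline{w}^2)$, as the paper's own remark confirms).
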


\begin{defi}\label{Defi: order rectangles case I, cW}
	The horizontal sub-rectangles of $\tilde{R}_r$ as described in Lemma \ref{Lemm: number horizontal sub case I, cW} are labeled from the bottom to the top with respect to the orientation of $\tilde{R}_r$ as:
	$$
	\{\tilde{H}^r_{j,J}\}_{J=1}^{\underline{h}(r,j)}.
	$$
\end{defi}

\begin{lemm} \label{Lemm: Permutation rho caso I, cW}
	With the hypothesis of Lemma \ref{Lemm: number horizontal sub case I, cW}, assume that $\Phi_T(i,j)=(k,l,\epsilon)$ and $f(\tilde{H}^r_{j,J})=\tilde{V}^{r'}_{l'}$. Then:
	\begin{itemize}
		\item[i)] 	If $\epsilon=\epsilon(i,j) = 1$, then $s^- < s^+$, $r'=\tilde{r}(k, s^- + J)$ and $l=l'$.
	\item[ii)] If $\epsilon=\epsilon(i,j) = -1$, then $s^- > s^+$, $r'=\tilde{r}(k, s^- - J+1)$ and $l=l'$.
	\end{itemize}
\end{lemm}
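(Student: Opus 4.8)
\textbf{Proof proposal for Lemma \ref{Lemm: Permutation rho caso I, cW}.}

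The plan is to transfer the argument of Lemma \ref{Lemm: Permutation rho caso II} (the single-code case, where this configuration $\tilde{R}_r \subset H^i_j\setminus \partial^s H^i_j$ was also the ``first case'') almost verbatim, the only adjustments being that the two boundary intervals of $\tilde{R}_r$ now come from possibly different codes $\underline{w}^1,\underline{w}^2\in\cW$ and the vertical order function $\underline{s}$ is the one attached to $\cO(i,\cW)$ from Definition \ref{Lemm: Boundaries code,CW}. First I would record the starting data: by Lemma \ref{Lemm: number horizontal sub case I, cW} we already know $w^1_{t_1+1}=w^2_{t_2+1}=k$, and by Lemma \ref{Lemm: imge of boundaries,cW} (item ii) the images of the two stable boundaries of $\tilde{R}_r$ are $f(I_{t_1,\underline{w}^1})\subset I_{t_1+1,\underline{w}^1}$ and $f(I_{t_2,\underline{w}^2})\subset I_{t_2+1,\underline{w}^2}$, which sit inside $R_k$ at vertical positions $s^- = \underline{s}(t_1+1,\underline{w}^1)$ and $s^+ = \underline{s}(t_2+1,\underline{w}^2)$ respectively (here I would flag that the statement as typed writes $s^+=\underline{s}(t_2+1,\underline{w}^1)$, which should read $\underline{w}^2$; this is a harmless typo to be corrected).

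Next I would split on the sign of $\epsilon(i,j)$. If $\epsilon(i,j)=1$, then $f$ restricted to $\tilde{R}_r$ preserves the vertical orientation, so the image of the lower boundary stays below the image of the upper boundary, forcing $s^-<s^+$; moreover the rectangle $\tilde{H}^r_{j,J}$, which occupies vertical position $J$ among the $\underline{h}(r,j)=s^+-s^-$ horizontal sub-rectangles of $\tilde{R}_r\cap H^i_j$, is mapped to the rectangle of $\cR_{S(\cW)}$ lying $J$ levels above the level-$s^-$ rectangle inside $R_k$, i.e. the one at position $s^-+J$; hence $r'=\tilde{r}(k,s^-+J)$. If $\epsilon(i,j)=-1$, then $f|_{\tilde{R}_r}$ reverses the vertical orientation, so $s^->s^+$, and $\tilde{H}^r_{j,J}$ is sent to the rectangle whose \emph{upper} boundary sits $J-1$ levels below the level-$s^-$ interval, i.e. the rectangle at position $s^--J+1$ inside $R_k$; hence $r'=\tilde{r}(k,s^--J+1)$. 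In both cases the equality $l=l'$ follows immediately from Corollary \ref{Coro: vertical possition is delimitate,cW}: since $\tilde{H}^r_{j,J}\subset H^i_j$ and $f(H^i_j)=V^k_l$, the image $f(\tilde{H}^r_{j,J})$ is a vertical sub-rectangle of $\cR_{S(\cW)}$ contained in $V^k_l$, so it must be $\tilde{V}^{r'}_l$.

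There is really no serious obstacle here; the content was already carried out in Lemma \ref{Lemm: Permutation rho caso II} and all the supporting facts (the images of boundaries, the bijection between vertical sub-rectangles of $\tilde R_r$ and of $R_i$, the constancy of the horizontal index $l$) have been re-established in the $\cW$-setting as Lemmas \ref{Lemm: imge of boundaries,cW}, \ref{Lemma: Vertical sub R-ps, cW}, and Corollary \ref{Coro: vertical possition is delimitate,cW}. The one point that warrants a line of care is checking that $\underline{h}(r,j)=|s^--s^+|$ and the monotonicity $s^-<s^+ \iff \epsilon(i,j)=1$ are genuinely consistent with the \emph{global} order $\underline{s}$ on $\cO(i,\cW)$ rather than an order internal to a single code: this is exactly what Lemma \ref{Lemm: Comparacion de ordenes, cW} guarantees, so invoking it closes the argument. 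After this lemma I would, in parallel with Definition \ref{Defi: permutation caso II}, package the outcome into the piecewise formula for $\underline{\rho}_{(r,j)}(r,J)$ for this case and proceed to the remaining two cases ($H^i_j\subset\tilde R_r$ and $H^i_j$ containing a single boundary of $\tilde R_r$), which run identically to their single-code counterparts.
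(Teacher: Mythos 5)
Your proposal is correct and coincides with the paper's treatment: the paper gives no separate proof here, only a remark stating that once $s^-$ and $s^+$ are read off (now from two possibly different codes via the global order $\underline{s}$ on $\cO(i,\cW)$), the argument of the single-code Lemma \ref{Lemm: Permutation rho caso II} applies verbatim, which is exactly what you carry out, including the use of Corollary \ref{Coro: vertical possition is delimitate,cW} for $l=l'$. Your extra remark about the typo $\underline{w}^1$ versus $\underline{w}^2$ in the definition of $s^+$ and the appeal to Lemma \ref{Lemm: Comparacion de ordenes, cW} for the coherence of the order are accurate but do not change the route.
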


\begin{rema}
	The only modification in the statement of Lemmas \ref{Lemm: number horizontal sub case I, cW} and \ref{Lemm: Permutation rho caso I, cW}  respect to such in Lemma \ref{Lemm: number horizontal sub case II} and \ref{Lemm: Permutation rho caso II} is that the numbers $s^-$ and $s^+$ in our previous section depend on the same code $\underline{w}$ in this case $s^-$ depends on $\underline{w}^1$ and $s^+$ depends on $\underline{w}^2$. But once we have fixed the values $s^-$ and $ s^+$ all the arguments in the proofs are the same.
\end{rema}

\begin{defi}\label{Defi: permutation caso I, cW}
	With the hypothesis of Lemma \ref{Lemm: number horizontal sub case I, cW}, assume that $\Phi_T(i,j)=(k,l,\epsilon)$. Then: 
	
	If $\epsilon(i,j) = 1$ we define:
	
	\begin{equation}\label{Equ: rho caso 1, +,cW}
\underline{\rho}_{(r,j)}(r,J)=(\tilde{r}(k, s^-+J), l  ).
	\end{equation}
	
	and if $\epsilon(i,j) = -1$:
	\begin{equation}\label{Equ: rho caso 1, -, cW}
	\underline{\rho}_{(r,j)}(r,J)=(\tilde{r}(k, s^+  -J+1) , l    ).
	\end{equation}
\end{defi}

\subsubsection{Second case: $H^i_{j}\subset \tilde{R}_r$ }

\begin{lemm}\label{Lemm: number horizontal sub case II, cW}
	
	Let $\tilde{R}_r\in \cR_{S(\cW)}$, with $r=\tilde{r}(i,s)$. Let $(i,j)\in \cH(r)$ be a pair of indices such that $H^i_j\subset \tilde{R}_r$, assume that $\rho_T(i,j)=(k,l)$, then the number of horizontal sub-rectangles of $\cR_{S(\cW)}$ that are contained in the intersection $\tilde{R}_r\cap H^i_j$ is given by the formula:
	
	\begin{equation}\label{Equa: h (i,j,r) caso I,cW}
	\underline{h}(r,j)=O(k,\cW)+1,
	\end{equation} 
		
\end{lemm}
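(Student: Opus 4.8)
\textbf{Proof proposal for Lemma \ref{Lemm: number horizontal sub case II, cW}.}

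The plan is to mimic exactly the argument used in Lemma \ref{Lemm: number horizontal sub case I} (the single-code case $H^i_j \subset \tilde R_r$), only replacing the set $\cO(k,\underline{w})$ by $\cO(k,\cW)$ and the quantity $O(k,\underline{w})$ by $O(k,\cW)$. First I would invoke the hypothesis $H^i_j \subset \tilde R_r$, which, together with $\rho_T(i,j)=(k,l)$, gives that $f(H^i_j)=V^k_l$ is a vertical sub-rectangle of $R_k\in\cR$ crossing $R_k$ from its lower to its upper boundary. Consequently $V^k_l$ intersects the interior of \emph{every} rectangle $\tilde R_{r'}\in\cR_{S(\cW)}$ that is contained in $R_k$, and no other rectangle of $\cR_{S(\cW)}$. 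By Lemma \ref{Lemm: Number  of rectangles in a rec of the s refinaent, cW}, the number of such rectangles is exactly $O(k,\cW)+1$.

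Next I would transport this count back to $\tilde R_r$ via $f^{-1}$. Since $f$ restricted to $\tilde R_r$ maps the interior of $\tilde R_r\cap H^i_j$ homeomorphically onto the interior of $V^k_l$, and since by Lemma \ref{Lemma: Vertical sub R-ps, cW} the rectangles $\tilde V^{r'}_l = \overline{\overset{o}{\tilde R_{r'}}\cap \overset{o}{V^k_l}}$ (for $r'$ ranging over the rectangles of $\cR_{S(\cW)}$ inside $R_k$) are precisely the vertical sub-rectangles of $(f,\cR_{S(\cW)})$ contained in $V^k_l$, each such $\tilde V^{r'}_l$ pulls back under $f^{-1}$ to a horizontal sub-rectangle of $(f,\cR_{S(\cW)})$ contained in $\tilde R_r\cap H^i_j$. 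Conversely, any horizontal sub-rectangle $\tilde H$ of $\cR_{S(\cW)}$ inside $\tilde R_r\cap H^i_j$ has $f(\tilde H)$ a vertical sub-rectangle of $\cR_{S(\cW)}$ contained in $V^k_l$, again by Lemma \ref{Lemma: Vertical sub R-ps, cW} together with the fact that the stable boundaries of $\tilde H$ map into $\partial^s\cR_{S(\cW)}$. This establishes a bijection between the horizontal sub-rectangles of $\cR_{S(\cW)}$ inside $\tilde R_r\cap H^i_j$ and the rectangles of $\cR_{S(\cW)}$ inside $R_k$, whence $\underline{h}(r,j)=O(k,\cW)+1$.

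I do not expect any serious obstacle here: the lemma is the verbatim generalization of Lemma \ref{Lemm: number horizontal sub case I} from one periodic code to a finite family, and every auxiliary fact it needs (the characterization of vertical sub-rectangles of $\cR_{S(\cW)}$, the count $O(k,\cW)+1$ of rectangles inside $R_k$, and the $f$-equivariance of the stable boundary) has already been proven in the $\cW$-setting as Lemmas \ref{Lemma: Vertical sub R-ps, cW} and \ref{Lemm: Number  of rectangles in a rec of the s refinaent, cW} and Corollary \ref{Coro: vertical possition is delimitate,cW}. The one point that deserves a sentence of care is why the interior of $\tilde R_r\cap H^i_j$ meets the interior of \emph{every} rectangle of $\cR_{S(\cW)}$ inside $R_k$ and not merely some of them — this is because $V^k_l$ joins the two horizontal boundary components of $R_k$, so its image stretches across all the horizontal slabs cut out by $\cI(k,\cW)$; this is precisely the content already used in the single-code Lemma \ref{Lemm: number horizontal sub case I}.
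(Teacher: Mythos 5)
Your proof is correct and follows essentially the same route as the paper: the paper's own argument is just the compressed observation that $f(\tilde R_r\cap H^i_j)=V^k_l$ crosses $R_k$ vertically and therefore meets all $O(k,\cW)+1$ rectangles of $\cR_{S(\cW)}$ inside $R_k$, which is exactly your count, with your explicit $f^{-1}$-bijection being the detail the paper leaves implicit. No gaps.
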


\begin{proof}
The image of $f^(H^i_j\cap \tilde{R}_r)=f(H^i_j)=V^k_l$ and $V^k_l$ intersect all the rectangles of $\cR_{S(\cW)}$ that are contained in $R_k$, the total amount of them is $O(k,\cW)+1,$ and this is the number of horizontal sub-rectangles of $\cR_{S(\cW)}$ that are contained in $\tilde{R}_r\cap H^i_j$.
\end{proof}

\begin{defi}\label{Defi: order rectangles case II,cW}
	The horizontal sub-rectangles of $\tilde{R}_r$ described in Lemma \ref{Lemm: number horizontal sub case II, cW}  are labeled from the bottom to the top with respect to the orientation of $\tilde{R}_r$ as:
	$$
	\{\tilde{H}^r_{j,J}\}_{J=1}^{\underline{h}(r,j)}.
	$$
\end{defi}

Now we can determine the image of any horizontal sub-rectangle.

\begin{lemm}\label{Lemm: Permutation rho caso I,cW}
	Let $\tilde{R}_r\in \cR_{S(\cW)}$, with $r=\tilde{r}(i,s)$. Let $(i,j)\in \cH(r)$ be a pair of indices such that $H^i_j\subset \tilde{R}_r$, and let $\tilde{H}^r_(j,J)$ be  an horizontal sub-rectangle of $\tilde{R}_r\cap H^i_j$. Assume that $\rho_T(i,j)=(k,l)$,  and that $f(\tilde{H}^r_{j,J})=\tilde{V}^{r'}_{l'}$, then:	
	\begin{itemize}
		\item[i)] If $\epsilon(i,j)=1$, then $r'=\tilde{r}(k,J)$ and $l=l'$.
		
		\item[ii)]  If  $\epsilon_T(i,j)=-1$, then $r'=\tilde{r}(k,O(k,\cW)+1 -J+1)$ and $l=l'$.
	\end{itemize}
	
\end{lemm}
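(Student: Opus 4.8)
\textbf{Proof proposal for Lemma \ref{Lemm: Permutation rho caso I,cW}.}

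The plan is to recycle almost verbatim the proof of Lemma \ref{Lemm: Permutation rho caso I} (the single-code case), since the statement here is its direct analogue for the family $\cW$, with $O(k,\underline{w})$ replaced by $O(k,\cW)$. The essential point is that the combinatorics of how $f$ carries a horizontal sub-rectangle $\tilde H^r_{j,J}$ onto a vertical sub-rectangle depends only on (a) the position of $\tilde H^r_{j,J}$ \emph{relative to} the rectangle $R_i$ in which it sits (via the index $j$ and the vertical order inside $\tilde R_r \cap H^i_j$), and (b) the sign $\epsilon_T(i,j)$ recorded by $T$; neither of these changes when we cut along a whole family of stable intervals rather than a single one.

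First I would set up notation: since $H^i_j \subset \tilde R_r$, Lemma \ref{Lemma: Vertical sub R-ps, cW} guarantees that $f(\tilde H^r_{j,J})$ is one of the vertical sub-rectangles $\tilde V^{r'}_{l'}$ with $\tilde R_{r'}\subset R_k$, and Corollary \ref{Coro: vertical possition is delimitate,cW} forces $l'=l$ where $\rho_T(i,j)=(k,l)$; this disposes of the ``$l=l'$'' claim in both cases at once. It remains to identify $r'=\tilde r(k,s')$, i.e.\ to find the vertical position $s'$ of $\tilde R_{r'}$ inside $R_k$. Here I would argue exactly as in Lemma \ref{Lemm: Permutation rho caso I}: by Lemma \ref{Lemm: Number  of rectangles in a rec of the s refinaent, cW} the rectangle $R_k$ contains $O(k,\cW)+1$ rectangles of $\cR_{S(\cW)}$ stacked in vertical order, and $f|_{H^i_j}$ maps the stack of horizontal sub-rectangles of $\tilde R_r\cap H^i_j$ bijectively onto this stack, preserving the order when $\epsilon_T(i,j)=1$ and reversing it when $\epsilon_T(i,j)=-1$ (this is the content of $\epsilon_T$ in the geometric type, combined with Lemma \ref{Lemm: imge of boundaries,cW} which pins down that the lower/upper boundary of $H^i_j$ is carried to the lower/upper boundary of $R_k$ according to the sign).

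Concretely: if $\epsilon_T(i,j)=1$, then the sub-rectangle in position $J$ inside $\tilde R_r\cap H^i_j$ is sent to the sub-rectangle in position $J$ inside $R_k$, so $r'=\tilde r(k,J)$. If $\epsilon_T(i,j)=-1$, the order is reversed, and the sub-rectangle in position $J$ is sent to the one in position $J-1$ counted downward from the top, i.e.\ position $O(k,\cW)+1-(J-1)=O(k,\cW)+1-J+1$ counted from the bottom, so $r'=\tilde r(k, O(k,\cW)+1-J+1)$. I do not anticipate a genuine obstacle: the only thing that needs care is bookkeeping the difference between ``counted from the bottom'' and ``counted from the top'' in the orientation-reversing case — the same indexing subtlety already handled in Lemma \ref{Lemm: Permutation rho caso I} — and checking that the hypothesis $H^i_j\subset\tilde R_r$ (so that $\tilde R_r\cap H^i_j = H^i_j$ meets \emph{all} of $V^k_l$, hence all $O(k,\cW)+1$ rectangles of $\cR_{S(\cW)}$ inside $R_k$) is what makes the count exactly $O(k,\cW)+1$ and not something smaller. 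Once this is in place, Definition \ref{Defi: permutation caso I, cW} records the formula and the proof is complete.
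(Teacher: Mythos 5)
Your proposal is correct and follows essentially the same route as the paper, which itself gives no separate proof here but defers to the single-code case (Lemma \ref{Lemm: Permutation rho caso I}): the image is a vertical sub-rectangle of some $\tilde{R}_{r'}\subset R_k$ with $l'=l$ by Corollary \ref{Coro: vertical possition is delimitate,cW}, and the position $s'$ is read off from the fact that $f|_{H^i_j}$ carries the stack of $O(k,\cW)+1$ sub-rectangles onto the stack inside $R_k$, preserving or reversing the vertical order according to $\epsilon_T(i,j)$. Your bookkeeping of the reversed case, $O(k,\cW)+1-(J-1)$ counted from the bottom, matches the paper's formula exactly.
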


\begin{defi}\label{Defi: permutation caso II, cW}
	Let $\tilde{R}_r\in \cR_{S(\cW)}$, with $r=\tilde{r}(i,s)$. Let $(i,j)\in \cH(r)$ be a pair of indices such that $H^i_j\subset \tilde{R}_r$, and let $\tilde{H}^r_{j,J}$ be  an horizontal sub-rectangle of $\tilde{R}_r\cap H^i_j$. Assume that $\rho_T(i,j)=(k,l)$, then:  
	
	If  $\epsilon(i,j)=1$ we define:
	\begin{equation}\label{Equ: rho caso 2, +,cW}
\underline{\rho}_{(r,j)}(r,J)=(\tilde{r}(k,J),l),
	\end{equation}
	
	and  if $\epsilon(i,j)=-1$ 
	
	\begin{equation} \label{Equ: rho caso 2, -, cW}
\underline{\rho}_{(r,j)}(r,J)=(\tilde{r}(k,O(k,\cW)+1 -J+1),l).
	\end{equation}
\end{defi}

\subsubsection{Third case: $H^i_{j}$ contains only one horizontal boundary component of $\tilde{R}_r$ }

\begin{lemm}\label{Lemm: number horizontal sub case III, upper, cW}
	
	Consider $\tilde{R}_r\in \cR_{S(\cW)}$, where $r=\tilde{r}(i,s)$, and assume that $O(i,\cW)>0$. 	Take a pair of indexes $(i,j)\in \cH(r)$ such that $H^i_j$ exclusively contains the upper boundary of $\tilde{R}_r$. Let $I_{t,\underline{w}^1}$ (with $\underline{w}^1\in \cW$) be the upper boundary of $\tilde{R}_r$. Let $s^+=\underline{s}(t+1,\underline{w}^1)$ and assume that $\rho_T(i,j)=(k,l)$. Then, the number of horizontal sub-rectangles of  $\cR_{S(\cW)}$ that are contained in  $\tilde{R}_r\cap H^i_j$ can be computed using one of the following formulas:
	
	\begin{itemize}
		\item If $\epsilon_T(i,j)=1$:
		
		\begin{equation}\label{Equ: h(i,j,r), caso 3, +, up,cW}
		\underline{h}(r,j)=s^+
		\end{equation} 
		
		\item If $\epsilon_T(i,j)=-1$
		\begin{equation}\label{Equ: h(i,j,r), caso 3, -,up,cW}
			\underline{h}(r,j)=O(k,\cW)+1-s^+ .
		\end{equation} 
	\end{itemize}

\end{lemm}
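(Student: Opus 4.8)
\textbf{Proof plan for Lemma \ref{Lemm: number horizontal sub case III, upper, cW}.}

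The plan is to reduce this statement entirely to the single-code case already established in Lemma \ref{Lemm: number horizontal sub case III, upper}. Observe that the quantity being counted — the number of horizontal sub-rectangles of $\cR_{S(\cW)}$ inside the intersection $\tilde{R}_r\cap H^i_j$ — is a purely local count in the rectangle $R_k$, where $k$ is given by $\rho_T(i,j)=(k,l)$. The crucial point is that the image under $f$ of $\tilde{R}_r\cap H^i_j$ is a vertical sub-rectangle of $R_k$ whose horizontal boundaries land in specific stable intervals of $R_k$: one boundary is forced (it comes from the full horizontal boundary of $H^i_j$), and the other is the stable interval $I_{t+1,\underline{w}^1}$, whose relative position inside $R_k$ among the stable intervals of $\cI(k,\cW)$ is $s^+=\underline{s}(t+1,\underline{w}^1)$. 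So the count is just the number of rectangles of $\cR_{S(\cW)}$ contained in $R_k$ that lie between these two boundaries, and this is read off directly from $s^+$ and $O(k,\cW)$.

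First I would invoke Lemma \ref{Lemm: imge of boundaries,cW} to pin down where the two horizontal boundary components of $\tilde{R}_r\cap H^i_j$ go: since $H^i_j$ contains only the upper boundary of $\tilde{R}_r$, the lower boundary of $\tilde{R}_r\cap H^i_j$ coincides with the lower boundary of $H^i_j$, hence its image is a boundary component of $R_k$; the upper boundary of $\tilde{R}_r\cap H^i_j$ is $I_{t,\underline{w}^1}$, whose image lands in $I_{t+1,\underline{w}^1}$ by item (ii) of that lemma. Then I would split according to the sign of $\epsilon_T(i,j)$, exactly as in the single-code proof. If $\epsilon_T(i,j)=1$, the vertical orientation is preserved, so the image of the lower boundary of $H^i_j$ lands in the lower boundary $I_{k,-1}$ of $R_k$, and the rectangles of $\cR_{S(\cW)}$ inside $R_k$ between $I_{k,-1}$ and $I_{t+1,\underline{w}^1}$ number exactly $s^+$ by the definition of the order function $\underline{s}$ in Definition \ref{Lemm: Boundaries code,CW}. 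If $\epsilon_T(i,j)=-1$, the orientation is reversed, the image of the lower boundary of $H^i_j$ lands in the upper boundary $I_{k,+1}$ of $R_k$, and the count of rectangles between $I_{t+1,\underline{w}^1}$ and $I_{k,+1}$ is $O(k,\cW)+1-s^+$, again by the normalization $\underline{s}(I_{k,+1})=O(k,\cW)+1$.

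Since Definition \ref{Lemm: Boundaries code,CW} and the comparison Lemma \ref{Lemm: Comparacion de ordenes, cW} already guarantee that $\underline{s}$ is a well-defined total order and that the image intervals $I_{t+1,\underline{w}^1}$ occupy the asserted positions, the argument is essentially formal; the only thing that needs genuine care is confirming that no interval of $\cI(k,\cW)$ other than the $s^+$ (resp. $O(k,\cW)+1-s^+$) claimed ones can intervene between the two boundary components of $f(\tilde{R}_r\cap H^i_j)$ — but this follows because $f(\tilde{R}_r\cap H^i_j)$ is a vertical sub-rectangle of $R_k$, hence its interior meets a stable interval of $\cI(k,\cW)$ exactly when that interval lies (in the vertical order of $R_k$) strictly between its two horizontal boundaries. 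The main (minor) obstacle is bookkeeping the two orientation cases together with the reversal of the order induced on $\cI(k,\cW)$ when $\epsilon_T(i,j)=-1$; this is why the statement records the position relative to $I_{k,-1}$ in one case and relative to $I_{k,+1}$ in the other, and I would simply track this carefully using the conventions fixed in Definitions \ref{Defi: induced and glin for realization} and \ref{Lemm: Boundaries code,CW} rather than re-deriving them. The companion statement for the lower boundary (Lemma \ref{Lemm: number horizontal sub case III, lower}, generalized) is proved by the symmetric argument with the roles of $I_{k,-1}$ and $I_{k,+1}$ interchanged.
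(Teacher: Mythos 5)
Your proposal is correct and follows essentially the same route as the paper: the paper's proof simply transplants the single-code argument of Lemma \ref{Lemm: number horizontal sub case III, upper} (image of the upper boundary lands in $I_{t+1,\underline{w}^1}$ at position $s^+$, image of the lower boundary of $H^i_j$ lands in $I_{k,-1}$ or $I_{k,+1}$ according to $\epsilon_T(i,j)$, then count positions via $\underline{s}$), with the only change being that $\underline{s}$ and $O(k,\cW)$ now account for the whole family $\cW$ rather than a single code. You merely make explicit the bookkeeping that the paper handles by reference, so there is nothing to correct.
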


\begin{proof}
	In fact the proof is exactly the same as  Lemma \ref{Lemm: number horizontal sub case III, upper} the only difference is that function $\underline{s}$ now take in count all the rectangles determined by the family $\cW$, when previously only depended on a single code $\underline{w}$.
	
\end{proof}

\begin{lemm}\label{Lemm: number horizontal sub case III, lower, cW}
	
	Consider $\tilde{R}_r\in \cR_{S(\cW)}$, where $r=\tilde{r}(i,s)$, and assume that $O(i,\cW)>0$.	Take a pair of indexes $(i,j)\in \cH(r)$ such that  $H^i_j$ just contains the lower boundary of $\tilde{R}_r$. Let $I_{t,\underline{w}^1}$ (with $\underline{w}^1\in \cW$ ) be the lower boundary of $\tilde{R}_r$, and denote $s^+=\underline{s}(t+1,\underline{w}^1)$. In this scenario, the number of horizontal sub-rectangles of  $\cR_{S(\cW)}$ that are contained in  $\tilde{R}_r\cap H^i_j$ can be determined using one of the following formulas:
	
	\begin{itemize}
		\item If $\epsilon_T(i,j)=1$:
		
		\begin{equation}\label{Equ: h(i,j,r), caso 3, 1, low cW}
	\underline{h}(r,j)=O(k,\cW)+1-s^+
		\end{equation} 
		
		\item If $\epsilon_T(i,j)=-1$
		\begin{equation}\label{Equ: h(i,j,r), caso 3, -1, low cW}
		\underline{h}(r,j)= s^+ .
		\end{equation} 
	\end{itemize}
\end{lemm}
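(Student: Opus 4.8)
\textbf{Proof proposal for Lemma \ref{Lemm: number horizontal sub case III, lower, cW}.}

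The plan is to mimic exactly the strategy used for the single-code case, Lemma \ref{Lemm: number horizontal sub case III, lower}, since the only genuinely new feature here is that the ordering function $\underline{s}$ now ranges over $\cO(i,\cW)$ rather than over a single $\cO(i,\underline{w})$; once a stable interval and its position are fixed, all the dynamical bookkeeping is identical. First I would set up the geometry: by Proposition \ref{Prop: determination case of ch(r) inside R-r, cW} the hypothesis that $H^i_j$ contains only the lower boundary of $\tilde R_r$ means $H^i_j$ is the bottom rectangle of $\cH(r)$, so its lower boundary coincides with the lower boundary of $H^i_j$ itself, while its upper boundary is an interior stable segment of $H^i_j$. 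Thus $\tilde R_r\cap H^i_j$ has its lower boundary equal to $\partial^s_{-1}H^i_j$ and its upper boundary equal to $I_{t,\underline{w}^1}$, where $I_{t,\underline{w}^1}$ is precisely the lower stable boundary of $\tilde R_r$.

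Next I would push this configuration forward by $f$. Since $\rho_T(i,j)=(k,l)$, we have $f(H^i_j)=V^k_l\subset R_k$, and the horizontal sub-rectangles of $\cR_{S(\cW)}$ inside $\tilde R_r\cap H^i_j$ are the connected components of this intersection cut by the orbit of $\cW$; equivalently, they correspond bijectively to the rectangles of $\cR_{S(\cW)}$ contained in $R_k$ that lie between the images of the two horizontal boundaries of $\tilde R_r\cap H^i_j$. By Lemma \ref{Lemm: image of stable intervals} (in the form used in Lemma \ref{Lemm: imge of boundaries,cW}), $f(I_{t,\underline{w}^1})\subset I_{t+1,\underline{w}^1}$, which sits at position $s^+=\underline{s}(t+1,\underline{w}^1)$ among the stable intervals of $R_k$. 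The image of $\partial^s_{-1}H^i_j$ is a stable boundary component of $R_k$: by Lemma \ref{Lemm: imge of boundaries,cW}, if $\epsilon_T(i,j)=1$ it is $\partial^s_{-1}R_k=I_{k,-1}$ (position $0$), and if $\epsilon_T(i,j)=-1$ it is $\partial^s_{+1}R_k=I_{k,+1}$ (position $O(k,\cW)+1$), because reversing the vertical orientation swaps the lower boundary of $\tilde R_r\cap H^i_j$ to the upper side of $R_k$. Counting the rectangles of $\cR_{S(\cW)}$ in $R_k$ strictly between these two positions then gives $|s^+-0|=O(k,\cW)+1-s^+$ when $\epsilon_T(i,j)=1$ and $|(O(k,\cW)+1)-s^+|=s^+$ when $\epsilon_T(i,j)=-1$, which is exactly Equations \ref{Equ: h(i,j,r), caso 3, 1, low cW} and \ref{Equ: h(i,j,r), caso 3, -1, low cW}; note the two cases are swapped relative to the upper-boundary Lemma \ref{Lemm: number horizontal sub case III, upper, cW}, precisely because here it is the lower boundary of $\tilde R_r$ that is the interior segment. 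Since $\cR_{S(\cW)}$ is a Markov partition (Lemma \ref{Lemm: Markov partition cW }), each such rectangle between the two positions is genuinely a horizontal sub-rectangle of $\tilde R_r$, so the count is sharp.

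I do not anticipate a serious obstacle: this is a routine symmetric variant of an argument already carried out in detail. The only point requiring a little care is the orientation-reversal case, where one must correctly identify that $f$ maps the lower boundary of $\tilde R_r\cap H^i_j$ to the \emph{upper} boundary of $R_k$ and hence the relevant reference position is $O(k,\cW)+1$ rather than $0$; keeping track of which of $s^+$ and $O(k,\cW)+1-s^+$ corresponds to which sign of $\epsilon_T(i,j)$ is the whole content of the statement. Everything else — the bijection between cutting segments and rectangles of $\cR_{S(\cW)}$ in $R_k$, the invariance of the stable boundary, the fact that the orbit of $\cW$ does not touch the interior of $H^i_j$ in any other way — is inherited verbatim from the preceding lemmas, with $\cO(i,\underline{w})$ everywhere replaced by $\cO(i,\cW)$.
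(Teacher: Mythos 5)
Your overall strategy is the right one and is the same as the paper's (push $\tilde{R}_r\cap H^i_j$ forward by $f$, locate the images of its two stable boundaries among the cut intervals of $R_k$, and count the rectangles of $\cR_{S(\cW)}$ between them), but the geometric setup is flipped and this breaks the argument. Since $I_{t,\underline{w}^1}$ is the \emph{lower} boundary of $\tilde{R}_r$ and, by Corollary \ref{Coro: projetion stable segments}, lies in the interior of $H^i_j$, the rectangle $\tilde{R}_r$ sits \emph{above} this segment; hence $\tilde{R}_r\cap H^i_j$ is bounded below by $I_{t,\underline{w}^1}$ and above by $\partial^s_{+1}H^i_j$. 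You assert the opposite (lower boundary $\partial^s_{-1}H^i_j$, upper boundary $I_{t,\underline{w}^1}$), which is the configuration of the \emph{upper}-boundary case, i.e. of the strip belonging to the rectangle of $\cR_{S(\cW)}$ just below $\tilde{R}_r$. Carried out correctly, your setup would give the counts $s^+$ for $\epsilon_T(i,j)=1$ and $O(k,\cW)+1-s^+$ for $\epsilon_T(i,j)=-1$ — the formulas of Lemma \ref{Lemm: number horizontal sub case III, upper, cW}, not of the statement you are proving. You only land on the stated formulas through the identities $|s^+-0|=O(k,\cW)+1-s^+$ and $|(O(k,\cW)+1)-s^+|=s^+$, which are false; two mistakes are cancelling each other, so the proof as written is not valid.

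The repair is short: with the correct boundaries, $f(I_{t,\underline{w}^1})\subset I_{t+1,\underline{w}^1}$ sits at position $s^+=\underline{s}(t+1,\underline{w}^1)$ in $R_k$, while $f(\partial^s_{+1}H^i_j)$, being the image of the upper boundary of $H^i_j$, lies in $I_{k,+1}$ (position $O(k,\cW)+1$) when $\epsilon_T(i,j)=1$ and in $I_{k,-1}$ (position $0$) when $\epsilon_T(i,j)=-1$. Counting the rectangles of $\cR_{S(\cW)}$ contained in $R_k$ between these two positions yields $O(k,\cW)+1-s^+$ in the first case and $s^+$ in the second, which are exactly Equations \ref{Equ: h(i,j,r), caso 3, 1, low cW} and \ref{Equ: h(i,j,r), caso 3, -1, low cW}; the rest of your reduction to the single-code argument (replacing $\cO(i,\underline{w})$ by $\cO(i,\cW)$ and using Lemmas \ref{Lemm: image of stable intervals} and \ref{Lemm: Markov partition cW }) is fine and matches the paper's intent.
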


\begin{defi}\label{Defi: order rectangles case III,cW}
	The horizontal sub-rectangles of $\tilde{R}_r$ described in Lemma \ref{Lemm: number horizontal sub case III, upper, cW}  and Lemma \ref{Lemm: number horizontal sub case III, lower, cW}  are labeled from the bottom to the top with the vertical orientation of $\tilde{R}_r$ as:
	$$
	\{\tilde{H}^r_{j,J}\}_{J=1}^{\underline{h}(r,j)}.
	$$
\end{defi}

%%%%%%%%%%%%%%%%%%

\begin{lemm} \label{Lemm: Permutation rho caso III, upper cW}
	With the hypothesis of Lemma \ref{Lemm: number horizontal sub case III, upper, cW}, suppose that  $H^i_j$  just contains the upper boundary of $\tilde{R}_r$ and assume that $\Phi_T(i,j)=(k,l,\epsilon_T(i,j))$ and $f(\tilde{H}^r_{j,J})=\tilde{V}^{r'}_{l'}$.	Then we have the following situations: 
	\begin{itemize}
		\item[i)] 	If $\epsilon_T(i,j)=1$, then $r'=\tilde{r}(k,  J)$ and $l=l'$

		\item[ii)] If $\epsilon_T(i,j)=-1$, then $r'=\tilde{r}(k, O(k,\cW)+1-J+1)$ and $l=l'$.
	\end{itemize}	
\end{lemm}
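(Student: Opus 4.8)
\textbf{Proof plan for Lemma \ref{Lemm: Permutation rho caso III, upper cW}.}

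The plan is to transcribe the argument used for Lemma \ref{Lemm: Permutation rho caso, up III} almost verbatim, keeping track only of the fact that the function $\underline{s}$ and the quantities $O(k,\cW)+1$ now refer to the family $\cW$ of periodic codes rather than a single code $\underline{w}$. The geometric situation is identical: $\tilde{R}_r$ is a rectangle of $\cR_{S(\cW)}$ with $r=\tilde{r}(i,s)$, and $H^i_j$ is a horizontal sub-rectangle of $R_i\in\cR$ that meets $\tilde{R}_r$ in a horizontal strip whose upper boundary is the upper boundary of $\tilde{R}_r$ — that is, the segment $I_{t,\underline{w}^1}$ — while its lower boundary coincides with the lower boundary of $H^i_j$. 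The number of horizontal sub-rectangles of $\cR_{S(\cW)}$ contained in this strip was computed in Lemma \ref{Lemm: number horizontal sub case III, upper, cW}, and they were labelled from bottom to top in Definition \ref{Defi: order rectangles case III,cW} as $\{\tilde{H}^r_{j,J}\}_{J=1}^{\underline{h}(r,j)}$.

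First I would fix the data $\Phi_T(i,j)=(k,l,\epsilon_T(i,j))$, so that $f(H^i_j)=V^k_l$ and $f$ sends the interior of $\tilde{R}_r\cap H^i_j$ into $R_k$, preserving or reversing the vertical orientation according to $\epsilon_T(i,j)$. The image $f(\tilde{H}^r_{j,J})$ is a vertical sub-rectangle of $\cR_{S(\cW)}$; by Corollary \ref{Coro: vertical possition is delimitate,cW} its horizontal index equals $l$, so it only remains to identify the rectangle $\tilde{R}_{r'}$ of $\cR_{S(\cW)}$ inside which it lies. This rectangle is determined entirely by its vertical position inside $R_k$, which I read off from the position of $f(\tilde{H}^r_{j,J})$ between the stable segments of $\cup\cI(k,\cW)$ using Lemma \ref{Lemm: imge of boundaries,cW}. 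Concretely: when $\epsilon_T(i,j)=1$, $f$ preserves the vertical order, the lower boundary of $H^i_j$ maps into the lower boundary of $R_k$ (hence to the rectangle of $\cR_{S(\cW)}$ in position $1$), so $\tilde{H}^r_{j,J}$, which sits in position $J$ from the bottom of $\tilde{R}_r\cap H^i_j$, maps into the rectangle in position $J$ of $R_k$; that is, $r'=\tilde{r}(k,J)$. When $\epsilon_T(i,j)=-1$, $f$ reverses the vertical order, so the lower boundary of $H^i_j$ now maps into the upper boundary of $R_k$ (position $O(k,\cW)+1$), and counting downward from the top, $\tilde{H}^r_{j,J}$ maps into the rectangle in position $O(k,\cW)+1-(J-1)=O(k,\cW)+1-J+1$; that is, $r'=\tilde{r}(k,O(k,\cW)+1-J+1)$. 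In both cases $l'=l$, which closes the argument.

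I do not anticipate any genuine obstacle here: the statement is the exact analogue of Lemma \ref{Lemm: Permutation rho caso, up III}, and the only difference is notational, namely that $\underline{s}$, $O(k,\cW)$ and the enumeration of rectangles of $\cR_{S(\cW)}$ inside $R_k$ replace their single-code counterparts. The one point that requires a moment of care is making sure that the vertical ordering of the rectangles of $\cR_{S(\cW)}$ inside $R_k$ is still coherent with the vertical orientation of $R_k$ (so that ``position $J$ from the bottom'' is unambiguous) — this is exactly the content of Definition \ref{Defi: Orientation in R-S(cW)} together with Remark \ref{Rema: Order of vertical coherent with R_i} and Lemma \ref{Lemm: Comparacion de ordenes, cW}, which I would invoke to justify that the relative position computed via $f$ agrees with the label $\tilde{r}(k,\cdot)$. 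After that, the formulas for $r'$ follow immediately, and $l'=l$ is Corollary \ref{Coro: vertical possition is delimitate,cW}.
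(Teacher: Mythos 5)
Your argument is correct and is essentially the paper's own approach: the paper states this family-of-codes lemma without a separate proof, deferring to the single-code case (Lemma \ref{Lemm: Permutation rho caso, up III}), and your transcription — tracking the image of the lower boundary of $H^i_j$ to the bottom (resp.\ top) of $R_k$ according to the sign of $\epsilon_T(i,j)$, reading off the vertical position inside $R_k$, and invoking Corollary \ref{Coro: vertical possition is delimitate,cW} for $l'=l$ — is exactly that argument adapted to the family $\cW$. In fact your item $i)$ is stated more cleanly than the paper's single-code proof text, which momentarily slips into the formula $\tilde{r}(k,s^{+}+J)$ that belongs to the lower-boundary case.
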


\begin{lemm}\label{Lemm: Permutation rho caso III, lower cW}
With the hypothesis of Lemma \ref{Lemm: number horizontal sub case III, lower, cW}, suppose that  $H^i_j$  just contains the lower boundary of $\tilde{R}_r$ and assume that $\Phi_T(i,j)=(k,l,\epsilon_T(i,j))$ and $f(\tilde{H}^r_{j,J})=\tilde{V}^{r'}_{l'}$.	Then we have the following situations: 

\begin{itemize}
	\item[i)] 	If $\epsilon_T(i,j)=1$, then $r'=\tilde{r}(k, s^+ + J)$ and $l=l'$

	\item[ii)] If $\epsilon_T(i,j)=-1$, then $r'=\tilde{r}(k, s^+-J+1)$ and $l=l'$.
\end{itemize} 	
\end{lemm}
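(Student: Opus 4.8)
\textbf{Proof proposal for Lemma \ref{Lemm: Permutation rho caso III, lower cW}.}

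The plan is to mirror, with the obvious sign and index bookkeeping, the proof of Lemma \ref{Lemm: Permutation rho caso, low III} from the single-code setting, since the combinatorial picture is identical once the orbit-cutting data $\cO(i,\cW)$ has been fixed. We are in the situation where $\tilde{R}_r \subset R_i$ with $r=\tilde{r}(i,s)$, the pair $(i,j)\in\cH(r)$ is such that $H^i_j$ contains only the \emph{lower} boundary $I_{t,\underline{w}^1}$ of $\tilde{R}_r$, and $s^+=\underline{s}(t+1,\underline{w}^1)$ records the vertical position, inside $R_k$, of the image segment $I_{t+1,\underline{w}^1}$ (here $\Phi_T(i,j)=(k,l,\epsilon_T(i,j))$). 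By Lemma \ref{Lemm: imge of boundaries,cW}, $f(I_{t,\underline{w}^1})\subset I_{t+1,\underline{w}^1}$ and the lower boundary of $H^i_j$ (which equals that of $\tilde{R}_r\cap H^i_j$) is sent into $\partial^s_{\epsilon}R_k$ with $\epsilon$ dictated by $\epsilon_T(i,j)$; moreover $f$ maps the $\underline{h}(r,j)$ horizontal sub-rectangles of $\tilde{R}_r\cap H^i_j$ onto the $\underline{h}(r,j)$ consecutive rectangles of $\cR_{S(\cW)}$ contained in $R_k$ that lie between $I_{t+1,\underline{w}^1}$ and $\partial^s_{\epsilon}R_k$, this count being exactly $\underline{h}(r,j)$ by Lemma \ref{Lemm: number horizontal sub case III, lower, cW}.

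First I would treat the case $\epsilon_T(i,j)=1$. Then $f$ preserves the vertical orientation on $\tilde{R}_r\cap H^i_j$, so the image of the lower boundary lands on $\partial^s_{-1}R_k$ and the images of the rectangles, listed from bottom to top, occupy consecutive slots of $\cR_{S(\cW)}$ inside $R_k$ \emph{above} the slot $s^+$ of $I_{t+1,\underline{w}^1}$. Hence the rectangle $\tilde{H}^r_{j,J}$, which sits at vertical position $J$ in $\tilde{R}_r\cap H^i_j$, is carried into the rectangle $\tilde{R}_{r'}$ with $r'=\tilde{r}(k,s^++J)$. Next, for $\epsilon_T(i,j)=-1$, the vertical orientation is reversed: the image of the lower boundary of $H^i_j$ lands on $\partial^s_{+1}R_k$, the segment $I_{t+1,\underline{w}^1}$ still sits at slot $s^+$, and the images are now stacked \emph{below} slot $s^+$; thus $\tilde{H}^r_{j,J}$ is sent to $\tilde{R}_{r'}$ with $r'=\tilde{r}(k,s^+-J+1)$ (the rectangle at position $1$ going to slot $s^+$). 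In both cases the equality $l=l'$ follows immediately from Corollary \ref{Coro: vertical possition is delimitate,cW}, since $f(\tilde{H}^r_{j,J})\subset f(H^i_j)=V^k_l$ while $f(\tilde{H}^r_{j,J})=\tilde{V}^{r'}_{l'}$ forces the two vertical sub-rectangles of $R_k$ involved to coincide.

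The argument is essentially pure bookkeeping, so I do not anticipate a genuine obstacle; the only point demanding care is keeping the \emph{reference segment} straight. In the lower-boundary case the relevant ``anchor'' for the position count is the image of the lower boundary $I_{t,\underline{w}^1}$ of $\tilde{R}_r$, not its upper boundary — this is precisely what distinguishes the formulas here from those of Lemma \ref{Lemm: Permutation rho caso III, upper cW}, where the upper boundary is the anchor — and it is also what forces the ``$+1$'' correction in the $\epsilon_T(i,j)=-1$ subcase (the bottom rectangle maps to the top available slot). I would therefore include the small picture-following remark, exactly as in Figure \ref{Fig: Caso III}, and then state the two formulas, after which one records the permutation rule in a definition paralleling Definition \ref{Defi: permutation caso, low III}, namely $\underline{\rho}_{(r,j)}(r,J)=(\tilde{r}(k,s^++J),l)$ when $\epsilon_T(i,j)=1$ and $\underline{\rho}_{(r,j)}(r,J)=(\tilde{r}(k,s^+-J+1),l)$ when $\epsilon_T(i,j)=-1$.
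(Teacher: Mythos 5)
Your proposal is correct and takes essentially the same route the paper intends: the paper offers no separate proof here, deferring to the single-code case (Lemmas \ref{Lemm: number horizontal sub case III, lower} and \ref{Lemm: Permutation rho caso, low III}), and your argument — anchor the count at the slot $s^+$ of $I_{t+1,\underline{w}^1}$, stack the images above that slot when $\epsilon_T(i,j)=1$ and below it when $\epsilon_T(i,j)=-1$, then get $l=l'$ from Corollary \ref{Coro: vertical possition is delimitate,cW} — is exactly that bookkeeping.

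One slip should be fixed before keeping your text: in the lower-boundary case the boundary that $\tilde{R}_r\cap H^i_j$ shares with $H^i_j$ is the \emph{upper} one, while its lower boundary is $I_{t,\underline{w}^1}$ itself (which lies in $H^i_j\setminus\partial^s H^i_j$ and is sent into $I_{t+1,\underline{w}^1}$, not into $\partial^s_{-1}R_k$). So your parenthetical ``the lower boundary of $H^i_j$ (which equals that of $\tilde{R}_r\cap H^i_j$)'' and the phrase ``the image of the lower boundary lands on $\partial^s_{-1}R_k$'' should instead say that the image of the upper boundary of $H^i_j$ lands on $\partial^s_{+1}R_k$ when $\epsilon_T(i,j)=1$ and on $\partial^s_{-1}R_k$ when $\epsilon_T(i,j)=-1$; since your positional conclusions only use the anchor $s^+$ and the stacking direction, which are correct, the resulting formulas $r'=\tilde{r}(k,s^++J)$ and $r'=\tilde{r}(k,s^+-J+1)$ are unaffected.
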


%%%%%%%%%%%%%%%%%%

\begin{defi}\label{Defi: permutation caso III, up cW}
	With the hypothesis of Lemma  \ref{Lemm: number horizontal sub case III, upper, cW}, suppose that  $H^i_j$  just contains the upper boundary of $\tilde{R}_r$ and  assume that $\Phi_T(i,j)=(k,l,\epsilon)$. Then: 	
	
	\begin{itemize}
		\item[i)]  	If $\epsilon_T(i,j)=1$, we define:
		\begin{equation}\label{Equ: rho caso 3, +,cW, upper}
			\underline{\rho}_{(r,j)}(r,J)=(\tilde{r}(k,  J), l).
		\end{equation}
		
		\item[ii)] if $\epsilon_T(i,j)=-1$ we take:
		
		\begin{equation}\label{Equ: rho caso 3, -, cW, upper }
		\underline{\rho}_{(r,j)}(r,J)=(\tilde{r}(k, O(k,\cW)+1-J+1),l).
		\end{equation}
	\end{itemize}

\end{defi}

\begin{defi}\label{Defi: permutation caso III, low cW}
With the hypothesis of Lemma \ref{Lemm: number horizontal sub case III, lower, cW}, suppose that  $H^i_j$  just contains the lower boundary of $\tilde{R}_r$ and  assume that $\Phi_T(i,j)=(k,l,\epsilon)$. Then: 	

\begin{itemize}
	\item[i)]  	If $\epsilon_T(i,j)=1$, we define:
	\begin{equation}\label{Equ: rho caso 3, +,low,cW}
 		\underline{\rho}_{(r,j)}(r,J)=(\tilde{r}(k, s^+ + J), l).
	\end{equation}
	
	\item[ii)] if $\epsilon_T(i,j)=-1$ we take:
	
	\begin{equation}\label{Equ: rho caso 3, -,low cW}
	\underline{\rho}_{(r,j)}(r,J)=(\tilde{r}(k, s^+-J+1),l).
	\end{equation}
\end{itemize} 	
\end{defi}

\subsubsection{Collecting the information}

Now, we can collect the information from the three cases to compute the number of horizontal sub-rectangles in $\tilde{R}_r$.

\begin{coro}\label{Coro: Number of horizontal sub, cW}
	Let $\tilde{R}_r\in \cR_{S(\cW)}$ with $r=\tilde{r}(i,s)$, then the number $H_r$ of horizontal sub-rectangles of the Markov partition $(f,\cR_{S(\cW)})$ that are contained in $\tilde{R}_r$ is given by:
	\begin{equation}
	H_r=\sum_{\{j:(i,j)\in \cH(r)\}}\underline{h}(r,j).
	\end{equation}
\end{coro}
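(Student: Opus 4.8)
\textbf{Proof plan for Corollary \ref{Coro: Number of horizontal sub, cW}.}

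The statement is a bookkeeping consequence of the three cases analyzed in the preceding lemmas, so the proof will essentially be an assembly argument. The plan is to fix a rectangle $\tilde{R}_r \in \cR_{S(\cW)}$ with $r = \tilde{r}(i,s)$ and to count the horizontal sub-rectangles of the Markov partition $(f,\cR_{S(\cW)})$ contained in $\tilde{R}_r$ by partitioning them according to which horizontal sub-rectangle $H^i_j$ of the original partition $\cR$ they lie in. First I would recall, from Lemma \ref{Lemm: determination cH(r), cW} and Definition \ref{Defi cH(r) set, cW}, that every horizontal sub-rectangle $\tilde{H}$ of $(f,\cR_{S(\cW)})$ contained in $\tilde{R}_r$ is contained in the interior of a unique $H^i_j$ with $(i,j) \in \cH(r)$; this uses the fact (Lemma \ref{Lemm: unique sub (i,j) for a r,cW}, Definition \ref{Defi: the unique pair (i,j-H),cW}) that such a $\tilde H$ cannot straddle the stable boundary of two consecutive sub-rectangles $H^i_{j_1}, H^i_{j_2}$, since otherwise its $f$-image would meet $\partial^s\cR$ in a non-trivial stable interval, contradicting that $\partial^s\cR_{S(\cW)}$ is $f$-invariant. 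This gives a disjoint decomposition of the set of horizontal sub-rectangles of $\tilde R_r$ indexed by $\{j : (i,j) \in \cH(r)\}$.

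Next I would invoke the trichotomy from Proposition \ref{Prop: determination case of ch(r) inside R-r, cW}: for each $(i,j) \in \cH(r)$, exactly one of the three configurations holds — $\tilde{R}_r \subset H^i_j \setminus \partial^s H^i_j$ (Lemma \ref{Lemm: number horizontal sub case I, cW}), $H^i_j \subset \tilde{R}_r$ (Lemma \ref{Lemm: number horizontal sub case II, cW}), or $H^i_j$ contains exactly one horizontal boundary component of $\tilde{R}_r$ (Lemmas \ref{Lemm: number horizontal sub case III, upper, cW} and \ref{Lemm: number horizontal sub case III, lower, cW}, distinguishing the upper from the lower case). In each of these cases the corresponding lemma produces the number $\underline{h}(r,j)$ of horizontal sub-rectangles of $(f,\cR_{S(\cW)})$ lying in $\tilde{R}_r \cap H^i_j$, together with an explicit formula for it in terms of $T$, $\cW$, and the order function $\underline{s}$. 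Summing these counts over all admissible $j$ yields the total $H_r$, which is exactly the asserted formula.

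The only subtlety to argue carefully — and what I expect to be the single delicate point — is that the decomposition indexed by $\cH(r)$ is genuinely a partition of the set of horizontal sub-rectangles of $\tilde{R}_r$: that no horizontal sub-rectangle of $(f,\cR_{S(\cW)})$ inside $\tilde R_r$ is counted twice and none is missed. Injectivity follows from the uniqueness in Lemma \ref{Lemm: unique sub (i,j) for a r,cW}; surjectivity (completeness) follows because $\overset{o}{\tilde{R}_r} = \bigcup_{(i,j)\in\cH(r)} (\overset{o}{\tilde{R}_r} \cap \overset{o}{H^i_j})$ up to a set of measure zero consisting of the stable boundaries $\partial^s H^i_j$, and every horizontal sub-rectangle of the refined partition is the closure of a connected component of $\overset{o}{\tilde R_r}\cap f^{-1}(\overset{o}{\cR_{S(\cW)}})$, hence its interior lies entirely inside one of these pieces. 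Once this is established, the rest is a one-line summation, so the corollary follows. I would keep the proof to a couple of sentences, citing Proposition \ref{Prop: determination case of ch(r) inside R-r, cW} and the four counting lemmas, since all the real work has already been done there.
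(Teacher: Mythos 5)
Your proposal is correct and follows the same route the paper takes: the corollary is simply the assembly of the counting Lemmas \ref{Lemm: number horizontal sub case I, cW}, \ref{Lemm: number horizontal sub case II, cW}, \ref{Lemm: number horizontal sub case III, upper, cW} and \ref{Lemm: number horizontal sub case III, lower, cW}, with Lemma \ref{Lemm: unique sub (i,j) for a r,cW} guaranteeing that each horizontal sub-rectangle of $(f,\cR_{S(\cW)})$ inside $\tilde{R}_r$ lies in exactly one $H^i_j$ with $(i,j)\in\cH(r)$, so the counts add without overlap or omission. The paper states the corollary without further argument, and your spelled-out disjointness/exhaustiveness check is exactly the implicit content.
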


 \subsection{The function $\rho_{S(\cW)}$ in the geometric type $T_{S(\cW)}$}

\begin{defi}\label{Defi: horizontal sub in R-r, cW}
	The sub-rectangles of the geometric Markov partition $\cR_{S(\cW)}$ contained in $\tilde{R}_r$ are labeled as:
	$$
	\{H^r_{\underline{J}}\}_{\underline{J}=1}^{H_r}
	$$
	from the bottom to the top with respect to the vertical orientation of  $\tilde{R}_r$.
\end{defi}

\begin{lemm}\label{Lemm: parameter J, cW}
	For each $\underline{J}\in\{1,\cdots, H_r\}$, there exists a unique index $j_{(\underline{J})}$ such that $(i,j)\in\cH(r)$  and  $\tilde{H}^r_{\underline{J}}\subset H^i_j\cap \tilde{R}_r$. The index $j_{(\underline{J})}$  is determined by satisfy the next inequalities:
	
	\begin{equation}\label{Equa: Cut the index, cW}
	\sum_{\{j': (i,j')\in \cH(r)  \text{ and } j'<j_{(\underline{J})}\} }\underline{h}(j',r)< \underline{J} \leq  \sum_{\{j': (i,j')\in \cH(r)  \text{ and } j'\leq j_{(\underline{J})}\} }\underline{h}(j',r).
	\end{equation} 
	
\end{lemm}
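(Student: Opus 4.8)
\textbf{Proof proposal for Lemma \ref{Lemm: parameter J, cW}.} The plan is to mimic exactly the argument of Lemma \ref{Lemm: parameter J}, the single-code analogue, adapting only the bookkeeping to the family $\cW$. First I would recall that the rectangle $\tilde{R}_r$ is partitioned into its horizontal sub-rectangles $\{\tilde{H}^r_{\underline{J}}\}_{\underline{J}=1}^{H_r}$, labelled from the bottom to the top with respect to the vertical orientation of $\tilde{R}_r$ (Definition \ref{Defi: horizontal sub in R-r, cW}). On the other hand, by Lemma \ref{Lemm: determination cH(r), cW} the set $\cH(r)$ is a consecutive block of indices $\{(i,j): j_-\le j\le j_+\}$, and for each $(i,j)\in\cH(r)$ the intersection $\tilde{R}_r\cap H^i_j$ contains exactly $\underline{h}(r,j)$ horizontal sub-rectangles of $\cR_{S(\cW)}$, as computed in the three cases of Lemmas \ref{Lemm: number horizontal sub case I, cW}, \ref{Lemm: number horizontal sub case II, cW}, \ref{Lemm: number horizontal sub case III, upper, cW} and \ref{Lemm: number horizontal sub case III, lower, cW}. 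Since the horizontal sub-rectangles of $\tilde{R}_r$ contained in $H^i_j$ for smaller $j$ lie below those contained in $H^i_{j'}$ for larger $j'$ (this uses that the $H^i_j$ are ordered by the vertical orientation of $R_i$, which coincides with that of $\tilde{R}_r$), the sub-rectangles $\tilde{H}^r_{\underline{J}}$ are grouped, in increasing order of $\underline{J}$, into consecutive blocks indexed by $j$, the $j$-th block having cardinality $\underline{h}(r,j)$.

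The key step is then purely arithmetic: the block corresponding to a given $j$ occupies the positions $\underline{J}$ with
\[
\sum_{\{j':(i,j')\in\cH(r)\text{ and }j'<j\}}\underline{h}(j',r)\;<\;\underline{J}\;\le\;\sum_{\{j':(i,j')\in\cH(r)\text{ and }j'\le j\}}\underline{h}(j',r),
\]
and these half-open intervals are pairwise disjoint and cover $\{1,\dots,H_r\}$ by Corollary \ref{Coro: Number of horizontal sub, cW}. Hence every $\underline{J}\in\{1,\dots,H_r\}$ belongs to exactly one of them, which determines a unique index $j_{(\underline{J})}$ with $(i,j_{(\underline{J})})\in\cH(r)$; and since $\tilde{H}^r_{\underline{J}}$ is then one of the horizontal sub-rectangles of $\tilde{R}_r$ contained in $H^i_{j_{(\underline{J})}}\cap\tilde{R}_r$, we get $\tilde{H}^r_{\underline{J}}\subset H^i_{j_{(\underline{J})}}\cap\tilde{R}_r$. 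Uniqueness follows because the defining inequalities pin down $j_{(\underline{J})}$ uniquely, and also because by Lemma \ref{Lemm: unique sub (i,j) for a r,cW} each horizontal sub-rectangle of $\cR_{S(\cW)}$ sits inside a single horizontal sub-rectangle of $\cR$.

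I do not expect any real obstacle here: the statement is the direct multi-code transcription of Lemma \ref{Lemm: parameter J}, and all the ingredients (the description of $\cH(r)$, the counts $\underline{h}(r,j)$, and the total $H_r=\sum_{\{j:(i,j)\in\cH(r)\}}\underline{h}(r,j)$) are already in place. The only point requiring a line of care is the claim that the blocks appear in increasing order of $j$ with respect to the labelling of $\{\tilde{H}^r_{\underline{J}}\}$, i.e.\ the compatibility between the vertical order on $\tilde{R}_r$ and the vertical order on $R_i$; this is immediate from Definition \ref{Defi: Orientation in R-S(cW)}, which gives $\tilde{R}_r$ the orientation inherited from $R_i$. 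So the proof reduces to invoking these earlier results and observing that a partition of $\{1,\dots,H_r\}$ into consecutive blocks determines, for each element, the unique block containing it — exactly as recorded in the displayed inequalities.
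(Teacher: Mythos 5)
Your argument is correct and is exactly the justification the paper leaves implicit: the single-code analogue (Lemma \ref{Lemm: parameter J}) is dismissed as ``immediate'', and the intended reasoning is precisely your observation that, by the coherence of the vertical orientations (Definition \ref{Defi: Orientation in R-S(cW)}) and the counts $\underline{h}(r,j)$ with $H_r=\sum_{\{j:(i,j)\in\cH(r)\}}\underline{h}(r,j)$, the sub-rectangles $\tilde{H}^r_{\underline{J}}$ fall into consecutive blocks indexed by $j$, so the partial-sum inequalities single out a unique $j_{(\underline{J})}$. Nothing is missing; your proof matches the paper's approach.
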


\begin{defi}\label{Defi: Horizontal parameter, cW}
	Given $\underline{J}\in \{1,\cdots, H_r\}$ we call the index $j_{(\underline{J})}$ that was determined in Lemma \ref{Lemm: parameter J, cW} the \emph{horizontal parameter} of $\underline{J}$.
\end{defi}

\begin{defi}\label{Defi: relative index, cW }
	Let $r=\tilde{r}(i,s)$, $\underline{J}\in \{1,\cdots, H_r\}$, and let $j_{(\underline{J})}$ be the horizontal parameter of $\underline{J}$. The \emph{relative index} of $H^r_J$ inside of $\tilde{R}_r\cap H^i_{j_{(\underline{J}})}$ is given by:
	\begin{equation}\label{Equa: relative index, cW}
	J_{(\underline{J})}:=\underline{J}-\sum_{j'<j_{({\underline{J})}}}\underline{h}(r,j').
	\end{equation}
	
\end{defi}

\begin{defi}\label{Defi: relative position, cW }
	Let $\underline{J}\in \{1,\cdots, H_r\}$ the \emph{relative position} of $\underline{J}$ inside $\tilde{R}_r$ is the pair $(j_{\underline{J}},J_{(\underline{J})})$ of the horizontal parameter of $\underline{J}$ and the relative index of $H^r_J$ inside of $\tilde{R}_r\cap H^i_{j_{\underline{J}}}$
\end{defi}

\begin{lemm}\label{Lemm: relative position relation, cW}
	The rectangle $\tilde{H}^r_{j_{(\underline{J})},J_{(\underline{J})}}$ and the rectangle $\tilde{H}^r_{\underline{J}}$ are the same.
\end{lemm}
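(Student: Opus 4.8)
\textbf{Proof plan for Lemma \ref{Lemm: relative position relation, cW}.}
The plan is to reproduce, in the present multi-code setting, exactly the argument used for Lemma \ref{Lemm: relative position relation}, since the combinatorial bookkeeping is identical once the sets $\cO(i,\cW)$ replace the sets $\cO(i,\underline{w})$ of a single code. First I would recall that, by Definition \ref{Defi: horizontal sub in R-r, cW}, the horizontal sub-rectangles of $\tilde R_r$ are labelled $\{\tilde H^r_{\underline J}\}_{\underline J=1}^{H_r}$ from bottom to top with respect to the vertical orientation of $\tilde R_r$, and that for each $\underline J$ the horizontal parameter $j_{(\underline J)}$ (Definition \ref{Defi: Horizontal parameter, cW}) and the relative index $J_{(\underline J)}$ (Definition \ref{Defi: relative index, cW }) are defined precisely by Equations \ref{Equa: Cut the index, cW} and \ref{Equa: relative index, cW}.

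The key steps, in order, are as follows. (1) Using Lemma \ref{Lemm: parameter J, cW}, observe that $\tilde H^r_{\underline J}$ is contained in $H^i_{j_{(\underline J)}}\cap\tilde R_r$ but is not contained in any $H^i_{j'}\cap\tilde R_r$ with $(i,j')\in\cH(r)$ and $j'<j_{(\underline J)}$; this is exactly the content of the left inequality in Equation \ref{Equa: Cut the index, cW}, because the horizontal sub-rectangles of $\tilde R_r$ sitting below $\tilde H^r_{\underline J}$ and contained in the sub-rectangles $H^i_{j'}$ with $j'<j_{(\underline J)}$ total $\sum_{j'<j_{(\underline J)}}\underline h(r,j')$, and by definition this sum is strictly less than $\underline J$. (2) Count the horizontal sub-rectangles of $\tilde R_r$ lying below $\tilde H^r_{\underline J}$: there are exactly $\underline J-1$ of them, of which $\sum_{j'<j_{(\underline J)}}\underline h(r,j')$ lie in the sub-rectangles $H^i_{j'}$ with $j'<j_{(\underline J)}$; hence the number lying in $H^i_{j_{(\underline J)}}\cap\tilde R_r$ and strictly below $\tilde H^r_{\underline J}$ is $(\underline J-1)-\sum_{j'<j_{(\underline J)}}\underline h(r,j')=J_{(\underline J)}-1$, using Equation \ref{Equa: relative index, cW}. (3) Conclude that $\tilde H^r_{\underline J}$ occupies vertical position $J_{(\underline J)}$ among the horizontal sub-rectangles of $\tilde R_r$ contained in $H^i_{j_{(\underline J)}}\cap\tilde R_r$, which is, by the labelling convention of Definitions \ref{Defi: order rectangles case I, cW}, \ref{Defi: order rectangles case II,cW} and \ref{Defi: order rectangles case III,cW} (the label $\tilde H^r_{j,J}$ runs from bottom to top within $\tilde R_r\cap H^i_j$), precisely the rectangle $\tilde H^r_{j_{(\underline J)},J_{(\underline J)}}$. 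Therefore $\tilde H^r_{j_{(\underline J)},J_{(\underline J)}}=\tilde H^r_{\underline J}$, as claimed.

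There is essentially no obstacle here: the statement is a consistency check between two labelling schemes for the same finite family of rectangles, and the only point requiring a little care is to make sure the splitting in step (2) is complete, i.e.\ that every horizontal sub-rectangle of $\tilde R_r$ lies in $H^i_{j'}\cap\tilde R_r$ for exactly one $(i,j')\in\cH(r)$. This is guaranteed by Lemma \ref{Lemm: unique sub (i,j) for a r,cW}, which provides the unique pair $(i,j_{\tilde H})\in\cH(T)$ with $\overset{o}{\tilde H}\subset\overset{o}{H^i_{j_{\tilde H}}}$, and by the fact that such a pair necessarily belongs to $\cH(r)$ since $\overset{o}{H^i_{j_{\tilde H}}}\cap\overset{o}{\tilde R_r}\supset\overset{o}{\tilde H}\neq\emptyset$. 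With this in hand the count in step (2) is exact and the lemma follows. The argument is word-for-word that of Lemma \ref{Lemm: relative position relation}, with $\cO(i,\cW)$, $O(i,\cW)$ and $\underline h(r,j)$ (as defined via Lemmas \ref{Lemm: number horizontal sub case I, cW}, \ref{Lemm: number horizontal sub case II, cW}, \ref{Lemm: number horizontal sub case III, upper, cW} and \ref{Lemm: number horizontal sub case III, lower, cW}) substituted for their single-code analogues.
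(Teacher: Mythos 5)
Your proposal is correct and follows essentially the same bookkeeping argument as the paper: the paper's proof of the single-code Lemma \ref{Lemm: relative position relation} (which the multi-code version simply inherits) likewise reads off from Equation \ref{Equa: Cut the index, cW} that $\tilde H^r_{\underline J}\subset H^i_{j_{(\underline J)}}$ and from Equation \ref{Equa: relative index, cW} that it sits at position $J_{(\underline J)}$ inside $H^i_{j_{(\underline J)}}\cap\tilde R_r$. Your explicit appeal to Lemma \ref{Lemm: unique sub (i,j) for a r,cW} to justify that the count is exhaustive is a welcome but minor addition, not a different route.
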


%%%%%%%%%%%%%%%%%%%%%%%%%%%

\begin{coro}\label{Lemm: The permutation, cW}
	Let $r=\tilde{r}(i,s)$ and $\underline{J}\in \{1,\cdots, H_r\}$. Let $(j_{(\underline{J})},J_{(\underline{J})})$ the relative position of $\underline{J}$ as was given in Definition \ref{Defi: relative position, cW }. Then the function $\rho_{S(\cW)}$ is given by the formula:
	\begin{equation}\label{Equa: The permutation, cW}
	\rho_{S(\cW)}(r,\underline{J}):=\underline{\rho}_{(r,j_{(\underline{J})})}(r,J_{(\underline{J})}).
	\end{equation}

\end{coro}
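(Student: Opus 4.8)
\textbf{Proof proposal for Corollary \ref{Lemm: The permutation, cW}.}

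The plan is to reduce the statement entirely to the machinery already assembled in the section, since the formula \eqref{Equa: The permutation, cW} is nothing but the bookkeeping identity that translates "position inside $\tilde R_r$" into "position inside $\tilde R_r \cap H^i_j$" and then applies the case-by-case image formulas. First I would fix a rectangle $\tilde R_r\in\cR_{S(\cW)}$ and recover the pair $(i,s)$ with $r=\tilde r(i,s)$ via Lemma \ref{lemm: determinating r=(i,s)} (the exact analogue for the $\cW$-refinement holds verbatim, using $O(i,\cW)$ in place of $O(i,\underline{w})$). Then for a given index $\underline J\in\{1,\dots,H_r\}$ I would invoke Lemma \ref{Lemm: parameter J, cW} to produce the horizontal parameter $j_{(\underline J)}$ with $(i,j_{(\underline J)})\in\cH(r)$ and $\tilde H^r_{\underline J}\subset H^i_{j_{(\underline J)}}\cap\tilde R_r$, and Definition \ref{Defi: relative index, cW } to produce the relative index $J_{(\underline J)}$. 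By Lemma \ref{Lemm: relative position relation, cW} these two numbers satisfy
$$
\tilde H^r_{j_{(\underline J)},\,J_{(\underline J)}}=\tilde H^r_{\underline J},
$$
so the two labelings genuinely describe the same horizontal sub-rectangle of $\cR_{S(\cW)}$.

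The second step is to observe that the right-hand side of \eqref{Equa: The permutation, cW}, namely $\underline\rho_{(r,j_{(\underline J)})}(r,J_{(\underline J)})$, is exactly the pair $(r',l')$ for which $f(\tilde H^r_{j_{(\underline J)},J_{(\underline J)}})=\tilde V^{r'}_{l'}$: this is the content of Definitions \ref{Defi: permutation caso I, cW}, \ref{Defi: permutation caso II, cW}, \ref{Defi: permutation caso III, up cW} and \ref{Defi: permutation caso III, low cW}, each of which is justified by the corresponding image Lemma (\ref{Lemm: Permutation rho caso I, cW}, \ref{Lemm: Permutation rho caso I,cW}, \ref{Lemm: Permutation rho caso III, upper cW}, \ref{Lemm: Permutation rho caso III, lower cW}). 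Proposition \ref{Prop: determination case of ch(r) inside R-r, cW} guarantees that exactly one of the three configurations (the rectangle $\tilde R_r$ sitting strictly inside $H^i_{j_{(\underline J)}}$; the rectangle $H^i_{j_{(\underline J)}}$ sitting inside $\tilde R_r$; $H^i_{j_{(\underline J)}}$ containing exactly one horizontal boundary of $\tilde R_r$) occurs, so exactly one of the defining formulas applies and $\underline\rho_{(r,j_{(\underline J)})}(r,J_{(\underline J)})$ is unambiguously defined. Combining this with the previous step gives $f(\tilde H^r_{\underline J})=\tilde V^{r'}_{l'}$ where $(r',l')=\underline\rho_{(r,j_{(\underline J)})}(r,J_{(\underline J)})$, which by Definition \ref{Defi: Permutation of type} (applied to the geometric type $T_{S(\cW)}$) is precisely the assertion that $\rho_{S(\cW)}(r,\underline J)=\underline\rho_{(r,j_{(\underline J)})}(r,J_{(\underline J)})$.

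The only genuine content beyond "unwinding definitions" is making sure the three configuration cases are jointly exhaustive and that the labeling conventions (Definition \ref{Defi: Label of R-S(cW)} for the indices $r$, Definitions \ref{Defi: order rectangles case I, cW}, \ref{Defi: order rectangles case II,cW}, \ref{Defi: order rectangles case III,cW} for the $J$'s, Definition \ref{Defi: horizontal sub in R-r, cW} for the $\underline J$'s) are all taken "from the bottom to the top with respect to the vertical orientation of $\tilde R_r$" — which is exactly what Lemma \ref{Lemm: parameter J, cW} and Definition \ref{Defi: relative index, cW } encode through the partial sums of the $\underline h(r,j')$. I expect the main obstacle to be purely expository rather than mathematical: one must keep consistent track of the vertical order so that the cut point $\underline J$ lands in the correct horizontal slab $H^i_{j_{(\underline J)}}$ and at the correct local height $J_{(\underline J)}$, and then cite the right one of the four $\underline\rho$-formulas; since all four of those formulas (and the Lemmas backing them) have already been established for the $\cW$-case, no new estimate or construction is required, and the corollary follows immediately from Lemma \ref{Lemm: relative position relation, cW} together with Proposition \ref{Prop: determination case of ch(r) inside R-r, cW}.
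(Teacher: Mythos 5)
Your proposal is correct and follows essentially the same route as the paper: the paper's own justification (spelled out in the single-code case, Corollary \ref{Coro: The permutation}, and carried over verbatim to the family case) is precisely the unwinding you describe, namely Lemma \ref{Lemm: relative position relation, cW} identifying $\tilde H^r_{\underline J}$ with $\tilde H^r_{j_{(\underline J)},J_{(\underline J)}}$ and then the case-by-case definitions of $\underline\rho_{(r,j)}$ backed by their image lemmas. No gap to report; your extra care about exhaustiveness of the three configurations and consistency of the bottom-to-top labeling is exactly the bookkeeping the paper leaves implicit.
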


\subsection{The orientation $\epsilon_{S(\cW)}$}

\begin{lemm}\label{Lemm: The orientation, cW}
	Let $r=\tilde{r}(i,s)$ and $\underline{J}\in \{1,\cdots, H_r\}$. Let $j_{(\underline{J})}$ the horizontal parameter of $\underline{J}$ (Definition  \ref{Defi: Horizontal parameter, cW}). Then:
	\begin{equation}\label{Equa: permtation in the s refinament, cW}
\epsilon_{S(\underline{w})}(r,\underline{J}):=\epsilon_T(i,j_{(\underline{J})}).
	\end{equation}
\end{lemm}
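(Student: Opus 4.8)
\textbf{Proof plan for Lemma \ref{Lemm: The orientation, cW}.}

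The plan is to reduce the $\epsilon_{S(\cW)}$ computation entirely to the single-code case treated in Lemma \ref{Lemm: The orientation}, by exploiting the fact that the geometrization of $\cR_{S(\cW)}$ (Definitions \ref{Defi: Orientation in R-S(cW)} and \ref{Defi: Label of R-S(cW)}) is built to be consistent with the ambient rectangles $R_i$, exactly as the geometrization of $\cR_{S(\underline{w})}$ was. First I would fix $\tilde R_r \in \cR_{S(\cW)}$ with $r = \tilde r(i,s)$ and a horizontal sub-rectangle $\tilde H^r_{\underline{J}}$ of $(f,\cR_{S(\cW)})$. By Lemma \ref{Lemm: relative position relation, cW}, we have $\tilde H^r_{\underline{J}} = \tilde H^r_{j_{(\underline J)}, J_{(\underline J)}}$, and by the definition of the horizontal parameter (Definition \ref{Defi: Horizontal parameter, cW} and Lemma \ref{Lemm: parameter J, cW}), $\tilde H^r_{\underline J}$ is contained in the interior of $H^i_{j_{(\underline J)}}$, the horizontal sub-rectangle of the original partition $(f,\cR)$. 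This is the key containment: the refining cuts along $\cup\cI(i,\cW)$ only add stable intervals, so any horizontal sub-rectangle of the refined partition sits inside a single horizontal sub-rectangle of $(f,\cR)$.

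The heart of the argument is then the observation that whether $f$ preserves or reverses the vertical orientation when restricted to $\tilde H^r_{\underline J}$ depends only on the germ of $f$ along the vertical leaves through $\tilde H^r_{\underline J}$. Since $\tilde H^r_{\underline J} \subset H^i_{j_{(\underline J)}}$ and the vertical foliation of $\tilde H^r_{\underline J}$ as a sub-rectangle of $\cR_{S(\cW)}$ coincides with that of $H^i_{j_{(\underline J)}}$ as a sub-rectangle of $\cR$ (because both inherit their foliations from $R_i$, via Definition \ref{Defi: Orientation in R-S(cW)}), and the image $f(\tilde H^r_{\underline J}) = \tilde V^{r'}_{l'}$ is a sub-rectangle of $f(H^i_{j_{(\underline J)}}) = V^k_l$ with matching vertical foliation inside $R_k$ (via the same orientation-consistency convention), $f$ preserves the vertical orientation on $\tilde H^r_{\underline J}$ relative to $\tilde R_r$ and $\tilde R_{r'}$ if and only if it preserves the vertical orientation on $H^i_{j_{(\underline J)}}$ relative to $R_i$ and $R_k$. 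The latter is encoded precisely by $\epsilon_T(i, j_{(\underline J)})$ in the geometric type $T$. Hence $\epsilon_{S(\cW)}(r,\underline J) = \epsilon_T(i, j_{(\underline J)})$, which is the claimed formula \eqref{Equa: permtation in the s refinament, cW}.

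I expect the main (and really the only) obstacle to be bookkeeping about orientations: one must be careful that the vertical orientation of $\tilde R_r$ genuinely agrees with that of $R_i$ (this is Definition \ref{Defi: Orientation in R-S(cW)}, so it is by construction), and that the labeling of the vertical sub-rectangles $\{\tilde V^r_l\}$ is coherent with the horizontal orientation of $R_i$ (Remark \ref{Rema: Order of vertical coherent with R_i}) so that no spurious orientation flip is introduced by relabeling. Once these consistency facts are invoked, the proof is a one-line transfer of the $\epsilon$-invariance that was already established in Lemma \ref{Lemm: The orientation} for a single code; the multi-code case adds nothing new to the orientation analysis because cutting introduces no new vertical boundary leaves. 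I would therefore write the proof as a short paragraph citing Lemma \ref{Lemm: relative position relation, cW}, the horizontal-parameter containment, and the orientation-consistency of the geometrization, then concluding directly.
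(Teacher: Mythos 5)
Your proposal is correct and follows essentially the same route as the paper: the paper's own argument (given explicitly for the single-code case and invoked verbatim for the family $\cW$) is exactly the containment $\tilde H^r_{\underline J}\subset H^i_{j_{(\underline J)}}$ together with the orientation-consistency of the geometrization, so that the preservation or reversal of the vertical orientation depends only on $\epsilon_T(i,j_{(\underline J)})$. The extra consistency checks you flag (Definition \ref{Defi: Orientation in R-S(cW)} and Remark \ref{Rema: Order of vertical coherent with R_i}) are precisely the facts the paper relies on implicitly, so nothing is missing.
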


\subsection{Conclusion}

\begin{coro}\label{Coro: Algoritm computation TS(w), cW}
	Let $T$ be a geometric type in the pseudo-Anosov class with a binary incidence matrix denoted as $A:=A(T)$. Let $\cW$ a family of periodic codes contained in $\Sigma_A$, there is algorithm to compute the   geometric type:
	
	\begin{equation*}
	T_{S(\cW)}:=\{N,\{H_r,V_r\}_{r=1}^N,\Phi_{S(\cW)}:=(\rho_{S(\cW)},\epsilon_{S(\cW)})\}.
	\end{equation*}
	
using  the given geometric type $T$ and the codes in  $\cW$. 
\end{coro}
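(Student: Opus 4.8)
\textbf{Proof proposal for Corollary \ref{Coro: Algoritm computation TS(w), cW}.}

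The plan is to observe that Corollary \ref{Coro: Algoritm computation TS(w), cW} is essentially a bookkeeping statement: all of its content has already been established, component by component, in the sequence of lemmas preceding it, and what remains is to verify that each formula produced in those lemmas depends only on $T$ and on the finite combinatorial data carried by the codes in $\cW$ (the periods $\textbf{Per}(\underline{w}^q)$ and the symbols $w^q_t$), never on the particular realization $(f,\cR)$. So the proof is simply to list the five parameters of $T_{S(\cW)}$ and cite the place where each one is computed, checking finiteness/algorithmicity along the way.

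The key steps, in order, are the following. First, the number $N$: it is given by Corollary \ref{Coro: Number N in the type T-S(cW)} as $N=\sum_{i=1}^n O(i,\cW)+1$, and each $O(i,\cW)$ is the cardinality of the finite set $\cO(i,\cW)=\cup_q\cO(i,\underline{w}^q)$ from Definition \ref{Defi: cO(i,cW) and O(i,cW)}, which is read off directly from the periods and symbols of the codes. Second, to make sense of the remaining formulas one must be able to recover, for each rectangle index $r\in\{1,\dots,N\}$, the pair $(i,s)$ with $r=\tilde r(i,s)$; this is exactly the (combinatorial) inversion carried out in Lemma \ref{lemm: determinating r=(i,s)} (whose proof goes through verbatim in the $\cW$-setting, using the numbers $O(i,\cW)$). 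Third, the vertical counts $V_r=v_i$ come from Corollary \ref{Coro: number of vertical sub rec, cW}. Fourth, the horizontal counts $H_r$ are assembled by Corollary \ref{Coro: Number of horizontal sub, cW} from the partial counts $\underline h(r,j)$, where the set $\cH(r)$ is determined by Lemma \ref{Lemm: determination cH(r), cW}, the trichotomy governing which of the three local configurations occurs is Proposition \ref{Prop: determination case of ch(r) inside R-r, cW}, and the three cases give $\underline h(r,j)$ via Lemmas \ref{Lemm: number horizontal sub case I, cW}, \ref{Lemm: number horizontal sub case II, cW}, \ref{Lemm: number horizontal sub case III, upper, cW} and \ref{Lemm: number horizontal sub case III, lower, cW}; all of these invoke only $T$, the order function $\underline s$ of Definition \ref{Lemm: Boundaries code,CW}, and the indices $j_{t,\underline w^q}$ of Lemma \ref{Lemm: unique hriwontal for every prjection}, which are themselves computed from $\rho_T$ and the codes. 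Fifth, the function $\rho_{S(\cW)}$ is delivered by Corollary \ref{Lemm: The permutation, cW}, via the relative-position bookkeeping of Definitions \ref{Defi: Horizontal parameter, cW}--\ref{Defi: relative position, cW } and the case-by-case formulas for $\underline\rho_{(r,j)}$ in Definitions \ref{Defi: permutation caso I, cW}, \ref{Defi: permutation caso II, cW}, \ref{Defi: permutation caso III, up cW} and \ref{Defi: permutation caso III, low cW}; and $\epsilon_{S(\cW)}$ is given by Lemma \ref{Lemm: The orientation, cW} as $\epsilon_{S(\cW)}(r,\underline J)=\epsilon_T(i,j_{(\underline J)})$. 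Stringing these citations together, and noting that each formula is an explicit finite expression, establishes the corollary.

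The one point that genuinely requires checking rather than citation — and hence the main obstacle — is the consistency of the \emph{ordering} data across different codes. Namely, the functions $\underline s$ and the interchange numbers $\delta((t_1,\underline w^{q_1}),(t_2,\underline w^{q_2}))$ must be well defined when the two stable intervals $I_{t_1,\underline w^{q_1}}$ and $I_{t_2,\underline w^{q_2}}$ lie in the same rectangle $R_i$ but come from two \emph{different} codes of $\cW$; one must know that the combinatorial rule of Lemma \ref{Lemm: Comparacion de ordenes, cW} (based on the first time $M$ at which $w^{q_1}_{t_1+M}\neq w^{q_2}_{t_2+M}$, together with the sign $\delta$) really does reproduce the geometric vertical order in $R_i$ for \emph{any} realization $(f,\cR)$ — in particular that two distinct periodic codes never project to a coincident stable interval, which is where the hypothesis that the codes are non-$s$-boundary is used, via Lemma \ref{Lemm: who the intervals intersect }. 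This is precisely what Lemmas \ref{Lemm: diferentation moment of the order,cW} and \ref{Lemm: Comparacion de ordenes, cW} supply, so in the write-up I would emphasize that these two lemmas are the substantive ingredient and the rest is assembly; with them in hand the algorithm is: compute the $\cO(i,\cW)$, sort each via $\underline s$, read off $N$, the $V_r$, then for each $r$ determine $(i,s)$, $\cH(r)$, the local configuration of each $(i,j)\in\cH(r)$, the $\underline h(r,j)$, hence $H_r$, and finally evaluate $\rho_{S(\cW)}$ and $\epsilon_{S(\cW)}$ by the relative-position formulas. Since every sub-step terminates in finitely many arithmetic operations on the data of $T$ and $\cW$, the whole procedure is a finite algorithm, which is the assertion of the corollary. $\hfill\qed$
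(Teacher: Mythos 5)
Your proposal is correct and matches the paper's own treatment: the corollary is proved (as in the explicitly written single-code case, Corollary \ref{Coro: Algoritm computation TS(w)}) by assembling the citations for $N$, $V_r$, $H_r$, $\rho_{S(\cW)}$ and $\epsilon_{S(\cW)}$ and noting that each formula uses only the combinatorial data of $T$ and $\cW$. Your added emphasis on Lemmas \ref{Lemm: diferentation moment of the order,cW} and \ref{Lemm: Comparacion de ordenes, cW} as the substantive ordering ingredient is consistent with, and a reasonable highlighting of, what the paper already establishes in that section.
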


\section{The $u$-boundary refinement}.

In this part, we are going to define the $u$-boundary refinement of a Markov partition along a family of periodic codes. This procedure involves cutting $\cR$ along the unstable segments that correspond to the orbit of any of the codes in $\cW$. In fact, we can define it using our previous $s$-boundary refinement applied to $T^{-1}$ and $f^{-1}$.

\begin{defi}\label{Defi: U boundary refinament}

Let $T$ be a geometric type in the pseudo-Anosov class with an incidence matrix $A := A(T)$ that is binary. Consider a generalized pseudo-Anosov homeomorphism $f: S \rightarrow S$ with a geometric Markov partition $\cR$ of geometric type $T$.
 Let 
 $$
 \cW = \{\underline{w}^1, \cdots, \underline{w}^Q\},
 $$
 be a family of periodic codes that are non-$u$-boundary. 
\begin{itemize}
\item Let $\cR_{U(\cW)}$  the $s$-boundary refinement of $\cR$ when $\cR$ is viewed as a Markov partition of $f^{-1}$. i.e $(\cR,f^{-1})$. Denote by $T^{-1}_{S(\cW)}$ the  geometric type of $(\cR_{U(\cW)},f^{-1})$.

\item Let $T_{U(\cW)}$ the geometric type of $(\cR_{U(\cW)},f)$.

\end{itemize}

In this case the geometry Markov partition $\cR_{U(\cW)}$ for $f$ called the $u$-boundary refinement of $\cR$ respect the family $\cW$.
\end{defi}

\begin{rema}\label{Rema: some properties of U-boundary refinament}

We can make the following direct observations:
	
\begin{enumerate}
\item If $\underline{w}$ is not a $u$-boundary point for $(\cR,f)$, the $\underline{w}$ it is not a $s$-boundary point for $(\cR,f^{-1})$, which justifies our assumption.

\item The geometric Markov partition $(\cR_{U(\cW)},f)$ has a geometric type $T_{U(\cW)}$, and this geometric type is the inverse of the geometric type of $(\cR_{U(\cW)},f^{-1})$, therefore:
$$
T_{U(\cW)} := (T^{-1}_{S(\cW)})^{-1}.
$$
and we can determine $T_{U(\cW)}$ in an algorithmic manner.

\item For every $\underline{w}\in \cW$, the point $\pi_f(\underline{w})$ (where $\pi_f$ is the projection with respect to $(\cR,f)$) is a $u$-boundary point of $\cR_{U(\cW)}$.

\item If $\underline{w}$ is a $u$-boundary point, clearly $\cR_{U(\underline{w})}=\cR$ and $T_{U(\underline{w})}=T$. This justifies taking any code in $\cW$ as a non $u$-boundary code.
\end{enumerate}
\end{rema}
\section{The corner refinement}

Let $T$ be a geometric type within the pseudo-Anosov class with binary incidence matrix $A(T)$. Consider a generalized pseudo-Anosov homeomorphism $f: S \to S$ with a geometric Markov partition $\cR$ of geometric type $T$. The projection $\pi_f: \Sigma_{A(T)} \to S$, as defined in \ref{Defi: projection pi}, depends on the specific geometric type $T$ of the Markov partition. We will relabel it as $\pi_T: \Sigma_{A(T)} \to S$ to enable easier comparison with another projection $\pi_{T'}$ associated with a different geometric type $T'$

For a given geometric type $T$, we have defined its $s$ and $u$ generating functions in \ref{Defi: s-boundary generating funtion}  and \ref{Defi: u-boundary generating funtion}, respectively. Iterating these functions generates codes in $\Sigma_{A(T)}$, which project to the stable and unstable boundary components of $\cR$. Furthermore, as described in Section \ref{Sub: symbolic dynamics of a Type}, using $T$,  it is feasible to determine the sets of periodic $s$-boundary codes of $T$:
$$
\textit{Per}(\underline{S(T)}):=\{\underline{w}\in \Sigma_{A(T)} : \underline{w} \text{ is periodic for $\sigma$ and } \pi_T(\underline{w})\in \partial^s \cR \}.
$$
and the sets of periodic $u$-boundary codes:
  $$
  \textit{Per}(\underline{(T)}):=\{\underline{w}\in \Sigma_{A(T)} : \underline{w} \text{ is periodic for $\sigma$ and } \pi_T(\underline{w})\in \partial^u \cR \}.
  $$
  Their union is the set of boundary periodic codes:
    $$
  \textit{Per}(\underline{B(T)}):=  \textit{Per}(S(T)) \cup  \textit{Per}(U(T)).
  $$
 
 In order to avoid to heavy notations we  relabel these sets:
  $$
\underline{S(T)}:=\textit{Per}(\underline{S(T)}), \, \, \underline{U(T)}:=\textit{Per}(\underline{U(T)}) \text{ and } \underline{B(T)}:=\textit{Per}(\underline{B(T)})  
  $$
  
 Finally a corner periodic code  is any code in the  intersection:
 $$
 \underline{C(T)}:=\underline{S(T)} \cap \underline{U(T)}.
 $$

Definition \ref{Defi: s,u-boundary periodic codes} reveals that $ \underline{B(T)}$ can be  described in a combinatorial manner using $T$, avoiding  the use of the Markov partition or the homeomorphism $f$. Based on the theory developed in Section \ref{Sub: symbolic dynamics of a Type}, we can draw the following conclusion, which we present in the form of a corollary:

\begin{coro}\label{Coro: algoritmic per codes}
Given a specific geometric type $T$, we can compute the sets: $\underline{(S(T)})$, $\underline{U(T)}$, $\underline{B(T)}$, and $\underline{C(T)}$, in terms of the geometric type $T$.
\end{coro}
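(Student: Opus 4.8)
\textbf{Proof proposal for Corollary \ref{Coro: algoritmic per codes}.}

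The plan is to assemble the algorithm from the machinery developed in Section \ref{Sub: symbolic dynamics of a Type}, treating each of the four sets in turn. The starting point is the incidence matrix: since $T$ is given, $A:=A(T)$ is computable directly from Definition \ref{Defi: Incidence matriw of a type}, and hence the sub-shift of finite type $(\Sigma_{A},\sigma)$ is determined. First I would compute the $s$-generating function $\Gamma(T):\cS(T)\to\cS(T)$ of Definition \ref{Defi: s-boundary generating funtion} and the $u$-generating function $\Upsilon(T)=\Gamma(T^{-1})$ of Definition \ref{Defi: u-boundary generating funtion}; both are finite maps on the $2n$-element sets $\cS(T)$, $\cU(T)$, so they are computable by evaluating the explicit formula \eqref{Equ: Gamma generation funtion}, which uses only $\theta_T$, $\xi_T$, and $\epsilon_T$, all read off from $T$. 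From $\Gamma(T)$ I build the $2n$ $s$-boundary positive codes $\underline{I}^{+}(i,\epsilon)$ via \eqref{Equa: $s$-boundary code +} and likewise the $2n$ $u$-boundary negative codes $\underline{J}^{-}(k,\epsilon)$ via \eqref{Equa: $u$-boundary code -}; Corollary \ref{Coro: preperiodic finite s,u boundary codes} guarantees these are pre-periodic under the shift on $\Sigma^{\pm}$, with the eventual period bounded by $2n$, so a finite search locates, for each label, the iterate of $\sigma$ at which the code becomes periodic.

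Next I would extract the periodic ones. A code $\underline{w}\in\underline{\cS}(T)$ (Definition \ref{Defi: s,u-boundary codes}) is periodic precisely when its negative part is compatible with a shift-iterate of its positive part so that the whole bi-infinite word repeats; since the positive part $\underline{w}_+$ is one of the finitely many $\underline{I}^{+}(i,\epsilon)$ and is itself pre-periodic, $\underline{w}$ is periodic iff $\underline{w}_+$ is periodic and $\underline{w}_-$ is the (unique, by admissibility in $\Sigma_A$ together with periodicity) negative continuation obtained by back-shifting the periodic positive block. Concretely, one lists the periodic positive codes among $\{\underline{I}^{+}(i,\epsilon)\}$ — equivalently the $\Gamma(T)$-periodic labels $(i,\epsilon)$, found by iterating $\Gamma(T)$ at most $2n$ times and detecting cycles — and for each such label the corresponding element of $\mathrm{Per}(\underline{\cS(T)})$ is the periodic bi-infinite word determined by that $\Gamma(T)$-cycle. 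This gives $\underline{S(T)}$ as a finite explicit list; the symmetric procedure applied to $\Upsilon(T)=\Gamma(T^{-1})$ (using Definition \ref{Defi: inverse of Type} to compute $T^{-1}$ from $T$) yields $\underline{U(T)}$. Then $\underline{B(T)}=\underline{S(T)}\cup\underline{U(T)}$ and $\underline{C(T)}=\underline{S(T)}\cap\underline{U(T)}$ are obtained by finite list operations — union and intersection of finite sets of periodic words, where two periodic words are compared by comparing one full period.

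I do not expect a serious obstacle here, since all the genuine content is already contained in Proposition \ref{Prop: positive codes are boundary}, Proposition \ref{Prop: boundary points have boundary codes}, and Corollary \ref{Coro: preperiodic finite s,u boundary codes}, which together identify $\underline{\cS}(T)$ and $\underline{\cU}(T)$ combinatorially and bound all the relevant periods by $2n$. The one point requiring care — and the step I would single out as the mild technical heart of the argument — is verifying that a finite word genuinely determines a \emph{unique} element of $\Sigma_{A}$ when closed up into a periodic code, i.e.\ that there is no ambiguity in reconstructing a periodic $\underline{w}$ from a $\Gamma(T)$-periodic label: this is exactly the injectivity established in Proposition \ref{Prop: s code Inyective} (and its $u$-analogue), so the correspondence "label $\leftrightarrow$ periodic boundary code" is a bijection that can be tabulated. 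With that in hand the algorithm is simply: compute $A(T)$; compute $\Gamma(T)$ and $\Upsilon(T)$; detect their cycles (at most $2n$ iterations each); form the finite lists $\underline{S(T)}$, $\underline{U(T)}$; and take union and intersection to get $\underline{B(T)}$ and $\underline{C(T)}$. Every step is finite and uses only the data of $T$, which is precisely the assertion of the corollary.
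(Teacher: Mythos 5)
Your proposal is correct and follows essentially the same route as the paper: the corollary is stated there as an immediate consequence of the combinatorial machinery of Section \ref{Sub: symbolic dynamics of a Type} (the generating functions $\Gamma(T)$, $\Upsilon(T)$, the boundary codes $\underline{I}^{+}(i,\epsilon)$, $\underline{J}^{-}(k,\epsilon)$, and the $2n$-iteration pre-periodicity bound of Corollary \ref{Coro: preperiodic finite s,u boundary codes}), which is exactly the algorithm you assemble, including the reconstruction of each periodic bi-infinite code from its $\Gamma(T)$-cycle and the finite union/intersection for $\underline{B(T)}$ and $\underline{C(T)}$.
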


\begin{defi}\label{Defi: Corner type}
A geometric type $T$ have the \emph{corner property} if every periodic boundary code is a  corner peridic code, i.e $\underline{B(T)}=\underline{S(T)}\cap \underline{U(T)}$.
\end{defi}

We fix the following notation:
\begin{itemize}
\item $P_s(f)$ is the set of $s$-boundary periodic points of $(f,\cR)$.
\item $P_u(f)$ is the set of $u$-boundary periodic points of $(f,\cR)$
\item  $P_u(f)=P_s(f)\cup P_u(f)$  is the set of periodic boundary  codes of $(f,\cR)$, and
\item $P_c(f)=P_s(f)\cap P_u(f)$ is the set of corner points of $\cR$.
\end{itemize}

\begin{defi}\label{Defi: Corner type partition }
	A geometric Markov partition $\cR$ of $f$ have the \emph{corner property} if every rectangle $R\in \cR$ that contains a boundary point $p\in P_b(f,\cR)$ have $p$ as a corner point.
\end{defi}

A corner point $p$ of a rectangle $R \in \cR$ is on the boundary of another rectangle $R' \in \cR$ but it is not necessarily a corner point of $R'$; it can be on the interior of the unstable boundary of $R'$, for example. In this situation, $\pi_T^{-1}(p) \subset \Sigma_{A(T)}$ contains a corner code and a $u$-boundary code that is not an $s$-boundary code. Therefore, $\underline{C(T)} \neq \underline{B(T)}$ and $T$  don't have the corner property. This discrepancy makes it necessary to introduce the following lemma

\begin{lemm}\label{Lemm: Corner prop equivalence}
The geometric Markov partition $\cR$ has the corner property if and only if its geometric type $T$ has the corner property.
\end{lemm}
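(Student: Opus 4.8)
The plan is to unwind both the definition of the corner property for the partition (Definition \ref{Defi: Corner type partition }) and the definition of the corner property for the geometric type (Definition \ref{Defi: Corner type}), and to bridge them via the projection $\pi_T:\Sigma_{A(T)}\to S$ and the characterization of boundary codes established in Section \ref{Sub: symbolic dynamics of a Type}. The key dictionary is: by Proposition \ref{Prop: positive codes are boundary} and Proposition \ref{Prop: boundary points have boundary codes}, a code $\underline{w}\in\Sigma_{A(T)}$ is an $s$-boundary code if and only if $\pi_T(\underline{w})\in\partial^s\cR$, and analogously for $u$-boundary codes; by Lemma \ref{Lemm: Periodic to peridic} a code is periodic exactly when it projects to a periodic point; and by Lemma \ref{Lemm: every code is sector code } and Corollary \ref{Coro: Caracterisation fibers}, the codes in $\pi_T^{-1}(p)$ are precisely the sector codes of $p$, one for each of the $2k$ sectors of $p$ when $p$ has $k$ separatrices. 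The relation $\underline{w}\sim_s\underline{w}'$ (Definition \ref{Defi: s realtion in Sigma S no per} and its extension to periodic codes \ref{Defi: sim s in per}) identifies exactly the sector codes that share a stable local separatrix, and $\sim_u$ those sharing an unstable local separatrix; this was the content of the mechanism explained in the proof of Lemma \ref{Lemm: sector codes}.

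\textbf{Forward direction.} Suppose $\cR$ has the corner property. Let $\underline{w}\in\underline{B(T)}$; then $\underline{w}$ is a periodic code projecting to a point $p:=\pi_T(\underline{w})\in\partial\cR$, and by Lemma \ref{Lemm: Periodic to peridic} (together with Lemma \ref{Lemm: Boundary of Markov partition is periodic}) $p$ is a periodic boundary point of $\cR$. The sector $e$ of $p$ corresponding to $\underline{w}$ is contained in a unique rectangle $R\in\cR$ (Lemma \ref{Lemm: sector contined unique rectangle}), and since $\cR$ has the corner property, $p$ is a corner point of $R$. Hence the two local separatrices of $p$ bounding $e$ consist of one stable separatrix lying on the stable boundary of $R$ and one unstable separatrix lying on the unstable boundary of $R$; tracing forward iterates and using that $\partial^s\cR$ is $f$-invariant and $\partial^u\cR$ is $f^{-1}$-invariant, we see that $\pi_T(\underline{w})\in\partial^s\cR$ and $\pi_T(\underline{w})\in\partial^u\cR$ simultaneously (indeed, because $p$ is a corner of $R$, the whole local picture of $e$ sits in $\partial^s R\cup\partial^u R$). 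By Propositions \ref{Prop: positive codes are boundary} and \ref{Prop: boundary points have boundary codes} this forces $\underline{w}\in\underline{S(T)}\cap\underline{U(T)}=\underline{C(T)}$. Thus $\underline{B(T)}\subseteq\underline{C(T)}$, and the reverse inclusion is trivial, so $T$ has the corner property.

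\textbf{Reverse direction.} Suppose $T$ has the corner property, i.e. $\underline{B(T)}=\underline{C(T)}$, but assume for contradiction that $\cR$ does not: there is a rectangle $R\in\cR$ and a periodic boundary point $p\in\partial\cR$ with $p\in R$ but $p$ not a corner of $R$. Then $p$ lies in the interior of the stable boundary or the interior of the unstable boundary of $R$; say $p\in\overset{o}{\partial^s R}$ (the other case is symmetric via $T^{-1}$, using Remark \ref{Rema: Inverse type and inverse pAnosov} and Lemma \ref{Lemm: geometric type of inverse}). Pick the sector $e$ of $p$ whose stable direction runs along $\partial^s R$ and whose unstable direction points into $\overset{o}{R}$; its sector code $\underline{w}=\underline{e}$ is periodic (Lemma \ref{Lemm: Periodic to peridic}) and projects to $p\in\partial^s\cR$, so $\underline{w}\in\underline{S(T)}\subseteq\underline{B(T)}=\underline{C(T)}$, hence $\underline{w}\in\underline{U(T)}$, i.e. $\pi_T(\underline{w})=p\in\partial^u\cR$. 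But then the unstable separatrix of $p$ bounding $e$, which points into the interior $\overset{o}{R}$, would have to lie on the unstable boundary of some rectangle of $\cR$; since the interior of $R$ is disjoint from the boundary of every rectangle, this local unstable arc must lie in $\partial^u R'$ for a rectangle $R'$ whose interior is on the side of $e$ — but the interior of $R$ is precisely on that side and $\overset{o}{R}\cap\partial^u\cR=\emptyset$, a contradiction. (This is exactly the argument in the paragraph preceding the statement of the lemma.) Hence every periodic boundary point of $\cR$ is a corner point of every rectangle containing it, so $\cR$ has the corner property.

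\textbf{Main obstacle.} The routine parts are the symbolic translations via Propositions \ref{Prop: positive codes are boundary}--\ref{Prop: boundary points have boundary codes} and Lemma \ref{Lemm: every code is sector code }. The delicate point, which deserves careful writing, is the geometric step identifying, for a given sector $e$ of a periodic boundary point $p$ sitting in a rectangle $R$, exactly which of the four statements ``$p\in\partial^s R$'', ``$p\in\partial^u R$'', ``$p\in\overset{o}{\partial^s R}$'', ``$p$ is a corner of $R$'' translate into the code $\underline{e}$ being an $s$-boundary code, a $u$-boundary code, or a corner code. Here one must use that a local unstable arc of $p$ can be disjoint from the interiors of all rectangles, so that if it is not a boundary arc of $R$ it cannot be a boundary arc of anything, invoking the no-impasse / non-double-boundary structure implicit in the pseudo-Anosov class (Lemma \ref{Lemm: sector contined unique rectangle} and the configuration discussion around Figures \ref{Fig: No annulus}--\ref{Fig: more Non-rectangles}). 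Making this ``side'' bookkeeping precise — orienting $e$ consistently and keeping track of which separatrix of $p$ lies on which boundary component — is the crux; once done, both inclusions follow mechanically.
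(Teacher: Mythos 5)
Your overall strategy coincides with the paper's: both directions are obtained by translating between boundary points of $\cR$ and periodic boundary codes of $T$ via $\pi_T$, and your forward direction is essentially the paper's argument. The weak point is in the reverse direction, at the step that produces the contradiction. From $\underline{w}\in\underline{U(T)}$ you extract only the global statement $p=\pi_T(\underline{w})\in\partial^u\cR$, and then assert that the unstable separatrix of $p$ bounding the chosen sector $e$ (the one pointing into $\overset{o}{R}$) ``would have to lie on the unstable boundary of some rectangle''. That inference is not valid: $p\in\partial^u\cR$ only says that some unstable side of some rectangle passes through $p$; it does not force the particular separatrix bounding $e$ to lie in $\partial^u\cR$. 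Indeed, the configuration discussed immediately before the lemma --- a point that is a corner of one rectangle but sits in the interior of an unstable side of a neighbouring one --- is exactly a point of $\partial^u\cR$ one of whose local unstable separatrices enters the interior of a rectangle; if your inference pattern were sound it would rule out that configuration altogether, and the corner refinement would be pointless.

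What you actually need is the rectangle-specific statement, and it follows from the tools you already cite: since the sector $e$ lies in $R$, the zeroth symbol of $\underline{w}$ is the index $w_0$ of $R$; being a $u$-boundary code means $\underline{w}_-=\underline{J}^-(w_0,\epsilon)$, and the unstable version of Proposition \ref{Prop: positive codes are boundary} then gives $p\in\partial^u_{\epsilon}R_{w_0}=\partial^u_{\epsilon}R$, i.e. $p$ lies on an unstable side of the \emph{same} rectangle $R$. Combined with $p\in\overset{o}{\partial^s R}$ this places $p$ on both a stable and an unstable side of $R$, so $p$ is a corner of $R$, contradicting the assumption. This is precisely the paper's (direct, non-contradiction) version of the argument; with that localization inserted, your proof is correct and coincides with the paper's.
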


\begin{proof}

If $\cR$ satisfies the corner property, and we have an $s$-boundary code $\underline{w} \in \underline{S(T)}$, then by  Lemma  \ref{Lemm: Projection Sigma S,U,I}, $\pi_T(\underline{w})$ is a stable boundary periodic point contained in the rectangle $R_{w_0}$, where $w_0=\underline{w}_0$, is $0$ therm of the code $\underline{w}$. However, by hypothesis, if $p$ is  a $s$-boundary point  of $R_{w_0}$, it is a corner point of $R_{w_0}$,   then, $p$  is a $u$-boundary point. We can use Lemma  \ref{Lemm: Projection Sigma S,U,I} to deduce that the code $\underline{w}$ is also a $u$-boundary code. So, $\underline{w} \in \underline{C(T)}$. The case when the code $\underline{w}$ is a $u$-boundary code, i.e., $\underline{w} \in \underline{U(T)}$, is similarly proven.

On the other hand, assume  $T$ satisfies the corner property. Let $p\in R_{w_0}\in \cR$ a $s$-boundary point of $R$. There exist a $s$-boundary code $\underline{w}\in \pi^{-1}_T(p)$ such that $\underline{w}_0=w_0$. Like $T$ have the corner property, any $s$-boundary periodic code is $u$-boundary, which in turn means $\underline{w}$ is a $u$-boundary code. As a consequence of Lemma  \ref{Lemm: Projection Sigma S,U,I}, $p=\pi_T(\underline{w})\in R_{w_0}$ is a $u$-boundary point contained in $R_{w_0}$, therefore $p$ is a corner point of such rectangle, $R_{w_0}$. The situation when $p$ is $s$-boundary point of $R_{w_0}$ is proved in the same manner.

On the other hand, assume that $T$ satisfies the corner property. Let $p \in  \in \partial^s R_{w_0}$ be an $s$-boundary point of $R_{w_0}$. There exists an $s$-boundary code $\underline{w} \in \pi^{-1}_T(p)$ such that $\underline{w}0=w_0$. Since $T$ has the corner property, any $s$-boundary periodic code is also a $u$-boundary code, which, in turn, means that $\underline{w}$ is a $u$-boundary code. As a consequence of \ref{Lemm: Projection Sigma S,U,I}, $p=\pi_T(\underline{w})\in \partial^s R{w_0}$ is a $u$-boundary point contained in $R{w_0}$; therefore, $p$ is a corner point of such a rectangle, $R_{w_0}$. The situation when $p$ is an $s$-boundary point of $R_{w_0}$ is proved in the same manner.

\end{proof}

\subsection{Construction of the corner refinement}

The first goal of this section is to provide a method for creating a geometric Markov partition $\cR_{C(T)}$ for the homeomorphism $f$, along with its corresponding geometric type $C(T)$, in such a way that both $\cR_{C(T)}$ and $T_C$ satisfy the corner property.

Let $T$ be a geometric type in the pseudo-Anosov class with a binary incidence matrix $A(T)$. Consider a generalized pseudo-Anosov homeomorphism $f: S \to S$ with a geometric Markov partition $\cR$ of geometric type $T$. The procedure is as describe in the following paragraph:

\begin{enumerate}
\item Get the $s$-boundary refinement of $\cR$ along the family of boundary periodic  of codes $\underline{B(T)}$ to obtain:
$$
\cR_{S(\underline{B(T)})}.
$$

and its geometric type: $T_{S(\underline{B(T)})}$.

\item Compute the set of boundary periodic codes of the geometric type $T_{S(\underline{B(T)})}$ (Corollary \ref{Coro: algoritmic per codes}):
$$
\underline{S(T_{S(\underline{B(T)})})} \subset \Sigma_{A(T_{S(\underline{B(T)})})}.
$$

\item Obtain the $u$-boundary refinement of $\cR_{S(\underline{B(T)})}$ along the family of boundary periodic codes $\underline{S(T_{S(\underline{B(T)})})}$ to obtain the corner refinement of $\cR$:

\begin{equation}\label{Equa: Corner refinament}
\cR_{C(T)}:=[\cR_{S(\underline{B(T)})}]_{U(\underline{S(T_{S(\underline{B(T)})})})}.
\end{equation}

whose geometric type is $T_C$.
\end{enumerate}

\begin{defi}\label{Defi: Corner refi}
The geometric Markov partition for $f$, $\cR_{C(T)}$, described by equation (\ref{Equa: Corner refinament}), is the \emph{Corner refinement} of the geometric Markov partition $(f,\cR)$, and its geometric type $T_C$ is the \emph{Corner refinement} of $T$.
\end{defi}

\begin{prop}\label{Prop: Corner ref periodic boundary points}
Let $T$ be a geometric type in the pseudo-Anosov class with a binary incidence matrix $A(T)$. Consider a generalized pseudo-Anosov homeomorphism $f: S \to S$ with a geometric Markov partition $\cR$ of geometric type $T$. Let $(f,\cR_{C(T)})$ and $T_C$ be their corner refinement. Then:
\begin{itemize}
\item[i)] The boundary periodic points of $(f, \cR_{C(T)})$ and the boundary periodic points of $(f, \cR)$ are exactly the same. In simpler terms: $P_b(f, \cR_{C(T)})$ equals $P_b(f, \cR)$.

\item[ii)] The Markov partition $(f, \cR_{C(T)})$ exhibits the corner property

\item[iii)]  The new geometric type $T_C$ also exhibits the corner property:
$$
\underline{B(T_C)} = \underline{U(T_C)} \cap \underline{S(T_C)}.
$$
In other terms every boundary periodic code of $T_C$ is a corner code.

\end{itemize}

\end{prop}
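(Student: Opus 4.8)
\textbf{Proof strategy for Proposition \ref{Prop: Corner ref periodic boundary points}.}
The plan is to track the periodic boundary points through the two refinement steps that define $\cR_{C(T)}$, using the fact that each refinement only \emph{adds} stable (resp. unstable) boundary segments lying on the orbit of points already present in $S$. First I would prove Item $i)$. For the $s$-boundary refinement $\cR_{S(\underline{B(T)})}$, recall from Section \ref{Sec: Cut in a code} that the new stable boundary consists of $\partial^s\cR$ together with the stable intervals $I_{t,\underline{w}}$ for $\underline{w}\in \underline{B(T)}$; since every $\underline{w}\in\underline{B(T)}$ is periodic, $\pi_T(\underline{w})$ is a periodic boundary point of $(f,\cR)$ by Lemma \ref{Lemm: Periodic to peridic}, and the added stable intervals lie on the stable leaves of these same periodic points. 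Hence $\partial^s\cR_{S(\underline{B(T)})}$ is contained in $\cF^s(\mathrm{Per}^s(f,\cR))\cup\partial^s\cR$, and the unstable boundary is unchanged. By Lemma \ref{Lemm: Boundary of Markov partition is periodic} together with Proposition \ref{Prop: sub-boundary points 2.1.1}, the only periodic points that can appear on these leaves are the periodic boundary points of $\cR$ themselves; thus $P_b(f,\cR_{S(\underline{B(T)})}) = P_b(f,\cR)$. The same argument applied to $f^{-1}$ (via the inverse type, Remark \ref{Rema: Inverse type and inverse pAnosov}) shows the subsequent $u$-boundary refinement does not create new periodic boundary points either, giving Item $i)$.

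For Item $ii)$, the point is that after the first step every periodic $s$-boundary point of $\cR_{S(\underline{B(T)})}$ has been ``promoted'' so that it lies on a stable \emph{boundary} segment of every rectangle containing it: indeed, the cut along $\cI(i,\underline{B(T)})$ places each periodic $s$-boundary point $p$ (more precisely, the stable interval $I_{t,\underline{w}}$ through its orbit) on the horizontal boundary of the refined rectangles, so $p$ is an $s$-boundary corner-candidate in the sense that no refined rectangle has $p$ in the interior of its stable boundary. Then the $u$-boundary refinement along $\underline{S(T_{S(\underline{B(T)})})}$ does the symmetric job: it cuts along unstable segments through the orbits of all periodic $s$-boundary points (which by construction now label periodic $s$-boundary codes of $T_{S(\underline{B(T)})}$), forcing each such point to also lie on the unstable boundary of every rectangle containing it. Combined with Item $i)$ (no new periodic boundary points were created), every rectangle $R\in\cR_{C(T)}$ containing a periodic boundary point $p$ has $p$ on both its stable and unstable boundary, i.e.\ $p$ is a corner of $R$; this is exactly Definition \ref{Defi: Corner type partition }, so $(f,\cR_{C(T)})$ has the corner property. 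Finally, Item $iii)$ follows immediately from Item $ii)$ and Lemma \ref{Lemm: Corner prop equivalence}, which states that a geometric Markov partition has the corner property if and only if its geometric type does.

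The main obstacle I anticipate is the careful bookkeeping in Item $ii)$: one must verify that the set of periodic $s$-boundary \emph{codes} of the intermediate type $T_{S(\underline{B(T)})}$ faithfully encodes precisely the orbits of the periodic $s$-boundary \emph{points} of $(f,\cR_{S(\underline{B(T)})})$, so that cutting along $\underline{S(T_{S(\underline{B(T)})})}$ in the $u$-direction really does reach every periodic $s$-boundary point. This requires invoking Lemma \ref{Lemm: Projection Sigma S,U,I} (the combinatorial characterization of which codes project to the stable boundary) for the refined partition, and checking that no periodic $s$-boundary point was left ``hidden'' in the interior of an unstable boundary component after the first refinement — but this cannot happen precisely because $K$ (here the whole surface, in the pseudo-Anosov setting) has no double boundary points, so a periodic $s$-boundary point always has its stable leaf accumulated by the dynamics in a way that the first refinement detects. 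A secondary, purely combinatorial point is that the three-step recipe of Definition \ref{Defi: Corner refi} is algorithmic: each step is covered by Corollary \ref{Coro: Algoritm computation TS(w), cW}, Remark \ref{Rema: some properties of U-boundary refinament} and Corollary \ref{Coro: algoritmic per codes}, so $T_C$ is effectively computable from $T$; I would state this explicitly as part of the proof since it is needed for the later algorithmic applications.
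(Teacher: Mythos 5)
Your proposal is correct and follows essentially the same route as the paper: Item $i)$ via the observation that each refinement only adds boundary lying on the orbits of points already in $P_b(f,\cR)$, Item $ii)$ by tracking that the first cut puts every boundary periodic point on the stable boundary of every rectangle containing it while the second cut does the same for the unstable boundary, and Item $iii)$ via Lemma \ref{Lemm: Corner prop equivalence}; this is the paper's own case analysis in slightly different words. One sentence in your step-1 discussion is stated backwards: after the $s$-boundary refinement it is not true that ``no refined rectangle has $p$ in the interior of its stable boundary'' — a point that was $s$-boundary but not $u$-boundary for $\cR$ still sits in the interior of a stable boundary segment at that intermediate stage; what step 1 actually guarantees is that no rectangle has $p$ in the interior of its \emph{unstable} boundary, i.e.\ $p$ lies on the stable boundary of every rectangle containing it. Since that correct formulation is the one you actually combine with the step-2 statement to conclude that $p$ is a corner, the slip does not affect the validity of the argument.
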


\begin{proof}

\textbf{Item} $i)$. For the $s$-boundary refinement, the following equality holds:
$$
P_b(f,\cR_{S(\underline{B(T)}))}) : = P_b(f,\cR)\cup \pi_T(\underline{B(T)})=P_b(f,\cR).
$$
As a consequence, the boundary points of $(f, \cR_{C(T)})$ coincide with the boundary points of $(f, \cR).$

\textbf{Item} $ii)$. Let $p$ be a boundary point of $\cR_{S(\underline{B(T)})}$, and let $R \in \cR_{S(\underline{B(T)})}$ be a rectangle that contains $p$. Let $R_{0} \in \cR$ be a rectangle of $\cR$ that also contains $R$. After the $s$-boundary refinement:

\begin{enumerate}
\item If $p$ belonged to both $\partial^s R_{0}$ and $\partial^u R_0$, then $p$ becomes a corner point of the rectangle $R$.
\item If $p$ were  in $\partial^s R_{0}$ but not in $\partial^u R_0$, it remains in $\partial^s R$ but not in $\partial^u R$. There is no change.
\item If $p$ was in $\partial^u R_{0}$ but not in $\partial^s R_0$, then it becomes a corner point of $R$. 
\end{enumerate}

\begin{comment}
Referring to the characterization of the $s$ and $u$ boundary codes presented in Lemma  \ref{Lemm: Projection Sigma S,U,I}, we can observe that.

$$
\underline{U(T_{(S\underline{B(T)})})}\setminus \underline{S(T_{S(\underline{B(T)})})}=\emptyset.
$$
and then:
$$
\underline{B(T_{S(\underline{B(T)})})}=\underline{C(T_{S(\underline{B(T)})})}\cup \underline{S(T_{S(\underline{B(T)})})}\setminus \underline{U(T_{S(\underline{B(T)})})}.
$$
\end{comment}

Let $p$ be a periodic boundary point in $\cR_{C(T)}$. Consider a rectangle $R$ in $$
\cR_{C(T)}=[\cR_{S(\underline{B(T)}}]_{U(\underline{B(T_{S(\underline{B(T)})})})},
$$
 that contains $p$, along with a rectangle $R_0$ in $R_{0} \in \cR_{S(\underline{B(T)})}$  that contains $R$. After the $u$-boundary refinement, we can have two possible outcomes
 
\begin{enumerate}
	\item 	If $p\in \partial^s R_0\cap \partial^u R_0$  was a corner point of $R_{w_0}$, then the rectangle $R$ has $p$ as one of its corner points.
	
	\item 	If $p\in \partial^u R_0 \setminus \partial^s R_0$, after the $u$-boundary refinement, $p$ is in $\partial^s R \cap \partial^u R$ and is a corner point of $R$.
\end{enumerate}

In conclusion, the geometric Markov partition $(f, \cR_{C(T)})$ exhibits the corner property, as asserted in Item $ii$.

\textbf{Item} $iii)$. In light of Lemma \ref{Lemm: Corner prop equivalence}, $T_{C(T)}$ exhibits the corner property, and thus, we can proceed to prove item $iii$.

\end{proof}

\subsection{The corner refinement in a family of periodic codes} 
 
Let $\cR$ be a geometric Markov partition of $f: S \to S$ with geometric type $T$ that has the corner property. Take a family of periodic codes $\cW\subset \Sigma_{A(T)}$ with its corresponding projected family $P_{\cW} = \pi_T(\cW)$ of periodic points for $f$. The $s$-boundary refinement of $\cR$ concerning this family yields $\cR_{S(\cW)}$ and its geometric type $T_{S(\cW)}$. The boundary points of this Markov partition are given by the union of the boundary points of $\cR$ and the periodic points determined by the projections of the codes in $\cW$:
 
$$
 P_b(f,\cR_{S(\cW)})=P_b(f,\cR)\cup P_{\cW}
 $$ 

Let  $\cR_{C(T_{S(\cW)})}$ be the corner refinement of  $\cR_{S(\cW)}$ with geometric type $[T_{S(\cW)}]_C$. A corollary of Proposition \ref{Prop: Corner ref periodic boundary points}, when applied to the geometric type $T_{S(\cW)}$, is as follows:
 
 \begin{coro}\label{Coro: Boundary refinament}
 Let $\cR_{C(T_{S(\cW)})}$ be the corner refinement of $\cR_{S(\cW)}$ with geometric type $T_{C(T_{S(\cW)})}$. Then:
 
\begin{itemize}
	\item[i)] The boundary periodic points of $(f, \cR_{C(T_{S(\cW)})})$ and the boundary periodic points of $(f, \cR_{S(\cW)})$ are exactly the same. In simpler terms: $P_b(f, \cR_{C(T_{S(\cW)})}) =P_b(f, \cR_{S(\cW)})$.
	
	\item[ii)] The Markov partition $(f, \cR_{C(T_{S(\cW)})})$ exhibits the corner property.
	
	\item[iii)]  The new geometric type $[T_{S(\cW)})]_C$ also exhibits the corner property:
	$$
	\underline{B([T_{S(\cW)})]_C)} = \underline{U([T_{S(\cW)})]_C)} \cap \underline{S([T_{S(\cW)})]_C)}.
	$$
	In other terms, every boundary periodic code of $[T_{S(\cW)})]_C$ is a corner code.
	
\end{itemize}

\end{coro}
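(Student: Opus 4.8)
The statement to prove is Corollary \ref{Coro: Boundary refinament}, and the plan is simply to instantiate Proposition \ref{Prop: Corner ref periodic boundary points} at the geometric type $T_{S(\cW)}$ rather than at $T$. The only thing to check before doing so is that the hypotheses of that proposition are met by $T_{S(\cW)}$: it must be a geometric type in the pseudo-Anosov class with binary incidence matrix, and it must be realized by $(f,\cR_{S(\cW)})$. The realization is automatic, since $\cR_{S(\cW)}$ is by construction a geometric Markov partition of $f$ (Lemma \ref{Lemm: Markov partition cW }, together with the geometrization of Definition \ref{Defi: Geometric s-boundary refinament cW}), so $T_{S(\cW)}=T(f,\cR_{S(\cW)})$ is in the pseudo-Anosov class by Definition \ref{Defi: pseudo-Anosov class}. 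For the incidence matrix to be binary one invokes Proposition \ref{Prop: Incidence matrix mixing} and Lemma \ref{Lemm: incidence matriz pA is mixing}: the incidence matrix of a Markov partition of a generalized pseudo-Anosov homeomorphism is mixing, hence after a horizontal refinement (Proposition \ref{Prop: Unique horizontal type}) it is binary; alternatively one notes that the $s$-boundary refinement does not destroy this property if one started from a binary $T$ and, since the construction of $\cR_{C(T_{S(\cW)})}$ only uses the combinatorial data via Corollary \ref{Coro: Algoritm computation TS(w), cW} and Remark \ref{Rema: some properties of U-boundary refinament}, one may freely horizontally refine first.

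First I would record the identity on boundary periodic points. By the description of the $s$-boundary refinement, $P_b(f,\cR_{S(\cW)})=P_b(f,\cR)\cup \pi_T(\cW)=P_b(f,\cR)\cup P_\cW$, which gives the relation displayed just before the Corollary; this is the analogue of Item $i)$ of Proposition \ref{Prop: Corner ref periodic boundary points} applied to the $s$-boundary refinement step. Then, applying Item $i)$ of Proposition \ref{Prop: Corner ref periodic boundary points} with $T$ replaced by $T_{S(\cW)}$ and $\cR$ replaced by $\cR_{S(\cW)}$, the corner refinement $\cR_{C(T_{S(\cW)})}$ has exactly the same boundary periodic points as $\cR_{S(\cW)}$, i.e. $P_b(f,\cR_{C(T_{S(\cW)})})=P_b(f,\cR_{S(\cW)})$. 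That is Item $i)$ of the Corollary. Items $ii)$ and $iii)$ are then verbatim the conclusions of Items $ii)$ and $iii)$ of Proposition \ref{Prop: Corner ref periodic boundary points} for the type $T_{S(\cW)}$: the partition $(f,\cR_{C(T_{S(\cW)})})$ has the corner property, and by Lemma \ref{Lemm: Corner prop equivalence} its geometric type $[T_{S(\cW)}]_C$ has the corner property, which is precisely the equality $\underline{B([T_{S(\cW)}]_C)}=\underline{U([T_{S(\cW)}]_C)}\cap \underline{S([T_{S(\cW)}]_C)}$.

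The only mild subtlety — and the step I expect to require the most care — is that Proposition \ref{Prop: Corner ref periodic boundary points} is stated for the corner refinement of a geometric type $T$ \emph{without} assuming $T$ already has the corner property, so there is genuinely nothing extra to prove about $T_{S(\cW)}$ beyond "it is a pseudo-Anosov-class type with binary incidence matrix realized by a geometric Markov partition of $f$". One should nonetheless double-check that the notation $T_{C(T_{S(\cW)})}$ in the corollary matches the notation $[T_{S(\cW)}]_C$ coming out of Definition \ref{Defi: Corner refi}, i.e. that the corner refinement operation applied to the type $T_{S(\cW)}$ yields what the Corollary calls $T_{C(T_{S(\cW)})}$; this is purely a bookkeeping identification. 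Having verified the hypotheses transfer, the proof is then a one-line citation: "Apply Proposition \ref{Prop: Corner ref periodic boundary points} to the geometric type $T_{S(\cW)}$ and its realization $(f,\cR_{S(\cW)})$." I would write it exactly that way, after the sentence establishing the boundary-point identity $P_b(f,\cR_{S(\cW)})=P_b(f,\cR)\cup P_\cW$.
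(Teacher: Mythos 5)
Your proposal is correct and matches the paper's treatment: the paper presents this corollary precisely as an immediate application of Proposition \ref{Prop: Corner ref periodic boundary points} to the geometric type $T_{S(\cW)}$ and its realization $(f,\cR_{S(\cW)})$, with the boundary-point identity $P_b(f,\cR_{S(\cW)})=P_b(f,\cR)\cup P_{\cW}$ recorded beforehand exactly as you do. Your extra care about the binary incidence matrix and the notational identification $T_{C(T_{S(\cW)})}=[T_{S(\cW)}]_C$ is reasonable bookkeeping but does not change the argument.
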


\subsection{The corner refinement in  codes of bounded period} 

Finally, let us make a construction that will help us during the discussion of the Béguin algorithm.

\begin{defi}\label{Defi: boudary of period P }

Let $T$ be a geometric type in the pseudo-Anosov class with a binary incidence matrix $A(T)$, and let $f: S \to S$ be a generalized pseudo-Anosov homeomorphism with a geometric Markov partition $\cR$ of geometric type $T$. Assume that $(f, \cR)$ (and $T$) has the corner property. Now, consider the following definitions:

\begin{itemize}
\item Let  $P_{\underline{B(T)}}$ be  the maximum period among the boundary periodic codes of $T$:
$$
P_{\underline{B(T)}}:=\max\{\textbf{Per}(\underline{w},\sigma_{T}): \underline{w}\in \underline{B(T)}\}
$$
\item For any integer $P \geq P_{\underline{B(T)}}$, define $\cW_P$ as the set of admissible codes with periods less than or equal to $P$:
$$
\cW_{P}:=\{\underline{w}\in \Sigma_{A(T)}: \textbf{Per}(\underline{w},\sigma_{T})\leq P\}.
$$
\item The corner refinement of $\cR_{S(\cW_P)}$ along the family $\cW_P$  is  given by  $\cR_{C(T_{S(\cW_P)})}$, with its corresponding geometric type referred to as $T_{C(P)}$.
\end{itemize}

\end{defi}

\begin{lemm}\label{Lemm: relation periods projection refinament C}
Let $P\geq P_{\underline{B(T)}}$. Suppose $p\in S$ is a periodic point of $f$ with a period less than or equal to $P$; i.e., $\textbf{Per}(p,f)\leq P$. Consider $\underline{w}\in \Sigma_{A(T)}$ as a code that projects to $p$: $\pi_T(\underline{w})=p$. Then, the period of $\underline{w}$ is less than or equal to $P$, and $\underline{w}\in \cW_{P}$.

Conversely, if $\underline{w}\in \cW_{P}$, then the point $p:=\pi_T(\underline{w})\in S$ is a periodic point of $f$ with a period $\textbf{Per}(p,f)$ less than or equal to $P$. In this manner:

\begin{equation}\label{Equa: Projection codes an point period P}
\pi_T(\cW_{P})=\{p\in S : \textit{Per}(p,f)\leq P\} \text{ and } \pi_T^{-1}(\{p\in S : \textit{Per}(p,f)\leq P\})=\cW_P
\end{equation}

\end{lemm}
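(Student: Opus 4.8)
The plan is to prove the two set equalities in (\ref{Equa: Projection codes an point period P}) by establishing both inclusions, leaning on the semi-conjugation property $f\circ\pi_T=\pi_T\circ\sigma$ from Proposition \ref{Prop:proyecion semiconjugacion} and on Lemma \ref{Lemm: Periodic to peridic}, which already relates periodic codes to periodic points. First I would take a periodic point $p\in S$ with $\textbf{Per}(p,f)\leq P$ and a code $\underline{w}$ with $\pi_T(\underline{w})=p$. Lemma \ref{Lemm: Periodic to peridic} tells us $\underline{w}$ is periodic, say of period $Q$, so it remains to bound $Q$ by $P$. The subtlety is exactly the one isolated in the proof of Lemma \ref{Lemm: Periodic to peridic}: if $p$ is a singular (or more generally a boundary) periodic point, the action of $f^{\textbf{Per}(p,f)}$ on the sectors of $p$ can be a nontrivial permutation, so the period of the sector code $\underline{w}=\underline{e}$ can be a proper multiple of $\textbf{Per}(p,f)$. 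This is where the standing hypothesis that $(f,\cR)$ has the corner property, together with the assumption $P\geq P_{\underline{B(T)}}$, does the work: if $p$ is a boundary point, then it is a boundary \emph{periodic} point of $\cR$, hence $\pi_T^{-1}(p)\subset \underline{B(T)}$, so $\underline{w}\in\underline{B(T)}$ and by definition of $P_{\underline{B(T)}}$ its period is at most $P_{\underline{B(T)}}\leq P$. If instead $p$ is a totally interior point, Corollary \ref{Coro: interior periodic points unique code} gives $\pi_T^{-1}(p)=\{\underline{w}\}$ a single code, and the $f$-orbit of $p$ stays in $\overset{o}{\cR}$, so all sectors of each $f^z(p)$ lie in one rectangle and $\underline{w}$ is literally the itinerary $(\,\text{index of the rectangle containing }f^z(p)\,)_{z\in\ZZ}$, which is periodic of period exactly $\textbf{Per}(p,f)\leq P$. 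In either case $\underline{w}\in\cW_P$, giving $\pi_T^{-1}(\{p:\textbf{Per}(p,f)\leq P\})\subseteq \cW_P$ and, as a consequence, $\{p:\textbf{Per}(p,f)\leq P\}\subseteq \pi_T(\cW_P)$.

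Conversely, suppose $\underline{w}\in\cW_P$, i.e. $\underline{w}$ is periodic of period $Q\leq P$. Then $p:=\pi_T(\underline{w})$ satisfies $f^Q(p)=\pi_T(\sigma^Q(\underline{w}))=\pi_T(\underline{w})=p$ by the semi-conjugation, so $p$ is periodic with $\textbf{Per}(p,f)\mid Q$ and hence $\textbf{Per}(p,f)\leq P$. This shows $\pi_T(\cW_P)\subseteq\{p:\textbf{Per}(p,f)\leq P\}$ and $\cW_P\subseteq\pi_T^{-1}(\{p:\textbf{Per}(p,f)\leq P\})$. Combining the two directions yields both equalities in (\ref{Equa: Projection codes an point period P}).

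I expect the main obstacle to be handling cleanly the case where $p$ is a periodic point on the boundary of $\cR$ but not a singularity, or a singularity whose sectors are genuinely permuted by a power of $f$ — precisely the scenario where a naive argument would only give that the code period divides $2k\cdot\textbf{Per}(p,f)$ rather than equals $\textbf{Per}(p,f)$. The resolution is not to control that multiple directly, but to observe that any such $p$ lies in $P_b(f,\cR)=\pi_T(\underline{B(T)})$ by the corner property, so every preimage code is a boundary periodic code, and $P_{\underline{B(T)}}$ was defined to dominate all such periods; the hypothesis $P\geq P_{\underline{B(T)}}$ is therefore used in an essential way. A minor point to state carefully is the identification, for totally interior $p$, of the unique preimage code with the rectangle itinerary of $p$; this is immediate from Lemma \ref{Lemm: every code is sector code } and Corollary \ref{Coro: interior periodic points unique code}, since all sector codes of $p$ coincide and each records which rectangle contains the (unique) sector of $f^z(p)$.
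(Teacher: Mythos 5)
Your proof is correct and follows essentially the same route as the paper: split the forward direction into the boundary case (where every preimage code is a periodic boundary code, hence has period at most $P_{\underline{B(T)}}\leq P$) and the totally interior case (unique preimage code whose period equals $\textbf{Per}(p,f)$, via Corollary \ref{Coro: interior periodic points unique code}), then use the semi-conjugation $f\circ\pi_T=\pi_T\circ\sigma$ for the converse inclusion. Your explicit discussion of the sector-permutation subtlety only makes more transparent why the boundary case must be routed through $P_{\underline{B(T)}}$, which is exactly how the paper handles it.
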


\begin{proof}
Let $p\in S$ be a periodic point of $f$ with period less or equal than $P$. We have two possible situations: 
\begin{itemize}
\item The periodic point $p$ is a boundary point. In this case, $\underline{w}\in \underline{B(T)}\subset \cW_{P}$ just by definition of $\cW_P$. Let's recall that any code in $\underline{B(T)}$ has a period less than or equal to $P_{\underline{B(T)}}\leq P$.

\item The periodic point $p$ is interior. In this case, $\pi^{-1}(p)$ contains a unique code $\underline{w}$ whose period coincides with the period of $p$, i.e., $\textbf{Per}(\underline{w},\sigma_{T})=\textbf{Per}(p,f)\leq P$. Therefore, $\underline{w}\in \cW_{P}$.

\end{itemize}

This implies that:
\begin{equation}\label{Equa: period codes and points, I}
\pi_T^{-1}(\{p\in S : \textit{Per}(p,f)\leq P\}) \subset \cW_P
\end{equation}
and 

\begin{equation}\label{Equa: period codes and points, II}
 \{p\in S : \textit{Per}(p,f)\leq P\}= \pi_T(\pi_T^{-1}(\{p\in S : \textit{Per}(p,f)\leq P\}))  \subset \pi_T(\cW_P).
\end{equation}
 
 For any code $\underline{w}\in \cW_{P}$ with a period $Q$ no greater than $P$, we can observe the following relationship:
 
  $$
  p=\pi_T(\underline{w})=\pi_T(\sigma^Q(\underline{w}))=f^Q(\pi_T(\underline{w}))=f^Q(p)
  $$

This implies that $P\geq Q \geq\textbf{Per}(p,f)$, i.e., the period of $p$ with respect to $f$ is less than or equal to $P$. Then:

\begin{equation}\label{Equa: period codes and points, II, b}
\pi_{T}(\cW_P)  \subset \{p\in S : \textit{Per}(p,f)\leq P\}.
\end{equation}

This inclusion, along with the information provided in \ref{Equa: period codes and points, II}, implies that:
$$
\pi_{T}(\cW_P)  = \{p\in S : \textit{Per}(p,f)\leq P\}.
$$

Furthermore, if $\underline{w}\in \cW_P$, trivially $\underline{w}\in \pi_T^{-1}(\pi_T(\underline{w}))$ and $p:=\pi_T(\underline{w})$ has a period less than or equal to $P$. In this manner:
 
\begin{equation}\label{Equa: period codes and points, I, b}
\cW_P \subset \pi_T^{-1}(\{p\in S : \textit{Per}(p,f)\leq P\}).
\end{equation}
Using this information and the contention provided in \ref{Equa: period codes and points, I}, we establish the second equality of our lemma:
$$
\pi_T^{-1}(\{p\in S : \textit{Per}(p,f)\leq P\})=\cW_P.
$$

This ends our proof.
\end{proof}

\begin{coro}\label{Coro: P boundary refinament}
When $P\geq P_{B(T)}$, the periodic boundary points of the corner refinement $(f, \cR_{C(T_{S(\cW_P)})})$ coincides with the set of all periodic points of $f$ having periods less than or equal to $P$ i.e.,
$$
P_b(f,\cR_{C(T_{S(\cW_P)})} )=\{p\in S : \textit{Per}(p,f)\leq P\}. 
$$
Furthermore, $(f,\cR_{C(T_{S(\cW_P)})} )$ also possesses the corner property.
\end{coro}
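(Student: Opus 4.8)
\textbf{Proof proposal for Corollary \ref{Coro: P boundary refinament}.}

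The plan is to assemble the statement from three ingredients already available: Lemma \ref{Lemm: relation periods projection refinament C}, which identifies $\cW_P$ as exactly the set of codes projecting to periodic points of period $\leq P$; Corollary \ref{Coro: Boundary refinament}, which controls the effect of the corner refinement on boundary periodic points and guarantees the corner property; and the computation of boundary periodic points of an $s$-boundary refinement along a family of periodic codes, namely the identity $P_b(f,\cR_{S(\cW)}) = P_b(f,\cR)\cup \pi_T(\cW)$ established in the discussion preceding Corollary \ref{Coro: Boundary refinament}. First I would record the standing hypotheses: $T$ is in the pseudo-Anosov class with binary incidence matrix, $f$ is a generalized pseudo-Anosov homeomorphism with geometric Markov partition $\cR$ of type $T$, and $(f,\cR)$ (equivalently $T$, by Lemma \ref{Lemm: Corner prop equivalence}) has the corner property; and $P\geq P_{\underline{B(T)}}$. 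These are precisely the hypotheses under which Definition \ref{Defi: boudary of period P } makes $\cW_P$, $\cR_{S(\cW_P)}$, and $\cR_{C(T_{S(\cW_P)})}$ well defined.

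The core computation proceeds in two steps. First, apply the boundary-point formula for the $s$-boundary refinement along the family $\cW_P$: the set of periodic boundary points of $(f,\cR_{S(\cW_P)})$ is $P_b(f,\cR)\cup \pi_T(\cW_P)$. Since $(f,\cR)$ has the corner property and $P\geq P_{\underline{B(T)}}$, every periodic boundary point of $\cR$ is a corner point whose period is $\leq P_{\underline{B(T)}}\leq P$; hence $P_b(f,\cR)$ is already contained in $\{p\in S:\textit{Per}(p,f)\leq P\}$. Combining this with Equation \eqref{Equa: Projection codes an point period P} of Lemma \ref{Lemm: relation periods projection refinament C}, which gives $\pi_T(\cW_P)=\{p\in S:\textit{Per}(p,f)\leq P\}$, yields
\[
P_b(f,\cR_{S(\cW_P)}) = P_b(f,\cR)\cup \pi_T(\cW_P) = \{p\in S:\textit{Per}(p,f)\leq P\}.
\]
Second, apply Corollary \ref{Coro: Boundary refinament} to the geometric type $T_{S(\cW_P)}$ and the corner refinement $\cR_{C(T_{S(\cW_P)})}$: item (i) of that corollary states that the boundary periodic points of $(f,\cR_{C(T_{S(\cW_P)})})$ and of $(f,\cR_{S(\cW_P)})$ coincide, and item (ii) states that $(f,\cR_{C(T_{S(\cW_P)})})$ has the corner property. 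Chaining the two steps gives
\[
P_b(f,\cR_{C(T_{S(\cW_P)})}) = P_b(f,\cR_{S(\cW_P)}) = \{p\in S:\textit{Per}(p,f)\leq P\},
\]
together with the corner property, which is exactly the assertion of the corollary.

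The only point requiring genuine care — and which I expect to be the main (mild) obstacle — is verifying that Corollary \ref{Coro: Boundary refinament} actually applies to $T_{S(\cW_P)}$: that corollary was stated for an arbitrary geometric type $T$ with the corner property together with a family $\cW$ of periodic codes, so one must confirm that $T$ having the corner property is the relevant hypothesis and that $\cW_P$, as a family of $\sigma_T$-periodic admissible codes, legitimately plays the role of $\cW$ there. (Indeed the setup of Corollary \ref{Coro: Boundary refinament} only asks that $\cR$ have the corner property and that $\cW$ be a family of periodic codes, both of which hold here.) One should also double-check the edge case where $\cW_P$ contains codes projecting to points already on $\partial\cR$, so that $\pi_T(\cW_P)$ genuinely absorbs $P_b(f,\cR)$ rather than the union being strictly larger; this is handled precisely because $P\geq P_{\underline{B(T)}}$ forces every boundary periodic code to lie in $\cW_P$. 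Beyond that, the argument is a bookkeeping exercise combining three previously established results, with no new geometric input.
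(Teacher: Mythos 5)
Your proposal is correct and follows essentially the same route as the paper: it invokes Corollary \ref{Coro: Boundary refinament} to identify the boundary periodic points of the corner refinement with those of $\cR_{S(\cW_P)}$ (and to get the corner property), uses the identity $P_b(f,\cR_{S(\cW_P)})=P_b(f,\cR)\cup\pi_T(\cW_P)$, and concludes via Lemma \ref{Lemm: relation periods projection refinament C} together with the inclusion $P_b(f,\cR)\subset\{p\in S:\textit{Per}(p,f)\leq P\}$. Your extra checks (applicability of Corollary \ref{Coro: Boundary refinament} to $T_{S(\cW_P)}$ and the absorption of $P_b(f,\cR)$) are sound elaborations of steps the paper treats as immediate.
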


\begin{proof}
From Corollary \ref{Coro: Boundary refinament}: 
$$
 P_b(f, \cR_{C(T_{S(\cW_P)})}) =P_b(f, \cR_{S(\cW_P)}).
$$
But, we recall that:
$$
P_b(f, \cR_{S(\cW_P)})= P_b(f,\cR) \cup \pi_T(\cW_{P}).
$$
In view of Lemma \ref{Lemm: relation periods projection refinament C} 
$$
\pi_T(\cW_P) = \{p\in S : \textit{Per}(p,f)\leq P\}
$$ 
 and clearly $P_b(f,\cR)\subset \{p\in S : \textit{Per}(p,f)\leq P\}$. Therefore: 
$$
 P_b(f, \cR_{C(T_{S(\cW_P)})}) =P_b(f, \cR_{S(\cW_P)})=P_b(f,\cR) \cup \pi_T(\cW_{P})= \{p\in S : \textit{Per}(p,f)\leq P\}.
$$
\end{proof}

\section{The boundary refinement respect to $\cW$}

In this section we deal with next objects:

\begin{enumerate}
\item  A geometric type in the pseudo-Anosov class, $T$  whose incidence matrix $A(T)$ is binary.
\item  A generalized pseudo-Anosov homeomorphism $f: S\rightarrow S$. With a geometric Markov partition $(f,\cR)$ of geometric type $T$.
\item Finite family of periodic points of $f$, $\cP:={p_\alpha}_{\alpha=1}^A$
\end{enumerate}

Given a geometric type $\cG$ we must fix the following notation, whenever it have seance:

\begin{enumerate}
	\item $A(\cG)$ its it incidence matrix, $(\Sigma_{A(\cG)},\sigma_{A(\cG)})$ the sub-shift of finite associate to $\cG$ (??????)and $\pi_{\cG}: \Sigma_{A(\cG)} \to S$ the natural projection defined in Def ????.
	\item $\cW_{\cG}(\cP)=\pi^{-1}_{\pi_{\cG}}(\cP)$. 
\end{enumerate}

\begin{defi}
The boundary refinement of $\cR$ and $T$ with respect to the family of periodic points $\cP$ is:
$$
\cR_{B(\cP)}:=(\cR_{S(\cW_T)})_{U(\cW_{T_{S(\cW_T(\cP))}}(\cP))}.
$$

\end{defi}

To clarify the ideas:

\begin{itemize}
\item We obtain the $s$-boundary refinement of $\cR$ along the periodic codes $\cW_T$, to get another Markov partition $\cR_{S(\cW_T)}$ with geometric type $T_{S(\cW_T(\cP))}$.

\item $\cW_{T_{S(\cW_T(\cP))}}(\cP))$ is a family of periodic codes in $\Sigma_{A( T_{S(\cW_{T}(\cP))})}$. We obtain the $u$-boundary refinament of $\cR_{S(\cW_T)}$ along this family of peridoic codes. To obtain:

$$
(\cR_{S(\cW_T)})_{U(\cW_{T_{S(\cW_T(\cP))}}(\cP))}.
$$
as was defined.
\end{itemize}

This definition have total seance from a formal point but we is necessarily that $(\cR_{S(\cW_T)})$ be  commutable in therms of the geometric type $T$ and the family of periodic codes $\cW_T$ to be interesting.  We have develop a effective approach to compute the $s$ and $u$ boundary refinements of a Markov partition and their respective geometric type, by applied such process to the respective Markov partitions we should obtain the boundary refinement an its geometric type. The only difficulty is loved a effective computation of the family codes of $\cW_{T_{S(\cW_T(\cP))}}(\cP)$  in therms of $T$. 

Lets to recall that $T\neq T_{S(\cW_{T})}$ at leas the family just contains $s$-boundary codes. Therefore the shift space $\Sigma_{A(T)}$ is different that $\Sigma_{A( T_{S(\cW_{T})})}$. The objective of this section is solve  the following problem:

 \begin{prob}
Let $T_{S(\cW_T(\cP))}$ be the geometric type of the $s$-boundary refinement of $T$ a long the family $\cW_T(\cP)$ of periodic codes in $\Sigma_{A(T)}$. Compute every code in the family $\cW_{T_{S(\cW_T(\cP))}}(\cP)$, in therms of the geometric type $T$ and $\cW_T(\cP)$.
 \end{prob}

 \subsection{From a code in $\Sigma_{A(T)}$ to a pair of codes in $\Sigma_{A(T_{S(\cW)})}$}

We are going to consider the next simplified problem:

\begin{prob}
	Let $\{p_m\}_{m=1}^m$ a family of periodic points of $f$ that are no $s$ or $u$-boundary codes. Let $\cW:=\cup_{m=1}^m\pi_f^{-1}(p_i)$ and let $\cW_s:=\cup_{m=1}^m\pi_{f_s}^{-1}(p_i)$. Construct any element of   $\cW_s$ in therms of elements in $\cW$.
\end{prob}

Let $\underline{w}\in \cW$ lets see how the boundary refinement split it. Lets to take:
\begin{enumerate}
\item The period of $\underline{w}$ is $P$.
\item We can determine for all $t\in \{0,\cdots, P-1\}$ the following parameters: $s_t:=\underline{s}(t,\underline{w})$,  $\tilde{r}(w_t,\underline{s}(t,\underline{w}))$ and $\epsilon_t:=\epsilon_t(w_t,j_{t,\underline{w}} )$.
\end{enumerate}

Now we are going to construct the two codes in $\Sigma_{A(T_{S(\cW)})}$ that project to $p:=\pi_f(\underline{w})$ using a recursive process:

\begin{itemize}
\item The point $p$ is in the stable boundary of two adjacent rectangles of $\cR_{S(\cW)}$ $\tilde{R}_{\tilde{r}(w_0,\underline{s}(0,\underline{w}))}$ and $\tilde{R}_{\tilde{r}(w_0,\underline{s}(0,\underline{w})+1)}$. Clearly $\tilde{R}_{\tilde{r}(w_0,\underline{s}(0,\underline{w}))}$  is below $\tilde{R}_{\tilde{r}(w_0,\underline{s}(0,\underline{w})+1)}$. Depending on this choose we are going to construct the different codes. In this construction we are going to take the inferior rectangle
$$
v_0:= \tilde{R}_{\tilde{r}(w_0,\underline{s}(0,\underline{w}))}
$$
 which means that we need to look for the orbit of the any sector $e$ of $p$ that point toward the bottom of $R_{w_0}$.
\item Now the stable segment $f(I_{0,\underline{w}})\subset I_{1,\underline{w}}$ and $I_{1,\underline{w}}$ is the stable boundary of two adjacent rectangles of $\cR_{S(\cW)}$ they are $\tilde{R}_{\tilde{r}(w_1,\underline{s}(1,\underline{w}))}$ and $\tilde{R}_{\tilde{r}(w_1,\underline{s}(1,\underline{w})+1)}$. To determine the index $v_1$ we need to know if $f$ change or not the vertical direction of our sector $e$, i.e. we need to look at $\epsilon_0$, in this manner:

\begin{itemize}
\item If $\epsilon_0=1$ we take $v_1=\tilde{r}(w_1,\underline{s}))$ because $f(e)$ point towards the bottom of $R_{w_1}$. 
\item If $\epsilon_0=-1$ we take $v_1=\tilde{r}(w_1,\underline{s}+1))$ because $f(e)$ point towards the top of $R_{w_1}$.
\end{itemize}
In any case we know the therm $v_1$ and the direction towards $f(e)$ points, and this position is given by  the sing og $-\epsilon_0$.

\item Now it comes our recursive law. Imagine we have determined the therm $v_T$ and we know the direction towards $f^T(e)$ points towards by use  the formula:
  $$
  e_T:= - \prod_{0}^{T-1}\epsilon_t
  $$ 
  
  then:

\begin{itemize}
\item[i)]  If $ e_T=- \prod_{t=0}^{T-1}\epsilon_t=-1$, $f^T(e)$ points toward the bottom of $R_{w_T}$ and :

\begin{itemize}
\item[a)] If $\epsilon_T=1$,  $v_{t+1}=\tilde{r}(w_{T+1},\underline{s}(T+1,\underline{w}))$ and $- \prod_{t=0}^{T-1}\epsilon_t=-1$.
\item[b)] If $\epsilon_T=1$,  $v_{t+1}=\tilde{r}(w_{T+1},\underline{s}(T+1,\underline{w})+1)$ and $- \prod_{t=0}^{T-1}\epsilon_t=1$.
\end{itemize}

\item[i)]  If $ e_T=- \prod_{t=0}^{T-1}\epsilon_t=1$, $f^T(e)$ points toward the top of $R_{w_T}$ and :
  
  \begin{itemize}
  	\item[a)] If $\epsilon_T=1$, $v_{t+1}=\tilde{r}(w_{T+1},\underline{s}(T+1,\underline{w})+1)$ and $- \prod_{t=0}^{T-1}\epsilon_t=1$.
  	\item[b)] If $\epsilon_T=1$,  $v_{t+1}=\tilde{r}(w_{T+1},\underline{s}(T+1,\underline{w}))$ and $- \prod_{t=0}^{T-1}\epsilon_t=-1$.
  \end{itemize}
  
\end{itemize}

\item This processes end after $P$ or $2P$ iterations depending on the change of the vertical orientation of $f^P$ in the unstable manifold of $p$. If end after $2P$ iterations we obtain the two different codes of $\Sigma_{A(T_{S(\cW)})}$ that project to $p$ at the same time. If not we need to incite the process assuming that the sector $e$ point towards the top of $R_{w_0}$ and take  the upper rectangle
$$
v_0:= \tilde{R}_{\tilde{r}(w_0,\underline{s}(0,\underline{w})+1)}.
$$

\end{itemize}

\section{The genus and the geometric type}\label{Sec: euler characteristic}

Along this section, let $T=\{n,\{h_i,v_i\}_{i=1}^n, \Phi_T:=\{\rho_T,\epsilon_T\}\}$  be a geometric type in the pseudo-Anosov class, assume that $f:S\to S$ is a generalized pseudo-Anosov homeomorphism with a geometric Markov partition $\cR$ of geometric type $T$. We must determine the genus of the surface $S$, gen$(S)$, using the \emph{Euler-Poincaré formula} (\cite[Proposition 11.4]{farb2011primer}):

\begin{prop}\label{Prop; Euler Poincare formula}
	Let $S$ be a closed and oriented surface with a singular foliation $\cF$. Let $P_s$ be the number of prongs at a singular point $p_s$. Then
	$$
2-2\text{gen}(S)=\frac{1}{2}\sum_{p_s\in \text{Sing}(\cF)}(2-P_s).
	$$
\end{prop}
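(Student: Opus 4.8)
The plan is to derive the identity from the Poincar\'e--Hopf index theorem for singular foliations; this is essentially the content of the cited \cite[Proposition 11.4]{farb2011primer}, so the point is to recall the mechanism. First I would note that the singular foliation $\cF$ defines a ``topological line field'' $\ell$ on $S\setminus\textbf{Sing}(\cF)$: in a $C^{0}$ chart $\phi_x$ of Definition~\ref{Defi: Singular foliation} with $p(x)=2$, the leaves are the horizontal lines of $\cH_2$, and since the chart changes carry leaves to leaves the local ``leaf pattern'' is intrinsic. Each $p_s\in\textbf{Sing}(\cF)$ is then an isolated singularity of $\ell$, and to each such singularity one attaches an index $\mathrm{ind}_{p_s}(\ell)\in\tfrac12\ZZ$, defined as the total turning (divided by $2\pi$) of the tangent direction of $\ell$ along a small positively oriented loop around $p_s$; because the index only depends on the $C^{0}$ conjugacy class of the germ of $\cF$ at $p_s$, it is well defined despite the merely $C^{0}$ regularity of the charts.

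Next I would compute this index from the normal form. Around a $k$-prong $p_s$ (so $k=P_s$) the chart $\phi_{p_s}$ identifies a neighbourhood with $\cD_k$ carrying the foliation $\cH_k$, which by construction is the lift under $\pi_k\colon\cD_k\to\cD_1$ of $\cH_1=\pi_2(\cH_2)$; concretely $\cH_k$ is the foliation of $\cD_k$ by the level sets of $z\mapsto\mathrm{Im}(z^{k/2})$, a smooth foliation in this chart. A direct computation with this smooth model --- the tangent line at $z$ to $\{\mathrm{Im}(z^{k/2})=\text{const}\}$ has argument $\arg(dz)$ with $z^{k/2-1}\,dz\in\RR$, so the line direction turns by $-2\pi(\tfrac k2-1)$ as $\arg z$ runs over $[0,2\pi]$ --- gives $\mathrm{ind}_{p_s}(\ell)=1-\tfrac k2=\tfrac{2-P_s}{2}$. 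As sanity checks, a fake singularity with $k=2$ has index $0$, a spine ($k=1$) has index $+\tfrac12$, and an ordinary tripod ($k=3$) has index $-\tfrac12$.

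Then I would invoke the Poincar\'e--Hopf theorem for singular foliations on the closed oriented surface $S$, $\sum_{p_s\in\textbf{Sing}(\cF)}\mathrm{ind}_{p_s}(\ell)=\chi(S)$, and substitute the indices found above: $\chi(S)=\sum_{p_s}\left(1-\tfrac{P_s}{2}\right)=\tfrac12\sum_{p_s}(2-P_s)$. Since $S$ is closed and orientable we have $\chi(S)=2-2\,\mathrm{gen}(S)$, and comparing the two expressions yields the claimed formula. In the situation in which this Proposition is actually used --- $\cF=\cF^{s}$ is the stable foliation of a generalized pseudo-Anosov $f$, so that transverse measures $\mu^{s},\mu^{u}$ are available --- there is an alternative, perhaps shorter route: the $C^{0}$ coordinate $z=x+iy$ with $x,y$ the local $\mu^{u}$- and $\mu^{s}$-transverse coordinates makes $S$ a flat surface with a cone point of angle $P_s\pi$ at each $p_s$, and Gauss--Bonnet for flat cone metrics, $\sum_{p_s}(2\pi-P_s\pi)=2\pi\,\chi(S)$, gives the conclusion at once.

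The main obstacle is not the arithmetic but the justification of the two topological inputs of the first argument at the level of regularity available here: (i) that the index of $p_s$ computed in the $C^{0}$ chart of Definition~\ref{Defi: Singular foliation} agrees with the smooth-model computation --- which holds because the index is a homotopy invariant of the restriction of the leaf pattern to a punctured disc and hence is unchanged under $C^{0}$ conjugacy; and (ii) the Poincar\'e--Hopf summation $\sum\mathrm{ind}_{p_s}=\chi(S)$ for a $C^{0}$ singular foliation, which I would either quote from \cite{fathi2021thurston} and \cite{farb2011primer} or re-prove by replacing $\cF$ near its singularities with its smooth models and away from them with a $C^{0}$-close smooth line field, so that the classical (smooth) Poincar\'e--Hopf theorem applies and the index sum is unchanged. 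The Gauss--Bonnet route sidesteps (ii) entirely but needs the transverse measures, which is exactly the hypothesis in force at the point of use.
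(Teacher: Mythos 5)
Your argument is correct; note that the paper does not prove this proposition at all but simply quotes it as \cite[Proposition 11.4]{farb2011primer}, and your main route (the line field of $\cF$ has index $1-\tfrac{P_s}{2}$ at a $P_s$-prong, summed via Poincar\'e--Hopf to give $\chi(S)=2-2\,\mathrm{gen}(S)$) is exactly the argument of that cited reference, so you have in effect reconstructed the standard proof, including the correct handling of the $C^0$ regularity by homotopy invariance of the index. The flat-cone-metric/Gauss--Bonnet variant you sketch is also fine in the setting where the proposition is actually used, since there the transverse measures $\mu^s,\mu^u$ are available.
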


If we are able to compute the number of singularities of $\mathcal{F}^{s,u}$ and the number of prongs in each of them, we have essentially computed the genus, as we can take the sum over all of the singularities and apply the Euler-Poincaré formula.

 The set of $s$-\emph{boundary periodic codes} of $T$, Per$(\underline{\cS(T)}$, was introduced in \ref{Defi: s,u-boundary periodic codes} and Proposition  \ref{Prop: positive codes are boundary}  establishes that each such periodic code, $\underline{w}(i,\epsilon)$, projects into $\partial^s_{\epsilon}R_i$ and the positive part is given by $I^+(i,\epsilon)$ (Definition \ref{Defi: positive negative boundary codes }). Similarly, the $u$-generating funtion $\Upsilon(T)$ (\ref{{Defi: u-boundary generating funtion}}) lets us determine the negative codes $J^{-}(i,\epsilon)$ for each unstable boundary component of $\cR$.
 
After a refinement in the geometric type,  we can assume that the incidence matrix of $T$, $A(T)$, is binary. In Chapter\ref{Chapter: TypeConjugacy} , we use the $s$-\emph{generating function} $\Gamma(T)$ (Definition  \ref{Defi: s-boundary generating funtion}) to produce the $s$-\emph{boundary codes} in the sub-shift of finite type, $(\Sigma_{A(T)},\sigma_{A(T)})$, associated with $T$ (Definition \ref{Defi: Subshift of type for a Markov partition}). These codes are  projected by $\pi_{(f,\cR)}:\Sigma_{A(T)} \to S$ (Definition \ref{Defi: projection pi}) into the stable boundary components of the Markov partition $\mathcal{R}$.

If we think for a while, any $k$-prong of the stable foliation  $\cF^s$ is always in the boundary of $\cR$ and if $T$ have the corner property,   the number $k$ of prong in each singularity is equal to the number of the different periodic boundary codes in $\Sigma_{T}$ that project into $p$. This discussion suggest the following proposition.

\begin{prop}\label{Prop: Peridic boundary codes}
Let $T$ be a geometric type in the pseudo-Anosov class whose incidence matrix $A(T)$ is binary and has the corner property. Let $f:S\to S$ be a pseudo-Anosov homeomorphism with a geometric Markov partition $\mathcal{R}$ of geometric type $T$. Let $(\Sigma_{A(T)},\sigma_{A(T)})$ be the sub-shift associated with $T$, and let $\pi=\pi_{(f,\mathcal{R})}: \Sigma_{A(T)} \to S$ be the natural projection, then the following statements holds:
	
\begin{itemize}
\item[i)] For each boundary label (\ref{Defi; s,u boundary labels of T}) of $T$  it is possible to determine by iterating $\Gamma(T)$ at most $2n$ times if the corresponding stable boundary $\partial_{\epsilon}^s R_i$ of $\mathcal{R}$ is either periodic or not. In the periodic case, the positive periodic expression is given by the positive code, $I^{+}(i,\epsilon)$, and has a period less than or equal to $2n$.   Therefore the set of \emph{periodic and positive} $s$-boundary codes of $T$ is given by:
$$
\underline{\cS}^{+}(T)=\{I^{+}(i_{\iota},\epsilon_{\iota})\}_{\iota=1}^{\textbf{P}(T)},
$$

Where  $1\leq\textbf{P}(T)\leq 2n$  is equal to the number of  periodic stable boundary positive codes of $T$, the index  $i_{\iota}\in \{1,\cdots,n\}$ may appear in at most $2$ pairs but if $i_{\iota}=i_{\iota'}$ then $\epsilon_{\iota}\neq \epsilon_{\iota'}$.

\item[ii)] A similar statement holds  for the set of \emph{periodic and negative} $u$-boundary codes of $T$ by applying the $u$-generating funtion $\Upsilon(T)$  at most $2n$ times. Moreover, the set of \emph{periodic and negative} $u$-boundary codes of $T$ is given by:
 $$
 \underline{\cU}^{-}(T)=\{J^{-}(i_{\iota},\epsilon_{\iota}'\}_{\iota=1}^{\textbf{P}(T)},
 $$
where the number $\textbf{P}(T)$ and the indexes $i_{\iota}\in\{1,\cdots,n\}$ are the  same  than those given  in item $i)$ for the set $\underline{\cS}^{+}(T)$,  but in this case  $\epsilon_{\iota}'$ may be different from $\epsilon_{\iota}$.

\item[iii)] The set of \emph{ periodic boundary codes} of $T$, denoted by Per$(\underline{\cB(T)})$,  is given by such periodic codes $\underline{p}\in \text{Per}(\Sigma_{A(T)})$ for whose there exist  $ \iota \in \{1, \cdots, \textbf{P}(T)\}$ and such that: 
\begin{equation}
\underline{p}_+= I^{+}(i_{\iota},\epsilon_{\iota}) \text{ and  }  \underline{p}_-= J^{-}(i_{\iota},\epsilon_{\iota}')
\end{equation}

In this case we label such periodic periodic code as $\underline{p}_(\iota,\epsilon8_{\iota})$ keeping the label of its positive part. In this manner:
$$
\text{ Per }(\underline{\cB(T)})=\{\underline{p}(\iota,\epsilon_{\iota})\}_{\iota}^{\textbf{P}(T)}
$$
\end{itemize}
\end{prop}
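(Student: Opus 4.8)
\textbf{Proof strategy for Proposition \ref{Prop: Peridic boundary codes}.}

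The plan is to build everything on top of the machinery already developed in Chapter \ref{Chapter: TypeConjugacy}, in particular Proposition \ref{Prop: s code Inyective}, Corollary \ref{Coro: preperiodic finite s,u boundary codes}, Proposition \ref{Prop: positive codes are boundary}, and Proposition \ref{Prop: boundary points have boundary codes}, together with the combinatorial characterization of $s$- and $u$-boundary (leaf) codes in Lemma \ref{Lemm: Projection Sigma S,U,I} and the corner-property equivalence of Lemma \ref{Lemm: Corner prop equivalence}. The point of the statement is not to prove a new phenomenon but to package these facts into an explicitly \emph{finite} and \emph{algorithmic} form suitable for the genus computation that follows; so most of the work is bookkeeping about periods and counting, using that there are exactly $2n$ distinct $s$-boundary labels and $2n$ distinct $u$-boundary labels.

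For item $i)$, I would first invoke Definition \ref{Defi: s-boundary generating funtion} and Definition \ref{Defi: positive negative boundary codes }: the positive code $\underline{I}^+(i,\epsilon)$ is obtained by iterating $\Gamma(T)$, and by Proposition \ref{Prop: positive codes are boundary} it projects (under any realization $(f,\cR)$ of $T$) into $\partial^s_\epsilon R_i$, while Proposition \ref{Prop: boundary points have boundary codes} says these are the only admissible codes with this property. Since $\cS(T)$ has exactly $2n$ elements and $\sigma$ acts on $\underline{\cS}^+(T)$ by the rule $\sigma(\underline{I}^+(i,\epsilon))=\underline{I}^+(\Gamma(T)(i,\epsilon))$, Corollary \ref{Coro: preperiodic finite s,u boundary codes} already gives that every $\underline{I}^+(i,\epsilon)$ is pre-periodic, and that for some $k\le 2n$ the shifted code $\sigma^k(\underline{I}^+(i,\epsilon))$ is periodic. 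The algorithmic detection of periodicity of $\partial^s_\epsilon R_i$ is then: compute the orbit of $(i,\epsilon)$ under $\Gamma(T)$ for $2n+1$ steps and check whether a repetition of the pair $(i,\epsilon)$ itself occurs (periodic) or only a repetition of a later pair occurs (strictly pre-periodic); the period, when it exists, divides the length of the cycle, hence is at most $2n$. The structural remark that an index $i_\iota$ occurs in at most two pairs, and that when it occurs twice the two signs differ, is immediate from the two stable boundary components $\partial^s_{+1}R_i,\partial^s_{-1}R_i$ of a single rectangle: there are only two stable boundaries per rectangle, so at most two of the $2n$ labels project into $\partial^sR_i$, and they must carry opposite signs. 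Item $ii)$ is then obtained verbatim by applying item $i)$ to $T^{-1}$ with the $u$-generating function $\Upsilon(T)=\Gamma(T^{-1})$ (Definition \ref{Defi: u-boundary generating funtion}); one has to observe that the \emph{set} of periodic $u$-boundary negative codes is indexed by the same list of indices $i_\iota$ as in item $i)$ — this is the only point where the corner property is genuinely used. Indeed, under the corner property, each periodic $s$-boundary component $\partial^s_{\epsilon_\iota}R_{i_\iota}$ has a periodic point $p_\iota$ on it which, being a corner point, also lies on a periodic $u$-boundary component of $R_{i_\iota}$; by Lemma \ref{Lemm: Projection Sigma S,U,I} this forces a periodic $u$-boundary negative code with the same first index $i_\iota$, and conversely, giving the matching of indices (but not of signs, since the unstable side need not be the $\epsilon_\iota$ side — hence $\epsilon_\iota'$ is in general different).

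For item $iii)$, the idea is that a periodic code $\underline{p}\in\mathrm{Per}(\Sigma_{A(T)})$ is a boundary periodic code (i.e.\ $\pi_f(\underline{p})\in\partial\cR$) if and only if $\pi_f(\underline{p})\in\partial^s\cR\cap\partial^u\cR$, which by Proposition \ref{Prop: boundary points have boundary codes} applied on both sides is equivalent to $\underline{p}_+$ being some $\underline{I}^+(i_\iota,\epsilon_\iota)$ and $\underline{p}_-$ being some $\underline{J}^-(i_{\iota'},\epsilon_{\iota'}')$; the corner property, again via Lemma \ref{Lemm: Corner prop equivalence}, forces $i_\iota = i_{\iota'}$, so the two halves are consistently indexed by a single $\iota$, and one recovers the labelling $\underline{p}(\iota,\epsilon_\iota)$. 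The main obstacle I anticipate is precisely the careful handling of this index-matching in items $ii)$ and $iii)$: one must check that the corner property not only guarantees that periodic boundary points are corners but also that the \emph{counting multiplicities} on the stable and unstable sides agree, i.e.\ that $\textbf{P}(T)$ computed from $\underline{\cS}^+(T)$ equals the one computed from $\underline{\cU}^-(T)$, which requires a small argument that a corner periodic point $p$ contributes exactly one $s$-boundary positive code and exactly one $u$-boundary negative code \emph{for each} of its sectors adjacent to the relevant separatrices, and that these pair up into the periodic codes of $\mathrm{Per}(\underline{\cB(T)})$ without over- or under-counting. Everything else reduces to invoking the already-proved finiteness and pre-periodicity results and observing that all the relevant computations — iterating $\Gamma(T)$ and $\Upsilon(T)$, detecting cycles, and reading off $\textbf{P}(T)$ — terminate within $2n$ steps.
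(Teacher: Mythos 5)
Your argument is correct and follows essentially the same route as the paper's proof: iterate $\Gamma(T)$ (resp. $\Upsilon(T)$) on the $2n$ boundary labels to detect periodicity within at most $2n$ steps, invoke the earlier pre-periodicity and injectivity results for the boundary codes, and use the corner property to pair each periodic stable boundary of $R_{i_\iota}$ with a periodic unstable boundary of the same rectangle, which yields the bi-infinite periodic boundary codes of item $iii)$. The only cosmetic difference is that in item $iii)$ the matching $i_\iota=i_{\iota'}$ is automatic, since the positive and negative parts of a code share the term $w_0$; the corner property is needed only to guarantee that each periodic positive $s$-code does pair with a periodic negative $u$-code, exactly as the paper argues.
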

  
 \begin{proof}
 \textbf{Item $i)$ :} For each \emph{boundary label} $(i,\epsilon)$,  the procedure to construct its positive boundary code  $\underline{I}_+ (i,\epsilon)$  as given in Definition \ref{Defi: positive negative boundary codes },  determine the image by $f$ of the horizontal boundary, $\partial^s_{\epsilon}R_{i}$ and the image of $\partial^s_{\epsilon}R_i$ can be in at most $2n$ different boundary components of $\cR$ before come back to itself, in this manner  after at most $2n$ iteration of $\Gamma(T)$ we can determine if $\partial^s_{\epsilon}R_{i}$  is or not periodic, while we can recover the periodic expression of $\underline{I}^+(i,\epsilon)$ by look the first $2n$ iteration of $\Gamma(T)$. 
 
 The number of periodic boundary codes of $T$, \textbf{P}$(T)$, is well determined and is less or equal than $2n$ that is the number of different of boundary codes of $T$. 
 
 If  the code $I^+(i,\epsilon)$ is periodic the rectangle $R_i$ have a periodic point $p$ in the  stable boundary component $\partial^s_{\epsilon}R_{i}$, but  for the corner property of $\cR$, $p$  is a corner point of $R_i$.  The rectangle $R_i$ cant have two different periodic corner points, but $R_i$ can have $p$ as corner point whit two different quadrants contained in $R_i$, is this situation which force that the periodic codes generated by the $s$-boundary labels, $I^+(i,1)$ and $I^+(i,-1)$ be different. 

\textbf{Items $ii)$ :} The arguments are the same than in item $ii)$, but we recall that, by the corner property every periodic boundary point $p$  is in the corner of each rectangle that contain it. In this manner if  $I^+(i_{\iota},\epsilon_{\iota})$ is periodic we can deduce that  the periodic point $p$ in $\partial^s_{\epsilon_{\iota}}R_{\iota}$ it is in $\partial^s_{\epsilon_{\iota}'}R_{\iota}$ where $\epsilon_{\iota}'$ is  such that $J^-(i_{\iota},\epsilon_{\iota}')$ is periodic with the same periodic expression than $I^+(i_{\iota},\epsilon_{\iota})$.
   
  \textbf{Items $iii)$ :} As  the code $\underline{p}$ is periodic the positive part and the negative part $\underline{w}$ must have the same periodic expression, and this determine the $(i_{\iota},\epsilon_{\iota})$ and $(i_{\iota},\epsilon_{\iota}')$ in the proposition. The rest of the notation is coherent and natural.
 
 \end{proof}
 
The equivalence relation  $\sim_{T}$ was introduced in \ref{Defi: Sim-T equivalent relation}, when is applied to the set of periodic boundary codes, each equivalence class, $[\underline{p}(i_{\iota},\epsilon_{\iota}]_{T}$,  is in correspondence with a unique periodic boundary point by the natural projection $\pi_{(f,\cR)}$, there fore we can determine the number of periodic boundary points by cont the number of such equivalence classes:
 $$
  \text{ Per }(\underline{\cB(T)})/\sim_{T}=\{[\underline{p}_s]\}_{s=1}^{\textbf(S)(T)} 
 $$
where $\textbf(S)(T)\in \NN$ is the number of $\sim_{T}$-equivalence classes, the construction of $\sim_{T}$ permits to determine in finite time such number and the elements of each conjugacy class, in this manner we pose the following notation: 
$$
\underline{P}_s:=\#[\underline{p}_s]_{T}
$$
be the number of periodic codes in the $\sim_{T}$ equivalence class of  $\underline{p}_s$.

\begin{lemm}
	Assume that $T$ has the corner property and the incidence matrix of $T$ is binary. Let $\underline{w}$ be a code in $[\underline{p}_s]_{T}$ and let  $p_s:=\pi_{f,\cR}(\underline{w})\in S$,   be the corresponding  periodic boundary point of $(f,\cR)$. The point $p_s$ is a $P_s$-prongs  where,  $ P_s= \frac{1}{2} \underline{P}_s $.
\end{lemm}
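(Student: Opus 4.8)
The plan is to relate the $\sim_T$-equivalence class $[\underline{p}_s]_T$ to the cyclic arrangement of sectors around the periodic boundary point $p_s$, using the machinery already developed for the relation $\sim_T$. First I would recall, from Lemma \ref{Lemm: every code is sector code } and Corollary \ref{Coro: Caracterisation fibers}, that if $p_s$ has $k$ separatrices then $\pi_f^{-1}(p_s)$ consists of exactly the $2k$ sector codes of $p_s$; in particular every code in $[\underline{p}_s]_T$ is a sector code of $p_s$. Conversely, Proposition \ref{Prop: T related iff same projection} (together with Lemma \ref{Lemm: sector codes}) shows that all $2k$ sector codes of $p_s$ are mutually $\sim_T$-related, so the equivalence class $[\underline{p}_s]_T$ is precisely the set of the $2k$ sector codes of $p_s$, and hence $\underline{P}_s = 2k$.

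Next I would argue that for a periodic \emph{boundary} point of a Markov partition satisfying the corner property, the number $k$ of stable separatrices coincides with the number $P_s$ of prongs of $\mathcal{F}^s$ at $p_s$ (which by Definition \ref{Defi: pseudo-Anosov} equals the number of prongs of $\mathcal{F}^u$). By definition, if $p_s$ is a $P_s$-prong singularity (or $1$-prong/spine, or a regular point with $P_s=2$ in the degenerate terminology), then $p_s$ has exactly $P_s$ stable separatrices, so $k = P_s$. Combining with the previous paragraph gives $\underline{P}_s = 2P_s$, i.e. $P_s = \tfrac{1}{2}\underline{P}_s$, which is the assertion. The role of the corner property here is twofold: it guarantees (via Lemma \ref{Lemm: Corner prop equivalence} and Proposition \ref{Prop: Peridic boundary codes}) that the periodic boundary codes projecting to $p_s$ are genuinely the corner codes, so that the counting of codes in $\mathrm{Per}(\underline{\cB(T)})/\sim_T$ is well defined and finite; and it ensures consistency between the combinatorial set $\mathrm{Per}(\underline{\cB(T)})$ and the geometric set of sector codes, so no sector code of $p_s$ is missed by the generating functions $\Gamma(T)$ and $\Upsilon(T)$.

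I would organize the write-up as follows: (1) fix $\underline{w} \in [\underline{p}_s]_T$ and set $p_s = \pi_{f,\cR}(\underline{w})$; (2) show $\pi_{f,\cR}^{-1}(p_s) \subseteq \mathrm{Per}(\underline{\cB(T)})$ using Lemma \ref{Lemm: Periodic to peridic} and the fact that $p_s$ is a boundary point together with Proposition \ref{Prop: boundary points have boundary codes}; (3) invoke Corollary \ref{Coro: Caracterisation fibers} to get $|\pi_f^{-1}(p_s)| = 2k$ where $k$ is the number of separatrices; (4) invoke Proposition \ref{Prop: T related iff same projection} to identify $[\underline{p}_s]_T$ with $\pi_f^{-1}(p_s)$, so $\underline{P}_s = 2k$; (5) conclude $k = P_s$ from the definition of a $P_s$-prong singularity; (6) combine to get $P_s = \tfrac{1}{2}\underline{P}_s$. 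The main obstacle I anticipate is step (2): one must be careful that \emph{every} code projecting to the boundary periodic point $p_s$ is itself a periodic boundary code (not merely a boundary code) — this needs Lemma \ref{Lemm: Periodic to peridic} to force periodicity of all such codes and Proposition \ref{Prop: boundary points have boundary codes} to force them into $\underline{\cS(T)} \cup \underline{\cU(T)}$, and then one must check, using the corner property, that each of them is simultaneously an $s$-boundary and a $u$-boundary code so that it lies in $\mathrm{Per}(\underline{\cB(T)})$ exactly as described in Proposition \ref{Prop: Peridic boundary codes}. Everything else is a direct application of the cited results.
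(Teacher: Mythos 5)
Your route is essentially the paper's: identify the class $[\underline{p}_s]_T$ with the sector codes of $p_s$ and use that the number of prongs is half the number of sectors. But there is a genuine gap at your step (3). Corollary \ref{Coro: Caracterisation fibers} does \emph{not} assert that $\vert \pi_f^{-1}(p_s)\vert = 2k$; it only says that the fiber is the collection $\{\underline{e_j(p_s)}\}_{j=1}^{2k}$ of sector codes, and these codes need not be pairwise distinct. Indeed, if a periodic boundary point lies in the interior of a stable boundary component of a rectangle $R$ (so it is an $s$-boundary point of $R$ but not a corner of $R$), the two sectors of the point contained in $R$, separated by the local unstable leaf, visit the same rectangles for all time and therefore have \emph{identical} sector codes; the fiber then has strictly fewer than $2k$ elements and the count $\underline{P}_s=2k$ fails. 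This distinctness of the $2k$ sector codes is exactly what the corner property buys, and it is the whole content of the paper's (one-line) proof: since every iterate $f^z(p_s)$ is a corner of each rectangle of $\cR$ that contains it, each such rectangle contains exactly one sector of $f^z(p_s)$ (compare Lemma \ref{Lemm: sector contined unique rectangle}), so two distinct sectors of $p_s$ cannot share their itinerary of rectangles, the $2k$ sector codes are pairwise distinct, and $\underline{P}_s=\#[\underline{p}_s]_T=2k$.

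In your write-up the corner property is invoked, but for the wrong purpose (well-definedness/finiteness of the counting and the generating functions ``not missing'' codes), not for the distinctness of the sector codes, which is where it is actually needed. Once that point is supplied, the remaining steps are correct and coincide with the paper: all codes in the fiber are periodic (Lemma \ref{Lemm: Periodic to peridic}), the class equals the fiber (Proposition \ref{Prop: T related iff same projection} together with Lemma \ref{Lemm: sector codes}), and a $P_s$-prong has $P_s$ stable separatrices, hence $2P_s$ sectors, giving $P_s=\frac{1}{2}\underline{P}_s$.
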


\begin{proof}
Like $T$ have the corner property, then the number $\underline{P}_s$ is equal to the number of sectors of the point $p_s$. Now,  consider that the number of prongs of $p_s$ is the half of the number of its sectors to obtain the equality.
\end{proof}

 \begin{theo}\label{Theo: Genus in therms of T}
 	Let $T$ a geometric type in the pseudo-Anosov class and let $f: S \to S$ a generalized pseudo-Anosov homeomorphism that realize $T$. If the $T$ have the corner property and its incidence matrix  is binary, the Euler-Characteristic of $S$ is given by the formula: 	
 	$$
 \chi(S)=\frac{1}{2} \sum_{s\in \text{Sing}(\cF)}(2-P_s) = \sum_{s=1}^{\textbf{S}} (2- \frac{1}{2} \underline{P}_s.
 	$$
 \end{theo}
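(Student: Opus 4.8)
The plan is to combine the Euler--Poincaré formula (Proposition \ref{Prop; Euler Poincare formula}) with the combinatorial bookkeeping of periodic boundary codes developed just above. First I would recall that every singularity of the stable foliation $\cF^s$ is a periodic boundary point of the Markov partition $\cR$: this follows from Lemma \ref{Lemm: Boundary of Markov partition is periodic} together with the fact that the interior of every rectangle carries the trivial bi-foliated structure, so no prong can sit in the interior of a rectangle. Conversely, \emph{every} periodic boundary point need not be a genuine singularity of $\cF^{s,u}$ --- a regular periodic point sitting on the boundary has four sectors and $P_s = 2$, and such a point contributes $2 - P_s = 0$ to the sum, so including it in the summation is harmless. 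This is exactly why the formula can be written as a sum over all $\sim_T$-equivalence classes of periodic boundary codes rather than only over $\textbf{Sing}(\cF)$.

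Next I would assemble the dictionary between the geometric type and the geometry of $(f,\cR)$. By Proposition \ref{Prop: Peridic boundary codes}, the set $\text{Per}(\underline{\cB(T)}) = \{\underline{p}(\iota,\epsilon_\iota)\}_{\iota=1}^{\textbf{P}(T)}$ of periodic boundary codes is computable from $T$ by iterating the generating functions $\Gamma(T)$ and $\Upsilon(T)$ at most $2n$ times; because $A(T)$ is binary, $(\Sigma_{A(T)},\sigma_{A(T)})$ and the projection $\pi_{f,\cR}$ are available, and by Proposition \ref{Prop: boundary points have boundary codes} these are precisely the codes that project onto $\partial^s\cR \cup \partial^u\cR$. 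Quotienting by the equivalence relation $\sim_T$ (Definition \ref{Defi: Sim-T equivalent relation}), whose classes coincide by Proposition \ref{Prop: T related iff same projection} with the fibers of $\pi_{f,\cR}$, produces the finite list $\{[\underline{p}_s]\}_{s=1}^{\textbf{S}}$ of periodic boundary points of $(f,\cR)$, one class per point. The key local statement is the preceding lemma: since $T$ has the corner property, each periodic boundary point $p_s$ is a corner of every rectangle containing it, so the $\underline{P}_s := \#[\underline{p}_s]_T$ distinct codes over $p_s$ are in bijection with the sectors of $p_s$; as a $P_s$-prong has exactly $2P_s$ sectors, we get $\underline{P}_s = 2 P_s$, i.e. $P_s = \tfrac12 \underline{P}_s$.

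Finally I would substitute into the Euler--Poincaré formula. Proposition \ref{Prop; Euler Poincare formula} gives $2 - 2\,\mathrm{gen}(S) = \chi(S) = \tfrac12 \sum_{p_s \in \textbf{Sing}(\cF)} (2 - P_s)$. Enlarging the sum to run over all periodic boundary points $[\underline{p}_s]$ for $s = 1,\dots,\textbf{S}$ changes nothing, since the extra terms all vanish; replacing $P_s$ by $\tfrac12\underline{P}_s$ yields
$$
\chi(S) = \sum_{s=1}^{\textbf{S}} \left(2 - \tfrac12 \underline{P}_s\right),
$$
which is the claimed identity. I expect the only genuinely delicate point to be justifying that the sum over periodic boundary points equals the sum over $\textbf{Sing}(\cF)$ --- that is, pinning down that a regular periodic corner point contributes zero and that no singularity is missed or double-counted --- but this is immediate once the bijection ``codes over $p_s$ $\leftrightarrow$ sectors of $p_s$'' from the corner property is in hand, together with the minimality of the foliations (Proposition \ref{Prop: pseudo-Anosov properties.}) guaranteeing the singular set is finite. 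Everything else is assembling results already proved: Proposition \ref{Prop: Peridic boundary codes} for computing the codes, Proposition \ref{Prop: T related iff same projection} for identifying $\sim_T$-classes with points, and the preceding lemma for the factor $\tfrac12$.
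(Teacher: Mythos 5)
Your argument is essentially the paper's own proof: it combines the Euler--Poincaré formula with the preceding lemma giving $P_s=\tfrac12\underline{P}_s$ via the corner property, plus the observation that a regular periodic boundary point contributes $2-P_s=2-\tfrac12\underline{P}_s=0$, so enlarging the sum from $\textbf{Sing}(\cF)$ to all $\sim_T$-classes of periodic boundary codes costs nothing. The only wrinkle --- the overall factor $\tfrac12$ that silently disappears in your final substitution --- is already present in the theorem's displayed formula itself, and the paper's one-line proof glosses over it in exactly the same way.
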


\begin{proof}
The only thing to observe is that, ff $p_s$ is a regular (boundary) point $2-P_s=0$ and $2- \frac{1}{2} \underline{P}_s=0$. Therefore we don't add anything to the sum in the left side, by consider regular points.
\end{proof}
 
 \begin{coro}\label{Coro: algoritmo genero}
 	Let $T$ be a geometric type of the pseudo-Anosov class, and let $f:S\rightarrow S$ be a generalized pseudo-Anosov homeomorphism with a geometric Markov partition of type $T$. There exists a finite algorithm that, given $T$, determines the number of singularities of $f$ and the number of stable and unstable separatrices at each singularity and subsequently the genus of $S$.
 \end{coro}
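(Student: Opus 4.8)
The statement to establish is Corollary \ref{Coro: algoritmo genero}: given a geometric type $T$ in the pseudo-Anosov class, there is a finite algorithm that outputs the number of singularities of the underlying homeomorphism $f$, the number of stable/unstable separatrices at each, and hence $\mathrm{gen}(S)$. The plan is to reduce this entirely to Theorem \ref{Theo: Genus in therms of T}, which already gives $\chi(S)$ as a finite sum $\sum_{s=1}^{\mathbf{S}}(2-\tfrac12\underline{P}_s)$ provided two hypotheses hold: that the incidence matrix $A(T)$ is binary, and that $T$ has the corner property. So the whole task is to arrange these two hypotheses algorithmically while keeping track of the data we care about, and to check that none of the required intermediate computations are affected by the refinements that realize those hypotheses.

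First I would reduce to a binary incidence matrix. Proposition \ref{Prop: Unique horizontal type} shows that the horizontal refinement $H(T)$ has a binary incidence matrix, is in the pseudo-Anosov class (Corollary \ref{Coro: horizontal type is pseudo-Anosov}), and — crucially — is computed from $T$ by an explicit algorithm. Since $H(T)$ is realized by a Markov partition $H(f,\cR)$ of the \emph{same} homeomorphism $f$ on the \emph{same} surface $S$, passing from $T$ to $H(T)$ changes none of the quantities we want to output. Next I would invoke the corner refinement: given any geometric type with binary incidence matrix in the pseudo-Anosov class, Definition \ref{Defi: Corner refi} and Proposition \ref{Prop: Corner ref periodic boundary points} produce, by an explicit algorithmic procedure (an $s$-boundary refinement along $\underline{B(T)}$ followed by a $u$-boundary refinement, both computable by the formulas of Section \ref{Sec: Cut in a code} and Corollary \ref{Coro: Algoritm computation TS(w), cW}, using that $\underline{B(T)}$ itself is computable by Corollary \ref{Coro: algoritmic per codes}), a geometric type $T_C := (H(T))_C$ that has the corner property. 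Again $T_C$ is realized by a Markov partition of the same $f$ on the same $S$, so the singularity data and genus are unchanged; moreover $T_C$ still has a binary incidence matrix because the $s$- and $u$-boundary refinements are obtained by cutting rectangles along stable/unstable segments, which cannot create a rectangle whose image meets another rectangle more than once — but I should double-check that this is preserved, since it is the one implicit point not spelled out verbatim in the excerpt.

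With $T_C$ having a binary incidence matrix and the corner property, Theorem \ref{Theo: Genus in therms of T} applies directly: compute the periodic boundary codes $\mathrm{Per}(\underline{\cB(T_C)})$ via the $s$- and $u$-generating functions $\Gamma(T_C)$, $\Upsilon(T_C)$ (Proposition \ref{Prop: Peridic boundary codes}; this needs at most $2n_C$ iterations where $n_C$ is the number of rectangles of $T_C$), quotient them by the algorithmically defined relation $\sim_{T_C}$ (Definition \ref{Defi: Sim-T equivalent relation} and Proposition \ref{Prop: The relation determines projections}, which guarantees the classes correspond bijectively to the boundary periodic points), obtaining classes $[\underline{p}_1],\dots,[\underline{p}_{\mathbf{S}}]$ of sizes $\underline{P}_s$. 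Each class with $\underline{P}_s>2$ (equivalently $2-\tfrac12\underline{P}_s\neq 0$) is a genuine singularity of $f$, its number of prongs is $\tfrac12\underline{P}_s$, and the number of stable separatrices there equals the number of unstable ones, namely $\tfrac12\underline{P}_s$. Summing $\sum_s(2-\tfrac12\underline{P}_s)=\chi(S)=2-2\,\mathrm{gen}(S)$ (Proposition \ref{Prop; Euler Poincare formula}) and solving for $\mathrm{gen}(S)$ finishes the output. Every step — the horizontal refinement, the corner refinement, enumerating and quotienting the finitely many boundary periodic codes, and the final arithmetic — is a finite computation depending only on $T$, so the algorithm terminates.

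The main obstacle I anticipate is not conceptual but bookkeeping: verifying that the two successive refinements ($H$ then $C$) jointly preserve the binary-incidence-matrix property \emph{and} the corner property simultaneously, so that Theorem \ref{Theo: Genus in therms of T} is legitimately applicable to $T_C$. The corner property after the corner refinement is exactly Proposition \ref{Prop: Corner ref periodic boundary points}(iii); the binary property needs the short additional remark that cutting a horizontal sub-rectangle out of a rectangle of a binary Markov partition yields again a partition in which each image rectangle meets each rectangle at most once — this follows because the new rectangles are sub-rectangles of the old ones and the old partition was binary, but it deserves an explicit line. A secondary point to be careful about is that the number $\underline{P}_s$ counts \emph{sectors} of the boundary periodic point, and the corner property is precisely what makes this equal to twice the prong number; this is already noted in the lemma preceding Theorem \ref{Theo: Genus in therms of T}, so it can simply be cited.
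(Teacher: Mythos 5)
Your proposal follows essentially the same route the paper intends: refine $T$ algorithmically to a type with binary incidence matrix (horizontal refinement) and the corner property (corner refinement), both realized by Markov partitions of the same $f$ on the same $S$, then apply Proposition \ref{Prop: Peridic boundary codes}, the relation $\sim_T$, and Theorem \ref{Theo: Genus in therms of T} together with the Euler--Poincar\'e formula; your flagged bookkeeping points (binariness surviving the cuts, sectors versus prongs) are exactly the ones the paper's constructions already cover. One small correction: the criterion for a class to be a genuine singularity is $\underline{P}_s\neq 4$ (equivalently $2-\tfrac12\underline{P}_s\neq 0$), not $\underline{P}_s>2$, since spines have $\underline{P}_s=2$ and regular boundary periodic points have $\underline{P}_s=4$.
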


\chapter{The formal derived from Anosov of a geometric type }

\section{The formal DA of a geometric type.}\label{Sec: Formal DA}

Let $T$ be a geometric type in the pseudo-Anosov class. Our previous results showed the existence of a closed surface $S$ and a Smale diffeomorphism $\phi: S \rightarrow S$, that have a  mixing saddle-type basic piece $K(T)$ which has a Markov partition of geometric type $T$. Therefore, we can define $\Delta(T) := \Delta(K(T))\subset S$ as the domain of $K(T)$. Moreover, the diffeomorphism $\phi$ uniquely determines (up to conjugation) a diffeomorphism in the domain $\phi_T:\Delta(T) \rightarrow \Delta(T)$. This justify the following definition.

\begin{defi}\label{Defi: Formal DA}
		Let $T$ be a geometric type of the pseudo-Anosov class. Let $S$ be a closed surface and $\phi: S\rightarrow S$ be a Smale surface diffeomorphism having a mixing saddle-type basic piece $K(T)$ with a geometric Markov partition of geometric type $T$, let $\Delta(T)\subset S$ be the domain of the basic piece $K(T)$.  We define the \emph{formal derived from Anosov} of $T$ to be the triplet
	$$
	\textbf{DA}(T):=(\Delta(T),K(T),\phi_T).
	$$
where $\phi_T: \Delta(T) \to \Delta(T)$ is the restriction of the diffeomorphism $\phi$ restricted to the domain of $K(T)$.
	
\end{defi}

The following proposition is consequence of  the Bonatti-Langevin results that were presented in the Chapter  \ref{Sub-sec: Domain basic piece} of Preliminaries.

\begin{prop}\label{Defi: Unique DA}
	Let $T$ be a geometric type in the pseudo-Anosov class. Up to a topological conjugacy in their respective domains, that is the identity in the non-boundary periodic points of $K(T)$, there exists a unique \textbf{DA}$(T)$.
\end{prop}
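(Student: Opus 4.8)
\textbf{Proof plan for Proposition \ref{Defi: Unique DA}.}

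The plan is to reduce the statement almost entirely to results of Bonatti--Langevin already recalled in the Preliminaries, the only new ingredient being the observation that $T$ being in the pseudo-Anosov class guarantees the existence of \emph{some} realization of $T$ as a mixing saddle-type basic piece (so that $\mathbf{DA}(T)$ is at least well defined), followed by an application of the uniqueness-of-domain machinery. First I would invoke Proposition \ref{Prop: pseudo-Anosov iff basic piece non-impace}: since $T$ is in the pseudo-Anosov class, it is realized as a mixing basic piece $K$ of some Smale surface diffeomorphism $\phi : S \to S$ without impasse, and in particular (Lemma \ref{Lemm: T mixing implies non double boundaries}) $T$ has no double boundaries, so $K$ is a genuine saddle-type basic piece in the sense of Convention \ref{Conv: non double boundary points}. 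This shows $\mathbf{DA}(T) = (\Delta(T), K(T), \phi_T)$ exists, where $\Delta(T) := \Delta(K(T))$ is the domain provided by the Bonatti--Langevin construction recalled in Subsection \ref{Sub-sec: Domain basic piece}.

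Next I would address uniqueness. Suppose $\phi : S \to S$ and $\psi : S' \to S'$ are two Smale surface diffeomorphisms with mixing saddle-type basic pieces $K$ and $K'$ respectively, each admitting a geometric Markov partition of the same geometric type $T$. By Theorem \ref{Theo: Presentation in a domain} there exists a homeomorphism $h : \Delta(K) \to \Delta(K')$ conjugating $\phi|_{\Delta(K)}$ to $\psi|_{\Delta(K')}$; the proof of that theorem (via Corollary \ref{Coro: Uniqueness of domain}) in fact produces such an $h$ that restricts to the identity on the restricted domain $\delta(K)$, hence in particular on the non-boundary periodic points of $K$ (these lie in $W^s(K) \cap W^u(K) \subset \delta(K)$, and $h$ commutes with the dynamics there). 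Thus any two choices of $\mathbf{DA}(T)$ are conjugate by a homeomorphism that is the identity on the non-boundary periodic orbits of the basic piece, which is exactly the asserted uniqueness.

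The one point requiring a little care — and the main (minor) obstacle — is making precise the clause ``that is the identity in the non-boundary periodic points of $K(T)$.'' Here I would argue that the conjugacy $h$ from Corollary \ref{Coro: Uniqueness of domain} coincides with the identity on $\delta(K)$, and then note that every non-boundary periodic point $p$ of $K$ lies in $\delta(K)$: indeed $p \in K \subset W^s(K) \cap W^u(K)$, and the restricted domain $\delta(K)$ contains $W^s(K) \cup W^u(K)$ by Definition \ref{Defi: restricted domain}. Since $h$ is the identity on $\delta(K)$ and conjugates the two dynamics, it fixes $p$ and respects its orbit. The remaining verifications — that $K(T)$ is the \emph{unique} nontrivial saddle-type basic piece of $(\Delta(T), \phi_T)$ with a Markov partition of type $T$, and that $\phi_T$ is well defined up to conjugacy — follow from the maximality clause in the definition of basic piece (Definition \ref{Defi: Basic piece}) together with the fact that the domain $\Delta(K)$ has $K$ as its maximal invariant set (the filtration property listed in Subsection \ref{Sub-sec: Domain basic piece}). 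I expect the whole argument to be short, essentially a bookkeeping assembly of Theorem \ref{Theo: Presentation in a domain}, Corollary \ref{Coro: Uniqueness of domain}, and Proposition \ref{Prop: pseudo-Anosov iff basic piece non-impace}, with no new technical difficulty.
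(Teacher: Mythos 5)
Your proposal is correct and follows essentially the same route as the paper: the paper gives no argument beyond declaring the proposition a consequence of the Bonatti--Langevin results recalled in the preliminaries (existence via Proposition \ref{Prop: pseudo-Anosov iff basic piece non-impace}, uniqueness via Theorem \ref{Theo: Presentation in a domain} and Corollary \ref{Coro: Uniqueness of domain}), which is precisely the assembly you carry out. Your added bookkeeping on the ``identity on the non-boundary periodic points'' clause only makes explicit what the paper leaves implicit, so there is nothing further to reconcile.
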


Let $T$ be a geometric type in the pseudo-Anosov class. Consider a generalized pseudo-Anosov homeomorphism $f$ with a geometric Markov  partition $\cR$ whose geometric type is $T$. The concept of formal derived from Anosov is rooted in the classical idea of considering all the periodic points in the boundary $\cR$ and open  the stable and unstable manifolds of these periodic points. Then a saddle-type basic piece without singularities obtained. In this process, each periodic point on the boundary of the Markov $\cR$ partition becomes a corner point of the basic piece.  However, with our definition of \textbf{DA}$(T)$, what happens is that, the periodic points on the stable boundary of $\cR$ become $s$-boundary points of the basic piece $K(T)$ while the periodic points on the unstable boundary of $\cR$ become $u$-boundary points of $K(T)$. The need to open all boundary points of $\cR$ in the stable and unstable direction is what has motivated us to introduce the corner refinement of $T$.

When comparing two saddle-type basic pieces obtained as the formal \textbf{DA} of two different geometric types in the pseudo-Anosov class, the first condition for these basic pieces to be topologically conjugate is that they must possess an equal number of periodic $s$ and $u$-boundary points. To achieve this, we introduce the concept of a \emph{joint refinement} of two geometric Markov partition and its associated geometric type. This  process allows us to transform two basic pieces originating from different geometric types into ones with the same number of corner points. We will delve into the details of this construction in the upcoming section.

\section{The joint refinement of two geometric types.} \label{Sec: Boundary refinament}

Let to recall that $\cW_{T,P}:=\{\underline{w}\in \Sigma_{A(T)}: \textbf{Per}(\sigma_{T},\underline{w})\leq P\}$.

\begin{defi}\label{Defin: Comun refinament}
	Let $f$ and $g$ be pseudo-Anosov homeomorphisms with geometric Markov partitions $\cR_f$ and $\cR_g$ whose geometric types $T_f$ and $T_g$ have binary incidence matrices.  Let  $P_{\underline{B(T_f)}}$ be  the maximum period among the boundary periodic codes of $T_f$:
	$$
	P_{\underline{B(T_f)}}:=\max\{\textbf{Per}(\underline{w},\sigma_{T_f}): \underline{w}\in \underline{B(T_f)}\}
	$$
	and similarly $P_{\underline{B(T_g)}}$ be  the maximum period among the boundary periodic codes of $T_g$. Lets take define $N(f,g):=\max\{P_{\underline{B(T_f)}}, P_{\underline{B(T_g)}}\}$. Consider:
	
	\begin{enumerate}
\item 	The \emph{joint refinement} (Definition \ref{Defi: boudary of period P }) of $\cR_f$ with respect to $\cR_g$ is the corner refinement of $\cR_f$ with respect to the family of periodic codes $\cW_{T_f, N(f,g)}\subset \Sigma_{A(T_f)}$:
$$
\cR_{(R_f,R_g)}:=[\cR_f]_{C(\cW_{T_f,N(f,g)})}.
$$
Its geometric type is denoted by $T_{(T_f,T_g)}$.

\item 	The \emph{joint refinement} of $\cR_g$ with respect to $\cR_f$ is the corner refinement of $\cR_g$ with respect to the family of boundary periodic codes $\cW(g,N(f,g))\subset \Sigma_{A(T_g)}$:
$$
\cR_{(\cR_g,\cR_f)}:=[\cR_g]_{C(\cW_{g,N(f,g)})}.
$$
Its geometric type is denoted  $T_{(T_g,T_f)}$.

	\end{enumerate}

\end{defi}

\begin{rema}\label{Rema: corner pint are th eperiod eauql N}
By construction, the geometric Markov partition $\cR_{(R_f,R_g)}$ has the corner property and all its boundary points are corners of any rectangle that contains them. The number $N(f,g)$ is equal to $N(g,f)$, and the corner points of $\cR_{(R_f,R_g)}$ coincide with the set of periodic points of $f$ with periods less than or equal to $N(f,g)$. Similarly, for $\cR_{(\cR_g,\cR_f)}$,
\end{rema}

\section{The lifting of a geometric  Markov partition of $f$ to the formal \textbf{DA}$(T)$.}

The important property of the joint refinement it that every periodic boundary point is a corner point of any rectangle that contain such point. We must use this property to prove the following Proposition. 

\textbf{Notation:} Once we have fixed $\cR_f$ and $\cR_g$ to avoid an overwhelming notation, we make the following conventions:
\begin{eqnarray}
\cR_{f,g}:=\cR_{(\cR_f,\cR_g)} \text{ and } \cR_{g,f}:=\cR_{(\cR_g,\cR_f)}, \\
T_{f,g}:=T_{(T_f,T_g)} \text{ and } T_{g,f}:=T_{(T_g,T_f)}.
\end{eqnarray}

The formal derived from pseudo-Anosov of such geometric types are:
\begin{eqnarray}
\textbf{DA}(T_{f,g}):=\{\Delta(T_{f,g}), K(T_{f,g}),\Phi_{T_{f,g}}) \} \text{ and } \\
\textbf{DA}( T_{g,f}):=\{\Delta( T_{g,f}), K( T_{g,f}),\Phi_{ T_{g,f}})\} 
\end{eqnarray}

\begin{prop}\label{Prop: preimage is Markov Tfg type}
	Let $f:S_f \rightarrow S_f$ and $g:S_g \rightarrow S_g$ be two generalized pseudo-Anosov homeomorphisms with geometric Markov partitions, $\cR_f$ and $\cR_g$, whose geometric types, $T_{f}$ and $T_g$, have binary incidence matrix. Suppose that $f$ and $g$  are  topologically conjugate through an orientation preserving homeomorphism $h:S_f\rightarrow S_g$ and let:
	$$
	\pi:\Delta(T_{g,f})\rightarrow S_g,
	$$
	be  the projection given by Proposition \ref{Prop: type of basic piece is type of pseudo-Anosov}. In this situation:
	\begin{itemize}
		\item[(1)] The induced geometric partition, $h(\cR_{f,g})$ is a geometric Markov partition of $g$ whose geometric type $ T_{f,g}$ have the corner property.
		
		\item[2)] A  point $p\in S_f$ is a periodic boundary point of the Markov partition $(f,\cR_{f,g})$ if and only if $h(p)$ is a periodic boundary point of $(g,\cR_{g,f})$. In particular, the corner periodic points of $h(\cR_{f,g})$ are the corner periodic points of $\cR_{\cR_{g,f}}$.
		
		\item[(3)] There exit a unique Markov partition $\underline{h}(\cR_{f,g})=\{\underline{h}(R_i)\}_{i=1}^n$ of $K(T_{g,f})\subset \Delta(T_{(g,f)}))$ such that:
		$$
		\pi\left(\overset{o}{\underline{h}}(R_i)\right)=\overset{o}{(h(R_i))}.
		$$
		\item[(4)] Keep the previous label of the rectangles in  $\underline{h}(\cR_{f,g})$ and give to the rectangle $\underline{h}(\cR_{f,g})$  the unique vertical and horizontal orientation  such that $\pi$ is increasing along both the vertical and horizontal leaves within $\underline{h}(R_i)$. With this geometrization, the geometric type of $\underline{h}(\cR)$ is $T_{f,g}$.
	\end{itemize}
\end{prop}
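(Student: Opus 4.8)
The plan is to establish the four assertions essentially in the order stated, since each one builds on the previous. For item (1), I would invoke Theorem~\ref{Theo: Conjugated partitions same types}: since $h$ is an orientation-preserving conjugacy between $f$ and $g$, the induced geometric Markov partition $h(\cR_{f,g})$ has the same geometric type as $(f,\cR_{f,g})$, namely $T_{f,g}$. The corner property of $T_{f,g}$ is part of its construction (Corollary~\ref{Coro: Boundary refinement} and Remark~\ref{Rema: corner pint are th eperiod eauql N} applied to $\cR_{f,g}=[\cR_f]_{C(\cW_{T_f,N(f,g)})}$), so nothing new is needed. For item (2), the key point is that a conjugacy maps periodic boundary points to periodic boundary points: $p$ is a periodic boundary point of $(f,\cR_{f,g})$ iff $h(p)$ is a periodic boundary point of $(g,h(\cR_{f,g}))$; and by the symmetry of the joint refinement construction — both $\cR_{f,g}$ and $\cR_{g,f}$ have as corner set exactly the periodic points of period $\leq N(f,g)=N(g,f)$ of $f$ and $g$ respectively (Remark~\ref{Rema: corner pint are th eperiod eauql N}), together with the fact that $h$ matches periods since it conjugates $f$ to $g$ — the periodic boundary points of $(g,h(\cR_{f,g}))$ coincide with those of $(g,\cR_{g,f})$. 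Since both partitions have the corner property, these are exactly their corner periodic points.

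For item (3), I would use Proposition~\ref{Prop: type of basic piece is type of pseudo-Anosov} (or rather its underlying Lemma~\ref{Lemm: pi(R) is a rectangle} and the injectivity statement of Proposition~\ref{Prop: Carterization injectivity of pif}): the projection $\pi:\Delta(T_{g,f})\to S_g$ is injective over the totally interior points, and its value over the interior of a rectangle of the Markov partition $\pi(\cR_{g,f})$ is precisely the interior of a rectangle of $\cR_{g,f}$. The crucial arithmetic coincidence is that the periodic boundary points of $(g,\cR_{g,f})$ are exactly the $s$- and $u$-boundary periodic points of $K(T_{g,f})$ that $\pi$ collapses — this is the whole point of the corner refinement and of how $\textbf{DA}$ is built (Definition~\ref{Defi: Formal DA} and Section~\ref{Sec: Formal DA}): the stable boundary periodic points of $\cR_{g,f}$ become $s$-boundary points of $K(T_{g,f})$, and likewise for the unstable ones. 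Because, by item (2), the periodic boundary points of $h(\cR_{f,g})=$ those of $\cR_{g,f}$, the stable (resp.\ unstable) boundary of $h(\cR_{f,g})$ lies in the stable (resp.\ unstable) lamination of $K(T_{g,f})$, and so $\pi^{-1}$ of the interior of each $h(R_i)$ is the interior of an embedded rectangle in $\Delta(T_{g,f})$; taking closures gives the rectangles $\underline{h}(R_i)$. The verification that $\underline{h}(\cR_{f,g})$ is a Markov partition for $\Phi_{T_{g,f}}$ uses Proposition~\ref{Prop: Markov criterion boundary diffeomorphism}: one checks that $\partial^s(\underline{h}(\cR_{f,g}))$ is $\Phi_{T_{g,f}}$-invariant and $\partial^u$ is $\Phi_{T_{g,f}}^{-1}$-invariant, which follows from the analogous invariance for $h(\cR_{f,g})$ pushed forward through $\pi$ (using the semi-conjugacy $\pi\circ\Phi_{T_{g,f}}=g\circ\pi$).

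For item (4), once the rectangles $\underline{h}(R_i)$ are endowed with the orientation making $\pi$ orientation-preserving on leaves, the argument is exactly that of Proposition~\ref{Prop: type of basic piece is type of pseudo-Anosov} run in reverse: Lemma~\ref{Lemm: pi send sub-rec in sub rec} and Corollary~\ref{Coro: Same sub-rectangle} give a bijection between horizontal (resp.\ vertical) sub-rectangles of $\underline{h}(\cR_{f,g})$ and those of $h(\cR_{f,g})$ compatible with the dynamics, and Remark~\ref{Rema: pi preserve the orientation} ensures the orientation-change functions $\epsilon$ agree. Hence the geometric type of $\underline{h}(\cR_{f,g})$ equals that of $h(\cR_{f,g})$, which is $T_{f,g}$ by item (1). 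The main obstacle I anticipate is item (3): one must argue carefully that $\pi^{-1}$ of a rectangle of $h(\cR_{f,g})$ is genuinely an \emph{embedded} rectangle in $\Delta(T_{g,f})$ with both stable boundaries collapsed and both unstable boundaries collapsed by $\pi$ — this is exactly where the corner property is indispensable, because if some boundary periodic point of $h(\cR_{f,g})$ were, say, $u$-boundary but not $s$-boundary in $K(T_{g,f})$, the preimage rectangle would be degenerate or fail to have stable boundary in the lamination. Pinning down that the periodic boundary structure of $h(\cR_{f,g})$ matches exactly the corner structure built into $K(T_{g,f})$ via $\cR_{g,f}$ is the heart of the proof, and it is precisely what items (1) and (2) were set up to deliver.
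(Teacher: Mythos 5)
Your treatment of items (1), (2) and (4) follows the paper's proof: (1) is Theorem \ref{Theo: Conjugated partitions same types} together with the corner property built into the joint refinement, (2) is the period-preservation argument with $N(f,g)=N(g,f)$, and (4) is obtained exactly as in the paper by projecting the lifted partition through Proposition \ref{Prop: type of basic piece is type of pseudo-Anosov} and comparing with item (1). The gap is in item (3), which is the technical heart of the proposition, and it occurs at two points.

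First, ``taking closures of $\pi^{-1}\bigl(\overset{o}{h(R_i)}\bigr)$'' does not by itself produce the rectangles. Over each stable (resp.\ unstable) boundary interval of $h(R_i)$ the preimage of the corresponding leaf splits into two separatrices of two distinct $s$-boundary (resp.\ $u$-boundary) periodic points of $K(T_{g,f})$, and only one of them --- the one saturated by $K(T_{g,f})$ on the side of the rectangle --- can serve as boundary of the lift. The paper's Lemma \ref{Lemm: preimage pi rectangle} selects that side, determines the endpoints of the lifted boundary interval inside the corresponding cycle or minimal rectangle, and uses the absence of impasse to see that the two lifted stable boundaries lie on distinct separatrices, so that the four lifted arcs close up into an embedded rectangle isolated from $K(T_{g,f})$ from its exterior. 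None of this follows merely from injectivity of $\pi$ over totally interior points, which is all your sketch invokes.

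Second, and more seriously, the invariance of $\partial^s\underline{h}(\cR_{f,g})$ does not ``follow from the analogous invariance for $h(\cR_{f,g})$ pushed forward through $\pi$''. The semi-conjugacy only yields $\Phi_{T_{g,f}}\bigl(\partial^s\underline{h}(\cR_{f,g})\bigr)\subset \pi^{-1}\bigl(\partial^s h(\cR_{f,g})\bigr)$, and this preimage is strictly larger than $\partial^s\underline{h}(\cR_{f,g})$: it contains the wrong-side lifts of each boundary interval (on the separatrice not saturated by $K(T_{g,f})$) as well as entire collapsed classes (minimal rectangles and arcs) attached to the boundary. One must still show that the image of each lifted stable boundary interval lands on the saturated side and inside a single stable boundary component of the target lifted rectangle. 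The paper does this by approximating the endpoints of the interval by interior non-boundary periodic points --- where $\pi$ is injective and conjugates $\Phi_{T_{g,f}}$ with $g$ --- and by a separate contradiction argument with two monotone sequences converging to the two corners, which excludes that the image contains an $s$-arc joining the two stable boundary components of the target rectangle. Without an argument of this kind the Markov-property verification in your item (3) does not go through. You also omit the checks that the lifted rectangles are pairwise disjoint and that their union contains $K(T_{g,f})$; these are comparatively minor, following from the isolation of the lifted boundaries and the density of non-boundary periodic points, but they are part of what makes $\underline{h}(\cR_{f,g})$ a Markov partition of the basic piece.
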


\begin{proof}

	\textbf{Item} $(1)$. 	In Definition  \ref{Defi: induced geometric Markov partition}, we introduced the induced geometric Markov partition $h(\cR_{f,g})$. Theorem\ref{Theo: Conjugated partitions same types} establishes that $h(\cR_{f,g})$ is a geometric Markov partition with the same geometric type as $\cR_{f,g}$, specifically $T_{f,g}$. The joint refinement was obtained as the corner refinement of another geometric type and Proposition \ref{Prop: Corner ref periodic boundary points} implies that $T_{f,g}$ has the corner property.

\textbf{Item} $(2)$.	By definition of the joint refinement, a periodic point $p$ of $f$ is a corner point  of $\cR_{f,g}$ if and only if it is a period is less than or equal to $N(f,g)$. But $h(p)$ has the same period that $p$, therefore $h(p)$ is a periodic point of $g$ with period less than or equal to $N(g,f)=N(f,g)$, then $h(p)$ is corner point of $\cR_{g,f}$, by definition it is a boundary point. This proves the second item of the proposition.

	\textbf{Item} $(3)$. We are going to prove the second statement through a series of lemmas. 	The reader can refer to Figure \ref{Fig: rectangle split} for a visual representation of our notation.
	
	\begin{figure}[h]
		\centering
		\includegraphics[width=0.7\textwidth]{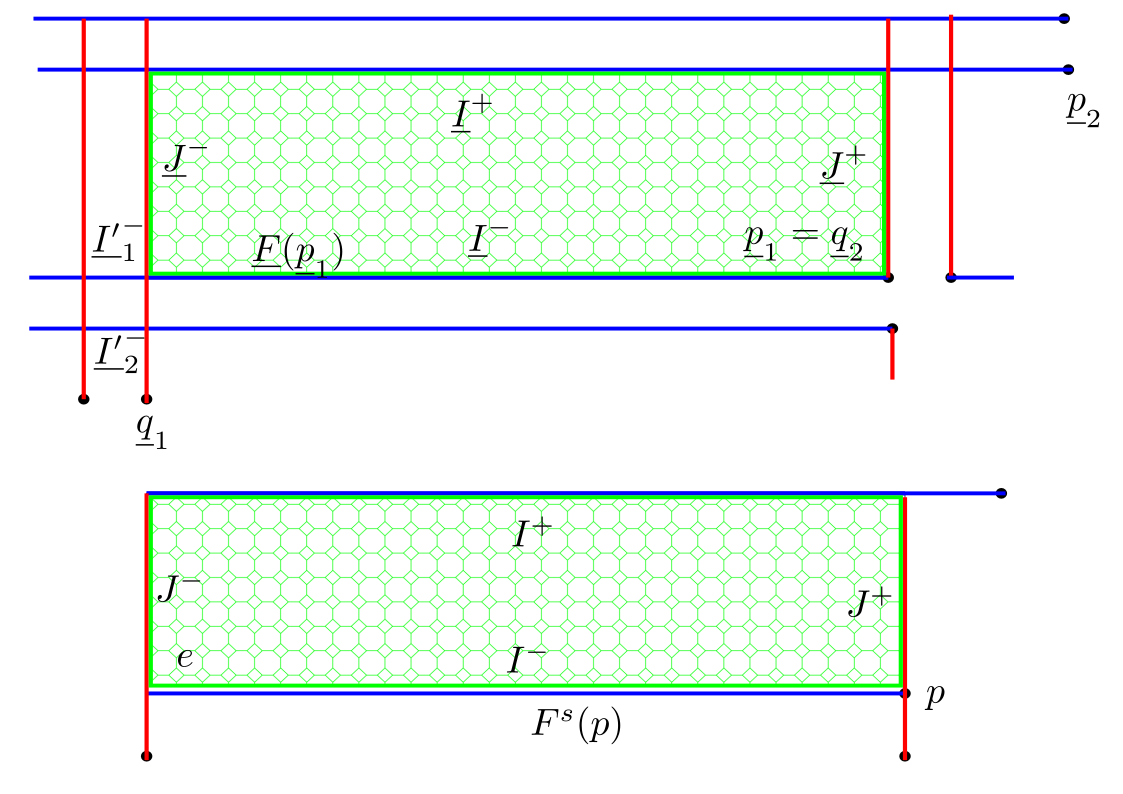}
		\caption{The lift of a rectangle $R$. }
		\label{Fig: rectangle split}
	\end{figure}

	\begin{lemm}\label{Lemm: preimage pi rectangle}
		Let $R$ be a rectangle in the Markov partition $h(\cR_{f,g})$. Then $\pi^{-1}(R)$ contains a unique rectangle $\underline{R}\subset \Delta(T_{g,f})$ with the following properties:
		\begin{itemize}
			\item The interior of $\underline{R}$ is mapped under $\pi$ to the interior of $R$, i.e., $\pi(\overset{o}{\underline{R}})=\overset{o}{R}$, and the image of $\underline{R}$ is the rectangle $R$, i.e., $\pi(\underline{R})=R$.
			
			\item The horizontal  boundary of $\underline{R}$ is contained in a stable separatrice of an $s$-boundary periodic point of $K(T_{g,f})$, and its vertical boundary is in an unstable separatrice of an $u$-boundary point.

			\item The rectangle $\underline{R}$ is isolated from the basic piece $K(T_{g,f})$ from its exterior, i.e., there is an open neighborhood $U$ of $\underline{R}$ such that $K(T_{g,f})\cap U =K(T_{g,f}) \cap \underline{R}$.
		\end{itemize}
	\end{lemm}
	
	\begin{proof}
		
		Denote the left side of $R$ as $J^-$, its right side as $J^+$, its lower boundary as $I^-$, and its upper boundary as $I^+$. All these intervals are contained within the invariant manifold of a corner periodic point of $h(\cR_{f,g})$. Moreover, the boundary points of $h(\cR_{f,g})$ are all corner points and as proven in Item $(2)$, they coincide with the corner points of $\cR_{g,f}$. This implies that neither $J^{\pm}$ nor $I^{\pm}$ contain a periodic point in their interior; therefore, they are contained within a unique separatrice of a corner point. Now, let's analyze the set $\pi^{-1}(I^-)$ to get some conclusions that can be applied to the remaining intervals $J^+, J^-, I^+$ in the boundary of $R$.
		
		Assume that $I^{-}$ is contained in the  stable separatrice $F^s(p)$ of the corner point $p$.  Inside the domain  $\textbf{DA}(T_{g,f})$ there are two $s$-boundary periodic points of $K(T_{g,f})$, $\underline{p}_1$ and $\underline{p}_2$ with different stable separatrices $\underline{F}^s(\underline{p}_1)$ and $\underline{F}^s(\underline{p}_2)$, such that:  $\pi^{-1}(\overset{o}{I^-})$ has two connected components that are are contained in such separatrices:
		$$
		\underline{\overset{o}{I'}}_1\subset F^s(\underline{p}_1) \text{ and } \underline{\overset{o}{I'}}_2\subset F^s(\underline{p}_2).
		$$
		and $\pi(\underline{\overset{o}{I'}}_1)=\pi(\underline{\overset{o}{I'}}_2)=\overset{o}{I^-}$. 
		Such stable intervals are contained in the stable boundary of  $s$-boundary periodic point, therefore only one of these intervals have points that can be approximated by singletons (Item $i)$ in Definition \ref{Defi: equivalen clases sim-r} ) of the form $\pi^{-1}(x_n)\in K(T_{g,f})$, where $x_n\in \overset{o}{R}$ and $x_n$ converge to a point in $I^-$. Assume that $\overset{o}{\underline{I'}_1}$  is such an interval. 
		
		Let's understated the closure of $\overset{o}{\underline{I'}_1}$ inside the stable separatrice $\underline{F}^s(p_1)$.  We need to consider two different of extreme points in $I^-$:
		\begin{itemize}
			\item[i)] The interval $I^{-}$ has a periodic point $p$ as one of its endpoints. In this case, $\pi^{-1}(p)$ is contained in a cycle. The endpoint of $\underline{I'}^-=\overline{\overset{o}{\underline{I'}_1}}$ which projects to $p$ needs to be contained in such a cycle. However, at the same time, such a point can be approximated by points in $K(T_{g,f})\cap \overset{o}{\underline{I'}_1}$ and is, therefore, an element of $K(T_{g,f})$. The unique hyperbolic points in the cycle are periodic points, so the endpoint of $\underline{I'}^-$ that projects to $p$ is the periodic point $\underline{p}_1$, which is approximated by points in  $\underline{F}^s(\underline{p}_1)$.
			
			\item[ii)]   The endpoint $O\in I^-$ is not periodic. In this case, $\pi^{-1}(O)$ is a minimal rectangle. Similar to before, the endpoint of $\underline{I'}^-=\overline{\overset{o}{\underline{I'}_1}}$ which projects to $O$ needs to be contained in such a minimal rectangle and be an element of the basic piece. Therefore, it is the unique corner point $\underline{O}$, in the minimal rectangle  that is saturated by points in $K(T_{g,f})\cap \pi^{-1}(\overset{o}{\underline{I'}_1})$.
		\end{itemize}
		
		In this setting: If $I^-$ have a periodic end point:
		$$
		\underline{I}^{-}:=[\underline{O},\underline{p}_1]^s\subset F^s(\underline{p}_1).
		$$
		If $I^-=[O_1,O_2]^s$ have no periodic end points, we can applied the analysis in Item $i)$ to construct point $\underline{O_1}$ and $\underline{O_2}$ such that:
		
		$$
		\underline{I}^{-}:=[\underline{O_1},\underline{O_1}]^s\subset F^s(\underline{p}_1).
		$$ 
		
		\begin{rema}\label{Rema: boundary points s-u}
			The extreme points of $\underline{I}^-$ are, in any case, $u$-boundary points. Therefore,  $\underline{I}^-$ is contained within an open interval $\underline{I}_1^-\subset \underline{F}^s(\underline{p}_1)$ such that $K(T_{g,f})\cap \underline{I}_1^- = K(T_{g,f})\cap \underline{I}^-$ and $\pi(\underline{I'}_1^-)=I^-$.  Furthermore, $\underline{I}^{-}$  is the smallest interval contained in $\underline{F}^s(\underline{p}_1)$ that projects to $I'$. These two properties determine it uniquely.
		\end{rema}

		The same construction applies to the other boundary components. That is, there are unique periodic corner points $\underline{p}_2$, $\underline{q}_1$, and $\underline{q}_2$ of the basic piece $K(T_{g,f})$ and unique separatrices of such points that contain minimal intervals:
		
		\begin{eqnarray*}
			\underline{I}^{+}\subset \underline{F}^s(\underline{p}_2)\\
			\underline{J}^{-}\subset \underline{F}^s(\underline{q}_1)\\
			\underline{J}^{+}\subset \underline{F}^s(\underline{q}_2)
		\end{eqnarray*}

		These intervals project into the respective boundary components of $R$. The stable separatrice $\underline{F}^s(\underline{p}_2)$ is the only stable separatrice that is saturated by stable leaves of non $s$ boundary periodic points that project to $R$. Clearly, these intervals are unique, as they are minimal with these properties.
		
		We claim that $\vert \underline{I}^-\cap\underline{J}^-\vert =1$. Let $O$ be the corner of $R$ determined by $I^-$ and $J^-$, and let $e$ be the sector of $O$ that is determined by the left-inferior corner of $R$. Then $\pi^{-1}(O)$ could be:
		
		\begin{itemize}
			\item Part of a cycle if $O$ is a periodic point. In this cycle, all the boundary points are corner points and has only one sector saturated by points in $K(T_{g,f})$. Let $\{x_n\}\subset \overset{o}{R}$ be a sequence on non-boundary periodic codes that converges to $0$ in the sector $e$, then $\pi^{-1}(x_n)$ is a singleton and then: $\lim_{n\to \infty} \pi^{-1}(x_n)$ is simultaneously the periodic endpoint of  $\underline{I}^{-}$ and the periodic endpoint of $\underline{J}^{-}$, i.e   $\underline{p}_1=\underline{q}_1$.

			\item  	A minimal rectangle if $O$ is not periodic. In this case, there is only one corner of the minimal rectangle where $\underline{I}_{-}$ and $\underline{J}^{-}$ intersect. Such  is the only one accumulated by singletons in $\pi^{-1}(\overset{o}{R})$ that converge to $\underline{O}$.
		\end{itemize}

		The same property follows for the other vertices of $R$. Furthermore, since the basic piece $K(T_{g,s})$ doesn't contain impasses, we deduce that $\underline{I}^-$ is in a different stable separatrice than $\underline{I}^+$, and then $\underline{I}^-\cap \underline{I}^+=\emptyset$. In conclusion, $\underline{I}^- \cup \underline{I}^{+} \cup \underline{J}^- \cup \underline{J}^+$ forms a closed curve. We shall see what happens with the interior of $\tilde{R}$.

		Take any point $x\in \overset{o}{R}$. The set $\pi^{-1}(x)$ is not part of a cycle because the only $s$ and $u$ boundary points of $K(T_{g,f})$ are projected to corner points of the original Markov partition $h(\cR_{f,g})$. Therefore, $\pi^{-1}(x)$ is a rectangle and is trivially bi-foliated. We can deduce that if $\overset{o}{J}\subset \overset{o}{R}$ is an unstable segment, its preimage $\pi^{-1}(\overset{o}{J})$ is a rectangle without its horizontal boundary. Clearly, the horizontal boundary of $\pi^{-1}(\overset{o}{J})$ has one connected component intersecting $\underline{I}^-$ and the other intersecting $\underline{I}^+$. The same holds for any horizontal segment $\overset{o}{I}\subset \overset{o}{R}$.

		In conclusion, $\underline{I}^- \cup \underline{I}^{+} \cup \underline{J}^- \cup \underline{J}^+$ bounds a region that is trivially bi-foliated, and therefore, it is a rectangle denoted as $\underline{R}$. It's evident that its projection is $R$, and its interior projects to the interior of $R$.
		
		The stable boundary of $\cR$ consists of two disjoint intervals contained in the $s$ and $u$ boundary leaves, as we observed in Remark \ref{Rema: boundary points s-u}. By taking slightly larger intervals that contain $\underline{I}^-,\underline{I}^{+},\underline{J}^-$, and $\underline{J}^+$, we can construct a small neighborhood $U$ of $\underline{R}$ such that $K(T_{g,f})\cap U =K(T_{g,f}) \cap \underline{R}$.
	\end{proof}

	As a consequence, for all rectangles $h(R_i)\in h(\cR_{f,g})$, there is a unique rectangle $\underline{h(R_i)} \subset \textbf{DA}(T_{g,f})$ as described in Lemma \ref{Lemm: preimage pi rectangle} that projects to $h(R_i)$. Furthermore, these rectangles have their boundaries isolated of the basic piece from its exterior. In particular, we have the following corollary:
	
	\begin{coro}\label{Coro: Disjoint rectangles}
		If $i\neq j$, then $\underline{h(R_i)} \cap \underline{h(R_j)}=\emptyset$.
	\end{coro}

	Let
	$$
	\underline{h}(\cR_{f,g}):=\{\underline{h(R_i)}\}_{i=1}^n
	$$
	be the family of rectangles constructed in Lemma \ref{Lemm: preimage pi rectangle} for each rectangle in the Markov partition $h(\cR_{f,g})$.

	\begin{lemm}\label{Lemm: Union rectangle is K}
		$K(T_{g,f})\subset \cup_{i=1}^n \underline{h(R_i)}$
	\end{lemm}
	
	\begin{proof}

		The set of periodic points of $f$ that are not $(s,u)$-boundary is dense inside every rectangle $h(R_i)$ and is contained in its interior. Denote the set of these periodic points by $P(i)$. Then $\pi^{-1}(P(i))$ consists of all the periodic points of $\phi_{T_{g,f}}$ that are contained in the interior of $\pi^{-1}(h(R_i))$ and there fore  that are not $(s,u)$-boundary. Such points  are dense in $K(T_{g,f})\cap R_i$, and then:
		
		$$
		K(T_{g,f})\subset \cup_{i=1}^n \overline{\pi^{-1}(P(i))} \subset \cup_{i=1}^n \overline{\overset{o}{\underline{h(R_i)}}} = \cup_{i=1}^n \underline{h(R_i)}
		$$
		
	\end{proof}

	\begin{lemm}\label{Lemm; h(R -gf) is invariant}
		The stable boundary of the partition $\underline{h}(\cR_{f,g})$ is $\Phi_{g,f}$-invariant.
	\end{lemm}

	\begin{figure}[h]
		\centering
		\includegraphics[width=0.7\textwidth]{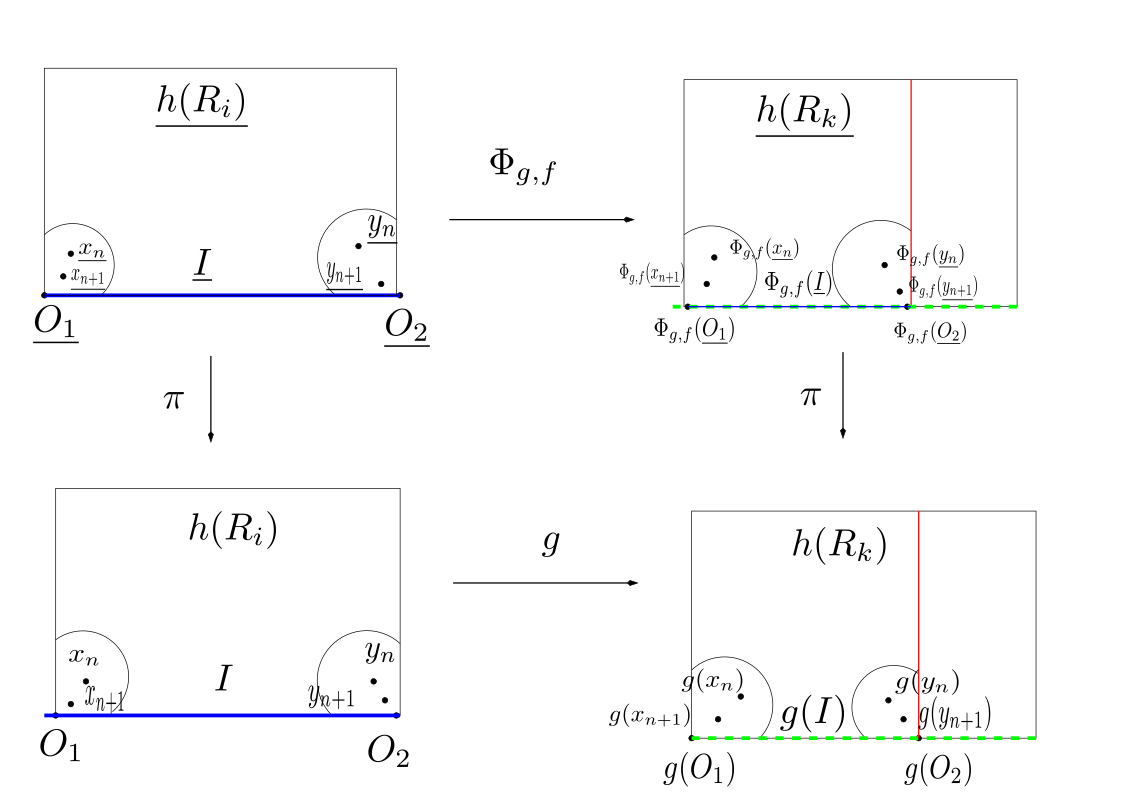}
		\caption{The stable boundary of the partition $\underline{h}(\cR_{f,g})$}
		\label{Fig: dete boundary}
	\end{figure}

	\begin{proof}
		
		Let $\underline{h(R_i)}\in \underline{h}(\cR_{f,g})$ be a rectangle. Take $\underline{I}$ as the lower boundary of such a rectangle, and let $\underline{O_1}$ and $\underline{O_2}$ its end points, the analysis for the upper boundary is the same. 
		
		There is a sequence ${\underline{x}_n}\subset \overset{o}{\underline{h(R_i)}}$ consisting of non $s,u$-boundary periodic points that converges to $\underline{O_1}$ as shown in Figure \ref{Fig: dete boundary}. Similarly, there exists a sequence $\{\underline{y}_n\} \subset \overset{o}{\underline{h(R_i)}}$ of non $s,u$-boundary periodic points that converges to $\underline{O_2}$. In these points, $\pi$ is a one-to-one map and serves as a conjugation restricted to their image. This implies that:
		
		$$
		\Phi_{g,f}(\underline{O_1})=\lim_{n\rightarrow \infty} \Phi_{g,f}(\underline{x}_n)= \lim_{n\rightarrow \infty}  \pi^{-1}\circ g \circ \pi (\underline{x_n}).
		$$
		and 
		
		$$
		\Phi_{g,f}(\underline{O_2})=\lim_{n\rightarrow \infty} \Phi_{g,f}(\underline{y}_n)= \lim_{n\rightarrow \infty}  \pi^{-1}\circ g \circ \pi (\underline{y_n}).
		$$

		By taking sub sequences of $\{\underline{x}_n\}$ and $\{\underline{y}_n\}$, we can assume that $x_n := \pi(\underline{x_n})$ and $y_n := \pi(\underline{y_n})$ are always contained in the interior of the rectangle $h(R_i).$ Furthermore, since $h(\cR_{f,g})$ is a Markov partition of $g$, we can assume that the lower horizontal sub-rectangle $H := H^i_1$ contained in $h(R_i)$ of the Markov partition $(g, \cR_{f,g})$ contains to $\{x_n\}$ and $\{y_n\}$ within its interior. Therefore, if $g(H) = V^k_l$, then $\{g(x_n)\}$ and $\{g(y_n)\}$ are contained in $\overset{o}{V^k_l} \subset \overset{o}{h(R_k)}$ and they converge to $g(O_1)$ and $g(O_2)$, which are in the stable boundary  of $\underline{h(R_k)}$.

		We claim that $\Phi_{g,f}(\underline{I})$ is a stable interval contained in a single stable boundary component of $\underline{h(R_k)}$.	 Suppose this is not the case, then $\Phi_{g,f}(\underline{I})$ contains an $s$-arc, $\underline{\alpha}'=[\underline{a}',\underline{b}']^s$  joining two corners of the rectangle $\underline{h(R_k)}$, where $\underline{a}'$ is on the lower boundary of  $\underline{h(R_i)}$ and $\underline{b}'$ is on the upper boundary of $\underline{h(R_i)}$ , as shown in Figure \ref{Fig: Same s bound}. This implies that $\underline{I}$ contains in its interior the $s$-arc
		 $\Phi_{g,f}^{-1}(\underline{\alpha}'):=\underline{\alpha}=[\underline{a},\underline{b}]^s$, that we recall it is contained in the inferior boundary of $\underline{h(R_i)}$
		 
		 	\begin{figure}[h]
		 	\centering
		 	\includegraphics[width=0.7\textwidth]{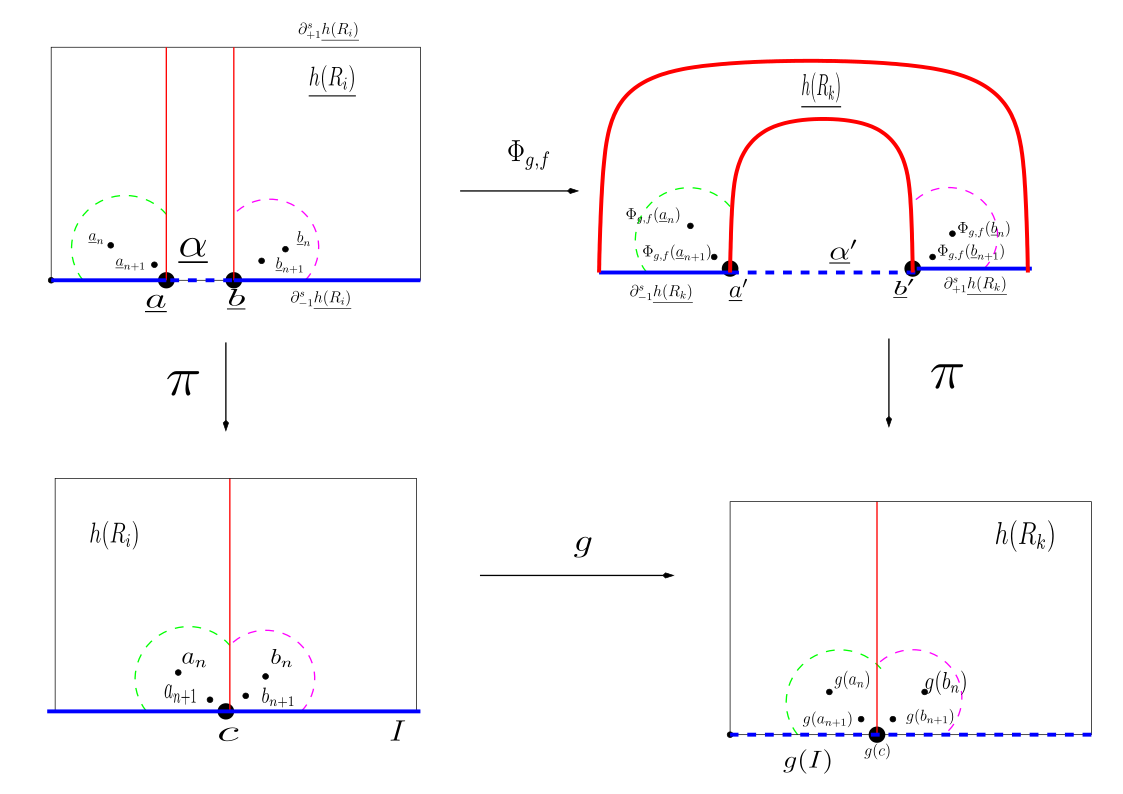}
		 	\caption{$\Phi_{g,f}(\underline{I})$  is in single stable boundary component}
		 	\label{Fig: Same s bound}
		 \end{figure}

		   Just as we did before for the endpoints of  $\underline{I}$ we can take decreasing successions $\{\underline{a}_n\},\{\underline{b}_n\}\subset  \overset{o}{\underline{h(R_i)}}$  of periodic non-corner points converging to $\underline{a}$ and $\underline{b}$ in two distinct sectors (the green and purple semicircles in Figure \ref{Fig: Same s bound}). Let us define,  $a_n:=\pi(\underline{a}_n)$ and $b_n:=\pi(\underline{b}_n)$. They are decreasing sequences (for the vertical order of $h(R_i)$) that converge to the same point  $c=\pi(\underline{a})0\pi(\underline{b})\in I$.  Clearly $\{g(a_n)\}$ is a decreasing (or increasing) sequence for the vertical order of $h(R_k)$ if and only if $\{g(b_n)\}$ is a decreasing (resp. increasing) sequence, and both converge to $g(c)$, without loss of generality suppose they are decreasing. Thus $\pi^{-1}(g(a_n))$ and $\pi^{-1}(g(b_n))$ are decreasing for the vertical order of $\underline{h(R_i)}$. Moreover 
		 $$
		\lim \pi^{-1}(g(a_n))= \lim \Phi_{g,f}(\underline{a_n})=\phi_{g,f}(\underline{a})= \underline{a}'
		$$
and 
			 $$
		\lim \pi^{-1}(g(b_n))= \lim \phi_{g,f}(\underline{b_n})=\phi_{g,f}(\underline{b})=\underline{b}'
		$$

		This implies that $\underline{a}'$ and $\underline{b}'$ are on the lower boundary of $\underline{h(R_i)}$ which is a contradiction.	 Then $\Phi_{g,f}(\underline{I})$ is contained in a single stable boundary component of $\underline{h(R_k)}$ and we conclude that the stable boundary of $\pi^{-1}(h(\cR_{f,g}))$ is $f$-invariant.

	\end{proof}
	
	The $\Phi^{-1}{g,f}$-invariance of $\partial^u \underline{h}(\cR_{f,g})$ is similarly proved. Consequently, $\underline{h}(\cR_{f,g})$ is a Markov partition of the basic piece $K(T_{g,f})$.

	\textbf{Item} $(4)$.  Considering the geometrization of $\underline{h}(\cR_{f,g})$  that was indicated in our last item, let $T'$ be its geometric type. It was proven in Proposition \ref{Prop: type of basic piece is type of pseudo-Anosov} that:
	
	$$		
	\pi(\underline{h}(\cR_{f,g}))=h(\cR_{f,g})
	$$		 
	
	is a geometric Markov partition of $g$ with geometric type $T'$. Since the geometric type of $h(\cR_{f,g})$ is $T_{f,g}$, we can conclude that $T = T_{f,g}$, which confirms the last statement of our proposition.
	
\end{proof}

\chapter{Equivalent geometric types: The Béguin's algorithm}\label{Chapter: Algorithm}

\section*{The equivalence problem.}

Let $T_f$ and $T_g$ be two geometric types in the pseudo-Anosov class, and let $(f, \cR)$ and $(g, \cR_g)$ be realizations of such geometric types by generalized pseudo-Anosov homeomorphisms. Their joint refinement was defined in \ref{Defin: Comun refinament} and denoted as $T_{f, g}$ and $T_{g,f)}$. Their  formal derived form Anosov, denoted as \textbf{DA}$(T_{f, g})$ and \textbf{DA}$(T_{g,f})$, were introduced in \ref{Defi: Formal DA}. Two geometric types, $T_1$ and $T_2$ are  \emph{strongly equivalents} if \textbf{DA}$(T_1)$ has a Markov partition of geometric type $T_2$ and \textbf{DA}$(T_2)$ has a Markov partition of geometric type $T_1$. Proposition \ref{Prop: preimage is Markov Tfg type} yields to the following corollary.

\begin{coro}\label{Coro: equivalence pA and DA}
	Let $f$ and $g$ be generalized pseudo-Anosov homeomorphisms with geometric Markov partitions $\cR_f$ and $\cR_g$ of geometric types $T_f$ and $T_g$, respectively. Let $\cR_{f,g}$ be the joint refinement of $\cR_f$ with respect to $\cR_g$, and let $\cR_{g,f}$ be the joint refinement of $\cR_g$ with respect to $\cR_f$, whose geometric types are $T_{f,g}$ and $T_{g,f}$, respectively.  Under these hypotheses: $f$ and $g$ are topologically conjugated through an orientation preserving homeomorphism if and only if $T_{f,g}$ and $T_{g,f}$ are strongly equivalent.
\end{coro}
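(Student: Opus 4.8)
The statement to prove is an ``if and only if'', and the two directions are of quite different natures, so I would organize the argument accordingly. For the forward direction, assume $f$ and $g$ are topologically conjugate by an orientation-preserving homeomorphism $h\colon S_f\to S_g$. The key technical tool is Proposition~\ref{Prop: preimage is Markov Tfg type}: applied to $h$, it gives that $h(\cR_{f,g})$ is a geometric Markov partition of $g$ of geometric type $T_{f,g}$ with the corner property, and moreover that there is a Markov partition $\underline{h}(\cR_{f,g})$ of the basic piece $K(T_{g,f})\subset \Delta(T_{g,f})$, carried by $\Phi_{T_{g,f}}$, whose geometric type is exactly $T_{f,g}$. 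In other words, $\textbf{DA}(T_{g,f})$ admits a Markov partition of geometric type $T_{f,g}$. To obtain strong equivalence we need the symmetric statement as well: that $\textbf{DA}(T_{f,g})$ admits a Markov partition of geometric type $T_{g,f}$. This follows by applying Proposition~\ref{Prop: preimage is Markov Tfg type} again, this time to the inverse conjugacy $h^{-1}\colon S_g\to S_f$ (which is again orientation-preserving), with the roles of $f$ and $g$ exchanged; here one uses that $N(f,g)=N(g,f)$ (Remark~\ref{Rema: corner pint are th eperiod eauql N}), so the joint refinements built from the swapped data are the same partitions $\cR_{g,f}$ and $\cR_{f,g}$. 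Hence both conditions in the definition of strong equivalence hold, and $T_{f,g}$ and $T_{g,f}$ are strongly equivalent.

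For the converse direction, assume $T_{f,g}$ and $T_{g,f}$ are strongly equivalent. By definition there is a Smale surface diffeomorphism $F$ of a compact surface and a non-trivial saddle-type basic piece $K$ of $F$ possessing a geometric Markov partition of geometric type $T_{f,g}$ and also one of geometric type $T_{g,f}$. By the uniqueness of the formal derived from Anosov (Proposition~\ref{Defi: Unique DA}, itself resting on Bonatti--Langevin \cite[Theorem 5.2.2]{bonatti1998diffeomorphismes}), having Markov partitions of these two geometric types forces $\textbf{DA}(T_{f,g})$ and $\textbf{DA}(T_{g,f})$ to be conjugate: there is a homeomorphism $H\colon \Delta(T_{f,g})\to \Delta(T_{g,f})$ with $H\circ \Phi_{T_{f,g}}\circ H^{-1}=\Phi_{T_{g,f}}$. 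Now I would push $H$ down through the collapsing projections. Let $\pi_{f,g}\colon \Delta(T_{f,g})\to S_f$ and $\pi_{g,f}\colon \Delta(T_{g,f})\to S_g$ be the semi-conjugations of Theorem~\ref{Theo: Basic piece projects to pseudo-Anosov}; here one uses that $f$ (resp.\ $g$) is realized as the pseudo-Anosov quotient of $\textbf{DA}(T_{f,g})$ (resp.\ $\textbf{DA}(T_{g,f})$) via the joint-refinement Markov partition, which is exactly Proposition~\ref{Prop: type of basic piece is type of pseudo-Anosov} applied to $\cR_{f,g}$ and $\cR_{g,f}$. Since $H$ conjugates the two Smale diffeomorphisms, it maps the equivalence relation $\sim_R$ on $\Delta(T_{f,g})$ defining the collapse to the corresponding relation on $\Delta(T_{g,f})$ (the relation is defined purely from the invariant laminations and the cycles, all of which are preserved by $H$), so $H$ descends to a well-defined homeomorphism $h\colon S_f\to S_g$ with $\pi_{g,f}\circ H=h\circ \pi_{f,g}$. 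Combining this with the two semi-conjugacy identities gives $h\circ f=g\circ h$, i.e.\ $h$ conjugates $f$ and $g$.

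It remains to check that $h$ is orientation-preserving. For this I would use that the surfaces $S_f$ and $S_g$ are oriented precisely by the orientations induced by $\pi_{f,g}$ and $\pi_{g,f}$ on the rectangles of the images of the Markov partitions (Definition~\ref{Defi: Orientation S'}), and that $H$ can be taken to respect the geometrizations of the corresponding Markov partitions of $K(T_{f,g})$ and $K(T_{g,f})$ — this is where the strong equivalence, via the uniqueness in \cite[Theorem 5.2.2]{bonatti1998diffeomorphismes}, actually produces a conjugacy compatible with the labelings and orientations of the rectangles. Then the argument is the same local one as in the proof of Theorem~\ref{Theo: conjugated iff  markov partition of same type}: $h$ sends positively oriented horizontal and vertical sub-rectangles of a rectangle $R_i$ of $h(\cR_{f,g})$ to positively oriented ones, hence preserves the transverse orientations of the foliations on each rectangle, and since adjacent rectangles meet along boundary pieces on which orientations are consistent, $h$ preserves the global orientation of $S_f$.

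\textbf{Main obstacle.} I expect the delicate point to be the converse direction, specifically verifying that the conjugacy $H$ between $\textbf{DA}(T_{f,g})$ and $\textbf{DA}(T_{g,f})$ genuinely descends through the collapsing maps to a homeomorphism of the pseudo-Anosov quotients, and does so preserving orientation. The subtlety is that the collapsing equivalence relation $\sim_R$ is defined in terms of minimal rectangles, arcs of non-boundary invariant manifolds, and the regions $\mathcal{P}(\mathcal{C})$ attached to cycles; one must confirm that a topological conjugacy of the ambient Smale diffeomorphisms carries each of these four types of equivalence class to a class of the same type, and that the identification of the quotient with $S_f$ (resp.\ $S_g$) given by Theorem~\ref{Theo: Basic piece projects to pseudo-Anosov} is canonical enough that $H$ intertwines them. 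Once this compatibility is secured, the orientation statement reduces to the already-established local rectangle argument.
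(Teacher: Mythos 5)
Your forward direction is correct and is essentially the paper's argument: apply Proposition~\ref{Prop: preimage is Markov Tfg type} to $h$ (and, for the symmetric half of strong equivalence, to $h^{-1}$, using $N(f,g)=N(g,f)$ so that the joint refinements are unchanged). The paper leaves the symmetric application implicit, but there is no substantive difference there.

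The converse is where you genuinely diverge from the paper, and where there is a gap. You pass to a conjugacy $H$ between $\textbf{DA}(T_{f,g})$ and $\textbf{DA}(T_{g,f})$ and descend it through the collapsing maps; the conjugacy you obtain downstairs is then claimed to be orientation preserving because ``$H$ can be taken to respect the geometrizations'' of the two Markov partitions, citing the uniqueness results. But neither Theorem~\ref{Theo: Presentation in a domain} nor Proposition~\ref{Defi: Unique DA} says anything about orientations or about compatibility with the geometrization of the partitions — they produce a topological conjugacy of the domains and nothing more — so the one clause of the statement that is delicate (``through an orientation preserving homeomorphism'') is exactly the one your argument does not establish. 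Note also that identifying the abstract Bonatti--Jeandenans quotient of $\textbf{DA}(T_{f,g})$ with $(S_f,f)$ already forces you to invoke Theorem~\ref{Theo: conjugated iff  markov partition of same type}; once that theorem is in play, the whole descent of $H$ is unnecessary. The paper's converse is shorter and avoids the problem: strong equivalence gives a geometric Markov partition $\cR$ of $K(T_{f,g})$ of type $T_{g,f}$; Proposition~\ref{Prop: type of basic piece is type of pseudo-Anosov} projects it by $\pi:\Delta(T_{f,g})\to S_f$ to a geometric Markov partition of $f$ of type $T_{g,f}$; since $g$ has $\cR_{g,f}$ of the same type, Theorem~\ref{Theo: conjugated iff  markov partition of same type} immediately yields a conjugacy between $f$ and $g$ that is orientation preserving — closing precisely the point you flagged as the main obstacle. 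To repair your write-up, either adopt this shortcut or supply a proof (not available in the paper as cited) that the domain conjugacy can be chosen to respect the transverse orientations of the rectangles.
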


\begin{proof}

	Let's assume that $f$ and $g$ are topologically conjugated through an orientation-preserving homeomorphism. Proposition \ref{Prop: preimage is Markov Tfg type} establishes that, in this situation, the saddle-type basic piece $K_{f,g}$ in \textbf{DA}$(T_{f,g})$ has a geometric Markov partition with geometric type $T_{g,f}$; therefore, $T_{f,g}$ and $T_{g,f}$ are strongly equivalent.

	If $T_{f,g}$ and $T_{g,f}$ are strongly equivalent, then the saddle-type basic piece $K_{f,g}$ in \textbf{DA}$(T_{f,g})$ has a geometric Markov partition $\cR$ with geometric type $T_{g,f}$.  Let 
	
	$$\pi:\textbf{DA}((T_{f,g})\to S_f$$
	
	 be the projection introduced in Theorem \ref{Theo: Basic piece projects to pseudo-Anosov}.  Proposition \ref{Prop: type of basic piece is type of pseudo-Anosov} states that $\pi(\cR)$ is a geometric Markov partition of $f$ with type $T_{g,f}$. Theorem \ref{Theo: conjugated iff  markov partition of same type} implies that $f$ and $g$ are topologically conjugated through an orientation-preserving homeomorphism.
	
\end{proof}

Part of the classification  of saddle-type basic pieces for surface Smale diffeomorphisms was carried out in 2004 by François Béguin in \cite{beguin2004smale}. In that work, he developed an algorithm that determines in finite time when two geometric types associated with non-trivial saddle-type basic pieces of surface Smale diffeomorphisms are strongly equivalent.

\begin{theo}[ Béguin \cite{beguin2004smale}]\label{Theo: Beguin algorithm}
	Let $T_1$ and $T_2$ be geometric types realizable as basic pieces $K_1$ and $K_2$ of Smale surface diffeomorphisms. There exists a finite algorithm that determines if  $T_1$ and $T_2$ are strongly equivalent.
\end{theo}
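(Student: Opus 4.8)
The plan is to reduce the statement to François Béguin's classification of saddle-type basic pieces inside their domains, and to make the underlying bookkeeping effective with the tools assembled in the previous chapters. First I would exploit the hypothesis: since $T_1$ and $T_2$ are realizable as basic pieces, Proposition~\ref{Prop: Charaterization pseudo Anosov class} together with the Bonatti--Langevin uniqueness of the domain (Proposition~\ref{Prop: unicity of domain}, Corollary~\ref{Coro: Uniqueness of domain}, Theorem~\ref{Theo: Presentation in a domain}) attaches to each $T_i$ a well-defined triple $\textbf{DA}(T_i)=(\Delta(T_i),K(T_i),\phi_{T_i})$, unique up to a conjugacy which is the identity on the non-boundary periodic orbits. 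By Theorem~\ref{Theo: Presentation in a domain}, $T_1$ and $T_2$ are strongly equivalent if and only if $(\Delta(T_1),\phi_{T_1})$ and $(\Delta(T_2),\phi_{T_2})$ are topologically conjugate; so it suffices to produce a finite algorithm which, fed only with $T_1$ and $T_2$, decides this conjugacy, and then transfer the conclusion back to the pseudo-Anosov setting by Corollary~\ref{Coro: equivalence pA and DA} if desired.

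The invariant to be computed is the family of primitive geometric types of the basic piece. Following the construction of Chapter~\ref{Chapter: Markov partitions}, carried over to saddle-type basic pieces (this is Béguin's original framework), one singles out the \emph{first intersection points} of $K(T_i)$ — points lying simultaneously on a stable separatrix of an $s$-boundary periodic point and an unstable separatrix of a $u$-boundary periodic point, with the two connecting segments meeting only at that point — and applies the recipe of Proposition~\ref{Prop: Recipe for Markov partitions} to produce, for each such $\underline z$ and each order $n\ge n(\phi_{T_i})$, a primitive Markov partition $\cR(n,\underline z)$ of $\phi_{T_i}$ on $\Delta(T_i)$. Since $K(T_i)$ has only finitely many $s$- and $u$-boundary periodic orbits (Proposition~\ref{Prop: sub-boundary points 2.1.1}), there are finitely many orbits of first intersection points, so the set $\cT(\phi_{T_i},n)=\{T(\cR):\cR\in\cM(\phi_{T_i},n)\}$ is finite for every $n\ge n(\phi_{T_i})$ (compare Corollary~\ref{Coro: Finite orbits of primitive Markov partitions}) and is invariant under conjugacy on the domain. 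Béguin's classification asserts that $(\Delta(T_1),\phi_{T_1})$ and $(\Delta(T_2),\phi_{T_2})$ are conjugate if and only if $\cT(\phi_{T_1},n)=\cT(\phi_{T_2},n)$ for some — equivalently every — $n\ge\max\{n(\phi_{T_1}),n(\phi_{T_2})\}$; the ``if'' direction is the analogue for basic pieces of Theorem~\ref{Theo: conjugated iff  markov partition of same type}, since a primitive Markov partition together with its combinatorial model reconstructs the domain up to conjugacy.

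Next I would make both ingredients effective. The periodic $s$- and $u$-boundary codes of a geometric type are read off the $s$- and $u$-generating functions $\Gamma(T)$ and $\Upsilon(T)$ of Definitions~\ref{Defi: s-boundary generating funtion} and \ref{Defi: u-boundary generating funtion} (Proposition~\ref{Prop: Peridic boundary codes}), so the boundary periodic orbits and the combinatorial counterparts of the first intersection points are listed in bounded time, the relevant periods being at most $2n$ by the pigeonhole argument of Proposition~\ref{Prop: bound positive incidence matriz } applied to $A(T)$. The recipe of Proposition~\ref{Prop: Recipe for Markov partitions} is purely combinatorial once one knows how to cut a Markov partition along the orbit of a periodic code and along the boundary periodic orbits; this is exactly what the $s$-boundary, $u$-boundary and corner refinements of Chapter~\ref{Chap: Computations} carry out, and each of these, together with the powers $T^m$ (Proposition~\ref{Prop: algoritm iterations type}), is computed by an explicit algorithm from $T$. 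Hence, for every $n\ge n(\phi_{T_i})$ the finite list $\cT(\phi_{T_i},n)$ is computable from $T_i$ alone. What remains is a computable bound $O(T_i)\ge n(\phi_{T_i})$ depending only on the first parameter of $T_i$: the compatibility order is controlled by the periods of the boundary periodic points and by the combinatorics of the first intersection points, all already bounded in terms of $n$, which yields the desired $O(T)$.

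The algorithm then runs as follows: compute $O(T_1)$ and $O(T_2)$; set $n=\max\{O(T_1),O(T_2)\}$; compute the finite lists $\cT(\phi_{T_1},n)$ and $\cT(\phi_{T_2},n)$ from $T_1$ and $T_2$; answer that $T_1$ and $T_2$ are strongly equivalent precisely when these two lists coincide. The main obstacle is the effectiveness package of the previous paragraph — establishing the explicit bound $O(T)$, proving termination and correctness of the computation of $\cT(\phi_T,n)$ purely from $T$, and justifying Béguin's classification statement that equality of these finite lists characterizes conjugacy on the domains; this is the core content of \cite{beguin2004smale} and \cite{beguin1999champs}, and in our presentation it only has to be combined with the $\textbf{DA}$ dictionary of this chapter (Corollary~\ref{Coro: equivalence pA and DA}) together with Theorem~\ref{Theo: conjugated iff  markov partition of same type} to make the transition between basic pieces and generalized pseudo-Anosov homeomorphisms.
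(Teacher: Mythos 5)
This theorem is not proved in the thesis at all: it is imported verbatim from Béguin's paper \cite{beguin2004smale} (see also the sketch of the algorithm in the introduction), and the text only \emph{uses} it, via Corollary~\ref{Coro: equivalence pA and DA}, to obtain Theorem~\ref{Theo: algorithm conjugacy class}. Your proposal is therefore not really in competition with a proof in the paper; and, read on its own terms, it is an outline of Béguin's strategy (primitive Markov partitions attached to first intersection points, a computable bound $O(T)\ge n(\phi_T)$, computation and comparison of the finite lists $\cT(\phi_{T_i},n)$) rather than a proof: you yourself defer the ``effectiveness package'' to \cite{beguin2004smale} and \cite{beguin1999champs}, which is exactly what the thesis does by citing the result.

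If one tries to read your text as an actual argument, two steps are genuinely missing. First, the bound $O(T)\ge n(\phi_T)$ is asserted, not established: the pigeonhole bound of Proposition~\ref{Prop: bound positive incidence matriz } only controls periods of boundary codes (at most $2n$), whereas the compatibility order in the paper's own construction (Lemma~\ref{Lemm: Iteration to be adapted}) is defined through the transverse measures and the dilatation factor, i.e.\ through metric data of the realization, and the thesis explicitly states that it does \emph{not} know how to compute $n(f)$ or $\Theta[f]$ from an arbitrary geometric type; producing a combinatorial bound is precisely the nontrivial content of Béguin's work. Second, the machinery of first intersection points and primitive Markov partitions in Chapter~\ref{Chapter: Markov partitions} is developed for generalized pseudo-Anosov homeomorphisms and uses minimality of the invariant foliations (Proposition~\ref{Prop: pseudo-Anosov properties.}); transporting it to the laminations of a saddle-type basic piece in its domain $\Delta(T)$, and proving that equality of the resulting finite lists of primitive types characterizes conjugacy of the domains, is again the core of \cite{beguin2004smale} and cannot be obtained from Theorem~\ref{Theo: conjugated iff  markov partition of same type} by analogy alone. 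So the honest summary is: your plan correctly reproduces the shape of Béguin's algorithm as the thesis describes it, but the statement remains, both for you and for the paper, a cited external theorem rather than something proved here.
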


Now we can state a similar result when is applied to geometric types in the pseudo-Anosov class.

\begin{theo}\label{Theo: algorithm conjugacy class}
Let $T_f$ and $T_g$ be two geometric types within the pseudo-Anosov class. Assume that $f: S \rightarrow S_f$ and $g: S_g \rightarrow S_g$ are two generalized pseudo-Anosov homeomorphisms with geometric Markov partitions $\cR_f$ and $\cR_g$, having geometric types $T_f$ and $T_g$ respectively.  We can compute the geometric types $T_{f,g}$ and $T_{g,f}$ of their joint refinements through the algorithmic process described in Chapter \ref{Chap: Computations}, and the homeomorphisms $f$ and $g$ are topologically conjugated by an orientation-preserving homeomorphism if and only if the algorithm developed by Béguin determines that $T_{f,g}$ and $T_{g,f}$ are strongly equivalent.
\end{theo}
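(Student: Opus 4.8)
The statement is essentially a packaging of two previously established results, so the proof is short. The plan is to combine Corollary~\ref{Coro: equivalence pA and DA} with Béguin's algorithm (Theorem~\ref{Theo: Beguin algorithm}), after checking that the inputs required by Béguin's algorithm can in fact be computed from $T_f$ and $T_g$. First I would recall that, by Proposition~\ref{Prop: pseudo-Anosov iff basic piece non-impace}, both $T_f$ and $T_g$ have finite genus, hence each is realizable as a (mixing) saddle-type basic piece of a surface Smale diffeomorphism; in particular $T_{f,g}$ and $T_{g,f}$, being obtained from $T_f$ and $T_g$ by the corner-refinement construction of Chapter~\ref{Chap: Computations}, are again in the pseudo-Anosov class (the relevant mixing, finite-genus and impasse-freeness properties are preserved, or one invokes Proposition~\ref{Prop: pseudo-Anosov iff basic piece non-impace} on the homeomorphisms $f,g$ directly since the refined partitions are Markov partitions for the same $f,g$). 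Thus $T_{f,g}$ and $T_{g,f}$ satisfy the hypotheses of Theorem~\ref{Theo: Beguin algorithm}: they are realizable as basic pieces.

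The second step is the computability of the inputs. By the algorithmic results of Chapter~\ref{Chap: Computations}, starting from $T_f$ and $T_g$ one computes the horizontal refinements (to obtain binary incidence matrices), then the sets of boundary periodic codes $\underline{B(T_f)}$, $\underline{B(T_g)}$ and the bound $N(f,g)=\max\{P_{\underline{B(T_f)}},P_{\underline{B(T_g)}}\}$ (Corollary~\ref{Coro: algoritmic per codes}), then the families $\cW_{T_f,N(f,g)}$ and $\cW_{T_g,N(f,g)}$ of periodic codes of period at most $N(f,g)$, and finally the $s$- and $u$-boundary refinements along these families, i.e. the joint refinements, via Corollary~\ref{Coro: Algoritm computation TS(w), cW} and the analogous $u$-statement (Remark~\ref{Rema: some properties of U-boundary refinament}) and Corollary~\ref{Coro: Boundary refinament}. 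This yields the geometric types $T_{f,g}=T_{(\cR_f,\cR_g)}$ and $T_{g,f}=T_{(\cR_g,\cR_f)}$ explicitly, purely from the combinatorial data $T_f,T_g$.

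The third and final step is to feed $T_{f,g}$ and $T_{g,f}$ into Béguin's algorithm. By Theorem~\ref{Theo: Beguin algorithm}, this algorithm terminates in finite time and outputs whether $T_{f,g}$ and $T_{g,f}$ are strongly equivalent. By Corollary~\ref{Coro: equivalence pA and DA}, strong equivalence of $T_{f,g}$ and $T_{g,f}$ is equivalent to the existence of an orientation-preserving homeomorphism conjugating $f$ and $g$. Chaining these two equivalences gives the statement: $f$ and $g$ are topologically conjugate by an orientation-preserving homeomorphism if and only if Béguin's algorithm, applied to the computed types $T_{f,g}$ and $T_{g,f}$, answers ``strongly equivalent.''

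\textbf{Main obstacle.} None of the three steps is genuinely hard at this point, since Corollary~\ref{Coro: equivalence pA and DA} and Theorem~\ref{Theo: Beguin algorithm} do the heavy lifting. The one point requiring care — and the place where I would spend the most attention — is verifying that $T_{f,g}$ and $T_{g,f}$ genuinely meet the hypotheses of Béguin's theorem, namely that they are realizable as basic pieces: one must confirm that the corner-refinement construction does not destroy realizability (it does not, because the refined partitions are still Markov partitions for the pseudo-Anosov $f$ and $g$, so their types lie in $\mathcal{GT}(p\AA)$ by definition, and every type in $\mathcal{GT}(p\AA)$ has finite genus and hence is realizable as a basic piece by Theorem~\ref{Theo: finite genus iff realizable}). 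A secondary bookkeeping point is to make sure the notion of ``strong equivalence'' used in Corollary~\ref{Coro: equivalence pA and DA} coincides with the one tested by Béguin's algorithm; both are stated in terms of a single Smale diffeomorphism and basic piece admitting Markov partitions of both types, so they agree, but this identification should be stated explicitly.
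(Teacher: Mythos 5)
Your proposal is correct and follows essentially the same route as the paper: the paper's proof simply invokes Corollary~\ref{Coro: equivalence pA and DA} to translate conjugacy of $f$ and $g$ into strong equivalence of $T_{f,g}$ and $T_{g,f}$, which Béguin's algorithm (Theorem~\ref{Theo: Beguin algorithm}) then decides. Your additional checks — computability of the joint refinements from Chapter~\ref{Chap: Computations} and realizability of $T_{f,g}$, $T_{g,f}$ as basic pieces — are implicit in the paper and only make the argument more complete.
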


\begin{proof}
	
In view of Corollary \ref{Coro: equivalence pA and DA},  the algorithm determines that $T_{f,g}$ and $T_{g,f}$ are strongly equivalent if and only if   $f$ and $g$ are conjugated. 
\end{proof}

\bibliographystyle{alpha}
\bibliography{BibPseudoA}

\vspace{.5cm}

\end{document}